\documentclass[10pt, oneside]{amsart}
\usepackage[lmargin=1in,rmargin=1in, bmargin=1in, tmargin=1in]{geometry}
\usepackage{amsmath,amsthm,amssymb,tikz}
\usepackage{tikz-cd}
\usepackage{tikz}
\usepackage{todonotes}
\usepackage{mathrsfs}
\usepackage{extarrows}
\usepackage{graphicx}

\usepackage[pagebackref, hidelinks]{hyperref}
\usepackage{IEEEtrantools}
\usepackage{mathrsfs}
\usepackage[utf8]{inputenc}
\usepackage[english]{babel}
 
\usepackage{comment}
\usepackage{csquotes}

\usepackage[shortlabels]{enumitem}

\usepackage{bbm}

\setcounter{tocdepth}{1}

\makeatletter
\def\paragraph{\@startsection{paragraph}{4}%
  \z@\z@{-\fontdimen2\font}%
  {\normalfont\bfseries}}
\makeatother


\usepackage{colonequals}

\newcommand{\QQ}{\mathbb{Q}}

\newcommand{\defeq}{\colonequals}

\renewcommand{\tilde}{\widetilde}

\DeclareMathOperator{\Gal}{Gal}    

\numberwithin{equation}{subsection}

\newtheorem{theorem}[equation]{Theorem}
\newtheorem{proposition}[equation]{Proposition}
\newtheorem{lemma}[equation]{Lemma}
\newtheorem{corollary}[equation]{Corollary}
\newtheorem*{corollary*}{Corollary}

\newtheorem{assumption}[equation]{Assumption}


\newcounter{alphalabels}

\newtheorem{theoremx}[alphalabels]{Theorem}

\theoremstyle{definition}

\newtheorem{definition}[equation]{Definition}
\newtheorem{notation}[equation]{Notation}

\newtheorem{convention}[equation]{Convention}

\theoremstyle{remark}
\newtheorem{remark}[equation]{Remark}
\newtheorem{example}[equation]{Example}

\usepackage{tikz-cd}
\usepackage{mathrsfs}

\DeclareFontFamily{U}{wncy}{}
\DeclareFontShape{U}{wncy}{m}{n}{<->wncyr10}{}
\DeclareSymbolFont{mcy}{U}{wncy}{m}{n}
\DeclareMathSymbol{\sha}{\mathord}{mcy}{"58}

\newcommand{\ide}[1]{\mathfrak{#1}}
\newcommand{\mbb}[1]{\mathbb{#1}}
\newcommand{\ordd}{\mathcal{O}}
\newcommand{\opn}[1]{\operatorname{#1}}
\newcommand{\hatot}{\hat{\otimes}}
\newcommand{\mbf}[1]{\mathbf{#1}}

\newcommand{\smat}[1]{\left(\begin{smallmatrix} #1 \end{smallmatrix}\right)}


\newcommand{\tbyt}[4]{\left( \begin{array}{cc} #1 & #2 \\ #3 & #4 \end{array} \right)}

\newcommand{\tbyts}[4]{\left( \begin{smallmatrix} #1 & #2 \\ #3 & #4 \end{smallmatrix} \right)}



\newcommand{\Addresses}{{
  \bigskip
  \footnotesize

  \textsc{Mathematical Institute, University of Oxford, Woodstock Road, Oxford OX2 6GG, United Kingdom}\par\nopagebreak
  \textit{E-mail address}: \texttt{andrew.graham@maths.ox.ac.uk}

}}

\usepackage{stmaryrd}

\newcommand{\et}{\mathrm{\acute{e}t}}
\newcommand{\Qpb}{\overline{\QQ}_p} 

\newcommand{\bincoeff}[2]{\genfrac(){0pt}{0}{#1}{#2}}


\title[ ]{Unitary Friedberg--Jacquet periods and anticyclotomic p-adic L-functions}
\author{Andrew Graham}
\date{}
\subjclass{11F67, 11G18, 11F77}

\begin{document}
\begin{abstract}
    We extend the construction of the $p$-adic $L$-function interpolating unitary Friedberg--Jacquet periods in previous work of the author to include the $p$-adic variation of Maass--Shimura differential operators. In particular, we develop a theory of nearly overconvergent automorphic forms in higher degrees of coherent cohomology for unitary Shimura varieties generalising previous work for modular curves. The construction of this $p$-adic $L$-function can be viewed as a higher-dimensional generalisation of the work of Bertolini--Darmon--Prasanna and Castella--Hsieh, and the inclusion of this extra variable arising from the $p$-adic iteration of differential operators will play a key role in relating values of this $p$-adic $L$-function to $p$-adic regulators of special cycles on unitary Shimura varieties. 
\end{abstract}

\maketitle

\tableofcontents

\section{Introduction}

Let $E/\mbb{Q}$ be an imaginary quadratic number field and $p$ an odd prime which splits in this extension. Let $f$ be a cuspidal newform of level $\Gamma_0(N)$ and (even) weight $k \geq 2$, where $N$ is coprime to $p$ and divisible only by primes which split in $E/\mbb{Q}$. In \cite{bertolini2013, CastellaHsieh}, the authors construct an anticyclotomic $p$-adic $L$-function which $p$-adically interpolates the square-root central critical $L$-values of the Rankin $L$-series $L(f, \chi, s)$ as $\chi$ runs through a certain range of anticyclotomic Hecke characters of $E$. To be more precise, let $E_{\infty}/E$ denote the anticyclotomic $\mbb{Z}_p$-extension. Then the authors construct a $p$-adic measure $\mathscr{L}_p(f, -) \in \overline{\mbb{Z}}_p[\![ \Gal(E_{\infty}/E) ]\!]$ such that
\[
\mathscr{L}_p(f, \hat{\chi})^2 = (\star) \cdot L(f, \chi, k/2)
\]
for any anticyclotomic character $\chi \colon E^{\times} \backslash \mbb{A}_E^{\times} \to \mbb{C}^{\times}$ of $p$-power conductor and infinity-type $(j, -j)$ with $j \geq k/2$, where $\hat{\chi} \colon \Gal(E_{\infty}/E) \to \Qpb^{\times}$ denotes the associated $p$-adic character via class field theory.\footnote{Here $(\star)$ denotes a suitable product of Euler factors and periods. In \cite{bertolini2013}, a $p$-adic $L$-function for odd weight modular forms is also constructed, however one can no longer take $\chi$ to be an anticyclotomic character of infinity-type $(j, -j)$ (the central value $L(f, \chi, k/2)$ is not Deligne-critical when $k$ is odd). For simplicity, we therefore stick to the even weight case in this introduction.} This $p$-adic $L$-function plays an important role in the study of the Bloch--Kato conjecture for anticyclotomic twists of the $p$-adic Galois representation associated with $f$, due to the striking relation between $\mathscr{L}_p(f, -)$ and generalised Heegner cycles \emph{outside} the above region of interpolation. For example, one can exploit this property to establish results towards the Bloch--Kato conjecture in analytic rank zero (see \cite[Theorem A]{CastellaHsieh}).

The key input in the construction of $\mathscr{L}_p(f, -)$ is an integral formula (Waldspurger's formula) for the central critical $L$-values in terms of a toric period involving the character $\chi$ and the nearly holomorphic modular form $\delta^{j-k/2} f$, where $\delta$ denotes the Maass--Shimura differential operator. This toric period can be interpreted algebraically by taking a $\chi$-weighted sum of values of the coherent cohomology class 
\[
\delta^{j-k/2} \eta_{f} \in \opn{H}^0\left( X_1(N), \omega^{j+k/2} \otimes \opn{Sym}^{j-k/2} \mathcal{H} \right)
\]
associated with $\delta^{j-k/2} f$ at a certain collection of CM points $\opn{CM}_E$ inside the modular curve $X_1(N)$ (here $\mathcal{H}$ denotes the first relative de Rham cohomology of the universal elliptic curve over $X_1(N)$). The idea for constructing the $p$-adic $L$-function is to then $p$-adically interpolate this weighted sum, and in particular, $p$-adically interpolate powers of the Maass--Shimura operator $\delta$. Since $p$ splits in $E/\mbb{Q}$, the collection of CM points $\opn{CM}_E$ lie in the ordinary locus $X_1(N)^{\opn{ord}} \subset X_1(N)$, and since $\delta^{j-k/2} \eta_{f}$ is a coherent cohomology class in degree zero, one can first restrict this class to a section over the ordinary locus (irrespective of any $p$-adic slope condition on $f$) before evaluating at CM points. Therefore it suffices to $p$-adically interpolate $\delta$ on the space of $p$-adic modular forms, and using the unit root splitting, this amounts to studying the $p$-adic properties of the (simpler) Atkin--Serre operator $\Theta = q \frac{d}{dq}$. Furthermore, one of the key properties used in establishing the interpolation property is that one has a canonical splitting of the Hodge filtration over $\opn{CM}_E$ which coincides with both the unit root splitting and the real-analytic splitting. 

This method has been generalised to allow the variation of the modular form in Hida/Coleman families \cite{Castella20, JLZHeegner}, and to the setting of Hilbert modular forms \cite{Hsieh14}. In this paper, we consider a different kind of generalisation. To describe this, we first introduce some notation. Let $F^+$ be a totally real number field of degree $>1$, and consider the compositum $F = E F^+$ (so $F/F^+$ is a degree two, totally imaginary extension). Let $n \geq 2$ be an integer, and let $W$ denote a $2n$-dimensional Hermitian space over $F$ with signatures at the infinite places of the form 
\[
\{ (1, 2n-1), (0, 2n), \dots, (0, 2n) \} .
\]
Suppose that $W_1 \subset W$ is an $n$-dimensional Hermitian subspace, with signatures at the infinite places given by $\{(1, n-1),(0, n), \dots, (0, n)\}$. Assume that $p$ splits completely in $F/\mbb{Q}$, and let $\mbf{G}_0$ denote the unitary group associated with the Hermitian space $W$. We let $\mbf{H}_0 \subset \mbf{G}_0$ denote the subgroup preserving the decomposition $W = W_1 \oplus W_2 \defeq W_1 \oplus W_1^{\perp}$. Let $\pi$ be a cuspidal automorphic representation of $\mbf{G}_0(\mbb{A})$, and for an anticyclotomic character of $F$, let $L(\pi, \chi, s)$ denote the $L$-function attached to the twist of the (standard) Galois representation associated with $\pi$ by $\hat{\chi}$. We are interested in constructing a $p$-adic $L$-function interpolating the (square-roots of the) central critical $L$-values of $L(\pi, \chi, s)$ as $\chi$ runs through a certain range of anticyclotomic characters.

The strategy for producing such a $p$-adic $L$-function is to generalise the method above to the setting of unitary groups. In this case, the toric periods are replaced with \emph{unitary Friedberg--Jacquet periods} (see \eqref{IntroEqn:UFJperiod} below) whose inputs involve higher dimensional analogues of the Maass--Shimura operator. These automorphic periods are a variant of the linear periods studied by Friedberg--Jacquet \cite{FJ93}.

Using work of Harris \cite{HarrisPartial} and Su \cite{Su19}, it turns out these automorphic periods can be interpreted algebraically via the coherent cohomology of the pair of unitary Shimura varieties associated with (the similitude versions of) the groups $\mbf{H}_0 \subset \mbf{G}_0$ (which is a higher dimensional analogue of $\opn{CM}_E \subset X_1(N)$). One can then hope to $p$-adically interpolate this algebraic reinterpretation, following a similar strategy as in \cite{LPSZ, LZBK21}. However, there are several key differences between this setting and the case of modular forms when $n \geq 2$:
\begin{itemize}
    \item The Shimura variety associated with $\mbf{H}_0$ no longer lies inside the ordinary locus of the Shimura variety associated with $\mbf{G}_0$ and we are therefore forced to consider $p$-adic iterations of Maass--Shimura operators on a suitable space of nearly overconvergent automorphic forms.
    \item The coherent cohomology class, which is the higher-dimensional analogue of $\delta^{j-k/2} \eta_{f}$, is a coherent cohomology class in degree $n-1$, so we also need a version of nearly overconvergent automorphic forms in higher degrees of coherent cohomology.
    \item One no longer has a canonical splitting of the Hodge filtration over the Shimura variety associated with $\mbf{H}_0$ which coincides with a real-analytic or $p$-adic splitting, so an extra argument is required to show that the pullback of the cohomology class is overconvergent.
\end{itemize}

The construction of this $p$-adic $L$-function was initiated in \cite{UFJ} by establishing functoriality of Boxer--Pilloni's higher Coleman theory \cite{BoxerPilloni} for unitary groups, however there was a restriction on the weight of $\pi$ and infinity-type of $\chi$ due to the absence of Maass--Shimura differential operators. In this article, we extend the construction in \cite{UFJ} to include the $p$-adic variation of these differential operators using a generalisation of the results in \cite{DiffOps}. In particular, we extend \cite{DiffOps} in two different ways: we construct spaces of nearly overconvergent automorphic forms in higher degrees of coherent cohomology (\emph{op.cit.}\ is only for $\opn{H}^0$); and we $p$-adically interpolate Maass--Shimura differential operators on higher dimensional Shimura varieties (\emph{op.cit.}\ is only for modular curves). Both of these aspects enable us to overcome the higher-dimensional issues highlighted above.

In addition to this, this extra variable allows one to consider values of this $p$-adic $L$-function in a certain region of twists (disjoint from the region of interpolation) where Euler system classes exist for the associated Galois representation (see \cite{ACES} for the construction of this Euler system when $F$ is imaginary quadratic). It is expected that one can prove an \emph{explicit reciprocity law} relating the image of these Euler system classes under a $p$-adic Abel--Jacobi map and values of this $p$-adic $L$-function -- generalising the results in \cite{bertolini2013} -- which would lead to new cases of the Bloch--Kato conjecture for anticyclotomic twists of $\pi$ (see \S \ref{ExpRelationWithESIntro}). 

We note that the $p$-adic $L$-function in this article interpolates unitary Friedberg--Jacquet periods, and the precise connection between these periods and values of the $L$-function needed for this $p$-adic $L$-function is still conditional on forthcoming work of Leslie--Xiao--Zhang \cite{LXZufjIII}. However, there is an overwhelming amount of evidence towards this (see \cite{ChenGan, PWZ19, LXZufjI, LXZufjII}). Furthermore, there are also alternative constructions of $p$-adic $L$-functions for automorphic representations of unitary groups, such as in \cite{EHLS, DLpadicheights} where one obtains both cyclotomic and anticyclotomic variables, however the regions of interpolation for these $p$-adic $L$-functions differ from the region of interpolation in this article (in analogy with the triple product setting, one could refer to the $p$-adic $L$-function in this paper as ``unbalanced'', whereas the $p$-adic $L$-functions in \cite{EHLS, DLpadicheights} are ``balanced'').

\begin{remark}
    We note that the construction in \cite{EHLS} similarly uses the $p$-adic interpolation of Maass--Shimura differential operators on unitary Shimura varieties, however there are two key differences between \emph{op.cit.}\ and this article. Firstly, the authors work with $p$-adic automorphic forms (not nearly overconvergent forms), which simplifies the $p$-adic interpolation, but requires an ordinarity assumption at $p$ on the automorphic representations. Secondly, the authors only need to work with coherent cohomology in $\opn{H}^0$ (and implicitly, top-degree cohomology via Serre duality); hence they do not need to $p$-adically interpolate Maass--Shimura operators in higher degrees of coherent cohomology. It seems likely that one can relax the ordinarity assumption in \emph{op.cit.}\ by extending \cite{DiffOps} to unitary Shimura varieties of signatures $(n, n)$ at all places (following a similar strategy as in this article, which extends \cite{DiffOps} to unitary Shimura varieties of signatures $(1, 2n-1)$, $(0, 2n)$, $\dots$, $(0, 2n)$).
\end{remark}

\subsection{Statement of the main results} \label{StatementOfMainResultsSSecIntro}

We now give a more detailed description of the main results of this article. For simplicity, we explain this in the setting of unitary groups without similitude (whereas in the main body of this article we work with unitary similitude groups and PEL Shimura varieties). With notation as above, fix a CM type $\Psi$ of $F$ and let $\tau_0 \in \Psi$ denote the place where $W$ has signature $(1, 2n-1)$. Let $p$ be an odd prime which splits completely in $F/\mbb{Q}$ and fix an identification $\iota_p \colon \mbb{C} \cong \Qpb$. For any $\tau \in \Psi$, let $\ide{p}_{\tau}$ denote the prime of $F$ lying above $p$ determined by embedding $\iota_p \circ \tau$, and let $\ide{p}_{\bar{\tau}}$ denote its complex conjugate. Since $p$ splits completely, we have an identification $\mbf{G}_{0, \mbb{Q}_p} = \prod_{\tau \in \Psi} \opn{GL}_{2n}$.

Let $\pi$ be a cuspidal automorphic representation of $\mbf{G}_0(\mbb{A})$ such that its component $\pi_{\infty}$ at the infinite place lies in the discrete series $L$-packet parameterised by a self-dual dominant character $\lambda \in X^*(T)^+$. More precisely, if $T \subset \mbf{G}_{0, \mbb{C}} = \prod_{\tau \in \Psi} \opn{GL}_{2n}$ denotes the standard diagonal torus, then $\lambda$ can be described as a tuple of integers $(\lambda_{1, \tau}, \dots, \lambda_{2n, \tau})$ with $\lambda_{1, \tau} \geq \lambda_{2, \tau} \geq \cdots \geq \lambda_{2n, \tau}$ and $\lambda_{i, \tau} = -\lambda_{2n+1-i, \tau}$ for all $i=1, \dots, 2n$ and $\tau \in \Psi$. Consider the Levi subgroup $M = \left(\opn{GL}_1 \times \opn{GL}_{2n-1}\right) \times \prod_{\tau \neq \tau_0} \opn{GL}_{2n} \subset \mbf{G}_{0, \mbb{C}}$ and let ${^MW}$ denote the set of minimal length representatives of the quotient $W_{M}\backslash W_{\mbf{G}_0}$ of Weyl groups. We assume that the Harish-Chandra parameter of $\pi_{\infty}$ is of the form $w_n \cdot (\lambda + \rho)$, where $\rho$ is the half-sum of the positive roots (with respect to the upper-triangular Borel) in $\mbf{G}_{0, \mbb{C}}$, and $w_n$ is the unique element in ${^MW}$ of length $n$. Finally, we assume that $\pi$ is unramified at $p$ and we fix a set $S$ containing $\infty$ and all primes where $\pi$ is ramified (the primes where there does not exist a maximal special subgroup with non-trivial fixed points on the corresponding local component of $\pi$).

The main objects of study in this article are the following unitary Friedberg--Jacquet periods, namely, for any anticyclotomic algebraic Hecke character $\chi \colon F^{\times} \mbb{A}_{F^+}^{\times} \backslash \mbb{A}_F^{\times} \to \mbb{C}^{\times}$ we set
\begin{equation} \label{IntroEqn:UFJperiod}
\mathscr{P}_{\pi, \chi}(\phi) \defeq \int_{[\mbf{H}_0]} \phi(h) \chi'\left( \frac{\opn{det}h_2}{\opn{det}h_1} \right) dh, \quad \quad h = (h_1, h_2) \in [\mbf{H}_0] \defeq \mbf{H}_0(\mbb{Q}) \backslash \mbf{H}_0(\mbb{A}), \quad \phi \in \pi,
\end{equation}
where $\chi' \colon \opn{Res}_{F^+/\mbb{Q}}\opn{U}(1)(\mbb{A}) \to \mbb{C}^{\times}$ is the unique character satisfying $\chi(z) = \chi'(\bar{z}/z)$ for any $z \in \mbb{A}_F^{\times}$. Here $h_1$ (resp. $h_2$) denotes the component of $h$ lying in the unitary group for $W_1$ (resp. $W_2=W_1^{\perp}$), and $dh$ denotes the Tamagawa measure. These periods are conjectured to be related to central values of the (standard) $L$-function $L(\Pi \otimes \chi, s)$, where $\Pi$ denotes the base-change of $\pi$ to an automorphic representation of $\opn{GL}_{2n}(\mbb{A}_F)$. More precisely, one expects that $\mathscr{P}_{\pi, \chi} \neq 0$ if and only if $\Pi$ is of symplectic type, $\opn{Hom}_{\mbf{H}_0(\mbb{A}_f)}(\pi_f, \chi' \boxtimes (\chi')^{-1}) \neq 0$, and $L(\Pi \otimes \chi, 1/2) \neq 0$ (see \cite[Conjecture 7.4]{ChenGan}). Given this conjecture, it is therefore natural to study the periods $\mathscr{P}_{\pi, \chi}(\phi)$ (for suitable choices of test data $\phi \in \pi$) in lieu of the central $L$-values $L(\Pi \otimes \chi, 1/2)$. 

Firstly, we introduce the set of anticyclotomic characters over which we intend $p$-adically interpolate the unitary Friedberg--Jacquet periods. Let $\Sigma_{\pi}$ denote the set of anticyclotomic algebraic Hecke characters $\chi \colon F^{\times} \mbb{A}_{F^+}^{\times} \backslash \mbb{A}_F^{\times} \to \mbb{C}^{\times}$ such that:
\begin{itemize}
    \item The $\infty$-type of $\chi$ is equal to $(\lambda_{n, \tau_0} + 1 + j_{\tau_0}, -(\lambda_{n, \tau_0} + 1 + j_{\tau_0}))$ at the place $\tau_0$, and equal to $(j_{\tau}, -j_{\tau})$ at any place $\tau \neq \tau_0$, for some tuple of integers $j = (j_{\tau}) \in \prod_{\tau \in \Psi} \mbb{Z}$ satisfying
    \[
    0 \leq j_{\tau_0} \leq \lambda_{n-1, \tau_0} - \lambda_{n, \tau_0}, \quad \quad 0 \leq j_{\tau} \leq \lambda_{n, \tau} \; \; (\tau \neq \tau_0).
    \]
    \item The conductor of $\chi$ is of the form $\prod_{\tau \in \Psi} (\ide{p}_{\tau} \ide{p}_{\bar{\tau}})^{c_{\tau}}$ with $c_{\tau_0} \geq 1$. We let $e = (e_{\tau}) \in \prod_{\tau \in \Psi} \mbb{Z}_{\geq 1}$ denote the tuple of integers given by $e_{\tau} = \opn{max}(1, c_{\tau})$.
\end{itemize}
Let $F_{p^{\infty}} / F$ denote the maximal anticyclotomic abelian extension which is unramified away from $p$. For any $\chi \in \Sigma_{\pi}$, we let $\hat{\chi} \colon \Gal(F_{p^{\infty}}/F) \to \Qpb^{\times}$ denote the corresponding continuous character via class field theory.

The test data we consider in this article is of the following form:
\begin{itemize}
    \item Let $K_{\infty} \subset \mbf{G}_0(\mbb{A})$ denote the maximal compact subgroup whose complexification equals $M(\mbb{C})$. We fix $\phi_{\infty} \in \pi_{\infty}$ a (non-zero) element in the minimal $K_{\infty}$-type of $\pi_{\infty}$ which is an eigenvector under the action of $K_{\infty} \cap \mbf{H}_0(\mbb{A})$.
    \item Let $K \subset \mbf{G}_0(\mbb{A}_f)$ be a compact open subgroup of the form $K = K_S \cdot \prod_{\ell \not\in S} K_\ell$, where
    \[
    K_S \subset \prod_{\substack{\ell \in S \\ \text{ finite }}} \mbf{G}_0(\mbb{Q}_{\ell}), \quad \quad K_{\ell} \subset \mbf{G}_0(\mbb{Q}_{\ell}) \text{ maximal special, }
    \]
    such that $\pi_f^K \neq 0$. We fix non-zero vectors $\phi_S \in \pi_{f, S}^{K_S}$, and $\phi_{\ell} \in \pi_{\ell}^{K_{\ell}}$ for $\ell \notin S \cup \{ p \}$, where $\pi_{f,S} = \bigotimes_{\substack{\ell \in S \\ \text{ finite }}} \pi_{\ell}$.
    \item For any $\tau \in \Psi$ and $i =1, \dots, 2n$, let $t_{p, i, \tau} \in \mbf{G}_0(\mbb{Q}_p)$ denote the diagonal matrix which is the identity outside the $\tau$-component, and in the $\tau$-component is given by $\opn{diag}(p, \dots, p, 1, \dots, 1)$ with $i$ lots of $p$. We fix a $p$-stabilisation $\phi_p \in \pi_p$, i.e., an Iwahori-fixed vector which is an eigenvector for the action of the $U_p$-Hecke operators associated with $t_{p, i, \tau}$. Explicitly, $\phi_p$ is fixed by the standard upper-triangular Iwahori subgroup $\opn{Iw} \subset \mbf{G}_0(\mbb{Q}_p)$, and there are (necessarily non-zero) complex numbers $\alpha_{i, \tau} \in \mbb{C}^{\times}$ such that
    \[
    [ \opn{Iw} \cdot t_{p, i, \tau} \cdot \opn{Iw}] \cdot \phi_p = \alpha_{i, \tau} \phi_p
    \]
    where $[ \opn{Iw} \cdot t_{p, i, \tau} \cdot \opn{Iw}]$ is the Hecke operator associated with $t_{p, i, \tau}$. We assume that this eigensystem is \emph{small slope}, i.e., for any $\tau \in \Psi$ and $i = 1, \dots, 2n-1$ one has
    \[
    v_p( \iota_p(\alpha_{i, \tau}^{\circ}) ) < \lambda_{i, \tau} - \lambda_{i+1, \tau} + 1, \quad \quad \quad \alpha_{i, \tau}^{\circ} \defeq \lambda(t_{p, i, \tau}) \alpha_{i, \tau},
    \]
    where $v_p$ denotes the $p$-adic valuation normalised such that $v_p(p) = 1$.
\end{itemize} 
For any $\chi \in \Sigma_{\pi}$, we let $\phi^{[j]}_e \in \pi$ denote the automorphic form given by
\[
\phi^{[j]}_e = (\Delta^{[j]}_{\kappa} \cdot \phi_{\infty}) \otimes \phi_S \otimes \bigotimes_{\ell \not\in S \cup \{p\}} \phi_{\ell} \otimes (u_{\opn{sph}} t_p^e \cdot \phi_p)
\]
where $\Delta^{[j]}_{\kappa} \in \mathcal{U}(\ide{g}_{\mbb{C}})$ is a certain differential operator depending on the tuple $j$ and the weight $\kappa = -w_M^{\opn{max}} [ w_n (\lambda + \rho) - \rho ]$ (see Definition \ref{AppendixDefOfDELTAkappaJDiffOp}), $u_{\opn{sph}} \in \mbf{G}_0(\mbb{Q}_p)$ is a certain representative of the open orbit of the lower-triangular Borel subgroup of $\mbf{G}_0(\mbb{Q}_p)$ acting on $\mbf{H}_0(\mbb{Q}_p) \backslash \mbf{G}_0(\mbb{Q}_p)$, and $t_p^e \in \mbf{G}_0(\mbb{Q}_p)$ denotes the diagonal matrix which in the $\tau$-component is given by $\opn{diag}(p^{e_{\tau}(2n-1)}, p^{e_{\tau}(2n-2)}, \dots, p^{e_{\tau}}, 1)$. Here $w_M^{\opn{max}} \in W_M$ denotes the longest Weyl element. One should view the operator $\Delta_{\kappa}^{[j]}$ as a higher-dimensional analogue of a certain power of the Maass--Shimura differential operator -- indeed, if we identify $\ide{g}_{\mbb{C}} = \bigoplus_{\tau \in \Psi} \ide{gl}_{2n}$, then up to a non-zero rational number the operator $\Delta_{\kappa}^{[j]}$ in the $\tau$-component is given by the $j_{\tau}$-th power of the determinant operator
\[
\opn{det}_{\tau} = \sum_{\sigma \in S_n} \opn{sgn}(\sigma) \prod_{i=1}^n E_{i, n+\sigma(i)} \; \in \; \mathcal{U}(\ide{gl}_{2n})
\]
where $E_{a, b} \in \ide{gl}_{2n}$ denotes the elementary matrix with $1$ in the $(a, b)$-th place. Furthermore, the local component $u_{\opn{sph}} t_p^e \cdot \phi_p$ is precisely the same element which appears in the study of twisted local zeta integrals associated with Shalika models (see \cite{5author, ClassicalLocus}). 

We now state the first main result of this article.

\begin{theoremx} \label{FirstTheoremIntro}
    There exists a locally analytic distribution $\mathscr{L}_{p, \phi}(\pi, -) \in \mathscr{D}^{\opn{la}}(\Gal(F_{p^{\infty}}/F), L)$ such that for any $\chi \in \Sigma_{\pi}$
    \[
    \mathscr{L}_{p, \phi}(\pi, \hat{\chi}) = (\star) \cdot (2\pi i)^{-(n-1)} \cdot \mathscr{E}_p(\pi, \chi) \cdot \mathscr{P}_{\pi, \chi}(\phi_e^{[j]})
    \]
    where:
    \begin{itemize}
        \item $L/\mbb{Q}_p$ is a sufficiently large finite extension and $(\star)$ is a non-zero rational number independent of $\pi$ and $\chi$;
        \item the factor $\mathscr{E}_p(\pi, \chi)$ is given by
        \[
         \mathscr{E}_p(\pi, \chi) = p^{-e_{\tau_0}} \left( \frac{\alpha_{n, \tau_0}}{\alpha_{n-1, \tau_0}} \right)^{e_{\tau_0}} \chi_{\ide{p}_{\bar{\tau}_0}}(-1) \chi_{\ide{p}_{\bar{\tau}_0}}(p)^{-e_{\tau_0}} \mathscr{G}(\chi_{\ide{p}_{\bar{\tau}_0}}) \left( \prod_{\tau \in \Psi} \chi_{\ide{p}_{\bar{\tau}}}(-1)^n \right) (\alpha_p^e \delta_B(t_p^e))^{-1} 
        \]
        where $\alpha_p^e$ is the eigenvalue corresponding to the action of $[\opn{Iw} \cdot t_p^e \cdot \opn{Iw}]$ on $\phi_p$, $\delta_B$ denotes the modulus function associated with the upper-triangular Borel subgroup of $\mbf{G}_0(\mbb{Q}_p)$, $\chi_{\ide{p}_{\bar{\tau}}}$ denotes the restriction of $\chi$ to $F_{\ide{p}_{\bar{\tau}}}^{\times} \cong \mbb{Q}_p^{\times}$, and $\mathscr{G}(\chi_{\ide{p}_{\bar{\tau}_0}})$ is the Gauss sum associated with the character $\chi_{\ide{p}_{\bar{\tau}_0}}$.
    \end{itemize}
\end{theoremx}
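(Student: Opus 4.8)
The plan is to reinterpret the unitary Friedberg--Jacquet period $\mathscr{P}_{\pi,\chi}(\phi^{[j]}_e)$ as a $p$-adic pairing of coherent cohomology classes on the Shimura varieties attached to $\mbf{H}_0\subset\mbf{G}_0$, and then to construct $\mathscr{L}_{p,\phi}(\pi,-)$ by $p$-adically interpolating the resulting geometric data. Write $S_{\mbf{G}_0}$ for the Shimura variety of $\mbf{G}_0$ (of dimension $2n-1$) and $S_{\mbf{H}_0}=S_{\mbf{H}_1}\times S_{\mbf{H}_2}$ for that of $\mbf{H}_0$; since $W_2=W_1^\perp$ is totally definite, $S_{\mbf{H}_2}$ is $0$-dimensional and $S_{\mbf{H}_1}$ has dimension $n-1$. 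First I would use the automorphic-to-coherent-cohomology dictionary together with the branching results of \cite{UFJ}: the data $\phi_\infty\otimes\phi_S\otimes\bigotimes_{\ell}\phi_\ell\otimes\phi_p$ determines a class $[\phi]\in\opn{H}^{n-1}(S_{\mbf{G}_0},\omega^\kappa)$ in the minimal $K_\infty$-type, the operator $\Delta^{[j]}_\kappa$ carries it to a nearly holomorphic class valued in a bundle with a horizontal step filtration whose graded pieces are obtained from $\omega^\kappa$ by twisting with pieces of the relative de Rham cohomology of the universal abelian scheme, and $\mathscr{P}_{\pi,\chi}(\phi^{[j]}_e)$ becomes -- up to the non-zero rational constant $(\star)$ and the archimedean comparison factor $(2\pi i)^{-(n-1)}$ coming from the de Rham--Betti comparison over the $(n-1)$-dimensional CM cycle $S_{\mbf{H}_1}$ -- the evaluation on $S_{\mbf{H}_0}$ of the pullback of this class, translated by $u_{\opn{sph}}t_p^e$ at $p$, against a section of an automorphic line bundle built from $\chi'$ on the unitary groups of $W_1$ and $W_2$.

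Next I would develop the theory of \emph{nearly overconvergent automorphic forms in degree $n-1$} on $S_{\mbf{G}_0}$, generalising \cite{DiffOps}: over a strict neighbourhood of the relevant locus adapted to Boxer--Pilloni's higher Coleman theory \cite{BoxerPilloni} one defines sheaves of nearly overconvergent forms carrying a horizontal filtration, shows that the Maass--Shimura operators act on $\opn{H}^{n-1}$, and -- the crucial analytic point -- shows that the assignment $j\mapsto\Delta^{[j]}_\kappa[\phi]$ extends to a $p$-adic analytic family over a weight space parametrising the tuples $j$. This $p$-adic iteration of Maass--Shimura operators on nearly overconvergent forms replaces the use of the ordinary locus and the unit-root splitting in the classical Bertolini--Darmon--Prasanna argument, both of which are unavailable here because $S_{\mbf{H}_0}$ is not contained in the ordinary locus of $S_{\mbf{G}_0}$; it is also the origin of the anticyclotomic variable of $\mathscr{L}_{p,\phi}(\pi,-)$.

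Then, using the functoriality of higher Coleman theory established in \cite{UFJ}, I would restrict the resulting family of nearly overconvergent classes along $S_{\mbf{H}_0}\to S_{\mbf{G}_0}$. The small-slope hypothesis on the eigenvalues $\alpha_{i,\tau}$ guarantees that $[\phi]$ lies in the finite-slope subspace on which the restriction and overconvergent-projection maps are defined, and it is also what makes the normalisation bookkeeping below invertible. The main obstacle is the third difficulty flagged in the introduction: one must show that this pullback is not merely nearly overconvergent but genuinely \emph{overconvergent} on $S_{\mbf{H}_0}$ -- equivalently a top-degree overconvergent class on the $(n-1)$-dimensional factor $S_{\mbf{H}_1}$. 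As there is no canonical splitting of the Hodge filtration over $S_{\mbf{H}_0}$ coinciding with the $p$-adic and real-analytic ones, the argument should be that the representative $u_{\opn{sph}}$ of the open $\mbf{H}_0(\Qp)$-orbit on $\mbf{H}_0(\Qp)\backslash\mbf{G}_0(\Qp)$ is chosen precisely so that pulling back the Hodge filtration along the translated embedding gives a splitting compatible with the overconvergent structure, which kills the ``nearly holomorphic'' (symmetric-power) directions upon restriction.

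Finally I would assemble $\mathscr{L}_{p,\phi}(\pi,-)\in\mathscr{D}^{\opn{la}}(\Gal(F_{p^\infty}/F),L)$ by pairing the resulting overconvergent family on $S_{\mbf{H}_0}$ against the universal character of $\Gal(F_{p^\infty}/F)$, realised through sections of an ``anticyclotomic'' line bundle on the $\opn{U}(1)$-type factors: $S_{\mbf{H}_2}$ contributes a finite sum over CM points and $S_{\mbf{H}_1}$ a Serre-duality pairing in top degree, with the translate by $t_p^e$ controlling the conductor $e$ so that the resulting functional is a genuine locally analytic distribution. Specialising at $\hat\chi$ and unwinding the interpolation property of the $p$-adic Maass--Shimura operator recovers $\Delta^{[j]}_\kappa$, hence $\mathscr{P}_{\pi,\chi}(\phi^{[j]}_e)$, up to the factor $\mathscr{E}_p(\pi,\chi)$. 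This factor is produced by an explicit local computation at $p$: the higher-Coleman $U_p$-normalisation accounts for $(\alpha_p^e\delta_B(t_p^e))^{-1}$ together with $p^{-e_{\tau_0}}(\alpha_{n,\tau_0}/\alpha_{n-1,\tau_0})^{e_{\tau_0}}$, while the twisted local zeta integral for the Shalika/Iwahori vector $u_{\opn{sph}}t_p^e\cdot\phi_p$ -- the same local input that appears in \cite{5author, ClassicalLocus} -- yields the Gauss sum $\mathscr{G}(\chi_{\ide{p}_{\bar{\tau}_0}})$ and the remaining sign factors involving $\chi_{\ide{p}_{\bar{\tau}}}(-1)$ that come from the choice of $u_{\opn{sph}}$. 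I expect the overconvergence of the pullback in the third step, and secondarily the precise evaluation of this local zeta integral, to be the two points requiring genuine work.
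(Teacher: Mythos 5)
Your skeleton matches the paper's: reinterpret $\mathscr{P}_{\pi,\chi}(\phi^{[j]}_e)$ as a coherent-cohomology pairing on the pair of Shimura varieties, build nearly overconvergent sheaves with a $p$-adically iterable Maass--Shimura action, restrict along $\hat{\iota}$, show the pullback is honestly overconvergent, and pair against a universal anticyclotomic character. However, the two steps you flag yourself as requiring ``genuine work'' are precisely the ones where your proposed mechanisms are not what the paper does, and in one case would not work.

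\textbf{Overconvergence of the pullback.} You propose that $u_{\opn{sph}}$ is chosen so that pulling back the Hodge filtration gives a splitting compatible with the overconvergent structure, thereby ``killing the nearly holomorphic directions.'' But the paper states explicitly (third bullet of the introductory list of difficulties) that no splitting of the Hodge filtration over $S_{\mbf{H}_0}$ is compatible with a $p$-adic or real-analytic splitting --- this is exactly the obstruction that makes the higher-rank case harder than Bertolini--Darmon--Prasanna. The actual mechanism is representation-theoretic: the morphism $\vartheta_{\kappa,j,\beta}$ is built from the $M_H$-eigenvector $\delta_{\kappa,j}$, and Propositions~\ref{ClassicalThetaPreservesHolProp} and \ref{DaggerThetaTakesOCtoOCProp} show $E_{i,1}\star_{\overline{P}}\vartheta(F)=0$ by (i) the commutator identity of Corollary~\ref{KeyCorForHolOnX} to push $\overline{\ide{u}}_H$ past the polynomial action, and (ii) the vanishing of the branching space
\[
\opn{Hom}_{1\times 1\times\opn{GL}_n}\bigl(\sigma_{\kappa}^{[j],-1},\ S_{-(j-1)}\otimes V_{\kappa}\bigr)=0
\]
deduced from Pieri's rule (Theorem~\ref{TheoremForClassicalBranching}, Lemma~\ref{PierisRuleLemma}). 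The role of the specific representative $\hat{\gamma}$ (your $u_{\opn{sph}}$ up to transpose) is instead to make the functoriality of the Igusa towers and the $M^H_{\diamondsuit}(p^{\beta})$-equivariance work out (see the remark before Lemma~\ref{OpenOrbitForgammauvLemma} and \S\ref{QuotientsOfIgVarSubSec}), not to provide a splitting. As stated, your step would fail.

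\textbf{The interpolation factor $\mathscr{E}_p(\pi,\chi)$.} You attribute the Gauss sum, the eigenvalue ratio, and the sign factors to a twisted local Shalika zeta-integral computation for $u_{\opn{sph}}t_p^e\cdot\phi_p$. That computation is only \emph{conjecturally} linked to the interpolation formula (Remark~\ref{CPRRemarkIntro}), and the paper proves Theorem~\ref{FirstTheoremIntro} without it. What you have missed is the \emph{$p$-depletion}: the quantity that interpolates $p$-adically is $\opn{Ev}^{\dagger,\circ}_{\kappa,j,\chi,\beta}$, built from $\delta^{\dagger,s\text{-an}}_{\kappa,j+\chi,\beta} = 1_{U_{G,\beta}^{\circ},\chi_p}\cdot\delta^{\dagger}_{\kappa,j,\beta}$ (Proposition~\ref{SanDeltaDaggerIsAnEvectorProp}(3)), whereas the period is computed by the undepleted $\opn{Ev}^{\dagger}_{\kappa,j,\chi,\beta}$. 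Theorem~\ref{MainThmOfPropEvMapSection} bridges these by expanding the depleting indicator as a Fourier-type linear combination of translates (producing $\mathscr{G}(\chi_{\ide{p}_{\bar\tau_0}})$), comparing the Frobenius correspondences $\varphi_G^{\beta'}$ and $\varphi_H^{\beta'}$ with the $U_p$-operators $U_{t_0},U_{t_1}$ over the Igusa towers (producing $(\alpha_{n,\tau_0}/\alpha_{n-1,\tau_0})^{e_{\tau_0}}$), and computing the Hecke adjoint of $[\chi]$ under Serre duality (Lemma~\ref{UtinverseEigenvalueLemma}). The extra factor $(\alpha_p^e\delta_B(t_p^e))^{-1}$ and $\prod_{\tau\neq\tau_0}\chi_{\ide{p}_{\bar\tau}}(-1)^n$ come only afterwards from the elementary translation relations of Appendix~\ref{EquivariantLinearFuncsAppendix} converting the dual test vector $\psi^{[j]}_\beta$ (associated with $\hat{\gamma}$ and the transposed $U_p$-operators) into $\phi^{[j]}_e$. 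Your sketch skips the entire depletion/Frobenius argument, which is where the actual geometric input for $\mathscr{E}_p$ resides.

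A smaller point: the Shimura variety $S_{\mbf{H}_0}$ is not literally a product $S_{\mbf{H}_1}\times S_{\mbf{H}_2}$ in the paper (the level structure and, in the similitude setting, the matched similitude, prevent this); the evaluation is carried out directly via Serre duality on the $(n-1)$-dimensional variety $S_{\mbf{H},\diamondsuit}(p^{\beta})$ and the determinant map to an auxiliary $\mathbf{R}$-Shimura variety (Lemma~\ref{TheUniversalChiCharacterLemma}).
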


Before describing the ingredients that go into the construction of $\mathscr{L}_{p, \phi}(\pi, -)$, let us first make a few remarks.

\begin{remark}
    One can extend the result in Theorem \ref{FirstTheoremIntro} to include characters with additional tame ramification depending on the compact open subgroup $K_S$.
\end{remark}

\begin{remark}
    The appearance of the factor $\mathscr{E}_p(\pi, \chi)$ is due to the fact that, in order to $p$-adically interpolate the differential operators $\Delta^{[j]}_{\kappa}$, one must first perform a certain ``$p$-depletion'' to the class $\phi$. The analysis of this $p$-depletion operator seems to be significantly easier if one assumes the character $\chi_{\ide{p}_{\bar{\tau_0}}}$ is ramified, and is the reason why the condition $c_{\tau_0} \geq 1$ appears in the definition of $\Sigma_{\pi}$. However, we expect that with more work that one can prove an interpolation property for unramified characters too.
\end{remark}

\begin{remark}
    The restrictions on the infinity-types of characters in $\Sigma_{\pi}$ are a generalisation of the conditions appearing in \cite{bertolini2013}. However, one key difference (when $n \geq 2$) is that this interpolation set is not Zariski dense in the weight space of continuous characters of $\Gal(F_{p^{\infty}}/F)$ (so the locally analytic distribution $\mathscr{L}_{p, \phi}(\pi, -)$ is not uniquely determined by this interpolation property).
\end{remark}

\begin{remark} \label{CPRRemarkIntro}
    For the locally analytic distribution $\mathscr{L}_{p, \phi}(\pi, -)$ to have a (potentially) non-trivial interpolation property, one should at the very least impose the additional conditions that $\Pi$ is of symplectic type, $\opn{Hom}_{\mbf{H}_0(\mbb{A}_f)}(\pi_f, \mbb{C}) \neq 0$, the Satake parameters for $\pi_p$ are distinct in each component indexed by $\Psi$, and the choice of $p$-stablisation $\phi_p$ is spin (see \cite[\S 6]{5author}). Assuming these conditions hold, write
    \[
    \pi_p = \bigotimes_{\tau \in \Psi} \opn{Ind}(\theta_{\tau})
    \]
    where $\opn{Ind}(\theta_{\tau})$ denotes the smooth irreducible representation of $\opn{GL}_{2n}(\mbb{Q}_p)$ obtained as the normalised induction of a smooth unramified character $\theta_{\tau} \colon (\mbb{Q}_p^{\times})^{\oplus 2n} \to \mbb{C}^{\times}$. For any $i=1, \dots, 2n$, let $\theta_{i, \tau} \colon \mbb{Q}_p^{\times} \to \mbb{C}^{\times}$ denote the $i$-th component of $\theta_{\tau}$. We may assume that $\theta_{i, \tau} = \theta_{2n+1-i, \tau}^{-1}$ and that
    \[
    \alpha_{i, \tau} = \prod_{j=1}^i p^{n-j+\frac{1}{2}} \theta_{j, \tau}(p) .
    \]
    Similar to Waldspurger's formula, it is expected that $|\mathscr{P}_{\pi, \chi}(\psi)|^2$ should decompose as a product of local zeta integrals for the automorphic representation $\Pi$ depending on the input vector $\psi$. More precisely, one expects a decomposition of the following shape:
    \[
    \mathscr{L}_{p, \phi}(\pi, \hat{\chi})^2 = (\star) \cdot \mathscr{E}_p(\pi, \chi)^2 \cdot \left( \prod_{\tau \in \Psi} \zeta(\psi_{\tau}, \chi_{\ide{p}_{\bar{\tau}}}^{-1}, 1/2) \cdot \zeta(\psi_{\tau}, \chi_{\ide{p}_{\bar{\tau}}}, 1/2) \right) \cdot C^{(p)} \cdot L^{(p)}(\Pi \otimes \chi, 1/2)
    \]
    where $\psi = \phi_{e}^{[j]}$, $\psi_{\tau} \in \opn{Ind}(\theta_{\tau})$ denotes the component at $p$ indexed by $\tau$, $\zeta(\cdots)$ is the twisted local zeta integral associated with a Shalika model for a smooth representation of $\opn{GL}_{2n}(\mbb{Q}_p)$, $C^{(p)}$ is a product of factors depending on $\Pi$ away from $p$, and $L^{(p)}(\Pi \otimes \chi, 1/2)$ denotes the value of the $L$-function with Euler factors at $p$ removed. Suppose that $c_{\tau} \geq 1$ for all $\tau \in \Psi$ for simplicity. Then the calculations in \cite[\S 5]{5author} show that 
    \[
    \mathscr{E}_p(\pi, \chi)^2 \cdot \left( \prod_{\tau \in \Psi} \zeta(\psi_{\tau}, \chi_{\ide{p}_{\bar{\tau}}}^{-1}, 1/2) \cdot \zeta(\psi_{\tau}, \chi_{\ide{p}_{\bar{\tau}}}, 1/2) \right)
    \]
    is (up to a constant only depending on the parity of $\chi_{\ide{p}_{\bar{\tau}_0}}$) equal to
    \begin{align}
     \varepsilon(\theta_{n, \tau_0}\chi_{\ide{p}_{\bar{\tau}_0}}^{-1}, 1/2) \varepsilon(\theta_{n+1, \tau_0}\chi_{\ide{p}_{\bar{\tau}_0}}, 1/2)^{-1} \prod_{\tau \in \Psi} \prod_{i=1}^n \varepsilon(\theta_{i, \tau}\chi_{\ide{p}_{\bar{\tau}}}^{-1}, 1/2)^{-1} \varepsilon(\theta_{i, \tau}\chi_{\ide{p}_{\bar{\tau}}}, 1/2)^{-1}  \label{Eqn:ProductOfEpfactors}
    \end{align} 
    where $\varepsilon(\eta, s) = \mathscr{G}(\eta^{-1}) \cdot \eta(-p^c) p^{-cs}$ denotes the Langlands--Deligne local factor associated with any smooth character $\eta \colon \mbb{Q}_p^{\times} \to \mbb{C}^{\times}$ of conductor $p^c$ (see, e.g., \cite[p.\ 572]{CastellaHsieh}). Here we have used the calculation
    \begin{align*}
    \mathscr{E}_p(\pi, \chi) &= \varepsilon(\theta_{n, \tau_0} \chi_{\ide{p}_{\bar{\tau}_0}}^{-1}, 1/2) \cdot \left( \prod_{\tau \in \Psi} \chi_{\ide{p}_{\bar{\tau}}}(-1)^n \right) (\alpha_p^e \delta_B(t_p^e))^{-1} \\
    &= \chi_{\ide{p}_{\bar{\tau}_0}}(-1) \cdot \varepsilon(\theta_{n+1, \tau_0} \chi_{\ide{p}_{\bar{\tau}_0}}, 1/2)^{-1} \cdot \left( \prod_{\tau \in \Psi} \chi_{\ide{p}_{\bar{\tau}}}(-1)^n \right) (\alpha_p^e \delta_B(t_p^e))^{-1}. 
    \end{align*}
    
    The expression \eqref{Eqn:ProductOfEpfactors} is precisely of the shape predicted by Coates--Perrin-Riou \cite{coatesperrinriou89, coates89} (note that $L(\eta, s) = 1$ when $\eta$ is ramified). In particular, we expect that the locally analytic distribution $\mathscr{L}_{p, \phi}(\pi, -)$ should be tempered, and a $p$-adic measure if $\phi_p$ is Borel-ordinary (i.e., $v_p(\alpha_{i, \tau}^{\circ}) = 0$ for all $\tau \in \Psi$ and $i=1, \dots, 2n-1$).\footnote{In fact, a weaker ordinary condition ensuring the existence of a ``Panchishkin submodule'' should suffice for showing $\mathscr{L}_{p, \phi}(\pi, -)$ is a $p$-adic measure.}
\end{remark}

\subsubsection{Results in families} \label{ResultsInFamiliesSSecIntro}

We also construct a $p$-adic $L$-function (with the maximal amount of variation) as $\pi$ varies in a Coleman family. For this, we make the additional assumptions that the finite primes in $S$ split in $E/\mbb{Q}$, and that $\opn{dim}_{\mbb{C}}\pi_{f,S}^{K_S} = 1$. We also assume $p$-regularity, namely that the generalised eigenspace in $\pi_p^{\opn{Iw}}$ with eigensystem $\{ \alpha_{i, \tau} \}$ is one-dimensional. In other words, $\phi_p$ is a $p$-regular $p$-stabilisation which is new away from $p$. When $\pi$ satisfies these assumptions, we simply write $\mathscr{L}_{p}(\pi, -)$ for the locally analytic distribution in Theorem \ref{FirstTheoremIntro} (associated with this $p$-regular $p$-stabilisation which is new away from $p$). Under these assumptions, we prove the following theorem.

\begin{theoremx} \label{SecondTheoremIntro}
    Let $\pi$ be as in \S \ref{StatementOfMainResultsSSecIntro} which additionally satisfies the assumptions at the start of \S \ref{ResultsInFamiliesSSecIntro}. Let $\mathcal{W}$ denote the $n[F^+:\mbb{Q}]$-dimensional weight space over $\mbb{Q}_p$ parameterising self-dual continuous characters of $T(\mbb{Z}_p)$. Then there exists a sufficiently large finite extension $L/\mbb{Q}_p$ and an open affinoid neighbourhood $\Omega \defeq \opn{Spa}(\mathscr{O}_{\Omega}) \subset \mathcal{W}_L$ containing $\lambda$ such that:
    \begin{enumerate}
        \item There exists a family $\underline{\pi}$ of automorphic representations over $\Omega$ passing through $\pi$, i.e., there exists an $\mathscr{O}_{\Omega}$-valued Hecke eigensystem (for the $U_p$-Hecke operators and the Hecke operators away from $S \cup \{p\}$) and a Zariski dense subset $\Upsilon \subset \Omega(\mbb{C}_p)$ of classical weights containing $\lambda$ such that:
        \begin{itemize}
            \item the specialisation of this Hecke eigensystem at any point $x \in \Upsilon$ is the Hecke eigensystem associated with a (unique) automorphic representation $\underline{\pi}_x$ of $\mbf{G}_0(\mbb{A})$ satisfying the assumptions in \S \ref{StatementOfMainResultsSSecIntro} and the start of \S \ref{ResultsInFamiliesSSecIntro};
            \item the specialisation of this Hecke eigensystem at $\lambda$ coincides with the Hecke eigensystem associated with $\pi$.
        \end{itemize}
        In particular, such a family $\underline{\pi}$ determines (up to scalar) a $p$-regular $p$-stabilisation in $\underline{\pi}_x$ for any $x \in \Upsilon$ which is new away from $p$.
        \item There exists a locally analytic distribution $\mathscr{L}_p(\underline{\pi}, -) \in \mathscr{D}^{\opn{la}}(\Gal(F_{p^{\infty}}/F), \mathscr{O}_{\Omega})$ such that for any $x \in \Upsilon$:
        \[
        \opn{sp}_x \mathscr{L}_p(\underline{\pi}, -) = \mathscr{L}_p(\underline{\pi}_x, -)
        \]
        where $\opn{sp}_x$ denotes the specialisation at the point $x$ and $\mathscr{L}_p(\underline{\pi}_x, -)$ is the locally distribution in Theorem \ref{FirstTheoremIntro} (associated with the appropriately normalised $p$-regular $p$-stabilisation in $\underline{\pi}_x$ which is new away from $p$).
        \item The set $\Sigma_{\underline{\pi}} \defeq \bigcup_{x \in \Upsilon} \{ x \} \times \Sigma_{\underline{\pi}_x}$ is Zariski dense in the fibre product of $\Omega$ with the weight space of continuous characters of $\Gal(F_{p^{\infty}}/F)$, hence $\mathscr{L}_p(\underline{\pi}, -)$ is uniquely determined by its interpolation property at points in $\Sigma_{\underline{\pi}}$.
    \end{enumerate}
\end{theoremx}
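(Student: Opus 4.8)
The plan is to carry out the construction underlying Theorem \ref{FirstTheoremIntro} uniformly over an affinoid neighbourhood of $\lambda$ in weight space, using the eigenvariety attached to higher Coleman theory for $\mathbf{G}_0$ to produce the family $\underline{\pi}$, and then to exploit Zariski density of the enlarged interpolation set to pin down the resulting distribution. First I would construct the family: the small-slope, $p$-regular $p$-stabilisation $\phi_p$ which is new away from $p$ determines a point on the cuspidal eigenvariety for $\mathbf{G}_0$ cut out inside degree $n-1$ of the overconvergent coherent cohomology of the relevant Shimura variety via Boxer--Pilloni's higher Coleman theory, in the form used in \cite{UFJ,BoxerPilloni}. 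The hypotheses that the finite primes of $S$ split and $\dim_{\mathbb{C}}\pi_{f,S}^{K_S}=1$ rigidify the prime-to-$p$ level, so near our point this eigenvariety is finite over the self-dual weight space $\mathcal{W}$; the $p$-regularity assumption (one-dimensionality of the generalised $U_p$-eigenspace) then forces a single irreducible component through the point together with étaleness over $\mathcal{W}$ in a neighbourhood of $\lambda$. This produces the affinoid $\Omega = \opn{Spa}(\mathscr{O}_\Omega)\ni\lambda$ and the $\mathscr{O}_\Omega$-valued Hecke eigensystem of part (1); one takes $\Upsilon$ to be the set of small-slope (hence classical, by the classicality theorem in higher Coleman theory) weights in $\Omega$, which is Zariski dense by density of classical weights and finiteness over $\mathcal{W}$, and each such point yields a unique $\underline{\pi}_x$ with the required properties. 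Choosing an $\mathscr{O}_\Omega$-generator of the rank-one module of coherent classes cut out by the eigensystem then fixes, by specialisation, the normalisation of the $p$-stabilisation $\phi_x$ in each $\underline{\pi}_x$ referred to in (2).

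Next I would interpolate the $p$-adic $L$-function. The distribution $\mathscr{L}_{p,\phi}(\pi,-)$ of Theorem \ref{FirstTheoremIntro} is built by (a) attaching to $\phi$ an overconvergent class in degree $n-1$ of coherent cohomology, (b) applying the $p$-adic avatars of the differential operators $\Delta^{[j]}_{\kappa}$ of Definition \ref{AppendixDefOfDELTAkappaJDiffOp} inside the space of nearly overconvergent automorphic forms developed in this paper, (c) taking the overconvergent pullback along the $\mathbf{H}_0$-Shimura variety, and (d) pairing the result against $\hat\chi$ via an anticyclotomic pairing. Each of (a)--(d) is $\mathscr{O}_\Omega$-linear and continuous and runs over $\Omega$: localising the higher Coleman cohomology at the family's Hecke eigensystem gives a rank-one projective $\mathscr{O}_\Omega$-module (free after shrinking $\Omega$) with the chosen generator lifting $\phi$; the nearly overconvergent differential operators interpolate $p$-adically over the anticyclotomic weight space exactly as in the generalisation of \cite{DiffOps} carried out here; and the overconvergence of the pullback to the $\mathbf{H}_0$-variety --- the key new geometric input, cf.\ the third bullet point of the introduction --- and the anticyclotomic pairing go through verbatim in families. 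This produces $\mathscr{L}_p(\underline{\pi},-)\in\mathscr{D}^{\opn{la}}(\Gal(F_{p^{\infty}}/F),\mathscr{O}_\Omega)$, and because each step commutes with specialisation at $x\in\Upsilon$ once $\phi_x$ is normalised as above, one gets $\opn{sp}_x\mathscr{L}_p(\underline{\pi},-)=\mathscr{L}_p(\underline{\pi}_x,-)$, which is (2).

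Finally, for (3): although the admissible infinity-types of $\chi\in\Sigma_{\underline{\pi}_x}$ are bounded for fixed $x$ --- so that $\Sigma_{\underline{\pi}_x}$ itself is not Zariski dense in the anticyclotomic weight space --- the bounds $\lambda_{n-1,\tau_0}-\lambda_{n,\tau_0}$ and $\lambda_{n,\tau}$ grow without bound as $x$ ranges over $\Upsilon$, so the projection of $\Sigma_{\underline{\pi}}$ to each weight coordinate is unbounded. Combined with Zariski density of $\Upsilon$ in $\Omega$, a standard accumulation argument (a nonzero rigid-analytic function cannot vanish on a set of locally algebraic characters whose weights are Zariski dense among the integral weights) then shows $\Sigma_{\underline{\pi}}$ is Zariski dense in the fibre product $\Omega\times\mathcal{W}_{\mathrm{ac}}$, where $\mathcal{W}_{\mathrm{ac}}$ is the weight space of continuous characters of $\Gal(F_{p^{\infty}}/F)$. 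Since $\mathscr{L}_p(\underline{\pi},-)$ is determined by the rigid-analytic function on $\Omega\times\mathcal{W}_{\mathrm{ac}}$ obtained by evaluating at characters, it is the unique element of $\mathscr{D}^{\opn{la}}(\Gal(F_{p^{\infty}}/F),\mathscr{O}_\Omega)$ with the interpolation property of (2). The hard part will be steps (b)--(c): carrying the nearly overconvergent realisation of the higher Maass--Shimura operators, and above all the overconvergence of the pullback to the $\mathbf{H}_0$-variety, uniformly through the Coleman family, and tracking periods and the $\mathscr{O}_\Omega$-generator of coherent classes precisely enough that the specialisation identity in (2) holds on the nose rather than up to an undetermined unit of $\mathscr{O}_\Omega^{\times}$.
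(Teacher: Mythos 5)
Your proposal follows essentially the same route as the paper: constructing the family via the cuspidal eigenvariety in higher Coleman theory, with \'etaleness over weight space forced by $p$-regularity and newness away from $p$ (via a multiplicity-one statement in degree $n-1$ coherent cohomology), lifting the eigenclass to an $\mathscr{O}_\Omega$-generator, running the overconvergent evaluation maps in families, and deducing Zariski density of $\Sigma_{\underline{\pi}}$ from the unboundedness of the weight coordinates. The one worry you flag at the end --- that the specialisation identity might hold only up to a unit --- is in fact what the paper's argument produces (constants $c_z\in L^{\times}$ measuring the mismatch between the specialised $\mathscr{O}_\Omega$-generator and a fixed choice of $p$-stabilisation at each $x\in\Upsilon$), and is absorbed into the phrase ``appropriately normalised $p$-regular $p$-stabilisation'' in the statement; you are not expected to eliminate it.
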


\subsubsection{Expected relation with Euler systems} \label{ExpRelationWithESIntro}

In analogy with the Heegner cycle setting, one expects that the $p$-adic $L$-functions in Theorems \ref{FirstTheoremIntro} and \ref{SecondTheoremIntro} should be related to Euler system classes outside the regions of interpolation. More precisely, suppose that $\pi$ is a cuspidal automorphic representation of $\mbf{G}_0(\mbb{A})$ satisfying the conditions in \S \ref{StatementOfMainResultsSSecIntro} \emph{including} those in Remark \ref{CPRRemarkIntro}. Furthermore, suppose that the sign of the functional equation of $L(\Pi, s)$ is $-1$, and for simplicity, suppose that $\Pi$ is cuspidal and the eigensystem $\{ \alpha_{i, \tau} \}$ is Borel-ordinary. Let $L_p(\Pi, -; \Sigma_{\tau_0,+}^{\opn{an}}) \defeq \mathscr{L}_{p, \phi}(\pi, -)^2$ denote the square of the $p$-adic $L$-function in Theorem \ref{FirstTheoremIntro} with respect to a suitable choice of test data. Let $\rho_{\Pi} \colon \Gal(\overline{F}/F) \to \opn{GL}_{2n}(L)$ denote the $p$-adic semisimple Galois representation associated with $\Pi$, as constructed by Chenevier--Harris \cite{ChenevierHarris}.

Then, continuing with Remark \ref{CPRRemarkIntro}, one expects that $L_p(\Pi, -; \Sigma_{\tau_0,+}^{\opn{an}})$ is a $p$-adic measure on $\Gal(F_{p^{\infty}}/F)$ which interpolates the central critical $L$-values $L(\Pi \otimes \chi, 1/2) = L(\rho_{\Pi}(n) \otimes \hat{\chi}, 0)$ as $\chi$ runs through the interpolating set:
\[
\Sigma_{\tau_0, +}^{\opn{an}} \defeq \left\{ \chi \text{ anticyclotomic } : \begin{array}{c} \opn{cond}(\chi) \text{ is divisible only by primes above } p \\ \text{ the } \infty\text{-type of } \chi \text{ at } \tau \in \Psi \text{ is } (\ell_{\tau}, -\ell_{\tau}) \\ \text{ with } \lambda_{n, \tau_0}+1 \leq \ell_{\tau_0} \leq \lambda_{n-1, \tau_0}+1 \\ \text{ and } |\ell_{\tau}| \leq \lambda_{n, \tau} 
\text{ for } \tau \neq \tau_0 \end{array} \right\}
\]
Here we have changed notation slightly, and this interpolating set is larger than $\Sigma_{\pi}$ to include unramified characters and also the condition $-\lambda_{n, \tau} \leq j_{\tau} \leq 0$ for $\tau \neq \tau_0$. By our assumptions, the sign of the functional equation for $L(\rho_{\Pi}(n) \otimes \hat{\chi}, s)$ is $+1$ for $\chi \in \Sigma_{\tau_0, +}^{\opn{an}}$.

On the other hand, one can also consider the behaviour of the Galois representation $\rho_{\Pi}(n) \otimes \hat{\chi}$ for $\chi$ lying in a different ``geometric'' region of twists
\[
\Sigma^{\opn{geom}} \defeq \left\{ \chi \text{ anticyclotomic } : \begin{array}{c} \opn{cond}(\chi) \text{ is divisible only by primes above } p \\ \text{ the } \infty\text{-type of } \chi \text{ at } \tau \in \Psi \text{ is } (\ell_{\tau}, -\ell_{\tau}) \\ \text{ with } |\ell_{\tau}| \leq \lambda_{n, \tau} 
\text{ for all } \tau \in \Psi \end{array} \right\}
\]
which is disjoint from $\Sigma_{\tau_0, +}^{\opn{an}}$. In this region, the sign of the functional equation is $-1$, and by generalising the construction in \cite{ACES}, one can construct split anticyclotomic Euler systems for $\rho_{\Pi}(n) \otimes \hat{\chi}$ with $\chi \in \Sigma^{\opn{geom}}$. The idea behind this construction is to consider the ($p$-adic) \'{e}tale regulators of cycles arising from the pair of unitary Shimura varieties associated with $\mbf{G}_0$ and $\mbf{H}_0$ with appropriately chosen level subgroups. One can then extend this construction to all anticyclotomic characters by Soul\'{e} twisting. Let $z_{\chi} \in \opn{H}^1(F, \rho_{\Pi}(n) \otimes \hat{\chi})$ denote the bottom class of the Euler system (after inverting $p$); this class lies in the Bloch--Kato Selmer group if $\chi \in \Sigma^{\opn{geom}}$.

In this situation, one expects to be able to prove two kinds of \emph{explicit reciprocity laws}:
\begin{itemize}
    \item If $\chi \in \Sigma^{\opn{geom}}$, then 
    \[
    L_p(\Pi, \hat{\chi}; \Sigma_{\tau_0, +}^{\opn{an}}) = (\star)_{\chi} \cdot \opn{log}_{\opn{BK}, \eta}(\opn{loc}_{\ide{p}_{\tau_0}}(z_{\chi}))^2 ,
    \]
    generalising \cite[Theorem 5.13]{bertolini2013}. Here $\opn{log}_{\opn{BK}, \eta}$ is a certain linear functional on $\opn{H}^1_f(F_{\ide{p}_{\tau_0}}, \rho_{\Pi}(n) \otimes \hat{\chi})$ constructed from the Bloch--Kato logarithm and a fixed choice $\eta \in \opn{Fil}^0\mbf{D}_{\opn{dR}}(\rho_{\Pi}^*(1-n)|_{G_{F_{\ide{p}_{\tau_0}}}})$, and $(\star)_{\chi}$ is an explicit factor (involving the $\gamma$-factors for $\theta_{n, \tau_0} \chi_{\ide{p}_{\bar{\tau}_0}}^{-1}$ and $\theta_{n+1, \tau_0} \chi_{\ide{p}_{\bar{\tau}_0}}$).
    \item If $\chi \in \Sigma_{\tau_0, +}^{\opn{an}}$, then 
    \[
    L_p(\Pi, \hat{\chi}; \Sigma_{\tau_0, +}^{\opn{an}}) = (\star)_{\chi} \cdot \opn{exp}^*_{\opn{BK}, \eta}(\opn{loc}_{\ide{p}_{\tau_0}}(z_{\chi}))^2 ,
    \]
    generalising \cite[Corollary 5.8]{CastellaHsieh}. Here $(\star)_{\chi}$ and $\eta$ are as in the previous bullet point, and $\opn{exp}^*_{\opn{BK}, \eta}$ is a certain linear functional on $\opn{H}^1(F_{\ide{p}_{\tau_0}}, \rho_{\Pi}(n) \otimes \hat{\chi})$ built from the Bloch--Kato dual exponential map.
\end{itemize}
The key strategy for the first explicit reciprocity law will be to analyse the syntomic regulators of the cycles associated with the above pair of Shimura varieties (c.f., \cite{LZBK20} for an instance where this strategy is carried out for automorphic representations of $\opn{GSp}_4$). One will then be able to use $p$-adic deformation arguments to obtain the second explicit reciprocity law. For this latter step, the fact that the $p$-adic $L$-functions interpolate as one varies $\pi$ in a Coleman family (Theorem \ref{SecondTheoremIntro}) will be an important ingredient.

The benefit of establishing such explicit reciprocity laws is that one can obtain results towards the Bloch--Kato conjectures for anticyclotomic twists of $\rho_{\Pi}(n)$. More precisely, using forthcoming work of Jetchev--Nekov\'{a}\v{r}--Skinner \cite{JNS}\footnote{A summary of their work can be found in \cite[\S 8.1]{Alonso_Castella_Rivero_2023}.} (and the forthcoming work of Leslie--Xiao--Zhang \cite{LXZufjIII}), and under the usual ``big image'' assumptions on $\rho_{\Pi}(n)$ (see \cite[\S 8.1]{Alonso_Castella_Rivero_2023}), one will be able to deduce that:
\begin{itemize}
    \item If $\chi \in \Sigma^{\opn{geom}}$ and $L_p(\Pi, \hat{\chi}; \Sigma_{\tau_0, +}^{\opn{an}}) \neq 0$, then the Bloch--Kato Selmer group
    \[
    \opn{H}^1_f\left( F, \rho_{\Pi}(n) \otimes \hat{\chi} \right)
    \]
    is one-dimensional and generated by the class $z_{\chi}$.
    \item If $\chi \in \Sigma^{\opn{an}}_{\tau_0, +}$ and the $L$-value $L(\rho_{\Pi}(n) \otimes \hat{\chi}, 0)$ is non-zero, then the Bloch--Kato Selmer group
    \[
    \opn{H}^1_f\left( F, \rho_{\Pi}(n) \otimes \hat{\chi} \right)
    \]
    vanishes.
\end{itemize}
Indeed, for the first bullet point, the condition $L_p(\Pi, \hat{\chi}; \Sigma_{\tau_0, +}^{\opn{an}}) \neq 0$ and the first explicit reciprocity law implies that $z_{\chi} \neq 0$; hence \cite[Theorem 8.3]{Alonso_Castella_Rivero_2023} implies that $\opn{H}^1_f\left( F, \rho_{\Pi}(n) \otimes \hat{\chi} \right)$ is generated by $z_{\chi}$. For the second bullet point, if $L(\rho_{\Pi}(n) \otimes \hat{\chi}, 0)$ is non zero, then the relation between unitary Friedberg--Jacquet periods and $L$-values, in combination with the interpolation property of the $p$-adic $L$-function, implies that $L_p(\Pi, \hat{\chi}; \Sigma_{\tau_0, +}^{\opn{an}}) \neq 0$. Hence, the second explicit reciprocity law implies that $z_{\chi} \neq 0$, and \cite[Theorem 8.3]{Alonso_Castella_Rivero_2023} implies that a certain Selmer group $\opn{Sel}_{\opn{Gr}}(F, \rho_{\Pi}(n) \otimes \hat{\chi})$ (different from the Bloch--Kato Selmer group) is one-dimensional. One can then deduce that $\opn{H}^1_f\left( F, \rho_{\Pi}(n) \otimes \hat{\chi} \right)$ vanishes from an argument involving the Poitou--Tate long exact sequence (c.f., \cite[Theorem 7.9]{CastellaHsieh} or \cite[\S 9.2]{Alonso_Castella_Rivero_2023}).

We note that there is also some extra symmetry involved in the construction of the $p$-adic $L$-function in Theorem \ref{FirstTheoremIntro}. To be more precise, one could repeat the whole construction in this article to produce a $p$-adic $L$-function $L_p(\Pi, -; \Sigma_{\tau_0, -}^{\opn{an}})$ which interpolates the $L$-values $L(\rho_{\Pi}(n) \otimes \hat{\chi}, 0)$ for $\chi$ in the region
\[
\Sigma_{\tau_0, -}^{\opn{an}} \defeq \left\{ \chi \text{ anticyclotomic } : \begin{array}{c} \opn{cond}(\chi) \text{ is divisible only by primes above } p \\ \text{ the } \infty\text{-type of } \chi \text{ at } \tau \in \Psi \text{ is } (\ell_{\tau}, -\ell_{\tau}) \\ \text{ with } -(\lambda_{n-1, \tau_0}+1) \leq \ell_{\tau_0} \leq -(\lambda_{n, \tau_0}+1) \\ \text{ and } |\ell_{\tau}| \leq \lambda_{n, \tau} 
\text{ for } \tau \neq \tau_0 \end{array} \right\} .
\]
Furthermore, if we write $\Psi = \{ \tau_0, \dots, \tau_{d-1}\}$ with $d = [F^+:\mbb{Q}]$, then there is no reason to choose $\tau_0$ as the privileged place where the Hermitian space has signature $(1, 2n-1)$; one could also repeat the same construction to produce $p$-adic $L$-functions $L_p(\Pi, -; \Sigma_{\tau_i, \pm}^{\opn{an}})$ interpolating the $L$-values $L(\rho_{\Pi}(n) \otimes \hat{\chi}, 0)$ for $\chi$ in the regions
\[
\Sigma_{\tau_i, \pm}^{\opn{an}} \defeq \left\{ \chi \text{ anticyclotomic } : \begin{array}{c} \opn{cond}(\chi) \text{ is divisible only by primes above } p \\ \text{ the } \infty\text{-type of } \chi \text{ at } \tau \in \Psi \text{ is } (\pm \ell_{\tau}, \mp \ell_{\tau}) \\ \text{ with } \lambda_{n, \tau_i}+1 \leq \ell_{\tau_i} \leq \lambda_{n-1, \tau_i}+1 \\ \text{ and } |\ell_{\tau}| \leq \lambda_{n, \tau} 
\text{ for } \tau \neq \tau_i \end{array} \right\}
\]
(note that, by switching the behaviour of the signatures at the places in $\Psi$, the Hermitian space changes but the Galois representation $\rho_{\Pi}$ does not). With exactly the same methods, one should be able to prove explicit reciprocity laws for these extra $p$-adic $L$-functions  and obtain similar applications to the Bloch--Kato conjecture. For example, when $d=2$, one can produce $p$-adic $L$-functions in the four regions adjacent (horizontally and vertically) to $\Sigma^{\opn{geom}}$ in Figure \ref{RegionsOfTwistFig}, and all of these should be related to Euler systems classes in $\Sigma^{\opn{geom}}$ via an explicit reciprocity law. 

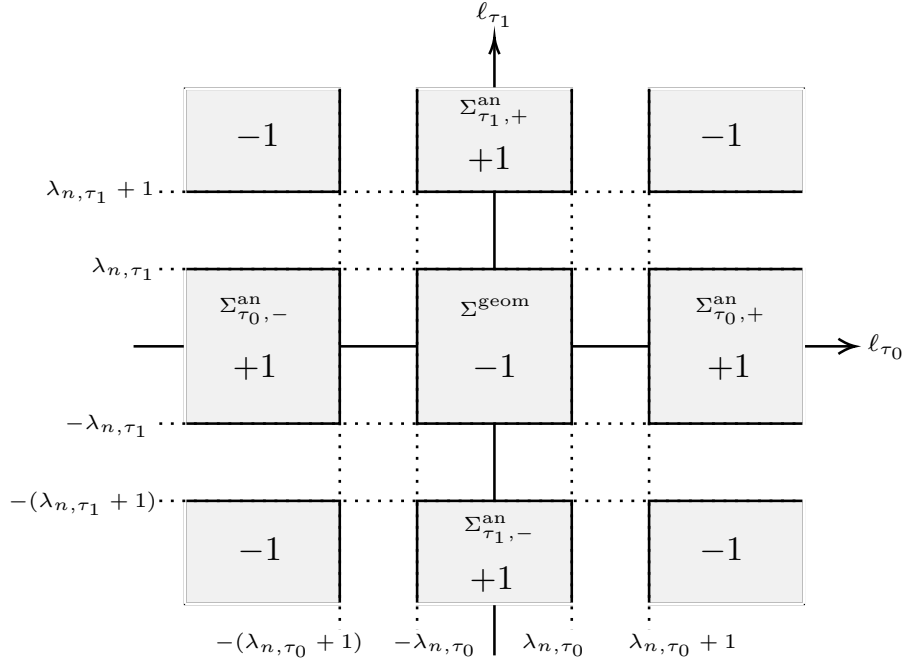
\begin{figure}[t]
    \centering
\caption{Regions of twists when $[F^+:\mbb{Q}] = 2$.}
\scalebox{1.3}{\tikzset{every picture/.style={line width=0.75pt}} 

\begin{tikzpicture}[x=0.75pt,y=0.75pt,yscale=-1,xscale=1]

\draw [color={rgb, 255:red, 0; green, 0; blue, 0 }  ,draw opacity=0.42 ]   (350,370) -- (350,132) ;
\draw [shift={(350,130)}, rotate = 90] [color={rgb, 255:red, 0; green, 0; blue, 0 }  ,draw opacity=0.42 ][line width=0.75]    (6.56,-1.97) .. controls (4.17,-0.84) and (1.99,-0.18) .. (0,0) .. controls (1.99,0.18) and (4.17,0.84) .. (6.56,1.97)   ;
\draw [color={rgb, 255:red, 0; green, 0; blue, 0 }  ,draw opacity=0.42 ]   (210,250) -- (488,250) ;
\draw [shift={(490,250)}, rotate = 180] [color={rgb, 255:red, 0; green, 0; blue, 0 }  ,draw opacity=0.42 ][line width=0.75]    (6.56,-2.94) .. controls (4.17,-1.38) and (1.99,-0.4) .. (0,0) .. controls (1.99,0.4) and (4.17,1.38) .. (6.56,2.94)   ;
\draw  [fill={rgb, 255:red, 241; green, 241; blue, 241 }  ,fill opacity=1 ] (320,220) -- (380,220) -- (380,280) -- (320,280) -- cycle ;
\draw  [fill={rgb, 255:red, 241; green, 241; blue, 241 }  ,fill opacity=1 ] (320,150) -- (380,150) -- (380,190) -- (320,190) -- cycle ;
\draw  [fill={rgb, 255:red, 241; green, 241; blue, 241 }  ,fill opacity=1 ] (320,310) -- (380,310) -- (380,350) -- (320,350) -- cycle ;
\draw  [fill={rgb, 255:red, 241; green, 241; blue, 241 }  ,fill opacity=1 ] (230,220) -- (290,220) -- (290,280) -- (230,280) -- cycle ;
\draw  [fill={rgb, 255:red, 241; green, 241; blue, 241 }  ,fill opacity=1 ] (410,220) -- (470,220) -- (470,280) -- (410,280) -- cycle ;
\draw  [fill={rgb, 255:red, 241; green, 241; blue, 241 }  ,fill opacity=1 ] (230,150) -- (290,150) -- (290,190) -- (230,190) -- cycle ;
\draw  [fill={rgb, 255:red, 241; green, 241; blue, 241 }  ,fill opacity=1 ] (410,150) -- (470,150) -- (470,190) -- (410,190) -- cycle ;
\draw  [fill={rgb, 255:red, 241; green, 241; blue, 241 }  ,fill opacity=1 ] (230,310) -- (290,310) -- (290,350) -- (230,350) -- cycle ;
\draw  [fill={rgb, 255:red, 241; green, 241; blue, 241 }  ,fill opacity=1 ] (410,310) -- (470,310) -- (470,350) -- (410,350) -- cycle ;
\draw [color={rgb, 255:red, 255; green, 255; blue, 255 }  ,draw opacity=1 ]   (230,150) -- (230,350) ;
\draw [color={rgb, 255:red, 255; green, 255; blue, 255 }  ,draw opacity=1 ]   (230,150) -- (470,150) ;
\draw [color={rgb, 255:red, 255; green, 255; blue, 255 }  ,draw opacity=1 ]   (230,350) -- (470,350) ;
\draw [color={rgb, 255:red, 255; green, 255; blue, 255 }  ,draw opacity=1 ]   (470,150) -- (470,350) ;
\draw  [dash pattern={on 0.84pt off 2.51pt}]  (290,150) -- (290,360) ;
\draw  [dash pattern={on 0.84pt off 2.51pt}]  (320,150) -- (320,360) ;
\draw  [dash pattern={on 0.84pt off 2.51pt}]  (380,150) -- (380,360) ;
\draw  [dash pattern={on 0.84pt off 2.51pt}]  (410,150) -- (410,360) ;
\draw  [dash pattern={on 0.84pt off 2.51pt}]  (220,190) -- (470,190) ;
\draw  [dash pattern={on 0.84pt off 2.51pt}]  (220,220) -- (470,220) ;
\draw  [dash pattern={on 0.84pt off 2.51pt}]  (220,280) -- (470,280) ;
\draw  [dash pattern={on 0.84pt off 2.51pt}]  (220,310) -- (470,310) ;

\draw (350.68,234.95) node  [font=\tiny] [align=left] {$\displaystyle \Sigma ^{\opn{geom}}$};
\draw (340,252) node [anchor=north west][inner sep=0.75pt]   [align=left] {$\displaystyle -1$};
\draw (196.89,189.95) node  [font=\tiny] [align=left] {$\displaystyle \lambda _{n,\tau _{1}} +1$};
\draw (206.23,219.95) node  [font=\tiny] [align=left] {$\displaystyle \lambda _{n,\tau _{1}}$};
\draw (190.3,310.95) node  [font=\tiny] [align=left] {$\displaystyle -( \lambda _{n,\tau _{1}} +1)$};
\draw (199.81,280.95) node  [font=\tiny] [align=left] {$\displaystyle -\lambda _{n,\tau _{1}}$};
\draw (422.89,365.95) node  [font=\tiny] [align=left] {$\displaystyle \lambda _{n,\tau _{0}} +1$};
\draw (270.3,365.95) node  [font=\tiny] [align=left] {$\displaystyle -( \lambda _{n,\tau _{0}} +1)$};
\draw (373.23,365.95) node  [font=\tiny] [align=left] {$\displaystyle \lambda _{n,\tau _{0}}$};
\draw (327,366) node  [font=\tiny] [align=left] {$\displaystyle -\lambda _{n,\tau _{0}}$};
\draw (502.3,250.31) node  [font=\scriptsize] [align=left] {$\displaystyle \ell _{\tau _{0}}$};
\draw (350.3,121.31) node  [font=\scriptsize] [align=left] {$\displaystyle \ell _{\tau _{1}}$};
\draw (248,162) node [anchor=north west][inner sep=0.75pt]   [align=left] {$\displaystyle -1$};
\draw (428,162) node [anchor=north west][inner sep=0.75pt]   [align=left] {$\displaystyle -1$};
\draw (428,323) node [anchor=north west][inner sep=0.75pt]   [align=left] {$\displaystyle -1$};
\draw (249,323) node [anchor=north west][inner sep=0.75pt]   [align=left] {$\displaystyle -1$};
\draw (247,252) node [anchor=north west][inner sep=0.75pt]   [align=left] {$\displaystyle +1$};
\draw (431,252) node [anchor=north west][inner sep=0.75pt]   [align=left] {$\displaystyle +1$};
\draw (339,334) node [anchor=north west][inner sep=0.75pt]   [align=left] {$\displaystyle +1$};
\draw (338,171) node [anchor=north west][inner sep=0.75pt]   [align=left] {$\displaystyle +1$};
\draw (441.5,236) node  [font=\tiny] [align=left] {$ \Sigma_{\tau_0,+}^{\opn{an}}$};
\draw (352,321) node  [font=\tiny] [align=left] {$ \Sigma_{\tau_1,-}^{\opn{an}}$};
\draw (257,236) node  [font=\tiny] [align=left] {$ \Sigma_{\tau_0,-}^{\opn{an}}$};
\draw (350,159) node  [font=\tiny] [align=left] {$ \Sigma_{\tau_1,+}^{\opn{an}}$};

\end{tikzpicture}}
\label{RegionsOfTwistFig}
\end{figure}

Of course, there are many other regions of twists disjoint from $\Sigma^{\opn{geom}}$ and $\Sigma_{\tau, \pm}^{\opn{an}}$ one could consider (depending on how the infinity-type of $\chi$ intertwines with the Hodge--Tate weights of $\rho_{\Pi}(n)$) -- for example, the four regions in the corners of Figure \ref{RegionsOfTwistFig}. It would also be interesting to understand the behaviour of the Galois representation in these regions and whether one can construct Euler systems/$p$-adic $L$-functions for these twists. 

\begin{remark}
    One could also ask what happens when one instead imposes the condition that the sign of the functional equation for $L(\Pi, s)$ is $+1$. In this setting, one expects to be able to produce a $p$-adic $L$-function which interpolates the values $L(\rho_{\Pi}(n) \otimes \hat{\chi}, 0)$ for $\chi \in \Sigma^{\opn{geom}}$ by considering the $p$-adic variation of unitary Friedberg--Jacquet periods for a pair of \emph{definite} unitary groups. This situation is more closely aligned with the formalism of bipartite Euler systems (generalising the strategy in \cite{BDIMC}) and will be studied in forthcoming work of Murilo Corato-Zanarella \cite{MCZbipartite}.
\end{remark}

\subsection{Method of proof}

We now describe the main ingredients that go into the proofs of Theorem \ref{FirstTheoremIntro} and Theorem \ref{SecondTheoremIntro}. The first step is to reinterpret the unitary Friedberg--Jacquet periods as a pairing in the coherent cohomology of unitary Shimura varieties. For this, we work with the corresponding unitary similitude groups, since these give rise to PEL-type Shimura varieties. Let $\mbf{G}$ denote the unitary similitude group associated with the Hermitian space $W$ (with similitude in $\mbb{G}_m$), and let $\mbf{H} \subset \mbf{G}$ denote the subgroup preserving the decomposition $W = W_1 \oplus W_2$. 

Let $\pi$ denote a cuspidal automorphic representation of $\mbf{G}(\mbb{A})$ satisfying the analogous hypotheses as in \S \ref{StatementOfMainResultsSSecIntro} (this is made precise in \S \ref{FinalSectionAssumptionsSSec}). It is also convenient to work with eigenvectors for the transpose $U_p$-Hecke operators. More precisely, for an integer $\beta \geq 1$, let $K^G_{\opn{Iw}}(p^{\beta}) \subset \mbf{G}(\mbb{Q}_p) = \mbb{Q}_p^{\times} \times \prod_{\tau \in \Psi} \opn{GL}_{2n}(\mbb{Q}_p)$ denote the upper-triangular modulo $p^{\beta}$ Iwahori subgroup. Then associated with the $p$-stabilisation $\phi_p$, we can define eigenvectors $\psi_{p, \beta} \in \pi_p^{K^G_{\opn{Iw}}(p^{\beta})}$ satisfying the property:
\[
[K^G_{\opn{Iw}}(p^{\beta}) \cdot s_{p, i, \tau} \cdot K^G_{\opn{Iw}}(p^{\beta})] \cdot \psi_{p, \beta} = \alpha_{i, \tau} \psi_{p, \beta}
\]
where $s_{p, i, \tau}$ denotes the conjugate of $t_{p, i, \tau}$ by the longest Weyl element of $\mbf{G}(\mbb{Q}_p)$. Let $\psi_{\beta}^{[j]} = (\Delta^{[j]}_{\kappa} \cdot \phi_{\infty}) \otimes \phi_S \otimes \bigotimes_{\ell \not\in S \cup \{p\}} \phi_{\ell} \otimes \hat{\gamma} \psi_{p, \beta} \in \pi$, where $\hat{\gamma}$ is a certain representative of the Zariski open orbit of the upper-triangular Borel subgroup acting on $\mbf{H}(\mbb{Q}_p) \backslash \mbf{G}(\mbb{Q}_p)$ (see Definition \ref{NewDefOfGamma}). Then, up to an explicit non-zero rational number, the period $\mathscr{P}_{\pi, \chi}(\phi^{[j]}_e)$ is equal to $\mathscr{P}_{\pi, \chi}(\psi^{[j]}_{\beta})$, provided that $\beta \geq \opn{max}_{\tau}(e_{\tau})$. Therefore, it suffices to analyse these latter periods.

Let $S_{\mbf{G}, \opn{Iw}}(p^{\beta})$ (resp. $S_{\mbf{H}, \diamondsuit}(p^{\beta})$) denote the Shimura variety associated with $\mbf{G}$ (resp. $\mbf{H}$) of level $K^p K^G_{\opn{Iw}}(p^{\beta})$ (resp. $(K^p \cap \mbf{H}(\mbb{A}_f^p)) (\hat{\gamma} K^G_{\opn{Iw}}(p^{\beta}) \hat{\gamma}^{-1} \cap \mbf{H}(\mbb{Q}_p))$). There is a natural finite unramified morphism
\[
\hat{\iota} \colon S_{\mbf{H}, \diamondsuit}(p^{\beta}) \to S_{\mbf{G}, \opn{Iw}}(p^{\beta})
\]
induced from right-translation by $\hat{\gamma}$. By the work of Su \cite{Su19}, the test data $\psi_{\beta}^{[0]}$ can be encoded as a coherent cohomology class
\[
\eta_{\beta} \in \opn{H}^{n-1}\left( S_{\mbf{G}, \opn{Iw}}(p^{\beta})(\mbb{C}), \mathscr{M}_{G,\kappa^*} \right)
\]
where $\mathscr{M}_{G, \kappa^*}$ denotes the automorphic vector bundle with highest weight $\kappa^* = w_n \cdot (\lambda + \rho) - \rho$. After possibly rescaling this test data, this cohomology class is defined over a finite extension of the reflex field of $S_{\mbf{G}, \opn{Iw}}(p^{\beta})$. 

In general, to view the test data $\psi^{[j]}_{\beta}$ as coherent cohomology classes, one needs to enlarge the automorphic vector bundle $\mathscr{M}_{G, \kappa^*}$ to a sheaf of ``nearly holomorphic forms''. More precisely, let $P_{\mbf{G}}^{\opn{std}} \subset \opn{GL}_1 \times \prod_{\tau \in \Psi} \opn{GL}_{2n}$ denote the standard lower-triangular parabolic subgroup with Levi $M_{\mbf{G}} = \opn{GL}_1 \times (\opn{GL}_{1} \times \opn{GL}_{2n-1}) \times \prod_{\tau \neq \tau_0} \opn{GL}_{2n}$. Then there exists a $P_{\mbf{G}}^{\opn{std}}$-torsor
\[
\pi \colon P_{\mbf{G}, \opn{dR}} \to S_{\mbf{G}, \opn{Iw}}(p^{\beta})
\]
such that automorphic vector bundles arise as the associated sheaves of this torsor. Let $\mathscr{N}_G \defeq \pi_* \mathcal{O}_{P_{\mbf{G}, \opn{dR}}}$ and $\mathscr{N}_{G, \kappa^*} = \left(\mathscr{N}_G \otimes V_{\kappa}^*\right)^{M_{\mbf{G}}}$, where $V_{\kappa}^*$ denotes the algebraic representation of $M_{\mbf{G}}$ with highest weight $\kappa^*$. Then we have an embedding $\mathscr{M}_{G,\kappa^*} \subset \mathscr{N}_{G, \kappa^*}$ which identifies with the subspace of elements killed by the action of the Lie algebra of the unipotent radical of $P_{\mbf{G}}^{\opn{std}}$ (obtained by differentiating the torsor structure). As explained in \S \ref{DmodulesOnFLsection}, the quasi-coherent sheaf $\mathscr{N}_G$ is in fact a $\mathcal{D}$-module on $S_{\mbf{G}, \opn{Iw}}(p^{\beta})$, and carries an action of $(2n-1)$ commuting derivations $\{ \nabla_1, \dots, \nabla_{2n-1} \}$ which are algebraic interpretations of the Maass--Shimura differential operators on $S_{\mbf{G}, \opn{Iw}}(p^{\beta})(\mbb{C})$. Furthermore, we can package these operators together into an $M_{\mbf{G}}$-equivariant algebra action
\begin{equation} \label{PolyActionIntro}
C^{\opn{pol}}(\mbb{G}_a^{\oplus 2n-1}, \mbb{G}_a) \otimes \mathscr{N}_G \to \mathscr{N}_G
\end{equation}
of polynomial functions, such that the projection to the $i$-th component acts through the operator $\nabla_i$. The action of $M_{\mbf{G}}$ on $C^{\opn{pol}}(\mbb{G}_a^{\oplus 2n-1}, \mbb{G}_a)$ is given by the adjoint action on the argument, by identifying $\mbb{G}_a^{\oplus 2n-1}$ with the unipotent radical of $P_{\mbf{G}}^{\opn{std}}$. One can make similar definitions for the group $\mbf{H}$ and there is an analogous $M_{\mbf{H}}$-equivariant action of polynomial functions on $\mathscr{N}_{H}$.

With this formalism of sheaves of ``nearly holomorphic forms'', we can reinterpret the period $\mathscr{P}_{\pi, \chi}(\psi_{\beta}^{[j]})$ algebraically as follows. Given $\chi \in \Sigma_{\pi}$, one can construct a certain non-zero vector
\[
\delta_{\kappa, j} \in V_{\kappa} \otimes C^{\opn{pol}}(\mbb{G}_a^{\oplus 2n-1}, \mbb{G}_a)
\]
which is an eigenvector for the action of $M_{\mbf{H}}$ with eigencharacter given by a certain character $\sigma_{\kappa}^{[j], -1} \colon M_{\mbf{H}} \to \mbb{G}_m$ (see \S \ref{BranchingLawsPreliminarySection} for its definition) which, up to a shift by the sum of the positive roots of $\mbf{H}$ not lying in $M_{\mbf{H}}$, encodes the infinity-type of $\chi$. If we write $\delta_{\kappa, j} = \sum_{l} v_l \otimes p_l$ as a sum of pure tensors, then we can build a map
\begin{align*} 
\mathscr{N}_{G, \kappa^*} = \opn{Hom}_{M_{\mbf{G}}}(V_{\kappa}, \mathscr{N}_G) &\xrightarrow{\vartheta_{\kappa, j, \beta}} \opn{Hom}_{M_{\mbf{H}}}(\sigma_{\kappa}^{[j], -1}, \hat{\iota}_*\mathscr{N}_H) = \hat{\iota}_*\mathscr{N}_{H, \sigma_{\kappa}^{[j]}} \\
F &\mapsto \left( 1 \mapsto \sum_l \hat{\iota}^*(p_l \star F(v_l)) \right)
\end{align*}
where $\star$ denotes the action in (\ref{PolyActionIntro}) and $\hat{\iota}^*$ denotes the natural pullback map $\mathscr{N}_G \to \hat{\iota}_* \mathscr{N}_H$. By analysing the representation theory and relevant branching laws for the pair of groups $(M_{\mbf{G}}, M_{\mbf{H}})$, one can show that this morphism takes ``holomorphic forms'' for $\mbf{G}$ to ``holomorphic forms'' for $\mbf{H}$, i.e., one obtains an induced morphism $\vartheta_{\kappa, j, \beta} \colon \mathscr{M}_{G, \kappa^*} \to \hat{\iota}_* \mathscr{M}_{H, \sigma_{\kappa}^{[j]}}$. Finally, by Shimura reciprocity, the character $\chi$ can be interpreted as a cohomology class $[\chi] \in \opn{H}^0\left( S_{\mbf{H}, \diamondsuit}(p^{\beta}), \mathscr{M}_{H, \sigma_{\kappa}^{[j]}}^{\vee} \right)$, where $(-)^{\vee}$ denotes the Serre dual of a vector bundle. Therefore, for a sufficiently large finite extension $L/F$, we can define an ``evaluation map'':
\begin{align*}
    \opn{Ev}_{\kappa, j, \chi, \beta} \colon \opn{H}^{n-1}\left( S_{\mbf{G}, \opn{Iw}}(p^{\beta}), \mathscr{M}_{G, \kappa^*} \right) &\to L \\
    \eta &\mapsto \langle \vartheta_{\kappa, j, \beta}(\eta), [\chi] \rangle . 
\end{align*}
Up to explicit non-zero factors, the automorphic period $\mathscr{P}_{\pi, \chi}(\psi_{\beta}^{[j]})$ is equal to the image of $\eta_{\beta}$ under the evaluation map $\opn{Ev}_{\kappa, j, \chi, \beta}$, which is the desired algebraic interpretation of the unitary Friedberg--Jacquet periods.

The benefit of this reinterpretation is that we can study the $p$-adic behaviour of these evaluation maps, and hence the unitary Friedberg--Jacquet periods. In particular, if we (abusively) let $L/\mbb{Q}_p$ denote the finite extension obtained as the completion of $\iota_p(L)$ and let $\mathcal{S}_{G, \opn{Iw}}(p^{\beta})$ denote the corresponding adic Shimura variety over $L$, then by rigid GAGA we can view $\eta_{\beta}$ (resp. $\opn{Ev}_{\kappa, j, \chi, \beta}$) as a cohomology class in (resp. linear form on) the cohomology group $\opn{H}^{n-1}\left( \mathcal{S}_{G, \opn{Iw}}(p^{\beta}), \mathscr{M}_{G,\kappa^*} \right)$. Using the recently developed higher Coleman theory of Boxer--Pilloni \cite{BoxerPilloni}, one can $p$-adically interpolate these cohomology groups and cohomology classes, so the strategy for $p$-adically interpolating $\opn{Ev}_{\kappa, j, \chi, \beta}$ is:
\begin{enumerate}
    \item to define overconvergent versions of $\mathscr{N}_G$ and $\mathscr{N}_H$ with actions of differential operators which can be $p$-adically iterated;
    \item construct a $p$-adic version of the map $\vartheta_{\kappa, j, \beta}$ by $p$-adically interpolating the eigenvectors $\delta_{\kappa, j}$.
\end{enumerate}
For (1), we generalise the results of \cite{DiffOps} to the setting of unitary Shimura varieties, and for (2), we generalise the $p$-adic interpolation of branching laws appearing in \cite[Appendix A]{UFJ}. One then builds an overconvergent version of the morphism $\vartheta_{\kappa, j, \beta}$ in a similar way as above. This method can be described rather abstractly and we explain how to accomplish this in \S \ref{TheMainConstructionSubSec}. To be able to $p$-adically interpolate the eigenvectors $\delta_{\kappa, j}$ one must first perform a certain ``$p$-depletion'', and in \S \ref{PropsOfEvMapsInterpolationSubSec} we explain how this is related to the appearance of the factor $\mathscr{E}_p(\pi, \chi)$ in the interpolation property in Theorem \ref{FirstTheoremIntro}. Additionally, this whole strategy can be extended to the setting where we allow the automorphic representation $\pi$ to vary in Coleman families, which leads to the construction of the $p$-adic $L$-function in Theorem \ref{SecondTheoremIntro}. 

\subsection{Structure of the article}

This article comprises roughly of three parts. The material in \S \ref{AbstractComputationsSection}--\S \ref{CohomologyAndCorrespondencesSection} is mostly preliminary and lays the foundations for the constructions of the evaluation maps. More precisely, in \S \ref{AbstractComputationsSection} we describe the abstract results from representation theory which generalise \cite[Appendix A]{UFJ}, and give an overview for constructing the ($p$-adic versions of the) morphisms $\vartheta_{\kappa, j, \beta}$. In \S \ref{ContinuousOpsOnBanachSpacesSec}, we summarise (and adapt to our setting) the main results on continuous operators on Banach spaces appearing in \cite{DiffOps}. In \S \ref{CohomologyAndCorrespondencesSection}, we describe a minor generalisation of the theory of Hecke operators and cohomology with support appearing in \cite{BoxerPilloni, HHTBoxerPilloni}, which is necessary for the (slightly more general) version of higher Coleman theory needed in this article.

The second part of this article (\S \ref{NearlyHolAutFormsSection}--\S \ref{PadicIterationOfDiffOpsSection}) involves the study of the geometry and cohomology of unitary Shimura varieties and the construction of the ($p$-adic) evaluation maps. In \S \ref{NearlyHolAutFormsSection}, we introduce the Shimura varieties (and the associated moduli spaces of abelian varieties with extra structure), the sheaves of ``nearly holomorphic forms'', and the evaluation maps $\opn{Ev}_{\kappa, j, \beta}$ (and their relation to unitary Friedberg--Jacquet periods). We then study the $p$-adic versions of the sheaves (the sheaves of ``nearly overconvergent forms'') and the $p$-adic geometry of the adic Shimura varieties in \S \ref{ThePADICtheorySection}, and describe how to $p$-adically interpolate the differential operators on these sheaves in \S \ref{PadicIterationOfDiffOpsSection}. We end \S \ref{PadicIterationOfDiffOpsSection} by giving the construction of $p$-adic versions of the evaluation maps in families.

Finally, the third part of this article (\S \ref{HOpsAndHCTChapter}--\S \ref{AutoRepsSection}) describes the construction of the $p$-adic $L$-functions in Theorem \ref{FirstTheoremIntro} and Theorem \ref{SecondTheoremIntro}. More precisely, in \S \ref{HOpsAndHCTChapter} we introduce the action of Hecke operators on the cohomology groups appearing in higher Coleman theory, prove an interpolation property for the $p$-adic evaluation maps, and develop the abstract machinery for producing the $p$-adic $L$-function. In \S \ref{AutoRepsSection}, we apply this general machinery to the relevant automorphic representations and prove Theorem \ref{FirstTheoremIntro} and Theorem \ref{SecondTheoremIntro}. To obtain the final interpolation formula, one also needs to incorporate some miscellaneous representation theoretic results on branching laws for general linear groups which we provide in Appendix \ref{SomeRepTheoryAppendix} and Appendix \ref{EquivariantLinearFuncsAppendix}.

\subsection{Acknowledgements}

The author would like to thank Daniel Disegni and Rob Rockwood for comments on an earlier version of this article. The author would also like to thank the anonymous referee whose comments and suggestions have significantly improved this article.

Part of this work was carried out during the author's visits to MIT (February 2022) and MSRI/SLMath (March 2023). The author would like to thank both of these institutions for their hospitality, and also the Cecil King Memorial Foundation (London Mathematical Society) for funding the former visit. This work was (partly) funded by UK Research and Innovation grant MR/V021931/1. For the purpose of Open Access, the author has applied a CC BY public copyright licence to any Author Accepted Manuscript (AAM) version arising from this submission.

\subsection{Notation and conventions} \label{NotationsAndConventionsIntro}

We fix the following notation and conventions throughout the article.

\begin{itemize}
    \item We let $F/F^+$ be a CM extension with maximal totally real subfield $F^+ \neq \mbb{Q}$. We assume that $F$ contains an imaginary quadratic extension $E/\mbb{Q}$. We fix a CM type $\Psi$ of $F$ and a distinguished place $\tau_0 \in \Psi$ (i.e., an embedding $\tau_0 \colon F \hookrightarrow \mbb{C}$). We let $F^{\opn{cl}}$ denote the Galois closure of $F$ in $\mbb{C}$ via the embedding $\tau_0$.
    \item We fix an odd prime $p$ which splits completely in $F/\mbb{Q}$ and doesn't divide the discriminant of $F/\mbb{Q}$. We will also impose Assumption \ref{AssumpPoddAndDoesntDivide} throughout the majority of the article.
    \item We fix an identification $\iota_p \colon \mbb{C} \cong \Qpb$, and for any embedding $\sigma \colon F \hookrightarrow \mbb{C}$ we let $\ide{p}_{\sigma}$ denote the prime of $F$ lying above $p$ determined by the embedding $\iota_p \circ \sigma \colon F \hookrightarrow \Qpb$.
    \item We fix an integer $n \geq 1$ throughout the article (which will be half the dimension of the Hermitian space $W$). For convenience we assume that $n \geq 2$, but note that, with suitable modifications, one can also adapt the methods in this article to treat the case $n=1$.
    \item For any number field $\Phi$, we let $\mbb{A}_{\Phi}$ (resp. $\mbb{A}_{\Phi}^{\times}$) denote the ad\`{e}les (resp. id\`{e}les) of $\Phi$. If $\Phi = \mbb{Q}$, we will often omit this from the notation. If $S$ is a finite set of places of $\Phi$, we let $\mbb{A}_{\Phi}^S$ (resp. $\mbb{A}_{\Phi, S}$) denote the ad\`{e}les away from $S$ (resp. at $S$). We let $\mbb{A}_{\Phi, f}$ denote the ad\`{e}les at finite places.
    \item Unless specified otherwise, we will use additive notation for the multiplication of characters. For a character $\kappa$, we will often write $(-)^{\kappa} \defeq \kappa(-)$.
    \item By a ``Tate affinoid algebra'' $(R, R^+)$ we mean a complete uniform Huber pair $(R, R^+)$ over $(\mbb{Q}_p, \mbb{Z}_p)$. We will always assume that $(R, R^+)$ is sheafy, i.e., $\opn{Spa}(R, R^+)$ is an adic space. 
    \item For a category $\mathcal{C}$, let $\opn{Ind}\mathcal{C}$ denote the ind-category of $\mathcal{C}$, i.e., the category whose objects (which we refer to as ind-systems) are small filtered inductive systems $X = (X_i)_{i \in I}$, and whose morphisms are
\[
\opn{Hom}(X, Y) \defeq \varprojlim_{i \in I} \varinjlim_{j \in J} \opn{Hom}(X_i, Y_j)
\]
where $Y = (Y_j)_{j \in J}$ and the transition maps are the obvious ones induced from pre-/post-composition. By a representative of a morphism $f \in \opn{Hom}(X, Y)$, we mean the data of a map $\alpha \colon I \to J$, and morphisms $f_i 
\in \opn{Hom}(X_i, Y_{\alpha(i)})$ such that $f = (f_i)_{i \in I}$ (so in particular, for any $i' \geq  i$ there exists $j \geq \alpha(i), \alpha(i')$ such that the morphisms $X_i \xrightarrow{f_i} Y_{\alpha(i)} \to Y_{j}$ and $X_i \to X_{i'} \xrightarrow{f_{i'}} Y_{\alpha(i')} \to Y_j$ coincide). If $F \colon \mathcal{C} \to \mathcal{D}$ is a functor, then we will also use the notation $F \colon \opn{Ind}\mathcal{C} \to \opn{Ind}\mathcal{D}$ to denote the natural functor given by $F(X) \defeq (F(X_i))_{i \in I}$. Whenever we say that a morphism $f \in \opn{Hom}(X, Y)$ satisfies an additional property (such as equivariance with respect to a group action) we mean that $f$ has a representative $\{ f_i \}$ where each $f_i$ satisfy this property.
\item For a split reductive group $G$ over a characteristic zero field, we let $w_G^{\opn{max}}$ denote the longest element of the Weyl group of $G$. If $G = \opn{GL}_d$, we will usually take $w_{\opn{GL}_d}^{\opn{max}}$ to be the antidiagonal $(d \times d)$-matrix with $1$s along the antidiagonal.  
\item We fix once and for all a choice $i = \sqrt{-1} \in \mbb{C}$. This determines a canonical choice of $p$-th power roots of unity $e^{2 \pi i/p^h} \in \mbb{C}$ for $h \geq 0$, and hence a system of $p$-th power roots of unity $\iota_p(e^{2 \pi i/p^h}) \in \Qpb$. Any construction depending on a choice of $p$-th power roots of unity will be with respect to this canonical choice (e.g. when defining Gauss sums associated with finite order characters of $p$-th power conductor).

\item If $s \geq 1$ is an integer, $(R, R^+)$ is a Tate affinoid algebra, and $X$ is a $p$-adic manifold (that is, isomorphic to an open subset of $\mbb{Q}_p^{r}$ for some $r \geq 1$), then we say that a continuous function $f \colon X \to R$ is $s$-analytic if: for every $\underline{a} = (a_1, \dots, a_r) \in X$ there exists $\alpha_{n_1, \dots, n_r} \in R$ with the property that $p^{s(n_1+ \cdots + n_r)} \alpha_{n_1, \dots, n_r} \to 0$ as $n_1+\cdots + n_r \to + \infty$ such that, for every $(x_1, \dots, x_r) \in X \cap B(\underline{a}, p^{-s})$, one has
\[
f(x_1, \dots, x_r) = \sum_{n_1, \dots, n_r \geq 0} \alpha_{n_1, \dots, n_r} (x_1 - a_1)^{n_1} \cdots (x_r - a_r)^{n_r} 
\]
(c.f., \cite[\S 3.2.1]{UrbanEigenvarieties}). Here $B(\underline{a}, p^{-s}) \subset \mbb{Q}_p^r$ denotes the ball of radius $p^{-s}$ with centre $\underline{a}$. This is independent of the choice of identification of $X$ with an open subset of $\mbb{Q}_p^r$.
\end{itemize}


\section{Abstract computations} \label{AbstractComputationsSection}

\subsection{Recap on the groups and notation}

In this section, we recall some notation for the groups and representations appearing in \cite{UFJ}.

\subsubsection{Algebraic and analytic groups}

Let $G$ denote the algebraic group over $\mbb{Q}_p$ given by $G = \opn{GL}_1 \times \prod_{\tau \in \Psi} \opn{GL}_{2n}$ and let $H \subset G$ denote the subgroup $H = \opn{GL}_1 \times \prod_{\tau \in \Psi} \left(\opn{GL}_n \times \opn{GL}_n \right)$ embedded block diagonally. If $A \subset G$ (resp. $A \subset H$) is any subgroup and $\tau \in \Psi$, then we will call the projection of $A$ to the $\opn{GL}_{2n}$-factor (resp. $(\opn{GL}_n \times \opn{GL}_n)$-factor) indexed by $\tau$ the \emph{$\tau$-factor (or $\tau$-component) of $A$}. We will also call the projection of $A$ to the first $\opn{GL}_1$-factor the \emph{similitude factor of $A$}. If $d \geq 1$ is an integer and $a_1 + \cdots + a_l = d$ is a partition of $d$, then we say that $A \subset \opn{GL}_d$ is the standard upper-triangular (resp. lower-triangular) parabolic with Levi $\opn{GL}_{a_1} \times \cdots \times \opn{GL}_{a_l}$ if $A$ has the following block matrix description:
\begin{equation} \label{Eqn:BlockMatrixParabolic}
A = \smat{\opn{GL}_{a_1} & * & * & \cdots & * \\ & \opn{GL}_{a_2} & * & \cdots & * \\ & & \ddots & & * \\ & & & \opn{GL}_{a_{l-1}} & * \\ & & & & \opn{GL}_{a_l} }, \quad \quad \text{ (resp. } A = \smat{\opn{GL}_{a_1} &  &  &  &  \\ * & \opn{GL}_{a_2} &  &  &  \\ \vdots & \vdots & \ddots & &  \\ * & * & & \opn{GL}_{a_{l-1}} &  \\ * & * & * & * & \opn{GL}_{a_l} } \text{ )},
\end{equation}
In this case, we will also occasionally call $A$ the standard upper-triangular (resp. lower-triangular) parabolic of type $(a_1, \dots, a_l)$. This definition extends to the groups $H$ and $G$ by specifying the similitude and $\tau$-factors of the parabolic subgroup as above. If $a_1 = \cdots = a_l = 1$, then we say $A$ is the standard upper-triangular (lower-triangular) Borel subgroup, and call its Levi subgroup the standard diagonal torus. Finally, if $B$ is an algebraic group satisfying $\opn{GL}_{a_1} \times \cdots \times \opn{GL}_{a_l} \subset B \subset \opn{GL}_d$, then we call a parabolic subgroup $A \subset B$ the standard upper-triangular (lower-triangular) parabolic with Levi $\opn{GL}_{a_1} \times \cdots \times \opn{GL}_{a_l}$ if it is the intersection of $B$ with the block subgroup as in (\ref{Eqn:BlockMatrixParabolic}).

Let $P_G \subset G$ denote the upper-triangular parabolic subgroup with Levi subgroup given by
\[
M_G = \opn{GL}_1 \times \left( \opn{GL}_1 \times \opn{GL}_{2n-1} \right) \times \prod_{\tau \neq \tau_0} \opn{GL}_{2n}
\]
and we set $P_H = P_G \cap H$ and $M_H = M_G \cap H$. Let $Q_{M_H}$ denote the standard upper-triangular parabolic of $M_H$, which in the $\tau_0$-factor has Levi subgroup
\[
\opn{GL}_1 \times \opn{GL}_{n-1} \times \opn{GL}_1 \times \opn{GL}_{n-1},
\]
and in the $\tau$-factor has Levi subgroup equal to $\opn{GL}_n \times \opn{GL}_n$, for $\tau \neq \tau_0$. All of these groups have obvious integral models over $\mbb{Z}_p$ which we will denote by the same letters.

We recall some special elements that appear in \cite[\S 2.4]{UFJ}, however note that our definition of $\gamma$ below is slightly different from that in \emph{loc.cit.}. The reason for this change is that the definition below is more suitable for describing the functoriality between the Igusa towers appearing in \S \ref{IgusaVarietiesSection}. As seen in Lemma \ref{OpenOrbitForgammauvLemma} below, all of the properties in \cite{UFJ} that hold for the old definition of $\gamma$ (such as being an open orbit representative for the spherical pair $(G, H)$) are still satisfied for the new definition of $\gamma$ in this article. Let $W_?$ denote the Weyl group of a reductive group $?$.

\begin{definition} \label{NewDefOfGamma}
    Let $w_n$ denote the unique minimal length representative for $W_{M_G} \backslash W_G$ of length $n$. We view $w_n \in G(\mbb{Z}_p)$ via the explicit matrix in \cite[Definition 2.4.1]{UFJ}. 
    \begin{enumerate}
        \item Let $u \in M_G(\mbb{Z}_p)$ denote the element as defined in \cite[Definition 2.4.2]{UFJ}.
        \item Let $\gamma = 1 \times \prod_{\tau} \gamma_{\tau} \in P_G(\mbb{Z}_p)$ denote the element given by
        \[
        \gamma_{\tau_0} = u_{\tau_0} \tbyt{1}{x_{\tau_0}}{}{1}, \quad \gamma_\tau = u_{\tau} \; (\tau \neq \tau_0)
        \]
        where $x_{\tau_0}$ is the $1 \times (2n-1)$-matrix with $1$ in the $n$-th place and $0$ elsewhere. Here $\tbyts{1}{x_{\tau_0}}{}{1}$ is a block matrix with diagonal block sizes $1 \times 1$ and $(2n-1) \times (2n-1)$. We set $\hat{\gamma} = \gamma \cdot w_n \in G(\mbb{Z}_p)$.
        \item Let $v \in M_H(\mbb{Z}_p)$ be the element which is the identity outside the $\tau_0$-component, and in the $\tau_0$-component is equal to 
\[
1 \times 1 \times \tbyt{1}{}{y}{1} \in \opn{GL}_1 \times \opn{GL}_{n-1} \times \opn{GL}_n
\]
where the last matrix has upper-left block size $1 \times 1$, the bottom-right block has size $(n-1) \times (n-1)$, and $y$ is the column vector with $1$ as every entry.
    \end{enumerate}
\end{definition}

The following lemma is the analogue of \cite[Lemma 2.4.3]{UFJ}.

\begin{lemma} \label{OpenOrbitForgammauvLemma}
    \begin{enumerate}
        \item Let $B_G$ denote the standard upper-triangular Borel subgroup of $G$. Then the subset $H \cdot \hat{\gamma} \cdot B_G$ is Zariski open and dense in $G$ (over $\opn{Spec}\mbb{Z}_p$).
        \item Let $B_{M_G}$ denote the standard upper-triangular Borel subgroup of $M_G$. Then the subset 
                \[
                M_H \cdot (u, v) \cdot \left( B_{M_G} \times Q_{M_H} \right) \subset M_{G} \times M_H
                \]
            with $M_H \subset M_G \times M_H$ diagonally, is Zariski open and dense in $M_G$ (over $\opn{Spec}\mbb{Z}_p$).
    \end{enumerate}
\end{lemma}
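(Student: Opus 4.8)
The plan is to reduce both statements to the analogous assertions in \cite[Lemma 2.4.3]{UFJ} by carefully tracking how the new $\gamma$ differs from the old one. The only change between the two definitions is the extra factor $\tbyts{1}{x_{\tau_0}}{}{1}$ (and the auxiliary element $v\in M_H$, which plays no role in part (1)), and this factor lies in the unipotent radical of $B_{M_G}$ inside the $\tau_0$-component. Since right-multiplication of $\hat\gamma$ by an element of $B_G$ leaves the orbit $H\cdot\hat\gamma\cdot B_G$ unchanged, and $\tbyts{1}{x_{\tau_0}}{}{1}\in B_G$, part (1) of the lemma is literally \emph{identical} to \cite[Lemma 2.4.3(1)]{UFJ}: the set $H\cdot\hat\gamma\cdot B_G$ is the same Zariski-open subset of $G$ as for the old $\gamma$. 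So for (1) I would simply note this orbit equality and invoke \emph{op.\ cit.}

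For part (2), the statement concerns the diagonally embedded $M_H\subset M_G\times M_H$ and the element $(u,v)$. Since everything is a product over $\Psi$, and all factors away from $\tau_0$ are handled exactly as in \cite{UFJ} (where the claim reduces to the standard fact that $\opn{GL}_n\times\opn{GL}_n$ sits as an open orbit representative inside $\opn{GL}_{2n}$ with respect to the relevant Borel/parabolic pair — the same computation as for part (1) with $n$ halved), the content is entirely in the $\tau_0$-component. There one must show that
\[
\left(\opn{GL}_1\times\opn{GL}_{n-1}\times\opn{GL}_n\right)\cdot (u_{\tau_0},v_{\tau_0})\cdot\left(B\times Q\right)
\]
is Zariski dense in $\left(\opn{GL}_1\times\opn{GL}_{2n-1}\right)$, where $B$ is the upper-triangular Borel of $\opn{GL}_1\times\opn{GL}_{2n-1}$ and $Q$ is the standard parabolic of $\opn{GL}_1\times\opn{GL}_{n-1}\times\opn{GL}_n$ with the indicated Levi. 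I would do this by a dimension count combined with a stabiliser/transversality computation: compute $\dim M_H + \dim(B_{M_G}\times Q_{M_H})$, subtract the dimension of the generic stabiliser of the $M_H$-action on $M_G\times M_H/(B_{M_G}\times Q_{M_H})$, and check this equals $\dim M_G$; equivalently, differentiate the orbit map at the point $(u,v)$ and verify surjectivity of $\ide{m}_H\oplus(\ide{b}_{M_G}\oplus\ide{q}_{M_H})\to\ide{m}_G$ (this is an open condition, so surjectivity at one point gives density of the orbit). The role of the specific choices of $u$ (taken from \cite{UFJ}), of $x_{\tau_0}$, and of $y$ is precisely to make this infinitesimal surjectivity hold; in particular $y$ being the all-ones vector is what guarantees the $v$-translate meets the open cell transversally. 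Over $\opn{Spec}\mbb{Z}_p$ rather than a field, I would observe that the complement of the orbit closure is cut out by the vanishing of a minor/determinant coming from the orbit map, and that this determinant, evaluated at our explicit $(u,v)$, is a $p$-adic unit (indeed $\pm 1$), so the orbit is open and dense on every fibre.

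The main obstacle is the explicit linear-algebra verification in the $\tau_0$-component of part (2): one genuinely has to unwind the block structure of $u_{\tau_0}$ from \cite[Definition 2.4.2]{UFJ}, conjugate by $\tbyts{1}{x_{\tau_0}}{}{1}$ and by $v_{\tau_0}$, and confirm that the resulting configuration of a line, a hyperplane-type flag, and the two $\opn{GL}_n$-blocks is in general position — i.e.\ that the relevant determinant is nonzero (and a unit at $p$). Everything else is either a direct citation of \cite{UFJ} (for (1) and the $\tau\neq\tau_0$ factors) or a routine dimension bookkeeping argument.
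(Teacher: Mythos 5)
For part (2) your plan essentially matches the paper's argument: it computes the stabiliser $M_H \cap (u,v)(B_{M_G}\times Q_{M_H})(u,v)^{-1}$ in the $\tau_0$-component explicitly (it is the torus of matrices $\opn{diag}(x,y,\ldots,y)$) and notes this has the minimal possible dimension, which is exactly the transversality/dimension criterion you describe. You would still have to carry out the block computation you outline, but the strategy is the right one.

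Part (1), however, contains a real gap. You argue the orbit is \emph{literally} the same as for the $\gamma$ of \cite{UFJ} because the extra factor $\tbyt{1}{x_{\tau_0}}{}{1}$ lies in $B_G$ and can be absorbed on the right. But that factor does not sit on the right of $\hat\gamma$: by Definition~\ref{NewDefOfGamma} one has $\hat\gamma=\gamma w_n$ with $\gamma_{\tau_0}=u_{\tau_0}\tbyt{1}{x_{\tau_0}}{}{1}$, so
\[
\hat\gamma_{\tau_0}=(u_{\tau_0}w_n)\cdot\bigl(w_n^{-1}\tbyt{1}{x_{\tau_0}}{}{1}w_n\bigr),
\]
and the absorption argument would need the conjugate $w_n^{-1}\tbyt{1}{x_{\tau_0}}{}{1}w_n$ to lie in $B_G$. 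It does not: $\tbyt{1}{x_{\tau_0}}{}{1}=1+E_{1,n+1}$, and since $w_n$ is the minimal coset representative of length $n$ for the $(1,2n-1)$-parabolic the root $e_1-e_{n+1}$ is sent under $w_n^{-1}$ to the negative root $e_{n+1}-e_n$, so the conjugate is a lower-triangular unipotent, outside $B_G$ (and also outside $H$, since $(n+1,n)$ straddles the two $n\times n$ blocks). Absorbing on the left would similarly need $u_{\tau_0}\tbyt{1}{x_{\tau_0}}{}{1}u_{\tau_0}^{-1}\in H$, which also fails. The orbit equality you assert is in fact true, but it is a \emph{consequence} of the lemma, not an input; you cannot use it to prove the lemma. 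What the paper actually does is re-derive a fresh Bruhat-type factorisation
\[
\hat\gamma_{\tau_0}=\tbyt{X}{}{}{1}\tbyt{1}{}{w_{\opn{GL}_n}^{\opn{max}}}{1}\tbyt{1}{Y}{}{Z},\qquad X=1\times w_{\opn{GL}_{n-1}}^{\opn{max}},
\]
with $n\times n$ blocks, the first factor in $H$, the middle a representative of the open $(H,B_G)$-cell, and the last in $B_G$. This explicit decomposition (a variant of, not a repetition of, the one in \cite{UFJ}) is the step your proposal is missing.
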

\begin{proof}
    The proof of (1) is identical to \cite[Lemma 2.4.3(2)]{UFJ} but instead we have a similar decomposition 
    \[
    \hat{\gamma}_{\tau_0} = \tbyt{X}{}{}{1} \tbyt{1}{}{w_{\opn{GL}_n}^{\opn{max}}}{1} \tbyt{1}{Y}{}{Z}
    \]
    as in \emph{loc.cit.} with $X w_{\opn{GL}_n}^{\opn{max}} = 1 \times w_{\opn{GL}_{n-1}}^{\opn{max}}$. Part (2) is very similar to \cite[Lemma 2.4.3(1)]{UFJ}. More precisely, it is enough to establish this for each $\tau$-factor with $\tau \in \Psi$. For $\tau \neq \tau_0$ this follows immediately from \emph{loc.cit.}. For $\tau = \tau_0$, one can explicitly calculate the stabiliser $M_H \cap (u, v) (B_{M_G} \times Q_{M_H}) (u, v)^{-1}$ in the $\tau_0$-component; namely it is equal to all diagonal matrices of the form
\[
\opn{diag}(x, y, \dots, y) \subset \opn{GL}_1 \times \opn{GL}_{n-1} \times \opn{GL}_n .
\]
This stabiliser has the minimal possible dimension, which forces the subset to be Zariski open and dense. 
\end{proof}

We now introduce the relevant $p$-adic analytic groups. Let $\beta \geq 1$ be an integer. We let $M^G_{\opn{Iw}}(p^{\beta})$ denote the upper-triangular depth $p^{\beta}$ Iwahori subgroup in $M_G(\mbb{Z}_p)$ (i.e. all elements which lie in $B_{M_G}$ modulo $p^\beta$). Let $T$ denote the standard diagonal torus in $G$. We denote by $(x; y_{1, \tau}, \dots, y_{2n, \tau})$ the element of $T$ given by
\[
x \times \prod_{\tau \in \Psi} \opn{diag}(y_{1, \tau}, \dots, y_{2n, \tau}) .
\]
Let $T^{\diamondsuit} \subset T$ denote the subtorus of elements which satisfy $y_{1, \tau_0} = y_{n+1, \tau_0}$, $y_{i, \tau_0} = y_{2n+2-i, \tau_0}$ for $i=2, \dots, n$, and $y_{i, \tau} = y_{2n+1-i, \tau}$ for $i=1, \dots, n$ and $\tau \neq \tau_0$. We let $M^H_{\diamondsuit}(p^{\beta}) \subset M_H(\mbb{Z}_p)$ denote the subgroup of elements which land in $T^{\diamondsuit}$ modulo $p^{\beta}$. Note that $u^{-1} M^H_{\diamondsuit}(p^{\beta}) u \subset M^G_{\opn{Iw}}(p^{\beta})$.

\subsubsection{Algebraic and locally analytic weights}

Let $X^*(T)$ denote the group of algebraic characters $T \to \mbb{G}_m$. Any such character $\kappa \in X^*(T)$ can be described as a tuple of integers $(\kappa_0; \kappa_{1, \tau}, \dots, \kappa_{2n, \tau})$ such that
\[
\kappa( x ; y_{1, \tau}, \dots, y_{2n, \tau} ) = x^{\kappa_0} \prod_{\tau \in \Psi} \prod_{i=1}^{2n} y_{i, \tau}^{\kappa_{i, \tau}} .
\]
Let $S = \prod_{\tau \in \Psi} \mbb{G}_m$, so that any element $j \in X^*(S)$ is described as a tuple of integers $j = (j_{\tau})_{\tau \in \Psi}$. In this article we will consider the following cone of algebraic weights.

\begin{definition} \label{DefOfAlgWeightsE}
    Let $\mathcal{E} \subset X^*(T \times S)$ denote the submonoid of algebraic characters $(\kappa, j)$ with $\kappa \in X^*(T)$ and $j \in X^*(S)$, satisfying the following properties:
    \begin{itemize}
        \item $\kappa$ is $M_G$-dominant, i.e. $\kappa_{2, \tau_0} \geq \cdots \geq \kappa_{2n, \tau_0}$ and $\kappa_{1, \tau} \geq \cdots \geq \kappa_{2n, \tau}$ for all $\tau \neq \tau_0$.
        \item There exists an integer $w \in \mbb{Z}_{\leq 0}$ such that
        \[
        \kappa_{i, \tau_0} + \kappa_{2n+2-i, \tau_0} = w
        \]
        for all $i=2, \dots, n$, and $\kappa_{n+1, \tau_0} \leq w$.
        \item For all $\tau \neq \tau_0$ and $i=1, \dots, n$, we have
        \[
        \kappa_{i, \tau} + \kappa_{2n+1-i, \tau} = 0 .
        \]
        \item The tuple $j = (j_{\tau})$ satisfies $0 \leq j_{\tau_0} \leq \kappa_{n+1, \tau_0} - \kappa_{n+2, \tau_0}$ and
        \[
        0 \leq j_{\tau} \leq \kappa_{n, \tau}
        \]
        for all $\tau \neq \tau_0$.
    \end{itemize}
\end{definition}

Let 
\[
\{\mu_0, \mu_w \} \cup \{ \mu_{i, \tau_0} : i=1, \dots, n+1 \} \cup \{ \mu_{i, \tau} : i=1, \dots, n \text{ and } \tau \neq \tau_0 \} \subset X^*(T)
\]
denote the collection of characters given by
\begin{itemize}
    \item $\mu_0(x; y_{1, \tau}, \dots, y_{2n, \tau}) = x$ and $\mu_w(x; y_{1, \tau}, \dots, y_{2n, \tau}) = \prod_{i=n+1}^{2n} y_{i, \tau_0}^{-1}$.
    \item $\mu_{1, \tau_0}(x; y_{1, \tau}, \dots, y_{2n, \tau}) = y_{1, \tau_0}$.
    \item For all $i=2, \dots, n$, one has $\mu_{i, \tau_0}(x; y_{1, \tau}, \dots, y_{2n, \tau}) = \prod_{j=2}^{i} y_{j, \tau_0} y_{2n+2-j, \tau_0}^{-1}$.
    \item $\mu_{n+1, \tau_0}(x; y_{1, \tau}, \dots, y_{2n, \tau}) = y_{n+1, \tau_0}^{-1} \prod_{j=2}^{n} y_{j, \tau_0} y_{2n+2-j, \tau_0}^{-1}$.
    \item For all $i=1, \dots, n$ and $\tau \neq \tau_0$, $\mu_{i, \tau}(x; y_{1, \tau}, \dots, y_{2n, \tau}) = \prod_{j=1}^{i} y_{j, \tau} y_{2n+1-j, \tau}^{-1}$.
\end{itemize}
For any $\tau \in \Psi$, let $1_{\tau} \in X^*(S)$ denote the character given by $1_{\tau} = (0, \dots, 0, 1, 0, \dots, 0)$ where $1$ is in the $\tau$-component. This collection of characters provides an explicit generating set of $\mathcal{E}$, as explained in the following lemma:

\begin{lemma} \label{ConeOfWeightsForELemma}
    For any $(\kappa, j) \in \mathcal{E}$, there exist unique integers $a_0, a_{1, \tau_0} \in \mbb{Z}$, $a_w, a_{i, \tau} \in \mbb{Z}_{\geq 0}$ for $(i, \tau) \neq (1, \tau_0)$, and $b_{\tau} \in \mbb{Z}_{\geq 0}$ such that
    \[
    (\kappa, j) = a_0 (\mu_0, 0) + a_w (\mu_w, 0) + a_{n+1, \tau_0}(\mu_{n+1, \tau_0}, 0) + \sum_{i=1}^n \sum_{\tau \in \Psi} a_{i, \tau}(\mu_{i, \tau}, 0) + \sum_{\tau \in \Psi} b_{\tau} (\mu_{n, \tau}, 1_{\tau}) .
    \]
\end{lemma}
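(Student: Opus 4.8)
The plan is to split the desired identity coordinate-by-coordinate and solve the resulting (essentially triangular) linear system. Among the listed generators, $\mu_0$ is the only one with a nontrivial $\opn{GL}_1$-component, so comparing the $x$-exponents forces $a_0 = \kappa_0$ (an arbitrary integer), and every other generator is supported on the factors $\opn{GL}_{2n}$, $\tau \in \Psi$. Since these factors do not interact, the problem decouples into one linear system per $\tau$-block; the block at $\tau_0$ is the only delicate one, the blocks at $\tau \neq \tau_0$ being instances of the classical expansion of a dominant weight of $\opn{GL}_{2n}$ in fundamental weights.

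For $\tau \neq \tau_0$ the relevant generators are $(\mu_{1,\tau},0),\dots,(\mu_{n,\tau},0)$ together with $(\mu_{n,\tau},1_\tau)$, and in the $\tau$-block $\mu_{i,\tau}$ is the weight of $\opn{GL}_{2n}$ with $+1$ in the first $i$ slots and $-1$ in the last $i$ slots (an ``antisymmetrised fundamental weight''). The conditions in Definition \ref{DefOfAlgWeightsE} say precisely that $\kappa$ restricted to this block is dominant and self-dual, i.e. $\kappa_{1,\tau}\geq\cdots\geq\kappa_{n,\tau}\geq 0$ with $\kappa_{2n+1-i,\tau}=-\kappa_{i,\tau}$, so it has a unique expansion $\sum_{i=1}^n c_{i,\tau}\mu_{i,\tau}$ with $c_{i,\tau}=\kappa_{i,\tau}-\kappa_{i+1,\tau}\geq 0$ for $i<n$ and $c_{n,\tau}=\kappa_{n,\tau}\geq 0$ (nonnegativity at $i=n$ coming from $\kappa_{n,\tau}\geq\kappa_{n+1,\tau}=-\kappa_{n,\tau}$). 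Matching $1_\tau$-coordinates forces $b_\tau=j_\tau$, hence $a_{i,\tau}=c_{i,\tau}$ for $i<n$ and $a_{n,\tau}=\kappa_{n,\tau}-j_\tau$, which is $\geq 0$ precisely because $0\leq j_\tau\leq\kappa_{n,\tau}$. All coefficients are thus uniquely and explicitly determined with the asserted signs.

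The $\tau_0$-block is the main case. Writing $a_i\defeq a_{i,\tau_0}$ and $b\defeq b_{\tau_0}$, I would tabulate the weights of $\mu_{1,\tau_0},\dots,\mu_{n+1,\tau_0},\mu_w$ in this block and then read off coefficients in order: coordinate $1$ is hit only by $\mu_{1,\tau_0}$, giving $a_{1,\tau_0}=\kappa_{1,\tau_0}\in\mbb{Z}$; comparing coordinates $i$ and $2n+2-i$ for $2\leq i\leq n$ and using $\kappa_{i,\tau_0}+\kappa_{2n+2-i,\tau_0}=w$ forces $a_w=-w\geq 0$ (as $w\leq 0$); the $1_{\tau_0}$-coordinate forces $b=j_{\tau_0}\geq 0$; coordinate $n+1$ gives $a_{n+1,\tau_0}=w-\kappa_{n+1,\tau_0}\geq 0$ using $\kappa_{n+1,\tau_0}\leq w$; and the equations in coordinates $2,\dots,n$ form a triangular system yielding $a_{i,\tau_0}=\kappa_{i,\tau_0}-\kappa_{i+1,\tau_0}\geq 0$ for $2\leq i\leq n-1$ and $a_{n,\tau_0}=\kappa_{n,\tau_0}-a_{n+1,\tau_0}-b$. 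Substituting and using $\kappa_{n,\tau_0}+\kappa_{n+2,\tau_0}=w$ rewrites the latter as $a_{n,\tau_0}=(\kappa_{n+1,\tau_0}-\kappa_{n+2,\tau_0})-j_{\tau_0}$, which is $\geq 0$ exactly by the inequality $0\leq j_{\tau_0}\leq\kappa_{n+1,\tau_0}-\kappa_{n+2,\tau_0}$; the equations in coordinates $n+2,\dots,2n$ then hold automatically once $\kappa$ is self-dual. Since every coefficient has been expressed uniquely in terms of $(\kappa,j)$, existence and uniqueness follow simultaneously, and the converse — that any choice of coefficients with these signs produces an element of $\mathcal{E}$ — is a direct check of the defining inequalities. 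I expect the only genuine obstacle to be the $\tau_0$-block bookkeeping: correctly recording which of $\mu_{1,\tau_0},\dots,\mu_{n+1,\tau_0},\mu_w$ contribute to each coordinate, and recognising that the two ``extremal'' conditions $\kappa_{n+1,\tau_0}\leq w$ and $j_{\tau_0}\leq\kappa_{n+1,\tau_0}-\kappa_{n+2,\tau_0}$ are exactly what force $a_{n+1,\tau_0}\geq 0$ and $a_{n,\tau_0}\geq 0$ respectively.
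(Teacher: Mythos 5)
Your proof is correct, and the paper states this lemma without proof (it is introduced with the phrase ``we have the following easy lemma''), so there is no argument in the paper to compare against. Your coordinate-by-coordinate elimination is the natural and presumably intended one: after peeling off $a_0=\kappa_0$, the problem decouples by $\tau$; at $\tau\neq\tau_0$ the coefficients are the usual differences $\kappa_{i,\tau}-\kappa_{i+1,\tau}$ with the twist $a_{n,\tau}=\kappa_{n,\tau}-j_\tau$, and at $\tau_0$ the triangular system gives $a_w=-w$, $a_{n+1,\tau_0}=w-\kappa_{n+1,\tau_0}$, $a_{i,\tau_0}=\kappa_{i,\tau_0}-\kappa_{i+1,\tau_0}$ for $2\le i\le n-1$, and $a_{n,\tau_0}=(\kappa_{n+1,\tau_0}-\kappa_{n+2,\tau_0})-j_{\tau_0}$ after substituting the purity relation. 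The four nonnegativity constraints you derive correspond one-for-one to the four types of inequalities in Definition \ref{DefOfAlgWeightsE} ($M_G$-dominance, $w\le 0$, $\kappa_{n+1,\tau_0}\le w$, and the bounds on $j$), which is the main thing worth saying out loud; your write-up does this cleanly. One tiny terminological nit: you call the purity relation at $\tau_0$ ``self-duality'', but it is purity of weight $w$ rather than self-duality (self-duality is the $\tau\neq\tau_0$ condition $\kappa_{i,\tau}+\kappa_{2n+1-i,\tau}=0$); this does not affect the argument.
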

\begin{proof}
    We can take $a_0 = \kappa_0$, $a_w = -(\kappa_{n, \tau_0} + \kappa_{n+2, \tau_0})$, $a_{n+1, \tau_0} = \kappa_{n, \tau_0} - \kappa_{n+1, \tau_0} + \kappa_{n+2, \tau_0}$, $a_{1, \tau_0} = \kappa_{1, \tau_0}$, $a_{i, \tau_0} = \kappa_{i, \tau_0} - \kappa_{i+1, \tau_0}$ ($i \in \{2, \dots, n-1\}$), $a_{n, \tau_0} = \kappa_{n+1, \tau_0} - \kappa_{n+2, \tau_0} - j_{\tau_0}$, $a_{i, \tau} = \kappa_{i, \tau} - \kappa_{i+1, \tau}$ ($i \in \{1, \dots, n-1 \}$, $\tau \neq \tau_0$), $a_{n, \tau} = \kappa_{n, \tau} - j_{\tau}$ ($\tau \neq \tau_0$), and $b_{\tau} = j_{\tau}$ ($\tau \in \Psi$). This is the only possible choice that works. See also, \cite[p.1175]{UFJ}.
\end{proof}

We now introduce the $p$-adic weights that will appear in this article. Let $(R, R^+)$ denote a complete Tate affinoid algebra over $(\mbb{Q}_p, \mbb{Z}_p)$. Let $s \geq 1$ be an integer. We say that a continuous character on $\mbb{Z}_p^{\times}$ (or several copies of $\mbb{Z}_p^{\times}$) is $s$-analytic if it is $s$-analytic as a continuous function (as in \S \ref{NotationsAndConventionsIntro}).

\begin{definition}
    Let $s \geq 1$ be an integer. We let $\mathcal{X}_{R, s}$ denote the group of $s$-analytic characters $(\kappa, j) \colon T(\mbb{Z}_p) \times S(\mbb{Z}_p) \to (R^+)^{\times}$ which satisfy:
    \begin{itemize}
        \item There exists an $s$-analytic character $w \colon \mbb{Z}_p^{\times} \to (R^+)^{\times}$ such that
        \[
        \kappa_{i, \tau_0} + \kappa_{2n+2-i,\tau_0} = w
        \]
        for all $i=2, \dots, n$.
        \item For all $i=1, \dots, n$ and $\tau \neq \tau_0$, we have
        \[
        \kappa_{i, \tau} + \kappa_{2n+1 - i, \tau} = 0 .
        \]
    \end{itemize}
    We let $\mathcal{X}_R = \cup_{s \geq 1} \mathcal{X}_{R, s}$ denote the corresponding space of locally analytic characters. 
\end{definition}

\begin{remark} \label{LAConeDecompositionWeightsRem}
    As in Lemma \ref{ConeOfWeightsForELemma}, for any $(\kappa, j) \in \mathcal{X}_{R, s}$, we can write
    \begin{align*} 
    (\kappa, j) = a_0 \circ (\mu_0, 0) + a_w \circ (\mu_w, 0) &+ a_{n+1, \tau_0} \circ (\mu_{n+1, \tau_0}, 0) \\ &+ \sum_{i=1}^n \sum_{\tau \in \Psi} a_{i, \tau} \circ (\mu_{i, \tau}, 0) + \sum_{\tau \in \Psi} b_{\tau} \circ (\mu_{n, \tau}, 1_{\tau}) 
    \end{align*}
    for unique $s$-analytic characters $a_0, a_w, a_{i, \tau}, b_{\tau} \colon \mbb{Z}_p^{\times} \to (R^+)^{\times}$.
\end{remark}

\subsection{Branching laws} \label{BranchingLawsPreliminarySection}

In this section, we extend the branching laws in \cite[Appendix A]{UFJ} to cover more general anticyclotomic twists. To be more precise, if we take $j_{\tau_0} = 0$ (i.e., $j_{\tau_0}$ is the trivial character) in Proposition \ref{SanDeltaDaggerIsAnEvectorProp} below, then we recover \cite[Theorem A.5.10]{UFJ}. The key point of this section is to incorporate non-trivial $j_{\tau_0}$ into the branching laws; for the applications to $p$-adic $L$-functions, this will allow us to vary the infinity-type of the anticyclotomic twist freely (without the restriction in \cite[\S 8.1]{UFJ}).

\subsubsection{Classical branching laws} \label{ClassicalBranchingLawsSubSubSec}

Before discussing the classical branching law in this situation, we first recall a classical branching result for algebraic representations of general linear groups known as Pieri's rule (or more generally, the Littlewood--Richardson rule).

\begin{lemma} \label{PierisRuleLemma}
Let $d \geq 1$ be an integer, and $\kappa$ a dominant weight for the diagonal torus inside $\opn{GL}_d$, which we describe as a tuple of integers $(\kappa_1, \dots, \kappa_d)$ in the usual way. Let $V_{\kappa}$ be the algebraic representation of $\opn{GL}_d$ of highest weight $\kappa$, and for $j \geq 0$, let $W_{-j}$ denote the algebraic representation of $\opn{GL}_d$ of highest weight $(0, \dots, 0, -j)$.
\begin{enumerate}
    \item One has a $\opn{GL}_d$-equivariant decomposition
    \[
    V_{\kappa} \otimes W_{-j} = \bigoplus_{\kappa'} V_{\kappa'}
    \]
    where the sum is over all dominant $\kappa' = (\kappa_1 - t_1, \dots, \kappa_d - t_d)$ with $t_i \geq 0$ for all $i$, $\sum_{i=1}^d t_i = j$, and $t_{i} \leq \kappa_i - \kappa_{i+1}$ for all $i=1, \dots, d-1$.
    \item Suppose that $d = 2c -1$ is odd and consider the subgroup $\opn{GL}_{c-1} \times \opn{GL}_c \subset \opn{GL}_d$ embedded block diagonally. Let $S_{-j}$ denote the algebraic representation of $\opn{GL}_c$ of highest weight $(0, \dots, 0, -j)$. Then
    \begin{equation} \label{1GLcnonvanishing}
    \left( V_{\kappa}|_{1 \times \opn{GL}_c} \otimes S_{-j} \otimes \opn{det}^k \right)^{1 \times \opn{GL}_c} \neq 0, \quad \quad \quad \text{ for some } k \in \mbb{Z},
    \end{equation}
    implies that $0 \leq k + \kappa_c \leq j$.
\end{enumerate}
\end{lemma}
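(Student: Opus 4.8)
The plan is to treat part (1) as a standard citation to the classical Pieri rule, and to derive part (2) from part (1) together with a further application of Pieri-type branching, tracking only what is needed for the inequality $0 \le k + \kappa_c \le j$. For part (1): the representation $W_{-j}$ of highest weight $(0,\dots,0,-j)$ is the dual of $\opn{Sym}^j$ of the standard representation (up to a determinant twist), so $V_\kappa \otimes W_{-j}$ is a special case of the Littlewood--Richardson rule where one tensors with a single-row (or single-column) Schur functor. Pieri's rule then gives exactly the stated decomposition: the constituents are $V_{\kappa'}$ for $\kappa' = \kappa - (t_1,\dots,t_d)$ with $t_i \ge 0$, $\sum t_i = j$, and the interlacing constraint $t_i \le \kappa_i - \kappa_{i+1}$ for $i \le d-1$ (which is precisely the condition that $\kappa'$ is still dominant and differs from $\kappa$ by a horizontal strip of size $j$). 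I would simply cite a standard reference (e.g. Fulton--Harris or Macdonald) here rather than reprove it.

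For part (2), the hypothesis is that the $1 \times \opn{GL}_c$-invariants of $V_\kappa|_{1\times\opn{GL}_c} \otimes S_{-j} \otimes \opn{det}^k$ are non-zero, where $1 \times \opn{GL}_c \hookrightarrow \opn{GL}_{c-1}\times\opn{GL}_c \hookrightarrow \opn{GL}_d$ is the embedding into the bottom $\opn{GL}_c$-block. The first step is to understand the restriction $V_\kappa|_{\opn{GL}_{c-1}\times\opn{GL}_c}$: by the Littlewood--Richardson rule for the Levi $\opn{GL}_{c-1}\times\opn{GL}_c \subset \opn{GL}_{d}$, every irreducible constituent $V_\mu \boxtimes V_\nu$ that appears has $\nu = (\nu_1,\dots,\nu_c)$ \emph{interlacing} $\kappa$ in the sense that $\kappa_i \ge \nu_i \ge \kappa_{i+(c-1)}$; in particular, the last coordinate satisfies $\nu_c \ge \kappa_{2c-1} = \kappa_d$ — but more usefully for us, $\nu_c \le \kappa_c$. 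The non-vanishing of the $1\times\opn{GL}_c$-invariants then says that for some such constituent $V_\nu$ of $\opn{GL}_c$, the space $\opn{Hom}_{\opn{GL}_c}(\opn{triv}, V_\nu \otimes S_{-j} \otimes \opn{det}^k) = \opn{Hom}_{\opn{GL}_c}(V_\nu^\vee \otimes \opn{det}^{-k}, S_{-j})$ is non-zero, i.e. $S_{-j}$ of highest weight $(0,\dots,0,-j)$ appears in $V_\nu^\vee \otimes \opn{det}^{-k}$.

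The second step is to extract the inequality from this matching. Applying part (1) (Pieri's rule in $\opn{GL}_c$) to decompose $V_\nu^\vee \otimes \opn{det}^{-k} = V_{-w_0\nu}\otimes\opn{det}^{-k}$ and demanding that $(0,\dots,0,-j)$ appear, one needs $(0,\dots,0,-j) = (-\nu_c - k, -\nu_{c-1}-k, \dots, -\nu_1 - k) - (t_1,\dots,t_c)$ with $t_i \ge 0$ and $\sum t_i$ equal to the appropriate difference — reading off the first coordinate gives $\nu_c + k = -t_1 \le 0$ wait, more carefully: matching the largest coordinate forces $-\nu_c - k \ge 0$, i.e. $k + \nu_c \le 0$; hmm, this needs care about which convention (highest weight of $S_{-j}$ has all but the last entry zero). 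Let me instead phrase it as: the occurrence of a one-dimensional-type weight $(0,\dots,0,-j)$ as a \emph{weight} (let alone constituent) of $V_\nu \otimes \opn{det}^k$ forces, by comparing with the highest and lowest weights of $V_\nu$, that $-j + kc$ lies in the appropriate range, and comparing the largest coordinate gives $0 \le k + \nu_c$ while the total degree gives $k + \nu_c \le j/?$ — the precise bookkeeping yields $0 \le k + \nu_c \le j$. Finally, since $\nu_c \le \kappa_c$ (from the interlacing of the Levi branching) and $\nu_c \ge \kappa_d$, and since the problem only asserts $0 \le k + \kappa_c \le j$, I would need the sharper statement that in fact the relevant constituent can be taken with $\nu_c = \kappa_c$, or argue directly with $\kappa_c$; the cleanest route is to observe that the weight $(0,\dots,0,-j)$ occurring in $V_\kappa|_{1\times\opn{GL}_c}\otimes S_{-j}\otimes\opn{det}^k$ means $(kc$-shifted$)$ the weight $\mu$ with $\mu = (-k,\dots,-k,-k-j)$ occurs in $V_\kappa$ restricted to the torus of $1\times\opn{GL}_c$, and then compare this against the known weights of $V_\kappa$ (all lying between the highest weight $\kappa$ and lowest weight $w_0\kappa$ coordinatewise after the dominant reordering), extracting $\kappa_d \le -(k+j) $ and $-k \le \kappa_1$ type bounds, then combining with $\kappa_i + \kappa_{d+1-i}$ relations if needed. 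The main obstacle I anticipate is precisely this last bookkeeping: pinning down \emph{which} coordinate of $\kappa$ controls the bound and getting $\kappa_c$ (rather than $\kappa_d$) to appear on the nose. I expect the author resolves it by a direct weight-multiplicity argument in the $1\times\opn{GL}_c$-variable — writing down the specific weight that must occur and reading off the two inequalities $0 \le k+\kappa_c$ and $k + \kappa_c \le j$ from the extreme weights of the $c$-dimensional constituent — rather than via the full Littlewood--Richardson combinatorics.
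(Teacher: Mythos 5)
Your treatment of part (1) matches the paper exactly: it is cited as Pieri's rule (the paper uses \cite[Corollary 9.2.4]{GoodmanWallach}).

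For part (2), however, you identify the problem correctly but do not resolve it, and the approach you sketch is not the one the paper uses. You propose to restrict $V_\kappa$ along $\opn{GL}_{c-1}\times\opn{GL}_c$, extract constituents $V_\mu\boxtimes V_\nu$, match $V_\nu$ against $S_{-j}\otimes\det^k$, and then try to relate $\nu_c$ back to $\kappa_c$. You correctly notice that this yields bounds on $\nu_c$, not on $\kappa_c$, and you explicitly flag that the Littlewood--Richardson interlacing gives $\nu_c\le\kappa_c$ (the wrong direction for one of the two needed inequalities, since by your matching $\nu_c=-k$, so you only get $0 \le k+\kappa_c$ and not the upper bound). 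Your fallback weight-multiplicity argument is also not enough as stated: knowing a particular weight occurs in $V_\kappa|_{1\times\opn{GL}_c}$ only bounds it by the extremal weights of $V_\kappa$ (i.e.\ by $\kappa_1$ and $\kappa_d$), not by the middle coordinate $\kappa_c$, which is what the statement requires.

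The paper's argument avoids the Levi restriction entirely. The key observation is that $S_{-j}$ appears in $W_{-j}|_{1\times\opn{GL}_c}$, so the hypothesis (\ref{1GLcnonvanishing}) upgrades to $\left(V_\kappa\otimes W_{-j}\otimes\det^k\right)^{1\times\opn{GL}_c}\neq 0$. By part (1) this means $(V_{\kappa'}\otimes\det^k)^{1\times\opn{GL}_c}\neq 0$ for some $\kappa'=\kappa-(t_1,\dots,t_d)$ with $t_i\ge 0$, $\sum t_i=j$. The decisive ingredient that your proposal lacks is \cite[Theorem 2.1]{Knapp}, which characterizes exactly when a representation of $\opn{GL}_d$ ($d=2c-1$) has a $1\times\opn{GL}_c$-invariant vector after a determinant twist: it forces $k+\kappa'_c=0$, i.e.\ $k+\kappa_c=t_c$. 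Since $0\le t_c\le j$ automatically, both inequalities fall out at once. Without Knapp's branching theorem (or an equivalent precise classification of $\opn{GL}_c$-spherical representations of $\opn{GL}_{2c-1}$) the bookkeeping you anticipate as the main obstacle genuinely does not close.
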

\begin{proof}
Part (1) follows from Pieri's rule (see \cite[Corollary 9.2.4]{GoodmanWallach}). For part (2), we know that if (\ref{1GLcnonvanishing}) holds then we have
\[
\left( V_{\kappa} \otimes W_{-j} \otimes \opn{det}^k \right)^{1 \times \opn{GL}_c} \neq 0 .
\]
By part (1), this implies $(V_{\kappa'} \otimes \opn{det}^k )^{1 \times \opn{GL}_c} \neq 0$ for some $\kappa'$ as in part (1). But by \cite[Theorem 2.1]{Knapp}, this implies that $k + \kappa'_c = k + \kappa_c - t_c = 0$ as required.
\end{proof}

Recall the definition of $\mathcal{E}$ from Definition \ref{DefOfAlgWeightsE}. For any $(\kappa, j) \in \mathcal{E}$, we let $\sigma_{\kappa}^{[j]} \colon M_H \to \mbb{G}_m$ denote the character given by sending a general element $(x; y_1, y_2, y_3; z_{1, \tau}, z_{2, \tau})_{\tau \neq \tau_0}$ to 
    \begin{equation} \label{ShapeOfsigmakjEqn}
    x^{-\kappa_0} y_1^{-\kappa_{1, \tau_0} - j_{\tau_0}} \opn{det}y_2^{\kappa_{n+1, \tau_0} - j_{\tau_0} - w} \opn{det}y_3^{-\kappa_{n+1, \tau_0} + j_{\tau_0}} \prod_{\tau \neq \tau_0} \opn{det} z_{1, \tau}^{-j_{\tau}} \opn{det}z_{2, \tau}^{j_{\tau}} .
    \end{equation} 

\begin{theorem} \label{TheoremForClassicalBranching}
Let $(\kappa, j) \in \mathcal{E}$, and let $S_{-j}$ denote the algebraic representation of $M_H$ with highest weight trivial outside the $\tau_0$-component, and $(j_{\tau_0},0, \dots, 0, -j_{\tau_0})$ at $\tau_0$. Let $V_{\kappa}$ denote the algebraic representation of $M_G$ with highest weight $\kappa$. We equip $V_{\kappa} \otimes S_{-j}$ with the action $\star$ of $u^{-1}M_Hu$ given on pure tensors by
\[
    (u^{-1} m u) \star (a \otimes b) \defeq [(u^{-1}mu) \cdot a] \otimes [m \cdot b], \quad \quad m \in M_H .
\]
Then there exists a unique vector $x_{\kappa}^{[j]} \in V_{\kappa} \otimes S_{-j}$ with the following properties:
\begin{enumerate}
    \item $x_{\kappa}^{[j]}$ is an eigenvector for the action of $u^{-1}M_Hu$ via $\star$, with eigencharacter given by the inverse of the character $\sigma_{\kappa}^{[j]}$, i.e. we have 
    \[
    (u^{-1}m u) \star x_{\kappa}^{[j]} = \sigma_{\kappa}^{[j]}(m)^{-1} x_{\kappa}^{[j]}
    \]
    for all $m \in M_H$.
    \item If we identify $V_{\kappa} \otimes S_{-j}$ with the space of all algebraic functions $f \colon M_G \times M_H \to \mbb{A}^1$ satisfying the following transformation property:
    \[
    f((m_1, m_2) \cdot (b, q)) = w_{M_G}^{\opn{max}} \kappa(b^{-1}) \cdot q_1^{-j_{\tau_0}} q_3^{j_{\tau_0}} \cdot f(m_1, m_2)
    \]
    where $(b, q) \in B_{M_G} \times Q_{M_H}$ and the projection of $q$ to the $\tau_0$-component of the Levi is $(q_1, q_2, q_3, q_4)$, then we have
    \[
    x_{\kappa}^{[j]}(1, v) = 1 .
    \]
    Here the $M_G \times M_H$ action is via $[(g, h) \cdot f](m_1, m_2) = f(g^{-1}m_1, h^{-1}m_2)$. 
\end{enumerate}
\end{theorem}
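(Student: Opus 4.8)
The plan is to prove both the existence/uniqueness of $x_{\kappa}^{[j]}$ and the normalisation by passing to the function model of part~(2) and reducing everything to the open orbit statement of Lemma~\ref{OpenOrbitForgammauvLemma}(2); the branching content (Pieri's rule, Lemma~\ref{PierisRuleLemma}) enters as a consistency check and as the explanation for why the inequalities cut out in Definition~\ref{DefOfAlgWeightsE} are exactly the right ones.

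\textbf{Straightening away the conjugation by $u$.} Given $f$ in the function model, I would set $g(m_1,m_2)\defeq f(u^{-1}m_1,m_2)$. Using $[(g,h)\cdot(-)](m_1,m_2)=(-)(g^{-1}m_1,h^{-1}m_2)$ with $(g,h)=(u^{-1}mu,m)$, a one line computation shows that condition~(1) — that $f$ is a $\star$-eigenvector for $u^{-1}M_Hu$ with eigencharacter $\sigma_{\kappa}^{[j],-1}$ — is equivalent to saying that $g$ is an eigenvector, with eigencharacter $\sigma_{\kappa}^{[j],-1}$, for the left-translation action of the \emph{standardly} (diagonally) embedded $M_H\hookrightarrow M_G\times M_H$; and $g$ manifestly retains the same right $B_{M_G}\times Q_{M_H}$-transformation behaviour $\chi(b,q)=w_{M_G}^{\opn{max}}\kappa(b^{-1})\,q_1^{-j_{\tau_0}}q_3^{j_{\tau_0}}$ as $f$. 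The crucial bookkeeping point is that $g(u,v)=f(1,v)$. So I am reduced to studying the (finite-dimensional, Borel--Weil) space of regular functions on $M_G\times M_H$ with prescribed left $M_H$- and right $B_{M_G}\times Q_{M_H}$-transformation behaviour, and to evaluating such a $g$ at $(u,v)$.

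\textbf{Open orbit, uniqueness, normalisation, existence.} By Lemma~\ref{OpenOrbitForgammauvLemma}(2) the orbit $M_H\cdot(u,v)\cdot(B_{M_G}\times Q_{M_H})$ is Zariski open and dense in $M_G\times M_H$, so restriction to $(u,v)$ is injective on the space of such $g$; hence uniqueness up to scalar and the fact that any nonzero such $g$ is nowhere vanishing on the open orbit. Since the scalar \emph{is} the value $g(u,v)=f(1,v)$, imposing $x_{\kappa}^{[j]}(1,v)=1$ both makes sense (the value is automatically nonzero, with no separate argument needed) and pins down $x_{\kappa}^{[j]}$ uniquely, as claimed. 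It remains to produce a nonzero such $g$, and for this I must check that $\sigma_{\kappa}^{[j]}$ and the right character $\chi$ are \emph{compatible on the stabiliser}: writing any $m$ in $\opn{Stab}(u,v)=M_H\cap(u,v)(B_{M_G}\times Q_{M_H})(u,v)^{-1}$ as $(m^{-1}u,m^{-1}v)=(ub_m,vq_m)$, one needs $\sigma_{\kappa}^{[j]}(m)^{-1}=\chi(b_m,q_m)$ with $b_m=u^{-1}m^{-1}u$, $q_m=v^{-1}m^{-1}v$. This reduces to a factor-by-factor identity of characters on the stabiliser torus described in the proof of Lemma~\ref{OpenOrbitForgammauvLemma}(2) (the diagonal torus for $\tau\ne\tau_0$; the torus $\{\opn{diag}(x,y,\dots,y)\}\subset\opn{GL}_1\times\opn{GL}_{n-1}\times\opn{GL}_n$ for $\tau_0$, together with the $\opn{GL}_1$ similitude factor contributing $x^{-\kappa_0}$), and it is precisely here that the defining constraints of $\mathcal{E}$ and the explicit shape~(\ref{ShapeOfsigmakjEqn}) of $\sigma_{\kappa}^{[j]}$ get used: one matches $-\kappa_0$, the entries of $w_{M_G}^{\opn{max}}\kappa$, and the $\det$-powers $q_1^{-j_{\tau_0}}q_3^{j_{\tau_0}}$, $\det z_{1,\tau}^{-j_\tau}\det z_{2,\tau}^{j_\tau}$ against the restriction of $\sigma_{\kappa}^{[j]}$ to the torus. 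For $\tau\ne\tau_0$ this is the computation already carried out in \cite[Appendix~A]{UFJ}.

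\textbf{Main obstacle, and cross-check.} I expect the $\tau_0$-factor to be the main obstacle: because $\gamma_{\tau_0}=u_{\tau_0}\tbyts{1}{x_{\tau_0}}{}{1}$ is not a minimal-length Weyl representative and the Levi $\opn{GL}_1\times\opn{GL}_{n-1}\times\opn{GL}_n$ has three blocks of unequal size, both identifying $\opn{Stab}(u,v)$ in that factor and verifying that $\sigma_{\kappa}^{[j]}$ restricts to it with the correct character are more delicate than in \emph{loc.\ cit.}. To organise this I would use the Bruhat-type decomposition $\hat{\gamma}_{\tau_0}=\tbyts{X}{}{}{1}\tbyts{1}{}{w_{\opn{GL}_n}^{\opn{max}}}{1}\tbyts{1}{Y}{}{Z}$ recorded in the proof of Lemma~\ref{OpenOrbitForgammauvLemma}(1) (with $X=1\times w_{\opn{GL}_{n-1}}^{\opn{max}}$), and I would cross-check the resulting non-vanishing against Pieri's rule: restricting $V_{\kappa}$ through the block subgroup $\opn{GL}_{n-1}\times\opn{GL}_n\subset\opn{GL}_{2n-1}$ and tensoring with the one-row representation underlying $S_{-j}$, Lemma~\ref{PierisRuleLemma}(2) forces exactly the bounds $0\le j_{\tau_0}\le\kappa_{n+1,\tau_0}-\kappa_{n+2,\tau_0}$ and $0\le j_\tau\le\kappa_{n,\tau}$ — i.e.\ exactly membership in $\mathcal{E}$ — confirming that the compatibility genuinely holds for all $(\kappa,j)\in\mathcal{E}$ and that the relevant $1$-dimensional $M_H$-type occurs with multiplicity one. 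Once the $\tau_0$-compatibility is in hand, the remaining assembly (reducing the global statement to the $\tau$-factors and reading off the character $\sigma_{\kappa}^{[j]}$) is routine bookkeeping with the explicit matrices of Definition~\ref{NewDefOfGamma}.
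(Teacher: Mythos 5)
Your approach is a genuine alternative to the paper's, and most of the structure is right, but there is a gap in the existence argument.

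The paper's route is essentially internal to representation theory: it cites \cite[Theorem~A.5.4]{UFJ} for the existence of a (multiplicity-one) $M_H$-eigenvector $v_\kappa^{[j]}$ in the \emph{larger} space $V_\kappa\otimes W_{-j}$, and then the new content of the proof is squeezing this eigenvector into $V_\kappa\otimes S_{-j}$ — which is done by the decomposition $W_{-j}|_{M_H}=\bigoplus_i \opn{Sym}^i T_{-1}\otimes S_{-(j-i)}$ together with Lemma~\ref{PierisRuleLemma}(2) to force $i=0$. Only then is $x_\kappa^{[j]}=(u^{-1},1)\cdot v_\kappa^{[j]}$ defined and the normalisation read off via Lemma~\ref{OpenOrbitForgammauvLemma}. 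You instead work in the function model, straighten away $u$, use the open orbit for uniqueness and for the automatic non-vanishing of $g(u,v)$ — all of which is correct — but then try to get existence from the character compatibility on $\opn{Stab}(u,v)$.

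That last step is the gap. Compatibility of $\sigma_\kappa^{[j]}$ with the right character $\chi$ on the stabiliser is \emph{necessary} but it is not \emph{sufficient} to produce a regular function $g$ on all of $M_G\times M_H$: it only produces a rational $\sigma_\kappa^{[j],-1}$-semi-invariant on the open orbit, and the complement of the open $M_H$-orbit in $M_G/B_{M_G}\times M_H/Q_{M_H}$ in general contains codimension-one boundary components along which one must still check non-negativity of vanishing order. This is exactly where the \emph{inequalities} in the definition of $\mathcal{E}$ enter. (The stabiliser-compatibility check, being a character identity, depends only on the self-duality-type \emph{equalities} among the $\kappa_{i,\tau}$ — so attributing the bounds $0\le j_{\tau_0}\le \kappa_{n+1,\tau_0}-\kappa_{n+2,\tau_0}$, etc., to the stabiliser computation is misplaced.) Your ``cross-check'' via Lemma~\ref{PierisRuleLemma}(2) is therefore not a consistency check but rather where the essential content lives; and as stated it is still not enough, since Lemma~\ref{PierisRuleLemma}(2) is only an implication (a necessary condition for non-vanishing), not a multiplicity-one statement. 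To close the gap you would either need to cite \cite[Theorem~A.5.4]{UFJ} directly (as the paper does) for existence and then carry out the Pieri refinement, or do an honest multiplicity count $\geq 1$ in $V_\kappa\otimes S_{-j}$ by combining Lemma~\ref{PierisRuleLemma}(1) with a branching argument, rather than relying on the stabiliser compatibility alone.
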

\begin{proof}
Let $W_{-j}$ denote the algebraic representation of $M_G$ with the same highest weight as $S_{-j}$. Then Lemma \ref{PierisRuleLemma}(1) implies that $V_{\kappa'}$ appears in $V_{\kappa} \otimes W_{-j}$ with multiplicity one, where $\kappa'$ is the weight equal to $\kappa$ except for $\kappa_{1, \tau_0}' = \kappa_{1, \tau_0}+j_{\tau_0}$ and $\kappa_{n+1, \tau_0}' = \kappa_{n+1, \tau_0} - j_{\tau_0}$. It is shown in \cite[Theorem A.5.4]{UFJ} that there exists a (non-zero) vector $v_{\kappa}^{[j]} \in V_{\kappa'}$ with multiplicity one on which $M_H$ acts through the inverse of the character $\sigma_{\kappa}^{[j]}$. In fact it has multiplicity one in $V_{\kappa} \otimes W_{-j}$ (by Lemma \ref{PierisRuleLemma}(1), $\kappa'$ is the unique weight appearing in the decomposition with $\kappa_{n+1, \tau_0}' = \kappa_{n+1, \tau_0} - j_{\tau_0}$). 

Decompose $W_{-1}|_{M_H} = T_{-1} \oplus S_{-1}$, where $T_{-1}$ is the algebraic representation of $M_H$ with highest weight in the $\tau_0$-component given by $(1, 0, \dots,0, -1, 0, \dots, 0)$ (where the $-1$ is in the $n$-th place). We have the following decomposition 
\[
W_{-j}|_{M_H} = \bigoplus_{i=0}^{j_{\tau_0}} \opn{Sym}^i(T_{-1}) \otimes \opn{Sym}^{j_{\tau_0}-i} (S_{-1}) = \bigoplus_{i=0}^{j_{\tau_0}} \opn{Sym}^i(T_{-1}) \otimes S_{-(j-i)} 
\]
where $S_{-(j-i)}$ has highest weight trivial outside the $\tau_0$-component, and $(j_{\tau_0} - i, 0, \dots, 0, -(j_{\tau_0} - i))$ in the $\tau_0$-component. Then we must have that $v_{\kappa}^{[j]}$ appears in $V_{\kappa} \otimes \opn{Sym}^i(T_{-1}) \otimes S_{-(j-i)}$ for some $0 \leq i \leq j_{\tau_0}$, which implies that
\[
\left( V_{\kappa}|_{M_H} \otimes S_{-(j-i)} \otimes \opn{det}_{\tau_0}^{-\kappa_{n+1, \tau_0} + j_{\tau_0}} \right)^{M'} \neq 0.
\]
where $M' \subset M_H$ denotes the subgroup which in the $\tau_0$-component is equal to $\opn{GL}_1 \times \{1\} \times \opn{GL}_n$, and is trivial outside the $\tau_0$-component. Here $\opn{det}_{\tau_0}$ denotes the determinant character in the $\tau_0$-component. By Lemma \ref{PierisRuleLemma}(2), this implies that $0 \leq j_{\tau_0} \leq j_{\tau_0}-i$, hence $i = 0$. We now define $x_{\kappa}^{[j]}$ as $(u^{-1}, 1) \cdot v_{\kappa}^{[j]}$ and the rest of the theorem follows from Lemma \ref{OpenOrbitForgammauvLemma}.
\end{proof}

We note the following crucial property of the vectors in the above theorem. Let $\overline{N}_{M_G}$ and $\overline{N}_{M_H}$ denote the opposite unipotent radicals of $B_{M_G}$ and $Q_{M_H}$ respectively. For an integer $\beta \geq 1$, let $\overline{N}_{M_G, \beta}^1 \subset \overline{N}_{M_G}(\mbb{Z}_p)$ denote the subgroup of elements which are congruent to the identity modulo $p^{\beta}$.

\begin{corollary} \label{xkappajValuedInCor}
Let $\beta \geq 1$. Consider the $B_{M_G} \times M_H$-equivariant map $V_{\kappa} \otimes S_{-j} \to w_{M_G}^{\opn{max}}\kappa \otimes S_{-j}$ given by restricting an algebraic function on $M_G \times M_H$ to $1 \times M_H$. Then the image of $x_{\kappa}^{[j]}$ under this map is a lowest weight vector. Furthermore, this implies that
\[
x_{\kappa}^{[j]}(n, m) \in 1 + p^{\beta} \mbb{Z}_p \subset \mbb{Z}_p^{\times}
\]
for $(n, m) \in \overline{N}_{M_G, \beta}^1 \times \overline{N}_{M_H}(\mbb{Z}_p)$. 
\end{corollary}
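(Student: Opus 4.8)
The plan is to deduce both assertions from the characterisation of $x_{\kappa}^{[j]}$ in Theorem \ref{TheoremForClassicalBranching} together with the explicit form of the elements $u, v$ in Definition \ref{NewDefOfGamma}. Throughout I work in the function-theoretic model of Theorem \ref{TheoremForClassicalBranching}(2): $V_{\kappa}\otimes S_{-j}$ is the space of functions on $M_G\times M_H$ with the stated right $B_{M_G}\times Q_{M_H}$-transformation, the target $w_{M_G}^{\opn{max}}\kappa\otimes S_{-j}$ is the $B_{M_G}\times M_H$-representation on which $B_{M_G}$ acts through $w_{M_G}^{\opn{max}}\kappa$ and $M_H$ acts on $S_{-j}$ by left translation in the $M_H$-variable, and the map of the statement is $F\mapsto F|_{\{1\}\times M_H}$. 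Since $x_{\kappa}^{[j]}(1,v)=1$, this image is nonzero; and since (by Pieri's rule, or a direct computation on the $\opn{GL}_n$-factor at $\tau_0$, noting $\overline{N}_{M_H}$ is trivial away from $\tau_0$) the space of $\overline{N}_{M_H}$-invariants in $S_{-j}$ is one-dimensional and spanned by the lowest weight vector, for the first assertion it suffices to show that $F|_{\{1\}\times M_H}$ is fixed by $\overline{N}_{M_H}$.

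To prove $\overline{N}_{M_H}$-invariance I would compare the honest action of $\bar n\in\overline{N}_{M_H}$ on the $S_{-j}$-factor with the twisted action $\star$. Directly from the definition of $\star$ one gets, on $V_{\kappa}\otimes S_{-j}$, the identity $\bar n^{\,\opn{hon}} = \rho(u^{-1}\bar n^{-1}u)\circ\big((u^{-1}\bar n u)\star\big)$, where $\rho(g)$ acts by $g\in M_G$ on the $V_{\kappa}$-factor and trivially on $S_{-j}$. Because $\sigma_{\kappa}^{[j]}$ is a character it is trivial on the unipotent $\bar n$, so Theorem \ref{TheoremForClassicalBranching}(1) gives $(u^{-1}\bar n u)\star x_{\kappa}^{[j]}=x_{\kappa}^{[j]}$, hence $\bar n^{\,\opn{hon}}x_{\kappa}^{[j]}=\rho(u^{-1}\bar n^{-1}u)\,x_{\kappa}^{[j]}$. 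Applying the restriction map: the left side is $\bar n\cdot\big(x_{\kappa}^{[j]}|_{\{1\}\times M_H}\big)$ by $M_H$-equivariance, while the right side is the function $m\mapsto x_{\kappa}^{[j]}(u^{-1}\bar n u,\,m)$. Now I would invoke the explicit form of $u$ (Definition \ref{NewDefOfGamma}, using the decomposition $\hat\gamma_{\tau_0}=\tbyts{X}{}{}{1}\tbyts{1}{}{w_{\opn{GL}_n}^{\opn{max}}}{1}\tbyts{1}{Y}{}{Z}$ from the proof of Lemma \ref{OpenOrbitForgammauvLemma}) to check that $u^{-1}\overline{N}_{M_H}u$ lies in the unipotent radical of $B_{M_G}$; since $w_{M_G}^{\opn{max}}\kappa$ is trivial on that unipotent radical, the right transformation property forces $x_{\kappa}^{[j]}(u^{-1}\bar n u,\,m)=x_{\kappa}^{[j]}(1,m)$. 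Thus $\bar n$ fixes $x_{\kappa}^{[j]}|_{\{1\}\times M_H}$, which is therefore the lowest weight vector of $S_{-j}$. (Alternatively, this can be read off from the explicit description of $v_{\kappa}^{[j]}$ in \cite[Theorem A.5.4]{UFJ} and the fact, established in the proof of Theorem \ref{TheoremForClassicalBranching}, that $v_{\kappa}^{[j]}$ lies in the summand $V_{\kappa}\otimes S_{-j}$.)

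For the second assertion I would first observe that, in the function model, a lowest weight vector of $S_{-j}$ is a left-$\overline{N}_{M_H}$-invariant function on $M_H$; since $v\in\overline{N}_{M_H}(\mbb{Z}_p)$ (its only nontrivial component, at $\tau_0$, is the lower-triangular unipotent $1\times1\times\tbyts{1}{}{y}{1}$) and $x_{\kappa}^{[j]}(1,v)=1$, the first assertion gives at once $x_{\kappa}^{[j]}(1,m)=1$ for every $m\in\overline{N}_{M_H}(\mbb{Z}_p)$. To control the first variable I would use that $x_{\kappa}^{[j]}$ lies in the natural $\mbb{Z}_p$-lattice $V_{\kappa}(\mbb{Z}_p)\otimes S_{-j}(\mbb{Z}_p)$ obtained from the Kostant lattices: the normalising scalar is a $p$-adic unit precisely because, by the first assertion, $x_{\kappa}^{[j]}|_{\{1\}\times M_H}$ is the primitive (standard) lowest weight vector of $S_{-j}$ with value $1$ on the big cell — something also visible from the explicit integral formula for $v_{\kappa}^{[j]}$. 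Consequently the regular function $(\bar n_G,m)\mapsto x_{\kappa}^{[j]}(\bar n_G,m)$ on $\overline{N}_{M_G}(\mbb{Z}_p)\times\overline{N}_{M_H}(\mbb{Z}_p)$ is a polynomial with $\mbb{Z}_p$-coefficients in the matrix entries, so Taylor-expanding in the first variable about the identity and using that $n\in\overline{N}_{M_G,\beta}^1$ has entries congruent to those of $1$ modulo $p^{\beta}$, we obtain $x_{\kappa}^{[j]}(n,m)\equiv x_{\kappa}^{[j]}(1,m)=1\pmod{p^{\beta}}$, i.e.\ $x_{\kappa}^{[j]}(n,m)\in 1+p^{\beta}\mbb{Z}_p\subset\mbb{Z}_p^{\times}$.

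The main obstacle is the book-keeping inside the first assertion: confirming that $u$ conjugates $\overline{N}_{M_H}$ all the way into the unipotent radical of $B_{M_G}$ (not merely into $B_{M_G}$ up to a harmless torus or sign factor) requires unwinding the explicit matrices of Definition \ref{NewDefOfGamma} in the $\tau_0$-component (for $\tau\neq\tau_0$ the group $\overline{N}_{M_H}$ is trivial, so there is nothing to check). Once this conjugation statement is in place, the second assertion is a routine integrality-and-continuity argument.
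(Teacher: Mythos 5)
The step on which your first assertion hinges — that $u^{-1}\overline{N}_{M_H}u$ lies in the unipotent radical (or even just in $B_{M_G}$) — is in fact false for $n\geq 2$, so the proposal has a genuine gap rather than just a verification to fill in. To see this, note that $u^{-1}\overline{N}_{M_H}u\subseteq B_{M_G}$ is equivalent to saying $\overline{N}_{M_H}$ stabilises the coset $uB_{M_G}$, i.e.\ $\overline{N}_{M_H}\subseteq M_H\cap uB_{M_G}u^{-1}$. But the $M_H$-orbit of $uB_{M_G}$ in $M_G/B_{M_G}$ is open (this follows from Lemma \ref{OpenOrbitForgammauvLemma}(2) by projecting to the first factor), so the stabiliser $M_H\cap uB_{M_G}u^{-1}$ has dimension $\dim M_H-\dim(M_G/B_{M_G})$; in the $\tau_0$-component this is $(2n^2-2n+2)-(2n-1)(n-1)=n+1$. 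On the other hand, the subtorus $T^{\clubsuit}$ satisfies $u^{-1}T^{\clubsuit}u=T^{\clubsuit}\subseteq T\subseteq B_{M_G}$, so $T^{\clubsuit}$ already sits in this stabiliser, and $\dim T^{\clubsuit}_{\tau_0}=n+1$ as well. Hence the identity component of $M_H\cap uB_{M_G}u^{-1}$ is exactly the torus $T^{\clubsuit}_{\tau_0}$, which contains no nontrivial unipotent elements; so the $(n-1)$-dimensional connected unipotent group $\overline{N}_{M_{H,\tau_0}}$ cannot be contained in the stabiliser. Consequently $x_{\kappa}^{[j]}(u^{-1}\bar n u,\,m)$ cannot be simplified via the right-$B_{M_G}$ transformation property, and the chain of equalities you wrote does not close.

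The paper sidesteps this entirely by working with the torus rather than the unipotent radical: it identifies the $T^{\clubsuit}$-weight of the image of $x_{\kappa}^{[j]}$ (using $u^{-1}tu=t$ for $t\in T^{\clubsuit}$, which is far weaker than the conjugation you need, together with the equality $\sigma_{\kappa}^{[j],-1}=w_{M_G}^{\opn{max}}\kappa\cdot w_{M_H}^{\opn{max}}\xi$ on $T^{\clubsuit}$) and observes that this is the lowest $T^{\clubsuit}$-weight of $S_{-j}$, because the $M_H$-action on $S_{-j}$ factors through a quotient $M'$ whose maximal torus is the image of $T^{\clubsuit}$. That this weight space is one-dimensional then forces the image to be a lowest weight vector. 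I would also flag that your second paragraph appeals to ``Kostant lattices'' and a unit normalising scalar somewhat loosely; the paper's argument is cleaner — it shows via Hartog's lemma (following \cite[Proposition 3.2.6]{LRZ21}) that $x_{\kappa}^{[j]}$ extends to an algebraic function over $\mathbb{Z}_p$, which directly gives the $\mathbb{Z}_p$-coefficient polynomial expansion you need before applying the congruence $n\equiv 1\pmod{p^{\beta}}$.
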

\begin{proof}
Let $T^{\clubsuit} \subset T$ denote the subtorus of elements of the form $(t_0; t_{1, \tau}, \dots, t_{2n, \tau})$ with the condition that $t_{i, \tau_0} = t_{2n+2-i, \tau_0}$ for $i=2, \dots, n$ and $t_{i, \tau} = t_{2n+1 - i, \tau}$ for all $\tau \neq \tau_0$ and $i=1, \dots, n$. Let $\xi$ denote the highest weight of $S_{-j}$. 

We first claim that the eigenspace $S_{-j}[T^{\clubsuit} = w_{M_H}^{\opn{max}}\xi] \subset S_{-j}$ where $T^{\clubsuit}$ acts through the character $w_{M_H}^{\opn{max}}\xi$ is the lowest weight subspace. Indeed, the action of $M_H$ factors through the projection $M_H \twoheadrightarrow M'$, where $M'$ is as in the proof of Theorem \ref{TheoremForClassicalBranching}, and the image of $T^{\clubsuit}$ under this projection is the maximal torus.

Now we note that $\sigma^{[j], -1}_{\kappa}(t) = w_{M_G}^{\opn{max}}\kappa(t) w_{M_H}^{\opn{max}}\xi(t)$ for all $t \in T^{\clubsuit}$. Furthermore, for any $t \in T^{\clubsuit}$, we have $u^{-1} t u = t$, hence we see that $x_{\kappa}^{[j]}$ maps to a lowest weight vector. It is non-zero because $x_{\kappa}^{[j]}(1, v) = 1$.

Finally, we claim that $x_{\kappa}^{[j]}$ extends to an algebraic function $M_{G, \mbb{Z}_p} \times M_{H, \mbb{Z}_p} \to \mbb{A}^1_{\mbb{Z}_p}$. This follows the same argument as in \cite[Proposition 3.2.6]{LRZ21}, namely the function $x_{\kappa}^{[j]}$ is regular on the union of $M_{G, \mbb{Q}_p} \times M_{H, \mbb{Q}_p}$ and the spherical cell $[(u^{-1}M_{H, \mbb{Z}_p} u) \times M_{H, \mbb{Z}_p}] \cdot (1, v) \cdot [B_{M_{G}, \mbb{Z}_p} \times Q_{M_{H}, \mbb{Z}_p}]$, and the complement of this union has codimension $\geq 2$ in $M_{G, \mbb{Z}_p} \times M_{H, \mbb{Z}_p}$ by Lemma \ref{OpenOrbitForgammauvLemma}. Hence $x_{\kappa}^{[j]}$ extends to a regular function on $M_{G, \mbb{Z}_p} \times M_{H, \mbb{Z}_p}$ by Hartogs' lemma. We therefore see that the vector
\[
x_{\kappa}^{[j]}|_{\overline{N}_{M_G}(\mbb{Z}_p) \times \overline{N}_{M_H}(\mbb{Z}_p)}
\]
is a polynomial with $\mbb{Z}_p$-coefficients in the coordinates corresponding to the opposite unipotents, and $x_{\kappa}^{[j]}|_{1 \times \overline{N}_{M_H}(\mbb{Z}_p)}$ is identically equal to $1$, by our normalisations and the fact that it is a lowest weight vector. The result follows.
\end{proof}

\subsubsection{Differential operators}

In this section, we introduce certain locally analytic functions which will act on the space of nearly holomorphic/overconvergent automorphic forms (defined in \S \ref{ClassicalNearlyHolFormsSection} and \S \ref{NearlyOverconvergentAutoFormsSection}) through differential operators. We construct these functions by $p$-adically interpolating the branching law in the previous section. Throughout this section, we fix an integer $\beta \geq 1$.

We first discuss the classical case. Let $C^{\opn{pol}}(\mbb{Q}_p^{2n-1}, \mbb{Q}_p)$ denote the $\mbb{Q}_p$-algebra of polynomial functions $\mbb{Q}_p^{2n-1} \to \mbb{Q}_p$. This naturally carries an action of $M_G(\mbb{Q}_p)$ as follows. We can view a (row) vector $a \in \mbb{Q}_p^{2n-1}$ as a lower triangular block matrix $\tbyts{1}{}{a^t}{1}$ in (the $\tau_0$-component of) $\overline{U}_G$, where $U_G$ denotes the unipotent radical of $P_G$. The action is then given by
\begin{equation} \label{ActionOnCpolEqnPrelim}
(m \cdot \phi)(a) = \phi( m^{-1} \tbyt{1}{}{a^t}{1} m ), \quad \phi \in C^{\opn{pol}}(\mbb{Q}_p^{2n-1}, \mbb{Q}_p), m \in M_G(\mbb{Q}_p).
\end{equation}

\begin{remark}
With this action, $C^{\opn{pol}}(\mbb{Q}_p^{2n-1}, \mbb{Q}_p)$ is naturally isomorphic to the universal enveloping algebra of $\ide{u}_G =\opn{Lie}U_G$ (equipped with the adjoint action).
\end{remark}

Given a partition $\sum_{i=n+1}^{2n} t_i = j_{\tau_0}$ with $t_i \geq 0$, we can consider the polynomial function 
\[
(a_2, \dots, a_{2n}) \mapsto \prod_{i=n+1}^{2n} a_i^{t_i} \in C^{\opn{pol}}(\mbb{Q}_p^{2n-1}, \mbb{Q}_p).
\]
The subspace spanned by these functions is stable under $M_H(\mbb{Q}_p) \subset M_G(\mbb{Q}_p)$ and is isomorphic to the representation $S_{-j}$. When $t_{n+1} = j_{\tau_0}$ (and $t_i = 0$ for $i > n+1$), this is a lowest weight vector in this representation.

Recall that we view $S_{-j}$ as the space of algebraic functions $f \colon M_H \to \mbb{A}^1$ satisfying the transformation property:
\[
f(m q) = q_1^{-j_{\tau_0}} q_3^{j_{\tau_0}} f(m)
\]
for all $m \in M_H$, $q \in Q_{M_H}$, where $(q_1, q_2, q_3, q_4)$ denotes the projection of $q$ to the $\tau_0$-component of the Levi of $Q_{M_H}$. We let $v^{\opn{can}} \in S_{-j}$ denote the lowest weight vector which satisfies $v^{\opn{can}}(1) = 1$.

\begin{convention} \label{ConventionForPhiBeta}
We normalise the $M_H$-equivariant map $S_{-j} \subset C^{\opn{pol}}(\mbb{Q}_p^{2n-1}, \mbb{Q}_p)$ described above by sending the lowest weight vector $v^{\opn{can}}$ to the function 
\[
(a_2, \dots, a_{2n}) \mapsto a_{n+1}^{j_{\tau_0}} 
\]
and we denote the resulting embedding by $\Phi \colon S_{-j} \to C^{\opn{pol}}(\mbb{Q}_p^{2n-1}, \mbb{Q}_p)$. In particular, if $f \in S_{-j}$ is an algebraic function and $\Phi(f) \in C^{\opn{pol}}(\mbb{Q}_p^{2n-1}, \mbb{Q}_p)$ denotes its image under this embedding, then we have 
\[
(m^{-1} \cdot \Phi(f))((0, \dots,0, 1, 0, \dots, 0)) = f(m), \quad \quad m \in M_H(\mbb{Q}_p)
\]
where $1$ is in the $n$-th place (i.e. $a_{n+1} = 1$). 
\end{convention}

We make the following definition:

\begin{definition} \label{DefinitionOfDeltakappaj}
Let $(\kappa, j) \in \mathcal{E}$. We let $\delta_{\kappa, j} \in V_{\kappa} \otimes C^{\opn{pol}}(\mbb{Q}_p^{2n-1}, \mbb{Q}_p)$ denote the image of $v_{\kappa}^{[j]}$ under the map 
\[
1 \times \Phi \colon V_{\kappa} \otimes S_{-j} \to V_{\kappa} \otimes C^{\opn{pol}}(\mbb{Q}_p^{2n-1}, \mbb{Q}_p) .
\]
Note that $\delta_{\kappa, j}$ is an eigenvector for the diagonal action of $M_{H}(\mbb{Q}_p)$ with eigencharacter $\sigma_{\kappa}^{[j], -1}$.
\end{definition}

We now discuss the $p$-adic analogue of these polynomial functions. Recall that $M^G_{\opn{Iw}}(p^{\beta})$ denotes the depth $p^{\beta}$ upper-triangular Iwahori subgroup in $M_G(\mbb{Z}_p)$.

\begin{definition} \label{DefOfUGbetacircSupport}
We let $U_{G, \beta} = p^{-\beta} \mbb{Z}_p^{\oplus n} \oplus \mbb{Z}_p^{\oplus n-1} \subset \mbb{Q}_p^{\oplus 2n-1}$, and we let
\[
U_{G, \beta}^{\circ} = \mbb{Z}_p^{\oplus n-1} \oplus \mbb{Z}_p^{\times} \oplus (p\mbb{Z}_p)^{\oplus n-1} \subset U_{G, \beta}.
\]
Note that both $U_{G, \beta}$ and $U_{G, \beta}^{\circ}$ are stable under the conjugation action of $M^G_{\opn{Iw}}(p^{\beta})$.
\end{definition}

We have the following important lemma, which allows us to $p$-adically interpolate the polynomial functions above.

\begin{lemma} \label{uudeltaInZpstarLemma}
If we view $\delta_{\kappa, j}$ as a polynomial function on $M_G(\mbb{Q}_p) \times \mbb{Q}_p^{\oplus 2n-1}$ satisfying the transformation property $\delta_{\kappa, j}(mb, a) = (w_{M_G}^{\opn{max}}\kappa)(b^{-1}) \delta_{\kappa, j}(m, a)$ for all $b \in B_{M_G}$, then
\[
[(u^{-1}, u^{-1}) \cdot \delta_{\kappa, j}] (M^G_{\opn{Iw}}(p^{\beta}) \times U_{G, \beta}^{\circ}) \subset \mbb{Z}_p^{\times} .
\]
\end{lemma}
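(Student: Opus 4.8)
The strategy is to reduce the statement to a combination of two facts already available: the integrality/lowest-weight behaviour of $x_{\kappa}^{[j]}$ established in Corollary \ref{xkappajValuedInCor}, and the explicit normalisation of the embedding $\Phi$ fixed in Convention \ref{ConventionForPhiBeta}. First I would unwind the definitions so that the claim becomes a statement purely about the polynomial $\delta_{\kappa,j}$ evaluated on a product $M^G_{\opn{Iw}}(p^{\beta}) \times U_{G,\beta}^{\circ}$. Writing a general element of $M^G_{\opn{Iw}}(p^{\beta})$ via the Iwahori decomposition $m = \overline{n} \, t \, n$ with $\overline{n} \in \overline{N}_{M_G,\beta}^1$, $t \in T(\mbb{Z}_p)$, $n \in N_{M_G}(\mbb{Z}_p)$, the transformation property $\delta_{\kappa,j}(mb,a) = (w_{M_G}^{\opn{max}}\kappa)(b^{-1})\delta_{\kappa,j}(m,a)$ lets me absorb the $T$-part (valued in $\mbb{Z}_p^{\times}$ since $w_{M_G}^{\opn{max}}\kappa$ is an algebraic character) and the $N_{M_G}$-part into the right-hand argument $b$, up to a unit; one similarly pushes the $u^{-1}$ conjugation through. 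So I would reduce to showing that $\delta_{\kappa,j}(\overline{n}, a) \in \mbb{Z}_p^{\times}$ for $\overline{n} \in \overline{N}_{M_G,\beta}^1$ and $a$ ranging over the preimage of $U_{G,\beta}^{\circ}$ under the conjugation $M^G_{\opn{Iw}}(p^{\beta})$-translation, which by the definition of $U_{G,\beta}^{\circ}$ (namely $\mbb{Z}_p^{\oplus n-1} \oplus \mbb{Z}_p^{\times} \oplus (p\mbb{Z}_p)^{\oplus n-1}$) amounts to evaluating at row vectors $a = (a_2,\dots,a_{2n})$ which are $p$-adically integral, have $a_{n+1} \in \mbb{Z}_p^{\times}$, and have $a_i \in p\mbb{Z}_p$ for $i > n+1$.

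\textbf{Key steps.} (i) Recall from Definition \ref{DefinitionOfDeltakappaj} that $\delta_{\kappa,j} = (1 \times \Phi)(v_{\kappa}^{[j]})$, and that $\Phi$ sends the lowest weight vector $v^{\opn{can}} \in S_{-j}$ to the monomial $(a_2,\dots,a_{2n}) \mapsto a_{n+1}^{j_{\tau_0}}$. By Corollary \ref{xkappajValuedInCor}, the image of $x_{\kappa}^{[j]} = (u^{-1},1)\cdot v_{\kappa}^{[j]}$ under restriction to $1 \times M_H$ is a lowest weight vector of $w_{M_G}^{\opn{max}}\kappa \otimes S_{-j}$, and $x_{\kappa}^{[j]}$ extends to an algebraic function over $\mbb{Z}_p$ which on $\overline{N}_{M_G}(\mbb{Z}_p) \times \overline{N}_{M_H}(\mbb{Z}_p)$ is a polynomial with $\mbb{Z}_p$-coefficients, identically $1$ on $1 \times \overline{N}_{M_H}(\mbb{Z}_p)$ and lying in $1 + p^{\beta}\mbb{Z}_p$ on $\overline{N}_{M_G,\beta}^1 \times \overline{N}_{M_H}(\mbb{Z}_p)$. (ii) Transport this statement across the embedding $\Phi$: the coordinate $a_{n+1}$ on $\mbb{Q}_p^{\oplus 2n-1}$ corresponds to the distinguished coordinate on $\overline{N}_{M_H}$ (the $(n+1,n)$-entry), while the remaining coordinates $a_i$ with $i \neq n+1$ correspond either to coordinates outside $M_H$ or to the coordinates in $\overline{N}_{M_H}$ that are "higher" in the partial order; the key point, following the proof of Corollary \ref{xkappajValuedInCor}, is that $x_{\kappa}^{[j]}$ being a lowest weight vector for $S_{-j}$ forces its expansion in the $a_i$ to be a \emph{unit multiple} of $a_{n+1}^{j_{\tau_0}}$ plus terms with strictly positive powers of the "higher" coordinates $a_i$, $i > n+1$. (iii) Now evaluate: on $U_{G,\beta}^{\circ}$ the leading monomial contributes $a_{n+1}^{j_{\tau_0}} \in \mbb{Z}_p^{\times}$, the higher coordinates lie in $p\mbb{Z}_p$ so the correction terms lie in $p\mbb{Z}_p$, and the $\overline{N}_{M_G,\beta}^1$-direction contributes a factor in $1 + p^{\beta}\mbb{Z}_p$; together these land in $\mbb{Z}_p^{\times}$. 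The conjugation-stability of $U_{G,\beta}$ and $U_{G,\beta}^{\circ}$ under $M^G_{\opn{Iw}}(p^{\beta})$ noted in Definition \ref{DefOfUGbetacircSupport} is what guarantees that the range of $a$ one actually has to check after the reduction in the first paragraph is still contained in $U_{G,\beta}^{\circ}$.

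\textbf{Main obstacle.} The routine part is the bookkeeping with the Iwahori decomposition and the transformation property; the genuinely delicate step is (ii), namely pinning down precisely which monomials in $(a_2,\dots,a_{2n})$ occur in the expansion of $\delta_{\kappa,j}$ and verifying that every monomial other than the leading $a_{n+1}^{j_{\tau_0}}$ is divisible by some $a_i$ with $i > n+1$ (equivalently, that no monomial involves only the coordinates $a_2,\dots,a_{n+1}$ beyond the pure power of $a_{n+1}$). This is exactly where the lowest-weight property from Corollary \ref{xkappajValuedInCor} and the Pieri-rule constraints of Theorem \ref{TheoremForClassicalBranching} (in particular the vanishing $i=0$ in its proof, which says the component of $v_{\kappa}^{[j]}$ lands in $V_{\kappa} \otimes S_{-j}$ and not in $V_{\kappa} \otimes \opn{Sym}^i(T_{-1}) \otimes S_{-(j-i)}$ for $i > 0$) must be combined: the $T_{-1}$-free statement is precisely what forbids the "bad" monomials. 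I would therefore spend most of the write-up making (ii) precise, likely by choosing explicit coordinates on $\overline{N}_{M_H}$ and $\overline{U}_G$ in the $\tau_0$-block and tracking the weight under $T^{\clubsuit}$, after which (iii) is an immediate $p$-adic estimate.
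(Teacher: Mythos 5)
Your plan correctly identifies Corollary \ref{xkappajValuedInCor} and Convention \ref{ConventionForPhiBeta} as the two essential inputs, but the load-bearing step (ii)--(iii) contains a misconception that makes the argument break down, and it is precisely at the point you flag as the "main obstacle."

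First, the claim in (ii) that ``the coordinate $a_{n+1}$ corresponds to the distinguished coordinate on $\overline{N}_{M_H}$ (the $(n+1,n)$-entry)'' is not right. In the paper's proof, the evaluation point is rewritten as $z^{-1}\cdot(0,\dots,0,1,0,\dots,0)$ for an explicit $z\in\overline{Q}_{M_H}(\mathbb{Z}_p)$ whose $\tau_0$-component is $1\times 1\times\left(\begin{smallmatrix}a_{n+1}&\\ c&1\end{smallmatrix}\right)$ with $c=(a_n+a_{n+2},\dots,a_2+a_{2n})^t$; here $a_{n+1}$ sits on the \emph{diagonal} of the $\mathrm{GL}_n$-block, i.e.\ it is a Levi coordinate of $\overline{Q}_{M_H}$, not a coordinate of the opposite unipotent $\overline{N}_{M_H}$. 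The unit contribution $a_{n+1}^{j_{\tau_0}}$ then comes from the $Q_{M_H}$-transformation law of $x_{\kappa}^{[j]}$ (the character $q_1^{-j_{\tau_0}}q_3^{j_{\tau_0}}$ in Theorem~\ref{TheoremForClassicalBranching}(2) applied to the diagonal factor of $z$), not from any lowest-weight structure on the unipotent side.

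Second, the $p$-adic estimate in (iii) fails. You have absorbed the $u^{-1}$-conjugation in the reduction step, but then in (iii) you estimate as if the polynomial is still being evaluated on $U^\circ_{G,\beta}$ itself. After the $u^{-1}$-change of coordinates the polynomial $(1\times\Phi)(x_\kappa^{[j]})$ (which depends only on its last $n$ slots) is evaluated at $(a_{n+1},\, a_n+a_{n+2},\dots,\, a_2+a_{2n})$, and the entries $a_n+a_{n+2},\dots,a_2+a_{2n}$ lie in $\mathbb{Z}_p$ but are \emph{not} in $p\mathbb{Z}_p$, because $a_2,\dots,a_n$ range freely over $\mathbb{Z}_p$ in the definition of $U^\circ_{G,\beta}$. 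So the correction terms are not small, and the monomial-by-monomial estimate you propose does not close.

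The paper's proof sidesteps both issues by never expanding the polynomial at all. It packages the transformed argument as $z^{-1}\cdot(0,\dots,0,1,0,\dots,0)$ and applies Convention~\ref{ConventionForPhiBeta} to rewrite the whole expression as $x_\kappa^{[j]}(i,z)$. Factoring $z=z'z_d$ with $z'\in\overline{N}_{M_H}(\mathbb{Z}_p)$ and $z_d$ diagonal (possible since $a_{n+1}\in\mathbb{Z}_p^\times$), the $Q_{M_H}$-transformation law pulls out $a_{n+1}^{j_{\tau_0}}\in\mathbb{Z}_p^\times$, and Corollary~\ref{xkappajValuedInCor} applied to $x_\kappa^{[j]}(\,\cdot\,,z')$ gives the remaining $1+p^\beta\mathbb{Z}_p$ factor (together with the unit from the $B_{M_G}(\mathbb{Z}_p)$-part of $i$). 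Crucially, Corollary~\ref{xkappajValuedInCor} holds for \emph{all} of $\overline{N}_{M_H}(\mathbb{Z}_p)$, so the fact that the entries of $z'$ are merely integral (not in $p\mathbb{Z}_p$) causes no problem --- which is exactly the gap your monomial expansion cannot bridge.
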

\begin{proof}
Let $b = (a_2, \dots, a_n, a_{n+1}, a_{n+2}, \dots, a_{2n}) \in U^{\circ}_{G, \beta}$ (so $a_i \in \mbb{Z}_p$, $a_{n+1} \in \mbb{Z}_p^{\times}$ and $a_{n+2}, \dots, a_{2n} \in p\mbb{Z}_p$). Consider the element $z \in \overline{Q}_{M_H}(\mbb{Z}_p)$ which is the identity outside the $\tau_0$-component, and in the $\tau_0$-component is given by
\[
1 \times 1 \times \tbyt{a_{n+1}}{}{c}{1}
\]
where $\tbyt{a_{n+1}}{}{c}{1}$ is the $n \times n$-matrix block matrix with $1 \times 1$ upper left block and $(n-1) \times (n-1)$-lower block, with $c = (a_{n} + a_{n+2}, a_{n-1} + a_{n+3}, \dots, a_2 + a_{2n})^t$. 

By Convention \ref{ConventionForPhiBeta} and the definition of $x_{\kappa}^{[j]}$, we see that for all $i \in M^G_{\opn{Iw}}(p^{\beta})$
\begin{align*}
[(u^{-1}, u^{-1}) \cdot \delta_{\kappa, j}](i, b) &= (1 \times \Phi)(x_{\kappa}^{[j]})(i, (a_2, \dots, a_{n+1}, a_n + a_{n+2}, \dots, a_2 + a_{2n})) \\ &= (1 \times \Phi)(x_{\kappa}^{[j]})(i, (0, \dots,0, a_{n+1}, a_n + a_{n+2}, \dots, a_2 + a_{2n})) \\ &= (1 \times z^{-1}) \cdot (1 \times \Phi)(x_{\kappa}^{[j]})(i, (0, \dots,0, 1, 0, \dots, 0)) \\ &= x_{\kappa}^{[j]}(i, z).
\end{align*}
This is an element of $\mbb{Z}_p^{\times}$ by Corollary \ref{xkappajValuedInCor}.
\end{proof}

We make the following definition:

\begin{definition} \label{DeltaDaggerKJBDef}
    For $(\kappa, j) \in \mathcal{E}$, let $\delta^{\dagger}_{\kappa, j, \beta} \in V_{\kappa} \otimes C^{\opn{pol}}(U_{G, \beta}, \mbb{Q}_p)$ denote the restriction of $(u^{-1}, u^{-1}) \cdot \delta_{\kappa, j}$ to $M_G(\mbb{Q}_p) \times U_{G, \beta}$.
\end{definition}

Let $s \geq 1$ be an integer and $(R, R^+)$ a complete Tate affinoid algebra over $(\mbb{Q}_p, \mbb{Z}_p)$. For any $(\kappa, j) \in \mathcal{X}_{R, s}$, we let $\sigma_{\kappa}^{[j]} \colon M_H(\mbb{Z}_p) \to R^{\times}$ denote the corresponding $s$-analytic character defined by the same formula as in (\ref{ShapeOfsigmakjEqn}). Let $C^{s\opn{-an}}(U_{G, \beta}, R)$ denote the $R$-algebra of $s$-analytic functions $U_{G, \beta} \to R$, which comes equipped with an action of $M^G_{\opn{Iw}}(p^{\beta})$ via the same formula as in (\ref{ActionOnCpolEqnPrelim}). We can $p$-adically interpolate the vectors in Definition \ref{DeltaDaggerKJBDef} by following the same strategy in \cite[Appendix A]{UFJ}. More precisely, let $V^{s\opn{-an}}_{\kappa}$ denote the $s$-analytic induction 
\[
V_{\kappa}^{s\opn{-an}} = \left\{ f \colon M^G_{\opn{Iw}}(p^{\beta}) \to \mbb{Q}_p : \begin{array}{c} f \text{ is } s\text{-analytic } \\ f(- \cdot b) = (w_{M_G}^{\opn{max}} \kappa)(b^{-1}) f(-) \text{ for all } b \in B_{M_G}(\mbb{Q}_p) \cap M^G_{\opn{Iw}}(p^{\beta}) \end{array} \right\}
\]
which carries an action of $m \in M^G_{\opn{Iw}}(p^{\beta})$ via the formula $m \cdot f(-) = f(m^{-1}\cdot -)$. 

\begin{definition}
Let $(\kappa, j) \in \mathcal{X}_{R, s}$ and write
    \begin{align*} 
    (\kappa, j) = a_0 \circ (\mu_0, 0) + a_w \circ (\mu_w, 0) &+ a_{n+1, \tau_0} \circ (\mu_{n+1, \tau_0}, 0) \\ &+ \sum_{i=1}^n \sum_{\tau \in \Psi} a_{i, \tau} \circ (\mu_{i, \tau}, 0) + \sum_{\tau \in \Psi} b_{\tau} \circ (\mu_{n, \tau}, 1_{\tau}) 
    \end{align*}
    for unique $s$-analytic characters $a_0, a_w, a_{i, \tau}, b_{\tau} \colon \mbb{Z}_p^{\times} \to (R^+)^{\times}$ (see Remark \ref{LAConeDecompositionWeightsRem}). Then we define $\delta^{\dagger, s\opn{-an}}_{\kappa, j, \beta} \colon M^G_{\opn{Iw}}(p^{\beta}) \times U_{G, \beta} \to R$ as the extension-by-zero of the function supported on $M^G_{\opn{Iw}}(p^{\beta}) \times U_{G, \beta}^{\circ}$ given by
\[
(\delta^{\dagger}_{\mu_0, 0, \beta})^{a_0} \cdot (\delta^{\dagger}_{\mu_w,0, \beta})^{a_w} \cdot (\delta^{\dagger}_{\mu_{n+1, \tau_0},0, \beta})^{a_{n+1, \tau_0}} \cdot \prod_{\substack{i=1, \dots, n \\ \tau \in \Psi}} (\delta^{\dagger}_{\mu_{i, \tau},0, \beta})^{a_{i, \tau}} \cdot \prod_{\tau \in \Psi} (\delta^{\dagger}_{\mu_{n, \tau},1_{\tau}, \beta})^{b_{\tau}}
\]
which is well-defined by Lemma \ref{uudeltaInZpstarLemma}. This function defines an element $\delta^{\dagger, s\opn{-an}}_{\kappa, j, \beta} \in V_{\kappa}^{s\opn{-an}} \hatot C^{s\opn{-an}}(U_{G, \beta}, R)$, and note that $\delta^{\dagger, s\opn{-an}}_{\kappa, j, \beta} \in V_{\kappa} \otimes C^{s\opn{-an}}(U_{G, \beta}, \mbb{Q}_p)$ whenever $(\kappa, j) \in \mathcal{E}$.
\end{definition}

These vectors satisfy the following properties:

\begin{proposition} \label{SanDeltaDaggerIsAnEvectorProp}
    We have the following:
    \begin{enumerate}
        \item The element $\delta^{\dagger, s\opn{-an}}_{\kappa, j, \beta}$ is an eigenvector for the action of $u^{-1}M^H_{\diamondsuit}(p^{\beta})u \subset M^G_{\opn{Iw}}(p^{\beta})$ with eigencharacter $\sigma_{\kappa}^{[j], -1}$.
        \item The construction of $\delta^{\dagger, s\opn{-an}}_{\kappa, j, \beta}$ is compatible with changing the Tate affinoid algebra $(R, R^+)$ and the radius $s$ of analyticity (in the obvious way).
        \item Let $(\kappa, j) \in \mathcal{E}$ and let $\chi = (\chi_{\tau}) \colon \prod_{\tau \in \Psi} \mbb{Z}_p^{\times} \to L^{\times}$ be a finite order character, for some finite extension $L/\mbb{Q}_p$. Let $\beta \geq 1$ be any integer such that $\chi_{\tau}$ is trivial on $1 + p^{\beta}\mbb{Z}_p$ for all $\tau \in \Psi$. Let $1_{U_{G, \beta}^{\circ}, \chi}$ denote the weighted indicator function given by
        \[
        1_{U_{G, \beta}^{\circ}, \chi}(a_2, \dots, a_{2n}) \defeq \left\{ \begin{array}{cc} \chi_{\tau_0}(a_{n+1}) & \text{ if } (a_2, \dots, a_{2n}) \in U_{G,\beta}^{\circ} \\ 0 & \text{ otherwise } \end{array} \right. \quad \in C^{s\opn{-an}}(U_{G, \beta}, L)
        \]
        with $s \geq 1$ sufficiently large. Then $\delta_{\kappa, j+\chi, \beta}^{\dagger, s\opn{-an}} = 1_{U_{G, \beta}^{\circ}, \chi} \cdot \delta_{\kappa, j, \beta}^{\dagger}$.
        \item Let $\beta' \geq \beta$. Then the vector $\delta^{\dagger, s\opn{-an}}_{\kappa, j, \beta'}$ is equal to the image of $\delta^{\dagger, s\opn{-an}}_{\kappa, j, \beta}$ under the map induced from restricting a function on $M^G_{\opn{Iw}}(p^{\beta}) \times U_{G, \beta}$ to $M^G_{\opn{Iw}}(p^{\beta'}) \times U_{G, \beta}$ and then extending by zero to $M^G_{\opn{Iw}}(p^{\beta'}) \times U_{G, \beta'}$.
    \end{enumerate}
\end{proposition}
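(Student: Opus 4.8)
The plan is to read all four assertions off the explicit product formula defining $\delta^{\dagger, s\opn{-an}}_{\kappa, j, \beta}$, so the first thing I would record is the bookkeeping that makes such a reduction possible: the map $(\kappa, j) \mapsto \sigma_\kappa^{[j]}$ is a group homomorphism on $\mathcal{X}_{R,s}$ — this is visible from the formula (\ref{ShapeOfsigmakjEqn}), since $w$ is a $\mbb{Z}$-linear function of $\kappa$ — so $\sigma^{[j],-1}_\kappa$ factors through the very decomposition into the generators $(\mu_0,0),(\mu_w,0),(\mu_{n+1,\tau_0},0),(\mu_{i,\tau},0),(\mu_{n,\tau},1_\tau)$ that $\delta^{\dagger, s\opn{-an}}_{\kappa, j, \beta}$ does. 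I would also note at the outset that, by Definition \ref{DefOfUGbetacircSupport} together with the inclusion $u^{-1}M^H_\diamondsuit(p^\beta)u \subseteq M^G_{\opn{Iw}}(p^\beta)$, the subgroup $u^{-1}M^H_\diamondsuit(p^\beta)u$ stabilises both $U_{G,\beta}$ and $U^\circ_{G,\beta}$; this is what will let me pass freely between "restrict to $M^G_{\opn{Iw}}(p^\beta) \times U_{G,\beta}$'', "restrict to $U^\circ_{G,\beta}$'' and "extend by zero'' without disturbing equivariance.

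For (1): by Definition \ref{DefinitionOfDeltakappaj} each classical generator $\delta_{\mu, *}$ is an eigenvector for the diagonal $M_H(\mbb{Q}_p)$-action with eigencharacter $\sigma_\mu^{[*], -1}$; twisting by $(u^{-1}, u^{-1})$ turns this into a $u^{-1}M_H(\mbb{Q}_p)u$-eigenvector, and after restriction (permissible by the remark above) each $\delta^\dagger_{\mu, *, \beta}|_{U^\circ_{G,\beta}}$ is a $u^{-1}M^H_\diamondsuit(p^\beta)u$-eigenvector with eigencharacter $\sigma_\mu^{[*], -1}$, taking values in $\mbb{Z}_p^\times$ by Lemma \ref{uudeltaInZpstarLemma}. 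Since the $u^{-1}M^H_\diamondsuit(p^\beta)u$-action on $C^{s\opn{-an}}(U^\circ_{G,\beta}, R)$ and on $V^{s\opn{-an}}_\kappa$ is by $R$-algebra automorphisms, an arbitrary product of $s$-analytic powers of such eigenvectors is again an eigenvector whose eigencharacter is the corresponding product/composition — which, by the homomorphism property, is exactly $\sigma^{[j],-1}_\kappa$; extending by zero preserves this. This is the step I expect to demand the most care, since one must carry the eigencharacter through the whole chain (the $(u^{-1},u^{-1})$-twist, restriction to the Iwahori and to $U^\circ_{G,\beta}$, $s$-analytic exponentiation, multiplication, extension by zero).

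Parts (2) and (4) are then formal. For (2): the decomposition of Remark \ref{LAConeDecompositionWeightsRem} is compatible with a morphism of Tate affinoid pairs and with enlarging $s$, each $\delta^\dagger_{\mu, *, \beta}$ is defined over $\mbb{Q}_p$ independently of these choices, and — thanks to Lemma \ref{uudeltaInZpstarLemma} — raising a $\mbb{Z}_p^\times$-valued function to an $s$-analytic power commutes with base change and with increasing the radius. For (4): the decomposition coefficients depend only on $(\kappa, j)$ and not on $\beta$, the support set $U^\circ_{G, \beta}$ is independent of $\beta$, and $\delta^\dagger_{\mu, *, \beta'}$ is by definition the restriction of the fixed function $(u^{-1}, u^{-1}) \cdot \delta_{\mu, *}$ from $M_G(\mbb{Q}_p) \times U_{G, \beta'} \supseteq M_G(\mbb{Q}_p) \times U_{G, \beta}$ (recall $U_{G,\beta}\subseteq U_{G,\beta'}$ for $\beta'\geq\beta$), so restricting further to $M^G_{\opn{Iw}}(p^{\beta'}) \subseteq M^G_{\opn{Iw}}(p^\beta)$ and extending by zero reproduces exactly the prescription in the statement.

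For (3) I would compare the generator decompositions of $(\kappa, j)$ and $(\kappa, j + \chi)$: they agree except that the coefficient of $(\mu_{n, \tau}, 1_\tau)$ is multiplied by $\chi_\tau$ and, to leave the $T$-component unchanged, the coefficient of $(\mu_{n, \tau}, 0)$ is multiplied by $\chi_\tau^{-1}$. Hence $\delta^{\dagger, s\opn{-an}}_{\kappa, j + \chi, \beta}$ equals $\delta^\dagger_{\kappa, j, \beta}$ on $U^\circ_{G,\beta}$ (and $0$ off it) times $\prod_{\tau \in \Psi}\bigl(\delta^\dagger_{\mu_{n,\tau}, 1_\tau, \beta}/\delta^\dagger_{\mu_{n,\tau}, 0, \beta}\bigr)^{\chi_\tau}$, and it remains to evaluate this product on $M^G_{\opn{Iw}}(p^\beta) \times U^\circ_{G,\beta}$. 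Using the computation in the proof of Lemma \ref{uudeltaInZpstarLemma} together with Convention \ref{ConventionForPhiBeta}, the $\tau_0$-factor becomes $\chi_{\tau_0}(a_{n+1})$ times the $\chi_{\tau_0}$-power of a quantity lying in $1 + p^\beta\mbb{Z}_p$ (by Corollary \ref{xkappajValuedInCor}, together with an Iwahori-factorisation argument to pass from $\overline{N}^1_{M_G,\beta}$ to all of $M^G_{\opn{Iw}}(p^\beta)$), while for $\tau \neq \tau_0$ the relevant generators have $j_{\tau_0} = 0$, so the ratio itself is $1 + p^\beta\mbb{Z}_p$-valued on $M^G_{\opn{Iw}}(p^\beta) \times U^\circ_{G,\beta}$ (using in addition that $\det z_{1,\tau} \equiv \det z_{2,\tau} \bmod p^\beta$ for elements of $M^H_\diamondsuit(p^\beta)$, which is exactly the $\diamondsuit$-condition). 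Since $\chi_\tau$ is trivial on $1 + p^\beta\mbb{Z}_p$ by hypothesis, all these powers collapse and one is left with $1_{U^\circ_{G,\beta}, \chi} \cdot \delta^\dagger_{\kappa, j, \beta}$. The genuinely delicate point in (3) is precisely this cancellation of the away-from-$\tau_0$ twists, but the principal obstacle overall is the eigenvector verification in (1); once that is in place, the remaining parts are bookkeeping with the explicit formulae.
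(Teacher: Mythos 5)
Your proof is correct, and for parts (1), (2), (4) it matches the paper's approach, which simply records these as ``clear from the construction''; the verifications you spell out (carrying the eigencharacter through the $(u^{-1},u^{-1})$-twist, restriction to $M^G_{\opn{Iw}}(p^\beta)\times U^\circ_{G,\beta}$, $s$-analytic exponentiation and multiplication; independence of the generator decomposition from $(R,R^+)$, $s$, $\beta$) are exactly the right ones.

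For (3) you take a mildly different route from the paper. The paper first reduces without loss of generality to $\chi_\tau$ trivial for $\tau\neq\tau_0$ (asserting that case is clear) and then does the explicit $\tau_0$-computation for $\delta^{\dagger,s\text{-an}}_{0,\chi,\beta}$; you instead keep all $\tau$ in play and evaluate the correction factor $\prod_\tau\bigl(\delta^\dagger_{\mu_{n,\tau},1_\tau,\beta}/\delta^\dagger_{\mu_{n,\tau},0,\beta}\bigr)^{\chi_\tau}$ directly. Both routes rest on the same computation from the proof of Lemma~\ref{uudeltaInZpstarLemma} together with Corollary~\ref{xkappajValuedInCor}, so the mathematical content agrees. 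One small correction: the parenthetical appeal to the $\diamondsuit$-condition ($\det z_{1,\tau}\equiv\det z_{2,\tau}\bmod p^\beta$) for the $\tau\neq\tau_0$ factors is neither needed nor the right kind of statement here — it is a condition on eigencharacters of the group action, whereas what you must control is the \emph{value} of the ratio on $\overline N^1_{M_G,\beta}\times U^\circ_{G,\beta}$, and the domain is not an $M^H_\diamondsuit(p^\beta)$-orbit, so the intended propagation argument does not close up. The correct justification is the same as for the $\tau_0$-factor: write $\delta^\dagger_{\mu_{n,\tau},\ast,\beta}(i,\underline a)=x_{\mu_{n,\tau}}^{[\ast]}(i,z(\underline a))$ as in Lemma~\ref{uudeltaInZpstarLemma}, use the $Q_{M_H}$-transformation property to replace $z(\underline a)\in\overline Q_{M_H}(\mathbb Z_p)$ by an element of $\overline N_{M_H}(\mathbb Z_p)$ — with $j_{\tau_0}=0$ the Levi factor drops out entirely rather than producing an $a_{n+1}$ — and then apply Corollary~\ref{xkappajValuedInCor}, which gives that both numerator and denominator lie individually in $1+p^\beta\mathbb Z_p$. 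The passage from $\overline N^1_{M_G,\beta}$ to all of $M^G_{\opn{Iw}}(p^\beta)$ via the Iwahori factorisation is legitimate for the \emph{ratio}, since the $w_{M_G}^{\opn{max}}\mu_{n,\tau}$-twist on the Borel cancels.
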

\begin{proof}
For part (1), note that for $(\kappa, j) \in \mathcal{E}$, the element $\delta^{\dagger}_{\kappa, j, \beta}$ is an eigenvector for the action of $u^{-1}M^H_{\diamondsuit}(p^{\beta})u$ with eigencharacter $\sigma_{\kappa}^{[j], -1} \colon M^H_{\diamondsuit}(p^{\beta}) \to \mbb{Z}_p^{\times}$. Hence, for $(\kappa, j) \in \mathcal{X}_{R, s}$, the element $\delta^{\dagger, s\opn{-an}}_{\kappa, j, \beta}$ is an eigenvector for the action of $u^{-1}M^H_{\diamondsuit}(p^{\beta})u$ with eigencharacter
\[
\sigma_{\mu_0}^{[0],-a_0} \cdot \sigma_{\mu_w}^{[0], -a_w} \cdot \sigma_{\mu_{n+1, \tau_0}}^{[0], -a_{n+1, \tau_0}} \cdot \prod_{\substack{i=1, \dots, n \\ \tau \in \Psi}} \sigma_{\mu_{i, \tau}}^{[0], -a_{i, \tau}} \cdot \prod_{\tau \in \Psi} \sigma_{\mu_{n, \tau}}^{[1_{\tau}], -b_{\tau}} = \sigma_{\kappa}^{[j], -1} .
\]
Parts (2) and (4) follow from the analogous compatibilities for $\delta^{\dagger}_{\kappa, j, \beta}$ with $(\kappa, j) \in \mathcal{E}$ (recall that $\delta^{\dagger}_{\kappa, j, \beta}$ is just the restriction of the algebraic function $(u^{-1}, u^{-1}) \cdot \delta_{\kappa, j}$ to $M_G(\mbb{Q}_p) \times U_{G, \beta}$, and $U^{\circ}_{G, \beta}$ is independent of $\beta$).

For part (3), the claim is clear when $\chi_{\tau_0}$ is trivial, so without loss of generality we may assume that $\chi_{\tau}$ is trivial for $\tau \neq \tau_0$. It is enough to show that $\delta^{\dagger, s\opn{-an}}_{0, \chi, \beta}(i, -) \in C^{s\opn{-an}}(U_{G, \beta}, L)$ is equal to $1_{U_{G, \beta}^{\circ}, \chi}$ for any $i \in \overline{N}^1_{M_G, \beta}$. Clearly $\delta^{\dagger, s\opn{-an}}_{0, \chi, \beta}(i,-)$ is supported on $U_{G, \beta}^{\circ}$. Let $\underline{a} = (a_2, \dots, a_{2n}) \in U_{G, \beta}^{\circ}$. Then, by construction, we have
    \[
    \delta^{\dagger, s\opn{-an}}_{0, \chi, \beta}(i,\underline{a}) = \chi_{\tau_0}\left( \delta^{\dagger}_{\mu_{n, \tau_0}, 1_{\tau_0}, \beta}(i, \underline{a}) \cdot \delta^{\dagger}_{\mu_{n, \tau_0}, 0, \beta}(i, \underline{a})^{-1} \right)
    \]
    where we view $\delta^{\dagger}_{\mu_{n, \tau_0}, 0, \beta}$ and $\delta^{\dagger}_{\mu_{n, \tau_0}, 1_{\tau_0}, \beta}$ as functions on $M_G(\mbb{Q}_p) \times U_{G, \beta}$. Let $z(\underline{a}) \in \overline{Q}_{M_H}(\mbb{Z}_p)$ denote the element which is the identity outside the $\tau_0$-component, and in the $\tau_0$-component is equal to the block matrix
    \[
    1 \times 1 \times \tbyt{a_{n+1}}{}{c}{1}
    \]
    with $c = (a_n + a_{n+2}, a_{n-1}+ a_{n+3}, \dots, a_2 + a_{2n})^t$. From the proof of Lemma \ref{uudeltaInZpstarLemma} and Corollary \ref{xkappajValuedInCor}, we see that $\delta^{\dagger}_{\mu_{n, \tau_0}, 0, \beta}(i, \underline{a}) = x_{\mu_{n, \tau_0}}^{[0]}(i, z(\underline{a})) \in 1+p^{\beta}\mbb{Z}_p$ and
    \[
    \delta^{\dagger}_{\mu_{n, \tau_0}, 1_{\tau_0}, \beta}(i, \underline{a}) = x_{\mu_{n, \tau_0}}^{[1_{\tau_0}]}(i, z(\underline{a})) \quad \in \quad a_{n+1} + p^{\beta}\mbb{Z}_p
    \]
    where the containment uses the transformation properties of the vector $x_{\mu_{n, \tau_0}}^{[1_{\tau_0}]}$ under right-translation by $Q_{M_H}$. The result follows. 
\end{proof}

\begin{remark}
If $j$ is the trivial character on the $\tau_0$-component and $j' \colon \prod_{\tau \neq \tau_0} \mbb{Z}_p^{\times} \to (R^+)^{\times}$ is the character away from $\tau_0$, then $\delta_{\kappa,j, \beta}^{\dagger, s\opn{-an}} = x_{\kappa}^{[j']} \otimes 1_{U_{G, \beta}^{\circ}}$ where $x_{\kappa}^{[j']} \in V_{\kappa}^{s\opn{-an}}$ is the vector constructed in \cite[Theorem A.5.10]{UFJ} (in the notation of \emph{loc.cit.}).
\end{remark}

\subsection{The main construction} \label{TheMainConstructionSubSec}

We now describe the key construction that will be used in the definition of the ``evaluation maps'' (which, in turn, will give rise to the $p$-adic $L$-functions). This abstract construction will be applied in \S \ref{ClassicalEvaluationMapsSubSec}, \S \ref{LACechVersionMainConstructionSSec} and can be skipped on first reading. As in the previous subsection, we fix an integer $\beta \geq 1$ which we will often omit from the notation. We also allow ourselves to work over a fixed finite extension $L/\mbb{Q}_p$. 

\subsubsection{Nearly holomorphic forms}

Suppose that we have a topological $L$-module $\mathcal{N}_G$ which comes equipped with an $L$-algebra action 
\begin{equation} \label{AMpol}
C^{\opn{pol}}(\mbb{Q}_p^{\oplus 2n-1}, L) \otimes \mathcal{N}_G \to \mathcal{N}_G .
\end{equation}
We assume that $\mathcal{N}_G$ has a left action of $M_G(\mbb{Q}_p)$ and the action map (\ref{AMpol}) is equivariant for the diagonal action of $M_G(\mbb{Q}_p)$. Suppose that we have a topological $L$-module $\mathcal{N}_H$ with a left action of $M_H(\mbb{Q}_p)$, and a continuous $M_H(\mbb{Q}_p)$-equivariant map $p \colon \mathcal{N}_G \to \mathcal{N}_H$. These spaces mimic the properties of (the sheaves of) nearly holomorphic modular forms appearing in \S \ref{NearlyHoloFormsSubSec}.

We consider the following construction.

\begin{definition} \label{NHOLDefAbstractTheta}
Let $(\kappa, j) \in \mathcal{E}$. Set $\mathcal{N}_{G, \kappa^*} = \left(\mathcal{N}_G \otimes V_{\kappa}^* \right)^{M_G(\mbb{Q}_p)}$ and $\mathcal{N}_{H, \sigma_{\kappa}^{[j]}} = \left( \mathcal{N}_H \otimes \sigma_{\kappa}^{[j]} \right)^{M_H(\mbb{Q}_p)}$. Then we define an $L$-linear map
\[
\vartheta_{\kappa, j, \beta} \colon \mathcal{N}_{G, \kappa^*} \to \mathcal{N}_{H, \sigma_{\kappa}^{[j]}}
\]
induced from passing to $M_H(\mbb{Q}_p)$-invariants of the map
\begin{equation} \label{EMmap}
\sigma_{\kappa}^{[j], -1} \otimes \mathcal{N}_G \otimes V_{\kappa}^* \to \mathcal{N}_H  
\end{equation}
which is the composition of the following $M_{H}(\mbb{Q}_p)$-equivariant maps:
\begin{itemize}
    \item The morphism $\sigma_{\kappa}^{[j], -1} \otimes \mathcal{N}_G \otimes V_{\kappa}^* \to V_{\kappa} \otimes C^{\opn{pol}}(\mbb{Q}_p^{\oplus 2n-1}, L) \otimes \mathcal{N}_G \otimes V_{\kappa}^*$ induced from sending the first factor to $\delta_{\kappa, j}$.
    \item The morphism $V_{\kappa} \otimes C^{\opn{pol}}(\mbb{Q}_p^{\oplus 2n-1}, L) \otimes \mathcal{N}_G \otimes V_{\kappa}^* \to V_{\kappa} \otimes \mathcal{N}_G \otimes V_{\kappa}^*$ induced from the action map in (\ref{AMpol}).
    \item The morphism $V_{\kappa} \otimes \mathcal{N}_G \otimes V_{\kappa}^* \to \mathcal{N}_G$ induced from the natural map $V_{\kappa} \otimes V^*_{\kappa} \to L$.
    \item The morphism $p \colon \mathcal{N}_G \to \mathcal{N}_H$.
\end{itemize}
\end{definition}

\subsubsection{Nearly overconvergent forms of classical weight}

We now suppose that we have an ind-system $\mathcal{N}^{\dagger}_G$ of topological $L$-modules which are equipped with a left action of $M^G_{\opn{Iw}}(p^{\beta})$. We suppose that we have an $L$-algebra action
\begin{equation} \label{AMla}
C^{\opn{pol}}(U_{G, \beta}, L) \otimes \mathcal{N}^{\dagger}_G \to \mathcal{N}^{\dagger}_G
\end{equation}
equivariant for the diagonal action of $M^G_{\opn{Iw}}(p^{\beta})$ (i.e. (\ref{AMla}) is a morphism in the ind-category of topological $L$-modules equipped with a continuous action of $M^G_{\opn{Iw}}(p^{\beta})$). We also suppose that we have an ind-system $\mathcal{N}^{\dagger}_H$ of topological $L$-modules which are equipped with a left action of $M^H_{\diamondsuit}(p^{\beta})$, and a continuous $L$-linear morphism $p^{\dagger} \colon \mathcal{N}^{\dagger}_G \to \mathcal{N}^{\dagger}_H$ which is equivariant for the action of $M^H_{\diamondsuit}(p^{\beta})$, where $M^H_{\diamondsuit}(p^{\beta})$ acts on $\mathcal{N}^{\dagger}_G$ through the inclusion $u^{-1}M^H_{\diamondsuit}(p^{\beta})u \subset M^G_{\opn{Iw}}(p^{\beta})$. These spaces mimic (the ind-sheaves of) nearly overconvergent forms that appear in \S \ref{NearlyOverconvergentAutoFormsSection}. We will always view $M^H_{\diamondsuit}(p^{\beta})$ as acting on $\mathcal{N}^{\dagger}_{G}$ or any representation of $M^G_{\opn{Iw}}(p^{\beta})$ through the inclusion $u^{-1}M^H_{\diamondsuit}(p^{\beta})u \subset M^G_{\opn{Iw}}(p^{\beta})$. 

We consider the following construction:

\begin{definition} \label{AbstractDefOfThetadagger}
Let $(\kappa, j) \in \mathcal{E}$. Set $\mathcal{N}^{\dagger}_{G, \kappa^*} = \left(\mathcal{N}^{\dagger}_G \otimes V_{\kappa}^* \right)^{ M^G_{\opn{Iw}}(p^{\beta})}$ and $\mathcal{N}^{\dagger}_{H, \sigma_{\kappa}^{[j]}} = \left( \mathcal{N}^{\dagger}_H \otimes \sigma_{\kappa}^{[j]} \right)^{M^H_{\diamondsuit}(p^{\beta})}$. Then we define an $L$-linear map
\[
\vartheta^{\dagger}_{\kappa, j, \beta} \colon \mathcal{N}^{\dagger}_{G, \kappa^*} \to \mathcal{N}^{\dagger}_{H, \sigma_{\kappa}^{[j]}}
\]
induced by taking $M^H_{\diamondsuit}(p^{\beta})$-invariants of the map
\[
\sigma_{\kappa}^{[j], -1} \otimes \mathcal{N}^{\dagger}_G \otimes V_{\kappa}^* \to \mathcal{N}^{\dagger}_H  
\]
which is the composition of the following $M^H_{\diamondsuit}(p^{\beta})$-equivariant maps:
\begin{itemize}
    \item The morphism $\sigma_{\kappa}^{[j], -1} \otimes \mathcal{N}^{\dagger}_G \otimes V_{\kappa}^* \to V_{\kappa} \otimes C^{\opn{pol}}(U_{G, \beta}, L) \otimes \mathcal{N}^{\dagger}_G \otimes V_{\kappa}^*$ induced from sending the first factor to $\delta^{\dagger}_{\kappa, j, \beta}$ (this is indeed $M^H_{\diamondsuit}(p^{\beta})$-equivariant by our conventions above and Proposition \ref{SanDeltaDaggerIsAnEvectorProp}).
    \item The morphism $V_{\kappa} \otimes C^{\opn{pol}}(U_{G, \beta}, L) \otimes \mathcal{N}^{\dagger}_G \otimes V_{\kappa}^* \to V_{\kappa} \otimes \mathcal{N}^{\dagger}_G \otimes V_{\kappa}^*$ induced from the action map in (\ref{AMla}).
    \item The morphism $V_{\kappa} \otimes \mathcal{N}^{\dagger}_G \otimes V_{\kappa}^* \to \mathcal{N}^{\dagger}_G$ induced from the natural map $V_{\kappa} \otimes V^*_{\kappa} \to L$.
    \item The morphism $p^{\dagger} \colon \mathcal{N}^{\dagger}_G \to \mathcal{N}^{\dagger}_H$.
\end{itemize}
\end{definition}

The operators $\vartheta_{\kappa, j, \beta}$ and $\vartheta_{\kappa, j, \beta}^{\dagger}$ will be compatible in the following way. Suppose that we have a commutative diagram of continuous $L$-linear maps
\[
\begin{tikzcd}
\mathcal{N}_G \arrow[d, "\iota_G"'] \arrow[r, "p \circ u"] & \mathcal{N}_H \arrow[d, "\iota_H"] \\
\mathcal{N}^{\dagger}_G \arrow[r, "p^{\dagger}"]           & \mathcal{N}_H^{\dagger}           
\end{tikzcd}
\]
where the vertical maps are equivariant for the actions of $M^G_{\opn{Iw}}(p^{\beta})$ and $M^H_{\diamondsuit}(p^{\beta})$ respectively. In particular, the maps $\iota_{\bullet}$ induce morphisms on isotypic pieces for the actions of these groups. We also assume that $\iota_G$ intertwines the actions (\ref{AMpol}) and (\ref{AMla}) via the natural restriction map $C^{\opn{pol}}(\mbb{Q}_p^{\oplus 2n-1}, L) \to C^{\opn{pol}}(U_{G, \beta}, L)$.

\begin{proposition} \label{DaggerAndClassicalUnderBPisosProp}
    We have the following relation $\vartheta^{\dagger}_{\kappa, j, \beta} \circ \iota_G = \iota_H \circ \vartheta_{\kappa, j, \beta}$.
\end{proposition}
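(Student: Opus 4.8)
The plan is to unwind Definition \ref{NHOLDefAbstractTheta} and Definition \ref{AbstractDefOfThetadagger} into their defining four-step compositions and then verify the identity by a direct computation, the only genuine bookkeeping being the $u$-twist built into $\delta^{\dagger}_{\kappa, j, \beta}$. First I would fix a finite decomposition $\delta_{\kappa, j} = \sum_l v_l \otimes \phi_l$ into pure tensors (legitimate since $V_{\kappa}$ is finite-dimensional), so that by Definition \ref{DeltaDaggerKJBDef} we have $\delta^{\dagger}_{\kappa, j, \beta} = \sum_l (u^{-1} \cdot v_l) \otimes r(u^{-1} \cdot \phi_l)$, where $r \colon C^{\opn{pol}}(\mbb{Q}_p^{\oplus 2n-1}, L) \to C^{\opn{pol}}(U_{G, \beta}, L)$ denotes restriction. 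Identifying $\mathcal{N}_{G, \kappa^*}$ with $\opn{Hom}_{M_G(\mbb{Q}_p)}(V_{\kappa}, \mathcal{N}_G)$, and using that an element of $\mathcal{N}_{H, \sigma_{\kappa}^{[j]}}$ or of $\mathcal{N}^{\dagger}_{H, \sigma_{\kappa}^{[j]}}$ is determined by its value at $1 \in \sigma_{\kappa}^{[j], -1}$, one reads off from the definitions that for $F \in \mathcal{N}_{G, \kappa^*}$
\[
\iota_H\big(\vartheta_{\kappa, j, \beta}(F)\big)(1) = \sum_l \iota_H\big( p(\phi_l \star F(v_l)) \big),
\]
while, using that $\iota_G$ sends $F$ to the homomorphism $v \mapsto \iota_G(F(v))$,
\[
\vartheta^{\dagger}_{\kappa, j, \beta}\big(\iota_G(F)\big)(1) = \sum_l p^{\dagger}\big( r(u^{-1} \cdot \phi_l) \star' \iota_G(F(u^{-1} \cdot v_l)) \big),
\]
where $\star$ and $\star'$ denote the actions (\ref{AMpol}) and (\ref{AMla}).

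It then remains to match the two right-hand sides term by term. Since $F$ is $M_G(\mbb{Q}_p)$-equivariant, $F(u^{-1} \cdot v_l) = u^{-1} \cdot F(v_l)$. Because $\iota_G$ intertwines $\star$ and $\star'$ via $r$, and $\star$ is equivariant for the diagonal $M_G(\mbb{Q}_p)$-action, we get
\[
r(u^{-1} \cdot \phi_l) \star' \iota_G\big(u^{-1} \cdot F(v_l)\big) = \iota_G\big( (u^{-1} \cdot \phi_l) \star (u^{-1} \cdot F(v_l)) \big) = \iota_G\big( u^{-1} \cdot (\phi_l \star F(v_l)) \big).
\]
Finally, commutativity of the displayed square with top arrow $p \circ u$ gives $p^{\dagger}(\iota_G(u^{-1} \cdot w)) = \iota_H(p(w))$ for every $w \in \mathcal{N}_G$; applying this with $w = \phi_l \star F(v_l)$ and summing over $l$ shows that $\vartheta^{\dagger}_{\kappa, j, \beta}(\iota_G(F))(1) = \iota_H(\vartheta_{\kappa, j, \beta}(F))(1)$, which proves the proposition.

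The argument is essentially formal, so I do not anticipate a serious obstacle. The one point demanding care is to keep the $u$-twist consistent throughout: one must check that the twist $(u^{-1}, u^{-1}) \cdot \delta_{\kappa, j}$ appearing in Definition \ref{DeltaDaggerKJBDef} is exactly compensated by the factor $u$ in the top arrow $p \circ u$ of the commutative square, and that all of the manipulations above are valid in the ind-category (each of $\iota_G$, $\iota_H$, $p^{\dagger}$ and the action (\ref{AMla}) is a continuous morphism of ind-systems, so the term-by-term computation makes sense after passing to representatives).
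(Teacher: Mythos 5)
Your proof is correct and follows essentially the same route as the paper: both exploit the fact that twisting the input by $(u^{-1}, u^{-1})$ and replacing $p$ by $p \circ u$ leaves $\vartheta_{\kappa, j, \beta}$ unchanged, after which the comparison with $\vartheta^{\dagger}_{\kappa, j, \beta}$ is just the intertwining of $\iota_G$ with restriction. The paper packages this as an abstract reinterpretation of $\vartheta_{\kappa, j, \beta}$ via a twisted version of the map (\ref{EMmap}), while you carry it out as a term-by-term computation on a pure-tensor decomposition of $\delta_{\kappa, j}$, which is a slightly more explicit (and equally valid) rendering of the same idea.
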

\begin{proof}
To prove this, we will show that we can reinterpret $\vartheta_{\kappa, j, \beta}$ using the map $p \circ u$ instead of $p$. More precisely, by replacing the first bullet point in Definition \ref{NHOLDefAbstractTheta} with the map induced from $(u^{-1}, u^{-1}) \cdot \delta_{\kappa, j}$ and replacing the last bullet point in Definition \ref{NHOLDefAbstractTheta} with $p \circ u$, we obtain an induced map
\begin{equation} \label{TwistedEM}
    \sigma_{\kappa}^{[j], -1} \otimes \mathcal{N}_G \otimes V_{\kappa}^* \to \mathcal{N}_H
\end{equation}
and one can verify that (\ref{TwistedEM})$(a \otimes u^{-1}b \otimes u^{-1}c) =$ (\ref{EMmap})$(a \otimes b \otimes c)$. Thus $\vartheta_{\kappa, j, \beta}$ is induced from passing to $M_H(\mbb{Q}_p)$-invariants of the map (\ref{TwistedEM}). 
The proposition follows, because $\delta^{\dagger}_{\kappa, j, \beta}$ is just the restriction of $(u^{-1}, u^{-1}) \cdot \delta_{\kappa, j}$ to $U_{G, \beta}$ (and the map $\iota_G$ is equivariant for the actions (\ref{AMpol}) and (\ref{AMla})).
\end{proof}

\subsubsection{Nearly overconvergent forms of \texorpdfstring{$p$}{p}-adic weight}

Continuing with the same notation and conventions as above, we now suppose that $\mathcal{N}^{\dagger}_G$ is an ind-system of Fr\'{e}chet spaces over $L$. We also suppose that (\ref{AMla}) extends to a continuous $L$-algebra action 
\begin{equation} \label{AMla2}
    C^{s\opn{-an}}(U_{G, \beta}, L) \; \hatot \; \mathcal{N}^{\dagger}_G \to \mathcal{N}^{\dagger}_G
\end{equation}
which is equivariant for the diagonal action of $M^G_{\opn{Iw}}(p^{\beta})$, for some integer $s \geq 1$ (recall our conventions on functors between ind-categories in \S \ref{NotationsAndConventionsIntro}). For an $s$-analytic character $\kappa \colon T(\mbb{Z}_p) \to R^{\times}$, let $D^{s\opn{-an}}_{\kappa^*}$ denote the continuous $R$-dual of 
\[
V^{\circ, s\opn{-an}}_{\kappa} \defeq \left\{ f \colon M^G_{\opn{Iw}}(p^{\beta}) \to R : \begin{array}{c} f \text{ is } (s+\varepsilon)\text{-analytic for all } \varepsilon > 0 \\ f(- \cdot b) = (w_{M_G}^{\opn{max}} \kappa)(b^{-1}) f(-) \text{ for all } b \in B_{M_G}(\mbb{Q}_p) \cap M^G_{\opn{Iw}}(p^{\beta}) \end{array}  \right\} .
\]
We consider the following construction:

\begin{definition} \label{AbstractDefOfThetadaggersan}
Let $(R, R^+)$ be a complete Tate affinoid algebra over $(L, \mathcal{O}_L)$, and let $(\kappa, j) \in \mathcal{X}_{R, s}$. Set $\mathcal{N}^{\dagger, s\opn{-an}}_{G, \kappa^*} = \left(\mathcal{N}^{\dagger}_G \hatot D^{s\opn{-an}}_{\kappa^*} \right)^{M^G_{\opn{Iw}}(p^{\beta})}$ and $\mathcal{N}^{\dagger, \opn{an}}_{H, \sigma_{\kappa}^{[j]}} = \left( \mathcal{N}^{\dagger}_H \hatot \sigma_{\kappa}^{[j]} \right)^{M^H_{\diamondsuit}(p^{\beta})}$. Then we define an $R$-linear map
\[
\vartheta^{\dagger, s\opn{-an}}_{\kappa,j, \beta} \colon \mathcal{N}^{\dagger, s\opn{-an}}_{G, \kappa^*} \to \mathcal{N}^{\dagger, \opn{an}}_{H, \sigma_{\kappa}^{[j]}}
\]
induced by taking $M^H_{\diamondsuit}(p^{\beta})$-invariants of the map
\[
\sigma_{\kappa}^{[j], -1} \; \hatot \; \mathcal{N}^{\dagger}_G \; \hatot \; D^{s\opn{-an}}_{\kappa^*} \to \mathcal{N}^{\dagger}_H  
\]
which is the composition of the following $M^H_{\diamondsuit}(p^{\beta})$-equivariant maps:
\begin{itemize}
    \item The morphism $\sigma_{\kappa}^{[j], -1} \hatot \mathcal{N}^{\dagger}_G \hatot D^{s\opn{-an}}_{\kappa^*} \to V_{\kappa}^{s\opn{-an}} \hatot C^{s\opn{-an}}(U_{G, \beta}, R) \hatot \mathcal{N}^{\dagger}_G \hatot D^{s\opn{-an}}_{\kappa^*}$ induced from sending the first factor to $\delta^{\dagger, s\opn{-an}}_{\kappa, j, \beta}$.
    \item The morphism $V_{\kappa}^{s\opn{-an}} \hatot C^{s\opn{-an}}(U_{G, \beta}, R) \hatot \mathcal{N}^{\dagger}_G \hatot D^{s\opn{-an}}_{\kappa^*} \to V_{\kappa}^{s\opn{-an}} \hatot \mathcal{N}^{\dagger}_G \hatot D^{s\opn{-an}}_{\kappa^*}$ induced from the action map in (\ref{AMla}).
    \item The morphism $V_{\kappa}^{s\opn{-an}} \hatot \mathcal{N}^{\dagger}_G \hatot D^{s\opn{-an}}_{\kappa^*} \to \mathcal{N}^{\dagger}_G$ induced from the natural map $V_{\kappa}^{s\opn{-an}} \hatot D^{s\opn{-an}}_{\kappa^*} \to R$.
    \item The morphism $p^{\dagger} \colon \mathcal{N}^{\dagger}_G \to \mathcal{N}^{\dagger}_H$.
\end{itemize}
\end{definition}

We will also need a version for locally algebraic characters.

\begin{definition} \label{abstractcirclalgTheta}
    Let $(\kappa, j) \in \mathcal{E}$ and let $\chi = (\chi_{\tau}) \colon \prod_{\tau \in \Psi} \mbb{Z}_p^{\times} \to L^{\times}$ be a finite-order character such that $\chi_{\tau}$ is trivial on $1 + p^{\beta}\mbb{Z}_p$ for all $\tau \in \Psi$. Then we define an $L$-linear map
    \[
    \vartheta_{\kappa, j+\chi, \beta}^{\dagger, \circ} \colon \mathcal{N}^{\dagger}_{G, \kappa^*} \to \mathcal{N}^{\dagger}_{H, \sigma_{\kappa}^{[j]}} = \mathcal{N}^{\dagger}_{H, \sigma_{\kappa}^{[j+\chi]}}
    \]
    by taking the $M^H_{\diamondsuit}(p^{\beta})$-invariants of the map $\sigma_{\kappa}^{[j+\chi], -1} \; \hatot \; \mathcal{N}^{\dagger}_G \; \hatot \; V_{\kappa}^* \to \mathcal{N}^{\dagger}_H$ defined in exactly the same way as in Definition \ref{AbstractDefOfThetadaggersan} (note that $\delta^{\dagger, s\opn{-an}}_{\kappa, j+\chi, \beta} \in V_{\kappa} \hatot C^{s\opn{-an}}(U_{G, \beta}, L)$ by Proposition \ref{SanDeltaDaggerIsAnEvectorProp}(3)). If $\chi$ is the trivial character, we will denote this morphism by $\vartheta^{\dagger, \circ}_{\kappa, j, \beta}$.
\end{definition}

As in the previous section, there exist relations between $\vartheta^{\dagger, s\opn{-an}}_{\kappa, j, \beta}$, $\vartheta^{\dagger, \circ}_{\kappa, j+\chi, \beta}$, and $\vartheta^{\dagger}_{\kappa, j, \beta}$, and also compatibility as the Tate affinoid algebra $(R, R^+)$ varies.

\begin{proposition}
    We have the following:
    \begin{enumerate}
        \item Let $(R, R^+) \to (R', (R')^+)$ be a morphism of complete Tate affinoid algebras over $(L, \mathcal{O}_L)$, and $s' \geq s \geq 1$ integers. Let $(\kappa, j) \in \mathcal{X}_{R, s}$ and let $(\kappa', j') \in \mathcal{X}_{R', s'}$ denote its image under the natural map $\mathcal{X}_{R, s} \to \mathcal{X}_{R', s'}$. Then we have a commutative diagram
        \[
\begin{tikzcd}
{\mathcal{N}^{\dagger, s'\opn{-an}}_{G, (\kappa')^*}} \arrow[d, "{\vartheta^{\dagger, s'\opn{-an}}_{\kappa', j', \beta}}"'] & {\mathcal{N}^{\dagger, s'\opn{-an}}_{G, \kappa^*}} \arrow[l] \arrow[r] \arrow[d, "{\vartheta^{\dagger, s'\opn{-an}}_{\kappa, j, \beta}}"'] & {\mathcal{N}^{\dagger, s\opn{-an}}_{G, \kappa^*}} \arrow[d, "{\vartheta^{\dagger, s\opn{-an}}_{\kappa, j, \beta}}"] \\
{\mathcal{N}^{\dagger, \opn{an}}_{H, \sigma_{\kappa'}^{[j']}}}                                                              & {\mathcal{N}^{\dagger, \opn{an}}_{H, \sigma_{\kappa}^{[j]}}} \arrow[l] \arrow[r, equals]                                                           & {\mathcal{N}^{\dagger, \opn{an}}_{H, \sigma_{\kappa}^{[j]}}}                                                       
\end{tikzcd}
        \]
        where the unlabelled arrows are the natural ones.
        \item Let $(\kappa, j) \in \mathcal{E}$ and let $\chi = (\chi_{\tau}) \colon \prod_{\tau \in \Psi} \mbb{Z}_p^{\times} \to L^{\times}$ be a finite-order character such that $\chi_{\tau}$ is trivial on $1 + p^{\beta}\mbb{Z}_p$ for all $\tau \in \Psi$. Let $s \geq 1$ be a sufficiently large integer such that $\chi$ is $s$-analytic. Then 
        \[
        \vartheta^{\dagger, s\opn{-an}}_{\kappa, j+\chi, \beta} = \vartheta^{\dagger, \circ}_{\kappa, j+\chi, \beta} 
        \]
        via the natural maps $\mathcal{N}^{\dagger, s\opn{-an}}_{G, \kappa^*} \to \mathcal{N}^{\dagger}_{G, \kappa^*}$ and $\mathcal{N}^{\dagger, \opn{an}}_{H, \sigma_{\kappa}^{[j]}} = \mathcal{N}^{\dagger}_{H, \sigma_{\kappa}^{[j]}}$. If $\chi$ is the trivial character, then
        \[
        \vartheta^{\dagger, \circ}_{\kappa, j, \beta} = \vartheta^{\dagger}_{\kappa, j, \beta} \circ 1_{U_{G, \beta}^{\circ}}
        \]
        where $1_{U_{G, \beta}^{\circ}} \colon \mathcal{N}^{\dagger}_{G, \kappa^*} \to \mathcal{N}^{\dagger}_{G, \kappa^*}$ denotes the action of the indicator function of $U_{G,\beta}^{\circ}$ (which preserves the weight).
    \end{enumerate}
\end{proposition}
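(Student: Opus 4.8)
The plan is to obtain both statements by unwinding Definitions \ref{AbstractDefOfThetadaggersan}, \ref{abstractcirclalgTheta} and \ref{AbstractDefOfThetadagger} and transporting to them the corresponding compatibilities of the interpolating vectors established in Proposition \ref{SanDeltaDaggerIsAnEvectorProp}. The key structural observation is that each of $\vartheta^{\dagger, s\text{-an}}_{\kappa, j, \beta}$, $\vartheta^{\dagger, \circ}_{\kappa, j+\chi, \beta}$ and $\vartheta^{\dagger}_{\kappa, j, \beta}$ is a composition of four morphisms, of which only the first (insertion of the vector $\delta^{\dagger, s\text{-an}}_{(\cdot)}$, resp.\ $\delta^{\dagger}_{(\cdot)}$, into the leading tensor factor) depends on the weight $(\kappa, j)$, the analyticity radius $s$, or the coefficient ring $(R, R^+)$; the remaining three — the module action (\ref{AMla2}), the contraction against $D^{s\text{-an}}_{\kappa^*}$ (or its algebraic analogue $V_\kappa^*$), and $p^{\dagger}$ — are independent of these choices. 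Hence every asserted identity reduces to a statement purely about the $\delta$-vectors.

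For part (1): the natural arrows in the diagram are induced, on the $G$-side, by restriction along the inclusions $C^{s\text{-an}}(U_{G,\beta}, R) \hookrightarrow C^{s'\text{-an}}(U_{G,\beta}, R)$ and $V_\kappa^{s\text{-an}} \hookrightarrow V_\kappa^{s'\text{-an}}$ (for the change of radius) and by applying $- \hatot_R R'$ (for the base change $R \to R'$), and analogously on the $H$-side; none of these touches the three universal morphisms above. Consequently the right-hand square commutes once one knows that $\delta^{\dagger, s'\text{-an}}_{\kappa, j, \beta}$ corresponds to $\delta^{\dagger, s\text{-an}}_{\kappa, j, \beta}$ under change of radius, and the left-hand square commutes once one knows that $\delta^{\dagger, s'\text{-an}}_{\kappa', j', \beta}$ is the image of $\delta^{\dagger, s'\text{-an}}_{\kappa, j, \beta}$ under base change $R \to R'$ — both being exactly the content of Proposition \ref{SanDeltaDaggerIsAnEvectorProp}(2), together with the compatibility of $\sigma_\kappa^{[j]}$ with change of coefficients.

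For part (2): the equality $\vartheta^{\dagger, s\text{-an}}_{\kappa, j+\chi, \beta} = \vartheta^{\dagger, \circ}_{\kappa, j+\chi, \beta}$ holds because, by Proposition \ref{SanDeltaDaggerIsAnEvectorProp}(3), the vector $\delta^{\dagger, s\text{-an}}_{\kappa, j+\chi, \beta} = 1_{U_{G,\beta}^{\circ}, \chi} \cdot \delta^{\dagger}_{\kappa, j, \beta}$ already lies in the \emph{algebraic} submodule $V_\kappa \otimes C^{s\text{-an}}(U_{G,\beta}, L)$; hence in the contraction step of $\vartheta^{\dagger, s\text{-an}}_{\kappa, j+\chi, \beta}$ the factor in $D^{s\text{-an}}_{\kappa^*}$ is only ever paired against algebraic vectors, so the whole map factors through the natural map $D^{s\text{-an}}_{\kappa^*} \twoheadrightarrow V_\kappa^*$ (dual to $V_\kappa \hookrightarrow V^{\circ, s\text{-an}}_\kappa$), i.e.\ through $\mathcal{N}^{\dagger, s\text{-an}}_{G, \kappa^*} \to \mathcal{N}^{\dagger}_{G, \kappa^*}$, and the induced map is exactly $\vartheta^{\dagger, \circ}_{\kappa, j+\chi, \beta}$ of Definition \ref{abstractcirclalgTheta}. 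For the second identity, specialise to $\chi$ trivial, so $\delta^{\dagger, s\text{-an}}_{\kappa, j, \beta} = 1_{U_{G,\beta}^{\circ}} \cdot \delta^{\dagger}_{\kappa, j, \beta}$: since (\ref{AMla2}) is a module action over the \emph{commutative} algebra $C^{s\text{-an}}(U_{G,\beta}, L)$ and the idempotent $1_{U_{G,\beta}^{\circ}}$ is invariant under $M^G_{\opn{Iw}}(p^{\beta})$-conjugation (Definition \ref{DefOfUGbetacircSupport}), inserting $1_{U_{G,\beta}^{\circ}} \cdot \delta^{\dagger}_{\kappa, j, \beta}$ and then applying the action has the same effect as first applying the weight-preserving endomorphism $1_{U_{G,\beta}^{\circ}}$ to the $\mathcal{N}^{\dagger}_G$-slot and then running the construction with $\delta^{\dagger}_{\kappa, j, \beta}$; this is the claim $\vartheta^{\dagger, \circ}_{\kappa, j, \beta} = \vartheta^{\dagger}_{\kappa, j, \beta} \circ 1_{U_{G,\beta}^{\circ}}$.

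I expect the only genuinely delicate point to be this last transposition in part (2): one must verify carefully that moving $1_{U_{G,\beta}^{\circ}}$ from the $\delta$-slot to the $\mathcal{N}^{\dagger}_G$-argument is legitimate, which simultaneously uses commutativity of $C^{s\text{-an}}(U_{G,\beta}, L)$, $M^G_{\opn{Iw}}(p^{\beta})$-equivariance of the action map, and the $M^G_{\opn{Iw}}(p^{\beta})$-stability of $U_{G,\beta}^{\circ}$ (so that $1_{U_{G,\beta}^{\circ}}$ descends to a well-defined weight-preserving operator on $\mathcal{N}^{\dagger}_{G, \kappa^*}$). Everything else is a direct diagram chase through the four-step compositions, with no new input beyond Proposition \ref{SanDeltaDaggerIsAnEvectorProp}.
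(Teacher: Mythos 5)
Your proposal is correct and follows essentially the same route as the paper's proof: the paper reduces both parts directly to Proposition \ref{SanDeltaDaggerIsAnEvectorProp} (parts (2) and (3), i.e.\ compatibility of the $\delta$-vectors under change of radius and coefficients, and the identity $\delta^{\dagger, s\text{-an}}_{\kappa, j+\chi, \beta} = 1_{U_{G,\beta}^{\circ}, \chi}\cdot\delta^{\dagger}_{\kappa,j,\beta}$), and your write-up simply unwinds explicitly how these facts propagate through the four-step compositions of Definitions \ref{AbstractDefOfThetadagger}--\ref{abstractcirclalgTheta}. The one place where you are more careful than the paper --- the transposition of $1_{U_{G,\beta}^{\circ}}$ from the $\delta$-slot to the $\mathcal{N}^{\dagger}_G$-argument using commutativity of $C^{s\text{-an}}(U_{G,\beta},L)$ and $M^G_{\mathrm{Iw}}(p^{\beta})$-equivariance --- is indeed the delicate point the paper leaves implicit, and your treatment of it is sound.
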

\begin{proof}
    The first part just follows from the fact that the vectors $\delta^{\dagger, s\opn{-an}}_{\kappa, j, \beta}$ are compatible with changing the Tate affinoid algebra and the radius of analyticity (see Proposition \ref{SanDeltaDaggerIsAnEvectorProp}).  Indeed, we have the following commutative diagram:
    \[
    \begin{tikzcd}
{\sigma_{\kappa'}^{[j'], -1} \hatot \mathcal{N}^{\dagger}_G \hatot D^{s'\opn{-an}}_{(\kappa')^*}} \arrow[r]                & {V_{\kappa'}^{s'\opn{-an}} \hatot C^{s'\opn{-an}}(U_{G, \beta}, R') \hatot \mathcal{N}^{\dagger}_G \hatot D^{s'\opn{-an}}_{(\kappa')^*}} \arrow[r]               & V_{\kappa'}^{s'\opn{-an}} \hatot \mathcal{N}^{\dagger}_G \hatot D^{s'\opn{-an}}_{(\kappa')^*} \arrow[r]                & \mathcal{N}^{\dagger}_H                     \\
{\sigma_{\kappa}^{[j], -1} \hatot \mathcal{N}^{\dagger}_G \hatot D^{s'\opn{-an}}_{\kappa^*}} \arrow[u] \arrow[r]           & {V_{\kappa}^{s'\opn{-an}} \hatot C^{s'\opn{-an}}(U_{G, \beta}, R) \hatot \mathcal{N}^{\dagger}_G \hatot D^{s'\opn{-an}}_{\kappa^*}} \arrow[u] \arrow[r]         & V_{\kappa}^{s'\opn{-an}} \hatot \mathcal{N}^{\dagger}_G \hatot D^{s'\opn{-an}}_{\kappa^*} \arrow[u] \arrow[r]          & \mathcal{N}^{\dagger}_H \arrow[u]           \\
{\sigma_{\kappa}^{[j], -1} \hatot \mathcal{N}^{\dagger}_G \hatot D^{s'\opn{-an}}_{\kappa^*}} \arrow[r] \arrow[u, equals] \arrow[d] & {V_{\kappa}^{s\opn{-an}} \hatot C^{s\opn{-an}}(U_{G, \beta}, R) \hatot \mathcal{N}^{\dagger}_G \hatot D^{s'\opn{-an}}_{\kappa^*}} \arrow[r] \arrow[u] \arrow[d] & V_{\kappa}^{s\opn{-an}} \hatot \mathcal{N}^{\dagger}_G \hatot D^{s'\opn{-an}}_{\kappa^*} \arrow[r] \arrow[u] \arrow[d] & \mathcal{N}^{\dagger}_H \arrow[u, equals] \arrow[d] \\
{\sigma_{\kappa}^{[j], -1} \hatot \mathcal{N}^{\dagger}_G \hatot D^{s\opn{-an}}_{\kappa^*}} \arrow[r]                      & {V_{\kappa}^{s\opn{-an}} \hatot C^{s\opn{-an}}(U_{G, \beta}, R) \hatot \mathcal{N}^{\dagger}_G \hatot D^{s\opn{-an}}_{\kappa^*}} \arrow[r]                      & V_{\kappa}^{s\opn{-an}} \hatot \mathcal{N}^{\dagger}_G \hatot D^{s\opn{-an}}_{\kappa^*} \arrow[r]                      & \mathcal{N}^{\dagger}_H                    
\end{tikzcd}
    \]
    where the compositions of the horizontal maps in the top, second, and bottom row describe $\vartheta^{\dagger, s'\opn{-an}}_{\kappa', j', \beta}$, $\vartheta^{\dagger, s'\opn{-an}}_{\kappa, j, \beta}$, and $\vartheta^{\dagger, s\opn{-an}}_{\kappa, j, \beta}$ respectively. Here the leftmost horizontal map is the first bullet point in Definition \ref{AbstractDefOfThetadaggersan}, the middle horizontal map is the second bullet point, and the rightmost horizontal map is the composition of the third and fourth bullet points. The top, middle, and bottom left squares commute by the compatibility properties in Proposition \ref{SanDeltaDaggerIsAnEvectorProp}, and the remaining squares commute by the compatibility in $s$ of the action maps \eqref{AMla2} as well as the natural pairings between $s$-analytic functions and distributions.
    
    For the second part, this follows from the fact that $\delta^{\dagger, s\opn{-an}}_{\kappa, j+\chi, \beta} = 1_{U_{G, \beta}^{\circ}, \chi} \cdot \delta^{\dagger}_{\kappa, j, \beta}$.  Indeed, we have the following commutative diagram:

\[
\begin{tikzcd}
{\sigma_{\kappa}^{[j+\chi], -1} \hatot \mathcal{N}^{\dagger}_G \hatot D^{s\opn{-an}}_{\kappa^*}} \arrow[r]                       & {V_{\kappa}^{s\opn{-an}} \hatot C^{s\opn{-an}}(U_{G, \beta}, L) \hatot \mathcal{N}^{\dagger}_G \hatot D^{s\opn{-an}}_{\kappa^*}} \arrow[r]                       & V_{\kappa}^{s\opn{-an}} \hatot \mathcal{N}^{\dagger}_G \hatot D^{s\opn{-an}}_{\kappa^*} \arrow[r]          & \mathcal{N}^{\dagger}_H                     \\
{\sigma_{\kappa}^{[j+\chi], -1} \hatot \mathcal{N}^{\dagger}_G \hatot D^{s\opn{-an}}_{\kappa^*}} \arrow[u, equals] \arrow[r] \arrow[d] & {V_{\kappa} \hatot C^{s\opn{-an}}(U_{G, \beta}, L) \hatot \mathcal{N}^{\dagger}_G \hatot D^{s\opn{-an}}_{\kappa^*}} \arrow[u] \arrow[r] \arrow[d]             & V_{\kappa} \hatot \mathcal{N}^{\dagger}_G \hatot D^{s\opn{-an}}_{\kappa^*} \arrow[u] \arrow[r] \arrow[d] & \mathcal{N}^{\dagger}_H \arrow[u, equals] \arrow[d, equals] \\
{\sigma_{\kappa}^{[j+\chi], -1} \hatot \mathcal{N}^{\dagger}_G \hatot V^*_{\kappa}} \arrow[r]                                  & {V_{\kappa} \hatot C^{s\opn{-an}}(U_{G, \beta}, L) \hatot \mathcal{N}^{\dagger}_G \hatot V^*_{\kappa}} \arrow[r]                                              & V_{\kappa} \hatot \mathcal{N}^{\dagger}_G \hatot V^*_{\kappa} \arrow[r]                                  & \mathcal{N}^{\dagger}_H                     \\
{\sigma_{\kappa}^{[j], -1} \otimes \mathcal{N}^{\dagger}_G \otimes V^*_{\kappa}} \arrow[r] \arrow[u, equals, dashed]                             & {V_{\kappa} \otimes C^{\opn{pol}}(U_{G, \beta}, L) \otimes \mathcal{N}^{\dagger}_G \otimes V^*_{\kappa}} \arrow[r] \arrow[u, "{1_{U_{G, \beta}^{\circ}, \chi}}"] & V_{\kappa} \otimes \mathcal{N}^{\dagger}_G \otimes V^*_{\kappa} \arrow[u, equals] \arrow[r]                        & \mathcal{N}^{\dagger}_H \arrow[u, equals]          
\end{tikzcd}
\]

where the compositions of the horizontal maps in the top, third, and bottom rows describe $\vartheta^{\dagger, s\opn{-an}}_{\kappa, j+\chi, \beta}$, $\vartheta^{\dagger, \circ}_{\kappa, j+\chi, \beta}$, and $\vartheta^{\dagger}_{\kappa, j, \beta}$ respectively. Here the dotted equals sign in the bottom left means we only consider this when $\chi$ is trivial, and the labelled arrow is induced from the natural map $C^{\opn{pol}}(U_{G, \beta}, L) \to C^{s\opn{-an}}(U_{G, \beta}, L)$ given by multiplication by $1_{U_{G, \beta}^{\circ}, \chi}$.
\end{proof}

\begin{remark}
    One should view the action of $1_{U_{G, \beta}^{\circ}}$ as a kind of ``$p$-depletion'' which is necessary for $p$-adically interpolating the maps $\vartheta^{\dagger}_{\kappa, j, \beta}$. This will be made more precise in \S \ref{PropsOfEvMapsInterpolationSubSec}.
\end{remark}


\section{Continuous operators on Banach spaces} \label{ContinuousOpsOnBanachSpacesSec}

In this section we establish the abstract results on locally analytic actions needed to $p$-adically interpolate the Gauss--Manin connection (see \S \ref{OrdExpStrictNhoodGSec}--\ref{TheConstuctionForHSec}). This section is essentially an adaptation of \cite[\S 4]{DiffOps} to the setting of this article.

\subsection{Function spaces}

Let $L/\mbb{Q}_p$ be a finite extension with ring of integers $\mathcal{O}_L$, equipped with the $p$-adic norm $| \cdot |$ such that $|p| = p^{-1}$. We let $C_{\opn{cont}}(\mbb{Z}_p, \mathcal{O}_L)$ (resp. $C_{\opn{cont}}(\mbb{Z}_p, L)$) denote the $\mathcal{O}_L$-algebra (resp. $L$-algebra) of continuous functions $\mbb{Z}_p \to \mathcal{O}_L$ (resp. $\mbb{Z}_p \to L$). For an integer $k \geq 0$, consider the continuous function
\[
\bincoeff{x}{k} = \frac{ x(x-1)\cdots (x-k+1)}{k!} \in C_{\opn{cont}}(\mbb{Z}_p, \mathcal{O}_L) .
\]
Then it is well-known that the collection $\{ \bincoeff{x}{k} \}_{k \geq 0}$ forms an orthonormal basis for $C_{\opn{cont}}(\mbb{Z}_p, L)$ (see \cite[Corollaire I.2.4]{ColmezFonctions} for example).

For an integer $h \geq 0$, let $C^{h\opn{-an}}(\mbb{Z}_p, L) \subset C_{\opn{cont}}(\mbb{Z}_p, L)$ denote the subspace of functions which are analytic on discs of radius $p^{-h}$, and let $C^{\opn{la}}(\mbb{Z}_p, L) = \varinjlim_{h \geq 0} C^{h\opn{-an}}(\mbb{Z}_p, L)$ denote the space of locally analytic functions equipped with the direct limit topology. In terms of the orthonormal basis above, $C^{\opn{la}}(\mbb{Z}_p, L)$ is the subspace of functions 
\begin{equation} \label{MahlerExpansionEqn}
f(x) = \sum_{k \geq 0} a_k \bincoeff{x}{k} \quad \in \quad C_{\opn{cont}}(\mbb{Z}_p, L)
\end{equation}
such that $p^{k \varepsilon} |a_k| \to 0$ as $k \to +\infty$ for some $\varepsilon > 0$.

\begin{definition}
    For $\varepsilon > 0$, let $C_{\varepsilon}(\mbb{Z}_p, L) \subset C_{\opn{cont}}(\mbb{Z}_p, L)$ denote the subspace of functions as in (\ref{MahlerExpansionEqn}) satisfying the condition that $p^{k \varepsilon} |a_k| \to 0$ as $k \to +\infty$. This is an $L$-Banach algebra with norm given by
    \[
    |\! | f | \! | = \opn{sup}\{ p^{k \varepsilon}|a_k| : k \geq 0 \} .
    \]
\end{definition}

We introduce the following terminology:

\begin{definition} \label{Def:ContLAEpsilonAction}
    Let $W$ be a topological $L$-vector space and $T \in \opn{End}_L(W)$ a continuous operator. We say that $T$ extends to a continuous (resp. locally analytic, resp. $\varepsilon$-analytic) action if there exists a continuous $L$-bilinear map
    \[
    C \times W \to W, \quad C = C_{\opn{cont}}(\mbb{Z}_p, L) \text{ (resp. } C = C^{\opn{la}}(\mbb{Z}_p, L)\text{, resp. } C = C_{\varepsilon}(\mbb{Z}_p, L) \text{ )}
    \]
such that
\begin{itemize}
    \item the induced map $C \to \opn{End}_L(W)$ is a morphism of $L$-algebras
    \item the structural map $\mbb{Z}_p \hookrightarrow L$ acts as $T$.
\end{itemize}
    Here $C \times W$ is equipped with the product topology.
\end{definition}

\subsection{A pertubation lemma}

We will need the following lemma which describes the convergence properties of two operators which are congruent modulo a power of $p$.

\begin{lemma} \label{PertubationLemma}
    Let $V$ be an $L$-Banach space and $T_1, T_2 \in \opn{End}_L(V)$ two continuous operators. Suppose that $T_1$ extends to a locally analytic action and $|\!| T_2 | \!| \leq 1$ (i.e. $T_2$ preserves the unit ball in $V$). Then for any $\varepsilon > 0$, there exists an integer $n(\varepsilon) \geq 1$ (depending on $T_1$ only) such that for all $n \geq n(\varepsilon)$, the operator $T_1 + p^n T_2$ extends to an $\varepsilon$-analytic action on $V$ (as in Definition \ref{Def:ContLAEpsilonAction}).
\end{lemma}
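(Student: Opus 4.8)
The plan is to verify the defining property of an $\varepsilon$-analytic action head-on. I would construct the bilinear map $C_{\varepsilon}(\mathbb{Z}_p,L)\times V\to V$ by declaring that the Mahler basis element $\binom{x}{k}$ act as the operator $\binom{T_1+p^nT_2}{k}\defeq\frac{1}{k!}\prod_{i=0}^{k-1}(T_1+p^nT_2-i)\in\operatorname{End}_L(V)$, and then check convergence, continuity and multiplicativity. Since $\{p^{-k\varepsilon}\binom{x}{k}\}_{k\ge0}$ is an orthonormal basis of $C_\varepsilon(\mathbb{Z}_p,L)$, everything is formal once one has the uniform estimate $\big\|\binom{T_1+p^nT_2}{k}\big\|\le C\,p^{k\varepsilon}$ for a constant $C$ independent of $k$: granting it, set $f\cdot v=\sum_k a_k\binom{T_1+p^nT_2}{k}v$ for $f=\sum_k a_k\binom xk$, which converges because $|a_k|\,p^{k\varepsilon}\to0$; the map is $L$-bilinear with $\|f\cdot v\|\le C\|f\|_\varepsilon\|v\|$; it is multiplicative because it restricts to $f\mapsto f(T_1+p^nT_2)$ on the dense subalgebra $L[x]\subset C_\varepsilon(\mathbb{Z}_p,L)$ (which is itself a Banach algebra) and $\operatorname{End}_L(V)$ is a Banach algebra; and the structural inclusion $\mathbb{Z}_p\hookrightarrow L$, i.e.\ $\binom x1=x$, acts as $\binom{T_1+p^nT_2}{1}=T_1+p^nT_2$.

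For the estimate, the starting point is that the hypothesis that $T_1$ extends to a locally analytic action gives, for every $\varepsilon_0>0$, a continuous $L$-algebra map $\rho_{\varepsilon_0}\colon C_{\varepsilon_0}(\mathbb{Z}_p,L)\to\operatorname{End}_L(V)$ with $\rho_{\varepsilon_0}(x)=T_1$, hence a constant $C_{\varepsilon_0}$ (depending only on the action of $T_1$) with $\|\rho_{\varepsilon_0}(f)\|\le C_{\varepsilon_0}\|f\|_{\varepsilon_0}$; in particular $\big\|\binom{T_1-a}{\ell}\big\|\le C_{\varepsilon_0}p^{\ell\varepsilon_0}$ for all $a,\ell\ge0$, because $\binom{x-a}{\ell}=\sum_{l\le\ell}\binom{-a}{\ell-l}\binom xl$ has $\mathbb{Z}_p$-integral Mahler coefficients. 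Next I would expand, with the convention that each block of consecutive factors is taken in increasing order,
\[
\prod_{i=0}^{k-1}\big((T_1-i)+p^nT_2\big)=\sum_{r=0}^{k}\ \sum_{0\le l_1<\cdots<l_r\le k-1}\Big(\textstyle\prod_{i<l_1}(T_1-i)\Big)(p^nT_2)\Big(\textstyle\prod_{l_1<i<l_2}(T_1-i)\Big)\cdots(p^nT_2)\Big(\textstyle\prod_{i>l_r}(T_1-i)\Big),
\]
where each maximal block $\prod_{a\le i\le b}(T_1-i)$ of length $\ell=b-a+1$ equals $\ell!\,\binom{T_1-a}{\ell}$ and so has norm $\le C_{\varepsilon_0}|\ell!|_p\,p^{\ell\varepsilon_0}$; using $\|T_2\|\le1$, a term with $r$ factors $p^nT_2$ and block lengths $\ell_0,\dots,\ell_r$ (so $\sum_j\ell_j=k-r$) has norm $\le C_{\varepsilon_0}^{\,r+1}\big(\prod_j|\ell_j!|_p\big)p^{(k-r)\varepsilon_0}p^{-nr}$. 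By the ultrametric inequality the norm of the sum over fixed $r$ is at most the maximum of these, and after dividing by $|k!|_p$ and invoking $k!=\big(\prod_j\ell_j!\big)r!\,\binom{k}{\ell_0,\dots,\ell_r,r}$ together with $v_p(r!)+v_p\binom{k}{\ell_0,\dots,\ell_r,r}=v_p(k!)-\sum_j v_p(\ell_j!)\le v_p\!\big(k!/(k-r)!\big)\le \tfrac{r}{p-1}+\log_p k+1$, one gets
\[
\Big\|\binom{T_1+p^nT_2}{k}\Big\|\ \le\ p\,k\,C_{\varepsilon_0}\,p^{k\varepsilon_0}\cdot\max_{0\le r\le k} p^{\,r\left(\frac{1}{p-1}-\varepsilon_0-n+\log_p C_{\varepsilon_0}\right)}.
\]
Choosing $n$ large enough that $\tfrac{1}{p-1}-\varepsilon_0-n+\log_p C_{\varepsilon_0}<0$ forces the maximum to be $1$ (attained at $r=0$), so $\|\binom{T_1+p^nT_2}{k}\|\le p\,k\,C_{\varepsilon_0}\,p^{k\varepsilon_0}$. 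Taking $\varepsilon_0=\varepsilon/2$, the prefactor $p\,k\,C_{\varepsilon_0}$ is bounded by a constant times $p^{k\varepsilon/2}$, giving the desired $\le C\,p^{k\varepsilon}$; and $n(\varepsilon)$ is the threshold $n>\tfrac{1}{p-1}-\tfrac{\varepsilon}{2}+\log_p C_{\varepsilon/2}$, which depends only on $\varepsilon$ and the action of $T_1$, not on $T_2$.

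The main obstacle — and the only step that is not bookkeeping — is controlling the denominators $k!$: bounding the falling-factorial expansion term by term too crudely loses a factor $p^{v_p(k!)}\sim p^{k/(p-1)}$, which alone exceeds $p^{k\varepsilon}$. The resolution above is to keep each block of $T_1$-factors bundled as the honest operator $\ell!\binom{T_1-a}{\ell}$, whose norm the locally analytic action of $T_1$ controls \emph{with} the coherent factor $|\ell!|_p$, and then to use the multinomial identity to see that $\prod_j|\ell_j!|_p/|k!|_p$ costs only $p^{r/(p-1)}$ up to a factor polynomial in $k$ — an $r$-dependent cost absorbed by the gain $p^{-nr}$ once $n$ is large. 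I would also record the routine points: density of $L[x]$ in $C_\varepsilon(\mathbb{Z}_p,L)$, the fact that $C_\varepsilon(\mathbb{Z}_p,L)$ and $\operatorname{End}_L(V)$ are Banach algebras so multiplicativity passes to the completion, and that the resulting bilinear map is manifestly continuous for the product topology.
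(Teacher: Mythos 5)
Your overall strategy is the same as the paper's: expand $\prod_{i=0}^{k-1}(T_1+p^nT_2-i)$ by recording the positions where $p^nT_2$ is chosen, keep each maximal block of consecutive factors $T_1-i$ bundled as $\ell!\binom{T_1-a}{\ell}$ so that the locally analytic action of $T_1$ controls it together with the coherent $|\ell!|_p$ factor, and then balance the multinomial cost against the $p^{-nr}$ gain. That is exactly how the paper argues, so the plan is sound. However, the one non\nobreakdash-formal estimate in your write-up is wrong, and it is not a cosmetic slip.

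You claim the chain
\[
v_p(r!)+v_p\binom{k}{\ell_0,\dots,\ell_r,r}=v_p(k!)-\sum_j v_p(\ell_j!)\;\le\; v_p\!\big(k!/(k-r)!\big)\;\le\;\tfrac{r}{p-1}+\log_p k+1.
\]
The middle inequality is reversed. Since $\sum_j\ell_j=k-r$, the product $\ell_0!\cdots\ell_r!$ divides $(k-r)!$, so $\sum_j v_p(\ell_j!)\le v_p((k-r)!)$ and therefore
\[
v_p(k!)-\sum_j v_p(\ell_j!)\;=\;v_p\!\big(k!/(k-r)!\big)+v_p\binom{k-r}{\ell_0,\dots,\ell_r}\;\ge\;v_p\!\big(k!/(k-r)!\big),
\]
with a nonnegative error term you are silently discarding. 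The final bound $\le r/(p-1)+\log_p k+1$ is also genuinely false, not just derived by a faulty intermediate step. Take $p$ odd, $k=2p^m-1$, $r=p^m-1$, and all $\ell_j=1$ (so there are $r+1=p^m$ blocks of length $1$ summing to $k-r=p^m$). Then $\sum_jv_p(\ell_j!)=0$, so the left-hand side is $v_p(k!)=\frac{2(p^m-1)}{p-1}-m$, roughly $\frac{2r}{p-1}$, whereas your claimed bound is roughly $\frac{r}{p-1}+m$. The discarded multinomial $\binom{k-r}{\ell_0,\dots,\ell_r}$ can have $p$-adic valuation on the order of $r/(p-1)$ when it has $\sim r$ parts; it is not a $\log_p k$-size quantity and has to be tracked as an $r$-linear cost and fed into the same $p^{-nr}$ absorption that handles $v_p(r!)$. (The paper's own justification invokes the bound $v_p\binom{n}{a_1,\ldots,a_s}\le\log_p n$, which likewise fails for multinomials with many parts; the lemma is true and the expansion is the right one, but the carry-count of these multinomials must be estimated as a quantity growing linearly in the number of $T_2$-insertions, which is the real reason the $p^{-nr}$ factor is what saves the day, not merely the $p^{-k\varepsilon/2}$ prefactor.)
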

\begin{proof}
    This is {\cite[Proposition 4.2.4]{DiffOps}}.
\end{proof}

\subsection{Nilpotent operators} \label{NilpotentOperatorsSubSec}

Let $S^+$ be an admissible $\mathcal{O}_L$-algebra, and set $S = S^+[1/p]$. Let $|\!|\cdot |\!|$ denote the Banach norm on $S$ such that $S^+$ is the unit ball. Consider the following two-variable Tate algebra $V^+ = S^+\langle X, Y \rangle$ and set $V = V^+[1/p]$. Equip $V$ with the Banach norm such that $V^+$ is the unit ball, i.e. equip $V$ with the Banach norm
\[
| \! | f | \! | = \opn{sup}\{ |\!| s_{a, b} |\!| : a, b \geq 0 \}
\]
where $f(X, Y) = \sum_{a, b \geq 0} s_{a, b} X^a Y^b$.

Suppose that we have an $\mathcal{O}_L$-linear derivation $D \colon S^+ \to S^+$. Fix an element $\lambda \in \mathcal{O}_L$ and consider the $\mathcal{O}_L$-linear derivation $T_D = T_{D, \lambda} \colon V^+ \to V^+$ uniquely determined by the following properties:
\begin{itemize}
    \item $T_D(s) = D(s)$ for any $s \in S^+$
    \item $T_D(X) = \lambda Y$
    \item $T_D(Y) = 0$ .
\end{itemize}
Concretely, the derivation $T_D$ is given by:
\[
T_D\left( \sum_{a, b \geq 0} s_{a, b} X^a Y^b \right) \defeq \sum_{a, b \geq 0} D(s_{a, b}) X^a Y^b + \sum_{\substack{a \geq 1 \\ b \geq 0}} (\lambda \cdot a \cdot s_{a, b}) X^{a-1} Y^{b+1}  
\]
which is well-defined because $V^+ = S^+\langle X, Y \rangle$ is a Tate algebra. It is uniquely determined by the above three properties because $T_D$ is uniquely determined by its values on $s_{a, b}X^aY^b$ (by $\mathcal{O}_L$-linearity) and hence its values on $s_{a, b}$, $X$, and $Y$ (by the Leibniz rule).

\begin{proposition} \label{LANilpToLAIterationProp}
    Suppose that $D$ extends to a locally analytic action on $S$. Then $T_D$ extends to a locally analytic action on $V$.
\end{proposition}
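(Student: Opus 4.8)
The plan is to split $T_D$ into two commuting pieces — one of which inherits the locally analytic action from $D$, the other of which is ``divided-power nilpotent'' in the $X,Y$-directions — and then to combine the two actions. First I would unwind the definition of $T_D$: by the Leibniz rule, on a monomial $sX^aY^b \in V^+$ one has $T_D(sX^aY^b) = D(s)X^aY^b + \lambda a\, s X^{a-1}Y^{b+1}$, so
\[
T_D = \widetilde{D} + \lambda N,
\]
where $\widetilde D$ applies $D$ to the coefficients of a power series (i.e. $\widetilde D = D \hatot \mathrm{id}$ under $V = S \hatot_L L\langle X, Y\rangle$) and $N\big(\sum s_{a,b}X^aY^b\big) = \sum a\, s_{a,b}X^{a-1}Y^{b+1}$ is the operator $Y\tfrac{\partial}{\partial X}$. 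These two operators commute, since $D$ acts on the coefficients while $N$ only rearranges the monomials $X^aY^b$.

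Next, the two summands individually extend to analytic-type actions. For $\widetilde D$: the hypothesis gives, for some $\varepsilon_1 > 0$, a continuous $L$-algebra map $C_{\varepsilon_1}(\mbb{Z}_p,L) \to \opn{End}_L(S)$ sending the coordinate function to $D$, hence (plugging in the basis elements $\bincoeff{x}{k}$, which have norm $p^{k\varepsilon_1}$ in $C_{\varepsilon_1}$) a bound $\|\bincoeff{D}{k}\|_{\opn{End}(S)} \le M_1 p^{k\varepsilon_1}$; since $\bincoeff{\widetilde D}{k}$ acts coefficientwise, $\|\bincoeff{\widetilde D}{k}\|_{\opn{End}(V)} \le \|\bincoeff{D}{k}\|_{\opn{End}(S)} \le M_1 p^{k\varepsilon_1}$, and as the $\bincoeff{x}{k}$ span $C_{\varepsilon_1}$ densely over the polynomials this says exactly that $\widetilde D$ extends to an $\varepsilon_1$-analytic action on $V$. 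For $\lambda N$: a direct computation gives $N^k(X^aY^b) = k!\,\bincoeff{a}{k}X^{a-k}Y^{b+k}$, so $\|N^k\|_{\opn{End}(V)} = |k!|$; in particular $\|N\| \le 1$, whence $\|\lambda N - \ell\| \le 1$ for every $\ell \in \mbb{Z}$ (as $\lambda \in \mathcal{O}_L$ and $|\ell| \le 1$), and therefore
\[
\Big\| \bincoeff{\lambda N}{k} \Big\| = \Big\| \tfrac{1}{k!}(\lambda N)(\lambda N - 1)\cdots(\lambda N - k + 1) \Big\| \le |k!|^{-1} = p^{v_p(k!)} \le p^{k/(p-1)} .
\]
Thus $\lambda N$ extends to a $\tfrac{1}{p-1}$-analytic action on $V$.

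Finally, I would combine the two. Since $\widetilde D$ and $\lambda N$ commute, the Vandermonde identity $\bincoeff{x+y}{k} = \sum_{i+j=k}\bincoeff{x}{i}\bincoeff{y}{j}$ — a polynomial identity in commuting variables, hence valid for commuting operators — gives $\bincoeff{T_D}{k} = \sum_{i+j=k}\bincoeff{\widetilde D}{i}\bincoeff{\lambda N}{j}$, so that
\[
\Big\| \bincoeff{T_D}{k} \Big\| \le \max_{i+j=k} M_1\, p^{\,i\varepsilon_1 + j/(p-1)} \le M_1\, p^{k\varepsilon}, \qquad \varepsilon \defeq \max\!\big(\varepsilon_1, \tfrac{1}{p-1}\big).
\]
As in the previous paragraph, this bound on the operators $\bincoeff{T_D}{k}$ means precisely that $\sum a_k\bincoeff{x}{k} \mapsto \sum a_k\bincoeff{T_D}{k}$ defines a continuous $L$-algebra map $C_{\varepsilon}(\mbb{Z}_p,L) \to \opn{End}_L(V)$ sending the coordinate function to $T_D$ (convergence and boundedness are immediate from the estimate, and multiplicativity holds on the dense subalgebra of polynomials, hence everywhere by continuity of composition in $\opn{End}_L(V)$). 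That is, $T_D$ extends to an $\varepsilon$-analytic, in particular a locally analytic, action on $V$.

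I expect the main technical point to be this last step: translating the locally analytic actions of the two commuting summands into one for their sum, which rests on the elementary but essential equivalence between ``$T$ extends to an $\varepsilon$-analytic action'' and ``$\|\bincoeff{T}{k}\| = O(p^{k\varepsilon})$'' together with Vandermonde. Note that the perturbation Lemma \ref{PertubationLemma} does not directly apply: it would require writing $\lambda N = p^n T_2$ with $\|T_2\| \le 1$ and $n$ large, which is impossible when $v_p(\lambda)$ is small (e.g. $\lambda$ a unit), so the explicit estimate on $\bincoeff{\lambda N}{k}$ — made available by the integrality $N^k/k! \in \opn{End}_L(V^+)$ — really is needed.
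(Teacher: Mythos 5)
Your decomposition $T_D = \widetilde D + \lambda N$, with $\widetilde D$ acting coefficientwise and $N = Y\,\partial/\partial X$, is a clean commuting split and a genuinely different starting point from the paper's direct induction on (\ref{ExplicitFormulaForfk}), but the argument has a gap. The bound $\bigl\|\bincoeff{\lambda N}{k}\bigr\| \le |k!|^{-1} \le p^{k/(p-1)}$ is too coarse: it follows from $\|\lambda N\|, \|\lambda N - \ell\| \le 1$ alone (so, despite your closing remark, the integrality $N^k/k!\in\opn{End}_L(V^+)$ plays no role in your estimate), and it grows with exponent $k/(p-1)$, so after Vandermonde you only get an $\varepsilon$-analytic action for $\varepsilon \ge 1/(p-1)$. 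The final step ``$\varepsilon$-analytic, in particular locally analytic'' then inverts the implication: since $C^{\opn{la}}(\mbb{Z}_p, L) = \varinjlim_{\varepsilon\to 0^+} C_\varepsilon(\mbb{Z}_p, L)$, a locally analytic action is a compatible system of $\varepsilon$-analytic actions for \emph{every} $\varepsilon > 0$ — strictly stronger than a single fixed-$\varepsilon$ action — and this is exactly the all-$\varepsilon$ statement that gets fed into Lemma \ref{PertubationLemma} when the proposition is iterated in Proposition \ref{epsilonactionOrdProp}. The paper's proof produces the required constant $C_\varepsilon$ for every $\varepsilon > 0$; your argument does not.

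The route is repairable, because $\bincoeff{\lambda N}{k}$ in fact grows only polynomially in $k$: expanding $(1+t)^{\lambda N}(X^aY^b) = (X + \lambda\log(1+t)Y)^a Y^b$ gives $\bincoeff{\lambda N}{k}(X^aY^b) = \sum_r \binom{a}{r}\lambda^r\bigl([t^k]\log(1+t)^r\bigr) X^{a-r}Y^{b+r}$, and the $D=0$ specialisation of (\ref{ExplicitFormulaForfk}) identifies $[t^k]\log(1+t)^r$ with $\bincoeff{k}{r}^{-1}\sum_{I\in\Sigma_{k,k-r}}\bincoeff{k-r}{k_1,\dots,k_\ell}^{-1}f_I(0)$, where each $f_I(0)\in\mbb{Z}$; this gives $\bigl\|\bincoeff{\lambda N}{k}\bigr\| \le p^{2\log_p k}$, which \emph{is} $O(p^{k\varepsilon})$ for every $\varepsilon>0$, and Vandermonde then closes the argument. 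But that sharp estimate is precisely the combinatorial content of (\ref{ExplicitFormulaForfk}) — the engine of the paper's proof — so your decomposition reorganises the argument without dispensing with its key lemma.
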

\begin{proof}
    We first introduce some polynomial functions that will be useful in the proof of this proposition. For any integers $k \geq 1$ and $0 \leq r \leq k$, let $\Sigma_{k, r}$ denote the set of subsets of $\{0, \dots, k-1 \}$ of size $r$. For any non-empty $I \in \Sigma_{k, r}$, let $k_1, \dots, k_{\ell}$ be the lengths of the largest blocks of consecutive integers in $I$, so that $\sum_{i=1}^\ell k_i = r$. In other words, $I$ can be written as 
    \[
    I = \bigcup_{1 \leq j \leq \ell} \{ i_j, i_j +1, \dots, i_j + k_j - 1 \} = \bigcup_{1 \leq j \leq \ell} I_j
    \]
    with no adjacent intervals (i.e. $i_j > i_{j-1} + k_{j-1}$ for all $2 \leq j \leq \ell$). For any $I \in \Sigma_{k, r}$, consider the following polynomial function $f_I \colon \mbb{Z}_p \to \mathcal{O}_L$ given by the formula
    \[
    f_I(x) = \prod_{1 \leq j \leq \ell} \frac{1}{k_j!} \prod_{i \in I_j} (x-i), \quad \quad x \in \mbb{Z}_p
    \]
    with the convention that $f_{\varnothing} = 1$. We also set $f_k = f_{\{0, \dots, k-1\}}$. Then, by a simple induction argument on $k$ (see \cite[Lemma 4.3.2]{DiffOps}), we have the following formula:
    \begin{equation} \label{ExplicitFormulaForfk}
    f_k(T_D)(s X^a Y^b) = \sum_{r=0}^{\opn{min}(k, a)} \sum_{I \in \Sigma_{k, k-r}} \bincoeff{k-r}{k_1, \dots, k_{\ell}}^{-1} \bincoeff{k}{r}^{-1} \bincoeff{a}{r} f_I(D)(s) \lambda^r X^{a-r}Y^{b+r}  
    \end{equation}
    for $s \in S$ and $a, b \geq 0$.

    Let $\varepsilon > 0$. To show that $T_D$ extends to a locally analytic action, it suffices to show that there exists a constant $C_{\varepsilon} \in \mbb{R}_{>0}$ such that for all $k \geq 0$, $s \in S$ and $a, b \geq 0$, we have 
    \[
    p^{-k \varepsilon} | \! | f_k(T_D)(sX^aY^b) | \! | \leq C_{\varepsilon} | \! | s X^a Y^b | \! | = C_{\varepsilon} |\!| s |\!|.
    \]
    Indeed, this implies that the operator norm satisfies $| \! | f_k(T_D) |\! | \leq p^{k \varepsilon} C_{\varepsilon}$, and hence any expression $\sum_{k \geq 0} a_k f_k(T_D)$, with $p^{k \varepsilon}|a_k| \to 0$ as $k \to +\infty$, converges (under the operator norm) to a well-defined operator on $V$. 

    By the formula in (\ref{ExplicitFormulaForfk}), it suffices to show that for each $0 \leq r \leq \opn{min}(k, a)$ and $I \in \Sigma_{k, k-r}$, we have
    \[
    p^{-k\varepsilon} \left| \! \left| \bincoeff{k-r}{k_1, \dots, k_{\ell}}^{-1} \bincoeff{k}{r}^{-1} \bincoeff{a}{r} f_I(D)(s) \lambda^r \right| \! \right| \leq C_{\varepsilon} |\!|s|\!| .
    \]
    Since $D$ extends to a locally analytic action on $S$ and $f_I(-)$ is valued in $\mathcal{O}_L$, there exists a constant $C_{\varepsilon/2}'$ such that $p^{-k\varepsilon/2}|\!|f_I(D)(s)|\!| \leq C_{\varepsilon/2}' |\!|s|\!|$ for all $s \in S$ and $k \geq 0$. Therefore, we have 
    \begin{align*}
    p^{-k \varepsilon} \left| \! \left| \bincoeff{k-r}{k_1, \dots, k_{\ell}}^{-1} \bincoeff{k}{r}^{-1} \bincoeff{a}{r} f_I(D)(s) \lambda^r \right| \! \right| &\leq p^{-k\varepsilon/2}\left| \! \left| \bincoeff{k-r}{k_1, \dots, k_{\ell}}^{-1} \bincoeff{k}{r}^{-1} \right| \! \right| p^{-k\varepsilon/2} |\!|f_I(D)(s) |\!| \\
     &\leq p^{-k\varepsilon/2 + \opn{log}_p(k-r) + \opn{log}_p(k)} C'_{\varepsilon/2} |\!|s|\!| .
    \end{align*}
    Since $\varepsilon > 0$, we have $-k\varepsilon/2 + \opn{log}_p(k-r) + \opn{log}_p(k) \to -\infty$ as $k \to +\infty$, hence we can find a constant $C_{\varepsilon}$, independent of $k, s, a, b$, such that
    \[
    p^{-k\varepsilon/2 + \opn{log}_p(k-r) + \opn{log}_p(k)} C'_{\varepsilon/2} \leq C_{\varepsilon}
    \]
    as required.
\end{proof}

\subsection{Overconvergence}

To conclude this section, we discuss a result which will allow us to extend $\varepsilon$-actions to ``overconvergent neighbourhoods''  (see Remark \ref{Rem:OCintuition}). Suppose that we have a sequence of $L$-Banach spaces
\[
V_0 \to V_1 \to \cdots \to V_{\infty}
\]
and denote the Banach norm on $V_r$ by $|\!| \cdot |\!|_r$. By abuse of notation, if $v \in V_r$, then for any $r \leq s \leq \infty$ we write $|\!|v|\!|_s$ for the Banach norm of the image of $v$ under the map $V_r \to V_s$. Finally, suppose that we have continuous operators $T = T_r \colon V_r \to V_r$ which are all compatible with each other under the maps above. We assume that $|\!| v |\!|_s \leq |\!| v |\!|_r$ for all $v \in V_r$ and $r \leq s \leq \infty$.

\begin{proposition} \label{CloseToIsomGeneralProp}
    Assume that the following property holds: for any $0 < \delta < 1$ and $r \in \mbb{N}$, there exists $s = s(\delta) \geq r$ such that, for all $c \in \mbb{Q}$, $h \in \mbb{N}$ and $v \in V_r$ we have
\begin{equation} \label{Equationproperty}
 |\!|v|\!|_r \leq p^c \text{ and } |\!|v|\!|_{\infty} \leq p^{c - h} \implies |\!|v|\!|_s \leq p^{c -\delta h}.
\end{equation}
Assume that, for some $\varepsilon > 0$, the operator $T$ extends to an $\varepsilon$-analytic action on $V_\infty$  (as in Definition \ref{Def:ContLAEpsilonAction}). Then, for any $\gamma > \varepsilon$, there exists $s \in \mbb{N}$ (depending only on $\varepsilon$, $\gamma$ and the operator norm $|\!|T|\!|_r$) such that, for any $v\in V_r$,
\[ 
p^{-k \gamma} |\!|f_k(\nabla)(v)|\!|_s \to 0 \text{ as } k \to +\infty.
\]
\end{proposition}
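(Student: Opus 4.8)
The plan is to mimic the proof of \cite[Proposition 4.4.1]{DiffOps}: interpolate a crude bound for $f_k(T)$ on $V_r$ against the sharp $\varepsilon$-analytic bound on $V_\infty$, using the hypothesis \eqref{Equationproperty}. Fix $v \in V_r$, which we may assume non-zero, and write $w_k \defeq f_k(T)(v) \in V_r$; we must show $p^{-k\gamma}|\!|w_k|\!|_s \to 0$. First I would record two estimates. Since $T = T_r$ is a continuous operator on $V_r$ and $f_k(X) = \binom{X}{k}$ is a polynomial with coefficients in $\tfrac{1}{k!}\mbb{Z}$, a standard estimate (integrality of the Stirling numbers together with $v_p(k!) \leq k/(p-1)$) gives $|\!|f_k(T)|\!|_r \leq p^{k\alpha}$ for a constant $\alpha \geq 0$ depending only on $p$ and $|\!|T|\!|_r$; hence $|\!|w_k|\!|_r \leq p^{k\alpha}|\!|v|\!|_r$. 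On the other hand, $f_k = \binom{x}{k}$ is exactly the $k$-th Mahler basis vector of $C_\varepsilon(\mbb{Z}_p, L)$, of norm $p^{k\varepsilon}$, and the $L$-algebra map $C_\varepsilon(\mbb{Z}_p, L) \to \opn{End}_L(V_\infty)$ underlying the $\varepsilon$-analytic action on $V_\infty$ carries it to $f_k(T_\infty)$; continuity of the action therefore produces a constant $C > 0$ with $|\!|f_k(T)(w)|\!|_\infty \leq C\, p^{k\varepsilon}|\!|w|\!|_\infty$ for all $w \in V_\infty$. Combining this with the compatibility of the $T_r$ under the maps $V_r \to V_\infty$ and the inequality $|\!|\cdot|\!|_\infty \leq |\!|\cdot|\!|_r$ yields $|\!|w_k|\!|_\infty \leq C\, p^{k\varepsilon}|\!|v|\!|_r$.

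If $\alpha \leq \varepsilon$, then $\alpha < \gamma$ and we are done with $s = r$, since $p^{-k\gamma}|\!|w_k|\!|_r \leq p^{k(\alpha - \gamma)}|\!|v|\!|_r \to 0$; so I may assume $\alpha > \varepsilon$. Then the interval $\bigl(\max(0, \tfrac{\alpha - \gamma}{\alpha - \varepsilon}), 1\bigr)$ is non-empty (as $\gamma > \varepsilon$), and I would fix a rational $\delta$ inside it — this depends only on $\alpha$, $\gamma$, $\varepsilon$, hence only on $|\!|T|\!|_r$, $\gamma$, $\varepsilon$ — and take $s = s(\delta) \geq r$ as supplied by \eqref{Equationproperty}. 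For $k$ large, I would pick $c_k \in \mbb{Q}$ with $k\alpha + \log_p|\!|v|\!|_r \leq c_k \leq k\alpha + \log_p|\!|v|\!|_r + 1$ (so $|\!|w_k|\!|_r \leq p^{c_k}$) and set $h_k \defeq \lfloor c_k - k\varepsilon - \log_p(C|\!|v|\!|_r) \rfloor$; since $c_k - k\varepsilon - \log_p(C|\!|v|\!|_r) \geq k(\alpha - \varepsilon) - \log_p C \to +\infty$, this $h_k$ is a non-negative integer for $k \gg 0$, and $c_k - h_k \geq k\varepsilon + \log_p(C|\!|v|\!|_r) \geq \log_p|\!|w_k|\!|_\infty$. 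Applying \eqref{Equationproperty} to $w_k$ then gives $|\!|w_k|\!|_s \leq p^{c_k - \delta h_k}$.

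It remains to estimate the exponent. Using $c_k \leq k\alpha + \log_p|\!|v|\!|_r + 1$ and $h_k \geq c_k - k\varepsilon - \log_p(C|\!|v|\!|_r) - 1$ one finds $c_k - \delta h_k \leq k\bigl((1-\delta)\alpha + \delta\varepsilon\bigr) + D$ for a constant $D$ independent of $k$, and by the choice of $\delta$ the slope $\gamma' \defeq (1-\delta)\alpha + \delta\varepsilon = \alpha - \delta(\alpha - \varepsilon)$ satisfies $\gamma' < \gamma$. Hence $p^{-k\gamma}|\!|w_k|\!|_s \leq p^{D}\, p^{-k(\gamma - \gamma')} \to 0$, the finitely many small $k$ (where $h_k < 1$) being harmless. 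I do not anticipate any real obstacle: the argument is essentially book-keeping, and the only points requiring care are (i) identifying $f_k$ with the Mahler basis vector, so that the $\varepsilon$-analytic action bounds $f_k(T)$ on $V_\infty$ by $C\,p^{k\varepsilon}$, and (ii) choosing the parameters $c_k \in \mbb{Q}$, $h_k \in \mbb{N}$ and the auxiliary $\delta$ so that \eqref{Equationproperty} applies while the output $s$ depends only on $\varepsilon$, $\gamma$ and $|\!|T|\!|_r$.
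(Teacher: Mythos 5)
Your proof is correct and follows essentially the same strategy as the paper: interpolate a crude polynomial bound for $f_k(T)$ on $V_r$ (depending on $|\!|T|\!|_r$) against the sharp $\varepsilon$-analytic bound on $V_\infty$, via the hypothesis \eqref{Equationproperty}, and then choose $\delta$ so that the resulting slope $(1-\delta)\alpha + \delta\varepsilon$ drops below $\gamma$. The only cosmetic difference is that the paper clears denominators by working with $k! f_k(T)(v)$ (and adjusts $C_r$ so that $h_k$ is automatically an integer), whereas you absorb $v_p(k!)$ into your constant $\alpha$ and take floors to manufacture integral $h_k$ — both devices serve the same bookkeeping purpose and you have handled the rationality/integrality requirements on $c$ and $h$ carefully.
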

\begin{proof}
    This is {\cite[Proposition 4.4.1, Remark 4.4.4]{DiffOps}}.
\end{proof}

\begin{remark} \label{Rem:OCintuition}
    Let us briefly describe the rationale behind Proposition \ref{CloseToIsomGeneralProp}. Suppose that $\mathcal{X}$ is a qcqs adic space over $\opn{Spa}(L, \mathcal{O}_L)$, and suppose that $h \in \opn{H}^0(\mathcal{X}, \mathcal{L})$ is a non-zero section of an invertible sheaf $\mathcal{L}$ of $\mathcal{O}_{\mathcal{X}}^+$-modules. Let $\mathcal{U}_{\infty} \subset \mathcal{X}$ denote the open locus where $|h| = 1$. Then we can define a system of ``overconvergent neighbourhoods'' of $\mathcal{U}_{\infty}$:
    \[
    \mathcal{U}_{\infty} \subset \cdots \subset \mathcal{U}_r \subset \cdots \subset \mathcal{U}_{1} \subset \mathcal{U}_0 \subset \mathcal{X}
    \]
    where $\mathcal{U}_r \subset \mathcal{X}$ is the open locus where $|h|^{p^{r+1}} \geq |p|$. One has the property that $\mathcal{U}_r$ contains the closure of $\mathcal{U}_{s}$ (in $\mathcal{X}$) for $r < s \leq \infty$. If we let $V_r = \opn{H}^0(\mathcal{U}_r, \mathcal{O}_{\mathcal{U}_r})$, then we obtain a sequence of $L$-Banach spaces $V_0 \to V_1 \to \cdots \to V_{\infty}$ which satisfies the assumptions of Proposition \ref{CloseToIsomGeneralProp}. Hence if we have compatible continuous operators $T \colon V_r \to V_r$ such that $T$ extends to an $\varepsilon$-analytic action on $V_{\infty}$, then the conclusion of Proposition \ref{CloseToIsomGeneralProp} implies this action ``overconverges'' to a $\gamma$-analytic action on $\varinjlim_{r < \infty} V_r$ for any $\gamma > \varepsilon$. In practice (see \S \ref{OrdExpStrictNhoodGSec}--\ref{TheConstuctionForHSec}), the space $\mathcal{X}$ will be a Shimura variety, $\{\mathcal{U}_r\}_{r < \infty}$ will be overconvergent neighbourhoods of a component of the ordinary locus in $\mathcal{X}$, and we will take $T$ to be the Gauss--Manin connection.
\end{remark}


\section{Cohomology and correspondences} \label{CohomologyAndCorrespondencesSection}

In \S \ref{HOpsAndHCTChapter}, we will describe the action of certain Hecke correspondences on the cohomology of unitary Shimura varieties with partial compact support. In particular, we will summarise the main results from higher Coleman theory \cite{BoxerPilloni} that we will need for the construction of the $p$-adic $L$-function. Unfortunately, the support conditions we need to consider for these Hecke correspondence are slightly different from those in \emph{op.cit.}, and it will therefore be useful to have a slight generalisation of this theory. 

Consider the following correspondence of smooth adic spaces (over a finite extension of $\mbb{Q}_p$)
\[
\begin{tikzcd}
            & \mathcal{C} \arrow[ld, "p_1"'] \arrow[rd, "p_2"] &             \\
\mathcal{X} &                                                  & \mathcal{X}
\end{tikzcd}
\]
where $p_1$ and $p_2$ are finite flat. For any subset $\mathcal{Y} \subset \mathcal{X}$, we set $T(\mathcal{Y}) = p_2 p_1^{-1}(\mathcal{Y})$ and $T^t(\mathcal{Y}) = p_1 p_2^{-1}(\mathcal{Y})$. By our assumptions, $T$ and $T^t$ take (quasi-compact) open subsets to (quasi-compact) open subsets, and closed subsets to closed subsets. We now introduce a version of support conditions suitable for our purposes.

\begin{definition} \label{SystemOfSupportConditionsDef}
    A system of support conditions for the above correspondence is a collection of open subsets $\{ \mathcal{U}_k \}_{k \in \mbb{N}}$ and closed subsets $\{ \mathcal{Z}_m \}_{m \in \mbb{N}}$ of $\mathcal{X}$ such that:
    \begin{enumerate}
        \item $\{ \mathcal{U}_k \}_{k \in \mbb{N}}$ and $\{ \mathcal{Z}_m \}_{m \in \mbb{N}}$ are nested (i.e. $\mathcal{U}_{k+1} \subset \mathcal{U}_k$ and $\mathcal{Z}_{m+1} \subset \mathcal{Z}_m$ for all $k, m \in \mbb{N}$).
        \item For any $k,m \in \mbb{N}$, both $\mathcal{U}_k$ and the complement of $\mathcal{Z}_m$ are finite unions of quasi-Stein\footnote{Recall that an adic space $\mathcal{Y}$ over $\opn{Spa}(\mbb{Q}_p, \mbb{Z}_p)$ is quasi-Stein if it is a countable increasing union $\mathcal{Y} = \bigcup_{i \geq 0} \mathcal{Y}_i$ of finite-type affinoid adic spaces $\mathcal{Y}_i \to \opn{Spa}(\mbb{Q}_p, \mbb{Z}_p)$ such that the restriction maps $\mathcal{O}_{\mathcal{Y}_{i+1}} \to \mathcal{O}_{\mathcal{Y}_i}$ have dense image (see \cite[Definition 2.5.14]{BoxerPilloni}). This condition implies that the cohomology complexes $R\Gamma_{\mathcal{U}_k \cap \mathcal{Z}_m}(\mathcal{U}_k, \mathscr{F})$ in (\ref{UZbulletDefinition}) can be represented by \v{C}ech complexes formed from projective systems of $\mbb{Q}_p$-Banach spaces (see \cite[Lemma 2.5.21]{BoxerPilloni}).} open subspaces.
        \item For any integer $k \in \mbb{N}$, there exist integers $k', m \in \mbb{N}$ such that
        \[
        T(\mathcal{U}_{k'}) \cap \mathcal{Z}_m \subset \mathcal{U}_k .
        \]
        \item For any $m \in \mbb{N}$, we have $T^t(\mathcal{Z}_m) \subset \mathcal{Z}_m$.
    \end{enumerate}
    For any locally projective Banach sheaf $\mathscr{F}$ on $\mathcal{U}_1$, we set
    \begin{equation} \label{UZbulletDefinition}
    R\Gamma(\mathcal{U}_{\bullet}, \mathcal{Z}_{\bullet}; \mathscr{F}) \defeq \varinjlim_k \varprojlim_m R\Gamma_{\mathcal{U}_k \cap \mathcal{Z}_m}(\mathcal{U}_k, \mathscr{F})
    \end{equation}
    where the direct limit is with respect to the natural restriction maps and the (derived) inverse limit is with respect to the natural corestriction maps.
\end{definition}

Suppose that we have a system of support conditions and a locally projective Banach sheaf $\mathscr{F}$ as in Definition \ref{SystemOfSupportConditionsDef}. Fix integers $a, b \in \mbb{N}$ such that $T(\mathcal{U}_a) \cap \mathcal{Z}_b \subset \mathcal{U}_1$ and suppose that we have a continuous morphism $\phi \colon p_2^* \mathscr{F} \to p_1^* \mathscr{F}$ defined on a open neighbourhood $p_1^{-1}(\mathcal{U}_a) \cap p_2^{-1}(\mathcal{Z}_b) \subset \mathcal{V} \subset p_1^{-1}(\mathcal{U}_a) \cap p_2^{-1}(\mathcal{U}_1)$. Then, for any integer $k \in \mbb{N}$ and integers $k' \geq a$, $m \geq b$ such that $T(\mathcal{U}_{k'}) \cap \mathcal{Z}_m \subset \mathcal{U}_k$, we obtain an operator $T$ defined by the following composition:
\begin{align*} 
R \Gamma_{\mathcal{U}_k \cap \mathcal{Z}_m}(\mathcal{U}_k, \mathscr{F}) &\xrightarrow{p_2^*} R\Gamma_{p_2^{-1}(\mathcal{U}_k) \cap p_2^{-1}(\mathcal{Z}_m)}(p_2^{-1}(\mathcal{U}_k), p_2^*\mathscr{F}) \\
 &\xrightarrow{\opn{res}} R\Gamma_{p_1^{-1}(\mathcal{U}_{k'}) \cap p_2^{-1}(\mathcal{Z}_m)}(p_2^{-1}(\mathcal{U}_k) \cap p_1^{-1}(\mathcal{U}_{k'}) \cap \mathcal{V}, p_2^*\mathscr{F}) \\
 &\xrightarrow{\phi} R\Gamma_{p_1^{-1}(\mathcal{U}_{k'}) \cap p_2^{-1}(\mathcal{Z}_m)}(p_2^{-1}(\mathcal{U}_k) \cap p_1^{-1}(\mathcal{U}_{k'}) \cap \mathcal{V}, p_1^*\mathscr{F}) \\
 &= R\Gamma_{p_1^{-1}(\mathcal{U}_{k'}) \cap p_2^{-1}(\mathcal{Z}_m)}(p_1^{-1}(\mathcal{U}_{k'}), p_1^*\mathscr{F}) \\
 &\xrightarrow{\opn{Tr}_{p_1}} R\Gamma_{\mathcal{U}_{k'} \cap T^t(\mathcal{Z}_m)}(\mathcal{U}_{k'}, \mathscr{F}) \\
 &\xrightarrow{\opn{cores}} R\Gamma_{\mathcal{U}_{k'} \cap \mathcal{Z}_m}(\mathcal{U}_{k'}, \mathscr{F})
\end{align*}
where: 
\begin{itemize}
    \item ``$\opn{res}$'' denotes the restriction map from $p_2^{-1}(\mathcal{U}_k)$ with support in $p_2^{-1}(\mathcal{U}_k) \cap p_2^{-1}(\mathcal{Z}_m)$ to $p_2^{-1}(\mathcal{U}_k) \cap p_1^{-1}(\mathcal{U}_{k'}) \cap \mathcal{V}$ with support in 
    \begin{equation} \label{Eqn:SupportEquality}
    p_2^{-1}(\mathcal{U}_k) \cap p_2^{-1}(\mathcal{Z}_m) \cap p_2^{-1}(\mathcal{U}_k) \cap p_1^{-1}(\mathcal{U}_{k'}) \cap \mathcal{V} = p_1^{-1}(\mathcal{U}_{k'}) \cap p_2^{-1}(\mathcal{Z}_m),
    \end{equation}
    where this equality follows from $T(\mathcal{U}_{k'}) \cap \mathcal{Z}_m \subset \mathcal{U}_k$ and $p_1^{-1}(\mathcal{U}_{k'}) \cap p_2^{-1}(\mathcal{Z}_m) \subset \mathcal{V}$ (because $k' \geq a$ and $m \geq b$);
    \item the equality is the excision isomorphism using the fact that $p_1^{-1}(\mathcal{U}_{k'}) \cap p_2^{-1}(\mathcal{Z}_m)$ is closed in both $p_2^{-1}(\mathcal{U}_k) \cap p_1^{-1}(\mathcal{U}_{k'}) \cap \mathcal{V}$ and $p_1^{-1}(\mathcal{U}_{k'})$ (by \eqref{Eqn:SupportEquality});
    \item the trace map $\opn{Tr}_{p_1}$ in the one constructed in \cite[Lemma 2.1.2]{BoxerPilloni};
    \item ``$\opn{cores}$'' denotes corestriction from cohomology with support in $\mathcal{U}_{k'} \cap T^t(\mathcal{Z}_m)$ to cohomology with support in $\mathcal{U}_{k'} \cap \mathcal{Z}_m$ (which makes sense by Definition \ref{SystemOfSupportConditionsDef}(4)).
\end{itemize}
We have the following compatibility with changing $(k, k', m, a, b)$:

\begin{lemma}
    For any $(k, k', m, a, b)$ as above, we have commutative diagrams:
\[
\begin{tikzcd}
{R \Gamma_{\mathcal{U}_k \cap \mathcal{Z}_m}(\mathcal{U}_k, \mathscr{F})} \arrow[r, "T"]                              & {R \Gamma_{\mathcal{U}_{k'} \cap \mathcal{Z}_m}(\mathcal{U}_{k'}, \mathscr{F})}                              & {R \Gamma_{\mathcal{U}_k \cap \mathcal{Z}_m}(\mathcal{U}_k, \mathscr{F})} \arrow[d, "\opn{res}"'] \arrow[r, "T"] & {R \Gamma_{\mathcal{U}_{k'} \cap \mathcal{Z}_m}(\mathcal{U}_{k'}, \mathscr{F})} \arrow[d, equals] \\
{R \Gamma_{\mathcal{U}_k \cap \mathcal{Z}_{m+1}}(\mathcal{U}_k, \mathscr{F})} \arrow[u, "\opn{cores}"] \arrow[r, "T"] & {R \Gamma_{\mathcal{U}_{k'} \cap \mathcal{Z}_{m+1}}(\mathcal{U}_{k'}, \mathscr{F})} \arrow[u, "\opn{cores}"] & {R \Gamma_{\mathcal{U}_{k+1} \cap \mathcal{Z}_m}(\mathcal{U}_{k+1}, \mathscr{F})} \arrow[r, "T"]                 & {R \Gamma_{\mathcal{U}_{k'} \cap \mathcal{Z}_m}(\mathcal{U}_{k'}, \mathscr{F})}                     
\end{tikzcd}
\]
where, for the right-hand diagram, we assume that $T(\mathcal{U}_{k'}) \cap \mathcal{Z}_m \subset \mathcal{U}_{k+1}$. As a consequence, we obtain a well-defined operator
\[
T \colon R\Gamma(\mathcal{U}_{\bullet}, \mathcal{Z}_{\bullet}; \mathscr{F}) \to R\Gamma(\mathcal{U}_{\bullet}, \mathcal{Z}_{\bullet}; \mathscr{F}) .
\]
This operator is independent of the choice of $a, b$ and $\mathcal{V}$.
\end{lemma}
\begin{proof}
    This follows from the various compatibilities of each of the maps in the definition of $T$, which we now explain.
    \begin{itemize}
        \item The map $p_2^*$ is compatible with changing $k$ because pullback commutes with the restriction maps with respect to the embeddings $\mathcal{U}_{k+1} \subset \mathcal{U}_k$ and $p_2^{-1}(\mathcal{U}_{k+1}) \subset p_2^{-1}(\mathcal{U}_k)$. Furthermore, $p_2^*$ commutes with the corestriction maps with respect to $\mathcal{U}_k \cap \mathcal{Z}_{m+1} \subset \mathcal{U}_k \cap \mathcal{Z}_{m}$ and $p_2^{-1}(\mathcal{U}_k) \cap p_2^{-1}(\mathcal{Z}_{m+1}) \subset p_2^{-1}(\mathcal{U}_k) \cap p_2^{-1}(\mathcal{Z}_{m})$ because the pullback of $(\mathcal{U}_k \cap \mathcal{Z}_m) - (\mathcal{U}_k \cap \mathcal{Z}_{m+1})$ under $p_2$ is just $(p_2^{-1}(\mathcal{U}_k) \cap p_2^{-1}(\mathcal{Z}_m)) - (p_2^{-1}(\mathcal{U}_k) \cap p_2^{-1}(\mathcal{Z}_{m+1}))$.
        \item The restriction map clearly commutes with restriction along $p_2^{-1}(\mathcal{U}_{k+1}) \subset p_2^{-1}(\mathcal{U}_{k})$ and $p_2^{-1}(\mathcal{U}_{k+1}) \cap p_1^{-1}(\mathcal{U}_{k'}) \cap \mathcal{V} \subset p_2^{-1}(\mathcal{U}_{k}) \cap p_1^{-1}(\mathcal{U}_{k'}) \cap \mathcal{V}$, and commutes with corestriction induced from $\mathcal{Z}_{m+1} \subset \mathcal{Z}_m$ for similar reasons as in the bullet point above.
        \item The compatibility for $\phi$ is clear, and the compatibility for the excision map is for the same reasons as in the preceding bullet point (the excision map is induced from a restriction map).
        \item The map $\opn{Tr}_{p_1}$ is compatible with the corestriction maps with respect to $p_1^{-1}(\mathcal{U}_{k'}) \cap p_2^{-1}(\mathcal{Z}_{m+1}) \subset p_1^{-1}(\mathcal{U}_{k'}) \cap p_2^{-1}(\mathcal{Z}_{m})$ and $\mathcal{U}_{k'} \cap T^t(\mathcal{Z}_{m+1}) \subset \mathcal{U}_{k'} \cap T^t(\mathcal{Z}_{m})$ because pushforwards commute with corestriction maps.
        \item The compatibility of the corestriction map with the corestriction maps induced from $\mathcal{Z}_{m+1} \subset \mathcal{Z}_m$ follows from the same reasons as in the preceding bullet point.
    \end{itemize}
    Putting this all together, we see that the diagrams in the statement of the lemma are commutative. The independence in $a, b, \mathcal{V}$ is immediate.
\end{proof}

We now compare two choices of support conditions.

\begin{definition} \label{IntertwinedSupportConditionsDef}
    We say that two systems of support conditions $(\mathcal{U}_{\bullet}, \mathcal{Z}_{\bullet})$ and $(\mathcal{U}_{\bullet}', \mathcal{Z}_{\bullet}')$ are intertwined if:
    \begin{itemize}
        \item For any pair of integers $k_1, m_1 \in \mbb{N}$, there exist a pair of integers $k_2, m_2 \in \mbb{N}$ such that
        \[
        \mathcal{U}'_{k_2} \cap \mathcal{Z}'_{m_2} \subset \mathcal{U}_{k_1} \cap \mathcal{Z}_{m_1} .
        \]
        \item For any pair of integers $k_1, m_1 \in \mbb{N}$, there exist a pair of integers $k_2, m_2 \in \mbb{N}$ such that
        \[
        \mathcal{U}_{k_2} \cap \mathcal{Z}_{m_2} \subset \mathcal{U}'_{k_1} \cap \mathcal{Z}'_{m_1} .
        \]
    \end{itemize}
    In particular, we have $\bigcap_{k, m} \left(\mathcal{U}_k \cap \mathcal{Z}_m \right) = \bigcap_{k, m} \left(\mathcal{U}'_k \cap \mathcal{Z}'_m \right)$.
\end{definition}

\begin{lemma}
    Suppose that $(\mathcal{U}_{\bullet}, \mathcal{Z}_{\bullet})$ and $(\mathcal{U}_{\bullet}', \mathcal{Z}_{\bullet}')$ are two systems of support conditions which are intertwined, and suppose that $\mathscr{F}$ is a locally projective Banach sheaf on $\mathcal{U}_1 \cup \mathcal{U}_1'$. Then there is a natural quasi-isomorphism
    \begin{equation} \label{NaturalQIsoIntertwinedEqn}
    R\Gamma( \mathcal{U}_{\bullet}, \mathcal{Z}_{\bullet}; \mathscr{F} ) \xrightarrow{\sim} R\Gamma( \mathcal{U}'_{\bullet}, \mathcal{Z}'_{\bullet}; \mathscr{F} ) .
    \end{equation}
    functorial in $\mathscr{F}$. If $\mathcal{V}$ and $\mathcal{V}'$ are open neighbourhoods as above (for the systems $(\mathcal{U}_{\bullet}, \mathcal{Z}_{\bullet})$ and $(\mathcal{U}_{\bullet}', \mathcal{Z}_{\bullet}')$ respectively) and $\phi \colon p_2^* \mathscr{F} \to p_1^*\mathscr{F}$ is continuous morphism defined on $\mathcal{V} \cup \mathcal{V}'$, then (\ref{NaturalQIsoIntertwinedEqn}) is $T$-equivariant.
\end{lemma}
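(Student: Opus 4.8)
The plan is to realise both sides of (\ref{NaturalQIsoIntertwinedEqn}) as (derived) colimit--limits of local cohomology groups indexed by the locally closed loci $\mathcal{U}_k\cap\mathcal{Z}_m$ and $\mathcal{U}'_{k'}\cap\mathcal{Z}'_{m'}$, and to compare them by a cofinality argument assembled from exactly the operations already used to build (\ref{UZbulletDefinition}) and the operator $T$: restriction, corestriction and excision.

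First I would isolate the two elementary facts about cohomology with support that drive everything. \emph{Excision:} if $Z$ is closed in an open $\mathcal{W}\subset\mathcal{X}$ and $\mathcal{W}_0\subset\mathcal{W}$ is open with $Z\subset\mathcal{W}_0$, then $R\Gamma_Z(\mathcal{W},\mathscr{F})\xrightarrow{\sim}R\Gamma_Z(\mathcal{W}_0,\mathscr{F})$; in particular $R\Gamma_{\mathcal{U}_k\cap\mathcal{Z}_m}(\mathcal{U}_k,\mathscr{F})$ may be computed on any open inside which $\mathcal{U}_k\cap\mathcal{Z}_m$ is closed and contained. \emph{Monotonicity:} for $Z'\subset Z$ both closed in $\mathcal{W}$ there is a natural ``enlarge support'' map $R\Gamma_{Z'}(\mathcal{W},\mathscr{F})\to R\Gamma_{Z}(\mathcal{W},\mathscr{F})$, and for $\mathcal{W}_0\subset\mathcal{W}$ open a natural ``restrict'' map $R\Gamma_{Z}(\mathcal{W},\mathscr{F})\to R\Gamma_{Z\cap\mathcal{W}_0}(\mathcal{W}_0,\mathscr{F})$. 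All of these commute with $p_i^*$, with the trace maps $\opn{Tr}_{p_i}$, and hence with the operator $T$; this is what will ultimately give $T$-equivariance.

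The heart of the argument is the construction of comparison data. Suppose $\mathcal{U}'_{k'}\cap\mathcal{Z}'_{m'}\subset\mathcal{U}_k\cap\mathcal{Z}_m$; write $C\defeq\mathcal{U}_k\cap\mathcal{Z}_m$, $C'\defeq\mathcal{U}'_{k'}\cap\mathcal{Z}'_{m'}$, $\mathcal{V}\defeq\mathcal{U}_k\cap\mathcal{U}'_{k'}$. Then $C$ is closed in $\mathcal{U}_k$, $C'\subset\mathcal{V}$ is closed in $\mathcal{V}$, $C\cap\mathcal{V}$ is closed in $\mathcal{V}$, and $C'\subset C\cap\mathcal{V}$; so one obtains a span
\[
R\Gamma_{C}(\mathcal{U}_k,\mathscr{F}) \xrightarrow{\ \opn{restrict}\ } R\Gamma_{C\cap\mathcal{V}}(\mathcal{V},\mathscr{F}) \xleftarrow{\ \opn{enlarge}\,\circ\,\opn{excise}\ } R\Gamma_{C'}(\mathcal{U}'_{k'},\mathscr{F}),
\]
natural in $\mathscr{F}$ and, once $\phi$ is given on a common neighbourhood, $T$-equivariant. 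Iterating the intertwining relations of Definition \ref{IntertwinedSupportConditionsDef} one produces, from $(k,m)$, primed indices $(k_2,m_2)$ with $\mathcal{U}'_{k_2}\cap\mathcal{Z}'_{m_2}\subset C$ and then unprimed indices $(k_3,m_3)$ with $k_3\geq k$, $m_3\geq m$ and $\mathcal{U}_{k_3}\cap\mathcal{Z}_{m_3}\subset\mathcal{U}'_{k_2}\cap\mathcal{Z}'_{m_2}$; the key point to verify is that the resulting composite span from $R\Gamma_{C}(\mathcal{U}_k,\mathscr{F})$ through $R\Gamma_{\mathcal{U}'_{k_2}\cap\mathcal{Z}'_{m_2}}(\mathcal{U}'_{k_2},\mathscr{F})$ back to $R\Gamma_{\mathcal{U}_{k_3}\cap\mathcal{Z}_{m_3}}(\mathcal{U}_{k_3},\mathscr{F})$ recovers the canonical transition map appearing in (\ref{UZbulletDefinition}), and symmetrically with the two systems exchanged. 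This is the mechanism by which mutually cofinal span-data induce a canonical isomorphism of the associated colimit--limits, giving (\ref{NaturalQIsoIntertwinedEqn}); functoriality in $\mathscr{F}$ is then automatic, and $T$-equivariance follows because every arrow occurring is built from restriction, corestriction, excision and the trace maps, which are compatible over $\mathcal{V}\cup\mathcal{V}'$ by the hypothesis on $\phi$.

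The main obstacle is the bookkeeping in that last verification: the primed indices $(k_2,m_2)$ produced by Definition \ref{IntertwinedSupportConditionsDef} depend on \emph{both} $k$ and $m$, whereas (\ref{UZbulletDefinition}) is a colimit over $k$ of a limit over $m$, so the comparison must be organised compatibly with this asymmetry. Concretely, one fixes $k$ and, using that enlarging any of $k,k',m,m'$ only shrinks the supports and hence preserves all required inclusions, builds coherently in $m$ a map $\varprojlim_m R\Gamma_{\mathcal{U}_k\cap\mathcal{Z}_m}(\mathcal{U}_k,\mathscr{F})\to\varprojlim_{m'}R\Gamma_{\mathcal{U}'_{k'}\cap\mathcal{Z}'_{m'}}(\mathcal{U}'_{k'},\mathscr{F})$ for a suitable $k'=k'(k)$, and then passes to the colimit over $k$; the reverse map and the two homotopies are obtained the same way. (Rewriting $\varprojlim_m R\Gamma_{\mathcal{U}_k\cap\mathcal{Z}_m}(\mathcal{U}_k,\mathscr{F})$ as $\opn{fib}\big(R\Gamma(\mathcal{U}_k,\mathscr{F})\to\varprojlim_m R\Gamma(\mathcal{U}_k\setminus\mathcal{Z}_m,\mathscr{F})\big)$ using the quasi-Stein hypothesis of Definition \ref{SystemOfSupportConditionsDef}(2) makes the limits manifestly well behaved, but does not by itself trivialise the comparison, since the opens $\mathcal{U}_k$ and $\mathcal{U}'_{k'}$ need not be cofinal in one another.) Everything else is then formal.
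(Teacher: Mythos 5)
Your proposal has the right ingredients in view (excision, the monotonicity maps, interleaving/cofinality, and the genuine asymmetry between the colimit over $k$ and the limit over $m$), but the central comparison step has a gap. The diagram
\[
R\Gamma_{C}(\mathcal{U}_k,\mathscr{F}) \xrightarrow{\ \opn{restrict}\ } R\Gamma_{C\cap\mathcal{V}}(\mathcal{V},\mathscr{F}) \xleftarrow{\ \opn{enlarge}\,\circ\,\opn{excise}\ } R\Gamma_{C'}(\mathcal{U}'_{k'},\mathscr{F})
\]
has a backward arrow equal to the composite of an excision quasi-isomorphism and a corestriction $R\Gamma_{C'}(\mathcal{V}) \to R\Gamma_{C\cap\mathcal{V}}(\mathcal{V})$. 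That corestriction is only an isomorphism when $C' = C \cap \mathcal{V}$, whereas in general $C' = \mathcal{U}'_{k'}\cap\mathcal{Z}'_{m'}$ is a strictly smaller closed subset of $\mathcal{V}$ than $C\cap\mathcal{V} = \mathcal{U}_k\cap\mathcal{U}'_{k'}\cap\mathcal{Z}_m$: the intertwining hypothesis gives only $C'\subset C\cap\mathcal{V}$. A diagram whose backward arrow is not a quasi-isomorphism does not determine a morphism in the derived category, so these ``comparison data'' cannot be assembled into the desired map of colimit--limits, and the claimed ``mechanism by which mutually cofinal span-data induce a canonical isomorphism'' is precisely what is lacking.

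The difficulty comes from trying to move the open parameter and the closed parameter at once. The paper avoids this by first passing to the common refinement $\mathcal{U}''_{\bullet} = \mathcal{U}_{\bullet}\cap\mathcal{U}'_{\bullet}$, $\mathcal{Z}''_{\bullet} = \mathcal{Z}_{\bullet}\cap\mathcal{Z}'_{\bullet}$ (one checks it is intertwined with both given systems) and then splitting the comparison into two: $R\Gamma(\mathcal{U}_{\bullet},\mathcal{Z}_{\bullet}) \xrightarrow{\opn{res}} R\Gamma(\mathcal{U}''_{\bullet},\mathcal{Z}_{\bullet})$ with the closed sets held fixed, and $R\Gamma(\mathcal{U}''_{\bullet},\mathcal{Z}''_{\bullet}) \xrightarrow{\opn{cores}} R\Gamma(\mathcal{U}''_{\bullet},\mathcal{Z}_{\bullet})$ with the opens held fixed. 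In the first comparison every arrow is a restriction, the only map ever inverted is the excision quasi-isomorphism $R\Gamma_{\mathcal{U}_{k_2}\cap\mathcal{Z}_m}(\mathcal{U}_{k_2})\xrightarrow{\sim}R\Gamma_{\mathcal{U}_{k_2}\cap\mathcal{Z}_m}(\mathcal{U}''_{k_1}\cap\mathcal{U}_{k_2})$, and the interleaving argument closes; the second comparison is dual, built from corestrictions. This decoupling also disposes of the colimit--limit asymmetry you flag as the main obstacle: the fixed-$\mathcal{Z}$ comparison is carried out for each $m$ separately and functorially in $m$ (so $\varprojlim_m$ commutes past it), and the fixed-$\mathcal{U}$ comparison behaves dually. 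The reduction to the common refinement is the missing structural step in your argument.
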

\begin{proof}
    Let $\mathcal{U}''_k = \mathcal{U}_k \cap \mathcal{U}'_k$ and $\mathcal{Z}''_m = \mathcal{Z}_m \cap \mathcal{Z}_m'$, then the two systems  $(\mathcal{U}_{\bullet}, \mathcal{Z}_{\bullet})$ and $(\mathcal{U}_{\bullet}'', \mathcal{Z}_{\bullet}'')$ are intertwined and we are reduced to proving the statement for these two systems (with the morphism $\phi$ restricted to $\mathcal{V}$). Furthermore, we can reduce this two further simpler cases: 
    \begin{enumerate}
        \item The statement for the systems  $(\mathcal{U}_{\bullet}, \mathcal{Z}_{\bullet})$ and $(\mathcal{U}_{\bullet}'', \mathcal{Z}_{\bullet})$
        \item The statement for the systems  $(\mathcal{U}_{\bullet}'', \mathcal{Z}_{\bullet})$ and $(\mathcal{U}_{\bullet}'', \mathcal{Z}_{\bullet}'')$.
    \end{enumerate}
    Suppose we are in case (1). Then one can show that the restriction maps $R\Gamma_{\mathcal{U}_k \cap \mathcal{Z}_m}\left( \mathcal{U}_k, \mathscr{F} \right) \to R\Gamma_{\mathcal{U}_k'' \cap \mathcal{Z}_m}\left( \mathcal{U}_k'', \mathscr{F} \right)$ are compatible as one varies $k, m$ and induce a $T$-equivariant map
    \begin{equation} \label{ResForIntertwined}
    R\Gamma(\mathcal{U}_{\bullet}, \mathcal{Z}_{\bullet}; \mathscr{F}) \xrightarrow{\opn{res}} R\Gamma(\mathcal{U}''_{\bullet}, \mathcal{Z}_{\bullet}; \mathscr{F}) .
    \end{equation}
    We claim that this is a quasi-isomorphism. Let $k_1, m \in \mbb{N}$ be integers, and $k_2 \geq k_1$ such that
    \[
    \mathcal{U}_{k_2} \cap \mathcal{Z}_{m} \subset \mathcal{U}''_{k_1} \cap \mathcal{Z}_m \subset \mathcal{U}_{k_1} \cap \mathcal{Z}_m.
    \]
    Then we have a commutative diagram:
    \[
\begin{tikzcd}
{R\Gamma_{\mathcal{U}_{k_1}\cap\mathcal{Z}_m}(\mathcal{U}_{k_1}, \mathscr{F})} \arrow[d] \arrow[r] & {R\Gamma_{\mathcal{U}_{k_2}\cap\mathcal{Z}_m}(\mathcal{U}_{k_2}, \mathscr{F})} \arrow[d, "\sim"] \arrow[rd]       &                                                                                    \\
{R\Gamma_{\mathcal{U}''_{k_1}\cap\mathcal{Z}_m}(\mathcal{U}''_{k_1}, \mathscr{F})} \arrow[r]       & {R\Gamma_{\mathcal{U}_{k_2}\cap\mathcal{Z}_m}(\mathcal{U}_{k_1}'' \cap \mathcal{U}_{k_2}, \mathscr{F})} \arrow[r] & {R\Gamma_{\mathcal{U}''_{k_2}\cap\mathcal{Z}_m}(\mathcal{U}''_{k_2}, \mathscr{F})}
\end{tikzcd}
    \]
    where all the maps are induced from restriction. The right-hand vertical arrow is an isomorphism by excision. This implies that we have factorisations 
    \[
    R\Gamma_{\mathcal{U}_{k_1}\cap\mathcal{Z}_m}(\mathcal{U}_{k_1}, \mathscr{F}) \to R\Gamma_{\mathcal{U}''_{k_1}\cap\mathcal{Z}_m}(\mathcal{U}''_{k_1}, \mathscr{F}) \to R\Gamma_{\mathcal{U}_{k_2}\cap\mathcal{Z}_m}(\mathcal{U}_{k_2}, \mathscr{F}) \to R\Gamma_{\mathcal{U}''_{k_2}\cap\mathcal{Z}_m}(\mathcal{U}''_{k_2}, \mathscr{F}) \to \cdots 
    \]
    functorial in $m$, which implies that the map (\ref{ResForIntertwined}) is a quasi-isomorphism.

    For case (2), the proof of the statement is very similar to (1), but now the quasi-isomorphism 
    \[
    R\Gamma(\mathcal{U}_{\bullet}'', \mathcal{Z}_{\bullet}''; \mathscr{F}) \xrightarrow{\opn{cores}} R\Gamma(\mathcal{U}_{\bullet}'', \mathcal{Z}_{\bullet}; \mathscr{F})
    \]
    is built up from corestriction maps. We leave the details to the reader. Finally, we now see that the map (\ref{NaturalQIsoIntertwinedEqn}) is the following composition
    \[
    R\Gamma(\mathcal{U}_{\bullet}, \mathcal{Z}_{\bullet}; \mathscr{F}) \xrightarrow{\sim} R\Gamma(\mathcal{U}_{\bullet}'', \mathcal{Z}_{\bullet}; \mathscr{F}) \xleftarrow{\sim} R\Gamma(\mathcal{U}_{\bullet}'', \mathcal{Z}_{\bullet}''; \mathscr{F}) \xrightarrow{\sim} R\Gamma(\mathcal{U}_{\bullet}'', \mathcal{Z}_{\bullet}'; \mathscr{F}) \xleftarrow{\sim} R\Gamma(\mathcal{U}_{\bullet}', \mathcal{Z}_{\bullet}'; \mathscr{F})
    \]
    which is $T$-equivariant.
\end{proof}

\begin{example} \label{HHTsupportEqualsThisExample}
    Suppose that $(\mathcal{U}, \mathcal{Z})$ is an open/closed support condition for the correspondence $\mathcal{C}$ as in \cite[Definition 6.1.3]{HHTBoxerPilloni}. Then the collection $\mathcal{U}_k \defeq T^{k-1}(\mathcal{U})$, $\mathcal{Z}_m \defeq (T^t)^{m-1}(\mathcal{Z})$ is a system of support conditions as in Definition \ref{SystemOfSupportConditionsDef}.
\end{example}


\section{Nearly holomorphic automorphic forms} \label{NearlyHolAutFormsSection}

In this section we introduce the unitary Shimura varieties that will be used throughout this article. We will also describe the space of nearly holomorphic automorphic forms equipped with an action of differential operators, and construct classical ``evaluation maps'' which are closely related to unitary Friedberg--Jacquet periods.

\subsection{PEL data and torsors}

In this section we describe the abelian varieties with extra structure that will feature in our moduli problems. Suppose $E = \mbb{Q}(\sqrt{-d})$ for a fixed choice of square-root $\sqrt{-d}$.

\subsubsection{The PEL data}

Let $n \geq 2$ be an integer and, as in \cite[\S 2]{UFJ}, we fix a $2n$-dimensional Hermitian space $W$ over $F$ with signature given by the generalised CM-type of rank $2n$
\[
\boldsymbol\Psi = \tau_0 + (2n-1)\tau_0^c + \sum_{\tau \in \Psi - \{\tau_0\}} 2n \tau^c .
\]
Let $\langle \cdot, \cdot \rangle_W$ denote the Hermitian pairing on $W$, and let $\langle \cdot, \cdot \rangle \colon W \times W \to \mbb{Q}$ denote the induced alternating bilinear pairing given by $\opn{tr}_{F^+/\mbb{Q}} \circ \opn{Im}_{\sqrt{-d}} \langle \cdot, \cdot \rangle_W$, where $\opn{Im}_{\sqrt{-d}} \colon F \to F^+$ is the map given by $x \mapsto (2 \sqrt{-d})^{-1} (x + \bar{x})$, and $\opn{tr}_{F^+} \colon F^+ \to \mbb{Q}$ is the trace map. 

Let $\mbf{G}$ be the unitary similitude group associated with $W$ as in \cite[Definition 2.0.1]{UFJ} (note that we can replace $\langle \cdot , \cdot \rangle_W$ with $\langle \cdot, \cdot \rangle$ in the definition). We suppose that we have a decomposition $W = W_1 \oplus W_2$ into the direct sum of $n$-dimensional Hermitian spaces, with respective generalised CM-types
\[
\boldsymbol\Psi_1 = \tau_0 + (n-1)\tau_0^c + \sum_{\tau \in \Psi - \{\tau_0\}} n \tau^c, \quad \quad \boldsymbol\Psi_2 = \sum_{\tau \in \Psi} n \tau^c ,
\]
and we let $\mbf{H} \subset \mbf{G}$ denote the subgroup preserving this decomposition.

Recall we have a decomposition $W \otimes_{\mbb{Q}} \mbb{R} = W^+ \oplus W^-$ into maximal subspaces where the Hermitian pairing is $\pm$-definite (we may assume they are orthogonal to each other). We let
\[
h_{\mbf{G}} \colon \mbb{C} \to \opn{End}_{\mbb{R}}(W \otimes_{\mbb{Q}} \mbb{R})
\]
be the map sending $h_{\mbf{G}}(z)$ to the element which acts as scalar multiplication by $z$ (resp. $\bar{z}$) on $W^+$ (resp. $W^-$). Then (with the convention that the Hermitian form is antilinear in the first-variable) the conditions in \cite[Definition 1.2.1.2]{LanArithmetic} are satisfied. Similarly, we obtain a morphism 
\[
h_{\mbf{H}} \colon \mbb{C} \to \opn{End}_{\mbb{R}}(W_1 \otimes_{\mbb{Q}} \mbb{R}) \oplus \opn{End}_{\mbb{R}}(W_2 \otimes_{\mbb{Q}} \mbb{R})
\]
satisfying the conditions in \cite[Definition 1.2.1.2]{LanArithmetic}. Without loss of generality, we may assume that $h_{\mbf{G}}$ is the composition of $h_{\mbf{H}}$ with the natural inclusion $\opn{End}_{\mbb{R}}(W_1 \otimes \mbb{R}) \oplus \opn{End}_{\mbb{R}}(W_2 \otimes \mbb{R}) \subset \opn{End}_{\mbb{R}}(W \otimes \mbb{R})$.

Finally, we fix $\mathcal{O}_F$-lattices $L_i \subset W_i$ ($i=1, 2$) and set $L = L_1 \oplus L_2 \subset W$. We let $L^{\#}$ denote the dual lattice under the pairing $\langle \cdot, \cdot \rangle$ above.

\begin{assumption} \label{AssumpPoddAndDoesntDivide}
Fix an odd prime $p$ not dividing $\opn{Disc}_{\ordd_F/\mbb{Z}} [L^{\#} : L ]$ that splits completely in $F/\mbb{Q}$.
\end{assumption}

\begin{remark}
    The pairs $(\mbf{G}, h_{\mbf{G}})$ and $(\mbf{H}, h_{\mbf{H}})$ are precisely the PEL Shimura--Deligne data considered in \cite{UFJ}. In \emph{op.cit.} we worked with the Shimura--Deligne varieties associated with these data, however in this article it seems more natural to work with the associated PEL moduli problems since these moduli problems have natural integral models. For $\mbf{G}$ these two spaces are the same, but they can be different for $\mbf{H}$ due to the potential failure of the Hasse principle. This is harmless in practice however, since the Shimura--Deligne variety for $\mbf{H}$ is a connected component of the PEL moduli space (see Lemma \ref{LemmaSDintoPEL} below).
\end{remark}

We introduce some useful notation:

\begin{notation}
Let $G$ denote the reductive group over $\mbb{Z}_p$ of symplectic $\mathcal{O}_F \otimes_{\mbb{Z}} \mbb{Z}_p$-equivariant similitudes of $L \otimes_{\mbb{Z}} \mbb{Z}_p$, and $H \subset G$ the reductive subgroup preserving the decomposition 
\[
L \otimes_{\mbb{Z}} \mbb{Z}_p = (L_1 \otimes_{\mbb{Z}} \mbb{Z}_p) \oplus (L_2 \otimes_{\mbb{Z}} \mbb{Z}_p ).
\]
We can and do identify 
\[
H = \opn{GL}_1 \times \prod_{\tau \in \Psi} (\opn{GL}_n \times \opn{GL}_n) \subset \opn{GL}_1 \times \prod_{\tau \in \Psi} \opn{GL}_{2n} = G
\]
with the embedding being the block diagonal one described in \cite[Remark 2.0.2]{UFJ}.
\end{notation}

\subsubsection{Unitary abelian varieties}

We now introduce unitary abelian varieties. Recall that $F^{\opn{cl}}$ denotes the Galois closure of $F$.

\begin{definition}
Let $S$ be a locally Noetherian scheme over $\opn{Spec}\ordd_{F^{\opn{cl}}, (p)}$. We say that a tuple $(A, \lambda, i)$ is a unitary abelian scheme of signature $\boldsymbol\Psi$ (resp. $\boldsymbol\Psi_1$, resp. $\boldsymbol\Psi_2$) if:
\begin{enumerate}
    \item $A$ is an abelian scheme over $S$
    \item $\lambda \colon A \to A^{\vee}$ is a $\mbb{Z}_{(p)}^{\times}$-polarisation
    \item $i \colon \mathcal{O}_F \otimes_{\mbb{Z}} \mbb{Z}_{(p)} \to \opn{End}_S(A) \otimes_{\mbb{Z}} (\mbb{Z}_{(p)})_S$ is a ring homomorphism such that the restriction of the Rosati involution to $\mathcal{O}_F \otimes_{\mbb{Z}} \mbb{Z}_{(p)}$ coincides with complex conjugation
    \item The characteristic polynomial of $i(a)$ for $a \in \mathcal{O}_F$ on $\opn{Lie}_{A/S}$ is given by
    \begin{align*} 
     &(T - \tau_0(a))(T-\tau_0^c(a))^{2n-1} \prod_{\tau \in \Psi - \{\tau_0\}} (T - \tau^c(a))^{2n} \\
    ( \text{ resp. } &(T - \tau_0(a))(T-\tau_0^c(a))^{n-1} \prod_{\tau \in \Psi - \{\tau_0\}} (T - \tau^c(a))^n  ,\text{ resp. } \prod_{\tau \in \Psi} (T - \tau^c(a))^n  ).
    \end{align*}
\end{enumerate}
Note that condition (4) implies that $A/S$ has relative dimension $2n [F^+ \colon \mbb{Q}]$ (resp. $n[F^+ : \mbb{Q}]$). For brevity, we will often simply say $A$ is a $\boldsymbol\Psi$-unitary (resp. $\boldsymbol\Psi_1$-unitary, resp. $\boldsymbol\Psi_2$-unitary) abelian scheme (over $S$). 
\end{definition}

We also introduce prime-to-$p$ level structures.

\begin{definition}
    Let $K^p \subset \mbf{G}(\mbb{A}_f^p)$ be a neat compact open subgroup and $(A, \lambda, i)$ a $\boldsymbol\Psi$-unitary abelian scheme over $S$. Then a $K^p$-level structure $\eta^p$ for $A$ is the data of a $\pi_1(S, s)$-invariant $K^p$-orbit of $F \otimes_{\mbb{Q}} \mbb{A}_f^p$-equivariant symplectic isomorphisms 
    \[
    \eta^p_s \colon W \otimes_{\mbb{Q}} \mbb{A}_f^p \xrightarrow{\sim} V^p A_s
    \]
    for each geometric point $s \in S$, which are compatible in changing $s$ (see \cite[Definition 1.3.8.7]{LanArithmetic} for more details).
    
    Similarly, let $U^p \subset \mbf{H}(\mbb{A}_f^p)$ be a neat compact open subgroup and let $(A_1, \lambda_1, i_1)$ and $(A_2, \lambda_2, i_2)$ be $\boldsymbol\Psi_1$-unitary and $\boldsymbol\Psi_2$-unitary abelian schemes over $S$ respectively. Set 
    \[
    (A = A_1 \oplus A_2, \lambda = \lambda_1 \oplus \lambda_2, i = i_1 + i_2)
    \]
    which is a $\boldsymbol\Psi$-unitary abelian scheme over $S$. Then a $U^p$-level structure $\eta^p$ is the data of a $\pi_1(S, s)$-invariant $U^p$-orbit of $F \otimes_{\mbb{Q}} \mbb{A}_f^p$-equivariant symplectic isomorphisms 
    \[
    \eta^p_s \colon (W_1 \otimes_{\mbb{Q}} \mbb{A}_f^p) \oplus (W_2 \otimes_{\mbb{Q}} \mbb{A}_f^p) \xrightarrow{\sim} V^p (A_1)_s \oplus V^p (A_2)_s
    \]
    for each geometric point $s \in S$, which are compatible in changing $s$ and respect the decompositions on both sides.
\end{definition}

We now introduce the moduli of these unitary abelian schemes.

\begin{definition} \label{HyperspecialVarietiesDefinition}
Let $K^p \subset \mbf{G}(\mbb{A}_f^p)$ (resp. $U^p \subset \mbf{H}(\mbb{A}_f^p)$) be a neat compact open subgroup. We let $X_{\mbf{G}}$ (resp. $X_{\mbf{H}}$) denote the functor taking a locally Noetherian scheme $S$ over $\opn{Spec}\ordd_{F^{\opn{cl}}, (p)}$ to the set of $\boldsymbol\Psi$-unitary abelian schemes (resp. pairs of $\boldsymbol\Psi_1$-unitary and $\boldsymbol\Psi_2$-unitary abelian schemes) over $S$ equipped with a $K^p$ (resp $U^p$) level structure, up to equivalence (as in \cite[\S 1.4.2]{LanArithmetic}). This is representable by a smooth projective scheme over $\opn{Spec}\ordd_{F^{\opn{cl}}, (p)}$ of dimension $2n-1$ (resp. $n-1$).

If $U^p \subset K^p$, we let $\iota \colon X_{\mbf{H}} \to X_{\mbf{G}}$ denote the natural finite unramified morphism sending $(A_1, \lambda_1, i_1, A_2, \lambda_2, i_2, \eta^p)$ to $(A_1 \oplus A_2, \lambda_1 \oplus \lambda_2, i_1 + i_2, \eta^p K^p)$.
\end{definition}

\subsubsection{Deeper level at $p$} \label{DeeperLevelAtpSubSec}

We now introduce some additional level structure at the prime $p$. Let $S$ be a locally Noetherian scheme over $\opn{Spec}F^{\opn{cl}}$. Let $\beta \geq 1$ be an integer. Since $p \nmid [L^{\#} : L]$, we have an induced symplectic pairing on $L/p^{\beta}L$ which we will continue to denote by $\langle \cdot, \cdot \rangle$. Since $p$ splits completely in $F/\mbb{Q}$, we have a decomposition 
\[
L/p^{\beta}L = \bigoplus_{\tau \in \Psi} \left( L/\ide{p}_{\tau}^{\beta}L \oplus L/\ide{p}_{\bar{\tau}}^{\beta}L \right)
\]
and similarly for $L_1$ and $L_2$. Each factor is a symplectic space with pairing induced from $\langle \cdot, \cdot \rangle$, and both $L/\ide{p}_{\tau}^{\beta}L$ and $L/\ide{p}_{\bar{\tau}}^{\beta}L$ are free of rank $2n$ over $\mbb{Z}/p^{\beta}\mbb{Z}$. The decomposition $L/\ide{p}_{\tau}^{\beta}L \oplus L/\ide{p}_{\bar{\tau}}^{\beta}L$ is a decomposition into maximal isotropic subspaces for the symplectic pairing, in particular $L/\ide{p}_{\bar{\tau}}^{\beta}L$ is in perfect duality with $L/\ide{p}_{\tau}^{\beta}L$ via $\langle \cdot, \cdot \rangle$.

Let $A/S$ be a $\boldsymbol\Psi$-unitary abelian scheme. Then the polarisation and endomorphism structure (and the fact that $p$ is invertible in $F^{\opn{cl}}$) imply that 
\[
A[p^{\beta}] = \bigoplus_{\tau \in \Psi} \left( A[\ide{p}_{\tau}^{\beta}] \oplus A[\ide{p}_{\bar{\tau}}^{\beta}] \right)
\]
is an \'{e}tale rank $2n[F^+ : \mbb{Q}]$ symplectic $\mathcal{O}_F/p^{\beta}\mathcal{O}_F$ group scheme over $S$. Hence it must be \'{e}tale locally isomorphic to $\left( L/p^{\beta}L \right)_S$ (since $p$ splits completely in $F/\mbb{Q}$). Similar isomorphisms exist for $\boldsymbol\Psi_i$-unitary schemes ($i=1,2$).

\begin{definition} \label{DefinitionOfDeeperLevelAtpVarieties}
Let $\beta \geq 1$ be an integer and let $\hat{\gamma} = \gamma w_n \in G(\mbb{Z}_p)$ denote the element introduced in Definition \ref{NewDefOfGamma}.
\begin{enumerate}
    \item Let $X_{\mbf{G}, \opn{Iw}, F^{\opn{cl}}}(p^{\beta}) \to X_{\mbf{G}, F^{\opn{cl}}}$ denote the finite \'{e}tale cover which, for a given point $(A, \lambda, i, \eta^p) \in X_{\mbf{G}, F^{\opn{cl}}}(S)$, parameterises flags
    \[
    0 = C_{0, \tau} \subset C_{1, \tau} \subset \cdots \subset C_{2n, \tau} = A[\ide{p}^{\beta}_{\tau}], \quad \quad \tau \in \Psi 
    \]
    where $C_{i, \tau}$ is a finite flat (\'{e}tale) subgroup scheme of rank $p^{i\beta}$, and $C_{i, \tau}/C_{i-1, \tau}$ is \'{e}tale locally isomorphic to $(\mbb{Z}/p^{\beta}\mbb{Z})_S$ (for all $i=1, \dots, 2n$). Equivalently, it parameterises $B_G(\mbb{Z}/p^{\beta}\mbb{Z})$-orbits of symplectic $\mathcal{O}_F/p^{\beta}\mathcal{O}_F$-equivariant isomorphisms $\left( L/p^{\beta} L \right)_S \cong A[p^{\beta}]$. Here $B_G \subset \mbf{G}_{\mbb{Q}_p}$ denotes the upper-triangular Borel subgroup.
    \item Let $X_{\mbf{H}, \diamondsuit, F^{\opn{cl}}}(p^{\beta}) \to X_{\mbf{H}, F^{\opn{cl}}}$ denote the finite \'{e}tale cover which, for a given point $(A_1, A_2, \eta^p) \in X_{\mbf{H}, F^{\opn{cl}}}(S)$, parameterises $\hat{\gamma} B_G(\mbb{Z}/p^{\beta}\mbb{Z}) \hat{\gamma}^{-1} \cap H(\mbb{Z}/p^{\beta}\mbb{Z})$-orbits of symplectic $\mathcal{O}_F/p^{\beta}\mathcal{O}_F$-equivariant isomorphisms 
    \[
    \left( L_1/p^{\beta} L_1 \right)_S \oplus \left( L_2/p^{\beta} L_2 \right)_S \cong A_1[p^{\beta}] \oplus A_2[p^{\beta}]
    \]
    respecting the decomposition on both sides.
\end{enumerate}
\end{definition}

We have a natural finite unramified map $\hat{\iota} \colon X_{\mbf{H}, \diamondsuit, F^{\opn{cl}}}(p^{\beta}) \to X_{\mbf{G}, \opn{Iw}, F^{\opn{cl}}}(p^{\beta})$ given by sending a $\hat{\gamma} B_G(\mbb{Z}/p^{\beta}\mbb{Z}) \hat{\gamma}^{-1} \cap H(\mbb{Z}/p^{\beta}\mbb{Z})$-orbit of symplectic $\mathcal{O}_F/p^{\beta}\mathcal{O}_F$-equivariant isomorphisms 
    \[
    \alpha \colon \left( L_1/p^{\beta} L_1 \right)_S \oplus \left( L_2/p^{\beta} L_2 \right)_S \xrightarrow{\sim} A_1[p^{\beta}] \oplus A_2[p^{\beta}]
    \]
to the orbit of isomorphisms $[\alpha \circ \hat{\gamma}]$ (for the unitary abelian scheme $A = A_1 \oplus A_2$). In particular, we have a commutative diagram:
\[
\begin{tikzcd}
{X_{\mbf{H}, \diamondsuit, F^{\opn{cl}}}(p^{\beta+1})} \arrow[d] \arrow[r, "\hat{\iota}"] & {X_{\mbf{G}, \opn{Iw}, F^{\opn{cl}}}(p^{\beta+1})} \arrow[d] \\
{X_{\mbf{H}, \diamondsuit, F^{\opn{cl}}}(p^{\beta})} \arrow[d] \arrow[r, "\hat{\iota}"]   & {X_{\mbf{G}, \opn{Iw}, F^{\opn{cl}}}(p^{\beta})} \arrow[d]   \\
{X_{\mbf{H}, F^{\opn{cl}}}} \arrow[r, "\iota"]                                            & {X_{\mbf{G}, F^{\opn{cl}}}}                                 
\end{tikzcd}
\]
for any $\beta \geq 1$. The top square is Cartesian (see \cite[Lemma 2.5.3]{UFJ}).

\subsubsection{Relation to Shimura--Deligne varieties}

The moduli spaces considered above are closely related to the Shimura--Deligne varieties associated with $\mbf{G}$ and $\mbf{H}$. More precisely, let $G(\mbb{Z}_p) \subset \mbf{G}(\mbb{Q}_p)$ (resp. $H(\mbb{Z}_p) \subset \mbf{H}(\mbb{Q}_p)$) denote the subgroup of similitudes which preserve the lattice $L = L_1 \oplus L_2$. These are hyperspecial subgroups. We let $S_{\mbf{G}}$ (resp. $S_{\mbf{H}}$) denote the Shimura--Deligne variety over $F^{\opn{cl}}$ associated with the datum $(\mbf{G}, h_{\mbf{G}})$ (resp. $(\mbf{H}, h_{\mbf{H}})$) of level $K^p G(\mbb{Z}_p)$ (resp. $U^p H(\mbb{Z}_p)$) for some neat compact open subgroup $K^p \subset \mbf{G}(\mbb{A}_f^p)$ (resp. $U^p \subset \mbf{H}(\mbb{A}_f^p)$).

For an integer $\beta \geq 1$, let $K^G_{\opn{Iw}}(p^{\beta}) \subset G(\mbb{Z}_p)$ and $K^H_{\diamondsuit}(p^{\beta}) \subset H(\mbb{Z}_p)$ denote the compact open subgroups defined in \cite[Definition 2.5.2]{UFJ}. We let $S_{\mbf{G}, \opn{Iw}}(p^\beta)$ (resp. $S_{\mbf{H}, \diamondsuit}(p^{\beta})$) denote the Shimura--Deligne variety of level $K^p K^G_{\opn{Iw}}(p^{\beta})$ (resp. $U^p K_{\diamondsuit}^H(p^{\beta})$).

\begin{lemma} \label{LemmaSDintoPEL}
    There exist Cartesian diagrams:
    \[
\begin{tikzcd}
{S_{\mbf{H}, \diamondsuit}(p^{\beta})} \arrow[d] \arrow[r, hook] & {X_{\mbf{H}, \diamondsuit, F^{\opn{cl}}}(p^{\beta})} \arrow[d] &  & {S_{\mbf{G}, \opn{Iw}}(p^{\beta})} \arrow[d] \arrow[r, "\sim"] & {X_{\mbf{G}, \opn{Iw}, F^{\opn{cl}}}(p^{\beta})} \arrow[d] \\
S_{\mbf{H}} \arrow[r, hook]                                      & {X_{\mbf{H}, F^{\opn{cl}}}}                                    &  & S_{\mbf{G}} \arrow[r, "\sim"]                                  & {X_{\mbf{G}, F^{\opn{cl}}}}                               
\end{tikzcd}
    \]
    where the horizontal arrows are open and closed embeddings. Furthermore, the morphisms 
    \[
    \hat{\iota} \colon X_{\mbf{H}, \diamondsuit, F^{\opn{cl}}}(p^{\beta}) \to X_{\mbf{G}, \opn{Iw}, F^{\opn{cl}}}(p^{\beta}), \quad \quad \hat{\iota} \colon S_{\mbf{H}, \diamondsuit}(p^{\beta}) \to S_{\mbf{G}, \opn{Iw}}(p^{\beta})
    \]
    (the latter given by right-translation by $\hat{\gamma}$) are compatible under these embeddings.
\end{lemma}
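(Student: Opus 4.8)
## Proof Proposal

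The plan is to establish the two Cartesian diagrams separately, treating $\mathbf{H}$ and $\mathbf{G}$ differently because of the potential failure of the Hasse principle for $\mathbf{H}$. For $\mathbf{G}$, the standard theory of PEL moduli problems (as in \cite{LanArithmetic}) identifies $S_{\mathbf{G}}$ with $X_{\mathbf{G}, F^{\opn{cl}}}$ directly: since the failure of the Hasse principle is measured by a Galois cohomology set that vanishes for the similitude group $\mathbf{G}$ (the relevant kernel $\ker^1(\mathbb{Q}, \mathbf{G})$ is trivial here), the Shimura--Deligne variety and the PEL moduli space coincide. The level-$p^\beta$ version then follows because both sides are the same finite \'etale cover: on the Shimura side this cover corresponds to passing from $K^p G(\mathbb{Z}_p)$ to $K^p K^G_{\opn{Iw}}(p^\beta)$, and on the moduli side it is by definition the cover parameterising $B_G(\mathbb{Z}/p^\beta\mathbb{Z})$-orbits of symplectic $\mathcal{O}_F/p^\beta$-trivialisations, i.e. flags $C_{\bullet, \tau}$ in $A[\mathfrak{p}_\tau^\beta]$; the identification $K^G_{\opn{Iw}}(p^\beta)\backslash G(\mathbb{Z}_p) \cong B_G(\mathbb{Z}/p^\beta)\backslash G(\mathbb{Z}/p^\beta)$ makes this precise. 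Hence the right-hand square is Cartesian with isomorphisms as horizontal arrows.

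For $\mathbf{H}$, I would first recall that the PEL moduli space $X_{\mathbf{H}, F^{\opn{cl}}}$ decomposes, over $F^{\opn{cl}}$, as a disjoint union indexed by the Hasse-principle obstruction set $\ker^1(\mathbb{Q}, \mathbf{H})$ (equivalently, by the set of $\mathbf{H}'$ in the same inner class that are locally isomorphic everywhere), and $S_{\mathbf{H}}$ is precisely the component corresponding to the quasi-split (or rather, the given) form. This is the source of the open and closed embedding $S_{\mathbf{H}} \hookrightarrow X_{\mathbf{H}, F^{\opn{cl}}}$: it is open because it is a union of connected components, and closed for the same reason. One then needs to check that the level-$p^\beta$ cover $X_{\mathbf{H}, \diamondsuit, F^{\opn{cl}}}(p^\beta)$ restricts over this component to exactly the cover defining $S_{\mathbf{H}, \diamondsuit}(p^\beta)$; this amounts to matching $K^H_{\diamondsuit}(p^\beta)$ (as defined in \cite[Definition 2.5.2]{UFJ}) with the subgroup $\hat\gamma B_G(\mathbb{Z}/p^\beta) \hat\gamma^{-1} \cap H(\mathbb{Z}/p^\beta)$ appearing in Definition \ref{DefinitionOfDeeperLevelAtpVarieties}(2), which should follow from the definitions together with the open orbit statement in Lemma \ref{OpenOrbitForgammauvLemma}(1). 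Since the level cover is finite \'etale and the base change of a finite \'etale cover along an open-and-closed immersion is again the restriction, the left-hand square is Cartesian.

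Finally, for the compatibility of the maps $\hat\iota$: on the moduli side, $\hat\iota$ was defined in \S\ref{DeeperLevelAtpSubSec} by sending a trivialisation $\alpha$ to $[\alpha \circ \hat\gamma]$, and on the Shimura--Deligne side $\hat\iota$ is induced by right-translation by $\hat\gamma \in G(\mathbb{Z}_p)$ on double-coset spaces. Both constructions are literally the same operation — right multiplication by $\hat\gamma$ — once one identifies the $p$-level structures on each side with trivialisations of $A[p^\beta]$ respecting the $\mathbf{H}$-decomposition; so the required square of four copies of $\hat\iota$ and the two horizontal embeddings commutes essentially by construction. The main obstacle is the bookkeeping around the Hasse-principle component for $\mathbf{H}$: one must be careful that $S_{\mathbf{H}}$ really is cut out as a union of geometrically connected components of $X_{\mathbf{H}, F^{\opn{cl}}}$ (so that the embedding is open \emph{and} closed), and that the level structure defining $S_{\mathbf{H}, \diamondsuit}(p^\beta)$ pulls back correctly — this is where invoking \cite[Lemma 2.5.3]{UFJ} and the Cartesian-ness of the top square in \S\ref{DeeperLevelAtpSubSec}, already proved there, does most of the work.
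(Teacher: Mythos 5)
Your proposal has the right ideas and follows a similar global strategy to the paper, but it is organized differently and elides a step the paper makes explicit. The paper handles $\mbf{G}$ and $\mbf{H}$ uniformly: it first observes that, \emph{because $p$ splits completely in $F/\mbb{Q}$}, there is a unique $\mathcal{O}_F \otimes_{\mbb{Z}} \mbb{Z}_p$-Hermitian space of each rank, so a prime-to-$p$ level structure can canonically be lifted to an orbit of trivialisations of the full Tate module. This reduces the comparison to one between moduli problems with level at \emph{all} finite places, after which \cite[\S 2.5]{Lan-Toroid} gives the commutative diagrams directly; the Cartesian property follows because both vertical maps are finite \'{e}tale covers of the same degree, and the isomorphism for $\mbf{G}$ follows from Kottwitz's Hasse principle for even-dimensional unitary groups. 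You instead treat $\mbf{G}$ and $\mbf{H}$ separately, which is fine, and your use of the vanishing of $\ker^1$ for $\mbf{G}$ and of the decomposition of $X_{\mbf{H}, F^{\opn{cl}}}$ into Hasse-principle components are in the right spirit, though the paper phrases the $\mbf{G}$-side more precisely in terms of isomorphism classes of Hermitian spaces locally similar to $W$.

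The gap in your writeup is the bridge between a prime-to-$p$ level structure (what the PEL moduli problems in Definition \ref{HyperspecialVarietiesDefinition} actually parameterise) and the Shimura--Deligne description, which involves all finite places including $p$. Without the observation that the local Hermitian module at $p$ is unique, it is not automatic that a $K^p$-orbit lifts canonically to a $K^p G(\mbb{Z}_p)$-orbit of trivialisations, and the comparison you invoke as ``standard theory'' would not apply cleanly. Your identification $K^G_{\opn{Iw}}(p^\beta)\backslash G(\mathbb{Z}_p) \cong B_G(\mathbb{Z}/p^\beta)\backslash G(\mathbb{Z}/p^\beta)$ is a separate point about the $p^\beta$-level covers, not a substitute for this lift at the $\beta=0$ layer. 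Once you add the splitting-at-$p$ observation and cite \cite[\S 2.5]{Lan-Toroid} for the base comparison, the rest of your argument (Cartesian-ness via base change along an open-and-closed immersion, matching of $K^H_{\diamondsuit}(p^\beta)$ with $\hat\gamma B_G(\mathbb{Z}/p^\beta)\hat\gamma^{-1} \cap H(\mathbb{Z}/p^\beta)$, and the compatibility of $\hat\iota$ as right-translation by $\hat\gamma$) goes through as you describe.
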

\begin{proof}
    Since $p$ splits completely in $F/\mbb{Q}$, there exists a unique (up to isomorphism) $\mathcal{O}_F \otimes_{\mbb{Z}} \mbb{Z}_p$-Hermitian space of a given dimension. In particular, this implies that (over any locally Noetherian scheme $S \to \opn{Spec}F^{\opn{cl}}$) a prime-to-$p$ level structure can be lifted to an orbit of trivialisations of the Tate module at all finite places. Hence all the moduli problems in this lemma can be replaced with the versions which include level stucture at $p$. 
    
    The commutative diagrams then follow from \cite[\S 2.5]{Lan-Toroid} -- they are Cartesian because the vertical maps are finite \'{e}tale covers and have the same degree. Finally, we note that the horizontal arrows in the diagram for $\mbf{G}$ are isomorphisms. Indeed, the moduli spaces for $\mbf{G}$ are a disjoint union of Shimura varieties for unitary similitude groups associated with Hermitian spaces which are locally isomorphic (up to similitude) to $W$ at all places. By the Hasse principle for \emph{even dimensional} unitary groups (see \cite[\S 7]{Kottwitz}), there is only one such Hermitian space (up to similitude), namely $W$.
\end{proof}

\subsubsection{Torsors}

We now describe the de Rham torsor that will play an important role in the discussion of nearly holomorphic/overconvergent automorphic forms. Let $P_{\mbf{G}} \subset \mbf{G}_{F^{\opn{cl}}}$ denote the parabolic subgroup associated with the Shimura--Deligne datum for $\mbf{G}$, as defined in \cite[\S 2]{UFJ}. Let $M_{\mbf{G}}$ denote its Levi subgroup. We let $P_{\mbf{H}} = P_{\mbf{G}} \cap \mbf{H}_{F^{\opn{cl}}}$ and $M_{\mbf{H}}$ its Levi subgroup. Recall that we have identifications
\begin{align*} 
M_{\mbf{G}} &= \opn{GL}_1 \times (\opn{GL}_1 \times \opn{GL}_{2n-1}) \times \prod_{\tau \in \Psi - \{ \tau_0 \}} \opn{GL}_{2n} \\
M_{\mbf{H}} &= \opn{GL}_1 \times (\opn{GL}_1 \times \opn{GL}_{n-1} \times \opn{GL}_n) \times \prod_{\tau \in \Psi - \{\tau_0 \}} ( \opn{GL}_n \times \opn{GL}_n ) .
\end{align*}
We let $P_{\mbf{G}}^{\opn{std}}$ and $P_{\mbf{H}}^{\opn{std}}$ denote the opposite parabolic subgroups with respect to the standard upper triangular Borel subgroups of $\mbf{G}$ and $\mbf{H}$ respectively.

\begin{notation}
For a $\boldsymbol\Psi$-unitary abelian scheme $A$ over a locally Noetherian scheme $S \to \opn{Spec}\mathcal{O}_{F^{\opn{cl}}, (p)}$, let $\mathcal{H}_A \defeq \mathcal{H}_1^{\opn{dR}}(A/S)$ denote the first relative de Rham homology of $A/S$. This is a vector bundle of rank $4n [F^+ : \mbb{Q}]$ and comes equipped with the Hodge filtration
\begin{equation} \label{HodgeFiltrationForUAS}
0 \to \omega_{A^D} \to \mathcal{H}_A \to \opn{Lie}(A/S) \to 0
\end{equation}
where $\omega_{A^D} = \pi_* \Omega^1_{A^D/S}$ denotes the Hodge bundle (with $\pi \colon A^D \to S$) and $\opn{Lie}(A/S)$ denotes the Lie algebra of $A/S$. 

The $\mathcal{O}_F$-endomorphism action induces a decomposition 
\[
\mathcal{H}_A = \bigoplus_{\tau \in \Psi} (\mathcal{H}_{A, \tau} \oplus \mathcal{H}_{A, \bar{\tau}} )
\]
into isotypic pieces (where on each piece, $\mathcal{O}_F$ acts through scalar multiplication via the corresponding embedding). One has a similar exact sequence to (\ref{HodgeFiltrationForUAS}) on isotypic pieces. The same discussion applies to $\boldsymbol\Psi_i$-unitary abelian schemes ($i=1, 2$) and we will use similar notation for the de Rham homology and Hodge bundles. 
\end{notation}

We now introduce the de Rham torsors.

\begin{definition} \label{DefinitionOfGdrPdR}
Let $\beta \geq 1$ be an integer.
    \begin{enumerate}
        \item Let $G_{\opn{dR}} \to X_{\mbf{G}, \opn{Iw}, F^{\opn{cl}}}(p^{\beta})$ denote the right $\mbf{G}_{F^{\opn{cl}}}$-torsor parameterising isomorphisms 
        \[
        \mathcal{O}_S \otimes_{\mbb{Q}} W \xrightarrow{\sim} \mathcal{H}_A, \quad \quad A \in X_{\mbf{G}, \opn{Iw}, F^{\opn{cl}}}(p^{\beta})(S)
        \]
        which respect the symplectic and endomorphism structures on both sides. Similarly, let $H_{\opn{dR}} \to X_{\mbf{H}, \diamondsuit, F^{\opn{cl}}}(p^{\beta})$ denote the right $\mbf{H}_{F^{\opn{cl}}}$-torsor parameterising isomorphisms 
        \[
        (\mathcal{O}_S \otimes_{\mbb{Q}} W_1) \otimes_{\mathcal{O}_S} (\mathcal{O}_S \otimes_{\mbb{Q}} W_2) \xrightarrow{\sim} \mathcal{H}_{A_1} \oplus \mathcal{H}_{A_2}, \quad \quad (A_1, A_2) \in X_{\mbf{H}, \diamondsuit, F^{\opn{cl}}}(p^{\beta})(S)
        \]
        respecting the symplectic, endomorphism structures and the decompositions on both sides.
        \item Let $P_{G, \opn{dR}} \to X_{\mbf{G}, \opn{Iw}, F^{\opn{cl}}}(p^{\beta})$ and $P_{H, \opn{dR}} \to X_{\mbf{H}, \diamondsuit, F^{\opn{cl}}}(p^{\beta})$ denote the $P_{\mbf{G}}^{\opn{std}}$ and $P_{\mbf{H}}^{\opn{std}}$ reductions of $G_{\opn{dR}}$ and $H_{\opn{dR}}$ respectively, given by trivialisations which respect the Hodge filtration. 
        \item Let $M_{G, \opn{dR}} \to X_{\mbf{G}, \opn{Iw}, F^{\opn{cl}}}(p^{\beta})$ and $M_{H, \opn{dR}} \to X_{\mbf{H}, \diamondsuit, F^{\opn{cl}}}(p^{\beta})$ denote the pushouts of $P_{G, \opn{dR}}$ and $P_{H, \opn{dR}}$ along the projection maps $P_{\mbf{G}}^{\opn{std}} \to M_{\mbf{G}}$ and $P_{\mbf{H}}^{\opn{std}} \to M_{\mbf{H}}$ respectively.
    \end{enumerate}
    Note that $G_{\opn{dR}}$ (resp. the pullback of $H_{\opn{dR}}$ along the open and closed embedding $S_{\mbf{H}, \diamondsuit}(p^{\beta}) \hookrightarrow X_{\mbf{H}, \diamondsuit, F^{\opn{cl}}}(p^{\beta})$ from Lemma \ref{LemmaSDintoPEL}) coincides with the standard principal bundle in \cite[\S III.3]{MilneCanonicalMixed}. Furthermore, $P_{G, \opn{dR}}$ and $P_{H, \opn{dR}}$ are the usual reductions of structure appearing in the theory of automorphic vector bundles (see \cite[\S 2]{CS17} for example).
\end{definition}

\subsection{D-modules on flag varieties} \label{DmodulesOnFLsection}

In this section, we describe the construction of $\mathcal{D}$-modules on flag varieties from $(\ide{g}, P_{\mbf{G}}^{\opn{std}})$ or $(\ide{h}, P_{\mbf{H}}^{\opn{std}})$ representations. We will then explain how to transport these modules to $\mathcal{D}$-modules on Shimura varieties, and prove a key result (Corollary \ref{KeyCorForHolOnX}) which will be used throughout the rest of this section. To ease notation, we will only describe the construction for $\mbf{G}$ -- the construction for $\mbf{H}$ (or indeed any well-behaved Shimura variety) follows exactly the same arguments.

\subsubsection{Running notation}

In this subsection, we fix some notation that will be used in the rest of this section. Since we will only consider the setting for $\mbf{G}$, we will drop the group from the notation for almost all objects in this section.

Throughout, we will let $k = F^{\opn{cl}}$. We let $\opn{FL} = \opn{FL}_{\mbf{G}}^{\opn{std}} = \mbf{G}_k/P_{\mbf{G}}^{\opn{std}}$ denote the partial flag variety over $\opn{Spec}(k)$. Also, to simplify notation, we will write $G = \mbf{G}_k$, $P = P_{\mbf{G}}$, $\overline{P} = P_{\mbf{G}}^{\opn{std}}$, and $M = M_{\mbf{G}}$. We will denote the Lie algebra of $\mbf{G}$ (resp. $P$, resp. $\overline{P}$) by $\ide{g}$ (resp. $\ide{p}$, resp. $\overline{\ide{p}}$), and let $\ide{u}$ denote the (upper-triangular) nilpotent part of $\ide{p}$. The adjoint action of a group on its Lie algebra will be denoted by $\opn{Ad}$.

\begin{definition}
    Let $\opn{Rep}_k(\ide{g}, \overline{P})$ denote the category of $(\ide{g}, \overline{P})$-representations, i.e., algebraic representations $V$ of $\overline{P}$ which come equipped with an action of $\ide{g}$ satisfying the following properties:
    \begin{enumerate}
        \item For any $p \in \overline{P}$, $X \in \ide{g}$ and $v \in V$, one has
        \[
        p \cdot (X \cdot v) = (\opn{Ad}(p)X) \cdot (p \cdot v) .
        \]
        \item For any $X \in \overline{\ide{p}}$ and $v \in V$, one has 
        \[
        X \cdot v = \left. \left( \frac{d}{dt} \opn{exp}(tX) \cdot v \right) \right|_{t=0} .
        \]
    \end{enumerate}
\end{definition}

For any scheme $Y$ over $\opn{Spec}(k)$, we let $\mathcal{T}_Y$ denote its tangent bundle and $\mathcal{D}_Y$ the sheaf of differential operators. If $\mathcal{A}$ is a sheaf of $\mathcal{O}_Y$-algebras, then we let $\opn{Der}_k(\mathcal{A}, \mathcal{A})$ denote the $k$-module of $k$-linear derivations $\mathcal{A} \to \mathcal{A}$. Finally, we fix an integer $\beta \geq 1$ and set $X = X_{\mbf{G}, \opn{Iw}, F^{\opn{cl}}}(p^{\beta})$.

\subsubsection{Actions on the flag variety} \label{ActionsOnFLsubsec}

We will consider the following sheaves of Lie algebras on the flag variety $\opn{FL}$. 

\begin{definition}
Set $\mathfrak{g}^0 = \mathcal{O}_{\opn{FL}} \otimes_k \ide{g}$. We denote by $\overline{\mathfrak{p}}^0$ the vector bundle on $\opn{FL}$ which on any open $U \subset \opn{FL}$ is given by
\[
\overline{\ide{p}}^0(U) = \{ f \colon U \to \mathfrak{g} : f(x) \in \overline{\ide{p}}_x \text{ for all } x \in U  \}
\]
where $\overline{\ide{p}}_x$ denotes the Lie algebra of the parabolic corresponding to the point $x \in U \subset \opn{FL}$. The tangent bundle of $\opn{FL}$ then coincides with $\mathcal{T}_{\opn{FL}} = \ide{g}^0 / \overline{\ide{p}}^0$.
\end{definition}

Let $\pi \colon G \to \opn{FL}$ be the natural right $\overline{P}$-torsor. Then, as explained in \cite[Appendix A]{DiffOps}, to any $(\ide{g}, \overline{P})$-representation $V$ one has a natural action of $\ide{g}^0$ on $\mathcal{V} = \left(\pi_*\mathcal{O}_G \otimes_k V \right)^{\overline{P}}$ (denoted $\star_{\mathcal{D}}$) factoring through $\mathcal{T}_{\opn{FL}}$. Let us recall how this action is constructed.

\begin{definition} \label{Def:StarDaction}
    Let $\mathcal{V}$ be as above. Then we define an action of $\mathfrak{g}^0$ on $\mathcal{V}$ by the following formula: for $\lambda \in \ide{g}^0(U)$ viewed as a function $U \to \ide{g}$, and $F \in \mathcal{V}(U)$ viewed as a function $F \colon \pi^{-1}(U) \to V$, we set
    \[
    (\lambda \star_{\mathcal{D}} F)(g) = (\lambda(g) \star_1 F)(g) - (\opn{Ad}(g^{-1})\lambda(g) \star_2 F)(g)
    \]
    where $\star_1$ denotes the action of $\ide{g}$ induced from differentiating the $G$-equivariant structure $(h \cdot F)(-) = F(h^{-1}\cdot -)$ ($h \in G$), and $\star_2$ denotes the $\ide{g}$-action on $V$.
\end{definition}

We explain how to construct a ``horizontal action'' out of this for a specific choice of $V$.

\begin{definition}
    Let $k[\overline{P}]$ denote the $k$-algebra of algebraic functions on $\overline{P}$. This comes equipped with two actions of $\overline{P}$, namely for $p \in \overline{P}$ we set
    \[
    (p \star_l f)(-) = f(p^{-1}\cdot -), \quad (p \star_r f)(-) = f(- \cdot p), \quad f \in k[\overline{P}] .
    \]
    The $\star_l$-action can be enhanced to a $(\ide{g}, \overline{P})$-action, by considering the open immersion $\overline{P} \hookrightarrow G/U$, where $U$ is the unipotent radical of $P$, and differentiating the $G$-action on $G/U$. This representation comes with some additional structure, namely the action of $\ide{g}$ (resp. $\overline{P}$) is through derivations (resp. algebra automorphisms). 
\end{definition}

\begin{remark}
Let $\kappa \in X^*(T)$ and let $W_{\kappa}$ denote the algebraic representation of $M$ of highest weight $\kappa$. Then the (relative) Verma module of weight $\kappa$ is given by $\mathcal{U}(\ide{g}) \otimes_{\mathcal{U}(\overline{\ide{p}})} W_{\kappa}$ and the dual of this representation in the Bernstein--Gelfand--Gelfand category $\mathcal{O}$ coincides with the representation $(k[\overline{P}] \otimes_k W_{\kappa} )^{(M, \star_r)}$ (invariants of the action of $M$ under $\star_r \otimes \cdot$).
\end{remark}

We now construct the desired action.

\begin{lemma} \label{HorzActionOnFL}
Let $V = k[\overline{P}]$. Then the action of $\ide{g}^0$ on $\mathcal{V}$ can be extended to an action of $\pi_{*}\mathcal{O}_G \otimes_{\mathcal{O}_{\opn{FL}}} \mathfrak{g}^0$ factoring through the quotient $(\pi_*\mathcal{O}_G \otimes_{\mathcal{O}_{\opn{FL}}} \mathfrak{g}^0) / (\pi_*\mathcal{O}_G \otimes_{\mathcal{O}_{\opn{FL}}} \overline{\mathfrak{p}}^0) = \pi_*\mathcal{O}_G \otimes_{\mathcal{O}_{\opn{FL}}} \mathcal{T}_{\opn{FL}}$. In particular, we obtain an induced Lie algebra homomorphism
\begin{equation} \label{uGactionEqn}
\mathfrak{u} \to \opn{Der}_k(\mathcal{V}, \mathcal{V})
\end{equation}
where $\ide{u}$ denotes the Lie algebra of the unipotent radical of $P$.
\end{lemma}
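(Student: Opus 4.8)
\textbf{Proof plan for Lemma \ref{HorzActionOnFL}.}

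The plan is to upgrade the $\ide{g}^0$-action on $\mathcal{V}$ (which factors through $\mathcal{T}_{\opn{FL}}$) to a $\pi_*\mathcal{O}_G$-linear-in-the-first-slot action of the full sheaf $\pi_*\mathcal{O}_G \otimes_{\mathcal{O}_{\opn{FL}}} \ide{g}^0$, and then to check that, for the specific choice $V = k[\overline{P}]$, this extended action has two extra features: (i) it kills $\pi_*\mathcal{O}_G \otimes \overline{\ide{p}}^0$, so it descends to $\pi_*\mathcal{O}_G \otimes \mathcal{T}_{\opn{FL}}$; and (ii) each operator in the image acts by a $k$-linear derivation of the $\mathcal{O}_{\opn{FL}}$-algebra $\mathcal{V}$. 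First I would write down the candidate extended action explicitly: for a local section $\mu = \sum_l h_l \otimes \lambda_l$ of $\pi_*\mathcal{O}_G \otimes \ide{g}^0$ and $F \in \mathcal{V}$, viewing $F \colon \pi^{-1}(U) \to V$ and $\lambda_l \colon U \to \ide{g}$, set
\[
(\mu \star_{\mathcal{D}} F)(g) = \sum_l h_l(g)\,\big[(\lambda_l(g) \star_1 F)(g) - (\opn{Ad}(g^{-1})\lambda_l(g) \star_2 F)(g)\big].
\]
One must check this is well-defined, i.e. $\overline{P}$-equivariant on $\pi^{-1}(U)$; this is the same computation as in \cite[Appendix A]{DiffOps} for the $\ide{g}^0$-action, with the extra scalar $h_l(g)$ transforming correctly because $\pi_*\mathcal{O}_G$ is an $\mathcal{O}_{\opn{FL}}$-algebra on which the two correction terms are balanced.

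Next I would verify property (i): if $\mu$ is a section of $\pi_*\mathcal{O}_G \otimes \overline{\ide{p}}^0$, say $\mu = h \otimes \lambda$ with $\lambda(x) \in \overline{\ide{p}}_x$, then at $g$ with $\pi(g) = x$ we have $\opn{Ad}(g^{-1})\lambda(g) \in \overline{\ide{p}}$ (the fibre $\overline{\ide{p}}_x$ is $\opn{Ad}(g)\overline{\ide{p}}$ for $g$ in the torsor over $x$), so the $(\ide{g},\overline{P})$-axiom forces $\opn{Ad}(g^{-1})\lambda(g) \star_2 F = \opn{Ad}(g^{-1})\lambda(g) \star_1 F$ along the fibre, and the two terms cancel after multiplication by $h(g)$. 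Hence $\star_{\mathcal{D}}$ factors through $\pi_*\mathcal{O}_G \otimes \mathcal{T}_{\opn{FL}}$, exactly as in the $\ide{g}^0$-case but now $\pi_*\mathcal{O}_G$-linearly in the first argument. For property (ii), I would use that $V = k[\overline{P}]$ carries the $(\ide{g},\overline{P})$-structure obtained by differentiating the $G$-action on $G/U \supset \overline{P}$, under which $\ide{g}$ acts by \emph{derivations} of the algebra $k[\overline{P}]$ and $\overline{P}$ acts by algebra automorphisms; since $\star_1$ is built from the $G$-equivariant structure (again through derivations of the fibrewise algebra structure of $\pi_*\mathcal{O}_G \otimes_k V$) and $\star_2$ is the $\ide{g}$-action on $V$, the combination $\lambda(g)\star_1 - (\opn{Ad}(g^{-1})\lambda(g))\star_2$ is a difference of derivations, hence a derivation; multiplying by the scalar $h(g) \in \pi_*\mathcal{O}_G$ does not preserve the derivation property in general, so to land in $\opn{Der}_k(\mathcal{V},\mathcal{V})$ one restricts to the subsheaf where the $\pi_*\mathcal{O}_G$-coefficient is taken in the image of $\mathcal{O}_{\opn{FL}}$ — concretely, one takes the composite $\ide{u} \to \mathcal{T}_{\opn{FL}}\subset \pi_*\mathcal{O}_G\otimes\mathcal{T}_{\opn{FL}}$, where $\ide{u} = \opn{Lie}(U)$ maps to $\mathcal{T}_{\opn{FL}}$ at the identity coset via the standard isomorphism $\ide{g}/\overline{\ide{p}} \xrightarrow{\sim} \mathcal{T}_{\opn{FL},eP}$ and more generally through the trivialisation of $\mathcal{T}_{\opn{FL}}$ over the big cell (or the canonical map $\ide{u}^0 \hookrightarrow \mathcal{T}_{\opn{FL}}$ given by the unipotent radical). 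The resulting map $\ide{u} \to \opn{Der}_k(\mathcal{V},\mathcal{V})$ is then a Lie algebra homomorphism because $\star_{\mathcal{D}}$ restricted to the image of $\ide{g}^0$ is a Lie algebra action (the bracket on $\mathcal{T}_{\opn{FL}}$ is matched by $[\cdot,\cdot]$ on $\ide{g}$ modulo $\overline{\ide{p}}$, as in \cite[Appendix A]{DiffOps}).

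I expect the main obstacle to be bookkeeping rather than conceptual: precisely pinning down which subsheaf of $\pi_*\mathcal{O}_G \otimes \mathcal{T}_{\opn{FL}}$ acts by honest $k$-linear derivations of $\mathcal{V}$ (as opposed to first-order differential operators with a symbol), and confirming that the restriction to $\ide{u}$ — i.e. the identification of $\ide{u}$ with global vector fields on $\opn{FL}$ coming from the $G$-action — lands in that subsheaf. Once the $(\ide{g},\overline{P})$-structure on $k[\overline{P}]$ is recalled carefully (derivations and automorphisms), the derivation property of each operator, its $k$-linearity, and the Lie-homomorphism property all follow from the corresponding statements for $\star_1$ and $\star_2$ separately, so the proof reduces to assembling the pieces and citing \cite[Appendix A]{DiffOps} for the $\ide{g}^0$-action computations that carry over verbatim.
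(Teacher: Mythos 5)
Your proposal misses the decisive step of the paper's proof, and one of your auxiliary claims is incorrect.

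First, the paper begins by observing that with $V=k[\overline P]$ one has a canonical identification $\mathcal V \cong \pi_*\mathcal O_G$, and the extension of the $\mathfrak g^0$-action to $\pi_*\mathcal O_G\otimes_{\mathcal O_{\mathrm{FL}}}\mathfrak g^0$ is simply by multiplication in this algebra: $(x\otimes y)$ acts as $x\cdot(y\star_{\mathcal D}-)$. Your candidate formula $(\mu\star_{\mathcal D}F)(g)=\sum_l h_l(g)[\cdots]$ is the same thing written out, so that part is fine. However, your claim that ``multiplying by the scalar $h(g)\in\pi_*\mathcal O_G$ does not preserve the derivation property'' is false: if $D$ is a $k$-linear derivation of the commutative sheaf of rings $\pi_*\mathcal O_G$ and $h$ a local section of that ring, then $hD$ is still a $k$-linear derivation. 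There is no need to restrict coefficients to the image of $\mathcal O_{\mathrm{FL}}$, and doing so would actually block the construction.

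Second, and more seriously, the map $\mathfrak u\to\opn{Der}_k(\mathcal V,\mathcal V)$ in the lemma is \emph{not} the composite $\mathfrak u\hookrightarrow\mathfrak g\to\mathfrak g^0\to\mathcal T_{\mathrm{FL}}$ by constant sections as you propose. The paper uses the \emph{twisted} representative $\lambda_X(g)=-\opn{Ad}(g)X$, which is a genuine section of $\pi_*\mathcal O_G\otimes_{\mathcal O_{\mathrm{FL}}}\mathfrak g^0$ (not of $\mathfrak g^0$, since $\opn{Ad}(g)X$ is not constant along the fibres of $\pi$). The whole point is that after passing to the alternative description $\tilde\star_{\mathcal D}$ and plugging in $\lambda_X$, the $\star_2$-term vanishes because $\exp(tX)\in U$ for $X\in\mathfrak u$, and one is left with exactly right translation: $\lambda_X\,\tilde\star_{\mathcal D}\,F=X\star_r F$ on $\mathcal V=\pi_*\mathcal O_G$. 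This identification with $\star_r$ is what makes the Lie algebra homomorphism property immediate (commutativity of the $\lambda_X$-operators for $X,Y\in\mathfrak u$, using that $\mathfrak u$ is abelian here), and it is also precisely what is needed for the subsequent Proposition \ref{KeyPropforHolOnFL}, whose proof is a one-line consequence of the fact that \emph{both} $\star_{\mathfrak u}$ and $\star_{\overline P}$ are induced from $\star_r$. With the constant-section map $\mathfrak u\to\mathcal T_{\mathrm{FL}}$, one obtains a different Lie algebra homomorphism (the differential of the left $G$-equivariant structure on $\mathcal V$), whose operators are not $\star_r$; the commutation relation in Proposition \ref{KeyPropforHolOnFL} would then not follow, and the later comparison with the Maass--Shimura operators would fail. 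So the conceptual gap is genuine: you need to identify $\mathcal V$ with $\pi_*\mathcal O_G$, choose the $\opn{Ad}(g)$-twisted representative of $X\in\mathfrak u$ in $\pi_*\mathcal O_G\otimes\mathcal T_{\mathrm{FL}}$, and compute that the resulting operator is right translation.
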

\begin{proof}
Let $\star_{\mathcal{D}}$ denote the action of $\ide{g}^0$ on $\mathcal{V}$. Note that we have 
\begin{align*} 
\mathcal{V}(U) &= \{ f \colon \pi^{-1}(U) \to k[\overline{P}] : f(-\cdot p) = p^{-1} \star_l f(-) \text{ for all } p \in \overline{P} \} \\
 &= \{ F \colon \pi^{-1}(U) \times \overline{P} \to k : F(- \cdot p, -) = F(-, p \cdot -) \text{ for all } p \in \overline{P} \} \\
 &= \pi_* \mathcal{O}_G(U)
\end{align*}
so we get an action of $\pi_*\mathcal{O}_G \otimes_{\mathcal{O}_{\opn{FL}}} \mathfrak{g}^0$ by using the multiplication structure on $\mathcal{V} = \pi_*\mathcal{O}_G$ (i.e. $x \otimes y$ acts as $x \cdot (y \star_{\mathcal{D}} -)$). Clearly this action factors through $\pi_*\mathcal{O}_G \otimes_{\mathcal{O}_{\opn{FL}}} \mathcal{T}_{\opn{FL}}$.

Alternatively, one can view an element $\lambda \in \pi_*\mathcal{O}_G \otimes_{\mathcal{O}_{\opn{FL}}} \mathfrak{g}^0$ as a function $\lambda \colon \pi^{-1}(U) \to \ide{g}$ and define the action as
\[
(\lambda \; \tilde{\star}_{\mathcal{D}} \; F)(g, p) \defeq ( \lambda(gp) \star_{1} F)(g, p) - (\opn{Ad}(g^{-1})\lambda(gp) \star_2 F)(g, p)
\]
where $\star_1$ is the $\ide{g}$-action induced from left-translation on the first variable, and $\star_2$ arises from the $\ide{g}$-action on $k[\overline{P}]$. Now the action in (\ref{uGactionEqn}) is given as follows. For $X \in \ide{u}$, we consider the function $\lambda_X \colon \pi^{-1}(U) \to \ide{g}$ given by $\lambda_X(g) = -\opn{Ad}(g)X$. The action of $X$ is then given by the action of $\lambda_X$. Explicitly, we have
\begin{equation} \label{lambdaXTildeEqn}
(\lambda_X \; \tilde{\star}_{\mathcal{D}} \; F)(g, p) = (-\opn{Ad}(gp)X \star_1 F)(g, p) + (\opn{Ad}(p)X \star_2 F)(g, p) .
\end{equation}
Note that the second term in (\ref{lambdaXTildeEqn}) vanishes because $X \in \ide{u}$, and we can rewrite the action in (\ref{lambdaXTildeEqn}) as
\[
\lambda_X \; \tilde{\star}_{\mathcal{D}} \; F =  X \star_r F, \quad \quad \quad F \in \pi_*\mathcal{O}_G(U) = \mathcal{O}_G(\pi^{-1}U),
\]
where $\star_r$ denotes the action of $\ide{g}$ on $\mathcal{O}_G$ given by differentiating the action of $G$ given by right-translation of the argument. This automatically implies that $X \mapsto \lambda_{X} \tilde{\star}_{\mathcal{D}} -$ is a Lie algebra homomorphism (i.e., the actions of $\lambda_X$ and $\lambda_Y$ commute for $X, Y \in \ide{u}$).
\end{proof}

Note that $\mathcal{V} = \pi_*\mathcal{O}_G$ comes with an additional action of $\overline{\ide{p}}$ given by differentiating $\star_r$ on $k[\overline{P}]$ (which coincides with the action of $\star_r$ on $\mathcal{O}_G$ above). We will denote this action by $\star_{\overline{P}}$. We have the following key relation:

\begin{proposition} \label{KeyPropforHolOnFL}
Let $\mathcal{V} = \pi_*\mathcal{O}_G$. Let $X \in \overline{\ide{p}}$, $Y \in \ide{u}$, and let $[X, Y] \in \ide{g}$ denote the Lie bracket as elements of $\ide{g}$. Suppose that $[X, Y] \in \overline{\ide{p}}$. Then for any $F \in \mathcal{V}$, we have the relation
\[
X \star_{\overline{P}} (Y \star_{\ide{u}} F) = Y \star_{\ide{u}} (X \star_{\overline{P}} F) + [X, Y] \star_{\overline{P}} F
\]
where $\star_{\ide{u}}$ denotes the action of $\ide{u}$ constructed in Lemma \ref{HorzActionOnFL}. 
\end{proposition}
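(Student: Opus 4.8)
The plan is to observe that the two operations appearing in the statement are restrictions of one and the same Lie algebra action on $\mathcal{O}_G$, and then to invoke the elementary fact that this action is bracket-preserving.

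Concretely, I would first recall that, by the proof of Lemma~\ref{HorzActionOnFL}, for $Y \in \ide{u}$ the operator $Y \star_{\ide{u}} -$ on $\mathcal{V} = \pi_*\mathcal{O}_G$ agrees with $Y \star_r -$, where $\star_r$ is the $\ide{g}$-action on $\mathcal{O}_G$ obtained by differentiating right translation of the argument; this is precisely the identity $\lambda_Y \; \tilde{\star}_{\mathcal{D}} \; F = Y \star_r F$ established there (with $\lambda_Y(g) = -\opn{Ad}(g)Y$), after identifying a local section of $\pi_*\mathcal{O}_G$ with a function on the corresponding open subset of $G$. On the other hand, the action $\star_{\overline{P}}$ of $\overline{\ide{p}} = \opn{Lie}\,\overline{P}$ on $\mathcal{V}$ is, by its very definition, the restriction to $\overline{\ide{p}}$ of the same action $\star_r$ on $\mathcal{O}_G$, since $\overline{P}$ acts by right translation by elements of $G$. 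Hence both $X \star_{\overline{P}} -$ (for $X \in \overline{\ide{p}}$) and $Y \star_{\ide{u}} -$ (for $Y \in \ide{u}$) are restrictions of the single map $\rho \colon \ide{g} \to \opn{Der}_k(\mathcal{O}_G, \mathcal{O}_G)$, $Z \mapsto Z \star_r -$.

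Now $\rho$ is the differential of an action of the algebraic group $G$ on $\mathcal{O}_G$ by algebra automorphisms, hence a homomorphism of Lie algebras. Therefore, for all $X, Y \in \ide{g}$ and every local section $F$ of $\mathcal{O}_G$,
\[
X \star_r (Y \star_r F) - Y \star_r (X \star_r F) = [X, Y] \star_r F .
\]
Specialising to $X \in \overline{\ide{p}}$ and $Y \in \ide{u}$, and using the hypothesis $[X,Y] \in \overline{\ide{p}}$ — which is exactly what lets us interpret $[X,Y]$ as acting through $\star_{\overline{P}}$, so that $[X,Y]\star_r F = [X,Y]\star_{\overline{P}} F$ — this identity rearranges into the asserted formula for every $F \in \mathcal{V}$. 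The only point requiring care, and what I would flag as the main (though purely bookkeeping) obstacle, is to confirm that the sign and ordering conventions are consistent: that right translation of the argument is normalised so as to yield a Lie algebra homomorphism rather than an anti-homomorphism, and that the identification $Y\star_{\ide{u}}- = Y\star_r-$ is taken with the normalisation $\lambda_Y(g) = -\opn{Ad}(g)Y$ from Lemma~\ref{HorzActionOnFL}. Once those conventions are pinned down as in the proof of that lemma, no further computation on the flag variety is needed.
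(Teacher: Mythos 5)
Your proposal is correct and is essentially the paper's own argument: the paper's proof of Proposition~\ref{KeyPropforHolOnFL} is the single sentence that the relation follows because $\star_{\ide{u}}$ and $\star_{\overline{P}}$ are both induced from the action $\star_r$ of $\ide{g}$ on $\mathcal{O}_G$. Your expansion, including the observation that the hypothesis $[X,Y]\in\overline{\ide{p}}$ is precisely what lets the bracket be read via $\star_{\overline{P}}$, and the remark that one must check right translation of the argument differentiates to a Lie algebra homomorphism (it does, since it yields left-invariant vector fields), matches the intended reasoning.
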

\begin{proof}
    This follows immediately from the fact that $\star_{\ide{u}}$ and $\star_{\overline{P}}$ are both induced from the action of $\ide{g}$ on $\mathcal{O}_G$ via $\star_r$.
\end{proof}

\subsubsection{Passage to Shimura varieties} \label{PassageToShVars}

In this section we describe the relation between the tangent bundles of Shimura varieties and flag varieties.  The main references we follow for this are \cite[\S 3.4.2]{HarrisAVBI} and \cite[\S 3.1]{Hormann_2024}, although note that our situation is significantly simpler than the latter because our Shimura varieties are compact (so we do not need to consider toroidal compactifications and log differentials).

Recall that $X = X_{\mbf{G}, \opn{Iw}, F^{\opn{cl}}}(p^{\beta})$ denotes the Shimura--Deligne variety associated with $\mbf{G}$ of level $K^pK^G_{\opn{Iw}}(p^{\beta})$. Consider the following diagram
\[
\begin{tikzcd}
  & G_{\opn{dR}} \arrow[ld, "p"'] \arrow[rd, "q"] &                        \\
X &                                               & \opn{FL}
\end{tikzcd}
\]
where $G_{\opn{dR}}$ denotes the right $G$-torsor (the standard principal bundle) parameterising frames of the first de Rham homology respecting PE-structures (see Definition \ref{DefinitionOfGdrPdR}). Here $p$ denotes the natural map, and $q$ denotes the map measuring the position of the Hodge filtration with respect to such a frame. The map $q$ is $G$-equivariant, i.e., $q(x g) = g^{-1} q(x)$.

Since $G_{\opn{dR}} \to X$ is a principal $G$-torsor, we have a $G$-equivariant short exact sequence
\begin{equation} \label{Xsestangent}
    0 \to \mathcal{T}_{G_{\opn{dR}}}^{p\text{-vert}} \to \mathcal{T}_{G_{\opn{dR}}} \xrightarrow{dp} p^*\mathcal{T}_{X} \to 0
\end{equation}
where $\mathcal{T}_{G_{\opn{dR}}}^{p\text{-vert}} = \mathcal{O}_{G_{\opn{dR}}} \otimes_k \ide{g}$ with the $G$-equivariant structure given by the diagonal action (i.e., the $G$-equivariant structure on $\mathcal{O}_{G_{\opn{dR}}}$ and the adjoint action on $\ide{g}$). Since the relative de Rham homology comes equipped with a connection (the Gauss--Manin connection), we obtain a $G$-equivariant splitting $s_X \colon p^*\mathcal{T}_X \to \mathcal{T}_{G_{\opn{dR}}}$ of (\ref{Xsestangent}) whose image we denote by $\mathcal{T}_{G_{\opn{dR}}}^{\opn{horz}}$. Since the connection is integrable, the subbundle $\mathcal{T}_{G_{\opn{dR}}}^{\opn{horz}}$ is closed under the $G$-equivariant Lie bracket on $\mathcal{T}_{G_{\opn{dR}}}$  (this is ``axiom (F)'' in \cite[\S 3.1.4]{Hormann_2024}). In particular, the section $s_X$ induces a $G$-equivariant Lie bracket structure $[\cdot, \cdot]_X$ on $p^*\mathcal{T}_X$ -- concretely this is described on pure tensors as 
\[
[f \otimes v, f' \otimes v']_X = s_X(f \otimes v) f' \otimes v' - s_X(f' \otimes v') f \otimes v + f f' \otimes [v, v']_X
\]
for $f \otimes v$ and $f' \otimes v'$ elements of $\mathcal{O}_{G_{\opn{dR}}} \otimes_{p^{-1}\mathcal{O}_X} p^{-1}\mathcal{T}_X$ (and we also use the notation $[\cdot, \cdot]_X$ to denote the Lie bracket on $\mathcal{T}_X$). As seen from this description, this Lie bracket extends the one on $\mathcal{T}_X$, so the notation is justified. 

On the other hand, let $\mathcal{T}_{G_{\opn{dR}}}^{q\text{-vert}}$ denote the kernel of the $G$-equivariant map $dq \colon \mathcal{T}_{G_{\opn{dR}}} \to q^* \mathcal{T}_{\opn{FL}}$. The morphism $dq$ is surjective and we have a G-equivariant short exact sequence
\begin{equation} \label{FLestangent}
     0 \to \mathcal{T}_{G_{\opn{dR}}}^{q\text{-vert}} \to \mathcal{T}_{G_{\opn{dR}}} \xrightarrow{dq} q^*\mathcal{T}_{\opn{FL}} \to 0
\end{equation}
We have the following result:

\begin{proposition} \label{TXTFLrelation}
One has an isomorphism $dq \colon \mathcal{T}_{G_{\opn{dR}}}^{\opn{horz}} \xrightarrow{\sim} q^*\mathcal{T}_{\opn{FL}}$. In particular, we obtain a $G$-equivariant splitting $s_{\opn{FL}} \colon q^*\mathcal{T}_{\opn{FL}} \to \mathcal{T}_{G_{\opn{dR}}}$ of (\ref{FLestangent}).
\end{proposition}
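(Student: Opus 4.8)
The plan is to identify the composite $dq\circ s_X$ with the Kodaira--Spencer morphism of the Shimura--Deligne variety $X$, and then to invoke the fact that the latter is an isomorphism. By construction of $s_X$ in \eqref{Xsestangent}, the Gauss--Manin splitting realises an isomorphism $p^*\mathcal{T}_X \xrightarrow{\ \sim\ } \mathcal{T}_{G_{\opn{dR}}}^{\opn{horz}}$, so the Proposition is equivalent to the assertion that
\[
\phi \colon p^*\mathcal{T}_X \xrightarrow{\ s_X\ } \mathcal{T}_{G_{\opn{dR}}} \xrightarrow{\ dq\ } q^*\mathcal{T}_{\opn{FL}}
\]
is an isomorphism of vector bundles on $G_{\opn{dR}}$; granting this, the induced splitting $s_{\opn{FL}}$ of \eqref{FLestangent} is $\phi$ composed with the inverse. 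Note both $p^*\mathcal{T}_X$ and $q^*\mathcal{T}_{\opn{FL}}$ are locally free of rank $\dim X = \dim \opn{FL}$ (the dimension of the compact dual), so this is at least numerically consistent.

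Next I would descend the statement to $X$. Since $q$ is $G$-equivariant ($q(xg)=g^{-1}q(x)$) and $\mathcal{T}_{\opn{FL}} = \ide{g}^0/\overline{\ide{p}}^0$ is a $G$-equivariant sheaf on $\opn{FL} = G/\overline{P}$, the bundle $q^*\mathcal{T}_{\opn{FL}}$ is $G$-equivariant on the $G$-torsor $p\colon G_{\opn{dR}}\to X$ and hence descends to a vector bundle $\mathcal{M}$ on $X$; using the $\overline{P}$-reduction $P_{G,\opn{dR}}\to X$ of Definition \ref{DefinitionOfGdrPdR}, $\mathcal{M}$ is canonically the automorphic vector bundle attached to the $(\ide{g},\overline{P})$-representation $\ide{g}/\overline{\ide{p}}$. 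Likewise $\mathcal{T}_{G_{\opn{dR}}}^{\opn{horz}}\cong p^*\mathcal{T}_X$ descends to $\mathcal{T}_X$, and $dq$, being $G$-equivariant, descends to an $\mathcal{O}_X$-linear map $\bar{\phi}\colon \mathcal{T}_X \to \mathcal{M}$. Unwinding the definition of $s_X$ (the horizontal lift for the Gauss--Manin connection on $\mathcal{H}_A$) and of $q$ (which records the position of the Hodge filtration $\omega_{A^D}\subset\mathcal{H}_A$ relative to a symplectic $\mathcal{O}_F$-frame), $\bar{\phi}$ is exactly the Kodaira--Spencer map: it sends a tangent vector $v$ to the class of $\nabla_v$ acting on $\mathcal{H}_A$ modulo the subalgebra preserving the Hodge filtration, i.e.\ the $\mathcal{O}_F$-equivariant, polarisation-compatible refinement of the map $\mathcal{T}_X \to \mathcal{H}om(\omega_{A^D},\opn{Lie}(A/X))$ coming from Griffiths transversality. (Concretely, this is the same compatibility between $\star_r$-type actions on $\mathcal{O}_{G_{\opn{dR}}}$ and the connection that underlies Lemma \ref{HorzActionOnFL}.)

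Finally I would invoke that the Kodaira--Spencer morphism of a PEL-type Shimura variety is an isomorphism --- this is classical, see \cite{Kottwitz} or \cite{LanArithmetic} --- noting that passing from $X_{\mbf{G},F^{\opn{cl}}}$ to the Iwahori-level cover $X = X_{\mbf{G},\opn{Iw},F^{\opn{cl}}}(p^{\beta})$ does not affect it, since this cover is finite \'etale and the universal abelian scheme, its de Rham homology, and the Gauss--Manin connection are pulled back along it. The identical argument applies to $\mbf{H}$ (replacing $W$ by $W_1\oplus W_2$ and $\overline{P}$ by $P_{\mbf{H}}^{\opn{std}}$), as $(\mbf{H},h_{\mbf{H}})$ is again of PEL type. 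The main obstacle is the middle step: carefully matching the abstractly defined horizontal structure on $G_{\opn{dR}}$ with the classical Kodaira--Spencer construction, keeping track of the $\mathcal{O}_F$-isotypic decomposition $\mathcal{H}_A = \bigoplus_{\tau\in\Psi}(\mathcal{H}_{A,\tau}\oplus\mathcal{H}_{A,\bar{\tau}})$ and verifying that $\overline{P} = P_{\mbf{G}}^{\opn{std}}$ is the stabiliser of the Hodge filtration in each isotypic component, so that the target of $\bar{\phi}$ really is $\mathcal{M}_{\ide{g}/\overline{\ide{p}}}$; once this bookkeeping is done, the isomorphy is formal.
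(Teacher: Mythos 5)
Your argument is correct and follows the same conceptual route as the paper: identify the composite $dq\circ s_X$ with the Kodaira--Spencer morphism and invoke its isomorphy. The paper's proof itself is only a one-line citation to \cite{HarrisAVBI}, whose argument establishes the result for general Shimura varieties by first working complex-analytically over $\mathbb{C}$ (using the holomorphic tangent bundle of the compact dual) and then descending to the reflex field. Your version instead exploits the PEL moduli interpretation and invokes the algebraic Kodaira--Spencer isomorphism for PEL abelian schemes directly (as in \cite{LanArithmetic}), which avoids the complex-analytic intermediary and is arguably tidier in this setting. The verifications you flag --- compatibility with the $\mathcal{O}_F$-isotypic decomposition, the fact that $\overline{P}=P_{\mbf{G}}^{\opn{std}}$ is the stabiliser of the Hodge filtration in each isotypic piece, and stability under the finite \'etale passage from hyperspecial to Iwahori level --- are exactly the bookkeeping needed to make the algebraic argument close. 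The closing remark about $\mbf{H}$ is not needed (the section fixes $\mbf{G}$ and already observes that the same arguments apply to $\mbf{H}$), but it is harmless.
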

\begin{proof}
This is proved in \cite{HarrisAVBI}. The result is first established over $\mbb{C}$ (as complex manifolds) and then is shown to be algebraic and descends to $F^{\opn{cl}}$.  This result corresponds to the ``Torelli axiom (T)'' in \cite[\S 3.1.9]{Hormann_2024}.
\end{proof}

As a consequence of the splitting $s_{\opn{FL}}$, we obtain a $G$-equivariant Lie bracket on $q^*\mathcal{T}_{\opn{FL}}$ which extends the bracket on $\mathcal{T}_{\opn{FL}}$. We have an induced $G$-equivariant Lie algebra isomorphism $p^*\mathcal{T}_{X} \xrightarrow{s_X} \mathcal{T}_{G_{\opn{dR}}}^{\opn{horz}} \xrightarrow{dq} q^*\mathcal{T}_{\opn{FL}}$, which encodes the Kodaira--Spencer isomorphism  (see \cite[Theorem 10.4]{Voisin_2002}).

We now recall how one passes from $\mathcal{D}_{\opn{FL}}$-modules on $\opn{FL}$ to $\mathcal{D}_X$-modules on $X$. 

\begin{definition}
    Let $\mathcal{V}$ be a quasi-coherent $G$-equivariant sheaf on $\opn{FL}$. We set $[\mathcal{V}] \defeq (p_* q^*\mathcal{V})^G$, which defines a quasi-coherent sheaf on $X$.
\end{definition}

\begin{example}
As seen from Proposition \ref{TXTFLrelation}, one has $\mathcal{T}_X \cong [\mathcal{T}_{\opn{FL}}]$.
\end{example}

Suppose that $\mathcal{V}$ is a quasi-coherent $G$-equivariant sheaf on $\opn{FL}$ equipped with a $G$-equivariant Lie algebra action of $\mathcal{T}_{\opn{FL}}$, i.e., a $G$-equivariant Lie algebra morphism
\[
\mathcal{T}_{\opn{FL}} \to \underline{\opn{End}}_k(\mathcal{V})
\]
where the latter is equipped with the commutator bracket. Then we obtain a $G$-equivariant Lie algebra action of $q^*\mathcal{T}_{\opn{FL}}$ on $q^*\mathcal{V}$. Explicitly, given $f \otimes v \in \mathcal{O}_{G_{\opn{dR}}} \otimes_{q^{-1}\mathcal{O}_{\opn{FL}}} q^{-1}\mathcal{T}_{\opn{FL}}$ and $\lambda \otimes \gamma \in \mathcal{O}_{G_{\opn{dR}}} \otimes_{q^{-1}\mathcal{O}_{\opn{FL}}} q^{-1}\mathcal{V}$, the action is given by
\[
(f \otimes v) \cdot (\lambda \otimes \gamma) = s_{\opn{FL}}(f \otimes v)(\lambda) \otimes \gamma + f \lambda \otimes v \cdot \gamma .
\]
This induces a $G$-equivariant Lie algebra action of $p^*\mathcal{T}_X \cong q^*\mathcal{T}_{\opn{FL}}$ on $q^*\mathcal{V}$, and by passing to $G$-invariants, we obtain a Lie algebra action of $\mathcal{T}_X$ on $[\mathcal{V}]$.

\begin{example}
Let $\pi \colon G \to \opn{FL}$ be the natural (right) $\overline{P}$-torsor, and let $\mathcal{V}$ denote the $G$-equivariant sheaf on $\opn{FL}$ given by the bundle associated with the standard representation of $\overline{P}$ (via this torsor). This carries an action of $\mathcal{T}_{\opn{FL}}$ (see \S \ref{ActionsOnFLsubsec}) and the induced action of $\mathcal{T}_X$ on $[\mathcal{V}]$ simply corresponds to the Gauss--Manin connection on the first de Rham homology. 
\end{example}

We will analyse this construction further for a specific choice of $\mathcal{V}$. For the rest of this subsection let $\mathcal{V} = \pi_*\mathcal{O}_G$, where $\pi \colon G \to \opn{FL}$ is the natural $\overline{P}$-torsor. This comes equipped with a $G$-equivariant Lie algebra action of $\mathcal{T}_{\opn{FL}}$ (as explained in \S \ref{ActionsOnFLsubsec}). We consider the following $G$-equivariant sheaves of Lie algebras:
\begin{itemize}
    \item $\mathcal{V} \otimes_{\mathcal{O}_{\opn{FL}}} \mathcal{T}_{\opn{FL}}$ with $G$-equivariant Lie bracket induced by the Leibniz rule, i.e., for $f \otimes v$ and $f' \otimes v'$ elements of $\mathcal{V} \otimes \mathcal{T}_{\opn{FL}}$ the bracket is given 
    \[
    [f \otimes v, f' \otimes v']_{\mathcal{D}} = f (v\cdot f') \otimes v' - (v' \cdot f)f' \otimes v + f f' \otimes [v, v']_{\opn{FL}} .
    \]
    As indicated by the notation, this bracket is induced from a similar bracket $[\cdot, \cdot]_{\mathcal{D}}$ on $\pi_*\mathcal{O}_G \otimes \ide{g}^0$ via the constructions in \S \ref{ActionsOnFLsubsec}. 
    \item $q^*(\mathcal{V} \otimes \mathcal{T}_{\opn{FL}}) = q^*\mathcal{V} \otimes_{\mathcal{O}_{G_{\opn{dR}}}} q^*\mathcal{T}_{\opn{FL}} \cong q^*\mathcal{V} \otimes_{\mathcal{O}_{G_{\opn{dR}}}} p^*\mathcal{T}_{X}$ with $G$-equivariant Lie bracket defined similarly. 
    \item $[\mathcal{V}] \otimes_{\mathcal{O}_X} \mathcal{T}_X$ with Lie bracket induced from taking $G$-invariants of the Lie bracket in the above bullet point.
\end{itemize}

We have a natural action of $\mathcal{V} \otimes \mathcal{T}_{\opn{FL}}$ on $\mathcal{V}$ given by $(f \otimes v) \cdot x = f(v\cdot x)$ for $f \otimes v \in \mathcal{V} \otimes \mathcal{T}_{\opn{FL}}$, which is a $G$-equivariant Lie algebra action. By pulling back along $q$ and taking $G$-invariants, we obtain a Lie algebra action of $[\mathcal{V}]\otimes \mathcal{T}_{X}$ on $[\mathcal{V}]$ extending the action of $\mathcal{T}_X$.

We recall that we are in the following situation. Let $\ide{u}$ denote the Lie algebra of the unipotent radical of $P_{\mbf{G}}$, which comes equipped with the trivial Lie bracket and the trivial action of $G$. Then, as explained in the proof of Lemma \ref{HorzActionOnFL}, we have a $G$-equivariant map of Lie algebras:
\[
\ide{u} \to \left( \mathcal{V} \otimes_{\mathcal{O}_{\opn{FL}}} \mathcal{T}_{\opn{FL}} \right)(\opn{FL})
\]
i.e., commuting $G$-invariant global sections of $\mathcal{V} \otimes_{\mathcal{O}_{\opn{FL}}} \mathcal{T}_{\opn{FL}}$. Then, by pulling back under $q$ and taking $G$-invariants, we obtain a Lie algebra morphism $\ide{u} \to \left( [\mathcal{V}] \otimes \mathcal{T}_X \right)$, and hence a Lie algebra action
\[
\ide{u} \to \opn{End}_k([\mathcal{V}]).
\]
In fact this action map factors through $\opn{Der}_k([\mathcal{V}])$ (the space of derivations $[\mathcal{V}] \to [\mathcal{V}]$), which makes sense because $[\mathcal{V}]$ carries an algebra structure. One can easily show that $[\mathcal{V}] \cong \pi_* \mathcal{O}_{P_{\opn{dR}}}$, where $\pi \colon P_{\opn{dR}} \to X$ denotes the $\overline{P}$-reduction of $G_{\opn{dR}}$ (frames of the first relative de Rham homology preserving the Hodge filtration -- see Definition \ref{DefinitionOfGdrPdR}). We denote this action by $\star_{\ide{u}}$.

Recall $\overline{\ide{p}}$ denotes the Lie algebra of $\overline{P}$. Then $[\mathcal{V}]$ has an $\mathcal{O}_{X}$-linear action of $\overline{\ide{p}}$ given by differentiating the torsor structure, which we denote by $\star_{\overline{P}}$. We have the following analogue of Proposition \ref{KeyPropforHolOnFL}.

\begin{proposition} \label{KeyPropforHolOnX}
Let $\gamma \in \overline{\ide{p}}$, $\delta \in \ide{u}$ and let $[\gamma, \delta] \in \ide{g}$ denote the Lie bracket of $\gamma$ and $\delta$ viewed as elements of $\ide{g}$. Suppose $[\gamma, \delta] \in \overline{\ide{p}}$. Then for any $F \in [\mathcal{V}]$, we have the relation
\[
\gamma \star_{\overline{P}} ( \delta \star_{\ide{u}} F ) = \delta \star_{\ide{u}} ( \gamma \star_{\overline{P}} F ) + [\gamma, \delta] \star_{\overline{P}} F .
\]
\end{proposition}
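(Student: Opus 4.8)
The plan is to reduce Proposition \ref{KeyPropforHolOnX} to Proposition \ref{KeyPropforHolOnFL} via the torsor $\pi \colon G_{\opn{dR}} \to X$, just as the statements themselves are parallel. First I would unwind the identification $[\mathcal{V}] \cong \pi_*\mathcal{O}_{P_{\opn{dR}}}$ and recall that, by construction, a section of $[\mathcal{V}]$ over an open $U \subset X$ is a $G$-invariant section of $q^*\mathcal{V} = q^*\pi_*\mathcal{O}_G$ over $p^{-1}(U)$; concretely this is a function $F \colon (\text{frame bundle}) \times \overline{P} \to k$ with the appropriate $\overline{P}$- and $G$-equivariance. Under this description, both operations $\star_{\ide{u}}$ and $\star_{\overline{P}}$ on $[\mathcal{V}]$ are, by definition (see the discussion preceding the Proposition and the proof of Lemma \ref{HorzActionOnFL}), obtained from the corresponding operations $\star_{\ide{u}}$ and $\star_{\overline{P}}$ on $\mathcal{V} = \pi_*\mathcal{O}_G$ on the flag variety by pulling back along $q$ and passing to $G$-invariants. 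The key point is that this pullback-and-invariants procedure is a morphism of Lie algebras / preserves all the bracket relations in sight, so any identity valid for the $\star$-actions on $\mathcal{V}$ over $\opn{FL}$ transports verbatim to $[\mathcal{V}]$ over $X$.

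The key steps, in order: (1) record that $\star_{\overline{P}}$ on $[\mathcal{V}]$ is $\mathcal{O}_X$-linear and is precisely the $G$-invariants of $q^*(\star_{\overline{P}})$, where $\star_{\overline{P}}$ on $\mathcal{V}$ is the differentiated right-translation action $\star_r$ (this is stated in \S \ref{PassageToShVars}); (2) record that $\star_{\ide{u}}$ on $[\mathcal{V}]$ is the $G$-invariants of $q^*$ applied to the map $\ide{u} \to (\mathcal{V} \otimes_{\mathcal{O}_{\opn{FL}}} \mathcal{T}_{\opn{FL}})(\opn{FL})$ of Lemma \ref{HorzActionOnFL}, followed by the action of $\mathcal{V}\otimes\mathcal{T}_{\opn{FL}}$ on $\mathcal{V}$; in particular $\star_{\ide{u}}$ on $[\mathcal{V}]$ is literally $\star_{\ide{u}}$ on $\mathcal{V}$ applied fibrewise after pulling back by $q$ (the Lie-algebra element $\delta \in \ide{u}$ is $G$-invariant, so there is no subtlety about the $G$-action in the pullback); (3) given $F \in [\mathcal{V}](U)$, view its image $\tilde F$ under $[\mathcal{V}] \hookrightarrow q^*\mathcal{V}$ and apply Proposition \ref{KeyPropforHolOnFL} to $\tilde F$ (working over the open subset $q(p^{-1}(U)) \subset \opn{FL}$, or more precisely on $G_{\opn{dR}}$ after pullback), using the hypothesis $[\gamma,\delta] \in \overline{\ide{p}}$ which is exactly the hypothesis of that proposition; (4) take $G$-invariants of the resulting identity and descend it back to $[\mathcal{V}](U)$, using that all three terms $\gamma\star_{\overline{P}}(\delta\star_{\ide{u}}F)$, $\delta\star_{\ide{u}}(\gamma\star_{\overline{P}}F)$, $[\gamma,\delta]\star_{\overline{P}}F$ are the $G$-invariant descents of the corresponding terms on $\mathcal{V}$.

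I expect the main obstacle to be purely bookkeeping: one must check carefully that pulling back along $q$ and taking $G$-invariants genuinely intertwines $\star_{\ide{u}}^{\opn{FL}}$ with $\star_{\ide{u}}^{X}$ and $\star_{\overline{P}}^{\opn{FL}}$ with $\star_{\overline{P}}^{X}$ — i.e.\ that the construction in \S \ref{PassageToShVars} does not introduce any correction terms coming from the Gauss--Manin splitting $s_{\opn{FL}}$. For $\star_{\overline{P}}$ this is immediate since that action is $\mathcal{O}$-linear and ``vertical'' with respect to both $p$ and $q$ (it only sees the $\overline{P}$-torsor direction, not the base), so $s_{\opn{FL}}$ plays no role. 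For $\star_{\ide{u}}$ one uses that the section $\ide{u} \to \mathcal{V}\otimes\mathcal{T}_{\opn{FL}}$ is made of $G$-invariant global sections, so applying $q^*$ and then $G$-invariants is compatible with the $G$-equivariant action of $\mathcal{V}\otimes\mathcal{T}_{\opn{FL}}$ on $\mathcal{V}$; the only thing to verify is that the horizontal lift $s_{\opn{FL}}$ used to define the action of $q^*(\mathcal{V}\otimes\mathcal{T}_{\opn{FL}})$ on $q^*\mathcal{V}$ is consistent with simply applying $\star_{\ide{u}}^{\opn{FL}}$ fibrewise — which holds because $s_{\opn{FL}}$ is a Lie-algebra splitting and the $\ide{u}$-action factors through the tangent direction on $\opn{FL}$ only. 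Once this compatibility is in hand, the proposition follows formally from Proposition \ref{KeyPropforHolOnFL}, with no further computation; indeed one can even argue, exactly as in the one-line proof of Proposition \ref{KeyPropforHolOnFL}, that both $\star_{\ide{u}}$ and $\star_{\overline{P}}$ on $[\mathcal{V}] \cong \pi_*\mathcal{O}_{P_{\opn{dR}}}$ ultimately come from the single action $\star_r$ of $\ide{g}$ on $\mathcal{O}_{G_{\opn{dR}}}$ by right translation, so the claimed relation is just the associativity/Leibniz identity for that action combined with $[\gamma,\delta] \in \overline{\ide{p}}$.
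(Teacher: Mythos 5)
Your proposal is correct and takes essentially the same route as the paper: reduce to Proposition~\ref{KeyPropforHolOnFL} by noting that both $\star_{\ide{u}}$ and $\star_{\overline{P}}$ on $[\mathcal{V}]$ arise by pulling back the flag-variety operations along $q$ and passing to $G$-invariants, where the $\mathcal{O}_{\opn{FL}}$-linearity and $G$-equivariance of the $\overline{\ide{p}}$-action ensure the identity descends. The paper's proof is this same two-step argument stated more tersely; your closing remark that both actions ultimately come from the single right-translation action $\star_r$ on $\mathcal{O}_{G_{\opn{dR}}}$ also mirrors the one-line proof of Proposition~\ref{KeyPropforHolOnFL}.
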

\begin{proof}
By Proposition \ref{KeyPropforHolOnFL}, the analogous relation holds for $\mathcal{V} = \pi_*\mathcal{O}_G$. Since the action of $\overline{\ide{p}}$ on $\mathcal{V}$ is $G$-equivariant and $\mathcal{O}_{\opn{FL}}$-linear, the result therefore follows by pulling back under $q$ and passing to $G$-invariants.
\end{proof}

It will be useful to iterate this relation.

\begin{corollary} \label{KeyCorForHolOnX}
For $i, j \in \{ 1, \dots, 2n \}$, let $E_{ij} \in \mathfrak{g}$ denote the elementary matrix with $1$ in the $(i, j)$-th place in the $\tau_0$-component, and $0$ outside the $\tau_0$-component. For $j \in \{n+1, \dots, 2n \}$, let $x_j \colon [\mathcal{V}] \to [\mathcal{V}]$ denote the endomorphism given by $E_{1, j} \star_{\ide{u}} -$. Then for any polynomial $p \in k[x_{n+1}, \dots, x_{2n}]$ and any $i \in \{2, \dots, n \}$, one has
\[
E_{i, 1} \star_{\overline{P}} ( p \cdot F ) = p \cdot ( E_{i, 1} \star_{\overline{P}} F ) + \sum_{j = n+1}^{2n} \frac{\partial p}{\partial x_j} \cdot ( E_{i, j} \star_{\overline{P}} F )
\]
for any $F \in [\mathcal{V}]$.
\end{corollary}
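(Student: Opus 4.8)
The plan is to deduce the corollary from Proposition \ref{KeyPropforHolOnX} by an induction on the degree of $p$, after reducing to monomials. The first step is bookkeeping of Lie brackets in $\ide{g}$: working in the $\tau_0$-component (all other components being trivial) and using $[E_{a,b},E_{c,d}] = \delta_{b,c}E_{a,d} - \delta_{d,a}E_{c,b}$, one has, for $i \in \{2,\dots,n\}$ and $j,k \in \{n+1,\dots,2n\}$,
\[
[E_{i,1},E_{1,j}] = E_{i,j}, \qquad [E_{i,j},E_{1,k}] = 0,
\]
since $i \neq j$ and $k > n \geq i$. Moreover $E_{i,1}$ lies in $\overline{\ide{p}}$ (the lower nilpotent part of $\overline{P}$), $E_{1,j} \in \ide{u}$ (the upper nilpotent radical of $P$, since $j \geq 2$), and $E_{i,j} \in \overline{\ide{p}}$ as well — it sits in the $\opn{GL}_{2n-1}$-block of the Levi $M$ because $i,j \geq 2$. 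Hence both brackets land in $\overline{\ide{p}}$ and Proposition \ref{KeyPropforHolOnX} is applicable in each of the two configurations above.

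From this I would extract two facts. Applying Proposition \ref{KeyPropforHolOnX} with $\gamma = E_{i,1}$, $\delta = E_{1,j}$ gives the degree-one relation: for every $G \in [\mathcal{V}]$,
\[
E_{i,1}\star_{\overline{P}}(x_j\, G) = x_j\,(E_{i,1}\star_{\overline{P}}G) + E_{i,j}\star_{\overline{P}}G ,
\]
using that $x_j(\cdot) = E_{1,j}\star_{\ide{u}}(\cdot)$. Applying it with $\gamma = E_{i,j}$, $\delta = E_{1,k}$ (whose bracket vanishes) shows that $E_{i,j}\star_{\overline{P}}$ commutes with every $x_k$, hence with any polynomial in $x_{n+1},\dots,x_{2n}$. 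I would also note that the $x_j$ commute among themselves: $\ide{u}$ is abelian and, by Lemma \ref{HorzActionOnFL}, $\ide{u} \to \opn{Der}_k([\mathcal{V}])$ is a homomorphism of Lie algebras, so $k[x_{n+1},\dots,x_{2n}]$ acts on $[\mathcal{V}]$ through a well-defined $k$-algebra homomorphism — in particular the expression $p \cdot F$ makes sense and all partial derivatives $\partial p/\partial x_j$ act as commuting operators.

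With these in hand, both sides of the asserted identity are $k$-linear in $p$, so it suffices to treat monomials, and I would induct on $\deg p$, the cases $\deg p \leq 1$ being the degree-one relation above (and triviality for constants). For the inductive step, write $p = x_m q$ with $q$ a monomial of strictly smaller degree; apply the degree-one relation to $x_m(qF)$, then the inductive hypothesis to $E_{i,1}\star_{\overline{P}}(qF)$, and pull $q$ past $E_{i,m}\star_{\overline{P}}$ using the commutation fact just proved. Collecting terms and comparing with $\partial p/\partial x_j = \delta_{j,m}\,q + x_m\,\partial q/\partial x_j$ gives exactly the claimed formula. There is no serious obstacle; the only points demanding care are the membership bookkeeping (so that the hypothesis $[\gamma,\delta]\in\overline{\ide{p}}$ of Proposition \ref{KeyPropforHolOnX} is genuinely met at every application) and keeping the operator conventions straight, remembering that $x_j$ and $p$ are endomorphisms of $[\mathcal{V}]$ rather than multiplication operators, so that "$p\cdot F$" means "$p$ applied to $F$" and $E_{i,j}\star_{\overline{P}}$ really does commute with $q$.
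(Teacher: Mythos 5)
Your proposal is correct and follows essentially the same route the paper takes: the two bracket computations $[E_{i,1},E_{1,j}]=E_{i,j}$ and $[E_{i,j},E_{1,k}]=0$, the membership bookkeeping showing both brackets lie in $\overline{\ide{p}}$, and then an induction on the degree of a monomial $p$ driven by Proposition \ref{KeyPropforHolOnX} and the Leibniz rule. The paper states the last step in one line; you have just unpacked that induction, including the degree-one case, the commutation of $E_{i,j}\star_{\overline{P}}$ with the $x_k$'s, and the reconciliation with $\partial p/\partial x_j = \delta_{j,m}q + x_m\,\partial q/\partial x_j$ — all of which checks out.
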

\begin{proof}
    By linearity, it is enough to prove this when $p$ is a monomial. Note that $[E_{i,1}, E_{1, j}] = E_{i, j}$ and $[E_{i, j}, E_{1, j'}] = 0$ for any $i \in \{2, \dots, n \}$, $j, j' \in \{n+1, \dots, 2n \}$. The result then follows from Proposition \ref{KeyPropforHolOnX} and a simple induction argument on the degree of $p$, using the Leibniz rule.
\end{proof}

\subsection{Classical nearly holomorphic forms} \label{ClassicalNearlyHolFormsSection}

In this section we introduce the sheaves of nearly holomorphic forms for $G$ and $H$, and describe a classical ``evaluation map'' on the cohomology of these sheaves which encodes the twisted unitary Friedberg--Jacquet periods.

\subsubsection{Nearly holomorphic forms} \label{NearlyHoloFormsSubSec}

Recall from Definition \ref{DefinitionOfGdrPdR} that we have torsors
\[
\pi_G \colon P_{G, \opn{dR}} \to X_{\mbf{G}, \opn{Iw}, F^{\opn{cl}}}(p^{\beta}) \quad \text{ and } \quad \pi_H \colon P_{H, \opn{dR}} \to X_{\mbf{H}, \diamondsuit, F^{\opn{cl}}}(p^{\beta}).
\]
We introduce the following notation:

\begin{notation}
Let $\mathscr{N}_G = \mathscr{O}_{P_{G, \opn{dR}}} \defeq (\pi_G)_*\mathcal{O}_{P_{G, \opn{dR}}}$ and $\mathscr{N}_H = \mathscr{O}_{P_{H, \opn{dR}}} \defeq (\pi_H)_*\mathcal{O}_{P_{H, \opn{dR}}}$, which we refer to as the sheaves of nearly holomorphic forms for $G$ and $H$ respectively. Given an algebraic representation $V$ of $M_{\mbf{G}}$ of highest weight $\kappa$, we let $\mathscr{N}_{G, \kappa} = (\mathscr{N}_G \otimes V)^{M_{\mbf{G}}}$ where the invariants are with respect to the natural diagonal action of $M_{\mbf{G}} \subset P_{\mbf{G}}^{\opn{std}}$. We use similar notation for $\mathscr{N}_H$ with respect to algebraic representations of $M_{\mbf{H}}$.
\end{notation}

\begin{remark}
    The sheaf $\mathscr{N}_{G, \kappa}$ is the quasi-coherent sheaf on $X_{\mbf{G}, \opn{Iw}, F^{\opn{cl}}}(p^{\beta})$ associated with the dual Verma module of weight $\kappa$, via the construction in \S \ref{PassageToShVars}. A similar assertion is true for $H$. 
\end{remark}

The sheaf $\mathscr{N}_G$ (resp. $\mathscr{N}_H$) comes equipped with an action of $\overline{\ide{p}}_G = \opn{Lie}P_{\mbf{G}}^{\opn{std}}$ (resp. $\overline{\ide{p}}_H = \opn{Lie}P_{\mbf{H}}^{\opn{std}}$) given by differentiating the torsor structure. We let $\mathscr{M}_G \subset \mathscr{N}_G$ (resp. $\mathscr{M}_H \subset \mathscr{N}_H$) denote the subsheaf of elements killed by the action of the nilpotent subalgebra $\overline{\ide{u}}_G \subset \overline{\ide{p}}_G$ (resp. $\overline{\ide{u}}_H \subset \overline{\ide{p}}_H$) -- these are the sheaves of holomorphic forms for $G$ and $H$ respectively. We have identifications $\mathscr{M}_G = (\pi'_G)_* \mathcal{O}_{M_{G, \opn{dR}}}$ and $\mathscr{M}_H = (\pi'_H)_* \mathcal{O}_{M_{H, \opn{dR}}}$ where $\pi'_G \colon M_{G, \opn{dR}} \to X_{\mbf{G}, \opn{Iw}, F^{\opn{cl}}}(p^{\beta})$ and $\pi'_H \colon M_{H, \opn{dR}} \to X_{\mbf{H}, \diamondsuit, F^{\opn{cl}}}(p^{\beta})$ denote the torsors in Definition \ref{DefinitionOfGdrPdR}. We will also use the notation $\mathscr{M}_{G, \kappa}$ if we wish to specify the weight, and similarly for $H$.

As explained in \S \ref{DmodulesOnFLsection}, the sheaf $\mathscr{N}_G$ is in fact a $\mathcal{D}_{X_{\mbf{G}, \opn{Iw}, F^{\opn{cl}}}(p^{\beta})}$-module on $X_{\mbf{G}, \opn{Iw}, F^{\opn{cl}}}(p^{\beta})$ and we have an action of $\ide{u}_G$ (the Lie algebra of the unipotent of $P_{\mbf{G}}$) on $\mathscr{N}_G$ through derivations. Recall that $\ide{u}_G \cong \mbb{G}_a^{\oplus 2n-1}$ corresponding to the Lie algebra of the unipotent of the standard $(1, 2n-1)$-parabolic in $\opn{GL}_{2n}$ (using the identification of $\mbf{G}_{F^{\opn{cl}}}$ as a product of general linear groups). Here is a more concrete description of this action:

\begin{lemma} \label{LemmaConcreteDescriptionNablai}
For $i=1, \dots, 2n-1$, let
\[
\nabla_i \colon \mathscr{N}_G \to \mathscr{N}_G 
\]
denote the derivation given by the action of the element $(0, \dots, 0, 1, 0, \dots, 0)$ of $\ide{u}_G = \mbb{G}_a^{\oplus 2n-1}$ with $1$ in the $i$-th place. Then $\nabla_i$ can be described as the composition of:
\begin{itemize}
    \item The $\mathcal{D}$-module structure $\mathscr{N}_G \to \mathscr{N}_G \otimes_{\mathcal{O}_{X_{\mbf{G}, \opn{Iw}, F^{\opn{cl}}}(p^\beta)}} \Omega^1_{X_{\mbf{G}, \opn{Iw}, F^{\opn{cl}}}(p^\beta)}$
    \item The inverse of the Kodaira--Spencer map $\omega_{A^D, \tau_0} \otimes \omega_{A, \tau_0} \xrightarrow{\sim} \Omega^1_{X_{\mbf{G}, \opn{Iw}, F^{\opn{cl}}}(p^\beta)}$, where $A \to X_{\mbf{G}, \opn{Iw}, F^{\opn{cl}}}(p^\beta)$ denotes the universal $\boldsymbol\Psi$-unitary abelian scheme, and $A^D$ its dual 
    \item The adjoint map $\omega_{A^D, \tau_0} \otimes \omega_{A, \tau_0} \to \mathscr{N}_G$ to the morphism
    \[
    \pi_G^* \left( \omega_{A^D, \tau_0} \otimes \omega_{A, \tau_0} \right) \to \mathcal{O}_{P_{G, \opn{dR}}}
    \]
    which is obtained as follows. Over $P_{G,\opn{dR}}$, we have universal trivialisations $\psi_1 \colon \mathcal{O}_{P_{G, \opn{dR}}} \xrightarrow{\sim} \pi_G^* (\omega_{A, \tau_0}^{\vee})$ and $\psi_2 \colon \mathcal{O}_{P_{G, \opn{dR}}}^{\oplus 2n-1} \xrightarrow{\sim} \pi_G^*(\omega_{A^D, \tau_0})$. Let $\pi_G^*( \omega_{A^D, \tau_0}) \otimes \pi_G^* (\omega_{A, \tau_0})  \to \mathcal{O}_{P_{G, \opn{dR}}}$ denote the map $\opn{pr}_i \circ \; \psi_2^{-1} \otimes \psi_1^{\vee}$, where $\opn{pr}_i \colon \mathcal{O}_{P_{G, \opn{dR}}}^{\oplus 2n-1} \to \mathcal{O}_{P_{G, \opn{dR}}}$ denotes projection to the $i$-th component.
    \item The multiplication map $\mathscr{N}_G \otimes \mathscr{N}_G \to \mathscr{N}_G$.
\end{itemize}
We have a similar description for the $\ide{u}_H$ action on $\mathscr{N}_H$; in particular we will use the notation $\nabla_i \colon \mathscr{N}_H \to \mathscr{N}_H$ ($i=1, \dots, n-1$) for the derivations coming from the standard basis vectors of $\ide{u}_H = \mbb{G}_a^{\oplus n-1} \subset \ide{u}_G$.
\end{lemma}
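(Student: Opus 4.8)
\textbf{Proof proposal for Lemma \ref{LemmaConcreteDescriptionNablai}.}

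The plan is to reduce everything to the flag-variety picture of \S\ref{DmodulesOnFLsection} and unwind the definitions. Recall the setup: $\mathscr{N}_G = [\mathcal{V}]$ where $\mathcal{V} = \pi_*\mathcal{O}_G$ on $\opn{FL}$, and the action $\star_{\ide{u}}$ of $\ide{u}_G$ on $[\mathcal{V}]$ is obtained, via the construction in \S\ref{PassageToShVars}, from the $G$-invariant global sections $\lambda_X \in (\mathcal{V}\otimes_{\mathcal{O}_{\opn{FL}}}\mathcal{T}_{\opn{FL}})(\opn{FL})$ for $X\in\ide{u}_G$, pulled back under $q$ and pushed down under $p$. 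So the first step is purely formal: the operator $\nabla_i$ is, by definition, the composite of the $\mathcal{D}$-module structure $\mathscr{N}_G\to\mathscr{N}_G\otimes\Omega^1_X$ with a pairing against the image of the standard basis vector $X_i=E_{1,n+i}\in\ide{u}_G$ under the Kodaira--Spencer-type identification of \S\ref{PassageToShVars}. Concretely, on $[\mathcal{V}]$ the action of an element $f\otimes v$ of $[\mathcal{V}]\otimes\mathcal{T}_X$ is $F\mapsto f\cdot(v\cdot F)$, and $X_i$ maps under $\ide{u}_G\to([\mathcal{V}]\otimes\mathcal{T}_X)$ precisely to the section whose $\mathcal{T}_X$-component is the Kodaira--Spencer image of $X_i$ and whose $[\mathcal{V}]$-coefficient records the position of the Hodge filtration. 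Thus the first two bullet points (the $\mathcal{D}$-module structure followed by the inverse Kodaira--Spencer map $\omega_{A^D,\tau_0}\otimes\omega_{A,\tau_0}\xrightarrow{\sim}\Omega^1_X$) are literally the unravelling of ``pull back under $q$, apply $s_{\opn{FL}}$, push down under $p$''; this uses Proposition \ref{TXTFLrelation} to identify $p^*\mathcal{T}_X\cong q^*\mathcal{T}_{\opn{FL}}$ with the Kodaira--Spencer isomorphism.

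The second step is to identify the $[\mathcal{V}]$-valued coefficient in the above with the adjoint map described in the third bullet point. Here I would work on the torsor $P_{G,\opn{dR}}$ directly. Over $P_{G,\opn{dR}}$ one has the tautological trivialisations $\psi_1,\psi_2$ of the Hodge and ``co-Hodge'' pieces in the $\tau_0$-component, and the element $X_i=E_{1,n+i}\in\ide{u}_G\subset\mathfrak{gl}_{2n}$ acts, through the torsor structure, as the endomorphism of $\mathcal{O}_{P_{G,\opn{dR}}}$ given by transport through $\psi_1,\psi_2$ of the matrix entry $(1,n+i)$; this is exactly the map $\opn{pr}_i\circ(\psi_2^{-1}\otimes\psi_1^\vee)$ of the third bullet, composed with multiplication $\mathscr{N}_G\otimes\mathscr{N}_G\to\mathscr{N}_G$ (fourth bullet). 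The key compatibility to check is that $\opn{Ad}(g^{-1})$ applied to the horizontal lift of the basis vector, when paired against the Gauss--Manin connection matrix, produces precisely the $(1,n+i)$-entry read off via $\psi_1,\psi_2$ --- this is the content of how the $\overline{P}$-torsor structure interacts with the Hodge filtration and is essentially bookkeeping once one fixes conventions for $w_n$ and the identification $\ide{u}_G\cong\mathbb{G}_a^{\oplus 2n-1}$ with the unipotent radical of the standard $(1,2n-1)$-parabolic.

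Finally, the statement for $\mathbf{H}$ is proved in exactly the same way, noting that $\ide{u}_H=\mathbb{G}_a^{\oplus n-1}\subset\ide{u}_G$ is the Lie algebra of the unipotent radical of the $(1,n-1)$-parabolic inside the $W_1$-factor, and that the de Rham torsor $P_{H,\opn{dR}}$ and its Hodge filtration restrict compatibly from $P_{G,\opn{dR}}$ under $\hat\iota$ (cf.\ the compatibility of torsors in Definition \ref{DefinitionOfGdrPdR}); so the same chain of identifications gives the claimed description of $\nabla_i\colon\mathscr{N}_H\to\mathscr{N}_H$. The main obstacle I anticipate is not conceptual but notational: carefully matching the sign and indexing conventions (the choice of $w_n$, the antidiagonal longest Weyl element, the direction of the Kodaira--Spencer map, and which of $\omega_A$, $\omega_{A^D}$ carries the dual) so that the elementary matrix $E_{1,n+i}$ corresponds on the nose to the $i$-th projection $\opn{pr}_i$; everything else is a formal consequence of \S\ref{DmodulesOnFLsection}--\S\ref{PassageToShVars}.
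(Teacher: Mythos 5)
The paper states this lemma without proof, treating it as a direct unravelling of the constructions in \S\ref{DmodulesOnFLsection}--\S\ref{PassageToShVars}; your proposal supplies exactly the unravelling that the author leaves implicit, and the overall chain of identifications ($\mathcal{D}$-module structure on $[\mathcal{V}]$, Kodaira--Spencer via Proposition~\ref{TXTFLrelation}, tautological trivialisations on $P_{G,\opn{dR}}$, multiplication) is the right one.

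There is, however, a concrete indexing slip. You take the $i$-th standard basis vector of $\ide{u}_G \cong \mbb{G}_a^{\oplus 2n-1}$ to be $E_{1,n+i}$. Since $\ide{u}_G$ is the Lie algebra of the unipotent radical of the upper-triangular $(1,2n-1)$-parabolic of $\opn{GL}_{2n}$, its natural basis over the index range $i = 1,\dots,2n-1$ is $E_{1,i+1}$, not $E_{1,n+i}$. This is also what the paper itself uses later: in the proof of Proposition~\ref{StandardFormpbetaNablaiProp} the element $E_{1,i+1}$ is singled out as the matrix satisfying $\nabla^{\opn{GM}}_{\theta_i}(\ide{B}_\infty^{\opn{can}}) = \ide{B}_\infty^{\opn{can}}\cdot E_{1,i+1}$, and in the proof of Proposition~\ref{ClassicalThetaPreservesHolProp} the polynomial coordinate $x_j$ ($j \in \{2,\dots,2n\}$) acts as $E_{1,j}$. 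With your indexing the argument would establish the stated description not for $\nabla_i$ but for the operator the paper would call $\nabla_{n+i-1}$, so the matching between $E_{1,\bullet}$, the projection $\opn{pr}_i$, and the trivialisation $\psi_2$ of the rank-$(2n-1)$ bundle $\omega_{A^D,\tau_0}$ needs to be redone with $E_{1,i+1}$ in place of $E_{1,n+i}$. Once that is corrected (and it is only a relabelling), the rest of your argument --- including the passage to $\mathbf{H}$ via compatibility of the torsors under $\hat\iota$ --- goes through.
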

\begin{proof}
It will be enough to prove this on the level of flag varieties. Let $V = V_0 \boxplus (\boxplus_{\tau \in \Psi} V_{\tau})$ denote the standard representation of $G = \opn{GL}_1 \times \prod_{\tau} \opn{GL}_n$, and let $e_0$, $\{ e_{\tau, 1}, \dots, e_{\tau, 2n} \}$ denote the standard bases of $V_0$, $V_{\tau}$ respectively. Let $L_{\tau_{0}} = \langle e_{\tau_0, 2}, \dots, e_{\tau_0, 2n}\rangle \subset V_{\tau_0}$ and $L_{\tau_0}' = V_{\tau_0}/L_{\tau_0}$, both of which are representations of $\overline{P}_G$ (through the projection to the $\tau_0$-component). Let $G$ act on $V_{\tau_0}$ through its projection to the $\tau_0$-component. Let $\mathcal{V}_{\tau_0}$, $\mathcal{L}_{\tau_0}$, and $\mathcal{L}_{\tau_0}'$ denote the corresponding $G$-equivariant bundles on $\opn{FL}$. Note that $\Omega^1_{\opn{FL}}$ is the $G$-equivariant bundle associated with the $\overline{P}_G$-representation $\left( \ide{g}/\overline{\ide{p}}_G \right)^{\vee}$.

Let $\{ E_0 \}$, $\{ E_{\tau, i, j} : i,j =1, \dots, 2n \}$ denote the standard bases of the similitude and $\tau$-factors of $\ide{g}$ respectively. Let $U \subset \opn{FL}$ be an open such that we have a section $s \colon U \to \pi^{-1}U$ of the natural torsor $\pi \colon G \to \opn{FL}$. Let $i \in \{1, \dots, 2n-1\}$, and consider the following function:
\begin{align*}
\mu_i \colon \pi^{-1}U &\to \ide{g}^{\vee} \\ 
g &\mapsto \left( X \mapsto - E_{\tau_0, 1, i+1}^*(\opn{Ad}(g^{-1})X) \right) ,
\end{align*}
where $E_{\tau_0, 1, i+1}^*$ denotes the dual basis element. Note that $\mu_i(g)$ is trivial on $\opn{Ad}(g)(\overline{\ide{p}}_G)$. Let $\nu_i \colon \pi^{-1}U \to \ide{g}^{\vee}$ denote the function $\nu_i(g) = g^{-1} \cdot \mu_i(s(\pi(g)))$. Then $\nu_i(g)$ is trivial on $\overline{\ide{p}}_G$, and defines a function $\nu_i \colon \pi^{-1}U \to \left( \ide{g}/\overline{\ide{p}}_G \right)^{\vee}$ which satisfies $\nu_i(- \cdot p) = p^{-1} \nu_i(-)$ for $p \in \overline{P}_G$ (so $\nu_i \in \Omega^1_{\opn{FL}}(U)$). One can show that $\{ \nu_i : i=1, \dots, 2n-1 \}$ is a basis of $\Omega^{1}_{\opn{FL}}(U)$. Let $\lambda_i \colon \pi^{-1}U \to \ide{g}$ denote the function $\lambda_i(g) = -\opn{Ad}(g) E_{\tau_0, 1, i+1}$, and let $\partial_i \colon \pi^{-1}U \to \ide{g}/\overline{\ide{p}}_G$ denote the function where $\partial_i(g)$ denotes the image of $g^{-1} \cdot \lambda_i(s(\pi(g)))$ in the quotient $\ide{g}/\overline{\ide{p}}_G$. Then $\{ \partial_i : i=1, \dots, 2n-1\}$ forms a basis of $\mathcal{T}_{\opn{FL}}(U)$ which is dual to the basis $\nu_i$. 

As explained in Definition \ref{Def:StarDaction}, one has a connection $\nabla$ on $\mathcal{V}_{\tau_0}$, which induces an isomorphism:
\begin{equation} \label{FLKSiso}
\mathcal{L}_{\tau_0} \otimes (\mathcal{L}'_{\tau_0})^{\vee} \xrightarrow{\sim} \Omega^1_{\opn{FL}} 
\end{equation}
given by $f \otimes z \mapsto (z \otimes 1)(\overline{\nabla(f)})$, where $\overline{\nabla(f)}$ denotes the image of $\nabla(f)$ in $\mathcal{L}'_{\tau_0} \otimes \Omega^1_{\opn{FL}}$. This is the Kodaira--Spencer isomorphism on the level of flag varieties. Let $\epsilon_{\tau_0, i} \colon \pi^{-1}U \to V_{\tau_0}$ denote the function given by $\epsilon_{\tau_0, i}(g) = g^{-1} s(\pi(g)) \cdot e_{\tau_0, i}$ (so $\{\epsilon_{\tau_0, 2}, \dots, \epsilon_{\tau_0, 2n} \}$ is a basis of $\mathcal{L}_{\tau_0}(U)$ and $\epsilon_{\tau_0, 1}$ gives a basis of $\mathcal{L}_{\tau_0}'$). Then, for $j \in \{2, \dots, 2n\}$,
\begin{align*}
((\lambda_i \circ s) \star_{\mathcal{D}} \epsilon_{\tau_0, j})(s(\pi(g))) &\equiv E_{\tau_0, 1, i+1} \star_2 \epsilon_{\tau_0, j}(s\pi(g)) \\ &= E_{\tau_0, 1, i+1} \star_2 e_{\tau_0, j} \\ &= \left\{ \begin{array}{cc} e_{\tau_0, 1} & \text{ if } j=i+1 \\ 0 & \text{ otherwise} \end{array} \right.
\end{align*}
modulo $L_{\tau_0}$. Hence $\epsilon_{\tau_0, i+1} \otimes \epsilon_{\tau_0, 1}^*$ is sent to $\nu_i$ under the isomorphism (\ref{FLKSiso}). This implies that the composition of the second and third bullet points in the statement of the lemma (on the level of flag varieties) is the morphism
\[
\Omega^{1}_{\opn{FL}}(U) \to \mathcal{O}_G(\pi^{-1}U)
\]
which sends $\nu_i$ to $1$ and $\nu_j$ to zero if $j \neq i$.

This implies that the composition $\mathscr{N}_G \to \mathscr{N}_G$ of all of the bullet points in the statement of the lemma is identified, on the level of flag varieties and over $U$, with the map 
\[
\lambda_i \tilde{\star}_{\mathcal{D}} - \colon \pi_*\mathcal{O}_G \to \pi_*\mathcal{O}_G
\]
which is precisely $\nabla_i$ (on the level of flag varieties) by definition.
\end{proof}

Let $k = F^{\opn{cl}}$ and let $C^{\opn{pol}}(k^{\oplus 2n-1}, k)$ denote the space of polynomial functions $k^{\oplus 2n-1} \to k$, which we equip with an action of $M_{\mbf{G}}$ via the formula:
\begin{equation} \label{ActionOnCpolEqn}
(m \cdot \phi)(a) = \phi( m_{\tau_0}^{-1} \tbyt{1}{}{a^t}{1} m_{\tau_0} ), \quad \phi \in C^{\opn{pol}}(k^{\oplus 2n-1}, k), m \in M_{\mbf{G}}.
\end{equation}
Here we view the row vector $a \in k^{\oplus 2n-1}$ as a lower triangular matrix $\tbyt{1}{}{a^t}{1}$ in the lower triangular unipotent of the $(1, 2n-1)$-parabolic of $\opn{GL}_{2n}$, and $m_{\tau_0}$ is the projection of $m$ to the $\tau_0$-component of $M_{\mbf{G}}$. Note that $C^{\opn{pol}}(k^{\oplus 2n-1}, k)$ is naturally isomorphic to the universal enveloping algebra of $\ide{u}_G$ equipped with the adjoint action of $M_{\mbf{G}}$. We therefore obtain an action map
\begin{equation} \label{ActionMapForScr}
C^{\opn{pol}}(k^{\oplus 2n-1}, k) \otimes \mathscr{N}_G \to \mathscr{N}_G
\end{equation}
which one can verify is $M_{\mbf{G}}$-equivariant by the explicit description of the action of $\ide{u}_G$ in \S \ref{DmodulesOnFLsection}. 

\subsubsection{Classical evaluations maps} \label{ClassicalEvaluationMapsSubSec}

Let $k = F^{\opn{cl}}$ for ease of notation. We now describe the construction of a linear functional on the cohomology of $X_{\mbf{G}, \opn{Iw}, k}(p^{\beta})$ which will be shown to recover unitary Friedberg--Jacquet periods in the following subsection.

The first step is to apply the general branching law construction in \S \ref{TheMainConstructionSubSec} to construct a map of sheaves from nearly holomorphic forms for $\mbf{G}$ to nearly holomorphic forms for $\mbf{H}$ incorporating the action of certain differential operators. Recall from \S \ref{DeeperLevelAtpSubSec} that we have a finite unramified morphism:
\[
\hat{\iota} \colon X_{\mbf{H}, \diamondsuit, k}(p^{\beta}) \to X_{\mbf{G}, \opn{Iw}, k}(p^{\beta}) .
\]
\begin{lemma} \label{ClassicalReductionOfStructuresOfPdRLemma}
    The natural $P_{\mbf{H}}^{\opn{std}}$-equivariant map $P_{H, \opn{dR}} \to P_{G, \opn{dR}}$ given by sending a trivialisation of $\mathcal{H}_{A_1} \oplus \mathcal{H}_{A_2}$ to the induced trivialisation of $\mathcal{H}_{A_1 \oplus A_2}$ gives a reduction of structure
    \[
    \hat{\iota}^* P_{G, \opn{dR}} = P_{H, \opn{dR}} \times^{P_{\mbf{H}}^{\opn{std}}} P_{\mbf{G}}^{\opn{std}} .
    \]
    Hence we obtain a natural $P_{\mbf{H}}^{\opn{std}}$-equivariant map of sheaves $\mathscr{N}_G \to \hat{\iota}_* \mathscr{N}_H$.
\end{lemma}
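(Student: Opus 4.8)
The statement asserts two things: first, that the natural map $P_{H,\opn{dR}} \to P_{G,\opn{dR}}$ exhibits $\hat{\iota}^* P_{G,\opn{dR}}$ as the pushout $P_{H,\opn{dR}} \times^{P_{\mbf{H}}^{\opn{std}}} P_{\mbf{G}}^{\opn{std}}$; second, that this formally produces the desired map of sheaves $\mathscr{N}_G \to \hat{\iota}_*\mathscr{N}_H$. The second part is purely formal once the first is established: a reduction of structure group $\hat{\iota}^* P_{G,\opn{dR}} = P_{H,\opn{dR}} \times^{P_{\mbf{H}}^{\opn{std}}} P_{\mbf{G}}^{\opn{std}}$ gives, for any scheme $T$ with a map to $P_{H,\opn{dR}}$, an induced map $T \to \hat{\iota}^* P_{G,\opn{dR}} \to P_{G,\opn{dR}}$, hence a morphism of torsors over $\hat{\iota}$, and pushing forward functions along $\pi_H$ and pulling back along $\hat{\iota}$ yields $\mathscr{N}_G = (\pi_G)_*\mathcal{O}_{P_{G,\opn{dR}}} \to \hat{\iota}_*(\pi_H)_*\mathcal{O}_{P_{H,\opn{dR}}} = \hat{\iota}_*\mathscr{N}_H$, which is $P_{\mbf{H}}^{\opn{std}}$-equivariant because all the maps involved are. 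So the real content is the first claim.

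The plan for the first claim is to unwind the moduli descriptions. Recall $G_{\opn{dR}} \to X_{\mbf{G},\opn{Iw},k}(p^\beta)$ parameterises symplectic-endomorphism-respecting isomorphisms $\mathcal{O}_S \otimes_{\mbb{Q}} W \xrightarrow{\sim} \mathcal{H}_A$, and $P_{G,\opn{dR}}$ is the $P_{\mbf{G}}^{\opn{std}}$-reduction given by those trivialisations compatible with the Hodge filtration. Likewise $H_{\opn{dR}} \to X_{\mbf{H},\diamondsuit,k}(p^\beta)$ parameterises decomposition-respecting trivialisations $\bigoplus_i \mathcal{O}_S \otimes W_i \xrightarrow{\sim} \bigoplus_i \mathcal{H}_{A_i}$, with $P_{H,\opn{dR}}$ the $P_{\mbf{H}}^{\opn{std}}$-reduction. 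Since $\hat{\iota}$ sends $(A_1, A_2, \dots)$ to $A = A_1 \oplus A_2$, and $\mathcal{H}_A = \mathcal{H}_{A_1} \oplus \mathcal{H}_{A_2}$ with Hodge filtration the direct sum of the two Hodge filtrations (and with $W = W_1 \oplus W_2$ as PE-modules), a decomposition-respecting frame that is Hodge-filtration-compatible induces a Hodge-filtration-compatible frame of $\mathcal{H}_A$. This gives the $P_{\mbf{H}}^{\opn{std}}$-equivariant map $P_{H,\opn{dR}} \to \hat{\iota}^* P_{G,\opn{dR}}$. To see it is a reduction of structure, I would argue that it identifies $P_{H,\opn{dR}}$ with the subscheme of $\hat{\iota}^* P_{G,\opn{dR}}$ of frames respecting the direct-sum decomposition, and that $\hat{\iota}^* P_{G,\opn{dR}}$ is obtained from this by the change-of-group along $P_{\mbf{H}}^{\opn{std}} \hookrightarrow P_{\mbf{G}}^{\opn{std}}$ — equivalently, the map $P_{H,\opn{dR}} \times^{P_{\mbf{H}}^{\opn{std}}} P_{\mbf{G}}^{\opn{std}} \to \hat{\iota}^* P_{G,\opn{dR}}$ is an isomorphism. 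One can check this either étale-locally (both sides are $P_{\mbf{G}}^{\opn{std}}$-torsors and the map is $P_{\mbf{G}}^{\opn{std}}$-equivariant, so it suffices to check it is an isomorphism after a base change trivialising $P_{H,\opn{dR}}$, where it becomes the tautological identity $P_{\mbf{G}}^{\opn{std}} = P_{\mbf{G}}^{\opn{std}}$), or by directly matching moduli functors. I expect the étale-local argument to be cleanest.

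The only genuine subtlety — and the step I would be most careful about — is compatibility of the Hodge filtrations and of the parabolic/Levi structures under the embedding $\mbf{H} \hookrightarrow \mbf{G}$, i.e. that $P_{\mbf{H}}^{\opn{std}} = P_{\mbf{G}}^{\opn{std}} \cap \mbf{H}$ and $M_{\mbf{H}} = M_{\mbf{G}} \cap \mbf{H}$ are precisely compatible with the block decompositions $W = W_1 \oplus W_2$ and $\mathcal{H}_A = \mathcal{H}_{A_1} \oplus \mathcal{H}_{A_2}$, so that ``Hodge-filtration-compatible and decomposition-respecting'' for $H_{\opn{dR}}$ matches ``reduces to a $P_{\mbf{H}}^{\opn{std}}$-structure inside the $P_{\mbf{G}}^{\opn{std}}$-structure'' for $G_{\opn{dR}}$. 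This is essentially bookkeeping with the explicit identifications of $M_{\mbf{G}}, M_{\mbf{H}}$ recorded just above, together with the hypothesis (from \S\ref{DeeperLevelAtpSubSec}) that $h_{\mbf{G}}$ is the composite of $h_{\mbf{H}}$ with $\opn{End}_{\mbb{R}}(W_1 \otimes \mbb{R}) \oplus \opn{End}_{\mbb{R}}(W_2 \otimes \mbb{R}) \subset \opn{End}_{\mbb{R}}(W \otimes \mbb{R})$, which is exactly what guarantees the Hodge filtration on $\mathcal{H}_A$ is the direct sum of those on the $\mathcal{H}_{A_i}$. I would note that the integral/level-at-$p$ structure plays no role here: the claim is about de Rham torsors, and the commutative diagram of Shimura varieties at deeper level from \S\ref{DeeperLevelAtpSubSec} (whose top square is Cartesian) shows the construction is compatible across $\beta$, so it suffices to treat one $\beta$. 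Finally, the resulting map $\mathscr{N}_G \to \hat{\iota}_*\mathscr{N}_H$ is, by construction, compatible with the $\overline{\ide{p}}_{\bullet}$-actions (hence restricts to $\mathscr{M}_G \to \hat{\iota}_*\mathscr{M}_H$) — this will be used in the subsequent construction of the evaluation maps in \S\ref{ClassicalEvaluationMapsSubSec}.
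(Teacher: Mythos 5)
Your proposal is correct and takes the same route the paper has in mind: the paper's own proof is simply ``Immediate from the definitions,'' and your argument is a careful unwinding of exactly those definitions (moduli descriptions of $P_{G,\opn{dR}}$ and $P_{H,\opn{dR}}$, compatibility of the direct-sum decomposition with the Hodge filtration and PE-structure, and the étale-local check that the induced map out of the pushout is an isomorphism).
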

\begin{proof}
    Immediate from the definitions.
\end{proof}

We now construct the aforementioned map of sheaves. Recall the definition of $\mathcal{E}$ from \S \ref{ClassicalBranchingLawsSubSubSec}, i.e., pairs $(\kappa, j)$ where $\kappa \in X^*(T)$ is a $M_{\mbf{G}}$-dominant weight that is ``pure of non-positive weight'' and $j = (j_{\tau})_{\tau \in \Psi}$ is a tuple of integers, such that $\kappa$ and $j$ satisfy a certain intertwining property. For $(\kappa, j) \in \mathcal{E}$, let $V_{\kappa}$ denote the algebraic representation of $M_{\mbf{G}}$ of highest weight $\kappa$, and recall that we have an eigenvector $\delta_{\kappa, j} \in V_{\kappa} \otimes_k C^{\opn{pol}}(k^{\oplus 2n-1}, k)$ for the diagonal action of $M_{\mbf{H}}$ with eigencharacter $\sigma_{\kappa}^{[j], -1}$ (see Definition \ref{DefinitionOfDeltakappaj}).

\begin{definition} \label{DefinitionOfScrPullbackNGNH}
    Let $(\kappa, j) \in \mathcal{E}$ and $\beta \geq 1$. Then we define a $k$-linear map
    \[
    \vartheta_{\kappa, j, \beta} \colon \mathscr{N}_{G, \kappa^*} \to \hat{\iota}_* \mathscr{N}_{H, \sigma_{\kappa}^{[j]}}
    \]
    of abelian sheaves over $X_{\mbf{G}, \opn{Iw}, k}(p^{\beta})$ as the $M_{\mbf{H}}$-invariants of the map
    \[
    \sigma_{\kappa}^{[j], -1} \otimes \mathscr{N}_G \otimes V_{\kappa}^* \to \hat{\iota}_* \mathscr{N}_H
    \]
    arising as the composition of the following $M_{\mbf{H}}$-equivariant maps:
    \begin{itemize}
    \item The morphism $\sigma_{\kappa}^{[j], -1} \otimes \mathscr{N}_G \otimes V_{\kappa}^* \to V_{\kappa} \otimes C^{\opn{pol}}(k^{\oplus 2n-1}, k) \otimes \mathscr{N}_G \otimes V_{\kappa}^*$ induced from sending the first factor to $\delta_{\kappa, j}$.
    \item The morphism $V_{\kappa} \otimes C^{\opn{pol}}(k^{\oplus 2n-1}, k) \otimes \mathscr{N}_G \otimes V_{\kappa}^* \to V_{\kappa} \otimes \mathscr{N}_G \otimes V_{\kappa}^*$ induced from the action map in (\ref{ActionMapForScr}).
    \item The morphism $V_{\kappa} \otimes \mathscr{N}_G \otimes V_{\kappa}^* \to \mathscr{N}_G$ induced from the natural map $V_{\kappa} \otimes V^*_{\kappa} \to k$.
    \item The morphism $\mathscr{N}_G \to \hat{\iota}_*\mathscr{N}_H$.
\end{itemize}
\end{definition}

We have the following result which crucially uses the property in Corollary \ref{KeyCorForHolOnX}.

\begin{proposition} \label{ClassicalThetaPreservesHolProp}
Let $(\kappa, j) \in \mathcal{E}$ and $\beta \geq 1$. Then the morphism $\vartheta_{\kappa, j, \beta}$ induces a morphism
\[
\vartheta_{\kappa, j, \beta} \colon \mathscr{M}_{G, \kappa^*} \to \hat{\iota}_* \mathscr{M}_{H, \sigma_{\kappa}^{[j]}} ,
\]
i.e., the morphism takes holomorphic forms for $\mbf{G}$ to holomorphic forms for $\mbf{H}$.
\end{proposition}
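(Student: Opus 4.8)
The statement is local on $X_{\mbf{G}, \opn{Iw}, k}(p^{\beta})$, so I would work locally and verify that $\vartheta_{\kappa, j, \beta}$ sends sections of $\mathscr{M}_{G, \kappa^*} = (\mathscr{N}_{G, \kappa^*})^{\overline{\ide{u}}_G}$ into sections killed by $\overline{\ide{u}}_H$. The plan is to trace through the four morphisms defining $\vartheta_{\kappa, j, \beta}$ and compute the action of an arbitrary element of $\overline{\ide{u}}_H$ on the image. Concretely, $\overline{\ide{u}}_H$ is spanned by the lower-triangular elementary matrices $E_{i,1}$ for $i \in \{2, \dots, n\}$ in the $\tau_0$-component (together with the obvious off-$\tau_0$ contributions, which are handled by the same computation and are in fact simpler since no twisting occurs there). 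So I would fix $F \in \mathscr{M}_{G, \kappa^*}$, meaning $F(v) \in \mathscr{N}_G$ is a holomorphic section for every $v \in V_\kappa$, and show that $E_{i,1} \star_{\overline{P}_H}$ annihilates $\vartheta_{\kappa, j, \beta}(F)$.

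The key computational input is Corollary \ref{KeyCorForHolOnX}: writing $\delta_{\kappa, j} = \sum_l v_l \otimes p_l$ with $p_l \in C^{\opn{pol}}(k^{\oplus 2n-1}, k) = k[x_{n+1}, \dots, x_{2n}]$ (only the last $n$ variables appear, by the structure of $S_{-j} \hookrightarrow C^{\opn{pol}}$ from Convention \ref{ConventionForPhiBeta}, since $\delta_{\kappa, j}$ is built from $v_{\kappa}^{[j]}$ whose image lies in the span of the $\prod_{i=n+1}^{2n} a_i^{t_i}$), the image $\vartheta_{\kappa, j, \beta}(F)$ is (up to pairing with $V_\kappa^*$) of the form $\sum_l \hat\iota^*(p_l \star F(v_l))$. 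Applying $E_{i,1} \star_{\overline{P}_H}$ and using Corollary \ref{KeyCorForHolOnX} (valid for the pair $(\overline{\ide{p}}, \ide{u})$ via the relation $[E_{i,1}, E_{1,j}] = E_{i,j}$, $[E_{i,j}, E_{1,j'}] = 0$), one gets two types of terms: one involving $p_l \cdot (E_{i,1} \star_{\overline{P}_H} F(v_l))$, which vanishes because $F(v_l)$ is holomorphic for $\mbf{G}$ (here one must check that the $\overline{P}_H$-action on $\mathscr{N}_G$ restricted from $\overline{\ide{u}}_G$ indeed kills holomorphic forms — this is where Lemma \ref{ClassicalReductionOfStructuresOfPdRLemma} enters, identifying $\hat\iota^* P_{G, \opn{dR}}$ with the pushforward of $P_{H, \opn{dR}}$, so that $E_{i,1} \in \overline{\ide{u}}_H$ acts through the inclusion $\overline{\ide{u}}_H \subset \overline{\ide{u}}_G$); and a second type involving $\sum_j \frac{\partial p_l}{\partial x_j} \cdot (E_{i,j} \star_{\overline{P}} F(v_l))$. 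The second sum is exactly the obstruction, and the point is that it vanishes after summing over $l$ and pairing against $V_\kappa^*$ precisely because $\delta_{\kappa, j}$ (equivalently $v_{\kappa}^{[j]}$) is an $M_{\mbf{H}}$-eigenvector of weight $\sigma_{\kappa}^{[j], -1}$ — the infinitesimal version of this eigenvector property, applied to $E_{i,1} \in \opn{Lie}(M_{\mbf{H}})$ modulo the parabolic $Q_{M_H}$, forces the "cross terms" produced by differentiating the polynomial in the $x_j$-directions against the simultaneous shift of the $V_\kappa^*$-pairing to cancel. I would phrase this cancellation by reinterpreting the whole expression $E_{i,1} \star (\sum_l p_l \star F(v_l))$ as the action of $E_{i,1} \in \ide{m}_{\mbf{H}}$ (viewed in $\ide{g}$) on the $M_{\mbf{H}}$-invariant vector $\vartheta$, which is zero because $\vartheta$ lands in an $M_{\mbf{H}}$-isotypic piece and $E_{i,1}$ lies in the nilpotent radical $\overline{\ide{u}}_H \subset \ide{m}_{\mbf{H}}$ acting trivially on the relevant character.

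The main obstacle, and the step I expect to require the most care, is the bookkeeping that matches the "geometric" Lie-algebra actions ($\star_{\ide{u}}$ and $\star_{\overline{P}}$ on $\mathscr{N}_G$, $\mathscr{N}_H$) with the "representation-theoretic" action $\star$ of $C^{\opn{pol}}$ and the $M_{\mbf{H}}$-equivariance of $\delta_{\kappa, j}$, so that Corollary \ref{KeyCorForHolOnX} can be invoked with the correct signs and the eigenvector identity of Theorem \ref{TheoremForClassicalBranching}(1) (via Definition \ref{DefinitionOfDeltakappaj}) can be used to kill the residual terms. In particular one needs to know that $\overline{\ide{u}}_H$ normalizes $\ide{u}_G$ appropriately — i.e. $[\overline{\ide{u}}_H, \ide{u}_G] \subseteq \overline{\ide{p}}_G$, which holds because $[E_{i,1}, E_{1,j}] = E_{i,j}$ with $j \geq n+1$, $i \leq n$ lands in the block-upper-triangular parabolic — and that the decomposition $\delta_{\kappa, j} = \sum_l v_l \otimes p_l$ only involves monomials in $x_{n+1}, \dots, x_{2n}$, so that the hypotheses of Corollary \ref{KeyCorForHolOnX} are literally met. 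Once these compatibilities are pinned down, the vanishing is a two-line application of the corollary plus the eigenvector property, and the off-$\tau_0$ factors as well as the Levi directions inside $M_{\mbf{H}}$ contribute nothing new. I would also remark that the same argument, run with the $p$-adic analytic vectors $\delta^{\dagger, s\opn{-an}}_{\kappa, j, \beta}$ in place of $\delta_{\kappa, j}$, will later yield the overconvergent analogue, but for the classical statement the above suffices. $\qed$
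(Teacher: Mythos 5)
The first half of your argument — using Corollary \ref{KeyCorForHolOnX} to peel off the term $p_l \star_{\ide{u}}(E_{i,1}\star_{\overline{P}}F_l)$, which dies because $F_l$ is holomorphic, and isolating the residual term $\sum_k \frac{\partial p_l}{\partial x_k}\star_{\ide{u}}(E_{i,k}\star_{\overline{P}}F_l)$ — matches the paper's proof exactly. Where you go wrong is in the final cancellation.

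You assert $E_{i,1}\in\opn{Lie}(M_{\mbf H})$ and later $\overline{\ide u}_H\subset\ide m_{\mbf H}$. Both are false: $\overline{\ide u}_H$ (spanned by $E_{i,1}$, $i\in\{2,\dots,n\}$, at $\tau_0$) is the nilpotent radical of $\overline{\ide p}_H$, and is \emph{complementary} to the Levi $\ide m_{\mbf H}$, not contained in it. Consequently the eigenvector identity for $\delta_{\kappa,j}$ under $M_{\mbf H}$ gives no information about the $\overline{\ide u}_H$-action, and "differentiating the eigenvector property at $E_{i,1}$" is not a legal move. The confusion becomes fatal in your last sentence: you conclude that $M_{\mbf H}$-invariance of $\vartheta_{\kappa,j,\beta}(F)$ forces $\overline{\ide u}_H\star_{\overline P}\vartheta_{\kappa,j,\beta}(F)=0$. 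But \emph{every} section of $\mathscr N_{H,\sigma_\kappa^{[j]}}$ is $M_{\mbf H}$-invariant by construction (that invariance is the definition of $\mathscr N_{H,\sigma_\kappa^{[j]}}=(\mathscr N_H\otimes\sigma_\kappa^{[j]})^{M_{\mbf H}}$), while $\mathscr M_{H,\sigma_\kappa^{[j]}}$ is a \emph{strict} subspace cut out by the $\overline{\ide u}_H$-vanishing. Your argument, taken literally, would prove $\mathscr M_{H,\sigma_\kappa^{[j]}}=\mathscr N_{H,\sigma_\kappa^{[j]}}$, which is false.

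What actually kills the obstruction is a representation-theoretic vanishing that you do not invoke: one packages $e_1^*e_i\mapsto E_{i,1}\star_{\overline{P}}\vartheta'_{\kappa,j,\beta}(F)$ as an element of an $M_{\mbf H}$-equivariant $\opn{Hom}$ of the form $\opn{Hom}_{M_{\mbf H}}(W\otimes\sigma_\kappa^{[j],-1},\,S_{-(j-1)}\otimes V_\kappa)$, factors it through the image of a certain $M_{\mbf H}$-equivariant map $g\colon S_{-(j-1)}\otimes V_\kappa\to\mathscr N_G$, and then shows the source $\opn{Hom}$ space is zero by restricting to $1\times1\times\opn{GL}_n\subset(M_{\mbf H})_{\tau_0}$ and applying the Pieri-rule bound from Lemma \ref{PierisRuleLemma}(2) (which is exactly the multiplicity-one/uniqueness argument in the proof of Theorem \ref{TheoremForClassicalBranching}). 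In other words, the bound $j_{\tau_0}\le\kappa_{n+1,\tau_0}-\kappa_{n+2,\tau_0}$ from Definition \ref{DefOfAlgWeightsE} is what makes the residual term vanish, and that input is missing from your sketch; the $M_{\mbf H}$-eigenvector property alone does not do it.
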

\begin{proof}
    We can (and do) assume that $j_{\tau_0} > 0$, otherwise there is nothing to prove. We first note the following property. Let $E_{i,k} \in \ide{gl}_{2n}$ denote the elementary matrix with $1$ in the $(i, k)$-th position and $0$ elsewhere. We can view $E_{i, k} \in \ide{g}$ as elements only concentrated in the $\tau_0$-component. 
    
    If $x_i \in C^{\opn{pol}}(k^{\oplus 2n-1}, k)$ is the polynomial function sending $(a_2, \dots, a_{2n}) \mapsto a_i$, then the action of $x_i$ on $\mathscr{N}_G$ corresponds to the action of $E_{1, i} \in \ide{u}_G$ under $\star_{\ide{u}}$. Note that we have $E_{i, 1} \in \overline{\ide{u}}_H$ for $i=2, \dots, n$. We have the commutator relations
    \[
    [E_{i, 1}, E_{1, k}] = E_{i, k} \in \opn{Lie}M_{\mbf{G}} \quad \quad \text{ for } i \in \{2, \dots, n\} \text{ and } k \in \{ n+1, \dots, 2n \}
    \]
    and
    \[
    [E_{i, k}, E_{1, k'}] = 0 \quad \quad \text{ for } i \in \{ 2, \dots, n \} \text{ and } k, k' \in \{ n+1, \dots, 2n \} .
    \]
    Therefore, by Corollary \ref{KeyCorForHolOnX}, we find that for any polynomial $p \in C^{\opn{pol}}(k^{\oplus 2n-1}, k)$ in the coordinates $x_{n+1}, \dots, x_{2n}$, any $i=2, \dots, n$ and any $F \in \mathscr{N}_G$, we have 
    \begin{equation} \label{EstarpstarFHolomorphicCase}
    E_{i, 1} \star_{\overline{P}} (p \star_{\ide{u}} F) = p \star_{\ide{u}} ( E_{i, 1} \star_{\overline{P}} F) + \sum_{k = n+1}^{2n} \frac{\partial p}{\partial x_k} \star_{\ide{u}} (E_{i, k} \star_{\overline{P}} F)
    \end{equation} 
    where we have also used the notation $\star_{\ide{u}}$ to denote the action map in (\ref{ActionMapForScr}).
    
    Now suppose that we have an element $F \in (\mathscr{M}_{G} \otimes V_{\kappa}^* )^{M_{\mbf{G}}} \subset (\mathscr{N}_{G} \otimes V_{\kappa}^* )^{M_{\mbf{G}}}$. We can also view this as a $M_{\mbf{G}}$-equivariant map $\tilde{F} \colon V_{\kappa} \to \mathscr{N}_G$. For any finite multiset $T \subset \{ 2, \dots, 2n \}$, we let $x^T = \prod_{i \in T} x_i \in C^{\opn{pol}}(k^{\oplus 2n-1}, k)$. These form a $k$-basis of $C^{\opn{pol}}(k^{\oplus 2n-1}, k)$. Let $\{ v_l \}$ denote a fixed basis for $V_{\kappa}$ and $\{ v_l^* \} \subset V_{\kappa}^*$ the dual basis. We note that $\delta_{\kappa, j}$ can be expressed as a linear combination
    \[
    \delta_{\kappa, j} = \sum_l \sum_{\substack{T \subset \{n+1, \dots, 2n\} \\ \#T = j_{\tau_0}}} \lambda_{l, T} (v_l \otimes x^T)
    \]
    for some $\lambda_{l, T} \in k$.
    
    Let $\vartheta_{\kappa, j, \beta}' \colon \mathscr{N}_G \otimes V_{\kappa}^* \to \mathscr{N}_G$ denote the composition of the maps in Definition \ref{DefinitionOfScrPullbackNGNH}, except for the final bullet point (we omit the notation for the character $\sigma_{\kappa}^{[j], -1}$). Then we have $h \cdot \vartheta_{\kappa, j, \beta}'(F) = \sigma_{\kappa}^{[j], -1}(h) \vartheta_{\kappa, j, \beta}'(F)$ for all $h \in M_{\mbf{H}}$. Since the map $\mathscr{N}_G \to \hat{\iota}_* \mathscr{N}_H$ is $P_{\mbf{H}}^{\opn{std}}$-equivariant, it is enough to show that $E_{i, 1} \star_{\overline{P}} \vartheta'_{\kappa, j, \beta}(F) = 0$ for all $i \in \{2, \dots, n \}$.
    
    Now if we write $F = \sum_l F_l \otimes v_l^*$ for some (unique) $F_l \in \mathscr{M}_G \subset \mathscr{N}_G$ (i.e., $F_l = \tilde{F}(v_l)$), then we have the formulae:
    \begin{align*} 
    \vartheta_{\kappa, j, \beta}'(F) &= \sum_l \sum_{\substack{T \subset \{n+1, \dots, 2n\} \\ \# T = j_{\tau_0}}} \lambda_{l, T} ( x^T \star_{\ide{u}} F_l )  \\
    E_{i, 1} \star_{\overline{P}} \vartheta'_{\kappa, j, \beta}(F) &=  \sum_l \sum_{\substack{T \subset \{n+1, \dots, 2n\} \\ \# T = j_{\tau_0}}} \lambda_{l, T} \left( \sum_{k=n+1}^{2n} \frac{\partial x^T}{\partial x_k} \star_{\ide{u}} (E_{i, k} \star_{\overline{P}} F_l ) \right)
    \end{align*}
    using the fact that $E_{i, 1} \star_{\overline{P}} F_l = 0$ because $F_l \in \mathscr{M}_G$. 
    
    With notation as in Theorem \ref{TheoremForClassicalBranching} and Convention \ref{ConventionForPhiBeta}, we have a $M_{\mbf{H}}$-equivariant map 
    \[
    g \colon S_{-(j-1)} \otimes V_{\kappa} \to \mathscr{N}_G
    \]
    given by $g( y \otimes z) = \Phi(y) \star_{\ide{u}} \tilde{F}(z)$ on pure tensors. Since $\frac{\partial x^T}{\partial x_k}$ is either zero or a monomial of degree $j_{\tau_0} - 1$, there exist unique elements $w_{k, T} \in S_{-(j-1)}$ such that $\Phi(w_{k, T}) = \frac{\partial x^T}{\partial x_k}$. We consider the elements $A_i \in S_{-(j-1)} \otimes V_{\kappa}$ given by
    \[
    A_i =  \sum_l \sum_{\substack{T \subset \{n+1, \dots, 2n\} \\ \# T = j_{\tau_0}}} \lambda_{l, T} \left( \sum_{k=n+1}^{2n} w_{k, T} \otimes (E_{i, k} \star_{\ide{m}_G} v_l ) \right)
    \]
    and we see that $g(A_i) = E_{i, 1} \star_{\overline{P}} \vartheta'_{\kappa, j, \beta}(F)$ (because $E_{i, k} \in \ide{m}_G = \opn{Lie}M_{\mbf{G}}$ and $\tilde{F}$ is $M_{\mbf{G}}$-equivariant).
    
    On the other hand, let $W$ denote the algebraic representation of $M_{\mbf{H}}$ with trivial highest weight outside the $\tau_0$-component, and weight $(-1, 1, 0, \dots, 0 )$ in the $\tau_0$-component. If $\{ e_1, \dots, e_{2n} \}$ denotes the standard basis of the standard representation of $\opn{GL}_{2n}$ (seen as the $\tau_0$-component of $\mbf{G}$), we have a $M_{\mbf{H}}$-equivariant map
    \begin{align*}
        h \colon W \otimes \sigma_{\kappa}^{[j], -1} &\to \mathscr{N}_G \\
        e_1^* e_i &\mapsto E_{i, 1} \star_{\overline{P}} \vartheta'_{\kappa, j, \beta}(F) .
    \end{align*}
    We define a $k$-linear map $q \colon W \otimes \sigma_{\kappa}^{[j], -1} \to S_{-(j-1)} \otimes V_{\kappa}$ sending $e_1^* e_i$ to $A_i$. Then clearly we have $g \circ q = h$ and the induced map 
    \[
    \bar{q} \colon W \otimes \sigma_{\kappa}^{[j], -1} \to (S_{-(j-1)} \otimes V_{\kappa}) / \opn{ker}(g)
    \]
    is $M_{\mbf{H}}$-equivariant. Since the category of algebraic representations of $M_{\mbf{H}}$ is semisimple (as the group is reductive), this gives rise to a $M_{\mbf{H}}$-equivariant morphism $W \otimes \sigma_{\kappa}^{[j], -1} \to S_{-(j-1)} \otimes V_{\kappa}$ -- if we show that any such morphism has to be zero, this will imply that $E_{i, 1} \star_{\overline{P}} \vartheta'_{\kappa, j, \beta}(F) = 0$, as required.
    
    Restricting to the $\tau_0$-component and considering the subgroup $1 \times 1 \times \opn{GL}_n \subset (M_{\mbf{H}})_{\tau_0}$, it is enough to show that
    \[
    \opn{Hom}_{1 \times 1 \times \opn{GL}_n} ( \sigma_{\kappa}^{[j], -1}, S_{-(j-1)} \otimes V_{\kappa} ) = 0
    \]
    since $W$ is the trivial representation when restricted to this group. But we have already shown this in the proof of Theorem \ref{TheoremForClassicalBranching}.
\end{proof}

By passing to cohomology and restricting along the open and closed embeddings in Lemma \ref{LemmaSDintoPEL}, we obtain a $k$-linear map
\[
\vartheta_{\kappa, j, \beta} \colon \opn{H}^{n-1}\left( S_{\mbf{G}, \opn{Iw}}(p^{\beta}), \mathscr{M}_{G, \kappa^*} \right) \to \opn{H}^{n-1}\left( S_{\mbf{H}, \diamondsuit}(p^{\beta}), \mathscr{M}_{H, \sigma_{\kappa}^{[j]}} \right) .
\]
The following proposition describes the compatibility of these maps as one varies $\beta$. For any $\beta \geq 1$, let $p_{G, \beta} \colon S_{\mbf{G}, \opn{Iw}}(p^{\beta+1}) \to S_{\mbf{G}, \opn{Iw}}(p^{\beta})$ and $p_{H, \beta} \colon S_{\mbf{H}, \diamondsuit}(p^{\beta+1}) \to S_{\mbf{H}, \diamondsuit}(p^{\beta})$ denote the natural finite \'{e}tale maps.

\begin{proposition} \label{TraceCompatibilityOfThetaProp}
Let $(\kappa, j) \in \mathcal{E}$. Then we have a commutative diagram:
\[
\begin{tikzcd}
{\opn{H}^{n-1}\left( S_{\mbf{G}, \opn{Iw}}(p^{\beta + 1}), \mathscr{M}_{G, \kappa^*} \right)} \arrow[d, "{\opn{Tr}_{p_{G, \beta}}}"'] \arrow[r, "{\vartheta_{\kappa, j, \beta + 1}}"] & {\opn{H}^{n-1}\left( S_{\mbf{H}, \diamondsuit}(p^{\beta + 1}), \mathscr{M}_{H, \sigma_{\kappa}^{[j]}} \right)} \arrow[d, "{\opn{Tr}_{p_{H, \beta}}}"] \\
{\opn{H}^{n-1}\left( S_{\mbf{G}, \opn{Iw}}(p^{\beta}), \mathscr{M}_{G, \kappa^*} \right)} \arrow[r, "{\vartheta_{\kappa, j, \beta}}"]                                                 & {\opn{H}^{n-1}\left( S_{\mbf{H}, \diamondsuit}(p^{\beta}), \mathscr{M}_{H, \sigma_{\kappa}^{[j]}} \right)}                                           
\end{tikzcd}
\]
where $\opn{Tr}_{\bullet}$ denotes the trace maps associated with the corresponding finite \'{e}tale morphisms. 
\end{proposition}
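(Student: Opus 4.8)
The plan is to reduce the commutativity of the diagram to a sheaf-level statement about the map $\vartheta_{\kappa, j, \beta}$ on nearly holomorphic forms, and then to check that statement using the explicit description of the trace maps along the finite \'etale covers $p_{G, \beta}$ and $p_{H, \beta}$. First I would recall that, by Lemma \ref{LemmaSDintoPEL}, it suffices to work with the PEL moduli spaces $X_{\mbf{G}, \opn{Iw}, F^{\opn{cl}}}(p^{\beta})$ and $X_{\mbf{H}, \diamondsuit, F^{\opn{cl}}}(p^{\beta})$, and that the morphisms $\hat\iota$ at levels $\beta$ and $\beta+1$ sit in a commutative square with the degeneracy maps (indeed the top square in the diagram after Definition \ref{DefinitionOfDeeperLevelAtpVarieties} is Cartesian). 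Writing $\pi_{G, \beta} \colon X_{\mbf{G}, \opn{Iw}}(p^{\beta+1}) \to X_{\mbf{G}, \opn{Iw}}(p^{\beta})$ and similarly $\pi_{H, \beta}$, the key point is that $\pi_{G, \beta}^{*}\mathscr{M}_{G, \kappa^{*}} \cong \mathscr{M}_{G, \kappa^{*}}$ canonically (and likewise for $H$), because the de Rham torsors $P_{G, \opn{dR}}$, and hence the sheaves of holomorphic/nearly holomorphic forms, are pulled back along the tower in the level at $p$ — the extra level structure only refines the \'etale data on the $p$-torsion and does not affect the de Rham realisation. Under this identification the trace map $\opn{Tr}_{p_{G, \beta}}$ on $\opn{H}^{n-1}$ is induced by the sheaf-theoretic trace $\pi_{G, \beta, *}\mathscr{M}_{G, \kappa^{*}} \to \mathscr{M}_{G, \kappa^{*}}$ for the finite \'etale cover, i.e.\ the normalised sum over the fibres of the Galois group $K^{G}_{\opn{Iw}}(p^{\beta})/K^{G}_{\opn{Iw}}(p^{\beta+1})$ (and analogously for $H$).

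Granting this, the proposition reduces to the commutativity of the square of sheaves on $X_{\mbf{G}, \opn{Iw}, F^{\opn{cl}}}(p^{\beta})$
\[
\begin{tikzcd}
\pi_{G, \beta, *}\mathscr{M}_{G, \kappa^{*}} \arrow[d, "\opn{tr}"'] \arrow[r, "\pi_{G, \beta, *}\vartheta_{\kappa, j, \beta+1}"] & \pi_{G, \beta, *}\hat\iota_{*}\mathscr{M}_{H, \sigma_{\kappa}^{[j]}} \arrow[d, "\opn{tr}"] \\
\mathscr{M}_{G, \kappa^{*}} \arrow[r, "\vartheta_{\kappa, j, \beta}"'] & \hat\iota_{*}\mathscr{M}_{H, \sigma_{\kappa}^{[j]}}
\end{tikzcd}
\]
where the vertical arrows are the traces for the (compatible) finite \'etale covers and the bottom row uses that $\hat\iota$ at level $\beta+1$ is the base change of $\hat\iota$ at level $\beta$. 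Now the construction of $\vartheta_{\kappa, j, \beta}$ in Definition \ref{DefinitionOfScrPullbackNGNH} is entirely in terms of: the fixed algebraic vector $\delta_{\kappa, j} \in V_{\kappa} \otimes C^{\opn{pol}}(k^{\oplus 2n-1}, k)$ (which is independent of $\beta$), the $M_{\mbf{G}}$-equivariant action map $C^{\opn{pol}}(k^{\oplus 2n-1}, k) \otimes \mathscr{N}_{G} \to \mathscr{N}_{G}$ of Lemma \ref{LemmaConcreteDescriptionNablai}, the evaluation $V_{\kappa} \otimes V_{\kappa}^{*} \to k$, and the pullback $\mathscr{N}_{G} \to \hat\iota_{*}\mathscr{N}_{H}$ of Lemma \ref{ClassicalReductionOfStructuresOfPdRLemma}. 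Each of these four ingredients is defined via the de Rham torsors, which (as above) are compatibly pulled back along the level tower at $p$; hence $\vartheta_{\kappa, j, \beta+1} = \pi_{G, \beta}^{*}\vartheta_{\kappa, j, \beta}$ under the identifications $\pi_{G, \beta}^{*}\mathscr{M}_{G, \kappa^{*}} = \mathscr{M}_{G, \kappa^{*}}$ and $\pi_{H, \beta}^{*}\mathscr{M}_{H, \sigma_{\kappa}^{[j]}} = \mathscr{M}_{H, \sigma_{\kappa}^{[j]}}$. The commutativity of the displayed square then follows from the general fact that a trace map $\opn{tr} \colon g_{*}g^{*}\mathscr{F} \to \mathscr{F}$ for a finite \'etale $g$ is natural in $\mathscr{F}$: applying naturality to $\vartheta_{\kappa, j, \beta} \colon \mathscr{M}_{G, \kappa^{*}} \to \hat\iota_{*}\mathscr{M}_{H, \sigma_{\kappa}^{[j]}}$ (and using that the two finite \'etale covers, for $\mbf{G}$ and $\mbf{H}$, are the base change of one another along $\hat\iota$, so that the $H$-trace is the pushforward by $\hat\iota$ of the pullback of the $G$-trace — this is where the Cartesian square of moduli spaces enters) gives exactly the required identity. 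Passing back to $\opn{H}^{n-1}$ and using $R\Gamma(X_{\mbf{G}, \opn{Iw}}(p^{\beta+1}), -) = R\Gamma(X_{\mbf{G}, \opn{Iw}}(p^{\beta}), \pi_{G, \beta, *}(-))$ (as $\pi_{G, \beta}$ is finite, hence affine) yields the proposition.

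The main obstacle, and the step deserving the most care, is justifying that $\vartheta_{\kappa, j, \beta}$ is literally the pullback of $\vartheta_{\kappa, j, \beta-1}$ along the degeneracy map, i.e.\ that all the de Rham-theoretic data (the torsors $P_{G, \opn{dR}}$, $P_{H, \opn{dR}}$, the $\mathcal{D}$-module structure on $\mathscr{N}_{G}$, the $\ide{u}_{G}$-action of Lemma \ref{LemmaConcreteDescriptionNablai}, and the reduction of structure in Lemma \ref{ClassicalReductionOfStructuresOfPdRLemma}) genuinely descend through the level tower at $p$ — this is intuitively clear since the level at $p$ is purely \'etale and the de Rham realisation is insensitive to it, but it must be spelled out, e.g.\ by observing that $P_{G, \opn{dR}}$ at level $p^{\beta+1}$ is by construction the pullback of the one at level $p^{\beta}$ (both parameterise frames of $\mathcal{H}_{A}$ compatible with PE-structure and the Hodge filtration, and $A$ over $X_{\mbf{G}, \opn{Iw}}(p^{\beta+1})$ is the pullback of $A$ over $X_{\mbf{G}, \opn{Iw}}(p^{\beta})$), and that the Gauss--Manin connection and Kodaira--Spencer isomorphism are compatible with flat base change. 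Once this compatibility is in hand, the rest is the formal naturality of trace maps under finite \'etale pullback together with base-change along the Cartesian square, as indicated above.
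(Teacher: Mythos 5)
Your proposal is correct and follows essentially the same route as the paper's proof: reduce to a sheaf-level statement over $X_{\mbf{G}, \opn{Iw}, F^{\opn{cl}}}(p^{\beta})$ using the Cartesian square relating $\hat\iota$ at levels $\beta$ and $\beta+1$, then observe that the trace maps commute both with $\hat\iota^*$ and with the $C^{\opn{pol}}$-action because $\mathscr{N}_{G,\beta}$ pulls back to $\mathscr{N}_{G,\beta+1}$ and the $\ide{u}_G$-action (via Lemma \ref{LemmaConcreteDescriptionNablai}) is insensitive to the level at $p$. The paper phrases this as two commuting squares (one for $\hat\iota^*$ vs.\ trace, one for $\phi \star_{\ide{u}} -$ vs.\ trace, checked after pullback to $G_{\opn{dR}}$) rather than saying $\vartheta_{\kappa,j,\beta+1}$ is the pullback of $\vartheta_{\kappa,j,\beta}$ and invoking naturality of the trace, but the content is the same.
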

\begin{proof}
    In this proof only, we will add the subscript $\beta$ or $\beta + 1$ to the objects to indicate which level we are working at. Note that $p_{G, \beta}^* \mathscr{N}_{G, \beta} = \mathscr{N}_{G, \beta}$ and similarly for $\mbf{H}$, so the trace maps in the statement of the proposition do indeed exist. Furthermore, it is enough to prove the statements over the varieties $X_{\mbf{G}, \opn{Iw}, F^{\opn{cl}}}(p^{\beta})$ and $X_{\mbf{H}, \diamondsuit, F^{\opn{cl}}}(p^{\beta})$ (we will continue to use the notation $p_{G, \beta}$ and $p_{H, \beta}$ for the natural maps. Recall that the diagram 
    \[
\begin{tikzcd}
{X_{\mbf{H}, \diamondsuit, k}(p^{\beta + 1})} \arrow[d, "{p_{H, \beta}}"'] \arrow[r, "\hat{\iota}_{\beta + 1}"] & {X_{\mbf{G}, \opn{Iw}, k}(p^{\beta + 1})} \arrow[d, "{p_{G, \beta}}"] \\
{X_{\mbf{H}, \diamondsuit, k}(p^{\beta})} \arrow[r, "\hat{\iota}_{\beta}"]                                      & {X_{\mbf{G}, \opn{Iw}, k}(p^{\beta})}                                
\end{tikzcd}
    \]
    is Cartesian  because $p_{H, \beta}$ and $p_{G, \beta}$ have the same degree -- see \cite[Lemma 2.5.3]{UFJ}.
    
    The claim in the proposition is a local one. Let $U \subset X_{\mbf{G}, \opn{Iw}, k}(p^{\beta})$ be an open, and let $V = p_{H, \beta}^{-1} \hat{\iota}_{\beta}^{-1} U = \hat{\iota}_{\beta + 1}^{-1} p_{G, \beta}^{-1} U$. We first note that the following diagram is commutative because the above square is Cartesian:
    \[
\begin{tikzcd}
{\mathscr{N}_{G, \beta+1}(p_{G, \beta}^{-1}U)} \arrow[r, "\hat{\iota}_{\beta + 1}^*"] \arrow[d, "{\opn{Tr}_{p_{G, \beta}}}"'] & {\mathscr{N}_{H, \beta + 1}(V)} \arrow[d, "{\opn{Tr}_{p_{H, \beta}}}"] \\
{\mathscr{N}_{G, \beta}(U)} \arrow[r, "\hat{\iota}_{\beta}^*"]                                                                & {\mathscr{N}_{H, \beta}(\hat{\iota}_{\beta}^{-1}U)}                   
\end{tikzcd}
\]
Furthermore, we claim that for any $\phi \in C^{\opn{pol}}(k^{\oplus 2n-1}, k)$, the following diagram is commutative:
\[
\begin{tikzcd}
{\mathscr{N}_{G, \beta+1}(p_{G, \beta}^{-1}U)} \arrow[r, "\phi \star_{\ide{u}} -"] \arrow[d, "{\opn{Tr}_{p_{G, \beta}}}"'] & {\mathscr{N}_{G, \beta+1}(p_{G, \beta}^{-1}U)} \arrow[d, "{\opn{Tr}_{p_{G, \beta}}}"] \\
{\mathscr{N}_{G, \beta}(U)} \arrow[r, "\phi \star_{\ide{u}} -"]                                                            & {\mathscr{N}_{G, \beta}(U)}                                                          
\end{tikzcd}
\]
Indeed, it is enough to check this statement after pulling back to $G_{\opn{dR}}$, and then the claim follows from the description of $\nabla_i$ in Lemma \ref{LemmaConcreteDescriptionNablai}, the fact that the morphisms $p_{G, \beta}$ and $p_{H, \beta}$ are finite \'{e}tale, and the fact that the sheaves $\mathscr{N}_{G, \bullet}$ arise from the same $G$-equivariant sheaf over the flag variety. The remainder of the proposition now follows from unwinding the definition of $\vartheta_{\kappa, j, \beta}$ in Definition \ref{DefinitionOfScrPullbackNGNH}. 
\end{proof}

We are now in a position to define the classical evaluation maps. We first recall, from \cite[\S 7]{UFJ}, how to associate cohomology classes to anticyclotomic algebraic Hecke characters. Let $\opn{Res}_{F^+/\mbb{Q}} \opn{U}(1)$ denote the unitary group associated with the one dimensional Hermitian space over $F$. Then we have a morphism
\begin{align*}
    \nu \colon \mbf{H} &\to \opn{Res}_{F^+/\mbb{Q}} \opn{U}(1) \\
    (h_1, h_2) &\mapsto \opn{det}h_2/\opn{det}h_1
\end{align*}
which is open and surjective on $\mbb{A}_f$-points. We also have a natural map $\mathcal{N} \colon \opn{Res}_{F/\mbb{Q}} \mbb{G}_m \to \opn{Res}_{F^+/\mbb{Q}} \opn{U}(1)$ given by $\mathcal{N}(z) = \bar{z}/z$. 

\begin{definition} \label{DefinitionOfMathfrakNbeta}
    Let $K_{\beta} \subset \mbf{H}(\mbb{A}_f)$ denote the level of the Shimura--Deligne variety $S_{\mbf{H}, \diamondsuit}(p^{\beta})$. We let $\ide{N}_{\beta}$ denote the smallest ideal of $\mathcal{O}_F$ such that $\nu(K_{\beta}) \subset \mathcal{N}((\widehat{\mathcal{O}}_{F^+} + \ide{N}_{\beta}\widehat{\mathcal{O}}_F)^{\times})$. Note that $\ide{N}_{\beta} = \ide{N}p^{\beta}$ for some ideal $\ide{N} \subset \mathcal{O}_{F^+}$ prime to $p$.
\end{definition}

We now introduce the space of anticyclotomic characters that we are interested in. For this, we need to make an additional assumption on the pair of weights $(\kappa, j)$.

\begin{assumption} \label{AssumpOnKJforACchar}
    Let $(\kappa, j) \in \mathcal{E}$. We assume that $\kappa_0 = 0$, $\kappa_{1, \tau_0} + \kappa_{n+1, \tau_0} = n-1$, and $w \defeq \kappa_{2, \tau_0} + \kappa_{2n, \tau_0}  = -1$.
\end{assumption}

For any $(\kappa, j) \in \mathcal{E}$ satisfying Assumption \ref{AssumpOnKJforACchar}, let $\sigma_{\kappa}^{[j], \vee} = - w_{M_H}^{\opn{max}}\sigma_{\kappa}^{[j]} - 2\rho_{H, \opn{nc}}$ denote the Serre dual of $\sigma_{\kappa}^{[j]}$. By Assumption \ref{AssumpOnKJforACchar}, $\sigma_{\kappa}^{[j], \vee}$ extends to a character of $\mbf{H}$.

\begin{definition} \label{DefOfSigmaKJNB}
    Let $(\kappa, j) \in \mathcal{E}$ satisfying Assumption \ref{AssumpOnKJforACchar}. We let $\Sigma_{\kappa, j}(\ide{N}_{\beta})$ denote the set of algebraic Hecke characters $\chi \colon \mbb{A}_F^{\times}/F^{\times} \to \mbb{C}^{\times}$ satisfying:
    \begin{enumerate}
        \item $\chi$ is anticyclotomic, i.e., $\chi$ is trivial on $\mbb{A}^{\times}_{F^+}$.
        \item The infinity type of $\chi$ is $(j_{\tau_0} - \kappa_{n+1, \tau_0}, -(j_{\tau_0} - \kappa_{n+1, \tau_0}))$ in the $\tau_0$-component and $(j_{\tau}, -j_{\tau})$ for $\tau \neq \tau_0$, i.e.,
\[
\chi( z ) = z_{\tau_0}^{\kappa_{n+1, \tau_0} - j_{\tau_0}} \bar{z}_{\tau_0}^{j_{\tau_0} - \kappa_{n+1, \tau_0}} \cdot \prod_{\tau \in \Psi - \{\tau_0\}} z_{\tau}^{-j_{\tau}} \bar{z}_{\tau}^{j_{\tau}} 
\]
for all $z = (z_{\tau}) \in (\mbb{R} \otimes_{\mbb{Q}} F)^{\times} = \prod_{\tau \in \Psi} \mbb{C}^{\times}$.
\item The conductor of $\chi$ divides $\ide{N}_{\beta}$.
    \end{enumerate}
    We let $\chi' \colon (\opn{Res}_{F^+/\mbb{Q}}\opn{U}(1))(\mbb{Q}) \backslash (\opn{Res}_{F^+/\mbb{Q}}\opn{U}(1))(\mbb{A}) \to \mbb{C}^{\times}$ denote unique character such that $\chi = \chi' \circ \mathcal{N}$.
\end{definition}

For any $\chi \in \Sigma_{\kappa, j}(\ide{N}_{\beta})$ let $F^{\opn{cl}}(\chi)$ denote the smallest number field containing $F^{\opn{cl}}$ and over which the finite part of $\chi$ is defined. Then, by the discussion in \cite[\S 7]{UFJ}, one can associate a cohomology class
\[
[\chi] \in \opn{H}^0\left( S_{\mbf{H}, \diamondsuit}(p^{\beta})_{F^{\opn{cl}}(\chi)}, \mathscr{M}_{H, \sigma_{\kappa}^{[j], \vee}} \right)
\]
which, after base-changing to $\mbb{C}$, coincides with the function
\begin{align*}
H_{\opn{dR}}(\mbb{C}) = \mbf{H}(\mbb{Q}) \backslash X \times \mbf{H}(\mbb{C}) \times \mbf{H}(\mbb{A}_f)/K_{\beta} &\to \mbb{C} \\
[x, h, h'] &\mapsto \xi^{[j]}_{\kappa}(h) \chi'(\nu(h'))
\end{align*}
where $X$ denotes the Hermitian space associated with the Shimura--Deligne datum $(\mbf{H}, h_{\mbf{H}})$, and $\xi_{\kappa}^{[j]} \colon \mbf{H}(\mbb{C}) = \mbb{C}^{\times} \times \prod_{\tau \in \Psi} \left( \opn{GL}_n(\mbb{C}) \times \opn{GL}_n(\mbb{C}) \right) \to \mbb{C}^{\times}$ denotes the character 
\[
(h_0; h_{1, \tau}, h_{2, \tau}) \mapsto (\opn{det}h_{2, \tau_0}/\opn{det}h_{1, \tau_0})^{j_{\tau_0} - \kappa_{n+1, \tau_0}}\prod_{\tau \neq \tau_0}(\opn{det}h_{2, \tau}/\opn{det}h_{1, \tau})^{j_{\tau}} .
\]

\begin{definition} \label{ClassicalEvMapDef}
    Let $(\kappa, j) \in \mathcal{E}$ satisfying Assumption \ref{AssumpOnKJforACchar}, and suppose that $\chi \in \Sigma_{\kappa, j}(\ide{N}_{\beta})$. For any field extension $L/F^{\opn{cl}}$ contained in $\mbb{C}$ (resp. $\Qpb$), let 
    \[
    \opn{Ev}_{\kappa, j, \chi, \beta} \colon \opn{H}^{n-1}\left( S_{\mbf{G}, \opn{Iw}}(p^{\beta})_L, \mathscr{M}_{G, \kappa^*} \right) \to L(\chi)
    \]
    denote the $L$-linear morphism given by $\opn{Ev}_{\kappa, j, \chi, \beta}(\eta) = \langle \vartheta_{\kappa, j, \beta}(\eta), [\chi] \rangle$, where $L(\chi)$ denotes the compositum of $L$ and $F^{\opn{cl}}(\chi)$ in $\mbb{C}$ (resp. in $\Qpb$) and $\langle \cdot, \cdot \rangle$ denotes the pairing given by Serre duality.
\end{definition}

We have the following compatibility with varying $\beta$.

\begin{proposition} \label{EVBeta1EqualsEVBetaProp}
    With notation as in Definition \ref{ClassicalEvMapDef} (and $\chi \in \Sigma_{\kappa, j}(\ide{N}_{\beta})$), we have $\opn{Ev}_{\kappa, j, \chi, \beta+1} = \opn{Ev}_{\kappa, j, \chi, \beta} \circ \opn{Tr}_{p_{G, \beta}}$.
\end{proposition}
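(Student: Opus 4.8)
The plan is to reduce the statement to the compatibility already recorded in Proposition \ref{TraceCompatibilityOfThetaProp} together with a projection-formula argument for the Serre duality pairing. Recall that by definition $\opn{Ev}_{\kappa, j, \chi, \beta}(\eta) = \langle \vartheta_{\kappa, j, \beta}(\eta), [\chi] \rangle$ and similarly at level $\beta+1$, where the pairings are those coming from Serre duality on $S_{\mbf{H}, \diamondsuit}(p^{\beta})$ and $S_{\mbf{H}, \diamondsuit}(p^{\beta+1})$ respectively. So for $\eta \in \opn{H}^{n-1}(S_{\mbf{G}, \opn{Iw}}(p^{\beta+1}), \mathscr{M}_{G, \kappa^*})$ we must show
\[
\langle \vartheta_{\kappa, j, \beta+1}(\eta), [\chi]_{\beta+1} \rangle = \langle \vartheta_{\kappa, j, \beta}(\opn{Tr}_{p_{G, \beta}}\eta), [\chi]_{\beta} \rangle,
\]
where I write $[\chi]_{\bullet}$ for the class on the relevant level. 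First I would invoke Proposition \ref{TraceCompatibilityOfThetaProp}, which rewrites the right-hand side as $\langle \opn{Tr}_{p_{H, \beta}}\vartheta_{\kappa, j, \beta+1}(\eta), [\chi]_{\beta} \rangle$. Thus it suffices to prove the projection formula
\[
\langle \opn{Tr}_{p_{H, \beta}} \xi, [\chi]_{\beta} \rangle = \langle \xi, p_{H, \beta}^* [\chi]_{\beta} \rangle
\]
for the finite étale morphism $p_{H, \beta} \colon S_{\mbf{H}, \diamondsuit}(p^{\beta+1}) \to S_{\mbf{H}, \diamondsuit}(p^{\beta})$, together with the elementary claim that $p_{H, \beta}^*[\chi]_{\beta} = [\chi]_{\beta+1}$.

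The second claim is immediate from the description of $[\chi]$ as an automorphic function: both classes are the descent of $[x, h, h'] \mapsto \xi_{\kappa}^{[j]}(h)\chi'(\nu(h'))$ to the relevant Shimura variety, and pullback along $p_{H, \beta}$ is just the inclusion of level structures $K_{\beta+1} \subset K_{\beta}$, which does not change the function; one checks that $\chi$ has conductor dividing $\ide{N}_{\beta} \mid \ide{N}_{\beta+1}$ so that the class at the deeper level is defined and agrees. The projection formula is the standard adjunction between $\opn{Tr}_{p_{H, \beta}}$ and $p_{H, \beta}^*$: for a finite étale morphism $f$ of smooth proper varieties of dimension $d = n-1$, the trace map $Rf_* \mathscr{M}_{H, \sigma_{\kappa}^{[j]}} \to \mathscr{M}_{H, \sigma_{\kappa}^{[j]}}$ on source cohomology is, under Serre duality $\opn{H}^{n-1}(-, \mathscr{M})^{\vee} \cong \opn{H}^0(-, \mathscr{M}^{\vee})$, dual to the pullback $f^* \colon \opn{H}^0(S_{\mbf{H}, \diamondsuit}(p^{\beta}), \mathscr{M}_{H, \sigma_{\kappa}^{[j], \vee}}) \to \opn{H}^0(S_{\mbf{H}, \diamondsuit}(p^{\beta+1}), \mathscr{M}_{H, \sigma_{\kappa}^{[j], \vee}})$, because $f$ is étale (so $f^! = f^*$ and the dualizing sheaves match up) and the trace/pullback adjunction for $Rf_*$/$f^*$ is compatible with Grothendieck--Serre duality. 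Concretely, one cites that the cup product pairing satisfies $\langle \opn{Tr}_f \xi, \omega \rangle_{\beta} = \langle \xi, f^*\omega \rangle_{\beta+1}$, which is the content of e.g. \cite[Lemma 2.1.2]{BoxerPilloni} combined with the fact that $f_* \mathcal{O} \to \mathcal{O}$ splits (the degree of $f$ is invertible), or simply the projection formula $\opn{Tr}_f(\xi \cup f^*\omega) = \opn{Tr}_f(\xi) \cup \omega$ applied in top degree.

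Assembling these, for $\eta$ at level $\beta+1$ we get
\[
\opn{Ev}_{\kappa, j, \chi, \beta}(\opn{Tr}_{p_{G, \beta}}\eta) = \langle \vartheta_{\kappa, j, \beta}(\opn{Tr}_{p_{G, \beta}}\eta), [\chi]_{\beta}\rangle = \langle \opn{Tr}_{p_{H, \beta}}\vartheta_{\kappa, j, \beta+1}(\eta), [\chi]_{\beta}\rangle = \langle \vartheta_{\kappa, j, \beta+1}(\eta), p_{H,\beta}^*[\chi]_{\beta}\rangle = \opn{Ev}_{\kappa, j, \chi, \beta+1}(\eta),
\]
where the middle equality is Proposition \ref{TraceCompatibilityOfThetaProp}, the third is the projection formula, and the last uses $p_{H, \beta}^*[\chi]_{\beta} = [\chi]_{\beta+1}$. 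I expect the only genuinely delicate point to be verifying that the Serre duality pairings at levels $\beta$ and $\beta+1$ are normalized so that the trace--pullback adjunction holds on the nose (rather than up to the degree of $p_{H, \beta}$ or a similar scalar); this is a compatibility of normalizations of trace maps that one should pin down carefully, but it is ultimately bookkeeping, and the étale-ness of all the maps involved means no genuinely new input beyond what is already used in Proposition \ref{TraceCompatibilityOfThetaProp} is required. One should also note the base change to $L(\chi)$ is harmless since all the maps in sight are defined over $F^{\opn{cl}}$ (or $F^{\opn{cl}}(\chi)$) and commute with extension of scalars.
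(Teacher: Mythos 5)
Your proposal is correct and matches the paper's argument: the paper also reduces to Proposition~\ref{TraceCompatibilityOfThetaProp} after observing that $[\chi]$ at level $\beta+1$ is the pullback $p_{H,\beta}^*$ of the class at level $\beta$ (using that $\opn{cond}(\chi) \mid \ide{N}_{\beta}$). You merely spell out the trace--pullback adjunction for Serre duality that the paper treats as implicit; the substance is identical.
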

\begin{proof}
    Since $\chi$ has conductor dividing $\ide{N}_{\beta}$, we see that $[\chi] \in \opn{H}^0\left( S_{\mbf{H}, \diamondsuit}(p^{\beta+1})_{F^{\opn{cl}}(\chi)}, \mathscr{M}_{H, \sigma_{\kappa}^{[j], \vee}} \right)$ is the image of the class at level $\beta$ under $p_{H, \beta}^*$. The claim now follows from Proposition \ref{TraceCompatibilityOfThetaProp}.
\end{proof}

\subsubsection{Relation with unitary Friedberg--Jacquet periods} \label{RelationWithUFJperiodsSubSubSec}

In this section, we relate the \emph{algebraic} construction of the morphisms $\vartheta_{\kappa, j, \beta}$ in the previous section with an \emph{analytic} construction over $\mbb{C}$ (Lemma \ref{ComplexThetaDefCoincidesWithOldDefLemma}). In particular, this will allow us to reinterpret the maps $\vartheta_{\kappa, j, \beta}$ on cohomology (over $\mbb{C}$) in terms of Lie algebra cohomology (Lemma \ref{ChevalleyEComplex}), and hence relate the evaluation maps in Definition \ref{ClassicalEvMapDef} (over $\mbb{C}$) with automorphic periods for the groups $\mbf{H} \subset \mbf{G}$ (Proposition \ref{ClassicalComplexUFJrelationProp}). The key input for this is the description of the coherent cohomology of Shimura varieties in terms of automorphic representations, following \cite{HarrisPartial} and \cite{Su19}.

For the rest of this section, we work over $L = \mbb{C}$, but will often omit this from the notation. In particular, gothic letters will denote the complexification of the Lie algebra of the corresponding group, unless specified otherwise. 

Let $K_{\infty} \subset \mbf{G}(\mbb{R})$ denote the maximal compact-mod-centre subgroup whose complexification is equal to $M_{\mbf{G}}(\mbb{C})$. Let $K_{\circ} \subset K_{\infty}$ denote the maximal compact subgroup, and let $A_{\mbf{G}}$ denote the maximal $\mbb{Q}$-split torus in the centre of $\mbf{G}$. Then one can verify that $K_{\infty} = K_{\circ} A_{\mbf{G}}(\mbb{R})^{\circ}$, where $A_{\mbf{G}}(\mbb{R})^{\circ} \subset A_{\mbf{G}}(\mbb{R})$ denotes the connected component of the identity. Let $\overline{\ide{p}}_G$ (resp. $\ide{a}_G$) denote the Lie algebra of $P_{\mbf{G}}^{\opn{std}}$ (resp. $A_{\mbf{G}}$), and set $\overline{\ide{p}}_{\circ} \defeq \overline{\ide{p}}_G \cap \opn{Lie}(\mbf{G}_0)$. Then one has $\overline{\ide{p}}_G = \overline{\ide{p}}_{\circ} \oplus \ide{a}_G$.

Let $[\mbf{G}] \defeq \mbf{G}(\mbb{Q}) \backslash \mbf{G}(\mbb{A}) / A_{\mbf{G}}(\mbb{R})^{\circ}$, and let $K = K^p K^G_{\opn{Iw}}(p^{\beta}) \subset \mbf{G}(\mbb{A}_f)$ denote the level subgroup for $S_{\mbf{G}, \opn{Iw}}(p^{\beta})$. Following \cite{Su19}, consider the following (right) $K_{\circ}$-torsor
\[
\pi_{\circ} \colon [\mbf{G}]/K \to S_{\mbf{G}, \opn{Iw}}(p^{\beta})(\mbb{C}) .
\]
Let $V$ be an algebraic representation of $P_{\mbf{G}}^{\opn{std}}(\mbb{C})$ on which $A_{\mbf{G}}(\mbb{C})$ acts trivially. Then the automorphic sheaf $\mathcal{V}$ on $S_{\mbf{G}, \opn{Iw}}(p^{\beta})(\mbb{C})$ associated with $V$ satisfies 
\[
\mathcal{V}(U) \cong \left( C^{\opn{\infty}}(\pi_{\circ}^{-1}U) \otimes V \right)^{(\overline{\ide{p}}_{\circ}, K_{\circ})}
\]
where the isomorphism also respects equivariant structures. Here $C^{\opn{\infty}}(-)$ denotes the space of complex-valued smooth functions on a real-analytic manifold, and $U \subset S_{\mbf{G}, \opn{Iw}}(p^{\beta})(\mbb{C})$ is an open subspace. We have a similar description for automorphic vector bundles on $S_{\mbf{H}, \diamondsuit}(p^{\beta})(\mbb{C})$. In particular, if $W$ is an algebraic representation of $P_{\mbf{H}}^{\opn{std}}$ on which $A_{\mbf{H}}(\mbb{C}) = A_{\mbf{G}}(\mbb{C})$ acts trivially, and $\mathcal{W}$ denotes the associated sheaf on $S_{\mbf{H}, \diamondsuit}(p^{\beta})$, then
\[
\hat{\iota}_*\mathcal{W}(U) \cong \left( C^{\opn{\infty}}(\hat{\iota}^{-1}\pi_{\circ}^{-1}U) \otimes W \right)^{(\overline{\ide{p}}_{\circ} \cap \ide{h}, K_{\circ} \cap \mbf{H}(\mbb{R}))}
\]
where $\hat{\iota} \colon [H]/(\hat{\gamma} K \hat{\gamma}^{-1} \cap \mbf{H}(\mbb{A}_f)) \to [G]/K$ is the natural map induced from right-translation by $\hat{\gamma}$.

We equip the sheaf $U \mapsto C^{\infty}(\pi_{\circ}^{-1}U)$ with an action of $\ide{g}$ in the following way. For any $X \in \opn{Lie}\mbf{G}(\mbb{R})$ and $f \in C^{\infty}(\pi_{\circ}^{-1}U)$, we define
\[
(X \star f)(-) \defeq  \left. \frac{d}{dt}\right|_{t=0} f( - \cdot \opn{exp}(tX))
\]
where $\opn{exp}(tX) \in \mbf{G}(\mbb{R})$ denotes the exponential of $tX$. We then extend this linearly to an action of $\ide{g} = \opn{Lie}\mbf{G}(\mbb{C})$. This induces an action of $\mathcal{U}(\ide{u}_G) \cong C^{\opn{pol}}(\mbb{C}^{\oplus 2n-1}, \mbb{C})$ such that the action map is equivariant for $K_{\circ}$ (via the action in (\ref{ActionOnCpolEqn})). 

\begin{lemma} \label{ComplexThetaDefCoincidesWithOldDefLemma}
    Let $(\kappa, j) \in \mathcal{E}$. Consider the following $\mbb{C}$-linear morphism 
    \[
    \vartheta_{\kappa, j, \beta} \colon C^{\opn{\infty}}(\pi_{\circ}^{-1}U) \otimes V_{\kappa}^* \to C^{\opn{\infty}}(\hat{\iota}^{-1}\pi_{\circ}^{-1}U) \otimes \sigma_{\kappa}^{[j]} 
    \]
    defined on pure tensors as:
    \[
    \vartheta_{\kappa, j, \beta}( f \otimes \lambda ) = \left. \left[ \lambda(\delta_{\kappa, j}) \star f \right] \right|_{\hat{\iota}^{-1}\pi_{\circ}^{-1}U} 
    \]
    where $f \in C^{\opn{\infty}}(\pi_{\circ}^{-1}U)$, $\lambda \in V_{\kappa}^*$, and $\delta_{\kappa, j}$ is defined in Definition \ref{DefinitionOfDeltakappaj}. Then:
    \begin{enumerate}
        \item The morphism $\vartheta_{\kappa, j, \beta}$ is $K_{\circ} \cap \mbf{H}(\mbb{R})$-equivariant.
        \item For any $m \geq 0$, the following diagram commutes
        \[
\begin{tikzcd}[column sep=large]
{\left( C^{\opn{\infty}}(\pi_{\circ}^{-1}U) \otimes V_{\kappa}^* \otimes \bigwedge^m \overline{\ide{u}}_G^* \right)^{K_{\circ}}} \arrow[d, "\alpha_1"'] \arrow[r, "{\vartheta_{\kappa, j, \beta} \otimes \opn{pr}}"]     & {C^{\opn{\infty}}(\hat{\iota}^{-1}\pi_{\circ}^{-1}U) \otimes \sigma_{\kappa}^{[j]} \otimes \bigwedge^m \overline{\ide{u}}_H^*} \arrow[d, "\alpha_2"] \\
C^{\opn{\infty}}(\pi_{\circ}^{-1}U) \otimes V_{\kappa}^* \otimes \bigwedge^m \overline{\ide{u}}_G^* \otimes \overline{\ide{u}}_H^* \arrow[r, "{\vartheta_{\kappa, j, \beta} \otimes \opn{pr} \otimes \opn{id}}"] & {C^{\opn{\infty}}(\hat{\iota}^{-1}\pi_{\circ}^{-1}U) \otimes \sigma_{\kappa}^{[j]} \otimes \bigwedge^m \overline{\ide{u}}_H^* \otimes \overline{\ide{u}}_H^*}            
\end{tikzcd}
        \]
        where $\alpha_i$ satisfies 
        \[
        \langle \alpha_i(f \otimes \lambda \otimes \gamma), X \rangle = (X \star f) \otimes \lambda \otimes \gamma \text{ for } \left\{ \begin{array}{cc} f \in C^{\opn{\infty}}(\pi_{\circ}^{-1}U), \lambda \in V_{\kappa}^*, \gamma \in \bigwedge^m \overline{\ide{u}}_G^*, X \in \overline{\ide{u}}_H & \text{ if } i=1 \\ f \in C^{\opn{\infty}}(\hat{\iota}^{-1}\pi_{\circ}^{-1}U), \lambda \in \sigma_{\kappa}^{[j]}, \gamma \in \bigwedge^m \overline{\ide{u}}_H^*, X \in \overline{\ide{u}}_H & \text{ if } i=2 \end{array} \right.
        \]
        and $\opn{pr} \colon \bigwedge^m \overline{\ide{u}}_G^* \to \bigwedge^m \overline{\ide{u}}_H^*$ is the map induced from the dual of the inclusion $\overline{\ide{u}}_H \hookrightarrow \overline{\ide{u}}_G$.
        \item The morphism $\vartheta_{\kappa, j, \beta}$ induces a morphism
    \[
    \mathscr{M}_{G, \kappa^*}(U) \cong \left( C^{\opn{\infty}}(\pi_{\circ}^{-1}U) \otimes V_{\kappa}^* \right)^{(\overline{\ide{p}}_{\circ}, K_{\circ})} \to \left( C^{\opn{\infty}}(\hat{\iota}^{-1}\pi_{\circ}^{-1}U) \otimes \sigma_{\kappa}^{[j]} \right)^{(\overline{\ide{p}}_{\circ} \cap \ide{h}, K_{\circ} \cap \mbf{H}(\mbb{R}))} \cong \hat{\iota}_* \mathscr{M}_{H, \sigma_{\kappa}^{[j]}}(U)
    \]
    which coincides with the morphism in Proposition \ref{ClassicalThetaPreservesHolProp}.
    \end{enumerate}
\end{lemma}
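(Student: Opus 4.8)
The statement has three parts, and the natural strategy is to treat (1) and (2) as explicit manipulations with the $\ide{g}$-action $\star$ on smooth functions, and then deduce (3) by comparing the analytic picture with the algebraic/$\mathcal{D}$-module picture from \S\ref{DmodulesOnFLsection}. For (1): the map $\vartheta_{\kappa,j,\beta}$ is built from the pairing $\lambda \mapsto \lambda(\delta_{\kappa,j})$ followed by the polynomial action $\star$ (identifying $\mathcal{U}(\ide{u}_G) \cong C^{\opn{pol}}(\mbb{C}^{\oplus 2n-1},\mbb{C})$) and then restriction along $\hat\iota$. Since $\delta_{\kappa,j}$ is by construction (Definition \ref{DefinitionOfDeltakappaj}) an eigenvector for the diagonal action of $M_{\mbf{H}}(\mbb{Q}_p)$ with eigencharacter $\sigma_{\kappa}^{[j],-1}$, and the polynomial action $\star$ on $C^{\infty}(\pi_\circ^{-1}U)$ is $K_\circ$-equivariant (via \eqref{ActionOnCpolEqn}) hence $K_\circ \cap \mbf{H}(\mbb{R})$-equivariant, the required equivariance follows by chasing the tensor factors: for $k \in K_\circ \cap \mbf{H}(\mbb{R})$, $k \cdot \vartheta_{\kappa,j,\beta}(f\otimes\lambda) = \vartheta_{\kappa,j,\beta}(k\cdot(f\otimes\lambda))$ twisted by the eigencharacter $\sigma_{\kappa}^{[j]}$ on the target, which is exactly the equivariant structure on $C^{\infty}(\hat\iota^{-1}\pi_\circ^{-1}U)\otimes\sigma_{\kappa}^{[j]}$. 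I would also need to check that the restriction map along $\hat\iota$ intertwines the $K_\circ\cap\mbf{H}(\mbb{R})$-actions, which is immediate since $\hat\iota$ is induced by right-translation by $\hat\gamma$.

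For (2): the maps $\alpha_i$ encode ``applying one more derivation in the $\overline{\ide{u}}_H$-direction before/after $\vartheta$''. The content of the commutativity is that the polynomial operator $\lambda(\delta_{\kappa,j})\star(-)$ and the derivation $X\star(-)$ for $X \in \overline{\ide{u}}_H$ satisfy a Leibniz-type relation whose ``error terms'' again lie in the image of $\overline{\ide{u}}_H$ after restriction. Concretely, $X\star(p\star f) = p\star(X\star f) + [X,p]\star f$ in $\mathcal{U}(\ide{g})$, and the commutator $[X,p]$ with $X = E_{i,1}$ ($i \in \{2,\dots,n\}$) and $p$ a polynomial in the $x_{n+1},\dots,x_{2n}$ coordinates produces, by the bracket relations $[E_{i,1},E_{1,k}]=E_{i,k}$ and $[E_{i,k},E_{1,k'}]=0$, exactly a sum $\sum_k \frac{\partial p}{\partial x_k}\cdot(E_{i,k}\star -)$ with $E_{i,k}\in\ide{m}_G$ — this is precisely the analytic avatar of Corollary \ref{KeyCorForHolOnX}. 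Passing to $V_\kappa^*$-coefficients and using that $E_{i,k}$ acts on $V_\kappa^*$ algebraically (hence the corresponding terms vanish modulo the relations defining the sheaves), one gets the claimed diagram. The main bookkeeping here is tracking the $\bigwedge^m\overline{\ide{u}}_G^*$ and $\bigwedge^m\overline{\ide{u}}_H^*$ factors correctly under $\opn{pr}$ — I would set this up so that the wedge factors are passive spectators and the only active part is the single extra $\overline{\ide{u}}_H^*$ slot, reducing to the one-variable Leibniz computation above.

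For (3): part (2) with $m=0$ shows that if $f\otimes\lambda$ represents a section of $\mathscr{M}_{G,\kappa^*}$ — i.e. is killed by $\overline{\ide{p}}_\circ$, in particular by $\overline{\ide{u}}_G\supset\overline{\ide{u}}_H$ — then $\vartheta_{\kappa,j,\beta}(f\otimes\lambda)$ is killed by $\overline{\ide{u}}_H$, so it lands in $\hat\iota_*\mathscr{M}_{H,\sigma_\kappa^{[j]}}$; one also needs that it is killed by the reductive part of $\overline{\ide{p}}_\circ\cap\ide{h}$, which follows from the $M_{\mbf{H}}$-eigenvector property of $\delta_{\kappa,j}$ from part (1) together with $(\overline{\ide{p}}_\circ,K_\circ)$-invariance of $f$. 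This is the analytic analogue of Proposition \ref{ClassicalThetaPreservesHolProp}. To identify this analytic morphism with the algebraic one of Proposition \ref{ClassicalThetaPreservesHolProp}, I would invoke the standard dictionary (as in \cite{Su19}) between the automorphic sheaf description $\mathcal{V}(U)\cong(C^\infty(\pi_\circ^{-1}U)\otimes V)^{(\overline{\ide{p}}_\circ,K_\circ)}$ and the torsor description $\mathscr{N}_{G,\kappa}=(\mathscr{N}_G\otimes V_\kappa)^{M_{\mbf{G}}}$: under this dictionary the action $\star$ of $\ide{u}_G$ on $C^\infty$-functions matches the derivation action $\star_{\ide{u}}$ on $\mathscr{N}_G$ (both come from differentiating the $G$-action on frames, i.e. both encode the Gauss--Manin connection composed with Kodaira--Spencer, cf. Lemma \ref{LemmaConcreteDescriptionNablai}), and hence the polynomial-action-then-pair construction in Definition \ref{ComplexThetaDefCoincidesWithOldDefLemma} matches Definition \ref{DefinitionOfScrPullbackNGNH} term by term.

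\textbf{Main obstacle.} The genuinely delicate point is (3), specifically checking that the analytic action $\star$ of $\ide{u}_G$ really is the one matching $\star_{\ide{u}}$ under the comparison of the two sheaf descriptions — i.e. that the complex-analytic Gauss--Manin/Kodaira--Spencer identification used implicitly in the $C^\infty$ picture agrees with the algebraic one from \S\ref{PassageToShVars} (this is exactly the content of Proposition \ref{TXTFLrelation}, which cites \cite{HarrisAVBI}). Parts (1) and (2) are essentially formal Lie-algebra computations once the notation is fixed; the work is in (2)'s bracket bookkeeping, but that is a direct translation of Corollary \ref{KeyCorForHolOnX}.
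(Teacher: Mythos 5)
Your outline for parts (1) and (3) is sound: part (1) is indeed formal equivariance-chasing, and part (3) is handled exactly as you propose — the $m=0$ case of (2) shows $\vartheta_{\kappa,j,\beta}$ sends $(\overline{\ide{p}}_\circ,K_\circ)$-invariants to $(\overline{\ide{p}}_\circ\cap\ide{h},K_\circ\cap\mbf{H}(\mbb{R}))$-invariants, and the identification with the algebraic morphism comes from comparing the $\star$-action on $C^\infty$ with the $\tilde{\star}_{\mathcal{D}}$-action of Lemma \ref{HorzActionOnFL}, both being induced by $\star_r$ (right-translation).

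But your treatment of part (2) has a genuine gap, and it is not a bookkeeping issue. After applying the Leibniz rule $X\star(p\star f)=p\star(X\star f)+[X,p]\star f$ with $X=E_{i,1}\in\overline{\ide{u}}_H$ you correctly identify the error term as $\sum_k\frac{\partial p}{\partial x_k}\star(E_{i,k}\star f)$ with $E_{i,k}\in\ide{m}_G$, and you then assert that ``the corresponding terms vanish modulo the relations defining the sheaves.'' They do not. The error term $F'$ is the difference of the two compositions in the diagram; by $K_\circ$-equivariance of $F$ it repackages (via $E_{i,k}\star F(v_l\otimes\gamma)=F(E_{i,k}\cdot v_l\otimes\gamma)$) into the image of a $K_\circ\cap\mbf{H}(\mbb{R})$-equivariant map $V_\kappa\otimes\bigwedge^m\overline{\ide{u}}_H\otimes S_{-(j-1)}\to C^\infty(\hat\iota^{-1}\pi_\circ^{-1}U)$, but this is a genuinely nonzero map in general — the obstruction to $F'=0$ lives in $\opn{Hom}_{K_\circ\cap\mbf{H}(\mbb{R})}\bigl(\sigma_\kappa^{[j],-1}\otimes\overline{\ide{u}}_H,\ V_\kappa\otimes S_{-(j-1)}\bigr)$. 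That this Hom-space vanishes is the heart of the argument, and it is exactly the branching-law input established in the proofs of Proposition \ref{ClassicalThetaPreservesHolProp} and Theorem \ref{TheoremForClassicalBranching} (ultimately Pieri's rule, Lemma \ref{PierisRuleLemma}(2)). It uses the numerical constraint $0\le j_{\tau_0}\le\kappa_{n+1,\tau_0}-\kappa_{n+2,\tau_0}$ from $(\kappa,j)\in\mathcal{E}$ in an essential way. Corollary \ref{KeyCorForHolOnX} only supplies the Leibniz identity; it does not supply the vanishing. Without this representation-theoretic step, part (2) — and hence part (3), which depends on it — does not follow.
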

\begin{proof}
    The $K_{\circ} \cap \mbf{H}(\mbb{R})$-equivariance is clear. We claim that part (2) follows from the same argument as in the proof of Proposition \ref{ClassicalThetaPreservesHolProp}. Indeed, let $F \in {\left( C^{\opn{\infty}}(\pi_{\circ}^{-1}U) \otimes V_{\kappa}^* \otimes \bigwedge^m \overline{\ide{u}}_G^* \right)^{K_{\circ}}}$ and consider the element:
    \[
    F' \defeq \left[ \alpha_2 \circ ( \vartheta_{\kappa, j, \beta} \otimes \opn{pr}) \right] (F) - \left[ (\vartheta_{\kappa, j, \beta} \otimes \opn{pr} \otimes \opn{id}) \circ \alpha_1 \right](F)
    \]
    which is fixed by the action of $K_{\circ} \cap \mbf{H}(\mbb{R})$. This is therefore equivalent to a $K_{\circ} \cap \mbf{H}(\mbb{R})$-equivariant morphism:
    \[
    F' \colon \sigma_{\kappa}^{[j], -1} \otimes \bigwedge^m \overline{\ide{u}}_H \otimes \overline{\ide{u}}_H \to C^{\opn{\infty}}(\hat{\iota}^{-1}\pi_{\circ}^{-1}U)
    \]
    which we wish to show is zero. Consider the elementary matrices $E_{i, 1} \in \overline{\ide{u}}_H$ with $i \in \{2, \dots, n \}$, which form a basis of this Lie algebra. For any such elementary matrix $E_{i, 1}$ (with $i \in \{2, \dots, n \}$) and any polynomial $Q \in C^{\opn{pol}}(\mbb{C}^{\oplus 2n-1}, \mbb{C})$ in the last $n$ coordinates $X_{n+1}, \dots, X_{2n}$ of $\mbb{C}^{\oplus 2n-1}$, we have
    \[
    E_{i, 1} \star ( Q \star f ) = Q \star (E_{i, 1} \star f) + \sum_{j=n+1}^{2n} \frac{\partial Q}{\partial X_j} \star (E_{i, j} \star f) 
    \]
    for any $f \in C^{\opn{\infty}}(\pi_{\circ}^{-1}U)$. Fix a basis $\{ v_l \}$ of $V_{\kappa}$ and let $\{v_l^*\}$ denote the dual basis. By viewing $F$ as a $K_{\circ}$-equivariant morphism $V_{\kappa} \otimes \bigwedge^m \overline{\ide{u}}_G \to C^{\opn{\infty}}(\pi_{\circ}^{-1}U)$, we see that the morphism $F'$ satisfies 
    \begin{align*} 
    F'(1 \otimes \gamma \otimes E_{i, 1}) &= \sum_{l}  \left. \left[E_{i, 1} \star v_l^*(\delta_{\kappa, j}) \star F(v_l \otimes \gamma) - v_l^*(\delta_{\kappa, j}) \star E_{i, 1} \star F(v_l \otimes \gamma) \right] \right|_{\hat{\iota}^{-1}\pi_{\circ}^{-1}U} \\
    &= \sum_l  \sum_{j=n+1}^{2n} \frac{\partial v_l^*(\delta_{\kappa, j})}{\partial X_j} \star ( E_{i, j} \star F(v_l \otimes \gamma) )|_{\hat{\iota}^{-1}\pi_{\circ}^{-1}U}
    \end{align*} 
    where $\gamma \in \bigwedge^m \overline{\ide{u}}_H$ (note that $\gamma$ is killed by $E_{i, j}$ for $i \in \{2, \dots, n\}$ and $j \in \{n+1, \dots, 2n\}$). We can define a $K_{\circ} \cap \mbf{H}(\mbb{R})$-equivariant map
    \[
    F'' \colon V_{\kappa} \otimes \bigwedge^m \overline{\ide{u}}_H \otimes S_{-(j-1)} \to C^{\opn{\infty}}(\hat{\iota}^{-1}\pi_{\circ}^{-1}U)
    \]
    such that the image of $F'$ is contained in the image of $F''$. Then, by the same argument as in Proposition \ref{ClassicalThetaPreservesHolProp}, one can then show that the obstruction to inducing a morphism $\mathscr{M}_{G, \kappa^*} \to \hat{\iota}_* \mathscr{M}_{H, \sigma_{\kappa}^{[j]}}$ is given by an element in 
    \[
    \opn{Hom}_{K_{\circ} \cap \mbf{H}(\mbb{R})}\left( \sigma_{\kappa}^{[j], -1} \otimes \overline{\ide{u}}_H, V_{\kappa} \otimes S_{-(j-1)} \right) .
    \]
    But this hom-space is trivial for the same reasons as in Proposition \ref{ClassicalThetaPreservesHolProp}. This also implies that the morphism $F'$ is zero, and hence completes the proof of part (2).

    It remains to show that the induced morphism in part (3) coincides with the one in Proposition \ref{ClassicalThetaPreservesHolProp}. But by the calculations in \S \ref{DmodulesOnFLsection}, the action of $C^{\opn{pol}}(\mbb{C}^{\oplus 2n-1}, \mbb{C})$ is induced from the action of $\ide{u}_G$ in Lemma \ref{HorzActionOnFL} (denoted $\tilde{\star}_{\mathcal{D}}$). Indeed, let $\pi_{\mbb{C}} \colon \mbf{G}(\mbb{C}) \to \mbf{G}(\mbb{C})/P_{\mbf{G}}^{\opn{std}}(\mbb{C})$ denote the natural $P_{\mbf{G}}^{\opn{std}}(\mbb{C})$-torsor. Then, from Lemma \ref{HorzActionOnFL}, the action of $\ide{u}_G$ on $\pi_{\mbb{C}, *}\mathcal{O}_{\mbf{G}(\mbb{C})}$ satisfies $X \tilde{\star}_{\mathcal{D}} f = X \star_r f$ (with $X \in \ide{u}_G$ and $f \in \pi_{\mbb{C}, *}\mathcal{O}_{\mbf{G}(\mbb{C})}$), where $\star_r$ denotes the action induced from right-translation of the argument. One then sees that the action of $\ide{u}_G$ on $\pi_{\circ, *}C^{\infty}(-)$ (defined just before this lemma) extends $\tilde{\star}_{\mathcal{D}}$. The compatibility between the two morphisms now follows.
\end{proof}

For any $m \geq 0$, let $\iota \colon \bigwedge^{m} \overline{\ide{u}}_H \hookrightarrow \bigwedge^{m} \overline{\ide{u}}_G$ denote the natural map induced from $\mbf{H} \hookrightarrow \mbf{G}$.

\begin{lemma} \label{ChevalleyEComplex}
    Let $(\kappa, j) \in \mathcal{E}$. Then $\vartheta_{\kappa, j, \beta}$ induces a morphism of Chevalley-Eilenberg complexes:
    \begin{align*} 
    \opn{Hom}_{K_{\circ}}\left( \wedge^{\bullet}\overline{\ide{u}}_G, C^{\infty}(\pi_{\circ}^{-1}-) \otimes V_{\kappa}^* \right) &\to \opn{Hom}_{K_{\circ} \cap \mbf{H}(\mbb{R})}\left( \wedge^{\bullet}\overline{\ide{u}}_H, C^{\infty}(\hat{\iota}^{-1}\pi_{\circ}^{-1}-) \otimes \sigma_{\kappa}^{[j]} \right) \\ 
    f &\mapsto \vartheta_{\kappa, j, \beta} \circ f \circ \iota 
    \end{align*} 
    and, after passing to cohomology, coincides with the map
    \[
    \vartheta_{\kappa, j, \beta} \colon \opn{H}^{n-1}\left( S_{\mbf{G}, \opn{Iw}}(p^{\beta}), \mathscr{M}_{G, \kappa^*} \right) \to \opn{H}^{n-1}\left( S_{\mbf{H}, \diamondsuit}(p^{\beta}), \mathscr{M}_{H, \sigma_{\kappa}^{[j]}} \right)
    \]
    from \S \ref{ClassicalEvaluationMapsSubSec} via the identifications in \cite[\S 1.1]{Su19}.
\end{lemma}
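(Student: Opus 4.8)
The statement asserts two things: first, that the explicit formula $f \mapsto \vartheta_{\kappa, j, \beta} \circ f \circ \iota$ genuinely defines a morphism of Chevalley--Eilenberg complexes (i.e. it is compatible with the differentials), and second, that the induced map on cohomology is the map $\vartheta_{\kappa, j, \beta}$ of \S \ref{ClassicalEvaluationMapsSubSec} under Su's identifications. The plan is to establish these two points in order, reducing each to results already in hand --- principally Lemma \ref{ComplexThetaDefCoincidesWithOldDefLemma} and the $\mathcal{D}$-module computations of \S \ref{DmodulesOnFLsection}.

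First I would recall the relevant identification from \cite[\S 1.1]{Su19}: for the automorphic bundle $\mathscr{M}_{G, \kappa^*}$ on $S_{\mbf{G}, \opn{Iw}}(p^\beta)(\mbb{C})$, the coherent cohomology $\opn{H}^{\bullet}(S_{\mbf{G}, \opn{Iw}}(p^{\beta}), \mathscr{M}_{G, \kappa^*})$ is computed by the complex $\opn{Hom}_{K_\circ}(\wedge^{\bullet}\overline{\ide{u}}_G, C^{\infty}(\pi_\circ^{-1}-) \otimes V_\kappa^*)$ with the Chevalley--Eilenberg differential built from the $\overline{\ide{p}}_\circ$-action --- this is the $(\overline{\ide{p}}_\circ, K_\circ)$-cohomology of the sheaf of smooth functions on the $K_\circ$-torsor, Dolbeault-type resolution of $\mathscr{M}_{G, \kappa^*}$. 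Concretely, the degree $n-1$ term is $\left( C^\infty(\pi_\circ^{-1}U) \otimes V_\kappa^* \otimes \bigwedge^{n-1}\overline{\ide{u}}_G^* \right)^{K_\circ}$ and the differential involves the operators $X \star (-)$ for $X \in \overline{\ide{u}}_G$. The same holds for $\mbf{H}$ with $\overline{\ide{u}}_H$, $\sigma_\kappa^{[j]}$. The map $f \mapsto \vartheta_{\kappa,j,\beta} \circ f \circ \iota$ is then precisely the map whose behaviour on one differential-step is governed by the commutative square in Lemma \ref{ComplexThetaDefCoincidesWithOldDefLemma}(2): that square says exactly that applying $\vartheta_{\kappa,j,\beta} \otimes \opn{pr}$ and then the ``apply $\overline{\ide{u}}_H$'' map $\alpha_2$ agrees with first applying $\alpha_1$ (the ``apply $\overline{\ide{u}}_H \hookrightarrow \overline{\ide{u}}_G$'' map) and then $\vartheta_{\kappa,j,\beta} \otimes \opn{pr} \otimes \opn{id}$. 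Unwinding, this yields the chain-map property: the differential on the $\mbf{H}$-side applied to $\vartheta_{\kappa,j,\beta}\circ f \circ \iota$ equals $\vartheta_{\kappa,j,\beta}$ applied to (the appropriate restriction/projection of) the differential of $f$. I would spell this out by identifying the Chevalley--Eilenberg differential as an alternating sum of terms $\langle \alpha_?(-), E_{i,1}\rangle$ over the basis $\{E_{i,1} : i = 2,\dots,n\}$ of $\overline{\ide{u}}_H$ (together with the bracket terms, which match automatically since $\iota$ is a Lie algebra map and $\vartheta_{\kappa,j,\beta}$ is $K_\circ \cap \mbf{H}(\mbb{R})$-equivariant by part (1)), and then invoke the square in part (2) term by term. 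The $K_\circ \cap \mbf{H}(\mbb{R})$-equivariance from Lemma \ref{ComplexThetaDefCoincidesWithOldDefLemma}(1) guarantees that the construction lands in the $\mbf{H}$-relative-Lie-algebra complex.

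Next, for the identification on cohomology: by Lemma \ref{ComplexThetaDefCoincidesWithOldDefLemma}(3), the degree-zero component of this chain map (restricted to cocycles that are already $\overline{\ide{p}}_\circ$-horizontal, i.e. lie in $\mathscr{M}_{G,\kappa^*}$) is exactly the sheaf map $\vartheta_{\kappa,j,\beta} \colon \mathscr{M}_{G,\kappa^*} \to \hat{\iota}_*\mathscr{M}_{H,\sigma_\kappa^{[j]}}$ of Proposition \ref{ClassicalThetaPreservesHolProp}. Since the Chevalley--Eilenberg complexes are (functorial) resolutions computing $\opn{H}^{n-1}(S_{\mbf{G},\opn{Iw}}(p^\beta), \mathscr{M}_{G,\kappa^*})$ and $\opn{H}^{n-1}(S_{\mbf{H},\diamondsuit}(p^\beta), \mathscr{M}_{H,\sigma_\kappa^{[j]}})$ respectively, and since a map of resolutions lifting a given sheaf map induces on cohomology the map induced by that sheaf map, the induced map on $\opn{H}^{n-1}$ is the map $\vartheta_{\kappa,j,\beta}$ of \S \ref{ClassicalEvaluationMapsSubSec} --- one also uses here the restriction along the open/closed embedding $S_{\mbf{G},\opn{Iw}}(p^\beta) \hookrightarrow X_{\mbf{G},\opn{Iw},F^{\opn{cl}}}(p^\beta)$ of Lemma \ref{LemmaSDintoPEL} to match the algebraic and analytic pictures, and the GAGA-type comparison between coherent cohomology of the projective variety and the $(\overline{\ide{p}}_\circ, K_\circ)$-cohomology that is standard (and implicit in \cite{Su19}).

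The main obstacle is the careful bookkeeping in the first step: verifying that $f \mapsto \vartheta_{\kappa,j,\beta}\circ f \circ \iota$ commutes with the Chevalley--Eilenberg differentials genuinely requires matching the differential's ``bracket terms'' $\langle \gamma, [X,Y]\rangle$ (for $X, Y \in \overline{\ide{u}}_H$, which vanish since $\overline{\ide{u}}_H$ is abelian, but in the full parabolic complex involve $\overline{\ide{p}}_\circ \cap \ide{h}$) as well as the ``action terms'' $X\star(-)$, and the content of Lemma \ref{ComplexThetaDefCoincidesWithOldDefLemma}(2) --- which itself rests on the commutator identity $E_{i,1}\star(Q\star f) = Q\star(E_{i,1}\star f) + \sum_j \tfrac{\partial Q}{\partial X_j}\star(E_{i,j}\star f)$ and the vanishing of $\opn{Hom}_{K_\circ \cap \mbf{H}(\mbb{R})}(\sigma_\kappa^{[j],-1}\otimes \overline{\ide{u}}_H, V_\kappa \otimes S_{-(j-1)})$ from the proof of Theorem \ref{TheoremForClassicalBranching} --- is precisely what makes the one-step diagram commute. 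Once that single commuting square is in place, the full chain-map property follows formally; I expect no essential difficulty beyond organizing the indices and signs of the Chevalley--Eilenberg differential.
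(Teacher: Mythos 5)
Your proposal is correct and follows the same route as the paper, which compresses everything to the single observation that both $\overline{\ide{u}}_G$ and $\overline{\ide{u}}_H$ are abelian (so the Chevalley--Eilenberg differentials reduce to the alternating sum of action terms with no bracket contributions) and then invokes Lemma \ref{ComplexThetaDefCoincidesWithOldDefLemma}. Your account makes explicit what the paper leaves terse, but the key ideas --- abelianity kills the bracket terms, part (2) of that lemma handles the action terms, and part (3) identifies the map on cohomology --- are exactly the paper's.
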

\begin{proof}
Noting that $\overline{\ide{u}}_G$ and $\overline{\ide{u}}_H$ are abelian, this follows from Lemma \ref{ComplexThetaDefCoincidesWithOldDefLemma}.
\end{proof}

Finally, we can relate the evaluation maps above to unitary Friedberg--Jacquet periods. Set $[\mbf{H}] \defeq \mbf{H}(\mbb{Q}) \backslash \mbf{H}(\mbb{A}) / A_{\mbf{G}}(\mbb{R})^{\circ}$ (note that $A_{\mbf{G}} \subset \mbf{H}$). Let $dh$ denote the Tamagawa measure on $[\mbf{H}]$. Fix bases $\alpha_{+}$ and $\alpha_{-}$ of $\bigwedge^{n-1} \ide{u}_H$ and $\bigwedge^{n-1} \overline{\ide{u}}_H$ respectively, and recall the definition of $\Delta_{\kappa}^{[j]} \in \mathcal{U}(\ide{g})$ from Definition \ref{AppendixDefOfDELTAkappaJDiffOp}. 

\begin{proposition} \label{ClassicalComplexUFJrelationProp}
    Let $(\kappa, j) \in \mathcal{E}$ and $\chi \in \Sigma_{\kappa, j}(\ide{N}_{\beta})$. Let $\eta \in \opn{H}^{n-1}\left( S_{\mbf{G}, \opn{Iw}}(p^{\beta}), \mathscr{M}_{G, \kappa^*} \right)$, which we view as a $K_{\circ}$-equivariant morphism $F \colon \bigwedge^{n-1}\overline{\ide{u}}_G \otimes V_{\kappa} \to C^{\infty}([\mbf{G}]/K)$. Set $\phi = F(\iota(\alpha_{-}) \otimes v_{\kappa}^{[0]})$. Then
    \[
    \opn{Ev}_{\kappa, j, \chi, \beta}(\eta) = (2 \pi i)^{-(n-1)} \opn{Vol}(K_{H, \beta}; dh)^{-1} \int_{[\mbf{H}]} \left(\Delta_{\kappa}^{[j]} \cdot \phi \right)(h \hat{\gamma}) \chi'\left( \frac{\opn{det}h_2}{\opn{det}h_1} \right) dh 
    \]
    where we write $h = (h_1, h_2)$ for the components preserving the corresponding factor in the decomposition $W = W_1 \oplus W_2$, we set $K_{H, \beta} \defeq \left( K_{\circ} \cap \mbf{H}(\mbb{R}) \right) \cdot \left(K^p \cap \mbf{H}(\mbb{A}_f^p) \right) \cdot K^H_{\diamondsuit}(p^{\beta})$, and we view $\hat{\gamma} \in \mbf{G}(\mbb{Q}_p) \subset \mbf{G}(\mbb{A}_f)$.
\end{proposition}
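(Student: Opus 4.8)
The strategy is to unwind both sides into integrals over $[\mbf{H}]$ and match the integrands. The left-hand side is $\opn{Ev}_{\kappa, j, \chi, \beta}(\eta) = \langle \vartheta_{\kappa, j, \beta}(\eta), [\chi] \rangle$, which by Serre duality over $S_{\mbf{H}, \diamondsuit}(p^{\beta})(\mbb{C})$ (of dimension $n-1$) is computed as a cup product pairing in degrees $(n-1, 0)$. Using Lemma \ref{ChevalleyEComplex}, I would represent $\vartheta_{\kappa, j, \beta}(\eta)$ by the Chevalley--Eilenberg cocycle $\vartheta_{\kappa, j, \beta} \circ F \circ \iota$, so that its value on $\alpha_{-}$ is (up to the eigencharacter bookkeeping) the function $h \mapsto [\delta_{\kappa, j} \star F(\iota(\alpha_{-}) \otimes -)](h)$, evaluated using the action $\star$ of $\mathcal{U}(\ide{u}_G)$ on $C^{\infty}$. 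The first key step is therefore to identify this with $(\Delta_{\kappa}^{[j]} \cdot \phi)(h\hat{\gamma})$: here one uses that $\delta_{\kappa, j}$ is built from $v_{\kappa}^{[j]}$ via $1 \times \Phi$ (Definition \ref{DefinitionOfDeltakappaj}), that $\phi = F(\iota(\alpha_{-}) \otimes v_{\kappa}^{[0]})$, and that $\Delta_{\kappa}^{[j]}$ (Definition \ref{AppendixDefOfDELTAkappaJDiffOp}) is precisely the differential operator realising the branching vector $x_{\kappa}^{[j]}$ --- the translate $\hat{\gamma} = \gamma w_n$ accounts for the fact that $\vartheta_{\kappa, j, \beta}$ passes through $\hat{\iota}$, i.e. through right-translation by $\hat{\gamma}$, together with the $(u^{-1}, u^{-1})$-twist already appearing in the proof of Proposition \ref{DaggerAndClassicalUnderBPisosProp}.

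The second key step is to compute the Serre duality pairing explicitly. Recall $[\chi]$ is the section of $\mathscr{M}_{H, \sigma_{\kappa}^{[j], \vee}}$ which, over $\mbb{C}$, is the function $[x, h, h'] \mapsto \xi_{\kappa}^{[j]}(h) \chi'(\nu(h'))$. Identifying cohomology with $(\ide{h}, K_{\circ} \cap \mbf{H}(\mbb{R}))$-cohomology à la Su, the degree-$(n-1)$ coherent cohomology of $S_{\mbf{H}, \diamondsuit}(p^{\beta})$ pairs against $\opn{H}^0$ via integration over $[\mbf{H}]$ of the product of the two automorphic functions against a top-degree form; concretely, the pairing of the cocycle $c$ (valued in $\wedge^{n-1}\overline{\ide{u}}_H^{*} \otimes \sigma_{\kappa}^{[j]}$, paired on $\alpha_{-}$) with $[\chi]$ (valued in $\sigma_{\kappa}^{[j], \vee}$, i.e. carrying the $\wedge^{n-1}\ide{u}_H$-part $\alpha_{+}$ and the dual weight) is $\int_{[\mbf{H}]} c(\alpha_{-})(h) \cdot \xi_{\kappa}^{[j]}(h) \chi'(\nu(h))\, dh$ up to the normalisation of the Haar measure relative to $dh$, which is where $\opn{Vol}(K_{H, \beta}; dh)$ enters (the function on $[\mbf{H}]/K_{H,\beta}$ must be integrated against the quotient measure). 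The character $\xi_{\kappa}^{[j]}(h)$ combines with the eigencharacter $\sigma_{\kappa}^{[j], -1}$ appearing in $\vartheta_{\kappa, j, \beta}$ and with the weight twist from $v_{\kappa}^{[0]}$ to produce exactly $\chi'\bigl(\tfrac{\opn{det}h_2}{\opn{det}h_1}\bigr)$, since $\nu(h) = \opn{det}h_2/\opn{det}h_1$ and $\chi = \chi' \circ \mathcal{N}$.

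The third step is tracking the factor $(2\pi i)^{-(n-1)}$. This is the standard comparison between the algebraic (de Rham/Serre-duality) normalisation of the cup product in degree $n-1$ and the $C^{\infty}$ (de Rham with the $dz \wedge d\bar z$-type volume form) normalisation; each of the $n-1$ ``anti-holomorphic directions'' in $\overline{\ide{u}}_H$ contributes a $(2\pi i)^{-1}$ when one writes the Serre-duality trace in terms of integration against the invariant measure $dh$ on $[\mbf{H}]$. I would cite the analogous normalisation computations in \cite{Su19} (and the conventions in \cite{UFJ}) rather than redo them. Finally, one assembles: $\opn{Ev}_{\kappa, j, \chi, \beta}(\eta) = (2\pi i)^{-(n-1)} \opn{Vol}(K_{H,\beta}; dh) \int_{[\mbf{H}]} (\Delta_{\kappa}^{[j]} \cdot \phi)(h\hat{\gamma}) \chi'(\opn{det}h_2/\opn{det}h_1)\, dh$.

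\textbf{Main obstacle.} The delicate point is the precise bookkeeping in the second step: matching the eigencharacter $\sigma_{\kappa}^{[j]}$ (shifted by $2\rho_{H,\opn{nc}}$ when passing to the Serre dual $\sigma_{\kappa}^{[j],\vee}$) against the infinity-type character $\xi_{\kappa}^{[j]}$ and the weight of $v_{\kappa}^{[0]}$, while simultaneously keeping track of the $\hat{\gamma}$-translation and the $(u^{-1},u^{-1})$-twist so that the argument of $\Delta_{\kappa}^{[j]}\cdot\phi$ comes out as $h\hat{\gamma}$ and not some other translate. One must check that all these twists conspire to leave behind exactly $\chi'\circ\nu$ with no residual character of $\mbf{H}(\mbb{R})$ or spurious power of $\hat{\gamma}$; this is essentially the consistency check that Assumption \ref{AssumpOnKJforACchar} (which forces $\sigma_{\kappa}^{[j],\vee}$ to extend to $\mbf{H}$ and pins down $\kappa_0, \kappa_{1,\tau_0}+\kappa_{n+1,\tau_0}, w$) was designed to make work, together with the defining properties of $x_{\kappa}^{[j]}$ in Theorem \ref{TheoremForClassicalBranching} and Corollary \ref{xkappajValuedInCor}. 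The rest is a routine (if lengthy) translation between coherent cohomology and $(\ide{g}, K)$-cohomology as in \cite{Su19}.
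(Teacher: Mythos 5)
Your proposal follows essentially the same route as the paper: use Lemma \ref{ChevalleyEComplex} (together with Lemma \ref{ComplexThetaDefCoincidesWithOldDefLemma}) to identify $\vartheta_{\kappa,j,\beta}(F)(\alpha_-)$ with the restriction of $\Delta_{\kappa}^{[j]}\cdot\phi$ along $\hat\iota$ (right-translation by $\hat\gamma$), represent $[\chi]$ as the explicit function $h\mapsto\chi'(\nu(h))$ on $\alpha_+$, identify the cup product with a $K_{H,\beta}$-invariant top-degree form, and quote a reference (the paper uses \cite[Proposition 3.8]{HarrisPartial}) for the $(2\pi i)^{-(n-1)}\opn{Vol}(K_{H,\beta};dh)$ normalisation.

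One small misattribution worth fixing: the $(u^{-1},u^{-1})$-twist you invoke from the proof of Proposition \ref{DaggerAndClassicalUnderBPisosProp} has no role here --- that twist is a device for comparing $\delta_{\kappa,j}$ with the $p$-adic vector $\delta^{\dagger}_{\kappa,j,\beta}$ and only enters the overconvergent theory. In the archimedean computation, the full force of $\hat\gamma = \gamma w_n$ is already packaged into $\hat\iota$ being right-translation by $\hat\gamma$ (see \S\ref{RelationWithUFJperiodsSubSubSec} and Lemma \ref{ComplexThetaDefCoincidesWithOldDefLemma}); there is no residual $u$-conjugation to track. What you \emph{do} need to make the first identification precise, and which your write-up leaves implicit, is the observation that the adjoint action of the operators appearing in $\Delta_{\kappa}^{[j]}$ annihilates $\iota(\alpha_-)\in\bigwedge^{n-1}\overline{\ide{u}}_H$ (the paper records this as the fact that $\iota(\alpha_-)$ is killed by the $E_{1,k,\tau_0}$ for $k\in\{n+1,\dots,2n\}$). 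This commutator vanishing is exactly what lets you slide the $\mathcal{U}(\ide{m}_G)$-part of $\Delta_{\kappa}^{[j]}$ out of $F(\iota(\alpha_-)\otimes-)$ via $K_\circ$-equivariance and rewrite $\vartheta_{\kappa,j,\beta}(F)(\alpha_-)$ as $\Delta_{\kappa}^{[j]}\cdot\phi$ rather than some other differential operator applied to $\phi$; without it the identification in your first step does not close.
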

\begin{proof}
    By Lemma \ref{ComplexThetaDefCoincidesWithOldDefLemma}, Lemma \ref{ChevalleyEComplex}, and the definition of $\Delta_{\kappa}^{[j]}$, we have 
    \[
    \vartheta_{\kappa, j, \beta}(F)(\alpha_{-}) = \left. \left[ \Delta^{[j]}_{\kappa} \cdot F(\iota(\alpha_{-}) \otimes v_{\kappa}^{[0]}) \right] \right|_{[\mbf{H}]/(\hat{\gamma} K \hat{\gamma}^{-1} \cap \mbf{H}(\mbb{A}_f))} 
    \]
    where the restriction is via the map $\hat{\iota}$. Here we have used the fact that $\iota(\alpha_{-})$ is killed by any $E_{1, k, \tau_0}$ for $k \in \{n+1, \dots, 2n\}$. Similarly, we can view $[\chi]$ as a homomorphism 
    \[
    [\chi] \colon \sigma_{\kappa}^{[j]} \otimes \bigwedge^{n-1} \ide{u}_H \to C^{\infty}([\mbf{H}]/(\hat{\gamma} K \hat{\gamma}^{-1} \cap \mbf{H}(\mbb{A}_f))) 
    \]
    which satisfies $[\chi](\alpha_+)(h) = \chi'(\opn{det}h_2/\opn{det}h_1)$. The cup product $\vartheta_{\kappa, j, \beta}(F) \smile [\chi]$ then corresponds to the $K_{H, \beta}$-invariant volume form
    \[
    h \mapsto \left(\Delta_{\kappa}^{[j]} \cdot \phi \right)(h \hat{\gamma}) \chi' \left( \frac{\opn{det}h_2}{\opn{det}h_1} \right) \alpha_{+}^* \wedge \alpha_{-}^* .
    \]
    The result now follows from \cite[Proposition 3.8]{HarrisPartial}.
\end{proof}


\section{The \texorpdfstring{$p$}{p}-adic theory} \label{ThePADICtheorySection}

In the section we describe the $p$-adic analogue of the previous section. Recall we have fixed a prime $p$ which splits completely in $F/\mbb{Q}$ (see Assumption \ref{AssumpPoddAndDoesntDivide}). 

\subsection{Igusa varieties} \label{IgusaVarietiesSection}

To be able to define the appropriate ordinary strata in the Shimura--Deligne varieties we consider, we need to introduce certain Igusa varieties. Let $\mathcal{X}_{G}$ and $\mathcal{X}_{H}$ denote the adic spaces over $\mbb{Q}_p$ associated with $X_{\mbf{G}, \mbb{Q}_p}$ and $X_{\mbf{H}, \mbb{Q}_p}$, where we have base-changed along the morphism $\mathcal{O}_{F^{\opn{cl}}, (p)} \to \mbb{Q}_p$ induced from the fixed embedding $F \hookrightarrow \Qpb$ induced by $\tau_0$. Let $\ide{X}_G$ and $\ide{X}_G$ denote the formal completions of $X_{\mbf{G}, \mbb{Z}_p}$ and $X_{\mbf{H}, \mbb{Z}_p}$ respectively along the special fibre, where we have base-changed along the morphism $\mathcal{O}_{F^{\opn{cl}}, (p)} \to \mbb{Z}_p$ induced by $\tau_0$. Since $X_{\mbf{G}}$ and $X_{\mbf{H}}$ are proper, the adic generic fibres of $\ide{X}_G$ and $\ide{X}_H$ are precisely $\mathcal{X}_G$ and $\mathcal{X}_H$. 

\subsubsection{Caraiani--Scholze Igusa varieties}

We now introduce the Igusa varieties considered in \cite[\S 4.3]{CS17}. For $\tau \in \Psi$, consider the following $p$-divisible group over $\opn{Spf}\mbb{Z}_p$:
\[
\mbb{X}_{\opn{ord}, \tau} = \left\{ \begin{array}{cc} \mu_{p^{\infty}} \oplus (\mbb{Q}_p/\mbb{Z}_p)^{\oplus 2n-1} & \tau = \tau_0 \\ (\mbb{Q}_p/\mbb{Z}_p)^{\oplus 2n} & \tau \neq \tau_0 \end{array} \right.
\]
For any $\tau \in \Psi$, we have a decomposition $\mbb{X}_{\opn{ord}, \tau} = \mbb{X}_{1, \tau} \oplus \mbb{X}_{2, \tau}$, where $\mbb{X}_{1, \tau_0} = \mu_{p^{\infty}} \oplus (\mbb{Q}_p/\mbb{Z}_p)^{\oplus n-1}$ and $\mbb{X}_{i, \tau} = (\mbb{Q}_p/\mbb{Z}_p)^{\oplus n}$ for either $\tau \neq \tau_0$ and $i = 1, 2$, or $(i, \tau) = (2, \tau_0)$. We also let $\tilde{\mbb{X}}_{\opn{ord}, \tau} = \varprojlim_{\times p} \mbb{X}_{\opn{ord}, \tau}$ and $\tilde{\mbb{X}}_{i, \tau} = \varprojlim_{\times p} \mbb{X}_{i, \tau}$ denote the universal covers, where the inverse limit is over multiplication by $p$. Let $\opn{Nilp}_{\mbb{Z}_p}$ denote the category of $\mbb{Z}_p$-algebras on which $p$ is nilpotent. In what follows, if $M$ is a locally profinite group, we let $\underline{M}$ denote the fpqc sheaf\footnote{That is, a sheaf for the fpqc topology as in \cite[Tag 03NV]{stacks-project}.} on $\opn{Nilp}^{\opn{op}}_{\mbb{Z}_p}$ given by $\underline{M}(R) = \opn{Cont}(\opn{Spec}R, M)$ (continuous maps $\opn{Spec}R \to M$ for the Zariski topology and locally profinite topology on the source and target respectively). This is consistent with the notation in \cite[\S 4.1.1]{howe2020unipotent} for example.

\begin{definition}
    For $\tau \in \Psi$, let $J_{G, \opn{ord}, \tau}$ and $J^+_{G, \opn{ord}, \tau}$ denote the fpqc sheaves on $\opn{Nilp}_{\mbb{Z}_p}^{\opn{op}}$ given by
    \[
    J_{G, \opn{ord}, \tau}(R) = \opn{Aut}( \tilde{\mbb{X}}_{\opn{ord}, \tau, R} ), \quad \quad J^+_{G, \opn{ord}, \tau}(R) = \opn{Aut}( \mbb{X}_{\opn{ord}, \tau, R} ).
    \]
    We set $J_{G, \opn{ord}} = \underline{\mbb{Q}_p^{\times}} \times \prod_{\tau \in \Psi} J_{G, \opn{ord}, \tau}$ and $J^+_{G, \opn{ord}} = \underline{\mbb{Z}_p^{\times}} \times \prod_{\tau \in \Psi} J^+_{G, \opn{ord}, \tau}$. Let $J_{H, \opn{ord}, \tau} \subset J_{G, \opn{ord}, \tau}$ (resp. $J^+_{H, \opn{ord}, \tau} \subset J^+_{G, \opn{ord}, \tau}$) denote the sub-sheaves preserving the decomposition $\tilde{\mbb{X}}_{\opn{ord}, \tau} = \tilde{\mbb{X}}_{1, \tau} \oplus \tilde{\mbb{X}}_{2, \tau}$ (resp. $\mbb{X}_{\opn{ord}, \tau} = \mbb{X}_{1, \tau} \oplus \mbb{X}_{2, \tau}$). We set $J_{H, \opn{ord}} = \underline{\mbb{Q}_p^{\times}} \times \prod_{\tau \in \Psi} J_{H, \opn{ord}, \tau}$ and $J^+_{H, \opn{ord}} = \underline{\mbb{Z}_p^{\times}} \times \prod_{\tau \in \Psi} J^+_{H, \opn{ord}, \tau}$.
\end{definition}

\begin{remark}
    Concretely, we have the following descriptions
    \[
    J_{G, \opn{ord}} = \underline{\mbb{Q}_p^{\times}} \times  \tbyt{\underline{\mbb{Q}_p^{\times}}}{\tilde{\mu_{p^{\infty}}}^{\oplus 2n-1}}{}{\underline{\opn{GL}_{2n-1}(\mbb{Q}_p)}} \times \prod_{\tau \neq \tau_0} \underline{\opn{GL}_{2n}(\mbb{Q}_p)}
    \]
    and 
    \[
    J_{G, \opn{ord}}^+ = \underline{\mbb{Z}_p^{\times}} \times  \tbyt{\underline{\mbb{Z}_p^{\times}}}{T_p\mu_{p^{\infty}}^{\oplus 2n-1}}{}{\underline{\opn{GL}_{2n-1}(\mbb{Z}_p)}} \times \prod_{\tau \neq \tau_0} \underline{\opn{GL}_{2n}(\mbb{Z}_p)}
    \]
    where $T_p\mu_{p^{\infty}}$ denotes the Tate module. The first factor will correspond to the similitude factor (see Definition \ref{DefOfCSIgusa} and Remark \ref{RemarkOnCSIgusa}).
\end{remark}

Let $\ide{X}_G^{\opn{ord}}$ and $\ide{X}_H^{\opn{ord}}$ denote the ordinary loci in $\ide{X}_G$ and $\ide{X}_H$ respectively. We now introduce the Igusa varieties:

\begin{definition} \label{DefOfCSIgusa}
    Let $\ide{IG}_G \to \ide{X}_G^{\opn{ord}}$ denote the functor on $\opn{Nilp}_{\mbb{Z}_p}^{\opn{op}}$ given by
    \[
    \ide{IG}_G(R) = \left\{ (A, \lambda, i, \eta^p, s, f_{\tau}) : \begin{array}{c} (A, \lambda, i, \eta^p) \in \ide{X}_G^{\opn{ord}}(R), s \in \underline{\mbb{Z}_p^{\times}}(R), \\  f_{\tau} \colon \mbb{X}_{\opn{ord}, \tau, R} \xrightarrow{\sim} A[\ide{p}_{\tau}^{\infty}] \end{array}  \right\}. 
    \]
    Similarly, we let $\ide{IG}_H \to \ide{X}_H^{\opn{ord}}$ denote the functor on $\opn{Nilp}^{\opn{op}}_{\mbb{Z}_p}$ such that $\ide{IG}_H(R)$ consists of tuples $(A_1, A_2, \lambda, i, \eta^p, s, f_{\tau})$ with $(A_1, A_2, \lambda, i, \eta^p) \in \ide{X}_H^{\opn{ord}}(R)$, $s \in \underline{\mbb{Z}_p^{\times}}(R)$, and $f_{\tau} \colon \mbb{X}_{\opn{ord}, \tau, R} \xrightarrow{\sim} A[\ide{p}_{\tau}^{\infty}]$ are isomorphisms preserving the decompositions $\mbb{X}_{\opn{ord}, \tau} = \mbb{X}_{1, \tau} \oplus \mbb{X}_{2, \tau}$ and $A[\ide{p}_{\tau}^{\infty}] = A_1[\ide{p}_{\tau}^{\infty}] \oplus A_2[\ide{p}_{\tau}^{\infty}]$.
\end{definition}

\begin{remark} \label{RemarkOnCSIgusa}
    Set $\mbb{X}_{\opn{ord}} = \bigoplus_{\tau \in \Psi} (\mbb{X}_{\opn{ord}, \tau} \oplus \mbb{X}_{\opn{ord}, \tau}^D)$, where $(-)^D$ denotes the dual $p$-divisible group. Then there is a natural symplectic pairing on $\mbb{X}_{\opn{ord}}$ given by
    \[
    \langle \sum_{\tau} x_{\tau}+y_{\tau}, \sum_{\tau} x'_{\tau} + y_{\tau}' \rangle_{\opn{std}} = \sum_{\tau} \langle x_{\tau}, y'_{\tau} \rangle - \langle y_{\tau}, x'_{\tau} \rangle 
    \]
    where $x_{\tau}, x_{\tau}' \in \mbb{X}_{\opn{ord}, \tau}$, $y_{\tau}, y_{\tau}' \in \mbb{X}_{\opn{ord}, \tau}^D$, and the pairings $\langle -, - \rangle$ on the right-hand side denote the natural ones. It also has a natural endomorphism structure via the identification $\mathcal{O}_F \otimes_{\mbb{Z}} \mbb{Z}_p = \bigoplus_{\tau \in \Psi} (\mbb{Z}_p \oplus \mbb{Z}_p)$. Given a tuple $(A, \lambda, i, \eta^p, s, f_{\tau}) \in \ide{IG}_G(R)$, we therefore obtain a trivialisation
    \begin{equation} \label{Eqn:BigXordTriv}
    \mbb{X}_{\opn{ord}} = \bigoplus_{\tau \in \Psi} (\mbb{X}_{\opn{ord}, \tau} \oplus \mbb{X}_{\opn{ord}, \tau}^D) \xrightarrow{\sim} \bigoplus_{\tau \in \Psi} (A[\ide{p}_{\tau}^{\infty}] \oplus A[\ide{p}_{\tau^c}^{\infty}]) = A[p^{\infty}]
    \end{equation}
    given by $\bigoplus_{\tau} (f_{\tau} \; \oplus \; s (f_{\tau}^D)^{-1})$, where we are identifying $A[\ide{p}_{\tau^c}^{\infty}]$ with $A[\ide{p}_{\tau}^{\infty}]^D$ via the Weil pairing on $A$. By design, the trivialisation in \eqref{Eqn:BigXordTriv} respects the endomorphism and symplectic structures up to the similitude $s$; hence the moduli problem $\ide{IG}_G$ is equivalent to (the formal version of) the one in \cite[Definition 4.3.1]{CS17} parameterising trivialisations of $A[p^{\infty}]$ respecting the endomorphism and symplectic structure up to similitude. The same is true for $\ide{IG}_H$. 
\end{remark}

We have the following properties of these functors.

\begin{proposition}
We have:
\begin{enumerate}
    \item $\ide{IG}_G$ and $\ide{IG}_H$ are representable by flat affine $p$-adic formal schemes over $\mbb{Z}_p$.
    \item $\ide{IG}_G \to \ide{X}_G^{\opn{ord}}$ and $\ide{IG}_H \to \ide{X}_H^{\opn{ord}}$ are fpqc torsors under the groups $J^+_{G, \opn{ord}}$ and $J^+_{H, \opn{ord}}$ respectively.
    \item The action of $J^+_{G, \opn{ord}}$ (resp. $J^+_{H, \opn{ord}}$) on $\ide{IG}_G$ (resp. $\ide{IG}_H$) extends to an action of $J_{G, \opn{ord}}$ (resp. $J_{H, \opn{ord}}$).
\end{enumerate}
\end{proposition}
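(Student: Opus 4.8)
The three assertions are the analogues, for the PEL moduli spaces considered here, of the corresponding statements about Caraiani--Scholze Igusa varieties in \cite[\S 4.3]{CS17}; the plan is to deduce them from \emph{loc.cit.} together with the structure of the ordinary locus established above. First I would treat (1). Representability and flatness are local on $\ide{X}_G^{\opn{ord}}$, so one reduces to a (formal) affine open $\opn{Spf}R$ of $\ide{X}_G^{\opn{ord}}$ carrying a universal tuple $(A,\lambda,i,\eta^p)$ with $A[p^\infty]$ ordinary. Using the splitting $A[p^\infty]=\bigoplus_{\tau\in\Psi}\bigl(A[\ide{p}_\tau^\infty]\oplus A[\ide{p}_{\bar\tau}^\infty]\bigr)$ coming from $p$ being split in $F/\mbb{Q}$, and the fact that $\lambda,i$ put $A[\ide{p}_{\bar\tau}^\infty]$ in perfect duality with $A[\ide{p}_\tau^\infty]$, the datum $(s,f_\tau)$ is equivalent to a trivialisation of $A[p^\infty]$ compatible with the PE-structure up to the similitude $s\in\underline{\mbb{Z}_p^\times}(R)$ (this is Remark \ref{RemarkOnCSIgusa}); over an ordinary $p$-divisible group the functor of such trivialisations is (pro-)representable by an affine scheme which is faithfully flat over the base, because it is built as an inverse limit of $\mbb{X}_{\opn{ord},\tau,R}[p^m]$-torsors, each of which is finite locally free and surjective. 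Taking the inverse limit over $m$ (and completing $p$-adically) gives a flat affine $p$-adic formal scheme; this is exactly the construction in \cite[\S 4.3]{CS17}, which applies verbatim once one knows the PEL datum here satisfies Assumption \ref{AssumpPoddAndDoesntDivide}. The case of $\ide{IG}_H$ is identical, keeping track of the decomposition $A=A_1\oplus A_2$ throughout.

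For (2), the same local description shows that $\ide{IG}_G\to\ide{X}_G^{\opn{ord}}$ is, by construction, the functor of isomorphisms $\mbb{X}_{\opn{ord},\tau,R}\xrightarrow{\sim}A[\ide{p}_\tau^\infty]$ together with $s$; the group $J^+_{G,\opn{ord}}=\underline{\mbb{Z}_p^\times}\times\prod_\tau\opn{Aut}(\mbb{X}_{\opn{ord},\tau})$ acts simply transitively on the fibres by precomposition, and fpqc-locally on $\ide{X}_G^{\opn{ord}}$ a trivialisation exists because any ordinary $p$-divisible group with the given PEL type is fpqc-locally isomorphic to $\mbb{X}_{\opn{ord},\tau,R}$ (again using that $p$ splits completely). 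Hence $\ide{IG}_G$ is an fpqc $J^+_{G,\opn{ord}}$-torsor; the same argument with the decomposition gives the statement for $\ide{IG}_H$ and $J^+_{H,\opn{ord}}$.

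For (3), one extends the $J^+$-action to a $J$-action exactly as in \cite[Lemma 4.3.4]{CS17}: an element of $J_{G,\opn{ord},\tau}(R)=\opn{Aut}(\tilde{\mbb{X}}_{\opn{ord},\tau,R})$ is a quasi-isogeny on the level of universal covers, and acting on a trivialisation $f_\tau$ of $A[\ide{p}_\tau^\infty]$ produces a new $p$-divisible group $A'[\ide{p}_\tau^\infty]$ together with a quasi-isogeny $A\to A'$; one then transports $(\lambda,i,\eta^p)$ along this quasi-isogeny (using that $\eta^p$ is away from $p$ and the polarisation/endomorphism structures are preserved up to the $\mbb{Z}_{(p)}^\times$-equivalence built into the moduli problem). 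The resulting $A'$ is again a point of $\ide{X}_G^{\opn{ord}}(R)$ since quasi-isogenies of ordinary $p$-divisible groups preserve ordinarity, and the recipe is functorial, giving the desired $J_{G,\opn{ord}}$-action; one checks it restricts to the $J^+_{G,\opn{ord}}$-action on the sub-sheaf of honest automorphisms. The argument for $\ide{IG}_H$ is the same, with $J_{H,\opn{ord}}$ acting through quasi-isogenies respecting $\tilde{\mbb{X}}_{\opn{ord},\tau}=\tilde{\mbb{X}}_{1,\tau}\oplus\tilde{\mbb{X}}_{2,\tau}$ (hence the factorwise quasi-isogeny $A_i\to A_i'$). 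The main subtlety — and the one point that needs genuine verification rather than citation — is that the transport of the prime-to-$p$ polarisation and $\mathcal{O}_F$-action along a $p$-power quasi-isogeny lands one back in the \emph{same} moduli problem (the signature condition (4) in the definition of a unitary abelian scheme, and the $\mbb{Z}_{(p)}^\times$-polarisation condition, are preserved); this is where the precise normalisation of the PEL data from \cite{UFJ} and the fact that $\ide{X}_G$ is a \emph{PEL} moduli space (rather than merely a Shimura--Deligne variety, cf.\ Lemma \ref{LemmaSDintoPEL}) are used.
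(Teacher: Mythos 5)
Your proposal is correct and follows essentially the same route as the paper: the paper dispatches parts (1) and (3) with citations to Caraiani--Scholze (p.~718 and Corollary 4.3.5 of \cite{CS17}) and part (2) with a citation to \cite[Lemma 5.1.1]{howe2020unipotent} (noting that the cover is pro-finite-flat, hence fpqc), and you spell out precisely the constructions those citations encapsulate. The one addition worth noting is that your remark on the transport of the PEL structure under $p$-power quasi-isogenies in (3) is indeed the content of \cite[Lemma 4.3.4]{CS17} (from which Corollary 4.3.5 follows), so identifying it as the ``genuine verification'' is consistent with the citation the paper uses.
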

\begin{proof}
    Parts (1) and (3) follow from \cite[p.718]{CS17} and \cite[Corollary 4.3.5]{CS17} respectively. Part (2) follows from the same proof as in \cite[Lemma 5.1.1]{howe2020unipotent} (the cover $\ide{IG}_{\bullet} \to \ide{X}_{\bullet}^{\opn{ord}}$ is pro-finite-flat, hence fpqc).
\end{proof}

\subsubsection{Quotients of Igusa varieties} \label{QuotientsOfIgVarSubSec}

We recall some notation from \cite{UFJ}. Let $P^G_{\opn{Iw}}(p^{\beta}) = w_n K^G_{\opn{Iw}}(p^{\beta}) w_n^{-1} \cap P_G(\mbb{Q}_p)$ and let $M^G_{\opn{Iw}}(p^{\beta})$ denote its image under the map $P_G(\mbb{Q}_p) \to M_G(\mbb{Q}_p)$. Let $N^G_{\opn{Iw}}(p^{\beta}) \subset J^+_{G, \opn{ord}}$ denote the sub fpqc sheaf given by
\[
N^G_{\opn{Iw}}(p^{\beta}) =  \underline{\{ 1 \}} \times \tbyt{\underline{\{ 1 \}}}{(p^{\beta}T_p \mu_{p^{\infty}})^{\oplus n} \oplus T_p \mu_{p^{\infty}}^{\oplus n-1}}{}{\underline{\{ 1 \}}} \times \prod_{\tau \neq \tau_0} \underline{\{1 \}}  .
\]
and set $J^G_{\opn{Iw}}(p^{\beta}) = N^G_{\opn{Iw}}(p^{\beta}) \rtimes \underline{M^G_{\opn{Iw}}(p^{\beta})} \subset J^+_{G, \opn{ord}}$. Here $\{1 \}$ denotes the trivial group. Also, set $P^H_{\diamondsuit}(p^{\beta}) = K^H_{\diamondsuit}(p^{\beta}) \cap P_H(\mbb{Q}_p)$ and $M^H_{\diamondsuit}(p^{\beta})$ its image under the map $P_H(\mbb{Q}_p) \to M_H(p^{\beta})$. Let $N^H_{\diamondsuit}(p^{\beta}) = N^G_{\opn{Iw}}(p^{\beta}) \cap J^+_{H, \opn{ord}}$ and $J^H_{\diamondsuit}(p^{\beta}) = N^H_{\diamondsuit}(p^{\beta}) \rtimes \underline{M^H_{\diamondsuit}(p^{\beta})} \subset J^+_{H, \opn{ord}}$.

\begin{definition}
    We introduce the following quotients:
    \begin{enumerate}
        \item Let $\ide{IG}_{G, w_n}(p^{\beta})$ (resp. $\ide{X}_{G, w_n}(p^{\beta})$) denote the flat $p$-adic formal scheme obtained as the quotient of $\ide{IG}_G$ by $N^G_{\opn{Iw}}(p^{\beta})$ (resp. $J^G_{\opn{Iw}}(p^{\beta})$). The map $\ide{IG}_{G, w_n}(p^{\beta}) \to \ide{X}_{G, w_n}(p^{\beta})$ is a pro\'{e}tale $M^G_{\opn{Iw}}(p^{\beta})$-torsor.
        \item Let $\ide{IG}_{H, \opn{id}}(p^{\beta})$ (resp. $\ide{X}_{H, \opn{id}}(p^{\beta})$) denote the flat $p$-adic formal scheme obtained as the quotient of $\ide{IG}_H$ by $N^H_{\diamondsuit}(p^{\beta})$ (resp. $J^H_{\diamondsuit}(p^{\beta})$). The map $\ide{IG}_{H, \opn{id}}(p^{\beta}) \to \ide{X}_{H, \opn{id}}(p^{\beta})$ is a pro\'{e}tale $M^H_{\diamondsuit}(p^{\beta})$-torsor.
    \end{enumerate}
\end{definition}

We have the following important properties of these quotients.

\begin{proposition} \label{PropositionLociIntClosedInGenFibre}
Let $\beta \geq 1$.
    \begin{enumerate}
        \item The natural maps $\ide{X}_{G, w_n}(p^{\beta}) \to \ide{X}_G^{\opn{ord}}$ and $\ide{X}_{H, \opn{id}}(p^{\beta}) \to \ide{X}_H^{\opn{ord}}$ are finite flat.
        \item $\ide{X}_{G, w_n}(p^{\beta})$ and $\ide{X}_{H, \opn{id}}(p^{\beta})$ are smooth $p$-adic formal schemes over $\opn{Spf}\mbb{Z}_p$.
        \item If we let $\mathcal{X}_{G, w_n}(p^{\beta})$ and $\mathcal{X}_{H, \opn{id}}(p^{\beta})$ denote the adic generic fibres of $\ide{X}_{G, w_n}(p^{\beta})$ and $\ide{X}_{H, \opn{id}}(p^{\beta})$, then $\ide{X}_{G, w_n}(p^{\beta})$ and $\ide{X}_{H, \opn{id}}(p^{\beta})$ are integrally closed in $\mathcal{X}_{G, w_n}(p^{\beta})$ and $\mathcal{X}_{H, \opn{id}}(p^{\beta})$ respectively, in the sense of \cite[\S 1.1]{PilloniStroh}.
    \end{enumerate}
\end{proposition}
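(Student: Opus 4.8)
The three assertions are all statements about the formal schemes $\ide{X}_{G, w_n}(p^{\beta})$ and $\ide{X}_{H, \opn{id}}(p^{\beta})$ obtained as quotients of the Igusa formal schemes by the open compact subgroups $J^G_{\opn{Iw}}(p^{\beta})$ and $J^H_{\diamondsuit}(p^{\beta})$. Since the two cases are entirely parallel, I would carry out the argument for $\mbf{G}$ and remark that the $\mbf{H}$ case follows by replacing every group by its intersection with $J^+_{H, \opn{ord}}$. The plan is as follows.

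For part (1), I would use that $\ide{IG}_G \to \ide{X}_G^{\opn{ord}}$ is a pro-finite-flat (fpqc) torsor under $J^+_{G, \opn{ord}}$, and that $J^G_{\opn{Iw}}(p^{\beta}) = N^G_{\opn{Iw}}(p^{\beta}) \rtimes \underline{M^G_{\opn{Iw}}(p^{\beta})}$ is an \emph{open} subgroup of $J^+_{G, \opn{ord}}$ of finite index contained inside it (indeed $J^+_{G, \opn{ord}}/J^G_{\opn{Iw}}(p^{\beta})$ is a finite set, as $M^G_{\opn{Iw}}(p^{\beta})$ is finite index in $\opn{GL}_{2n-1}(\mbb{Z}_p)\times\prod_{\tau\ne\tau_0}\opn{GL}_{2n}(\mbb{Z}_p)$ and the $T_p\mu_{p^\infty}$-part is killed by a suitable power of $p$). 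Concretely, $\ide{X}_{G, w_n}(p^{\beta}) = \ide{IG}_G / J^G_{\opn{Iw}}(p^{\beta})$ and the structure map to $\ide{X}_G^{\opn{ord}} = \ide{IG}_G/J^+_{G, \opn{ord}}$ is then the quotient by the finite group $J^+_{G, \opn{ord}}/J^G_{\opn{Iw}}(p^{\beta})$ acting freely (freeness comes from the torsor property); a free action of a finite group on an affine flat formal scheme has a finite flat quotient, and the map to the further quotient is finite flat. This is the content of \cite[\S 2.5]{UFJ}, whose arguments transfer verbatim, so I would simply cite that and spell out the index computation.

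For part (2), smoothness of $\ide{X}_{G, w_n}(p^{\beta})$ over $\opn{Spf}\mbb{Z}_p$: I would first observe that $\ide{X}_G^{\opn{ord}}$ is smooth over $\mbb{Z}_p$ (it is an open subscheme of the smooth $\mbb{Z}_p$-scheme $X_{\mbf{G}}$). The map $\ide{X}_{G, w_n}(p^{\beta}) \to \ide{X}_G^{\opn{ord}}$ is finite flat by part (1) but not étale (the $T_p\mu_{p^\infty}$-direction is ramified in characteristic $p$), so smoothness is not automatic. Instead I would use the explicit description of $\ide{X}_{G, w_n}(p^{\beta})$ as a moduli space: étale-locally on $\ide{X}_G^{\opn{ord}}$ the formal group $A[\ide{p}_{\tau_0}^\infty]^\circ \cong \hat{\mbb{G}}_m$ is trivialised, and the quotient by $N^G_{\opn{Iw}}(p^{\beta})$ adds the data of a trivialisation of the étale quotient together with a point of $\mu_{p^\beta}$-type structure; locally this is a product of $\ide{X}_G^{\opn{ord}}$ with (formal spectra of) rings of the form $\mbb{Z}_p[T]/(T^{p^\beta} - u)$ for a unit $u$ coming from the canonical trivialisation, which are smooth over $\mbb{Z}_p$. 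This is essentially the classical computation that Igusa curves over $\mbb{Z}_p$ (with the right level structure at $p$) are smooth; again the cleanest route is to cite the corresponding statement in \cite{CS17} or \cite{UFJ} for $\ide{X}_{\mbf{G}}$ with Iwahori level and transfer it. I expect \textbf{this to be the main obstacle}, since it requires unwinding what the quotient by $N^G_{\opn{Iw}}(p^{\beta})$ does to the local structure near the ordinary locus, and checking that the combination of the $\mu_{p^\infty}$-ramified directions and the étale directions assembles into something $p$-smooth rather than merely flat; in particular one must be careful that the extra similitude factor $\underline{\mbb{Z}_p^\times}$ and the way $N^G_{\opn{Iw}}(p^\beta)$ sits inside it does not spoil smoothness.

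For part (3), integral closedness of $\ide{X}_{G, w_n}(p^{\beta})$ inside its adic generic fibre $\mathcal{X}_{G, w_n}(p^{\beta})$ in the sense of \cite[\S 1.1]{PilloniStroh}: by part (2) the formal scheme is smooth, hence normal, over $\mbb{Z}_p$; a normal (equivalently, by smoothness, regular) Noetherian formal scheme that is flat over $\mbb{Z}_p$ is automatically integrally closed in its generic fibre, because a section of the structure sheaf on the generic fibre which is integral over $\mathcal{O}$ is a meromorphic function that is integral over a normal ring, hence lies in the ring. More precisely I would argue affine-locally: $\ide{X}_{G, w_n}(p^{\beta})$ is covered by $\opn{Spf} R$ with $R$ a $p$-adically complete, flat, normal $\mbb{Z}_p$-algebra topologically of finite type (by parts (1)–(2) applied to an affine cover of $\ide{X}_G^{\opn{ord}}$), and one checks $R = \{ x \in R[1/p] : x \text{ integral over } R \}$ directly from normality; the adic generic fibre of $\opn{Spf} R$ is $\opn{Spa}(R[1/p], R)$ with $R$ already integrally closed, which is precisely the integral-closedness condition of \emph{loc.\ cit.} Here I would also note that since $X_{\mbf{G}}$ is proper, the adic generic fibres of the formal completions are the analytifications of the ordinary loci, so there is no subtlety about properness. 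The $\mbf{H}$ versions of all three parts follow mutatis mutandis, using that $\ide{X}_H$ is also smooth and proper over $\mbb{Z}_p$ and that $J^H_{\diamondsuit}(p^{\beta})$ is open of finite index in $J^+_{H, \opn{ord}}$.
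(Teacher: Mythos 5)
Your treatment of parts (1) and (3) is essentially the same as the paper's. For (1) the paper simply notes the map is finite flat because $\ide{IG}_{\bullet}\to\ide{X}_{\bullet}^{\opn{ord}}$ is pro-finite-flat; your phrasing as ``quotient by the finite group $J^+_{G,\opn{ord}}/J^G_{\opn{Iw}}(p^{\beta})$ acting freely'' is slightly off since $J^G_{\opn{Iw}}(p^{\beta})$ is not normal in $J^+_{G,\opn{ord}}$ (the Iwahori subgroup is not normal in the Levi), so that coset space is not a group, but the underlying idea and conclusion are correct. For (3) you argue exactly as the paper does: smooth $\Rightarrow$ normal $\Rightarrow$ integrally closed, checked by localising at primes containing $p$ and using regularity of $R/p$, $p$-torsion freeness, and the dimension formula.

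Part (2) is where the real content is, and here your sketch has a genuine gap, which the paper avoids by taking a completely different route. You propose that, \'etale-locally, $\ide{X}_{G,w_n}(p^{\beta})$ looks like $\ide{X}_G^{\opn{ord}}$ times a ring of the form $\mbb{Z}_p[T]/(T^{p^{\beta}}-u)$ for a unit $u$, and assert this is smooth over $\mbb{Z}_p$. But for a \emph{fixed} unit $u$ this ring is not smooth over $\mbb{Z}_p$: the derivative with respect to $T$ is $p^{\beta}T^{p^{\beta}-1}$, which vanishes mod $p$, so the Jacobian criterion fails unless $du$ is a nonvanishing element of the cotangent space of the base --- i.e.\ unless $u$ is (the restriction of) an \'etale coordinate on $\ide{X}_G^{\opn{ord}}$. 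That is precisely the nontrivial thing that has to be established, and your sketch does not do it. The difficulty is compounded by the fact that $N^G_{\opn{Iw}}(p^{\beta})$ is not a single $\mu_{p^{\beta}}$-direction: the quotient $\opn{Unip}(J_{G,\opn{ord}})/N^G_{\opn{Iw}}(p^{\beta})$ is a product of $n$ copies of $\mu_{p^{\beta}}$ and $n-1$ copies of the trivial factor, sitting in a non-split extension with the Levi part $M^G_{\opn{Iw}}(p^{\beta})$, so the local structure is genuinely more complicated than the rank-one Igusa-curve picture you are implicitly invoking. The paper sidesteps all of this: it introduces an auxiliary finite flat moduli space $\ide{Y}\to\ide{X}_G^{\opn{ord}}$ parameterising subgroup schemes $C\subset A[\ide{p}_{\tau_0}^{\beta}]$ \'etale-locally isomorphic to $(\mbb{Z}/p^{\beta}\mbb{Z})^{\oplus n}$, proves $\ide{Y}$ is smooth by a square-zero deformation argument (Illusie) applied to the map $q(A,C)=A/C'$ together with unique deformation of \'etale group schemes, and then factors $\ide{X}_{G,w_n}(p^{\beta})\to\ide{Y}$ and shows this last map is finite \emph{\'etale} (it is the pushout of a pro\'etale torsor). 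Smoothness then propagates along a finite \'etale map, with no need to exhibit explicit \'etale coordinates for the ramified $\mu_{p^{\beta}}$-directions. If you want to make your local-model argument work, you would need to prove, in effect, that the canonical trivialisation of the connected--\'etale sequence of $A[\ide{p}_{\tau_0}^{\infty}]$ provides \'etale coordinates on the base; the paper's deformation-theoretic route is substantially cleaner and is what you should follow.
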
 
\begin{proof}
    Part (1) is clear because $\ide{IG}_{\bullet} \to \ide{X}_{\bullet}^{\opn{ord}}$ are pro-finite-flat torsors. Part (3) follows from part (2). Indeed, suppose that $\mathfrak{X}_{G, w_n}(p^{\beta}) \to \opn{Spf}\mbb{Z}_p$ is smooth. Then $\mathcal{X}_{G, w_n}(p^{\beta})$ is reduced, hence we are in the setting of \cite[\S 1.1]{PilloniStroh}. Suppose that $\ide{U} = \opn{Spf}R \subset \ide{X}_{G, w_n}(p^{\beta})$ is an open affine subspace with $R$ a smooth $\mbb{Z}_p$-algebra. Note that $R$ is $p$-torsion free. We want to show that $R$ is integrally closed in $R[1/p]$. It suffices to check this after localising at open primes ideals of $R$, i.e. if $\ide{p} \subset R$ is a prime ideal containing $p$, then we need to show that $R_{\ide{p}}$ is integrally closed in $R_{\ide{p}}[1/p]$. But since $R/p$ is a smooth $\mbb{F}_p$-algebra, $R_{\ide{p}}/p$ is regular, and since $R_{\ide{p}}$ is $p$-torsion free, $R_{\ide{p}}$ is regular and hence normal. Here we have used the fact that $R_{\ide{p}}$ is a flat $\mbb{Z}_p$-algebra, hence $\opn{dim}R_{\ide{p}} = 1 + \opn{dim}(R_{\ide{p}}/p)$. This implies that $R_{\ide{p}}$ is integrally closed in $R_{\ide{p}}[1/p]$. Therefore it remains to prove part (2).

     Let us begin by describing the method for proving part (2). The first attempt would be to show that the morphism $\ide{X}_{G, w_n}(p^{\beta}) \to \ide{X}_{G}^{\opn{ord}}$ is smooth (which would imply the claim because $\ide{X}_{G}^{\opn{ord}}$ is smooth over $\mbb{Z}_p$). Unfortunately this is false; so we have to modify this strategy. The idea is to factorise the map $\ide{X}_{G, w_n}(p^{\beta}) \to \ide{X}_{G}^{\opn{ord}}$ as
    \[
    \ide{X}_{G, w_n}(p^{\beta}) \to \ide{Y} \to \ide{X}_{G}^{\opn{ord}},
    \]
    and show two properties: the space $\ide{Y}$ is smooth over $\mbb{Z}_p$ and the morphism $\ide{X}_{G, w_n}(p^{\beta}) \to \ide{Y}$ is finite \'{e}tale (and hence smooth). The key idea for the first property is to construct a \emph{second map} $\ide{Y} \xrightarrow{q} \ide{X}_{G}^{\opn{ord}}$ which is smooth, hence one can deduce that $\ide{Y}$ is smooth over $\mbb{Z}_p$ from the fact that $\ide{X}_{G}^{\opn{ord}}$ is smooth over $\mbb{Z}_p$.\footnote{It does not seem immediate to the author how to construct such a map $q$ for $\ide{Y} = \ide{X}_{G, w_n}(p^{\beta})$ which is smooth, hence the need for an intermediate space $\ide{Y}$ which is genuinely different from $\ide{X}_{G, w_n}(p^{\beta})$ and $\ide{X}_{G}^{\opn{ord}}$.} For the second property, we will show that $\ide{X}_{G, w_n}(p^{\beta})$ and $\ide{Y}$ are both suitable quotients of the pro-\'{e}tale torsor $\ide{IG}_{G, w_n}(p^{\beta})$, which will automatically imply the map $\ide{X}_{G, w_n}(p^{\beta}) \to \ide{Y}$ is finite \'{e}tale. \\
    \\
    \textbf{Step 1: (The construction of $\ide{Y}$ and $q$)}
    
    Let $\ide{Y} \to \ide{X}_G^{\opn{ord}}$ denote the moduli space parameterising subgroup schemes $C \subset A[\ide{p}_{\tau_0}^{\beta}]$ which are \'{e}tale locally isomorphic to $\left(\mbb{Z}/p^{\beta}\mbb{Z} \right)^{\oplus n}$. For any such subgroup $C$, let $C' = C \oplus C^{\perp} \subset A[p^{\beta}]$ where $C^{\perp}$ is the orthogonal complement of $C$ under the symplectic pairing $A[\ide{p}_{\tau_0}^{\beta}] \times A[\ide{p}_{\tau_0^c}^{\beta}] \to \mu_{p^{\beta}}$ induced from the polarisation and Weil pairing. Then $C'$ is an $\mathcal{O}_F$-stable totally isotropic finite flat subgroup, hence $A/C'$ is naturally a $\boldsymbol\Psi$-unitary abelian scheme with an induced prime-to-$p$ level structure. Note that the natural map $\ide{Y} \to \ide{X}_G^{\opn{ord}}$ is just given by forgetting $C$. As described above, in order to show $\ide{Y}$ is a smooth $p$-adic formal scheme over $\opn{Spf}\mbb{Z}_p$, it is enough to construct a different morphism $q \colon \ide{Y} \to \ide{X}_G^{\opn{ord}}$ and show this is smooth. The candidate morphism we will consider is the finite flat map $q \colon \ide{Y} \to \ide{X}_G^{\opn{ord}}$ given by $q(A, C) = A/C'$.\\
    \\
    \textbf{Step 2: (Showing $q$ is smooth)}

    To show that $q$ is smooth, it is enough to prove it is formally smooth, i.e., we need to show for any commutative diagram
    \[
\begin{tikzcd}
T_0 \arrow[r] \arrow[d]        & \ide{Y} \arrow[d, "q"]  \\
T \arrow[r] \arrow[ru, dashed] & \ide{X}_{G}^{\opn{ord}}
\end{tikzcd}
    \]
    the dotted arrow exists, where $T = \opn{Spec}A$ is an affine scheme with $A$ a $\mbb{Z}/p^r\mbb{Z}$-algebra (for any $r \geq 1$), $J \subset A$ is a square-zero ideal, and $T_0 = \opn{Spec}A/J$. Rephrasing, we need to show that for any point $(A, C) \in \ide{Y}(T_0)$ and any deformation $B \in \ide{X}_{G}^{\opn{ord}}(T)$ such that $B|_{T_0} = A/C'$, there exists a point $(\tilde{B}, \tilde{C}) \in \ide{Y}(T)$ such that $\tilde{B}/\tilde{C}' = B$. 
    
    Let $K = \overline{A[\ide{p}_{\tau_0}^{\beta}]}$ denote the image of $A[\ide{p}_{\tau_0}^{\beta}]$ in $A/C'$ (which is a finite flat group scheme \'{e}tale locally isomorphic to $\mu_{p^{\beta}} \oplus \left(\mbb{Z}/p^{\beta}\mbb{Z}\right)^{\oplus n-1}$). Then we have a short exact sequence
    \begin{equation} \label{SESoffflatgp}
    0 \to K \to (A/C')[\ide{p}_{\tau_0}^{\beta}] \to L \to 0
    \end{equation}
    where $L$ is finite flat \'{e}tale locally isomorphic to $\left(\mbb{Z}/p^{\beta}\mbb{Z}_p\right)^{\oplus n}$. By Illusie's deformation theory \cite{Illusie} and since \'{e}tale group schemes deform uniquely, there exists a finite flat group scheme $\mathcal{L}$, \'{e}tale locally isomorphic to $\left(\mbb{Z}/p^{\beta}\mbb{Z}\right)^{\oplus n}$, such that $\mathcal{L}|_{T_0} = L$, and a morphism $B[\ide{p}_{\tau_0}] \twoheadrightarrow \mathcal{L}$ deforming the right-hand map in (\ref{SESoffflatgp}). Let $\mathcal{K}$ denote the kernel of the map $B[\ide{p}_{\tau_0}] \twoheadrightarrow \mathcal{L}$. Then the required point is given by $\tilde{B} = B/\mathcal{K}'$ and $\tilde{C}$ is the image of $B[\ide{p}_{\tau_0}^{\beta}]$ under $B \to \tilde{B}$. This proves that $q$ and hence $\ide{Y}$ is smooth.\\
    \\
    \textbf{Step 3: (Constructing a finite \'{e}tale map $\ide{X}_{G, w_n}(p^{\beta}) \to \ide{Y}$)}
    
    For $i=2, \dots, n+1$, let $e_i \in \mbb{X}_{\opn{ord}, \tau_0}[p^{\beta}] = \mu_{p^{\beta}} \oplus \left(\mbb{Z}/p^{\beta}\mbb{Z} \right)^{\oplus 2n-1}$ denote the basis vector of the $\mbb{Z}/p^{\beta}\mbb{Z}$-factor in the $i$-th place. Let $C_{\opn{std}} \subset \mbb{X}_{\opn{ord}, \tau_0}[p^{\beta}]$ denote the finite flat subgroup scheme generated by $\{ e_i : i=2, \dots, n+1 \}$. We have a natural map $\ide{IG}_G \to \ide{Y}$ given by sending $(A, \lambda, i, \eta^p, s, f_{\tau})$ to $(A, C)$ (which the induced extra structure) where $C = f_{\tau_0}(C_{\opn{std}})$. This is a fpqc torsor for a subgroup of the form $N^G_{\opn{Iw}}(p^{\beta}) \rtimes \underline{M} \subset J^+_{G, \opn{ord}}$, for some pro-\'{e}tale subgroup scheme $\underline{M} \subset J^+_{G, \opn{ord}}$ which contains $\underline{M^G_{\opn{Iw}}(p^{\beta})}$. We have a factorisation
    \[
    \ide{IG}_G \to \ide{IG}_{G, w_n}(p^{\beta}) \to \ide{X}_{G, w_n}(p^{\beta}) \to \ide{Y} .
    \]
    Since the map $\ide{IG}_{G, w_n} \to \ide{Y}$ is a pro-\'{e}tale $M$-torsor, we see that $\ide{X}_{G, w_n}(p^{\beta}) \to \ide{Y}$ is finite \'{e}tale. Since we have already shown $\ide{Y}$ is smooth, this implies that $\ide{X}_{G, w_n}(p^{\beta})$ is smooth. \\
    \\
    The proof for $\ide{X}_{H, \opn{id}}(p^{\beta})$ is very similar, using the moduli space $\ide{Y} \to \ide{X}_{H}^{\opn{ord}}$ parameterising finite flat subgroup schemes $C \subset A_1[\ide{p}_{\tau_0}^{\beta}]$ which are \'{e}tale locally isomorphic to $\left(\mbb{Z}/p^{\beta}\mbb{Z}\right)^{\oplus n-1}$.
\end{proof}

Let $\mbb{Q}_p^{\opn{cycl}}$ denote the $p$-adic completion of $\mbb{Q}_p(\mu_{p^{\infty}})$ with ring of integers $\mbb{Z}_p^{\opn{cycl}}$, and fix a basis $\varepsilon \in T_p\mu_{p^{\infty}}(\mbb{Z}_p^{\opn{cycl}})$. Let $u \in M_{\mbf{G}}(\mbb{Z}_p)$ denote the element in \cite[Definition 2.4.2]{UFJ}. Then we consider the following element $\gamma = 1 \times \prod_{\tau \in \Psi} \gamma_{\tau} \in J_{G, \opn{ord}}^+(\mbb{Z}_p^{\opn{cycl}})$, where $\gamma_{\tau} = u_{\tau}$ for $\tau \neq \tau_0$ and 
\[
\gamma_{\tau_0} = u_{\tau_0} \cdot \tbyt{1}{x_{\tau_0}}{}{1}
\]
where $x_{\tau_0} \in (T_p\mu_{p^{\infty}}(\mbb{Z}_p^{\opn{cycl}}))^{\oplus 2n-1}$ is the $(1 \times 2n-1)$-matrix whose $n$-th entry is $\varepsilon \in T_p\mu_{p^{\infty}}(\mbb{Z}_p^{\opn{cycl}})$ and the rest are $1 \in T_p\mu_{p^{\infty}}(\mbb{Z}_p^{\opn{cycl}})$. Then we have a morphism
\[
\ide{IG}_{H, \mbb{Z}_p^{\opn{cycl}}} \xrightarrow{\cdot \gamma} \ide{IG}_{G, \mbb{Z}_p^{\opn{cycl}}}
\]
given by right-translation by $\gamma$. If we let $\opn{Unip}(J_{H, \opn{ord}}) \cong \tilde{\mu_{p^{\infty}}}^{\oplus n-1}$ denote the unipotent part of $J_{H, \opn{ord}}$, then this morphism is equivariant for the actions of $\opn{Unip}(J_{H, \opn{ord}})$ on both sides. Furthermore, we have $N_{\diamondsuit}^H(p^{\beta}) = \gamma N^G_{\opn{Iw}}(p^{\beta}) \gamma^{-1} \cap J_{H, \opn{ord}}^+$ and $J_{\diamondsuit}^H(p^{\beta}) = \gamma J_{\opn{Iw}}^G(p^{\beta}) \gamma^{-1} \cap J_{H, \opn{ord}}^+$, so we obtain induced morphisms
\[
\hat{\iota} \colon \ide{IG}_{H, \opn{id}}(p^{\beta}) \to \ide{IG}_{G, w_n}(p^{\beta}), \quad \quad \hat{\iota} \colon \ide{X}_{H, \opn{id}}(p^{\beta}) \to \ide{X}_{G, w_n}(p^{\beta}) 
\]
Both of these morphisms only depend on the image of $\varepsilon$ in $\mu_{p^{\beta}}$, hence descend to morphisms over $\mbb{Z}_p[\mu_{p^{\beta}}]$. The reason for the notation $\hat{\iota}$ will be explained in Proposition \ref{HlociisHasseProp}.

\subsubsection{Differential operators} \label{DiffOpsCcontsubsec}

We now explain how one obtains a unipotent action on the Igusa towers $\ide{IG}_{G, w_n}(p^{\beta})$ and $\ide{IG}_{H, \opn{id}}(p^{\beta})$ following the strategy in \cite{howe2020unipotent}. 

Let $\opn{Unip}(J_{G, \opn{ord}})$ (resp. $\opn{Unip}(J_{H, \opn{ord}})$) denote the unipotent subgroup of $J_{G, \opn{ord}}$ (resp. $J_{H, \opn{ord}}$). Then we have 
\[
\opn{Unip}(J_{G, \opn{ord}})/N_{\opn{Iw}}^G(p^{\beta}) \cong \left( \tilde{\mu_{p^{\infty}}}/p^{\beta}T_p\mu_{p^{\infty}} \right)^{\oplus n} \oplus \left( \tilde{\mu_{p^{\infty}}}/T_p\mu_{p^{\infty}} \right)^{\oplus n-1}
\]
and 
\[
\opn{Unip}(J_{H, \opn{ord}})/N_{\diamondsuit}^H(p^{\beta}) \cong \left( \tilde{\mu_{p^{\infty}}}/p^{\beta}T_p\mu_{p^{\infty}} \right)^{\oplus n-1}
\]
both of which are isomorphic to copies of the formal torus $\widehat{\mbb{G}}_m$.

\begin{definition} \label{DefOfUGbetaUHbeta}
    Let $U_{G, \beta} = (p^{-\beta} \mbb{Z}_p)^{\oplus n} \oplus \mbb{Z}_p^{\oplus n-1}$ and $U_{H, \beta} = (p^{-\beta} \mbb{Z}_p)^{\oplus n-1}$. We view $U_{H, \beta} \subset U_{G, \beta}$ by including in the first $n-1$ factors. Let $C_{\opn{cont}}(U_{\bullet, \beta}, \mbb{Z}_p)$ denote the algebra of continuous functions $U_{\bullet, \beta} \to \mbb{Z}_p$.
\end{definition}

By $p$-adic Fourier theory, we have identifications of $\mbb{Z}_p$-algebras
\begin{align*} 
C_{\opn{cont}}(U_{G, \beta}, \mbb{Z}_p) &= \opn{Hom}_{\mbb{Z}_p}(\ordd(\opn{Unip}(J_{G, \opn{ord}})/N_{\opn{Iw}}^G(p^{\beta})), \mbb{Z}_p) \\ C_{\opn{cont}}(U_{H, \beta}, \mbb{Z}_p) &= \opn{Hom}_{\mbb{Z}_p}(\ordd(\opn{Unip}(J_{H, \opn{ord}})/N_{\diamondsuit}^H(p^{\beta})), \mbb{Z}_p)
\end{align*}
where the algebra action on the right-hand side is induced from the co-algebra structure on $\mathcal{O}(\cdots)$.  Indeed, by a change of coordinates these identifications are induced from the identification $C_{\opn{cont}}(\mbb{Z}_p, \mbb{Z}_p) = \opn{Hom}_{\mbb{Z}_p}(\mathcal{O}(\widehat{\mbb{G}}_m), \mbb{Z}_p)$, which in turn arises from the Amice transform identifying sections $\mathcal{O}(\widehat{\mbb{G}}_m) \cong \mbb{Z}_p[\![T]\!]$ with measures on $\mbb{Z}_p$ (see \cite[Corollaire I.2.4]{ColmezFonctions}; or for a more general result for families of $p$-divisible groups stated in a similar form as above, see \cite[\S 7]{FourierTheory}).

Suppose that $R$ is $p$-adically complete and separated $\mbb{Z}_p$-algebra and let $\tilde{\zeta} = (\tilde{\zeta}_i) \in \tilde{\mu_{p^{\infty}}}^{\oplus 2n-1}(R)$. We can view $\tilde{\zeta}_i = (\zeta_{i, k})_{k \geq 0} \in \varprojlim_{\times p} \mu_{p^{\infty}}(R)$ where $\zeta_{i, k} \in \mu_{p^{\infty}}(R)$ and $\zeta_{i, k+1}^p = \zeta_{i, k}$.\footnote{Note that we are viewing $\mu_{p^{\infty}}$ as a $p$-divisible group over $\opn{Spf}\mbb{Z}_p$. In particular, $\mu_{p^{\infty}}(R) = 1 + R^{00}$ where $R^{00}$ denotes the topological nilpotent elements. C.f. \cite[Remark 2.1.3]{howe2020unipotent}.} Consider the continuous function $\chi_{\tilde{\zeta}} \colon U_{G, \beta} \to R$ given by 
\[
\chi_{\tilde{\zeta}}( \frac{a_1}{p^{\beta}}, \dots, \frac{a_n}{p^{\beta}}, a_{n+1}, \dots, a_{2n-1}) = \zeta_{1, \beta}^{a_1} \cdots \zeta_{n, \beta}^{a_n} \cdot \zeta_{n+1, 0}^{a_{n+1}} \cdots \zeta_{2n-1, 0}^{a_{2n-1}}
\]
which only depends on the image of $\tilde{\zeta}$ in $\opn{Unip}(J_{G, \opn{ord}})/N_{\opn{Iw}}^G(p^{\beta})$. Then the above identifications are normalised so that $\chi_{\tilde{\zeta}}$ corresponds to the homomorphism $\mathcal{O}(\opn{Unip}(J_{G, \opn{ord}})/N_{\opn{Iw}}^G(p^{\beta})) \to R$ given by evaluating a section on $\tilde{\zeta}$. We have a similar description for $H$.

Since $J_{\bullet, \opn{ord}}$ acts on $\ide{IG}_{\bullet}$, we obtain right actions 
\begin{align*}
    \opn{Unip}(J_{G, \opn{ord}})/N_{\opn{Iw}}^G(p^{\beta}) \times \ide{IG}_{G, w_n}(p^{\beta}) &\to \ide{IG}_{G, w_n}(p^{\beta}) \\
    \opn{Unip}(J_{H, \opn{ord}})/N_{\diamondsuit}^H(p^{\beta}) \times \ide{IG}_{H, \opn{id}}(p^{\beta}) &\to \ide{IG}_{H, \opn{id}}(p^{\beta}) .
\end{align*}
Therefore, by passing to the associated co-actions and using the identifications above, we obtain $\mbb{Z}_p$-algebra actions
\begin{align*}
    C_{\opn{cont}}(U_{G, \beta}, \mbb{Z}_p) \times \mathcal{O}_{\ide{IG}_{G, w_n}(p^{\beta})} &\to \mathcal{O}_{\ide{IG}_{G, w_n}(p^{\beta})} \\
    C_{\opn{cont}}(U_{H, \beta}, \mbb{Z}_p) \times \mathcal{O}_{\ide{IG}_{H, \opn{id}}(p^{\beta})} &\to \mathcal{O}_{\ide{IG}_{H, \opn{id}}(p^{\beta})}.
\end{align*}
In particular, the action of $\chi_{\tilde{\zeta}}$ corresponds right-translation of a section by $\tilde{\zeta}$.

Finally, we note that the pullback map $\ordd_{\ide{IG}_{G, w_n}(p^{\beta})} \to \hat{\iota}_* \mathcal{O}_{\ide{IG}_{H, \opn{id}}(p^{\beta})}$ is equivariant for the action of $C_{\opn{cont}}(U_{H, \beta}, \mbb{Z}_p)$, where we view $C_{\opn{cont}}(U_{H, \beta}, \mbb{Z}_p) \subset C_{\opn{cont}}(U_{G, \beta}, \mbb{Z}_p)$ in the natural way (via the split inclusion $U_{H, \beta} \subset U_{G, \beta}$).

\subsection{Integral models and overconvergent neighbourhoods} \label{IntegralModelsandOCNbhdsSec}

Let $\mathcal{X}_{G, \opn{Iw}}(p^{\beta})$ and $\mathcal{X}_{H, \diamondsuit}(p^{\beta})$ denote the smooth proper adic spaces over $\mbb{Q}_p$ associated with $X_{\mbf{G}, \opn{Iw}, \mbb{Q}_p}(p^{\beta})$ and $X_{\mbf{H}, \diamondsuit, \mbb{Q}_p}(p^{\beta})$ respectively. In this section we will construct certain integral models for these adic spaces, via normalisation, which will be useful in the $p$-adic interpolation of differential operators (see \S \ref{PadicIterationOfDiffOpsSection}). Furthermore, we will explain how $\ide{X}_{G, w_n}(p^{\beta})$ and $\ide{X}_{H, \opn{id}}(p^{\beta})$ appear as ordinary strata inside these formal models. To be able to do this for $H$, it will be necessary to work over a finite extension $L/\mbb{Q}_p$ which contains $\mu_{p^{\beta}}$. \emph{We therefore assume that all of the spaces we consider in this section have been base-changed to $L$ or $\mathcal{O}_L$, however we will omit this from the notation.}

\subsubsection{Integral models for \texorpdfstring{$G$}{G}}

Fix $\beta \geq 1$ (and $L/\mbb{Q}_p$ as above). Recall from Definition \ref{DefinitionOfDeeperLevelAtpVarieties} that $\mathcal{X}_{G, \opn{Iw}}(p^{\beta})$ parameterises flags
\begin{equation} \label{ShapeOfFlags}
0 = C_{0, \tau} \subset C_{1, \tau} \subset \cdots \subset C_{2n, \tau} = A[\ide{p}_{\tau}^{\beta}] 
\end{equation}
of finite flat group schemes $C_{i, \tau}$ of order $p^{i \beta}$, such that each graded piece $C_{i, \tau}/C_{i-1, \tau}$ is cyclic (of order $p^{\beta}$). From the moduli description of $\ide{IG}_G$, we also see that $\ide{X}_{G, w_n}(p^{\beta})$ parameterises ordinary $\boldsymbol\Psi$-abelian schemes $A$ (with extra structure) and flags of finite flat subgroups as in (\ref{ShapeOfFlags}), with
\begin{itemize}
    \item $C_{i, \tau}/C_{i-1, \tau}$ is \'{e}tale locally isomorphic to $\mbb{Z}/p^{\beta}\mbb{Z}$ for $(i, \tau) \neq (n+1, \tau_0)$
    \item $C_{n+1, \tau_0}/C_{n, \tau_0}$ is \'{e}tale locally isomorphic to $\mu_{p^{\beta}}$.
\end{itemize}
We therefore clearly have a morphism $\mathcal{X}_{G, w_n}(p^{\beta}) \to \mathcal{X}_{G, \opn{Iw}}(p^{\beta})$.  We claim that this is an open immersion and can be identified with the locus where a certain invertible $\mathcal{O}_{\mathcal{X}_{G, \opn{Iw}}(p^{\beta})}^+$-module coincides with the structural sheaf $\mathcal{O}_{\mathcal{X}_{G, \opn{Iw}}(p^{\beta})}^+$. This will allow us to explicitly describe overconvergent neighbourhoods of $\mathcal{X}_{G, w_n}(p^{\beta})$ in $\mathcal{X}_{G, \opn{Iw}}(p^{\beta})$ (see Lemma \ref{XGwnisOpenImmersionLemma} and Definition \ref{Def:XGwnOCneighbourhood}).

To prove this, we need to consider the following morphisms:

\begin{definition}
    For any $(i, \tau)$, let $A_{i, \tau} \defeq A/C_{i, \tau}'$, where $C_{i, \tau}' = C_{i, \tau} \oplus C_{i, \tau}^{\perp} \subset A[p^{\beta}]$ with $C_{i, \tau}^{\perp} \subset A[\ide{p}_{\tau^c}^{\beta}]$ the orthogonal complement of $C_{i, \tau}$ under the Weil pairing. Note that the extra structure on $A$ naturally descends to $A_{i, \tau}$. We therefore obtain a finite morphism
    \begin{align*}
        q_{i, \tau} \colon \mathcal{X}_{G, \opn{Iw}}(p^{\beta}) &\to \mathcal{X}_G \\
        (A, C_{\bullet, \bullet}) &\mapsto A_{i, \tau} .
    \end{align*}
    Let $q \colon \mathcal{X}_{G, \opn{Iw}}(p^{\beta}) \to \mathcal{X}_G \times \prod_{(i, \tau)} \mathcal{X}_G$ denote the induced finite morphism, where the map to the first factor is the forgetful map and $\opn{pr}_{i, \tau} \circ q = q_{i, \tau}$, where $\opn{pr}_{i, \tau}$ denotes projection to the $(i, \tau)$-factor.
    
    We define $\ide{X}_{G, \opn{Iw}}(p^{\beta})$ to be the normalisation of $\ide{X}_G \times \prod_{(i, \tau)} \mathfrak{X}_G$ under the morphism $q \colon \mathcal{X}_{G, \opn{Iw}}(p^{\beta}) \to \mathcal{X}_G \times \prod_{(i, \tau)} \mathcal{X}_G$.
\end{definition}

We have the following lemma which says that the universal flags of finite flat subgroup schemes over $\mathcal{X}_{G, \opn{Iw}}(p^{\beta})$ extend to the normalisation. 

\begin{lemma} \label{ExtensionOfFFfiltrationLemma}
    Let $\mathcal{A}$ denote the pullback of the universal $\boldsymbol\Psi$-unitary abelian scheme under the map $\ide{X}_{G, \opn{Iw}}(p^{\beta}) \to \ide{X}_G \times \prod_{(i, \tau)} \mathfrak{X}_G \to \ide{X}_G$, where the second map is projection to the first component. Then there exist flags
    \[
    0 = \mathcal{C}_{0, \tau} \subset \mathcal{C}_{1, \tau} \subset \cdots \subset \mathcal{C}_{2n, \tau} = \mathcal{A}[\ide{p}_{\tau}^{\beta}]
    \]
    of finite flat subgroup schemes $\mathcal{C}_{i, \tau}$ of order $p^{i \beta}$ which agree with the universal flags (as in (\ref{ShapeOfFlags})) on the adic generic fibre.
\end{lemma}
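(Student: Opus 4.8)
## Proof plan for Lemma~\ref{ExtensionOfFFfiltrationLemma}

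The plan is to reduce the extension problem for the entire flag to the extension problem for a single finite flat subgroup scheme, and then to solve the latter by exhibiting the candidate subgroup as a scheme-theoretic closure inside an ambient finite flat group scheme over a normal base. The key observation is that $\ide{X}_{G,\opn{Iw}}(p^\beta)$ is normal by construction (it is defined as a normalisation) and that its generic fibre $\mathcal{X}_{G,\opn{Iw}}(p^\beta)$ is dense in it, since the special fibre contains no irreducible components (the map to $\ide{X}_G\times\prod_{(i,\tau)}\ide{X}_G$ is finite and $\ide{X}_G$ is $p$-torsion free, so the normalisation is $p$-torsion free too). So I would first record these two facts: $\ide{X}_{G,\opn{Iw}}(p^\beta)$ is an integrally closed, $p$-torsion-free $p$-adic formal scheme flat over $\opn{Spf}\mathcal{O}_L$, and $\mathcal{A}[\ide{p}_\tau^\beta]$ is a finite flat (hence finite, and finitely presented over an affine formal cover) group scheme over it whose generic fibre carries the universal flag.

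Next I would construct the $\mathcal{C}_{i,\tau}$ as scheme-theoretic closures. Work locally on an affine formal cover $\opn{Spf} R\subset\ide{X}_{G,\opn{Iw}}(p^\beta)$ with $R$ normal and $p$-torsion free, so $R\hookrightarrow R[1/p]$ and $\opn{Spec} R[1/p]$ is (an open part of) the generic fibre. The universal subgroup $C_{i,\tau}\subset \mathcal{A}[\ide{p}_\tau^\beta]_{R[1/p]}$ corresponds to a Hopf ideal; let $\mathcal{C}_{i,\tau}$ be the scheme-theoretic image of $C_{i,\tau}\to\mathcal{A}[\ide{p}_\tau^\beta]_R$, equivalently $\opn{Spec}$ of the quotient of $\mathcal{O}(\mathcal{A}[\ide{p}_\tau^\beta]_R)$ by the contraction of the Hopf ideal. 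One checks: (i) $\mathcal{C}_{i,\tau}$ is a closed subgroup scheme — the contracted ideal is still a Hopf ideal because the comultiplication, counit and antipode of $\mathcal{A}[\ide{p}_\tau^\beta]_R$ are defined over $R$ and the generic-fibre subobject is a subgroup, so the relevant compatibilities, being equalities of $R$-algebra maps that hold after inverting $p$ and landing in $p$-torsion-free rings, hold over $R$; (ii) $\mathcal{C}_{i,\tau}$ is flat over $R$ — its coordinate ring is a quotient of $\mathcal{O}(\mathcal{A}[\ide{p}_\tau^\beta]_R)$ that is $p$-torsion free (a submodule of an $R[1/p]$-module, being the image of an $R$-flat thing inside a $p$-torsion-free $R$-module after closure), and a finitely presented $p$-torsion-free module over a normal (in particular reduced, $p$-torsion-free) ring; to upgrade $p$-torsion-freeness to flatness I would invoke that $\mathcal{C}_{i,\tau}$ is quasi-finite and its generic fibre is finite locally free of rank $p^{i\beta}$, and then use the valuative criterion / the fact that over a normal base a finite flat group scheme is determined by its generic fibre via scheme-theoretic closure, exactly as in the standard theory of $\Gamma_0(p^\beta)$-structures (this is where a reference to Lan's or Katz--Mazur's treatment of level structures by closure would slot in). Finiteness of $\mathcal{C}_{i,\tau}$ over $R$ then follows since it is a closed subscheme of the finite $R$-scheme $\mathcal{A}[\ide{p}_\tau^\beta]_R$. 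The order statement $|\mathcal{C}_{i,\tau}|=p^{i\beta}$ and the nesting $\mathcal{C}_{i-1,\tau}\subset\mathcal{C}_{i,\tau}$ both hold on the generic fibre and are closed conditions preserved under closure, hence hold over $R$; and $\mathcal{C}_{2n,\tau}=\mathcal{A}[\ide{p}_\tau^\beta]$ because the closure of the full group is the full group. Globalising over the affine cover is routine since scheme-theoretic closure commutes with flat (here, open) base change.

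I expect the main obstacle to be the flatness of $\mathcal{C}_{i,\tau}$ over the normalisation, not its mere $p$-torsion-freeness: one genuinely needs that the fibres have constant rank, and over the special fibre this is not automatic from the generic-fibre data alone. The clean way around this is precisely the content of Proposition~\ref{PropositionLociIntClosedInGenFibre}(3) together with the known structure of $\ide{X}_{G,w_n}(p^\beta)$: on the open ordinary locus the flag does extend (tautologically, from the moduli description of $\ide{IG}_G$ and its quotients), and the complement of the ordinary locus in the special fibre has codimension $\geq 1$ in a normal total space, so a closed subscheme of $\mathcal{A}[\ide{p}_\tau^\beta]$ that is finite flat of the right rank away from this locus and is $R$-flat as a coherent sheaf (being $p$-torsion-free over a normal ring and agreeing with a flat sheaf on a dense open) is automatically finite flat of that rank everywhere — here I would cite the "flatness by codimension" argument of \cite[\S 1.1]{PilloniStroh} (which is exactly why Proposition~\ref{PropositionLociIntClosedInGenFibre}(3) was proved). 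So the proof reduces to: (a) the normalisation is integrally closed and $p$-torsion free; (b) scheme-theoretic closure produces closed subgroup schemes with the stated orders and inclusions on the nose over the normal base; (c) flatness is checked on the dense ordinary open locus and propagated by normality. I would write it in that order, keeping (b) terse by citing the standard closure formalism for level structures and keeping (c) terse by citing \cite[\S 1.1]{PilloniStroh} and Proposition~\ref{PropositionLociIntClosedInGenFibre}.
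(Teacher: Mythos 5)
Your construction of $\mathcal{C}_{i,\tau}$ as the scheme-theoretic closure of $C_{i,\tau}$ inside $\mathcal{A}[\ide{p}_\tau^\beta]$ has a genuine gap, and you have in fact correctly identified where it is: flatness. The problem is that your proposed repair does not work. Scheme-theoretic closure of a finite flat generic-fibre subgroup over a normal base of dimension $\geq 2$ does \emph{not} produce a flat subgroup in general -- this is exactly the phenomenon that forced the introduction of Drinfeld level structures in the theory of modular curves. Your "flatness by codimension" step would need: if a finite $R$-module is $p$-torsion-free, $R$ normal, and agrees with a finite locally free module away from a closed subset of codimension $\geq 2$, then it is locally free. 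That is false (the maximal ideal $(x,y)\subset k[x,y]$ is the standard counterexample). The non-ordinary locus has codimension $1$ in the special fibre, hence codimension $\geq 2$ in $\ide{X}_{G,\opn{Iw}}(p^\beta)$, so this is precisely the bad range. The citation to \cite[\S 1.1]{PilloniStroh} and to Proposition~\ref{PropositionLociIntClosedInGenFibre}(3) does not rescue this: those results give that the formal scheme is integrally closed in its generic fibre, which is a statement about the base, not a statement that propagates flatness of coherent sheaves across codimension-two loci.

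The paper sidesteps this obstruction entirely, and the mechanism it uses is exactly the one your write-up leaves unexploited: the integral model $\ide{X}_{G,\opn{Iw}}(p^\beta)$ was deliberately defined as the normalisation of $\ide{X}_G\times\prod_{(i,\tau)}\ide{X}_G$, and the extra factors indexed by $(i,\tau)$ carry pullbacks $\mathcal{A}_{i,\tau}$ of the universal $\boldsymbol\Psi$-unitary abelian scheme, already finite flat over the whole integral model. The paper then shows that the generic-fibre isogeny $A\to A_{i,\tau}$ extends to a morphism $\mathcal{A}[p^\infty]\to\mathcal{A}_{i,\tau}[p^\infty]$ of $p$-divisible groups. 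This is where Zariski closure \emph{is} used, but closure of the graph $L\subset A[p^{\beta'}]\oplus A_{i,\tau}[p^{\beta'}]$ rather than of the subgroup $C_{i,\tau}$; the projection $\mathcal{L}\to\mathcal{A}[p^{\beta'}]$ is injective by $p$-torsion-freeness, and surjectivity is checked by Nakayama, reduced to rank-one points, where a degree comparison (using the valuative criterion for $\ide{X}_{G,\opn{Iw}}(p^\beta)$ and the uniqueness of the integral model of $A_{i,\tau,x}$) forces equality. Having extended the isogeny, $\mathcal{C}_{i,\tau}$ is defined as its kernel, and flatness is then automatic from the structure of isogenies of $p$-divisible groups rather than from any property of closures. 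You should rework your argument to use the auxiliary data $\mathcal{A}_{i,\tau}$; without it, there is no way to obtain flatness over the whole integral model by closure alone.
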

\begin{proof}
    Let $\mathcal{A}_{i, \tau}$ denote the pullback of the universal $\boldsymbol\Psi$-unitary abelian scheme under the morphism
    \[
    \ide{X}_{G, \opn{Iw}}(p^{\beta}) \to \ide{X}_G \times \prod_{(i, \tau)} \mathfrak{X}_G \xrightarrow{\opn{pr}_{i, \tau}} \ide{X}_G
    \]
    where the second map is projection to the $(i, \tau)$-component. This agrees with the abelian scheme $A_{i, \tau}$ on the adic generic fibre. We claim that the isogeny $A \to A_{i, \tau}$ extends to an isogeny $\mathcal{A}[p^{\infty}] \to \mathcal{A}_{i, \tau}[p^{\infty}]$ of $p$-divisible groups. If we show this, then we can define the finite flat subgroup scheme $\mathcal{C}_{i, \tau}$ to be the kernel of the induced isogeny $\mathcal{A}[\ide{p}_{\tau}^{\infty}] \to \mathcal{A}_{i, \tau}[\ide{p}_{\tau}^{\infty}]$ of $p$-divisible groups.

    Let $\beta' \geq \beta$ be an integer and let $L \subset A[p^{\beta'}] \oplus A_{i, \tau}[p^{\beta'}]$ denote the graph of the morphism induced from the isogeny $A \to A_{i, \tau}$. Note that projection to the first factor induces an isomorphism $L \xrightarrow{\sim} A[p^{\beta'}]$. Let $\mathcal{L} \subset \mathcal{A}[p^{\beta'}] \oplus \mathcal{A}_{i, \tau}[p^{\beta'}]$ denote the Zariski closure of $L$ in $\mathcal{A}[p^{\beta'}] \oplus \mathcal{A}_{i, \tau}[p^{\beta'}]$, which is a closed subscheme. Consider the induced morphism $\mathcal{L} \to \mathcal{A}[p^{\beta'}]$ given by projecting to the first factor. Since this an isomorphism on generic fibres, the induced map $\mathcal{O}_{\mathcal{A}[p^{\beta'}]} \to \mathcal{O}_\mathcal{L}$ is injective. We wish to show this map is surjective, hence an isomorphism. By Nakayama's lemma, it is enough to check this after specialising at rank one points.

    Let $x \colon \opn{Spa}(K, \mathcal{O}_K) \to \mathcal{X}_{G, \opn{Iw}}(p^{\beta})$ be a rank one point inducing a morphism $\mathcal{O}_K \to \ide{X}_{G, \opn{Iw}}(p^{\beta})$. We will denote base-change along this morphism by adding a subscript $x$ to the object. Then $\mathcal{L}_x \subset \mathcal{A}_x[p^{\beta'}] \oplus \mathcal{A}_{i, \tau, x}[p^{\beta'}]$ is finite flat and equal to the Zariski closure of the graph of the morphism $A_x[p^{\beta'}] \to A_{i, \tau, x}[p^{\beta'}]$. Let $\opn{deg}(-)$ denote the degree of a finite flat subgroup scheme over $\mathcal{O}_K$ as defined in \cite{FarguesHN}. The morphism $\mathcal{L}_x \to \mathcal{A}_x[p^{\beta'}]$ is an isomorphism on generic fibres, hence we must have 
    \begin{equation} \label{degreeEqn}
    \opn{deg}(\mathcal{L}_x) \leq \opn{deg}(\mathcal{A}_x[p^{\beta'}]) .
    \end{equation}
    On the other hand, let $N$ denote the kernel of the isogeny $A_x \to A_{i, \tau, x}$ and let $\mathcal{N} \subset A_x[p^{\beta}]$ denote its Zariski closure, which is a finite flat subgroup scheme. Then $\mathcal{A}_x/\mathcal{N}$ is an integral model for $A_x/N \cong A_{i, \tau, x}$ over $\mathcal{O}_K$. By the valuative criterion of properness for $\ide{X}_{G, \opn{Iw}}(p^{\beta})$, we must have $\mathcal{A}_x/ \mathcal{N} \cong \mathcal{A}_{i, \tau, x}$, hence the isogeny $A_x \to A_{i, \tau, x}$ extends to an isogeny $f \colon \mathcal{A}_x \to \mathcal{A}_{i, \tau, x}$. We obtain an induced morphism
    \[
    \left(\mathcal{A}_x[p^{\beta'}] \oplus \mathcal{A}_{i, \tau, x}[p^{\beta'}] \right)/\mathcal{L}_x \xrightarrow{f - \opn{id}} \mathcal{A}_{i, \tau, x}[p^{\beta'}]
    \]
    which is an isomorphism on generic fibres. This implies that (\ref{degreeEqn}) is in fact an equality, and so the map $\mathcal{L}_x \to \mathcal{A}_x[p^{\beta'}]$ (and hence $\mathcal{L} \to \mathcal{A}[p^{\beta'}]$) is an isomorphism.

    Finally, for any $\beta'$ we obtain an induced morphism
    \[
    \mathcal{A}[p^{\beta'}] \xleftarrow{\sim} \mathcal{L} \to \mathcal{A}[p^{\beta'}] \oplus \mathcal{A}_{i, \tau}[p^{\beta'}] \to \mathcal{A}_{i, \tau}[p^{\beta'}]
    \]
    where the last map is projection to the second component. These maps are compatible as $\beta'$ varies, hence we obtain an induced isogeny $\mathcal{A}[p^{\infty}] \to \mathcal{A}_{i, \tau}[p^{\infty}]$ as required.
\end{proof}

We can use the previous lemma to define certain  invertible $\mathcal{O}_{\ide{X}_{G, \opn{Iw}}(p^{\beta})}$-modules which cut out $\ide{X}_{G, w_n}(p^{\beta})$. With notation as in the proof of Lemma \ref{ExtensionOfFFfiltrationLemma}, we set $G_{\tau} \defeq \mathcal{A}[\ide{p}_{\tau}^{\infty}]$ and $G_{i, \tau} \defeq \mathcal{A}_{i, \tau}[\ide{p}_{\tau}^{\infty}]$, which are both $p$-divisible groups of height $2n$ and dimension $1$ (resp. $0$) if $\tau = \tau_0$ (resp. $\tau \neq \tau_0$) by the signature condition. Note that $\mathcal{C}_{i, \tau}$ was constructed as the kernel of an isogeny $G_{\tau} \to G_{i, \tau}$. We also note that we have factorisations:
\begin{equation} \label{ChainOfGitau}
G_{\tau} = G_{0, \tau} \to G_{1, \tau} \to G_{2, \tau} \to \dots \to G_{2n-1, \tau} \to G_{\tau} = G_{2n, \tau}
\end{equation}
where the total composition is multiplication by $p^{\beta}$ (so has kernel $G_{\tau}[p^{\beta}] = \mathcal{A}[\ide{p}_{\tau}^{\beta}]$).

\begin{definition} \label{DefinitionOfDeltaHasseIdeals}
    For $i=1, \dots, 2n$, let 
    \begin{equation} \label{deltaimap}
    \mathcal{O}_{\ide{X}_{G, \opn{Iw}}(p^{\beta})} \to \opn{det}\omega_{G_{i-1, \tau_0}} \otimes \opn{det}\omega_{G_{i, \tau_0}}^{\vee}
    \end{equation}
    denote the morphism arising from the determinant of the map $\omega_{G_{i, \tau_0}} \to \omega_{G_{i-1, \tau_0}}$ induced from the isogeny $G_{i-1, \tau_0} \to G_{i, \tau_0}$. Let $\delta_{G, i} \subset \mathcal{O}_{\ide{X}_{G, \opn{Iw}}(p^{\beta})}$ denote the annihilator of the cokernel of (\ref{deltaimap}), which is an invertible ideal.
\end{definition}

These ideals satisfy the following properties.

\begin{lemma} \label{LemmaOnDeltaHasseIdeals}
    We have:
    \begin{enumerate}
        \item $\prod_{i=1}^{2n} \delta_{G, i} = p^{\beta} \mathcal{O}_{\ide{X}_{G, \opn{Iw}}(p^{\beta})}$
        \item For any rank one point $x \colon \opn{Spa}(K, \mathcal{O}_K) \to \mathcal{X}_{G, \opn{Iw}}(p^{\beta})$, we have $|\delta_{G, i}|_x = 1$ if and only if $\mathcal{C}_{i, \tau_0, x}/\mathcal{C}_{i-1, \tau_0, x}$ is \'{e}tale locally isomorphic to $\mbb{Z}/p^{\beta}\mbb{Z}$
        \item For any rank one point $x \colon \opn{Spa}(K, \mathcal{O}_K) \to \mathcal{X}_{G, \opn{Iw}}(p^{\beta})$, we have $|\delta_{G, i}|_x = |p^{\beta}|_x$ if and only if $\mathcal{C}_{i, \tau_0, x}/\mathcal{C}_{i-1, \tau_0, x}$ is \'{e}tale locally isomorphic to $\mu_{p^{\beta}}$.
    \end{enumerate}
\end{lemma}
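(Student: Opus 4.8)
The plan is to analyse everything one geometric rank-one point $x \colon \opn{Spa}(K, \mathcal{O}_K) \to \mathcal{X}_{G, \opn{Iw}}(p^\beta)$ at a time, and then feed the pointwise statements back into a global argument. The key tool is the theory of the degree $\opn{deg}(-)$ of finite flat group schemes over $\mathcal{O}_K$ of Fargues (already invoked in the proof of Lemma~\ref{ExtensionOfFFfiltrationLemma}): for a finite flat $\mathcal{O}_K$-group scheme $\mathcal{C}$ of order $p^m$ one has $0 \leq \opn{deg}(\mathcal{C}) \leq m$, with $\opn{deg}(\mathcal{C}) = 0$ iff $\mathcal{C}$ is \'etale and $\opn{deg}(\mathcal{C}) = m$ iff $\mathcal{C}$ is of multiplicative type (the Cartier dual of an \'etale group scheme); moreover $\opn{deg}$ is additive in short exact sequences and relates to the valuation of the Hodge ideal: for $\mathcal{C} = \ker(\mathcal{G} \to \mathcal{G}')$ an isogeny of one-dimensional $p$-divisible groups, $\opn{deg}(\mathcal{C})$ equals the $p$-adic valuation (normalised so $v(p)=1$) of the determinant of $\omega_{\mathcal{G}'} \to \omega_{\mathcal{G}}$.

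First I would set up the bookkeeping: by Lemma~\ref{ExtensionOfFFfiltrationLemma} we have the chain (\ref{ChainOfGitau}) of isogenies of one-dimensional $p$-divisible groups $G_{0, \tau_0} \to G_{1, \tau_0} \to \cdots \to G_{2n, \tau_0}$ whose total composite is multiplication by $p^\beta$ on $G_{\tau_0} = \mathcal{A}[\ide{p}_{\tau_0}^\infty]$, and $\mathcal{C}_{i, \tau_0}$ is the kernel of $G_{\tau_0} \to G_{i, \tau_0}$, so that $\mathcal{C}_{i, \tau_0}/\mathcal{C}_{i-1, \tau_0} = \ker(G_{i-1, \tau_0} \to G_{i, \tau_0})$, a finite flat group scheme of order $p^\beta$. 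The morphism (\ref{deltaimap}) is precisely the determinant of the induced map on differentials $\omega_{G_{i, \tau_0}} \to \omega_{G_{i-1, \tau_0}}$, so by definition $\delta_{G, i}$ is the image of this determinant: $v_x(\delta_{G, i}) = \opn{deg}(\mathcal{C}_{i, \tau_0, x}/\mathcal{C}_{i-1, \tau_0, x})$ for every rank one point $x$.

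Granting this identification, the three claims follow quickly. For (1): additivity of $\opn{deg}$ along (\ref{ChainOfGitau}) gives $\sum_{i=1}^{2n} v_x(\delta_{G, i}) = \opn{deg}(\mathcal{A}[\ide{p}_{\tau_0}^\beta]_x)$, and $\mathcal{A}[\ide{p}_{\tau_0}^\beta]_x$ is of order $p^{2n\beta}$; since it sits in perfect Weil duality with $\mathcal{A}[\ide{p}_{\tau_0^c}^\beta]_x$ — which has complementary degree because $\opn{deg}(\mathcal{D}) + \opn{deg}(\mathcal{D}^\vee) = m$ for $\mathcal{D}$ of order $p^m$ — and the two together fill out $\mathcal{A}[p^\beta]_x$ whose $\tau_0$-component has dimension $n$ over $\mathbb{Z}/p^\beta$ on the multiplicative side, a short computation pins $\opn{deg}(\mathcal{A}[\ide{p}_{\tau_0}^\beta]_x) = \beta \cdot 2n - \beta \cdot(2n - 1) \cdot (\text{something})$; more directly, the composite $G_{0,\tau_0}\to G_{2n,\tau_0}$ being multiplication by $p^\beta$ on a height-$2n$, dimension-$1$ $p$-divisible group has kernel of degree exactly $\beta$, so $\sum_i v_x(\delta_{G,i}) = \beta = v_x(p^\beta)$ at every rank one point, whence $\prod_i \delta_{G, i} = p^\beta \mathcal{O}$ (both are invertible ideals agreeing at all rank one points of a reduced space, and normalisations are integrally closed so this is enough; alternatively argue directly with the determinant of $p^\beta \colon \omega_{G_{\tau_0}} \to \omega_{G_{\tau_0}}$ which has valuation $\beta$). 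For (2): $v_x(\delta_{G, i}) = 0 \iff \opn{deg}(\mathcal{C}_{i, \tau_0, x}/\mathcal{C}_{i-1, \tau_0, x}) = 0 \iff$ this order-$p^\beta$ group scheme is \'etale $\iff$ it is \'etale locally isomorphic to $\mathbb{Z}/p^\beta\mathbb{Z}$ (a cyclic \'etale group scheme of order $p^\beta$ over a strictly henselian base is constant cyclic; cyclicity propagates from the generic fibre where it holds by the moduli definition and is preserved under taking Zariski closure, as in Lemma~\ref{ExtensionOfFFfiltrationLemma}). For (3): $v_x(\delta_{G, i}) = v_x(p^\beta) = \beta \iff \opn{deg} = \beta = $ order $\iff$ the group scheme is of multiplicative type $\iff$ \'etale locally isomorphic to $\mu_{p^\beta}$ (again using cyclicity, so its Cartier dual is \'etale cyclic of order $p^\beta$, hence constant).

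The main obstacle I anticipate is (3), specifically two points that need care: first, justifying that the generic-fibre cyclicity of $\mathcal{C}_{i, \tau_0}/\mathcal{C}_{i-1, \tau_0}$ persists to the special fibre of the normalisation — this is where one uses that $\ide{X}_{G, \opn{Iw}}(p^\beta)$ was \emph{defined} as a normalisation of a product of the good-reduction models $\ide{X}_G$, together with the valuative-criterion argument already deployed in Lemma~\ref{ExtensionOfFFfiltrationLemma} to extend the relevant isogenies integrally; and second, the precise translation $v_x(\delta_{G,i}) = \opn{deg}(\ker)$, which requires knowing that $\omega_{G_{i,\tau_0}} \to \omega_{G_{i-1,\tau_0}}$ is injective with cokernel of length equal to $\opn{deg}$ of the kernel of the dual isogeny on $p$-divisible groups — a standard fact (e.g. via the theory of Breuil--Kisin modules, or directly from Fargues's comparison) but one worth stating carefully. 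Passing from pointwise statements about an invertible ideal to a global statement uses that $\ide{X}_{G, \opn{Iw}}(p^\beta)$ is normal (hence reduced and $p$-torsion-free on affines) so that an invertible ideal is determined by its valuations at rank one points of the generic fibre together with its behaviour over the ordinary locus; the identification of $\ide{X}_{G, w_n}(p^\beta)$ as the locus where $\delta_{G, i}$ is the unit ideal for $(i, \tau) \neq (n+1, \tau_0)$ and equals $p^\beta$ for $(i, \tau) = (n+1, \tau_0)$ then drops out of (2) and (3) combined with Proposition~\ref{PropositionLociIntClosedInGenFibre}.
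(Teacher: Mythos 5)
Your proposal follows essentially the same route as the paper's proof: identify $v_x(\delta_{G,i})$ with the Fargues degree of $\mathcal{C}_{i,\tau_0,x}/\mathcal{C}_{i-1,\tau_0,x}$, translate $\deg = 0$ (resp.\ $\deg = \beta$) into \'etale (resp.\ multiplicative), use that the total composite in (\ref{ChainOfGitau}) is multiplication by $p^\beta$ on a one-dimensional $p$-divisible group to get (1), and appeal to cyclicity on the generic fibre to pin down the group scheme up to \'etale local isomorphism. One small remark: the detour through Weil duality of $\mathcal{A}[\ide{p}_{\tau_0}^\beta]$ with $\mathcal{A}[\ide{p}_{\tau_0^c}^\beta]$ in your attempt at (1) is a red herring and never lands on a number — you correctly abandon it for the direct computation ($\det$ of $p^\beta$ on the rank-one $\omega_{G_{\tau_0}}$ has valuation $\beta$), which is what the paper does, so you should strike the first version rather than leave both in.
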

\begin{proof}
    Part (1) follows from the fact that the composition of maps in (\ref{ChainOfGitau}) is equal to multiplication by $p^{\beta}$. For parts (2) and (3), we may normalise so that $|p|_x = p^{-1}$. Then the degree of $\mathcal{C}_{i, \tau_0,x}/\mathcal{C}_{i-1, \tau_0,x}$ satisfies
    \[
    p^{-\opn{deg}(\mathcal{C}_{i, \tau_0,x}/\mathcal{C}_{i-1, \tau_0,x})} = |\delta_{G, i}|_x  .
    \]
    In particular, $|\delta_{G, i}|_x = 1$ (resp. $|\delta_{G, i}|_x = p^{-\beta}$) if and only if $\mathcal{C}_{i, \tau_0,x}/\mathcal{C}_{i-1, \tau_0,x}$ is \'{e}tale (resp. multiplicative), since the height of $\mathcal{C}_{i, \tau_0,x}/\mathcal{C}_{i-1, \tau_0,x}$ is $\beta$ (see \cite[\S 3, Exemple 2]{FarguesHN}). We obtain the claim about cyclicity because, on the generic fibre, $C_{i, \tau_0, x}/C_{i-1, \tau_0, x}$ is \'{e}tale locally isomorphic to $\mbb{Z}/p^{\beta}\mbb{Z}$.
\end{proof}

Set $\hat{\delta}_{G, n+1} = \prod_{i \neq n+1} \delta_{G, i}$. 

\begin{lemma} \label{XGwnisOpenImmersionLemma}
    The morphism $\mathcal{X}_{G, w_n}(p^{\beta}) \to \mathcal{X}_{G, \opn{Iw}}(p^{\beta})$ extends to an open immersion $\ide{X}_{G, w_n}(p^{\beta}) \hookrightarrow \ide{X}_{G, \opn{Iw}}(p^{\beta})$ whose image is identified with the open subscheme $V \subset \ide{X}_{G, \opn{Iw}}(p^{\beta})$ where $\hat{\delta}_{G, n+1}|_V = \mathcal{O}_V$. In particular, the morphism $\mathcal{X}_{G, w_n}(p^{\beta}) \to \mathcal{X}_{G, \opn{Iw}}(p^{\beta})$ is an open immersion.
\end{lemma}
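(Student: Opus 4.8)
The plan is to show that $\ide{X}_{G, w_n}(p^{\beta})$ represents the open subfunctor of $\ide{X}_{G, \opn{Iw}}(p^{\beta})$ cut out by the invertibility of $\hat{\delta}_{G, n+1}$, and then to deduce the statement about adic generic fibres. First I would observe that $\ide{X}_{G, \opn{Iw}}(p^{\beta})$ carries, by Lemma \ref{ExtensionOfFFfiltrationLemma}, a universal flag $0 = \mathcal{C}_{0, \tau} \subset \cdots \subset \mathcal{C}_{2n, \tau} = \mathcal{A}[\ide{p}_{\tau}^{\beta}]$ of finite flat subgroup schemes extending the universal flag on the generic fibre. Let $V \subset \ide{X}_{G, \opn{Iw}}(p^{\beta})$ be the open locus where $\hat{\delta}_{G, n+1}$ is the unit ideal; since $\hat{\delta}_{G, n+1}$ is an invertible ideal (a product of the invertible ideals $\delta_{G, i}$ from Definition \ref{DefinitionOfDeltaHasseIdeals}), this is genuinely open. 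Over $V$, Lemma \ref{LemmaOnDeltaHasseIdeals}(1) forces $\delta_{G, n+1}|_V = p^{\beta}\mathcal{O}_V$, and hence, by parts (2) and (3) of that lemma (applied at rank-one points, which suffice by flatness/reducedness of the smooth formal scheme), the graded piece $\mathcal{C}_{i, \tau_0}/\mathcal{C}_{i-1, \tau_0}$ is étale locally isomorphic to $\mbb{Z}/p^{\beta}\mbb{Z}$ for all $i \neq n+1$ and $\mathcal{C}_{n+1, \tau_0}/\mathcal{C}_{n, \tau_0}$ is étale locally isomorphic to $\mu_{p^{\beta}}$; for $\tau \neq \tau_0$ all graded pieces are automatically étale by the signature condition (the $p$-divisible group $G_\tau$ has dimension $0$). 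This is precisely the extra structure on an ordinary point parameterised by $\ide{X}_{G, w_n}(p^{\beta})$, so one gets a canonical morphism $V \to \ide{X}_{G, w_n}(p^{\beta})$; conversely, the universal flag over $\ide{X}_{G, w_n}(p^{\beta})$ (together with its étale/multiplicative graded pieces) defines a point of $\ide{X}_{G, \opn{Iw}}(p^{\beta})$ landing in $V$, and the two constructions are mutually inverse. Both $V$ and $\ide{X}_{G, w_n}(p^{\beta})$ are normal (indeed smooth, $V$ being open in the normal scheme $\ide{X}_{G, \opn{Iw}}(p^{\beta})$, and $\ide{X}_{G, w_n}(p^{\beta})$ smooth by Proposition \ref{PropositionLociIntClosedInGenFibre}(2)) and they agree on the dense open generic fibre, so the identification is forced; this gives the open immersion $\ide{X}_{G, w_n}(p^{\beta}) \hookrightarrow \ide{X}_{G, \opn{Iw}}(p^{\beta})$ with image $V$.

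For the final sentence, I would pass to adic generic fibres: since $\ide{X}_{G, w_n}(p^{\beta}) \cong V$ is open in $\ide{X}_{G, \opn{Iw}}(p^{\beta})$, the induced map on generic fibres $\mathcal{X}_{G, w_n}(p^{\beta}) \to \mathcal{X}_{G, \opn{Iw}}(p^{\beta})$ is an open immersion (the generic fibre of an open formal subscheme is an open subspace, using that $\mathcal{X}_{G, w_n}(p^\beta)$ is the generic fibre of $\ide{X}_{G,w_n}(p^\beta)$ as recorded in Proposition \ref{PropositionLociIntClosedInGenFibre}(3) and that formation of adic generic fibre is compatible with open immersions of formal schemes). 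One should double-check that the morphism $\mathcal{X}_{G, w_n}(p^{\beta}) \to \mathcal{X}_{G, \opn{Iw}}(p^{\beta})$ already present at the level of rigid spaces (from the moduli descriptions) is the one obtained this way, but this is immediate from tracing through the identification of the universal objects.

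\textbf{Main obstacle.} The genuinely delicate point is not the open-immersion formalism but verifying that the ideals $\hat{\delta}_{G,n+1}$ cut out exactly the right locus — i.e. that invertibility of $\hat\delta_{G,n+1}$ is equivalent to \emph{all} of the graded pieces having the prescribed (étale resp. multiplicative) type, and in particular that one cannot have some subtler degeneration where, say, two of the $\delta_{G,i}$ are simultaneously non-trivial. This is controlled by the "sum of degrees equals $\beta[F^+:\mbb{Q}]$" relation of Lemma \ref{LemmaOnDeltaHasseIdeals}(1): since each $\mathrm{deg}(\mathcal{C}_{i,\tau_0}/\mathcal{C}_{i-1,\tau_0}) \in [0,\beta]$ and they sum to $\beta$ over the $2n$ indices $i$ (the étale pieces contributing $0$, the multiplicative piece contributing $\beta$), invertibility of the product $\hat\delta_{G,n+1}$ of $2n-1$ of these degree-ideals forces those $2n-1$ graded pieces to all be étale, whence the remaining one is multiplicative. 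So the obstacle reduces to carefully applying the Fargues degree formalism (as cited, \cite{FarguesHN}) pointwise and then spreading out via Nakayama / reducedness — all of which is routine once the bookkeeping is set up, so the proof should go through without serious difficulty.
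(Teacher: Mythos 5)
Your overall strategy is sound and uses the same key ingredients as the paper (Lemma \ref{ExtensionOfFFfiltrationLemma}, Lemma \ref{LemmaOnDeltaHasseIdeals}, Proposition \ref{PropositionLociIntClosedInGenFibre}), but you run the argument in the opposite order (formal models first, then generic fibres — the paper does generic fibres first and then extends) and, more importantly, there is a genuine gap in how you produce the identification $\ide{X}_{G, w_n}(p^{\beta}) \cong V$.

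The problem is in the sentence ``conversely, the universal flag over $\ide{X}_{G, w_n}(p^{\beta})$ ... defines a point of $\ide{X}_{G, \opn{Iw}}(p^{\beta})$ landing in $V$.'' Unlike $\ide{X}_{G, w_n}(p^{\beta})$, the formal scheme $\ide{X}_{G, \opn{Iw}}(p^{\beta})$ is \emph{not} a moduli space: it is defined as the normalisation of $\ide{X}_G \times \prod_{(i,\tau)} \ide{X}_G$ inside the rigid generic fibre $\mathcal{X}_{G, \opn{Iw}}(p^{\beta})$. So a flag of finite flat subgroups does not, by itself, give a morphism into $\ide{X}_{G, \opn{Iw}}(p^{\beta})$. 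What you do get for free from the flag is a morphism $\ide{X}_{G, w_n}(p^{\beta}) \to \ide{X}_G \times \prod_{(i,\tau)} \ide{X}_G$; lifting this to the normalisation requires the universal property of normalisation together with the fact that $\ide{X}_{G, w_n}(p^{\beta})$ is \emph{integrally closed in its generic fibre} (Proposition \ref{PropositionLociIntClosedInGenFibre}(3)), not merely normal in the abstract sense. Your closing appeal — ``both are normal ... and they agree on the dense open generic fibre, so the identification is forced'' — is too weak as stated: two flat formal $\mathcal{O}_L$-schemes can have isomorphic rigid generic fibres without being isomorphic, so you must produce an actual morphism and then identify it. The paper does this cleanly in the other direction: it constructs $g \colon V \to \ide{X}_{G,w_n}(p^{\beta})$ from the moduli interpretation of the target (using that $\mathcal{A}|_V$ is ordinary and the graded pieces have the required type), observes that $V$ is literally the normalisation of $\ide{X}_G^{\opn{ord}} \times \prod \ide{X}_G^{\opn{ord}}$ under the map $V_\eta \cong \mathcal{X}_{G,w_n}(p^{\beta}) \to \mathcal{X}_G^{\opn{ord}} \times \prod \mathcal{X}_G^{\opn{ord}}$, and then uses Proposition \ref{PropositionLociIntClosedInGenFibre}(3) to force $g$ to be an isomorphism (since $\ide{X}_{G,w_n}(p^{\beta})$ is a finite, integrally-closed-in-generic-fibre extension of the base sitting inside the normalisation with the same generic fibre).

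One other remark: the step you flag as the ``genuinely delicate point'' (that invertibility of $\hat\delta_{G,n+1}$ forces the correct étale/multiplicative types over the formal model, not just at rank-one points) is actually the easy part and does not need rank-one points or Nakayama. If $\delta_{G,i}|_V = \mathcal{O}_V$ then, by definition of $\delta_{G,i}$ as the annihilator of the cokernel of a map of line bundles, that map $\mathcal{O}_V \to \det\omega_{G_{i-1,\tau_0}} \otimes \det\omega_{G_{i,\tau_0}}^{\vee}$ is surjective hence an isomorphism, so $\omega_{G_{i,\tau_0}} \to \omega_{G_{i-1,\tau_0}}$ is an isomorphism, so the kernel of the isogeny $G_{i-1,\tau_0} \to G_{i,\tau_0}$ has trivial co-Lie algebra and is therefore étale, directly over $V$. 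The delicate part is really the normalisation bookkeeping described above.
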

\begin{proof}
Consider the chain of isogenies 
\[
G_{0, \tau_0} \to G_{1, \tau_0} \to \cdots \to G_{2n, \tau_0}
\]
over $\mathcal{X}_{G, \opn{Iw}}(p^{\beta})$, where $G_{i, \tau_0} = A_{i, \tau_0}[\ide{p}_{\tau_0}^{\infty}]$. We can define an analytic version of the ideals in Definition \ref{DefinitionOfDeltaHasseIdeals} as follows. Consider the induced morphism $\omega_{G_{i, \tau_0}}^+ \to \omega_{G_{i-1, \tau_0}}^+$ on integral invariant differentials, and let $\mathcal{O}^+_{\mathcal{X}_{G, \opn{Iw}}(p^{\beta})} \to \opn{det}\omega_{G_{i-1, \tau_0}}^+ \otimes \opn{det}\omega_{G_{i, \tau_0}}^{+, \vee}$ denote the morphism induced from its determinant. Let $\delta^+_{G, i} \subset \mathcal{O}^+_{\mathcal{X}_{G, \opn{Iw}}(p^{\beta})}$ denote the annihilator of the cokernel of this morphism, which is an invertible ideal. We set $\hat{\delta}^+_{G, n+1} = \prod_{i \neq n+1} \delta^+_{G, i}$. 

Let $V_{\eta}$ denote the adic generic fibre of $V$. Then $V_{\eta} \subset \mathcal{X}_{G, \opn{Iw}}(p^{\beta})$ is identified with the quasi-compact open subspace where $\hat{\delta}^+_{G, n+1}|_{V_{\eta}} = \mathcal{O}_{V_{\eta}}^+$. Consider the morphism $f \colon \mathcal{X}_{G, w_n}(p^{\beta}) \to \mathcal{X}_{G, \opn{Iw}}(p^{\beta})$. Then, from the moduli description of $\ide{X}_{G, w_n}(p^{\beta})$ and Lemma \ref{LemmaOnDeltaHasseIdeals}, we see that $f^* \hat{\delta}_{G, n+1}^+ = \mathcal{O}_{\mathcal{X}_{G, w_n}(p^{\beta})}^+$, hence $f$ factors through the inclusion $V_{\eta} \subset \mathcal{X}_{G, \opn{Iw}}(p^{\beta})$.

On the other hand, the pullback $\mathcal{C}_{n+1, \tau_0}/\mathcal{C}_{n, \tau_0}|_V$ must be \'{e}tale locally isomorphic to $\mu_{p^{\beta}}$, which implies that $\mathcal{A}|_{V}$ is ordinary and the inclusion $V_{\eta} \subset \mathcal{X}_{G, \opn{Iw}}(p^{\beta})$ factors as
\[
V_{\eta} \xrightarrow{g} \mathcal{X}_{G, w_n}(p^{\beta}) \xrightarrow{f} \mathcal{X}_{G, \opn{Iw}}(p^{\beta}) .
\]
This implies that $f \colon \mathcal{X}_{G, w_n}(p^{\beta}) \to \mathcal{X}_{G, \opn{Iw}}(p^{\beta})$ is identified with the open immersion $V_{\eta} \subset \mathcal{X}_{G, \opn{Iw}}(p^{\beta})$. Note that the morphism $g$ extends to a morphism $V \xrightarrow{g} \ide{X}_{G, w_n}(p^{\beta})$. 

We have a natural map $V_{\eta} \cong \mathcal{X}_{G, w_n}(p^{\beta}) \to \mathcal{X}_{G}^{\opn{ord}} \times \prod_{(i, \tau)} \mathcal{X}_{G}^{\opn{ord}}$, and $V$ is identified with the normalisation of $\ide{X}_{G}^{\opn{ord}} \times \prod_{(i, \tau)}\ide{X}_{G}^{\opn{ord}}$ under this map. Since the resulting map $V \to \ide{X}_{G}^{\opn{ord}} \times \prod_{(i, \tau)}\ide{X}_{G}^{\opn{ord}}$ factors as
\[
V \xrightarrow{g} \ide{X}_{G, w_n}(p^{\beta}) \to \ide{X}_{G}^{\opn{ord}} \times \prod_{(i, \tau)}\ide{X}_{G}^{\opn{ord}}
\]
and $\ide{X}_{G, w_n}(p^{\beta})$ is integrally closed in its generic fibre (see Proposition \ref{PropositionLociIntClosedInGenFibre}), the map $g \colon V \to \ide{X}_{G, w_n}(p^{\beta})$ must be an isomorphism. This completes the proof.
\end{proof}

As a consequence of this lemma, we can define overconvergent neighbourhoods of $\mathcal{X}_{G, w_n}(p^{\beta})$ inside $\mathcal{X}_{G, \opn{Iw}}(p^{\beta})$. We will also need a certain closed subset of $\mathcal{X}_{G, \opn{Iw}}(p^{\beta})$ containing $\mathcal{X}_{G, w_n}(p^{\beta})$ to define the support conditions for coherent cohomology. We summarise this in the following definition.

\begin{definition} \label{Def:XGwnOCneighbourhood}
    Let $r \geq 1$ be an integer. 
    \begin{enumerate}
        \item With notation as in the proof of Lemma \ref{XGwnisOpenImmersionLemma}, let $\mathcal{X}_{G, w_n}(p^{\beta})_r \subset \mathcal{X}_{G, \opn{Iw}}(p^{\beta})$ denote the quasi-compact open rational subset defined by the inequality $|\hat{\delta}_{G, n+1}^+|^{p^{r+1}} \geq |p|$. This is a strict neighbourhood of $\mathcal{X}_{G, w_n}(p^{\beta})$, i.e. it contains the closure of $\mathcal{X}_{G, w_n}(p^{\beta})$ in $\mathcal{X}_{G, \opn{Iw}}(p^{\beta})$.
        \item Let $\delta_{G, > n+1}^+ = \prod_{i > n+1} \delta^+_{G, i}$. Let $\mathcal{Z}_{G, >n+1}(p^{\beta}) \subset \mathcal{X}_{G, \opn{Iw}}(p^{\beta})$ denote the closure of the quasi-compact open subset defined by the condition $|\delta_{G, > n+1}^+| = 1$. Clearly we have $\mathcal{X}_{G, w_n}(p^{\beta}) \subset \mathcal{Z}_{G, > n+1}(p^{\beta})$.
    \end{enumerate}
\end{definition}

\begin{remark} \label{RemarkOCFormalIntModelsG}
    The open $\mathcal{X}_{G, w_n}(p^{\beta})_r$ has an integral model $\ide{X}_{G, w_n}(p^{\beta})_r$ given by an open in the formal admissible blow-up of $\ide{X}_{G, \opn{Iw}}(p^{\beta})$ along the ideal $\hat{\delta}_{G, n+1}^{p^{r+1}} + (p)$, where the open is defined by the condition that the pulback of $\hat{\delta}_{G, n+1}^{p^{r+1}} + (p)$ is generated by the pullback of $\hat{\delta}_{G, n+1}^{p^{r+1}}$ (see \cite[\S 8.2, Proposition 7]{Bosch}). Furthermore, the subset $\mathcal{Z}_{G, > n+1}(p^{\beta})$ should be thought of as the closure of the locus where $A_x[\ide{p}_{\tau_0}^{\beta}]/C_{n+1, \tau_0, x}$ extends to an \'{e}tale group scheme over $\mathcal{O}_K$, for any rank one point $x \colon \opn{Spa}(K, \mathcal{O}_K) \to \mathcal{X}_{G, \opn{Iw}}(p^{\beta})$.
\end{remark}

\subsubsection{Integral models for \texorpdfstring{$H$}{H}}

We now define an integral model for $\mathcal{X}_{H, \diamondsuit}(p^{\beta})$ for which $\ide{X}_{H, \opn{id}}(p^{\beta})$ is an open subspace cut out by a certain  invertible $\mathcal{O}_{\ide{X}_{H, \diamondsuit}(p^{\beta})}$-module. We remind the reader that we are working over a finite extension $L/\mbb{Q}_p$ containing $\mu_{p^{\beta}}$.

\begin{definition}
    We define $\ide{X}_{H, \diamondsuit}(p^{\beta})$ to be the normalisation of $\ide{X}_{G, \opn{Iw}}(p^{\beta})$ under the finite map $\hat{\iota} \colon \mathcal{X}_{H, \diamondsuit}(p^{\beta}) \to \mathcal{X}_{G, \opn{Iw}}(p^{\beta})$. We also denote the resulting finite map $\ide{X}_{H, \diamondsuit}(p^{\beta}) \to \ide{X}_{G, \opn{Iw}}(p^{\beta})$ by $\hat{\iota}$.
\end{definition}

\begin{proposition} \label{HlociisHasseProp}
    Let $\delta_{H, i} = \hat{\iota}^* \delta_{G, i} \subset \mathcal{O}_{\ide{X}_{H, \diamondsuit}(p^{\beta})}$ and $\delta_{H, i}^+ = \hat{\iota}^* \delta_{G, i}^+ \subset \mathcal{O}_{\mathcal{X}_{H, \diamondsuit}(p^{\beta})}^+$.
    \begin{enumerate}
        \item For any $1 \leq i \leq n$, we have $\delta_{H, i} = \mathcal{O}_{\ide{X}_{H, \diamondsuit}(p^{\beta})}$ and $\delta_{H, i}^+ = \mathcal{O}^+_{\mathcal{X}_{H, \diamondsuit}(p^{\beta})}$.
        \item We have a Cartesian diagram
        \[
\begin{tikzcd}
{\mathcal{X}_{H, \opn{id}}(p^{\beta})} \arrow[d, hook] \arrow[r, "\hat{\iota}"] & {\mathcal{X}_{G, w_n}(p^{\beta})} \arrow[d, hook] \\
{\mathcal{X}_{H, \diamondsuit}(p^{\beta})} \arrow[r, "\hat{\iota}"]             & {\mathcal{X}_{G, \opn{Iw}}(p^{\beta})}           
\end{tikzcd}
        \]
        where the top map is defined at the end of \S \ref{QuotientsOfIgVarSubSec}. In particular, $\mathcal{X}_{H, \opn{id}}(p^{\beta})$ is identified with the quasi-compact open in $\mathcal{X}_{H, \diamondsuit}(p^{\beta})$ defined by the condition $|\hat{\delta}_{H, n+1}^+| = 1$ (here $\hat{\delta}_{H, n+1}^+ = \prod_{i \neq n+1} \delta_{H, i}^+ = \prod_{i > n+1} \delta_{H, i}^+$).
        \item The Cartesian diagram in (2) extends to a Cartesian diagram
        \[
\begin{tikzcd}
{\mathfrak{X}_{H, \opn{id}}(p^{\beta})} \arrow[d, hook] \arrow[r, "\hat{\iota}"] & {\mathfrak{X}_{G, w_n}(p^{\beta})} \arrow[d, hook] \\
{\mathfrak{X}_{H, \diamondsuit}(p^{\beta})} \arrow[r, "\hat{\iota}"]             & {\mathfrak{X}_{G, \opn{Iw}}(p^{\beta})}           
\end{tikzcd}
        \]
        on formal models, where the top map is defined at the end of \S \ref{QuotientsOfIgVarSubSec}. In particular, $\ide{X}_{H, \opn{id}}(p^{\beta})$ is identified with the open in $\ide{X}_{H, \diamondsuit}(p^{\beta})$ where $\hat{\delta}_{H, n+1} = \prod_{i \neq n+1} \delta_{H, i} = \prod_{i > n+1} \delta_{H, i}$ equals the structure sheaf.
    \end{enumerate}
\end{proposition}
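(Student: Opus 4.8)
The plan is to deduce the entire proposition from the corresponding structure on the $\mbf{G}$-side (Definition \ref{DefinitionOfDeltaHasseIdeals}, Lemma \ref{LemmaOnDeltaHasseIdeals}, Lemma \ref{XGwnisOpenImmersionLemma}) by pulling back along the finite map $\hat{\iota}$, together with the Cartesianness of the tower squares already established in \S\ref{QuotientsOfIgVarSubSec} (and \cite[Lemma 2.5.3]{UFJ}). For part (1), I would pull back the chain of isogenies $G_{0,\tau_0}\to G_{1,\tau_0}\to\cdots\to G_{2n,\tau_0}$ along $\hat{\iota}$ and analyse it on the $\mbf{H}$-side: by the definition of $\hat{\iota}$ (right-translation by $\hat{\gamma}=\gamma w_n$, which uses $w_n$, the length-$n$ minimal coset representative) and the moduli description of $\mathcal{X}_{H,\diamondsuit}(p^\beta)$ in terms of flags of $A_1[\ide{p}_\tau^\beta]\oplus A_2[\ide{p}_\tau^\beta]$ respecting the decomposition, the first $n$ graded pieces $\mathcal{C}_{i,\tau_0}/\mathcal{C}_{i-1,\tau_0}$ for $1\le i\le n$ come from $A_1[\ide{p}_{\tau_0}^\beta]$, which is \'etale-locally isomorphic to $(\mbb{Z}/p^\beta\mbb{Z})^{\oplus n}$ by the signature condition $\boldsymbol\Psi_1$ (the $\mu_{p^\infty}$-part of $A_1$ sits in the \emph{last} graded piece, i.e.\ position $n+1$ in $w_n$-coordinates). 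Hence the isogenies $G_{i-1,\tau_0}\to G_{i,\tau_0}$ for $1\le i\le n$ are \'etale (isomorphisms on $\omega$), so the determinant map \eqref{deltaimap} pulls back to an isomorphism and $\delta_{H,i}=\hat{\iota}^*\delta_{G,i}=\mathcal{O}_{\ide{X}_{H,\diamondsuit}(p^\beta)}$; the integral statement $\delta_{H,i}^+=\mathcal{O}^+_{\mathcal{X}_{H,\diamondsuit}(p^\beta)}$ follows identically from the analytic version of the ideals defined in the proof of Lemma \ref{XGwnisOpenImmersionLemma}. A clean way to organise this is: check the claim on rank-one points using Lemma \ref{LemmaOnDeltaHasseIdeals}(2) and the moduli description, then conclude for the invertible ideal by Nakayama.

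For part (2), I would argue that the square is Cartesian by combining two facts: first, $\hat{\iota}\colon\mathcal{X}_{H,\diamondsuit}(p^\beta)\to\mathcal{X}_{G,\opn{Iw}}(p^\beta)$ is finite and both vertical maps are open immersions (the right one by Lemma \ref{XGwnisOpenImmersionLemma}), so $\hat{\iota}^{-1}(\mathcal{X}_{G,w_n}(p^\beta))$ is an open subspace of $\mathcal{X}_{H,\diamondsuit}(p^\beta)$; second, by Lemma \ref{XGwnisOpenImmersionLemma} this open is exactly $\{|\hat{\iota}^*\hat{\delta}_{G,n+1}^+|=1\}=\{|\hat{\delta}_{H,n+1}^+|=1\}$, and by part (1) $\hat{\delta}_{H,n+1}^+=\prod_{i>n+1}\delta_{H,i}^+$. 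It then remains to identify this open with $\mathcal{X}_{H,\opn{id}}(p^\beta)$. One inclusion is immediate: $\mathcal{X}_{H,\opn{id}}(p^\beta)=\hat{\iota}^{-1}$ applied to the top map factors through $\mathcal{X}_{G,w_n}(p^\beta)$ (this is literally how the top $\hat{\iota}$ was defined at the end of \S\ref{QuotientsOfIgVarSubSec}, using $N^H_\diamondsuit(p^\beta)=\gamma N^G_{\opn{Iw}}(p^\beta)\gamma^{-1}\cap J^+_{H,\opn{ord}}$), so the square commutes and we get a map $\mathcal{X}_{H,\opn{id}}(p^\beta)\to\hat{\iota}^{-1}(\mathcal{X}_{G,w_n}(p^\beta))$. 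For the reverse, I would use that the top square of the tower diagram at the end of \S\ref{DeeperLevelAtpSubSec} is Cartesian, so on the ordinary locus the pullback $\hat{\iota}^{-1}(\mathcal{X}_{G,w_n}(p^\beta))$ carries the moduli data of $\mathcal{X}_{H,\opn{id}}(p^\beta)$ (an ordinary $A=A_1\oplus A_2$ together with the $\hat{\gamma}$-translated flag, whose multiplicative graded piece lands in position $n+1$ and whose $A_1$-flag is \'etale); comparing moduli descriptions and using that $\mathcal{X}_{H,\opn{id}}(p^\beta)\to\mathcal{X}_{H}^{\opn{ord}}$ and the candidate open have the same degree over the ordinary locus (both finite \'etale of degree $|M^H_\diamondsuit(p^\beta)/(\text{torus part})|$ --- this is where one invokes Proposition \ref{PropositionLociIntClosedInGenFibre}(1) and the Cartesian squares) yields the isomorphism.

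For part (3), I would upgrade the identification to formal models by the same normalisation argument used for $\mbf{G}$ in Lemma \ref{XGwnisOpenImmersionLemma}: $\ide{X}_{H,\opn{id}}(p^\beta)$ is integrally closed in its generic fibre by Proposition \ref{PropositionLociIntClosedInGenFibre}(3), so any open $V\subset\ide{X}_{H,\diamondsuit}(p^\beta)$ with generic fibre $\mathcal{X}_{H,\opn{id}}(p^\beta)$ which receives a map from $\ide{X}_{H,\opn{id}}(p^\beta)$ and maps back to the normalisation defining $\ide{X}_{H,\opn{id}}(p^\beta)$ must be isomorphic to it; taking $V=\{\hat{\delta}_{H,n+1}=\mathcal{O}_V\}$ and using that $\hat{\iota}$ commutes with normalisation, the square on formal models is obtained by base-change from the $\mbf{G}$-side Cartesian square of Remark \ref{RemarkOCFormalIntModelsG}-type data, hence Cartesian. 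The main obstacle I anticipate is part (2): one has to be careful about exactly which graded piece of the $\hat{\gamma}$-translated flag is multiplicative --- this hinges on the precise form of $w_n$ and $\gamma$ from Definition \ref{NewDefOfGamma} and the bookkeeping of signatures $\boldsymbol\Psi_1,\boldsymbol\Psi_2$ --- and on matching the moduli description of the quotient Igusa variety $\ide{X}_{H,\opn{id}}(p^\beta)$ with the open subspace of the normalisation, for which the degree count via the Cartesian tower squares is the cleanest route; everything else is a formal consequence of pulling back the $\mbf{G}$-side results along $\hat{\iota}$.
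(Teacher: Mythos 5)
Your proposal for part~(1) has a genuine gap. You claim that the first $n$ graded pieces $\mathcal{C}_{i,\tau_0}/\mathcal{C}_{i-1,\tau_0}$ ($1\le i\le n$) ``come from $A_1[\ide{p}_{\tau_0}^\beta]$, which is \'etale-locally isomorphic to $(\mbb{Z}/p^\beta\mbb{Z})^{\oplus n}$ by the signature condition $\boldsymbol\Psi_1$.'' Both halves of this are incorrect. The signature of $\boldsymbol\Psi_1$ at $\tau_0$ is $(1,n-1)$, so $A_1[\ide{p}_{\tau_0}^\infty]$ has dimension $1$ and is therefore \emph{not} \'etale over the integral model (it has a nontrivial multiplicative part); the \'etale factor is $A_2[\ide{p}_{\tau_0}^\beta]$, by the signature condition on $\boldsymbol\Psi_2$. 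Moreover, the $\hat{\gamma}$-translated flag does not have $\mathcal{C}_{n,\tau_0}$ contained in $A_1[\ide{p}_{\tau_0}^\beta]$: computing with the explicit $\hat{\gamma}=\gamma w_n$ of Definition~\ref{NewDefOfGamma}, one finds $C_{n,\tau_0,y}$ is generated by elements of the form $\phi(e_i)+\phi(e_{2n+2-i})$ and $\phi(e_1)+\phi(e_{n+1})$, which mixes the $A_1$ and $A_2$ factors. The correct argument (and the paper's) is a Goursat-type one: both projections $r_1,r_2\colon C_{n,\tau_0,y}\to A_{i,x}[\ide{p}_{\tau_0}^\beta]$ are isomorphisms on the generic fibre, so $C_{n,\tau_0,y}$ is the graph of an isomorphism $\alpha\colon A_{2,x}[\ide{p}_{\tau_0}^\beta]\xrightarrow{\sim}A_{1,x}[\ide{p}_{\tau_0}^\beta]$; since the \emph{domain} $\mathcal{A}_{2,x}[\ide{p}_{\tau_0}^\beta]$ is \'etale (not $A_1$!), $\alpha$ extends integrally, and the Zariski closure $\mathcal{C}_{n,\tau_0,y}$ is the graph of $\boldsymbol\alpha$, giving $\opn{deg}(\mathcal{C}_{n,\tau_0,y})\le\opn{deg}(\mathcal{A}_{2,x}[\ide{p}_{\tau_0}^\beta])=0$, hence \'etaleness. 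Without this you cannot conclude the graded pieces are \'etale.

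For part~(2), your two inclusions are correct in outline, but where you propose a degree-count over the ordinary locus to produce the reverse map, the paper instead constructs a finite flat $\underline{T^\diamondsuit(\mbb{Z}/p^\beta\mbb{Z})}$-torsor $W'\to W$ by parameterising trivialisations compatible with the flag filtrations $\mathcal{C}_{\bullet,\tau}$ of Lemma~\ref{ExtensionOfFFfiltrationLemma}, which directly exhibits the factorisation $W\to\ide{X}_{H,\opn{id}}(p^\beta)\to\ide{X}_{G,w_n}(p^\beta)$. The torsor construction is cleaner than a degree comparison because it produces the map $g\colon W\to\ide{X}_{H,\opn{id}}(p^\beta)$ at once; a degree count alone would not identify the two moduli problems. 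Your part~(3) argument via $\ide{X}_{H,\opn{id}}(p^\beta)$ being integrally closed in its generic fibre (Proposition~\ref{PropositionLociIntClosedInGenFibre}(3)) is the same as the paper's.
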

\begin{proof}
    Let $p \colon \ide{X}_{G, \opn{Iw}}(p^{\beta}) \to \ide{X}_G \times \prod_{(i, \tau)} \ide{X}_G \to \ide{X}_G$ denote the forgetful map, where the second map is projection to the first factor. We use similar notation for the map on adic generic fibres. Then, since $\ide{X}_{G, \opn{Iw}}(p^{\beta})$ maps to normalisation of $\ide{X}_G$ under $p \colon \mathcal{X}_{G, \opn{Iw}}(p^{\beta}) \to \mathcal{X}_G$, the commutative diagram
    \[
\begin{tikzcd}
{\mathcal{X}_{H, \diamondsuit}(p^{\beta})} \arrow[d] \arrow[r, "\hat{\iota}"] & {\mathcal{X}_{G, \opn{Iw}}(p^{\beta})} \arrow[d, "p"] \\
\mathcal{X}_H \arrow[r]                                                       & \mathcal{X}_G                                        
\end{tikzcd}
    \]
    extends to a commutative diagram on formal integral models. In particular, the pullback of $\mathcal{A}$ along $\hat{\iota} \colon \ide{X}_{H, \diamondsuit}(p^{\beta}) \to \ide{X}_{G, \opn{Iw}}(p^{\beta})$ coincides with the pullback of $\mathcal{A}_1 \oplus \mathcal{A}_2$ under the map $\ide{X}_{H, \diamondsuit}(p^{\beta}) \to \ide{X}_H$, where $\mathcal{A}_1$ (resp. $\mathcal{A}_2$) denotes the universal $\boldsymbol\Psi_1$-unitary (resp. $\boldsymbol\Psi_2$-unitary) abelian scheme over $\ide{X}_H$.

    For part (1), it is enough to prove this for the $+$-sheaves at rank one points. Let $x \colon \opn{Spa}(K, \mathcal{O}_K) \to \mathcal{X}_{H, \diamondsuit}(p^{\beta})$ be a rank one point and let $y = \hat{\iota} \circ x$. Consider the corresponding filtration determined by the point $y$:
    \[
    0 = C_{0, \tau_0, y} \subset C_{1, \tau_0, y} \subset \cdots \subset C_{2n, \tau_0, y} = A_{1, x}[\ide{p}_{\tau_0}^{\beta}] \oplus A_{2, x}[\ide{p}_{\tau_0}^{\beta}] = A_y[\ide{p}_{\tau_0}^{\beta}] .
    \]
    Consider the \'{e}tale group scheme $\left(\mbb{Z}/p^{\beta}\mbb{Z} \right)^{\oplus n} \oplus \left(\mbb{Z}/p^{\beta}\mbb{Z} \right)^{\oplus n}$ with standard basis denoted $e_1, \dots, e_{2n}$. The point $x$ gives rise to a certain orbit of isomorphisms
    \[
    \phi \colon \left(\mbb{Z}/p^{\beta}\mbb{Z} \right)^{\oplus n} \oplus \left(\mbb{Z}/p^{\beta}\mbb{Z} \right)^{\oplus n} \cong A_{1, x}[\ide{p}_{\tau_0}^{\beta}] \oplus A_{2, x}[\ide{p}_{\tau_0}^{\beta}]
    \]
    under the $\tau_0$-component of the group $T^{\diamondsuit}(\mbb{Z}/p^{\beta}\mbb{Z}) = \hat{\gamma} B_G(\mbb{Z}/p^{\beta}\mbb{Z}) \hat{\gamma}^{-1} \cap H(\mbb{Z}/p^{\beta}\mbb{Z})$. In particular, by using the explicit description of $\hat{\gamma}$ in Definition \ref{NewDefOfGamma}, we see that $C_{n, \tau_0, y}$ is the subgroup generated by the elements:
    \[
    \{ \phi(e_i) + \phi(e_{2n+2-i}) : i=2, \dots, n \} \cup \{ \phi(e_1) + \phi(e_{n+1}) \} .
    \]
    In particular, we see that the induced maps
    \[
    r_i \colon C_{n, \tau_0, y} \subset A_{1, x}[\ide{p}_{\tau_0}^{\beta}] \oplus A_{2, x}[\ide{p}_{\tau_0}^{\beta}] \twoheadrightarrow A_{i, x}[\ide{p}_{\tau_0}^{\beta}], \quad \quad i=1, 2
    \]
    are isomorphisms, so by Goursat's lemma for \'{e}tale group schemes in characteristic zero, we see that $C_{n, \tau_0, y}$ is the graph of an isomorphism $\alpha \colon A_{2, x}[\ide{p}_{\tau_0}^{\beta}] \xrightarrow{\sim} A_{1, x}[\ide{p}_{\tau_0}^{\beta}]$. Since $\mathcal{A}_{2, x}[\ide{p}_{\tau_0}^{\beta}]$ is \'{e}tale (by the signature condition), after possibly replacing $(K, \mathcal{O}_K)$ with a finite \'{e}tale extension, the morphism $\alpha$ extends to a morphism $\boldsymbol\alpha \colon \mathcal{A}_{2, x}[\ide{p}_{\tau_0}^{\beta}] \to \mathcal{A}_{1, x}[\ide{p}_{\tau_0}^{\beta}]$. The Zariski closure of $C_{n, \tau_0, y}$ in $\mathcal{A}_{1, x}[\ide{p}_{\tau_0}^{\beta}] \oplus \mathcal{A}_{2, x}[\ide{p}_{\tau_0}^{\beta}]$, denoted $\mathcal{C}_{n, \tau_0, y}$, is therefore equal to the graph of $\boldsymbol\alpha$. But this implies that $\opn{deg}(\mathcal{C}_{n, \tau_0, y}) \leq \opn{deg}(\mathcal{A}_{2, x}[\ide{p}_{\tau_0}^{\beta}]) = 0$, hence $\mathcal{C}_{n, \tau_0, y}$ is an \'{e}tale group scheme, as required.

    We now prove part (2). Let $W \subset \ide{X}_{H, \diamondsuit}(p^{\beta})$ denote the open subspace where $\hat{\delta}_{H, n+1}|_W = \mathcal{O}_W$, and let $W_{\eta} \subset \mathcal{X}_{H, \diamondsuit}(p^{\beta})$ denote its adic generic fibre (which is the locus where $|\hat{\delta}_{H, n+1}^+| = 1$). Recall that $\ide{X}_{H, \opn{id}}(p^{\beta}) \to \ide{X}_{H}^{\opn{ord}}$ parameterises $\underline{T^{\diamondsuit}(\mbb{Z}/p^{\beta}\mbb{Z})}$-orbits of isomorphisms 
    \begin{equation} \label{UpsilonTauIsos}
    \upsilon_{\tau} \colon \mbb{X}_{1, \tau}[p^{\beta}] \oplus \mbb{X}_{2, \tau}[p^{\beta}] \xrightarrow{\sim} A_1[\ide{p}_{\tau}^{\beta}] \oplus A_2[\ide{p}_{\tau}^{\beta}]
    \end{equation}
    respecting the decompositions on both sides (and a similitude factor $s \in \underline{\left(\mbb{Z}/p^{\beta}\mbb{Z}\right)^{\times}}$ ). By passing to generic fibres and fixing a $p^{\beta}$-root of unity $\varepsilon \in \mu_{p^{\beta}}$, we have an isomorphism $\left(\mbb{Z}/p^{\beta}\mbb{Z}\right)^{\oplus n} \cong \mbb{X}_{1, \tau_0}^{\eta}[p^{\beta}]$, hence we obtain a morphism $\mathcal{X}_{H, \opn{id}}(p^{\beta}) \to \mathcal{X}_{H, \diamondsuit}(p^{\beta})$.

    On the other hand, consider the morphism $\hat{\iota} \colon \ide{X}_{H, \opn{id}}(p^{\beta}) \to \ide{X}_{G, w_n}(p^{\beta})$ as at the end of \S \ref{QuotientsOfIgVarSubSec}, defined with respect to the same choice $\varepsilon \in \mu_{p^{\beta}}$. It is straightforward to verify that one has a commutative diagram
    \[
\begin{tikzcd}
{\mathcal{X}_{H, \opn{id}}(p^{\beta})} \arrow[d] \arrow[r] & {\mathcal{X}_{G, w_n}(p^{\beta})} \arrow[d] \\
{\mathcal{X}_{H, \diamondsuit}(p^{\beta})} \arrow[r]       & {\mathcal{X}_{G, \opn{Iw}}(p^{\beta})}     
\end{tikzcd}
    \]
    hence there exists a morphism $\mathcal{X}_{H, \opn{id}}(p^{\beta}) \to W_{\eta}$ (as $W_{\eta}$ is the pullback of $\mathcal{X}_{G, w_n}(p^{\beta})$ along $\hat{\iota} \colon \mathcal{X}_{H, \diamondsuit}(p^{\beta}) \to \mathcal{X}_{G, \opn{Iw}}(p^{\beta})$, using Lemma \ref{XGwnisOpenImmersionLemma}).

    Consider the moduli space $W' \to W$ parameterising isomorphisms $(v_{\tau})_{\tau \in \Psi}$ as in (\ref{UpsilonTauIsos}) and a similitude $s \in \underline{\left(\mbb{Z}/p^{\beta}\mbb{Z}\right)^{\times}}$ such that the flags determined by $(v_{\tau} \circ \gamma_{\tau})_{\tau \in \Psi}$ coincide with the pullback of the flags $\mathcal{C}_{\bullet, \tau}$ from Lemma \ref{ExtensionOfFFfiltrationLemma} to $W$. We claim this is a finite flat $\underline{T^{\diamondsuit}(\mbb{Z}/p^{\beta}\mbb{Z} )}$-torsor. Indeed, let $S \to W$ be a finite flat cover over which the group schemes $\mathcal{A}_i[\ide{p}_{\tau}^{\beta}]$ trivialise. We can find isomorphisms $f_{\tau} \colon \mbb{X}_{\opn{ord}, \tau}[p^{\beta}] \cong \mathcal{A}[\ide{p}_{\tau}^{\beta}]$ lifting the filtrations $\mathcal{C}_{\bullet, \tau}$, and we know on the generic fibre that $f_{\tau} \circ \gamma_{\tau}^{-1}$ preserves the decompositions 
    \[
    \mbb{X}_{\opn{ord}, \tau}^{\eta}[p^{\beta}] = \mbb{X}_{1, \tau}^{\eta}[p^{\beta}] \oplus \mbb{X}_{2, \tau}^{\eta}[p^{\beta}] \text{ and } A[\ide{p}_{\tau}^{\beta}] = A_1[\ide{p}_{\tau}^{\beta}] \oplus A_2[\ide{p}_{\tau}^{\beta}].
    \]
    Hence $f_{\tau} \circ \gamma_{\tau}^{-1}$ must also preserve the decomposition integrally and $W' \to W$ has a section over $S$. Furthermore, one can easily see that the fibre of $W' \to W$ over $S$ is a principal homogeneous space for $T^{\diamondsuit}(\mbb{Z}/p^{\beta}\mbb{Z})$. This implies we have a map $g \colon W \to \ide{X}_{H, \opn{id}}(p^{\beta})$ giving a factorisation
    \[
    W \xrightarrow{g} \ide{X}_{H, \opn{id}}(p^{\beta}) \to \ide{X}_{G, w_n}(p^{\beta})
    \]
    of the natural map $W \to \ide{X}_{G, w_n}(p^{\beta})$. Hence, on generic fibres, the map $g \colon W_{\eta} \to \mathcal{X}_{H, \opn{id}}(p^{\beta})$ must factor the inclusion $W_{\eta} \subset \mathcal{X}_{H, \diamondsuit}(p^{\beta})$, which proves part (2).

    To conlude the proof, we note that $W$ is identified with the normalisation of $\ide{X}_{G, w_n}(p^{\beta})$ under the map $W_{\eta} \cong \mathcal{X}_{H, \opn{id}}(p^{\beta}) \to \mathcal{X}_{G, w_n}(p^{\beta})$, and $\ide{X}_{H, \opn{id}}(p^{\beta})$ is already integrally closed in its generic fibre (see Proposition \ref{PropositionLociIntClosedInGenFibre}), so the map $g \colon W \to \ide{X}_{H, \opn{id}}(p^{\beta})$ must be an isomorphism. This gives part (3).  
\end{proof}

We introduce the following overconvergent neighbourhoods.

\begin{definition} \label{DeltaHneighbourhoodsDef}
    For $r \geq 1$, let $\mathcal{X}_{H, \opn{id}}(p^{\beta})_r \subset \mathcal{X}_{H, \opn{id}}(p^{\beta})$ denote the quasi-compact open rational subset defined by the inequality $|\hat{\delta}_{H, n+1}^+|^{p^{r+1}} \geq |p|$. It has a formal integral model $\ide{X}_{H, \opn{id}}(p^{\beta})_r$ given by an open in the formal admissible blow-up of $\ide{X}_{H, \diamondsuit}(p^{\beta})$ along the ideal $\hat{\delta}_{H, n+1}^{p^{r+1}} + (p)$, where the open is the locus where the pullback of $\hat{\delta}_{H, n+1}^{p^{r+1}} + (p)$ is equal to the pullback of $\hat{\delta}_{H, n+1}^{p^{r+1}}$ (see \cite[\S 8.2, Proposition 7]{Bosch}). We also let $\mathcal{Z}_{H, \opn{id}}(p^{\beta}) \subset \mathcal{X}_{H, \diamondsuit}(p^{\beta})$ denote the closure of $\mathcal{X}_{H, \opn{id}}(p^{\beta})$.
\end{definition}

\subsubsection{Some special open subspaces}

We now introduce certain collections of open subspaces which will be useful in \S \ref{PadicIterationOfDiffOpsSection}. Let $P_{G, \opn{dR}}^{\opn{an}} \to \mathcal{X}_{G, \opn{Iw}}(p^{\beta})$ and $M_{G, \opn{dR}}^{\opn{an}} \to \mathcal{X}_{G, \opn{Iw}}(p^{\beta})$ denote the analytifications of the torsors $P_{G, \opn{dR}}$ and $M_{G, \opn{dR}}$ defined in Definition \ref{DefinitionOfGdrPdR}. Similarly, let $P_{H, \opn{dR}}^{\opn{an}} \to \mathcal{X}_{H, \diamondsuit}(p^{\beta})$ and $M_{H, \opn{dR}}^{\opn{an}} \to \mathcal{X}_{H, \diamondsuit}(p^{\beta})$ denote the analytifications of the torsors $P_{H, \opn{dR}}$ and $M_{H, \opn{dR}}$.

\begin{definition} \label{DefOfCGCH}
    Let
    \begin{enumerate}
        \item $\mathcal{C}_G$ denote the collection of quasi-compact open affinoid subspaces $\opn{Spa}(A, A^+) \subset \mathcal{X}_{G, \opn{Iw}}(p^{\beta})$ such that $\opn{Spa}(A_{\opn{ord}}, A^+_{\opn{ord}}) \defeq \opn{Spa}(A, A^+) \cap \mathcal{X}_{G, w_n}(p^{\beta})$ is the adic generic fibre of an open affine subspace $\opn{Spf}A_{\opn{ord}}^+ \subset \ide{X}_{G, w_n}(p^{\beta})$.  We also impose the condition that the ideals $\delta_{G, i}^+$ ($i=1, \dots, 2n$) and the $+$-versions of $\mathcal{H}_A$ and the graded pieces of its Hodge filtration trivialise over any $\opn{Spa}(A, A^+) \in \mathcal{C}_G$. In particular, this implies that the torsors $P_{G, \opn{dR}}^{\opn{an}}$, $M_{G, \opn{dR}}^{\opn{an}}$ trivialise over any $\opn{Spa}(A, A^+) \in \mathcal{C}_G$.
        \item $\mathcal{C}_H$ denote the collection of quasi-compact open affinoid subspaces $\opn{Spa}(A, A^+) \subset \mathcal{X}_{H, \diamondsuit}(p^{\beta})$ such that $\opn{Spa}(A_{\opn{ord}}, A^+_{\opn{ord}}) \defeq \opn{Spa}(A, A^+) \cap \mathcal{X}_{H, \opn{id}}(p^{\beta})$ is the adic generic fibre of an open affine subspace $\opn{Spf}A_{\opn{ord}}^+ \subset \ide{X}_{H, \opn{id}}(p^{\beta})$.  We also impose the condition that the ideals $\delta_{H, i}^+$ ($i=1, \dots, 2n$) and the $+$-versions of $\mathcal{H}_{A_1}$, $\mathcal{H}_{A_2}$ and the graded pieces of their Hodge filtrations trivialise over any $\opn{Spa}(A, A^+) \in \mathcal{C}_H$. In particular, the torsors $P_{H, \opn{dR}}^{\opn{an}}$, $M_{H, \opn{dR}}^{\opn{an}}$ trivialise over any $\opn{Spa}(A, A^+) \in \mathcal{C}_H$.
        
        \item Let $\mathcal{C}_{G, H}$ denote the collection of all $U \in \mathcal{C}_G$ such that the pullback of $U$ along the finite unramified morphism 
    \[
    \hat{\iota} \colon \mathcal{X}_{H, \diamondsuit}(p^{\beta}) \to \mathcal{X}_{G, \opn{Iw}}(p^{\beta})
    \]
    is contained in $\mathcal{C}_H$.
        
    \end{enumerate}
\end{definition}

\begin{remark} 
    Note that there exists a finite cover of $\mathcal{X}_{G, \opn{Iw}}(p^{\beta})$ (resp. $\mathcal{X}_{H, \diamondsuit}(p^{\beta})$) by elements of $\mathcal{C}_G$ (resp. $\mathcal{C}_H$), because we can always choose $\opn{Spa}(A, A^+)$ to be the adic generic fibre of an open in $\ide{X}_{G, \opn{Iw}}(p^{\beta})$ (resp. $\ide{X}_{H, \diamondsuit}(p^{\beta})$).
\end{remark}

\begin{lemma} \label{Lem:SpecialCoverFromCGH}
    There exists a finite cover of $\mathcal{X}_{G, \opn{Iw}}(p^{\beta})$ by elements of $\mathcal{C}_{G, H}$.
\end{lemma}
\begin{proof}
    We start with the following basic result in commutative algebra. \\
    \\
    \textbf{Claim:} Let $\phi \colon A \to B$ be a finite morphism of commutative rings, and let $M$ be a finite projective $B$-module of constant rank $d$. Then there exists a finite collection of elements $f_1, \dots, f_r \in A$ which generate the unit ideal (in $A$) such that: $M \otimes_B B[\phi(f_i)^{-1}]$ is free of rank $d$ over $B[\phi(f_i)^{-1}]$ for all $i=1, \dots, r$. \\
    \\
    \textbf{Proof of claim:} Let $\ide{m} \subset A$ be a maximal ideal. Then, because the map $\phi$ is finite, there are finitely many maximal ideals $\ide{n}_1, \dots, \ide{n}_s$ of $B$ which lie above $\ide{m}$ (i.e., $\phi^{-1}(\ide{n}_j) = \ide{m}$ for all $j=1, \dots, s$). Let $S = A - \ide{m}$ and $T = \cap_{j=1}^s (B - \ide{n}_j)$. Then $\phi(S) \subset T$, $\phi^{-1}(T) = S$, and $T^{-1}B$ is a semi-local ring. Since finite projective modules of constant rank over (commutative) semi-local rings are free (see, e.g., \cite{Hinohara}), we see that $M \otimes_B T^{-1}B$ is free of rank $d$ over $T^{-1}B$. Furthermore, we have $\phi(S)^{-1}B = T^{-1}B$ because $T$ is contained in the saturation of $\phi(S)$ in $B$ (the saturation of $\phi(S)$ is necessarily the complement of a union of prime ideals \cite[p.\ 2, Theorem 2]{Kaplansky}, and any prime ideal $\ide{p}$ which intersects $\phi(S)$ trivially must be contained in some $\ide{n}_i$ by the ``going up'' property \cite[p.\ 29, Theorem 44]{Kaplansky}). Since $M$ is finitely-generated, we can therefore find $f_{\ide{m}} \in S$ such that $M \otimes_B B[\phi(f_{\ide{m}})^{-1}]$ is free of rank $d$ over $B[\phi(f_{\ide{m}})^{-1}]$. Repeating this for every maximal ideal $\ide{m}$, we obtain a collection of elements $\{ f_{\ide{m}} \}$ and we can clearly pick out a finite collection $f_1, \dots, f_r$ satisfying the statement of the claim. \hfill \qedsymbol \\
    \\
    We note the important consequence of this claim, namely: if $\phi \colon X \to Y$ is a finite morphism of schemes with $Y$ quasi-compact, and $\mathcal{M}_1, \dots, \mathcal{M}_t$ is a finite collection of locally free $\mathcal{O}_X$-modules of (finite) constant rank, then there exists a finite open affine cover $Y = \bigcup_{i=1}^r U_i$ such that $\mathcal{M}_{j}$ is free over $\phi^{-1}U_i$ for all $i=1, \dots, r$ and $j=1, \dots, t$. \\
    \\
    We now return to the proof of the lemma. Consider the finite morphism $\iota \colon X_{\mbf{H}} \to X_{\mbf{G}}$ over $\opn{Spec}\mathcal{O}_L$ from Definition \ref{HyperspecialVarietiesDefinition}. Then the consequence of the above claim implies that we can find a finite open affine cover $X_{\mbf{G}} = \bigcup_{i \in I} U_i$ such that $\opn{H}_1^{\opn{dR}}(A_1/X_{\mbf{H}})$, $\opn{H}_1^{\opn{dR}}(A_2/X_{\mbf{H}})$ and the graded pieces of their Hodge filtrations become free over the cover $\{ \iota^{-1}(U_i) \}_{i \in I}$. By refining $\{ U_i \}_{i \in I}$ we may assume, without loss of generality, that $\opn{H}_1^{\opn{dR}}(A/X_{\mbf{G}})$ and the graded pieces of its Hodge filtration become free over the cover $\{ U_i \}_{i \in I}$.

    Let $\varpi$ be a uniformiser of $\mathcal{O}_L$. Let $\ide{X}_{H}$, $\ide{X}_G$ and $\ide{U}_i$ denote the $\varpi$-adic completions of $X_{\mbf{H}}$, $X_{\mbf{G}}$, $U_i$ respectively, which are formal schemes over $\opn{Spf}\mathcal{O}_L$ (and $\{ \ide{U}_i \}_{i \in I}$ is a finite open affine cover of $\ide{X}_G$). Note all the formal schemes have the $\varpi$-adic topology. Recall from the proof of Proposition \ref{HlociisHasseProp} that we have a commutative diagram
    \[
\begin{tikzcd}
{\ide{X}_{H, \diamondsuit}(p^{\beta})} \arrow[r, "\hat{\iota}"] \arrow[d] & {\ide{X}_{G, \opn{Iw}}(p^{\beta})} \arrow[d, "p"] \\
\ide{X}_H \arrow[r, "\iota"]                                              & \ide{X}_G                                        
\end{tikzcd}
    \]
    where the vertical arrows are the forgetful maps. Let $p^{-1}\ide{U}_i$ denote the pullback of $\ide{U}_i$ under the right-hand vertical map. Recall $\ide{X}_{G, \opn{Iw}}(p^{\beta})$ is quasi-compact. Then, we can find a finite open affine refinement $\{ \ide{U}_i' \}_{i \in I'}$ of the cover $\{ p^{-1}\ide{U}_i \}_{i \in I}$ such that $\mathcal{H}_A$ and the graded pieces of its Hodge filtration are free over $\{ \ide{U}_i' \}_{i \in I'}$ and $\mathcal{H}_{A_1}$, $\mathcal{H}_{A_2}$, and the graded pieces of their Hodge filtrations are free over $\{ \hat{\iota}^{-1}(\ide{U}_i') \}_{i \in I'}$. By refining the cover $\{ \ide{U}'_i \}_{i \in I'}$ further, we may also assume, without loss of generality, that the ideals $\{ \delta_{G, i}^+ : i=1, \dots, 2n \}$ are free over $\{ \ide{U}_i' \}_{i \in I'}$. Since $\delta_{H, i} = \hat{\iota}^* \delta_{G, i}$, we immediately see that the ideals $\{ \delta_{H, i} : i=1, \dots, 2n \}$ are free over the affine cover $\{ \hat{\iota}^{-1}(\ide{U}_i')\}_{i \in I'}$. The desired cover of $\mathcal{X}_{G, \opn{Iw}}(p^{\beta})$ is then $\{ \mathcal{U}_i' \}_{i \in I'}$, where $\mathcal{U}_i'$ is the adic generic fibre of $\ide{U}_i'$. 
\end{proof}

\subsection{Comparison between the Gauss--Manin connection and Atkin--Serre operators}

We now compare the operators $\nabla_i$ in Lemma \ref{LemmaConcreteDescriptionNablai} to the actions of $C_{\opn{cont}}(U_{\bullet, \beta}, \mbb{Z}_p)$ constructed in \S \ref{DiffOpsCcontsubsec}. We continue to adopt the same conventions as in \S \ref{IntegralModelsandOCNbhdsSec}. First we note the following:

\begin{lemma} \label{IGPdrComparisonsLemma}
    One has commutative diagrams
    \[
\begin{tikzcd}
                                                                  & {P_{G, \opn{dR}}^{\opn{an}}} \arrow[d] &  &                                                                        & {P_{H, \opn{dR}}^{\opn{an}}} \arrow[d]     \\
{\mathcal{IG}_{G, w_n}(p^{\beta})} \arrow[r] \arrow[ru] \arrow[d] & {M_{G, \opn{dR}}^{\opn{an}}} \arrow[d] &  & {\mathcal{IG}_{H, \opn{id}}(p^{\beta})} \arrow[ru] \arrow[r] \arrow[d] & {M_{H, \opn{dR}}^{\opn{an}}} \arrow[d]     \\
{\mathcal{X}_{G, w_n}(p^{\beta})} \arrow[r]                       & {\mathcal{X}_{G, \opn{Iw}}(p^{\beta})} &  & {\mathcal{X}_{H, \opn{id}}(p^{\beta})} \arrow[r]                       & {\mathcal{X}_{H, \diamondsuit}(p^{\beta})}
\end{tikzcd}
    \]
    where the top diagonal and horizontal maps are $M^G_{\opn{Iw}}(p^{\beta})$-equivariant and $M^H_{\diamondsuit}(p^{\beta})$-equivariant respectively, and the top horizontal maps provide reductions of structure of $M_{G, \opn{dR}}^{\opn{an}} \times_{\mathcal{X}_{G, \opn{Iw}}(p^{\beta})} \mathcal{X}_{G, w_n}(p^{\beta})$ and $M_{H, \opn{dR}}^{\opn{an}} \times_{\mathcal{X}_{H, \diamondsuit}(p^{\beta})} \mathcal{X}_{H, \opn{id}}(p^{\beta})$.
\end{lemma}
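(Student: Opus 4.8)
\textbf{Proof proposal for Lemma \ref{IGPdrComparisonsLemma}.}

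The plan is to reduce everything to the construction of a canonical trivialisation of the de Rham homology over the Igusa tower, and then check that this trivialisation is compatible with the Hodge filtration and the block decompositions, which is exactly the data encoding the reductions of structure claimed in the lemma. First I would recall that over $\mathcal{IG}_{G, w_n}(p^{\beta})$ (or rather over the formal scheme $\ide{IG}_{G, w_n}(p^{\beta})$, then passing to adic generic fibres) one has the universal ordinary $\boldsymbol\Psi$-unitary abelian scheme $A$ together with the trivialisations $f_\tau \colon \mbb{X}_{\opn{ord}, \tau} \xrightarrow{\sim} A[\ide{p}_\tau^\infty]$, well-defined up to the action of $N^G_{\opn{Iw}}(p^\beta)$. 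Over the ordinary locus, the connected-\'etale sequence of $A[\ide{p}_\tau^\infty]$ splits canonically (after passing to the perfection in characteristic $p$, or equivalently over the Igusa tower where the \'etale part is rigidified), and the Dieudonn\'e module / de Rham homology of $\mbb{X}_{\opn{ord}, \tau}$ has a canonical basis: the multiplicative part $\mu_{p^\infty}$ contributes $\opn{Lie}$ of $\widehat{\mbb{G}}_m$ via the chosen compatible system of $p$-power roots of unity, and the \'etale part $(\mbb{Q}_p/\mbb{Z}_p)^{\oplus \bullet}$ contributes its canonical lattice basis. Pulling this back along $f_\tau$ produces an isomorphism $\mathcal{O} \otimes W \xrightarrow{\sim} \mathcal{H}_A$ respecting symplectic and endomorphism structures, i.e. a canonical point of $G^{\opn{an}}_{\opn{dR}}$ over $\mathcal{IG}_{G, w_n}(p^\beta)$; this is precisely the unit-root splitting packaged as a section of the de Rham torsor.

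Next I would observe that this canonical frame respects the Hodge filtration: the multiplicative part of $A[\ide{p}_{\tau_0}^\infty]$ is exactly $\omega_{A^D, \tau_0}^\vee$ after dualising, so the canonical basis is adapted to the filtration $0 \to \omega_{A^D} \to \mathcal{H}_A \to \opn{Lie}(A) \to 0$ in the sense that it defines a $P_{\mbf{G}}^{\opn{std}}$-reduction. Pushing out along $P_{\mbf{G}}^{\opn{std}} \to M_{\mbf{G}}$ gives the map $\mathcal{IG}_{G, w_n}(p^\beta) \to M^{\opn{an}}_{G, \opn{dR}}$, and the lift to $P^{\opn{an}}_{G, \opn{dR}}$ is the one just constructed; the $M^G_{\opn{Iw}}(p^\beta)$-equivariance is immediate from the fact that the residual $N^G_{\opn{Iw}}(p^\beta)$-ambiguity in $f_\tau$ acts through the unipotent radical, so on the associated-graded/Levi quotient only the $M^G_{\opn{Iw}}(p^\beta)$-action survives, and it matches the natural $M^G_{\opn{Iw}}(p^\beta)$-torsor structure on $\mathcal{IG}_{G, w_n}(p^\beta) \to \mathcal{X}_{G, w_n}(p^\beta)$ by construction of the latter as a quotient (Definition in \S\ref{QuotientsOfIgVarSubSec}). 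The statement that the top horizontal map provides a reduction of structure of $M^{\opn{an}}_{G, \opn{dR}} \times_{\mathcal{X}_{G, \opn{Iw}}(p^\beta)} \mathcal{X}_{G, w_n}(p^\beta)$ is then the assertion that the $M^G_{\opn{Iw}}(p^\beta)$-torsor $\mathcal{IG}_{G, w_n}(p^\beta)$ maps $M^G_{\opn{Iw}}(p^\beta)$-equivariantly to the $M_{\mbf{G}}$-torsor $M^{\opn{an}}_{G, \opn{dR}}$, inducing $M^{\opn{an}}_{G, \opn{dR}}|_{\mathcal{X}_{G, w_n}(p^\beta)} \cong \mathcal{IG}_{G, w_n}(p^\beta) \times^{M^G_{\opn{Iw}}(p^\beta)} M_{\mbf{G}}$; this I would deduce by comparing degrees/the fact that both are torsors under compatible groups and the map is equivariant, exactly as in the analogous statement in \cite{UFJ}. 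For $H$, I would repeat verbatim using the block-preserving trivialisations $f_\tau$ of $A_1[\ide{p}_\tau^\infty] \oplus A_2[\ide{p}_\tau^\infty]$, noting that the canonical frame then automatically respects the decomposition $\mathcal{H}_{A_1} \oplus \mathcal{H}_{A_2}$, and that the compatibility of the $H$-picture with the $G$-picture under $\hat\iota$ follows from the compatibility of $\hat\iota$ with $\gamma$-translation established at the end of \S\ref{QuotientsOfIgVarSubSec} together with Lemma \ref{ClassicalReductionOfStructuresOfPdRLemma} (or rather its analytic analogue).

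The main obstacle I anticipate is bookkeeping rather than conceptual: one must be careful that the ``canonical'' de Rham frame over the Igusa tower is genuinely well-defined, i.e. that the crystalline/Dieudonn\'e comparison identifying $\mathcal{H}_A$ with the de Rham realisation of $\mbb{X}_{\opn{ord}, \tau}$ is canonical and compatible with polarisations, endomorphisms, and — crucially — with passage between the formal model and its adic generic fibre, so that the diagrams literally commute on the nose (not just up to canonical isomorphism). This is where the hypothesis that $p$ splits completely in $F/\mbb{Q}$ and does not divide the relevant discriminant is used, ensuring the $p$-divisible groups decompose into the simple ordinary pieces $\mbb{X}_{\opn{ord}, \tau}$ with no subtlety. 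I would handle it by working first over $\ide{X}^{\opn{ord}}$ and the formal Igusa schemes $\ide{IG}_\bullet$, where the unit-root subcrystal and the canonical basis are standard (cf. the discussion in \cite{howe2020unipotent}), constructing the reductions of structure there, and only then analytifying; the compatibility with $\mathcal{X}_{G, \opn{Iw}}(p^\beta)$ and $\mathcal{X}_{H, \diamondsuit}(p^\beta)$ is then inherited from the open immersions $\ide{X}_{G, w_n}(p^\beta) \hookrightarrow \ide{X}_{G, \opn{Iw}}(p^\beta)$ and $\ide{X}_{H, \opn{id}}(p^\beta) \hookrightarrow \ide{X}_{H, \diamondsuit}(p^\beta)$ of Lemma \ref{XGwnisOpenImmersionLemma} and Proposition \ref{HlociisHasseProp}(3), since the de Rham torsors extend across these opens by construction.
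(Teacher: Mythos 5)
Your proposal follows essentially the same approach as the paper: the paper's (very terse) proof likewise constructs the lift $\mathcal{IG}_{G, w_n}(p^{\beta}) \to P_{G, \opn{dR}}^{\opn{an}}$ by trivialising the graded pieces of the canonical filtration via $\opn{dlog}$ morphisms and then extending with the unit-root splitting, and leaves the $H$-case and the equivariance checks to the reader. Your write-up fills in more of the detail (the canonical basis is spelled out explicitly in the proof of Proposition \ref{IntertwiningOfNablaThetaiProp}), and the parenthetical identification of the multiplicative part with ``$\omega_{A^D}^\vee$ after dualising'' is slightly loose — it is $\opn{Lie}(A)$ that the multiplicative part governs, with the étale part mapping to $\omega_{A^D}$ via $\opn{dlog}$ — but this does not affect the substance of the argument.
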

\begin{proof}
    The Igusa tower $\mathcal{IG}_{G, w_n}(p^{\beta})$ parameterises points $(A, \mathcal{C}_{\bullet, \bullet}) \in \mathcal{X}_{G, w_n}(p^{\beta})$ and trivialisations of (the graded pieces of) the canonical filtration on $A[p^{\infty}]$ which are compatible with the filtrations $\mathcal{C}_{\bullet, \bullet}$. One then obtains a trivialisation of $\mathcal{H}_A$ respecting the Hodge filtration in the usual way, namely by considering the ``dlog'' morphisms for each graded piece of the canonical filtration (to obtain a map to $M_{G,\opn{dR}}^{\opn{an}}$), and using the unit root splitting (to extend this to a map to $P_{G,\opn{dR}}^{\opn{an}}$).

    Similarly, the Igusa tower $\mathcal{IG}_{H, \opn{id}}(p^{\beta})$ parameterises tuples $(A_1, A_2, \lambda, i, \eta^p, s, [f_{\tau}])$ with $(A_1, A_2, \lambda, i, \eta^p) \in \ide{X}_H^{\opn{ord}}$, $s \in \underline{\mbb{Z}_p^{\times}}$ and $[f_{\tau}]$ is an $N^H_{\diamondsuit}(p^{\beta})$-orbit of isomorphisms $f_{\tau} \colon \mbb{X}_{\opn{ord}, \tau} \xrightarrow{\sim} A[\ide{p}_{\tau}^{\infty}]$ respecting the decompositions $\mbb{X}_{\opn{ord}, \tau} = \mbb{X}_{1, \tau} \oplus \mbb{X}_{2, \tau}$ and $A[\ide{p}_{\tau}^{\infty}] = A_1[\ide{p}_{\tau}^{\infty}] \oplus A_2[\ide{p}_{\tau}^{\infty}]$. Given such a point $(A_1, A_2, \lambda, i, \eta^p, s, [f_{\tau}])$ we can: obtain trivialisations of the Hodge filtrations on $\mathcal{H}_{A_1}$ and $\mathcal{H}_{A_2}$ using the ``dlog'' morphisms and the trivialisations of the graded pieces of the canonical filtration induced from $f_{\tau}$ (this is independent of the choice of representatives of the orbits $[f_{\tau}]$); obtain a $(\hat{\gamma} B_G(\mbb{Z}/p^{\beta}\mbb{Z}) \hat{\gamma}^{-1} \cap H(\mbb{Z}/p^{\beta}\mbb{Z}))$-orbit of isomorphisms
    \[
    (\underline{\mbb{Z}/p^{\beta}\mbb{Z}})^{\oplus n} \oplus (\underline{\mbb{Z}/p^{\beta}\mbb{Z}})^{\oplus n} \cong  \mbb{X}_{1, \tau}[p^{\beta}] \oplus \mbb{X}_{2, \tau}[p^{\beta}] \xrightarrow{f_{\tau}[p^{\beta}]} A_1[\ide{p}_{\tau}^{\beta}] \oplus A_2[\ide{p}_{\tau}^{\beta}] 
    \]
    respecting the decompositions on both sides (note that we have an isomorphism $\mu_{p^{\beta}} \cong \underline{\mbb{Z}/p^{\beta}\mbb{Z}}$ of group schemes over $L$ because we are assuming $L$ contains the $p^{\beta}$-roots of unity; in particular $\hat{\gamma} B_G(\mbb{Z}/p^{\beta}\mbb{Z}) \hat{\gamma}^{-1} \cap H(\mbb{Z}/p^{\beta}\mbb{Z})$ is identified with $J^H_{\diamondsuit}(p^{\beta})$ modulo the subgroup of $J^+_{H, \opn{ord}}$ consisting of elements congruent to the identity modulo $p^{\beta}$). This describes the desired map $\mathcal{IG}_{H, \opn{id}}(p^{\beta}) \to M_{H, \opn{dR}}^{\opn{an}}$. We then extend this to a map to $P_{H, \opn{dR}}^{\opn{an}}$ using the unit root splittings of the Hodge filtrations on $\mathcal{H}_{A_1}$ and $\mathcal{H}_{A_2}$.
\end{proof}

Let $\mathscr{O}_{P_{G,\opn{dR}}^{\opn{an}}}$ and $\mathscr{O}_{\mathcal{IG}_{G, w_n}(p^{\beta})}$ denote the pushforwards of the structure sheaves of $P_{G, \opn{dR}}^{\opn{an}}$ and $\mathcal{IG}_{G, w_n}(p^{\beta})$ respectively to $\mathcal{X}_{G, \opn{Iw}}(p^{\beta})$. We define $\mathscr{O}_{P_{H,\opn{dR}}^{\opn{an}}}$ and $\mathscr{O}_{\mathcal{IG}_{H, \opn{id}}(p^{\beta})}$ similarly (by pushing forward to $\mathcal{X}_{H, \diamondsuit}(p^{\beta})$). Lemma \ref{IGPdrComparisonsLemma} implies that we have natural restriction maps $\mathscr{O}_{P_{G,\opn{dR}}^{\opn{an}}} \to \mathscr{O}_{\mathcal{IG}_{G, w_n}(p^{\beta})}$ and $\mathscr{O}_{P_{H,\opn{dR}}^{\opn{an}}} \to \mathscr{O}_{\mathcal{IG}_{H, \opn{id}}(p^{\beta})}$. Note that for any $U = \opn{Spa}(A, A^+) \in \mathcal{C}_G$, we have an action
\[
C_{\opn{cont}}(U_{G, \beta}, L) \times \mathscr{O}_{\mathcal{IG}_{G, w_n}(p^{\beta})}(U) \to \mathscr{O}_{\mathcal{IG}_{G, w_n}(p^{\beta})}(U)
\]
which is functorial in $U$. The same is true for $H$.

\begin{proposition} \label{IntertwiningOfNablaThetaiProp}
    For $i=1, \dots, 2n-1$ and $U \in \mathcal{C}_G$, let $\theta_i \colon \mathscr{O}_{\mathcal{IG}_{G, w_n}(p^{\beta})}(U) \to \mathscr{O}_{\mathcal{IG}_{G, w_n}(p^{\beta})}(U)$ denote the morphism given by the action of the continuous map
    \[
    (a_1, \dots, a_{2n-1}) \mapsto a_i \quad \in \quad C_{\opn{cont}}(U_{G, \beta}, L) .
    \]
    For $i=1, \dots, n-1$ and $U \in \mathcal{C}_H$, we also use the notation $\theta_i \colon \mathscr{O}_{\mathcal{IG}_{H, \opn{id}}(p^{\beta})}(U) \to \mathscr{O}_{\mathcal{IG}_{H, \opn{id}}(p^{\beta})}(U)$ to denote the morphism constructed analogously.
    \begin{enumerate}
        \item For any $U \in \mathcal{C}_G$ and $i=1, \dots, 2n-1$, one has a commutative diagram
        \[
\begin{tikzcd}
{\mathscr{O}_{P_{G, \opn{dR}}^{\opn{an}}}(U)} \arrow[r, "\nabla_i"] \arrow[d] & {\mathscr{O}_{P_{G, \opn{dR}}^{\opn{an}}}(U)} \arrow[d] \\
{\mathscr{O}_{\mathcal{IG}_{G, w_n}(p^{\beta})}(U)} \arrow[r, "\theta_i"]     & {\mathscr{O}_{\mathcal{IG}_{G, w_n}(p^{\beta})}(U)}    
\end{tikzcd}
        \]
        \item For any $U \in \mathcal{C}_H$ and $i=1, \dots, n-1$, one has a commutative diagram
        \[
\begin{tikzcd}
{\mathscr{O}_{P_{H, \opn{dR}}^{\opn{an}}}(U)} \arrow[r, "\nabla_i"] \arrow[d]  & {\mathscr{O}_{P_{H, \opn{dR}}^{\opn{an}}}(U)} \arrow[d]  \\
{\mathscr{O}_{\mathcal{IG}_{H, \opn{id}}(p^{\beta})}(U)} \arrow[r, "\theta_i"] & {\mathscr{O}_{\mathcal{IG}_{H, \opn{id}}(p^{\beta})}(U)}
\end{tikzcd}
        \]
    \end{enumerate}
\end{proposition}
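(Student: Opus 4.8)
The statement asserts that two constructions of ``differential operators'' on the sheaf of nearly holomorphic forms agree: on the one hand the Gauss--Manin-derived operators $\nabla_i$ of Lemma \ref{LemmaConcreteDescriptionNablai} (restricted to the Igusa tower via Lemma \ref{IGPdrComparisonsLemma}), and on the other the Atkin--Serre operators $\theta_i$ coming from the unipotent action on $\ide{IG}_{\bullet}$ constructed in \S\ref{DiffOpsCcontsubsec}. The key point is that both actions are induced from the \emph{same} geometric structure on the Igusa tower, namely the right-translation action of the unipotent subgroup $\opn{Unip}(J_{\bullet,\opn{ord}})/N_{\bullet}(p^{\beta}) \cong \widehat{\mbb{G}}_m^{\oplus(2n-1)}$ (resp.\ $\widehat{\mbb{G}}_m^{\oplus(n-1)}$). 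The plan is to make this common origin explicit and to check that the identifications in Lemma \ref{IGPdrComparisonsLemma} carry one description to the other. By the equivariance statement in \S\ref{DiffOpsCcontsubsec} (the pullback $\mathscr{O}_{\ide{IG}_{G,w_n}(p^{\beta})} \to \hat{\iota}_*\mathscr{O}_{\ide{IG}_{H,\opn{id}}(p^{\beta})}$ intertwines the $C_{\opn{cont}}(U_{H,\beta},\mbb{Z}_p)$-actions via $U_{H,\beta}\subset U_{G,\beta}$) together with the compatibility of $\nabla_i$ on $\mathscr{N}_G$ and $\mathscr{N}_H$ noted in Lemma \ref{LemmaConcreteDescriptionNablai}, it suffices to prove part (1); part (2) then follows by restriction along $\hat{\iota}$, noting that the first $n-1$ basis vectors of $\ide{u}_G$ span $\ide{u}_H$.

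The first step is to unwind the definition of $\theta_i$ at the level of the Igusa tower. The operator $\theta_i$ is, by construction, the action of the coordinate function $a_i \in C_{\opn{cont}}(U_{G,\beta},\mbb{Z}_p) = \opn{Hom}_{\mbb{Z}_p}(\mathcal{O}(\opn{Unip}(J_{G,\opn{ord}})/N^G_{\opn{Iw}}(p^{\beta})),\mbb{Z}_p)$, which is dual (under $p$-adic Fourier theory) to differentiating the right-translation action of the $i$-th copy of $\widehat{\mbb{G}}_m$; concretely, on a section $f$ of $\mathscr{O}_{\ide{IG}_{G,w_n}(p^{\beta})}$ it computes the ``$a_i$-linear part'' of $\tilde{\zeta}\cdot f$ in the expansion over $\chi_{\tilde{\zeta}}$. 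The second step is to identify this with the action coming from $\ide{u}_G$: the trivialisation map $\ide{IG}_{G,w_n}(p^{\beta}) \to P^{\opn{an}}_{G,\opn{dR}}$ of Lemma \ref{IGPdrComparisonsLemma} is built via the ``dlog'' maps on the graded pieces of the canonical filtration together with the unit root splitting, so right-translation by the unipotent part of $J_{G,\opn{ord}}$ corresponds under this map precisely to the horizontal (Gauss--Manin) flow in the $\omega_{A^D,\tau_0}\otimes\omega_{A,\tau_0}$-directions. The third step is to match the normalisations: one must check that the $i$-th coordinate of $U_{G,\beta}$ (as parametrised in Definition \ref{DefOfUGbetaUHbeta}, with the $p^{-\beta}$-scaling in the first $n$ slots) corresponds under this dictionary to the $i$-th basis vector $(0,\dots,1,\dots,0)$ of $\ide{u}_G = \mbb{G}_a^{\oplus 2n-1}$ used to define $\nabla_i$ in Lemma \ref{LemmaConcreteDescriptionNablai}, using the explicit concrete description of $\nabla_i$ there (via the inverse Kodaira--Spencer map and the universal trivialisations $\psi_1,\psi_2$ over $P_{G,\opn{dR}}$). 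This is essentially the assertion, proved over $\mbb{C}$ and descended in \cite{howe2020unipotent} (c.f.\ the discussion there on the comparison of the Gauss--Manin connection with the $q$-expansion / Serre--Tate coordinate differential operator), adapted to our tower.

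The main obstacle I expect is the \emph{bookkeeping of normalisations} in the third step: reconciling the $p$-adic Fourier-theoretic normalisation of the $C_{\opn{cont}}(U_{G,\beta},\mbb{Z}_p)$-action (with its asymmetric $p^{-\beta}$-scaling forced by $N^G_{\opn{Iw}}(p^{\beta})$ containing $p^{\beta}T_p\mu_{p^{\infty}}$ in the first $n$ slots but $T_p\mu_{p^{\infty}}$ in the last $n-1$) against the ``algebraic'' normalisation of $\nabla_i$ coming from the fixed basis of $\ide{u}_G$ and the Kodaira--Spencer map. Concretely, one needs to trace through how the dlog map on $C_{i,\tau_0}/C_{i-1,\tau_0}$ interacts with these scalings and verify no extraneous power of $p$ appears; I would handle this by working on a single $U=\opn{Spa}(A,A^+)\in\mathcal{C}_G$ where all torsors trivialise, choosing compatible trivialisations of $\mathcal{H}_A$ via the Igusa structure, and computing both operators explicitly on the universal trivialising section, reducing everything to the known Serre--Tate-coordinate computation. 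Once the normalisations are pinned down on such a $U$, functoriality in $U$ (Definition \ref{DefOfCGCH}) gives the commutative diagram of sheaves, completing part (1), and hence the proposition.
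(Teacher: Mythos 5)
Your approach to part (1) is essentially the paper's: one compares the unipotent right-translation action on the Igusa tower with the Gauss--Manin connection via the dlog map and unit root splitting, invoking the crystalline/Gauss--Manin comparison from \cite{howe2020unipotent}. The paper makes this precise by computing the derivation associated with explicit vector fields $t_i$ (arising from the point $1+\epsilon$ of $\mu_{p^{\infty}}$) in two ways --- once via the $C_{\opn{cont}}$-action and once via the crystalline connection --- and showing both equal $p^{\beta_i}\theta_i$ respectively $p^{\beta_i}\theta_i^{\opn{cris}}$; this is exactly the normalisation bookkeeping you flag as the main obstacle, and your strategy of working locally and computing on the universal trivialising section is what the paper does.

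However, your reduction of part (2) to part (1) has a gap. You propose that (2) follows from (1) by ``restriction along $\hat{\iota}$'', using the $C_{\opn{cont}}(U_{H,\beta},\mbb{Z}_p)$-equivariance of the pullback $\mathscr{O}_{\ide{IG}_{G,w_n}(p^{\beta})} \to \hat{\iota}_*\mathscr{O}_{\ide{IG}_{H,\opn{id}}(p^{\beta})}$. But this equivariance only controls the image of that pullback, which is nowhere near all of $\mathscr{O}_{\mathcal{IG}_{H,\opn{id}}(p^{\beta})}(\hat{\iota}^{-1}U)$ --- the map $\hat{\iota}$ is finite, not an open immersion, so the pullback has small image. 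Moreover, elements of $\mathcal{C}_H$ are not in general of the form $\hat{\iota}^{-1}(U')$ for $U' \in \mathcal{C}_G$. The $H$-side operators $\theta_i$ and $\nabla_i$ are defined intrinsically, and there is no obvious way to deduce their compatibility from the $G$-side statement by functoriality alone. The paper simply observes that the argument proving part (1) applies verbatim to the $H$-Igusa tower: one replays the crystalline computation with $\mathcal{A}_1 \oplus \mathcal{A}_2$, the canonical filtration of $\mathcal{A}_1[\ide{p}_{\tau_0}^{\infty}]$, and the $H$-unipotent action. You should do the same rather than attempt a reduction.
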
 
\begin{proof}
    Let $\mathcal{A}$ denote the universal $\boldsymbol\Psi$-unitary abelian scheme over $\ide{IG}_{G, w_n}(p^{\beta})$. Then (using the universal trivialisations of the graded pieces) the canonical filtration for $\mathcal{A}[\ide{p}_{\tau_0}^{\infty}]$ is of the form
    \[
    0 \to \mu_{p^{\infty}} \to \mathcal{A}[\ide{p}_{\tau_0}^{\infty}] \to \left(\mbb{Q}_p/\mbb{Z}_p \right)^{\oplus 2n-1} \to 0 .
    \]
    Let $\mbb{E}(\mathcal{A}[\ide{p}_{\tau_0}^{\infty}])$ denote the universal vector extension of $\mathcal{A}[\ide{p}_{\tau_0}^{\infty}]$. Then $\opn{Lie}\mbb{E}(\mathcal{A}[\ide{p}_{\tau_0}^{\infty}])$ is identified with the first relative de Rham homology of $\mathcal{A}[\ide{p}_{\tau_0}^{\infty}]$, and the Hodge filtration is given by the exact sequence:
    \[
    0 \to \omega_{\mathcal{A}[\ide{p}_{\tau_0}^{\infty}]^D} \to \opn{Lie}\mbb{E}(\mathcal{A}[\ide{p}_{\tau_0}^{\infty}]) \to \opn{Lie} \mathcal{A}[\ide{p}_{\tau_0}^{\infty}] \to 0 .
    \]
    The ``dlog'' map for $T_p\mathcal{A}[\ide{p}_{\tau_0}^{\infty}]$ induces an isomorphism
    \[
    T_p\mathcal{A}[\ide{p}_{\tau_0}^{\infty}]^{\et} \otimes_{\mbb{Z}_p} \mathcal{O}_{\ide{IG}_{G, w_n}(p^{\beta})} \cong \mathcal{O}_{\ide{IG}_{G, w_n}(p^{\beta})}^{\oplus 2n-1} \xrightarrow{\sim} \omega_{\mathcal{A}[\ide{p}_{\tau_0}^{\infty}]^D}
    \]
    where the first isomorphism arises from the universal trivialisation of the \'{e}tale part of the canonical filtration. In particular, we obtain a basis $\{ \omega_{\opn{can}, i} : i = 1, \dots, 2n-1 \}$ of $\omega_{\mathcal{A}[\ide{p}_{\tau_0}^{\infty}]^D}$ by considering the image of the $i$-th basis vector of $\mathcal{O}_{\ide{IG}_{G, w_n}(p^{\beta})}^{\oplus 2n-1}$.

    On the other hand, the inclusion $\mu_{p^{\infty}} \hookrightarrow \mathcal{A}[\ide{p}_{\tau_0}^{\infty}]$ induces an injective map
    \[
    \opn{Lie} \mu_{p^{\infty}} = \opn{Lie} \mbb{E}(\mu_{p^{\infty}}) \hookrightarrow \opn{Lie}\mbb{E}(\mathcal{A}[\ide{p}_{\tau_0}^{\infty}])
    \]
    which splits the Hodge filtration (it is the ``unit root splitting''). Let $u_{\opn{can}} \in \opn{Lie}\mbb{E}(\mathcal{A}[\ide{p}_{\tau_0}^{\infty}])$ denote the image of the canonical tangent vector $t \partial_t \in \opn{Lie}\mu_{p^{\infty}}$ under this map.

    As explained in \cite[\S 2.4]{howe2020unipotent}, we have a crystalline connection $\nabla_{\opn{cris}}$ on $\opn{Lie}\mbb{E}(\mathcal{A}[\ide{p}_{\tau_0}^{\infty}])$ which is block upper nilpotent in the basis $\{ u_{\opn{can}}, \omega_{\opn{can}, 1}, \dots, \omega_{\opn{can}, 2n-1} \}$. Let $\mathcal{K}_i \in \Omega^1_{\ide{IG}_{G, w_n}(p^{\beta})}$ denote the unique differential such that $\nabla_{\opn{cris}}(\omega_{\opn{can}, i}) = u_{\opn{can}} \otimes \mathcal{K}_i$. Let $\theta^{\opn{cris}}_i \colon \mathcal{O}_{\mathcal{IG}_{G, w_n}(p^{\beta})} \to \mathcal{O}_{\mathcal{IG}_{G, w_n}(p^{\beta})}$ denote the (unique) derivation such that
    \[
    \langle \mathcal{K}_i, \theta^{\opn{cris}}_i \rangle = 1 \quad \text{ and } \quad \langle \mathcal{K}_j, \theta^{\opn{cris}}_i \rangle = 0 \text{ for } j \neq i.
    \]
    Since the crystalline and Gauss--Manin connections coincide (see \cite[Theorem 2.6.1]{howe2020unipotent}), one sees from the description in Lemma \ref{LemmaConcreteDescriptionNablai} that $\nabla_i$ and $\theta^{\opn{cris}}_i$ are compatible under the map $\mathscr{O}_{P_{G, \opn{dR}}^{\opn{an}}} \to \mathscr{O}_{\mathcal{IG}_{G, w_n}(p^{\beta})}$. Therefore, it is enough to show that $\theta^{\opn{cris}}_i = \theta_i$ on $\mathscr{O}_{\mathcal{IG}_{G, w_n}(p^{\beta})}(U)$.

    For $i=1, \dots, 2n-1$, let 
    \[
    \beta_i = \left\{ \begin{array}{cc} \beta & \text{ if } i=1, \dots, n \\ 0 & \text{ if } i=n+1, \dots, 2n-1 \end{array} \right.
    \]
    so that $U_{G, \beta} = \prod_{i=1}^{2n-1} p^{-\beta_i}\mbb{Z}_p$. Let $L_i \defeq \tilde{\mu_{p^{\infty}}} / p^{\beta_i}T_p\mu_{p^{\infty}}$, then we have an isomorphism
    \[
    L_i \xrightarrow{\sim} \mu_{p^{\infty}}, \quad (\zeta_{k})_{k \geq 0} \mapsto \zeta_{\beta_i}
    \]
    hence we have a natural map $D = \opn{Spf}\mbb{Z}_p[\epsilon]/\epsilon^2 \to L_i$ which, when composed with this isomorphism, corresponds to the point $1+\epsilon \in \mu_{p^{\infty}}(\mbb{Z}_p[\epsilon]/\epsilon^2)$. We therefore obtain a vector field
    \[
    t_i \colon D \times \ide{IG}_{G, w_n}(p^{\beta}) \to L_i \times \ide{IG}_{G, w_n}(p^{\beta}) \to \ide{IG}_{G, w_n}(p^{\beta})
    \]
    where the second map is the action map. Then
    \begin{itemize}
        \item The derivation $\mathcal{O}_{\ide{IG}_{G, w_n}(p^{\beta})} \to \mathcal{O}_{\ide{IG}_{G, w_n}(p^{\beta})}$ corresponding to the vector field $t_i$ is given by the action of 
        \[
        \left. \frac{d}{d\epsilon} \right|_{\epsilon = 0} [ ( \frac{a_j}{p^{\beta_j}} ) \mapsto (1+ \epsilon)^{a_i} ] = [ ( \frac{a_j}{p^{\beta_j}} ) \mapsto a_i ] \in C_{\opn{cont}}(U_{G, \beta}, \mbb{Z}_p)
        \]
        hence is equal to $p^{\beta_i} \theta_i$.
        \item By the same proof as in \cite[Theorem 5.3.1]{howe2020unipotent}, one can show that the crystalline connection induces an isomorphism
        \[
        \nabla_{\opn{cris}, t_i} \colon t_i^* \opn{Lie}\mbb{E}(\mathcal{A}[\ide{p}_{\tau_0}^{\infty}]) \xrightarrow{\sim} 0^* \opn{Lie}\mbb{E}(\mathcal{A}[\ide{p}_{\tau_0}^{\infty}]) 
        \]
        which satisfies 
        \[
        \nabla_{\opn{cris}, t_i}(\omega_{\opn{can}, j}) = \left\{ \begin{array}{cc} \omega_{\opn{can}, i} + p^{\beta_i} \epsilon u_{\opn{can}} & \text{ if } j=i \\ \omega_{\opn{can}, j} & \text{ otherwise } \end{array} \right. .
        \]
        This implies that $\langle \mathcal{K}_j, t_i \rangle$ equals $p^{\beta_i}$ if $j=i$, and is equal to zero otherwise. Hence, the derivation induced from the vector field $t_i$ is equal to $p^{\beta_i} \theta^{\opn{cris}}_i$.
    \end{itemize}
    Combining these bullet points, we see that $\theta_i = \theta_i^{\opn{cris}}$ as required. The proof of part (2) is identical.
\end{proof}

\subsection{Nearly overconvergent automorphic forms} \label{NearlyOverconvergentAutoFormsSection}

We now introduce the ind-sheaves of nearly overconvergent automorphic forms and their extra structures. In \S \ref{PadicIterationOfDiffOpsSection}, we will explain how one can ``overconverge'' the operators $\theta_i$ to these ind-sheaves. As in \S \ref{IntegralModelsandOCNbhdsSec}, we continue to work over a finite extension $L/\mbb{Q}_p$ containing $\mu_{p^{\beta}}$, but omit this from the notation. We fix an integer $\beta \geq 1$ throughout.

\begin{definition}
     For any open subspace $U \subset P_{G, \opn{dR}}^{\opn{an}}$ or $U \subset M_{G, \opn{dR}}^{\opn{an}}$ (resp. $U \subset P_{H, \opn{dR}}^{\opn{an}}$ or $U \subset M_{H, \opn{dR}}^{\opn{an}}$) we let $\mathscr{O}_U$ denote the pushforward of the structure sheaf of $U$ along the natural map to $\mathcal{X}_{G, \opn{Iw}}(p^{\beta})$ (resp. $\mathcal{X}_{H, \diamondsuit}(p^{\beta})$).
    \begin{enumerate}
        \item We define the ind-sheaf of nearly overconvergent forms on $G$ (resp. $H$) to be
        \[
        \mathscr{N}_G^{\dagger} \defeq (\mathscr{O}_U)_U, \quad \quad \text{ (resp. } \mathscr{N}^{\dagger}_H \defeq (\mathscr{O}_U)_U \text{ )},
        \]
        where the inductive system is over all quasi-compact open subspaces of $P_{G, \opn{dR}}^{\opn{an}}$ (resp. $P_{G, \opn{dR}}^{\opn{an}}$) containing the closure of $\mathcal{IG}_{G, w_n}(p^{\beta})$ (resp. $\mathcal{IG}_{H, \opn{id}}(p^{\beta})$), ordered by inclusion (so the transition maps in the inductive systems are given by restriction).
        \item We define the ind-sheaf of overconvergent forms on $G$ (resp. $H$) to be
        \[
        \mathscr{M}_G^{\dagger} \defeq (\mathscr{O}_U)_U, \quad \quad \text{ (resp. } \mathscr{M}^{\dagger}_H \defeq (\mathscr{O}_U)_U \text{ )},
        \]
        where the inductive system is over all quasi-compact open subspaces of $M_{G, \opn{dR}}^{\opn{an}}$ (resp. $M_{G, \opn{dR}}^{\opn{an}}$) containing the closure of $\mathcal{IG}_{G, w_n}(p^{\beta})$ (resp. $\mathcal{IG}_{H, \opn{id}}(p^{\beta})$), ordered by inclusion (so the transition maps in the inductive system are given by restriction).
    \end{enumerate}
\end{definition}

We have the following relation with the torsors appearing in higher Coleman theory \cite{BoxerPilloni}. Let $\mathcal{M}_G$ and $\mathcal{M}_H$ denote the completions of $M_G$ and $M_H$ along the special fibre.

\begin{lemma} \label{OctorsorsReductionLemma}
    For any integer $k \geq 1$, let $\mathcal{M}^1_{G, k} \subset \mathcal{M}_G$ denote the affinoid subgroup of elements which reduce to the identity modulo $p^k$. Set $\mathcal{M}^{\square}_{G, k} = \mathcal{M}^1_{G, k} \cdot M^G_{\opn{Iw}}(p^{\beta})$. Similarly, let $\mathcal{M}^1_{H, k} \subset \mathcal{M}_H$ denote the affinoid subgroup of elements which reduce to the identity modulo $p^k$, and set $\mathcal{M}^{\diamondsuit}_{H, k} = \mathcal{M}^1_{H, k} \cdot M^H_{\diamondsuit}(p^{\beta})$. 
    \begin{enumerate}
        \item For any integer $k \geq 1$, there exists an integer $r \geq 1$ (depending on $k$) such that $M_{G, \opn{dR}}^{\opn{an}} \times_{\mathcal{X}_{G, \opn{Iw}}(p^{\beta})} \mathcal{X}_{G, w_n}(p^{\beta})_{r}$ has a reduction of structure to an \'{e}tale $\mathcal{M}^{\square}_{G, k}$-torsor, denoted ${^\mu \mathcal{M}_{G, \opn{HT}, k}}$. Furthermore the torsors ${^\mu \mathcal{M}_{G, \opn{HT}, k}} \subset M_{G, \opn{dR}}^{\opn{an}}$ form a cofinal system of quasi-compact open subspaces containing the closure of $\mathcal{IG}_{G, w_n}(p^{\beta})$, so 
        \[
        \mathscr{M}^{\dagger}_G \cong (\mathscr{M}_G^{(r,k)})_{r,k} , \quad \quad \mathscr{M}_G^{(r,k)} \defeq \mathscr{O}_{{^\mu \mathcal{M}_{G, \opn{HT}, k}}},
        \]
        where the inductive system is over all possible $r, k$ with transition maps given by restriction.
        \item For any integer $k \geq 1$, there exists an integer $r \geq 1$ (depending on $k$) such that $M_{H, \opn{dR}}^{\opn{an}} \times_{\mathcal{X}_{H, \diamondsuit}(p^{\beta})} \mathcal{X}_{H, \opn{id}}(p^{\beta})_r$ has a reduction of structure to an \'{e}tale $\mathcal{M}^{\diamondsuit}_{H, k}$-torsor, denoted ${^\mu \mathcal{M}_{H, \opn{HT}, k}}$. Furthermore the torsors ${^\mu \mathcal{M}_{H, \opn{HT}, k}} \subset M_{H, \opn{dR}}^{\opn{an}}$ form a cofinal system of quasi-compact open subspaces containing the closure of $\mathcal{IG}_{H, \opn{id}}(p^{\beta})$, so 
        \[
        \mathscr{M}^{\dagger}_H \cong (\mathscr{M}^{(r, k)}_H)_{r, k} , \quad \quad \mathscr{M}^{(r, k)}_H \defeq \mathscr{O}_{{^\mu \mathcal{M}_{H, \opn{HT}, k}}},
        \]
        where the inductive system is over all possible $r, k$ with transition maps given by restriction.
    \end{enumerate}
\end{lemma}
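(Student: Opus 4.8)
The two parts of Lemma~\ref{OctorsorsReductionLemma} are proved by identical arguments, so I will describe the strategy for part (1); part (2) follows \emph{mutatis mutandis} by replacing $\mathbf{G}$ with $\mathbf{H}$, $M^G_{\opn{Iw}}(p^{\beta})$ with $M^H_{\diamondsuit}(p^{\beta})$, $w_n$ with $\opn{id}$, and using Proposition~\ref{HlociisHasseProp}(3) in place of Lemma~\ref{XGwnisOpenImmersionLemma} where needed. The key geometric input is the theory of the Hodge--Tate period map on the ordinary locus: the Igusa tower $\mathcal{IG}_{G, w_n}(p^{\beta})$ already provides, via the ``dlog'' maps on the graded pieces of the canonical filtration together with the unit root splitting (exactly as in the proof of Lemma~\ref{IGPdrComparisonsLemma}), a canonical $M^G_{\opn{Iw}}(p^{\beta})$-equivariant trivialisation of $M_{G, \opn{dR}}^{\opn{an}}$ restricted to $\mathcal{X}_{G, w_n}(p^{\beta})$. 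The content of the lemma is that this trivialisation \emph{spreads out} to a small strict neighbourhood $\mathcal{X}_{G, w_n}(p^{\beta})_r$ and defines there an $\mathcal{M}^{\square}_{G, k}$-reduction of structure, and that these reductions form a cofinal family among quasi-compact opens of $M_{G,\opn{dR}}^{\opn{an}}$ containing $\overline{\mathcal{IG}_{G, w_n}(p^{\beta})}$.

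\textbf{Step 1 (existence of the spread-out torsor).} I would work over the formal integral model $\ide{X}_{G, w_n}(p^{\beta})_r$ of Remark~\ref{RemarkOCFormalIntModelsG}. On the ordinary locus one has the canonical filtration $0 \to \mathcal{A}[\ide{p}_{\tau_0}^{\infty}]^{\opn{mult}} \to \mathcal{A}[\ide{p}_{\tau_0}^{\infty}] \to \mathcal{A}[\ide{p}_{\tau_0}^{\infty}]^{\et} \to 0$, refined by the flags $\mathcal{C}_{\bullet, \tau}$ of Lemma~\ref{ExtensionOfFFfiltrationLemma}; for $\tau \ne \tau_0$ the $p$-divisible group is \'etale. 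On a strict neighbourhood $\mathcal{X}_{G, w_n}(p^{\beta})_r$ with $r$ large relative to $k$, the canonical subgroup of level $k$ exists (by the standard theory of canonical subgroups over the $r$-overconvergent locus, the relevant bound being governed by $|\hat{\delta}_{G, n+1}^+|^{p^{r+1}} \geq |p|$), and the ``dlog'' map for the canonical subgroup of the dual gives a map to the Hodge--Tate period domain which is an \'etale $\mathcal{M}^1_{G,k}$-torsor after reduction of structure. Combining this with the Iwahori-level data (the flags $\mathcal{C}_{\bullet,\bullet}$) produces the reduction to an \'etale ${^\mu \mathcal{M}_{G, \opn{HT}, k}}$-torsor for the group $\mathcal{M}^{\square}_{G, k} = \mathcal{M}^1_{G, k} \cdot M^G_{\opn{Iw}}(p^{\beta})$. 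This is precisely the construction of \cite[\S 4.6]{BoxerPilloni}, adapted to the PEL setup of \S\ref{NearlyHolAutFormsSection}, and I would cite \emph{op.\ cit.}\ for the bookkeeping, taking care to match the Siegel-type construction there with our product-of-$\opn{GL}$ description of $M_{\mbf{G}}$ at $p$ and the signature data at $\tau_0$ versus $\tau \ne \tau_0$. That the resulting torsor is an open subspace of $M_{G, \opn{dR}}^{\opn{an}}$ containing $\overline{\mathcal{IG}_{G, w_n}(p^{\beta})}$ is immediate since on the ordinary locus it agrees with the image of $\mathcal{IG}_{G, w_n}(p^{\beta}) \to M_{G, \opn{dR}}^{\opn{an}}$ from Lemma~\ref{IGPdrComparisonsLemma}, and a neighbourhood of the closure is captured by taking $r$ finite.

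\textbf{Step 2 (cofinality).} Here one must show that the family $\{ {^\mu \mathcal{M}_{G, \opn{HT}, k}} \}_{r, k}$ is cofinal in the system of quasi-compact opens $U \subset M_{G, \opn{dR}}^{\opn{an}}$ containing $\overline{\mathcal{IG}_{G, w_n}(p^{\beta})}$. Given such a $U$, quasi-compactness and the fact that $\mathcal{IG}_{G, w_n}(p^{\beta})$ is the limit (over $r$) of the ${^\mu \mathcal{M}_{G, \opn{HT}, k}}$ in the $r$-direction for each fixed $k$, together with the fact that the ${^\mu \mathcal{M}_{G, \opn{HT}, k}}$ shrink in the $k$-direction to a cofinal system of neighbourhoods of the ``integral'' $M^G_{\opn{Iw}}(p^{\beta})$-torsor, forces ${^\mu \mathcal{M}_{G, \opn{HT}, k}} \subset U$ for suitable $r \gg 0$ and $k \gg 0$; this is a topological argument on the adic space $M_{G, \opn{dR}}^{\opn{an}}$ using that the closure of the Igusa tower is quasi-compact and that $M^1_{G,k} \to 1$ as $k \to \infty$. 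The identifications $\mathscr{M}^{\dagger}_G \cong (\mathscr{M}_G^{(r,k)})_{r,k}$ then follow formally from the definition of $\mathscr{M}^{\dagger}_G$ as an ind-sheaf indexed by such opens, once cofinality is known, since passing to a cofinal subsystem does not change the ind-object.

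\textbf{Main obstacle.} The crux is Step~1: pinning down the precise relationship between the overconvergence radius $r$ and the congruence level $k$ so that the canonical subgroup of level $k$ genuinely exists on $\mathcal{X}_{G, w_n}(p^{\beta})_r$ and the ``$\opn{dlog}$'' trivialisation extends with the stated $\mathcal{M}^{\square}_{G, k}$-structure, \emph{uniformly} in the PEL data and compatibly with the Iwahori flags $\mathcal{C}_{\bullet, \bullet}$ at both the $\tau_0$-component (where the $p$-divisible group has dimension one) and the $\tau \ne \tau_0$-components (where it is \'etale, so the construction is purely the \'etale/dlog part with no canonical-subgroup subtlety). The honest verification is the adaptation of \cite[\S 4]{BoxerPilloni} --- which is carried out there for general PEL Shimura varieties of the relevant type --- and I would present it as such, highlighting only the points where our specific signature $\boldsymbol\Psi$ and the choice of parabolic $P_{\mbf{G}}^{\opn{std}}$ enter, rather than reproving the canonical-subgroup estimates.
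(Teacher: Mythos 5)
Your proposal takes a genuinely different route from the paper's, and it has a real gap in its treatment of part (2). Let me take these in turn.

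The paper's proof is a single paragraph, but the content is specific: one pulls back a torsor over the flag variety $\mathcal{P}_G \backslash \mathcal{G}$ along the truncated Hodge--Tate period morphism, and then twists by the central Hodge cocharacter $\mu \colon \mbb{Z}_p^{\times} \to \mathcal{M}^{\square}_{G,k}$. The appeal is to the constructions in \cite[\S 5]{UFJ} and the twisting formalism of \cite[\S 4.2]{UFJ}. Your proposal instead works directly on the formal model with the canonical filtration, the ``dlog'' map, and the unit root splitting, citing Boxer--Pilloni. These are of course closely related pictures of the same underlying geometry, but they are not the same presentation, and your version suppresses the step in which the twist by $\mu$ converts the (\'etale) Hodge--Tate data into a reduction of structure of the \emph{de Rham} torsor $M_{G,\opn{dR}}^{\opn{an}}$. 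Over the ordinary locus the unit root splitting does this job for you (which is what you invoke via Lemma~\ref{IGPdrComparisonsLemma}), but over the strict neighbourhood $\mathcal{X}_{G,w_n}(p^{\beta})_r$ the unit root splitting must be overconverged --- that is the content of Proposition~\ref{NOctorsorsReductionProp} in the paper, proved afterwards and by a different (explicit section) argument; the paper deliberately avoids this in Lemma~\ref{OctorsorsReductionLemma} by twisting instead.

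The more serious issue is your claim that ``part (2) follows \emph{mutatis mutandis}.'' It does not, and the paper's second paragraph exists precisely to flag the difference. For $H$ one cannot twist along the full Hodge cocharacter $\mu\colon \mbb{Z}_p^{\times} \to \mathcal{M}^{\diamondsuit}_{H,k}$, because $M^{H}_{\diamondsuit}(p^{\beta})$ is cut out by congruences landing in the special torus $T^{\diamondsuit}$ modulo $p^{\beta}$, so the central $\mbb{Z}_p^{\times}$ does not lie in the structure group. The twist must instead be taken along the restriction of $\mu$ to $1 + p^{\beta}\mbb{Z}_p$, using the sub-$(1+p^{\beta}\mbb{Z}_p)$-torsor $\mathcal{T}^{\times}_{\beta} \subset \underline{\opn{Isom}}(\mbb{Z}_p, \mbb{Z}_p(1))$ picked out by a fixed $p^{\beta}$-th root of unity --- which is exactly why the whole section is carried out over an $L \supset \mbb{Q}_p(\mu_{p^{\beta}})$. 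Nothing in your argument produces or explains this modified twist, and replacing ``$\mbf{G}$ by $\mbf{H}$'' in Step~1 will not, by itself, land you in the correct structure group $\mathcal{M}^{\diamondsuit}_{H,k}$. This needs to be addressed explicitly, not folded into a ``\emph{mutatis mutandis}.''

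Your Step~2 cofinality argument is reasonable and is more detail than the paper gives; there is no objection there.
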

\begin{proof}
    The proof of this lemma is very similar to the constructions in \cite[\S 5]{UFJ}, although the torsors are for slightly different groups and strata in the Shimura--Deligne varieties. More precisely, the torsor ${^\mu \mathcal{M}_{G, \opn{HT}, k}}$ is the twist along the central Hodge cocharacter $\mu \colon \mbb{Z}_p^{\times} \to \mathcal{M}_{G, k}^{\square}$ of the pullback of a torsor over the flag variety $\mathcal{P}_G \backslash \mathcal{G}$ under the (truncated) Hodge--Tate period morphism. 

    For the torsor ${^\mu \mathcal{M}_{H, \opn{HT}, k}}$, the construction is similar, however we must twist the pro\'{e}tale torsor $\mathcal{M}_{H, \opn{HT}, k}$ along the restriction of the Hodge cocharacter to $1 + p^{\beta} \mbb{Z}_p$, i.e. we define
    \[
    {^\mu \mathcal{M}_{H, \opn{HT}, k}} = \mathcal{M}_{H, \opn{HT}, k} \times^{[1 + p^{\beta}\mbb{Z}_p, \mu]} \mathcal{T}^{\times}_{\beta}
    \]
    where $\mathcal{T}^{\times}_{\beta} \subset \mathcal{T}^{\times} = \underline{\opn{Isom}}(\mbb{Z}_p, \mbb{Z}_p(1))$ is the sub-$(1+p^{\beta}\mbb{Z}_p)$-torsor of isomorphisms which map $1$ to a fixed $p^{\beta}$-root of unity modulo $p^{\beta}$ (and the twist is along the central homomorphism $\mu \colon 1 + p^{\beta}\mbb{Z}_p \to \mathcal{M}^{\diamondsuit}_{H, k}$). We refer the reader to \cite[\S 4.2]{UFJ} for more details on this twisting construction.
\end{proof}

It will be useful to extend the above lemma to reductions of structure of $P_{\bullet, \opn{dR}}^{\opn{an}}$. Let $\overline{\mathcal{P}}_G$ (resp. $\overline{\mathcal{P}}_H$) denote the formal completion of $\overline{P}_G$ (resp. $\overline{P}_H$) along its special fibre. For any integer $k \geq 1$, let $\overline{\mathcal{P}}^1_{G, k} \subset \overline{\mathcal{P}}_G$ (resp. $\overline{\mathcal{P}}^1_{H, k} \subset \overline{\mathcal{P}}_H$) denote the affinoid subgroup of elements which reduce to the identity modulo $p^k$. Set $\overline{P}^{\square}_{G, k} = \overline{\mathcal{P}}^1_{G, k} \cdot M^G_{\opn{Iw}}(p^{\beta})$ and $\overline{P}^{\diamondsuit}_{H, k} = \overline{\mathcal{P}}^1_{H, k} \cdot M^H_{\diamondsuit}(p^{\beta})$.

\begin{proposition} \label{NOctorsorsReductionProp}
    For any integer $k \geq 1$, there exists an integer $r \geq 1$ (depending on $k$) such that:
    \begin{enumerate}
        \item $P_{G, \opn{dR}}^{\opn{an}} \times_{\mathcal{X}_{G, \opn{Iw}}(p^{\beta})} \mathcal{X}_{G, w_n}(p^{\beta})_r$ (resp. $P_{H, \opn{dR}}^{\opn{an}} \times_{\mathcal{X}_{H, \diamondsuit}(p^{\beta})} \mathcal{X}_{H, \opn{id}}(p^{\beta})_r$) has a reduction of structure to an \'{e}tale $\overline{P}^{\square}_{G, k}$-torsor (resp. $\overline{P}^{\diamondsuit}_{H, k}$-torsor) which is denoted $\mathcal{P}_{G, \opn{dR}, k}$ (resp. $\mathcal{P}_{H,\opn{dR},k}$). Furthermore, $\mathcal{IG}_{G, w_n}(p^{\beta})$ (resp. $\mathcal{IG}_{H, \opn{id}}(p^{\beta})$) provides a reduction of structure of $\mathcal{P}_{G, \opn{dR}, k}$ (resp. $\mathcal{P}_{H,\opn{dR},k}$) over $\mathcal{X}_{G, w_n}(p^{\beta})$ (resp. $\mathcal{X}_{H, \opn{id}}(p^{\beta})$).
        \item One has $\mathcal{P}_{G,\opn{dR},k} \times^{\overline{\mathcal{P}}^{\square}_{G, k}} \mathcal{M}^{\square}_{G, k} = {^\mu \mathcal{M}_{G, \opn{HT}, k} }$ and $\mathcal{P}_{H,\opn{dR},k} \times^{\overline{\mathcal{P}}^{\diamondsuit}_{H, k}} \mathcal{M}^{\diamondsuit}_{H, k} = {^\mu \mathcal{M}_{H, \opn{HT}, k}}$.
    \end{enumerate}
    In particular, if $\mathscr{F} \to \mathcal{X}_{G, w_n}(p^{\beta})_r$ and $\mathscr{F}' \to \mathcal{X}_{G, w_n}(p^{\beta})_{r'}$ (resp. $\mathscr{F} \to \mathcal{X}_{H, \opn{id}}(p^{\beta})_r$ and $\mathscr{F}' \to \mathcal{X}_{H, \opn{id}}(p^{\beta})_{r'}$) are two \'{e}tale torsors satisfying (1), then there exists an integer $r'' \geq \opn{max}(r, r')$ such that
    \begin{align*}
    \mathscr{F} \times_{\mathcal{X}_{G, w_n}(p^{\beta})_r} \mathcal{X}_{G, w_n}(p^{\beta})_{r''} &= \mathscr{F}' \times_{\mathcal{X}_{G, w_n}(p^{\beta})_{r'}} \mathcal{X}_{G, w_n}(p^{\beta})_{r''} \\
    \text{(resp. } \mathscr{F} \times_{\mathcal{X}_{H, \opn{id}}(p^{\beta})_r} \mathcal{X}_{H, \opn{id}}(p^{\beta})_{r''} &= \mathscr{F}' \times_{\mathcal{X}_{H, \opn{id}}(p^{\beta})_{r'}} \mathcal{X}_{H, \opn{id}}(p^{\beta})_{r''} \text{ )}
    \end{align*}
    viewed as subsets of $P_{G, \opn{dR}}^{\opn{an}}$ (resp. $P_{H, \opn{dR}}^{\opn{an}}$).

    The collection of such torsors form a cofinal system of quasi-compact open subspaces containing the closure of the Igusa tower, which implies that
    \[
    \mathscr{N}^{\dagger}_G \cong (\mathscr{N}^{(r, k)}_{G})_{r, k}, \quad \quad \mathscr{N}^{\dagger}_H \cong (\mathscr{N}^{(r, k)}_{H})_{r, k},
    \]
    where $\mathscr{N}^{(r, k)}_{G} \defeq \mathscr{O}_{\mathcal{P}_{G, \opn{dR}, k}}$, $\mathscr{N}^{(r, k)}_{H} \defeq \mathscr{O}_{\mathcal{P}_{H, \opn{dR}, k}}$, and the transition maps in the inductive systems (running over all possible $r, k$) are induced from restriction. 
\end{proposition}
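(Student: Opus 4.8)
\textbf{Proof strategy for Proposition \ref{NOctorsorsReductionProp}.} The plan is to mimic the proof of Lemma \ref{OctorsorsReductionLemma} but at the level of the larger parabolic group, and to leverage the unit root splitting to pass back and forth between reductions of $M_{\bullet, \opn{dR}}^{\opn{an}}$ and reductions of $P_{\bullet, \opn{dR}}^{\opn{an}}$. First I would recall the (truncated) Hodge--Tate period morphism on the overconvergent neighbourhood $\mathcal{X}_{G, w_n}(p^{\beta})_r$: over such a strict neighbourhood, the integral Hodge filtration is defined modulo $p^k$ up to an error controlled by the overconvergence radius $r$, so for each $k$ there is an $r = r(k)$ giving a well-defined truncated period map to the flag variety $\overline{\mathcal{P}}_G \backslash \mathcal{G}$ modulo $p^k$. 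Pulling back the tautological $\overline{P}_G$-torsor along this map and twisting by the central Hodge cocharacter $\mu \colon \mbb{Z}_p^\times \to \overline{P}_{G, k}^{\square}$ (exactly as in \cite[\S 4.2]{UFJ}, using the extra twist by $\mathcal{T}_\beta^\times$ in the $\mbf H$-case to account for the $\diamondsuit$-level) produces the \'etale $\overline{P}_{G, k}^{\square}$-torsor $\mathcal{P}_{G, \opn{dR}, k}$; this is the reduction of structure asserted in part (1). The fact that $\mathcal{IG}_{G, w_n}(p^\beta)$ provides a reduction of structure of $\mathcal{P}_{G, \opn{dR}, k}$ over $\mathcal{X}_{G, w_n}(p^\beta)$ is the content of Lemma \ref{IGPdrComparisonsLemma}: the Igusa tower trivialises $\mathcal{H}_A$ compatibly with the Hodge filtration (via $\opn{dlog}$ on each graded piece, extended to all of $\mathcal{H}_A$ by the unit root splitting), and this trivialisation lands in the $\overline{P}_G^{\square}$-torsor by construction. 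Part (2) is then immediate: pushing out $\mathcal{P}_{G, \opn{dR}, k}$ along $\overline{\mathcal{P}}_{G, k}^{\square} \twoheadrightarrow \mathcal{M}_{G, k}^{\square}$ corresponds under the period maps to pushing out the tautological $\overline{P}_G$-torsor on the flag variety along $\overline{P}_G \twoheadrightarrow M_G$, which is precisely the construction of ${^\mu \mathcal{M}_{G, \opn{HT}, k}}$ in Lemma \ref{OctorsorsReductionLemma}; the analogous statements for $\mbf H$ follow verbatim with the $\diamondsuit$-twist.

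The next step is the uniqueness/cofinality assertion. Given two \'etale $\overline{P}_{G, k}^{\square}$-torsor reductions $\mathscr{F}, \mathscr{F}'$ of $P_{G, \opn{dR}}^{\opn{an}}$ over $\mathcal{X}_{G, w_n}(p^\beta)_r$ and $\mathcal{X}_{G, w_n}(p^\beta)_{r'}$ respectively, both restrict to $\mathcal{IG}_{G, w_n}(p^\beta)$-compatible reductions over the ordinary locus; so over $\mathcal{X}_{G, w_n}(p^\beta)$ the two torsors agree. The difference of the two reductions is a section of the (automatically \'etale, quasi-compact) $\overline{P}_G / \overline{P}_{G, k}^{\square}$-bundle over $\mathcal{X}_{G, w_n}(p^\beta)_{\max(r,r')}$ which vanishes on the ordinary locus; since that bundle is finite \'etale and the ordinary locus is dense in each strict neighbourhood (and the neighbourhoods $\mathcal{X}_{G, w_n}(p^\beta)_{r''}$ for $r'' \to \infty$ form a fundamental system of neighbourhoods of the closure of $\mathcal{X}_{G, w_n}(p^\beta)$), the two sections must coincide after shrinking to some $\mathcal{X}_{G, w_n}(p^\beta)_{r''}$ with $r'' \geq \max(r, r')$. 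This is the standard ``two close-enough \'etale structures agree on a small-enough neighbourhood'' argument, and I would cite the analogous statement in \cite[\S 5]{UFJ} rather than reprove it. The cofinality of the collection $\{ \mathcal{P}_{G, \opn{dR}, k} \}_{r, k}$ among quasi-compact opens of $P_{G, \opn{dR}}^{\opn{an}}$ containing the closure of the Igusa tower then follows formally: any such quasi-compact open contains, for $r$ large enough depending on $k$ large enough, the image of $\mathcal{P}_{G, \opn{dR}, k}$ (because the latter shrinks to $\mathcal{IG}_{G, w_n}(p^\beta)$-over-$\mathcal{X}_{G, w_n}(p^\beta)$ as $k, r \to \infty$, by the analogous statement for ${^\mu \mathcal{M}_{G, \opn{HT}, k}}$ in Lemma \ref{OctorsorsReductionLemma} combined with part (2)), and conversely contains some $\mathcal{P}_{G, \opn{dR}, k'}$. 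Hence the ind-sheaf $\mathscr{N}_G^\dagger = (\mathscr{O}_U)_U$ can be recomputed along this cofinal system, giving $\mathscr{N}_G^\dagger \cong (\mathscr{N}_G^{(r,k)})_{r,k}$ with $\mathscr{N}_G^{(r,k)} = \mathscr{O}_{\mathcal{P}_{G, \opn{dR}, k}}$; the $\mbf H$-case is identical.

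I expect the main obstacle to be part (1): producing the \'etale $\overline{P}_{G, k}^{\square}$-torsor reduction of $P_{G, \opn{dR}}^{\opn{an}}$ on a strict neighbourhood, i.e. verifying that the truncated Hodge--Tate period map is well-defined modulo $p^k$ once one overconverges sufficiently far. For the $M$-version this is already done in Lemma \ref{OctorsorsReductionLemma} via \cite[\S 5]{UFJ}, but the $P$-version requires in addition controlling the position of the full Hodge filtration (not just the graded quotients entering the $M$-level data), which is exactly where the unit root splitting over the ordinary locus must be shown to spread out \'etale-locally to the neighbourhood. The cleanest route is to observe that $P_{G, \opn{dR}}^{\opn{an}}$ is itself the fibre product of $M_{G, \opn{dR}}^{\opn{an}}$ with an affine space (the unipotent radical $\overline{U}_G$), so a reduction of $M_{G, \opn{dR}}^{\opn{an}}$ together with an integral-modulo-$p^k$ choice of splitting of $0 \to \omega_{A^D} \to \mathcal{H}_A \to \opn{Lie}(A) \to 0$ yields the desired $\overline{P}_{G, k}^{\square}$-reduction — and such a splitting exists over the neighbourhood because the Hodge--Tate filtration is pro-\'etale-locally split and overconvergence makes the splitting defined modulo $p^k$ with error $\to 0$. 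Everything else (parts (2), uniqueness, cofinality, the final ind-sheaf identification) is formal once part (1) and Lemma \ref{IGPdrComparisonsLemma} are in hand.
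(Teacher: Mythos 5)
Your overall strategy is the right one and matches the paper's intent (which refers the reader to \cite[Proposition 6.2.1]{DiffOps}: find an \'etale cover on which ${^\mu \mathcal{M}_{\bullet, \opn{HT}, k}}$ has sections, compose those sections with the unit root splitting, and show the composite overconverges). You correctly identify the overconvergence of that composite as the key technical point. However, both of the two mechanisms you offer for that step have problems.

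Your first route — pulling back the tautological $\overline{P}_G$-torsor along the truncated Hodge--Tate period map and twisting by $\mu$ — produces the \emph{Hodge--Tate} parabolic torsor, not the de Rham one. The paper's comparison between the twisted HT-torsor and the de Rham torsor is established only at the Levi level (Lemma \ref{OctorsorsReductionLemma}: ${^\mu \mathcal{M}_{\bullet, \opn{HT}, k}}$ gives a reduction of $M_{\bullet, \opn{dR}}^{\opn{an}}$), not at the parabolic level. The HT period map sees the Hodge--Tate filtration, while $P_{\bullet, \opn{dR}}^{\opn{an}}$ encodes the de Rham Hodge filtration; these agree over the ordinary locus (via the unit root splitting) but not a priori elsewhere, so this construction by itself does not yield the asserted $\overline{P}^{\square}_{\bullet, k}$-reduction of $P_{\bullet, \opn{dR}}^{\opn{an}}$.

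Your ``cleanest route'' has a related imprecision: you propose to obtain the integral-mod-$p^k$ splitting of $0 \to \omega_{A^D} \to \mathcal{H}_A \to \opn{Lie}(A) \to 0$ from the pro-\'etale splitting of the Hodge--Tate filtration. That is a generic-fibre (pro-\'etale) object; it is not the unit root splitting, which is a formal/crystalline object living over the ordinary locus, defined by entire functions on the Igusa tower. The two coincide over the ordinary locus but the argument in \cite[Proposition 6.2.1]{DiffOps} is precisely that the \emph{unit root} splitting, composed with \'etale-local sections of ${^\mu \mathcal{M}_{\bullet, \opn{HT}, k}}$, extends to a strict neighbourhood of radius $r(k)$. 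Invoking the pro-\'etale HT splitting does not substitute for this: the assertion ``overconvergence makes the splitting defined modulo $p^k$ with error $\to 0$'' is exactly the claim that requires proof, and the mechanism you cite does not supply it. Your treatment of part (2), the uniqueness of the reduction on a smaller strict neighbourhood, and the cofinality/ind-sheaf identification are all fine and match the paper.
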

\begin{proof}
     In this proof only, let $\mathscr{G}_{k}$ denote the image of $M^G_{\opn{Iw}}(p^{\beta})$ (resp. $M^H_{\diamondsuit}(p^{\beta})$) in $M_G(\mbb{Z}/p^{k}\mbb{Z})$ (resp. $M_H(\mbb{Z}/p^{k}\mbb{Z})$). Let $\mathcal{IG}_{k, \infty}$ denote the quotient of $\mathcal{IG}_{G, w_n}(p^{\beta})$ (resp. $\mathcal{IG}_{H, \opn{id}}(p^{\beta})$) by the kernel of the map $M^G_{\opn{Iw}}(p^{\beta}) \to M_G(\mbb{Z}/p^{k}\mbb{Z})$ (resp. $M^H_{\diamondsuit}(p^{\beta}) \to M_H(\mbb{Z}/p^{k}\mbb{Z})$); so in particular, $\mathcal{IG}_{k, \infty}$ is a finite \'{e}tale $\mathscr{G}_{k}$-torsor over $\mathcal{X}_{G, w_n}(p^{\beta})$ (resp. $\mathcal{X}_{H, \opn{id}}(p^{\beta})$). Set $\mathcal{X}_{\infty} = \mathcal{X}_{G, w_n}(p^{\beta})$ and $\mathcal{X}_r = \mathcal{X}_{G, w_n}(p^{\beta})_r$ (resp. $\mathcal{X}_{\infty} = \mathcal{X}_{H, \opn{id}}(p^{\beta})$ and $\mathcal{X}_r = \mathcal{X}_{H, \opn{id}}(p^{\beta})_r$). For $r$ sufficiently large, we have a Cartesian diagram:
    \[
\begin{tikzcd}
{\mathcal{IG}_{k, \infty}} \arrow[d] \arrow[r] & {\mathcal{IG}_{k, r}} \arrow[d] \\
\mathcal{X}_{\infty} \arrow[r]                     & \mathcal{X}_r                      
\end{tikzcd}
    \]
    where $\mathcal{IG}_{k, r}$ denotes the quotient of ${^\mu \mathcal{M}_{\bullet, \opn{HT}, k}}$ by $\mathcal{M}^1_{\bullet, k}$. In particular, ${^\mu \mathcal{M}_{\bullet, \opn{HT}, k}}$ has sections locally on $\mathcal{IG}_{k, r}$. The rest of the proof is now identical to \cite[Proposition 6.2.1]{DiffOps}, exploiting the fact that $\mathcal{X}_{\infty}$ is the locus in $\mathcal{X}_r$ where $|\hat{\delta}_{\bullet, n+1}^+| = 1$ (a local generator of the ideal $\hat{\delta}_{\bullet, n+1}^+$ plays the role of the element ``$h$'' in \emph{loc.cit.}).
\end{proof}

Note that $\mathscr{N}^{\dagger}_{\bullet}$ and $\mathscr{M}^{\dagger}_{\bullet}$ are ind-sheaves of Banach spaces, so the terms in the inductive systems have a natural topology. Furthermore, Lemma \ref{OctorsorsReductionLemma} and Proposition \ref{NOctorsorsReductionProp} allow us to view $\mathscr{N}^{\dagger}_G$, $\mathscr{M}^{\dagger}_G$ and $\mathscr{N}^{\dagger}_H$, $\mathscr{M}^{\dagger}_{H}$ as ind-sheaves of Banach spaces equipped with continuous actions of $M^G_{\opn{Iw}}(p^{\beta})$ and $M^H_{\diamondsuit}(p^{\beta})$ respectively. We can also differentiate the torsor structures on $\mathcal{P}_{\bullet, \opn{dR}, k}$, to obtain actions of $\overline{\ide{p}}_G = \opn{Lie}\overline{P}_G$ and $\overline{\ide{p}}_H = \opn{Lie}\overline{P}_H$ on $\mathscr{N}^{\dagger}_G$ and $\mathscr{N}^{\dagger}_H$ respectively. If we let $\overline{\ide{u}}_{\bullet}$ denote the nilpotent part of $\overline{\ide{p}}_{\bullet}$, then the sub-ind-sheaf of $\mathscr{N}^{\dagger}_{\bullet}$ consisting of elements killed by $\overline{\ide{u}}_{\bullet}$ is precisely the ind-sheaf of overconvergent forms $\mathscr{M}^{\dagger}_{\bullet}$.

 In the following definition, we introduce the ind-sheaves of nearly overconvergent and overconvergent automorphic forms for $G$ and $H$ with a fixed weight  given by either an algebraic or locally analytic representation of $M^G_{\opn{Iw}}(p^{\beta})$ or $M^H_{\diamondsuit}(p^{\beta})$. The cohomology (with partial compact support) of the ind-sheaves of overconvergent forms will recover the spaces of overconvergent forms considered in higher Coleman theory \cite{BoxerPilloni}; the cohomology of the nearly overconvergent ind-sheaves will provide an extension of \emph{op.cit.}\ which incorporates the action of Maass--Shimura differential operators.

\begin{definition} \label{DefinitionOfNOCwithweight}
    \begin{enumerate}
        \item Let $V_{\kappa}$ be an algebraic representation of $M_G$ (resp. $M_H$) of highest weight $\kappa \in X^*(T)$. We define
        \begin{align*} 
        \mathscr{N}^{\dagger}_{G, \kappa} &\defeq (\mathscr{N}^{\dagger}_G \hatot V_{\kappa})^{M^G_{\opn{Iw}}(p^{\beta})} \defeq \left( (\mathscr{N}^{(r, k)}_G \hatot V_{\kappa})^{M^G_{\opn{Iw}}(p^{\beta})} \right)_{r, k} = \left( \mathscr{N}^{(r, k)}_{G, \kappa} \right)_{r, k} \\
        \mathscr{M}^{\dagger}_{G, \kappa} &\defeq (\mathscr{M}^{\dagger}_G \hatot V_{\kappa})^{M^G_{\opn{Iw}}(p^{\beta})} \defeq \left( (\mathscr{M}^{(r, k)}_G \hatot V_{\kappa})^{M^G_{\opn{Iw}}(p^{\beta})} \right)_{r, k} = \left( \mathscr{M}^{(r, k)}_{G, \kappa} \right)_{r, k} \\
        \text{(resp. } \mathscr{N}^{\dagger}_{H, \kappa} &\defeq (\mathscr{N}^{\dagger}_H \hatot V_{\kappa})^{M^H_{\diamondsuit}(p^{\beta})} \defeq \left( (\mathscr{N}^{(r, k)}_H \hatot V_{\kappa})^{M^H_{\diamondsuit}(p^{\beta})} \right)_{r, k} = \left( \mathscr{N}^{(r, k)}_{H, \kappa} \right)_{r, k} \\
        \mathscr{M}^{\dagger}_{H, \kappa} &\defeq (\mathscr{M}^{\dagger}_H \hatot V_{\kappa})^{M^H_{\diamondsuit}(p^{\beta})} \defeq \left( (\mathscr{M}^{(r, k)}_H \hatot V_{\kappa})^{M^H_{\diamondsuit}(p^{\beta})} \right)_{r, k} = \left( \mathscr{M}^{(r, k)}_{H, \kappa} \right)_{r, k} \text{ )}
        \end{align*} 
        to be the ind-sheaves of nearly overconvergent and overconvergent forms respectively, where the invariants are with respect to the diagonal action. 
        \item Let $(R, R^+)$ be a Tate affinoid algebra over $(L, \mathcal{O}_L)$ and suppose that $\kappa \colon T(\mbb{Z}_p) \to (R^+)^{\times}$ is an $s$-analytic character, for some $s \geq 1$. Let $V^{\circ, s\opn{-an}}_{G, \kappa}$ denote the $s$-analytic induction as in \cite[\S 6.2.20]{BoxerPilloni}, and let $D^{s\opn{-an}}_{G, \kappa^*}$ denote its continuous $R$-linear dual. We set
        \begin{align*} 
        \mathscr{N}^{\dagger, s\opn{-an}}_{G, \kappa^*} &\defeq \left( \mathscr{N}^{\dagger}_G \hatot D^{s\opn{-an}}_{G, \kappa^*} \right)^{M^G_{\opn{Iw}}(p^{\beta})} \defeq \left( \left( \mathscr{N}^{(r, k)}_G \hatot D^{s\opn{-an}}_{G, \kappa^*} \right)^{M^G_{\opn{Iw}}(p^{\beta})} \right)_{r, k \geq s+1} = \left( \mathscr{N}^{(r, k), s\opn{-an}}_{G, \kappa^*} \right)_{r, k \geq s+1}, \\
        \mathscr{M}^{\dagger, s\opn{-an}}_{G, \kappa^*} &\defeq \left( \mathscr{M}^{\dagger}_G \hatot D^{s\opn{-an}}_{G, \kappa^*} \right)^{M^G_{\opn{Iw}}(p^{\beta})} \defeq \left( \left( \mathscr{M}^{(r, k)}_G \hatot D^{s\opn{-an}}_{G, \kappa^*} \right)^{M^G_{\opn{Iw}}(p^{\beta})} \right)_{r, k \geq s+1} = \left( \mathscr{M}^{(r, k), s\opn{-an}}_{G, \kappa^*} \right)_{r, k \geq s+1} .
        \end{align*}
        \item Let $(R, R^+)$ be a Tate affinoid algebra over $(L, \mathcal{O}_L)$ and suppose that $\sigma \colon M^H_{\diamondsuit}(p^{\beta}) \to (R^+)^{\times}$ is an $s$-analytic character, for some $s \geq 1$. We set
        \begin{align*}
        \mathscr{N}^{\dagger, \opn{an}}_{H, \sigma} &\defeq \left( \mathscr{N}^{\dagger}_H \hatot \sigma \right)^{M^H_{\diamondsuit}(p^{\beta})} \defeq \left( \left( \mathscr{N}^{(r, k)}_H \hatot \sigma \right)^{M^H_{\diamondsuit}(p^{\beta})} \right)_{r, k \geq s} = \left( \mathscr{N}^{(r, k), \opn{an}}_{H, \sigma} \right)_{r, k \geq s}, \\
        \mathscr{M}^{\dagger, \opn{an}}_{H, \sigma} &\defeq \left( \mathscr{M}^{\dagger}_H \hatot \sigma \right)^{M^H_{\diamondsuit}(p^{\beta})} \defeq \left( \left( \mathscr{M}^{(r, k)}_H \hatot \sigma \right)^{M^H_{\diamondsuit}(p^{\beta})} \right)_{r, k \geq s} = \left( \mathscr{M}^{(r, k), \opn{an}}_{H, \sigma} \right)_{r, k \geq s},
        \end{align*}
        both of which are independent of $s$ (up to isomorphism in the ind-category).
    \end{enumerate}
\end{definition}

\subsubsection{Acyclicity}

We will need the following acyclicity lemma.

\begin{lemma} \label{AcyclicityCoverLemma}
    Let $\mathscr{F}^{(r, k)} \in \{ \mathscr{M}^{(r, k)}_{G, \kappa}, \hat{\iota}_* \mathscr{M}^{(r, k)}_{H, \kappa}, \mathscr{M}^{(r, k), s\opn{-an}}_{G, \kappa^*}, \hat{\iota}_*\mathscr{M}^{(r, k), \opn{an}}_{H, \sigma} \}$, where the sheaves are as in Definition \ref{DefinitionOfNOCwithweight} and $\hat{\iota} \colon \mathcal{X}_{H, \diamondsuit}(p^{\beta}) \to \mathcal{X}_{G, \opn{Iw}}(p^{\beta})$ is the (analytification of the) morphism from \S \ref{DeeperLevelAtpSubSec}. Then, after possibly increasing $r$, there exists a finite open affinoid cover $\ide{U} = \{ U_i \}_{i \in I}$ of $\mathcal{X}_{G, \opn{Iw}}(p^{\beta})$ such that:
    \begin{itemize}
        \item $U_i \in \mathcal{C}_{G, H}$ for all $i \in I$
        \item For any subset $J \subset I$, with $U_J \defeq \cap_{i \in J} U_i$, we have
        \[
        R^j \Gamma(U_J, \mathscr{F}^{(r, k)}) = 0 \quad \text{ for all } \quad j \neq 0 
        \]
        for any $(r, k)$ such that $\mathscr{F}^{(r, k)}$ is well-defined.
        \item For any subset $J \subset I$, we have
    \[
        R^j \Gamma(V_J, \mathscr{F}^{(r,k)}) = 0 \quad \text{ for all } \quad j \neq 0 
    \]
    for any $(r, k)$ such that $\mathscr{F}^{(r, k)}$ is well-defined, where $V_J \defeq U_J \cap \left( \mathcal{X}_{G, \opn{Iw}}(p^{\beta}) - \mathcal{Z}_{G, >n+1}(p^{\beta}) \right)$.
    \end{itemize}
\end{lemma}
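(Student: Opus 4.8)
The statement is a coherent-cohomology acyclicity result of the kind that appears throughout \cite{BoxerPilloni}, so the plan is to reduce to the standard fact that a coherent sheaf on an affinoid adic space has vanishing higher cohomology, and that this persists for the relevant Banach/ind-Banach sheaves here. First I would reduce to the case of $\mathscr{F}^{(r,k)} = \mathscr{M}^{(r,k)}_{G,\kappa}$ on $G$: the sheaves $\hat{\iota}_* \mathscr{M}^{(r,k)}_{H, \kappa}$ are pushforwards along the \emph{finite} morphism $\hat{\iota}$ (Proposition \ref{HlociisHasseProp} and Lemma \ref{LemmaSDintoPEL}), so $R^j\hat{\iota}_* = 0$ for $j \neq 0$, and affinoid subspaces of $\mathcal{X}_{G, \opn{Iw}}(p^{\beta})$ pull back to (finite unions of) affinoid subspaces of $\mathcal{X}_{H, \diamondsuit}(p^{\beta})$ which lie in $\mathcal{C}_H$ by the last sentence of Definition \ref{DefOfCGCH}; thus cohomology of $\hat{\iota}_*\mathscr{M}^{(r,k)}_{H,\kappa}$ over $U_J$ agrees with cohomology of $\mathscr{M}^{(r,k)}_{H,\kappa}$ over $\hat{\iota}^{-1}(U_J)$, and the same argument handles the locally analytic variants. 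Similarly, the locally analytic weight versions $\mathscr{M}^{(r,k),s\opn{-an}}_{G,\kappa^*}$ are obtained from $\mathscr{M}^{(r,k)}_G$ by completed tensor with a fixed Banach space $D^{s\opn{-an}}_{G,\kappa^*}$ followed by taking $M^G_{\opn{Iw}}(p^\beta)$-invariants (a left-exact operation commuting with the relevant limits), so it suffices to prove the vanishing for the ``plain'' sheaf $\mathscr{N}^{(r,k)}_G = \mathscr{O}_{\mathcal{P}_{G,\opn{dR},k}}$ — note $\mathscr{M}^{(r,k)}_G \subset \mathscr{N}^{(r,k)}_G$ is a direct summand as a sheaf (the kernel of the projector onto $\overline{\ide{u}}_G$-invariants, using reductivity arguments as in the branching-law discussion), so vanishing for $\mathscr{N}^{(r,k)}_G$ gives it for $\mathscr{M}^{(r,k)}_G$.

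Having reduced to $\mathscr{N}^{(r,k)}_G = \mathscr{O}_{\mathcal{P}_{G,\opn{dR},k}}$, I would invoke Proposition \ref{NOctorsorsReductionProp}: for $r$ sufficiently large relative to $k$, $\mathcal{P}_{G,\opn{dR},k} \to \mathcal{X}_{G, w_n}(p^{\beta})_r$ is an \'{e}tale $\overline{P}^{\square}_{G,k}$-torsor, where $\overline{P}^{\square}_{G,k} = \overline{\mathcal{P}}^1_{G,k} \cdot M^G_{\opn{Iw}}(p^{\beta})$ is an affinoid group. Over any affinoid $U = \opn{Spa}(A,A^+)$ the torsor $\mathcal{P}_{G,\opn{dR},k}|_U$ is itself an affinoid adic space (a torsor under an affinoid group over an affinoid base), so $R\Gamma(U, \mathscr{O}_{\mathcal{P}_{G,\opn{dR},k}}) = \Gamma(\mathcal{P}_{G,\opn{dR},k}|_U, \mathcal{O})$ sits in degree zero by Tate acyclicity for affinoids. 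Hence the whole point is to \emph{produce the cover} $\ide{U}$: I need a finite open affinoid cover $\{U_i\}$ of $\mathcal{X}_{G,\opn{Iw}}(p^{\beta})$ with each $U_i \in \mathcal{C}_G$ and with all finite intersections $U_J$ (and their truncations $V_J = U_J \cap (\mathcal{X}_{G,\opn{Iw}}(p^\beta) - \mathcal{Z}_{G,>n+1}(p^\beta))$) again affinoid. For the $U_J$ this is immediate from the definition of $\mathcal{C}_G$ together with the standard fact that $\mathcal{X}_{G, \opn{Iw}}(p^\beta)$ admits a cover by generic fibres of affine opens of the formal model $\ide{X}_{G, \opn{Iw}}(p^\beta)$ (the remark after Definition \ref{DefOfCGCH}) — one refines so that each chart is in $\mathcal{C}_G$, which is possible since the ideals $\delta^+_{G,i}$ and the torsors $P^{\opn{an}}_{G,\opn{dR}}, M^{\opn{an}}_{G,\opn{dR}}$ trivialise on small enough formal affine opens, and finiteness follows from quasi-compactness of the proper space $\mathcal{X}_{G,\opn{Iw}}(p^\beta)$. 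The condition ``$\opn{Spa}(A,A^+) \cap \mathcal{X}_{G,w_n}(p^\beta)$ is the generic fibre of an affine open of $\ide{X}_{G,w_n}(p^\beta)$'' in $\mathcal{C}_G$ is arranged by Lemma \ref{XGwnisOpenImmersionLemma}, which identifies $\ide{X}_{G,w_n}(p^\beta)$ with the locus $\{\hat{\delta}_{G,n+1} = \mathcal{O}\}$ in $\ide{X}_{G,\opn{Iw}}(p^\beta)$.

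The main obstacle is the \textbf{third bullet}: showing $R^j\Gamma(V_J, \mathscr{F}^{(r,k)}) = 0$ for $j \neq 0$, where $V_J = U_J \cap (\mathcal{X}_{G,\opn{Iw}}(p^\beta) - \mathcal{Z}_{G,>n+1}(p^\beta))$ is obtained by removing a \emph{closed} subset from an affinoid. Such a complement is in general only \emph{partially proper} / a rising union of affinoids, not affinoid, so Tate acyclicity does not apply directly. The resolution is to recognize, via Remark \ref{RemarkOCFormalIntModelsG} and the definition of $\mathcal{Z}_{G,>n+1}(p^\beta)$ as the closure of $\{|\delta^+_{G,>n+1}| = 1\}$, that the complement $\mathcal{X}_{G,\opn{Iw}}(p^\beta) - \mathcal{Z}_{G,>n+1}(p^\beta)$ is a finite union of \emph{quasi-Stein} open subspaces (cf. property (2) in Definition \ref{SystemOfSupportConditionsDef}, which is imposed precisely so that such complements have this shape): intersecting with the affinoid $U_J \in \mathcal{C}_G$ keeps $V_J$ a finite union of quasi-Stein opens, and higher cohomology of a coherent (or Banach, or ind-Banach) sheaf on a quasi-Stein space vanishes by Kiehl's theorem / the theory of \cite{BoxerPilloni}. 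Concretely, I would write $\mathcal{X}_{G,\opn{Iw}}(p^\beta) - \mathcal{Z}_{G,>n+1}(p^\beta)$ as an increasing union of rational subdomains where $|\delta^+_{G,>n+1}| \geq |p|^{1/m}$ — after possibly enlarging $r$ so that these rational domains still lie inside the strict neighbourhood where $\mathcal{P}_{G,\opn{dR},k}$ is an \'{e}tale $\overline{P}^{\square}_{G,k}$-torsor — so $V_J$ is the rising union of affinoids $V_{J,m}$ with each $\mathcal{P}_{G,\opn{dR},k}|_{V_{J,m}}$ affinoid; then $R\Gamma(V_J, \mathscr{O}_{\mathcal{P}_{G,\opn{dR},k}}) = R\varprojlim_m \Gamma(V_{J,m}, \cdots)$, and the transition maps being surjective with dense image (Banach, nuclear reductions) kills $R^1\varprojlim$. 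Finally, all of the above is uniform in $(r,k)$ once $r$ is taken large enough as a function of $k$, since the cover $\ide{U}$ and the quasi-Stein exhaustions do not depend on the weight $\kappa$, $\sigma$ or the analyticity radius $s$; this gives the uniformity in the statement.
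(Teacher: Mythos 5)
Your proposal captures the right overall shape — reduce to a Banach sheaf acyclicity statement and use quasi-Stein vanishing — but there are two genuine gaps that separate it from the paper's proof, one of which is a flawed step.

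First, the reduction from $\mathscr{M}^{(r,k)}_G$ to $\mathscr{N}^{(r,k)}_G = \mathscr{O}_{\mathcal{P}_{G,\opn{dR},k}}$ via ``$\mathscr{M}^{(r,k)}_G$ is a direct summand of $\mathscr{N}^{(r,k)}_G$, as the image of the projector onto $\overline{\ide{u}}_G$-invariants'' is not correct: $\overline{\ide{u}}_G$ is the Lie algebra of a \emph{unipotent} group, not a reductive one, and its invariants in a module carry no canonical complement (the reductivity arguments in \S\ref{BranchingLawsPreliminarySection} concern the Levi $M_{\mbf{G}}$ and the compact group $M^G_{\opn{Iw}}(p^{\beta})$, for which averaging is available, not $\overline{\ide{u}}_G$). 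Without a retraction, vanishing for $\mathscr{N}^{(r,k)}_G$ implies nothing about the subsheaf $\mathscr{M}^{(r,k)}_G$. The detour through $\mathscr{N}^{(r,k)}_G$ is also unnecessary: the paper works directly with $\mathscr{M}^{(r,k)}_G = \mathscr{O}_{{}^\mu\mathcal{M}_{G,\opn{HT},k}}$, the pushforward of the structure sheaf of the \emph{Levi} torsor.

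Second, and more fundamentally, the argument does not establish the local structure of the Banach sheaves that the acyclicity theorems actually require. ``Tate acyclicity'' applies to \emph{coherent} sheaves on an affinoid; the sheaves $\mathscr{M}^{(r,k)}_{G,\kappa}$, and especially the distribution-twisted sheaves $\mathscr{M}^{(r,k),s\opn{-an}}_{G,\kappa^*} = (\mathscr{M}^{(r,k)}_G \hatot D^{s\opn{-an}}_{G,\kappa^*})^{M^G_{\opn{Iw}}(p^\beta)}$, are infinite-rank Banach sheaves, and the relevant statement (\cite[Prop.\ 2.5.17]{BoxerPilloni}, which also handles the quasi-Stein $V_J$ in the third bullet) requires the sheaf to be a \emph{projective Banach sheaf}, i.e.\ locally of the form $M \hatot_{\mathcal{O}(U')} \mathcal{O}_{U'}$ with $M$ a projective Banach module. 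Your ``total torsor is affinoid, hence $R\Gamma$ in degree $0$'' observation handles only $\mathscr{N}^{(r,k)}_G$ and $\mathscr{M}^{(r,k)}_G$ on affinoid opens and does not propagate to the $D^{s\opn{-an}}$-twists, nor to the quasi-Stein $V_J$ where one genuinely needs Banach-sheaf control of the $R^1\varprojlim$, not just Kiehl for coherent sheaves. The paper's proof supplies exactly this missing structure: it uses the moduli description of ${}^\mu\mathcal{M}_{G,\opn{HT},k+1}$ restricted to the ordinary locus (via the Igusa tower of \S\ref{RedOfStrOverOrdLocSSec}) to produce a section of the torsor over $U_i \times_{\mathcal{X}_{G,\opn{Iw}}(p^\beta)} \mathcal{IG}_{\opn{ord},k+1}$, then overconverges it to a section over a finite \'etale \emph{Galois} cover $U_i'' \to U_i'$ with Galois group $\mathscr{G}_{k+1}$, and invokes \cite[Prop.\ 6.3.3]{BoxerPilloni} to conclude that $\mathscr{F}^{(r'',k)}|_{U_i'}$ is a projective Banach $\mathcal{O}_{U_i'}$-module. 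That explicit Galois cover, and the overconvergence step used to construct it (which is also where ``after possibly increasing $r$'' in the statement comes from), is the step your proposal is missing.
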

\begin{proof}
    Let $\mathscr{F}^{(r, k)} \in \{ \mathscr{M}^{(r, k)}_{G, \kappa}, \mathscr{M}^{(r, k), s\opn{-an}}_{G, \kappa^*} \}$. Let ${^\mu \mathcal{M}_{G, \opn{HT}, k}} \to \mathcal{X}_{G, w_n}(p^{\beta})_r$ and ${^\mu \mathcal{M}_{G, \opn{HT}, k+1}} \to \mathcal{X}_{G, w_n}(p^{\beta})_{r'}$ be the torsors from Lemma \ref{OctorsorsReductionLemma}, for some $r' \geq r$. Let $\mathcal{IG}_{r', k+1} \to \mathcal{X}_{G, w_n}(p^{\beta})_{r'}$ denote the pushout of ${^\mu \mathcal{M}_{G, \opn{HT}, k+1}}$ along the map $\mathcal{M}^{\square}_{G, k+1} \twoheadrightarrow \mathcal{M}^{\square}_{G, k+1}/\mathcal{M}^1_{G, k+1} =: \mathscr{G}_{k+1}$. Let $\mathcal{IG}_{\opn{ord}, k+1} = \mathcal{IG}_{r', k+1} \times_{\mathcal{X}_{G, w_n}(p^{\beta})_{r'}} \mathcal{X}_{G, w_n}(p^{\beta})$.
    
    Let $\ide{U} = \{ U_i \}_{i \in I}$ be any finite open affinoid cover of $\mathcal{X}_{G, \opn{Iw}}(p^{\beta})$  as in Lemma \ref{Lem:SpecialCoverFromCGH}, which gives rise to sections $s_i \colon U_i \to P_{G,\opn{dR}}^{\opn{an}}$. In particular, the $+$-versions of $\mathcal{H}_A$ and its Hodge filtration trivialise over $U_i$. Then, since ${^\mu \mathcal{M}_{G, \opn{HT}, k+1}} \times_{\mathcal{X}_{G, w_n}(p^{\beta})_{r'}} \mathcal{X}_{G, w_n}(p^{\beta})$ has an explicit moduli description (locally it is the pushout to the Levi of the torsors $\mathcal{U}_{\opn{HT}, k+1}$ in \S \ref{RedOfStrOverOrdLocSSec}), one can find sections 
    \[
    t_i \colon U_i \times_{\mathcal{X}_{G, \opn{Iw}}(p^{\beta})} \mathcal{IG}_{\opn{ord}, k+1} \to {^\mu \mathcal{M}_{G, \opn{HT}, k+1}} \times_{\mathcal{X}_{G, w_n}(p^{\beta})_{r'}} \mathcal{X}_{G, w_n}(p^{\beta}) .
    \]
    Since ${^\mu \mathcal{M}_{G, \opn{HT}, k}}$ is a strict neighbourhood of ${^\mu \mathcal{M}_{G, \opn{HT}, k+1}} \times_{\mathcal{X}_{G, w_n}(p^{\beta})_{r'}} \mathcal{X}_{G, w_n}(p^{\beta})$ inside $M_{G, \opn{dR}}^{\opn{an}}$ and the cover $\ide{U}$ is finite, we can find a sufficiently large integer $r'' \geq r'$ such that $t_i$ extend to sections 
    \[
    t_i \colon U_i'' \to {^\mu \mathcal{M}_{G, \opn{HT}, k}} \times_{\mathcal{X}_{G, w_n}(p^{\beta})_r} \mathcal{X}_{G, w_n}(p^{\beta})_{r''}
    \]
    where $U_i'' \defeq U_i \times_{\mathcal{X}_{G, \opn{Iw}}(p^{\beta})} \mathcal{IG}_{r'', k+1}$. Set $U_i' = U_i \times_{\mathcal{X}_{G, \opn{Iw}}(p^{\beta})} \mathcal{X}_{G, w_n}(p^{\beta})_{r''}$. Then $U_i'' \to U_i'$ is a finite \'{e}tale Galois cover with Galois group $\mathscr{G}_{k+1}$. By the proof of \cite[Proposition 6.3.3]{BoxerPilloni}, this implies that $\mathscr{F}^{(r'',k)}|_{U_i'} = \mathscr{F}^{(r'',k)}(U_i') \hatot_{\mathcal{O}(U_i')} \mathcal{O}_{U_i'}$ with $\mathscr{F}^{(r'',k)}(U_i')$ a projective Banach $\mathcal{O}(U_i')$-module. In particular, since $U_i$ is finite-type, Proposition 2.5.17 in \emph{op.cit.} implies that $R^j \Gamma( V, \mathscr{F}^{(r'',k)} ) = 0$ for all $j \neq 0$ for any quasi-Stein open subspace $V \subset U_i'$. The result now follows for $\mathscr{F}^{(r'', k)}$ (note that both $V_J$ and $U_J$ are quasi-Stein and the morphism $\mathcal{X}_{G, w_n}(p^{\beta})_{r''} \to \mathcal{X}_{G, \opn{Iw}}(p^{\beta})$ is affinoid).

    The proof for $\mathscr{F}^{(r, k)} \in \{ \hat{\iota}_* \mathscr{M}^{(r, k)}_{H, \kappa}, \hat{\iota}_*\mathscr{M}^{(r, k), \opn{an}}_{H, \sigma} \}$ follows a similar argument on the space $\mathcal{X}_{H, \diamondsuit}(p^{\beta})$ by using the fact that $\hat{\iota}_*$ is an exact functor (as the morphism is finite)  and that $\hat{\iota}^{-1}\ide{U}$ is a cover consisting of elements in $\mathcal{C}_H$.
\end{proof}

\subsubsection{Overconvergent cohomologies}

We now define the overconvergent cohomologies that will appear in this article. To ease notation, set $\mathcal{Z}_{G, >n+1}(p^{\beta})^c = \mathcal{X}_{G, \opn{Iw}}(p^{\beta}) - \mathcal{Z}_{G, >n+1}(p^{\beta})$ and $\mathcal{Z}_{H, \opn{id}}(p^{\beta})^c = \mathcal{X}_{H, \diamondsuit}(p^{\beta}) - \mathcal{Z}_{H, \opn{id}}(p^{\beta})$.

\begin{definition} \label{OverconvergentCohomologiesDefinition}
With notation as in Definition \ref{DefinitionOfNOCwithweight}, we define:
\begin{align}
        R\Gamma_{w_n}^G(\kappa; \beta)^{(-, \dagger)} &= \varinjlim_{r,k} R\Gamma_{\mathcal{Z}_{G, >n+1}(p^{\beta})}( \mathcal{X}_{G, \opn{Iw}}(p^{\beta}), \mathscr{M}^{(r,k)}_{G, \kappa} ) \nonumber \\
        R\Gamma_{w_n, s\opn{-an}}^G(\kappa^*; \beta)^{(-, \dagger)} &= \varinjlim_{r,k} R\Gamma_{\mathcal{Z}_{G, >n+1}(p^{\beta})}( \mathcal{X}_{G, \opn{Iw}}(p^{\beta}), \mathscr{M}^{(r,k), s\opn{-an}}_{G, \kappa^*} ) \nonumber \\
        R\Gamma_{\opn{id}}^H(\kappa; \beta)^{(-, \dagger)} &= \varinjlim_{r,k} R\Gamma_{\mathcal{Z}_{H, \opn{id}}(p^{\beta})}(\mathcal{X}_{H, \diamondsuit}(p^{\beta}), \mathscr{M}^{(r,k)}_{H, \kappa}) = R\Gamma_{\mathcal{Z}_{H, \opn{id}}(p^{\beta})}(\mathcal{X}_{H, \diamondsuit}(p^{\beta}), \mathscr{M}_{H, \kappa}) \label{ExcisionEqnlabel} \\
        R\Gamma_{\opn{id}, \opn{an}}^H(\sigma; \beta)^{(-, \dagger)} &= \varinjlim_{r, k} R\Gamma_{\mathcal{Z}_{H, \opn{id}}(p^{\beta})}(\mathcal{X}_{H, \diamondsuit}(p^{\beta}), \mathscr{M}^{(r,k), \opn{an}}_{H, \sigma}) \nonumber
    \end{align}
where the last equality in (\ref{ExcisionEqnlabel}) follows from excision. The last two cohomology complexes should be thought of as the compactly supported cohomology of the dagger space associated with $\mathcal{X}_{H, \opn{id}}(p^{\beta})$, whereas the first two cohomology complexes should be thought of as the cohomology of the dagger space associated with $\mathcal{X}_{G, w_n}(p^{\beta})$ with \emph{partial} compact support.

We also set $R\Gamma^H_{\opn{id}, \opn{an}}( \mathcal{S}_{H, \diamondsuit}(p^{\beta}), \sigma)^{(-. \dagger)}$ to be the cohomology complex defined in \cite[\S 5.4]{UFJ} -- in the notation of \emph{loc.cit.}, but replacing $t$ with $\beta$ and $\sigma$ with $\sigma_n^{[\beta]}(\underline{\lambda})$, this is given by 
\[
\varprojlim_m R \Gamma_{\mathcal{Z}^H_m(p^{\beta})}( \mathcal{U}_k^H(p^{\beta}), [\sigma] ).
\]
This is simply the analogue of $R\Gamma_{\opn{id}, \opn{an}}^H(\sigma; \beta)^{(-, \dagger)}$ defined using the Shimura--Deligne variety $\mathcal{S}_{H, \diamondsuit}(p^{\beta})$ instead of $\mathcal{X}_{H, \diamondsuit}(p^{\beta})$, and we have a natural map
\[
R\Gamma_{\opn{id}, \opn{an}}^H(\sigma; \beta)^{(-, \dagger)} \to R\Gamma^H_{\opn{id}, \opn{an}}( \mathcal{S}_{H, \diamondsuit}(p^{\beta}), \sigma)^{(-. \dagger)}
\]
induced from pullback along the open and closed embedding in Lemma \ref{LemmaSDintoPEL}.
\end{definition}

\begin{remark} \label{CanBeComputeUsingCechRem}
    Let $\mathscr{F}^{(r, k)} \in \{ \mathscr{M}^{(r, k)}_{G, \kappa}, \hat{\iota}_* \mathscr{M}^{(r, k)}_{H, \kappa}, \mathscr{M}^{(r, k), s\opn{-an}}_{G, \kappa^*}, \hat{\iota}_*\mathscr{M}^{(r, k), \opn{an}}_{H, \sigma} \}$ and let $\ide{U} = (U_i)_{i \in I}$ be an open cover of $\mathcal{X}_{G, \opn{Iw}}(p^{\beta})$ as in Lemma \ref{AcyclicityCoverLemma}. Let $\ide{V} = (V_i)_{i \in I}$ be the induced cover of $\mathcal{Z}_{G, >n+1}(p^{\beta})^c$ (i.e. $V_i = U_i \cap \mathcal{Z}_{G, >n+1}(p^{\beta})^c$). Let $\opn{Cech}(\mathscr{F}^{(r, k)};\ide{U})$ and $\opn{Cech}(\mathscr{F}^{(r,k)}; \ide{V})$ denote the \v{C}ech complexes representing $R\Gamma(\mathcal{X}_{G, \opn{Iw}}(p^{\beta}), \mathscr{F}^{(r, k)} )$ and $R\Gamma(\mathcal{Z}_{G, >n+1}(p^{\beta})^c, \mathscr{F}^{(r,k)})$ respectively, which makes sense by Lemma \ref{AcyclicityCoverLemma}. Then each of the cohomology complexes in Definition \ref{OverconvergentCohomologiesDefinition} can be computed as
    \[
    \opn{Cone}\left( \varinjlim_{r, k}\opn{Cech}(\mathscr{F}^{(r, k)};\ide{U}) \to \varinjlim_{r, k} \opn{Cech}(\mathscr{F}^{(r,k)}; \ide{V}) \right)[-1] .
    \]
    Indeed, the claim for $G$ is clear, using the exact triangle for cohomology with support in a closed subspace and the fact that passing to the inductive limit is exact. For $H$, the proof of the claim is similar, however one must use the additional fact that
    \[
    R\Gamma(\mathcal{X}_{G, \opn{Iw}}(p^{\beta}), \hat{\iota}_*\mathscr{G} ) = R\Gamma(\mathcal{X}_{H, \diamondsuit}(p^{\beta}), \mathscr{G}), \quad R\Gamma(\mathcal{Z}_{G, > n+1}(p^{\beta})^c, \hat{\iota}_*\mathscr{G} ) = R\Gamma(\mathcal{Z}_{H, \opn{id}}(p^{\beta})^c, \mathscr{G})
    \]
    for a sheaf $\mathscr{G}$. The latter follows from the fact that $\hat{\iota}^{-1}(\mathcal{Z}_{G, > n+1}(p^{\beta})^c) = \mathcal{Z}_{H, \opn{id}}(p^{\beta})^c$ by Proposition \ref{HlociisHasseProp}(1).
\end{remark}

\begin{remark}
    Let $k \geq 1$ and $r' \gg r_k$ (where $r=r_k$ is as in Lemma \ref{OctorsorsReductionLemma}). Let $\mathcal{Z}_{G, > n+1}(p^{\beta})_{r'}$ denote the closure of the locus where $|\delta^+_{G, >n+1}|^{p^{r'+1}} \geq |p|$. Then we can write
    \[
    R\Gamma_{w_n}^G(\kappa; \beta)^{(-, \dagger)} = \varinjlim_k \varprojlim_{r'} R\Gamma_{\mathcal{Z}_{G, >n+1}(p^{\beta})_{r'} \cap \mathcal{X}_{G, w_n}(p^{\beta})_{r_k}}( \mathcal{X}_{G, w_n}(p^{\beta})_{r_k}, \mathscr{M}_{G, \kappa} )
    \]
    where the inverse limit is (a priori) derived. In fact, by the Mittag--Leffler property for the \v{C}ech complexes representing $R\Gamma_{\mathcal{Z}_{G, >n+1}(p^{\beta})_{r'} \cap \mathcal{X}_{G, w_n}(p^{\beta})_{r_k}}( \mathcal{X}_{G, w_n}(p^{\beta})_{r_k}, \mathscr{M}_{G, \kappa} )$ as $r'$ varies, we see that 
    \[
    \opn{H}^i_{w_n}(\kappa; \beta)^{(-, \dagger)} = \varinjlim_k \varprojlim_{r'} \opn{H}^i_{\mathcal{Z}_{G, >n+1}(p^{\beta})_{r'} \cap \mathcal{X}_{G, w_n}(p^{\beta})_{r_k}}( \mathcal{X}_{G, w_n}(p^{\beta})_{r_k}, \mathscr{M}_{G, \kappa} )
    \]
    where the left-hand side denotes the cohomology of $R\Gamma_{w_n}^G(\kappa; \beta)^{(-, \dagger)}$ (i.e. the inverse limit has vanishing higher derived functors on these complexes). This expresses the overconvergent cohomologies $\opn{H}^i_{w_n}(\kappa; \beta)^{(-, \dagger)}$ as the colimit of a limit of overconvergent cohomology groups with fixed radii of overconvergence, which is more in spirit with the cohomology groups considered in \cite{BoxerPilloni}. Similar descriptions can be obtained for the other complexes in Definition \ref{OverconvergentCohomologiesDefinition}.
\end{remark}

\subsubsection{The Gauss--Manin connection} \label{TheGMConnectionOnNdaggerSubSec}

Recall from Lemma \ref{LemmaConcreteDescriptionNablai} that we have commuting derivations
\[
\nabla_i \colon (\pi_G)_* \mathcal{O}_{P_{G, \opn{dR}}} \to (\pi_G)_* \mathcal{O}_{P_{G, \opn{dR}}} .
\]
Since the map $\pi_G$ is affine, these induce derivations $\nabla_i \colon \mathcal{O}_{P_{G, \opn{dR}}} \to \mathcal{O}_{P_{G, \opn{dR}}}$, and hence their analytifications immediately induce commuting continuous derivations $\nabla_i \colon \mathscr{N}^{\dagger}_G \to \mathscr{N}^{\dagger}_G$. We have similar operators on $\mathscr{N}^{\dagger}_H$. We summarise the properties of these derivations in the following lemma:

\begin{lemma}
    With notation as above:
    \begin{enumerate}
        \item We have actions of $C^{\opn{pol}}(\mbb{Q}_p^{\oplus 2n-1}, L)$ and $C^{\opn{pol}}(\mbb{Q}_p^{\oplus n-1}, L)$ on $\mathscr{N}^{\dagger}_G$ and $\mathscr{N}^{\dagger}_{H}$ respectively, such that action of 
        \[
        (a_1, \dots, a_{2n-1}) \mapsto a_i \in C^{\opn{pol}}(\mbb{Q}_p^{\oplus 2n-1}, L) \quad \quad \text{ (resp. } (a_1, \dots, a_{n-1}) \mapsto a_i \in C^{\opn{pol}}(\mbb{Q}_p^{\oplus n-1}, L) \text{ )}
        \]
        is given by $\nabla_i$ on $\mathscr{N}^{\dagger}_G$ (resp. $\mathscr{N}^{\dagger}_H$).
        \item The action maps $C^{\opn{pol}}(\mbb{Q}_p^{\oplus 2n-1}, L) \times \mathscr{N}^{\dagger}_G \to \mathscr{N}^{\dagger}_G$ and $C^{\opn{pol}}(\mbb{Q}_p^{\oplus n-1}, L) \times \mathscr{N}^{\dagger}_H \to \mathscr{N}^{\dagger}_H$ are $M^G_{\opn{Iw}}(p^{\beta})$ and $M^H_{\diamondsuit}(p^{\beta})$ equivariant respectively, where the action on polynomial functions is given in \S \ref{NearlyHoloFormsSubSec}.
    \end{enumerate}
\end{lemma}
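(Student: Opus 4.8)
The plan is to reduce both statements to the corresponding facts about the sheaf $(\pi_G)_*\mathcal{O}_{P_{G,\opn{dR}}}$ on the algebraic Shimura--Deligne variety established in \S\ref{DmodulesOnFLsection}, and then transport them along analytification and the passage to an ind-sheaf. Concretely, recall that $\mathscr{N}_G^{\dagger}$ is by construction the ind-system $(\mathscr{O}_U)_U$ where $U$ ranges over quasi-compact open neighbourhoods of the closure of the Igusa tower inside $P_{G,\opn{dR}}^{\opn{an}}$, and that the $\nabla_i$ are the analytifications of the algebraic derivations of Lemma~\ref{LemmaConcreteDescriptionNablai}, which are precisely the images of the standard basis vectors of $\ide{u}_G = \mbb{G}_a^{\oplus 2n-1}$ under the Lie algebra action $\star_{\ide{u}} \colon \ide{u}_G \to \opn{Der}_k([\mathcal{V}])$ of \S\ref{PassageToShVars} (with $[\mathcal{V}] = (\pi_G)_*\mathcal{O}_{P_{G,\opn{dR}}} = \mathscr{N}_G$).

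For part (1): on the algebraic side, the remark after (\ref{ActionOnCpolEqn}) identifies $C^{\opn{pol}}(k^{\oplus 2n-1},k)$ with $\mathcal{U}(\ide{u}_G)$ (using that $\ide{u}_G$ is abelian, so the enveloping algebra is the polynomial ring), and since the $\nabla_i = E_{1,n+i}\star_{\ide{u}}-$ pairwise commute by the last sentence of the proof of Lemma~\ref{HorzActionOnFL}, the Lie algebra map $\ide{u}_G \to \opn{Der}_k(\mathscr{N}_G)$ extends uniquely to an algebra homomorphism $\mathcal{U}(\ide{u}_G) = C^{\opn{pol}}(\mbb{Q}_p^{\oplus 2n-1},L) \to \opn{End}_L(\mathscr{N}_G)$ landing in derivations, under which the $i$-th coordinate function acts by $\nabla_i$. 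Analytifying, and then pushing forward along each $U \to \mathcal{X}_{G,\opn{Iw}}(p^{\beta})$, this produces a compatible family of actions on the terms $\mathscr{O}_U$ of the ind-system (compatibility with the restriction maps being automatic, since the $\nabla_i$ are restrictions of a single operator on $P_{G,\opn{dR}}^{\opn{an}}$), hence an action on the ind-sheaf $\mathscr{N}_G^{\dagger}$; one does the same for $H$ using $\ide{u}_H = \mbb{G}_a^{\oplus n-1} \subset \ide{u}_G$.

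For part (2): the $M_{\mbf{G}}$-equivariance of the action map $C^{\opn{pol}}(k^{\oplus 2n-1},k)\otimes\mathscr{N}_G \to \mathscr{N}_G$ is exactly the statement verified after (\ref{ActionMapForScr}) in \S\ref{NearlyHoloFormsSubSec}, which rests on the fact that the $\ide{u}_G$-action $\star_{\ide{u}}$ is $M_{\mbf{G}}$-equivariant (property (1) in the definition of $\opn{Rep}_k(\ide{g},\overline{P})$ applied to the generators of $\ide{u}_G$, transported through the constructions of \S\ref{PassageToShVars}). The subtlety here is that $M_{\mbf{G}}$ does not act on the ind-sheaf $\mathscr{N}_G^{\dagger}$; rather $M^G_{\opn{Iw}}(p^{\beta})$ does, via the reduction of structure $\mathcal{P}_{G,\opn{dR},k}$ of Proposition~\ref{NOctorsorsReductionProp}. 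So I would argue that the $M^G_{\opn{Iw}}(p^{\beta})$-action on $\mathscr{N}_G^{\dagger}$ (coming from right-translation on the $\overline{P}^{\square}_{G,k}$-torsors) is compatible with the $M_{\mbf{G}}(\mbb{Z}_p)$-action obtained by differentiating/integrating the algebraic torsor structure on $P_{G,\opn{dR}}$: both are induced from the same $G$-equivariant picture on the flag variety, and the $M^G_{\opn{Iw}}(p^{\beta})$-equivariance of $\star_{\ide{u}}$ therefore follows from the $M_{\mbf{G}}$-equivariance of the algebraic action by restriction. The analogous statement for $H$ is identical, using $M^H_{\diamondsuit}(p^{\beta})$ and $\ide{u}_H$.

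The main obstacle is the bookkeeping of the second point: one must check carefully that the higher-Coleman-theoretic $M^G_{\opn{Iw}}(p^{\beta})$-action on $\mathscr{N}_G^{\dagger}$ (defined via the étale $\overline{P}^{\square}_{G,k}$-torsors and Hodge--Tate period maps in Proposition~\ref{NOctorsorsReductionProp}, twisted by the Hodge cocharacter) really does restrict, on the locus where the torsor admits an algebraic description, to the action got by differentiating the de Rham torsor $P_{G,\opn{dR}}$ used in Lemma~\ref{LemmaConcreteDescriptionNablai} — i.e. that the ``nearly holomorphic'' and ``nearly overconvergent'' torsor structures are genuinely the same group action restricted from $M_{\mbf{G}}$ to the compact subgroup. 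Once this identification is in place, both parts are formal consequences of the purely algebraic results of \S\ref{DmodulesOnFLsection}--\S\ref{NearlyHoloFormsSubSec}, so I would keep the proof short and refer back to those sections for the substantive content.
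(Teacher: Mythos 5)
Your proposal follows essentially the same route as the paper: part (1) is the formal extension of the commuting derivations $\nabla_i$ to an action of $\mathcal{U}(\ide{u}_G)\cong C^{\opn{pol}}$, analytified and pushed to the ind-system, and part (2) is deduced from the $M_{\mbf{G}}$-equivariance of the algebraic action map. The concern you flag about matching the two torsor structures is precisely what the paper dispatches in one line — namely, $\mathscr{O}_{P_{\bullet,\opn{dR}}^{\opn{an}}}$ is dense in each term of $\mathscr{N}^{\dagger}_{\bullet}$ (the reductions $\mathcal{P}_{G,\opn{dR},k}$ of Proposition~\ref{NOctorsorsReductionProp} are open inside $P_{G,\opn{dR}}^{\opn{an}}$ with compatible $M^G_{\opn{Iw}}(p^{\beta})$-action), so the algebraic $M_{\mbf{G}}$-equivariance restricts and extends by continuity; no separate check of the Hodge–Tate versus de Rham actions is needed.
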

\begin{proof}
    Part (1) is immediate, and part (2) follows from the fact that $\mathscr{O}_{P_{\bullet, \opn{dR}}^{\opn{an}}}$ is dense in $\mathscr{N}^{\dagger}_{\bullet}$.
\end{proof}

\subsubsection{Functoriality} \label{FunctorialityOfGMConnectionOnNOCSubSec}

We also note that we have the following functoriality results between nearly overconvergent forms for $G$ and $H$. We fix a choice of $p^{\beta}$-root of unity in $L$.

\begin{lemma}
     We have the following commutative cube:
    \[
\begin{tikzcd}
                                                                       &                                                    & {P_{H,\opn{dR}}^{\opn{an}}} \arrow[r, "\hat{\iota}"] \arrow[d, dashed]  & {P_{G, \opn{dR}}^{\opn{an}}} \arrow[d]      \\
{\mathcal{IG}_{H, \opn{id}}(p^{\beta})} \arrow[rru] \arrow[d] \arrow[r, "\hat{\iota}"] & {\mathcal{IG}_{G, w_n}(p^{\beta})} \arrow[rru] \arrow[d] & {\mathcal{X}_{H, \diamondsuit}(p^{\beta})} \arrow[r] & {\mathcal{X}_{G, \opn{Iw}}(p^{\beta})} \\
{\mathcal{X}_{H, \opn{id}}(p^{\beta})} \arrow[r] \arrow[rru, dashed]         & {\mathcal{X}_{G, w_n}(p^{\beta})} \arrow[rru]            &                                                &                                 
\end{tikzcd}
    \]
    with the bottom square Cartesian. Here $\hat{\iota} \colon \mathcal{IG}_{H, \opn{id}}(p^{\beta}) \to \mathcal{IG}_{G, w_n}(p^{\beta})$ is the morphism defined at the end of \S \ref{QuotientsOfIgVarSubSec}, and $\hat{\iota} \colon P_{H, \opn{dR}}^{\opn{an}} \to P_{G, \opn{dR}}^{\opn{an}}$ is the composition
    \[
    P_{H, \opn{dR}}^{\opn{an}} \to P_{G, \opn{dR}}^{\opn{an}} \xrightarrow{u} P_{G, \opn{dR}}^{\opn{an}}
    \]
    where the first map is the natural one and the second map is the action of $u \in \overline{P}_G^{\opn{an}}$ through the torsor structure. The front and back squares provide reduction of structures, i.e. 
    \[
    \hat{\iota}^*\mathcal{IG}_{G, w_n}(p^{\beta}) = \mathcal{IG}_{H, \opn{id}}(p^{\beta}) \times^{[M^H_{\diamondsuit}(p^{\beta}), u]} M^G_{\opn{Iw}}(p^{\beta}), \quad \hat{\iota}^* P_{G, \opn{dR}}^{\opn{an}} = P_{H, \opn{dR}}^{\opn{an}} \times^{[\overline{P}_H^{\opn{an}}, u]} \overline{P}_{G}^{\opn{an}}
    \]
    using the inclusions $u^{-1}M^H_{\diamondsuit}(p^{\beta}) u \subset M^G_{\opn{Iw}}(p^{\beta})$ and $u^{-1} \overline{P}_H^{\opn{an}} u \subset \overline{P}_{G}^{\opn{an}}$. We have a similar cube replacing $P_{\bullet, \opn{dR}}^{\opn{an}}$ with $M_{\bullet, \opn{dR}}^{\opn{an}}$. 
\end{lemma}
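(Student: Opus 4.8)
The plan is to construct the commutative cube by carefully tracking how the various moduli descriptions and torsor structures interact, following the pattern already established in \cite{UFJ} for the Shimura--Deligne varieties. First I would recall the bottom square: this is essentially the Cartesian diagram from the discussion after Definition \ref{DefinitionOfDeeperLevelAtpVarieties} (the top square there is Cartesian by \cite[Lemma 2.5.3]{UFJ}), transported to the adic generic fibres via the constructions in \S\ref{IntegralModelsandOCNbhdsSec}, where the identification $\mathcal{X}_{H, \opn{id}}(p^{\beta}) = \mathcal{X}_{H, \diamondsuit}(p^{\beta}) \times_{\mathcal{X}_{G, \opn{Iw}}(p^{\beta})} \mathcal{X}_{G, w_n}(p^{\beta})$ is exactly Proposition \ref{HlociisHasseProp}(2). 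The horizontal maps $\mathcal{IG}_{H, \opn{id}}(p^{\beta}) \to \mathcal{IG}_{G, w_n}(p^{\beta})$ and $\mathcal{X}_{H, \opn{id}}(p^{\beta}) \to \mathcal{X}_{G, w_n}(p^{\beta})$ are the ones defined at the end of \S\ref{QuotientsOfIgVarSubSec} (using the chosen $p^{\beta}$-root of unity), and the vertical maps $\mathcal{IG}_{\bullet} \to \mathcal{X}_{\bullet}$ are the structural projections. Commutativity of the left and right faces of the cube is immediate from the definitions of these maps as right-translation by $\gamma$ (resp. $\hat\gamma = \gamma w_n$); indeed this is precisely the content of the commutative square in \S\ref{QuotientsOfIgVarSubSec} relating $\hat\iota$ on Igusa varieties with the map on the $\mathcal{X}_{\bullet, \bullet}$.

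Next I would address the top face, which is the only part involving genuinely new content, namely the map $\hat\iota \colon P_{H, \opn{dR}}^{\opn{an}} \to P_{G, \opn{dR}}^{\opn{an}}$ defined as the composition of the natural inclusion (Lemma \ref{ClassicalReductionOfStructuresOfPdRLemma}, analytified) with the action of $u \in \overline{P}_G^{\opn{an}}$ through the torsor structure. The key point is that the two diagonal arrows $\mathcal{IG}_{H, \opn{id}}(p^{\beta}) \to P_{H, \opn{dR}}^{\opn{an}} \to P_{G, \opn{dR}}^{\opn{an}}$ and $\mathcal{IG}_{H, \opn{id}}(p^{\beta}) \xrightarrow{\hat\iota} \mathcal{IG}_{G, w_n}(p^{\beta}) \to P_{G, \opn{dR}}^{\opn{an}}$ agree. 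Here the map $\mathcal{IG}_{G, w_n}(p^{\beta}) \to P_{G, \opn{dR}}^{\opn{an}}$ is the one from Lemma \ref{IGPdrComparisonsLemma} (built from the dlog maps on graded pieces of the canonical filtration together with the unit root splitting), and similarly for $H$. The discrepancy between the ``naive'' inclusion $P_{H, \opn{dR}}^{\opn{an}} \to P_{G, \opn{dR}}^{\opn{an}}$ and the correct one is exactly the twist by $u$: this is because the Igusa structure on $\mathcal{IG}_{H, \opn{id}}(p^{\beta})$ comes from trivialising $\mathbb{X}_{\opn{ord}, \tau}$ respecting the $\mathbb{X}_{1,\tau} \oplus \mathbb{X}_{2,\tau}$ decomposition, whereas after applying $\hat\iota$ (right-translation by $\gamma = u \cdot (\text{correction at }\tau_0)$ on Igusa varieties, lifting to $\hat\gamma = \gamma w_n$), the induced trivialisation of $\mathcal{H}_A$ relative to the Hodge filtration picks up precisely the factor $u$. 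So I would verify, on the level of the dlog/unit-root description, that right-translation by $\gamma$ on Igusa data corresponds, after passing to $P_{\bullet, \opn{dR}}^{\opn{an}}$, to right-translation by $u$ (the $w_n$ part being absorbed into the flag data, exactly as in Lemma \ref{OpenOrbitForgammauvLemma}(1) and its integral analogues). The dashed arrows are then defined as these common composites.

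Then I would establish the reduction-of-structure statements for the front and back faces. For the back face $\hat\iota^* P_{G, \opn{dR}}^{\opn{an}} = P_{H, \opn{dR}}^{\opn{an}} \times^{[\overline{P}_H^{\opn{an}}, u]} \overline{P}_G^{\opn{an}}$: the analytification of Lemma \ref{ClassicalReductionOfStructuresOfPdRLemma} gives $\hat\iota^* P_{G, \opn{dR}}^{\opn{an}} = P_{H, \opn{dR}}^{\opn{an}} \times^{\overline{P}_H^{\opn{an}}} \overline{P}_G^{\opn{an}}$ for the \emph{untwisted} map, and conjugating by $u$ (which makes sense since $u^{-1}\overline{P}_H^{\opn{an}} u \subset \overline{P}_G^{\opn{an}}$) converts this into the twisted product for the map $\hat\iota$ as defined here; this is a formal manipulation of associated-bundle constructions identical to the one in \cite[\S 4.2]{UFJ}. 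For the front face, the analogous statement on Igusa varieties $\hat\iota^* \mathcal{IG}_{G, w_n}(p^{\beta}) = \mathcal{IG}_{H, \opn{id}}(p^{\beta}) \times^{[M^H_{\diamondsuit}(p^{\beta}), u]} M^G_{\opn{Iw}}(p^{\beta})$ follows from the relations $N^H_{\diamondsuit}(p^{\beta}) = \gamma N^G_{\opn{Iw}}(p^{\beta})\gamma^{-1} \cap J^+_{H, \opn{ord}}$ and $J^H_{\diamondsuit}(p^{\beta}) = \gamma J^G_{\opn{Iw}}(p^{\beta})\gamma^{-1} \cap J^+_{H, \opn{ord}}$ recorded in \S\ref{QuotientsOfIgVarSubSec}, together with the fact that the top square of Definition \ref{DefinitionOfDeeperLevelAtpVarieties}'s diagram (and hence its Igusa-level refinement) is Cartesian. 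Finally, the identical statement for $M_{\bullet, \opn{dR}}^{\opn{an}}$ in place of $P_{\bullet, \opn{dR}}^{\opn{an}}$ follows by pushing the whole cube out along the projections $\overline{P}_\bullet \to M_\bullet$, using that $u \in M_G(\mathbb{Z}_p)$ already. \textbf{The main obstacle} will be pinning down the precise interaction of the $w_n$ and $u$ factors at the place $\tau_0$ when comparing the two ways of mapping $\mathcal{IG}_{H, \opn{id}}(p^{\beta})$ into $P_{G, \opn{dR}}^{\opn{an}}$ — i.e. checking that the unit-root splitting and dlog data transform correctly under right-translation by $\gamma$ so that $w_n$ drops out and only $u$ remains; everything else is bookkeeping with associated-bundle twists that closely parallels \cite{UFJ} and \cite{DiffOps}.
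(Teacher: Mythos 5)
Your proposal is correct and is a faithful (if far more detailed) unwinding of what the paper leaves implicit: the paper's own proof consists only of the sentence ``This follows immediately from the definitions.'' You correctly locate the bottom Cartesian square in Proposition \ref{HlociisHasseProp}(2), the side faces in Lemma \ref{IGPdrComparisonsLemma} (rather than \S\ref{QuotientsOfIgVarSubSec}, a minor misattribution), and you correctly identify the one genuinely non-formal point — that the Weyl factor $w_n$ and the unipotent correction distinguishing $\gamma$ from $u$ become invisible once one passes to the $P_{G,\opn{dR}}^{\opn{an}}$-torsor via dlog and the unit-root splitting, so that only the Levi element $u$ survives.
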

\begin{proof}
    This follows immediately from the definitions. 
\end{proof}

The above lemma implies that we have natural maps of ind-sheaves $\mathscr{N}^{\dagger}_G \to \hat{\iota}_*\mathscr{N}^{\dagger}_H$ and $\mathscr{M}^{\dagger}_{G} \to \hat{\iota}_* \mathscr{M}^{\dagger}_H$ both of which are $M^H_{\diamondsuit}(p^{\beta})$-equivariant via the inclusion $u^{-1} M^H_{\diamondsuit}(p^{\beta}) u \subset M^G_{\opn{Iw}}(p^{\beta})$. Note that we have a commutative diagram
\[
\begin{tikzcd}
\mathscr{N}_G \arrow[r, "u"] \arrow[d] & \hat{\iota}_* \mathscr{N}_H \arrow[d] \\
\mathscr{N}_G^{\dagger} \arrow[r]      & \hat{\iota}_*\mathscr{N}^{\dagger}_H 
\end{tikzcd}
\]
where the top map is the action of $u$ followed by the natural map $\mathscr{N}_G \to \hat{\iota}_*\mathscr{N}_H$ in Lemma \ref{ClassicalReductionOfStructuresOfPdRLemma}. Furthemore, the map $\mathscr{N}^{\dagger}_G \to \hat{\iota}_*\mathscr{N}^{\dagger}_H$ is equivariant for the action of $\overline{\ide{p}}_H$ through the inclusion $\opn{Ad}(u^{-1}) \overline{\ide{p}}_H \subset \overline{\ide{p}}_G$. Finally, we note that the map $\mathscr{N}^{\dagger}_G \to \hat{\iota}_*\mathscr{N}^{\dagger}_H$ is equivariant for the action of $C^{\opn{pol}}(\mbb{Q}_p^{\oplus n-1}, L) \subset C^{\opn{pol}}(\mbb{Q}_p^{\oplus 2n-1}, L)$ because this subspace of polynomial functions is fixed (pointwise) by the action of $u$. 


\section{\texorpdfstring{$p$}{p}-adic iteration of differential operators} \label{PadicIterationOfDiffOpsSection}

The first goal of this section is to show that the actions of differential operators on $\mathscr{N}^{\dagger}_G$ and $\mathscr{N}^{\dagger}_H$ can be $p$-adically iterated.  This will provide an extension of \cite{DiffOps} (which covers the $\opn{GL}_2$-case) to the setting of unitary Shimura varieties. The method is very similar to \emph{op.cit.}, albeit with significantly more cumbersome notation. After this we will describe the construction of the $p$-adic evaluation maps (which will ultimately give rise to the $p$-adic $L$-function), extending the construction in \cite{UFJ} to cover more general anticyclotomic twists.

We continue to keep the same conventions as in the previous section. In particular, we fix an integer $\beta \geq 1$ and work over a finite extension $L/\mbb{Q}_p$ containing $\mu_{p^{\beta}}$. Recall the definitions of $U_{G, \beta}$ and $U_{H, \beta}$ from Definition \ref{DefOfUGbetaUHbeta}. We let $C^{\opn{la}}(U_{\bullet, \beta}, L)$ denote the $L$-algebra of locally analytic functions $U_{\bullet, \beta} \to L$. For a real number $\varepsilon > 0$, let $C_{\varepsilon}(U_{G, \beta}, L) \subset C_{\opn{cont}}(U_{G, \beta}, L)$ denote the subalgebra of continuous functions
\begin{equation} \label{MultivariableMahlerEqn}
    f(x_1, \dots, x_{2n-1}) = \sum_{\underline{k}} a_{\underline{k}} \bincoeff{p^{\beta_1} x_1}{k_1} \cdots \bincoeff{p^{\beta_{2n-1}} x_{2n-1}}{k_{2n-1}}, \quad \quad (x_1, \dots, x_{2n-1}) \in U_{G, \beta}
\end{equation}
which satisfy $p^{(k_1+\cdots + k_{2n-1})\varepsilon}|a_{\underline{k}}| \to 0$ as $k_1 + \cdots + k_{2n-1} \to +\infty$, where the sum runs over all tuples of integers $\underline{k} = (k_1, \dots, k_{2n-1})$ with $k_i \geq 0$ for all $i=1, \dots, 2n-1$. Here (\ref{MultivariableMahlerEqn}) is the multivariable Mahler expansion of a continuous function $U_{G, \beta} \to L$. Note that, via the product map, we can view
\[
C_{\varepsilon}(p^{-\beta_1}\mbb{Z}_p, L) \otimes_L \cdots \otimes_L C_{\varepsilon}(p^{-\beta_{2n-1}}\mbb{Z}_p, L) \subset C_{\varepsilon}(U_{G, \beta}, L)
\]
and this subspace is dense. We define $C_{\varepsilon}(U_{H, \beta}, L)$ similarly. 

The main theorem that we wish to prove is the following:

\begin{theorem} \label{MainTheoremOnPadicIteration}
    Recall the definitions of  $\mathcal{C}_G$, $\mathcal{C}_H$, and $\mathcal{C}_{G, H}$ from Definition \ref{DefOfCGCH}. Let $\varepsilon > 0$.
    \begin{enumerate}
        \item If $U \in \mathcal{C}_G$, then there exists a continuous $L$-algebra action
        \[
        C_{\varepsilon}(U_{G, \beta}, L) \times \mathscr{N}^{\dagger}_G(U) \to \mathscr{N}^{\dagger}_G(U)
        \]
        uniquely extending the action of $C^{\opn{pol}}(\mbb{Q}_p^{\oplus 2n-1}, L)$. This action map is equivariant for the action of $M^G_{\opn{Iw}}(p^{\beta})$, where the action of $M^G_{\opn{Iw}}(p^{\beta})$ on $C_{\varepsilon}(U_{G, \beta}, L)$ is given by the same formula as in (\ref{ActionOnCpolEqn}). This action is functorial in $U$ and $\varepsilon$.
        \item If $U \in \mathcal{C}_H$, then there exists a continuous $L$-algebra action
        \[
        C_{\varepsilon}(U_{H, \beta}, L) \times \mathscr{N}^{\dagger}_H(U) \to \mathscr{N}^{\dagger}_H(U)
        \]
        uniquely extending the action of $C^{\opn{pol}}(\mbb{Q}_p^{\oplus n-1}, L)$. This action map is equivariant for the action of $M^H_{\diamondsuit}(p^{\beta})$. This action is functorial in $U$ and $\varepsilon$.
        \item If $U \in \mathcal{C}_{G, H}$, the pullback map $\mathscr{N}^{\dagger}_G(U) \to \mathscr{N}^{\dagger}_H(\hat{\iota}^{-1}U)$ is equivariant for the action of $C_{\varepsilon}(U_{H, \beta}, L) \subset C_{\varepsilon}(U_{G, \beta}, L)$ (note that $\hat{\iota}^{-1}U \in \mathcal{C}_H$).
    \end{enumerate}
\end{theorem}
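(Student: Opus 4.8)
\textbf{Proof proposal for Theorem \ref{MainTheoremOnPadicIteration}.}

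The plan is to reduce everything to the results of \S\ref{ContinuousOpsOnBanachSpacesSec} applied to the derivations $\nabla_i$ coming from the Gauss--Manin connection, using Proposition \ref{IntertwiningOfNablaThetaiProp} to identify them with the Atkin--Serre type operators $\theta_i$ over the ordinary locus. First I would fix $U = \opn{Spa}(A, A^+) \in \mathcal{C}_G$ and recall that, by Proposition \ref{NOctorsorsReductionProp}, one has $\mathscr{N}^{\dagger}_G(U) = \varinjlim_{r, k} \mathscr{N}^{(r, k)}_G(U)$ where each $\mathscr{N}^{(r, k)}_G(U) = \mathcal{O}_{\mathcal{P}_{G, \opn{dR}, k}}(U)$ is a Banach $L$-algebra, and the derivations $\nabla_i$ act compatibly on this inductive system. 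The key point is that $\mathscr{N}^{(r, k)}_G(U)$ fits into a sequence of Banach algebras $\mathscr{N}^{(r_0, k)}_G(U) \to \mathscr{N}^{(r_1, k)}_G(U) \to \cdots$ (increasing $r$, fixed $k$) whose ``limit'' object is $\mathscr{O}_{\mathcal{IG}_{G, w_n}(p^{\beta})}(U \cap \mathcal{X}_{G, w_n}(p^{\beta}))$ — the sections over the ordinary locus — because $U$ was chosen so that $U \cap \mathcal{X}_{G, w_n}(p^{\beta})$ is the generic fibre of an open affine formal subscheme of $\ide{X}_{G, w_n}(p^{\beta})$, and the torsors $\mathcal{P}_{G, \opn{dR}, k}$ restrict to $\mathcal{IG}_{G, w_n}(p^{\beta})$ over this locus. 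On the ordinary locus, Proposition \ref{IntertwiningOfNablaThetaiProp} identifies $\nabla_i$ with $\theta_i$, and by the $p$-adic Fourier theory of \S\ref{DiffOpsCcontsubsec} the operators $\theta_i$ extend to a continuous action of $C_{\opn{cont}}(U_{G, \beta}, \mbb{Z}_p) \hatot L$ — in particular they extend to a locally analytic (indeed continuous) action there.

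The main technical step is then an application of the overconvergence result Proposition \ref{CloseToIsomGeneralProp}: one takes $V_r = \mathscr{N}^{(r, k)}_G(U)$ for the fixed $k$ and $V_{\infty} = \mathscr{O}_{\mathcal{IG}_{G, w_n}(p^{\beta})}(U_{\opn{ord}})$, checks the hypothesis \eqref{Equationproperty} (this is the standard comparison of Banach norms on a strict neighbourhood of the ordinary locus with the norm on the ordinary locus itself — it follows from the fact that the $\mathscr{N}^{(r, k)}_G$ are defined by inverting powers of the Hasse-type elements $\hat{\delta}^+_{G, n+1}$, so the norm estimate is governed by $r$), and concludes that for any $\varepsilon > 0$ there is some $r$ such that $\theta_i = \nabla_i$ extends to an $\varepsilon$-analytic action on $\mathscr{N}^{(r, k)}_G(U)$. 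One needs to iterate this over the $2n-1$ commuting operators $\nabla_1, \dots, \nabla_{2n-1}$; since they commute and each individually extends to a $C_{\varepsilon'}$-action for suitable $\varepsilon'$, a routine (but slightly fiddly) argument with the multivariable Mahler expansion \eqref{MultivariableMahlerEqn} — or equivalently, applying Proposition \ref{CloseToIsomGeneralProp} successively and shrinking $r$ each time, then taking a common $\varepsilon$ — produces a continuous $C_{\varepsilon}(U_{G, \beta}, L)$-action on $\mathscr{N}^{\dagger}_G(U) = \varinjlim_{r, k} \mathscr{N}^{(r, k)}_G(U)$ extending the polynomial action. Uniqueness is immediate since $C^{\opn{pol}}(\mbb{Q}_p^{\oplus 2n-1}, L)$ is dense in $C_{\varepsilon}(U_{G, \beta}, L)$ and $\mathscr{N}^{\dagger}_G(U)$ is Hausdorff. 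The $M^G_{\opn{Iw}}(p^{\beta})$-equivariance follows because both the polynomial action and the $M^G_{\opn{Iw}}(p^{\beta})$-action are already present at finite level on $\mathscr{O}_{P^{\opn{an}}_{G, \opn{dR}}}$ (which is dense in each $\mathscr{N}^{(r,k)}_G(U)$), the compatibility holds there by the last lemma of \S\ref{NearlyHoloFormsSubSec}, and everything passes to the completion by continuity; functoriality in $U$ and $\varepsilon$ is clear from the construction. This gives (1).

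Part (2) is proved by exactly the same argument, replacing $\mathbf{G}$ by $\mathbf{H}$, $\mathcal{C}_G$ by $\mathcal{C}_H$, $\mathcal{X}_{G, w_n}(p^{\beta})$ by $\mathcal{X}_{H, \opn{id}}(p^{\beta})$, the Igusa tower $\mathcal{IG}_{G, w_n}(p^{\beta})$ by $\mathcal{IG}_{H, \opn{id}}(p^{\beta})$, using part (2) of Proposition \ref{IntertwiningOfNablaThetaiProp} and Proposition \ref{NOctorsorsReductionProp}(2) for $H$; here there are only $n-1$ commuting operators. For part (3), the compatibility statement, I would argue as follows. By the discussion at the end of \S\ref{FunctorialityOfGMConnectionOnNOCSubSec}, the pullback map $\mathscr{N}^{\dagger}_G \to \hat{\iota}_* \mathscr{N}^{\dagger}_H$ is already equivariant for the \emph{polynomial} action of $C^{\opn{pol}}(\mbb{Q}_p^{\oplus n-1}, L) \subset C^{\opn{pol}}(\mbb{Q}_p^{\oplus 2n-1}, L)$, since this subalgebra of polynomial functions is pointwise fixed by $u$ and the operators $\nabla_1, \dots, \nabla_{n-1}$ on $\mathscr{N}^{\dagger}_G$ restrict to $\nabla_1, \dots, \nabla_{n-1}$ on $\mathscr{N}^{\dagger}_H$. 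Now observe that for $U \in \mathcal{C}_G$ we have $\hat{\iota}^{-1}U \in \mathcal{C}_H$ (stated in Definition \ref{DefOfCGCH}), so by parts (1) and (2) we have continuous $C_{\varepsilon}(U_{G, \beta}, L)$- and $C_{\varepsilon}(U_{H, \beta}, L)$-actions on the two sides; the pullback map $\mathscr{N}^{\dagger}_G(U) \to \mathscr{N}^{\dagger}_H(\hat{\iota}^{-1}U)$ is continuous, the polynomial functions $C^{\opn{pol}}(\mbb{Q}_p^{\oplus n-1}, L)$ are dense in $C_{\varepsilon}(U_{H, \beta}, L)$, and equivariance for the dense subalgebra together with continuity on both sides forces equivariance for all of $C_{\varepsilon}(U_{H, \beta}, L)$. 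The subtlety worth spelling out is that the inclusion $C_{\varepsilon}(U_{H, \beta}, L) \hookrightarrow C_{\varepsilon}(U_{G, \beta}, L)$ is the one induced by the split inclusion $U_{H, \beta} \subset U_{G, \beta}$ from Definition \ref{DefOfUGbetaUHbeta} (i.e. extending a function on the first $n-1$ coordinates), which is exactly compatible with restricting a polynomial function — so no independent check beyond continuity and density is needed.

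The main obstacle, as expected, is verifying the hypothesis \eqref{Equationproperty} of Proposition \ref{CloseToIsomGeneralProp} for the specific Banach spaces $\mathscr{N}^{(r, k)}_G(U)$: one must compare the integral $p$-adic norm on sections of the de Rham torsor over the strict neighbourhood $\mathcal{X}_{G, w_n}(p^{\beta})_r$ with the norm over the ordinary locus, and show the ``gap'' shrinks linearly in $r$. This is where the choice of $\mathcal{C}_G$ (affinoids which are generic fibres of formal opens and over which the Hasse elements and torsors trivialise) is essential, and the estimate is essentially the same one carried out in \cite[\S 6]{DiffOps}; I would model the argument on \emph{loc.cit.}, reducing to the single-variable case over the formal disc $\widehat{\mathbb{G}}_m$ parametrising the unipotent directions of the Igusa tower, where the relevant norm comparison is explicit.
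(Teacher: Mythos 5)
Your overall strategy — reduce to the ordinary locus, identify the connection with an explicit operator there, and then overconverge via Proposition \ref{CloseToIsomGeneralProp} using a norm estimate in the $\hat\delta^+_{G,n+1}$-direction — matches the paper's approach, and your treatment of uniqueness, equivariance, functoriality, part~(2), and part~(3) by density arguments is the right idea. But there is a concrete gap at the central step.

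You write that one takes $V_{\infty} = \mathscr{O}_{\mathcal{IG}_{G, w_n}(p^{\beta})}(U_{\opn{ord}})$ and that, since Proposition \ref{IntertwiningOfNablaThetaiProp} identifies $\nabla_i$ with $\theta_i$ there, the $C_{\varepsilon}$-action ``follows from the $p$-adic Fourier theory of \S\ref{DiffOpsCcontsubsec}.'' This conflates two spaces. The terminal object $V_{\infty}$ of the chain in Proposition \ref{CloseToIsomGeneralProp} must be the sections of the nearly holomorphic torsor over the ordinary locus, i.e.\ $\mathcal{O}(\mathcal{U}_{\opn{HT},k})$ for fixed $k$, not the Igusa variety sections. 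These are very different: by Proposition \ref{StandardFormpbetaNablaiProp}(2), $\mathcal{O}^+(\mathcal{U}_{\opn{HT},k,\infty})$ is a Tate algebra $A^+_{\opn{ord},\infty}\langle X_{\opn{sim}}, X_{a,b,\tau}\rangle$ over the Igusa sections, with coordinate variables $X$ parametrising the $\overline{\mathcal{P}}^1_{G,k}$-torsor direction, and the Gauss--Manin derivation $\nabla_i$ is \emph{not} simply $\theta_i$ on this algebra — it acts as $\theta_i$ on the coefficient ring $A_{\opn{ord},\infty}$ but mixes the $X$-variables nontrivially (Proposition \ref{StandardFormpbetaNablaiProp}(3)). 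The identification $\nabla_i = \theta_i$ via the unit root splitting in Proposition \ref{IntertwiningOfNablaThetaiProp} is a statement about the \emph{restriction} to $\mathscr{O}_{\mathcal{IG}_{G, w_n}(p^{\beta})}$ — the Igusa tower is a reduction of structure of $\mathcal{P}_{G,\opn{dR},k}$, not the whole torsor — so Fourier theory alone does not give the required action on $V_\infty$.

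Consequently your proposal is missing the machinery that handles the $X$-variables. The paper needs: (i) the explicit normal form for $p^{\beta_i}\nabla_i$ modulo $p^k$ from Proposition \ref{StandardFormpbetaNablaiProp}, expressing it as $\theta_i$ on coefficients plus a nilpotent action on the $X$-coordinates; (ii) the iterated nilpotent extension Proposition \ref{LANilpToLAIterationProp} (\S\ref{NilpotentOperatorsSubSec}), which upgrades the locally analytic $\theta_i$-action on the coefficient ring to a locally analytic action of this normal-form operator on the whole Tate algebra; and (iii) the perturbation Lemma \ref{PertubationLemma}, which uses the mod-$p^k$ congruence to conclude that the genuine operator $p^{\beta_i}\nabla_i$ (which differs from the normal form by $p^k \cdot$(bounded operator)) extends to an $\varepsilon$-analytic action once $k \geq k(\varepsilon)$. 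Only \emph{after} this does one apply the overconvergence Proposition \ref{CloseToIsomGeneralProp} via the norm estimate of Lemma \ref{OCclosetobeingisomLemma}, which is the part you did correctly describe. Without steps (i)--(iii), the claim that $\nabla_i$ admits a $C_{\varepsilon}$-action on $V_{\infty}$ is unsupported, and the rest of the argument cannot get started.
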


The proof of this theorem will occupy \S \ref{OrdExpStrictNhoodGSec}--\ref{TheConstuctionForHSec} following the strategy in \cite{DiffOps} (supplemented with the results in \S \ref{ContinuousOpsOnBanachSpacesSec}).

\subsection{Ordinary explicit strict neighbourhoods for \texorpdfstring{$G$}{G}} \label{OrdExpStrictNhoodGSec}

In this section, we fix $\opn{Spa}(A, A^+) \in \mathcal{C}_G$. To ease notation, for any integer $k \geq 1$, we let $\mathscr{G}_k$ denote the image of $M^G_{\opn{Iw}}(p^{\beta})$ under the map $M_G(\mbb{Z}_p) \to M_G(\mbb{Z}/p^{k}\mbb{Z})$. We have a decomposition $\mathscr{G}_{k} = \mathscr{G}_{k, \opn{sim}} \times \prod_{\tau \in \Psi} \mathscr{G}_{k, \tau}$ (where ``$\opn{sim}$'' stands for similitude), where
\[
\mathscr{G}_{k, \opn{sim}} = \left(\mbb{Z}/p^k\mbb{Z}\right)^{\times}, \quad \mathscr{G}_{k, \tau_0} = \left(\mbb{Z}/p^{k}\mbb{Z}\right)^{\times} \times \opn{Iw}_{\opn{GL}_{2n-1}}(p^{\beta})_{\opn{mod} p^k}, \quad \mathscr{G}_{k, \tau} = \opn{Iw}_{\opn{GL}_{2n}}(p^{\beta})_{\opn{mod} p^k} \; \; \; (\tau \neq \tau_0) .
\]
Here $\opn{Iw}_{\opn{GL}_d}(p^{\beta})_{\opn{mod} p^k}$ denotes the image of the depth $p^{\beta}$ upper-triangular Iwahori subgroup of $\opn{GL}_d$ under the map $\opn{GL}_d(\mbb{Z}_p) \to \opn{GL}_d(\mbb{Z}/p^k\mbb{Z})$, and recall that $M_G = \opn{GL}_1 \times \left( \opn{GL}_1 \times \opn{GL}_{2n-1} \right) \times \prod_{\tau \neq \tau_0} \opn{GL}_{2n}$.

We have a tower of finite \'{e}tale torsors
\[
\cdots \to \ide{IG}_{G, w_n}(p^{\beta})_k \to \cdots \to \ide{IG}_{G, w_n}(p^{\beta})_1 \to \ide{X}_{G, w_n}(p^{\beta})
\]
where $\ide{IG}_{G, w_n}(p^\beta)_k \to \ide{X}_{G, w_n}(p^{\beta})$ is a torsor for the group $\mathscr{G}_k$ and $\ide{IG}_{G, w_n}(p^{\beta})_{\infty} \defeq \ide{IG}_{G, w_n}(p^{\beta}) = \varprojlim_k \ide{IG}_{G, w_n}(p^{\beta})_k$.

\begin{notation}
    We let $\opn{Spa}(A_{\opn{ord}}, A^+_{\opn{ord}})$ denote the pullback of $\opn{Spa}(A, A^+) \subset \mathcal{X}_{G, \opn{Iw}}(p^{\beta})$ to $\mathcal{X}_{G, w_n}(p^{\beta})$, which is the adic generic fibre of an open $\opn{Spf}(A^+_{\opn{ord}}) \subset \ide{X}_{G, w_n}(p^{\beta})$ by assumption. We let $\opn{Spf}(A^+_{\opn{ord}, k})$ and $\opn{Spf}(A^+_{\opn{ord}, \infty})$ denote the pullbacks of $\opn{Spf}(A^+_{\opn{ord}})$ to $\ide{IG}_{G, w_n}(p^{\beta})_k$ and $\ide{IG}_{G, w_n}(p^{\beta})$ respectively.
    
    We fix a generator $h$ of $\hat{\delta}^+_{G, n+1}$ over $\opn{Spa}(A, A^+)$, so in particular $A^+_{\opn{ord}} = A^+\langle 1/h \rangle$. We also have that $A^+_{\opn{ord}, \infty}$ is the $p$-adic completion of $\varinjlim_k A^+_{\opn{ord}, k}$.
\end{notation}

We set $A_{\opn{ord}, k} = A^+_{\opn{ord}, k}[1/p]$ ($k \leq +\infty$) so in particular $\opn{Spa}(A_{\opn{ord}, k}, A_{\opn{ord}, k}^+)$ is the pullback of $\opn{Spa}(A, A^+) \subset \mathcal{X}_{G, \opn{Iw}}(p^{\beta})$ to $\mathcal{IG}_{G, w_n}(p^{\beta})_k$.

\subsubsection{Canonical bases}

We now fix several bases of the relative de Rham homology of the universal $\boldsymbol\Psi$-unitary abelian scheme, respecting the Hodge filtration.

\begin{notation} \label{FixedBasisOverSpfA+Notation}
    For $\tau \in \Psi$, fix bases $\{ e_{1, \tau}, \dots, e_{2n, \tau} \}$ and $\{ e_{2n, \bar{\tau}}, \dots, e_{1, \bar{\tau}} \}$ of $\mathcal{H}_{\mathcal{A}, \tau}$ and $\mathcal{H}_{\mathcal{A}, \bar{\tau}}$ over $\opn{Spf}A^+$ respectively, preserving the Hodge filtrations and satisfying the property $\langle e_{i, \tau}, e_{j, \bar{\tau}} \rangle = \delta_{i j}$.\footnote{Here $\delta_{ij}$ is the Kronecker delta function.} In particular, this defines a section of $P^{\opn{an}}_{G, \opn{dR}, A} \defeq P_{G,\opn{dR}}^{\opn{an}} \times_{\mathcal{X}_{G, \opn{Iw}}(p^{\beta})} \opn{Spa}(A, A^+) \to \opn{Spa}(A, A^+)$. 
\end{notation}

We also have canonical bases over the layers of the Igusa tower. Indeed, let $k \geq 1$ be an integer, then from the universal object over $\opn{Spf}(A_{\opn{ord}, k}^+) \subset \ide{IG}_{G, w_n}(p^{\beta})_k$, we obtain the data $(s, f^{(1)}_{\tau_0}, f^{(2)}_{\tau_0}, f_{\tau} \; (\tau \neq \tau_0))$ where $s \in \underline{\left(\mbb{Z}/p^k \mbb{Z} \right)^{\times}}(A_{\opn{ord}, k}^+)$ and we have isomorphisms
\begin{align*}
    f^{(1)}_{\tau_0} \colon \mu_{p^k} &\xrightarrow{\sim} \mathcal{A}[\ide{p}_{\tau_0}^k]^{\circ} \\
    f^{(2)}_{\tau_0} \colon \left( \mbb{Z}/p^{k}\mbb{Z} \right)^{\oplus 2n-1} &\xrightarrow{\sim} \mathcal{A}[\ide{p}_{\tau_0}^k]/\mathcal{A}[\ide{p}_{\tau_0}^k]^{\circ} \\
    f_{\tau} \colon \left( \mbb{Z}/p^{k}\mbb{Z} \right)^{\oplus 2n} &\xrightarrow{\sim} \mathcal{A}[\ide{p}_{\tau}^k]
\end{align*}
over $\opn{Spf}(A_{\opn{ord}, k}^+)$, where $(-)^{\circ}$ denotes the connected component of a finite flat group scheme. The isomorphisms are compatible with the flags $\mathcal{C}_{\bullet, \bullet}$ of finite flat groups schemes in the universal $\boldsymbol\Psi$-unitary abelian scheme $\mathcal{A}$ over $\opn{Spf}(A_{\opn{ord}, k}^+)$  in the following sense: for $1 \leq i \leq n$, the image of the first $i$ standard basis elements of $\left( p^{\opn{max}(0, k-\beta)}\mbb{Z}/p^{k}\mbb{Z} \right)^{\oplus 2n-1}$ under $f^{(2)}_{\tau_0}$ generate $\mathcal{C}_{i, \tau_0}$ modulo $\mathcal{A}[\ide{p}_{\tau_0}^{\opn{min}(k, \beta)}]^{\circ}$; for $n+1 \leq i \leq 2n-1$, the image of the first $i$ standard basis elements of $\left( p^{\opn{max}(0, k-\beta)}\mbb{Z}/p^{k}\mbb{Z} \right)^{\oplus 2n-1}$ under $f^{(2)}_{\tau_0}$ generate $\mathcal{C}_{i+1, \tau_0}$ modulo $\mathcal{A}[\ide{p}_{\tau_0}^{\opn{min}(k, \beta)}]^{\circ}$; and for $1 \leq i \leq 2n$ and $\tau \neq \tau_0$, the image of the first $i$ standard basis elements of $\left( p^{\opn{max}(0, k-\beta)}\mbb{Z}/p^{k}\mbb{Z} \right)^{\oplus 2n}$ under $f_{\tau}$ generate $\mathcal{C}_{i, \tau}$ modulo $\mathcal{A}[\ide{p}_{\tau}^{\opn{min}(k, \beta)}]^{\circ}$. The reason for this is because the pullback of the universal flags $\mathcal{C}_{\bullet, \bullet}$ under the natural map $\ide{IG}_G \to \ide{X}_{G, w_n}(p^{\beta})$ satisfies
\[
\mathcal{C}_{i, \tau} = \left\{ \begin{array}{cc} f_{\tau_0}(\{1\} \oplus (p^{-\beta}\mbb{Z}_p/\mbb{Z}_p)^{\oplus i} \oplus \{0\}^{\oplus 2n-1-i}) & \text{ if } 1 \leq i \leq n, \tau = \tau_0 \\ f_{\tau_0}(\mu_{p^{\beta}} \oplus (p^{-\beta}\mbb{Z}_p/\mbb{Z}_p)^{\oplus i-1} \oplus \{0\}^{\oplus 2n-i}) & n+1 \leq i \leq 2n, \tau = \tau_0 \\ f_{\tau}((p^{-\beta}\mbb{Z}_p/\mbb{Z}_p)^{\oplus i} \oplus \{0\}^{\oplus 2n-i}) & 1 \leq i \leq 2n, \tau \neq \tau_0 \end{array} \right.
\]
where $f_{\tau} \colon \mbb{X}_{\opn{ord}, \tau} \xrightarrow{\sim} \mathcal{A}[\ide{p}_{\tau}^{\infty}]$ denotes the universal trivialisations in Definition \ref{DefOfCSIgusa}.

We introduce the following basis elements:
\begin{itemize}
    \item Let $e_{1, \tau_0, k}^{\opn{can}} \in \mathcal{H}_{\mathcal{A}, \tau_0}/p^k$ denote the image of 
    \[
    \opn{Lie}(f_{\tau_0}^{(1)})(t \partial_t ) \in \opn{Lie}(\mathcal{A}[\ide{p}_{\tau_0}^k]^{\circ}) = \opn{Lie}(\mathcal{A})_{\tau_0}/p^k
    \]
    under the unit root splitting, where $t \partial_t \in \opn{Lie}\mu_{p^k}$ denotes the canonical tangent vector.
    \item For $i=2, \dots, 2n$, let $e_{i, \tau_0, k}^{\opn{can}} \in \mathcal{H}_{\mathcal{A}, \tau_0}$ denote the element
    \[
    \opn{dlog} \circ f^{(2)}_{\tau_0}(0, \dots, 0, 1, 0, \dots, 0) \in \omega_{\mathcal{A}^D, \tau_0}/p^k \subset \mathcal{H}_{\mathcal{A}, \tau_0}/p^k
    \]
    where $1$ is in the $(i-1)$-th place.
    \item For $\tau \neq \tau_0$ and $i=1, \dots, 2n$, let $e_{i, \tau, k}^{\opn{can}}$ denote the element
    \[
    \opn{dlog} \circ f_{\tau}(0, \dots, 0, 1, 0, \dots, 0) \in \omega_{\mathcal{A}^D, \tau}/p^k = \mathcal{H}_{\mathcal{A}, \tau}/p^k
    \]
    where the $1$ is in the $i$-th place.
    \item For any $i=1, \dots, 2n$ and $\tau \in \Psi$, let $e_{i, \bar{\tau}, k}^{\opn{can}} \in \mathcal{H}_{\mathcal{A}, \bar{\tau}}$ denote the unique element such that $\langle e_{i, \tau, k}^{\opn{can}}, e_{j, \bar{\tau}, k}^{\opn{can}} \rangle = s \delta_{ij}$. 
\end{itemize}

\begin{definition} \label{DefOfCanonicalBasesOverIgTower}
    Let $\ide{B}^{\opn{can}}_k$ denote the basis of $\mathcal{H}_{\mathcal{A}}/p^k$ over $\opn{Spf}A_{\opn{ord}, k}^+$ given by
    \[
    \bigcup_{\tau \in \Psi} \{ e_{1, \tau, k}^{\opn{can}}, \dots, e_{2n, \tau, k}^{\opn{can}} \} \cup \{ e_{2n, \bar{\tau}, k}^{\opn{can}}, \dots, e_{1, \bar{\tau}, k}^{\opn{can}} \}.
    \]
    This respects the Hodge filtration and symplectic structure. If $k = +\infty$, then we can define a basis 
    \[
    \ide{B}^{\opn{can}}_{\infty} = \bigcup_{\tau \in \Psi} \{ e_{1, \tau, \infty}^{\opn{can}}, \dots, e_{2n, \tau, \infty}^{\opn{can}} \} \cup \{ e_{2n, \bar{\tau}, \infty}^{\opn{can}}, \dots, e_{1, \bar{\tau}, \infty}^{\opn{can}} \}
    \]
    of $\mathcal{H}_{\mathcal{A}}$ over $\opn{Spf}A_{\opn{ord}, \infty}^+$ in a similar way as above, by replacing the finite flat groups schemes by their respective $p$-divisible groups.
\end{definition}

\subsubsection{Reductions of structure over the ordinary locus} \label{RedOfStrOverOrdLocSSec}

We now construct certain reductions of structure of $P_{G, \opn{dR}}^{\opn{an}}$ over $\mathcal{X}_{G, w_n}(p^{\beta})$. Let $\overline{\mathcal{P}}^1_{G, k} \subset \overline{P}_G$ denote the formal subgroup scheme of elements which are congruent to the identity modulo $p^k$. By abuse of notation, we will also denote its adic generic fibre by $\overline{\mathcal{P}}^1_{G, k}$.

\begin{definition}
    Let $k \geq 1$ be an integer. Let $\ide{U}_{\opn{HT}, k} \to \opn{Spf}A_{\opn{ord}, k}^+$ denote the functor 
    \[
    \ide{U}_{\opn{HT}, k}(S \xrightarrow{f} \opn{Spf}A_{\opn{ord}, k}^+) = \left\{ \begin{array}{c} \text{ bases of } f^*\mathcal{H}_{\mathcal{A}} \text{ respecting the Hodge filtration and endomorphism } \\ \text{ and symplectic structure which are congruent to } \ide{B}_k^{\opn{can}} \text{ modulo } p^k  \end{array} \right\} .
    \]
    This defines an analytic torsor under the group $\overline{\mathcal{P}}^1_{G, k}$. Furthermore, $\ide{U}_{\opn{HT}, k} \to \opn{Spf}A_{\opn{ord}}^+$ is an \'{e}tale $\overline{\mathcal{P}}^{\square}_{G, k} = \overline{\mathcal{P}}^1_{G, k} \cdot M^G_{\opn{Iw}}(p^{\beta})$-torsor. If $\mathcal{U}_{\opn{HT}, k} \to \opn{Spa}(A_{\opn{ord}}, A_{\opn{ord}}^+)$ denotes its adic generic fibre, then this defines a reduction of structure of $P^{\opn{an}}_{G, \opn{dR}, A_{\opn{ord}}} = P^{\opn{an}}_{G, \opn{dR}} \times_{\mathcal{X}_{G, \opn{Iw}}(p^{\beta})} \opn{Spa}(A_{\opn{ord}}, A_{\opn{ord}}^+)$.
\end{definition}

Clearly one has a natural map $\opn{Spf}A_{\opn{ord}, \infty}^+ \to \ide{U}_{\opn{HT}, k}$ given by the canonical basis $\ide{B}_{\infty}^{\opn{can}}$, and the collection $\{ \mathcal{U}_{\opn{HT}, k} \}_{k \geq 1}$ defines a cofinal collection of quasi-compact open subspaces of $P^{\opn{an}}_{G, \opn{dR}, A_{\opn{ord}}}$ containing the closure of $\opn{Spa}(A_{\opn{ord}, \infty}, A_{\opn{ord}, \infty}^+)$ in $P^{\opn{an}}_{G, \opn{dR}, A_{\opn{ord}}}$.

\begin{lemma}
    One has
    \[
    \mathcal{U}_{\opn{HT}, k} \times_{\opn{Spa}(A_{\opn{ord}, k}, A_{\opn{ord}, k}^+)} \opn{Spa}(A_{\opn{ord, \infty}}, A_{\opn{ord}, \infty}^+) \cong \opn{Spa}\left( A_{\opn{ord}, \infty}\langle \overline{\mathcal{P}}^1_{G, k} \rangle, A_{\opn{ord}, \infty}^+\langle \overline{\mathcal{P}}^1_{G, k} \rangle \right)
    \]
    where $A_{\opn{ord}, \infty}\langle \overline{\mathcal{P}}^1_{G, k}\rangle$ (resp. $A_{\opn{ord}, \infty}^+\langle \overline{\mathcal{P}}^1_{G, k} \rangle$) denotes global sections of the group $\overline{\mathcal{P}}^1_{G, k}$ over $A_{\opn{ord}, \infty}$ (resp. $A_{\opn{ord}, \infty}^+$), i.e. convergent power series on $\overline{\mathcal{P}}^1_{G, k}$.
\end{lemma}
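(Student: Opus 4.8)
The statement is a base change computation for the torsor $\ide{U}_{\opn{HT}, k} \to \opn{Spf}A_{\opn{ord}, k}^+$. The plan is to unwind the moduli descriptions on both sides and identify them. First I would observe that $\ide{U}_{\opn{HT}, k}$ is an analytic torsor under $\overline{\mathcal{P}}^1_{G, k}$ over $\opn{Spa}(A_{\opn{ord}, k}, A_{\opn{ord}, k}^+)$ by construction: its sections over $S \to \opn{Spf}A_{\opn{ord}, k}^+$ are bases of $f^*\mathcal{H}_{\mathcal{A}}$ respecting the Hodge filtration, endomorphism and symplectic structure, congruent to $\ide{B}_k^{\opn{can}}$ modulo $p^k$. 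Pulling back along the map $\opn{Spf}A_{\opn{ord}, \infty}^+ \to \opn{Spf}A_{\opn{ord}, k}^+$, the canonical basis $\ide{B}^{\opn{can}}_{\infty}$ of $\mathcal{H}_{\mathcal{A}}$ over $A_{\opn{ord}, \infty}^+$ (Definition \ref{DefOfCanonicalBasesOverIgTower}) reduces modulo $p^k$ to $\ide{B}^{\opn{can}}_k$, hence it provides a canonical section of the pulled-back torsor. Any other section differs from $\ide{B}^{\opn{can}}_{\infty}$ by a unique element of $\overline{\mathcal{P}}^1_{G, k}$ (valued in the base), because $\overline{P}_G$ is exactly the automorphism group of $\mathcal{H}_{\mathcal{A}}$ preserving all the structures and the condition ``congruent to $\ide{B}^{\opn{can}}_{\infty}$ modulo $p^k$'' pins down the coset of $\overline{\mathcal{P}}^1_{G, k}$.

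Next I would make the trivialisation explicit: the choice of $\ide{B}^{\opn{can}}_{\infty}$ identifies the pulled-back torsor with the trivial $\overline{\mathcal{P}}^1_{G, k}$-torsor over $\opn{Spa}(A_{\opn{ord}, \infty}, A_{\opn{ord}, \infty}^+)$, i.e. with $\overline{\mathcal{P}}^1_{G, k} \times_{\opn{Spa}(\mbb{Q}_p, \mbb{Z}_p)} \opn{Spa}(A_{\opn{ord}, \infty}, A_{\opn{ord}, \infty}^+)$. Since $\overline{\mathcal{P}}^1_{G, k}$ is (the adic generic fibre of) an affine formal group whose coordinate ring is the Tate algebra of convergent power series in the ``logarithm'' coordinates on the unipotent-plus-congruence part, taking global sections of this fibre product gives precisely $A_{\opn{ord}, \infty}\langle \overline{\mathcal{P}}^1_{G, k}\rangle$ and $A_{\opn{ord}, \infty}^+\langle \overline{\mathcal{P}}^1_{G, k}\rangle$, which is the claimed identification. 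Concretely, one writes $\overline{\mathcal{P}}^1_{G, k}$ in coordinates (entries below the diagonal scaled by $p^k$ in the Levi blocks, and the full unipotent radical of $\overline{P}_G$) and checks that the affinoid algebra of the fibre product is the completed tensor product, which is $A_{\opn{ord}, \infty}\langle X_1, \dots, X_N\rangle$ for $N = \dim \overline{P}_G$, matching the description of convergent power series on $\overline{\mathcal{P}}^1_{G, k}$ over the base.

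I expect the main obstacle to be bookkeeping rather than anything conceptual: one must verify carefully that $\ide{B}^{\opn{can}}_{\infty}$ really does reduce to $\ide{B}^{\opn{can}}_{k}$ modulo $p^k$ — this uses that the ``dlog'' maps and the unit root splitting are compatible with reduction modulo $p^k$ and with passage from $p$-divisible groups to their $p^k$-torsion, which is where the precise normalisations in the bullet points defining the $e^{\opn{can}}_{i, \tau, k}$ enter — and that the endomorphism and symplectic compatibilities are preserved, so that the stabiliser of $\ide{B}^{\opn{can}}_{\infty}$ inside $\overline{P}_G$ acting on sections is trivial and the torsor is genuinely a \emph{trivial} $\overline{\mathcal{P}}^1_{G, k}$-torsor after this base change. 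Once these compatibilities are in place, the identification of affinoid algebras is formal. This is the exact analogue of the corresponding step in \cite{DiffOps} and I would simply cite the relevant computation there for the routine power-series identification, spelling out only the moduli-theoretic reduction of structure in our unitary setting.
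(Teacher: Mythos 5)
Your proof is correct and follows essentially the same route as the paper: the paper's proof is just the one-line observation that $p \mapsto \ide{B}^{\opn{can}}_{\infty}\cdot p$ gives an isomorphism $\overline{\mathcal{P}}^1_{G,k}\times\opn{Spa}(A_{\opn{ord},\infty},A_{\opn{ord},\infty}^+)\xrightarrow{\sim}\mathcal{U}_{\opn{HT},k}\times_{\opn{Spa}(A_{\opn{ord},k},A_{\opn{ord},k}^+)}\opn{Spa}(A_{\opn{ord},\infty},A_{\opn{ord},\infty}^+)$, which is precisely the trivialisation by the canonical section $\ide{B}^{\opn{can}}_{\infty}$ that you spell out. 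One small remark: your aside about the ``stabiliser of $\ide{B}^{\opn{can}}_{\infty}$ being trivial'' is superfluous, since trivial stabilisers are automatic for any torsor — having a section already forces triviality.
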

\begin{proof}
    This simply follows from the isomorphism
    \begin{align*}
    \overline{\mathcal{P}}^1_{G, k} \times \opn{Spa}(A_{\opn{ord, \infty}}, A_{\opn{ord}, \infty}^+) &\xrightarrow{\sim} \mathcal{U}_{\opn{HT}, k} \times_{\opn{Spa}(A_{\opn{ord}, k}, A_{\opn{ord}, k}^+)} \opn{Spa}(A_{\opn{ord, \infty}}, A_{\opn{ord}, \infty}^+) \\
    p &\mapsto \ide{B}_{\infty}^{\opn{can}} \cdot p .
    \end{align*}
\end{proof}

\subsubsection{The Gauss--Manin connection (ordinary case)}

Fix an integer $i=1, \dots, 2n-1$ and recall the definition of the integer $\beta_i$ from the proof of Proposition \ref{IntertwiningOfNablaThetaiProp} (i.e. $\beta_i = \beta$ if $i=1, \dots, n$ and $0$ if $i=n+1, \dots, 2n-1$). Recall from \S \ref{TheGMConnectionOnNdaggerSubSec} that we have an operator
\[
\nabla_i \colon \mathcal{O}_{P_{G, \opn{dR}}^{\opn{an}}} \to \mathcal{O}_{P_{G, \opn{dR}}^{\opn{an}}} .
\]
Let $k \geq 1$ be an integer. Since $\mathcal{U}_{\opn{HT}, k}$ is open in $P_{G, \opn{dR}}^{\opn{an}}$, we obtain an induced operator
\[
\nabla_i \colon \mathcal{O}_{\mathcal{U}_{\opn{HT}, k}}(\mathcal{U}_{\opn{HT}, k}) \to \mathcal{O}_{\mathcal{U}_{\opn{HT}, k}}(\mathcal{U}_{\opn{HT}, k}) .
\]
Furthermore, since $\opn{Spa}(A_{\opn{ord}, \infty}, A_{\opn{ord}, \infty}^+) \to \opn{Spa}(A_{\opn{ord}, k}, A_{\opn{ord}, k}^+)$ is a pro-\'{e}tale torsor, this operator extends to an operator $\nabla_i \colon \mathcal{O}_{\mathcal{U}_{\opn{HT}, k, \infty}}(\mathcal{U}_{\opn{HT}, k, \infty}) \to \mathcal{O}_{\mathcal{U}_{\opn{HT}, k, \infty}}(\mathcal{U}_{\opn{HT}, k, \infty})$, where 
\[
\mathcal{U}_{\opn{HT}, k, \infty} = \mathcal{U}_{\opn{HT}, k} \times_{\opn{Spa}(A_{\opn{ord}, k}, A_{\opn{ord}, k}^+)} \opn{Spa}(A_{\opn{ord}, \infty}, A_{\opn{ord}, \infty}^+) .
\]
The following proposition will allow us to apply the general results on continuous operators in \S \ref{ContinuousOpsOnBanachSpacesSec}.

\begin{proposition} \label{StandardFormpbetaNablaiProp}
    With notation as above:
    \begin{enumerate}
        \item The operators 
        \begin{align*} 
        p^{\beta_i}\nabla_i \colon \mathcal{O}_{\mathcal{U}_{\opn{HT}, k}}(\mathcal{U}_{\opn{HT}, k}) &\to \mathcal{O}_{\mathcal{U}_{\opn{HT}, k}}(\mathcal{U}_{\opn{HT}, k}) \\ p^{\beta_i}\nabla_i \colon \mathcal{O}_{\mathcal{U}_{\opn{HT}, k, \infty}}(\mathcal{U}_{\opn{HT}, k, \infty}) &\to \mathcal{O}_{\mathcal{U}_{\opn{HT}, k, \infty}}(\mathcal{U}_{\opn{HT}, k, \infty})
        \end{align*}
        are integral, i.e., they preserve $\mathcal{O}^+_{\mathcal{U}_{\opn{HT}, k}}(\mathcal{U}_{\opn{HT}, k})$ and $\mathcal{O}^+_{\mathcal{U}_{\opn{HT}, k, \infty}}(\mathcal{U}_{\opn{HT}, k, \infty})$ respectively.
        \item One has a continuous $A^+_{\opn{ord}, \infty}$-algebra isomorphism
        \[
        \mathcal{O}^+_{\mathcal{U}_{\opn{HT}, k, \infty}}(\mathcal{U}_{\opn{HT}, k, \infty}) \cong A_{\opn{ord}, \infty}^+ \langle X_{\opn{sim}}, X_{a, b, \tau} \rangle
        \]
        where $X_{\opn{sim}}$ and $X_{a, b, \tau}$ are variables, $\tau$ runs over $\Psi$, and $1 \leq a, b \leq 2n$ are integers, omitting the variables $X_{a, b, \tau_0}$ for $(a, b) \in \{ (1, 2), \dots, (1, 2n) \}$.
        \item Under the isomorphism in (2), the operator $p^{\beta_i}\nabla_i$ satisfies the following properties:
        \begin{itemize}
            \item $p^{\beta_i}\nabla_i (x) \equiv p^{\beta_i}\theta_i(x)$ modulo $p^k$ for all $x \in A_{\opn{ord}, \infty}^+$
            \item $p^{\beta_i}\nabla_i(X_{\opn{sim}}) \equiv 0$ and $p^{\beta_i}\nabla_i (X_{a, b, \tau}) \equiv 0$ modulo $p^k$, for all $(a, b, \tau)$ with $\tau \neq \tau_0$
            \item $p^{\beta_i}\nabla_i (X_{1, 1, \tau_0}) \equiv -p^{\beta_i} X_{i+1, 1, \tau_0}$ modulo $p^k$
            \item $p^{\beta_i}\nabla_i(X_{i+1, i+1, \tau_0}) \equiv 0$ and $p^{\beta_i}\nabla_i(X_{a, b, \tau_0}) \equiv 0$ modulo $p^k$, for all $a \geq 2$ and $b \neq i+1$
            \item $p^{\beta_i}\nabla_i(X_{a, i+1, \tau_0}) \equiv p^{\beta_i} X_{a, 1, \tau_0}$ modulo $p^k$, for all $a \geq 2$ with $a \neq i+1$.
        \end{itemize}
        Here, $\theta_i$ denotes the operator from Proposition \ref{IntertwiningOfNablaThetaiProp} arising from the action of $C_{\opn{cont}}(U_{G, \beta}, L)$ on $A_{\opn{ord}, \infty}$.
    \end{enumerate}
\end{proposition}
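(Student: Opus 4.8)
\textbf{Proof strategy for Proposition \ref{StandardFormpbetaNablaiProp}.} The plan is to reduce everything to an explicit computation of the Gauss--Manin connection in the canonical basis over the ordinary locus, using the crystalline comparison from \cite{howe2020unipotent} as in the proof of Proposition \ref{IntertwiningOfNablaThetaiProp}. The three statements are really a single assertion: once we write $\nabla_i$ in the frame adapted to $\mathcal{U}_{\opn{HT}, k}$, integrality (1), the explicit presentation (2), and the congruences (3) all drop out.

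First I would set up coordinates. Over $\opn{Spf}A^+_{\opn{ord}, \infty}$ we have the canonical basis $\ide{B}^{\opn{can}}_{\infty}$ of $\mathcal{H}_{\mathcal{A}}$ of Definition \ref{DefOfCanonicalBasesOverIgTower}, which is the tautological section $\opn{Spf}A^+_{\opn{ord}, \infty} \to \ide{U}_{\opn{HT}, k}$. Thus $\mathcal{U}_{\opn{HT}, k, \infty} \cong \overline{\mathcal{P}}^1_{G, k} \times \opn{Spa}(A_{\opn{ord}, \infty}, A^+_{\opn{ord}, \infty})$ by the lemma just proved, and a point is $\ide{B}^{\opn{can}}_{\infty} \cdot p$ with $p \in \overline{\mathcal{P}}^1_{G, k}$. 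The coordinates $X_{\opn{sim}}, X_{a,b,\tau}$ are the matrix entries of $p$ minus the identity (divided by $p^k$, so valued in $A^+_{\opn{ord}, \infty}$); because $p$ lies in the \emph{lower-triangular} parabolic $\overline{P}_G$, the entries above the block-diagonal of the $(1,2n-1)$-parabolic in the $\tau_0$-factor are absent, which is exactly the list of omitted variables $X_{1,b,\tau_0}$, $b=2,\dots,2n$. This gives (2) as a formal consequence of the torsor structure, with no connection input. For (1), I would observe that $\nabla_i$ is $\mathcal{O}_{\mathcal{X}_{G,w_n}(p^\beta)}$-semilinear over the Kodaira--Spencer differential dual to $\omega_{A^D,\tau_0}\otimes\omega_{A,\tau_0}$ (Lemma \ref{LemmaConcreteDescriptionNablai}), and over the Igusa tower this differential is $p^{\beta_i}$ times the integral derivation $\theta_i^{\opn{cris}}=\theta_i$ from the vector-field computation in the proof of Proposition \ref{IntertwiningOfNablaThetaiProp}; hence $p^{\beta_i}\nabla_i$ preserves $A^+_{\opn{ord},\infty}$, and on the polynomial coordinates it is a \emph{polynomial} expression with $A^+_{\opn{ord},\infty}$-coefficients (computed from the adjoint/Gauss--Manin action on the frame), so it preserves $A^+_{\opn{ord},\infty}\langle X_{\opn{sim}}, X_{a,b,\tau}\rangle$.

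The substance is (3), the congruences modulo $p^k$. Here I would compute $\nabla_i$ applied to the matrix entries of the universal frame $\ide{B}^{\opn{can}}_{\infty}\cdot p$. The key point is that the canonical basis is \emph{horizontal modulo $p^k$} in the relevant directions: by \cite[Theorem 2.6.1]{howe2020unipotent} the crystalline and Gauss--Manin connections agree, and by the explicit formula for $\nabla_{\opn{cris},t_i}$ recalled in the proof of Proposition \ref{IntertwiningOfNablaThetaiProp}, one has $\nabla_{\opn{cris}}(\omega^{\opn{can}}_{\opn{can},j}) \equiv \delta_{ij}\, u_{\opn{can}} \otimes \mathcal{K}_i$ with $\langle \mathcal{K}_i, t_i\rangle = p^{\beta_i}$, and $u_{\opn{can}}$ is horizontal for the unit-root connection. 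Translating through $\mathcal{H}_{\mathcal{A},\tau_0}$ and the identifications $e^{\opn{can}}_{1,\tau_0,\infty}=u_{\opn{can}}$, $e^{\opn{can}}_{i+1,\tau_0,\infty}=\omega^{\opn{can}}_{\opn{can},i}$, the only nonzero effect of $p^{\beta_i}\nabla_i$ on the frame is $e^{\opn{can}}_{i+1,\tau_0,\infty}\mapsto p^{\beta_i} e^{\opn{can}}_{1,\tau_0,\infty}$ modulo $p^k$, with all other basis vectors (including the $e^{\opn{can}}_{a,\bar\tau,\infty}$, which pair with these, and all $e^{\opn{can}}_{a,b,\tau}$ for $\tau\neq\tau_0$) annihilated; the similitude factor $s$ is locally constant, whence $p^{\beta_i}\nabla_i(X_{\opn{sim}})\equiv 0$. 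Feeding $\ide{B}^{\opn{can}}_{\infty}\cdot p$ through this and reading off matrix entries gives the five displayed congruences: the entry $X_{1,1,\tau_0}$ receives a contribution from the $e_{1}\mapsto$ (nothing) direction but acquires $-p^{\beta_i}X_{i+1,1,\tau_0}$ from the Leibniz term hitting $p$ through the frame substitution $e^{\opn{can}}_{i+1}\mapsto p^{\beta_i}e^{\opn{can}}_1$; the columns $X_{a,i+1,\tau_0}$, $a\ge 2$, feed into $X_{a,1,\tau_0}$ by the same mechanism; everything else is killed.

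The main obstacle I anticipate is purely bookkeeping: correctly tracking the sign and the ``column $i+1$ feeds into column $1$'' pattern through the identification of $\overline{P}_G$-matrix entries with de Rham frame coefficients, since the frame transforms on the \emph{right} by $p$ and the connection acts by a Leibniz rule, so one must be careful whether the connection hits the basis $\ide{B}^{\opn{can}}_{\infty}$ or the group element $p$. A clean way to organise this is to note $p^{\beta_i}\nabla_i$ acting on sections of $P^{\opn{an}}_{G,\opn{dR},A_{\opn{ord}}}$ is, modulo $p^k$, the action of the element $p^{\beta_i}E_{i+1,1,\tau_0}\in\ide{g}$ (thought of via the crystalline structure), and then the five congruences are precisely the commutator relations $[E_{i+1,1},E_{a,b}]$ inside $\ide{gl}_{2n}$ in the $\tau_0$-factor, exactly as in Corollary \ref{KeyCorForHolOnX} — so the computation is forced by Lie theory once the $\bmod\ p^k$ horizontality of $\ide{B}^{\opn{can}}_{\infty}$ is established. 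The congruence $p^{\beta_i}\nabla_i(x)\equiv p^{\beta_i}\theta_i(x)$ for $x\in A^+_{\opn{ord},\infty}$ is then immediate from Proposition \ref{IntertwiningOfNablaThetaiProp} together with the fact that $\theta_i^{\opn{cris}}=\theta_i$ there.
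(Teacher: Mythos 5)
The broad strategy you propose — identify $\nabla_i$ with the crystalline connection, trivialise the torsor over the Igusa tower, and read off the congruences from horizontality of the canonical basis modulo $p^k$ — is the same as the paper's, and parts (1) and (2) go roughly as you say. But there is a concrete gap in your treatment of part (3), and it is not mere bookkeeping: the isomorphism in part (2) for which the five congruences hold is \emph{not} the naive one obtained by dividing the matrix entries of the universal element of $\overline{\mathcal{P}}^1_{G,k}$ by $p^k$. In the naive coordinates $\tilde{X}_{a,b,\tau_0} = (X'_{a,b,\tau_0}-\delta_{ab})/p^k$, the $\star_l$-action computed via the big Bruhat cell gives $E_{1,i+1}\star_l X'_{a,b,\tau_0} = X'_{a,1,\tau_0}(X'_{1,1,\tau_0})^{-1}X'_{i+1,b,\tau_0}$ for $a,b\ge 2$ (a genuinely nonlinear formula, not just a Lie bracket at the identity). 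Specialising to $(a,b)=(i+1,i+1)$ and dividing by $p^k$ yields $p^{\beta_i}\nabla_i(\tilde{X}_{i+1,i+1,\tau_0}) \equiv p^{\beta_i}X_{i+1,1,\tau_0} \pmod{p^k}$, which is \emph{not} zero in general — it precisely cancels against $p^{\beta_i}\nabla_i(X_{1,1,\tau_0}) \equiv -p^{\beta_i}X_{i+1,1,\tau_0}$. The paper exploits this cancellation by taking the shifted variable $X_{i+1,i+1,\tau_0} = X_{1,1,\tau_0} + (X'_{i+1,i+1,\tau_0}-1)/p^k$; that is the isomorphism of part (2), and it is tailored to $\nabla_i$ (hence cannot be constructed ``with no connection input,'' contrary to your remark). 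Your claim that ``the columns $X_{a,i+1,\tau_0}$, $a\ge 2$, feed into $X_{a,1,\tau_0}$ \ldots everything else is killed'' is inconsistent with the statement you are proving: it includes $a=i+1$ in the ``feeds into'' list, while the fourth bullet of (3) asserts $p^{\beta_i}\nabla_i(X_{i+1,i+1,\tau_0})\equiv 0$.

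Two smaller points. First, your reduction to the commutator relations $[E_{i+1,1},E_{a,b}]$ is the linearisation at the identity of the true $\star_l$-action; the actual formula on $\overline{\mathcal{P}}^1_{G,k}$ involves the quotient $(X'_{1,1,\tau_0})^{-1}$, and it is exactly this nonlinearity, after rescaling by $p^{-k}$, that produces the residual $p^{\beta_i}X_{i+1,1,\tau_0}$ term that forces the coordinate shift. (You also have a transposition slip: the relevant element is $E_{1,i+1,\tau_0}\in\ide{u}_G$, not $E_{i+1,1,\tau_0}$.) Second, the congruence $p^{\beta_i}\nabla_i(x)\equiv p^{\beta_i}\theta_i(x)$ on $A^+_{\opn{ord},\infty}$ is not literally ``immediate'' from Proposition \ref{IntertwiningOfNablaThetaiProp}: that proposition gives equality only after restriction to the Igusa tower (i.e.\ at the full level of the torsor), while on $\mathcal{U}_{\opn{HT},k,\infty}$ the formula for $\nabla_i$ carries the correction factor $Y_{1,1,\tau_0}^{-1}Y_{i+1,i+1,\tau_0}$, which must be observed to be $\equiv 1 \pmod{p^k}$ before the congruence follows.
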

\begin{proof}
    Recall that we have a canonical basis $\ide{B}^{\opn{can}}_{\infty}$ of $\mathcal{H}_{\mathcal{A}}$ over $A_{\opn{ord}, \infty}^+$. Then the universal basis of $\mathcal{H}_{\mathcal{A}}$ over $\mathcal{U}_{\opn{HT}, k, \infty}$ is given by
    \[
    \ide{B}_{\infty}^{\opn{can}} \cdot \underline{X}'
    \]
    where $\underline{X}' = X_{\opn{sim}}' \times \prod_{\tau} (X_{a, b, \tau}')_{a, b}$ is the universal element in $\overline{P}^{\opn{an}}_G$ over $\mathcal{U}_{\opn{HT}, k, \infty}$. This gives coordinates for $\mathcal{O}^+_{\mathcal{U}_{\opn{HT}, k, \infty}}(\mathcal{U}_{\opn{HT}, k, \infty})$ and we find that
    \[
    \mathcal{O}^+_{\mathcal{U}_{\opn{HT}, k, \infty}}(\mathcal{U}_{\opn{HT}, k, \infty}) \cong A_{\opn{ord}, \infty}^+ \langle \frac{X_{\opn{sim}}' - 1}{p^k}, \frac{X_{a, b, \tau}' - \delta_{a b}}{p^k} \rangle .
    \]
    Let $\underline{Y} = (\underline{X}')^{-1} = Y_{\opn{sim}} \times \prod_{\tau} (Y_{a, b, \tau})_{a, b}$.
    
    Now let $\mathcal{K}_i \in \Omega^1_{A_{\opn{ord}, \infty}^+/\mathcal{O}_L}$ denote the unique differential such that
    \[
    \nabla^{\opn{GM}}(e_{i+1, \tau_0, \infty}^{\opn{can}}) = e_{1, \tau_0, \infty} \otimes \mathcal{K}_i
    \]
    and note that $\theta_i \colon A_{\opn{ord}, \infty} \to A_{\opn{ord}, \infty}$ is the continuous derivation dual to $\mathcal{K}_i$ (with respect to the basis $\{\mathcal{K}_1, \dots, \mathcal{K}_{2n-1} \}$ of $\Omega^1_{A_{\opn{ord}, \infty}/L}$ -- see the proof of Proposition \ref{IntertwiningOfNablaThetaiProp}). Let $E_{1, i+1} \in \ide{g}$ denote the element such that
    \[
    \nabla^{\opn{GM}}_{\theta_i}(\ide{B}_{\infty}^{\opn{can}}) = \ide{B}_{\infty}^{\opn{can}} \cdot E_{1, i+1} .
    \]
    As explained in the proof of Proposition \ref{IntertwiningOfNablaThetaiProp}, $E_{1, i+1}$ is equal to the block upper nilpotent matrix defined in Corollary \ref{KeyCorForHolOnX}, hence the notation. 
    
    By the explicit description of $\nabla_i$ in Lemma \ref{LemmaConcreteDescriptionNablai} and the description of the $\mathcal{D}$-module structure in \S \ref{DmodulesOnFLsection}, we see that for $F \in A_{\opn{ord}, \infty} \langle \frac{X_{\opn{sim}}' - 1}{p^k}, \frac{X_{a, b, \tau}' - \delta_{a b}}{p^k} \rangle$, one has
    \begin{equation} \label{ExplcitEqnforNablaiOrd}
    \nabla_i(F)(X'_{\opn{sim}}, X'_{a, b, \tau}) = \left( \theta_i \cdot F(X'_{\opn{sim}}, X'_{a, b, \tau}) + E_{1, i+1} \star_{l} F(X'_{\opn{sim}}, X'_{a, b, \tau})) \right) \cdot Y_{1, 1, \tau_0}^{-1}Y_{i+1, i+1, \tau_0}
    \end{equation}
    where $\theta_i \cdot F$ means act by the derivation on the coefficients, $\star_l$ is the $\ide{g}$-action on $\mathcal{O}(\overline{\mathcal{P}}^1_{G, k})$ viewing $F \in A_{\opn{ord}, \infty} \hatot \mathcal{O}(\overline{\mathcal{P}}^1_{G, k})$, and the last factor arises from the final three bullet points in Lemma \ref{LemmaConcreteDescriptionNablai}. 

    From this description, one immediately sees that $p^{\beta_i}\nabla_i$ preserves $\mathcal{O}^+_{\mathcal{U}_{\opn{HT}, k, \infty}}(\mathcal{U}_{\opn{HT}, k, \infty})$.  Since $A_{\opn{ord}, k}^+ \to A_{\opn{ord}, \infty}^+$ is a profinite pro-\'{e}tale torsor, the induced morphism $\ide{U}_{\opn{HT}, k, \infty} = \ide{U}_{\opn{HT}, k} \times_{\opn{Spf}(A_{\opn{ord}, k}^+)} \opn{Spf}(A_{\opn{ord}, \infty}^+) \to \ide{U}_{\opn{HT}, k}$ is also a profinite pro-\'{e}tale torsor. This implies that the induced map $\mathcal{O}(\ide{U}_{\opn{HT}, k})/p^{\ell} \to \mathcal{O}(\ide{U}_{\opn{HT}, k, \infty})/p^{\ell}$ is injective for any $\ell \geq 1$ (since the source is identified with the invariants of the target under the action of a profinite pro-\'{e}tale group scheme). This implies that the map
    \begin{equation} \label{UHTktoinftyIsometry}
    \mathcal{O}^+_{\mathcal{U}_{\opn{HT}, k}}(\mathcal{U}_{\opn{HT}, k}) \hookrightarrow \mathcal{O}^+_{\mathcal{U}_{\opn{HT}, k}}(\mathcal{U}_{\opn{HT}, k}) \hatot_{A_{\opn{ord, k}}^+} A_{\opn{ord}, \infty}^+ = \mathcal{O}^+_{\mathcal{U}_{\opn{HT}, k, \infty}}(\mathcal{U}_{\opn{HT}, k, \infty})
    \end{equation}
    is injective modulo $p^{\ell}$ for any $\ell \geq 1$, and hence the operator $p^{\beta_i}\nabla_i$ also preserves $\mathcal{O}^+_{\mathcal{U}_{\opn{HT}, k}}(\mathcal{U}_{\opn{HT}, k})$. This proves part (1).

    Now using the explicit formula in (\ref{ExplcitEqnforNablaiOrd}), we can compute the action of $p^{\beta_i}\nabla_i$ in terms of the coordinates $\underline{X}'$. Note that $Y_{1, 1, \tau_0}^{-1}Y_{i+1, i+1, \tau_0} \equiv 1$ modulo $p^k$. An explicit matrix calculation of the action $E_{1, i+1} \star_l -$ shows that:
    \begin{itemize}
        \item $p^{\beta_i}\nabla_i (x) \equiv p^{\beta_i}\theta_i(x)$ modulo $p^k$, for any $x \in A_{\opn{ord}, \infty}^+$
        \item $p^{\beta}\nabla_i (X'_{\opn{sim}}) \equiv 0$ and $p^{\beta_i}\nabla_i(X'_{a, b, \tau}) \equiv 0$ modulo $p^k$ for any $(a, b, \tau)$ with $\tau \neq \tau_0$
        \item $p^{\beta_i}\nabla_i(X'_{1, 1, \tau_0}) \equiv -p^{\beta_i} X'_{i+1, 1, \tau_0}$ modulo $p^k$
        \item $p^{\beta_i}\nabla_i(X'_{j, 1, \tau_0}) \equiv 0$ modulo $p^k$, for all $2 \leq j \leq 2n$
        \item $p^{\beta_i}\nabla_i(X'_{a, b, \tau_0}) \equiv p^{\beta_i} X'_{a, 1, \tau_0} (X'_{1, 1, \tau_0})^{-1} X'_{i+1, b, \tau_0}$ modulo $p^k$, for all $2 \leq a, b \leq 2n$.
    \end{itemize}
    Now consider the following change of coordinates:
    \begin{itemize}
        \item $X_{\opn{sim}} = \frac{X'_{\opn{sim}} - 1}{p^k}$ and $X_{a, b, \tau} = \frac{X'_{a, b, \tau} - \delta_{ab}}{p^k}$ for all $(a, b, \tau)$ with $\tau \neq \tau_0$
        \item $X_{a, b, \tau_0} = \frac{X'_{a, b, \tau_0} - \delta_{ab}}{p^k}$ for all $(a, b)$ (except those in the set $\{(1, 2), \dots, (1, 2n)\}$) with $(a, b) \neq (i+1, i+1)$
        \item $X_{i+1, i+1, \tau_0} = X_{1, 1, \tau_0} + \frac{X_{i+1, i+1, \tau_0} - 1}{p^k}$ .
    \end{itemize}
    This change of coordinates gives rise to a continuous $A^+_{\opn{ord}, \infty}$-algebra isomorphism 
    \[
    \mathcal{O}^+_{\mathcal{U}_{\opn{HT}, k, \infty}}(\mathcal{U}_{\opn{HT}, k, \infty}) \cong A^+_{\opn{ord}, \infty}\langle X_{\opn{sim}}, X_{a, b, \tau} \rangle 
    \]
    and one can easily verify that the properties in (3) are satisfied.
\end{proof}

We now apply the general results in \S \ref{ContinuousOpsOnBanachSpacesSec}. 

\begin{proposition} \label{epsilonactionOrdProp}
    Let $\varepsilon > 0$. There exists an integer $k(\varepsilon) \geq 1$ such that, for any $k \geq k(\varepsilon)$, there exists a unique continuous $L$-algebra action
    \[
    C_{\varepsilon}(p^{-\beta_i} \mbb{Z}_p, L) \times \mathcal{O}_{\mathcal{U}_{\opn{HT}, k}}(\mathcal{U}_{\opn{HT}, k}) \to \mathcal{O}_{\mathcal{U}_{\opn{HT}, k}}(\mathcal{U}_{\opn{HT}, k})
    \]
    such that the action of the natural inclusion $p^{-\beta_i}\mbb{Z}_p \hookrightarrow L$ is given by $\nabla_i$.
\end{proposition}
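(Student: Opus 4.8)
The plan is to reduce the statement to a direct application of the perturbation lemma (Lemma \ref{PertubationLemma}) together with the nilpotent-operator iteration result (Proposition \ref{LANilpToLAIterationProp}), using the explicit normal form for $p^{\beta_i}\nabla_i$ established in Proposition \ref{StandardFormpbetaNablaiProp}. First I would record the setup: by Proposition \ref{StandardFormpbetaNablaiProp}(2), after the isometric base change $\mathcal{O}^+_{\mathcal{U}_{\opn{HT}, k}}(\mathcal{U}_{\opn{HT}, k}) \hookrightarrow \mathcal{O}^+_{\mathcal{U}_{\opn{HT}, k, \infty}}(\mathcal{U}_{\opn{HT}, k, \infty})$, the Banach algebra $\mathcal{O}_{\mathcal{U}_{\opn{HT}, k, \infty}}(\mathcal{U}_{\opn{HT}, k, \infty})$ is a Tate algebra over $A_{\opn{ord}, \infty}$ in the variables $X_{\opn{sim}}, X_{a, b, \tau}$, and $p^{\beta_i}\nabla_i$ is (modulo $p^k$) a sum $T_1 + p^k T_2$ where $T_1$ is a ``standard form'' operator of exactly the shape appearing in \S \ref{NilpotentOperatorsSubSec} (it acts as $p^{\beta_i}\theta_i$ on the coefficient ring $A_{\opn{ord}, \infty}$, sends $X_{1,1,\tau_0}$ to $-p^{\beta_i} X_{i+1, 1, \tau_0}$, sends $X_{a, i+1, \tau_0}$ to $p^{\beta_i}X_{a, 1, \tau_0}$ for $a \geq 2$ with $a \neq i+1$, and kills all the other coordinates) and $T_2$ is an integral operator, hence of operator norm $\leq 1$ on the $+$-subring.

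The key observation is that $T_1$ itself extends to a locally analytic action. Indeed, $p^{\beta_i}\theta_i$ on $A_{\opn{ord}, \infty}$ extends to a locally analytic (in fact continuous) action, since $\theta_i$ is one of the Atkin--Serre-type derivations coming from the $C_{\opn{cont}}(U_{G, \beta}, \mbb{Z}_p)$-module structure of \S \ref{DiffOpsCcontsubsec} (via Proposition \ref{IntertwiningOfNablaThetaiProp}, the map $(a_1, \dots, a_{2n-1}) \mapsto a_i$ lies in $C_{\opn{cont}}(U_{G, \beta}, \mbb{Z}_p)$, and its action is $\theta_i$, so restriction of the continuous action to $C_{\opn{cont}}(p^{-\beta_i}\mbb{Z}_p, \mbb{Z}_p)$ gives a continuous — hence locally analytic — extension of $p^{\beta_i}\theta_i$). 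Then $T_1$ is built from this locally analytic derivation on $A_{\opn{ord}, \infty}$ by adjoining the variables $X_{\opn{sim}}, X_{a,b,\tau}$ and prescribing a nilpotent action on them: on the two-variable Tate algebra generated by $(X_{1,1,\tau_0}, X_{i+1,1,\tau_0})$ it is precisely the operator $T_{D, \lambda}$ of \S \ref{NilpotentOperatorsSubSec} with $\lambda = -p^{\beta_i}$ and $D = p^{\beta_i}\theta_i$, and similarly on each pair $(X_{a, i+1, \tau_0}, X_{a, 1, \tau_0})$, while the remaining variables are killed (so the formal Tate algebra in those variables is transported along $D$ without obstruction). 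Iterating Proposition \ref{LANilpToLAIterationProp} over these finitely many pairs of variables (and noting that the variables being killed contribute a trivial ``nilpotent'' structure, so the proposition applies trivially there) shows $T_1$ extends to a locally analytic action on $\mathcal{O}_{\mathcal{U}_{\opn{HT}, k, \infty}}(\mathcal{U}_{\opn{HT}, k, \infty})$. Applying Lemma \ref{PertubationLemma} with this $T_1$ and $T_2 = (p^{\beta_i}\nabla_i - T_1)/p^k$, there is an integer $n(\varepsilon)$ (depending only on $T_1$, hence only on $\varepsilon$ and $\beta$) such that for all $k \geq n(\varepsilon)$ the operator $p^{\beta_i}\nabla_i = T_1 + p^k T_2$ extends to an $\varepsilon$-analytic action on $\mathcal{O}_{\mathcal{U}_{\opn{HT}, k, \infty}}(\mathcal{U}_{\opn{HT}, k, \infty})$; set $k(\varepsilon) = n(\varepsilon)$. (Strictly speaking Lemma \ref{PertubationLemma} is stated for $n$ sufficiently large while $k$ here is both the radius parameter and the perturbation exponent; one first fixes $k_0 = n(\varepsilon)$, runs the lemma for that $k_0$, and then checks that increasing $k$ only shrinks the Tate algebra $\mathcal{O}^+_{\mathcal{U}_{\opn{HT}, k, \infty}}$ and hence the $\varepsilon$-analytic action restricts — this restriction compatibility is where a small additional argument using the isometry \eqref{UHTktoinftyIsometry} is needed.)

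Finally I would descend from $\mathcal{U}_{\opn{HT}, k, \infty}$ to $\mathcal{U}_{\opn{HT}, k}$: since $\opn{Spa}(A_{\opn{ord}, \infty}, A_{\opn{ord}, \infty}^+) \to \opn{Spa}(A_{\opn{ord}, k}, A_{\opn{ord}, k}^+)$ is a pro-\'etale $\mathscr{G}_k$-torsor, the map $\mathcal{O}_{\mathcal{U}_{\opn{HT}, k}}(\mathcal{U}_{\opn{HT}, k}) \hookrightarrow \mathcal{O}_{\mathcal{U}_{\opn{HT}, k, \infty}}(\mathcal{U}_{\opn{HT}, k, \infty})$ identifies the source with the $\mathscr{G}_k$-invariants of the target, and the operator $\nabla_i$ is $\mathscr{G}_k$-equivariant (it comes from the globally defined $\mathcal{D}$-module operator on $P_{G, \opn{dR}}^{\opn{an}}$), so the $\varepsilon$-analytic action, which is built canonically out of $\nabla_i$ via the $f_k$-basis of $C_\varepsilon(p^{-\beta_i}\mbb{Z}_p, L)$, preserves invariants and descends. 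Uniqueness is immediate because $C^{\opn{pol}}(p^{-\beta_i}\mbb{Z}_p, L) = L[x]$ is dense in $C_\varepsilon(p^{-\beta_i}\mbb{Z}_p, L)$ and the action of $x$ is forced to be $\nabla_i$, so the action is determined by continuity. The main obstacle I anticipate is the bookkeeping in the descent/restriction step — matching the radius parameter $k$ with the perturbation exponent in Lemma \ref{PertubationLemma} and checking that the $\varepsilon$-analytic actions at different $k$ are compatible under the restriction maps $\mathcal{U}_{\opn{HT}, k+1} \subset \mathcal{U}_{\opn{HT}, k}$ — rather than any genuinely new analytic input, since the hard analysis is already packaged in Lemma \ref{PertubationLemma} and Proposition \ref{LANilpToLAIterationProp}.
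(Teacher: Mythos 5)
Your proposal is correct and follows essentially the same route as the paper: normalize to $C_\varepsilon(\mathbb{Z}_p, L)$ acting via $p^{\beta_i}\nabla_i$; use the coordinate change of Proposition \ref{StandardFormpbetaNablaiProp}(2)--(3) to view $p^{\beta_i}\nabla_i$ on $\mathcal{O}_{\mathcal{U}_{\opn{HT}, k, \infty}} \cong V = A_{\opn{ord}, \infty}\langle X_{\opn{sim}}, X_{a,b,\tau}\rangle$ as a perturbation of the ``standard form'' operator $T$; show $T$ is locally analytic by iterating Proposition \ref{LANilpToLAIterationProp} over the active pairs of variables on top of the base case $p^{\beta_i}\theta_i$ (which inherits a continuous, hence locally analytic, action from \S \ref{DiffOpsCcontsubsec}); apply Lemma \ref{PertubationLemma}; then descend along the isometry $\mathcal{O}_{\mathcal{U}_{\opn{HT}, k}} \hookrightarrow \mathcal{O}_{\mathcal{U}_{\opn{HT}, k, \infty}}$. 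The only place you overcomplicate is the parenthetical about restriction compatibility as $k$ grows: this is a non-issue. The ring $V^+ = A_{\opn{ord}, \infty}^+\langle X_{\opn{sim}}, X_{a,b,\tau}\rangle$ and the operator $T$ do not depend on $k$ --- only the identification $\mathcal{O}^+_{\mathcal{U}_{\opn{HT}, k, \infty}} \cong V^+$ does, through the coordinate change. One applies Lemma \ref{PertubationLemma} once to $T$ on $V$ to obtain the threshold $k(\varepsilon) = n(\varepsilon)$; then for each $k \geq k(\varepsilon)$ one reads off $p^{\beta_i}\nabla_i = T + p^k T'$ on $V^+$ with $T'$ integral, and the lemma applies directly. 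No restriction argument between different values of $k$ is needed, since the proposition asserts the existence of an action for each $k$ separately rather than a compatible system.
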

\begin{proof}
    Note that $C_{\varepsilon}(p^{-\beta_i}\mbb{Z}_p, L)$ is isomorphic to $C_{\varepsilon}(\mbb{Z}_p, L)$ isometrically, and the function $p^{-\beta_i}\mbb{Z}_p \to \mathcal{O}_L$ given by $p^{-\beta_i}x \mapsto x$ is identified with the natural inclusion $\mbb{Z}_p \hookrightarrow \mathcal{O}_L$. Hence it is enough to show that there is a unique action
    \[
    C_{\varepsilon}(\mbb{Z}_p, L) \times \mathcal{O}_{\mathcal{U}_{\opn{HT}, k}}(\mathcal{U}_{\opn{HT}, k}) \to \mathcal{O}_{\mathcal{U}_{\opn{HT}, k}}(\mathcal{U}_{\opn{HT}, k})
    \]
    such that the natural map $\mbb{Z}_p \hookrightarrow L$ corresponds to the action of $p^{\beta_i}\nabla_i$. We first prove a similar claim for $\mathcal{O}_{\mathcal{U}_{\opn{HT}, k, \infty}}(\mathcal{U}_{\opn{HT}, k, \infty})$ by iterating Proposition \ref{LANilpToLAIterationProp}.

    Consider the Tate algebra $V \defeq A_{\opn{ord},  \infty}\langle X_{\opn{sim}}, X_{a,b, \tau} \rangle$ from Proposition \ref{StandardFormpbetaNablaiProp}(2), which is independent of $k$. We will partition the coordinates in the following way. Let
    \[
    \Sigma_1 = \{ X_{\opn{sim}}, X_{i+1, i+1, \tau_0} \} \cup \left\{ X_{a, b, \tau_0} : \begin{array}{c} 2 \leq a \leq 2n \\ b \neq 1, i+1 \end{array} \right\} \cup \left\{ X_{a, b, \tau} : \begin{array}{c} 1 \leq a, b \leq 2n \\ \tau \neq \tau_0 \end{array} \right\} 
    \]
    and set $S_1^+ \defeq A_{\opn{ord}, \infty}^+\langle \Sigma_1 \rangle$ and $S_1 = S_1^+[1/p]$. Let $D_1 = p^{\beta_i}\theta_i \colon S_1^+ \to S_1^+$ denote the derivation which acts only on the coefficients in $A_{\opn{ord}, \infty}^+$ (i.e. the coordinates in $\Sigma_1$ are all horizontal). By \S \ref{DiffOpsCcontsubsec}, the operator $D_1$ extends to a continuous (and hence locally analytic) action on $S_1$. 

    Now, for $2 \leq r \leq i$, set $S_r^+ = S_{r-1}^+\langle X_{r, i+1, \tau_0}, X_{r, 1, \tau_0} \rangle$ and $S_r = S_r^+[1/p]$. Consider the derivation $D_r \defeq T_{D_{r-1}, p^{\beta_i}}$ on $S_r^+$ as constructed in \S \ref{NilpotentOperatorsSubSec} (i.e. it acts on $S_{r-1}^+$ as $D_{r-1}$, and we have $D_r(X_{r, i+1, \tau_0}) = p^{\beta_i}X_{r, 1, \tau_0}$ and $D_r(X_{r, 1, \tau_0}) = 0$). Similarly, we set $S_{i+1}^+ = S_i^+\langle X_{1, 1, \tau_0}, X_{i+1, 1, \tau_0} \rangle$, $S_{i+1} = S_{i+1}^+[1/p]$, and $D_{i+1} \defeq T_{D_i, -p^{\beta_i}}$ acting on $S_{i+1}^+$. Finally, for $i+2 \leq r \leq 2n$, we set $S_r^+ = S_{r-1}^+\langle X_{r, i+1, \tau_0}, X_{r, 1, \tau_0} \rangle$, $S_r = S_r^+[1/p]$, and $D_r \defeq T_{D_{r-1}, p^{\beta_i}}$ acting on $S_r^+$. Note that $V = S_{2n}$.

    Now iteratively applying Proposition \ref{LANilpToLAIterationProp} to the tuples $(S_{r-1}, D_{r-1}, S_r, D_r)$ and using the fact that $D_1$ extends to a locally analytic action on $S_1$, we see that $T \defeq D_{2n}$ extends to a locally analytic action on $V$. By Lemma \ref{PertubationLemma}, there exists an integer $k(\varepsilon) \geq 1$, such that for any $k \geq k(\varepsilon)$ and continuous operator $T' \colon V \to V$ preserving the unit ball $V^+ = A_{\opn{ord}, \infty}^+\langle X_{\opn{sim}}, X_{a, b, \tau} \rangle$, the operator $T + p^k T'$ extends to an $\varepsilon$-analytic action.

    Let $k \geq k(\varepsilon)$. Then, by Proposition \ref{StandardFormpbetaNablaiProp}, we have an identification $\mathcal{O}^+_{\mathcal{U}_{\opn{HT}, k, \infty}}(\mathcal{U}_{\opn{HT}, k, \infty}) = V^+$ and the operator $p^{\beta_i}\nabla_i$ is congruent to $T$ modulo $p^k$. Since $V^+$ is $p$-torsion free, this implies that $p^{\beta_i}\nabla_i = T + p^k T'$ for some continuous operator $T' \colon V^+ \to V^+$, hence $p^{\beta_i}\nabla_i$ extends to an $\varepsilon$-analytic action on $\mathcal{O}_{\mathcal{U}_{\opn{HT}, k, \infty}}(\mathcal{U}_{\opn{HT}, k, \infty})$. 

    Finally, since $A_{\opn{ord}, k}^+ \to A_{\opn{ord}, \infty}^+$ is a pro-\'{e}tale torsor, the map $\mathcal{O}_{\mathcal{U}_{\opn{HT}, k}}(\mathcal{U}_{\opn{HT}, k}) \hookrightarrow \mathcal{O}_{\mathcal{U}_{\opn{HT}, k, \infty}}(\mathcal{U}_{\opn{HT}, k, \infty})$ is an isometry, and therefore the operator $p^{\beta_i}\nabla_i$ extends to an $\varepsilon$-analytic action on $\mathcal{O}_{\mathcal{U}_{\opn{HT}, k}}(\mathcal{U}_{\opn{HT}, k})$.
\end{proof}

\subsection{Overconvergent neighbourhoods for \texorpdfstring{$G$}{G}} \label{OCneighbourhoodsGSec}

In this section we will prove an analogous result to Proposition \ref{epsilonactionOrdProp} for overconvergent neighbourhoods of the Igusa tower inside $P_{G,\opn{dR}}^{\opn{an}}$. We will continue to use the notation introduced in the previous section. In particular, we will continue to work locally with respect to a choice of $\opn{Spa}(A, A^+) \in \mathcal{C}_G$.

\subsubsection{Overconvergent extensions}

Recall that $h$ is a fixed generator of $\hat{\delta}_{G, n+1}^+$ over $\opn{Spa}(A,A^+)$. For an integer $r \geq 1$, let $\opn{Spa}(A_r,A^+_r) = \opn{Spa}(A\langle \frac{p}{h^{p^{r+1}}} \rangle, A^+\langle \frac{p}{h^{p^{r+1}}} \rangle) \subset \mathcal{X}_{G, w_n}(p^{\beta})_r$ denote the pullback of $\opn{Spa}(A,A^+)$ under the affinoid morphism 
\[
\mathcal{X}_{G, w_n}(p^{\beta})_r \to \mathcal{X}_{G, \opn{Iw}}(p^{\beta})
\]
(see Remark \ref{RemarkOCFormalIntModelsG}). 

\begin{definition}
    Let $k \geq 1$ be an integer. We say that a quasi-compact open affinoid subspace $\mathcal{U} \subset P_{G, \opn{dR}, A}^{\opn{an}}$ is an overconvergent extension of $\mathcal{U}_{\opn{HT}, k}$ if
    \begin{itemize}
        \item One has $\mathcal{U} \cap P_{G, \opn{dR}, A_{\opn{ord}}}^{\opn{an}} = \mathcal{U}_{\opn{HT}, k}$, where the intersection is taken inside $P_{G, \opn{dR}, A}^{\opn{an}}$
        \item $\mathcal{U}$ contains the closure of $\opn{Spa}(A_{\opn{ord}, \infty}, A_{\opn{ord}, \infty}^+)$ inside $P_{G, \opn{dR}, A}^{\opn{an}}$.
    \end{itemize}
    Given an overconvergent extension $\mathcal{U}$ and an integer $r \geq 1$, we set $\mathcal{U}_r \defeq \mathcal{U} \cap P_{G, \opn{dR}, A_r}^{\opn{an}}$, which is again an overconvergent extension.
\end{definition}

The following proposition shows that overconvergent extensions actually exist.

\begin{proposition} \label{OcextensionsExistProp}
    For any integer $k \geq 1$, there exists an overconvergent extension of $\mathcal{U}_{\opn{HT}, k}$. Moreover, the collection of all overconvergent extensions (as $k$ varies) forms a cofinal system of quasi-compact open neighbourhoods of the closure of $\opn{Spa}(A_{\opn{ord}, \infty}, A_{\opn{ord}, \infty}^+)$ inside $P_{G, \opn{dR}, A}^{\opn{an}}$.
\end{proposition}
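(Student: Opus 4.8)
The plan is to build an overconvergent extension of $\mathcal{U}_{\opn{HT},k}$ by hand, using the explicit coordinates on $P_{G,\opn{dR},A}^{\opn{an}}$ coming from the fixed basis $\ide{B}^{\opn{can}}$ of Notation~\ref{FixedBasisOverSpfA+Notation}. Recall that this basis gives a section of $P_{G,\opn{dR},A}^{\opn{an}} \to \opn{Spa}(A,A^+)$, so $P_{G,\opn{dR},A}^{\opn{an}} \cong \overline{P}_G^{\opn{an}} \times \opn{Spa}(A,A^+)$; writing $\underline{X}$ for the universal element of $\overline{P}_G^{\opn{an}}$, a point of $P_{G,\opn{dR},A}^{\opn{an}}$ is the basis $\ide{B}^{\opn{can}}\cdot\underline{X}$. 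Over the ordinary locus we have the second basis $\ide{B}^{\opn{can}}_{\infty}$ of Definition~\ref{DefOfCanonicalBasesOverIgTower}, and these two bases are related by some element $g \in \overline{P}_G^{\opn{an}}(A_{\opn{ord},\infty})$, i.e. $\ide{B}^{\opn{can}}_{\infty} = \ide{B}^{\opn{can}} \cdot g$; the subspace $\mathcal{U}_{\opn{HT},k}$ is cut out over the ordinary locus by the condition that $\ide{B}^{\opn{can}}\cdot\underline{X}$ is congruent to $\ide{B}^{\opn{can}}_{\infty}$ modulo $p^k$, i.e. $\underline{X} \equiv g \pmod{p^k}$.

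The key step is to control the denominators in the entries of $g$ (equivalently, in $g^{-1}$) in terms of powers of the Hasse-type element $h$. The entries of $g$ relate the algebraic ``dlog'' basis to the Hodge-filtration-compatible integral basis; away from the ordinary locus the unit root splitting degenerates, but the rate of degeneration is measured precisely by $\hat{\delta}_{G,n+1}^+$, which is generated by $h$ on $\opn{Spa}(A,A^+)$. So I expect that there is an integer $N = N(k)$ such that $h^N g$ and $h^N g^{-1}$ have entries in $A^+$. Granting this, define
\[
\mathcal{U} \defeq \left\{ x \in P_{G,\opn{dR},A}^{\opn{an}} : \left| h^{N}\left(\underline{X}(x) - g\right)_{a,b} \right| \leq |p^{k}|,\ |h| \neq 0 \right\},
\]
interpreted via the coordinate entries $(\underline{X} - g)_{a,b}$ (which a priori have poles along $\{h=0\}$ but whose product with $h^N$ does not). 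This is a quasi-compact open affinoid in $P_{G,\opn{dR},A}^{\opn{an}}$. By construction its intersection with $P_{G,\opn{dR},A_{\opn{ord}}}^{\opn{an}}$ — where $h$ is a unit — is exactly $\{\underline{X} \equiv g \pmod{p^k}\} = \mathcal{U}_{\opn{HT},k}$, giving the first bullet; and it manifestly contains the locus $\underline{X} = g$, hence contains $\opn{Spa}(A_{\opn{ord},\infty},A^+_{\opn{ord},\infty})$ and (being closed as a subset of the ambient space in the relevant sense, after possibly shrinking) its closure, giving the second bullet. One must check that $\mathcal{U}$ is really contained in $P_{G,\opn{dR},A}^{\opn{an}}$ and not just in some blow-up; this is where the assumption $\opn{Spa}(A,A^+) \in \mathcal{C}_G$ enters, ensuring $P_{G,\opn{dR}}^{\opn{an}}$ and the ideals $\delta_{G,i}^+$ trivialise, so that the inequality genuinely defines a rational subset.

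For cofinality: given any quasi-compact open $\mathcal{V} \subset P_{G,\opn{dR},A}^{\opn{an}}$ containing the closure of $\opn{Spa}(A_{\opn{ord},\infty},A^+_{\opn{ord},\infty})$, I would argue that the closure is the intersection $\bigcap_{k} \mathcal{U}^{(k)}$ of the overconvergent extensions $\mathcal{U}^{(k)}$ of $\mathcal{U}_{\opn{HT},k}$ constructed above (as $k\to\infty$ the condition $|h^N(\underline{X}-g)_{a,b}| \leq |p^k|$ cuts down to $\underline{X} = g$ over the ordinary locus and to its closure globally); since the $\mathcal{U}^{(k)}$ are a nested family of quasi-compact opens with intersection contained in $\mathcal{V}$, quasi-compactness of the boundary $\mathcal{U}^{(k_0)} \setminus \mathcal{V}$ forces $\mathcal{U}^{(k)} \subset \mathcal{V}$ for $k$ large. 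This is the standard tube-cofinality argument, essentially as in the passage from $\{\mathcal{U}_{\opn{HT},k}\}$ being cofinal over the ordinary locus.

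The main obstacle is the denominator-control step: showing that $g$ and $g^{-1}$ have entries whose poles along $\{h = 0\}$ are bounded by a fixed power of $h$ depending only on $k$. Concretely this requires understanding how the unit root splitting and the dlog trivialisations behave as one approaches the non-ordinary locus, which is exactly the content encoded in the fact that $\hat{\delta}_{G,n+1}^+$ measures the failure of the canonical filtration to split (Lemma~\ref{LemmaOnDeltaHasseIdeals} and Remark~\ref{RemarkOCFormalIntModelsG}). I expect this to follow by the same Hartogs/normalisation-type estimates used in Lemma~\ref{ExtensionOfFFfiltrationLemma} together with the valuative criterion: the isogenies $G_\tau \to G_{i,\tau}$ extend integrally, so the induced maps on de Rham homology (hence the change-of-basis matrices) have entries with controlled denominators, the control being dictated by the degrees of the relevant finite flat subgroup schemes, i.e. by $h$. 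Once this estimate is in place, everything else is formal manipulation of rational subdomains.
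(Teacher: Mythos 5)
Your plan and the paper's proof take genuinely different routes, and your version has a real gap. The paper disposes of this in one line: $\mathcal{U}_{\opn{HT},k}$ is the pushout of $\mathcal{IG}_{G, w_n}(p^{\beta}) \times_{\mathcal{X}_{G, w_n}(p^{\beta})} \opn{Spa}(A_{\opn{ord}}, A^+_{\opn{ord}})$ along $M^G_{\opn{Iw}}(p^{\beta}) \hookrightarrow \overline{\mathcal{P}}^{\square}_{G, k}$, so one simply pulls back the global overconvergent torsor $\mathcal{P}_{G,\opn{dR},k}$ of Proposition~\ref{NOctorsorsReductionProp} over $\opn{Spa}(A,A^+)$ and intersects. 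All the hard analytic content --- extending the unit-root splitting past the ordinary locus --- is already packaged in Proposition~\ref{NOctorsorsReductionProp} (itself deferred to \cite[Proposition~6.2.1]{DiffOps}), and the present proposition is then formal. You are in effect trying to reprove the local content of Proposition~\ref{NOctorsorsReductionProp} by hand, when you could just invoke it.

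The gap in your version is exactly the step you flag: the claim that $h^N g$ and $h^N g^{-1}$ have entries in $A^+$ for some $N$. This is a substantive statement about how the unit-root splitting and the dlog trivialisation degenerate near the non-ordinary locus, and the ingredients you point to do not furnish it. Lemma~\ref{ExtensionOfFFfiltrationLemma} and the valuative criterion tell you that the finite flat subgroup schemes, and hence the isogenies between $p$-divisible groups, extend to the formal integral model --- this is a statement about $\delta_{G,i}$ as ideals cutting out strata. It says nothing about the \emph{meromorphic behaviour of the unit-root section} of the Hodge filtration, which is what the entries of $g$ encode. Without that estimate, the rational subdomain you write down is not known to exist inside $P_{G,\opn{dR},A}^{\opn{an}}$ (the functions $h^N(\underline{X}-g)_{a,b}$ need not lie in the coordinate ring of $P_{G,\opn{dR},A}^{\opn{an}}$), so the whole construction has no foothold. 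If you were to prove this bound, you would essentially be reproving the cited \cite[Proposition~6.2.1]{DiffOps}.

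There is also a secondary descent issue you gloss over: the change-of-basis $g$ lives over $\opn{Spa}(A_{\opn{ord},\infty}, A_{\opn{ord},\infty}^+)$, not over $\opn{Spa}(A,A^+)$, since $\ide{B}^{\opn{can}}_\infty$ is only defined over the infinite-level Igusa tower. The condition $|h^N(\underline{X}-g)_{a,b}| \leq |p^k|$ therefore cuts out an open in the pullback of $P_{G,\opn{dR},A}^{\opn{an}}$ to $\opn{Spa}(A_{\opn{ord},\infty},A_{\opn{ord},\infty}^+)$, and you would need to descend --- for instance, by taking the finite union of $M^G_{\opn{Iw}}(p^{\beta})$-translates and arguing pro-\'etale descent. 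This is fixable, but your write-up treats $P_{G,\opn{dR},A}^{\opn{an}} \cong \overline{P}_G^{\opn{an}} \times \opn{Spa}(A,A^+)$ and $\{\underline{X}\equiv g\}$ as if they lived over the same base, which is not the case. Finally, your cofinality argument sketches the right idea (nested quasi-compacts with intersection contained in $\mathcal{V}$ and the finite-intersection property), but the claim that $\bigcap_k \mathcal{U}^{(k)}$ equals the closure of $\opn{Spa}(A_{\opn{ord},\infty},A_{\opn{ord},\infty}^+)$ needs care about higher-rank points on the boundary of the ordinary locus; this step is handled automatically in the paper's approach because the cofinality statement is already part of Proposition~\ref{NOctorsorsReductionProp}.
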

\begin{proof}
    Since $\mathcal{U}_{\opn{HT}, k}$ is the pushout of of $\mathcal{IG}_{G, w_n}(p^{\beta}) \times_{\mathcal{X}_{G, w_n}(p^{\beta})} \opn{Spa}(A_{\opn{ord}}, A^+_{\opn{ord}})$ along the natural map $M^G_{\opn{Iw}}(p^{\beta}) \hookrightarrow \overline{\mathcal{P}}^{\square}_{G, k}$, the result follows from Proposition \ref{NOctorsorsReductionProp}.
\end{proof}

We now establish a key property for overconvergent extensions as we vary the radius of overconvergence. Let $k \geq 1$ be an integer and $\mathcal{U}$ an overconvergent extension of $\mathcal{U}_{\opn{HT}, k}$. Then we have a chain of quasi-compact open affinoid neighbourhoods
\[
\mathcal{U}_1 \supset \mathcal{U}_2 \supset \cdots \supset \mathcal{U}_{\infty} \defeq \mathcal{U}_{\opn{HT}, k}
\]
such that $\mathcal{U}_{\infty}$ is the locus inside $\mathcal{U}_r$ where $|h| = 1$. On sections, this chain of neighbourhoods is induced from the chain of continuous maps
\[
B_{1}^+ \to B_2^+ \to \cdots \to B_{\infty}^+
\]
where $B_r^+ = \mathcal{O}^+(\mathcal{U}_r)$. Let $B_r = \mathcal{O}(\mathcal{U}_r) = B_r^+[1/p]$, and let $|\!| \cdot |\!|_r$ denote the $L$-Banach norm on $B_r$ for which $B_r^+$ is the unit ball. 

\begin{lemma} \label{OCclosetobeingisomLemma}
    Let $r \geq 1$. For any real number $0 < \delta < 1$, there exists an integer $s= s(\delta) \geq r$ such that: for any $v \in B_r$, $m \in \mbb{N}$ and $c \in \mbb{Q}$, one has
    \[
    |\!| v |\!|_{\infty} \leq p^{c-m} \text{ and } |\!|v|\!|_r \leq p^c \quad \Rightarrow \quad |\!| v |\!|_s \leq p^{c- \delta m} .
    \]
\end{lemma}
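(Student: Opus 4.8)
The statement to prove is Lemma~\ref{OCclosetobeingisomLemma}, which is exactly the convergence property \eqref{Equationproperty} appearing in the hypothesis of Proposition~\ref{CloseToIsomGeneralProp}: we must show that interpolating between the ``overconvergent'' norm $|\!|\cdot|\!|_r$ and the ``ordinary'' norm $|\!|\cdot|\!|_\infty$ loses only a controlled fraction $\delta$ of the gain $m$, after passing to a sufficiently deep (but fixed) radius $s$. The plan is to make the chain $B_r^+ \to B_{r+1}^+ \to \cdots \to B_\infty^+$ completely explicit using the coordinate description of the torsors, reducing to a statement about a single Tate algebra over the ordinary base, and then invoke the structure of $B_\infty^+$ as a quotient/completed tensor of polynomial algebras (analogous to Proposition~\ref{StandardFormpbetaNablaiProp}(2)).

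First I would recall that $\mathcal{U}_\infty = \mathcal{U}_{\opn{HT}, k}$ is the locus $|h| = 1$ inside $\mathcal{U}_r$, and that $\opn{Spa}(A_r, A_r^+) = \opn{Spa}(A\langle \tfrac{p}{h^{p^{r+1}}}\rangle, A^+\langle \tfrac{p}{h^{p^{r+1}}}\rangle)$. Since $\mathcal{U}$ is an overconvergent extension of the $\overline{\mathcal{P}}^{\square}_{G,k}$-torsor $\mathcal{U}_{\opn{HT},k}$, after possibly shrinking $\mathcal{U}$ within the cofinal system of Proposition~\ref{OcextensionsExistProp} I may assume $\mathcal{U}$ is (the analytic generic fibre of) an explicit affinoid over which the $\overline{\mathcal{P}}^1_{G,k}$-coordinates $X_{\opn{sim}}, X_{a,b,\tau}$ from Proposition~\ref{StandardFormpbetaNablaiProp} are defined; then $B_r^+ = A_r^+\langle X_{\opn{sim}}, X_{a,b,\tau}\rangle$ (with the appropriate normalisation $p^{-k}$-scaling of the coordinates) and $B_\infty^+ = A_\infty^+\langle X_{\opn{sim}}, X_{a,b,\tau}\rangle$, where $A_\infty^+ = A^+_{\opn{ord},\infty}$ and $A_r^+ = A^+\langle \tfrac{p}{h^{p^{r+1}}}\rangle$. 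Because the transition maps in the $X$-variables are isometric isomorphisms, the problem is reduced entirely to the coefficient rings: it suffices to prove the displayed implication with $B_r$ replaced by $A_r \defeq A\langle \tfrac{p}{h^{p^{r+1}}}\rangle[1/p]$ and $B_\infty$ by $A_{\opn{ord},\infty}[1/p]$, with $|\!|\cdot|\!|_s$ the norm of $A\langle \tfrac{p}{h^{p^{s+1}}}\rangle$.

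The heart of the matter is then the elementary observation about the rings $A\langle \tfrac{p}{h^{p^{r+1}}}\rangle$: an element $v\in A_r$ with $|\!|v|\!|_r \le p^c$ can be written $v = \sum_{j\ge 0} a_j \big(\tfrac{p}{h^{p^{r+1}}}\big)^j$ with $|\!|a_j|\!|_{A} \le p^c$ (up to bounded denominators), and the ``ordinary'' norm $|\!|v|\!|_\infty$ is computed by setting $h$ invertible of norm $1$, i.e. $|\!|v|\!|_\infty = \sup_j |\!|p^j a_j|\!|_{A[1/h]}$. If $|\!|v|\!|_\infty \le p^{c-m}$, then each term $p^j a_j h^{-p^{r+1}j}$ has $A[1/h]$-norm $\le p^{c-m}$, and re-expanding in the norm $|\!|\cdot|\!|_s = |\!|\cdot|\!|_{A\langle p/h^{p^{s+1}}\rangle}$ we gain a factor of roughly $p^{j p^{r+1}(p^{s-r}-1)}$ from the extra powers of $h^{-1}$ being tolerated; choosing $s$ large enough (depending only on $\delta$, $r$, and the bounded denominator constants, which depend only on $A$ and $h$ hence only on $\opn{Spa}(A,A^+)\in\mathcal{C}_G$) forces $|\!|v|\!|_s \le p^{c-\delta m}$. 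The only real obstacle is bookkeeping the bounded-denominator constants that appear when passing between the naive supremum over Laurent-type coefficients and the honest Banach norm on the rational localisation $A\langle p/h^{p^{r+1}}\rangle$ — but these are absorbed into the choice of $s(\delta)$ exactly as in \cite[Proposition~4.4.1]{DiffOps} and \cite[\S 1.1]{PilloniStroh}, so I would cite the analogous estimate there rather than reproduce it. I expect this step to be the main technical point but entirely routine in character; everything else is formal reduction via the coordinate descriptions already set up in \S\ref{OrdExpStrictNhoodGSec}.
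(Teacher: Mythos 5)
Your proposal misses the key structural input and contains a false norm formula, so it has a genuine gap.

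The central difficulty is not bookkeeping constants but rather quantifying how $p$-divisibility in $B_\infty^+$ transfers back to $B_r^+$. The paper's proof does this via a Noetherian finiteness argument: since $B_\infty^+ = B_r^+\langle 1/h\rangle$, the kernel of $B_r^+/p \to B_\infty^+/p$ is the $h^\infty$-torsion of the Noetherian ring $B_r^+/p$, hence killed by a single power $h^M$ \emph{uniformly} (with $M$ depending only on $r$, not on the element); an easy induction then gives that $h^{mM}$ kills the kernel of $B_r^+/p^m \to B_\infty^+/p^m$. Once one has this, the proof is short: if $v\in B_r^+$ lands in $p^m B_\infty^+$, then $(p^{-1}h^M)^m v \in B_r^+$, so $|\!|v|\!|_s \le |\!|ph^{-M}|\!|_s^m$, and choosing $s$ with $|\!|h^{-1}|\!|_s \le p^{(1-\delta)/M}$ gives the bound. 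Your proposal contains none of this; the existence of a uniform $M$ is precisely the content that makes $s(\delta)$ independent of $v$ and $m$, and nothing in your coefficient-level sketch produces it.

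Moreover, the explicit step you propose — writing $v = \sum_j a_j (p/h^{p^{r+1}})^j$ with $|\!|a_j|\!|\le p^c$ and then reading off $|\!|v|\!|_\infty = \sup_j|\!|p^j a_j|\!|_{A[1/h]}$ — is false. The elements $h^{-p^{r+1}j}$ are not orthogonal in $A\langle 1/h\rangle$ (they differ by units), so the image of $v$ in $B_\infty^+$ can be far smaller than any individual term because of cancellation, and this cancellation is exactly what the hypothesis $|\!|v|\!|_\infty\le p^{c-m}$ encodes. You cannot extract the gain $p^{-m}$ term-by-term; you must exploit a structural consequence of that smallness, which is what the Noetherianness argument supplies.

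A secondary issue: the reduction to coefficient rings via $B_r^+ = A_r^+\langle X_{\opn{sim}},X_{a,b,\tau}\rangle$ is not available in the form you assert. The coordinates of Proposition~\ref{StandardFormpbetaNablaiProp}(2) are built from the canonical bases $\ide{B}_\infty^{\opn{can}}$, which live only over the infinite Igusa tower above the ordinary locus; they do not extend to the overconvergent neighbourhood $\mathcal{U}_r$. The torsor $P_{G,\opn{dR},A}^{\opn{an}}$ is trivialised over all of $\opn{Spa}(A,A^+)$ by the fixed basis of Notation~\ref{FixedBasisOverSpfA+Notation}, but the overconvergent extension $\mathcal{U}$ is a neighbourhood of a \emph{different} section (the one given by the canonical basis over the ordinary locus), so it is not a product in the fixed-basis coordinates either. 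The paper deliberately avoids needing any such explicit coordinates and instead works directly with the abstract inclusion $B_r^+ \hookrightarrow B_\infty^+ = B_r^+\langle 1/h\rangle$, using only Noetherianness of $B_r^+/p$ and $p$-torsion-freeness of $B_\infty^+$.
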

\begin{proof}
    Since $\mathcal{U}_r$ is an overconvergent extension, we have $B^+_\infty = B_r^+\langle 1/h \rangle$, hence $\left( B^+_{\infty}/p \right) = \left( B_r^+/p \right)[1/h]$. Therefore, any element in the kernel of the map $B_r^+/p \to B_{\infty}^+/p$ is killed by some power of $h$. Since $B_r^+/p$ is Noetherian, we see that there is a common power $h^M$ which kills any element in the kernel of the map $B_r^+/p \to B_{\infty}^+/p$. Since $B_{\infty}^+$ is $p$-torsion free, this implies (by a simple induction argument) that $h^{m M}$ kills the kernel of $B_r^+/p^m \to B^+_{\infty}/p^m$ for any integer $m \geq 1$.

    Now, by raising $v$ to an integral power, it is enough to prove the claim for $c \in \mbb{Z}$, and by rescaling, it is enough to prove the claim when $c = 0$. Therefore, we suppose we have an element $v \in B_r^+$ whose image lies in $p^m B_{\infty}^+$. By the paragraph above, we must therefore have
    \[
    |\!| (p^{-1} h^M)^m v |\!|_s \leq |\!| (p^{-1} h^M)^m v |\!|_r \leq 1
    \]
    for any $s \geq r$ and $m \geq 1$. Since $|\!|h^{-1} |\!|_s \to 1$ as $s \to +\infty$, taking $s$ large enough such that $|\!|h^{-1}|\!|_s \leq p^{\frac{(1-\delta)}{M}}$, we obtain
    \[
    |\!| v |\!|_s \leq |\!| (p h^{-M})^m |\!|_s \leq p^{-\delta m}
    \]
    as required.
\end{proof}

\subsubsection{The Gauss--Manin connection (overconvergent case)} \label{TheGMConnectionOCcaseSubSec}

We now prove an overconvergent analogue of Proposition \ref{epsilonactionOrdProp}. 

\begin{proposition} \label{epactionOCversionProp}
    Let $\varepsilon > 0$. For any quasi-compact open neighbourhood $U$ of the closure of $\opn{Spa}(A_{\opn{ord}, \infty}, A_{\opn{ord}, \infty}^+)$ inside $P_{G, \opn{dR}, A}^{\opn{an}}$, there exists a quasi-compact open neighbourhood $V$ of the closure of $\opn{Spa}(A_{\opn{ord}, \infty}, A_{\opn{ord}, \infty}^+)$ inside $P_{G, \opn{dR}, A}^{\opn{an}}$ and a unique continuous $L$-linear action
    \begin{equation} \label{epactionOCversionEqn}
    C_{\varepsilon}(U_{G, \beta}, L) \times \mathcal{O}_{P_{G, \opn{dR}, A}^{\opn{an}}}(U) \to \mathcal{O}_{P_{G, \opn{dR}, A}^{\opn{an}}}(V)
    \end{equation}
    extending the action of polynomial functions in $C_{\varepsilon}(U_{G, \beta}, L)$ induced from the operators $\{ \nabla_i : i=1, \dots, 2n-1 \}$. These actions are compatible as one varies $\varepsilon$, $U$, and $V$.
\end{proposition}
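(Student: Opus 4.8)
The plan is to reduce the overconvergent statement to the ordinary statement of Proposition~\ref{epsilonactionOrdProp} by working coordinate-by-coordinate and applying the overconvergence machinery of \S\ref{ContinuousOpsOnBanachSpacesSec} (specifically Proposition~\ref{CloseToIsomGeneralProp}). First I would fix $\varepsilon > 0$ and a quasi-compact open neighbourhood $U$ of the closure of $\opn{Spa}(A_{\opn{ord}, \infty}, A_{\opn{ord}, \infty}^+)$ inside $P_{G, \opn{dR}, A}^{\opn{an}}$. By Proposition~\ref{OcextensionsExistProp}, after possibly shrinking $U$ we may assume $U$ is an overconvergent extension $\mathcal{U}$ of some $\mathcal{U}_{\opn{HT}, k}$ with $k \geq k(\varepsilon/2)$ (the constant from Proposition~\ref{epsilonactionOrdProp}); replacing $k$ by a larger integer only shrinks the neighbourhood, so this is harmless. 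As in \S\ref{OCneighbourhoodsGSec} we obtain the chain of Banach algebras $B_1 \to B_2 \to \cdots \to B_\infty = \mathcal{O}(\mathcal{U}_{\opn{HT}, k})$ with $B_r = \mathcal{O}(\mathcal{U}_r)$, together with the operators $\nabla_i$ acting compatibly on all of them (these are the analytifications of the $\mathcal{D}$-module operators of Lemma~\ref{LemmaConcreteDescriptionNablai}, restricted to the opens $\mathcal{U}_r \subset P_{G, \opn{dR}}^{\opn{an}}$).

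The key inputs are then: (i) Lemma~\ref{OCclosetobeingisomLemma}, which establishes precisely the convergence property \eqref{Equationproperty} required in Proposition~\ref{CloseToIsomGeneralProp} for the sequence $B_1 \to B_2 \to \cdots \to B_\infty$; and (ii) Proposition~\ref{epsilonactionOrdProp}, which tells us that $p^{\beta_i}\nabla_i$ extends to an $(\varepsilon/2)$-analytic action on $B_\infty = \mathcal{O}_{\mathcal{U}_{\opn{HT}, k}}(\mathcal{U}_{\opn{HT}, k})$ (here I would take $\gamma = \varepsilon$ and $\varepsilon_{\text{old}} = \varepsilon/2$ in the notation of Proposition~\ref{CloseToIsomGeneralProp}). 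Applying Proposition~\ref{CloseToIsomGeneralProp} with $\nabla = p^{\beta_i}\nabla_i$ and $V_\bullet = B_\bullet$, we conclude: for each $i$ there exists $s_i \geq 1$ (depending only on $\varepsilon$ and the operator norm $\|p^{\beta_i}\nabla_i\|_r$) such that $p^{-k\gamma}\|f_k(p^{\beta_i}\nabla_i)(v)\|_{s_i} \to 0$ as $k \to +\infty$ for any $v \in B_r$. Taking $s = \max_i s_i$ and recalling that $C_{\varepsilon}(\mbb{Z}_p, L)$ has the Mahler basis $\{\binom{x}{k}\}$ with the norm condition $p^{k\varepsilon}|a_k| \to 0$, this shows that each operator $p^{\beta_i}\nabla_i$ extends to a continuous $\varepsilon$-analytic action $C_{\varepsilon}(p^{-\beta_i}\mbb{Z}_p, L) \times B_r \to B_s$ (after the change of variables $p^{-\beta_i}x \mapsto x$ identifying $C_\varepsilon(p^{-\beta_i}\mbb{Z}_p, L) \cong C_\varepsilon(\mbb{Z}_p, L)$ as in the proof of Proposition~\ref{epsilonactionOrdProp}).

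To obtain the full multivariable action of $C_{\varepsilon}(U_{G, \beta}, L) = C_\varepsilon(p^{-\beta_1}\mbb{Z}_p, L)\hatot \cdots \hatot C_\varepsilon(p^{-\beta_{2n-1}}\mbb{Z}_p, L)$, I would compose the $2n-1$ single-variable actions: since the $\nabla_i$ commute (Lemma~\ref{LemmaConcreteDescriptionNablai}), the composite
\[
B_r \xrightarrow{\nabla_1\text{-action}} B_{s_1} \xrightarrow{\nabla_2\text{-action}} B_{s_2} \to \cdots \to B_{s}
\]
gives a well-defined continuous $L$-algebra map $C_{\varepsilon}(p^{-\beta_1}\mbb{Z}_p, L)\hatot\cdots\hatot C_\varepsilon(p^{-\beta_{2n-1}}\mbb{Z}_p, L) \to \underline{\opn{Hom}}_{\opn{cont}}(B_r, B_s)$, and because polynomial functions are dense this extends uniquely to $C_\varepsilon(U_{G,\beta}, L)$. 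Setting $V = \mathcal{U}_s$ (more precisely the preimage in $P_{G, \opn{dR}, A}^{\opn{an}}$ of the relevant radius-$s$ overconvergent extension), the map \eqref{epactionOCversionEqn} is the required action. Uniqueness follows from density of polynomial functions and continuity; compatibility in $\varepsilon$, $U$, $V$ is immediate from the uniqueness and functoriality of all the constructions involved. The main obstacle is the bookkeeping to ensure that the single-variable extensions can be chained consistently --- one must check that passing to a larger radius $s_1$ in the first step does not destroy the hypotheses needed to run Proposition~\ref{CloseToIsomGeneralProp} for the second operator; this is fine because the property \eqref{Equationproperty} of Lemma~\ref{OCclosetobeingisomLemma} holds for the whole tower $B_\bullet$ uniformly, and the operator norms $\|p^{\beta_i}\nabla_i\|_{s_1}$ are bounded by $\|p^{\beta_i}\nabla_i\|_{r}$ (the norms decrease along the tower), so the constant $s$ from Proposition~\ref{CloseToIsomGeneralProp} applied at radius $s_1$ is no worse than at radius $r$. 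I would also remark at the end that this construction is independent of the choice of generator $h$ and of $\opn{Spa}(A, A^+) \in \mathcal{C}_G$ up to the natural restriction maps, which is what allows one to glue in the next subsection.
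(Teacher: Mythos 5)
Your proposal is essentially the same as the paper's proof, and it is correct. Both arguments fix $i$, choose $k\geq k(\varepsilon/2)$ so that Proposition~\ref{epsilonactionOrdProp} gives an $(\varepsilon/2)$-analytic action of $p^{\beta_i}\nabla_i$ on $B_\infty=\mathcal{O}(\mathcal{U}_{\opn{HT},k})$, then feed this together with Lemma~\ref{OCclosetobeingisomLemma} into Proposition~\ref{CloseToIsomGeneralProp} to overconverge the single-variable action to some $B_s$. The paper then simply sets $V=\cap_i V_i$, invokes commutativity of the $\nabla_i$ and density of $C_\varepsilon(p^{-\beta_1}\mbb{Z}_p,L)\otimes_L\cdots\otimes_L C_\varepsilon(p^{-\beta_{2n-1}}\mbb{Z}_p,L)$, and declares the multivariable action to exist; you are more explicit, spelling out the chaining $B_r\to B_{s_1}\to\cdots\to B_s$ and explaining why the outputs of one step lie in the domain of the next (because the operator norms are non-increasing along the tower, so the integer produced by Proposition~\ref{CloseToIsomGeneralProp} at level $s_{i-1}$ is no worse than at level $r$). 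This is genuinely the missing detail glossed over by the paper's ``take $V=\cap_i V_i$'' line, so your version is arguably more careful, not merely longer. Two small imprecisions worth noting: you should be careful with the notational collision of $k$ as both Igusa-tower depth and Mahler index; and the paper asserts density of the \emph{uncompleted} tensor product $C_\varepsilon(p^{-\beta_1}\mbb{Z}_p,L)\otimes_L\cdots$ inside $C_\varepsilon(U_{G,\beta},L)$ rather than an identification with a completed tensor product, which is what you actually use when extending by continuity --- fine in substance, but worth stating the way the paper does.
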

\begin{proof}
    Let $i \in \{1, \dots, 2n-1\}$ and let $k \geq 1$ be an integer such that $k \geq k(\varepsilon/2)$, where $k(\varepsilon/2)$ is the integer in Proposition \ref{epsilonactionOrdProp} (depending on the integer $i$). Then, increasing $k$ if necessary, there exists an overconvergent extension $\mathcal{U}$ of $\mathcal{U}_{\opn{HT}, k}$ with $\mathcal{U} \subset U$ because overconvergent extensions are cofinal. Without loss of generality, we may assume $\mathcal{U} = \mathcal{U}_1$. Then, we consider the chain of $L$-Banach spaces
    \[
    \mathcal{O}_{P_{G, \opn{dR}, A}^{\opn{an}}}(\mathcal{U}_1) \to \cdots \to \mathcal{O}_{P_{G, \opn{dR}, A}^{\opn{an}}}(\mathcal{U}_{\infty})
    \]
    where $\mathcal{U}_{\infty} = \mathcal{U}_{\opn{HT}, k}$. By Proposition \ref{epsilonactionOrdProp}, the operator $p^{\beta_i}\nabla_i$ extends to an $\varepsilon/2$-analytic action on $\mathcal{O}_{P_{G, \opn{dR}, A}^{\opn{an}}}(\mathcal{U}_{\infty})$. Combining this with Lemma \ref{OCclosetobeingisomLemma}, we see that the hypotheses of Proposition \ref{CloseToIsomGeneralProp} are satisfied, hence there exists an integer $r \geq 1$ such that $p^{\beta_i}\nabla_i$ extends to an $\varepsilon$-analytic action on $\mathcal{O}_{P_{G, \opn{dR}, A}^{\opn{an}}}(\mathcal{U}_{r})$. In other words, if we set $V_i =  \mathcal{U}_r$, then there exists a continuous $L$-linear action
    \[
    C_{\varepsilon}(p^{-\beta_i}\mbb{Z}_p, L) \times \mathcal{O}_{P_{G, \opn{dR}, A}^{\opn{an}}}(U) \to \mathcal{O}_{P_{G, \opn{dR}, A}^{\opn{an}}}(V_i)
    \]
    extending the action of polynomial functions in $p^{\beta_i} \nabla_i$. This action is unique because it can be computed on Mahler expansions.

    Now take $V = \cap_{i=1}^{2n-1} V_i$. Using the fact that the operators $\nabla_i$ commute with each other and the fact that
    \[
    C_{\varepsilon}(p^{-\beta_1} \mbb{Z}_p, L) \otimes_L \cdots \otimes_L C_{\varepsilon}(p^{-\beta_{2n-1}} \mbb{Z}_p, L) \subset C_{\varepsilon}(U_{G, \beta}, L)
    \]
    is dense, we see that there is a unique action as in (\ref{epactionOCversionEqn}) extending the action of polynomial functions. This action is functorial in $\varepsilon$, $U$ and $V$ by unicity.
\end{proof}

We now prove the first part of Theorem \ref{MainTheoremOnPadicIteration}. Let $U = \opn{Spa}(A, A^+) \in \mathcal{C}_G$. Then, from the functoriality properties in Proposition \ref{epactionOCversionProp}, we obtain a continuous $L$-algebra action
\[
C_{\varepsilon}(U_{G, \beta}, L) \times \mathscr{N}^{\dagger}_G(U) \to \mathscr{N}^{\dagger}_G(U)
\]
extending the action of polynomial functions. This action is functorial in $\varepsilon$. We claim that this action is unique. Indeed, by Proposition \ref{NOctorsorsReductionProp}, there exist integers $r_1 < r_2 < \cdots $ and \'{e}tale $\overline{\mathcal{P}}^{\square}_{G, k}$-torsors $\mathcal{U}_{r_k, k} \to \opn{Spa}(A_{r_k}, A_{r_k}^+)$ with $\mathcal{U}_{r_k, k}$ an overconvergent extension of $\mathcal{U}_{\opn{HT}, k}$, such that 
\[
\mathcal{U}_{r_1, 1} \supset \mathcal{U}_{r_2, 2} \supset \cdots \supset \mathcal{U}_{r_k, k} \supset \cdots \supset \opn{Spa}(A_{\opn{ord}, \infty}, A_{\opn{ord}, \infty}^+) .
\]
Furthermore, the collection $\{ \mathcal{U}_{r_k, k} \}_{k \geq 1}$ is a cofinal system of quasi-compact open neighbourhoods of $P_{G, \opn{dR}, A}^{\opn{an}}$ containing the closure of $\opn{Spa}(A_{\opn{ord}, \infty}, A_{\opn{ord}, \infty}^+)$, and all of these torsors are necessarily reductions of structure of each other. This implies that
\begin{equation} \label{InjectiveDirectSystemForNdagger}
\mathscr{N}^{\dagger}_G(U) \cong \left( \mathcal{O}_{P_{G, \opn{dR}, A}^{\opn{an}}}(\mathcal{U}_{r_k, k}) \right)_{k \geq 1} .
\end{equation}

Suppose that $\star_1$ and $\star_2$ are two continuous actions of $C_{\varepsilon}(U_{G, \beta}, L)$ on $\mathscr{N}^{\dagger}_G(U)$ extending the action of polynomial functions induced from the operators $\{ \nabla_i : i=1, \dots, 2n-1 \}$. Let $f \in C_{\varepsilon}(U_{G, \beta}, L)$ which we can write as a limit $f = \lim_m f_m$ of polynomial functions, and let $x \in \mathcal{O}_{P_{G, \opn{dR}, A}^{\opn{an}}}(\mathcal{U}_{r_k, k})$ for some $k \geq 1$. By assumption and our definition of an action on ind-sheaves (see \S \ref{NotationsAndConventionsIntro}), there exists an integer $k' \geq 1$ and continuous maps
\begin{equation} \label{CepToNdaggerEqn}
C_{\varepsilon}(U_{G, \beta}, L) \to \mathcal{O}_{P_{G, \opn{dR}, A}^{\opn{an}}}(\mathcal{U}_{r_{k'}, k'}), \quad g \mapsto g \star_j x \quad (j=1, 2) .
\end{equation}
Since $\mathcal{O}_{P_{G, \opn{dR}, A}^{\opn{an}}}(\mathcal{U}_{r_{k'}, k'})$ is Hausdorff and both actions extend the same action of polynomial functions, we find that
\[
f \star_1 x = \lim_m f_m \star_1 x = \lim_m f_m \star_2 x = f \star_2 x
\]
with the limit taking place in $\mathcal{O}_{P_{G, \opn{dR}, A}^{\opn{an}}}(\mathcal{U}_{r_{k'}, k'})$. This implies that the actions $\star_1$ and $\star_2$ coincide (as morphisms of ind-sheaves), thereby proving uniqueness.

To conclude the proof of Theorem \ref{MainTheoremOnPadicIteration}(1), we must show that the action is functorial in $U \in \mathcal{C}_G$ and equivariant for the action of $M^G_{\opn{Iw}}(p^{\beta})$. Let $U_1, U_2 \in \mathcal{C}_G$ with $U_1 \supset U_2$, and consider the natural (continuous) restriction map $\pi \colon \mathscr{N}^{\dagger}_G(U_1) \to \mathscr{N}^{\dagger}_G(U_2)$. Let $x \in \mathscr{N}^{\dagger}_G(U_1)$ and consider the following two continuous maps $C_{\varepsilon}(U_{G, \beta}, L) \to \mathscr{N}^{\dagger}_G(U_2)$ given by
\begin{align}
    f &\mapsto \pi ( f \star_{U_1} x ) \label{ftopifU1} \\
    f &\mapsto f \star_{U_2} \pi(x) \label{ftofU_2pi}
\end{align}
where $\star_{U_j}$ denotes the action of $C_{\varepsilon}(U_{G, \beta}, L)$ on $\mathscr{N}^{\dagger}_G(U_j)$. By the same argument above, we can write $\mathscr{N}^{\dagger}_G(U_2) \cong (V_k)_{k \geq 1}$ as a filtered inductive system of Banach spaces and both (\ref{ftopifU1}) and (\ref{ftofU_2pi}) factor through some $V_k$ (by the definition of an action on ind-sheaves). Write $f = \lim_m f_m$ as a limit of polynomial functions. Since we know that the action of the operators $\{\nabla_i \}$ (and hence the action of polynomial functions) is already functorial, we have
\[
\pi( f \star_{U_1} x ) = \lim_m \pi(f_m \star_{U_1} x) = \lim_m (f_m \star_{U_2} \pi(x) ) = f \star_{U_2} \pi(x)
\]
with the limit taking place in $V_k$. This implies that (\ref{ftopifU1}) $=$ (\ref{ftofU_2pi}). The proof for the equivariance under the action of $M^G_{\opn{Iw}}(p^{\beta})$ is very similar, and can again be deduced from the density of polynomial functions.  

\subsection{The construction for \texorpdfstring{$H$}{H}} \label{TheConstuctionForHSec}

In this section we finish the proof of Theorem \ref{MainTheoremOnPadicIteration}. Since the construction of the action in Theorem \ref{MainTheoremOnPadicIteration}(2) follows exactly the same strategy as in \S \ref{OrdExpStrictNhoodGSec}--\ref{OCneighbourhoodsGSec}, we do not repeat the construction in the same amount of detail. Instead, we simply highlight the differences between the constructions for $G$ and $H$ and leave the details to the interested reader.

To define the analogue of the torsors $\mathcal{U}_{\opn{HT}, k}$, we consider quotients of the Igusa tower $\ide{IG}_{H, \opn{id}}(p^{\beta})$ by the subgroup of $M^H_{\diamondsuit}(p^{\beta})$ consisting of elements which are congruent to the identity modulo $p^k$. Over these quotients of the Igusa tower, we again have a canonical basis of $\mathcal{H}_{\mathcal{A}_1}/p^k \oplus \mathcal{H}_{\mathcal{A}_2}/p^k$ arising from the universal trivialisations, and we can consider the torsor parameterising bases of $\mathcal{H}_{\mathcal{A}_1} \oplus \mathcal{H}_{\mathcal{A}_2}$ respecting the symplectic structure and Hodge filtration which reduce to the canonical basis modulo $p^k$. Working locally, one can show via the same method as in Proposition \ref{OcextensionsExistProp} that these torsors overconverge (and hence we can define overconvergent extensions in this setting). To define the system of overconvergent neighbourhoods of $\mathcal{X}_{H, \opn{id}}(p^{\beta})$ inside $\mathcal{X}_{H, \diamondsuit}(p^{\beta})$ locally, we consider the exact same congruences but now involving a generator of $\hat{\delta}^+_{H, n+1}$ (see Definition \ref{DeltaHneighbourhoodsDef}).

We now proceed in exactly the same way for the construction the action for $H$. Indeed, one can choose coordinates of $\mathcal{U}_{\opn{HT}, k}$ base-changed to $\mathcal{IG}_{H, \opn{id}}(p^{\beta})$ so that the action of $p^{\beta_i} \nabla_i$ ($i=1, \dots, n-1$) is congruent modulo $p^k$ to a simple operator on a Tate algebra (which is an iterated version of the construction in \S \ref{NilpotentOperatorsSubSec}). One then applies the general results in \S \ref{ContinuousOpsOnBanachSpacesSec} in the same way as in \S \ref{OrdExpStrictNhoodGSec}--\ref{OCneighbourhoodsGSec}, and this results in the construction of the action in Theorem \ref{MainTheoremOnPadicIteration}(2). The uniqueness and functoriality results follow the same proof as at the end of \S \ref{TheGMConnectionOCcaseSubSec}.

To prove Theorem \ref{MainTheoremOnPadicIteration}(3), we again use the same strategy as at the end of \S \ref{TheGMConnectionOCcaseSubSec}. More precisely, we already know from \S \ref{FunctorialityOfGMConnectionOnNOCSubSec} that the pullback map is equivariant on polynomial functions, and we use the fact that $\mathscr{N}^{\dagger}_H(\hat{\iota}^{-1}U)$ is a filtered inductive system of Banach spaces. Hence equivariance on polynomial functions is enough to deduce equivariance for the $\varepsilon$-analytic actions.

\subsection{\texorpdfstring{$p$}{p}-adic evaluation maps}

We now construct the $p$-adic analogues of the evaluation maps in \S \ref{ClassicalEvaluationMapsSubSec} following the general construction in \S \ref{TheMainConstructionSubSec}. 

\subsubsection{Compatibility with support conditions} \label{CompatWithSupportConditionsSubSec}

Fix $U = \opn{Spa}(A, A^+) \in \mathcal{C}_G$ and let $V \defeq U \cap (\mathcal{X}_{G, \opn{Iw}}(p^{\beta}) - \mathcal{Z}_{G, > n+1}(p^{\beta}))$. Before constructing the evaluation maps, one must first show that the structures on $\mathscr{N}^{\dagger}_G(U)$ extend over $V$. Recall the definitions of $\mathscr{M}^{(r, k)}_{\bullet}$ and $\mathscr{N}^{(r, k)}_{\bullet}$ from Lemma \ref{OctorsorsReductionLemma} and Proposition \ref{NOctorsorsReductionProp} respectively, and also their versions with a fixed weight from Definition \ref{DefinitionOfNOCwithweight}. 

Note that $\mathscr{M}^{(r,k)}_G(U) \subset \mathscr{N}^{(r,k)}_G(U)$ (resp. $\mathscr{M}^{(r,k)}_G(V) \subset \mathscr{N}^{(r,k)}_G(V)$) is the subspace killed by the action of $\overline{\ide{u}}_G$ (resp. $\overline{\ide{u}}_H$) under $\star_{\overline{P}}$. Furthermore, $\mathscr{N}^{(r,k)}_G(U)$ and $\mathscr{M}^{(r,k)}_G(U)$ are Banach $L$-algebras, and $\mathscr{N}^{(r,k)}_G(V)$ and $\mathscr{M}^{(r,k)}_G(V)$ are Fr\'{e}chet $L$-algebras. Indeed, let $\delta$ be a fixed generator of $\delta_{G, >n+1}^+$ over $U$, and let $\{ \gamma_i \}_{i \in \mbb{N}}$ be a monotonically decreasing sequence of rational numbers $\gamma_i > 0$ such that $\gamma_i \to 0$ as $i \to +\infty$. Let $V_{\gamma_i} \subset V \subset U$ denote the open affinoid subspace where $|\delta| \leq |p|^{\gamma_i}$. Then $\{ V_{\gamma_i} \}$ is a cover of $V$ with compact restriction maps with dense image. We have
\[
\mathscr{N}^{(r,k)}_G(V) = \varprojlim_i \mathscr{N}^{(r,k)}_G(V_{\gamma_i}), \quad \quad \mathscr{M}^{(r,k)}_G(V) = \varprojlim_i \mathscr{M}^{(r,k)}_G(V_{\gamma_i})
\]
where each term in the inverse limit is a Banach $L$-algebra and the transition maps are compact and have dense image.

\begin{lemma} \label{LaActionExtendsToVLemma}
    Let $U \in \mathcal{C}_G$ and $V \defeq U \cap (\mathcal{X}_{G, \opn{Iw}}(p^{\beta}) - \mathcal{Z}_{G, > n+1}(p^{\beta}))$. Let $\varepsilon > 0$. Then the action in Theorem \ref{MainTheoremOnPadicIteration}(1) extends uniquely to a continuous $L$-algebra action
    \[
    C_{\varepsilon}(U_{G, \beta}, L) \times \mathscr{N}^{\dagger}_{G}(V) \to \mathscr{N}^{\dagger}_{G}(V)
    \]
    which is functorial in $U$, $\varepsilon$, and is $M^G_{\opn{Iw}}(p^{\beta})$-equivariant.
\end{lemma}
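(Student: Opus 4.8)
The plan is to deduce Lemma~\ref{LaActionExtendsToVLemma} from Theorem~\ref{MainTheoremOnPadicIteration}(1) by a standard ``spreading out along the cover $\{V_{\gamma_i}\}$'' argument, together with the uniqueness already established. First I would recall that $V = \bigcup_i V_{\gamma_i}$ with compact transition maps $\mathscr{N}_G^{(r,k)}(V_{\gamma_{i+1}}) \to \mathscr{N}_G^{(r,k)}(V_{\gamma_i})$ having dense image, and that $\mathscr{N}_G^{\dagger}(V) = \varinjlim_{r,k} \varprojlim_i \mathscr{N}_G^{(r,k)}(V_{\gamma_i})$ as an ind-(Fréchet) space. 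Each $V_{\gamma_i}$ is a quasi-compact open affinoid subspace contained in $U$; however it need not itself lie in $\mathcal{C}_G$ (it may fail to meet the ordinary locus in the generic fibre of an open affine of $\ide{X}_{G,w_n}(p^\beta)$, and indeed it is disjoint from $\mathcal{Z}_{G,>n+1}(p^\beta) \supset \mathcal{X}_{G,w_n}(p^\beta)$). So the action on $\mathscr{N}_G^{\dagger}(V_{\gamma_i})$ cannot be obtained directly from Theorem~\ref{MainTheoremOnPadicIteration}(1) applied to $V_{\gamma_i}$. Instead I would obtain it by \emph{restriction} from the action on $\mathscr{N}_G^{\dagger}(U)$: for $U \in \mathcal{C}_G$ we have the continuous $C_\varepsilon(U_{G,\beta},L)$-action on $\mathscr{N}_G^{\dagger}(U)$, and composing with the natural restriction $\mathscr{N}_G^{\dagger}(U) \to \mathscr{N}_G^{\dagger}(V_{\gamma_i})$ produces, for each $x \in \mathscr{N}_G^{\dagger}(U)$, a map $C_\varepsilon(U_{G,\beta},L) \to \mathscr{N}_G^{\dagger}(V_{\gamma_i})$; the point is to show this descends to an action \emph{on} $\mathscr{N}_G^{\dagger}(V)$, i.e.\ that the action of $C^{\opn{pol}}$ (which is genuinely defined on $V$ since the operators $\nabla_i$ are defined on all of $P_{G,\opn{dR}}^{\opn{an}}$) extends continuously to a $C_\varepsilon$-action.

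The key mechanism is Proposition~\ref{epactionOCversionProp} together with the cofinality of overconvergent extensions. Concretely: fix $\varepsilon>0$ and $i \in \{1,\dots,2n-1\}$. For an overconvergent extension $\mathcal{U}$ of $\mathcal{U}_{\opn{HT},k}$ (with $k$ large relative to $\varepsilon$), Proposition~\ref{epactionOCversionProp} gives an $\varepsilon$-analytic action of $p^{\beta_i}\nabla_i$ on $\mathcal{O}_{P_{G,\opn{dR},A}^{\opn{an}}}(\mathcal{U}_r)$ for suitable $r$. Now the overconvergent neighbourhoods $\mathcal{U}_r$ shrink towards the closure of the ordinary locus $\opn{Spa}(A_{\opn{ord},\infty},A_{\opn{ord},\infty}^+)$, which is \emph{contained in} the preimage of $V$ (since $V$ contains a strict neighbourhood of $\mathcal{X}_{G,w_n}(p^\beta)$... wait---in fact $V$ is the complement of $\mathcal{Z}_{G,>n+1}(p^\beta)$, and $\mathcal{X}_{G,w_n}(p^\beta) \subset \mathcal{Z}_{G,>n+1}(p^\beta)$, so $V$ does \emph{not} contain the ordinary locus). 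This is the crux of the difficulty, so let me restate the correct mechanism: the operators $\nabla_i$ are defined on $\mathscr{O}_{P_{G,\opn{dR}}^{\opn{an}}}$ globally, hence on $\mathscr{N}_G^{\dagger}(V_{\gamma_i})$ for every $i$; the content is purely the \emph{convergence} of the $\varepsilon$-analytic iteration, which one already has on $\mathscr{N}_G^{\dagger}(U) \supset \mathscr{N}_G^{\dagger}(V)$ by Theorem~\ref{MainTheoremOnPadicIteration}(1). So I would argue as follows: the operator norm bounds $\|f_k(p^{\beta_i}\nabla_i)\|_{\mathscr{N}_G^{(r,k)}(U)} \le p^{k\varepsilon}C_\varepsilon$ established in the course of proving Theorem~\ref{MainTheoremOnPadicIteration}(1) are statements about an operator defined on a Banach algebra; restricting to the dense subalgebra coming from $U$ and using that restriction $\mathscr{N}_G^{\dagger}(U) \to \mathscr{N}_G^{\dagger}(V_{\gamma_i})$ is continuous, the same operator norm bound holds on the image, hence (by density of $\mathscr{N}_G^{(r,k)}(U)$ in $\mathscr{N}_G^{(r,k)}(V_{\gamma_i})$, which follows since $V_{\gamma_i} \subset U$ is a rational subspace and the torsor $\mathcal{P}_{G,\opn{dR},k}$ trivialises over $U$) on all of $\mathscr{N}_G^{(r,k)}(V_{\gamma_i})$. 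This gives a continuous $C_\varepsilon(p^{-\beta_i}\mbb{Z}_p,L)$-action on each $\mathscr{N}_G^{(r,k)}(V_{\gamma_i})$, compatible as $i$ varies, hence a $C_\varepsilon(U_{G,\beta},L)$-action; passing to the inverse limit over $i$ and the colimit over $(r,k)$ gives the action on $\mathscr{N}_G^{\dagger}(V)$.

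The remaining steps are routine. \textbf{Uniqueness}: as at the end of \S\ref{TheGMConnectionOCcaseSubSec}, $\mathscr{N}_G^{\dagger}(V)$ is a filtered colimit of Fréchet spaces, each Fréchet space being $\varprojlim_i$ of Banach spaces; two continuous actions extending the (genuinely defined, hence agreeing) action of polynomial functions must agree, because any $f \in C_\varepsilon(U_{G,\beta},L)$ is a limit of polynomials and each term in the colimit/limit is Hausdorff. \textbf{Functoriality in $U$ and in $\varepsilon$}: identical to the argument at the end of \S\ref{OCneighbourhoodsGSec}---for $U_1 \supset U_2$ in $\mathcal{C}_G$ one gets $V_1 \supset V_2$, the restriction $\mathscr{N}_G^{\dagger}(V_1) \to \mathscr{N}_G^{\dagger}(V_2)$ intertwines the two actions because it does so on polynomial functions and these are dense; compatibility in $\varepsilon$ is immediate since a smaller $\varepsilon$ gives a larger algebra $C_\varepsilon$ restricting the action. \textbf{$M^G_{\opn{Iw}}(p^\beta)$-equivariance}: the action map on $\mathscr{N}_G^{\dagger}(U)$ is equivariant by Theorem~\ref{MainTheoremOnPadicIteration}(1), and $V$ is stable under the group action (it is the complement of the closed, Hecke-stable set $\mathcal{Z}_{G,>n+1}(p^\beta)$), so equivariance descends to $V$ by the same density argument. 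I expect the main obstacle to be the technical point that $V_{\gamma_i}$ is not in $\mathcal{C}_G$ and so one genuinely must transport the action from $U$ rather than produce it directly---verifying that the torsors and all the relevant structures (the coordinates, the operator-norm estimates) behave well under the rational localisation $V_{\gamma_i} \subset U$, and that density of $\mathscr{N}_G^{(r,k)}(U)$ in $\mathscr{N}_G^{(r,k)}(V_{\gamma_i})$ indeed holds, which is where one invokes that $\mathcal{P}_{G,\opn{dR},k}$ trivialises over $U$ together with the description $\mathscr{N}_G^{(r,k)}(V_{\gamma_i}) = \mathscr{N}_G^{(r,k)}(U)\,\hatot_{\mathcal{O}(U)}\,\mathcal{O}(V_{\gamma_i})$.
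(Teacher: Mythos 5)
Your proof breaks down at the central step: you claim that the operator norm bounds $\|f_a(p^{\beta_i}\nabla_i)\| \leq p^{a\varepsilon}C_\varepsilon$ established on $\mathscr{N}_G^{(r,k)}(U)$ ``hold on the image'' in $\mathscr{N}_G^{(r,k)}(V_{\gamma_i})$ and thereby extend by density. This is not a valid deduction. The restriction map $r\colon \mathscr{N}_G^{(r,k)}(U) \to \mathscr{N}_G^{(r,k)}(V_{\gamma_i})$ is continuous (even norm-decreasing) with dense image and commutes with $\nabla$, but these facts do not control the operator norm of $f_a(\nabla)$ on $\mathscr{N}_G^{(r,k)}(V_{\gamma_i})$: for $y=r(x)$ one gets $\|f_a(\nabla)(y)\|_{V_{\gamma_i}} \leq M\|x\|_U$, and there is no lower bound for $\|r(x)\|_{V_{\gamma_i}}$ in terms of $\|x\|_U$. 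Concretely, writing $\mathscr{N}_G^{(r,k)}(V_{\gamma_i}) = \mathscr{N}_G^{(r,k)}(U)\langle \delta^i/p\rangle$, the Leibniz rule gives $\nabla(\delta^i/p) = i\,\nabla(\delta)\delta^{-1}\cdot(\delta^i/p)$, so iterating $\nabla$ on $\sum_j a_j(\delta^i/p)^j$ produces powers of $\nabla(\delta)\delta^{-1}$, whose norm strictly exceeds $1$ away from the ordinary locus --- which is precisely the issue you notice mid-proof when you observe that $V$ does \emph{not} contain $\mathcal{X}_{G,w_n}(p^\beta)$. That observation is where the difficulty lives; transport from $U$ simply does not clear it.

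The paper's proof instead re-establishes the estimate directly on $V_{\gamma_i}$. One first chooses $k'$ large enough that $\|\nabla(\delta)\delta^{-1}\|_{k'}\leq p^{\varepsilon/4}$, which is possible because this quantity tends to $1$ as the overconvergent radius shrinks towards the ordinary locus, where $\nabla$ is integral by Proposition~\ref{StandardFormpbetaNablaiProp}. One then expands $\binom{\nabla}{a}\bigl(a_j(\delta^i/p)^j\bigr)$ by a Leibniz-type formula in the spirit of (\ref{ExplicitFormulaForfk}), so that each term is a multinomial coefficient times a degree-$\leq a$ polynomial applied to $a_j$, times a power $b\leq a$ of $\nabla(\delta)\delta^{-1}$, times a power of $\delta^i/p$. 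Combining the $\varepsilon/2$-analyticity of the $C_{\varepsilon/2}(U_{G,\beta},L)$-action on $\mathscr{N}_G^{(r,k)}(U)$ with the bound $\|\nabla(\delta)\delta^{-1}\|_{k'}^{b}\leq p^{a\varepsilon/4}$ yields $p^{-a\varepsilon}\|\binom{\nabla}{a}(x)\|_{k',i}\to 0$, which gives the $\varepsilon$-analytic action on $\mathscr{N}_G^{(r,k)}(V_{\gamma_i})$ for each $i$, and then on $\mathscr{N}_G^\dagger(V)$ by passing to the inverse limit. This direct estimate is the actual content of the lemma; the closing steps you describe (uniqueness, functoriality, equivariance by density of polynomials) are correct and match the paper.
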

\begin{proof}
    Without loss of generality, we may assume that the rational numbers above satisfy $\gamma_i = 1/i$. Then we have
    \[
    \mathscr{N}^{(r, k)}_{G}(V_{\gamma_i}) = \mathscr{N}^{(r, k)}_G(U) \langle \frac{\delta^i}{p} \rangle
    \]
    for all $k \geq 1$. Let $\varepsilon > 0$. Let $\nabla \in \{ p^{\beta_1}\nabla_1, \dots, p^{\beta_{2n-1}}\nabla_{2n-1} \}$ be one of the operators appearing at the start of \S \ref{TheGMConnectionOnNdaggerSubSec}, normalised by the power of $p$ defined in Proposition \ref{IntertwiningOfNablaThetaiProp}. Then there exists $k' \geq k$ and $r' \geq r$ such that
    \begin{itemize}
        \item We have a continuous $L$-linear action
        \[
        C_{\varepsilon/2}(U_{G, \beta}, L) \times \mathscr{N}^{(r, k)}_G(U) \to \mathscr{N}^{(r', k')}_G(U)
        \]
        extending the action of polynomial functions. In particular, there exists a constant $C \in \mbb{R}_{>0}$ such that for any $f \in C^{\opn{pol}}(U_{G, \beta}, \mathcal{O}_L)$ of degree $\leq a$ and $y \in \mathscr{N}^{(r, k)}_G(U)$, we have $p^{-a\varepsilon/2}|\!|f \star y |\!|_{k'} \leq C |\!| y |\!|_{k}$, where $|\!| \cdot |\!|_k$ (resp. $|\!| \cdot |\!|_{k'}$) denotes the Banach norm on $\mathscr{N}^{(r, k)}_G(U)$ (resp. $\mathscr{N}^{(r', k')}_G(U)$). 
        \item $|\!| \nabla(\delta) \delta^{-1} |\!|_{k'} \leq p^{\varepsilon/4}$ for any choice of $\nabla$. Indeed, $\nabla$ is integral over the ordinary locus (see Proposition \ref{StandardFormpbetaNablaiProp}), so we can increase $k'$ if necessary so that this is satisfied. 
    \end{itemize}
    Let $x = \sum_{j=0}^{\infty} a_j \left( \frac{\delta^i}{p} \right)^j \in \mathscr{N}^{(r, k)}_{G}(V_{\gamma_i})$, with $a_j \in \mathscr{N}^{(r, k)}_G(U)$ converging to zero. Then, by a similar argument as in the proof of Proposition \ref{LANilpToLAIterationProp}, we have (for $a \geq 1$)
    \[
    \bincoeff{\nabla}{a} ( a_j \left( \frac{\delta^i}{p} \right)^j ) = \sum_{b=0}^{\opn{min}(a, j)}\sum \left( \begin{array}{c} \text{ multinomial } \\ \text{ coefficients } \end{array} \right) \cdot \left( \begin{array}{c} \text{ element of } \\ C^{\opn{pol}}(U_{G, \beta}, \mathcal{O}_L) \text{ of } \\ \text{ degree } \leq a \end{array} \right)(a_j) \cdot \nabla\left( \frac{\delta^i}{p} \right)^b \cdot \left( \frac{\delta^i}{p} \right)^{j-b} 
    \]
    where the unlabelled sum is indexed by subsets of $\{0, \dots, a-1\}$ of size $a-b$ (c.f. formula (\ref{ExplicitFormulaForfk})). Let $|\!| \cdot |\!|_{k', i}$ denote the Banach norm on $\mathscr{N}^{(r', k')}_{G}(V_{\gamma_i})$. Then, following the proof of Proposition \ref{LANilpToLAIterationProp}, we see that
    \begin{eqnarray}
    \left|\! \left| \bincoeff{\nabla}{a} ( a_j \left( \frac{\delta^i}{p} \right)^j ) \right| \! \right|_{k', i} & \leq & \opn{max}\{ p^{2\opn{log}_p(a)} |\!| f \star a_j |\!|_{k'} |\!| \nabla\left( \frac{\delta^i}{p} \right) |\!|_{k'}^b \} \label{BoundOnDeltaa1} \\ 
    & \leq & \opn{max}\{ p^{2\opn{log}_p(a)} |\!| f \star a_j |\!|_{k'} |\!|  \nabla(\delta)\delta^{-1}  |\!|_{k'}^b \} \label{BoundOnDeltaa2} \\
    & \leq & p^{2 \opn{log}_p(a)}p^{a \varepsilon/2} C |\!| a_j |\!|_k p^{a \varepsilon/4} \label{BoundOnDeltaa3}
    \end{eqnarray}
    where the maxima in (\ref{BoundOnDeltaa1}) and (\ref{BoundOnDeltaa2}) are over $0 \leq b \leq a$ and polynomials $f \in C^{\opn{pol}}(U_{G, \beta}, \mathcal{O}_L)$ of degree $\leq a$. For the inequality in (\ref{BoundOnDeltaa2}) we have used the fact that $\nabla(\delta^i/p) = i \nabla(\delta) \delta^{-1} (\delta^i/p)$, and for the inequality in (\ref{BoundOnDeltaa3}), we have used the fact that $|\!| f \star a_j |\!|_{k'} \leq p^{a\varepsilon/2} C |\!| a_j |\!|_k$ (by the first bullet point above) and $|\!|  \nabla(\delta)\delta^{-1}  |\!|_{k'}^b \leq p^{b \varepsilon/4} \leq p^{a \varepsilon/4}$ (by the second bullet point above). This implies that
    \[
    p^{-a\varepsilon} \left|\! \left| \bincoeff{\nabla}{a} (x) \right|\! \right|_{k', i} \leq p^{-a \varepsilon/4 + 2\opn{log}_p(a)} C \opn{max}_j |\!| a_j |\!|_k
    \]
    and hence the left-hand side converges to zero as $a \to +\infty$. This implies that the action $C_{\varepsilon}(U_{G, \beta}, L) \times \mathscr{N}^{(r, k)}_G(U) \to \mathscr{N}^{(r', k')}_G(U)$ extends uniquely to an action
    \[
    C_{\varepsilon}(U_{G, \beta}, L) \times \mathscr{N}^{(r, k)}_G(V_{\gamma_i}) \to \mathscr{N}^{(r', k')}_G(V_{\gamma_i})
    \]
    for any $\gamma_i$, and hence induces an action
    \[
    C_{\varepsilon}(U_{G, \beta}, L) \times \varprojlim_i \mathscr{N}^{(r, k)}_G(V_{\gamma_i}) \to \varprojlim_i \mathscr{N}^{(r', k')}_G(V_{\gamma_i})
    \]
    as required. The rest of the lemma follows.
\end{proof}

\begin{remark}
    By a similar argument as in Lemma \ref{LaActionExtendsToVLemma} above, one can check that Theorem \ref{MainTheoremOnPadicIteration}(2) extends to $\mathscr{N}^{\dagger}_H(\hat{\iota}^{-1}V)$, and we also have an analogue of Theorem \ref{MainTheoremOnPadicIteration}(3).
\end{remark}

We will also need the following important result.

\begin{lemma} \label{IndicatorFunctionsPreserveOCLemma}
    Fix $U \in \mathcal{C}_G$ and set $V \defeq U \cap (\mathcal{X}_{G, \opn{Iw}}(p^{\beta}) - \mathcal{Z}_{G, > n+1}(p^{\beta}))$. Let
    \[
    \mathcal{M}^{\dagger}_G \in \{ \mathscr{M}^{\dagger}_G(U), \mathscr{M}^{\dagger}_G(V) \} .
    \]
    Let $f \in C^{\opn{la}}(U_{G, \beta}, L)$ be any locally constant function. Then for any $x \in \mathcal{M}^{\dagger}_G$, one has $f \star x \in \mathcal{M}^{\dagger}_G$, i.e. the action of the locally constant function $f$ takes overconvergent forms to overconvergent forms. We also have an analogous statement for overconvergent forms for $H$.
\end{lemma}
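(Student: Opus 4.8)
\textbf{Plan of proof for Lemma \ref{IndicatorFunctionsPreserveOCLemma}.}

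The key observation is that overconvergent forms are exactly the elements of $\mathscr{N}^{\dagger}_G$ killed by the action of $\overline{\ide{u}}_G$ (resp. $\overline{\ide{u}}_H$) via $\star_{\overline{P}}$, as recorded after Proposition \ref{NOctorsorsReductionProp}. So the plan is to show that for any locally constant $f \in C^{\opn{la}}(U_{G, \beta}, L)$, the operator $f \star (-)$ commutes with each of the operators $\gamma \star_{\overline{P}} (-)$ for $\gamma \in \overline{\ide{u}}_G$ (resp. $\overline{\ide{u}}_H$) on the relevant module. Granting this, if $x \in \mathcal{M}^{\dagger}_G$ then $\gamma \star_{\overline{P}} (f \star x) = f \star (\gamma \star_{\overline{P}} x) = f \star 0 = 0$, so $f \star x \in \mathcal{M}^{\dagger}_G$, which is exactly the claim. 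The same argument applies verbatim to $H$ using $\overline{\ide{u}}_H$ and the action of $C^{\opn{la}}(U_{H, \beta}, L)$.

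First I would reduce to the case where $f = 1_{\Lambda}$ is the indicator function of a coset $\Lambda = v_0 + p^m U_{G, \beta}$, since any locally constant function is a finite $L$-linear combination of such, and $\gamma \star_{\overline{P}}(-)$ is $L$-linear. Next, I would reduce the commutation statement to a statement about polynomial functions: by Theorem \ref{MainTheoremOnPadicIteration}(1) and Lemma \ref{LaActionExtendsToVLemma}, the action of $C_{\varepsilon}(U_{G, \beta}, L) \supset C^{\opn{la}}(U_{G, \beta}, L)$ is the unique continuous extension of the action of $C^{\opn{pol}}(\mbb{Q}_p^{\oplus 2n-1}, L)$, and $1_{\Lambda}$ can be written as a limit (in $C_{\varepsilon}$ for some $\varepsilon>0$, via its Mahler expansion) of polynomials $q_a$. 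Since $\gamma \star_{\overline{P}}(-)$ is continuous (it comes from differentiating the $\overline{P}$-torsor structure and is a morphism of ind-sheaves of Banach/Fr\'{e}chet spaces), it suffices to prove $\gamma \star_{\overline{P}} (q \star x) = q \star (\gamma \star_{\overline{P}} x)$ for any polynomial $q$; then passing to the limit gives the result for $1_{\Lambda}$. Here one must be a little careful that the limit takes place inside a single Banach space in the relevant ind-system (so that the continuity argument goes through); this is handled exactly as in the uniqueness arguments at the end of \S \ref{TheGMConnectionOCcaseSubSec}.

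So the crux is: for $\gamma \in \overline{\ide{u}}_G$ and a polynomial $q \in C^{\opn{pol}}(\mbb{Q}_p^{\oplus 2n-1}, L)$, the operators $\gamma \star_{\overline{P}}$ and $q \star_{\ide{u}}$ commute. But $\gamma \star_{\overline{P}} (q \star_{\ide{u}} x) - q \star_{\ide{u}} (\gamma \star_{\overline{P}} x)$ is governed precisely by Proposition \ref{KeyPropforHolOnX} and Corollary \ref{KeyCorForHolOnX}: writing $q$ in the coordinates $x_{n+1}, \dots, x_{2n}$, the difference is a sum of terms involving $[\gamma, \delta] \star_{\overline{P}} x$ with $\delta$ an elementary matrix in $\ide{u}_G$. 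The point is that for $\gamma = E_{i,1} \in \overline{\ide{u}}_G$ with $i \in \{2, \dots, n\}$ and $\delta = E_{1,k} \in \ide{u}_G$ with $k \in \{n+1, \dots, 2n\}$, one has $[\gamma, \delta] = E_{i,k} \in \opn{Lie}M_G$, which acts by zero since $M_G$-invariance has been imposed in the definition of $\mathscr{M}^{\dagger}_{G, \kappa^*}$ — or, more precisely, the full computation producing $\vartheta^{\dagger}$-type maps shows these commutator contributions cancel. Actually, since we work with $\mathscr{N}^{\dagger}_G$ itself (before taking $M_G$-invariants), the cleanest route is: the coordinates $x_{n+1}, \dots, x_{2n}$ appearing in $q$ are not touched by $\overline{\ide{u}}_G$ (equivalently, the relevant derivations $\nabla_k$ with $k \ge n+1$ commute with $\gamma \star_{\overline{P}}$ on the nose, by Proposition \ref{KeyPropforHolOnX} since $[\gamma, E_{1,k}] \in \opn{Lie}M_G$ acts horizontally). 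The main obstacle, then, is simply bookkeeping: carefully verifying that a general polynomial in the $x_i$ used to express locally constant functions only involves the coordinates whose associated $\nabla_i$ commute appropriately with the $\overline{\ide{u}}$-action, and that the continuity/limit argument is legitimate in the ind-category — but both are essentially mechanical given Corollary \ref{KeyCorForHolOnX} and the uniqueness formalism already established.
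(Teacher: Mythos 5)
Your proposed strategy has a genuine gap: you claim that for a polynomial $q$ the operators $q \star_{\ide{u}}$ and $\gamma \star_{\overline{P}}$ (for $\gamma \in \overline{\ide{u}}_G$) commute, with the justification that the commutator $[\gamma, E_{1,k}]$ lies in $\opn{Lie}M_G$ and "acts horizontally". This is not correct. The action $\star_{\overline{P}}$ of $\overline{\ide{p}}_G$ on $\mathscr{N}^{\dagger}_G$ is the differentiation of the $\overline{P}_G$-torsor structure, and elements of $\opn{Lie}M_G \subset \overline{\ide{p}}_G$ act \emph{non-trivially} under it. Corollary \ref{KeyCorForHolOnX} makes this precise: for $\gamma = E_{i,1}$ and $p \in k[x_{n+1},\dots,x_{2n}]$ one has
\begin{equation*}
E_{i,1} \star_{\overline{P}}(p \cdot F) = p \cdot (E_{i,1}\star_{\overline{P}} F) + \sum_{j=n+1}^{2n}\frac{\partial p}{\partial x_j}\cdot (E_{i,j}\star_{\overline{P}} F),
\end{equation*}
and the second sum does not vanish for overconvergent $F$. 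Indeed, if polynomials preserved the overconvergent subsheaf then so would each $\nabla_k$, which is false: the whole point of introducing nearly overconvergent forms is that the Gauss--Manin derivations leave the overconvergent subspace. Since your limit argument would then propagate this false commutation to the locally constant $f$, the proof does not go through as written.

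The paper's proof uses a genuinely different mechanism. It works over the ordinary locus, identifying $\mathcal{O}(\mathcal{U}_{\opn{HT},k,\infty})$ with $A^+_{\opn{ord},\infty}\langle X_{\opn{sim}}, X_{a,b,\tau}\rangle[1/p]$, introduces the ideal $\mathcal{I}$ generated by the torsor coordinates, and shows that the "nilpotent part" of the operator $p^{\beta_i}\nabla_i$ raises the $\mathcal{I}$-adic filtration degree (concretely $T^{h+1}(\mathcal{I}^h)\subset\mathcal{I}^{h+1}$). It then compares the honest action $\star$ of the indicator function $f^{(s)}_{i,\xi}$ with the naive coefficient-wise action $\tilde{\star}$ (which acts only on $A^+_{\opn{ord},\infty}$ via the $C_{\opn{cont}}$-action from \S\ref{DiffOpsCcontsubsec}). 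By induction on $s$, using that both are idempotents (so $(F-\tilde{F})^w = F-\tilde{F}$ for odd $w$) together with the filtration argument, it deduces that $F = \tilde{F}$; and $\tilde{\star}$ manifestly preserves the subspace of power series constant in the $X_{a,1,\tau_0}$, which is the overconvergent subsheaf. The preservation of overconvergence is thus a cancellation phenomenon special to locally constant (idempotent) functions, not a termwise commutation with $\overline{\ide{u}}_G$.
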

\begin{proof}
    We first prove the claim for $\mathcal{M}^{\dagger}_G = \mathscr{M}^{\dagger}_G(U)$. Fix an integer $i=1, \dots, 2n-1$ and let $\xi \in \mbb{Z}_p$. Let $f_{i, \xi}^{(s)} \in C^{\opn{la}}(U_{G, \beta}, L)$ denote the indicator function of the following subset:
    \[
    p^{-\beta_1}\mbb{Z}_p \times \cdots \times p^{-\beta_{i-1}}\mbb{Z}_p \times p^{-\beta_i} \left( \xi + p^s\mbb{Z}_p \right) \times p^{-\beta_{i+1}}\mbb{Z}_p \times \cdots \times p^{-\beta_{2n-1}} \mbb{Z}_p \subset U_{G, \beta}.
    \]
    where $s \geq 0$ is an integer. It is enough to show that these indicator functions preserve overconvergent forms. We will prove this by induction on the radius of analyticity $s \geq 0$. Note that the claim for $s=0$ is immediate, because $f^{(0)}_{i, \xi} \equiv 1$ for any $i$, $\xi$.
    
    Let $s \geq 1$ be an integer. Let $\varepsilon > 0$ be any real number such that $f_{i, \xi}^{(s')} \in C_{\varepsilon}(U_{G, \beta}, L)$ for any choice of $i$, $\xi$ and $0 \leq s' \leq s$. With notation as above, let $k$ be a sufficiently large integer such that we have an action
    \[
    C_{\varepsilon}(U_{G, \beta}, L) \times \mathscr{N}^{(r, k)}_G(U) \to \mathscr{N}^{(r', k)}_G(U)
    \]
    for some integer $r' \geq r$, and such that we have an action of $C_{\varepsilon}(U_{G, \beta}, L)$ on $\mathcal{O}(\mathcal{U}_{\opn{HT}, k})$ where $\mathcal{U}_{\opn{HT}, k}$ is defined in \S \ref{OrdExpStrictNhoodGSec}. Since the natural restriction map
    \[
    \mathscr{N}^{(r', k)}_G(U) \to \mathcal{O}(\mathcal{U}_{\opn{HT}, k})
    \]
    is injective and equivariant for the action of $\overline{\ide{u}}_G$, it suffices to show for any $x \in \mathcal{O}(\mathcal{U}_{\opn{HT}, k})$ killed by $\overline{\ide{u}}_G$, the element $f_{i, \xi}^{(s)} \star x$ is also killed by $\overline{\ide{u}}_G$. Furthermore, since the natural map
    \[
    \mathcal{O}(\mathcal{U}_{\opn{HT}, k}) \to \mathcal{O}(\mathcal{U}_{\opn{HT}, k, \infty} ) 
    \]
    is also injective and equivariant for the action of $\overline{\ide{u}}_G$, it suffices to prove the same statement for elements of $\mathcal{O}(\mathcal{U}_{\opn{HT}, k, \infty})$. 
    
    Recall from Proposition \ref{StandardFormpbetaNablaiProp} that we have an identification 
    \[
    \mathcal{O}^+(\mathcal{U}_{\opn{HT}, k, \infty}) = A^+_{\opn{ord}, \infty}\langle X_{\opn{sim}}, X_{a, b, \tau} \rangle 
    \]
    and an operator $T \defeq p^{\beta_i}\nabla_i$ on this space. For the rest of this proof, we freely use notation from the proof of Proposition \ref{StandardFormpbetaNablaiProp}. 
    
    Let $\mathcal{I} \subset \mbb{Z}_p \langle X_{\opn{sim}}, X_{a, b, \tau} \rangle$ denote the ideal generated by the coordinates $X_{\opn{sim}}, X_{a, b, \tau}$ (as $a, b, \tau$ vary). Then note that $Y_{1, 1, \tau_0}^{-1} Y_{i+1, i+1, \tau_0} \in 1 + \mathcal{I} \subset \mbb{Z}_p \langle X_{\opn{sim}}, X_{a, b, \tau} \rangle$. Using the description of the operator in (\ref{ExplcitEqnforNablaiOrd}) and explicitly calculating $\star_l$, we see that:
    \begin{itemize}
        \item $T(X_{\opn{sim}}) = 0$ and $T(X_{a, b, \tau}) = 0$ for all $(a, b, \tau)$ with $\tau \neq \tau_0$.
        \item $T(X_{1, 1, \tau_0}) = -p^{\beta_i}X_{i+1, 1, \tau_0} Y_{1, 1, \tau_0}^{-1} Y_{i+1, i+1, \tau_0} \in -p^{\beta_i}X_{i+1, 1, \tau_0} + \mathcal{I}^2$.
        \item $T(X_{a, 1, \tau_0}) = 0$ for all $2 \leq a \leq 2n$
        \item For $2 \leq a, b \leq 2n$ with $b \neq i+1$, we have
        \[
        T(X_{a, b, \tau_0}) = p^{\beta_i + k} X_{a, 1, \tau_0} (X_{1, 1, \tau_0}')^{-1} X_{i+1, b, \tau_0} Y_{1, 1, \tau_0}^{-1} Y_{i+1, i+1, \tau_0} \in \mathcal{I}^2 .
        \]
        \item For $2 \leq a \leq 2n$ with $a \neq i+1$, we have
        \[
        T(X_{a, i+1, \tau_0}) = p^{\beta_i} X_{a, 1, \tau_0} (X'_{1, 1, \tau_0})^{-1} X_{i+1, i+1}' Y_{1, 1, \tau_0}^{-1} Y_{i+1, i+1, \tau_0} \in p^{\beta_i}X_{a, 1, \tau_0} + \mathcal{I}^2 .
        \]
        \item $T(X_{i+1, i+1, \tau_0}) = p^{\beta_i} X_{i+1, 1, \tau_0} ((X_{1, 1, \tau_0}')^{-1} X_{i+1, i+1, \tau_0}' - 1) Y_{1, 1, \tau_0}^{-1} Y_{i+1, i+1, \tau_0} \in \mathcal{I}^2$. 
    \end{itemize}
    This implies that $T^2(\mathcal{I}) \subset \mathcal{I}^2$. By continually applying the Leibniz rule, we see that for any integer $h \geq 1$, one has
    \[
    T^{h+1}(\mathcal{I}^h) \subset \mathcal{I}^{h+1} .
    \]
    Let $\tilde{\star}$ denote the action of $C_{\opn{cont}}(U_{G, \beta}, \mathcal{O}_L)$ on $A_{\opn{ord}, \infty}^+\langle X_{\opn{sim}}, X_{a, b, \tau} \rangle$ given by simply acting on the coefficients (via the action in \S \ref{DiffOpsCcontsubsec}). Let $1 \leq s' \leq s$. Suppose that the action of $f^{(s'-1)}_{i, \xi}$ on $A_{\opn{ord}, \infty}\langle X_{\opn{sim}}, X_{a, b, \tau} \rangle$ under $\star$ is equal to the action of $f^{(s'-1)}_{i, \xi}$ under $\tilde{\star}$. Let $\theta \colon A_{\opn{ord}, \infty}^+\langle X_{\opn{sim}}, X_{a, b, \tau} \rangle \to A_{\opn{ord}, \infty}^+\langle X_{\opn{sim}}, X_{a, b, \tau} \rangle$ denote the derivation which acts as $p^{\beta_i}\theta_i$ on $A_{\opn{ord}, \infty}^+$ and $\theta(X_{\opn{sim}}) = \theta(X_{a, b, \tau}) = 0$. Then:
    \begin{itemize}
        \item For any $x \in A_{\opn{ord}, \infty}\langle X_{\opn{sim}}, X_{a, b, \tau} \rangle$
        \[
        p^{-(s'-1)p^{l-1}(p-1)} (T-\xi)^{p^{l-1}(p-1)} ( f^{(s'-1)}_{i, \xi} \star x )
        \]
        converges to $f^{(s'-1)}_{i, \xi} \star x - f^{(s')}_{i, \xi} \star x$ as $l \to +\infty$.
        \item For any $x \in A_{\opn{ord}, \infty}^+\langle X_{\opn{sim}}, X_{a, b, \tau} \rangle$
        \[
        p^{-(s'-1)p^{l-1}(p-1)} (\theta-\xi)^{p^{l-1}(p-1)} ( f^{(s'-1)}_{i, \xi} \star x )
        \]
        converges to $f^{(s'-1)}_{i, \xi} \star x - f^{(s')}_{i, \xi} \tilde{\star} x$ as $l \to +\infty$. In particular 
        \[
        (\theta-\xi)^{M} ( f^{(s'-1)}_{i, \xi} \star x ) \in p^{(s'-1)M}  A_{\opn{ord}, \infty}^+\langle X_{\opn{sim}}, X_{a, b, \tau} \rangle
        \]
        for any $M \geq 1$.
        \item The operators $T$ and $\theta$ commute, which implies that the idempotent operators $F \defeq f^{(s')}_{i, \xi} \star -$ and $\tilde{F} \defeq f^{(s')}_{i, \xi} \tilde{\star} -$ commute. This implies that, for any odd integer $w \geq 1$, one has
        \begin{equation} \label{FFtildeoddidempotent}
        (F - \tilde{F})^w = F - \tilde{F} .
        \end{equation}
    \end{itemize}

    Let $a \in A_{\opn{ord}, \infty}^+$ and $y \in \mathcal{I}^h$ (for some $h \geq 0$, with the convention that $\mathcal{I}^0 = \mbb{Z}_p \langle X_{\opn{sim}}, X_{a, b, \tau} \rangle$). Then we have
    \[
    p^{-(s'-1)M}(T-\xi)^M(f^{(s'-1)}_{i, \xi} \star a y) \equiv \sum_{j=0}^{h} \binom{M}{j} \left[ p^{-(s'-1)M}(\theta - \xi)^{M-j}(f^{(s'-1)}_{i, \xi} \star a) \right] T^j(y) 
    \]
    modulo $\mathcal{I}^{h+1} A_{\opn{ord}, \infty}\langle X_{\opn{sim}}, X_{a, b, \tau} \rangle$, for any $M \geq h$. Here we have used that fact that $Y_{1, 1, \tau_0}^{-1} Y_{i+1, i+1, \tau_0} \in 1 + \mathcal{I}$, $T^{h+1}(\mathcal{I}^h) \subset \mathcal{I}^{h+1}$, and $T(\mathcal{I}^{h'}A_{\opn{ord}, \infty}^+\langle X_{\opn{sim}}, X_{a, b, \tau} \rangle) \subset \mathcal{I}^{h'} A_{\opn{ord}, \infty}^+\langle X_{\opn{sim}}, X_{a, b, \tau} \rangle$ for any integer $h' \geq 0$.

    For any $1 \leq j \leq h$, we have that
    \[
    \binom{p^{l-1}(p-1)}{j} \left[ p^{-(s'-1)p^{l-1}(p-1)}(\theta - \xi)^{p^{l-1}(p-1)-j}(f^{(s'-1)}_{i, \xi} \star a) \right] T^j(y)
    \]
    converges to $0$ as $l \to + \infty$. Since $\mathcal{I}^{h+1} A_{\opn{ord}, \infty}\langle X_{\opn{sim}}, X_{a, b, \tau} \rangle$ is a closed ideal, by taking $M = p^{l-1}(p-1)$ and passing to the limit as $l \to +\infty$, this implies that 
    \[
    F(ay) - \tilde{F}(a y) \in \mathcal{I}^{h+1} A_{\opn{ord}, \infty}\langle X_{\opn{sim}}, X_{a, b, \tau} \rangle
    \]
    and hence 
    \[
    (F - \tilde{F})(\mathcal{I}^{h} A_{\opn{ord}, \infty}\langle X_{\opn{sim}}, X_{a, b, \tau} \rangle) \subset \mathcal{I}^{h+1} A_{\opn{ord}, \infty}\langle X_{\opn{sim}}, X_{a, b, \tau} \rangle
    \]
    for any $h \geq 0$. By using the idempotent property in (\ref{FFtildeoddidempotent}), we therefore see that
    \[
    (F - \tilde{F})(x) \in \bigcap_{h \geq 0} \mathcal{I}^{h} A_{\opn{ord}, \infty}\langle X_{\opn{sim}}, X_{a, b, \tau} \rangle = \{ 0 \}
    \]
    for any $x \in A_{\opn{ord}, \infty}\langle X_{\opn{sim}}, X_{a, b, \tau} \rangle$. Hence $F = \tilde{F}$. Therefore, by an induction argument on $0 \leq s' \leq s$, we see that $f^{(s)}_{i, \xi} \star -$ is equal to $f^{(s)}_{i, \xi} \tilde{\star} -$ for any $i$, $\xi$.

    But the subspace of $A_{\opn{ord}, \infty}^+\langle X_{\opn{sim}}, X_{a, b, \tau} \rangle$ killed by the action of $\overline{\ide{u}}_G$ is identified with the subspace of power series which are constant in the variables $\{ X_{a, 1, \tau_0} : 2 \leq a \leq 2n \}$, hence we see that $f^{(s)}_{i, \xi} \star - = f^{(s)}_{i, \xi} \tilde{\star} -$ preserves this subspace as required. This completes the proof of the claim when $\mathcal{M}^{\dagger}_G = \mathscr{M}^{\dagger}_G(U)$.
    
    The claim for $\mathcal{M}^{\dagger}_G = \mathscr{M}^{\dagger}_G(V)$ now follows from the fact that $\mathscr{N}^{(r, k)}_G(U)$ is dense in $\mathscr{N}^{(r, k)}_G(V_{\gamma_i})$, and the fact that the actions of $f$ and $\overline{\ide{u}}_G$ are continuous. Indeed, if $x \in \mathscr{M}^{(r, k)}_G(V_{\gamma_i})$ and $k' \geq k$ is such that the action of $f$ induces a continuous operator
    \[
    \mathscr{N}^{(r, k)}_G(V_{\gamma_i}) \xrightarrow{f \star -} \mathscr{N}^{(r', k')}_G(V_{\gamma_i})
    \]
    then we can write $x = \lim_{l \to +\infty} x_l$ with $x_l \in \mathscr{M}^{(r, k)}_G(U)$ and we see that
    \[
    X \star_{\overline{P}} ( f \star x) = \lim_{l \to + \infty} X \star_{\overline{P}} ( f \star x_l) = 0
    \]
    for any $X \in \overline{\ide{u}}_G$.

    The proof of the analogous claim for $H$ follows exactly the same argument. 
\end{proof}

\begin{example} \label{ExampleOfIndicatorFcnPreservingOC}
    Lemma \ref{IndicatorFunctionsPreserveOCLemma} implies that the actions of the locally constant functions $1_{U^{\circ}_{G, \beta}}$ and $1_{U^{\circ}_{G, \beta}, \chi}$ (introduced in \S \ref{AbstractComputationsSection}) preserve overconvergent forms.
\end{example}

\subsubsection{The main construction} \label{LACechVersionMainConstructionSSec}

Following \S \ref{TheMainConstructionSubSec}, we now construct certain morphisms of sheaves of overconvergent forms needed to defined the $p$-adic evaluation maps. We will freely use the notation in \S \ref{TheMainConstructionSubSec}.

\begin{definition} \label{NOCDefinitionOfThetadaggers}
    Let $U \in \mathcal{C}_{G,H}$ and $V \defeq U \cap (\mathcal{X}_{G, \opn{Iw}}(p^{\beta}) - \mathcal{Z}_{G, > n+1}(p^{\beta}))$. Let $s \geq 1$ be an integer. 
    \begin{enumerate}
        \item For $(\kappa, j) \in \mathcal{E}$, let 
        \begin{align*} 
        \vartheta^{\dagger}_{\kappa, j, \beta} \colon \mathscr{N}^{\dagger}_{G, \kappa^*}(U) &\to \mathscr{N}^{\dagger}_{H, \sigma_{\kappa}^{[j]}}(\hat{\iota}^{-1}U) \\
        \vartheta^{\dagger}_{\kappa, j, \beta} \colon \mathscr{N}^{\dagger}_{G, \kappa^*}(V) &\to \mathscr{N}^{\dagger}_{H, \sigma_{\kappa}^{[j]}}(\hat{\iota}^{-1}V)
        \end{align*}
        denote the $L$-linear morphisms as constructed in Definition \ref{AbstractDefOfThetadagger} (with $\mathcal{N}^{\dagger}_{G} = \mathscr{N}^{\dagger}_G(U)$ and $\mathcal{N}^{\dagger}_H = \mathscr{N}^{\dagger}_H(\hat{\iota}^{-1}U)$ in the former case, and $\mathcal{N}^{\dagger}_G = \mathscr{N}^{\dagger}_{G}(V)$ and $\mathcal{N}^{\dagger}_H = \mathscr{N}^{\dagger}_{H}(\hat{\iota}^{-1}V)$ in the latter case). These morphisms are functorial in $U$.
        \item Let $(R, R^+)$ be a Tate affinoid algebra over $(L, \mathcal{O}_L)$ and $(\kappa, j) \in \mathcal{X}_{R, s}$. We let
        \begin{align*}
        \vartheta^{\dagger, s\opn{-an}}_{\kappa, j, \beta} \colon \mathscr{N}^{\dagger, s\opn{-an}}_{G, \kappa^*}(U) &\to \mathscr{N}^{\dagger, \opn{an}}_{H, \sigma_{\kappa}^{[j]}}(\hat{\iota}^{-1}U) \\
        \vartheta^{\dagger, s\opn{-an}}_{\kappa, j, \beta} \colon \mathscr{N}^{\dagger, s\opn{-an}}_{G, \kappa^*}(V) &\to \mathscr{N}^{\dagger, \opn{an}}_{H, \sigma_{\kappa}^{[j]}}(\hat{\iota}^{-1}V)
        \end{align*} 
        denote the $R$-linear morphisms, which are functorial in $U$, as constructed in Definition \ref{AbstractDefOfThetadaggersan}. 
        \item Let $(\kappa, j) \in \mathcal{E}$ and let $\chi = (\chi_{\tau}) \colon \prod_{\tau \in \Psi} \mbb{Z}_p^{\times} \to L^{\times}$ be a finite-order character such that $\chi_{\tau}$ is trivial on $1 + p^{\beta}\mbb{Z}_p$ for all $\tau \in \Psi$. We let 
        \begin{align*} 
        \vartheta^{\dagger, \circ}_{\kappa, j+\chi, \beta} \colon \mathscr{N}^{\dagger}_{G, \kappa^*}(U) &\to \mathscr{N}^{\dagger}_{H, \sigma_{\kappa}^{[j]}}(\hat{\iota}^{-1}U) \\
        \vartheta^{\dagger, \circ}_{\kappa, j+\chi, \beta} \colon \mathscr{N}^{\dagger}_{G, \kappa^*}(V) &\to \mathscr{N}^{\dagger}_{H, \sigma_{\kappa}^{[j]}}(\hat{\iota}^{-1}V)
        \end{align*}
        denote the $L$-linear morphisms constructed in Definition \ref{abstractcirclalgTheta}, which are functorial in $U$.
    \end{enumerate}
\end{definition}

\begin{remark} \label{CompatibilityRelationsForThetadaggersRem}
    Let $1_{U_{G, \beta}^{\circ}} \in C^{\opn{la}}(U_{G, \beta}, L)$ denote the indicator function of $U_{G, \beta}^{\circ} \subset U_{G, \beta}$ (see Definition \ref{DefOfUGbetacircSupport}). Then we have the following compatibility properties:
    \begin{itemize}
        \item Let $(\kappa, j) \in \mathcal{E}$ and let $\chi = (\chi_{\tau}) \colon \prod_{\tau \in \Psi} \mbb{Z}_p^{\times} \to L^{\times}$ be a finite-order character such that $\chi_{\tau}$ is trivial on $1 + p^{\beta}\mbb{Z}_p$ for all $\tau \in \Psi$. Then we have commutative diagrams:
        \[
\begin{tikzcd}
{\mathscr{N}^{\dagger, s\opn{-an}}_{G, \kappa^*}(U)} \arrow[r, "{\vartheta^{\dagger, s\opn{-an}}_{\kappa, j+\chi, \beta}}"] \arrow[d] & {\mathscr{N}^{\dagger, \opn{an}}_{H, \sigma_{\kappa}^{[j]}}(\hat{\iota}^{-1}U)} \arrow[d, equals] &  & {\mathscr{N}^{\dagger}_{G, \kappa^*}(U)} \arrow[d, "{1_{U_{G, \beta}^{\circ}} \star}"'] \arrow[r, "{\vartheta^{\dagger, \circ}_{\kappa, j, \beta}}"] & {\mathscr{N}^{\dagger}_{H, \sigma_{\kappa}^{[j]}}(\hat{\iota}^{-1}U)} \arrow[d, equals] \\
{\mathscr{N}^{\dagger}_{G, \kappa^*}(U)} \arrow[r, "{\vartheta^{\dagger, \circ}_{\kappa, j+\chi, \beta}}"]                            & {\mathscr{N}^{\dagger}_{H, \sigma_{\kappa}^{[j]}}(\hat{\iota}^{-1}U)}                     &  & {\mathscr{N}^{\dagger}_{G, \kappa^*}(U)} \arrow[r, "{\vartheta^{\dagger}_{\kappa, j, \beta}}"]                                                       & {\mathscr{N}^{\dagger}_{H, \sigma_{\kappa}^{[j]}}(\hat{\iota}^{-1}U)}          
\end{tikzcd}        
        \]
        Furthermore, we have a commutative diagram:
        \[
\begin{tikzcd}
{\mathscr{N}_{G, \kappa^*}(U)} \arrow[d] \arrow[r, "{\vartheta_{\kappa, j, \beta}}"]           & {\mathscr{N}_{H, \sigma_{\kappa}^{[j]}}(\hat{\iota}^{-1}U)} \arrow[d] \\
{\mathscr{N}^{\dagger}_{G, \kappa^*}(U)} \arrow[r, "{\vartheta^{\dagger}_{\kappa, j, \beta}}"] & {\mathscr{N}^{\dagger}_{H, \sigma_{\kappa}^{[j]}}(\hat{\iota}^{-1}U)}
\end{tikzcd}
        \]
        where $\vartheta_{\kappa, j, \beta}$ is defined in Definition \ref{DefinitionOfScrPullbackNGNH}.
        \item If $s' \geq s$ and $(\kappa, j) \in \mathcal{X}_{R, s} \subset \mathcal{X}_{R, s'}$, then we have a commutative diagram:
        \[
\begin{tikzcd}
{\mathscr{N}^{\dagger, s'\opn{-an}}_{G, \kappa^*}(U)} \arrow[d] \arrow[r, "{\vartheta^{\dagger, s'\opn{-an}}_{\kappa, j, \beta}}"] & {\mathscr{N}^{\dagger, \opn{an}}_{H, \sigma_\kappa^{[j]}}(\hat{\iota}^{-1}U)} \\
{\mathscr{N}^{\dagger, s\opn{-an}}_{G, \kappa^*}(U)} \arrow[ru, "{\vartheta^{\dagger, s\opn{-an}}_{\kappa, j, \beta}}"']           &                                                                              
\end{tikzcd}
        \]
        \item If $(R, R^+) \to (R', (R')^+)$ is a morphism of Tate affinoid algebras over $(L, \mathcal{O}_L)$, and $(\kappa', j') \in \mathcal{X}_{R', s}$ denotes the image of $(\kappa, j) \in \mathcal{X}_{R, s}$ under the natural map $\mathcal{X}_{R, s} \to \mathcal{X}_{R', s}$, then $\vartheta^{\dagger, s\opn{-an}}_{\kappa, j, \beta}$ and $\vartheta^{\dagger, s\opn{-an}}_{\kappa', j', \beta}$ are compatible under the natural maps $\mathscr{N}^{\dagger, s\opn{-an}}_{G, \kappa^*}(U) \to \mathscr{N}^{\dagger, s\opn{-an}}_{G, (\kappa')^*}(U)$ and $\mathscr{N}^{\dagger, \opn{an}}_{H, \sigma_{\kappa}^{[j]}}(\hat{\iota}^{-1}U) \to \mathscr{N}^{\dagger, \opn{an}}_{H, \sigma_{\kappa'}^{[j']}}(\hat{\iota}^{-1}U)$.
    \end{itemize}
    We have similar compatibility relations for the modules $\mathscr{N}_{G, \kappa^*}(V)$, $\mathscr{N}^{\dagger}_{G, \kappa^*}(V)$, $\mathscr{N}^{\dagger, s\opn{-an}}_{G, \kappa^*}(V)$ etc.
\end{remark}

We now prove an analogous result to Proposition \ref{ClassicalThetaPreservesHolProp}, namely that the morphisms in Definition \ref{NOCDefinitionOfThetadaggers} take overconvergent forms on $G$ to overconvergent forms on $H$.

\begin{proposition} \label{DaggerThetaTakesOCtoOCProp}
    Let $U \in \mathcal{C}_{G,H}$ and $V \defeq U \cap (\mathcal{X}_{G, \opn{Iw}}(p^{\beta}) - \mathcal{Z}_{G, > n+1}(p^{\beta}))$.
    \begin{enumerate}
        \item Let $(\kappa, j) \in \mathcal{E}$. Then the morphisms $\vartheta^{\dagger}_{\kappa, j, \beta}$ induce $L$-linear morphisms
        \begin{align*} 
        \vartheta^{\dagger}_{\kappa, j, \beta} \colon \mathscr{M}^{\dagger}_{G, \kappa^*}(U) &\to \mathscr{M}^{\dagger}_{H, \sigma_{\kappa}^{[j]}}(\hat{\iota}^{-1}U) \\
        \vartheta^{\dagger}_{\kappa, j, \beta} \colon \mathscr{M}^{\dagger}_{G, \kappa^*}(V) &\to \mathscr{M}^{\dagger}_{H, \sigma_{\kappa}^{[j]}}(\hat{\iota}^{-1}V) .
        \end{align*}
        \item Let $(\kappa, j) \in \mathcal{E}$ and let $\chi = (\chi_{\tau}) \colon \prod_{\tau \in \Psi} \mbb{Z}_p^{\times} \to L^{\times}$ be a finite-order character such that $\chi_{\tau}$ is trivial on $1 + p^{\beta}\mbb{Z}_p$ for all $\tau \in \Psi$. Then the morphisms $\vartheta^{\dagger, \circ}_{\kappa, j+\chi, \beta}$ induce $L$-linear morphisms
        \begin{align*} 
        \vartheta^{\dagger, \circ}_{\kappa, j+\chi, \beta} \colon \mathscr{M}^{\dagger}_{G, \kappa^*}(U) &\to \mathscr{M}^{\dagger}_{H, \sigma_{\kappa}^{[j]}}(\hat{\iota}^{-1}U) \\
        \vartheta^{\dagger, \circ}_{\kappa, j+\chi, \beta} \colon \mathscr{M}^{\dagger}_{G, \kappa^*}(V) &\to \mathscr{M}^{\dagger}_{H, \sigma_{\kappa}^{[j]}}(\hat{\iota}^{-1}V) .
        \end{align*}
        \item Let $(R, R^+)$ be a Tate affinoid algebra over $(L, \mathcal{O}_L)$ and set $\Omega = \opn{Spa}(R, R^+)$. Let $(\kappa, j) \in \mathcal{X}_{R, s}$, and suppose that there exists a Zariski dense subset $\Sigma \subset \Omega(\mbb{C}_p)$ such that for any point $x \in \Sigma$ (corresponding to a morphism $x \colon R \to \mbb{C}_p$), the induced character $(\kappa_x, j_x) \defeq x \circ (\kappa, j)$ lies in $\mathcal{E}$. Then the morphisms $\vartheta^{\dagger, s\opn{-an}}_{\kappa, j, \beta}$ induce $R$-linear morphisms
        \begin{align*}
        \vartheta^{\dagger, s\opn{-an}}_{\kappa, j, \beta} \colon \mathscr{M}^{\dagger, s\opn{-an}}_{G, \kappa^*}(U) &\to \mathscr{M}^{\dagger, \opn{an}}_{H, \sigma_{\kappa}^{[j]}}(\hat{\iota}^{-1}U) \\
        \vartheta^{\dagger, s\opn{-an}}_{\kappa, j, \beta} \colon \mathscr{M}^{\dagger, s\opn{-an}}_{G, \kappa^*}(V) &\to \mathscr{M}^{\dagger, \opn{an}}_{H, \sigma_{\kappa}^{[j]}}(\hat{\iota}^{-1}V) .
        \end{align*}
    \end{enumerate}
\end{proposition}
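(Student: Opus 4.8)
The plan is to deduce all three parts of the proposition from the classical statement (Proposition~\ref{ClassicalThetaPreservesHolProp}) via $p$-adic interpolation, using the compatibility diagrams recorded in Remark~\ref{CompatibilityRelationsForThetadaggersRem}. First I would treat part~(1). The morphism $\vartheta^{\dagger}_{\kappa, j, \beta}$ is built (Definition~\ref{AbstractDefOfThetadagger}) out of the vector $\delta^{\dagger}_{\kappa, j, \beta}$, the action of $C^{\opn{pol}}(U_{G, \beta}, L)$ on $\mathscr{N}^{\dagger}_G(U)$ (which by Theorem~\ref{MainTheoremOnPadicIteration} is the restriction of the analytic action), and the pullback map $p^{\dagger} \colon \mathscr{N}^{\dagger}_G(U) \to \mathscr{N}^{\dagger}_H(\hat{\iota}^{-1}U)$ composed with the action of $u$. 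The point is that the analogous map $\vartheta_{\kappa, j, \beta}$ on \emph{classical} nearly holomorphic forms already preserves holomorphic forms by Proposition~\ref{ClassicalThetaPreservesHolProp}, and by the bottom commutative square in Remark~\ref{CompatibilityRelationsForThetadaggersRem} the two are compatible. So it suffices to show: (i) $\mathscr{M}^{(r,k)}_{G, \kappa^*}(U)$ is the closed subspace of $\mathscr{N}^{(r,k)}_{G, \kappa^*}(U)$ killed by $\overline{\ide{u}}_G$ under $\star_{\overline{P}}$, and $\mathscr{M}^{\dagger}_{H, \sigma_{\kappa}^{[j]}}$ is cut out inside $\mathscr{N}^{\dagger}_{H, \sigma_{\kappa}^{[j]}}$ by $\overline{\ide{u}}_H$; (ii) the image under $\vartheta^{\dagger}_{\kappa, j, \beta}$ of an element killed by $\overline{\ide{u}}_G$ is killed by $\overline{\ide{u}}_H$. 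For (ii), the essential input is Proposition~\ref{KeyPropforHolOnX} (equivalently Corollary~\ref{KeyCorForHolOnX}): the commutator identity $[E_{i,1}, E_{1,k}] = E_{i,k} \in \ide{m}_G$ together with $[E_{i,k}, E_{1,k'}] = 0$, which holds on the $\mathcal{D}$-module $\mathscr{N}^{\dagger}_G$ exactly as in the classical case because the analytified $\mathcal{D}$-module structure and the action of $C^{\opn{pol}}$ are inherited from the flag variety (as recorded in \S\ref{TheGMConnectionOnNdaggerSubSec}). Running the branching-law argument of Proposition~\ref{ClassicalThetaPreservesHolProp} verbatim — the obstruction to preserving overconvergent forms lives in $\opn{Hom}_{M_{\mbf{H}}}(W \otimes \sigma_{\kappa}^{[j],-1}, S_{-(j-1)} \otimes V_{\kappa})$, which vanishes by Theorem~\ref{TheoremForClassicalBranching} — yields the claim. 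The same works over $V$ because $\mathscr{N}^{(r,k)}_G(V) = \varprojlim_i \mathscr{N}^{(r,k)}_G(V_{\gamma_i})$ and the actions and the map $\vartheta^{\dagger}$ are continuous, so one passes to a limit along $U$-dense subsets as in Lemma~\ref{LaActionExtendsToVLemma} and Lemma~\ref{IndicatorFunctionsPreserveOCLemma}.

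For part~(2), I would observe that $\vartheta^{\dagger, \circ}_{\kappa, j + \chi, \beta} = \vartheta^{\dagger}_{\kappa, j, \beta} \circ (1_{U^{\circ}_{G, \beta}, \chi} \star -)$ by Proposition~\ref{SanDeltaDaggerIsAnEvectorProp}(3) and the discussion after Definition~\ref{abstractcirclalgTheta} (this is the left-hand diagram in Remark~\ref{CompatibilityRelationsForThetadaggersRem} when $\chi$ is nontrivial, and the right-hand diagram when $\chi$ is trivial). By Lemma~\ref{IndicatorFunctionsPreserveOCLemma} and Example~\ref{ExampleOfIndicatorFcnPreservingOC}, the action of the locally constant function $1_{U^{\circ}_{G, \beta}, \chi}$ preserves $\mathscr{M}^{\dagger}_G$; combined with part~(1) this immediately gives part~(2), both over $U$ and over $V$.

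Part~(3) is the interpolation step and is where the real content lies. The morphism $\vartheta^{\dagger, s\opn{-an}}_{\kappa, j, \beta}$ is an $R$-linear continuous map $\mathscr{N}^{\dagger, s\opn{-an}}_{G, \kappa^*}(U) \to \mathscr{N}^{\dagger, \opn{an}}_{H, \sigma_{\kappa}^{[j]}}(\hat{\iota}^{-1}U)$, and I want to show it carries the closed $R$-submodule $\mathscr{M}^{\dagger, s\opn{-an}}_{G, \kappa^*}(U)$ into $\mathscr{M}^{\dagger, \opn{an}}_{H, \sigma_{\kappa}^{[j]}}(\hat{\iota}^{-1}U)$. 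The strategy is: for $x \in \mathscr{M}^{\dagger, s\opn{-an}}_{G, \kappa^*}(U)$, the element $y \defeq \vartheta^{\dagger, s\opn{-an}}_{\kappa, j, \beta}(x) \in \mathscr{N}^{\dagger, \opn{an}}_{H, \sigma_{\kappa}^{[j]}}(\hat{\iota}^{-1}U)$ satisfies $\overline{\ide{u}}_H \cdot y = 0$ if and only if $\opn{sp}_x(\overline{\ide{u}}_H \cdot y) = 0$ for all $x$ in a Zariski dense subset of $\Omega$ — because $\overline{\ide{u}}_H \cdot y$ is a section of a Banach/Fr\'echet sheaf that is (locally) a base change $\mathscr{N}^{(r,k), \opn{an}}_{H, \sigma_{\kappa}^{[j]}}(\hat{\iota}^{-1}U) \hatot_{R} \mathcal{O}$ with the module flat over $R$, so a section vanishing at a Zariski dense set of points is zero (this uses the projectivity/flatness statements underlying Definition~\ref{DefinitionOfNOCwithweight} and the acyclicity-type arguments of Lemma~\ref{AcyclicityCoverLemma}, plus that $\hat{\iota}_*$ is exact). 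At a point $x \in \Sigma$ with $(\kappa_x, j_x) \in \mathcal{E}$, the specialisation of $\vartheta^{\dagger, s\opn{-an}}_{\kappa, j, \beta}$ agrees with $\vartheta^{\dagger, \circ}_{\kappa_x, j_x, \beta}$ (or with $\vartheta^{\dagger, \circ}_{\kappa_x, j_x + \chi_x, \beta}$ in the locally algebraic case) up to replacing the continuous dual of the analytic induction by the algebraic $V^*_{\kappa_x}$ — this is the content of the first two compatibility diagrams of Remark~\ref{CompatibilityRelationsForThetadaggersRem}, which I would state and verify carefully from Proposition~\ref{SanDeltaDaggerIsAnEvectorProp}. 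Since part~(2) already shows $\vartheta^{\dagger, \circ}_{\kappa_x, j_x, \beta}$ preserves overconvergent forms, $\opn{sp}_x(y)$ is overconvergent, i.e. $\opn{sp}_x(\overline{\ide{u}}_H \cdot y) = \overline{\ide{u}}_H \cdot \opn{sp}_x(y) = 0$. Running over all $x \in \Sigma$ gives $\overline{\ide{u}}_H \cdot y = 0$, hence $y \in \mathscr{M}^{\dagger, \opn{an}}_{H, \sigma_{\kappa}^{[j]}}(\hat{\iota}^{-1}U)$. The version over $V$ follows by the same continuity/limit argument as in part~(1). The main obstacle I anticipate is the bookkeeping in the ``Zariski dense implies zero'' step: one must be careful that the action of $\overline{\ide{u}}_H$ commutes with specialisation and that the relevant sheaves are, locally on the cover from Lemma~\ref{AcyclicityCoverLemma}, completed base changes of $R$-flat (indeed projective) modules, so that vanishing can be checked pointwise — and that the dense subset $\Sigma$ remains Zariski dense after restricting to an affinoid piece of the cover.
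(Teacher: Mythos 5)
Your parts~(2) and~(3) are basically sound and proceed along the same lines as the paper. The paper happens to handle~(1) and~(2) simultaneously by running a single argument with $\xi \in \{1_{U_{G,\beta}}, 1_{U_{G,\beta}^{\circ},\chi}\}$, whereas you deduce~(2) from~(1) by pre-composing with the locally constant function; that is a cosmetic reorganisation. Your Zariski-density argument for~(3) — specialise at $x \in \Sigma$, use part~(2) there, and then conclude from projectivity of $\mathscr{N}^{(r',k'),\opn{an}}_{H,\sigma_{\kappa}^{[j]}}(\hat{\iota}^{-1}U)$ over $C\,\hatot\,R$ that a section vanishing on a Zariski dense set of points is zero — is exactly the paper's. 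One small caveat in~(2): the identity $\vartheta^{\dagger,\circ}_{\kappa, j+\chi, \beta} = \vartheta^{\dagger}_{\kappa, j, \beta}\circ(1_{U_{G,\beta}^{\circ},\chi}\star -)$ should not be read literally as a composition of endomorphisms of $\mathscr{M}^{\dagger}_{G,\kappa^*}(U)$ when $\chi$ is nontrivial, because $1_{U_{G,\beta}^{\circ},\chi}$ is not $M^G_{\opn{Iw}}(p^{\beta})$-invariant and so does not preserve the weight; the paper only records this factoring for $\chi$ trivial, and in general it has to be unwound at the level of the $M^H_{\diamondsuit}(p^{\beta})$-equivariant data. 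This is easy to repair.

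The genuine gap is in part~(1). You assert that the obstruction to $E_{i,1}\star_{\overline{P}}\vartheta^{\dagger}_{\kappa,j,\beta}(F)=0$ lives in $\opn{Hom}_{M_{\mbf{H}}}(W\otimes\sigma_{\kappa}^{[j],-1},\, S_{-(j-1)}\otimes V_{\kappa})$ and vanishes by ``running the branching-law argument of Proposition~\ref{ClassicalThetaPreservesHolProp} verbatim.'' But this is exactly where the overconvergent case differs from the classical one. In Proposition~\ref{ClassicalThetaPreservesHolProp} the section $F$ is invariant under all of $M_{\mbf{G}}$, so the auxiliary maps $g$, $h$, $\bar{q}$ built from it are genuinely $M_{\mbf{H}}$-equivariant and the vanishing of the $M_{\mbf{H}}$-equivariant $\opn{Hom}$-space applies directly. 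In the overconvergent case $F\in\mathscr{M}^{\dagger}_{G,\kappa^*}(U)$ is a priori only invariant under $M^G_{\opn{Iw}}(p^{\beta})$, so the resulting maps are only $M^H_{\diamondsuit}(p^{\beta})$-equivariant (in the $u$-twisted sense), and the obstruction a priori sits in the larger space $\opn{Hom}_{M^H_{\diamondsuit}(p^{\beta})}(W\otimes\sigma_{\kappa}^{[j],-1},\, S_{-(j-1)}\otimes V_{\kappa})$, whose vanishing is not part of Theorem~\ref{TheoremForClassicalBranching}. The paper closes this gap with an additional argument that you omit: since $S_{-(j-1)}\otimes V_{\kappa}$ is a finite-dimensional \emph{algebraic} representation of $M_{\mbf{H}}$ and $M^H_{\diamondsuit}(p^{\beta})$ is Zariski dense in $M_{\mbf{H}}$, any $M^H_{\diamondsuit}(p^{\beta})$-stable subspace (in particular $\opn{ker}(g)$) is automatically $M_{\mbf{H}}(\mbb{Q}_p)$-stable, and the $M^H_{\diamondsuit}(p^{\beta})$-equivariant map $\bar{q}$ upgrades to an $M_{\mbf{H}}(\mbb{Q}_p)$-equivariant one; only then does the branching-law vanishing apply. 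Without this step your proof of~(1) is incomplete, and since your~(2) and~(3) are deduced from~(1), the gap propagates to the whole argument.
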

\begin{proof}
    Parts (1) and (2) follow the same strategy as in Proposition \ref{ClassicalThetaPreservesHolProp}. More precisely, let
    \[
    (\mathcal{M}^{\dagger}_G, \mathcal{N}^{\dagger}_G, \mathcal{M}^{\dagger}_{G, \kappa^*}, \mathcal{N}^{\dagger}_{G, \kappa^*}, \mathcal{N}^{\dagger}_H )
    \]
    be one of the following tuples:
    \begin{align*}
        (\mathscr{M}^{\dagger}_G(U), \; \mathscr{N}^{\dagger}_G(U), \; &\mathscr{M}^{\dagger}_{G, \kappa^*}(U), \; \mathscr{N}^{\dagger}_{G, \kappa^*}(U), \; \mathscr{N}^{\dagger}_{H}(\hat{\iota}^{-1}U) ) \\
        (\mathscr{M}^{\dagger}_G(V), \; \mathscr{N}^{\dagger}_G(V), \; &\mathscr{M}^{\dagger}_{G, \kappa^*}(V), \; \mathscr{N}^{\dagger}_{G, \kappa^*}(V), \; \mathscr{N}^{\dagger}_{H}(\hat{\iota}^{-1}V) )  
    \end{align*}
    and let $(\vartheta, \xi) \in \{ (\vartheta^{\dagger}_{\kappa, j, \beta}, 1_{U_{G, \beta}}), (\vartheta^{\dagger, \circ}_{\kappa, j+\chi, \beta}, 1_{U_{G, \beta}^{\circ}, \chi}) \}$. Let $F \in \mathcal{M}^{\dagger}_{G, \kappa^*} \subset \mathcal{N}^{\dagger}_{G, \kappa^*}$, so in particular, $F$ is killed by the action of $\overline{\ide{u}}_G$ under $\star_{\overline{P}}$. Recall from Definition \ref{AbstractDefOfThetadagger} or Definition \ref{abstractcirclalgTheta}, that the morphism $\vartheta$ is the $M^H_{\diamondsuit}(p^{\beta})$-invariants of the composition of two maps:
    \[
    \mathcal{N}^{\dagger}_{G} \otimes V_{\kappa}^* \xrightarrow{\pi_1} \mathcal{N}^{\dagger}_{G} \otimes \sigma_{\kappa}^{[j]} \xrightarrow{\pi_2} \mathcal{N}^{\dagger}_{H} \otimes \sigma_{\kappa}^{[j]}
    \]
    where the first map $\pi_1$ is the composition of the first three bullet points in Definition \ref{AbstractDefOfThetadagger} (or the analogous maps in Definition \ref{abstractcirclalgTheta}), and $\pi_2$ is the natural pullback map (as discussed at the end of \S \ref{FunctorialityOfGMConnectionOnNOCSubSec}). Here the action of $M^H_{\diamondsuit}(p^{\beta})$ on $\mathcal{N}^{\dagger}_G$ and $V_{\kappa}^*$ is through the embedding $u^{-1}M^H_{\diamondsuit}(p^{\beta}) u \subset M^G_{\opn{Iw}}(p^{\beta})$, and we have twisted the maps in Definition \ref{AbstractDefOfThetadagger} by $\sigma_{\kappa}^{[j]}$. 
    
    With notation as in the proof of Proposition \ref{ClassicalThetaPreservesHolProp}, we wish to show $E_{i, 1} \star_{\overline{P}} \vartheta(F) = 0$ for all $i=2, \dots, n$. Since $\pi_2$ is equivariant for the action of $\overline{\ide{u}}_H$ through the embedding $\opn{Ad}(u^{-1})\overline{\ide{u}}_H  \subset \overline{\ide{u}}_G$, it suffices to prove $\opn{Ad}(u^{-1})E_{i, 1} \star_{\overline{P}} \pi_1(F) = 0$.

    Fix a basis $\{v_l \}$ of $V_{\kappa}$ and note that $F$ can be viewed as a $M^G_{\opn{Iw}}(p^{\beta})$-equivariant map $\tilde{F} \colon V_{\kappa} \to \mathcal{N}^{\dagger}_G$. Then, with notation as in Proposition \ref{ClassicalThetaPreservesHolProp}, we have
    \[
    \pi_1(F) = \sum_l \sum_{\substack{T \subset \{n+2, \dots, 2n\} \\ \# T = j_{\tau_0}}} \lambda_{l, T} \left[ p_T \star_{\ide{u}} (\xi \star_{\ide{u}} \tilde{F}(u^{-1} \cdot v_l) ) \right]
    \]
    where $p_T$ denotes the restriction of $u^{-1} \cdot x^T$ to a polynomial on $U_{G, \beta}$. Note that $\frac{\partial p_T}{\partial x_k}$ is the restriction of $u^{-1} \frac{\partial x^T}{\partial x_k}$ to $U_{G, \beta}$, for all $k=n+1, \dots, 2n$ (since the change of coordinates given by the action of $u^{-1}$ only shifts the coordinates by $x_i$ for $i=2, \dots, n$). Here $\star_{\ide{u}}$ denotes the action in Theorem \ref{MainTheoremOnPadicIteration} or Lemma \ref{LaActionExtendsToVLemma}. By acting on both sides of (\ref{EstarpstarFHolomorphicCase}) by $u^{-1}$ and using the density of $\mathscr{N}_G(U)$ in $\mathcal{N}^{\dagger}_G$, we have for any $F' \in \mathcal{N}^{\dagger}_G$
    \[
    \opn{Ad}(u^{-1})E_{i, 1} \star_{\overline{P}} ( p_T \star_{\ide{u}} F') = p_T \star_{\ide{u}} ( \opn{Ad}(u^{-1})E_{i, 1} \star_{\overline{P}} F') + \sum_{k=n+1}^{2n} \frac{\partial p_T}{\partial x_k} \star_{\ide{u}} ( \opn{Ad}(u^{-1})E_{i, k} \star_{\overline{P}} F') .
    \]
    Then, using the fact that $\overline{\ide{u}}_G$ kills $\xi \star_{\ide{u}} \tilde{F}(u^{-1} \cdot v_l)$ (because $F \in \mathcal{M}^{\dagger}_G$ and $\xi$ preserves overconvergent forms -- see Example \ref{ExampleOfIndicatorFcnPreservingOC}), we have
    \[
    \opn{Ad}(u^{-1})E_{i, 1} \star_{\overline{P}} \pi_1(F) =  \sum_l \sum_{\substack{T \subset \{n+2, \dots, 2n\} \\ \# T = j_{\tau_0}}} \lambda_{l, T} \left( \sum_{k=n+1}^{2n} \frac{\partial p_T}{\partial x_k} \star_{\ide{u}} \left[ \opn{Ad}(u^{-1})E_{i, k} \star_{\overline{P}} (\xi \star_{\ide{u}} \tilde{F}(u^{-1} \cdot v_l) ) \right]  \right) .
    \]
    Now we consider the following map
    \[
    g \colon S_{-(j-1)} \otimes V_{\kappa} \to \mathcal{N}^{\dagger}_G
    \]
    given by $g( y \otimes z) = (u^{-1} \cdot \Phi_{\beta}(y) )|_{U_{G, \beta}} \star_{\ide{u}} (\xi \star_{\ide{u}} \tilde{F}(u^{-1} \cdot z))$. This is $M^H_{\diamondsuit}(p^{\beta})$-equivariant in the following sense: for any $h \in M^H_{\diamondsuit}(p^{\beta})$, we have
    \[
    g( h \cdot y \otimes h \cdot z) = (u^{-1}h u) \cdot g(y \otimes z) .
    \]
    Here we are using the fact that $(u^{-1}hu) \cdot \xi = \sigma_0^{[\chi]}(h)^{-1} \xi = \xi$. Note that there are elements $A_i \in S_{-(j-1)} \otimes V_{\kappa}$ ($i=2, \dots, n$) such that $g(A_i) = \opn{Ad}(u^{-1})E_{i, 1} \star_{\overline{P}} \pi_1(F)$ (because $\xi$ is killed under the action of $\ide{m}_G$). 

    On the other hand, let $W$ be as in Proposition \ref{ClassicalThetaPreservesHolProp}, then we define a morphism 
    \begin{align*} 
    h \colon W \otimes \sigma_{\kappa}^{[j], -1} &\to \mathcal{N}^{\dagger}_{G} \\
    e_1^* e_i &\mapsto \opn{Ad}(u^{-1})E_{i, 1} \star_{\overline{P}} \pi_1(F) 
    \end{align*} 
    and extending linearly. This is $M^H_{\diamondsuit}(p^{\beta})$-equivariant in the following sense: for any $m \in M^H_{\diamondsuit}(p^{\beta})$ and $w \in W \otimes \sigma_{\kappa}^{[j], -1}$, we have
    \[
    h( m \cdot w ) = (u^{-1}m u) \cdot h(w) .
    \]
    As in Proposition \ref{ClassicalThetaPreservesHolProp}, we therefore obtain a $M^H_{\diamondsuit}(p^{\beta})$-equivariant morphism
    \[
    \bar{q} \colon W \otimes \sigma_{\kappa}^{[j],-1} \to \left( S_{-(j-1)} \otimes V_{\kappa} \right)/\opn{ker}(g) .
    \]
    We now claim that the natural map $S_{-(j-1)} \otimes V_{\kappa} \twoheadrightarrow \left( S_{-(j-1)} \otimes V_{\kappa} \right)/\opn{ker}(g)$ is actually $M_H(\mbb{Q}_p)$-equivariant. For this, it is enough to show that $\opn{ker}(g)$ is stable under $M_H(\mbb{Q}_p)$. Choose a basis $\{ s_1, \dots, s_a \}$ of $S_{-(j-1)} \otimes V_{\kappa}$ such that $\{ s_1, \dots, s_b \}$ is a basis of $\opn{ker}(g)$ for some $1 \leq b \leq a$. Let $\mu_i \colon S_{-(j-1)} \otimes V_{\kappa} \to \mbb{Q}_p$ denote the linear functional projecting to the coefficient of $s_i$. Fix $x \in \opn{ker}(g)$ and consider the map
    \begin{align*} 
    Q_i \colon M_H(\mbb{Q}_p) &\to \mbb{Q}_p \\
    m &\mapsto \mu_i(m \cdot x ) .
    \end{align*} 
    Since $S_{-(j-1)} \otimes V_{\kappa}$ is an algebraic representation, the maps $Q_i$ are algebraic (i.e. they extend to algebraic morphisms $M_H \to \mbb{A}^1$). We already know that for $b+1 \leq i \leq a$, the functions $Q_i$ vanish on $M^H_{\diamondsuit}(p^{\beta})$. But this subgroup is Zariski dense in $M_H$, hence we must have $Q_i = 0$ for all $i=b+1, \dots, a$. This implies $\opn{ker}(g)$ is stable under $M_H(\mbb{Q}_p)$. More generally, this argument shows that any $M^H_{\diamondsuit}(p^{\beta})$-stable subspace of a finite-dimensional algebraic representation of $M_H$, is actually stable under $M_H(\mbb{Q}_p)$.

    A similar argument shows that $\bar{q}$ is in fact $M_H(\mbb{Q}_p)$-equivariant.  Indeed, for any $x \in W \otimes \sigma_{\kappa}^{[j],-1}$, consider the following morphism $Q_x \colon M_H(\mbb{Q}_p) \to \mbb{Q}_p$ given by $m \mapsto \bar{q}(m \cdot x) - m \cdot \bar{q}(x)$, which makes sense because the target of $\bar{q}$ carries an action of $M_H(\mbb{Q}_p)$. Since $\bar{q}$ is linear, and the actions of $M_H(\mbb{Q}_p)$ on both the source and target are algebraic, the morphisms $Q_x$ are algebraic. But the morphism $\bar{q}$ is $M^H_{\diamondsuit}(p^{\beta})$-equivariant, hence $Q_x$ vanishes on $M^H_{\diamondsuit}(p^{\beta})$. By density, this implies that $Q_x$ is identically zero, and since $x$ is arbitrary, this implies that $\bar{q}$ is $M_H(\mbb{Q}_p)$-equivariant.

    To complete the proof of (1) and (2), we now note from the proof of Proposition \ref{ClassicalThetaPreservesHolProp} that the morphism $\bar{q}$ must be zero; hence the map $h$ is zero as required.

    We now prove part (3). Firstly note that for any $x \in \Sigma$, any $i \in \{2, \dots, n\}$, and any $z \in \mathscr{M}^{\dagger, s\opn{-an}}_{G, \kappa_x^*}(U)$ or $\mathscr{M}^{\dagger, s\opn{-an}}_{G, \kappa_x^*}(V)$
    \[
    E_{i, 1} \star_{\overline{P}} \vartheta^{\dagger, s\opn{-an}}_{\kappa_x, j_x, \beta}(z) = E_{i, 1} \star_{\overline{P}} \vartheta^{\dagger}_{\kappa_x, j_x, \beta}(1_{U_{G, \beta}^{\circ}} \star z) = 0
    \]
    by the compatibility relations in Remark \ref{CompatibilityRelationsForThetadaggersRem} and the fact that the action of $1_{U_{G, \beta}^{\circ}}$ preserves overconvergent forms (Example \ref{ExampleOfIndicatorFcnPreservingOC}). Furthermore, let $z \in \mathscr{M}^{(r, k), s\opn{-an}}_{G, \kappa^*}(U)$ (resp. $z \in \mathscr{M}^{(r, k), s\opn{-an}}_{G, \kappa^*}(V_{\gamma_i})$). Then unwinding the definitions in Definition \ref{NOCDefinitionOfThetadaggers}, we see that there exist integers $r', k'$ and a rational number $\gamma > 0$ such that
    \begin{itemize}
        \item $\vartheta^{\dagger, s\opn{-an}}_{\kappa, j, \beta}(z) \in \mathscr{N}^{(r', k'), \opn{an}}_{H, \sigma_{\kappa}^{[j]}}(\hat{\iota}^{-1}U)$ (resp. $\vartheta^{\dagger, s\opn{-an}}_{\kappa, j, \beta}(z) \in \mathscr{N}^{(r', k'), \opn{an}}_{H, \sigma_{\kappa}^{[j]}}(\hat{\iota}^{-1}V_{\gamma})$). 
        \item For any $X \in \overline{\ide{u}}_H$, the specialisation of $X \star_{\overline{P}} \vartheta^{\dagger, s\opn{-an}}_{\kappa, j, \beta}(z)$ at any point $x \in \Sigma$ is zero. 
    \end{itemize}
    By \cite[Proposition 6.3.3]{BoxerPilloni}, $\mathscr{N}^{(r', k'), \opn{an}}_{H, \sigma_{\kappa}^{[j]}}(\hat{\iota}^{-1}U)$ (resp. $\mathscr{N}^{(r', k'), \opn{an}}_{H, \sigma_{\kappa}^{[j]}}(\hat{\iota}^{-1}V_{\gamma})$) is a projective Banach $C \hatot R$-module, where $C$ denotes the sections of an appropriate quasi-compact open affinoid subspace of $\mathcal{X}_{H, \diamondsuit}(p^{\beta})$. This implies that $X \star_{\overline{P}} \vartheta^{\dagger, s\opn{-an}}_{\kappa, j, \beta}(z) = 0$. Indeed, this follows from the general fact: if $M$ is a projective Banach $C \hatot R$-module, then any element $m \in M$ whose specialisations satisfy $m_x = 0$ for any $x \in \Sigma$ must satisfy $m=0$. This can be proven by reducing to the case where $M$ is orthonormalisable, and then to the setting where $M = C \hatot R$, where the claim follows from Zariski density. 
\end{proof}

We now consider the induced morphisms on cohomology. We consider three cases:
\begin{enumerate}
    \item Let $(\kappa, j) \in \mathcal{E}$. In this case, we set $\mathscr{F}^{(r,k)} = \mathscr{M}^{(r,k)}_{G, \kappa^*}$ and $\mathscr{G}^{(r,k)} = \hat{\iota}_* \mathscr{M}^{(r,k)}_{H, \sigma_{\kappa}^{[j]}}$. We also let $\vartheta = \vartheta^{\dagger}_{\kappa, j, \beta}$.
    \item Let $(R, R^+)$ be a Tate affinoid algebra over $(L, \mathcal{O}_L)$ and $(\kappa, j) \in \mathcal{X}_{R, s}$ satisfying the conditions in Proposition \ref{DaggerThetaTakesOCtoOCProp}(2). In this case, we set $\mathscr{F}^{(r,k)} = \mathscr{M}^{(r,k), s\opn{-an}}_{G, \kappa^*}$ and $\mathscr{G}^{(r,k)} = \hat{\iota}_* \mathscr{M}^{(r,k), \opn{an}}_{H, \sigma_{\kappa}^{[j]}}$ (for $k \geq s+1$). We also let $\vartheta = \vartheta^{\dagger, s\opn{-an}}_{\kappa, j, \beta}$.
    \item Let $(\kappa, j) \in \mathcal{E}$ and let $\chi = (\chi_{\tau}) \colon \prod_{\tau \in \Psi} \mbb{Z}_p^{\times} \to L^{\times}$ be a finite-order character such that $\chi_{\tau}$ is trivial on $1 + p^{\beta}\mbb{Z}_p$ for all $\tau \in \Psi$. In this case, we set $\mathscr{F}^{(r,k)} = \mathscr{M}^{(r,k)}_{G, \kappa^*}$ and $\mathscr{G}^{(r,k)} = \hat{\iota}_* \mathscr{M}^{(r,k)}_{H, \sigma_{\kappa}^{[j]}}$. We also let $\vartheta = \vartheta^{\dagger, \circ}_{\kappa, j+\chi, \beta}$.
\end{enumerate}
Let $\ide{U} = (U_i)_{i \in I}$ be an open cover of $\mathcal{X}_{G, \opn{Iw}}(p^{\beta})$ as in Lemma \ref{AcyclicityCoverLemma} (for both $\mathscr{F}^{(r, k)}$ and $\mathscr{G}^{(r, k)}$) and let $\ide{V} = (V_i)_{i \in I}$ denote the induced cover of $\mathcal{X}_{G, \opn{Iw}}(p^{\beta}) - \mathcal{Z}_{G, >n+1}(p^{\beta})$ (i.e. we set $V_i = U_i \cap (\mathcal{X}_{G, \opn{Iw}}(p^{\beta}) - \mathcal{Z}_{G, >n+1}(p^{\beta}))$). Then since the morphisms $\vartheta$ are functorial in $U \in \mathcal{C}_{G,H}$ and take overconvergent forms to overconvergent forms, we obtain a commutative diagram of \v{C}ech complexes
\begin{equation} \label{CommutativeDiagOfCechComplex}
\begin{tikzcd}
\varinjlim_{r, k}\opn{Cech}(\mathscr{F}^{(r, k)}; \ide{U}) \arrow[d, "\vartheta"'] \arrow[r] & \varinjlim_{r, k} \opn{Cech}(\mathscr{F}^{(r,k)}; \ide{V} ) \arrow[d, "\vartheta"] \\
\varinjlim_{r, k}\opn{Cech}(\mathscr{G}^{(r, k)}; \ide{U}) \arrow[r]                         & \varinjlim_{r,k} \opn{Cech}(\mathscr{G}^{(r,k)}; \ide{V} ) .                      
\end{tikzcd}
\end{equation}
By Remark \ref{CanBeComputeUsingCechRem}, this induces morphisms:
\begin{align*}
    \vartheta^{\dagger}_{\kappa, j, \beta} \colon R\Gamma^G_{w_n}(\kappa^*; \beta)^{(-, \dagger)} \to R\Gamma^H_{\opn{id}}(\sigma_{\kappa}^{[j]}; \beta)^{(-, \dagger)} \to R\Gamma_{\mathcal{Z}_{H, \opn{id}}(p^{\beta})}\left( \mathcal{S}_{H, \diamondsuit}(p^{\beta}), \mathscr{M}_{H, \sigma_{\kappa}^{[j]}} \right) \\
    \vartheta^{\dagger, s\opn{-an}}_{\kappa, j, \beta} \colon R\Gamma^G_{w_n, s\opn{-an}}(\kappa^*; \beta)^{(-, \dagger)} \to R\Gamma^H_{\opn{id}, \opn{an}}(\sigma_{\kappa}^{[j]}; \beta)^{(-, \dagger)} \to R\Gamma^H_{\opn{id}, \opn{an}}(\mathcal{S}_{H, \diamondsuit}(p^{\beta}), \sigma_{\kappa}^{[j]})^{(-, \dagger)} \\
    \vartheta^{\dagger, \circ}_{\kappa, j+\chi, \beta} \colon R\Gamma^G_{w_n}(\kappa^*; \beta)^{(-, \dagger)} \to R\Gamma^H_{\opn{id}}(\sigma_{\kappa}^{[j]}; \beta)^{(-, \dagger)} \to R\Gamma_{\mathcal{Z}_{H, \opn{id}}(p^{\beta})}\left( \mathcal{S}_{H, \diamondsuit}(p^{\beta}), \mathscr{M}_{H, \sigma_{\kappa}^{[j]}} \right)
\end{align*}
in cases (1), (2), and (3) respectively. In all three compositions, the first map is induced from the commutative diagram (\ref{CommutativeDiagOfCechComplex}) and the second map is just restriction along the open and closed embedding in Lemma \ref{LemmaSDintoPEL} (note we have used the excision property in (\ref{ExcisionEqnlabel}) for the first and third map, and by abuse of notation we also use the notation $\mathcal{Z}_{H, \opn{id}}(p^{\beta})$ for its intersection with $\mathcal{S}_{H, \diamondsuit}(p^{\beta})$).

\subsubsection{Overconvergent evaluation maps for classical and locally algebraic weights}

For $(\kappa, j) \in \mathcal{E}$ or $\mathcal{X}_{R, s}$, let $\sigma_{\kappa}^{[j], \vee}$ denote the Serre dual of $\sigma_{\kappa}^{[j]}$, i.e
\[
\sigma_{\kappa}^{[j], \vee} \defeq - w_{M_H}^{\opn{max}} \sigma_{\kappa}^{[j]} - 2 \rho_{H, \opn{nc}}
\]
where $\rho_{H, \opn{nc}}$ denotes the half-sum of positive roots of $H$ not lying in $M_H$.

\begin{definition}
    \begin{enumerate}
        \item For $(\kappa, j) \in \mathcal{E}$, let 
        \[
        R\Gamma^H_{\opn{id}}( \mathcal{S}_{H, \diamondsuit}(p^{\beta}), \sigma_{\kappa}^{[j], \vee} )^{(+, \dagger)} \defeq \varinjlim_U R \Gamma( U, \mathscr{M}_{H, \sigma_{\kappa}^{[j], \vee}} ) = \varinjlim_{r, k} R\Gamma\left( \mathcal{S}_{H, \diamondsuit}(p^{\beta}), \mathscr{M}^{(r, k)}_{H, \sigma_{\kappa}^{[j], \vee}} \right)
        \]
        where the colimit is over all open neighbourhoods of $\mathcal{Z}_{H, \opn{id}}(p^{\beta})$ inside $\mathcal{S}_{H, \diamondsuit}(p^{\beta})$ (with transition maps given by restriction).
        \item Let $(\kappa, j) \in \mathcal{X}_{R, s}$. Then we define
        \[
        R\Gamma^H_{\opn{id}, \opn{an}}( \mathcal{S}_{H, \diamondsuit}(p^{\beta}), \sigma_{\kappa}^{[j], \vee} )^{(+, \dagger)} \defeq \varinjlim_{r, k} R\Gamma \left( \mathcal{S}_{H, \diamondsuit}(p^{\beta}), \mathscr{M}^{(r, k), \opn{an}}_{H, \sigma_{\kappa}^{[j], \vee}} \right) .
        \]
    \end{enumerate}
    Note that $R\Gamma^H_{\opn{id}}( \mathcal{S}_{H, \diamondsuit}(p^{\beta}), \sigma_{\kappa}^{[j], \vee} )^{(+, \dagger)} = R\Gamma^H_{\opn{id}, \opn{an}}( \mathcal{S}_{H, \diamondsuit}(p^{\beta}), \sigma_{\kappa}^{[j], \vee} )^{(+, \dagger)}$ when $(\kappa, j) \in \mathcal{E}$.
\end{definition}

\begin{remark} \label{UFJsSerreDualityRem}
    As explained in \cite[\S 5.4]{UFJ}, we have Serre duality pairings
    \begin{align*}
        \opn{H}^{n-1-i}_{\opn{id}}( \mathcal{S}_{H, \diamondsuit}(p^{\beta}), \sigma_{\kappa}^{[j]} )^{(-, \dagger)} \times \opn{H}^{i}_{\opn{id}}( \mathcal{S}_{H, \diamondsuit}(p^{\beta}), \sigma_{\kappa}^{[j], \vee} )^{(+, \dagger)} &\to L \\
        \opn{H}^{n-1-i}_{\opn{id}, \opn{an}}( \mathcal{S}_{H, \diamondsuit}(p^{\beta}), \sigma_{\kappa}^{[j]} )^{(-, \dagger)} \times \opn{H}^{i}_{\opn{id}, \opn{an}}( \mathcal{S}_{H, \diamondsuit}(p^{\beta}), \sigma_{\kappa}^{[j], \vee} )^{(+, \dagger)} &\to R 
    \end{align*}
    for $i=0, \dots, n-1$, which are compatible with changing $R$. 
\end{remark}

Let $(\kappa, j) \in \mathcal{E}$ satisfying Assumption \ref{AssumpOnKJforACchar} and let $\chi \in \Sigma_{\kappa, j}(\ide{N}_{\beta})$ (see Definition \ref{DefOfSigmaKJNB}). Let $L/\mbb{Q}_p$ be a finite extension containing $F^{\opn{cl}}(\chi)$ and $\mbb{Q}_p(\mu_{p^{\beta}})$. All constructions are made over this finite extension $L/\mbb{Q}_p$, which we will once again omit from the notation. Let $\chi_{p} = (\chi_{p, \tau}) \colon \prod_{\tau \in \Psi} \mbb{Z}_p^{\times} \to L^{\times}$ denote the restriction of $\chi$ to $\prod_{\tau \in \Psi} \mathcal{O}^{\times}_{F_{\bar{\ide{p}}_{\tau}}} \cong \prod_{\tau \in \Psi} \mbb{Z}_p^{\times}$. Note that, for any $\tau \in \Psi$, $\chi_{p, \tau}$ is trivial on $1 + p^{\beta}\mbb{Z}_p$ because the conductor of $\chi$ divides $\ide{N}_{\beta} = \ide{N} p^{\beta}$. 

Recall from \S \ref{ClassicalEvaluationMapsSubSec} that we have a cohomology class
\[
[\chi] \in \opn{H}^0\left( \mathcal{S}_{H, \diamondsuit}(p^{\beta}), \mathscr{M}_{H, \sigma_{\kappa}^{[j], \vee}} \right) 
\]
associated with the anticyclotomic character $\chi$. Let $\opn{res}[\chi] \in \opn{H}^0_{\opn{id}}(\mathcal{S}_{H, \diamondsuit}(p^{\beta}), \sigma_{\kappa}^{[j], \vee})^{(+, \dagger)}$ denote the image of $[\chi]$ under the natural restriction map $\opn{H}^0\left( \mathcal{S}_{H, \diamondsuit}(p^{\beta}), \mathscr{M}_{H, \sigma_{\kappa}^{[j], \vee}} \right) \to \opn{H}^0_{\opn{id}}(\mathcal{S}_{H, \diamondsuit}(p^{\beta}), \sigma_{\kappa}^{[j], \vee})^{(+, \dagger)}$.

\begin{definition}
    With notation as above, we consider the following $L$-linear maps:
    \begin{align*}
    \opn{Ev}^{\dagger}_{\kappa, j, \chi, \beta} \colon \opn{H}^{n-1}_{w_n}(\kappa^*; \beta)^{(-, \dagger)} &\to L  \\
    \eta &\mapsto \langle \vartheta^{\dagger}_{\kappa, j, \beta}(\eta), \opn{res}[\chi] \rangle \\
    \opn{Ev}^{\dagger, \circ}_{\kappa, j, \chi, \beta} \colon \opn{H}^{n-1}_{w_n}(\kappa^*; \beta)^{(-, \dagger)} &\to L  \\
    \eta &\mapsto \langle \vartheta^{\dagger, \circ}_{\kappa, j+\chi_p, \beta}(\eta), \opn{res}[\chi] \rangle
    \end{align*}
    where, in both cases, $\langle \cdot, \cdot \rangle$ denotes the Serre duality pairing in Remark \ref{UFJsSerreDualityRem}.
\end{definition}

\subsubsection{Overconvergent evaluation maps for \texorpdfstring{$p$}{p}-adic weights} \label{OverconvergentEVMapsPAdicWtSSec}

Let $\mathcal{W}$ denote the adic space over $\opn{Spa}(\mbb{Q}_p, \mbb{Z}_p)$ parameterising continuous characters of $\prod_{\tau \in \Psi} \mbb{Z}_p^{\times}$. This has an increasing cover $\{ \mathcal{W}_{h} \}_{h \geq 1}$ by quasi-compact open affinoid subpaces where the universal character of $\mathcal{W}_h$ is $h$-analytic. We can (and do) assume that $\mathcal{W}_1$ contains all classical weights, i.e. the characters of the form 
\begin{align*}
    \prod_{\tau \in \Psi} \mbb{Z}_p^{\times} &\to \mbb{Q}_p^{\times} \\
    (x_{\tau})_{\tau \in \Psi} &\mapsto \prod_{\tau \in \Psi} x_{\tau}^{m_{\tau}}
\end{align*}
where $m_{\tau}$ are integers.

Let $\mathcal{W}(\ide{N} p^{\infty})$ denote the adic space over $\opn{Spa}(\mbb{Q}_p, \mbb{Z}_p)$ parameterising continuous characters of $\Gal(F_{\ide{N}p^{\infty}}/F)$, where $F_{\ide{N}p^{\infty}}/F$ denotes the abelian extension corresponding to 
\[
C_{\ide{N}p^{\infty}} \defeq F^{\times} \backslash \left( \mbb{A}_{F, f}^{\times} / \mbb{A}_{F^+, f}^{\times} \cdot (\widehat{\mathcal{O}}^{(p)}_{F^+} + \ide{N} \widehat{\mathcal{O}}_F^{(p)} )^{\times} \right)
\]
via Artin reciprocity, where $\widehat{\mathcal{O}}_{F}^{(p)} = \prod_{\substack{v \text{ finite } \\ v \nmid p}} \mathcal{O}_{F, v}$ and similarly for $\widehat{\mathcal{O}}_{F^+}^{(p)}$. Note that, in general, $F_{\ide{N}p^{\infty}}$ can be smaller than the ring class field $F[\ide{N}p^{\infty}]$. We define $F_{\ide{N}}/F$ in the same way, but replacing $\widehat{\mathcal{O}}_{F^+}^{(p)}$ and $\widehat{\mathcal{O}}_{F}^{(p)}$ with $\widehat{\mathcal{O}}_{F^+}$ and $\widehat{\mathcal{O}}_{F}$ respectively. We have an injective map
\begin{equation} \label{ProdZpstarIncl}
\prod_{\tau \in \Psi} \mbb{Z}_p^{\times} \cong \prod_{\tau \in \Psi} \mathcal{O}_{F_{\bar{\ide{p}}_{\tau}}}^{\times} \hookrightarrow C_{\ide{N}p^{\infty}}
\end{equation}
where the second map is induced from the natural inclusion of ideles, and this subgroup is identified with $\Gal(F_{\ide{N}p^{\infty}}/F_{\ide{N}})$. One has a natural map
\[
\jmath \colon \mathcal{W}(\ide{N}p^{\infty}) \to \mathcal{W}
\]
given by sending a continuous character $C_{\ide{N}p^{\infty}} \to R^{\times}$ to its restriction to $\prod_{\tau \in \Psi} \mbb{Z}_p^{\times}$ via (\ref{ProdZpstarIncl}). 

\begin{lemma}
    The map $\jmath$ is an \'{e}tale torsor under the character group scheme $X^*(\Gal(F_{\ide{N}}/F))$. In particular, the map $\jmath$ is finite \'{e}tale.
\end{lemma}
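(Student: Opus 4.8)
The plan is to unwind the short exact sequence of abelian groups that defines the relevant Galois groups and then translate it into a statement about adic spaces parameterising their characters. First I would record the exact sequence
\[
1 \to \Gal(F_{\ide{N}p^{\infty}}/F_{\ide{N}}) \to \Gal(F_{\ide{N}p^{\infty}}/F) \to \Gal(F_{\ide{N}}/F) \to 1,
\]
and identify the kernel with $\prod_{\tau \in \Psi} \mbb{Z}_p^{\times}$ via the inclusion (\ref{ProdZpstarIncl}), which is exactly the subgroup over which $\jmath$ is defined. The key point is that this sequence splits after restricting to a suitable finite-index subgroup: since $\Gal(F_{\ide{N}}/F)$ is a finite abelian group and $\prod_{\tau} \mbb{Z}_p^{\times}$ is a profinite group which is (topologically) $p$-adically analytic with no nontrivial finite quotients of order prime to $p$ except the tame part $\prod_\tau \mbb{F}_p^\times$, one checks that the extension is classified by an element of a torsion group and hence, after dualising, the map on character groups is a torsor. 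More precisely, applying the (exact, since these are locally compact abelian groups and we work with continuous characters into $R^\times$) functor $\opn{Hom}_{\opn{cont}}(-, \mbb{G}_m)$ to the above sequence gives a short exact sequence of character functors
\[
1 \to X^*(\Gal(F_{\ide{N}}/F)) \to \mathcal{W}(\ide{N}p^{\infty}) \xrightarrow{\jmath} \mathcal{W} \to 1
\]
of group-valued functors on Tate affinoid algebras, where surjectivity uses that $\prod_\tau \mbb{Z}_p^\times \hookrightarrow C_{\ide{N}p^{\infty}}$ admits a continuous splitting up to finite index (equivalently that every continuous character of $\prod_\tau \mbb{Z}_p^\times$ extends, fppf-locally on the base, to $C_{\ide{N}p^{\infty}}$).

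The next step is to upgrade this exact sequence of functors into the geometric statement that $\jmath$ is an \'etale torsor under $X^*(\Gal(F_{\ide{N}}/F))$. Here I would use that $X^*(G)$ for a finite abelian group $G$ is a finite \'etale constant group scheme over $\opn{Spa}(\mbb{Q}_p, \mbb{Z}_p)$ (a disjoint union of copies of the base indexed by the finitely many characters of $G$), so it suffices to show that fppf-locally (indeed \'etale-locally) on $\mathcal{W}$ the map $\jmath$ admits a section and that the fibres are principal homogeneous spaces for $X^*(\Gal(F_{\ide{N}}/F))$. Both follow from the exactness established above: the section exists after a finite \'etale base change trivialising the (finite, order dividing $|\Gal(F_{\ide{N}}/F)|$, hence a unit after inverting that order, but more robustly killed by passing to the cover adjoining the relevant roots) obstruction class, and the fibre over a character $\lambda \colon \prod_\tau \mbb{Z}_p^\times \to R^\times$ is precisely the set of extensions of $\lambda$ to $C_{\ide{N}p^{\infty}}$, which is either empty or a torsor under $X^*(\Gal(F_{\ide{N}}/F))(R)$; emptiness is excluded after the \'etale base change. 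Finiteness of $\jmath$ is then immediate since $X^*(\Gal(F_{\ide{N}}/F))$ is finite over the base, and a torsor under a finite \'etale group scheme is finite \'etale.

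I expect the main obstacle to be the careful verification of surjectivity of $\jmath$ on points, i.e.\ that an arbitrary continuous character of $\prod_{\tau \in \Psi} \mbb{Z}_p^{\times}$ valued in $R^\times$ extends \'etale-locally to $C_{\ide{N}p^{\infty}}$. This is where one must genuinely use the structure of the idele class group quotient $C_{\ide{N}p^{\infty}}$: one has $C_{\ide{N}p^{\infty}}/(\prod_\tau \mbb{Z}_p^\times) = \Gal(F_{\ide{N}}/F)$ finite, so the obstruction to extending lives in a group killed by $|\Gal(F_{\ide{N}}/F)|$, and extracting the corresponding roots of the character $\lambda$ (which is possible after an \'etale cover of $\opn{Spa}(R,R^+)$, since $\mbb{G}_m$ is divisible in the \'etale topology on adic spaces over $\mbb{Q}_p$) produces the required local section. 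Once this is in hand, the rest is formal: the torsor axioms and \'etaleness follow from the exact sequence of character functors together with the fact that $X^*$ of a finite abelian group is finite \'etale constant, and finiteness of $\jmath$ is then automatic.
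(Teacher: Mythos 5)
Your proposal follows essentially the same route as the paper: use the short exact sequence $0 \to \prod_\tau \mbb{Z}_p^\times \to C_{\ide{N}p^\infty} \to \Gal(F_{\ide{N}}/F) \to 0$, identify the obstruction to lifting a character to an element of $\opn{Ext}^1_{\opn{Ab}}(\Gal(F_{\ide{N}}/F), R^\times)$ (a direct sum of $R^\times/(R^\times)^{m_i}$), and kill it by the finite \'etale cover obtained by adjoining the relevant roots. Two small imprecisions worth flagging: the parenthetical claim that $\opn{Hom}_{\opn{cont}}(-,\mbb{G}_m)$ is exact is false on the nose (it is only left exact, and your own later discussion of the obstruction group is what actually carries the argument), and $X^*(\Gal(F_{\ide{N}}/F))$ is finite \'etale over $\mbb{Q}_p$ but not in general \emph{constant} unless $\mbb{Q}_p$ contains the relevant roots of unity --- neither slip affects the proof.
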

\begin{proof}
    Recall our convention is that multiplication of characters is written additively. The action of the group scheme is given by
    \begin{align*} 
    X^*(\Gal(F_{\ide{N}}/F))(S) \times \mathcal{W}(\ide{N}p^{\infty})(S) &\to \mathcal{W}(\ide{N}p^{\infty})(S) \\
    (\chi_1, \chi_2) \mapsto \chi_1 + \chi_2
    \end{align*}
    for an adic space $S \to \mathcal{W}$. This is clearly free and transitive, so we just need to show the map $\jmath$ has sections locally in the \'{e}tale topology. Let $S = \opn{Spa}(R, R^+) \hookrightarrow \mathcal{W}$ be an open affinoid, which corresponds to a continuous character $\chi \colon \prod_{\tau \in \Psi} \mbb{Z}_p^{\times} \to R^{\times}$. Consider the short exact sequence (in category $\opn{Ab}$ of abelian groups):
    \[
    0 \to \prod_{\tau \in \Psi} \mbb{Z}_p^{\times} \to C_{\ide{N}p^{\infty}} \to \Gal(F_{\ide{N}}/F) \to 0 .
    \]
    This gives rise to the exact sequence:
    \begin{equation} \label{Hom2Ext}
    \opn{Hom}_{\opn{Ab}}\left( C_{\ide{N}p^{\infty}}, R^{\times} \right) \to \opn{Hom}_{\opn{Ab}}\left( \prod_{\tau \in \Psi} \mbb{Z}_p^{\times}, R^{\times} \right) \to \opn{Ext}^1_{\opn{Ab}}\left(\Gal(F_{\ide{N}}/F), R^{\times} \right)
    \end{equation}
    which is functorial in $R$. Since $\Gal(F_{\ide{N}}/F)$ is finite abelian group, there exist positive integers $m_1, \dots, m_k$ such that
    \[
    \opn{Ext}^1_{\opn{Ab}}\left(\Gal(F_{\ide{N}}/F), R^{\times} \right) \cong \bigoplus_{i=1}^k R^{\times}/\left( R^{\times} \right)^{m_i} 
    \]
    functorially in $R$. The image of the continuous character $\chi$ under the second map in (\ref{Hom2Ext}) therefore gives rise to a class
    \[
    \opn{ob}(\chi) = ( x_i ) \in \bigoplus_{i=1}^k R^{\times}/\left( R^{\times} \right)^{m_i} .
    \]
    Let $T = R[X_1, \dots, X_k]/(X_1^{m_1} - x_1, \dots, X_k^{m_k} - x_k)$, and let $T^+ \subset T$ denote the integral closure of $R^+$ under the map $R \to T$. Then $\opn{Spa}(T, T^+) \to S$ is finite \'{e}tale, and the image of $\opn{ob}(\chi)$ under the map
    \[
    \bigoplus_{i=1}^k R^{\times}/\left( R^{\times} \right)^{m_i} \to \bigoplus_{i=1}^k T^{\times}/\left( T^{\times} \right)^{m_i}
    \]
    is zero. This implies that $\chi$ can be lifted to a homomorphism $C_{\ide{N}p^{\infty}} \to T^{\times}$. It is automatically continuous because its restriction $\prod_{\tau \in \Psi} \mbb{Z}_p^{\times} \to T^{\times}$ is continuous (and $\prod_{\tau \in \Psi} \mbb{Z}_p^{\times}$ is an open subgroup of $C_{\ide{N}p^{\infty}}$). This implies that $\jmath$ has a section over $\opn{Spa}(T, T^+)$ as required.
\end{proof}

We can view the characters introduced in Definition \ref{DefOfSigmaKJNB} as points on the adic space $\mathcal{W}(\ide{N}p^{\infty})$. More precisely, for any $(\kappa, j) \in \mathcal{E}$ satisfying Assumption \ref{AssumpOnKJforACchar}, there is an injective map
\[
\Sigma_{\kappa, j}(\ide{N}_{\beta}) \hookrightarrow \mathcal{W}(\ide{N}p^{\infty})(\mbb{C}_p), \quad \chi \mapsto \hat{\chi}
\]
where $\hat{\chi} \colon C_{\ide{N}p^{\infty}} \to \mbb{C}_p^{\times}$ is the continuous character defined by the formula:
\[
\hat{\chi}(z) = \iota_p(\chi(z)) \cdot z_{\ide{p}_{\tau_0}}^{\kappa_{n+1, \tau_0} - j_{\tau_0}} z_{\ide{p}_{\bar{\tau}_0}}^{j_{\tau_0}-\kappa_{n+1, \tau_0}} \cdot \prod_{\tau \neq \tau_0} z_{\ide{p}_{\tau}}^{- j_{\tau_0}} z_{\ide{p}_{\bar{\tau}}}^{j_{\tau_0}}, \quad \quad z \in \mbb{A}_{F, f}^{\times} .
\]
Here, for an embedding $\sigma \colon F \hookrightarrow \mbb{C}$, $z_{\ide{p}_{\sigma}}$ denotes the component of $z$ at the prime $\ide{p}_{\sigma}$ above $p$ determined by the embedding $F \xrightarrow{\sigma} \mbb{C} \xrightarrow{\iota_p} \Qpb$. 

We let $\mathcal{W}(\ide{N}p^{\infty})_h = \jmath^{-1}(\mathcal{W}_h)$ which, by the above lemma, is a quasi-compact open affinoid subspace. Let $L/\mbb{Q}_p$ be a finite extension containing $\mbb{Q}_p(\mu_{p^{\beta}})$ and $\iota_p(F_{\ide{N}})$, and let $\Omega = \opn{Spa}(\mathscr{O}_{\Omega}, \mathscr{O}_{\Omega}^+)$ be the adic spectrum of a Tate affinoid adic space over $(L, \mathcal{O}_L)$. For $h \geq 1$, we set
\[
\Omega_h \defeq \Omega \times_{\mbb{Q}_p} \mathcal{W}(\ide{N}p^{\infty})_h = \opn{Spa}(\mathscr{O}_{\Omega} \hatot_{\mbb{Q}_p} \mathscr{O}_{\mathcal{W}(\ide{N}p^{\infty})_h}) = \opn{Spa}(\mathscr{O}_{\Omega_h}).
\]
Let $s \geq \opn{max}(h, \beta)$ and let $\kappa \colon T(\mbb{Z}_p) \to \mathscr{O}_{\Omega}^{\times}$ be an $s$-analytic character satisfying Assumption \ref{AssumpOnKJforACchar}. Let $\chi \colon \Gal(F_{\ide{N}p^{\infty}}/F) \to \mathscr{O}_{\mathcal{W}(\ide{N}p^{\infty})_h}^{\times}$ denote the universal character. We let $\tilde{\kappa}$ and $\tilde{\chi}$ denote the characters $\kappa$ and $\chi$ respectively, viewed as homomorphisms valued in $\mathscr{O}_{\Omega_h}^{\times}$. Note that $\tilde{\kappa}$ and $\jmath(\tilde{\chi})$ are both $s$-analytic. We let
\begin{align*} 
j \colon \prod_{\tau \in \Psi} \mbb{Z}_p^{\times} &\to \mathscr{O}_{\Omega_h}^{\times} \\
(x_{\tau}) &\mapsto \tilde{\kappa}_{n+1, \tau_0}(x_{\tau_0})\prod_{\tau \in \Psi} \jmath(\tilde{\chi})(x_{\tau})
\end{align*} 
which is $s$-analytic.

We introduce some notation.

\begin{notation} \label{NotationForSigmaSigmaPrime}
    Let $\Sigma'_{\beta} \subset \Omega_h(\mbb{C}_p)$ denote a subset satisfying the following properties:
    \begin{enumerate}
        \item Let $\kappa_x \colon T(\mbb{Z}_p) \to \mbb{C}_p^{\times}$ and $j_x \colon \prod_{\tau \in \Psi} \mbb{Z}_p^{\times} \to \mbb{C}_p^{\times}$ denote the specialisations of $\tilde{\kappa}$ and $j$ at $x \in \Sigma'_{\beta}$ respectively. We assume that there exists a finite-order character $\chi_{x, p} \colon \prod_{\tau \in \Psi} \mbb{Z}_p^{\times} \to \mbb{C}_p^{\times}$ which is trivial on $\prod_{\tau \in \Psi} (1 + p^{\beta}\mbb{Z}_p)$ such that $(\kappa_x, j_x - \chi_{x, p}) \in \mathcal{E}$.
        \item We assume that the projection of $x \in \Sigma'_{\beta}$ to a point in $\mathcal{W}(\ide{N}p^{\infty})(\mbb{C}_p)$ lies in $\Sigma_{\kappa_x, j_x - \chi_{x, p}}(\ide{N}_{\beta})$. We let $\chi_x$ denote the corresponding character (so that $\hat{\chi}_x$ is equal to the specialisation of $\chi$ at $x$).
    \end{enumerate}
    We also let $\Sigma \subset \Sigma'_{\beta}$ denote a subset such that $\chi_x$ has conductor dividing $\ide{N}$ for any $x \in \Sigma$ (which implies that $\chi_{x, p}$ is trivial). Note that one automatically has $\Sigma \subset \Omega_1(\mbb{C}_p)$.
\end{notation}

\begin{lemma} \label{TheUniversalChiCharacterLemma}
    There exists a cohomology class $[\chi] \in \opn{H}^0_{\opn{id}, \opn{an}}\left( \mathcal{S}_{H, \diamondsuit}(p^{\beta}), \sigma_{\tilde{\kappa}}^{[j], \vee} \right)^{(+, \dagger)}$ such that: for any $x \in \Sigma'_{\beta}$, the image of $[\chi]$ under the specialisation map
    \[
    \opn{H}^0_{\opn{id}, \opn{an}}\left( \mathcal{S}_{H, \diamondsuit}(p^{\beta}), \sigma_{\tilde{\kappa}}^{[j], \vee} \right)^{(+, \dagger)} \to \opn{H}^0_{\opn{id}}\left( \mathcal{S}_{H, \diamondsuit}(p^{\beta}), \sigma_{\kappa_x}^{[j_x], \vee} \right)^{(+, \dagger)}
    \]
    is equal to $\opn{res}[\chi_x]$. If $\Sigma'_{\beta}$ is Zariski dense in $\Omega_h$, then $[\chi]$ is uniquely determined by this specialisation property.
\end{lemma}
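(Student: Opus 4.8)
\textbf{Proof plan for Lemma \ref{TheUniversalChiCharacterLemma}.}

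The plan is to construct $[\chi]$ by directly $p$-adically interpolating the Shimura reciprocity cohomology classes $[\chi_x]$, using the fact that the underlying Igusa-tower / torsor picture on $\mathcal{S}_{H,\diamondsuit}(p^{\beta})$ allows one to describe these classes as explicit functions on a cover, and that the ``universal'' infinity-type embodied by $j$ and $\jmath(\tilde{\chi})$ is $s$-analytic. First I would recall from \cite[\S 7]{UFJ} the construction of $[\chi_x] \in \opn{H}^0\left(\mathcal{S}_{H, \diamondsuit}(p^{\beta}), \mathscr{M}_{H, \sigma_{\kappa_x}^{[j_x], \vee}}\right)$: over the complex points the class is the function $[x, h, h'] \mapsto \xi_{\kappa_x}^{[j_x]}(h)\chi_x'(\nu(h'))$, and this descends to a cohomology class over $F^{\opn{cl}}(\chi_x)$ via the canonical model and Shimura reciprocity. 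The key observation is that after restricting to the ``identity'' overconvergent neighbourhood $\mathcal{Z}_{H, \opn{id}}(p^{\beta})$ (equivalently, after passing to the Igusa tower $\mathcal{IG}_{H,\opn{id}}(p^{\beta})$ using the torsor ${^\mu\mathcal{M}_{H, \opn{HT}, k}}$ of Lemma \ref{OctorsorsReductionLemma}), the section $\mathscr{M}^{(r,k), \opn{an}}_{H, \sigma}$ is the $\sigma$-isotypic part of functions on the torsor, and the class $[\chi_x]$ becomes (on each member of an affinoid cover as in Lemma \ref{AcyclicityCoverLemma}) a function built from the component of $h' \in \mbf{H}(\mbb{A}_f)$ at $p$ together with the algebraic character $\sigma_{\kappa_x}^{[j_x], \vee}$; the dependence on $x$ enters only through the power $\kappa_{n+1,\tau_0} - j_{\tau_0}$ at $\tau_0$ and $j_\tau$ away from $\tau_0$, i.e.\ exactly through the character $j$ above. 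So the plan is to write down the analogous function with the $\mbb{C}_p$-valued exponents replaced by the $\mathscr{O}_{\Omega_h}$-valued $s$-analytic character $j$ (twisted appropriately by $\jmath(\tilde{\chi})$ to account for the prime-to-$p$ and $\tau_0$ parts), check it is genuinely $s$-analytic hence defines an element of $\mathscr{M}^{(r,k), \opn{an}}_{H, \sigma_{\tilde\kappa}^{[j], \vee}}$ for $k \geq s$, and verify the Čech cocycle condition — which holds because it holds after specialisation at the Zariski-dense (in each ``slice'') family of classical points and the relevant section module is a projective Banach $\mathscr{O}_{\Omega_h}$-module by \cite[Proposition 6.3.3]{BoxerPilloni}.

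Concretely, the steps in order are: (1) fix an affinoid cover $\ide{U} = (U_i)$ of $\mathcal{X}_{H, \diamondsuit}(p^{\beta})$ (or rather of $\mathcal{S}_{H, \diamondsuit}(p^{\beta})$ via the embedding of Lemma \ref{LemmaSDintoPEL}) by elements of $\mathcal{C}_H$ and restrict attention to the open neighbourhood of $\mathcal{Z}_{H, \opn{id}}(p^{\beta})$; over this neighbourhood, use the reduction of structure $\mathcal{IG}_{H, \opn{id}}(p^{\beta}) \subset {^\mu\mathcal{M}_{H, \opn{HT}, k}}$ to trivialise $\mathscr{M}^{(r,k), \opn{an}}_{H, \sigma}$; (2) on each $U_i$ write the explicit formula for $[\chi_x]|_{U_i}$ coming from \cite[\S 7]{UFJ} and Shimura reciprocity, and observe that as a function of $x$ it is the specialisation at $x$ of the function obtained by substituting the universal character $j$ and $\jmath(\tilde\chi)$; call the resulting $\mathscr{O}_{\Omega_h}$-valued $0$-cochain $[\chi]$; (3) check $s$-analyticity of $[\chi]$ in the torsor variables and in the $\mathscr{O}_{\Omega_h}$ variables, so that $[\chi] \in \mathscr{M}^{(r,k), \opn{an}}_{H, \sigma_{\tilde\kappa}^{[j], \vee}}(U_i)$ for all $i$ and all $k \geq s$; (4) verify that $[\chi]$ is a Čech cocycle: its coboundary lies in $\mathscr{M}^{(r,k), \opn{an}}_{H, \sigma_{\tilde\kappa}^{[j], \vee}}(U_i \cap U_j)$, which by \cite[Proposition 6.3.3]{BoxerPilloni} is a projective (hence, in particular, $\Sigma'_{\beta}$-separated once $\Sigma'_\beta$ is dense, or at least separated over the classical slices) Banach $\mathscr{O}_{\Omega_h}$-module; since the coboundary vanishes after specialising at every $x \in \Sigma'_\beta$ (where it equals the coboundary of $\opn{res}[\chi_x]$, which is zero because $[\chi_x]$ is an honest global section), it vanishes — for this one uses the separation argument exactly as in the last paragraph of the proof of Proposition \ref{DaggerThetaTakesOCtoOCProp}(3), reducing to orthonormalisable modules and then to $C \hatot \mathscr{O}_{\Omega_h}$ where Zariski density of the classical weights does the job; (5) the specialisation property at $x \in \Sigma'_{\beta}$ is then immediate by construction, since we substituted the universal character and specialising at $x$ recovers the formula for $\opn{res}[\chi_x]$ (here one must match the twist by $\chi_{x, p}$ in Notation \ref{NotationForSigmaSigmaPrime}(1) against the shape of $\sigma_{\kappa_x}^{[j_x], \vee}$, using that $(\kappa_x, j_x - \chi_{x, p}) \in \mathcal{E}$ and that $\chi_{x,p}$ is trivial on $1 + p^\beta\mbb{Z}_p$, exactly as in Proposition \ref{SanDeltaDaggerIsAnEvectorProp}(3)); (6) finally, uniqueness when $\Sigma'_\beta$ is Zariski dense: two classes with the same specialisations differ by a class whose image in $\opn{H}^0$ vanishes at a Zariski-dense set, and since $\opn{H}^0_{\opn{id}, \opn{an}}$ sits inside a projective Banach $\mathscr{O}_{\Omega_h}$-module it is separated, so the difference is zero.

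The main obstacle I expect is step (4), the verification of the cocycle condition over the deformation base rather than at individual points — in particular ensuring that the ambient section modules $\mathscr{M}^{(r,k),\opn{an}}_{H, \sigma_{\tilde\kappa}^{[j],\vee}}(U_i \cap U_j)$ really are ``separated enough'' that vanishing at the points of $\Sigma'_\beta$ forces vanishing. When $\Sigma'_\beta$ is only dense in the classical ``slices'' (i.e.\ the fibres over $\Sigma \subset \Omega_1$) rather than Zariski dense in all of $\Omega_h$, one cannot argue by naive Zariski density; instead one should argue weight-by-weight, using that for each classical $\kappa$ appearing, the corresponding anticyclotomic characters $\hat\chi_x$ form a Zariski-dense subset of the weight space of $\Gal(F_{\ide{N}p^\infty}/F)$ (this is the content of the classical part, essentially \cite[\S 7]{UFJ}), and then invoking that the Čech complex is a complex of projective Banach modules over the relevant piece of $\mathscr{O}_{\Omega_h}$ in that slice direction. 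A secondary subtlety is the bookkeeping of analyticity radii: one must check that the single choice $k \geq s \geq \opn{max}(h, \beta)$ suffices uniformly over the finite cover $\ide{U}$, which follows because the cover is finite and the torsors ${^\mu\mathcal{M}_{H, \opn{HT}, k}}$ form a cofinal system (Lemma \ref{OctorsorsReductionLemma}), but should be stated carefully. Everything else — the explicit formula, $s$-analyticity, and the specialisation identity — is a direct unwinding of the constructions in \cite[\S 7]{UFJ} and \S\ref{ClassicalEvaluationMapsSubSec}, parallel to Proposition \ref{SanDeltaDaggerIsAnEvectorProp}.
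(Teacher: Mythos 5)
Your proposal takes a genuinely different route from the paper. The paper does not build $[\chi]$ by a \v{C}ech-local interpolation on $\mathcal{S}_{H,\diamondsuit}(p^{\beta})$ at all; instead it transfers the whole problem to the zero-dimensional Shimura variety $\Delta$ attached to the torus $\mbf{R} = \opn{det}(\mbf{H})$ (the PEL datum $(\mbf{R}, h_{\mbf{R}})$ from \cite[\S 7]{UFJ}). There, the sheaf $\mathcal{F}_{\lambda^{-1}}$ is a line bundle on a finite set of points, a global section is literally a function $\Delta \to \mbb{A}^1$, there is no sheaf-gluing or cocycle condition to check, and the $p$-adic interpolation is just the observation that the function $[x, x', x''] \mapsto \lambda(x)\lambda(x')\lambda(x'')$ on the torsor ${^\mu\mathcal{R}_{\opn{HT},s}}$ makes sense with $\lambda$ the $\mathscr{O}_{\Omega_h}$-valued character built from $\tilde\chi$, provided $\jmath(\tilde\chi)$ is $s$-analytic. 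The class $[\chi]$ is then obtained for free as $\opn{det}^*[\lambda]$, automatically a section of the correct weight. Your approach buys nothing over this and is strictly more delicate, because on $\mathcal{S}_{H,\diamondsuit}(p^{\beta})$ you must track the isotypic decomposition of functions on the full Igusa tower and argue for gluing, whereas on $\Delta$ all of that collapses.

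The concrete gap in your proposal is the Galois descent to $L$, which you do not address and which is the heart of the paper's argument. The putative formula you write down on each $U_i$ a priori lives over a finite extension $\Phi \supset L$ (the extension over which the canonical model of $\Delta$ becomes a constant finite set and over which the Shimura reciprocity identification $\Delta_{\Phi} = \mbf{R}(\mbb{Q})\backslash\mbf{R}(\mbb{A}_f)/C$ holds), and one must check it is fixed by $\Gal(\Phi/L)$. The paper does this by computing the action of $F_p^m$ via Shimura reciprocity: it acts by right-translation by a specific element $y_p$, and the class descends if and only if $\lambda(y_p) = 1$, which holds precisely because $L \supset \iota_p(F_{\ide{N}})$ and $\tilde\chi$ is trivial on $F^{\times}\mbb{A}_{F^+,f}^{\times}(\widehat{\mathcal{O}}_{F^+}^{(p)} + \ide{N}\widehat{\mathcal{O}}_F^{(p)})^{\times}$. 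Without this verification your $0$-cochain is not known to be $\mathscr{O}_{\Omega_h}$-valued rather than $\mathscr{O}_{\Omega_{h,\Phi}}$-valued, and the specialisation-at-$\Sigma'_{\beta}$ argument you use elsewhere does not repair this, since it is a statement about vanishing, not about descent of the field of definition. Separately, your worry about the \v{C}ech coboundary is somewhat misplaced: if the local formulas are restrictions of a single uniform expression on the Igusa tower, the $0$-cocycle condition is automatic, and the genuine issue in that direction is only that the expression lands in the correct $\sigma_{\tilde\kappa}^{[j],\vee}$-isotypic subspace, which is where the paper's pullback-from-a-torus construction quietly does the bookkeeping for you. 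Finally, your uniqueness argument (separatedness of a projective Banach module) is a valid alternative to the paper's (disjoint-union-of-components over which the sheaf trivialises); that part is fine.
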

\begin{proof}
    This follows from the construction in \cite[\S 7]{UFJ}. More precisely, let $(\mbf{R}, h_{\mbf{R}})$ denote the PEL Shimura datum as in \S 7.1 in \emph{op.cit.}, and let $\mathcal{R}_s \subset R^{\opn{an}}$ denote the affinoid subgroup
    \[
    \mathcal{R}_s = \mbb{Z}_p^{\times}\left( 1 + \mathcal{B}_s \right) \times \prod_{\tau \in \Psi} \left( \mbb{Z}_p^{\times}\left( 1 + \mathcal{B}_s \right) \times \mbb{Z}_p^{\times}\left( 1 + \mathcal{B}_s \right) \right) .
    \]
    Let $\Delta$ denote the adic Shimura variety over $\mbb{Q}_p$ associated with $(\mbf{R}, h_{\mbf{R}})$ of level $C \defeq \opn{det}(U) \subset \mbf{R}(\mbb{A}_f)$, where $U \subset \mbf{H}(\mbb{A}_f)$ denotes the level of $\mathcal{S}_{H, \diamondsuit}(p^{\beta})$ and $\opn{det} \colon \mbf{H} \to \mbf{R}$ is determinant map in \S 7.1 of \emph{op.cit.}. The compact open subgroup $C$ decomposes as $C = C^p C_p$ with $C^p \subset \mbf{R}(\mbb{A}_f^p)$ and $C_p \subset \mbf{R}(\mbb{Q}_p)$. By Shimura reciprocity and the fact that $L$ contains $\mbb{Q}_p(\mu_{p^{\beta}})$, we see that there is a finite unramified extension $L' / \mbb{Q}_p$ such that
    \[
    \Delta_{\Phi} = \mbf{R}(\mbb{Q}) \backslash \mbf{R}(\mbb{A}_f) / C
    \]
    where $\Phi = L \cdot L' \subset \Qpb$. If $F_p \in \Gal(\mbb{Q}_p^{\opn{ab}}/\mbb{Q}_p)$ denotes the geometric Frobenius corresponding $p \in \mbb{Q}_p^{\times}$ under the Artin reciprocity map of local class field theory, then (after choosing a lift to $\Gal(\Qpb /\mbb{Q}_p)$) there is an integer $m \geq 1$ such that $F^m_p$ generates the cyclic Galois group $\Gal(\Phi / L)$. By Shimura reciprocity, the action of $F^m_p$ on $[x] \in \Delta(\Phi)$ (with $x \in \mbf{R}(\mbb{A}_f)$) is given by
    \[
    F_p^m \cdot [x] = [y \cdot x]
    \]
    where $y \in \mbf{R}(\mbb{A}_f)$ denotes the point which is trivial outside the $p$-component, and at the $p$-component is given by $y_p =(y_0; y_{1, \tau}, y_{2, \tau})_{\tau \in \Psi}$ with $y_0 = p^m$, $y_{1, \tau_0} = p^m$, $y_{2, \tau_0} = 1$, and $y_{i, \tau} = 1$ for all $i=1, 2$ and $\tau \neq \tau_0$.

    The torsor $R_{\opn{dR}}^{\opn{an}}$ has a reduction of structure ${^\mu \mathcal{R}_{\opn{HT},s}}$ which can similarly be described as 
    \[
    {^\mu \mathcal{R}_{\opn{HT},s, \Phi}} = \left[ \mbf{R}(\mbb{Q}) \backslash \left( \mbf{R}(\mbb{A}_f^p)/C^p \times \mbf{R}(\mbb{Q}_p) \right) \times \mathcal{R}_{s, \Phi} \right] / C_p
    \]
    where $C_p$ acts diagonally with $C_p$ acting on $\mathcal{R}_{s, \Phi}$ through the map $C_p \to \mathcal{R}_{s, \Phi}$, $z \mapsto z^{-1}$. The action of $F_p^m$ on $[x, x', x''] \in {^\mu \mathcal{R}_{\opn{HT},s}}(\Phi)$ (with $x \in \mbf{R}(\mbb{A}_f^p)$, $x' \in \mbf{R}(\mbb{Q}_p)$, $x'' \in \mathcal{R}_s(\Phi)$) is given by
    \[
    F_p^m \cdot [x, x', x''] = [x, y_p \cdot x', F_p^m(x'')]
    \]
    where $F_p^m(x'')$ denotes the natural Galois action.

    Let $\lambda \colon \mbf{R}(\mbb{Q})\backslash \mbf{R}(\mbb{A}_f) \to \mathscr{O}_{\Omega_h}^{\times}$ denote the unique character such that $\lambda(z_1, z_2) = \tilde{\chi}'(z_2/z_1)$, where $\tilde{\chi}'$ denotes the unique $p$-adic Hecke character on $\opn{Res}_{F^+/\mbb{Q}}\opn{U}(1)$ satisfying $\tilde{\chi} = \tilde{\chi}' \circ \mathcal{N}$. Let $f \colon {^\mu \mathcal{R}_{\opn{HT},s, \Omega_{h,\Phi}}} \to \mbb{A}^{1, \opn{an}}_{\Omega_{h,\Phi}}$ denote the (well-defined) function given by
    \[
    f([x, x', x'']) = \lambda(x)\lambda(x')\lambda(x'')
    \]
    which makes sense because $\lambda$ restricted to $R(\mbb{Z}_p)$ is $s$-analytic (because $\jmath(\chi)$ is). Then $f$ corresponds to a class $[\lambda] \in \opn{H}^0( \Delta_{\Omega_{h, \Phi}}, \mathcal{F}_{\lambda^{-1}} )$, where $\mathcal{F}_{\lambda^{-1}}$ denotes the line bundle associated with the character $\lambda^{-1} \colon \mathcal{R}_s \to \mbb{G}_m^{\opn{an}}$. Note $\lambda(y_p) = 1$ because $L$ contains $\iota_p(F_{\ide{N}})$ and $\tilde{\chi}$ is trivial on $F^{\times}\mbb{A}_{F^+, f}^{\times} (\widehat{\mathcal{O}}^{(p)}_{F^+} + \ide{N} \widehat{\mathcal{O}}_F^{(p)} )^{\times}$. By the description of the Galois actions above, we therefore see that $[\lambda]$ descends to a class $[\lambda] \in \opn{H}^0( \Delta_{\Omega_{h}}, \mathcal{F}_{\lambda^{-1}} )$. As explained in \cite[\S 7.3]{UFJ}, there is a natural map
    \[
    \opn{det} \colon {^\mu \mathcal{M}_{H, \opn{HT}, s}} \to {^\mu \mathcal{R}_{\opn{HT}, s}}
    \]
    and we define $[\chi] \defeq \opn{det}^*[\lambda] \in \opn{H}^0_{\opn{id}, \opn{an}}\left( \mathcal{S}_{H, \diamondsuit}(p^{\beta}), \sigma_{\tilde{\kappa}}^{[j], \vee} \right)^{(+, \dagger)}$ (note that $\sigma_{\tilde{\kappa}}^{[j], \vee} = \lambda^{-1} \circ \opn{det}$). One can easily verify that this class interpolates $\opn{res}[\chi_x]$ for $x \in \Sigma'_{\beta}$.

    Recall that ${^\mu \mathcal{M}_{H, \opn{HT}, s}}$ is a torsor over the affinoid $\mathcal{S}_{H, \opn{id}}(p^{\beta})_s$ (the pullack of $\mathcal{X}_{H, \opn{id}}(p^{\beta})_s$ along the open and closed embedding in Lemma \ref{LemmaSDintoPEL}), and note that the pullback of ${^\mu \mathcal{R}_{\opn{HT}, s}}$ along the map $\opn{det} \colon \mathcal{S}_{H, \opn{id}}(p^{\beta})_s \to \Delta$ is the pushout of ${^\mu \mathcal{M}_{H, \opn{HT}, s}}$ along $\opn{det} \colon \mathcal{M}_{H, s}^{\diamondsuit} \to \mathcal{R}_s$. Fix a set $Z = \{ z \in \mbf{R}(\mbb{A}_f) \}$ of representatives of $\Delta_{\Phi}$, and let $\mathcal{S}_{H, \opn{id}}(p^{\beta})_{s, [z]}$ denote the preimage of $[z] \in \Delta_{\Phi}$ under $\opn{det} \colon \mathcal{S}_{H, \opn{id}}(p^{\beta})_{s, \Phi} \to \Delta_{\Phi}$. Then we have an identification
    \begin{align}
    \opn{H}^0( \mathcal{S}_{H, \opn{id}}(p^{\beta})_{s, \Omega_{h,\Phi}}, \sigma_{\tilde{\kappa}}^{[j], \vee}) &= \opn{H}^0(\Delta_{\Omega_{h,\Phi}}, (\opn{det})_*\mathcal{O}_{\mathcal{S}_{H, \opn{id}}(p^{\beta})_{s, \Phi}} \hatot \mathcal{F}_{\lambda^{-1}}) \nonumber \\
     &= \bigoplus_{z \in Z} \opn{H}^0(\mathcal{S}_{H, \opn{id}}(p^{\beta})_{s, [z]}, \mathcal{O}_{\mathcal{S}_{H, \opn{id}}(p^{\beta})_{s, [z]}}) \; \hatot_{\Phi} \; \mathscr{O}_{\Omega_{h, \Phi}} \label{Eqn:ConnectedCompSHdaggerSpace}
    \end{align}
    given by evaluating a section of ${^\mu \mathcal{R}_{\opn{HT}, s, \Omega_{h, \Phi}}}$ at $[z^p, z_p, 1]$ for $z \in Z$ (for brevity, we are writing $\sigma_{\tilde{\kappa}}^{[j], \vee}$ instead of $\mathscr{M}^{\opn{an}}_{H, \sigma_{\tilde{\kappa}}^{[j], \vee}}$). Here we are using the fact that $\mathcal{S}_{H, \opn{id}}(p^{\beta})_s$ is affinoid. The specialisation map at some $x \in \Sigma_{\beta}'$ is identified (via \eqref{Eqn:ConnectedCompSHdaggerSpace}) with the map
    \begin{equation} \label{Eqn:SpecMapDirectSums}
    \bigoplus_{z \in Z} \opn{H}^0(\mathcal{S}_{H, \opn{id}}(p^{\beta})_{s, [z]}, \mathcal{O}_{\mathcal{S}_{H, \opn{id}}(p^{\beta})_{s, [z]}}) \; \hatot_{\Phi} \; \mathscr{O}_{\Omega_{h, \Phi}} \to \bigoplus_{z \in Z} \opn{H}^0(\mathcal{S}_{H, \opn{id}}(p^{\beta})_{s, [z]}, \mathcal{O}_{\mathcal{S}_{H, \opn{id}}(p^{\beta})_{s, [z]}}) \; \hatot_{\Phi} \; \mbb{C}_p
    \end{equation}
    induced from the specialisation map $\mathscr{O}_{\Omega_{h, \Phi}} \to \mbb{C}_p$. Since $\opn{H}^0(\mathcal{S}_{H, \opn{id}}(p^{\beta})_{s, [z]}, \mathcal{O}_{\mathcal{S}_{H, \opn{id}}(p^{\beta})_{s, [z]}})$ is a projective Banach $\Phi$-module, if we assume that $\Sigma_{\beta}'$ is Zariski dense, then an element of the left-hand side of \eqref{Eqn:SpecMapDirectSums} is uniquely determined by its specialisations at $x \in \Sigma_{\beta}'$. Hence any section of $\opn{H}^0( \mathcal{S}_{H, \opn{id}}(p^{\beta})_{s, \Omega_{h,\Phi}}, \sigma_{\tilde{\kappa}}^{[j], \vee})$ is uniquely determined by its specialisations at points in $\Sigma_{\beta}'$, implying the uniqueness claim of the lemma.
\end{proof}

We now define the overconvergent evaluation map in this setting.

\begin{definition}
    With notation as above, suppose that $\Sigma$ is Zariski dense in $\Omega_h$ (or equivalently, Zariski dense in $\Omega_1$). We define the following $\mathscr{O}_{\Omega}$-linear map
    \begin{align*} 
    \opn{Ev}^{\dagger, s\opn{-an}}_{\kappa, h, \beta} \colon \opn{H}^{n-1}_{w_n, s\opn{-an}}\left( \kappa^* ; \beta \right)^{(-, \dagger)} &\to \mathscr{O}_{\Omega_h} \\
    \eta &\mapsto \langle \vartheta^{\dagger, s\opn{-an}}_{\tilde{\kappa}, j, \beta}(\tilde{\eta}), [\chi] \rangle
    \end{align*} 
    where $\tilde{\eta}$ denotes the pullback of $\eta$ to a class in $\opn{H}^{n-1}_{w_n, s\opn{-an}}\left( \tilde{\kappa}^* ; \beta \right)^{(-, \dagger)}$ and $[\chi]$ denotes the class in Lemma \ref{TheUniversalChiCharacterLemma}. Note that the assumptions in Proposition \ref{DaggerThetaTakesOCtoOCProp}(3) are satisfied.
\end{definition}

\subsubsection{Compatibilities} \label{CompatibilitiesOfEvMapSSEC}

One can easily verify that $\opn{Ev}^{\dagger, s\opn{-an}}_{\kappa, h, \beta}$ is compatible with changing $s$ and $h$. In particular, by passing to the limit over $s$ and $h$, one obtains an $\mathscr{O}_{\Omega}$-linear map
\[
\opn{Ev}^{\dagger, \opn{la}}_{\kappa, \beta} \colon \opn{H}^{n-1}_{w_n, \opn{la}}\left( \kappa^* ; \beta \right)^{(-, \dagger)} \defeq \varprojlim_s \opn{H}^{n-1}_{w_n, s\opn{-an}}\left( \kappa^* ; \beta \right)^{(-, \dagger)} \to \mathscr{D}^{\opn{la}}\left( \Gal(F_{\ide{N}p^{\infty}}/F), \mathscr{O}_{\Omega} \right)
\]
where $\mathscr{D}^{\opn{la}}\left( \Gal(F_{\ide{N}p^{\infty}}/F), \mathscr{O}_{\Omega} \right) = \varprojlim_h \mathscr{O}_{\Omega_h}$ denotes the $\mathscr{O}_{\Omega}$-module of locally analytic distributions on $\Gal(F_{\ide{N}p^{\infty}}/F)$. Furthermore, one has the interpolation property:
\[
\opn{Ev}^{\dagger, s\opn{-an}}_{\kappa, h, \beta}(\eta)(x) = \opn{Ev}^{\dagger, \circ}_{\kappa_x, j_x - \chi_{x, p}, \chi_x, \beta}(\eta_x)
\]
for any $x \in \Sigma'_{\beta}$, where $\eta_x$ denotes the image of $\eta$ under the specialisation map $\opn{H}^{n-1}_{w_n, s\opn{-an}}\left( \kappa^* ; \beta \right)^{(-, \dagger)} \to \opn{H}^{n-1}_{w_n}\left( \kappa_x^* ; \beta \right)^{(-, \dagger)}$. 


\section{Hecke operators and higher Coleman theory} \label{HOpsAndHCTChapter}

The construction in the previous section will be used for the $p$-adic $L$-function. However to find the suitable test data and prove the interpolation property, we need to understand the action of Hecke operators on the space of overconvergent forms. 

\subsection{Notations for the flag variety}

Consider the following flag varieties $\mathtt{FL}^G = P_G^{\opn{an}} \backslash G^{\opn{an}} = \mathcal{P}_G \backslash \mathcal{G}$ and $\mathtt{FL}^H = P_H^{\opn{an}} \backslash H^{\opn{an}} = \mathcal{P}_H \backslash \mathcal{H}$ which are adic spaces over $\opn{Spa}(\mbb{Q}_p, \mbb{Z}_p)$. Recall from Definition \ref{HyperspecialVarietiesDefinition} that we have fixed neat compact open subgroups $K^p \subset \mbf{G}(\mbb{A}_f^p)$ and $U^p \subset \mbf{H}(\mbb{A}_f^p)$. To simplify notation in this section, we will let $\mathcal{S}_{G, K_p}$ (resp. $\mathcal{S}_{H, K_p}$) denote the adic Shimura variety for $\mbf{G}$ (resp. $\mbf{H}$) over $\opn{Spa}(\mbb{Q}_p, \mbb{Z}_p)$ of level $K^pK_p \subset \mbf{G}(\mbb{A}_f)$ (resp. $U^pK_p \subset \mbf{H}(\mbb{A}_f)$) for any compact open subgroup $K_p \subset G(\mbb{Q}_p)$ (resp. $K_p \subset H(\mbb{Q}_p)$). In particular, $\mathcal{S}_{G, \opn{Iw}}(p^{\beta}) = \mathcal{S}_{G, K_p}$ for $K_p = K^G_{\opn{Iw}}(p^{\beta})$, and similarly for $\mathcal{S}_{H, \diamondsuit}(p^{\beta})$.

As usual, let $\beta \geq 1$ be an integer and $L/\mbb{Q}_p$ a finite extension containing $\mu_{p^{\beta}}$. We will often consider the base-change of the above flag varieties and Shimura varieties to $\opn{Spa}(L, \mathcal{O}_L)$ but omit this from the notation. Recall that for any compact open $K_p \subset G(\mbb{Z}_p)$ (resp. $K_p \subset H(\mbb{Z}_p)$) we have truncated Hodge--Tate period maps (which we simply view as maps of topological spaces):
\begin{align*}
    \pi_{\opn{HT}, G, K_p} \colon \mathcal{S}_{G, K_p} &\to \mathtt{FL}^G/K_p \\
    \text{(resp. }\pi_{\opn{HT}, H, K_p} \colon \mathcal{S}_{H, K_p} &\to \mathtt{FL}^H/K_p \text{ )}
\end{align*}
which we will often denote simply by $\pi_{\opn{HT}, K_p}$ when the context is clear.

Recall from \cite[Definition 3.0.1]{UFJ} that $\mathtt{FL}^G \cong \mbb{P}^{2n-1}$ and in coordinates the right-action of $g \in G^{\opn{an}}$ (denoted $\star$) is given by
\[
[x_0 : x_1 : \cdots : x_{2n-1}] \star g = [x_0 : x_1 : \cdots : x_{2n-1}] \cdot {^t g_{\tau_0}^{-1}}
\]
where the right hand side is the usual action on row vectors and $g_{\tau_0} \in \opn{GL}_{2n}^{\opn{an}}$ denotes the $\tau_0$-component of $g$. We have a similar description for $\mathtt{FL}^H$. Finally, we let $]C^?_{w}[_{m, k} \subset \mathtt{FL}^?$ (and their partially compactified versions) denote the tubes of Bruhat cells as in \cite[Definition 3.2.3]{UFJ}.

\begin{definition} \label{DefinitionOfCurlyQm}
    For an integer $m \geq 1$, let $\mathcal{Q}_m \subset \mathcal{G}$ denote the subgroup which coincides with $\mathcal{P}_G$ outside the $\tau_0$-component, and in the $\tau_0$-component is equal to the subgroup of block matrices
    \[
    \tbyt{A}{B}{C}{D}
    \]
    with $A \in \mathcal{GL}_{n+1}$, $B \in \mathcal{M}_{(n+1) \times (n-1)}$, $C \in \mathcal{B}_m^{\circ} \mathcal{M}_{(n-1) \times (n+1)}$ and $D \in \mathcal{GL}_{n-1}$. Here $\mathcal{M}_{r \times s}$ denotes the adic group scheme with $\mathcal{M}_{r \times s}(\opn{Spa}(A, A^+))$ equal to the group of $r \times s$-matrices with coefficients in $A^+$, and $\mathcal{B}_m^{\circ}$ is the ``open disc'' defined in \cite[Definition 3.2.1]{UFJ}.
\end{definition}

We have the following useful lemma.

\begin{lemma}
    For any integer $k \geq 1$ and compact open subgroup $K_p \subset G(\mbb{Z}_p)$ (resp. $K_p \subset H(\mbb{Z}_p)$), set $\mathcal{U}^G_{K_p, k} \defeq \pi_{\opn{HT}, G, K_p}^{-1}\left( ]C^G_{w_n}[_{k, k} K_p \right) \subset \mathcal{S}_{G, K_p}$ (resp. $\mathcal{U}^H_{K_p, k} \defeq \pi_{\opn{HT}, H, K_p}^{-1}\left( ]C^H_{\opn{id}}[_{k, k} K_p \right) \subset \mathcal{S}_{H, K_p}$). Then
    \begin{enumerate}
        \item For $K_p = K^G_{\opn{Iw}}(p^{\beta})$, we have 
        \[
        \overline{\mathcal{S}_{G, w_n}(p^{\beta})} = \bigcap_{k \geq 1} \mathcal{U}^G_{K_p, k}
        \]
        where $\overline{\mathcal{S}_{G, w_n}(p^{\beta})}$ denotes the closure of $\mathcal{S}_{G, w_n}(p^{\beta}) \defeq \mathcal{X}_{G, w_n}(p^{\beta})$ inside $\mathcal{S}_{G, \opn{Iw}}(p^{\beta}) = \mathcal{X}_{G, \opn{Iw}}(p^{\beta})$. In particular $\{ \mathcal{U}^G_{K_p, k} \}_{k \geq 1}$ is a cofinal system of (quasi-Stein) open neighbourhoods of $\overline{\mathcal{S}_{G, w_n}(p^{\beta})}$.
        \item Let $K_p = K^H_{\diamondsuit}(p^{\beta})$ and let $\mathcal{S}_{H, \opn{id}}(p^{\beta})$ denote the pullback of $\mathcal{X}_{H, \opn{id}}(p^{\beta})$ along the open and closed embedding $\mathcal{S}_{H, \diamondsuit}(p^{\beta}) \subset \mathcal{X}_{H, \diamondsuit}(p^{\beta})$ (see Lemma \ref{LemmaSDintoPEL}). Then
        \[
        \overline{\mathcal{S}_{H, \opn{id}}(p^{\beta})} = \bigcap_{k \geq 1} \mathcal{U}^H_{K_p, k}
        \]
        where $\overline{\mathcal{S}_{H, \opn{id}}(p^{\beta})}$ denotes the closure of $\mathcal{S}_{H, \opn{id}}(p^{\beta})$ inside $\mathcal{S}_{H, \diamondsuit}(p^{\beta})$. In particular $\{ \mathcal{U}^H_{K_p, k} \}_{k \geq 1}$ is a cofinal system of (quasi-Stein) open neighbourhoods of $\overline{\mathcal{S}_{H, \opn{id}}(p^{\beta})}$.
        \item Let $K_p = K^G_{\opn{Iw}}(p^{\beta})$. Then we have
        \[
        \mathcal{Z}_{G, >n+1}(p^{\beta}) = \bigcap_{m \geq 1} \pi^{-1}_{\opn{HT}, G, K_p}\left( \mathcal{P}_G \backslash \mathcal{P}_G \mathcal{Q}_m K_p \right) .
        \]
    \end{enumerate}
\end{lemma}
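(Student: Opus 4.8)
The three statements are of the same flavour: each identifies a ``formal'' stratum (or its closure, or a closed union of strata) inside an adic Shimura variety with a locus cut out by the Hodge--Tate period map. The unifying tool is the moduli interpretation of the Hodge--Tate period map at Iwahori level, which relates the position of the Hodge filtration relative to the flag coming from the universal $p^{\beta}$-level structure. I would treat all three parts by the same mechanism, working at rank-one points and using the degree function on finite flat group schemes (via \cite{FarguesHN}) together with the characterisations of the ordinary strata already established in Lemma \ref{LemmaOnDeltaHasseIdeals} and Proposition \ref{HlociisHasseProp}.

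\emph{First I would set up the dictionary.} Over $\mathcal{X}_{G, \opn{Iw}}(p^{\beta})$ the universal flag $0 = C_{0, \tau} \subset \cdots \subset C_{2n, \tau} = A[\ide{p}_\tau^\beta]$ determines, for each $\tau$, a full flag in $A[\ide{p}_\tau^\beta]$, and the truncated $\pi_{\opn{HT}}$ measures the position of $\omega_{A^D}$ (equivalently the Hodge filtration) against this flag; concretely, in the coordinates $\mathtt{FL}^G \cong \mbb{P}^{2n-1}$ recalled above, the Bruhat cell $]C^G_{w_n}[_{k,k}$ is exactly the tube where the Hodge line is in ``generic'' position (the $w_n$-position) with respect to the flag in the $\tau_0$-component, up to radius $p^{-k}$, and similarly at the other places. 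For part (1): a rank-one point $x$ lies in $\mathcal{U}^G_{K_p, k}$ for \emph{all} $k$ iff the Hodge filtration sits exactly in the $w_n$-position with respect to the flag at every $\tau$, which by the explicit description of $w_n$ (Definition \ref{NewDefOfGamma}) forces $C_{n+1, \tau_0, x}/C_{n, \tau_0, x}$ to be multiplicative and all other graded pieces étale; by Lemma \ref{LemmaOnDeltaHasseIdeals}(2)--(3) this says precisely $|\hat{\delta}_{G, n+1}|_x = 1$, i.e.\ $x$ lies in the closure $\overline{\mathcal{S}_{G, w_n}(p^{\beta})}$ (using Lemma \ref{XGwnisOpenImmersionLemma} for the identification of that closure with the $\hat{\delta}_{G,n+1}$-locus). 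Conversely any point of $\overline{\mathcal{S}_{G, w_n}(p^{\beta})}$ has Hodge filtration in the $w_n$-position, hence lies in every $\mathcal{U}^G_{K_p, k}$. The nestedness of the $\mathcal{U}^G_{K_p, k}$ and the fact that $]C^G_{w_n}[_{k,k}$ shrinks to the closed Bruhat cell give cofinality; quasi-Steinness is part of the general properties of these tubes in \cite{BoxerPilloni, UFJ}.

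\emph{Part (2) is handled by pulling back part (1) along $\hat{\iota}$.} The compatibility of the Hodge--Tate period maps with $\hat{\iota}$ (the map is induced by right-translation by $\hat{\gamma}$, which moves $\mathtt{FL}^H \hookrightarrow \mathtt{FL}^G$ so that $]C^H_{\opn{id}}[$ maps into $]C^G_{w_n}[$; this is exactly the content of Lemma \ref{OpenOrbitForgammauvLemma}(1) on the generic point of the flag variety, i.e.\ $H\cdot \hat\gamma \cdot B_G$ being Zariski dense) shows $\mathcal{U}^H_{K_p, k} = \hat{\iota}^{-1}(\mathcal{U}^G_{K_p', k})$ for the corresponding level $K_p' = K^G_{\opn{Iw}}(p^\beta)$, at least up to adjusting radii. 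Since $\hat{\iota}$ is finite (hence closed), $\hat{\iota}^{-1}$ commutes with taking closures, so $\bigcap_k \mathcal{U}^H_{K_p, k} = \hat{\iota}^{-1}\big(\overline{\mathcal{S}_{G, w_n}(p^{\beta})}\big) = \overline{\mathcal{S}_{H, \opn{id}}(p^{\beta})}$, the last equality being the Cartesian diagram of Proposition \ref{HlociisHasseProp}(2) (noting $\hat\iota^{-1}(\mathcal{Z}_{G,>n+1}(p^\beta)^c) = \mathcal{Z}_{H,\opn{id}}(p^\beta)^c$, equivalently the statement about $\hat\delta_{H,n+1}$). For part (3): a rank-one point $x$ lies in $\pi_{\opn{HT}, G, K_p}^{-1}(\mathcal{P}_G\backslash \mathcal{P}_G \mathcal{Q}_m K_p)$ for all $m$ iff the Hodge line lies (exactly, in the limit) in the Schubert variety $\overline{\mathcal{P}_G \mathcal{Q}_\infty}$, which unwinds — using Definition \ref{DefinitionOfCurlyQm} and the matrix description of the $\star$-action — to the condition that $A_x[\ide{p}_{\tau_0}^\beta]/C_{n+1, \tau_0, x}$ is étale (equivalently all graded pieces $C_{i,\tau_0}/C_{i-1,\tau_0}$ for $i>n+1$ are étale), which by Lemma \ref{LemmaOnDeltaHasseIdeals}(2) is $|\delta_{G, i}|_x = 1$ for $i > n+1$, i.e.\ $|\delta^+_{G, >n+1}|_x = 1$ — precisely membership in $\mathcal{Z}_{G, >n+1}(p^{\beta})$ (which was \emph{defined} as the closure of that quasi-compact open; and the intersection of the $\pi_{\opn{HT}}^{-1}$-preimages of the nested opens $\mathcal{P}_G\backslash\mathcal{P}_G\mathcal{Q}_m K_p$ is automatically closed, so it equals this closure).

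\emph{The main obstacle} will be pinning down the precise bookkeeping between the radius of overconvergence $r$ appearing in the definitions of $\mathcal{X}_{G, w_n}(p^{\beta})_r$, $\mathcal{Z}_{G, >n+1}(p^\beta)$ etc.\ and the truncation level $k$ (resp.\ $m$) in the tubes $]C^G_{w_n}[_{k,k}$ (resp.\ $\mathcal{P}_G\mathcal{Q}_m$) — i.e.\ showing that the two nested systems of neighbourhoods are \emph{intertwined} in a way that survives the intersection, not merely that their intersections agree. Concretely one must compare the valuation $|\hat\delta^+_{G,n+1}|$ (defined via determinants of maps on invariant differentials between the isogenous $p$-divisible groups $G_{i,\tau_0}$) with the matrix-entry valuations defining the Bruhat tubes; this is a local computation at rank-one points using the degree-of-a-subgroup formalism, and it is the same type of estimate already carried out in \cite{BoxerPilloni} and in \S\ref{IntegralModelsandOCNbhdsSec} of this paper. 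Once that comparison is in hand — or once one observes that for the \emph{intersection over all $k$ (resp.\ all $m$)} only the exact (radius-one) conditions matter, so the precise dependence is irrelevant — all three identities follow. I would therefore present the argument by first reducing each statement to a pointwise condition on rank-one points, then translating that condition through the moduli description of $\pi_{\opn{HT}}$ into a statement about degrees of graded pieces of the universal flag, and finally invoking Lemma \ref{LemmaOnDeltaHasseIdeals} and Proposition \ref{HlociisHasseProp} to recognise the resulting locus.
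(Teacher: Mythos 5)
Your overall strategy — check the identity at rank-one points, translate the Hodge--Tate position into conditions on the graded pieces of the universal flag, and invoke the degree formalism — is the same as the paper's, and your appeal to Lemma~\ref{LemmaOnDeltaHasseIdeals} / Lemma~\ref{XGwnisOpenImmersionLemma} / Proposition~\ref{HlociisHasseProp} as a shortcut is a legitimate alternative presentation. The paper instead computes $\pi_{\opn{HT}}$ directly at a rank-one point via choices of Tate-module bases and the $\opn{dlog}$ maps, and reads off the étale/multiplicative conditions from $\opn{deg}$ estimates; your hand-waved step ``which by the explicit description of $w_n$ forces $C_{n+1,\tau_0}/C_{n,\tau_0}$ multiplicative and all other graded pieces étale'' is exactly the content of that computation, and in a full write-up it cannot be skipped.

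The one genuine gap is in part (2). You write that ``since $\hat{\iota}$ is finite (hence closed), $\hat{\iota}^{-1}$ commutes with taking closures.'' That is false: for a continuous map $f$ one always has $\overline{f^{-1}(A)} \subseteq f^{-1}(\overline{A})$, and the reverse inclusion holds when $f$ is \emph{open}, not when it is closed. The map $\hat{\iota}$ is finite unramified but far from flat or open (its source has strictly smaller dimension than its target), so you cannot conclude $\hat{\iota}^{-1}(\overline{\mathcal{S}_{G,w_n}(p^{\beta})}) = \overline{\mathcal{S}_{H,\opn{id}}(p^{\beta})}$ on this basis; a priori $\hat{\iota}^{-1}(\overline{\mathcal{S}_{G,w_n}(p^{\beta})})$ could contain extra boundary points approached from directions transverse to the image of $\hat{\iota}$. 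The paper avoids the issue altogether: its proof of part (2) is a direct rank-one computation parallel to part (1), using the moduli description of $\mathcal{IG}_{H,\opn{id}}(p^\beta)$ and the Cartesian diagram of Proposition~\ref{HlociisHasseProp}. Your pullback route can be salvaged by carrying out the comparison at rank-one points (where both $\hat{\iota}^{-1}(\cdot)^{\opn{rk}1}$ and $\overline{\cdot}^{\,\opn{rk}1}$ are determined by the open loci, which \emph{are} Cartesian by Proposition~\ref{HlociisHasseProp}(2)) and then invoking the same ``easy'' descent from rank-one points to the full identity that parts (1) and (3) already rely on — but the reasoning you actually wrote does not establish this.
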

\begin{proof}
    Let $x \colon \opn{Spa}(F, \mathcal{O}_F) \to \mathcal{S}_{G, \opn{Iw}}(p^{\beta})$ be a rank one point. Then $\pi_{\opn{HT},G, K_p}(x)$ can be described as follows. The point $x$ corresponds to a tuple $(A, \lambda, i, \eta^p)$ (a $\boldsymbol\Psi$-unitary abelian scheme over $F$ with $K^p$-level structure) and flags
    \[
    0 = C_{0, \tau} \subset C_{1, \tau} \subset \cdots \subset C_{2n, \tau} = A[\ide{p}_{\tau}^{\beta}], \quad \quad \tau \in \Psi 
    \]
    of finite-flat group schemes satisfying the conditions in Definition \ref{DefinitionOfDeeperLevelAtpVarieties}(1). Assume without loss of generality that $F = \widehat{F^{\opn{sep}}}$ is equal to the completion of its separable closure. We know that $(A, \lambda, i, \eta^p)$ extends to a $\boldsymbol\Psi$-unitary abelian scheme $\mathcal{A}$ over $\mathcal{O}_F$. Let $\mathcal{C}_{i, \tau}$ denote the Zariski closure of $C_{i, \tau}$ inside $\mathcal{A}[\ide{p}_{\tau}^{\beta}]$ (which are finite flat). Let $T_{\ide{p}_{\tau}}(A) = T_p(A[\ide{p}_{\tau}^{\infty}]) = \varprojlim_{\beta' \geq 1}\mathcal{A}[\ide{p}_{\tau}^{\beta'}](F) = \varprojlim_{\beta' \geq 1}\mathcal{A}[\ide{p}_{\tau}^{\beta'}](\mathcal{O}_F)$ denote the corresponding Tate modules, and set $V_{\ide{p}_{\tau}}(A) = T_{\ide{p}_{\tau}}(A) \otimes_{\mbb{Z}_p} \mbb{Q}_p$. Since $F$ is separably closed, we can choose bases $\{ e_{1, \tau}, \dots, e_{2n, \tau} \}$ of $T_{\ide{p}_{\tau}}(A)$ which are compatible with the flags $C_{\bullet, \tau}$ modulo $p^{\beta}$ (i.e. $e_{1, \tau}$ modulo $p^{\beta}$ generates $C_{1, \tau}(F) = \mathcal{C}_{1, \tau}(\mathcal{O}_F)$ etc.). Any other such choice of basis differs from this one by elements of the depth $p^{\beta}$ upper-triangular Iwahori subgroups of $\opn{GL}_{2n}(\mbb{Z}_p) = \opn{Aut}_{\mbb{Z}_p}(T_{\ide{p}_{\tau}}(A))$.

    On the other hand, for any $\beta' \geq 1$, we have ``dlog'' morphisms 
    \[
    \opn{dlog}_{\beta', \tau} \colon \mathcal{A}[\ide{p}_{\tau}^{\beta'}](\mathcal{O}_F) \to \omega_{\mathcal{A}^D, \tau}/p^{\beta'}
    \]
    which are compatible as $\beta'$ varies. Hence, we obtain morphisms $\opn{dlog}_{\tau} \colon T_{\ide{p}_{\tau}}(A) \to \omega_{\mathcal{A}^D, \tau}$ and 
    \[
    \opn{HT}_{\tau} \colon V_{\ide{p}_{\tau}}(A) \otimes_{\mbb{Q}_p} F = T_{\ide{p}_{\tau}}(A) \otimes_{\mbb{Z}_p} F \xrightarrow{\opn{dlog}_{\tau} \otimes 1} \omega_{\mathcal{A}^D, \tau} \otimes_{\mathcal{O}_F} F = \omega_{A^D, \tau} .
    \]
    The morphisms $\opn{HT}_{\tau}$ are surjective and $F$-linear, and isomorphisms when $\tau \neq \tau_0$ by the signature condition. Let $\{ f_{1, \tau}, \dots, f_{2n, \tau} \}$ be a $F$-basis of $V_{\ide{p}_{\tau}}(A) \otimes_{\mbb{Q}_p} F$ respecting the filtration given by $\opn{HT}_{\tau}$ (i.e. we require that $\opn{HT}_{\tau_0}(f_{2, \tau_0}), \dots, \opn{HT}_{\tau_0}(f_{2n, \tau_0})$ generate $\omega_{A^D, \tau_0}$). Any other choice of basis differs by an element of $P_G(F)$ (ignoring the similitude factor). Then $\pi_{\opn{HT}, G, K_p}(x) \in \mathtt{FL}^G(F)/K_p = P_G(F) \backslash G(F) / K_p$ is represented by the element $g = (1; g_{\tau}) \in G(F) = F^{\times} \times \prod_{\tau \in \Psi} \opn{GL}_{2n}(F)$ satisfying
    \[
    (e_{1, \tau}, \dots, e_{2n, \tau}) = ( f_{1, \tau}, \dots, f_{2n, \tau} ) \cdot g_{\tau} 
    \]
    for all $\tau \in \Psi$.

    We now prove parts (1) and (3). Firstly, if $x \in \mathcal{S}_{G, w_n}(p^{\beta})$ then we can choose the bases $\{ f_{i, \tau} \}$ above such that $f_{i, \tau_0} = e_{i-1, \tau_0}$ for $i=2, \dots, n+1$, $f_{1, \tau_0} = e_{n+1, \tau_0}$ and $f_{i, \tau_0} = e_{i, \tau_0}$ for $i=n+2, \dots, 2n$ (because $\mathcal{C}_{n, \tau_0}$ is \'{e}tale and $\mathcal{C}_{n+1, \tau_0}/\mathcal{C}_{n, \tau_0}$ is multiplicative). Hence, we see that $\pi_{\opn{HT}, G, K_p}(x)$ is represented by $w_n$. On the other hand, suppose $\pi_{\opn{HT}, G, K_p}(x)$ is represented by $w_n$; so we can find bases above such that $g = w_n$. This means that $f_{i, \tau} = e_{i, \tau}$ for all $(i, \tau)$ with $\tau \neq \tau_0$, and $f_{i, \tau_0} = e_{i-1, \tau_0}$ for $i=2, \dots, n+1$, $f_{1, \tau_0} = e_{n+1, \tau_0}$ and $f_{i, \tau_0} = e_{i, \tau_0}$ for $i=n+2, \dots, 2n$. Then we see that 
    \[
    \{ \opn{dlog}_{\tau_0}(e_{1, \tau_0}), \dots, \opn{dlog}_{\tau_0}(e_{n, \tau_0}) \}
    \]
    give elements of $\omega_{\mathcal{C}_{n, \tau_0}^D}$ and their image in $\omega_{\mathcal{A}^D, \tau_0}/p^{\beta}$ span a free $\mathcal{O}_F/p^{\beta}$-subspace of rank $n$. This can only happen if $\opn{deg}(\mathcal{C}_{n, \tau_0}^D) = n \beta$, which coincides with its rank. This implies $\mathcal{C}_{n, \tau_0}$ is \'{e}tale. Similarly, the fact that $\opn{HT}_{\tau_0}(e_{n+1, \tau_0}) = \opn{HT}_{\tau_0}(f_{1, \tau_0}) = 0$ implies that $\mathcal{C}_{n+1, \tau_0}/\mathcal{C}_{n, \tau_0}$ is multiplicative. Putting this altogether, we see that
    \[
    \mathcal{S}_{G, w_n}(p^{\beta})^{\opn{rk} 1} = \pi_{\opn{HT}, G, K_p}^{-1}\left( \mathcal{P}_G \backslash \mathcal{P}_G \cdot w_n \cdot K_p \right)^{\opn{rk} 1} = \bigcap_{k \geq 1} \left(\mathcal{U}^G_{K_p, k}\right)^{\opn{rk} 1} .
    \]
    The claim in part (1) easily follows from this.

    We now prove part (3). Suppose that $x \in \mathcal{Z}_{G, > n+1}(p^{\beta})^{\opn{rk} 1}$. Then, since the $p$-rank of $\mathcal{A}[\ide{p}_{\tau_0}]^D$ must be $\geq n-1$, we can find a $p$-divisible group $\mathcal{D}_{n+1, \tau_0} \subset \mathcal{A}[\ide{p}_{\tau_0}^{\infty}]$ with $\mathcal{D}_{n+1, \tau_0}[p^{\beta'}] = \mathcal{C}_{n+1, \tau_0}$ and $\mathcal{E} \defeq \mathcal{A}[\ide{p}_{\tau_0}^{\infty}]/\mathcal{D}_{n+1, \tau_0}$ \'{e}tale. This implies that we have a quotient $\omega_{A^D, \tau_0} \twoheadrightarrow \omega_{\mathcal{E}^D} \otimes_{\mathcal{O}_F} F$ of rank $n-1$, and hence we may take $f_{i, \tau_0} = e_{i, \tau_0}$ for $i=n+2, \dots, 2n$. Furthermore, we may take $f_{i, \tau_0}$ ($i=1, \dots, n+1$) to be in the $\mathcal{O}_F$-span of $\{ e_{1, \tau_0}, \dots, e_{n+1, \tau_0} \}$. This implies that $g \in \bigcap_{m \geq 1}\mathcal{Q}_m(F)$. On the other hand, suppose that matrix $g$ representing $\pi_{\opn{HT}, G, K_p}(x)$ is in $\bigcap_{m \geq 1}\mathcal{Q}_m(F)$. Then we see that $f_{i, \tau}$ ($i=1, \dots, n+1$) is in the $\mathcal{O}_F$-span of $\{ e_{1, \tau_0}, \dots, e_{n+1, \tau_0} \}$. This implies that $\{ \opn{dlog}_{\tau_0}(e_{1, \tau_0}), \dots, \opn{dlog}_{\tau_0}(e_{n+1, \tau_0}) \}$ generate a free submodule of $\omega_{\mathcal{A}^D, \tau_0}/p^{\beta}$ of rank $n$. This forces $\omega_{\mathcal{C}_{n+1, \tau_0}^D}$ to have rank $n$ and hence 
    \[
    \opn{deg}\left(\left(\mathcal{A}[\ide{p}_{\tau_0}^{\beta}]/\mathcal{C}_{n+1, \tau_0}\right)^D \right) = (2n-1)\beta - \opn{deg}(\mathcal{C}^D_{n+1, \tau_0}) = (n-1)\beta .
    \]
    This implies that $\mathcal{A}[\ide{p}_{\tau_0}^{\beta}]/\mathcal{C}_{n+1, \tau_0}$ is \'{e}tale and part (3) follows.

    For the proof of part (2), let $K_p = K^H_{\diamondsuit}(p^{\beta})$ and take any rank one point $x \colon \opn{Spa}(F, \mathcal{O}_F) \to \mathcal{S}_{H, \diamondsuit}(p^{\beta})$ (with $F$ separably closed). Then, by Proposition \ref{HlociisHasseProp}(2), the condition $x \in \mathcal{S}_{H, \opn{id}}(p^{\beta})$ is equivalent to $\hat{\iota}(x) \in \mathcal{S}_{G, w_n}(p^{\beta})$, which as shown above, is equivalent to $\pi_{\opn{HT}, H, K_p}(x) \hat{\gamma} \in P_G(F) \cdot w_n \cdot K^G_{\opn{Iw}}(p^{\beta})$. But, using the fact that $\hat{\gamma} \in P_G(F) w_n$ and $K_p = \hat{\gamma} K^G_{\opn{Iw}}(p^{\beta}) \hat{\gamma}^{-1} \cap H(\mbb{Q}_p)$, we have 
    \[
    P_G(F) \backslash (P_G(F) \cdot w_n \cdot K^G_{\opn{Iw}}(p^{\beta}))/K^G_{\opn{Iw}}(p^{\beta}) = P_H(F) \backslash (P_H(F) \cdot K_p) / K_p ,
    \]
    so $x \in \mathcal{S}_{H, \opn{id}}(p^{\beta})$ is equivalent to the condition that $\pi_{\opn{HT}, H, K_p}(x)$ is equal to $1 \in \mathtt{FL}^H(F)$. Part (2) now follows.
\end{proof}

\subsection{Topological Hecke correspondences} \label{TopologicalHeckeCorrsSection}

We continue with the notation introduced in the previous section.

\begin{definition}
    Let $T^{G,-} \subset T(\mbb{Q}_p)$ (resp. $T^{H, -} \subset T(\mbb{Q}_p)$) denote the submonoid of elements $t \in T(\mbb{Q}_p)$ which satisfy $v_p(\alpha(t)) \leq 0$ for any positive root $\alpha$ of $G$ (resp. $H$). For any $t \in T^{G,-}$ (resp. $t \in T^{H, -}$) we let 
    \begin{align*} 
    \opn{min}_G(t) &= \opn{min}\{ -v_p(\alpha(t)) : \alpha \in \Phi_G^+ \} \\
    \opn{max}_G(t) &= \opn{max}\{ -v_p(\alpha(t)) : \alpha \in \Phi_G^+ \} \\
    \text{ ( resp. } \opn{min}_H(t) &= \opn{min}\{ -v_p(\alpha(t)) : \alpha \in \Phi_H^+ \} \\
    \opn{max}_H(t) &= \opn{max}\{ -v_p(\alpha(t)) : \alpha \in \Phi_H^+ \} \text{ ) }
    \end{align*}
    where $\Phi_G^+$ and $\Phi_H^+$ denote the positive roots of $G$ and $H$ respectively.
\end{definition}

For brevity and to ease notation, we place ourselves in one of the following two cases: 
\begin{equation} \label{GgKpZEqn}
(\mathscr{G}, g, K_p, \mathcal{Z}) \in \left\{ (G, w_n, K^G_{\opn{Iw}}(p^{\beta}), \mathcal{Z}_{G, >n+1}(p^{\beta})), (H, \opn{id}, K^H_{\diamondsuit}(p^{\beta}), \mathcal{Z}_{H, \opn{id}}(p^{\beta})) \right\} 
\end{equation}
where, by abuse of notation, we write $\mathcal{Z}_{H, \opn{id}}(p^{\beta})$ for its intersection with $\mathcal{S}_{H, \diamondsuit}(p^{\beta})$. For an integer $m \geq 1$, let $\mathcal{Z}_m$ denote the closure of $\pi_{\opn{HT}, G, K_p}^{-1}(\mathcal{P}_G \backslash \mathcal{P}_G \mathcal{Q}_m K_p )$ (resp. $\mathcal{U}^H_{K_p, m}$) in $\mathcal{S}_{\mathscr{G}, K_p}$ in the case $\mathscr{G} = G$ (resp. $\mathscr{G} = H$).

Let $t \in T^{\mathscr{G}, -}$. Then we can consider the following correspondence
\begin{equation} \label{GlobalSGKCorrepondence}
\begin{tikzcd}
                  & \mathcal{S}_{\mathscr{G},K_p'} \arrow[ld, "p_1"'] \arrow[rd, "p_2"] &                   \\
\mathcal{S}_{\mathscr{G}, K_p} &                                                         & \mathcal{S}_{\mathscr{G}, K_p}
\end{tikzcd}
\end{equation}
where $K_p' = t K_p t^{-1} \cap K_p$, $p_1$ is the forgetful map, and $p_2$ is induced from right multiplication by $t$. Note that $p_1$ and $p_2$ are finite \'{e}tale. The first property we need for (\ref{GlobalSGKCorrepondence}) is compatibility with support conditions. 

\begin{lemma} \label{p1p2-1ZisZLemma}
    For any integer $m \geq 1$, we have $p_1p_2^{-1}(\mathcal{Z}_m) \subset \mathcal{Z}_m$. Furthermore, we have $p_1p_2^{-1}(\mathcal{Z}) = \mathcal{Z}$.
\end{lemma}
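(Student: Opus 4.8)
The statement to prove is Lemma \ref{p1p2-1ZisZLemma}: in either of the two cases in (\ref{GgKpZEqn}), for the correspondence (\ref{GlobalSGKCorrepondence}) associated to an element $t \in T^{\mathscr{G},-}$, one has $p_1 p_2^{-1}(\mathcal{Z}_m) \subset \mathcal{Z}_m$ for all $m \geq 1$, and $p_1 p_2^{-1}(\mathcal{Z}) = \mathcal{Z}$. The natural strategy is to translate everything into the flag variety via the (truncated) Hodge--Tate period maps, since both $\mathcal{Z}_m$ and $\mathcal{Z}$ were shown in the previous lemma to be pullbacks of explicit $K_p$-invariant (locally closed, resp. closed) loci in $\mathtt{FL}^{\mathscr{G}}/K_p$ --- namely $\mathcal{P}_G \backslash \mathcal{P}_G \mathcal{Q}_m K_p$ and its intersection over $m$ in the case $\mathscr{G}=G$, and the closures of the $]C^H_{\opn{id}}[$-tubes in the case $\mathscr{G}=H$. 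The key compatibility is that the Hodge--Tate period map intertwines the prime-to-$p$ Hecke action trivially and the $p$-adic Hecke correspondence (\ref{GlobalSGKCorrepondence}) with right-translation on $\mathtt{FL}^{\mathscr{G}}$: concretely, $\pi_{\opn{HT}, \mathscr{G}, K_p} \circ p_1$ and $\pi_{\opn{HT}, \mathscr{G}, K_p'}$ agree (up to the quotient map $\mathtt{FL}^{\mathscr{G}}/K_p' \to \mathtt{FL}^{\mathscr{G}}/K_p$), while $\pi_{\opn{HT}, \mathscr{G}, K_p} \circ p_2$ is right-translation by $t$ composed with $\pi_{\opn{HT}, \mathscr{G}, K_p'}$. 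This is a standard functoriality of the Hodge--Tate period morphism under the $p$-adic Hecke action (it is essentially \cite[\S 3]{UFJ} / \cite[\S 6]{BoxerPilloni}, and follows from the description of $\pi_{\opn{HT}}$ in terms of changing a basis of the Tate module by $t$).

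First I would reduce the set-theoretic statement on Shimura varieties to a statement purely on the flag variety. Using the previous lemma's description and the functoriality above, $p_1 p_2^{-1}(\mathcal{Z}_m) \subset \mathcal{Z}_m$ is implied by the inclusion $\mathcal{P}_G \backslash \mathcal{P}_G \mathcal{Q}_m K_p \star t^{-1} \subset \mathcal{P}_G \backslash \mathcal{P}_G \mathcal{Q}_m K_p$ inside $\mathtt{FL}^{\mathscr{G}}$ (where $\star$ denotes the right action, and $t^{-1}$ appears because $p_2$ is translation by $t$ and we are pulling back then pushing forward), and similarly for $\mathcal{Z}$ one needs $\bigcap_m \mathcal{P}_G \backslash \mathcal{P}_G \mathcal{Q}_m K_p \star t^{-1} = \bigcap_m \mathcal{P}_G \backslash \mathcal{P}_G \mathcal{Q}_m K_p$. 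The point is that $\mathcal{Q}_m$ (resp. $\mathcal{P}_G$, resp. the $\opn{id}$-tube) is stabilized on the right by the relevant parabolic up to the $\mathcal{B}_m^{\circ}$-perturbation, and since $t \in T^{\mathscr{G},-}$ the conjugation $t^{-1} \mathcal{B}_m^{\circ} t$ (or rather the effect of $t$ on the $\mathcal{Q}_m$-coordinates) \emph{shrinks} or preserves the open disc factor $\mathcal{B}_m^{\circ}$ in the off-diagonal block --- because $v_p(\alpha(t)) \leq 0$ for positive roots $\alpha$ means $t$ expands positive root directions, which for the lower-triangular perturbation $\mathcal{B}_m^{\circ}$ (a \emph{negative} root direction in $\mathcal{Q}_m$) means contraction. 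So the monoid $T^{\mathscr{G},-}$ condition is exactly what makes $\mathcal{Q}_m \star t^{-1} \subseteq \mathcal{Q}_{m'} \cdot K_p$ for some $m' \geq m$, and one gets the inclusion into $\mathcal{P}_G \backslash \mathcal{P}_G \mathcal{Q}_m K_p$ after noting $\mathcal{Q}_m \supseteq \mathcal{Q}_{m'}$. For the equality $p_1 p_2^{-1}(\mathcal{Z}) = \mathcal{Z}$, one inclusion is the $m = \infty$ version of the above, and the reverse inclusion follows because $\mathcal{Z} = \pi_{\opn{HT}}^{-1}$ of the \emph{full} parabolic locus $\mathcal{P}_G \backslash \mathcal{P}_G P_G^{(\tau_0\text{-mod})} K_p$ (taking $m \to \infty$ collapses $\mathcal{B}_m^{\circ}$ to $\{0\}$), which is genuinely right-$P_G$-stable, hence right-$t$-stable since $t \in T \subset P_G$; then $p_1 p_2^{-1}$ acts by $g \mapsto g \star t^{-1}$ followed by $K_p$-saturation, and both operations preserve this locus, so we get equality rather than just containment. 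In the case $\mathscr{G} = H$, the same argument applies with $\mathcal{Q}_m$ replaced by the relevant analytic neighbourhood of the identity Bruhat cell, using that $\opn{id}$ is the minimal-length element so the tube $]C^H_{\opn{id}}[_{m,m}$ is a union of $\mathcal{B}_m^{\circ}$-polydiscs around the parabolic point, again contracted by $t \in T^{H,-}$.

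I expect the main obstacle to be the bookkeeping in the flag-variety step: making precise the claim ``$t \in T^{\mathscr{G},-}$ implies $\mathcal{Q}_m \star t^{-1} \subseteq \mathcal{P}_G \mathcal{Q}_m K_p$''. One has to carefully track which matrix block of $\mathcal{Q}_m$ (in the notation of Definition \ref{DefinitionOfCurlyQm}, the block $C \in \mathcal{B}_m^{\circ}\mathcal{M}_{(n-1)\times(n+1)}$ in the $\tau_0$-component) is the analytically-small one, verify that right multiplication by ${}^t t_{\tau_0}^{-1}$ (the action on row vectors / the flag) scales the entries of this block by a product of characters of $t$ evaluated at roots, and check that the $T^{\mathscr{G},-}$ condition forces all these scaling factors to have non-negative valuation --- so the small block stays small (indeed can only get smaller, landing in $\mathcal{B}_{m'}^{\circ}$ with $m' \geq m$), while the remaining entries stay integral up to the $K_p$-coset. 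This is where one genuinely uses the precise shape of $\mathcal{Q}_m$ and the Iwahori level, and it is the step most prone to sign/convention errors; but conceptually it is the standard ``dynamical'' compatibility of a contracting Hecke operator with a nested system of tubes, exactly as in \cite{BoxerPilloni}. Everything else --- the functoriality of $\pi_{\opn{HT}}$ under Hecke, the reduction from Shimura varieties to the flag variety, the passage $m \to \infty$ --- is routine given the previous lemma and the cited references.
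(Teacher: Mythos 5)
Your overall approach is the same as the paper's: translate everything to the flag variety via $\pi_{\opn{HT}}$ and use the fact that $t \in T^{\mathscr{G},-}$ contracts the analytic perturbation $\mathcal{B}_m^\circ$ in $\mathcal{Q}_m$. The essential verification --- that the small block stays small --- is correct, though your phrasing ``right multiplication by ${}^t t_{\tau_0}^{-1}$ scales the entries'' is slightly off from the actual mechanism. Right multiplication by $t^{-1}$ does not itself scale the entries of $q \in \mathcal{Q}_m$; what one does is factor $qt^{-1} = t^{-1}(tqt^{-1})$ and observe that $t^{-1} \in T \subset P_G^{\opn{an}}$ can be absorbed into $\mathcal{P}_G$, while $tqt^{-1} \in \mathcal{Q}_m$ because the conjugation scales the $(2,1)$-block entries by $t_{n+1+i}/t_j$ (with $n+1+i > j$), whose valuations are $\geq 0$ precisely because $v_p(\alpha(t)) \leq 0$ for positive roots $\alpha$. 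You also omit the intermediate step $\mathcal{Q}_m K_p t^{-1} \subset \mathcal{Q}_m t^{-1} K_p$, which again uses $T^{\mathscr{G},-}$ through the Iwahori factorisation of $K_p$; this is needed to get from $\mathcal{Q}_m K_p t^{-1} K_p$ down to $\mathcal{Q}_m K_p$.

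Your argument for the equality $p_1 p_2^{-1}(\mathcal{Z}) = \mathcal{Z}$ is where the proposal is weakest. You write that ``both operations preserve this locus, so we get equality,'' but preservation only gives one containment. As a subset of $\mathtt{FL}^{\mathscr{G}}$, we have $Z = \mathcal{P}_G\backslash\mathcal{P}_G Q K_p$ with $Q = \bigcap_m \mathcal{Q}_m$, and $p_1p_2^{-1}(\mathcal{Z})$ corresponds to $Z t^{-1} K_p$. Right translation by $t^{-1}$ does \emph{not} preserve $Z$: one has $Z t^{-1} K_p \supset \mathcal{P}_G\backslash\mathcal{P}_G Q t^{-1} K_p = \mathcal{P}_G\backslash\mathcal{P}_G Q K_p = Z$ only after using that $T$ normalises $Q$ and that $t^{-1} \in P_G^{\opn{an}}$, and this containment is genuinely a computation, not an instance of ``the locus is $t$-stable.'' The paper avoids this subtlety by proving a uniform two-sided bound $\mathcal{Z}_{m+r} \subset p_1p_2^{-1}(\mathcal{Z}_m) \subset \mathcal{Z}_m$ (the lower bound uses the expansion $t^{-1}_2 C t_1 \in \mathcal{B}_m^\circ\mathcal{M}$ for $C \in \mathcal{B}_{m+r}^\circ\mathcal{M}$, the upper bound the contraction you describe) and then taking intersections over $m$, which is cleaner and automatically produces both inclusions for $\mathcal{Z}$. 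If you wish to keep your ``$m = \infty$'' shortcut, you should spell out both inclusions $Z t^{-1} K_p \subset Z$ and $Z \subset Z t^{-1} K_p$ explicitly, as they require different ingredients.
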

\begin{proof}
    We first consider the case $\mathscr{G} = G$. Write $t = \tbyts{t_1}{}{}{t_2}$ with $t_1$ (resp. $t_2$) a $(n+1) \times (n+1)$ (resp. $(n-1) \times (n-1)$) diagonal matrix. Let $m \geq 1$ be an integer. Then there exists an integer $r \geq 0$ such that
    \[
    t_2^{-1} \mathcal{B}_{m+r}^{\circ} \mathcal{M}_{(n-1) \times (n+1)} t_1 \subset \mathcal{B}_{m}^{\circ} \mathcal{M}_{(n-1) \times (n+1)} .
    \]
    Furthermore, since $t \in T^{G, -}$, we see that $t_2 \mathcal{B}_{m}^{\circ} \mathcal{M}_{(n-1) \times (n+1)} t_1^{-1} \subset \mathcal{B}_{m}^{\circ} \mathcal{M}_{(n-1) \times (n+1)}$. For any $\tbyts{A}{}{C}{D} \in \mathcal{Q}_{m+r}$ (with notation as in Definition \ref{DefinitionOfCurlyQm}), we have
    \[
    \tbyt{A}{}{C}{D} = t t^{-1} \tbyt{A}{}{C}{D} t t^{-1} = t \tbyt{t_1^{-1}At_1}{}{t_2^{-1}Ct_1}{t_2^{-1}Dt_2} t^{-1} = p \tbyt{A'}{}{C'}{1} t^{-1}
    \]
    for some $p \in P_G^{\opn{an}}$, $A' \in \mathcal{GL}_{n+1}$ and $C' \in \mathcal{B}_{m}^{\circ} \mathcal{M}_{(n-1) \times (n+1)}$. Here we have used the fact that $\opn{GL}_{n+1}^{\opn{an}}$ modulo any parabolic subgroup is proper, so we may assume $A' \in \mathcal{GL}_{n+1}$. By the above inclusion and the Iwahori factorisation of $\mathcal{Q}_m$, we therefore see that
    \[
    P_G^{\opn{an}} \mathcal{Q}_{m+r} \subset P_G^{\opn{an}}\mathcal{Q}_m t^{-1}
    \]
    and so $P_G^{\opn{an}} \mathcal{Q}_{m+r} K_p \subset P_G^{\opn{an}}\mathcal{Q}_m K_p t^{-1} K_p$.

    On the other hand, we also have $\mathcal{Q}_m K_p t^{-1} \subset \mathcal{Q}_m t^{-1} K_p$ because $t \in T^{G, -}$. Indeed, by the Iwahori factorisation of $K_p$, we may replace $K_p$ with lower-triangular matrices. This, and the inclusions above, imply that
    \[
    P_G^{\opn{an}} \mathcal{Q}_m K_p t^{-1} K_p \subset P_G^{\opn{an}} \mathcal{Q}_m t^{-1} K_p \subset P_G^{\opn{an}} \mathcal{Q}_m K_p .
    \]
    By pulling back along $\pi_{\opn{HT}, G, K_p}$ and taking closures, we therefore see that $\mathcal{Z}_{m+r} \subset p_1p_2^{-1}(\mathcal{Z}_m) \subset \mathcal{Z}_m$, which gives the first claim. The second claim follows from taking the intersection over $m \geq 1$. 

    The case $\mathscr{G} = H$ is even simpler. Indeed, one can easily show that 
    \[
    ]C^H_{\opn{id}}[_{k+r, k+r} K_p \subset ]C^H_{\opn{id}}[_{k, k} (K_p t^{-1} K_p) \subset ]C^H_{\opn{id}}[_{k, k} K_p
    \]
    for any $r \geq \opn{max}_H(t)$.
\end{proof}

We now consider an overconvergent version of the above correspondence. We first note the following lemma:
\begin{lemma} \label{SupportOfCorrespContainedInCanonicalLemma}
    Let $t \in T^{\mathscr{G}, -}$ and $p_1, p_2$ the maps in the correspondence (\ref{GlobalSGKCorrepondence}). Let $d = \opn{max}_{\mathscr{G}}(t)$ and $k \geq 1$ an integer. Then
    \begin{enumerate}
        \item $p_2(\mathcal{U}^{\mathscr{G}}_{K_p', k+d}) \subset \mathcal{U}^{\mathscr{G}}_{K_p, k}$
        \item There exists an integer $m \geq k$ such that $p_1^{-1}(\mathcal{U}^{\mathscr{G}}_{K_p, k}) \cap p_2^{-1}(\mathcal{Z}_m) \subset \mathcal{U}_{K_p', k}^{\mathscr{G}}$
        \item $\mathcal{U}_{K_p', k}^{\mathscr{G}} \subset p_1^{-1}(\mathcal{U}^{\mathscr{G}}_{K_p, k})$ is an open and closed subspace.
    \end{enumerate}
    In particular, the induced map $p_1 \colon \mathcal{U}_{K_p', k}^{\mathscr{G}} \to \mathcal{U}^{\mathscr{G}}_{K_p, k}$ is finite \'{e}tale.
\end{lemma}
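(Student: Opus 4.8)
The claims (1)--(3) are all statements about the behaviour of the Hodge--Tate period map under the Hecke correspondence, and the strategy is to reduce everything to the corresponding combinatorial statements at the level of the flag variety, exactly as in the proof of Lemma \ref{p1p2-1ZisZLemma}. First I would unwind the definition of the maps $p_1, p_2$ in terms of the period maps $\pi_{\opn{HT}, \mathscr{G}, K_p}$ and $\pi_{\opn{HT}, \mathscr{G}, K_p'}$: since $p_2$ is induced by right-translation by $t$ and the period maps are equivariant, one has $\pi_{\opn{HT}, K_p}(p_2(x)) = \pi_{\opn{HT}, K_p'}(x) \star t$ (as a $K_p$-orbit), while $\pi_{\opn{HT}, K_p}(p_1(x))$ is just the image of the $K_p'$-orbit $\pi_{\opn{HT}, K_p'}(x)$ in $\mathtt{FL}^{\mathscr{G}}/K_p$. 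So each of (1)--(3) becomes the assertion that a certain inclusion of $\star$-translates of the tubes $]C^{\mathscr{G}}_g[_{k,k}$ holds inside $\mathtt{FL}^{\mathscr{G}}$, together with an open-closed statement which then follows because $p_1$ is finite étale and the source tube is a union of connected components of its preimage.

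For part (1), after the reduction I would show $]C^{\mathscr{G}}_g[_{k+d, k+d} \, \star\, t \subset \mathcal{P}_{\mathscr{G}} \backslash \mathcal{P}_{\mathscr{G}} g K_p$ and more precisely that it lands in $]C^{\mathscr{G}}_g[_{k, k} K_p$; this is the usual computation that conjugating the ``contracting'' coordinates of the open cell by $t \in T^{\mathscr{G}, -}$ shrinks the relevant discs by a factor $p^{-d}$ where $d = \opn{max}_{\mathscr{G}}(t)$, precisely the kind of estimate carried out at the start of the proof of Lemma \ref{p1p2-1ZisZLemma} (for $\mathscr{G} = H$ it is the displayed inclusion $]C^H_{\opn{id}}[_{k+r,k+r}K_p \subset ]C^H_{\opn{id}}[_{k,k}K_p$ there, and for $\mathscr{G} = G$ it is the analogous statement for the cell $C^G_{w_n}$, using the explicit matrix form of $w_n$ and the Iwahori factorisation of $K^G_{\opn{Iw}}(p^{\beta})$). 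For part (2), I would combine the inclusion $\mathcal{Z}_m \subset \pi_{\opn{HT}, K_p}^{-1}(\mathcal{P}_{\mathscr{G}} \backslash \mathcal{P}_{\mathscr{G}} \mathcal{Q}_m K_p)$ (resp. $\subset \mathcal{U}^H_{K_p, m}$) with a Bruhat-type argument: if $\pi_{\opn{HT}, K_p'}(x)$ lies in $p_1^{-1}$ of the $g$-tube and $\pi_{\opn{HT}, K_p}(p_2(x))$ lies deep enough in the $\mathcal{Q}$-stratum (large $m$), then the position of $\pi_{\opn{HT}, K_p'}(x)$ itself must already be in $]C^{\mathscr{G}}_g[_{k,k}$, because the only way a point of the $g$-tube can be translated by $t$ into a point that is far from the generic stratum is for it to start far from the generic stratum. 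Quantitatively one chooses $m = k + c$ for a constant $c$ depending only on $\opn{max}_{\mathscr{G}}(t)$ and the length of $g$; this is essentially the reverse of the estimate used in part (1) combined with the openness of the $g$-tube. Part (3) is then formal: $p_1$ is finite étale (the composition of a finite étale cover of Shimura varieties with base change), so $p_1^{-1}(\mathcal{U}^{\mathscr{G}}_{K_p, k})$ is a disjoint union of opens each mapping finite étale onto $\mathcal{U}^{\mathscr{G}}_{K_p, k}$; the subspace $\mathcal{U}_{K_p', k}^{\mathscr{G}}$ is cut out inside it by the condition that the period point lies in the $g$-tube for the finer level $K_p'$, which since $]C^{\mathscr{G}}_g[_{k,k}$ is open and closed in $\mathtt{FL}^{\mathscr{G}}$ (being a connected component of the locus of radius $p^{-k}$ around the cell) makes $\mathcal{U}_{K_p', k}^{\mathscr{G}}$ open and closed. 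The final sentence follows immediately: $p_1$ restricted to an open-closed subspace of a space on which it is finite étale is again finite étale.

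The main obstacle I anticipate is part (2): unlike (1) and (3), it is not a monotone ``shrinking'' statement but requires showing that the correspondence cannot move points \emph{out} of the canonical neighbourhood while keeping them inside the support condition $\mathcal{Z}_m$, and making the choice of $m$ explicit (or even just its existence) needs a careful two-sided analysis of how $t$-conjugation acts on the Bruhat coordinates, distinguishing the ``$T^{\mathscr{G},-}$-contracting'' directions from the others. For $\mathscr{G} = G$ this interacts with the somewhat delicate definition of $\mathcal{Q}_m$ (Definition \ref{DefinitionOfCurlyQm}) and the fact that $\mathcal{Z}_{G, >n+1}(p^{\beta})$ is a \emph{closure} of a locus, so one has to argue at the level of rank-one points and then pass to closures, as in Lemma \ref{p1p2-1ZisZLemma}. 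I would handle this by first proving the inclusion on rank-one points using the moduli-theoretic description of $\pi_{\opn{HT}, G, K_p}$ (flags of finite flat group schemes, dlog maps, degrees) already developed in the lemmas above, and only then take closures; the compatibility with support conditions recorded in Lemma \ref{p1p2-1ZisZLemma} will do much of the bookkeeping.
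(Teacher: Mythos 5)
Your overall strategy of reducing everything to combinatorics on the flag variety via the period maps is correct, and your treatment of part (1) matches the paper's: after using $t^{-1}K_p't \subset K_p$ to see that $]C^{\mathscr{G}}_g[_{k,k}K_p' t \subset ]C^{\mathscr{G}}_g[_{k,k} t K_p$, everything comes down to the elementary inclusion $]C^{\mathscr{G}}_g[_{k+d,k+d} t \subset ]C^{\mathscr{G}}_g[_{k,k}$.

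For part (2), however, there is a conceptual imprecision and a divergence from the paper. You say the conclusion should be that $\pi_{\opn{HT},K_p'}(x)$ ``must already be in $]C^{\mathscr{G}}_g[_{k,k}$,'' but that would only give membership in the $K_p$-orbit tube, which is already the hypothesis; the actual content of (2) is refining the $K_p$-orbit condition to a $K_p'$-orbit condition, i.e.\ landing in $]C^{\mathscr{G}}_g[_{k,k}K_p'$. The paper proves exactly this by a purely combinatorial Bruhat-coordinate computation: for $\mathscr{G}=G$ it reduces to $]C^G_{w_n}[_{k,k}K_p \cap A_mK_pt^{-1} \subset ]C^G_{w_n}[_{k,k}K_p'$ with $A_m = \mathcal{P}_G\backslash\mathcal{P}_G\mathcal{Q}_m$, disposes of the cells $]X^G_w[$ with $w<w_n$ (disjoint from the tube because $t\in T^{G,-}$), and then does an explicit residue-class analysis on the $w_n$-tube. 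Your proposal to instead work on rank-one points with the moduli-theoretic description of $\pi_{\opn{HT}}$ (group schemes, dlog, degrees) and then take closures is a genuinely different route; it would re-do work that the paper isolated into the preceding lemma identifying $\mathcal{Z}_{G,>n+1}(p^\beta)$ with the $\mathcal{Q}_m$-intersection, and is considerably heavier. It could be made to work, but make sure you keep careful track of the fact that $\mathcal{Z}_m$ is a closure (so you will need to pass from the open $A_mK_p$ to a nearby $A_{m'}K_p$ with $m'<m$).

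Part (3) contains two genuine errors. First, $]C^{\mathscr{G}}_g[_{k,k}$ is \emph{not} open and closed in $\mathtt{FL}^{\mathscr{G}}$: it is an open Bruhat tube with nonempty boundary, and the closure is strictly larger. So it is not a connected component of anything. Second, a finite \'etale map does not give that $p_1^{-1}(\mathcal{U}^{\mathscr{G}}_{K_p,k})$ is ``a disjoint union of opens each mapping finite \'etale onto $\mathcal{U}^{\mathscr{G}}_{K_p,k}$''; this would only hold if the cover were trivial over $\mathcal{U}^{\mathscr{G}}_{K_p,k}$, which you have no reason to assume. What is actually needed (and what the paper shows) is that $]C^{\mathscr{G}}_g[_{k,k}K_p$ decomposes into a finite disjoint union of residue-class translates of a polydisc (the tuples $\underline{a}$), each of which is open and closed, and that $]C^{\mathscr{G}}_g[_{k,k}K_p'$ is exactly the subunion over those $\underline{a}$ with $a_i$ in the smaller residue classes determined by $K_p'$. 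Pulling back this open-and-closed decomposition under $\pi_{\opn{HT},K_p'}$ exhibits $\mathcal{U}^{\mathscr{G}}_{K_p',k}$ as open and closed in $p_1^{-1}(\mathcal{U}^{\mathscr{G}}_{K_p,k})$; the final ``finite \'etale'' claim then follows since $p_1$ is finite \'etale globally and the restriction of a finite \'etale map to an open-and-closed subset of the source is again finite \'etale. You need this explicit residue-class decomposition, not the appeal to connected components.
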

\begin{proof}
    For part (1), we have 
    \[
    ]C^{\mathscr{G}}_g [_{k, k} K_p' t = ]C^{\mathscr{G}}_g [_{k, k} t t^{-1} K_p' t \subset ]C^{\mathscr{G}}_g [_{k, k} t K_p, \quad \quad k \geq 1,
    \]
    so it suffices to show that $]C^{\mathscr{G}}_g [_{k+d, k+d} t \subset ]C^{\mathscr{G}}_g [_{k, k}$. But this is clear.

    For part (2), we first prove the claim for $\mathscr{G} = G$. For any integer $m \geq 1$, set $A_m \defeq \mathcal{P}_G \backslash \mathcal{P}_G \mathcal{Q}_m \subset \mathtt{FL}^G$. Then it suffices to show that for $m \gg k$, one has
    \[
    ]C^G_{w_n}[_{k, k} K_p \cap A_m K_p t^{-1} \subset ]C^G_{w_n}[_{k, k} K_p'.
    \]
    Note that $A_m K_p \subset ]C^G_{w_n}[_{m, \bar{0}} K_p \bigcup \cup_{w < w_n} ]X^G_{w}[$ (with notation as in \cite[Definition 3.1.1]{UFJ}) and we have $]C^G_{w_n}[_{k, k} K_p \cap ]X^G_{w}[t^{-1} \subset ]C^G_{w_n}[_{k, k} K_p \cap ]X^G_{w}[ = \varnothing$ for any $w < w_n$ (because $t \in T^{G, -}$). In addition to this, suppose that we have 
    \[
    [x_0 : \cdots : x_{2n-1}] \in ]C^G_{w_n}[_{k, k} K_p \cap ]C^G_{w_n}[_{m, \bar{0}} K_p t^{-1} .
    \]
    Let $\opn{diag}(t_1, \dots, t_{2n})$ denote the $\tau_0$-component of $t$. Then there exist $a_1, \dots, a_{2n}, a_1', \dots, a_{2n}' \in \mbb{Z}_p$ with the following properties:
    \begin{itemize}
        \item $a_{n+1}, a_{n+1}' \in \mbb{Z}_p^{\times}$ and $a_{n+2}, \dots, a_{2n}, a_{n+2}', \dots, a_{2n}' \in p^{\beta}\mbb{Z}_p$
        \item For $i=1, \dots, n$, we have $x_{i-1} \in a_i + \mathcal{B}_k$
        \item $x_n \in (a_{n+1} + \mathcal{B}_k^{\circ}) \cap (a_{n+1}' + \mathcal{B}_m^{\circ})$
        \item For $i=n+2, \dots, 2n$, we have $x_{i-1} \in (a_i + \mathcal{B}_k^{\circ}) \cap (t_i t_{n+1}^{-1} a_i' + \mathcal{B}_m^{\circ})$.
    \end{itemize}
    This implies that 
    \[
    (x_0, \cdots, x_{2n-1}) \in (a_1, \dots, a_n, a'_{n+1}, t_{n+2}t_{n+1}^{-1} a_{n+2}', \dots, t_{2n}t_{n+1}^{-1} a_{2n}') + (\overline{\mathcal{B}}_0, \dots, \overline{\mathcal{B}}_0, \mathcal{B}^{\circ}_m, \dots, \mathcal{B}^{\circ}_m) 
    \]
    with $n$ lots of $\overline{\mathcal{B}}_0$, and hence $]C^G_{w_n}[_{k, k} K_p \cap ]C^G_{w_n}[_{m, \bar{0}} K_p t^{-1} \subset ]C^G_{w_n}[_{m, \bar{0}} K_p'$. So to prove (2) in the case $\mathscr{G} = G$, it suffices to show that
    \[
    ]C^G_{w_n}[_{k, k} K_p \cap ]C^G_{w_n}[_{m, \bar{0}} \subset ]C^G_{w_n}[_{k, k} K_p' .
    \]
    But, as long as we take $m \geq k + \opn{max}_G(t)$, then this follows from a similar explicit calculation above (take $a_{i}' = 0$ for $i \neq n+1$). This completes the proof of (2) in the case $\mathscr{G} = G$. The proof of part (2) for $\mathscr{G} = H$ is simpler. Indeed, one can easily show that $p_2^{-1}(\mathcal{U}^H_{K_p, m}) \subset \mathcal{U}^H_{K_p', k}$ for any $m \gg k$ (depending on $\opn{max}_H(t)$).

    We now prove part (3). For $\mathscr{G} = G$, we simply note that
    \[
    ] C^G_{w_n} [_{k, k} K_p = \bigcup_{\underline{a}} \left( \underline{a} + (\mathcal{B}_k, \dots, \mathcal{B}_k, 0, \mathcal{B}_k^{\circ}, \dots, \mathcal{B}_k^{\circ}) \right) 
    \]
    where the $0$ is in the $(n+1)$-th place and the union is over tuples $(a_1, \dots, a_n, 1, a_{n+2}, \dots, a_{2n})$, with $a_i \in \mbb{Z}/p^k\mbb{Z}$ (resp. $a_i \in p^{\beta}\mbb{Z}/p^{\opn{max}(\beta, k)}\mbb{Z}$) for $i \in \{1, \dots, n \}$ (resp. $i \in \{ n+2, \dots, 2n \}$). Clearly there are only finitely many such tuples, and this is a disjoint union of open and closed subspaces. Furthermore, one easily sees that $]C_{w_n}^G[_{k, k} K_p'$ is a union over the cosets for which 
    \[
    a_i \in p^{\beta'_i}\mbb{Z}/p^{\opn{max}(\beta'_i, k)}\mbb{Z} \subset p^{\beta}\mbb{Z}/p^{\opn{max}(\beta, k)}\mbb{Z}, \quad \quad \beta'_i \defeq \beta + v_p(t_it_{n+1}^{-1})
    \]
    for all $i \in \{n+2, \dots, 2n\}$. This proves the claim -- the proof of (3) in the case $\mathscr{G} = H$ follows an identical argument. Finally, since $p_1 \colon p_1^{-1}(\mathcal{U}^{\mathscr{G}}_{K_p, k}) \to \mathcal{U}^{\mathscr{G}}_{K_p, k}$ is finite \'{e}tale, the induced morphism $p_1 \colon \mathcal{U}^{\mathscr{G}}_{K_p', k} \to \mathcal{U}^{\mathscr{G}}_{K_p, k}$ is also finite \'{e}tale (by part (3)).
\end{proof}

We now consider the overconvergent version of the Hecke correpondence. Let $t \in T^{\mathscr{G}, -}$ and $d = \opn{max}_{\mathscr{G}}(t)$. Then for any $k \geq 1$, Lemma \ref{SupportOfCorrespContainedInCanonicalLemma} implies that we have a correspondence:
\begin{equation} \label{OCSGKCorrepondence}
\begin{tikzcd}
                  & {\mathcal{U}^{\mathscr{G}}_{K_p', k+d}} \arrow[ld, "p_1"'] \arrow[rd, "p_2"] &                   \\
{\mathcal{U}^{\mathscr{G}}_{K_p, k+d}} &                                                         & {\mathcal{U}^{\mathscr{G}}_{K_p, k} .}
\end{tikzcd}
\end{equation}
This is compatible with the correspondence in (\ref{GlobalSGKCorrepondence}) in  the sense that we have a commutative diagram 
\begin{equation} \label{Eqn:ObviousCommute}
\begin{tikzcd}
{\mathcal{U}^{\mathscr{G}}_{K_p, k+d}} \arrow[d, hook] & {\mathcal{U}^{\mathscr{G}}_{K_p', k+d}} \arrow[d, hook] \arrow[l, "p_1"'] \arrow[r, "p_2"] & {\mathcal{U}^{\mathscr{G}}_{K_p, k}} \arrow[d, hook] \\
{\mathcal{S}_{\mathscr{G}, K_p}}                       & {\mathcal{S}_{\mathscr{G}, K_p'}} \arrow[l, "p_1"'] \arrow[r, "p_2"]                       & {\mathcal{S}_{\mathscr{G}, K_p}}                    
\end{tikzcd} 
\end{equation}
We note that the squares in \eqref{Eqn:ObviousCommute} are often not Cartesian, because the horizontal maps often do not have the same degree.

Finally, we will consider a version of this correspondence over the ordinary locus. Recall from \S \ref{QuotientsOfIgVarSubSec} that we have group schemes $J^G_{\opn{Iw}}(p^{\beta})$ and $J^H_{\diamondsuit}(p^{\beta})$, both of which we denote by $J_p$ in cases $\mathscr{G} = G$ and $\mathscr{G} = H$ respectively. For any $t \in T^{\mathscr{G}, -}$ we can naturally view $t' = g t g^{-1} \in T(\mbb{Q}_p) \subset J_{\mathscr{G}, \opn{ord}}$, and we consider the group scheme $J_p' \defeq t' J_p (t')^{-1} \cap J_p$. Then we have a natural correspondence between quotients of Igusa varieties
\begin{equation} \label{IgusaSGKCorrepondence}
\begin{tikzcd}
                  & {\mathcal{IG}_{\mathscr{G}}/J_p'} \arrow[ld, "q_1"'] \arrow[rd, "q_2"] &                   \\
{\mathcal{IG}_{\mathscr{G}}/J_p} &                                                         & {\mathcal{IG}_{\mathscr{G}}/J_p}
\end{tikzcd}
\end{equation}
where $q_1$ is the natural forgetful map and $q_2$ is induced from the right-action of $t'$. Here $\mathcal{IG}_{\mathscr{G}}$ is the adic generic fibre of $\ide{IG}_{\mathscr{G}}$. Both of the morphisms $q_1$ and $q_2$ are finite \'{e}tale. Furthermore, there exists a finite extension $L' / L$ (depending on $t$) and a commutative diagram:
\begin{equation} \label{DiagramBetweenOrdHeckeAndOCHeckeEqn}
\begin{tikzcd}
\left( \mathcal{IG}_{\mathscr{G}}/J_p \right)_{L'} \arrow[d, "g", hook] & \left( \mathcal{IG}_{\mathscr{G}}/J_p' \right)_{L'} \arrow[d, "g", hook] \arrow[l, "q_1"'] \arrow[r, "q_2"] & \left( \mathcal{IG}_{\mathscr{G}}/J_p \right)_{L'} \arrow[d, "g", hook] \\
{(\mathcal{U}^{\mathscr{G}}_{K_p, k+d})_{L'}}                           & {(\mathcal{U}^{\mathscr{G}}_{K_p', k+d})_{L'}} \arrow[l, "p_1"'] \arrow[r, "p_2"]                           & {(\mathcal{U}^{\mathscr{G}}_{K_p, k})_{L'}}                            
\end{tikzcd}
\end{equation}
where the vertical maps are open immersions induced from right-translation by $g$.

\begin{remark}
    The extension $L'$ is obtained by adjoining $\mu_{p^{\beta + d}}$ to $L$ (where $d=\opn{max}_{\mathscr{G}}(t)$). The reason for this is similar to the discussion at the end of \S \ref{QuotientsOfIgVarSubSec} and is due to the fact that we need to trivialise $\mu_{p^{\beta + d}}$ to compare level structures for quotients of the Igusa variety and adic Shimura varieties.
\end{remark}

\begin{lemma} \label{LeftHandSqaureIsCartesianLemma}
    If $k \geq \beta - 1$, then the left-hand square in (\ref{DiagramBetweenOrdHeckeAndOCHeckeEqn}) is Cartesian.
\end{lemma}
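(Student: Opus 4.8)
\textbf{Proof plan for Lemma \ref{LeftHandSqaureIsCartesianLemma}.}
The goal is to show that the square
\[
\begin{tikzcd}
\left( \mathcal{IG}_{\mathscr{G}}/J_p' \right)_{L'} \arrow[d, "g", hook] \arrow[r, "q_1"] & \left( \mathcal{IG}_{\mathscr{G}}/J_p \right)_{L'} \arrow[d, "g", hook] \\
{(\mathcal{U}^{\mathscr{G}}_{K_p', k+d})_{L'}} \arrow[r, "p_1"]                             & {(\mathcal{U}^{\mathscr{G}}_{K_p, k+d})_{L'}}
\end{tikzcd}
\]
is Cartesian. Since all four maps are finite \'{e}tale (the horizontal ones by construction, the vertical ones are open immersions hence finite \'{e}tale onto their images), it suffices to check the following two things: first, that the natural map from $\left( \mathcal{IG}_{\mathscr{G}}/J_p' \right)_{L'}$ to the fibre product $\left( \mathcal{IG}_{\mathscr{G}}/J_p \right)_{L'} \times_{(\mathcal{U}^{\mathscr{G}}_{K_p, k+d})_{L'}} (\mathcal{U}^{\mathscr{G}}_{K_p', k+d})_{L'}$ is an open immersion (which is automatic from the diagram), and second, that it is surjective on underlying topological spaces — equivalently, that any point of $(\mathcal{U}^{\mathscr{G}}_{K_p', k+d})_{L'}$ lying over a point of $\left( \mathcal{IG}_{\mathscr{G}}/J_p \right)_{L'}$ (via $p_1$ and the right-hand embedding $g$) already lies in the image of the left-hand $g$. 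The key structural input is that $g \colon \left( \mathcal{IG}_{\mathscr{G}}/J_p \right)_{L'} \hookrightarrow (\mathcal{U}^{\mathscr{G}}_{K_p, k+d})_{L'}$ identifies the former with an open and closed subspace cut out by a ``canonicity'' condition (from Proposition \ref{HlociisHasseProp} and Lemma \ref{XGwnisOpenImmersionLemma}, the Igusa quotient is exactly the locus where the relevant Hasse invariants $\hat{\delta}_{\bullet, n+1}$ are invertible, i.e.\ the ordinary locus with compatible canonical subgroup structure), together with the corresponding statement at level $K_p'$.

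First I would reduce to a moduli-theoretic statement. The open immersion $g$ at level $K_p$ realizes $\mathcal{IG}_{\mathscr{G}}/J_p$ as the sublocus of $\mathcal{U}^{\mathscr{G}}_{K_p, k+d}$ where the abelian scheme is ordinary and the chosen flag $\mathcal{C}_{\bullet, \bullet}$ is the ``canonical'' one (the one whose graded pieces are $\mu_{p^{\beta}}$ or $\mathbb{Z}/p^{\beta}\mathbb{Z}$ in the prescribed pattern), with an $L'$-rigidification of the connected/\'{e}tale parts modulo the group $J_p$. The analogous statement holds at level $K_p'$: $\mathcal{IG}_{\mathscr{G}}/J_p'$ sits inside $\mathcal{U}^{\mathscr{G}}_{K_p', k+d}$ as the locus where the finer flag (now with graded pieces of rank $p^{\beta}$ and $p^{\beta'_i}$, $\beta'_i = \beta + v_p(t_i t_{n+1}^{-1})$, in the notation of the proof of Lemma \ref{SupportOfCorrespContainedInCanonicalLemma}) is canonical. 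The content of the lemma is then: if an $L'$-point (or more generally a point in the topological space) of $\mathcal{U}^{\mathscr{G}}_{K_p', k+d}$ maps under $p_1$ to a point of $\mathcal{U}^{\mathscr{G}}_{K_p, k+d}$ whose \emph{coarser} flag is canonical, then its \emph{finer} flag is automatically canonical. In other words, on the ordinary locus, refining a canonical flag by finite flat subgroups of the prescribed ranks forces all the refinements to be canonical as well. This is where the hypothesis $k \geq \beta - 1$ enters: the tubes $\mathcal{U}^{\mathscr{G}}_{\bullet, k+d}$ are defined by the condition that the Hodge--Tate period map lands in $]C^{\mathscr{G}}_g[_{k+d, k+d}$, and when $k + d \geq \beta - 1 + d \geq \beta$ the depth-$p^{\beta}$ congruence conditions defining $K_p$ and $K_p'$ are ``seen'' by the analytic radius, so that the open and closed decomposition of $]C^{\mathscr{G}}_g[$ by $K_p'$-cosets (as computed explicitly in part (3) of the proof of Lemma \ref{SupportOfCorrespContainedInCanonicalLemma}) refines compatibly.

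Concretely, I would run the argument through rank-one points, as in the proofs of Lemma \ref{XGwnisOpenImmersionLemma} and Lemma \ref{HlociisHasseProp}: take a rank one point $x \colon \opn{Spa}(K, \mathcal{O}_K) \to (\mathcal{U}^{\mathscr{G}}_{K_p', k+d})_{L'}$ whose image under $p_1$ lies in $\left( \mathcal{IG}_{\mathscr{G}}/J_p \right)_{L'}$, spread out to $\mathcal{O}_K$, and examine the induced chain of isogenies of $p$-divisible groups together with the Hasse-invariant ideals $\delta_{\mathscr{G}, i}$. The point over $\mathcal{IG}_{\mathscr{G}}/J_p$ forces the coarse graded pieces to be \'{e}tale or multiplicative of the right type, i.e.\ $|\delta_{\mathscr{G}, i}|_x = 1$ for $i \neq n+1$ and $|\delta_{\mathscr{G}, n+1}|_x = |p^{\beta}|_x$ (using Lemma \ref{LemmaOnDeltaHasseIdeals}). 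Because the ``product formula'' $\prod_i \delta_{\mathscr{G}, i} = p^{\beta} \mathcal{O}$ holds at the finer level too (with the finer ranks), and each degree is non-negative, the finer graded pieces are squeezed to be canonical as well — a non-negative-degree refinement of a canonical filtration on an ordinary $p$-divisible group is canonical. Finally I would upgrade the rank-one-point statement to an equality of adic spaces: since both $\left( \mathcal{IG}_{\mathscr{G}}/J_p' \right)_{L'}$ and the fibre product are open and closed in $(\mathcal{U}^{\mathscr{G}}_{K_p', k+d})_{L'}$ (the former by the Igusa-locus description, the latter as the preimage of an open and closed subspace under the finite \'{e}tale $p_1$), and they contain the same rank-one points, they coincide — using that a reduced adic space is determined by its rank-one points (both spaces are reduced, being open subspaces of a smooth — hence reduced — adic Shimura variety, cf.\ Proposition \ref{PropositionLociIntClosedInGenFibre}). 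The main obstacle I anticipate is bookkeeping the precise interaction between the analytic radius $k$, the depth $\beta$, and the twist $d = \opn{max}_{\mathscr{G}}(t)$ — making sure that ``$k \geq \beta - 1$'' is exactly the threshold at which the coset decomposition of the tube at level $K_p'$ refines the one at level $K_p$ compatibly with the canonical-flag condition, rather than a strictly stronger or weaker bound; this is essentially the explicit congruence-geometry computation already carried out in part (3) of the proof of Lemma \ref{SupportOfCorrespContainedInCanonicalLemma}, applied with care to track where the bound is sharp.
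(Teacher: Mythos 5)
Your reduction to rank-one points and your reformulation of the content — that a point of $\mathcal{U}^{\mathscr{G}}_{K_p', k+d}$ mapping under $p_1$ into the Igusa quotient must itself be an Igusa-quotient point — is exactly the paper's starting point. After that, however, your proposed argument diverges in a way that leaves a genuine gap.

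The paper's proof, once reduced to rank one, identifies $(\mathcal{IG}_{\mathscr{G}}/J_p)^{\opn{rk}1}$ and $(\mathcal{IG}_{\mathscr{G}}/J_p')^{\opn{rk}1}$ with preimages of $\mathcal{P}_{\mathscr{G}}\backslash\mathcal{P}_{\mathscr{G}}gK_p$ and $\mathcal{P}_{\mathscr{G}}\backslash\mathcal{P}_{\mathscr{G}}gK_p'$ under $\pi_{\opn{HT}}$, so Cartesianity reduces to the set-theoretic inclusion
\[
\mathcal{P}_{\mathscr{G}}\backslash\mathcal{P}_{\mathscr{G}} g K_p \; \cap \; ]C^{\mathscr{G}}_g[_{k+d, k+d} K_p' \; \subset \; \mathcal{P}_{\mathscr{G}}\backslash\mathcal{P}_{\mathscr{G}} g K_p'
\]
on the flag variety. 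This is then verified by writing out coordinates $[x_0:\cdots:x_{2n-1}]$: the constraints force $x_i \in p^{\opn{max}(\beta, k+d+1)}\mbb{Z}_p$ for $i \geq n+1$, and $k \geq \beta-1$ is precisely the threshold at which $p^{\opn{max}(\beta, k+d+1)}\mbb{Z}_p \subset p^{\beta+d}\mbb{Z}_p$, which is what is needed to land in the $K_p'$-orbit. This is the entire content of the lemma.

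Your proposal gestures at this ("the open and closed decomposition of $]C^{\mathscr{G}}_g[$ by $K_p'$-cosets ... refines compatibly") but then, in the "concretely" paragraph, substitutes a Hasse-invariant degree argument that does not actually work. You assert that "on the ordinary locus, refining a canonical flag by finite flat subgroups of the prescribed ranks forces all the refinements to be canonical as well," with the justification that the degrees of the finer graded pieces sum to $\beta$ and are non-negative, hence are squeezed. But on the ordinary locus the degree of a finite flat subgroup does not determine it — there are many \'{e}tale subgroups of a given rank — so a refinement of the canonical coarse flag by finite flat subgroups need not be canonical at all. The thing that does constrain the finer flag is the requirement that the point still lies in the tube $\mathcal{U}^{\mathscr{G}}_{K_p', k+d}$, i.e.\ the analytic radius condition, and this is what your degree argument omits. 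As a result the bound $k \geq \beta-1$ plays no role in your proposed argument, whereas it is exactly the condition that makes the tube inclusion on the flag variety go through. You flag this interaction as an "obstacle" at the end, but it is not a side calculation to be smoothed out afterward: it is the only nontrivial step, and your proposed replacement for it is incorrect.
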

\begin{proof}
    It suffices to check the claim on rank one points. Note that $\left( \mathcal{IG}_{\mathscr{G}}/J_p \right)_{L'}^{\opn{rk} 1}$ (resp. $\left( \mathcal{IG}_{\mathscr{G}}/J_p' \right)_{L'}^{\opn{rk} 1}$) is identified with $\pi_{\opn{HT}, K_p}^{-1}\left( \mathcal{P}_{\mathscr{G}}\backslash \mathcal{P}_{\mathscr{G}} g K_p \right)^{\opn{rk} 1}$ (resp. $\pi_{\opn{HT}, K_p'}^{-1}\left( \mathcal{P}_{\mathscr{G}}\backslash \mathcal{P}_{\mathscr{G}} g K_p' \right)^{\opn{rk} 1}$). Therefore, it is enough to show that
    \[
    \mathcal{P}_{\mathscr{G}}\backslash \mathcal{P}_{\mathscr{G}} g K_p \; \cap \; ]C^{\mathscr{G}}_g[_{k+d, k+d} K_p' = \mathcal{P}_{\mathscr{G}}\backslash \mathcal{P}_{\mathscr{G}} g K_p'
    \]
    with the intersection taking place in $\mathtt{FL}^{\mathscr{G}}$. In the case $\mathscr{G} = G$, an element 
    \[
    [x_0 : \cdots : x_{2n-1}] \in \mathcal{P}_{\mathscr{G}}\backslash \mathcal{P}_{\mathscr{G}} g K_p \; \cap \; ]C^{\mathscr{G}}_g[_{k+d, k+d}
    \]
    can be described (uniquely) as a point on the flag variety satisfying:
    \begin{itemize}
        \item For $i=0, \dots n-1$, we have $x_i \in \mbb{Z}_p \cap \mathcal{B}_{k+d} = p^{k+d}\mbb{Z}_p$
        \item $x_n = 1$
        \item For $i=n+1, \dots, 2n-1$, we have $x_i \in p^{\beta}\mbb{Z}_p \cap \mathcal{B}^{\circ}_{k+d} = p^{\opn{max}(\beta, k+d+1)} \mbb{Z}_p \subset p^{\beta + d}\mbb{Z}_p$ .
    \end{itemize}
    We then see that $\mathcal{P}_{\mathscr{G}}\backslash \mathcal{P}_{\mathscr{G}} g K_p \; \cap \; ]C^{\mathscr{G}}_g[_{k+d, k+d} \; \subset \mathcal{P}_{\mathscr{G}}\backslash \mathcal{P}_{\mathscr{G}} g K_p'$ which is sufficient for establishing the claim. The case $\mathscr{G} = H$ is similar and left to the reader.
\end{proof}

\subsection{Cohomological correspondences} \label{HeckeCohCorrespSubSec}

We now discuss the maps of sheaves associated with each correspondence in the previous section. We then explain how one obtains the action of Hecke operators on cohomology (with and without partial compact support). We continue with the general notation in the previous section (i.e. we deal with the cases $\mathscr{G} = G$ and $\mathscr{G} = H$ simultaneously).  Let $\langle - \rangle \colon \mbb{Q}_p^{\times} \to \mbb{Z}_p^{\times}$ denote the natural projection arising from the identification $\mbb{Q}_p^{\times} = p^{\mbb{Z}} \times \mbb{Z}_p^{\times}$. We extend this naturally to a map $\langle - \rangle \colon T(\mbb{Q}_p) \to T(\mbb{Z}_p)$.

We begin with the cohomological correspondence for classical weights. Recall that we have an $M_{\mathscr{G}}^{\opn{an}}$-torsor $M^{\opn{an}}_{\mathscr{G}, \opn{dR}, K_p} \to \mathcal{S}_{\mathscr{G}, K_p}$ (where we now include the level subgroup in the notation). Let $t \in T^{\mathscr{G}, -}$ and consider the correspondence in (\ref{GlobalSGKCorrepondence}). 

\begin{definition} \label{phitclassicalcohcorrespDefinition}
    \begin{enumerate}
        \item Let $\phi_t \colon p_1^{-1} M^{\opn{an}}_{\mathscr{G}, \opn{dR}, K_p} \to p_2^{-1} M^{\opn{an}}_{\mathscr{G}, \opn{dR}, K_p}$ denote the composition of the map induced by the $G(\mbb{Q}_p)$-equivariant structure on the torsors $M^{\opn{an}}_{\mathscr{G}, \opn{dR},-}$ and the action of $gtg^{-1} \in M_{\mathscr{G}}(\mbb{Q}_p)$ through the torsor structure (in either order).
        \item Let $\kappa \in X^*(T)$ be a $M_{\mathscr{G}}$-dominant weight, and let $V_{\kappa}^*$ denote the linear dual of the algebraic representation of $M_{\mathscr{G}}$ of highest weight $\kappa$. We let $\phi_{t, \kappa^*} \colon p_2^* \mathscr{M}_{\mathscr{G}, \kappa^*} \to p_1^* \mathscr{M}_{\mathscr{G}, \kappa^*}$ denote the morphism of $\mathcal{O}_{\mathcal{S}_{\mathscr{G}, K_p'}}$-modules obtained as the $M_{\mathscr{G}}^{\opn{an}}$ invariants of:
        \[
        p_2^*\mathscr{M}_{\mathscr{G}} \otimes V^*_{\kappa} \xrightarrow{\phi_t^* \otimes (\kappa^*(gt^{-1}\langle t \rangle g^{-1}) gtg^{-1} \cdot -)} p_1^*\mathscr{M}_{\mathscr{G}} \otimes V_{\kappa}^* .
        \]
    \end{enumerate}
\end{definition}

This induces the following operators on cohomology.

\begin{definition}
    Let $U_t \colon R\Gamma\left( \mathcal{S}_{\mathscr{G}, K_p}, \mathscr{M}_{\mathscr{G}, \kappa^*} \right) \to R\Gamma\left( \mathcal{S}_{\mathscr{G}, K_p}, \mathscr{M}_{\mathscr{G}, \kappa^*} \right)$ denote the Hecke correspondence defined as the composition:
    \begin{align*}
        R\Gamma\left( \mathcal{S}_{\mathscr{G}, K_p}, \mathscr{M}_{\mathscr{G}, \kappa^*} \right) &\xrightarrow{p_2^*} R\Gamma\left( \mathcal{S}_{\mathscr{G}, K_p'}, p_2^*\mathscr{M}_{\mathscr{G}, \kappa^*} \right) \\
        &\xrightarrow{\phi_{t, \kappa^*}} R\Gamma\left( \mathcal{S}_{\mathscr{G}, K_p'}, p_1^*\mathscr{M}_{\mathscr{G}, \kappa^*} \right) \\
        &\xrightarrow{(-g^{-1}\rho + \rho)(t\langle t \rangle^{-1})\opn{Tr}_{p_1}} R\Gamma\left( \mathcal{S}_{\mathscr{G}, K_p}, \mathscr{M}_{\mathscr{G}, \kappa^*} \right)
    \end{align*}
    where $\rho$ is the half-sum of the positive roots in $\mathscr{G}$ and $\opn{Tr}_{p_1}$ is the trace map from \cite[Lemma 2.1.2]{BoxerPilloni}. Since $p_1p_2^{-1}(\mathcal{Z}) = \mathcal{Z}$ (Lemma \ref{p1p2-1ZisZLemma}) we also obtain a Hecke correspondence $U_t$ on $R\Gamma_{\mathcal{Z}}\left( \mathcal{S}_{\mathscr{G}, K_p}, \mathscr{M}_{\mathscr{G}, \kappa^*} \right)$ in exactly the same way, and these two operators are compatible with each other under the natural corestriction map
    \[
    R\Gamma_{\mathcal{Z}}\left( \mathcal{S}_{\mathscr{G}, K_p}, \mathscr{M}_{\mathscr{G}, \kappa^*} \right) \to R\Gamma\left( \mathcal{S}_{\mathscr{G}, K_p}, \mathscr{M}_{\mathscr{G}, \kappa^*} \right) .
    \]
\end{definition}

\begin{remark}
    The Hecke operator $U_t$ is optimally normalised for those weights $\kappa$ with $C(\kappa^*)^{-} = \{ g \}$ (see \cite[\S 5.9]{BoxerPilloni}). 
\end{remark}

We now discuss the overconvergent version of this. Let $k \geq 1$ be an integer and recall the definition of $\mathscr{G}^1_{k, k}$ from \cite[\S 3.3.10]{BoxerPilloni} (i.e. the subgroup of elements in $\mathscr{G}$ which land in the lower-triangular Borel unipotent modulo $p^{k}$, and in the upper-triangular Borel unipotent modulo $p^{k+\varepsilon}$ for some $\varepsilon > 0$). Let $K_p'' = t^{-1}K_p't$. Then, for $U_p = K_p, K_p', K_p''$ we have torsors 
\begin{align*} 
\mathtt{M}_{\mathscr{G}, \opn{HT}, U_p, k} = g^{-1} g \mathscr{G}^{1}_{k, k} U_p g^{-1} / \left( \opn{Unip}(\mathcal{P}_{\mathscr{G}}) \cap g \mathscr{G}^{1}_{k, k} U_p g^{-1} \right) &\to ]C^{\mathscr{G}}_{g}[_{k, k} U_p \\
x &\mapsto x^{-1}
\end{align*} 
under the group $U_{p, g, k}^{\circ} \subset \mathcal{M}_{\mathscr{G}}$ obtained as the image of $g \mathscr{G}^{1}_{k, k} U_p g^{-1} \cap \mathcal{P}_{\mathscr{G}}$ under projection to the Levi. Let $U_{p, g, k} \subset \mathcal{M}_{\mathscr{G}}$ denote the affinoid subgroup obtained as the image of $g \mathscr{G}^{1}_{k} U_p g^{-1} \cap \mathcal{P}_{\mathscr{G}}$ under projection to the Levi, where $\mathscr{G}^1_k \subset \mathscr{G}$ denotes the open affinoid subgroup of elements which reduce to the identity modulo $p^k$. Note that $K_{p, g, k} = \mathcal{M}^{\square}_{G, k}$ (resp. $K_{p, g, k} = \mathcal{M}^{\diamondsuit}_{H, k}$) in the case $\mathscr{G} = G$ (resp. $\mathscr{G} = H$).

By pulling back under the Hodge--Tate period morphism, descending to finite level, twisting along the Hodge cocharacter $\mu$ (or its restriction to $1+p^{\beta}\mbb{Z}_p$ in the case $\mathscr{G} = H$), and pushing out along the inclusion $U_{p, g, k}^{\circ} \subset U_{p, g, k}$ we obtain \'{e}tale $U_{p, g, k}$-torsors ${^\mu\mathcal{M}_{\mathscr{G}, \opn{HT}, U_p, k}} \to \mathcal{U}^{\mathscr{G}}_{U_p, k}$. Suppose we are in the setting of (\ref{GlobalSGKCorrepondence}), and let $t \colon \mathcal{U}^{\mathscr{G}}_{K_p',k+d} \to \mathcal{U}^{\mathscr{G}}_{K_p'', k}$ denote the map induced from right-translation by $t$. Then we have a diagram:
\[
\begin{tikzcd}
{{^{\mu} \mathcal{M}_{\mathscr{G}, \opn{HT}, K_p', k+d}}} \arrow[d] \arrow[r, "\phi_t"] & {t^* \left({^{\mu} \mathcal{M}_{\mathscr{G}, \opn{HT}, K_p'', k}}\right)} \arrow[d]        \\
{\left. p_1^{-1}\left( {^{\mu} \mathcal{M}_{\mathscr{G}, \opn{HT}, K_p, k+d}}\right) \right|_{\mathcal{U}^{\mathscr{G}}_{K_p', k+d}}}          & {\left. p_2^{-1} \left( {^{\mu} \mathcal{M}_{\mathscr{G}, \opn{HT}, K_p, k}}\right) \right|_{\mathcal{U}^{\mathscr{G}}_{K_p', k+d}}}
\end{tikzcd}
\]
where the vertical maps are reductions of structure and $\phi_t$ is induced from the morphism 
\begin{align*} 
\mathtt{M}_{\mathscr{G}, \opn{HT}, K_p', k+d} &\to \mathtt{M}_{\mathscr{G}, \opn{HT}, K_p'', k} \\ [x] &\mapsto [t^{-1} x (g t g^{-1})] .
\end{align*}
As the notation suggests, the map $\phi_t$ commutes with the one in Definition \ref{phitclassicalcohcorrespDefinition}(1). We now construct the overconvergent cohomological correspondences.

\begin{definition}
    Let $\mathscr{O}_{{^{\mu} \mathcal{M}_{\mathscr{G}, \opn{HT}, U_p, k}}}$ denote the pushforward of the structure sheaf of ${^{\mu} \mathcal{M}_{\mathscr{G}, \opn{HT}, U_p, k}}$ to $\mathcal{U}^{\mathscr{G}}_{U_p, k}$. Let $(R, R^+)$ be a complete Tate affinoid algebra over $(L, \mathcal{O}_L)$. Let $T^{M_G, -} \subset T(\mbb{Q}_p)$ be the submonoid of elements $t \in T(\mbb{Q}_p)$ satisfying $v_p(\alpha(t)) \leq 0$ for all positive roots $\alpha \in \Phi_{M_G}^+$.
    \begin{enumerate}
        \item Suppose we are in case $\mathscr{G} = G$. Let $\kappa \colon T(\mbb{Z}_p) \to (R^+)^{\times}$ be an $s$-analytic character and let $k \geq s+1$. Recall that $D_{G, \kappa^*}^{s\opn{-an}}$ denotes the continuous $R$-dual of $V_{G, \kappa}^{\circ, s\opn{-an}}$, which comes equipped with an action of the submonoid of $G(\mbb{Q}_p)$ generated by $K_{p,w_n,k}$ and $T^{M_G, -}$ (see \cite[\S 6.2.20]{BoxerPilloni}\footnote{One can (and must) uniquely modify the action in \cite[\S 6.2.20]{BoxerPilloni} slightly so that the $K_{p, w_n, k}$ and $T^{M_G, -}$ actions are compatible, and the action of any $t \in T^{M_G, -}$ with $\langle t \rangle = 1$ is the same as in \emph{loc.cit.}.}). We set $\mathscr{M}_{G, \kappa^*}^{s\opn{-an}} \defeq \left( \mathscr{O}_{{^{\mu} \mathcal{M}_{G, \opn{HT}, K_p, k}}} \hatot D_{G, \kappa^*}^{s\opn{-an}} \right)^{K_{p, w_n,k}}$. Then we obtain a cohomological correspondence
        \[
        \phi_{t, \kappa^*}^{s\opn{-an}} \colon p_2^*(\mathscr{M}_{G, \kappa^*}^{s\opn{-an}}) \to p_1^*(\mathscr{M}_{G, \kappa^*}^{s\opn{-an}})
        \]
        defined on $\mathcal{U}^G_{K_p', k+d}$, which is induced from the morphism
        \[
        \left( t^* \mathscr{O}_{{^{\mu} \mathcal{M}_{G, \opn{HT}, K_p'', k}}} \hatot D_{G, \kappa^*}^{s\opn{-an}} \right)^{K''_{p, w_n, k}} \to \left( \mathscr{O}_{{^{\mu} \mathcal{M}_{G, \opn{HT}, K_p', k+d}}} \hatot D_{G, \kappa^*}^{s\opn{-an}} \right)^{K'_{p, w_n, k+d}}
        \]
        given as the tensor product $\phi_t^* \otimes ((w_n t w_n^{-1}) \cdot -)$, noting that $w_n t w_n^{-1} \in T^{M_G, -}$.
        \item Suppose that we are in case $\mathscr{G} = H$ and let $\sigma \colon M^H_{\diamondsuit}(p^{\beta}) \to (R^+)^{\times}$ be an $s$-analytic character  which extends to a character $T(\mbb{Z}_p) \to (R^+)^{\times}$ (also denoted $\sigma$). Let $k \geq s$. We set $\mathscr{M}^{\opn{an}}_{H, \sigma} \defeq \left( \mathscr{O}_{{^{\mu} \mathcal{M}_{H, \opn{HT}, K_p, k}}} \hatot \sigma \right)^{K_{p, \opn{id}, k}}$. Then we obtain a cohomological correspondence
        \[
        \phi_{t, \sigma}^{\opn{an}} \colon p_2^*(\mathscr{M}_{H, \sigma}^{\opn{an}}) \to p_1^*(\mathscr{M}_{H, \sigma}^{\opn{an}})
        \]
        defined over $\mathcal{U}^H_{K_p', k+d}$, which is induced from the morphism
        \[
        \left( t^* \mathscr{O}_{{^{\mu} \mathcal{M}_{H, \opn{HT}, K_p'', k}}} \hatot \sigma \right)^{K''_{p, \opn{id}, k}} \to \left( \mathscr{O}_{{^{\mu} \mathcal{M}_{H, \opn{HT}, K_p', k+d}}} \hatot \sigma \right)^{K'_{p, \opn{id}, k+d}}
        \]
        given as the tensor product $\phi_t^* \otimes  \sigma(\langle t \rangle) $. This is well-defined because $\sigma(x)=\sigma(t x t^{-1})$ for any $x \in K_{p, \opn{id}, k}''$.
    \end{enumerate}
\end{definition}

\begin{remark} \label{OCphitequalsrestrictedphitRemark}
    If $\kappa \in X^*(T)$ is $M_G$-dominant, then the morphism $\phi_{t, \kappa^*}^{s\opn{-an}}$ is compatible with the restriction of $\phi_{t, \kappa^*}$ to $\mathcal{U}^G_{K_p', k+d}$ via the natural map $\mathscr{M}_{G, \kappa^*}^{s\opn{-an}} \to \mathscr{M}_{G, \kappa^*}$ induced from the map of representations $D_{G, \kappa^*}^{s\opn{-an}} \to V_{\kappa}^*$. Similarly, if $\sigma$ is an algebraic character of $M_H$, then the morphism $\phi_{t, \sigma}^{\opn{an}}$ coincides with the restriction of $\phi_{t, \sigma}$ to $\mathcal{U}^H_{K_p', k+d}$.
\end{remark}

By Lemmas \ref{p1p2-1ZisZLemma} and \ref{SupportOfCorrespContainedInCanonicalLemma} the collections $\{ \mathcal{U}^{\mathscr{G}}_{K_p, k} \}_{k \geq 1}$ and $\{ \mathcal{Z}_m \}_{m \geq 1}$ form a system of support conditions for the correspondence (\ref{GlobalSGKCorrepondence}), in the sense of Definition \ref{SystemOfSupportConditionsDef}. Furthermore, note that:
\begin{itemize}
    \item $R\Gamma(\mathcal{U}^G_{K_p, \bullet}, \mathcal{Z}_{\bullet}; \mathscr{M}_{G, \kappa^*}) = R\Gamma^G_{w_n}(\kappa^*; \beta)^{(-, \dagger)}$
    \item $R\Gamma(\mathcal{U}^G_{K_p, \bullet}, \mathcal{Z}_{\bullet}; \mathscr{M}^{s\opn{-an}}_{G, \kappa^*}) = R\Gamma^G_{w_n, s\opn{-an}}(\kappa^*; \beta)^{(-, \dagger)}$
    \item $R\Gamma(\mathcal{U}^H_{K_p, \bullet}, \mathcal{Z}_{\bullet}; \mathscr{M}_{H, \sigma}) = R\Gamma_{\mathcal{Z}_{H, \opn{id}}(p^{\beta})}\left( \mathcal{S}_{H, \diamondsuit}(p^{\beta}), \mathscr{M}_{H, \sigma} \right)$
    \item $R\Gamma(\mathcal{U}^H_{K_p, \bullet}, \mathcal{Z}_{\bullet}; \mathscr{M}^{\opn{an}}_{H, \sigma}) = R\Gamma^H_{\opn{id}, \opn{an}}(\mathcal{S}_{H, \diamondsuit}(p^{\beta}), \sigma)^{(-, \dagger)}$
\end{itemize}
because the support conditions defined by inequalities involving $\hat{\delta}_{G, n+1}^+$, $\delta_{G, >n+1}^+$ and $\hat{\delta}_{H, n+1}^+$ intertwine with the support conditions $(\mathcal{U}^{\mathscr{G}}_{K_p, \bullet}, \mathcal{Z}_{\bullet})$. By applying the general construction in \S \ref{CohomologyAndCorrespondencesSection} (and normalising the trace map by  $(-g^{-1}\rho + \rho)(t\langle t \rangle^{-1})$), we therefore obtain Hecke operators on each of the above cohomologies, all of which we denote by $U_t$.

\begin{remark}
    By Remark \ref{OCphitequalsrestrictedphitRemark}, the natural restriction map 
    \[
    R\Gamma_{\mathcal{Z}_{G, >n+1}(p^{\beta})}\left( \mathcal{S}_{G, \opn{Iw}}(p^{\beta}), \mathscr{M}_{G, \kappa^*} \right) \to R\Gamma^G_{w_n}(\kappa^*; \beta)^{(-, \dagger)}
    \]
    is $U_t$-equivariant. Similarly, the Hecke operator $U_t$ on $R\Gamma^G_{w_n}(\kappa^*; \beta)^{(-, \dagger)}$ can be seen as the colimit of the following operators:
    \begin{align*} 
    R\Gamma_{\mathcal{U}^{G}_{K_p, k} \cap \mathcal{Z}}\left( \mathcal{U}^{G}_{K_p, k}, \mathscr{M}_{G, \kappa^*} \right) &\xrightarrow{p_2^*} R\Gamma_{\mathcal{U}^{G}_{K_p', k+d} \cap p_2^{-1}\mathcal{Z}}\left( \mathcal{U}^{G}_{K_p', k+d}, p_2^*\mathscr{M}_{G, \kappa^*} \right) \\
    &\xrightarrow{\phi_{t, \kappa^*}} R\Gamma_{\mathcal{U}^{G}_{K_p', k+d} \cap p_2^{-1}\mathcal{Z}}\left( \mathcal{U}^{G}_{K_p', k+d}, p_1^*\mathscr{M}_{G, \kappa^*} \right) \\
    &\xrightarrow{(-g^{-1}\rho+ \rho)(t\langle t \rangle^{-1}) \tilde{\opn{Tr}}_{p_1}} R\Gamma_{\mathcal{U}^{G}_{K_p, k+d} \cap \mathcal{Z}}\left( \mathcal{U}^{G}_{K_p, k+d}, \mathscr{M}_{G, \kappa^*} \right)
    \end{align*}
    where $\tilde{\opn{Tr}}_{p_1}$ denotes the trace map associated with the finite flat morphism $p_1 \colon \mathcal{U}^{G}_{K_p', k+d} \to \mathcal{U}^{G}_{K_p, k+d}$ and we have used the fact that $p_1p_2^{-1}(\mathcal{Z}) = \mathcal{Z}$ (see \cite[Lemma 2.1.2]{BoxerPilloni}). This alternative description holds because $\mathcal{U}^{G}_{K_p', k+d}$ is open and closed in $p_1^{-1}(\mathcal{U}^{G}_{K_p, k+d})$.
\end{remark}

\begin{remark}
    As explained in \cite[\S 4.2]{BoxerPilloni}, the operators $\{ U_t : t \in T^{\mathscr{G}, -} \}$ commute with each other after passing to the cohomology groups of the complexes above.
\end{remark}

\subsection{Frobenius} \label{DiscussionOfFrobeniusSubSec}

We now discuss the Frobenius operator acting on the cohomology of the Shimura--Deligne varieties. We will continue to use the same notation as in \S \ref{TopologicalHeckeCorrsSection}, i.e. $(\mathscr{G}, g, K_p, \mathcal{Z})$ will denote one of the tuples in (\ref{GgKpZEqn}). However in this section we will focus on a single correspondence which is not necessarily associated with an element of $T^{\mathscr{G}, -}$.

Let $\xi \in T(\mbb{Q}_p)$ denote the element which is trivial outside the $\tau_0$-component, and in the $\tau_0$-component is equal to 
\[
\xi = \opn{diag}(p^{-1}, 1, \dots, 1) .
\]
Note that $\xi \in T^{M_{\mathscr{G}}, -}$. We let $t = g^{-1} \xi g$ and set $K_p' = tK_pt^{-1} \cap K_p$ and $K_p'' = t^{-1}K_p' t$. We consider the following correspondence
\begin{equation} \label{FrobeniusTopCorrEqn}
\begin{tikzcd}
                                 & {\mathcal{S}_{\mathscr{G}, K_p'}} \arrow[ld, "p_1"'] \arrow[rd, "p_2"] &                                  \\
{\mathcal{S}_{\mathscr{G}, K_p}} &                                                                        & {\mathcal{S}_{\mathscr{G}, K_p}}
\end{tikzcd}
\end{equation}
where $p_1$ is the forgetful map, and $p_2$ is induced from right-translation by $t$. We have the following properties:

\begin{lemma} \label{FrobSystemSupportLemma}
    Let $k,m \geq 1$ be integers. Then
    \begin{enumerate}
        \item $p_1p_2^{-1}(\mathcal{Z}_m) \subset \mathcal{Z}_m$ and $p_1 p_2^{-1}(\mathcal{Z}) = \mathcal{Z}$.
        \item $p_2(\mathcal{U}^{\mathscr{G}}_{K_p', k+1}) \subset \mathcal{U}^{\mathscr{G}}_{K_p, k}$ and the map $p_2 \colon \mathcal{U}^{\mathscr{G}}_{K_p', k+1} \to \mathcal{U}^{\mathscr{G}}_{K_p, k}$ is finite \'{e}tale of degree $[K_p : K_p'']$.
        \item The map $p_1 \colon \mathcal{U}^{\mathscr{G}}_{K_p', k} \to \mathcal{U}^{\mathscr{G}}_{K_p, k}$ is an isomorphism.
        \item For $m \gg k$, we have $p_1^{-1}(\mathcal{U}^{\mathscr{G}}_{K_p, k}) \cap p_2^{-1}(\mathcal{Z}_m) \subset \mathcal{U}^{\mathscr{G}}_{K_p', k}$.
    \end{enumerate}
    In particular, the collections $\{ \mathcal{U}^{\mathscr{G}}_{K_p, k} \}_{k \in \mbb{N}}$ and $\{ \mathcal{Z}_m \}_{m \in \mbb{N}}$ form a system of support conditions for the correspondence (\ref{FrobeniusTopCorrEqn}) as in Definition \ref{SystemOfSupportConditionsDef}. 
\end{lemma}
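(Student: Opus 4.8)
The statement is the verification of the four axioms (1)--(4) in Definition \ref{SystemOfSupportConditionsDef} for the correspondence (\ref{FrobeniusTopCorrEqn}) associated to the element $t = g^{-1}\xi g$, where $\xi$ is the specific ``Frobenius-type'' cocharacter $\opn{diag}(p^{-1},1,\dots,1)$ in the $\tau_0$-component. The key point distinguishing this from the results of \S \ref{TopologicalHeckeCorrsSection} is that $\xi$ is \emph{not} in $T^{\mathscr{G},-}$: it lies only in $T^{M_{\mathscr{G}},-}$. So I cannot simply invoke Lemmas \ref{p1p2-1ZisZLemma} and \ref{SupportOfCorrespondenceContainedInCanonicalLemma}; instead I would rerun those arguments keeping careful track of where the hypothesis $t \in T^{\mathscr{G},-}$ was actually used versus where $t \in T^{M_{\mathscr{G}},-}$ suffices. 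The plan is to treat the cases $\mathscr{G}=G$ and $\mathscr{G}=H$ in parallel, as in the earlier lemmas, since the $H$-case is always the easier one.

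First I would prove (1). For the inclusion $p_1p_2^{-1}(\mathcal{Z}_m)\subset\mathcal{Z}_m$ in the case $\mathscr{G}=G$, the relevant point is that $\mathcal{Z}_m$ is defined via the parabolic $\mathcal{Q}_m$ of Definition \ref{DefinitionOfCurlyQm}, whose deviation from $\mathcal{P}_G$ is concentrated in the lower-left $(n-1)\times(n+1)$ block governed by $\mathcal{B}_m^\circ$. Conjugation by $\xi$ (hence by $t$, after the $g^{-1}(-)g$ twist intertwining $\mathcal{P}_G$-structures with $\mathcal{Q}_m$-structures) only rescales the first coordinate by $p^{\pm1}$, which acts on that block in a way that preserves or shrinks $\mathcal{B}_m^\circ\mathcal{M}_{(n-1)\times(n+1)}$; combined with the Iwahori factorisation of $\mathcal{Q}_m$ and of $K_p$ this gives $P_G^{\opn{an}}\mathcal{Q}_m K_p t^{-1} K_p \subset P_G^{\opn{an}}\mathcal{Q}_m K_p$, exactly as in Lemma \ref{p1p2-1ZisZLemma}. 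Pulling back along $\pi_{\opn{HT},G,K_p}$ and taking closures yields the claim, and intersecting over $m$ gives $p_1p_2^{-1}(\mathcal{Z})=\mathcal{Z}$ (the reverse inclusion being trivial). For $\mathscr{G}=H$ the argument with the tubes $]C^H_{\opn{id}}[_{k,k}$ is identical to the one in Lemma \ref{p1p2-1ZisZLemma}. This gives axiom (4) of Definition \ref{SystemOfSupportConditionsDef} (stability of the $\mathcal{Z}_m$ under $T^t$) and half of (3).

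Next, for (2) and (3): the map $p_2$ is right-translation by $t$, so $p_2(\mathcal{U}^{\mathscr{G}}_{K_p',k+1}) = ]C^{\mathscr{G}}_g[_{k+1,k+1}K_p' t$-pullback, and since $\xi$ scales the relevant ``open disc'' coordinate by $p^{-1}$ we get $]C^{\mathscr{G}}_g[_{k+1,k+1}t \subset ]C^{\mathscr{G}}_g[_{k,k}$ (a one-step shrinking, hence the index jump by exactly $1$ rather than by $\opn{max}_{\mathscr{G}}(t)$; this is where being in $T^{M_{\mathscr{G}},-}$ rather than $T^{\mathscr{G},-}$ is gentler, not harsher), giving $p_2(\mathcal{U}^{\mathscr{G}}_{K_p',k+1})\subset\mathcal{U}^{\mathscr{G}}_{K_p,k}$; the finite \'etale degree count follows from the correspondence being finite \'etale on the ambient Shimura varieties together with $[K_p:K_p'']$. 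The isomorphism $p_1\colon\mathcal{U}^{\mathscr{G}}_{K_p',k}\xrightarrow{\sim}\mathcal{U}^{\mathscr{G}}_{K_p,k}$ in (3) is the genuinely new feature: because $\xi$ normalises $M_{\mathscr{G}}$ (it lies in $T^{M_{\mathscr{G}},-}\subset M_{\mathscr{G}}$), the group $K_p' = tK_pt\cap K_p$ differs from $K_p$ only in the parabolic directions that $\pi_{\opn{HT}}$ collapses over the canonical locus, so on the tube $]C^{\mathscr{G}}_g[_{k,k}$ the forgetful map $p_1$ is a bijection on the relevant cosets; I would verify this by the same rank-one-point / explicit flag-coordinate computation used in Lemma \ref{SupportOfCorrespondenceContainedInCanonicalLemma}(3), checking that $]C^{\mathscr{G}}_g[_{k,k}K_p = ]C^{\mathscr{G}}_g[_{k,k}K_p'$ as subsets of $\mathtt{FL}^{\mathscr{G}}/K_p$ and $\mathtt{FL}^{\mathscr{G}}/K_p'$ respectively. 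Axiom (4) of Definition \ref{SystemOfSupportConditionsDef} for the $\mathcal{Z}_m$-direction is (1); the covering/quasi-Stein hypotheses (1)--(2) of Definition \ref{SystemOfSupportConditionsDef} are immediate since the $\mathcal{U}^{\mathscr{G}}_{K_p,k}$ are preimages of tubes of Bruhat cells (quasi-Stein, as recorded in the lemma preceding \S \ref{TopologicalHeckeCorrsSection}) and the $\mathcal{Z}_m^c$ are finite unions of such.

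Finally, for (4) of the lemma, i.e. $p_1^{-1}(\mathcal{U}^{\mathscr{G}}_{K_p,k})\cap p_2^{-1}(\mathcal{Z}_m)\subset\mathcal{U}^{\mathscr{G}}_{K_p',k}$ for $m\gg k$: this is the intersection-of-cells estimate and I expect it to be the main obstacle, exactly as the analogous part (2) of Lemma \ref{SupportOfCorrespondenceContainedInCanonicalLemma} was the technical heart there. The approach is to work on rank-one points, write $\pi_{\opn{HT},K_p}(x)$ in projective coordinates $[x_0:\cdots:x_{2n-1}]$ on $\mathtt{FL}^G\cong\mathbb{P}^{2n-1}$, and combine two facts: first, being in $]C^G_g[_{k,k}K_p$ pins down each $x_i$ in a disc of radius $p^{-k}$ (closed or open depending on the position relative to $g$); second, being in $p_2^{-1}(\mathcal{Z}_m)=t\cdot\mathcal{Z}_m$ means the same point, after applying $t^{-1}$, lands in $\mathcal{P}_G\backslash\mathcal{P}_G\mathcal{Q}_m K_p$, which forces the coordinates in the $\mathcal{Q}_m$-``open disc'' directions to lie in $\mathcal{B}_m^\circ$ after rescaling by the $p^{\pm1}$ coming from $\xi$. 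Taking $m$ large compared to $k$ (here $m\geq k+1$ should suffice since $\opn{max}_{M_G}(\xi)=1$) makes the $\mathcal{B}_m^\circ$ constraint the binding one and pushes the point into the smaller tube cutting out $K_p'$-level structure, using also that lower cells $]X^G_w[$ for $w<g$ do not meet $]C^G_g[_{k,k}K_p$. The $\mathscr{G}=H$ case is handled by the simpler observation $p_2^{-1}(\mathcal{U}^H_{K_p,m})\subset\mathcal{U}^H_{K_p',k}$ for $m\gg k$, as in Lemma \ref{SupportOfCorrespondenceContainedInCanonicalLemma}. Assembling (1)--(4) and citing Definition \ref{SystemOfSupportConditionsDef} completes the proof; the closing sentence of the lemma then follows formally.
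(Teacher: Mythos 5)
Your plan is essentially the same route the paper takes: rerun the arguments of Lemmas \ref{p1p2-1ZisZLemma} and \ref{SupportOfCorrespContainedInCanonicalLemma}, noting that $t = g^{-1}\xi g$ fails to be in $T^{\mathscr{G},-}$ but that the estimates used there only require non-positivity of $v_p(\alpha(t))$ for the positive roots $\alpha$ outside the $(n+1,n-1)$-parabolic in the $\tau_0$-component, and settle (1), (3), (4) by explicit flag-coordinate computations with the extra two-step cell reduction in part (4). Parts (1) and (4) are in line with the paper's proof.

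There is a genuine gap in part (2). You establish only the \emph{inclusion} $p_2(\mathcal{U}^{\mathscr{G}}_{K_p',k+1})\subset\mathcal{U}^{\mathscr{G}}_{K_p,k}$, i.e.\ $\mathcal{U}^{\mathscr{G}}_{K_p',k+1}\subset p_2^{-1}(\mathcal{U}^{\mathscr{G}}_{K_p,k})$, and then assert that the finite-\'etaleness and degree $[K_p:K_p'']$ ``follow from the correspondence being finite \'etale on the ambient Shimura varieties.'' But a finite \'etale map restricted to an open subspace of the source that does not saturate the fibres need not be finite, nor of the same degree. What is required is the \emph{equality} $p_2^{-1}(\mathcal{U}^{\mathscr{G}}_{K_p,k}) = \mathcal{U}^{\mathscr{G}}_{K_p',k+1}$; the paper proves this by the two-sided computation $]C^{\mathscr{G}}_g[_{k,k}K_p t^{-1} K_p' = ]C^{\mathscr{G}}_g[_{k+1,k+1}K_p'$, and your argument supplies only one containment. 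This equality is also what makes the $\mathscr{G}=H$ half of part (4) immediate (the paper deduces (4) for $H$ directly from it), whereas you invoke an analogue of Lemma \ref{SupportOfCorrespContainedInCanonicalLemma}(2) instead. Secondarily, in part (3) your claimed ``equality'' of $]C^{\mathscr{G}}_g[_{k,k}K_p$ and $]C^{\mathscr{G}}_g[_{k,k}K_p'$ as subsets of two different quotients is imprecise; the clean statement is that $]C^{\mathscr{G}}_g[_{k,k}K_p$ decomposes (inside $\mathtt{FL}^{\mathscr{G}}$) as a disjoint union of $[K_p:K_p']$ open-and-closed translates of $]C^{\mathscr{G}}_g[_{k,k}K_p'$, which forces $p_1$ restricted to the corresponding open piece upstairs to be of degree one, hence an isomorphism.
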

\begin{proof}
    The proof is very similar to Lemmas \ref{p1p2-1ZisZLemma} and \ref{SupportOfCorrespContainedInCanonicalLemma} even though $t \not\in T^{\mathscr{G}, -}$. Indeed, for part (1) in the case $G = \mathscr{G}$, it is enough to show that
    \[
    P_G^{\opn{an}} \mathcal{Q}_{m+1} K_p \subset P_G^{\opn{an}} \mathcal{Q}_m K_p t^{-1} K_p \subset P_G^{\opn{an}} \mathcal{Q}_m K_p 
    \]
    and we just follow the same argument in Lemma \ref{p1p2-1ZisZLemma} (noting that we only need $v_p(\alpha(t)) \leq 0$ for the positive roots $\alpha$ not contained in the $(n+1, n-1)$-parabolic in the $\tau_0$-component). The case $\mathscr{G} = H$ is similar.

    For part (2) it is enough to show $p_2^{-1}(\mathcal{U}^{\mathscr{G}}_{K_p, k}) = \mathcal{U}^{\mathscr{G}}_{K_p', k+1}$, but this follows from the easy calculation that
    \[
    ]C^{\mathscr{G}}_g[_{k, k}K_p t^{-1} K_p' = ]C^{\mathscr{G}}_g[_{k+1, k+1} K_p' .
    \]
    For part (3), one can show that $]C^{\mathscr{G}}_g[_{k, k}K_p$ is the disjoint union of $[K_p : K_p']$ open and closed subspaces, and one of these subspaces is equal to $]C^{\mathscr{G}}_g[_{k, k} K_p'$, hence the map $p_1 \colon \mathcal{U}^{\mathscr{G}}_{K_p', k} \to \mathcal{U}^{\mathscr{G}}_{K_p, k}$ must be an isomorphism.

    Finally, for part (4) the case $\mathscr{G} = H$ follows from $p_2^{-1}(\mathcal{U}^{\mathscr{G}}_{K_p, k}) = \mathcal{U}^{\mathscr{G}}_{K_p', k+1}$, so we give the proof in the case $\mathscr{G} = G$. In this case, one can follow the same proof as in Lemma \ref{SupportOfCorrespContainedInCanonicalLemma} with some minor modifications. Firstly, one can reduce to showing 
    \[
    ]C^G_{w_n}[_{k, k} K_p \cap ]C^G_{w_n}[_{m, \bar{0}} K_pt^{-1} \subset ]C^G_{w_n}[_{k, k}K_p'
    \]
    because $A_mK_pt^{-1} \subset A_mK_p$. 
     
    By a similar explicit calculation as in the proof of Lemma \ref{SupportOfCorrespContainedInCanonicalLemma}, one can show that: if $[x_0 : \cdots : x_{2n-1}] \in ]C^G_{w_n}[_{k, k} K_p \cap ]C^G_{w_n}[_{m, \bar{0}} K_pt^{-1}$, then there exist 
    \[
    a_1, \dots, a_n, a_1', \dots, a_n' \in \mbb{Z}_p \; \text{ and } \; a_{n+2}, \dots, a_{2n}, a_{n+2}', \dots, a_{2n}' \in p^{\beta}\mbb{Z}_p
    \]
    such that $x_{i-1} \in (a_i + \mathcal{B}_k) \cap p(a_i' + \overline{\mathcal{B}}_0)$ for $1 \leq i \leq n$, $x_{n} \in (1+\mathcal{B}^{\circ}_k) \cap (1+\mathcal{B}_m^{\circ})$, and $x_{i-1} \in (a_i + \mathcal{B}_k^{\circ}) \cap p(a_i + \mathcal{B}_m^{\circ})$ for $n+2 \leq i \leq 2n$. This implies that 
    \[
    ]C^G_{w_n}[_{k, k} K_p \cap ]C^G_{w_n}[_{m, \bar{0}} K_pt^{-1} \subset ]C^G_{w_n}[_{m, \bar{1}} K_p' .
    \]
    To complete the proof of part (4), it therefore suffices to show that $]C^G_{w_n}[_{k, k} K_p \cap ]C^G_{w_n}[_{m, \bar{1}} \subset ]C^G_{w_n}[_{k, k}K_p'$. But this latter inclusion follows again from a similar explicit calculation as in the proof of Lemma \ref{SupportOfCorrespContainedInCanonicalLemma}, namely: if $[x_0 : \cdots : x_{2n-1}] \in ]C^G_{w_n}[_{k, k} K_p \cap ]C^G_{w_n}[_{m, \bar{1}}$, then there exist 
    \[
    a_1, \dots, a_n \in \mbb{Z}_p \; \text{ and } \; a_{n+2}, \dots, a_{2n} \in p^{\beta}\mbb{Z}_p
    \]
    such that $x_{i-1} \in (a_i + \mathcal{B}_k) \cap \overline{\mathcal{B}}_1$ for $1 \leq i \leq n$, $x_n \in (1+\mathcal{B}_k^{\circ}) \cap (1+\mathcal{B}_m^{\circ})$, and $x_{i-1} \in (a_i + \mathcal{B}_k^{\circ}) \cap \mathcal{B}_m^{\circ}$ for $n+2 \leq i \leq 2n$. Since $\mathcal{B}_k \subset \overline{\mathcal{B}}_1$ and $\mbb{Z}_p \cap \overline{\mathcal{B}}_1 = p\mbb{Z}_p$, we find that $a_i \in p\mbb{Z}_p$, and hence $[x_0 : \cdots : x_{2n-1}] \in ]C^G_{w_n}[_{k, k}K_p'$ (for $m \gg k$) as required.
\end{proof}

We now introduce the cohomological correspondences. We are only interested in such morphisms for automorphic sheaves of classical weight. Let $\kappa \in X^*(T)$ be a $M_{\mathscr{G}}$-dominant weight, and let $V_{\kappa}$ denote the algebraic representation of $M_{\mathscr{G}}$ with highest weight $\kappa$. Let $\psi \colon p_1^{-1}M_{\mathscr{G}, \opn{dR}, K_p}^{\opn{an}} \to p_2^{-1}M_{\mathscr{G}, \opn{dR}, K_p}^{\opn{an}}$ denote the morphism obtained as the composition of the equivariant structure on the torsors $M_{\mathscr{G}, \opn{dR}, -}^{\opn{an}}$ and the action of $\xi \in M_{\mathscr{G}}(\mbb{Q}_p)$ through the torsor structure. Let $\pi_i \colon p_i^{-1}M_{\mathscr{G}, \opn{dR}, K_p}^{\opn{an}} \to \mathcal{S}_{\mathscr{G}, K_p'}$ denote the structural map. Then we let
\[
\psi_{\mathscr{G}, \kappa^*} \colon p_2^*\mathscr{M}_{\mathscr{G}, \kappa^*} = \left( (\pi_2)_* \mathcal{O}_{p_2^{-1}M_{\mathscr{G}, \opn{dR}, K_p}^{\opn{an}}} \otimes V_{\kappa}^* \right)^{M_{\mathscr{G}}^{\opn{an}}} \to \left( (\pi_1)_* \mathcal{O}_{p_1^{-1}M_{\mathscr{G}, \opn{dR}, K_p}^{\opn{an}}} \otimes V_{\kappa}^* \right)^{M_{\mathscr{G}}^{\opn{an}}} = p_1^*\mathscr{M}_{\mathscr{G}, \kappa^*}
\]
denote the morphism induced from $\psi^* \otimes ( \kappa^*(\xi^{-1}) \xi \cdot -)$. Here $\kappa^* = -w_{M_{\mathscr{G}}}^{\opn{max}}\kappa$. Recall that 
\[
R\Gamma(\mathcal{U}^G_{K_p, \bullet}, \mathcal{Z}_{\bullet}; \mathscr{M}_{G, \kappa^*}) = R\Gamma^G_{w_n}(\kappa^*; \beta)^{(-, \dagger)}  \text{ and } R\Gamma(\mathcal{U}^H_{K_p, \bullet}, \mathcal{Z}_{\bullet}; \mathscr{M}_{H, \kappa^*}) = R\Gamma_{\mathcal{Z}_{H, \opn{id}}(p^{\beta})}\left( \mathcal{S}_{H, \diamondsuit}(p^{\beta}), \mathscr{M}_{H, \kappa^*} \right).
\]
Then by the general procedure in \S \ref{CohomologyAndCorrespondencesSection}, we obtain Hecke operators (both of which we denote by $\varphi_{\mathscr{G}}$) on $R\Gamma^G_{w_n}(\kappa^*; \beta)^{(-, \dagger)}$ and $R\Gamma_{\mathcal{Z}_{H, \opn{id}}(p^{\beta})}\left( \mathcal{S}_{H, \diamondsuit}(p^{\beta}), \mathscr{M}_{H, \kappa^*} \right)$ associated with the correspondence (\ref{FrobeniusTopCorrEqn}) and cohomological correspondence $\psi_{\mathscr{G}, \kappa^*}$. These Hecke operators only depend on the restriction of $\psi_{\mathscr{G}, \kappa^*}$ to $\mathcal{U}^{\mathscr{G}}_{K_p', k}$ for any integer $k \geq 1$, therefore it will be useful to consider the following morphism
\[
\varphi \colon \mathcal{U}_{K_p, k+1}^{\mathscr{G}} \xrightarrow{p_1^{-1}} \mathcal{U}^{\mathscr{G}}_{K_p', k+1} \xrightarrow{p_2} \mathcal{U}^{\mathscr{G}}_{K_p, k} .
\]
We can view the restriction of $\psi_{\mathscr{G}, \kappa^*}$ as a morphism $\varphi^* \mathscr{M}_{\mathscr{G}, \kappa^*}|_{\mathcal{U}^{\mathscr{G}}_{K_p, k}} \to \mathscr{M}_{\mathscr{G}, \kappa^*}|_{\mathcal{U}^{\mathscr{G}}_{K_p, k+1}}$, or by adjunction, as a morphism
\begin{equation} \label{AdjunctVersionOfFrobEqn}
\psi_{\mathscr{G}, \kappa^*} \colon \mathscr{M}_{\mathscr{G},\kappa^*}|_{\mathcal{U}^{\mathscr{G}}_{K_p, k}} \to \varphi_*\mathscr{M}_{\mathscr{G},\kappa^*}|_{\mathcal{U}^{\mathscr{G}}_{K_p, k+1}} .
\end{equation}
By Lemma \ref{FrobSystemSupportLemma} we have $\varphi^{-1}(\mathcal{Z} \cap \mathcal{U}^{\mathscr{G}}_{K_p, k} ) = \mathcal{Z} \cap \mathcal{U}^{\mathscr{G}}_{K_p, k+1}$, and the operator is simply induced from the colimit of the morphism (\ref{AdjunctVersionOfFrobEqn}) on cohomology with support in $\mathcal{Z}$, i.e. $\varphi_{\mathscr{G}}$ is the colimit of the maps
\[
R\Gamma_{\mathcal{Z} \cap \mathcal{U}^{\mathscr{G}}_{K_p, k}}\left( \mathcal{U}^{\mathscr{G}}_{K_p, k}, \mathscr{M}_{\mathscr{G}, \kappa^*} \right) \xrightarrow{\psi_{\mathscr{G}, \kappa^*}} R\Gamma_{\mathcal{Z} \cap \mathcal{U}^{\mathscr{G}}_{K_p, k}}\left( \mathcal{U}^{\mathscr{G}}_{K_p, k}, \varphi_*\mathscr{M}_{\mathscr{G}, \kappa^*} \right) = R\Gamma_{\mathcal{Z} \cap \mathcal{U}^{\mathscr{G}}_{K_p, k+1}}\left( \mathcal{U}^{\mathscr{G}}_{K_p, k+1}, \mathscr{M}_{\mathscr{G}, \kappa^*} \right)
\]
where the last equality uses the fact that $\varphi$ is finite.

\subsubsection{A version over the ordinary locus} \label{AVersionOfFrobOverOrdSSec}

We now consider the Frobenius operator acting on Igusa towers. Suppose that $L/\mbb{Q}_p$ is a sufficiently large extension containing $\mu_{p^{\beta+1}}$. Let $J_p$ be as at the end of \S \ref{TopologicalHeckeCorrsSection} and set $J_p' = \xi J_p \xi^{-1} \cap J_p$ and $J_p'' = \xi^{-1} J_p' \xi$. Then we have a correspondence
\[
\begin{tikzcd}
                               & \mathcal{IG}_{\mathscr{G}}/J_p' \arrow[ld, "q_1"'] \arrow[rd, "q_2"] &                                \\
\mathcal{IG}_{\mathscr{G}}/J_p &                                                                      & \mathcal{IG}_{\mathscr{G}}/J_p
\end{tikzcd}
\]
where $q_1$ is the natural forgetful map and $q_2$ is induced from right-translation by $\xi$. One can easily check that $q_1$ is an isomorphism and $q_2$ is finite \'{e}tale of degree $[J_p : J_p''] = [K_p : K_p'']$. We let 
\[
\varphi \colon \mathcal{IG}_{\mathscr{G}}/J_p \xrightarrow{q_1^{-1}} \mathcal{IG}_{\mathscr{G}}/J_p' \xrightarrow{q_2} \mathcal{IG}_{\mathscr{G}}/J_p 
\]
which is finite \'{e}tale of degree $[K_p : K_p'']$. We have a Cartesian diagram
\[
\begin{tikzcd}
\mathcal{IG}_{\mathscr{G}}/J_p \arrow[d, hook] \arrow[r, "\varphi"] & \mathcal{IG}_{\mathscr{G}}/J_p \arrow[d, hook] \\
{\mathcal{U}^{\mathscr{G}}_{K_p, k+1}} \arrow[r, "\varphi"]         & {\mathcal{U}^{\mathscr{G}}_{K_p, k}}          
\end{tikzcd}
\]
and the pullback $\mathscr{M}_{\mathscr{G}, \kappa^*}|_{\mathcal{IG}_{\mathscr{G}}/J_p} \to \varphi_* \mathscr{M}_{\mathscr{G}, \kappa^*}|_{\mathcal{IG}_{\mathscr{G}}/J_p}$ of $\psi_{\mathscr{G}, \kappa^*}$ in (\ref{AdjunctVersionOfFrobEqn}) can be described as follows. Let $U(J_p) \subset J_p$ denote the unipotent part, and $M(J_p)$ the Levi part. Let $\pi \colon \mathcal{IG}_{\mathscr{G}}/U(J_p) \to \mathcal{IG}_{\mathscr{G}}/J_p$ denote the corresponding pro\'{e}tale $M(J_p)$-torsor. Then for an open $V \subset \mathcal{IG}_{\mathscr{G}}/J_p$, we have
\[
\mathscr{M}_{\mathscr{G}, \kappa^*}(V) = \{ f \colon \pi^{-1}(V) \to V_{\kappa}^* : f(x \cdot m) = m^{-1} \cdot f(x) \text{ all } m \in M(J_p) \} .
\]
The morphism $\psi_{\mathscr{G}, \kappa^*} \colon \mathscr{M}_{\mathscr{G}, \kappa^*}(V) \to \mathscr{M}_{\mathscr{G}, \kappa^*}(\varphi^{-1}(V))$ is then simply described as
\[
(\psi_{\mathscr{G}, \kappa^*}f)(x) = \kappa^*(\xi^{-1}) \xi \cdot f(x \cdot \xi) \quad \quad \quad x \in \pi^{-1}(\varphi^{-1}(V)) .
\]

\subsection{Properties of evaluation maps} \label{PropsOfEvMapsInterpolationSubSec}

In this section, we prove an ``interpolation formula'' for the evaluation maps $\opn{Ev}_{\kappa, j, \chi, \beta}^{\dagger, \circ}$ when the conductor of $\chi$ is divisible by $p$. We expect one can also establish a similar formula for unramified characters, however this seems harder than the ramified case. The proofs in this section will involve studying the relations between Hecke operators, Frobenius, and the action of $C^{\opn{la}}(U_{G, \beta}, L)$, and will continually make use of the following strategy. Firstly, one establishes the relation over the ordinary locus. Then one constructs an appropriate $\mathscr{M}_{G, \kappa^*}$-acyclic (or $\mathscr{M}_{H, \sigma_{\kappa}^{[j]}}$-acyclic) cover such that the restriction map from sections over an element of this cover to sections over the ordinary locus is injective. This injectivity property allows one to establish the desired overconvergent version of the relation on the level of \v{C}ech complexes -- the final result is then obtained by passing to cohomology. Although the idea is not too difficult, this section is unfortunately rather technical.

We now state the main result. Fix $(\kappa, j) \in \mathcal{E}$ satisfying Assumption \ref{AssumpOnKJforACchar} (i.e. $\kappa_0 = 0$, $\kappa_{1, \tau_0} + \kappa_{n+1, \tau_0} = n-1$, and $w = \kappa_{2, \tau_0} + \kappa_{2n, \tau_0} = -1$). Let $\chi \in \Sigma_{\kappa, j}(\ide{N}_{\beta})$ be an anticyclotomic character. Let $\chi_{p, \bar{\tau}_0} \colon \mbb{Z}_p^{\times} \to \mbb{C}^{\times}$ denote the finite-order character obtained as the restriction of $\chi$ to $\mathcal{O}_{F_{\overline{\ide{p}}_{\tau_0}}}^{\times} \cong \mbb{Z}_p^{\times}$. We suppose that the conductor of $\chi_{p, \bar{\tau}_0}$ is $p^{\beta'}$ for some $1 \leq \beta' \leq \beta$. Finally, we fix $\varepsilon = (\varepsilon_h)_{h \geq 0}$ a compatible system of $p$-th roots of unity in $\overline{\mbb{Q}}$, i.e. $\varepsilon_h$ is a primitive $p^h$-th root of unity, and $\varepsilon_{h+1}^p = \varepsilon_h$ for all $h \geq 0$. The Gauss sum associated with $\chi_{p, \bar{\tau}_0}$ is
\[
\mathscr{G}(\chi_{p, \bar{\tau}_0}) \defeq \frac{1}{p^{h-\beta'}} \sum_{a \in \left(\mbb{Z}/p^{h}\mbb{Z}\right)^{\times}} \chi_{p, \bar{\tau}_0}(a) \varepsilon_{\beta'}^a
\]
for any $h \geq \beta'$ (which is non-zero because $\chi_{p, \bar{\tau}_0}$ has conductor $p^{\beta'}$). For $i=0, 1$, let $t_i \in T^{G, -}$ be the element which is the identity outside the $\tau_0$-component, and in the $\tau_0$-component is equal to $\opn{diag}(1, \dots, 1, p, \dots, p)$ where there are $n-i$ lots of $p$.

\begin{theorem} \label{MainThmOfPropEvMapSection}
    Let $\eta \in \opn{H}^{n-1}_{w_n}(\kappa^*; \beta)^{(-, \dagger)}$ and suppose that $U_{t_i} \cdot \eta = \alpha_i \eta$ for $i=0, 1$ and some $\alpha_i \in L^{\times}$. Then
    \[
    \opn{Ev}^{\dagger, \circ}_{\kappa, j, \chi, \beta}( \eta ) = \left( \frac{\alpha_0}{\alpha_1} \right)^{\beta'} p^{\beta'\kappa_{n+1, \tau_0}}(1-p^{-1}) \chi( \varpi_{\ide{p}_{\bar{\tau}_0}} )^{-\beta'} \chi_{p, \bar{\tau}_0}(-1) \mathscr{G}(\chi_{p, \bar{\tau}_0}) \opn{Ev}^{\dagger}_{\kappa, j, \chi, \beta}(\eta)
    \]
    where $\varpi_{\ide{p}_{\bar{\tau}_0}} \in \mbb{A}_F^{\times}$ denotes the image of $p$ under the natural embedding $\mbb{Q}_p^{\times} \cong F_{\bar{\ide{p}}_{\tau_0}}^{\times} \hookrightarrow \mbb{A}_F^{\times}$.
\end{theorem}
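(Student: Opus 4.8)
\textbf{Proof strategy for Theorem \ref{MainThmOfPropEvMapSection}.}

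The plan is to reduce the statement to a comparison between the morphism $\vartheta^{\dagger, \circ}_{\kappa, j, \chi, \beta}$ (which uses the $p$-depleted weighted indicator $1_{U_{G, \beta}^{\circ}, \chi_{p, \bar{\tau}_0}}$) and the morphism $\vartheta^{\dagger}_{\kappa, j, \beta}$ (which uses no depletion), by expressing the weighted indicator function as a combination of Frobenius translates of the constant function $1$ on $U_{G, \beta}$. Concretely, one has the classical Gauss sum identity
\[
1_{U_{G, \beta}^{\circ}, \chi_{p, \bar{\tau}_0}}(a_2, \dots, a_{2n}) = \frac{\chi_{p, \bar{\tau}_0}(-1)}{p^{h}} \mathscr{G}(\chi_{p, \bar{\tau}_0}) \sum_{b \in \mbb{Z}/p^{h}\mbb{Z}} \chi_{p, \bar{\tau}_0}(b)^{-1} 1_{U_{G, \beta}^{\circ}}(a_2, \dots, a_{2n}) \cdot \varepsilon_{h}^{-b a_{n+1}}
\]
for $h \geq \beta'$ sufficiently large (after also imposing $h \geq \beta$), so the $\chi$-twisted $p$-depletion is built out of additive characters $\varepsilon_h^{-ba_{n+1}}$ applied to the $(n+1)$-st coordinate. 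The action of such an additive character on $\mathscr{N}_G^\dagger$ is, by the very construction of the $C_{\opn{cont}}(U_{G, \beta}, \mbb{Z}_p)$-action via $p$-adic Fourier theory in \S\ref{DiffOpsCcontsubsec}, nothing but right-translation by the corresponding element of $\opn{Unip}(J_{G, \opn{ord}})$ on the Igusa tower — which over the ordinary locus is exactly a Frobenius-type correspondence. This is where the operators $t_0, t_1 \in T^{G, -}$ enter: the element $t_0 t_1^{-1}$ is (up to conjugation by $w_n$) the diagonal matrix $\opn{diag}(1, \dots, 1, p, 1, \dots, 1)$ with the $p$ in the $(n+1)$-st slot, and its associated Hecke correspondence $U_{t_0}U_{t_1}^{-1}$ implements, on the ordinary locus, the operator $\xi$ of \S\ref{AVersionOfFrobOverOrdSSec} in the $(n+1)$-st coordinate direction. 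Thus the eigenvalue ratio $(\alpha_0/\alpha_1)^{\beta'}$ will appear precisely from $\beta'$-fold iteration of this correspondence.

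The key steps, in order, are: (1) establish the Gauss sum expansion above and feed it into the definition of $\opn{Ev}^{\dagger, \circ}_{\kappa, j, \chi, \beta}$ using Proposition \ref{SanDeltaDaggerIsAnEvectorProp}(3) (which gives $\delta^{\dagger, s\opn{-an}}_{\kappa, j+\chi, \beta} = 1_{U_{G, \beta}^{\circ}, \chi} \cdot \delta^{\dagger}_{\kappa, j, \beta}$) and Remark \ref{CompatibilityRelationsForThetadaggersRem} (which relates $\vartheta^{\dagger, \circ}_{\kappa, j+\chi, \beta}$ to $\vartheta^{\dagger}_{\kappa, j, \beta} \circ 1_{U_{G, \beta}^{\circ}}$); (2) on the ordinary locus $\mathcal{IG}_{G, w_n}(p^{\beta})$, use Proposition \ref{IntertwiningOfNablaThetaiProp} and the Fourier-theoretic description in \S\ref{DiffOpsCcontsubsec} to identify the action of the additive character $\varepsilon_h^{-b a_{n+1}}$ with right-translation by a unipotent element, and then compare this translation with the Frobenius correspondence $\varphi$ of \S\ref{AVersionOfFrobOverOrdSSec} in the $(n+1)$-st coordinate, tracking the precise power of $p$ coming from $\kappa^*(\xi^{-1})\xi$ (this produces the $p^{\beta'\kappa_{n+1, \tau_0}}$ factor and, through the behaviour of $\varepsilon$ and $\varpi_{\ide{p}_{\bar{\tau}_0}}$ under the reciprocity normalisation fixed in \S\ref{NotationsAndConventionsIntro}, the $\chi(\varpi_{\ide{p}_{\bar{\tau}_0}})^{-\beta'}$ factor); (3) relate the Frobenius operator $\varphi_G$ to the Hecke operators $U_{t_0}, U_{t_1}$ — since $t_0 t_1^{-1}$ acts in the $(n+1)$-st coordinate direction and $\varphi$ is the restriction of this to the Igusa tower, one gets that on $\eta$ the operator $\varphi_G$ (times an appropriate normalisation) acts by $\alpha_0/\alpha_1$, hence $\beta'$ iterations give $(\alpha_0/\alpha_1)^{\beta'}$; (4) promote the ordinary-locus identity to the overconvergent cohomology $\opn{H}^{n-1}_{w_n}(\kappa^*;\beta)^{(-,\dagger)}$ by choosing an acyclic affinoid cover as in Lemma \ref{AcyclicityCoverLemma} for which the restriction maps to sections over the ordinary locus are injective (using that $\mathscr{N}^{(r,k)}_G(U) \hookrightarrow \mathcal{O}(\mathcal{U}_{\opn{HT}, k})$ is injective and equivariant for $\overline{\ide{u}}_G$, exactly as exploited in Proposition \ref{DaggerThetaTakesOCtoOCProp}), so that the Cech-level identity descends to cohomology; and (5) assemble the constants, noting that the factor $(1-p^{-1})$ arises from passing between the sum over $\mbb{Z}/p^h\mbb{Z}$ restricted to $(\mbb{Z}/p^h\mbb{Z})^\times$ (as dictated by the support condition $a_{n+1} \in \mbb{Z}_p^\times$ in $U_{G,\beta}^{\circ}$) and the full sum, combined with the normalisation of $\mathscr{G}(\chi_{p, \bar{\tau}_0})$.

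\textbf{Main obstacle.} The hard part will be step (2)–(3): pinning down the exact identification, \emph{with all normalising powers of $p$ and roots of unity}, between the Fourier-dual action of the additive character on $\mathscr{N}_G^{\dagger}$ and the Frobenius correspondence of \S\ref{DiscussionOfFrobeniusSubSec}--\S\ref{AVersionOfFrobOverOrdSSec}, and then the precise comparison of $\varphi_G$ with $U_{t_0}U_{t_1}^{-1}$. The subtlety is that $U_{t_0}$ and $U_{t_1}$ are Hecke operators at Iwahori level on the full Shimura variety whereas $\varphi$ is built from correspondences on the Igusa tower, and the element $\xi$ of \S\ref{DiscussionOfFrobeniusSubSec} is not in $T^{G,-}$, so one must carefully use the compatibility diagram (\ref{DiagramBetweenOrdHeckeAndOCHeckeEqn}) and Lemma \ref{LeftHandSqaureIsCartesianLemma} to transfer between the two pictures; the weight-dependent twist $\kappa^*(\xi^{-1})\xi$ and the optimal normalisation $(-g^{-1}\rho+\rho)(t)$ of the trace maps must be tracked exactly to land the stated formula. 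Everything else — the Gauss sum expansion, the acyclicity/injectivity bootstrap, and the extraction of $(1-p^{-1})$ — is routine given the machinery already established.
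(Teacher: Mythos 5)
Your strategy is essentially the one the paper follows, and the main ingredients you name — the Gauss sum expansion of the weighted indicator, identification of the additive character action with unipotent translation on the Igusa tower, the role of $t_0t_1^{-1}$ sitting in the $(n+1)$-st diagonal slot, and the acyclic-cover/injective-restriction bootstrap from the ordinary locus to $\opn{H}^{n-1}_{w_n}(\kappa^*;\beta)^{(-,\dagger)}$ — are exactly the ones the paper uses (Steps~2--4 of the proof, Lemmas \ref{ABcisTheSameMorphismLemma}, \ref{LemmaRhoAURhoBUordEquals}, \ref{ResAlphaIndEqualityLemma}, Proposition~\ref{ExistenceOfInjectiveAcyclicOCcoverProp}, Corollary~\ref{CorollaryEvCircEquals}).

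Two places where the proposal is incomplete or imprecise, though. First, you are mostly working on the $G$-side, but the evaluation map ends with pairing against $[\chi]$ on $H$, and the paper's proof cannot avoid an $H$-side computation: the morphism $B_c$ in Step~2 is composed with the $H$-side Frobenius $\psi_{H,\sigma_\kappa^{[j]}}^{\beta'}$ and the $H$-side indicator $1_{U_{H,\beta'}}\star-$, and extracting the factor $\chi(\varpi_{\ide{p}_{\bar\tau_0}})^{-\beta'}$ and the $p^{\beta'\kappa_{n+1,\tau_0}}$ power requires knowing the eigenvalues of $\opn{res}[\chi]$ under the transpose Hecke operators $U_{t^{-1}}$ — this is Lemma~\ref{UtinverseEigenvalueLemma} (Step~1), which your sketch only gestures at via ``the behaviour of $\varepsilon$ and $\varpi_{\ide{p}_{\bar\tau_0}}$ under the reciprocity normalisation''. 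Without that lemma (and the relation $\langle 1_{U_{H,\beta'}}\star x,\opn{res}[\chi]\rangle=\langle x,\opn{res}[\chi]\rangle$ from Remark~\ref{FrobHIndicatorRelationRemark}), the constants in your step~(5) would not assemble. Second, the claim that ``$\varphi_G$ (times an appropriate normalisation) acts by $\alpha_0/\alpha_1$'' is slightly off and hides a real subtlety: $\varphi_G$ is a degree-$>1$ correspondence and is not invertible, so it does not literally act by a scalar. What the paper proves in Step~3 is $\varphi_G^{\beta'}\circ U_{t_0}^{\beta'} = (\mbb{I}_{\beta'}\star-)\circ U_{t_1}^{\beta'}$, so on an eigenclass $\varphi_G^{\beta'}\eta = (\alpha_1/\alpha_0)^{\beta'}\,\mbb{I}_{\beta'}\star\eta$; the depletion operator $\mbb{I}_{\beta'}$ is unavoidable, and the reason it disappears in the end is the separate observation $\opn{Ev}^{\dagger,\circ}_{\kappa,j,\chi,\beta}(\mbb{I}_{\beta'}\star-)=\opn{Ev}^{\dagger,\circ}_{\kappa,j,\chi,\beta}(-)$, because $\mbb{I}_{\beta'}\cdot 1_{U_{G,\beta}^\circ,\chi_p}=1_{U_{G,\beta}^\circ,\chi_p}$. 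The sign flip $\alpha_1/\alpha_0$ versus $\alpha_0/\alpha_1$ in the final formula then comes from solving $\opn{Ev}^{\dagger,\circ}(\varphi_G^{\beta'}\eta)$ in terms of $\opn{Ev}^{\dagger,\circ}(\eta)$.
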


We will prove this theorem in several steps. First, we recall and introduce some notation. Recall the definition of the weighted indicator function $1_{U_{G, \beta}^{\circ}, \chi_p} \in C^{\opn{la}}(U_{G, \beta}, L)$, namely
\[
1_{U_{G, \beta}^{\circ}, \chi_p}(a_2, \dots, a_{2n}) = \left\{ \begin{array}{cc} \chi_{p, \bar{\tau}_0}(a_{n+1}) & \text{ if } (a_2, \dots, a_{2n}) \in U_{G, \beta}^{\circ} \\ 0 & \text{ otherwise } \end{array} \right. .
\]
We let $f_{\chi_p} \in C^{\opn{la}}(U_{G, \beta}, L)$ denote the function given by
\[
f_{\chi_p}(a_2, \dots, a_{2n}) = \left\{ \begin{array}{cc} \chi_{p, \bar{\tau}_0}(p^{\beta'}a_{n+1}) & \text{ if } a_{n+1} \in p^{-\beta'}\mbb{Z}_p^{\times} \\ 0 & \text{ otherwise } \end{array} \right.
\]
and let $1_{U_{H, \beta'}} \in C^{\opn{la}}(U_{H, \beta}, L)$ denote the indicator function of the subset $U_{H, \beta'} = \left( p^{-\beta'} \mbb{Z}_p \right)^{\oplus n-1} \subset U_{H, \beta}$. We also let $1_{U_{H, \beta'}} \in C^{\opn{la}}(U_{G, \beta}, L)$ denote the function given by $(a_2, \dots, a_{2n}) \mapsto 1_{U_{H, \beta'}}(a_2, \dots, a_n)$. Finally, we set 
\[
1_{U_{G, \beta}^{\circ}, \chi_p}^{\spadesuit} \defeq f_{\chi_p} \cdot 1_{U_{H, \beta'}} \in C^{\opn{la}}(U_{G, \beta}, L) .
\]

\subsubsection{Step 1}

In this step, we describe the eigenvalues of the cohomology class $[\chi]$ under the action of certain $U_p$-Hecke operators. We first describe the transpose of the Hecke operators defined in \S \ref{HeckeCohCorrespSubSec} with respect to Serre duality. Let $t \in T^{H, -}$ and consider the correspondence in (\ref{GlobalSGKCorrepondence}) (in the case $\mathscr{G} = H$). Consider the morphism
\[
 (2\rho_{H, \opn{nc}})(t\langle t \rangle^{-1})\phi_{t, \sigma_{\kappa}^{[j], \vee}}^{-1} \colon p_1^* \mathscr{M}_{H, \sigma_{\kappa}^{[j], \vee}} \to p_2^* \mathscr{M}_{H, \sigma_{\kappa}^{[j], \vee}}
\]
which is well-defined because the morphism in Definition \ref{phitclassicalcohcorrespDefinition}(2) is an isomorphism. Because $p_1p_2^{-1}(\mathcal{U}^H_{K_p, k}) \subset \mathcal{U}^H_{K_p, k}$, we obtain a Hecke correspondence 
\[
U_{t^{-1}} \colon \opn{H}^0_{\opn{id}}(\mathcal{S}_{H, \diamondsuit}(p^{\beta}), \sigma_{\kappa}^{[j], \vee} )^{(+, \dagger)} \to \opn{H}^0_{\opn{id}}(\mathcal{S}_{H, \diamondsuit}(p^{\beta}), \sigma_{\kappa}^{[j], \vee} )^{(+, \dagger)}
\]
as the colimit over $k$ of following composition:
\begin{align*} 
\opn{H}^0\left( \mathcal{U}^H_{K_p, k}, \mathscr{M}_{H, \sigma_{\kappa}^{[j], \vee}} \right) \xrightarrow{\opn{res}} \opn{H}^0\left( p_1p_2^{-1}(\mathcal{U}^H_{K_p, k}), \mathscr{M}_{H, \sigma_{\kappa}^{[j], \vee}} \right) \xrightarrow{p_1^*} \opn{H}^0\left( p_2^{-1}(\mathcal{U}^H_{K_p, k}), p_1^*\mathscr{M}_{H, \sigma_{\kappa}^{[j], \vee}} \right) \\ \xrightarrow{(2\rho_{H, \opn{nc}})(t\langle t \rangle^{-1})\phi_{t, \sigma_{\kappa}^{[j], \vee}}^{-1}} \opn{H}^0\left( p_2^{-1}(\mathcal{U}^H_{K_p, k}), p_2^*\mathscr{M}_{H, \sigma_{\kappa}^{[j], \vee}} \right) \xrightarrow{\opn{Tr}_{p_2}} \opn{H}^0\left( \mathcal{U}^H_{K_p, k}, \mathscr{M}_{H, \sigma_{\kappa}^{[j], \vee}} \right) .
\end{align*} 
Recall that we have a Serre duality pairing 
\[
\langle \cdot, \cdot \rangle \colon \opn{H}^{n-1}_{\opn{id}}(\mathcal{S}_{H, \diamondsuit}(p^{\beta}), \sigma_{\kappa}^{[j]} )^{(-, \dagger)} \times \opn{H}^0_{\opn{id}}(\mathcal{S}_{H, \diamondsuit}(p^{\beta}), \sigma_{\kappa}^{[j], \vee} )^{(+, \dagger)} \to L .
\]
We have the following relation between $U_t$ and $U_{t^{-1}}$.

\begin{lemma} \label{Lem:UtUt-1adjoint}
    For any $t \in T^{H, -}$, one has $\langle U_t \cdot -, - \rangle = \langle -, U_{t^{-1}} \cdot - \rangle$.
\end{lemma}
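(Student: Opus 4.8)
The statement is the standard adjunction formula for Hecke operators with respect to Serre duality, and the plan is to verify it directly from the definitions of $U_t$, $U_{t^{-1}}$, and the trace map. First I would recall that Serre duality on $\mathcal{S}_{H, \diamondsuit}(p^{\beta})$ pairs $\mathscr{M}_{H, \sigma_{\kappa}^{[j]}}$ with $\mathscr{M}_{H, \sigma_{\kappa}^{[j], \vee}} \otimes \Omega^{n-1}$, and that this pairing at finite level $\mathcal{U}^H_{K_p, k}$ is compatible with restriction/corestriction, so that it suffices to check the identity after passing to the colimit over $k$ using the description of both sides via the correspondence $(p_1, p_2)$ of $\mathcal{S}_{H, K_p'} \rightrightarrows \mathcal{S}_{H, K_p}$, with $K_p' = tK_pt^{-1} \cap K_p$.

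The key computation is the projection (adjunction) formula for the trace map: given the finite flat maps $p_1, p_2$, one has $\langle \opn{Tr}_{p_1}(a), b\rangle = \langle a, p_1^* b\rangle$ on cohomology with the appropriate supports, and similarly for $p_2$. I would set up $U_t$ as $(-g^{-1}\rho+\rho)(t) \opn{Tr}_{p_1} \circ \phi_{t, \kappa^*} \circ p_2^*$ (with $g = \opn{id}$ here, so the normalizing factor is $(-\rho+\rho)(t)$-type; more precisely for $\mathscr{G} = H$ it is $(-\rho+\rho)(t)$ which needs to be matched against the $\phi^{-1}$ normalization $(2\rho_{H,\opn{nc}})(t)$ on the dual side) and $U_{t^{-1}}$ as $\opn{Tr}_{p_2} \circ (2\rho_{H, \opn{nc}})(t)\phi_{t, \sigma_{\kappa}^{[j], \vee}}^{-1} \circ p_1^*$. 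Then $\langle U_t \cdot \eta, \omega \rangle = (\text{const}) \langle \opn{Tr}_{p_1} \phi_{t, \kappa^*} p_2^* \eta, \omega\rangle = (\text{const}) \langle \phi_{t, \kappa^*} p_2^*\eta, p_1^*\omega\rangle$ by adjunction for $\opn{Tr}_{p_1}$; one then moves $\phi_{t, \kappa^*}$ across the pairing (it is, pointwise on the correspondence, the action of $gtg^{-1} \in M_H(\mbb{Q}_p)$ on the torsor composed with the equivariant structure, and the dual/Serre-dual twist $\phi_{t, \sigma_{\kappa}^{[j], \vee}}$ is precisely its transpose up to the weight shift recorded in $\sigma_{\kappa}^{[j], \vee} = -w_{M_H}^{\opn{max}}\sigma_{\kappa}^{[j]} - 2\rho_{H, \opn{nc}}$), giving $(\text{const}') \langle p_2^*\eta, \phi_{t, \sigma_{\kappa}^{[j], \vee}}^{-1} p_1^*\omega\rangle$; finally adjunction for $\opn{Tr}_{p_2}$ yields $\langle \eta, \opn{Tr}_{p_2}\phi_{t, \sigma_{\kappa}^{[j], \vee}}^{-1} p_1^*\omega\rangle = \langle \eta, U_{t^{-1}} \cdot \omega\rangle$ after checking the normalizing constants cancel.

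The main obstacle will be bookkeeping the normalizing factors: the factor $(-g^{-1}\rho + \rho)(t)$ built into $U_t$, the factor $(2\rho_{H, \opn{nc}})(t)$ built into $U_{t^{-1}}$, and the $M_H(\mbb{Q}_p)$-equivariance twist on the torsors all contribute characters evaluated at $t$, and one must verify they combine to give exactly $1$. This is a routine but careful root-theoretic check: $\Omega^{n-1}_{\mathcal{S}_{H, \diamondsuit}(p^{\beta})}$ corresponds to the character $-2\rho_{H, \opn{nc}}$ of $M_H$ under the Kodaira--Spencer isomorphism, so the Serre duality pairing is $M_H$-equivariant for the pairing $\sigma_{\kappa}^{[j]} \otimes \sigma_{\kappa}^{[j], \vee} \otimes (-2\rho_{H,\opn{nc}}) \to \opn{triv}$, and tracking how $t$ acts through $\phi_t$ on each tensor factor produces the cancellation. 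I would also need to confirm the support conditions match up: by Lemma \ref{p1p2-1ZisZLemma} (applied with the roles of $p_1, p_2$ interchanged, i.e. $p_1p_2^{-1}(\mathcal{Z}_{H,\opn{id}}(p^{\beta})) = \mathcal{Z}_{H, \opn{id}}(p^{\beta})$ and also $p_2 p_1^{-1}$ preserves the relevant neighbourhoods since $t^{-1} \notin T^{H,-}$ forces us to work with the cofinal system $\mathcal{U}^H_{K_p, k}$ and the transpose correspondence), the two support filtrations are intertwined in the sense of Definition \ref{IntertwinedSupportConditionsDef}, so the pairing is well-defined on the colimit and compatible with both Hecke actions. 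Once the constants are pinned down, the identity follows formally.
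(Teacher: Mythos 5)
Your proposal is correct and takes essentially the same route as the paper: adjunction of pullback and trace under Serre duality, plus the observation that $\phi_{t,\sigma_{\kappa}^{[j]}}$ and $\phi_{t,\sigma_{\kappa}^{[j],\vee}}^{-1}$ are transposes of one another up to a normalizing character evaluated at $t$. The one genuine difference is that the paper does not redo the constant bookkeeping by hand: it simply notes that $(-g^{-1}\rho + \rho)(t) = 1$ because $g = \opn{id}$ for $\mathscr{G} = H$, and then cites \cite[Proposition 4.2.9]{BoxerPilloni} for the statement that $\sigma_{\kappa}^{[j]}(t)\phi_{t,\sigma_{\kappa}^{[j]}}$ and $\sigma_{\kappa}^{[j],\vee}(t^{-1})\phi_{t,\sigma_{\kappa}^{[j],\vee}}^{-1}$ are Verdier dual; the factor $(2\rho_{H,\opn{nc}})(t)$ built into $U_{t^{-1}}$ is exactly $\sigma_{\kappa}^{[j]}(t)^{-1}\sigma_{\kappa}^{[j],\vee}(t)^{-1}$ (using that $\sigma_{\kappa}^{[j]} + \sigma_{\kappa}^{[j],\vee} = -2\rho_{H,\opn{nc}}$ since the character is Weyl-invariant), which is precisely the twist you would recover via the Kodaira--Spencer identification. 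Your hands-on version is sound but heavier; the citation replaces that computation. One small clarification: your support-condition remark invokes Lemma~\ref{p1p2-1ZisZLemma}, but for the $(+,\dagger)$ side what is actually needed (and stated in the paragraph defining $U_{t^{-1}}$) is the containment $p_1p_2^{-1}(\mathcal{U}^H_{K_p,k}) \subset \mathcal{U}^H_{K_p,k}$ rather than the statement about $\mathcal{Z}$; since $U_{t^{-1}}$ acts on degree-$0$ overconvergent sections, no compact-support or intertwining-of-filtrations argument is needed there.
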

\begin{proof}
    This follows from the duality of pullback and trace maps (note that the normalisation of $\opn{Tr}_{p_1}$ in the definition of $U_t$ is trivial), and the fact that $\sigma_{\kappa}^{[j]}(t\langle t \rangle^{-1})\phi_{t, \sigma^{[j]}_{\kappa}}$ corresponds to $\sigma_{\kappa}^{[j], \vee}(t^{-1}\langle t \rangle)\phi_{t, \sigma_{\kappa}^{[j], \vee}}^{-1}$ under Verdier duality (see \cite[Proposition 4.2.9]{BoxerPilloni}).
\end{proof}

We have the following property for the cohomology classes associated with anticyclotomic characters.

\begin{lemma} \label{UtinverseEigenvalueLemma}
    Let $t \in T^{H, -}$ be any element which is trivial outside the $\tau_0$-component, and in the $\tau_0$-component is given by $\opn{diag}(y, 1, \dots, 1)$. We have
    \[
     U_{t^{-1}} \cdot \opn{res}[\chi] = \sigma_{\kappa}^{[j], \vee}(\langle t \rangle^{-1}) \hat{\chi}(z_{t^{-1}}) \cdot \opn{res}[\chi]  
    \]
    where $z_{t^{-1}}$ denotes the idele which is $y$ in the factor corresponding to the prime $\overline{\ide{p}}_{\tau_0}$ (and the identity outside this prime). 
\end{lemma}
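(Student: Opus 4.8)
The plan is to unwind the definition of the cohomology class $\opn{res}[\chi]$ in terms of its description over the ordinary locus (or equivalently, in terms of its complex-analytic avatar from \S\ref{ClassicalEvaluationMapsSubSec}) and then compute directly the effect of the Hecke correspondence $U_{t^{-1}}$ on it. Concretely, I would first reduce to a computation over (a strict neighbourhood of) the ordinary locus $\mathcal{S}_{H,\opn{id}}(p^\beta)$: since $[\chi] \in \opn{H}^0\left( \mathcal{S}_{H,\diamondsuit}(p^\beta), \mathscr{M}_{H,\sigma_\kappa^{[j],\vee}} \right)$ is a global section of a line bundle on a zero-dimensional-cohomology situation, its restriction to $\mathcal{U}^H_{K_p,k}$ is determined by its values on the ordinary Igusa tower $\mathcal{IG}_{H,\opn{id}}(p^\beta)$, and the $U_{t^{-1}}$-action computed via the correspondence over $\mathcal{S}_{H,\diamondsuit}(p^\beta)$ agrees with the one computed via the analogous correspondence over the quotients of the Igusa tower (using the compatible diagrams at the end of \S\ref{QuotientsOfIgVarSubSec} and the Frobenius/Hecke discussion in \S\ref{AVersionOfFrobOverOrdSSec}). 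This reduction is the reason the lemma should be ``easy'': on the Igusa tower the class $[\chi]$ is literally the function $[x,h,h'] \mapsto \xi^{[j]}_\kappa(h)\chi'(\nu(h'))$ pulled back along $\opn{det} \colon {^\mu\mathcal{M}_{H,\opn{HT},k}} \to {^\mu\mathcal{R}_{\opn{HT},k}}$, as in the construction recalled in the proof of Lemma \ref{TheUniversalChiCharacterLemma}.

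Next I would carry out the explicit Hecke computation. Writing the correspondence (\ref{GlobalSGKCorrepondence}) for $t^{-1}$ in the case $\mathscr{G}=H$: $p_1$ is the forgetful map, $p_2$ is right-translation by $t^{-1}$, and $U_{t^{-1}}$ on $\opn{H}^0$ is $\opn{Tr}_{p_2}\circ (2\rho_{H,\opn{nc}})(t)\phi_{t,\sigma_\kappa^{[j],\vee}}^{-1}\circ p_1^*\circ \opn{res}$. Because $p_1p_2^{-1}(\mathcal{U}^H_{K_p,k})\subset \mathcal{U}^H_{K_p,k}$ and, crucially, $p_2\colon p_2^{-1}(\mathcal U^H_{K_p,k})\to \mathcal U^H_{K_p,k}$ is an isomorphism onto a connected component in the relevant range (this is the analogue of Lemma \ref{FrobSystemSupportLemma}(3) — the element $t$ lies in the ``Levi-direction'' since it only modifies the $(1,1)$-entry, so the associated correspondence on the Igusa/flag side is invertible up to the Weyl translate), the trace $\opn{Tr}_{p_2}$ is inverse to $p_2^*$ on that component and $U_{t^{-1}}$ acts simply by $(2\rho_{H,\opn{nc}})(t)$ times the pullback of the section along the ``translate-by-$t^{-1}$'' self-map. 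Then I would evaluate this self-map on the Igusa-tower description of $[\chi]$: right-translation by $gtg^{-1}$ on the $M_H$-torsor contributes the character value $\sigma_\kappa^{[j],\vee}(t^{-1})$ on the automorphic-sheaf side, while on the Shimura-variety side (via $\opn{det}$ and Shimura reciprocity as in Lemma \ref{TheUniversalChiCharacterLemma}) translating by the idele $z_{t^{-1}}$ scales the value of the class $[\chi]$ by $\tilde\chi'(\nu(z_{t^{-1}})) = \hat\chi(z_{t^{-1}})$ up to the split (integral) part, which is absorbed into the algebraic normalisation. Matching the normalisation factors — the $(2\rho_{H,\opn{nc}})(t)$ in the definition of $U_{t^{-1}}$, the $\sigma_\kappa^{[j],\vee}(t^{-1})$ from $\phi_{t,\sigma^{[j],\vee}_\kappa}^{-1}$, and the relation $\sigma_\kappa^{[j],\vee} = -w_{M_H}^{\opn{max}}\sigma_\kappa^{[j]} - 2\rho_{H,\opn{nc}}$ together with the fact that $t$ is fixed by $w_{M_H}^{\opn{max}}$ on the relevant coordinate — should collapse the answer to $(2\rho_{H,\opn{nc}})(\langle t\rangle)\,\hat\chi(z_{t^{-1}})$, the extra $\langle t\rangle$ appearing because only the $p$-integral part of $t$ survives after the character-theoretic cancellations (the $p$-power part of $\sigma_\kappa^{[j]}(t)$ cancels against the Hecke normalisation, leaving $(2\rho_{H,\opn{nc}})(\langle t\rangle)$ and the full $p$-adic character $\hat\chi$).

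The main obstacle I anticipate is not conceptual but bookkeeping: carefully tracking the three competing normalisations (the Hecke-operator normalisation $(2\rho_{H,\opn{nc}})(t)$, the twist $\kappa^*$-normalisation built into $\phi_{t,\sigma}$, and the Shimura-reciprocity shift in the definition of $[\chi]$) and the passage between the ``idele $z_{t^{-1}}$'' normalisation of $\hat\chi$ and the ``$\infty$-type plus finite part'' description of $\chi$ from Definition \ref{DefOfSigmaKJNB}, so that the split parts genuinely cancel and one is left with exactly $(2\rho_{H,\opn{nc}})(\langle t\rangle)\hat\chi(z_{t^{-1}})$ and no stray power of $p$. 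A secondary technical point is justifying that the $U_{t^{-1}}$-action on $\opn{H}^0_{\opn{id}}(\mathcal{S}_{H,\diamondsuit}(p^\beta),\sigma_\kappa^{[j],\vee})^{(+,\dagger)}$ (colimit over $k$) agrees with the naive ``pullback along translation'' on the Igusa tower; this follows from the Cartesian diagram in Lemma \ref{LeftHandSqaureIsCartesianLemma} (and its analogue for the $t$ here) together with the fact that $p_2$ is an isomorphism onto a component, but it should be spelled out at the level of the defining Čech-type complexes as in \S\ref{CohomologyAndCorrespondencesSection}.
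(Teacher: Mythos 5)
The paper's proof is considerably more direct than what you propose, and your proposal contains a concrete error that would break the normalisation.

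\textbf{On the difference in approach.} The paper does not reduce to the Igusa tower at all. Instead it uses the construction of $[\chi]$ from the proof of Lemma \ref{TheUniversalChiCharacterLemma}: the class is defined as the pullback along the determinant map $\opn{det}\colon M_{H,\opn{dR}}^{\opn{an}} \to R_{\opn{dR}}^{\opn{an}}$ of an explicit character-valued function $\xi$ on the Shimura variety for the abelian group $\mbf{R}$. The whole Hecke computation then reduces to observing that the cohomological correspondence $\phi_{t,\sigma_\kappa^{[j],\vee}}^{-1}$, pushed down via $\opn{det}$, is simply left-translation by the idele $\opn{det}(t^{-1})$ on the $R$-torsor, so $\xi \circ f = \lambda(\opn{det}(t^{-1}))\,\xi = \hat\chi(z_{t^{-1}})\,\xi$. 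No Igusa-tower reduction, no Frobenius analogue, no matching of three competing normalisations of the line bundle: just a single application of Shimura reciprocity on $\mbf{R}$.

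\textbf{On the gap.} Your assertion that $p_2\colon p_2^{-1}(\mathcal{U}^H_{K_p,k})\to \mathcal{U}^H_{K_p,k}$ is an isomorphism onto a connected component is false. You are appealing to Lemma \ref{FrobSystemSupportLemma}(3), but (i) that lemma concerns the Frobenius correspondence (\ref{FrobeniusTopCorrEqn}), not the $U_t$-correspondence (\ref{GlobalSGKCorrepondence}) relevant here; and (ii) even there it is $p_1$, not $p_2$, that is an isomorphism, while $p_2$ has degree $[K_p:K_p'']$. For the correspondence actually used in defining $U_{t^{-1}}$, the applicable statement is Lemma \ref{SupportOfCorrespContainedInCanonicalLemma}(3), which only gives that $\mathcal{U}^H_{K_p',k}$ is an open and closed subspace of $p_1^{-1}(\mathcal{U}^H_{K_p,k})$ and the induced map is finite \'etale, not an isomorphism. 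The element $t$ being in the Levi $M_H$ of $P_H$ does not help: conjugation by $t = \opn{diag}(y,1,\dots,1)$ with $v_p(y)<0$ acts nontrivially on the $n-1$ noncompact positive roots $\alpha_{1j}$, and an Iwahori computation gives $\opn{deg}(p_2) = p^{-(n-1)v_p(y)} = p^{v_p((2\rho_{H,\opn{nc}})(t^{-1}))} > 1$. This degree factor is precisely what the paper uses (it is the quantity the paper records as ``the degree of $p_1$,'' which coincides with $\opn{deg}(p_2)$ here): $\opn{Tr}_{p_2}$ contributes $\opn{deg}(p_2)$, and multiplying by the Hecke normalisation $(2\rho_{H,\opn{nc}})(t)$ gives exactly $(2\rho_{H,\opn{nc}})(\langle t\rangle)$. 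If $p_2$ were an isomorphism as you claim, this factor would be missing, and you would get $(2\rho_{H,\opn{nc}})(t)\hat\chi(z_{t^{-1}})$ rather than $(2\rho_{H,\opn{nc}})(\langle t\rangle)\hat\chi(z_{t^{-1}})$ — the ``split part is absorbed into the algebraic normalisation'' step you invoke is exactly the step your hypothesis makes impossible.
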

\begin{proof}
    By analysing the construction of $[\chi]$ in \cite[\S 7]{UFJ}, one can see that the action of 
    \[
     \sigma_{\kappa}^{[j], \vee}(\langle t \rangle) (2\rho_{H, \opn{nc}})(t\langle t \rangle^{-1})^{-1} U_{t^{-1}}
    \]
    is described as follows. We will freely use notation from the proof of Lemma \ref{TheUniversalChiCharacterLemma}. Without loss of generality, we may also assume that $L = \mbb{C}_p$.

    Let $C = \opn{det}(U^p K_p)$ and $C' = \opn{det}(U^p K_p')$, which are compact open subgroups of $\mbf{R}(\mbb{A}_f)$. One can easily see that $C = C'$ because $K_p'$ contains all elements in $T(\mbb{Z}_p)$ which lie in $T^{\diamondsuit}$ modulo $p^{\beta}$. Then we have a commutative diagram of torsors:
    \[
\begin{tikzcd}
{p_2^{-1}M_{H, \opn{dR}, K_p}^{\opn{an}}} \arrow[d, "\phi_{t}^{-1}"'] \arrow[r] & {R_{\opn{dR}, C_p}^{\opn{an}}} \arrow[d, "f"] \\
{p_1^{-1}M_{H, \opn{dR}, K_p}^{\opn{an}}} \arrow[r]                             & {R_{\opn{dR}, C_p}^{\opn{an}}}               
\end{tikzcd}
    \]
    where the horizontal maps are induced from the determinant morphism, and the morphism $f$ is described as 
    \begin{align*}
        \left[ \mbf{R}(\mbb{Q}) \backslash \left( \mbf{R}(\mbb{A}_f^p)/C^p \times \mbf{R}(\mbb{Q}_p) \right) \times R^{\opn{an}} \right] / C_p &\to \left[ \mbf{R}(\mbb{Q}) \backslash \left( \mbf{R}(\mbb{A}_f^p)/C^p \times \mbf{R}(\mbb{Q}_p) \right) \times R^{\opn{an}} \right] / C_p \\
        [x, x', x''] &\mapsto [x, \opn{det}(t^{-1}) x', x''] .
    \end{align*}
    Let $\widehat{\chi}'$ be the unique $p$-adic Hecke character on $\opn{Res}_{F^+/\mbb{Q}}\opn{U}(1)$ such that $\widehat{\chi} = \widehat{\chi}' \circ \mathcal{N}$, and let $\lambda \colon \mbf{R}(\mbb{Q}) \backslash \mbf{R}(\mbb{A}_f) \to L^{\times}$ denote the continuous character given by $\lambda(z_1, z_2) = \widehat{\chi}'(z_2/z_1)$. Then, since $\opn{res}[\chi]$ is constructed as the pullback of the function $\xi \colon R^{\opn{an}}_{\opn{dR}, C_p} \to \mbb{A}^{1, \opn{an}}$, $\xi([x, x', x'']) \defeq \lambda(x)\lambda(x')\mu(x'')$ (where $\mu$ is the unique algebraic character satisfying $\sigma_{\kappa}^{[j], \vee} = \mu^{-1} \circ \opn{det}$), it is enough to understand $\xi \circ f$. But clearly one has
    \[
    \xi \circ f = \lambda(\opn{det}(t^{-1})) \xi = \hat{\chi}(z_{t^{-1}}) \xi .
    \]
     Recall the morphism $p_2 \colon \mathcal{U}^H_{K_p', k+\opn{max}_H(t)} \to \mathcal{U}^H_{K_p, k}$ (given by right-translation by $t$). The above then implies that 
    \[
    \sigma_{\kappa}^{[j], \vee}(\langle t \rangle)(2\rho_{H, \opn{nc}})(t\langle t \rangle^{-1})^{-1} U_{t^{-1}} \cdot \opn{res}[\chi] = \opn{deg}(p_2) \hat{\chi}(z_{t^{-1}}) \cdot \opn{res}[\chi]  
    \]
    by the relation $\opn{Tr}_{p_2} \circ \; p_2^* = \opn{deg}(p_2)$.
    
    The result now follows from the fact that the degree of the morphism $p_2 \colon \mathcal{U}^H_{K_p', k+\opn{max}_H(t)} \to \mathcal{U}^H_{K_p, k}$ is the degree of $q_2 \colon \mathcal{IG}_H/J_p' \to \mathcal{IG}/{J_p}$ (given by right-translation by $t$), where $J_p = J^H_{\diamondsuit}(p^{\beta})$ and $J_p' = t J_p t^{-1} \cap J_p$. But the degree of $q_2$ is given by
    \[
    [J^H_{\diamondsuit}(p^{\beta}) : t^{-1}J_p' t] = [(p^{\beta}T_p(\mu_{p^{\infty}}))^{\oplus n-1} : y^{-1} \cdot (p^{\beta}T_p(\mu_{p^{\infty}}))^{\oplus n-1}] = p^{v_p(y^{-1})(n-1)} = (2\rho_{H, \opn{nc}})( \langle t \rangle t^{-1} ). \qedhere
    \]
\end{proof}

\subsubsection{Step 2}

We now establish an intertwining property between Frobenius and the evaluation maps. We introduce the following matrices. For any $c \in \mbb{Z}_p^{\times}$, let $\xi_c$ denote the diagonal matrix which is the identity outside the $\tau_0$-component, and at the $\tau_0$-component is given by $(c + p^{\beta'}, 1, \dots, 1)$. We let $t_c = w_n^{-1} \xi_c w_n$. For notational brevity, let $L_p = K^H_{\diamondsuit}(p^{\beta})$ and $K_p = K^G_{\opn{Iw}}(p^{\beta})$. For any integer $k \geq 1$, we consider the following map
\[
B_c \colon \mathcal{U}^H_{L_p, k+\beta'} \xrightarrow{\varphi^{\beta'}} \mathcal{U}^H_{L_p, k} \xrightarrow{ \cdot \xi_c} \mathcal{U}^H_{L_p, k} \xrightarrow{\hat{\iota}} \mathcal{U}^G_{K_p, k} \xrightarrow{ \cdot t_c^{-1}} \mathcal{U}^G_{K_p, k} .
\]
We also consider
\[
A \colon \mathcal{U}^H_{L_p, k+\beta'} \xrightarrow{\hat{\iota}} \mathcal{U}^G_{K_p, k+\beta'} \xrightarrow{ \varphi^{\beta'} } \mathcal{U}^G_{K_p, k} .
\]
Note that $B_c$ only depends on $c$ modulo $p^{\beta}$. We will need the following lemma.

\begin{lemma} \label{ABcisTheSameMorphismLemma}
Let $c \in \mbb{Z}_p^{\times}$.
\begin{enumerate}
	\item One has $A^{-1}(\mathcal{Z}_{G, >n+1}(p^{\beta})) = B_c^{-1}(\mathcal{Z}_{G, >n+1}(p^{\beta})) = \mathcal{Z}_{H, \opn{id}}(p^{\beta})$ (where by abuse of notation, $\mathcal{Z}_{H, \opn{id}}(p^{\beta})$ also denotes its intersection with $\mathcal{S}_{H, \diamondsuit}(p^{\beta})$).
	\item Suppose that $U \subset \mathcal{X}_{G, w_n}(p^{\beta})$ is a quasi-compact open subset arising as the adic generic fibre of an open subspace in $\ide{X}_{G, w_n}(p^{\beta})$. Then $A^{-1}(U) = B_c^{-1}(U)$.
\end{enumerate}
\end{lemma}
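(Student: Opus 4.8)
\textbf{Proof plan for Lemma \ref{ABcisTheSameMorphismLemma}.}

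The plan is to reduce everything to an explicit computation on rank one points via the Hodge--Tate period morphism, exactly as in the proof of the support lemmas in \S\ref{TopologicalHeckeCorrsSection} and the description of $\mathcal{Z}_{G, >n+1}(p^{\beta})$ and the canonical loci in terms of tubes of Bruhat cells. The first step is to unwind both $A$ and $B_c$ as morphisms of Shimura--Deligne varieties induced by right translations on the adele group, and to identify the relevant Hodge--Tate images. For the Frobenius maps $\varphi$, recall from \S\ref{DiscussionOfFrobeniusSubSec} that $\varphi$ is the composition $p_1^{-1}$ (an isomorphism) followed by $p_2$ (right translation by $\xi = \opn{diag}(p^{-1}, 1, \dots, 1)$ in the $\tau_0$-component, via $w_n$-conjugation for the $H$-version); iterating $\beta'$ times replaces $\xi$ by $\xi^{\beta'}$. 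The key observation is that both $A$ and $B_c$ fit into the same correspondence framework: on the level of adele-group translations, $A$ translates by (a power of $\xi$) after including via $\hat\iota = \cdot\,\hat\gamma$, while $B_c$ translates by $\hat\gamma \xi_c$ conjugated appropriately and then by $t_c^{-1}$. Since $t_c = w_n^{-1}\xi_c w_n$ and $\hat\gamma = \gamma w_n$, a direct matrix manipulation (using that $\gamma \in P_G(\mbb{Z}_p)$ and $\xi_c$ centralizes $\gamma$ up to an element of the appropriate Iwahori, which must be checked component-wise at $\tau_0$) should show $A$ and $B_c$ induce the \emph{same} map on the underlying topological spaces, or at least maps with identical fibers over the loci in question.

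For part (1), I would argue as follows. By Lemma \ref{FrobSystemSupportLemma}(1) we have $\varphi^{-1}(\mathcal{Z}_m) \subset \mathcal{Z}_m$ with equality after intersecting over $m$, so $\varphi^{-\beta'}(\mathcal{Z}_{G,>n+1}(p^\beta)) = \mathcal{Z}_{G,>n+1}(p^\beta)$ and similarly for $\mathcal{Z}_{H,\opn{id}}(p^\beta)$; combined with Proposition \ref{HlociisHasseProp}(1), which gives $\hat\iota^{-1}(\mathcal{Z}_{G,>n+1}(p^\beta)^c) = \mathcal{Z}_{H,\opn{id}}(p^\beta)^c$, this already shows $A^{-1}(\mathcal{Z}_{G,>n+1}(p^\beta)) = \mathcal{Z}_{H,\opn{id}}(p^\beta)$. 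For $B_c$, the only additional ingredient is that right translation by $\xi_c$ on $\mathcal{S}_{H,\diamondsuit}(p^\beta)$ (an isomorphism, as $\xi_c \in H(\mbb{Z}_p)$ since $c + p^{\beta'}$ is a unit — here one uses $\beta' \leq \beta$ and that $\xi_c$ lies in the relevant level group) preserves $\mathcal{Z}_{H,\opn{id}}(p^\beta)$, and similarly translation by $t_c^{-1}$ on $\mathcal{S}_{G,\opn{Iw}}(p^\beta)$ preserves $\mathcal{Z}_{G,>n+1}(p^\beta)$; this last point follows from the same Bruhat-cell computation used to prove $p_1p_2^{-1}(\mathcal{Z}) = \mathcal{Z}$ in Lemma \ref{p1p2-1ZisZLemma}, noting $t_c$ is conjugate to a diagonal matrix with a single entry of $p$-valuation equal to zero (indeed $v_p(c+p^{\beta'}) = 0$), hence acts trivially enough on the $(n+1,n-1)$-block structure. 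Part (2) is then a statement about the ordinary locus only: over $\mathcal{X}_{G,w_n}(p^\beta)$ the torsor trivializations give explicit moduli descriptions, and both $A$ and $B_c$ restrict to the \emph{same} endomorphism of $\mathcal{IG}_{H,\opn{id}}(p^\beta)$-level data — the point being that on the ordinary locus, translation by $\xi_c = \xi_c$ followed by $\hat\iota$ and $t_c^{-1}$ differs from $\hat\iota$ followed by Frobenius only by a unipotent adjustment which acts trivially on the Igusa tower modulo the relevant subgroup — so they agree as maps of formal schemes, and hence $A^{-1}(U) = B_c^{-1}(U)$ for $U$ a tube of an open in $\ide{X}_{G,w_n}(p^\beta)$.

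The main obstacle I anticipate is the explicit matrix bookkeeping at the $\tau_0$-component needed to verify that $A$ and $B_c$ genuinely coincide (rather than merely having the same fibers over $\mathcal{Z}^c$ and over $U$): one must track how $\hat\gamma = \gamma w_n$ interacts with the diagonal elements $\xi_c^{\beta'}$ and $t_c = w_n^{-1}\xi_c w_n$, using the precise form of $\gamma_{\tau_0} = u_{\tau_0}\tbyts{1}{x_{\tau_0}}{}{1}$ from Definition \ref{NewDefOfGamma} and the matrix for $w_n$ from \cite[Definition 2.4.1]{UFJ}. The identity $\hat\gamma\,\xi_c^{\beta'} = t_c^{\beta'}\,\hat\gamma\cdot(\text{element of the level subgroup})$ — or the version relating the two after the extra Frobenius twist — is what makes $A = B_c$, and checking it amounts to a finite commutator computation in $\opn{GL}_{2n}(\mbb{Z}_p)$ that I would carry out by writing $\xi_c$ and $w_n$ in block form with respect to the $(n+1, n-1)$-decomposition. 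Once this identity is in hand, both parts of the lemma are immediate from it together with the support-preservation properties already established, without needing the rank-one-point analysis separately; the Hodge--Tate description serves mainly as a sanity check on the block computation.
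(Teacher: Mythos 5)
Your overall strategy matches the paper's: for part (1), combine the support-preservation properties of Frobenius (Lemma \ref{FrobSystemSupportLemma}), of the Hecke translations by $t_c^{-1}$ and $\xi_c$ (Lemma \ref{p1p2-1ZisZLemma}), and of $\hat\iota$ (Proposition \ref{HlociisHasseProp}); for part (2), pass to the ordinary locus, express $A$ and $B_c$ as right-translations on the full Igusa towers $\mathcal{IG}_H \to \mathcal{IG}_G$, and show they differ by a ``harmless'' unipotent adjustment. This is essentially the paper's argument.

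Two of the specifics in your sketch, however, are off. First, the commutator identity is not $\hat\gamma\,\xi_c^{\beta'} = t_c^{\beta'}\hat\gamma\cdot(\text{level element})$ (nor a variant with a single Frobenius twist): the Frobenius element $\xi = \opn{diag}(p^{-1},1,\dots,1)$ must appear on \emph{both} sides, and only a single $\xi_c$ enters. Unwinding $A$ (translation by $\hat\gamma\cdot w_n^{-1}\xi^{\beta'}w_n = \gamma\xi^{\beta'}w_n$) and $B_c$ (translation by $\xi^{\beta'}\xi_c\hat\gamma\,t_c^{-1} = \xi^{\beta'}\xi_c\gamma\xi_c^{-1}w_n$), the correct identity to verify is $\xi^{\beta'}\xi_c\gamma\xi_c^{-1} = \gamma\xi^{\beta'}u_c$. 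Second, and more importantly, this is \emph{not} a computation in $\opn{GL}_{2n}(\mbb{Z}_p)$ and $u_c$ is \emph{not} an element of a level subgroup: the relevant $\gamma$ is the $\mbb{Z}_p^{\opn{cycl}}$-point of $J_{G,\opn{ord}}^+$ from the end of \S\ref{QuotientsOfIgVarSubSec}, whose unipotent part has the Tate-module entry $\varepsilon$, and the correcting factor works out to $u_c = \tbyts{1}{\omega_c}{}{1}$ with $\omega_{c,n}=\varepsilon^c \in T_p\mu_{p^\infty}(\mbb{Z}_p^{\opn{cycl}})$. Consequently $A$ and $B_c$ do \emph{not} agree as maps of formal schemes; they differ by right-translation by $u_c$. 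The reason $A^{-1}(U) = B_c^{-1}(U)$ nonetheless holds is that $u_c$ is topologically unipotent (its entries lie in $1+R^{00}$, hence it reduces to the identity on the special fiber), so it acts as the identity on the underlying topological space of $\ide{IG}_G$ and therefore preserves every open subscheme. Your phrase ``acts trivially on the Igusa tower modulo the relevant subgroup'' gestures at this but misattributes the mechanism; without identifying the topological-unipotency/special-fiber point, the reduction of part (2) to the commutator identity is incomplete.
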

\begin{proof}
    The first part follows from Lemma \ref{p1p2-1ZisZLemma} and Lemma \ref{FrobSystemSupportLemma}. For the second part we may assume, without loss of generality, that $L$ contains $\mbb{Q}_p^{\opn{cycl}}$. Then, over $\mathcal{X}_{G, w_n}(p^{\beta})$, the morphisms $A$ and $B_c$ are induced from the morphisms 
    \[
    \mathcal{IG}_{H} \to \mathcal{IG}_G
    \]
    given by right-translation by $\gamma \xi^{\beta'}$ and $\xi^{\beta'} \xi_c \gamma \xi_c^{-1}$ respectively, where $\xi$ is the element of the torus in the definition of Frobenius (see \S \ref{DiscussionOfFrobeniusSubSec}) and $\gamma$ is the element introduced at the end of \S \ref{QuotientsOfIgVarSubSec}. For any $c \in \mbb{Z}_p^{\times}$, let 
    \[
    \omega_c = (\omega_{c, i})_{1 \leq i \leq 2n-1} \in \widetilde{\mu_{p^{\infty}}}^{\oplus 2n-1}(\mbb{Z}_p^{\opn{cycl}}) 
    \]
    denote the element satisfying $\omega_{c, i} = 1$ if $i \neq n$, and $\omega_{c, n} = \varepsilon^c$. Let $u_c \in J_{G,\opn{ord}}^+(\mbb{Z}_p^{\opn{cycl}})$ denote the element which is the identity outside the $\tau_0$-component, and at the $\tau_0$-component is given by the block matrix
    \[
    \tbyt{1}{\omega_c}{}{1} .
    \]
    Then an explicit calculation shows that $\xi^{\beta'} \xi_c \gamma \xi_c^{-1} = \gamma \xi^{\beta'} u_c$. The claim now follows from the fact that right-translation by $u_c$ maps any open subscheme of $\mathfrak{IG}_{G}$ into itself.
\end{proof}

We now introduce some morphisms which encode the intertwining property over the ordinary loci.

\begin{definition}
Let $U \subset \mathcal{X}_{G, w_n}(p^{\beta})$ be a quasi-compact open subset arising as the adic generic fibre of an open subspace of $\ide{X}_{G, w_n}(p^{\beta})$. Set $U' = A^{-1}(U) = B_c^{-1}(U)$ (for any $c \in \mbb{Z}_p^{\times}$).
\begin{enumerate}
    \item Let $\rho_{A, U}^{\opn{ord}} \colon \mathscr{M}_{G, \kappa^*}(U) \to \mathscr{M}_{H, \sigma_{\kappa}^{[j]}}(U')$ denote the $L$-linear morphism given by the composition
    \[
    \mathscr{M}_{G, \kappa^*}(U) \xrightarrow{\psi_{G, \kappa^*}^{\beta'}} \mathscr{M}_{G, \kappa^*}((\varphi^{\beta'})^{-1}U) \xrightarrow{\vartheta^{\dagger, \circ}_{\kappa, j+\chi_p, \beta}} \mathscr{M}_{H, \sigma_{\kappa}^{[j]}}(U') .
    \]
    \item Let $\rho_{B_c, U}^{\opn{ord}} \colon \mathscr{M}_{G, \kappa^*}(U) \to \mathscr{M}_{H, \sigma_{\kappa}^{[j]}}(U')$ denote the $L$-linear morphism obtained as the following composition:
    \begin{align*}
        \mathscr{M}_{G, \kappa^*}(U) &\xrightarrow{\phi_{t_c^{-1}, \kappa^*}} \mathscr{M}_{G, \kappa^*}((t_c^{-1})^{-1}U) \\
         &\xrightarrow{\vartheta^{\dagger}_{\kappa, j, \beta}} \mathscr{M}_{H, \sigma_{\kappa}^{[j]}}((t_c^{-1} \circ \hat{\iota})^{-1}U) \\
         &\xrightarrow{\phi_{\xi_c, \sigma_{\kappa}^{[j]}}} \mathscr{M}_{H, \sigma_{\kappa}^{[j]}}((t_c^{-1} \circ \hat{\iota} \circ \xi_c)^{-1}U) \\
         &\xrightarrow{1_{U_{H, \beta'}} \star -} \mathscr{M}_{H, \sigma_{\kappa}^{[j]}}((t_c^{-1} \circ \hat{\iota} \circ \xi_c)^{-1}U) \\
         &\xrightarrow{\psi_{H, \sigma_{\kappa}^{[j]}}^{\beta'}} \mathscr{M}_{H, \sigma_{\kappa}^{[j]}}(U') .
    \end{align*}
    We set 
    \[
     \rho_{B, U}^{\opn{ord}} \defeq \frac{p^{\beta' j_{\tau_0}}}{p^{\beta-\beta'}\mathscr{G}(\chi_{p, \bar{\tau}_0}^{-1})} \sum_{c \in \left(\mbb{Z}/p^{\beta}\mbb{Z}\right)^{\times}} \chi_{p, \bar{\tau}_0}(c)^{-1}  \rho_{B_c, U}^{\opn{ord}} 
    \]
    for a fixed choice of representatives of $\left(\mbb{Z}/p^{\beta}\mbb{Z}\right)^{\times}$.
\end{enumerate}
\end{definition}

The following lemma shows that these morphisms are in fact equal.

\begin{lemma} \label{LemmaRhoAURhoBUordEquals}
We have $\rho_{A, U}^{\opn{ord}} = \rho_{B, U}^{\opn{ord}}$.
\end{lemma}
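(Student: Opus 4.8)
\textbf{Proof strategy for Lemma \ref{LemmaRhoAURhoBUordEquals}.}

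The plan is to work entirely over the ordinary locus, where all the sheaves $\mathscr{M}_{G,\kappa^*}$ and $\mathscr{M}_{H,\sigma_\kappa^{[j]}}$ can be described explicitly via the Igusa towers $\mathcal{IG}_{G, w_n}(p^\beta)$ and $\mathcal{IG}_{H, \opn{id}}(p^\beta)$, and the various operators ($\vartheta^{\dagger,\circ}$, $\vartheta^{\dagger}$, Frobenius $\psi_{\bullet,\kappa^*}$, the Hecke correspondences $\phi_{t_c^{-1},\kappa^*}$ and $\phi_{\xi_c,\sigma_\kappa^{[j]}}$, and the action of $1_{U_{H,\beta'}}$ and $1_{U_{G,\beta}^\circ,\chi_p}$) all become concrete formulas in terms of right-translation on the torsor $\mathcal{IG}_{G, w_n}(p^\beta) \to \mathcal{X}_{G, w_n}(p^\beta)$ and the polynomial action $\star_{\ide u}$. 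By the assumption on $U$ and Lemma \ref{ABcisTheSameMorphismLemma}(2), both morphisms $\rho_{A,U}^{\opn{ord}}$ and $\rho_{B,U}^{\opn{ord}}$ have the same source and target, so the statement is a pointwise identity of $L$-linear maps $\mathscr{M}_{G,\kappa^*}(U) \to \mathscr{M}_{H,\sigma_\kappa^{[j]}}(U')$.

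First I would unwind $\rho_{A,U}^{\opn{ord}}$. By Definition \ref{AbstractDefOfThetadagger} and \ref{abstractcirclalgTheta}, $\vartheta^{\dagger,\circ}_{\kappa,j+\chi_p,\beta}$ is built from the vector $\delta_{\kappa,j}^{\dagger}$ twisted by the weighted indicator function $1_{U_{G,\beta}^\circ,\chi_p}$ (using Proposition \ref{SanDeltaDaggerIsAnEvectorProp}(3), $\delta^{\dagger,s\text{-an}}_{\kappa,j+\chi_p,\beta} = 1_{U_{G,\beta}^\circ,\chi_p}\cdot\delta^{\dagger}_{\kappa,j,\beta}$), composed with the $\beta'$-th power of the Frobenius $\psi_{G,\kappa^*}$. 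Using the explicit description of $\psi_{G,\kappa^*}$ over the ordinary locus from \S \ref{AVersionOfFrobOverOrdSSec} (namely $(\psi_{G,\kappa^*}f)(x) = \kappa^*(\xi^{-1})\,\xi\cdot f(x\cdot\xi)$), and the relation $\xi^{\beta'}\xi_c\gamma\xi_c^{-1} = \gamma\xi^{\beta'}u_c$ established in the proof of Lemma \ref{ABcisTheSameMorphismLemma}, I would rewrite the action of right-translation by $\gamma\xi^{\beta'}$ as an average over $c \in (\mathbb{Z}/p^\beta\mathbb{Z})^\times$ of right-translations by $\xi^{\beta'}\xi_c\gamma\xi_c^{-1}$, detecting the $\chi_{p,\bar\tau_0}$-component of the coordinate $a_{n+1}$ via the Gauss-sum identity $1_{U_{G,\beta}^\circ,\chi_p}(\dots) = \frac{1}{p^{\beta-\beta'}\mathscr{G}(\chi_{p,\bar\tau_0}^{-1})}\sum_c \chi_{p,\bar\tau_0}(c)^{-1}\,\chi_{p,\bar\tau_0}(\text{shift by }p^{\beta'})(\dots)$. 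This is where the constants $p^{\beta'j_{\tau_0}}$, $(c+p^{\beta'})^{-j_{\tau_0}}$, and $\mathscr{G}(\chi_{p,\bar\tau_0}^{-1})$ enter: the factor $(c+p^{\beta'})^{-j_{\tau_0}}$ arises because conjugating $\gamma$ (which contains $x_{\tau_0}$ with entry $\varepsilon$ in the $n$-th slot) by $\xi_c$ rescales that entry, and $\delta_{\kappa,j}$ is homogeneous of degree $j_{\tau_0}$ in the relevant coordinate. After this rewriting, term-by-term one should recognise each summand as precisely $\rho_{B_c,U}^{\opn{ord}}$: the composition $\phi_{t_c^{-1},\kappa^*}$ followed by $\vartheta^{\dagger}_{\kappa,j,\beta}$ followed by $\phi_{\xi_c,\sigma_\kappa^{[j]}}$ implements right-translation by $t_c^{-1}$, then by $\gamma$, then by $\xi_c$ (this uses Lemma \ref{OpenOrbitForgammauvLemma} and the fact that $\hat\gamma = \gamma w_n$), while $1_{U_{H,\beta'}}\star-$ cuts out the right support and $\psi_{H,\sigma_\kappa^{[j]}}^{\beta'}$ supplies the remaining $\xi^{\beta'}$; the normalising weights $\kappa^*(\xi)$ versus $\sigma_\kappa^{[j]}(\xi)$ must be checked to match up using Assumption \ref{AssumpOnKJforACchar} and the definitions of $\sigma_\kappa^{[j]}$ and $\kappa^*$.

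The key point making everything match is Proposition \ref{KeyPropforHolOnX} / Corollary \ref{KeyCorForHolOnX}: the derivations $\nabla_i$ (equivalently the operators $\theta_i$, by Proposition \ref{IntertwiningOfNablaThetaiProp}) interact with the $\overline{\ide p}_G$-action and with right-translation on the torsor in a controlled way, so that ``differentiate then translate'' and ``translate then differentiate'' differ by explicit lower-order terms that are absorbed by the branching-law vector $x_\kappa^{[j]}$ being a lowest weight vector (Corollary \ref{xkappajValuedInCor}). Concretely, I would check the identity first on $\mathcal{IG}_{G,w_n}(p^\beta)$-sections, where it reduces to a finite computation with the matrices $\gamma_{\tau_0}, \xi_c, t_c, u_c$ and the polynomial functions $\delta_{\kappa,j}$, and then descend: since $U$ is the adic generic fibre of an open in $\ide X_{G,w_n}(p^\beta)$, the restriction map $\mathscr{M}_{G,\kappa^*}(U) \to \mathscr{M}_{G,\kappa^*}(\mathcal{IG}_{G,w_n}(p^\beta) \times_{\mathcal{X}_{G,w_n}(p^\beta)} U)$ is injective (it is a pro-étale torsor), so an identity of sections over the Igusa tower that is equivariant for the structure group descends to $U$. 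The main obstacle I anticipate is not the conceptual structure but the bookkeeping of the power-of-$p$ and Gauss-sum normalisations: one must carefully track how $\kappa^*(\xi^{-1})$, $\sigma_\kappa^{[j]}(\xi_c)$, the degree-$j_{\tau_0}$ homogeneity of $\delta_{\kappa,j}$, and the shift $a_{n+1} \mapsto p^{\beta'}a_{n+1}$ in $f_{\chi_p}$ combine, and verify that the averaging constant $\frac{p^{\beta'j_{\tau_0}}}{p^{\beta-\beta'}\mathscr{G}(\chi_{p,\bar\tau_0}^{-1})}$ in the definition of $\rho_{B,U}^{\opn{ord}}$ is exactly what is needed. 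I expect this is a direct but delicate matrix-and-character computation that I would carry out over a single affine chart of the Igusa tower.
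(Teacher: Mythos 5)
Your proposal follows essentially the same route as the paper: compute both $\rho^{\opn{ord}}_{A,U}$ and $\rho^{\opn{ord}}_{B,U}$ explicitly on sections over the Igusa tower after base-changing to $\mbb{Q}_p^{\opn{cycl}}$, exploit the commutation $\xi^{\beta'}\xi_c\gamma\xi_c^{-1}=\gamma\xi^{\beta'}u_c$ from Lemma \ref{ABcisTheSameMorphismLemma}, expand the locally constant functions $f_{\chi_p}$ and $1_{U_{H,\beta'}}$ as Gauss-sum averages of characters $\chi_{\omega_c}$ and $\chi_{\omega_{\underline d}}$, and match each summand with $\rho^{\opn{ord}}_{B_c,U}$; descent to $U$ then follows because the restriction to the Igusa tower is injective. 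That is all correct and is exactly what the paper does.

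The one place you go astray is in identifying ``the key point''. Proposition \ref{KeyPropforHolOnX} and Corollary \ref{KeyCorForHolOnX} concern the commutator of the $\ide u_G$-action $\star_{\ide u}$ with the $\overline{\ide p}_G$-action $\star_{\overline P}$, and they are invoked only to show that the $\vartheta$-maps carry holomorphic (resp.\ overconvergent) forms to holomorphic (resp.\ overconvergent) forms (Proposition \ref{ClassicalThetaPreservesHolProp}, Lemma \ref{IndicatorFunctionsPreserveOCLemma}, Proposition \ref{DaggerThetaTakesOCtoOCProp}). They play no role in the proof of Lemma \ref{LemmaRhoAURhoBUordEquals}, which never touches $\overline{\ide u}_H$ and contains no ``lower-order correction'' terms to absorb. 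The ingredient that actually makes the Frobenius conjugation work is the clean compatibility
$\xi^{\beta'}\cdot\bigl((\xi^{-\beta'}\cdot\phi)\star h\bigr)=\phi\star(\xi^{\beta'}\cdot h)$,
together with the identity $\xi^{-\beta'}\cdot 1_{U_{G,\beta}^\circ,\chi_p}=1^{\spadesuit}_{U_{G,\beta}^\circ,\chi_p}$ and the $\sigma_\kappa^{[j]}$-eigenvector property of $\delta^{\dagger}_{\kappa,j,\beta}$ under $(\xi^{-\beta'},\xi^{-\beta'})$ and $(\xi_c^{-1},\xi_c^{-1})$. If you follow your plan but look for Corollary \ref{KeyCorForHolOnX}-style corrections, you will be chasing terms that aren't there; once you replace that step by the simple equivariance of $\star$ under the $J_{G,\opn{ord}}$-translations on the Igusa tower, the bookkeeping of the constants $p^{\beta'j_{\tau_0}}$, $(c+p^{\beta'})^{-j_{\tau_0}}$, and $\mathscr{G}(\chi_{p,\bar\tau_0}^{-1})^{-1}$ closes up as you describe.
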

\begin{proof}
    It suffices to prove the statement after base-changing to an extension $L$ containing $\mbb{Q}_p^{\opn{cycl}}$. Let $\pi_G \colon \mathcal{IG}_{G, w_n}(p^{\beta}) \to \mathcal{X}_{G, w_n}(p^{\beta})$ and $\pi_H \colon \mathcal{IG}_{H, \opn{id}}(p^{\beta}) \to \mathcal{S}_{H, \opn{id}}(p^{\beta})$ denote the structural maps. Let $F \in \mathscr{M}_{G, \kappa^*}(U)$, which we can view as a morphism $F \colon \pi_G^{-1}(U) \to V_{\kappa}^*$ satisfying $F(- \cdot m) = m^{-1} \cdot F(-)$ for all $m \in M^G_{\opn{Iw}}(p^{\beta})$. Recall we have a differential operator $\delta_{\kappa, j} \in V_{\kappa} \otimes C^{\opn{pol}}(\mbb{Q}_p^{\oplus 2n-1}, L)$ which is an eigenvector for the action of $M_H(\mbb{Q}_p)$ with eigencharacter $\sigma_{\kappa}^{[j], -1}$. Fix a basis $\{ v_i \}_{i \in I}$ of $V_{\kappa}$. Then we can write $\delta_{\kappa, j} = \sum_{i \in I} v_i \otimes \delta_i$, for some $\delta_i \in C^{\opn{pol}}(\mbb{Q}_p^{\oplus 2n-1}, L)$. Recall
    \[
    \delta^{\dagger, \circ}_{\kappa, j+\chi_p, \beta} = 1_{U_{G, \beta}^{\circ}, \chi_p} \cdot \delta^{\dagger}_{\kappa, j, \beta} = \sum_{i \in I} u^{-1} \cdot v_i \otimes 1_{U_{G, \beta}^{\circ}, \chi_p} \cdot (u^{-1} \delta_i)|_{U_{G, \beta}} .
    \]
    Consider the action of $\xi^{-1}$ on $C^{\opn{la}}(U_{G, \beta}, L)$ given by $(\xi^{-1} \cdot \phi)(a_2, \dots, a_{2n}) = \phi(pa_2, \dots, pa_{2n})$. Then one can easily verify that
    \[
    \xi^{\beta'} \cdot \left( (\xi^{-\beta'} \cdot \phi) \star h \right) = \phi \star (\xi^{\beta'} \cdot h)
    \]
    for any $h \in \mathcal{O}(\pi_G^{-1}(U))$ and $\phi \in C^{\opn{la}}(U_{G, \beta}, L)$. 

    Since $\xi$ commutes with $u$, we have $(\xi^{-\beta'}, \xi^{-\beta'}) \cdot \delta^{\dagger}_{\kappa, j, \beta} = \sigma_{\kappa}^{[j]}(\xi^{\beta'}) \delta^{\dagger}_{\kappa, j, \beta}$, and hence
    \begin{equation} \label{XiTwistedDaggerDeltaEqn}
    \delta^{\dagger}_{\kappa, j, \beta} = \sigma_{\kappa}^{[j]}(\xi^{\beta'})^{-1} \sum_{i \in I} \left( \xi^{-\beta'} u^{-1} \cdot v_i \otimes  (\xi^{-\beta'} u^{-1} \delta_i)|_{U_{G, \beta}} \right) .
    \end{equation}
    We now compute $\rho^{\opn{ord}}_{A, U}(F)$. Firstly, consider the function $H \defeq \psi_{G, \kappa^*}^{\beta'}(F)$. By the explicit description of the Frobenius morphism over the ordinary locus (see \S \ref{AVersionOfFrobOverOrdSSec}), we have
    \[
    H(-) = \kappa^*(\xi^{-\beta'}) \xi^{\beta'} \cdot F(- \cdot \xi^{\beta'} ) .
    \]
    For any $x \in \pi_H^{-1}(U')$, we therefore have:
    \begin{align*}
    \rho^{\opn{ord}}_{A, U}(F)(x) &= \vartheta^{\dagger, \circ}_{\kappa, j+\chi_p, \beta}(H)(x) \\  &= \sum_{i \in I} \left( \left[ 1_{U_{G, \beta}^{\circ}, \chi_p} \cdot (u^{-1} \delta_i)|_{U_{G, \beta}} \right] \star \langle H(-), u^{-1}v_i \rangle \right)(\hat{\iota}(x)) \\ &= \kappa^*(\xi^{-\beta'}) \sum_{i \in I} \left( \left[ 1_{U_{G, \beta}^{\circ}, \chi_p} \cdot (u^{-1} \delta_i)|_{U_{G, \beta}} \right] \star \xi^{\beta'} \cdot \langle F(-), \xi^{-\beta'}u^{-1}v_i \rangle \right)(\hat{\iota}(x)) \\ &= \kappa^*(\xi^{-\beta'}) \sum_{i \in I} \left( \left[ 1^{\spadesuit}_{U_{G, \beta}^{\circ}, \chi_p} \cdot (\xi^{-\beta'}u^{-1} \delta_i)|_{U_{G, \beta}} \right] \star \langle F(-), \xi^{-\beta'}u^{-1}v_i \rangle \right)(\hat{\iota}(x) \xi^{\beta'}) \\ 
     &= p^{\beta' j_{\tau_0}} \sum_{i \in I} \left( \left[ 1^{\spadesuit}_{U_{G, \beta}^{\circ}, \chi_p} \cdot (u^{-1} \delta_i)|_{U_{G, \beta}} \right] \star \langle F(-), u^{-1}v_i \rangle \right)(\hat{\iota}(x) \xi^{\beta'})
    \end{align*}
    where for the fourth equality we have used the fact that $\xi^{-\beta'} \cdot 1_{U_{G, \beta}^{\circ}, \chi_p} = 1^{\spadesuit}_{U_{G, \beta}^{\circ}, \chi_p}$, and for the fifth equality we have used $\kappa^*(\xi^{-\beta'})\sigma_{\kappa}^{[j]}(\xi^{\beta'}) = p^{\beta' j_{\tau_0}}$ and (\ref{XiTwistedDaggerDeltaEqn}).

    Recall the definitions of $\omega_c$ and $u_c$ from Lemma \ref{ABcisTheSameMorphismLemma}. For any tuple $\underline{d} = (d_2, \dots, d_n) \in (p^{\beta'} \mbb{Z})^{\oplus n-1}$, let 
    \[
    \omega_{\underline{d}} = (\omega_{\underline{d}, i})_{2 \leq i \leq 2n} \in \widetilde{\mu_{p^{\infty}}}^{\oplus 2n-1}(\mbb{Z}_p^{\opn{cycl}})
    \]
    be the element which satisfies $\omega_{\underline{d}, i} = \varepsilon^{d_i}$ if $2 \leq i \leq n$, and $\omega_{\underline{d}, i} = 1$ if $i \geq n+1$. Let $v_{\underline{d}} \in J_{H, \opn{ord}}^+(\mbb{Z}_p^{\opn{cycl}}) \subset J_{G, \opn{ord}}^+(\mbb{Z}_p^{\opn{cycl}})$ denote the element which is the identity outside the $\tau_0$-component, and in the $\tau_0$ is given by the block matrix 
    \[
    \tbyt{1}{\omega_{\underline{d}}}{}{1} .
    \]
    Then, with notation as in \S \ref{DiffOpsCcontsubsec}, we have the following relations: 
    \begin{itemize}
        \item The function $f_{\chi_p}$ satisfies 
        \begin{equation} \label{fchiInTermsOfchisEqn}
        f_{\chi_p} = \frac{1}{p^{\beta-\beta'}\mathscr{G}(\chi_{p, \bar{\tau}_0}^{-1})} \sum_{c \in \left(\mbb{Z}/p^{\beta}\mbb{Z}\right)^{\times}} \chi_{p, \bar{\tau}_0}(c)^{-1} \chi_{\omega_c} .
        \end{equation}
        Indeed, evaluating the right-hand side of (\ref{fchiInTermsOfchisEqn}) at an element $(a_2, \dots, a_{2n}) \in U_{G, \beta}$, we have
    \[
    \frac{1}{p^{\beta-\beta'}\mathscr{G}(\chi_{p, \bar{\tau}_0}^{-1})} \sum_{c \in \left(\mbb{Z}/p^{\beta}\mbb{Z}\right)^{\times}} \chi_{p, \bar{\tau}_0}(c)^{-1} \varepsilon_{\beta}^{c p^{\beta}a_{n+1}} = \left\{ \begin{array}{cc} 0 & \text{ if } a_{n+1} \not\in p^{-\beta'}\mbb{Z}_p^{\times} \\ \chi_{p, \bar{\tau}_0}(p^{\beta'}a_{n+1}) & \text{ if } a_{n+1} \in p^{-\beta'}\mbb{Z}_p^{\times} \end{array} \right.
    \]
    \item The function $1_{U_{H, \beta'}}$ satisfies
    \[
    1_{U_{H, \beta'}} = \frac{1}{p^{(n-1)(\beta - \beta')}}\sum_{\underline{d} \in \left(p^{\beta'}\mbb{Z}/p^{\beta}\mbb{Z}\right)^{\oplus n-1}} \chi_{\omega_{\underline{d}}} .
    \]
    \end{itemize}
    Using these expressions, we see that $\rho^{\opn{ord}}_{A, U}(F)(x)$ is equal to:
    \begin{align*} 
    \frac{p^{\beta' j_{\tau_0}}}{p^{\beta-\beta'}\mathscr{G}(\chi_{p, \bar{\tau}_0}^{-1})} \sum_{c \in \left(\mbb{Z}/p^{\beta}\mbb{Z}\right)^{\times}} \chi_{p, \bar{\tau}_0}(c)^{-1} &\cdot \\    
      &\hspace{-3cm} \left( \frac{1}{p^{(n-1)(\beta - \beta')}}\sum_{\underline{d} \in \left(p^{\beta'}\mbb{Z}/p^{\beta}\mbb{Z}\right)^{\oplus n-1}} \sum_{i \in I} \left(  (u^{-1} \delta_i)|_{U_{G, \beta}} \star \langle F(-), u^{-1}v_i \rangle \right)(\hat{\iota}(x) \xi^{\beta'} u_c v_{\underline{d}}) \right) .
    \end{align*} 
    Now because $\hat{\iota}$ is induced from right-translation by $\gamma$ and commutes with elements in $\opn{Unip}(J_{H, \opn{ord}})$, we see that $\hat{\iota}(x) \xi^{\beta'} u_c v_{\underline{d}} = \hat{\iota}(x \xi^{\beta'} v_{\underline{d}} \xi_c) \xi_c^{-1}$. Set $F' \defeq \phi_{t_c^{-1}, \kappa^*} (F)$ -- explicitly, one has  $F'(-) = \xi_c^{-1} \cdot F(- \cdot \xi_c^{-1})$. Since $(\xi_c^{-1}, \xi_c^{-1}) \cdot \delta^{\dagger}_{\kappa, j, \beta} = \sigma_{\kappa}^{[j]}(\xi_c) \delta^{\dagger}_{\kappa, j, \beta}$, we see that
    \begin{align*}
    \sum_{i \in I} \left(  (u^{-1} \delta_i)|_{U_{G, \beta}} \star \langle F(-), u^{-1}v_i \rangle \right)(\hat{\iota}(x \xi^{\beta'} v_{\underline{d}} \xi_c) \xi_c^{-1}) &= \sigma_{\kappa}^{[j]}(\xi_c) \cdot \vartheta^{\dagger}_{\kappa, j, \beta}(F')( x \xi^{\beta'} v_{\underline{d}} \xi_c ) \\ &= \sigma_{\kappa}^{[j]}(\xi_c) \cdot \vartheta^{\dagger}_{\kappa, j, \beta}(F')( x \xi^{\beta'} v_{\underline{d}} \xi_c ) .
    \end{align*} 
    Therefore, we see that
    \[
     \rho^{\opn{ord}}_{A, U}(F) = \frac{p^{\beta' j_{\tau_0}}}{p^{\beta-\beta'}\mathscr{G}(\chi_{p, \bar{\tau}_0}^{-1})} \sum_{c \in \left(\mbb{Z}/p^{\beta}\mbb{Z}\right)^{\times}} \chi_{p, \bar{\tau}_0}(c)^{-1} \left[ \psi_{H, \sigma_{\kappa}^{[j]}}^{\beta'} \circ (1_{U_{H, \beta'}} \star - ) \circ \phi_{\xi_c, \sigma_{\kappa}^{[j]}} \right]( \vartheta^{\dagger}_{\kappa, j, \beta}(F') ) . 
    \]
    This coincides with $\rho^{\opn{ord}}_{B, U}$ as required.
\end{proof}

We now consider an overconvergent version of this result. We will continually use the following fact: if $f \colon \mathcal{X} \to \mathcal{Y}$ is a finite morphism of adic spaces which is of finite presentation, and $\mathscr{F}$ is a sheaf on $\mathcal{Y}$ which is locally free of finite rank, then $f^{-1}$ sends $\mathscr{F}$-acyclic covers of $\mathcal{Y}$ to $f^*\mathscr{F}$-acyclic covers of $\mathcal{X}$. This follows from a simple application of the projection formula. We will apply this to the setting where $\mathcal{X}$ and $\mathcal{Y}$ are both open subspaces of a third space $\mathcal{C}$, $\mathscr{F}$ is a sheaf on $\mathcal{C}$ locally free of finite rank, and $f$ corresponds to a Hecke operator (for example Frobenius) satisfying the property $f^* \mathscr{F}|_{\mathcal{Y}} \cong \mathscr{F}|_{\mathcal{X}}$. One then sees that $f^{-1}$ sends $\mathscr{F}$-acyclic covers of $\mathcal{Y}$ to $\mathscr{F}$-acyclic covers of $\mathcal{X}$.

Since, in this section, we are working with Shimura varieties rather than the moduli spaces of unitary abelian varieties, we set $\mathcal{S}_{G, w_n}(p^{\beta})_k \defeq \mathcal{X}_{G, w_n}(p^{\beta})_k$ and $\mathcal{S}_{H, \opn{id}}(p^{\beta})_k = \mathcal{X}_{H, \opn{id}}(p^{\beta})_k \cap \mathcal{S}_{H, \diamondsuit}(p^{\beta})$ for any integer $k \geq 1$. We will also frequently use the following notation: if $\{ U_i \}_{i \in I}$ is an open cover of an adic space $\mathcal{X}$, then for any subset $\mbf{I} \subset I$, we set $U_{\mbf{I}} \defeq \cap_{i \in \mbf{I}} U_i$.

We begin with the following proposition.

\begin{proposition} \label{ExistenceOfInjectiveAcyclicOCcoverProp}
    Let $k \geq 1$ be an integer such that $\mathcal{S}_{G, w_n}(p^{\beta})_k \subset \mathcal{U}^G_{K_p, \beta}$. Let $\ide{U} = \{ U_i \}_{i \in I}$ be a finite $\mathscr{M}_{G, \kappa^*}$-acyclic cover of $\mathcal{S}_{G, w_n}(p^{\beta})_k$ as in Lemma \ref{AcyclicityCoverLemma} (i.e. $U_i \in \mathcal{C}_{G,H}$ and $\ide{V} = \{ V_i \}_{i \in I} = \{ U_i - \mathcal{Z}_{G, >n+1}(p^{\beta}) \}_{i \in I}$ is an acyclic cover). Then there exists a sufficiently large integer $k' \geq k$ and a finite collection $\ide{U}' = \{ U_j' \}_{j \in J}$ of quasi-compact open subspaces of $\mathcal{S}_{H, \diamondsuit}(p^{\beta})$ such that:
    \begin{enumerate}
        \item There exists a surjective map $q \colon J \twoheadrightarrow I$ such that for all $j \in J$:
        \[
        U_j' \subset A^{-1}(U_{q(j)}) \cap \bigcap_{c \in \mbb{Z}_p^{\times}} B_c^{-1}(U_{q(j)}) .
        \]
        \item $\mathcal{S}_{H, \opn{id}}(p^{\beta})_{k'} \subset \bigcup_{j \in J} U_j'$ and the covers $\{ U_j' \cap \mathcal{S}_{H, \opn{id}}(p^{\beta})_{k'} \}_{j \in J}$ and $\{ V_j' \cap \mathcal{S}_{H, \opn{id}}(p^{\beta})_{k'} \}_{j \in J}$ are $\mathscr{M}_{H, \sigma_{\kappa}^{[j]}}$-acyclic. Here $V_j' \defeq U_j' - \mathcal{Z}_{H, \opn{id}}(p^{\beta})$.
        \item For any subset $\mbf{J} \subset J$, the restriction map
        \[
        \mathscr{M}_{H, \sigma_{\kappa}^{[j]}}( U'_{\mbf{J}} ) \to \mathscr{M}_{H, \sigma_{\kappa}^{[j]}} ( U'_{\mbf{J}} \cap \mathcal{S}_{H, \opn{id}}(p^{\beta}))
        \]
        is injective.
        \item For any subset $\mbf{J} \subset J$ and $c \in \mbb{Z}_p^{\times}$:
        \begin{itemize}
            \item The map $\vartheta^{\dagger, \circ}_{\kappa, j+\chi_p, \beta}$ of ind-systems induces well-defined maps
            \begin{align*}
                \vartheta^{\dagger, \circ}_{\kappa, j+\chi_p, \beta} \colon \mathscr{M}_{G, \kappa^*}\left( (\varphi^{\beta'})^{-1}(U_{q(\mbf{J})}) \right) &\to \mathscr{M}_{H, \sigma_{\kappa}^{[j]}}(U'_{\mbf{J}}) \\
                \vartheta^{\dagger, \circ}_{\kappa, j+\chi_p, \beta} \colon \mathscr{M}_{G, \kappa^*}\left( (\varphi^{\beta'})^{-1}(V_{q(\mbf{J})}) \right) &\to \mathscr{M}_{H, \sigma_{\kappa}^{[j]}}(V'_{\mbf{J}})
            \end{align*}
            which are functorial in $\mbf{J}$ (via the various restriction maps).
            \item The map $1_{U_{H, \beta'}} \star -$ of ind-systems induces well-defined maps:
            \begin{align*}
                \mathscr{M}_{H, \sigma_{\kappa}^{[j]}}\left( (t_c^{-1} \circ \hat{\iota} \circ \xi_c)^{-1}(U_{q(\mbf{J})}) \right) &\xrightarrow{1_{U_{H, \beta'}} \star -} \mathscr{M}_{H, \sigma_{\kappa}^{[j]}}\left( (t_c^{-1} \circ \hat{\iota} \circ \xi_c)^{-1}(U_{q(\mbf{J})}) \cap \mathcal{S}_{H, \opn{id}}(p^{\beta})_r \right) \\
                \mathscr{M}_{H, \sigma_{\kappa}^{[j]}}\left( (t_c^{-1} \circ \hat{\iota} \circ \xi_c)^{-1}(V_{q(\mbf{J})}) \right) &\xrightarrow{1_{U_{H, \beta'}} \star -} \mathscr{M}_{H, \sigma_{\kappa}^{[j]}}\left( (t_c^{-1} \circ \hat{\iota} \circ \xi_c)^{-1}(V_{q(\mbf{J})}) \cap \mathcal{S}_{H, \opn{id}}(p^{\beta})_r \right)
            \end{align*}
            which are functorial in $\mbf{J}$, for some $r \gg 0$ (not depending on $c$ nor $\mbf{J}$) satisfying 
            \[
            U'_{\mbf{J}} \subset (\varphi^{\beta'})^{-1} \left[ (t_c^{-1} \circ \hat{\iota} \circ \xi_c)^{-1}(U_{q(\mbf{J})}) \cap \mathcal{S}_{H, \opn{id}}(p^{\beta})_r \right] .
            \]
        \end{itemize}
    \end{enumerate}
\end{proposition}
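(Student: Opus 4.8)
\textbf{Proof strategy for Proposition \ref{ExistenceOfInjectiveAcyclicOCcoverProp}.} The plan is to construct the cover $\ide{U}'$ by pulling back the cover $\ide{U}$ along the various morphisms $A$ and $B_c$ and then refining. By Lemma \ref{ABcisTheSameMorphismLemma}, the preimages $A^{-1}(U_{q(j)})$ and $B_c^{-1}(U_{q(j)})$ agree as subsets of $\mathcal{S}_{H, \diamondsuit}(p^{\beta})$ whenever $U_{q(j)}$ is the adic generic fibre of an open in $\ide{X}_{G, w_n}(p^{\beta})$. Since each $U_i \in \mathcal{C}_G$ is of this form (after intersecting with $\mathcal{X}_{G, w_n}(p^{\beta})$), the common preimage $\widetilde{U}_i \defeq A^{-1}(U_i \cap \mathcal{X}_{G, w_n}(p^{\beta}))$ is a well-defined quasi-compact open of $\mathcal{S}_{H, \opn{id}}(p^{\beta})$. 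First I would enlarge these to quasi-compact opens $\widehat{U}_i \subset \mathcal{S}_{H, \diamondsuit}(p^{\beta})$ which satisfy $\widehat{U}_i \cap \mathcal{S}_{H, \opn{id}}(p^{\beta}) = \widetilde{U}_i$ and lie inside $A^{-1}(U_i) \cap \bigcap_{c} B_c^{-1}(U_i)$ (this intersection is a single open neighbourhood of $\widetilde{U}_i$ by Lemma \ref{ABcisTheSameMorphismLemma}, noting there are only finitely many relevant $c$ modulo $p^{\beta}$). Then one applies the acyclicity construction of Lemma \ref{AcyclicityCoverLemma} to refine the cover $\{\widehat{U}_i\}$ into an acyclic cover $\{U_j'\}_{j \in J}$ of $\mathcal{S}_{H, \opn{id}}(p^{\beta})_{k'}$ (after possibly increasing $k'$), with the map $q \colon J \to I$ recording which $\widehat{U}_i$ each $U_j'$ refines; this gives (1) and (2).

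For part (3), the key point is that $\{U_j' \cap \mathcal{S}_{H, \opn{id}}(p^{\beta})\}$ is a cover of $\mathcal{S}_{H, \opn{id}}(p^{\beta})_{k'}$ obtained from a cover by elements of $\mathcal{C}_H$ (after refining), and by the argument in Lemma \ref{AcyclicityCoverLemma} (using that the relevant torsors acquire explicit moduli descriptions over the Igusa tower) one can arrange that $\mathscr{M}^{(r, k)}_{H, \sigma_{\kappa}^{[j]}}(U'_{\mbf{J}})$ is, for suitable $r$, of the form $\mathscr{M}^{(r, k)}_{H, \sigma_{\kappa}^{[j]}}(U'_{\mbf{J}}) \widehat{\otimes}_{\mathcal{O}(U'_{\mbf{J}})} \mathcal{O}_{U'_{\mbf{J}}}$ with a projective Banach module of sections. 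The injectivity of the restriction map to sections over the ordinary locus then follows from the fact that $\mathcal{O}(U'_{\mbf{J}}) \to \mathcal{O}(U'_{\mbf{J}} \cap \mathcal{S}_{H, \opn{id}}(p^{\beta}))$ is injective modulo $p^{\ell}$ for all $\ell$ — this uses that $U'_{\mbf{J}} \cap \mathcal{S}_{H, \opn{id}}(p^{\beta})$ is the generic fibre of an open in the formal model $\ide{X}_{H, \opn{id}}(p^{\beta})$, and $\ide{X}_{H, \diamondsuit}(p^{\beta})$ is integrally closed in its generic fibre (Proposition \ref{PropositionLociIntClosedInGenFibre}), so sections over $U'_{\mbf{J}}$ are determined by their restriction to a Zariski-dense open. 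I would phrase this as: a section vanishing on the ordinary locus is, up to a power of the Hasse invariant $\hat{\delta}^+_{H, n+1}$, integrally divisible by arbitrarily large powers of $p$, hence zero.

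Part (4) is essentially a bookkeeping statement about the ind-systems: the maps $\vartheta^{\dagger, \circ}_{\kappa, j+\chi_p, \beta}$ (Definition \ref{NOCDefinitionOfThetadaggers}) and $1_{U_{H, \beta'}} \star -$ (well-defined on overconvergent forms by Lemma \ref{IndicatorFunctionsPreserveOCLemma} and Example \ref{ExampleOfIndicatorFcnPreservingOC}) are each morphisms of ind-sheaves, so on a fixed quasi-compact open they are represented by actual maps $\mathscr{M}^{(r,k)}_{G} \to \mathscr{M}^{(r', k')}_{H}$ for $r', k' \gg r, k$; one just needs to choose $k'$ in (2) and $r$ in the final bullet uniformly large enough to accommodate all the finitely many subsets $\mbf{J} \subset J$ and the finitely many $c$ modulo $p^{\beta}$. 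The containment $U'_{\mbf{J}} \subset (\varphi^{\beta'})^{-1}[(t_c^{-1} \circ \hat{\iota} \circ \xi_c)^{-1}(U_{q(\mbf{J})}) \cap \mathcal{S}_{H, \opn{id}}(p^{\beta})_r]$ for $r \gg 0$ follows from the fact that $U'_j \subset B_c^{-1}(U_{q(j)})$ and $B_c = (\cdot t_c^{-1}) \circ \hat{\iota} \circ (\cdot \xi_c) \circ \varphi^{\beta'}$ by definition, together with the cofinality of the strict neighbourhoods $\mathcal{S}_{H, \opn{id}}(p^{\beta})_r$ inside $\mathcal{S}_{H, \diamondsuit}(p^{\beta})$ around the ordinary locus.

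\textbf{Main obstacle.} The hardest part will be part (3): arranging the cover so that the restriction map to the ordinary locus is \emph{injective} on sections, not merely that the ordinary sections are dense. This forces a careful choice of the $U_j'$ — they must be generic fibres of opens in a formal model for which $\ide{X}_{H, \opn{id}}(p^{\beta})$ sits as the ordinary (non-vanishing Hasse) locus, so that the complement is cut out by powers of $\hat{\delta}^+_{H, n+1}$ — and one must check that the acyclicity refinement of Lemma \ref{AcyclicityCoverLemma} can be performed while preserving this property. Everything else (the set-theoretic preimage identities, the ind-category bookkeeping) is routine once Lemma \ref{ABcisTheSameMorphismLemma} and the integral-closedness in Proposition \ref{PropositionLociIntClosedInGenFibre} are in hand.
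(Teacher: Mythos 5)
Your overall structure — pull back $\ide{U}$ along $A$ and the $B_c$ (using Lemma \ref{ABcisTheSameMorphismLemma} and finiteness of $c$ modulo $p^{\beta}$), refine to an acyclic cover, and do ind-system bookkeeping for parts (1), (2), and (4) — is essentially the route the paper takes, and those parts of your argument are correct.

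The gap is in your argument for part (3), and it is precisely the issue you flag as the ``main obstacle'' but then do not actually resolve. You propose to deduce injectivity of $\mathscr{M}_{H,\sigma_{\kappa}^{[j]}}(U'_{\mbf{J}}) \to \mathscr{M}_{H,\sigma_{\kappa}^{[j]}}(U'_{\mbf{J}} \cap \mathcal{S}_{H,\opn{id}}(p^{\beta}))$ from a ``sections vanishing on the ordinary locus are divisible by arbitrarily large powers of $p$ up to the Hasse invariant'' argument, relying on integral closedness. But this only works if the ordinary locus is dense (equivalently, meets every connected component) of $U'_{\mbf{J}}$. After you refine $\{\widehat{U}_i\}$ to an acyclic cover and pass to finite intersections $U'_{\mbf{J}} = \bigcap_{j\in\mbf{J}} U'_j$, nothing in your construction prevents $U'_{\mbf{J}}$ from having a connected component entirely disjoint from $\mathcal{S}_{H,\opn{id}}(p^{\beta})$. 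On such a component the Hasse invariant is bounded away from $1$, your divisibility argument has no teeth, and a nonzero section supported there restricts to zero on the ordinary locus — so the restriction map is not injective. Asking that $U'_j$ arise from a formal model in which the ordinary locus is the non-vanishing locus of $\hat{\delta}^+_{H,n+1}$ does not rule this out.

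The paper's fix is cleaner and is the step you are missing: after choosing a refinement $\{U_j''\}$ on which the sheaf trivialises, one defines $U_j'$ to be the \emph{union of the connected components of $U_j''$ that actually meet $\mathcal{S}_{H,\opn{id}}(p^{\beta})$}. This forces the natural map $\pi_0(U'_{\mbf{J}} \cap \mathcal{S}_{H,\opn{id}}(p^{\beta})) \to \pi_0(U'_{\mbf{J}})$ to be surjective for every $\mbf{J}$, and then injectivity of $\mathcal{O}(U'_{\mbf{J}}) \to \mathcal{O}(U'_{\mbf{J}} \cap \mathcal{S}_{H,\opn{id}}(p^{\beta}))$ follows directly from smoothness of a qcqs adic space (a function vanishing on a dense open of each component is zero); freeness of the sheaf then gives part (3) with no divisibility estimates at all. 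One also still needs to check, as the paper does, that $\bigcup_j U_j'$ continues to contain the closure of $\mathcal{S}_{H,\opn{id}}(p^{\beta})$ after discarding components, so that a suitable $k'$ exists.
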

\begin{proof}
    Since $t_c^{-1} \circ \hat{\iota} \circ \xi_c$ only depends on $c$ modulo $p^{\beta}$, we can certainly find an integer $r \gg 0$ such that for any subset $\mbf{I} \subset I$ and $c \in \mbb{Z}_p^{\times}$, the maps $\vartheta^{\dagger, \circ}_{\kappa, j+\chi_p, \beta}$ and $1_{U_{H, \beta'}} \star -$ induce morphisms:
    \begin{align*}
        \mathscr{M}_{G, \kappa^*}\left( (\varphi^{\beta'})^{-1}(U_{\mbf{I}}) \right) &\xrightarrow{\vartheta^{\dagger, \circ}_{\kappa, j+\chi_p, \beta}} \mathscr{M}_{H, \sigma_{\kappa}^{[j]}}\left( (\varphi^{\beta'} \circ \hat{\iota})^{-1}(U_{\mbf{I}}) \cap \mathcal{S}_{H, \opn{id}}(p^{\beta})_r \right) \\
                \mathscr{M}_{G, \kappa^*}\left( (\varphi^{\beta'})^{-1}(V_{\mbf{I}}) \right) &\xrightarrow{\vartheta^{\dagger, \circ}_{\kappa, j+\chi_p, \beta}} \mathscr{M}_{H, \sigma_{\kappa}^{[j]}}\left( (\varphi^{\beta'} \circ \hat{\iota})^{-1}(V_{\mbf{I}}) \cap \mathcal{S}_{H, \opn{id}}(p^{\beta})_r \right) \\
                \mathscr{M}_{H, \sigma_{\kappa}^{[j]}}\left( (t_c^{-1} \circ \hat{\iota} \circ \xi_c)^{-1}(U_{\mbf{I}}) \right) &\xrightarrow{1_{U_{H, \beta'}} \star -} \mathscr{M}_{H, \sigma_{\kappa}^{[j]}}\left( (t_c^{-1} \circ \hat{\iota} \circ \xi_c)^{-1}(U_{\mbf{I}}) \cap \mathcal{S}_{H, \opn{id}}(p^{\beta})_r \right) \\
                \mathscr{M}_{H, \sigma_{\kappa}^{[j]}}\left( (t_c^{-1} \circ \hat{\iota} \circ \xi_c)^{-1}(V_{\mbf{I}}) \right) &\xrightarrow{1_{U_{H, \beta'}} \star -} \mathscr{M}_{H, \sigma_{\kappa}^{[j]}}\left( (t_c^{-1} \circ \hat{\iota} \circ \xi_c)^{-1}(V_{\mbf{I}}) \cap \mathcal{S}_{H, \opn{id}}(p^{\beta})_r \right)
    \end{align*}
    Note that $(\varphi^{\beta'})^{-1}(U_{\mbf{I}}) \in \mathcal{C}_G$ and $(t_c^{-1} \circ \hat{\iota} \circ \xi_c)^{-1}(U_{\mbf{I}}) \in \mathcal{C}_H$ because the morphisms under which we are pulling back the opens extend to integral models over the ordinary loci.

    Set $U_i^{\opn{ord}} \defeq U_i \cap \mathcal{S}_{G, w_n}(p^{\beta})$. This is a quasi-compact open subspace which is the adic generic fibre of an open in $\ide{X}_{G, w_n}(p^{\beta})$, hence we have $W_i \defeq A^{-1}(U_i^{\opn{ord}}) = B_c^{-1}(U_i^{\opn{ord}})$ for any $c \in \mbb{Z}_p^{\times}$ (see Lemma \ref{ABcisTheSameMorphismLemma}). Choose a refinement $\ide{U}'' = \{ U_j'' \}_{j \in J}$ of the cover
    \[
    \left\{ A^{-1}(U_i) \cap \bigcap_{c \in \mbb{Z}_p^{\times}} B_c^{-1}(U_i) \cap \mathcal{S}_{H, \opn{id}}(p^{\beta})_r \cap (\varphi^{\beta'})^{-1}(\mathcal{S}_{H, \opn{id}}(p^{\beta})_r) \right\}_{i \in I} =: \{ Y_i \}_{i \in I}
    \]
    such that $\mathscr{M}_{H, \sigma_{\kappa}^{[j]}}|_{U_j''} = \mathscr{M}_{H, \sigma_{\kappa}^{[j]}}(U_j'') \otimes \mathcal{O}_{U_j''}$ (for all $j \in J$). We may assume that $\ide{U}''$ is finite and consists of quasi-compact open subspaces. Let $q \colon J \twoheadrightarrow I$ be a surjective map such that
    \[
    U_j'' \cap \mathcal{S}_{H, \opn{id}}(p^{\beta}) = U_j'' \cap W_{q(j)} \subset U_j'' \subset Y_{q(j)} .
    \]
    Let $U_j'$ denote the union of all connected components of $U_j''$ which intersect non-trivially with $\mathcal{S}_{H, \opn{id}}(p^{\beta})$. Then, for any subset $\mbf{J} \subset J$, the map $\pi_0(U_{\mbf{J}}' \cap \mathcal{S}_{H, \opn{id}}(p^{\beta})) \twoheadrightarrow \pi_0(U_{\mbf{J}}')$ is surjective. Since $U_{\mbf{J}}'$ is an open subspace of a smooth qcqs adic space, this implies that the natural restriction map $\mathcal{O}(U_{\mbf{J}}') \to \mathcal{O}(U_{\mbf{J}}' \cap \mathcal{S}_{H, \opn{id}}(p^{\beta}))$ is injective. This gives part (3) because $\mathscr{M}_{H, \sigma_{\kappa}^{[j]}}$ is free over $U_{\mbf{J}}'$. Furthermore, $\cup_{i \in I}Y_i = \cup_{j \in J} U_j''$ contains the closure of $\mathcal{S}_{H, \opn{id}}(p^{\beta})$, hence $\cup_{j \in J}U_j'$ must also contain the closure of $\mathcal{S}_{H, \opn{id}}(p^{\beta})$. This implies that there exists an integer $k' \geq k$ such that $\mathcal{S}_{H, \opn{id}}(p^{\beta})_{k'} \subset \cup_{j \in J}U_j'$. The rest of the proposition easily follows.
\end{proof}

We now define the overconvergent morphisms.

\begin{definition}
    Let $\ide{U}$ be a finite cover as in Proposition \ref{ExistenceOfInjectiveAcyclicOCcoverProp}, and let $\ide{U}'$ be the corresponding collection of quasi-compact open subspaces of $\mathcal{S}_{H, \diamondsuit}(p^{\beta})$.
    \begin{enumerate}
        \item For any subset $\mbf{J} \subset J$, let $\rho^{\dagger}_{A, \mbf{J}} \colon \mathscr{M}_{G, \kappa^*}(U_{q(\mbf{J})}) \to \mathscr{M}_{H, \sigma_{\kappa}^{[j]}}(U'_{\mbf{J}})$ denote the continuous $L$-linear morphism (which is functorial in $\mbf{J}$) given by the following composition:
        \[
        \mathscr{M}_{G, \kappa^*}(U_{q(\mbf{J})}) \xrightarrow{\psi_{G, \kappa^*}^{\beta'}} \mathscr{M}_{G, \kappa^*}( (\varphi^{\beta'})^{-1} U_{q(\mbf{J})} ) \xrightarrow{\vartheta^{\dagger, \circ}_{\kappa, j+\chi_p, \beta}} \mathscr{M}_{H, \sigma_{\kappa}^{[j]}}(U'_{\mbf{J}}) .
        \]
        We define $\rho^{\dagger}_{A, \mbf{J}} \colon \mathscr{M}_{G, \kappa^*}(V_{q(\mbf{J})}) \to \mathscr{M}_{H, \sigma_{\kappa}^{[j]}}(V'_{\mbf{J}})$ similarly, which is the unique continuous $L$-linear morphism extending $\rho^{\dagger}_{A, \mbf{J}}$.
        \item Let $c \in \mbb{Z}_p^{\times}$. For any subset $\mbf{J} \subset J$, let $\rho^{\dagger}_{B_c, \mbf{J}} \colon \mathscr{M}_{G, \kappa^*}(U_{q(\mbf{J})}) \to \mathscr{M}_{H, \sigma_{\kappa}^{[j]}}(U'_{\mbf{J}})$ denote the continuous $L$-linear morphism (which is functorial in $\mbf{J}$) given by the following composition:
        \begin{align*}
        \mathscr{M}_{G, \kappa^*}(U_{q(\mbf{J})}) &\xrightarrow{\phi_{t_c^{-1}, \kappa^*}} \mathscr{M}_{G, \kappa^*}((t_c^{-1})^{-1}U_{q(\mbf{J})}) \\
         &\xrightarrow{\vartheta^{\dagger}_{\kappa, j, \beta}} \mathscr{M}_{H, \sigma_{\kappa}^{[j]}}((t_c^{-1} \circ \hat{\iota})^{-1}U_{q(\mbf{J})}) \\
         &\xrightarrow{\phi_{\xi_c, \sigma_{\kappa}^{[j]}}} \mathscr{M}_{H, \sigma_{\kappa}^{[j]}}((t_c^{-1} \circ \hat{\iota} \circ \xi_c)^{-1}U_{q(\mbf{J})}) \\
         &\xrightarrow{1_{U_{H, \beta'}} \star - } \mathscr{M}_{H, \sigma_{\kappa}^{[j]}}\left( (t_c^{-1} \circ \hat{\iota} \circ \xi_c)^{-1}(U_{q(\mbf{J})}) \cap \mathcal{S}_{H, \opn{id}}(p^{\beta})_r \right) \\
         &\xrightarrow{\psi_{H, \sigma_{\kappa}^{[j]}}^{\beta'}} \mathscr{M}_{H, \sigma_{\kappa}^{[j]}}(U'_{\mbf{J}}) .
    \end{align*}
    We define $\rho^{\dagger}_{B_c, \mbf{J}} \colon \mathscr{M}_{G, \kappa^*}(V_{q(\mbf{J})}) \to \mathscr{M}_{H, \sigma_{\kappa}^{[j]}}(V'_{\mbf{J}})$ similarly, which is the unique continuous $L$-linear morphism extending $\rho^{\dagger}_{B_c, \mbf{J}}$.
    \end{enumerate}
    We set 
    \[
    \rho^{\dagger}_{B, \mbf{J}} \defeq \frac{p^{\beta' j_{\tau_0}}}{p^{\beta-\beta'}\mathscr{G}(\chi_{p, \bar{\tau}_0}^{-1})} \sum_{c \in \left(\mbb{Z}/p^{\beta}\mbb{Z}\right)^{\times}} \chi_{p, \bar{\tau}_0}(c)^{-1}  \rho^{\dagger}_{B_c, \mbf{J}} 
    \]
    for a fixed set of representatives of $\left( \mbb{Z}/p^{\beta} \mbb{Z} \right)^{\times}$.
\end{definition}

\begin{lemma} \label{LemmaRhoDaggerAequalsB}
    One has $\rho^{\dagger}_{A, \mbf{J}} = \rho^{\dagger}_{B, \mbf{J}}$.
\end{lemma}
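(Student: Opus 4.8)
The claim $\rho^{\dagger}_{A, \mbf{J}} = \rho^{\dagger}_{B, \mbf{J}}$ is the overconvergent upgrade of the identity $\rho^{\opn{ord}}_{A, U} = \rho^{\opn{ord}}_{B, U}$ established in Lemma \ref{LemmaRhoAURhoBUordEquals}, and the plan is to deduce it from that ordinary statement by an injectivity (Zariski density) argument. First I would fix a subset $\mbf{J} \subset J$ and note that by Proposition \ref{ExistenceOfInjectiveAcyclicOCcoverProp}(3) the restriction map
\[
\mathscr{M}_{H, \sigma_{\kappa}^{[j]}}(U'_{\mbf{J}}) \to \mathscr{M}_{H, \sigma_{\kappa}^{[j]}}\left( U'_{\mbf{J}} \cap \mathcal{S}_{H, \opn{id}}(p^{\beta}) \right)
\]
is injective; hence it suffices to check $\rho^{\dagger}_{A, \mbf{J}}(F)$ and $\rho^{\dagger}_{B, \mbf{J}}(F)$ agree after restriction to the ordinary locus, for every $F \in \mathscr{M}_{G, \kappa^*}(U_{q(\mbf{J})})$. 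Set $U = U^{\opn{ord}}_{q(\mbf{J})} \defeq U_{q(\mbf{J})} \cap \mathcal{S}_{G, w_n}(p^{\beta})$, which is the adic generic fibre of an open in $\ide{X}_{G, w_n}(p^{\beta})$ (by construction $\ide{U}$ has $U_i \in \mathcal{C}_G$), and recall from Lemma \ref{ABcisTheSameMorphismLemma} that $A^{-1}(U) = B_c^{-1}(U) = W_{q(\mbf{J})}$ for all $c \in \mbb{Z}_p^{\times}$, and that $U'_{\mbf{J}} \cap \mathcal{S}_{H, \opn{id}}(p^{\beta}) \subset W_{q(\mbf{J})}$.

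The key step is then to check that each building block of $\rho^{\dagger}_{A, \mbf{J}}$ and $\rho^{\dagger}_{B_c, \mbf{J}}$ restricts, on the ordinary locus, to the corresponding building block of $\rho^{\opn{ord}}_{A, U}$ and $\rho^{\opn{ord}}_{B_c, U}$. Concretely: the Frobenius correspondence $\psi_{G, \kappa^*}$ (resp. $\psi_{H, \sigma_{\kappa}^{[j]}}$) restricts over the ordinary locus to the explicit formula of \S\ref{AVersionOfFrobOverOrdSSec} via the Cartesian diagram there; the Hecke correspondences $\phi_{t_c^{-1}, \kappa^*}$ and $\phi_{\xi_c, \sigma_{\kappa}^{[j]}}$ restrict via the compatibility diagram \eqref{DiagramBetweenOrdHeckeAndOCHeckeEqn} and Lemma \ref{LeftHandSqaureIsCartesianLemma} (taking $k$ large enough, which we may by enlarging the cover); the operators $\vartheta^{\dagger, \circ}_{\kappa, j+\chi_p, \beta}$, $\vartheta^{\dagger}_{\kappa, j, \beta}$ and the action of $1_{U_{H, \beta'}}$ are by their very definitions (Definition \ref{NOCDefinitionOfThetadaggers}, Theorem \ref{MainTheoremOnPadicIteration}, Lemma \ref{LaActionExtendsToVLemma}) compatible with restriction to the ordinary Igusa tower, where they reduce to the maps $\vartheta^{\dagger, \circ}_{\kappa, j+\chi_p, \beta}$ etc.\ appearing in the ordinary construction. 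Chaining these restriction compatibilities gives commutative squares expressing $\rho^{\dagger}_{A, \mbf{J}}(F)|_{U'_{\mbf{J}} \cap \mathcal{S}_{H, \opn{id}}(p^{\beta})} = \rho^{\opn{ord}}_{A, U}(F|_U)|_{U'_{\mbf{J}} \cap \mathcal{S}_{H, \opn{id}}(p^{\beta})}$, and likewise for $B_c$ and hence for the weighted sum $B$. Combining with Lemma \ref{LemmaRhoAURhoBUordEquals} gives equality after restriction to the ordinary locus, and injectivity then finishes the proof.

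The main obstacle I anticipate is purely bookkeeping: ensuring that the various opens $U_{q(\mbf{J})}$, their Frobenius/Hecke pullbacks, and the overconvergent neighbourhoods $\mathcal{S}_{H, \opn{id}}(p^{\beta})_r$ line up so that every arrow in the two compositions is actually defined at the relevant radius, and that all the restriction-compatibility squares commute strictly (not just up to the ind-category equivalences). Proposition \ref{ExistenceOfInjectiveAcyclicOCcoverProp} has been designed precisely to supply a cover $\ide{U}'$ and integers $k', r$ making all of this work functorially in $\mbf{J}$, so the proof reduces to invoking parts (4)(a)--(b) of that proposition together with the ordinary-locus formulas cited above, and then transporting Lemma \ref{LemmaRhoAURhoBUordEquals} through the injection of part (3). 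One should also remark that on the $V$-versions the equality follows automatically, since $\rho^{\dagger}_{A, \mbf{J}}$ and $\rho^{\dagger}_{B, \mbf{J}}$ on $\mathscr{M}_{G, \kappa^*}(V_{q(\mbf{J})})$ are by definition the unique continuous extensions of the maps on $\mathscr{M}_{G, \kappa^*}(U_{q(\mbf{J})})$, which we have just shown to coincide, and $\mathscr{M}_{G, \kappa^*}(U_{q(\mbf{J})})$ is dense in $\mathscr{M}_{G, \kappa^*}(V_{q(\mbf{J})})$ with both target modules Hausdorff.
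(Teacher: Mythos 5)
Your proposal is correct and follows essentially the same argument as the paper: reduce the $V$-version to the $U$-version by unique continuous extension, restrict to the ordinary locus where the building blocks become the explicit ordinary operators, invoke the injectivity of Proposition~\ref{ExistenceOfInjectiveAcyclicOCcoverProp}(3), and conclude from Lemma~\ref{LemmaRhoAURhoBUordEquals}. The only cosmetic difference is that you spell out the restriction compatibility of each factor and place the $V$-to-$U$ reduction at the end rather than the beginning, whereas the paper packages the compatibility into a single commutative square.
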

\begin{proof}
    Since the morphisms $\rho^{\dagger}_{A, \mbf{J}}, \rho^{\dagger}_{B, \mbf{J}} \colon \mathscr{M}_{G, \kappa^*}(V_{q(\mbf{J})}) \to \mathscr{M}_{H, \sigma_{\kappa}^{[j]}}(V'_{\mbf{J}})$ uniquely extend 
    \begin{equation} \label{RhoARhoBMapsEqn}
    \rho^{\dagger}_{A, \mbf{J}}, \rho^{\dagger}_{B, \mbf{J}} \colon \mathscr{M}_{G, \kappa^*}(U_{q(\mbf{J})}) \to \mathscr{M}_{H, \sigma_{\kappa}^{[j]}}(U'_{\mbf{J}})
    \end{equation}
    it suffices to prove the morphisms in (\ref{RhoARhoBMapsEqn}) are equal. Set $U_{q(\mbf{J})}^{\opn{ord}} = U_{q(\mbf{J})} \cap \mathcal{S}_{G, w_n}(p^{\beta})$ which is a quasi-compact open subspace which is the adic generic fibre of an open in $\ide{X}_{G, w_n}(p^{\beta})$. Set $(U'_{\mbf{J}})^{\opn{ord}} = U_{\mbf{J}}' \cap \mathcal{S}_{H, \opn{id}}(p^{\beta})$ and note that $(U'_{\mbf{J}})^{\opn{ord}} \subset A^{-1}(U_{q(\mbf{J})})$. Then for $\bullet = A, B$ we have a commutative diagram:
    \[
\begin{tikzcd}
{\mathscr{M}_{G, \kappa^*}(U^{\opn{ord}}_{q(\mbf{J})})} \arrow[r, "{\opn{res} \circ \rho^{\opn{ord}}_{\bullet, U^{\opn{ord}}_{q(\mbf{J})}}}"] & {\mathscr{M}_{H, \sigma_{\kappa}^{[j]}}((U'_{\mbf{J}})^{\opn{ord}})}   \\
{\mathscr{M}_{G, \kappa^*}(U_{q(\mbf{J})})} \arrow[u] \arrow[r, "{\rho^{\dagger}_{\bullet, \mbf{J}}}"]                                        & {\mathscr{M}_{H, \sigma_{\kappa}^{[j]}}(U'_{\mbf{J}})} \arrow[u, hook]
\end{tikzcd}
    \]
    where the vertical arrows are given by restriction. The result now follows from Lemma \ref{LemmaRhoAURhoBUordEquals} because the right-hand vertical map is injective (see Proposition \ref{ExistenceOfInjectiveAcyclicOCcoverProp}).
\end{proof}

We now consider the analogous morphisms on \v{C}ech complexes. More precisely, let $\ide{U}'_{k'} = \{ U_j' \cap \mathcal{S}_{H, \opn{id}}(p^{\beta})_{k'} \}_{j \in J}$ denote the $\mathscr{M}_{H, \sigma_{\kappa}^{[j]}}$-acyclic cover as in Proposition \ref{ExistenceOfInjectiveAcyclicOCcoverProp}. Then, for $\bullet = A, B$, we obtain a commutative diagram of \v{C}ech complexes
\[
\begin{tikzcd}
{\opn{Cech}(\mathscr{M}_{G, \kappa^*}; \ide{U} )} \arrow[r] \arrow[d, "\opn{res} \circ \rho^{\dagger}_{\bullet}"'] & {\opn{Cech}(\mathscr{M}_{G, \kappa^*}; \ide{V} )} \arrow[d, "\opn{res} \circ \rho^{\dagger}_{\bullet}"] \\
{\opn{Cech}(\mathscr{M}_{H, \sigma_{\kappa}^{[j]}}; \ide{U}'_{k'} )} \arrow[r]                     & {\opn{Cech}(\mathscr{M}_{H, \sigma_{\kappa}^{[j]}}; \ide{V}'_{k'} )}                   
\end{tikzcd}
\]
where $\opn{res}$ denotes restriction from elements of $\ide{U}'$ (resp. $\ide{V}'$) to $\ide{U}'_{k'}$ (resp. $\ide{V}'_{k'}$). Here $\rho^{\dagger}_{\bullet}$ denotes the induced morphism of complexes built up from the maps $\rho^{\dagger}_{\bullet, \mbf{J}}$ for different subsets $\mbf{J} \subset J$. By passing to cohomology of the mapping fibres of the horizontal arrows, we obtain morphisms
\[
\opn{H}^{n-1}(\rho^{\dagger}_{\bullet}) \colon \opn{H}^{n-1}_{\mathcal{Z}_{G,>n+1}(p^{\beta})}\left( \mathcal{S}_{G, w_n}(p^{\beta})_k, \mathscr{M}_{G, \kappa^*} \right) \to \opn{H}^{n-1}_{\mathcal{Z}_{H, \opn{id}}(p^{\beta})}\left( \mathcal{S}_{H, \opn{id}}(p^{\beta})_{k'}, \mathscr{M}_{H, \sigma_{\kappa}^{[j]}} \right) .
\]
By Lemma \ref{LemmaRhoDaggerAequalsB}, one has $\opn{H}^{n-1}(\rho^{\dagger}_{A}) = \opn{H}^{n-1}(\rho^{\dagger}_{B})$. On the other hand, using the acyclicity property of finite morphisms discussed just before Proposition \ref{ExistenceOfInjectiveAcyclicOCcoverProp}:
\begin{itemize}
    \item One has $\opn{H}^{n-1}(\rho^{\dagger}_{A}) = \vartheta^{\dagger, \circ}_{\kappa, j+\chi_p, \beta} \circ \varphi_G^{\beta'}$.
    \item One has 
    \[
    \opn{H}^{n-1}(\rho^{\dagger}_{B}) = \frac{p^{\beta' j_{\tau_0}}}{p^{\beta-\beta'}\mathscr{G}(\chi_{p, \bar{\tau}_0}^{-1})} \sum_{c \in \left(\mbb{Z}/p^{\beta}\mbb{Z}\right)^{\times}} \chi_{p, \bar{\tau}_0}(c)^{-1}  \left( \varphi_H^{\beta'} \circ (1_{U_{H, \beta'}} \star -) \circ U_{\xi_c} \circ \vartheta_{\kappa, j, \beta}^{\dagger} \circ U_{t_c^{-1}} \right) .
    \]
\end{itemize}
We therefore obtain the following corollary.

\begin{corollary} \label{CorollaryEvCircEquals}
    For any $\eta \in \opn{H}^{n-1}_{w_n}(\kappa^*; \beta)^{(-, \dagger)}$, one has
    \[
    \opn{Ev}^{\dagger, \circ}_{\kappa, j, \chi, \beta}( \varphi_G^{\beta'} \eta ) = p^{\beta'\kappa_{n+1, \tau_0} - 1}(p-1) \chi(z_{\xi^{-\beta'}}) \chi_{p, \bar{\tau}_0}(-1) \mathscr{G}(\chi_{p, \bar{\tau}_0}) \opn{Ev}^{\dagger}_{\kappa, j, \chi, \beta}(\eta) .
    \]
\end{corollary}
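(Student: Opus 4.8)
\textbf{Proof proposal for Corollary \ref{CorollaryEvCircEquals}.}

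The plan is to combine the two ingredients established just before the statement — namely the equality of cohomological maps $\opn{H}^{n-1}(\rho^{\dagger}_{A}) = \opn{H}^{n-1}(\rho^{\dagger}_{B})$ coming from Lemma \ref{LemmaRhoDaggerAequalsB}, and the explicit formulas for each side in terms of Hecke operators, Frobenius, and the operators $\vartheta^{\dagger}_{\kappa, j, \beta}$, $\vartheta^{\dagger, \circ}_{\kappa, j+\chi_p, \beta}$ — and then pair both sides against the cohomology class $\opn{res}[\chi] \in \opn{H}^0_{\opn{id}}(\mathcal{S}_{H, \diamondsuit}(p^{\beta}), \sigma_{\kappa}^{[j], \vee})^{(+,\dagger)}$ via Serre duality. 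First I would apply $\langle -, \opn{res}[\chi]\rangle$ to the identity $\vartheta^{\dagger,\circ}_{\kappa, j+\chi_p, \beta}(\varphi_G^{\beta'}\eta) = \opn{H}^{n-1}(\rho^{\dagger}_B)(\eta)$, so that the left-hand side is by definition $\opn{Ev}^{\dagger,\circ}_{\kappa, j, \chi, \beta}(\varphi_G^{\beta'}\eta)$ up to the image under the natural restriction map (which is harmless since $[\chi]$ has conductor dividing $\ide{N}_\beta$ and $\opn{res}[\chi]$ is the restriction of a global class).

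Next I would unwind the right-hand pairing $\langle \opn{H}^{n-1}(\rho^{\dagger}_B)(\eta), \opn{res}[\chi]\rangle$ term by term using the displayed formula for $\opn{H}^{n-1}(\rho^{\dagger}_B)$. Using the adjunction between Frobenius $\varphi_H^{\beta'}$ (a pushforward/trace-type operator on the $H$-side) and its Serre-dual, and the self-duality statement $\langle U_t\cdot -, -\rangle = \langle -, U_{t^{-1}}\cdot -\rangle$, the factors $\varphi_H^{\beta'}$, $U_{\xi_c}$, and the operator $1_{U_{H,\beta'}}\star -$ get transposed onto the class $\opn{res}[\chi]$. The operator $1_{U_{H,\beta'}}\star -$ dualises on $[\chi]$ to a simple scaling: since $[\chi]$ is (up to restriction) the cohomology class of a character and the action of a locally constant indicator function on it is by a volume factor, it contributes a constant of the shape $p^{-(n-1)(\beta-\beta')}$ type — this needs to be checked against the normalisations in Definition \ref{DefinitionOfMathfrakNbeta} and \S \ref{ClassicalEvaluationMapsSubSec}, but it is purely bookkeeping. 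The operator $U_{\xi_c}$ transposes to $U_{\xi_c^{-1}}$, and by Lemma \ref{UtinverseEigenvalueLemma} acts on $\opn{res}[\chi]$ by $(2\rho_{H,\opn{nc}})(\langle \xi_c\rangle)\hat\chi(z_{\xi_c^{-1}})$; since $c+p^{\beta'}\in\mbb{Z}_p^\times$ this is just $\chi_{p,\bar\tau_0}(c+p^{\beta'})$ up to the anticyclotomic normalisation, and the $(c+p^{\beta'})^{-j_{\tau_0}}$ and $p^{\beta'j_{\tau_0}}$ factors in the definition of $\rho^{\dagger}_B$ combine to cancel the part of $\chi$ coming from the infinity-type. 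Similarly $\varphi_H^{\beta'}$ transposes to the dual Frobenius, which on $\opn{res}[\chi]$ again acts by a power of $p$ and a value of $\chi$ at $z_{\xi^{-\beta'}}$ by the same computation as in Lemma \ref{UtinverseEigenvalueLemma}; this is where the $p^{\beta'\kappa_{n+1,\tau_0}}$ and $\chi(z_{\xi^{-\beta'}})$ factors are produced.

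After these transpositions the remaining pairing is $\langle \vartheta^{\dagger}_{\kappa, j, \beta}(\tilde U_{t_c^{-1}}\eta), \text{(transposed class)}\rangle$. Since the transposed class is, up to the explicit scalars just described, again $\opn{res}[\chi]$ (using that $\chi_{p,\bar\tau_0}$ has conductor $p^{\beta'}$, so the sum over $c\in(\mbb{Z}/p^\beta\mbb{Z})^\times$ of $\chi_{p,\bar\tau_0}(c)^{-1}\chi_{p,\bar\tau_0}(c+p^{\beta'})$-type terms collapses via the Gauss sum identity $\mathscr{G}(\chi_{p,\bar\tau_0}^{-1})\mathscr{G}(\chi_{p,\bar\tau_0}) = \chi_{p,\bar\tau_0}(-1)p^{\beta'}$ — equivalently, the factor $\mathscr{G}(\chi_{p,\bar\tau_0}^{-1})^{-1}$ in $\rho^{\dagger}_B$ together with the sum produces $\chi_{p,\bar\tau_0}(-1)\mathscr{G}(\chi_{p,\bar\tau_0})$), each term gives $\opn{Ev}^{\dagger}_{\kappa, j, \chi, \beta}(\tilde U_{t_c^{-1}}\eta)$. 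Finally I would observe that $\opn{Ev}^{\dagger}_{\kappa,j,\chi,\beta}$ is built from $\vartheta^{\dagger}_{\kappa,j,\beta}$ which is $T$-equivariant in the appropriate sense, so the $U_{t_c^{-1}}$-action can be moved onto $\opn{res}[\chi]$ too, where Lemma \ref{UtinverseEigenvalueLemma} together with the normalisation $(-w_n^{-1}\rho+\rho)(t_c)$ shows it acts by a scalar independent of $c$ that is absorbed into the constants; alternatively, if one does not want to move it, the $U_{t_c^{-1}}$-eigenvalue hypothesis is not needed here and only appears in Theorem \ref{MainThmOfPropEvMapSection}. Collecting all the scalars — the $p^{\beta-\beta'}$ from the definition of $\rho^{\dagger}_B$ cancelling against the $p^{-(n-1)(\beta-\beta')}$-type volume factors and the sum over $c$ having $p^{\beta-\beta'}$ terms with the relevant $c$ fixed modulo $p^{\beta'}$ — yields exactly $p^{\beta'\kappa_{n+1,\tau_0}-1}(p-1)\chi(z_{\xi^{-\beta'}})\chi_{p,\bar\tau_0}(-1)\mathscr{G}(\chi_{p,\bar\tau_0})$, as claimed. \emph{The main obstacle} I anticipate is not conceptual but the careful tracking of all the normalisation constants: the power of $p$ in the Serre-duality transpose of Frobenius and of $U_{\xi_c}$, the volume factor from $1_{U_{H,\beta'}}\star-$ acting on $[\chi]$, the $(-w_n^{-1}\rho+\rho)(t_c)$ and $(2\rho_{H,\opn{nc}})$ normalisations, and the Gauss-sum reciprocity — these must all be reconciled with the conventions fixed in \S \ref{ClassicalEvaluationMapsSubSec}, \S \ref{HOpsAndHCTChapter}, and Definition \ref{DefinitionOfMathfrakNbeta}, and getting the factor $(1-p^{-1}) = p^{-1}(p-1)$ exactly right is the delicate point. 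Everything else reduces to the already-established identity $\opn{H}^{n-1}(\rho^{\dagger}_A) = \opn{H}^{n-1}(\rho^{\dagger}_B)$ and to Lemmas \ref{UtinverseEigenvalueLemma} and the $U_t$/$U_{t^{-1}}$ duality.
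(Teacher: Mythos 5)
Your overall strategy is the same as the paper's: pair the identity $\opn{H}^{n-1}(\rho^{\dagger}_A)=\opn{H}^{n-1}(\rho^{\dagger}_B)$ against $\opn{res}[\chi]$ via Serre duality, recognise the left side as $\opn{Ev}^{\dagger,\circ}_{\kappa,j,\chi,\beta}(\varphi_G^{\beta'}\eta)$, and evaluate the right side by transposing the $H$-side operators onto $\opn{res}[\chi]$ using Lemma \ref{UtinverseEigenvalueLemma} and the $U_t/U_{t^{-1}}$ adjunction.

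There is, however, a concrete error in the treatment of $1_{U_{H,\beta'}}\star -$. You propose that its Serre-dual transpose acts on $[\chi]$ by a ``volume factor of shape $p^{-(n-1)(\beta-\beta')}$''. This is wrong: the operator $1_{U_{H,\beta'}}\star -$ acts \emph{trivially} on the pairing, $\langle 1_{U_{H,\beta'}}\star x,\opn{res}[\chi]\rangle=\langle x,\opn{res}[\chi]\rangle$, and this is exactly the content of Remark \ref{FrobHIndicatorRelationRemark}, which identifies $1_{U_{H,\beta'}}\star - = \varphi_H^{\beta-\beta'}\circ(\varphi_H^t)^{\beta-\beta'}$, together with the fact that the transposes $F_1^m$ and $F_2^m$ of $\varphi_H^m$ and $(\varphi_H^t)^m$ act on $\opn{res}[\chi]$ by mutually inverse scalars (so $F_1^m F_2^m=\opn{id}$). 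The point is that the indicator function does not obviously dualize to an operator on $\opn{H}^0$, and trying to compute a ``volume'' of $[\chi]$ against it is a dead end; the workable route is to factor it through the Frobenius correspondences. If you did carry through your proposed volume factor, it would not cancel against the $p^{\beta-\beta'}$ in the denominator of $\rho^{\dagger}_B$ in the way you expect, and the final constant would come out wrong.

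A secondary, but related, confusion concerns the sum over $c\in(\mbb{Z}/p^{\beta}\mbb{Z})^{\times}$. You describe this sum as ``collapsing via the Gauss sum identity $\mathscr{G}(\chi_{p,\bar{\tau}_0}^{-1})\mathscr{G}(\chi_{p,\bar{\tau}_0})=\chi_{p,\bar{\tau}_0}(-1)p^{\beta'}$''. In fact the sum does not involve any Gauss-sum cancellation at all: once you insert the scalar actions of $\tilde U_{t_c^{-1}}$ (which is $\kappa^*(\xi_c)=(c+p^{\beta'})^{-\kappa_{1,\tau_0}}$, since $\xi_c\in T(\mbb{Z}_p)$) and $U_{\xi_c^{-1}}$ on $\opn{res}[\chi]$ (which is $\chi_{p,\bar{\tau}_0}(c)(c+p^{\beta'})^{j_{\tau_0}+\kappa_{1,\tau_0}}$ by Lemma \ref{UtinverseEigenvalueLemma}), the $c$-dependent factors in $\rho^{\dagger}_B$, namely $\chi_{p,\bar{\tau}_0}(c)^{-1}(c+p^{\beta'})^{-j_{\tau_0}}$, cancel them \emph{exactly}. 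Each term of the sum is therefore the same, and the sum contributes the simple count $\#(\mbb{Z}/p^{\beta}\mbb{Z})^{\times}=p^{\beta-1}(p-1)$. The reciprocity $\mathscr{G}(\chi_{p,\bar{\tau}_0}^{-1})^{-1}=\chi_{p,\bar{\tau}_0}(-1)p^{-\beta'}\mathscr{G}(\chi_{p,\bar{\tau}_0})$ enters only at the very end, to convert the $\mathscr{G}(\chi_{p,\bar{\tau}_0}^{-1})$ sitting in the denominator. Your observation that the $U_{t_c^{-1}}$-eigenvalue hypothesis on $\eta$ is not needed at this stage is correct.
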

\begin{proof}
     We first note that $U_{t_c^{-1}}$ acts trivially on the cohomology group $\opn{H}^{n-1}_{w_n}(\kappa^*; \beta)^{(-, \dagger)}$. With notation as in \S \ref{DiscussionOfFrobeniusSubSec}, let $\varphi_H^t$ denote the colimit (after passing to cohomology) of the morphisms $\opn{deg}(\varphi)^{-1}\opn{Tr}_{\varphi} \circ \psi_{H, \sigma_{\kappa}^{[j]}}^{-1}$ acting on $\opn{H}^{n-1}_{\opn{id}}(\mathcal{S}_{H, \diamondsuit}(p^{\beta}), \sigma_{\kappa}^{[j]})^{(-, \dagger)} = \varinjlim_k \opn{H}^{n-1}_{\mathcal{Z}}(\mathcal{U}^H_{K_p, k}, \mathscr{M}_{H, \sigma_{\kappa}^{[j]}})$. We define:
    \begin{itemize}
        \item $F_1$ to be the Hecke operator acting on $\opn{H}^0_{\opn{id}}\left( \mathcal{S}_{H, \diamondsuit}(p^{\beta}), \sigma_{\kappa}^{[j], \vee} \right)^{(+, \dagger)}$ given by $U_{\xi^{-1}}$. Recall from Lemma \ref{Lem:UtUt-1adjoint} that $U_{\xi^{-1}}$ is adjoint to the operator $U_{\xi}$ acting on $\opn{H}^{n-1}_{\opn{id}}(\mathcal{S}_{H, \diamondsuit}(p^{\beta}), \sigma_{\kappa}^{[j]})^{(-, \dagger)}$ via the Serre duality pairing. Since $\xi \in T^{H, -}$, we also have $U_{\xi} = \varphi_H$ (this can be seen by tracing through the definitions of the Hecke action coming from $T^{H, -}$ and the Frobenius action). In particular, since pullback and pushforward are adjoint under the Serre duality pairing, we see that $U_{\xi^{-1}}$ is equal to the composition:
        \begin{align*}
            \varinjlim_k \opn{H}^0(\mathcal{U}^H_{K_p, k+1}, \mathscr{M}_{H, \sigma_{\kappa}^{[j], \vee}}) \xrightarrow{\psi_{H, \sigma_{\kappa}^{[j], \vee}}^{-1}} \varinjlim_k \opn{H}^0(\mathcal{U}^H_{K_p, k+1}, \varphi^*\mathscr{M}_{H, \sigma_{\kappa}^{[j], \vee}}) \xrightarrow{\opn{Tr}_{\varphi}} \varinjlim_k \opn{H}^0(\mathcal{U}^H_{K_p, k}, \mathscr{M}_{H, \sigma_{\kappa}^{[j], \vee}}) .
        \end{align*}
        \item $F_2$ to be the operator acting on $\opn{H}^0_{\opn{id}}\left( \mathcal{S}_{H, \diamondsuit}(p^{\beta}), \sigma_{\kappa}^{[j], \vee} \right)^{(+, \dagger)}$ given by $\opn{deg}(\varphi)^{-1} \psi_{H, \sigma_{\kappa}^{[j], \vee}} \circ \varphi^*$. Again, since pullback and pushforward are adjoint under the Serre duality pairing, we see that $F_2$ is adjoint to $\varphi_H^t$. Moreover, the relation $\opn{Tr}_{\varphi} \circ \; \varphi^* = \opn{deg}(\varphi)$ (the map $\varphi$ is finite \'{e}tale by Lemma \ref{FrobSystemSupportLemma}) implies that $F_1 \circ F_2 = \opn{id}$.
    \end{itemize}
    For $0 \leq m \leq \beta$, we see that $F_1^m$ and $F_2^m$ are adjoint under Serre duality to $\varphi_H^{m}$ and $(\varphi_H^t)^{m}$ respectively, $F_1^m = U_{\xi^{-m}}$, and $F_1^m \circ F_2^m = \opn{id}$.
    
    By Lemma \ref{UtinverseEigenvalueLemma}, we therefore see that
    \[
    F_1^m(\opn{res}[\chi]) = \hat{\chi}(z_{\xi^{-m}}) \opn{res}[\chi], \quad \quad F_2^m(\opn{res}[\chi]) = \hat{\chi}(z_{\xi^{-m}})^{-1} \opn{res}[\chi] .
    \]
    Note that $\hat{\chi}(z_{\xi^{-\beta'}}) = \chi(z_{\xi^{-\beta'}}) p^{\beta'(\kappa_{n+1, \tau_0} - j_{\tau_0})}$. By Remark \ref{FrobHIndicatorRelationRemark}, we have 
    \[
    (\varphi^t_H)^{\beta - \beta'} \circ (1_{U_{H, \beta'}} \star -) = (\varphi^t_H)^{\beta - \beta'}
    \]
    hence $\langle 1_{U_{H, \beta'}} \star x, \opn{res}[\chi] \rangle = \langle x, \opn{res}[\chi] \rangle$ for any $x \in \opn{H}^{n-1}_{\opn{id}}(\mathcal{S}_{H, \diamondsuit}(p^{\beta}), \sigma_{\kappa}^{[j]} )^{(-, \dagger)}$. We also have  $U_{\xi_c^{-1}} \cdot \opn{res}[\chi] = \sigma_{\kappa}^{[j], \vee}(\langle \xi_c \rangle^{-1})\hat{\chi}(z_{\xi_c^{-1}}) \cdot \opn{res}[\chi] = \chi_{p, \tau_0}(c) \cdot \opn{res}[\chi]$. 

    Putting this all together, we find that $\opn{Ev}^{\dagger, \circ}_{\kappa, j, \chi, \beta}( \varphi_G^{\beta'} \eta )$ is equal to
    \begin{align*} 
    \frac{p^{\beta' j_{\tau_0}}}{p^{\beta-\beta'}\mathscr{G}(\chi_{p, \bar{\tau}_0}^{-1})} &\sum_{c \in \left(\mbb{Z}/p^{\beta}\mbb{Z}\right)^{\times}} \chi_{p, \bar{\tau}_0}(c)^{-1} \langle \left( \varphi_H^{\beta'} \circ (1_{U_{H, \beta'}} \star -) \circ U_{\xi_c} \circ \vartheta_{\kappa, j, \beta}^{\dagger} \circ U_{t_c^{-1}} \right)(\eta), \opn{res}[\chi] \rangle \\
    &= \frac{p^{\beta' j_{\tau_0}}}{p^{\beta-\beta'}\mathscr{G}(\chi_{p, \bar{\tau}_0}^{-1})} \sum_{c \in \left(\mbb{Z}/p^{\beta}\mbb{Z}\right)^{\times}} \chi(z_{\xi^{-\beta'}}) p^{\beta'(\kappa_{n+1, \tau_0} - j_{\tau_0})} \langle \vartheta^{\dagger}_{\kappa, j, \beta}(\eta), \opn{res}[\chi] \rangle \\
    &= \frac{p^{\beta'(1+\kappa_{n+1, \tau_0})}(p-1)}{p \; \mathscr{G}(\chi_{p, \bar{\tau}_0}^{-1})} \chi(z_{\xi^{-\beta'}}) \opn{Ev}^{\dagger}_{\kappa, j, \chi, \beta}(\eta) .
    \end{align*} 
    Now using the relation 
    \[
    \mathscr{G}(\chi_{p, \bar{\tau}_0}^{-1})^{-1} = \chi_{p, \bar{\tau}_0}(-1) p^{-\beta'} \mathscr{G}(\chi_{p, \bar{\tau}_0})
    \]
    the result follows.
\end{proof}

\subsubsection{Step 3}

To complete the proof of Theorem \ref{MainThmOfPropEvMapSection}, we need a relation between Hecke operators, Frobenius and the action of indicator functions. As above, we will establish this over the ordinary locus and then use the injectivity of certain restriction maps to deduce the overconvergent version. For $i=0, 1$, recall that $t_i \in T^{G, -}$ denotes the element which is the identity outside the $\tau_0$-component, and in the $\tau_0$-component is equal to
\[
\opn{diag}(1, \dots, 1, p, \dots, p)
\]
where there are $n-i$ lots of $p$. We will consider the Hecke operators associated with these elements as in \S \ref{TopologicalHeckeCorrsSection}. However to be able to establish a relation between them, we need to view them as a cohomological correspondence for the same (topological) correspondence. We first consider the correspondences over the ordinary locus. Let $t'_i = w_n t_i w_n^{-1} \in J_{G, \opn{ord}}$. With notation as in \S \ref{TopologicalHeckeCorrsSection}, we have a commutative diagram:
\begin{equation} \label{BigIGDiagramT0T1}
\begin{tikzcd}
                   &                                                                & {\mathcal{IG}_G/J_{p, 1}'} \arrow[ld] \arrow[lldd, bend right] \arrow[rd, "\cdot \xi^{\beta'}"'] \arrow[rrdd, "\cdot (t_1')^{\beta'}", bend left] &                                                                            &                    \\
                   & \mathcal{IG}_G/J_p \arrow[ld] \arrow[rd, "\cdot \xi^{\beta'}"] &                                                                                                                                                   & {\mathcal{IG}_G/J_{p, 0}'} \arrow[ld] \arrow[rd, "\cdot (t_0')^{\beta'}"'] &                    \\
\mathcal{IG}_G/J_p &                                                                & \mathcal{IG}_G/J_p                                                                                                                                &                                                                            & \mathcal{IG}_G/J_p
\end{tikzcd}
\end{equation} 
where $J_{p, i}' = (t_i')^{\beta'} J_p (t_i')^{-\beta'} \cap J_p$ and the unlabelled arrows are the natural maps. The inner square is not Cartesian in general, however $\mathcal{IG}_G/J'_{p, 1}$ is an open and closed subspace of the Cartesian product $\mathcal{IG}_G/J_p \times_{\mathcal{IG}_G/J_p} \mathcal{IG}_G/J_{p, 0}'$ (with respect to the maps in the bottom half of the square). We will denote the left-hand (resp. right-hand) curved arrow by $q_1$ (resp. $q_2$). 

We claim that we have an overconvergent version of the above diagram. To see this, consider the following compact open subgroups. Let $K_p = K^G_{\opn{Iw}}(p^{\beta})$, $K_{p, i}' = t_i^{\beta'} K_p t_i^{-\beta'} \cap K_p$, $K_{p, \varphi^{\beta'}}' = (w_n^{-1}\xi^{\beta'} w_n) K_p (w_n^{-1}\xi^{\beta'} w_n)^{-1} \cap K_p$, and $K_p'' = K_{p, \varphi^{\beta'}}' \cap (w_n^{-1}\xi^{\beta'} w_n) K_{p, 0}' (w_n^{-1}\xi^{\beta'} w_n)^{-1}$. Note that $K_p'' \subset K_{p, 1}'$. We have a commutative diagram:
\[
\begin{tikzcd}
                       &                                                                                    & {\mathcal{S}_{G, K_p''}} \arrow[ld] \arrow[rd, "\cdot w_n^{-1} \xi^{\beta'} w_n"'] \arrow[rrdd, "\cdot t_1^{\beta'}", bend left] &                                                                  &                        \\
                       & {\mathcal{S}_{G, K_{p, \varphi^{\beta'}}'}} \arrow[ld] \arrow[rd, "\cdot w_n^{-1} \xi^{\beta'} w_n"] &                                                                                                                & {\mathcal{S}_{G, K_{p, 0}'}} \arrow[ld] \arrow[rd, "\cdot t_0^{\beta'}"'] &                        \\
{\mathcal{S}_{G, K_p}} &                                                                                    & {\mathcal{S}_{G, K_p}}                                                                                         &                                                                  & {\mathcal{S}_{G, K_p}}
\end{tikzcd}
\]
and $\mathcal{S}_{G, K_p''}$ is an open and closed subspace in the Cartesian product $\mathcal{S}_{G, K_{p, \varphi^{\beta'}}'} \times_{\mathcal{S}_{G, K_p}} \mathcal{S}_{G, K_{p, 0}'}$ formed from the maps in the bottom half of the middle square. One can easily show that the natural map $\mathcal{S}_{G, K_p''} \to \mathcal{S}_{G, K_{p, 1}'}$ induces an isomorphism $a \colon \mathcal{U}^G_{K_p'', k} \xrightarrow{\sim} \mathcal{U}^G_{K_{p, 1}', k}$. We obtain a commutative diagram:
\begin{equation} \label{BigCompositionOnOCnhoods}
\begin{tikzcd}
                                  &                                                                                                               & {\mathcal{U}^G_{K_{p, 1}', k+2\beta'}} \arrow[ld] \arrow[rd, "(\star)"'] \arrow[rrdd, "\cdot t_1^{\beta'}", bend left] \arrow[lldd, bend right] &                                                                                    &                          \\
                                  & {\mathcal{U}^G_{K_{p, \varphi^{\beta'}}', k+2\beta'}} \arrow[ld] \arrow[rd, "\cdot w_n^{-1} \xi^{\beta'}w_n"] &                                                                                                                                                & {\mathcal{U}^G_{K_{p, 0}', k+\beta'}} \arrow[ld] \arrow[rd, "\cdot t_0^{\beta'}"'] &                          \\
{\mathcal{U}^G_{K_p, k+ 2\beta'}} &                                                                                                               & {\mathcal{U}^G_{K_p, k+\beta'}}                                                                                                                &                                                                                    & {\mathcal{U}^G_{K_p, k}}
\end{tikzcd}
\end{equation} 
where the map $(\star)$ is the composition of $a^{-1}$ with $\cdot w_n^{-1} \xi^{\beta'} w_n$. We will also denote the left-hand (resp. right-hand map) in the above diagram by $q_1$ (resp. $q_2$). We have a commutative diagram
\[
\begin{tikzcd}
\mathcal{IG}_G/J_p \arrow[d, hook] & {\mathcal{IG}_G/J_{p, 1}'} \arrow[l, "q_1"'] \arrow[r, "q_2"] \arrow[d, hook] & \mathcal{IG}_G/J_p \arrow[d, hook] \\
{\mathcal{U}_{K_p, k+2\beta'}^G}   & {\mathcal{U}^G_{K_{p, 1}', k+2\beta'}} \arrow[l, "q_1"'] \arrow[r, "q_2"]     & {\mathcal{U}^G_{K_p, k}}          
\end{tikzcd}
\]
and the left-hand square is Cartesian for $k$ sufficiently large (and the vertical maps are induced from right-translation by $w_n$). 

Consider the inner diamond of (\ref{BigCompositionOnOCnhoods}) (and identifying $\mathcal{U}^G_{K_{p, \varphi^{\beta'}}, k+2\beta'} \cong \mathcal{U}^G_{K_p, k+2\beta'}$)
\[
\begin{tikzcd}
                                                                & {\mathcal{U}^G_{K_{p, 1}', k+2\beta'}} \arrow[ld, "q_1"'] \arrow[rd, "\lambda"] &                                                         \\
{\mathcal{U}^G_{K_p, k+2\beta'}} \arrow[rd, "\varphi^{\beta'}"] &                                                                                 & {\mathcal{U}^G_{K_{p, 0}', k+\beta'}} \arrow[ld, "\mu"] \\
                                                                & {\mathcal{U}^G_{K_p, k+\beta'}}                                                 &                                                        
\end{tikzcd}
\]
Note that all the maps in this diagram are finite \'{e}tale. Let $\ide{U} = \{ U_i \}_{i \in I}$ be a finite $\mathscr{M}_{G, \kappa^*}$-acyclic cover of $\mathcal{X}_{G, w_n}(p^{\beta})_k \subset \mathcal{U}^G_{K_p, k_0 + \beta'}$ as in Lemma \ref{AcyclicityCoverLemma}, where $k_0 \geq 1$ is an auxiliary integer. Since $\varphi^{\beta'}$ is finite and integral over the ordinary locus, we see that $(\varphi^{\beta'})^{-1}(U_i) \in \mathcal{C}_{G}$ and the cover $\{ (\varphi^{\beta'})^{-1}(U_i) \}_{i \in I}$ is $(\varphi^{\beta'})^*\mathscr{M}_{G, \kappa^*} \cong \mathscr{M}_{G, \kappa^*}$-acyclic. For any subset $\mbf{I} \subset I$, we consider the following two maps:
\begin{itemize}
    \item The composition $\alpha_{0, \mbf{I}}$ given by 
    \[
    \mu^*\mathscr{M}_{G, \kappa^*}(\mu^{-1}(U_{\mbf{I}})) \xrightarrow{(-w_n^{-1}\rho + \rho)(t_0^{\beta'}) \opn{Tr}_{\mu}} \mathscr{M}_{G, \kappa^*}(U_{\mbf{I}}) \xrightarrow{\psi^{\beta'}_{G, \kappa^*}} \mathscr{M}_{G, \kappa^*}((\varphi^{\beta'})^{-1}(U_{\mbf{I}})) .
    \]
    \item The composition $\alpha_{1, \mbf{I}}$ given by
    \begin{align*} 
    \mu^*\mathscr{M}_{G, \kappa^*}(\mu^{-1}(U_{\mbf{I}})) &\xrightarrow{\lambda^*} \lambda^* \mu^*\mathscr{M}_{G, \kappa^*}(\lambda^{-1} \mu^{-1}(U_{\mbf{I}})) \\ &= q_1^* (\varphi^{\beta'})^* \mathscr{M}_{G, \kappa^*}(q_1^{-1}(\varphi^{\beta'})^{-1}(U_{\mbf{I}})) \\ &\xrightarrow{(-w_n^{-1}\rho + \rho)(t_1^{\beta'}) \opn{Tr}_{q_1}} (\varphi^{\beta'})^* \mathscr{M}_{G, \kappa^*}((\varphi^{\beta'})^{-1}(U_{\mbf{I}})) \\ &\xrightarrow{\psi^{\beta'}_{G, \kappa^*}} \mathscr{M}_{G, \kappa^*}((\varphi^{\beta'})^{-1}(U_{\mbf{I}})).
    \end{align*} 
\end{itemize}

Let $\mbb{I}_{\beta'} \in C^{\opn{la}}(U_{G, \beta}, L)$ denote the indicator function associated with the subset $(p^{\beta' - \beta}\mbb{Z}_p)^{\oplus n} \oplus \mbb{Z}_p^{\oplus n-1} \subset U_{G, \beta}$. Then, following the same strategy as in the proof of Proposition \ref{ExistenceOfInjectiveAcyclicOCcoverProp}, there exists an integer $k' \geq k$ and a finite collection $\ide{U}' = \{ U_j' \}_{j \in J}$ of quasi-compact open subspaces of $\mathcal{U}^G_{K_p, k_0 + 2\beta'}$, with the following properties:
\begin{itemize}
    \item One has a surjective map $q \colon J \twoheadrightarrow I$ such that: for any $j \in J$
    \[
    U'_j \subset (\varphi^{\beta'})^{-1}(U_{q(j)}) .
    \]
    Furthermore, we have an action map $\mbb{I}_{\beta'} \star - \colon \mathscr{M}_{G, \kappa^*}((\varphi^{\beta'})^{-1}(U_{q(\mbf{J})})) \to \mathscr{M}_{G, \kappa^*}(U'_{\mbf{J}})$ for any subset $\mbf{J} \subset J$.
    \item We have $\mathcal{X}_{G, w_n}(p^{\beta})_{k'} \subset \bigcup_{j \in J} U_j'$ and $\ide{U}'_{k'} = \{ U_j' \cap \mathcal{X}_{G, w_n}(p^{\beta})_{k'}\}_{j\in J}$ is a $\mathscr{M}_{G, \kappa^*}$-acyclic cover.
    \item For any subset $\mbf{J} \subset J$, the restriction map $\mathscr{M}_{G, \kappa^*}(U_{\mbf{J}}') \to \mathscr{M}_{G, \kappa^*}(U_{\mbf{J}}' \cap \mathcal{X}_{G, w_n}(p^{\beta}))$ is injective.
\end{itemize}

This leads to the following lemma.

\begin{lemma} \label{ResAlphaIndEqualityLemma}
    For any subset $\mbf{J} \subset J$, we have $\opn{res} \circ \alpha_{0, q(\mbf{J})} = (\mbb{I}_{\beta'} \star - ) \circ \alpha_{1, q(\mbf{J})}$ as morphisms
    \[
    \mu^*\mathscr{M}_{G, \kappa^*}(\mu^{-1}(U_{q(\mbf{J})})) \to \mathscr{M}_{G, \kappa^*}(U'_{\mbf{J}}) .
    \]
\end{lemma}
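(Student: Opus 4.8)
The plan is to reduce the identity to a purely ordinary-locus statement — an equality of maps on sections over opens of the Igusa tower $\mathcal{IG}_G/J_p$ — and then bootstrap it to the overconvergent opens $U'_{\mbf{J}}$ using the injectivity of the restriction map $\mathscr{M}_{G, \kappa^*}(U_{\mbf{J}}') \to \mathscr{M}_{G, \kappa^*}(U_{\mbf{J}}' \cap \mathcal{X}_{G, w_n}(p^{\beta}))$ recorded just before the statement. Concretely, I would first restrict both sides along the map $\mathscr{M}_{G, \kappa^*}(U'_{\mbf{J}}) \hookrightarrow \mathscr{M}_{G, \kappa^*}(U'_{\mbf{J}} \cap \mathcal{X}_{G, w_n}(p^{\beta}))$; since the target is $U'_{\mbf{J}} \cap \mathcal{X}_{G, w_n}(p^{\beta}) \subset \mathcal{IG}_G/J_p$ (more precisely a quotient thereof over which $\mathscr{M}_{G, \kappa^*}$ trivialises), it suffices to prove the equality after this restriction. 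The operators $\opn{Tr}_{\mu}$, $\opn{Tr}_{q_1}$, $\lambda^*$, $\psi^{\beta'}_{G, \kappa^*}$, and the action of $\mbb{I}_{\beta'}$ all have explicit descriptions over the ordinary locus: the trace maps over the ordinary locus are sums over the fibres of the relevant finite \'{e}tale covers of quotients of the Igusa tower (which are torsors under finite quotients of the groups $J_p$, $J'_{p,i}$), the Frobenius $\psi_{G, \kappa^*}^{\beta'}$ is described in \S\ref{AVersionOfFrobOverOrdSSec} by $(\psi_{G,\kappa^*}f)(x) = \kappa^*(\xi^{-1})\,\xi\cdot f(x\cdot \xi)$ iterated $\beta'$ times, and the action of the locally constant function $\mbb{I}_{\beta'}$ is, by the $p$-adic Fourier theory of \S\ref{DiffOpsCcontsubsec} (and the analysis in the proof of Lemma \ref{IndicatorFunctionsPreserveOCLemma}), just the averaging projector $\frac{1}{p^{n\beta'}}\sum$ over right-translations by the finite subgroup of $\opn{Unip}(J_{G,\opn{ord}})$ cut out by the index data.

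The heart of the argument is then a group-theoretic bookkeeping computation over $\mathcal{IG}_G/J_p$. On the $\alpha_{0}$ side one has a trace along $\mu$ (summing over the cosets $J_p/(t_0^{\beta'}J_p t_0^{-\beta'} \cap J_p)$ in the Levi direction) followed by $\beta'$-fold Frobenius; on the $\alpha_1$ side one has $\lambda^*$ followed by a trace along $q_1$ (summing over the fibres of $\mathcal{IG}_G/J'_{p,1} \to \mathcal{IG}_G/J_p$ in the unipotent-times-Levi direction) followed by $\beta'$-fold Frobenius, and then the projector $\mbb{I}_{\beta'}\star-$. Using the factorisation of the inner diamond of (\ref{BigCompositionOnOCnhoods}) — i.e. $J'_{p,1}$ sitting inside the fibre product $J_p \times_{J_p} J'_{p,0}$ as an open-and-closed piece — one decomposes the $\opn{Tr}_{q_1}$ on the $\alpha_1$ side as (a sub-sum of) $\opn{Tr}_{\mu}$ composed with the trace along the Frobenius-type map $\mathcal{IG}_G/J'_{p,1}\to\mathcal{IG}_G/J_p$; the ``extra'' cosets that appear on the $\alpha_1$ side over those on the $\alpha_0$ side are exactly the unipotent translations in $\opn{Unip}(J_{G,\opn{ord}})$ modulo the congruence subgroup that the projector $\mbb{I}_{\beta'}$ is designed to kill. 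So $\mbb{I}_{\beta'}\star-$ applied to the $\alpha_1$-side output collapses precisely to the $\alpha_0$-side output. The normalising factors $(-w_n^{-1}\rho+\rho)(t_0^{\beta'})$ and $(-w_n^{-1}\rho+\rho)(t_1^{\beta'})$ attached to the two traces, together with the cocharacter weights picked up from $\psi_{G,\kappa^*}^{\beta'}$, have to be checked to cancel — this is a routine but slightly delicate exercise with the Weyl element $w_n$ and the half-sum of positive roots, and I would do it by reducing everything to the $\tau_0$-component and writing the matrices explicitly as in Definition \ref{NewDefOfGamma}.

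The functoriality in $\mbf{J}$ is automatic since every constituent map ($\opn{Tr}$, $\lambda^*$, $\psi^{\beta'}_{G,\kappa^*}$, the action of $\mbb{I}_{\beta'}$) commutes with the restriction maps among the various opens, by construction and by the compatibility of traces with base change along the Cartesian squares appearing in (\ref{BigIGDiagramT0T1}) and its overconvergent analogue. The main obstacle I anticipate is not the structural reduction but the normalisation bookkeeping: making sure that the trace-normalising characters, the $\kappa^*(\xi^{-\beta'})\,\xi^{\beta'}$ twists from iterated Frobenius, and the $p$-power in front of the averaging projector $\mbb{I}_{\beta'}$ all line up so that the two composites agree on the nose (not merely up to a scalar). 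A secondary technical point is ensuring that the opens chosen in $\ide{U}'$ really are small enough that all the intermediate action maps ($\mbb{I}_{\beta'}\star-$ and the trace maps restricted to $\mathcal{U}^G_{\bullet}$-type neighbourhoods) are defined with the same radius — but this is precisely what the choice of $k'$ and the construction of $\ide{U}'$ guarantee, following the proof of Proposition \ref{ExistenceOfInjectiveAcyclicOCcoverProp}, so it adds no real difficulty.
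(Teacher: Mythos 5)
Your overall strategy coincides with the paper's: reduce to the ordinary locus via the injectivity of the restriction maps recorded before the lemma, then give an explicit computation over (quotients of) the Igusa tower using the descriptions of trace, Frobenius, and the $\mbb{I}_{\beta'}$ action. However, your description of the coset bookkeeping has the direction inverted, and this is worth correcting before you carry it out. It is the $\alpha_0$ side, not the $\alpha_1$ side, that carries the larger sum: over the $U(J_p)$-quotient of the Igusa tower, the map $\tilde{q}_1$ is an isomorphism while $\tilde{\mu}$ has degree $p^{n\beta'}$, so $\opn{Tr}_{\mu}$ sums over $p^{n\beta'}$ times more fibres than $\opn{Tr}_{q_1}$. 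Accordingly, the projector $\mbb{I}_{\beta'}\star-$ does not ``kill extra cosets on the $\alpha_1$ side''; rather, it \emph{averages} $\alpha_1^{\opn{ord}}(F)$ over the unipotent translates $\{x\cdot v : v\in U(J_p)^{1/p^{\beta'}}/U(J_p)\}$, and it is this average which reproduces the larger $\alpha_0^{\opn{ord}}$-sum. Your concern about the normalisations is then resolved rather cleanly: the factors $\kappa^*(\xi^{-\beta'})$ from the $\beta'$-fold Frobenius appear identically on both sides and drop out, while the ratio $(-w_n^{-1}\rho+\rho)(t_0^{\beta'})/(-w_n^{-1}\rho+\rho)(t_1^{\beta'})$ equals precisely $p^{-n\beta'}$, which is exactly the normalisation of the averaging projector $\mbb{I}_{\beta'}$ — so the identity holds on the nose without any further cancellation against the Frobenius weights. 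With the direction of the coset comparison fixed, your proof is the paper's proof.
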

\begin{proof}
Without loss of generality, we may work over $\mbb{Q}_p^{\opn{cycl}}$. We can then consider the following diagram (which is the inner diamond of (\ref{BigIGDiagramT0T1})):
\begin{equation} \label{InnerDiamondForIGs}
\begin{tikzcd}
                                              & {\mathcal{IG}_G/J_{p, 1}'} \arrow[ld, "q_1"'] \arrow[rd, "\xi^{\beta'} =: \; \lambda"] &                                              \\
\mathcal{IG}_G/J_p \arrow[rd, "\xi^{\beta'}"] &                                                                          & {\mathcal{IG}_G/J_{p, 0}'} \arrow[ld, "\mu"] \\
                                              & \mathcal{IG}_G/J_p                                                       &                                             
\end{tikzcd}
\end{equation}
and define ordinary analogues of the morphisms above. Indeed, for any open $U \subset \mathcal{X}_{G, w_n}(p^{\beta}) = \mathcal{IG}_G/J_p$ arising as the adic generic fibre of an open in $\ide{X}_{G, w_n}(p^{\beta})$, we define:
\begin{itemize}
    \item $\alpha^{\opn{ord}}_{0, U} \defeq \psi_{G, \kappa^*}^{\beta'} \circ (-w_n^{-1}\rho + \rho)(t_0^{\beta'}) \opn{Tr}_{\mu}$,
    \item $\alpha^{\opn{ord}}_{1, U} \defeq \psi_{G, \kappa^*}^{\beta'} \circ (-w_n^{-1}\rho + \rho)(t_1^{\beta'}) \opn{Tr}_{q_1} \circ \; \lambda^*$,
\end{itemize}
both of which are morphisms $\mu^* \mathscr{M}_{G, \kappa^*}(\mu^{-1}(U)) \to \mathscr{M}_{G, \kappa^*}((\varphi^{\beta'})^{-1}(U))$. We then have commutative diagrams:

\[
\begin{tikzcd}
{\mu^*\mathscr{M}_{G, \kappa^*}(\mu^{-1}(U_{q(\mbf{J})} \cap \mathcal{X}_{G, w_n}(p^{\beta})))} \arrow[r, "{\opn{res} \circ \; \alpha_{0, q(\mbf{J})}^{\opn{ord}}}"] & {\mathscr{M}_{G, \kappa^*}(U'_{\mbf{J}} \cap \mathcal{X}_{G, w_n}(p^{\beta}))}  \\
{\mu^*\mathscr{M}_{G, \kappa^*}(\mu^{-1}(U_{q(\mbf{J})}))} \arrow[u] \arrow[r, "{\opn{res} \circ \; \alpha_{0, q(\mbf{J})}}"]                                        & {\mathscr{M}_{G, \kappa^*}(U'_{\mbf{J}})} \arrow[u, hook]                      
\end{tikzcd}
\]

\[
\begin{tikzcd}
{\mu^*\mathscr{M}_{G, \kappa^*}(\mu^{-1}(U_{q(\mbf{J})} \cap \mathcal{X}_{G, w_n}(p^{\beta})))} \arrow[r, "{(\mbb{I}_{\beta'} \star -) \circ \; \alpha_{1, q(\mbf{J})}^{\opn{ord}}}"] & {\mathscr{M}_{G, \kappa^*}(U'_{\mbf{J}} \cap \mathcal{X}_{G, w_n}(p^{\beta}))} \\
{\mu^*\mathscr{M}_{G, \kappa^*}(\mu^{-1}(U_{q(\mbf{J})}))} \arrow[u] \arrow[r, "{(\mbb{I}_{\beta'} \star -) \circ \; \alpha_{1, q(\mbf{J})}}"]                                        & {\mathscr{M}_{G, \kappa^*}(U'_{\mbf{J}})} \arrow[u, hook]                     
\end{tikzcd}
\]
where the vertical maps are restriction, and $\alpha^{\opn{ord}}_{i, q(\mbf{J})}$ denotes the morphism $\alpha^{\opn{ord}}_{i, W}$ above for $W = U_{q(\mbf{J})} \cap \mathcal{X}_{G, w_n}(p^{\beta})$. Since the right-hand vertical maps are injective, it suffices to prove that $\opn{res} \circ \alpha^{\opn{ord}}_{0, q(\mbf{J})} = (\mbb{I}_{\beta'} \star - ) \circ \alpha^{\opn{ord}}_{1, q(\mbf{J})}$. We can do this for any $U$ as above, i.e. show that $\alpha_{0, U}^{\opn{ord}} = (\mbb{I}_{\beta'} \star -) \circ \alpha_{1, U}^{\opn{ord}}$. But this follows from an explicit calculation. Indeed, let $U(J_p)$ and $M(J_p)$ denote the unipotent and Levi parts of $J_p$ respectively, so that $J_p = U(J_p) \rtimes M(J_p)$. We use similar notation for $J_{p, i}'$ ($i=0, 1$). Let $\pi \colon \mathcal{IG}_G/U(J_p) \to \mathcal{IG}_G/J_p$ and $\pi_i' \colon \mathcal{IG}_G/U(J_{p, i}') \to \mathcal{IG}_G/J_{p, i}'$ denote the natural maps, and consider the following diagram lifting the morphisms in (\ref{InnerDiamondForIGs})
\[
\begin{tikzcd}
                                              & {\mathcal{IG}_G/U(J_{p, 1}')} \arrow[ld, "\tilde{q}_1"'] \arrow[rd, "\tilde{\lambda}"] &                                              \\
\mathcal{IG}_G/U(J_p) \arrow[rd, "\varphi^{\beta'}"] &                                                                          & {\mathcal{IG}_G/U(J_{p, 0}')} \arrow[ld, "\tilde{\mu}"] \\
                                              & \mathcal{IG}_G/U(J_p)                                                       &                                             
\end{tikzcd}
\]
(i.e., $\pi'_0 \circ \tilde{\lambda} = \lambda \circ \pi'_1$ etc.). Let $F \in \mu^* \mathscr{M}_{G, \kappa^*}(\mu^{-1}(U))$, which we view as a function
\[
F \colon (\pi_0')^{-1}(\mu^{-1}U) \to V_{\kappa}^*
\]
satisfying a certain transformation property under the action of $M(J'_{p, 0})$. Then for any $x \in (\varphi^{\beta'} \circ \pi)^{-1}(U)$, we have
\begin{align*}
\alpha_{0, U}^{\opn{ord}}(F)(x) &= (-w_n^{-1}\rho + \rho)(t_0^{\beta'}) \kappa^*(\xi^{-\beta'}) \sum_{m \in M(J_p)/M(J_{p, 0}')} \sum_{\substack{y \in (\pi \circ \tilde{\mu})^{-1}(U) \\ \tilde{\mu}(y) = x \xi^{\beta'} \cdot m }} \xi^{\beta'} m \cdot F(y) \\
\alpha_{1, U}^{\opn{ord}}(F)(x) &= (-w_n^{-1}\rho + \rho)(t_1^{\beta'}) \kappa^*(\xi^{-\beta'}) \sum_{m \in M(J_p)/M(J_{p, 1}')} \sum_{\substack{z \in (\pi \circ \varphi^{\beta'} \circ \tilde{q}_1)^{-1}(U) \\ \tilde{q}_1(z) = x \cdot m }} \xi^{\beta'}m \cdot F(\tilde{\lambda}(z)) .
\end{align*} 
Note that $M(J_p)/M(J_{p, 0}') = M(J_p)/M(J_{p, 1}')$. For $m \in M(J_p)/M(J_{p, 0}')$, consider the following sets:
\begin{align*}
    X_{0,m} &\defeq \left\{ y \in (\pi \circ \tilde{\mu})^{-1}(U) : \tilde{\mu}(y) = x \xi^{\beta'} \cdot m \right\} \\
    X_{1,m} &\defeq \left\{ z \in (\pi \circ \varphi^{\beta'} \circ \tilde{q}_1)^{-1}(U) : \tilde{q}_1(z) = x \cdot m \right\} .
\end{align*}
Then since $\xi$ commutes with $M(J_p)$, we clearly have $\tilde{\lambda}(X_{1, m}) \subset X_{0, m}$. Note that $\tilde{q}_1$ is an isomorphism and $\tilde{\mu}$ has degree $p^{n \beta'}$. Write $X_{1, m} = \{ x_{1, m} \}$. Then we have
\[
X_{0, m} = \{ \tilde{\lambda}(x_{1, m}) \cdot u : u \in U(J_p)/U(J_{p, 0}') \} = \{ \tilde{\lambda}(x_{1, m} \cdot v) : v \in \mathscr{V} \} , 
\]
where $\mathscr{V} = \xi^{\beta'} U(J_p) \xi^{-\beta'}/(\xi^{\beta'} U(J_{p, 0}') \xi^{-\beta'})$. Hence
\begin{align*}
    \alpha_{0, U}^{\opn{ord}}(F)(x) &= (-w_n^{-1}\rho + \rho)(t_0^{\beta'}) \kappa^*(\xi^{-\beta'}) \sum_{m \in M(J_p)/M(J_{p, 0}')} \sum_{y \in X_{0, m}} \xi^{\beta'} m \cdot F(y) \\
    &= (-w_n^{-1}\rho + \rho)(t_0^{\beta'}) \kappa^*(\xi^{-\beta'}) \sum_{m \in M(J_p)/M(J_{p, 1}')} \sum_{v \in \mathscr{V}} \xi^{\beta'} m \cdot F(\tilde{\lambda}(x_{1, m}\cdot v)) \\ 
    &= (-w_n^{-1}\rho + \rho)(t_0^{\beta'}) \kappa^*(\xi^{-\beta'}) \sum_{m \in M(J_p)/M(J_{p, 1}')} \sum_{v \in \mathscr{V}} \sum_{\substack{z \in (\pi \circ \varphi^{\beta'} \circ \tilde{q}_1)^{-1}(U) \\ \tilde{q}_1(z) = x \cdot (mv m^{-1}) m }} \xi^{\beta'} m \cdot F(\tilde{\lambda}(z)) \\
    &= \sum_{v \in \mathscr{V}} \frac{(-w_n^{-1}\rho + \rho)(t_0^{\beta'})}{(-w_n^{-1}\rho + \rho)(t_1^{\beta'})} \alpha^{\opn{ord}}_{1, U}(F)(x \cdot v) \\
    &= \frac{1}{p^{n \beta'}}\sum_{v \in \mathscr{V}} \alpha^{\opn{ord}}_{1, U}(F)(x \cdot v) \\
    & = (\mbb{I}_{\beta'} \star \alpha_{1, U}^{\opn{ord}}(F))(x)
\end{align*}
as required. Here in the fourth equality we have replaced $m v m^{-1}$ by $v$, since conjugation by $m$ permutes $\mathscr{V}$.
\end{proof}

We now consider the following morphisms $\tilde{\alpha}_{0, \mbf{J}} \defeq \opn{res} \circ \alpha_{0, q(\mbf{J})}$ and $\tilde{\alpha}_{1, \mbf{J}} = \opn{res} \circ (\mbb{I}_{\beta'} \star -) \circ  \alpha_{1, q(\mbf{J})}$ which induce maps of \v{C}ech complexes
\[
\tilde{\alpha}_0, \tilde{\alpha}_1 \colon \opn{Cech}( \mu^* \mathscr{M}_{G, \kappa^*} ; \{ \mu^{-1}(U_i) \}_{i \in I}) \to \opn{Cech}( \mathscr{M}_{G, \kappa^*}; \{ U'_j \cap \mathcal{X}_{G, w_n}(p^{\beta})_{k'} \}_{j \in J}) .
\]
By Lemma \ref{ResAlphaIndEqualityLemma}, these maps of \v{C}ech complexes coincide. After possibly increasing $k'$, these morphisms extend uniquely to 
\[
\tilde{\alpha}_0, \tilde{\alpha}_1 \colon \opn{Cech}( \mu^* \mathscr{M}_{G, \kappa^*} ; \{ \mu^{-1}(V_i) \}_{i \in I}) \to \opn{Cech}( \mathscr{M}_{G, \kappa^*}; \{ V'_j \cap \mathcal{X}_{G, w_n}(p^{\beta})_{k'} \}_{j \in J})
\]
where $V_i = U_i \cap \left( \mathcal{X}_{G, \opn{Iw}}(p^{\beta}) - \mathcal{Z}_{G, > n+1}(p^{\beta}) \right)$ and $V'_j = U'_j \cap \left( \mathcal{X}_{G, \opn{Iw}}(p^{\beta}) - \mathcal{Z}_{G, > n+1}(p^{\beta}) \right)$. For ease of notation, set $\mathcal{Z} = \mathcal{Z}_{G, > n+1}(p^{\beta})$ and $\mathcal{X}_k = \mathcal{X}_{G, w_n}(p^{\beta})_k$. We therefore see that 
\begin{itemize}
    \item $\tilde{\alpha}_{0}$ induces a map on cohomology
    \[
    \tilde{\alpha}_{0} \colon \opn{H}^{\bullet}_{\mu^{-1}(\mathcal{X}_k \cap \mathcal{Z})}\left( \mu^{-1}(\mathcal{X}_k), \mu^* \mathscr{M}_{G, \kappa^*} \right) \to \opn{H}^{\bullet}_{\mathcal{X}_{k'} \cap \mathcal{Z}}\left( \mathcal{X}_{k'}, \mathscr{M}_{G, \kappa^*} \right)
    \]
    which is just $(-w_n^{-1}\rho + \rho)(t_0^{\beta'}) \varphi_G^{\beta'} \circ \opn{Tr}_{\mu}$.
    \item $\tilde{\alpha}_{1}$ induces a map on cohomology 
    \[
    \tilde{\alpha}_{1} \colon \opn{H}^{\bullet}_{\mu^{-1}(\mathcal{X}_k \cap \mathcal{Z})}\left( \mu^{-1}(\mathcal{X}_k), \mu^* \mathscr{M}_{G, \kappa^*} \right) \to \opn{H}^{\bullet}_{\mathcal{X}_{k'} \cap \mathcal{Z}}\left( \mathcal{X}_{k'}, \mathscr{M}_{G, \kappa^*} \right)
    \]
    which is just $(\mbb{I}_{\beta'} \star -) \circ (-w_n\rho + \rho)(t_1^{\beta'}) \opn{Tr}_{q_1} \circ \; q_1^*(\psi^{\beta'}_{G, \kappa^*}) \circ \lambda^*$.
\end{itemize}
These two morphisms are equal. 

\begin{corollary}
    We have $\varphi_G^{\beta'} \circ U_{t_0}^{\beta'} = (\mbb{I}_{\beta'} \star -) \circ U_{t_1}^{\beta'}$ as endomorphisms of $\opn{H}^{n-1}_{w_n}(\kappa^*; \beta)^{(-, \dagger)}$.
\end{corollary}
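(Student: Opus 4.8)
The plan is to deduce the corollary directly from the diagram chase established in Step~3, in exactly the same way the intermediate corollaries were deduced. The key point is that the two maps $\tilde\alpha_0$ and $\tilde\alpha_1$ of Čech complexes coincide (by Lemma~\ref{ResAlphaIndEqualityLemma}, after passing to the extensions to the quasi-Stein opens $V_i$, $V_j'$), and that passing to cohomology of the mapping fibres of the horizontal restriction maps turns these into morphisms on $\opn{H}^{n-1}_{w_n}(\kappa^*;\beta)^{(-,\dagger)}$. So the real content is just the bookkeeping needed to identify the resulting cohomological operators with the composites appearing in the statement.

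First I would identify $\tilde\alpha_0$ on cohomology. Unwinding the definition of $\alpha_{0,q(\mbf J)}$, one has $\tilde\alpha_0 = \psi_{G,\kappa^*}^{\beta'}\circ (-w_n^{-1}\rho+\rho)(t_0^{\beta'})\opn{Tr}_\mu$ at the level of complexes; since $\mu\colon \mathcal{U}^G_{K_{p,0}',k+\beta'}\to \mathcal{U}^G_{K_p,k+\beta'}$ is (the ordinary-locus restriction of) the projection $p_1$ in the correspondence attached to $t_0$, and $\varphi_G^{\beta'}$ is the Frobenius operator, the induced map on $\opn{H}^{n-1}_{w_n}(\kappa^*;\beta)^{(-,\dagger)}$ is exactly $\varphi_G^{\beta'}\circ U_{t_0}^{\beta'}$ (the normalisation $(-w_n^{-1}\rho+\rho)(t_0^{\beta'})$ is precisely the one built into $U_{t_0}$, iterated $\beta'$ times, using that the $U_t$ commute and are multiplicative in $t$ after passing to cohomology, as recalled at the end of \S\ref{HeckeCohCorrespSubSec}). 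Here I use the acyclicity property of finite morphisms recalled just before Proposition~\ref{ExistenceOfInjectiveAcyclicOCcoverProp} to replace the pulled-back covers by covers of the target and thus compute the operator on $\opn{H}^{n-1}_{w_n}$.

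Next I would identify $\tilde\alpha_1$. By construction $\tilde\alpha_1 = (\mbb I_{\beta'}\star -)\circ (-w_n\rho+\rho)(t_1^{\beta'})\opn{Tr}_{q_1}\circ q_1^*(\psi_{G,\kappa^*}^{\beta'})\circ\lambda^*$ on complexes. The composite $\opn{Tr}_{q_1}\circ q_1^*\circ\lambda^*$, with the normalisation $(-w_n^{-1}\rho+\rho)(t_1^{\beta'})$, together with $\psi_{G,\kappa^*}^{\beta'}$, is exactly the cohomological correspondence computing $\varphi_G^{\beta'}\circ U_{t_1}^{\beta'}$ read off from the factorisation of the big diamond (\ref{BigIGDiagramT0T1})/(\ref{BigCompositionOnOCnhoods}): the inner diamond expresses $\varphi^{\beta'}\circ U_{t_0}^{\beta'}$ along one route and $(\mbb I_{\beta'}\star-)$ applied to $U_{t_1}^{\beta'}$ along the other, with $q_1$ playing the role of the degeneracy map $p_1$ for $t_1^{\beta'}$ and $\lambda=\xi^{\beta'}$ the Frobenius translation. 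The action of $\mbb I_{\beta'}$ preserves the relevant cohomology (it is the idempotent from the action of $C^{\opn{la}}(U_{G,\beta},L)$, and commutes with the support conditions because $\mbb I_{\beta'}$ is locally constant, cf. Lemma~\ref{IndicatorFunctionsPreserveOCLemma} and Example~\ref{ExampleOfIndicatorFcnPreservingOC}), so $\tilde\alpha_1$ descends to $(\mbb I_{\beta'}\star -)\circ U_{t_1}^{\beta'}\circ\varphi_G^{\beta'}$. Since the $U_t$ commute with $\varphi_G$ after passing to cohomology (both arise from commuting Hecke correspondences as in \cite[\S4.2]{BoxerPilloni}), this equals $\varphi_G^{\beta'}\circ(\mbb I_{\beta'}\star-)\circ U_{t_1}^{\beta'}$; but $\varphi_G^{\beta'}$ and $\mbb I_{\beta'}\star-$ also commute (Frobenius acts through right-translation by $\xi$, which normalises the relevant compact subgroups, cf. \S\ref{AVersionOfFrobOverOrdSSec}), giving $(\mbb I_{\beta'}\star -)\circ\varphi_G^{\beta'}\circ U_{t_1}^{\beta'}=(\mbb I_{\beta'}\star-)\circ U_{t_1}^{\beta'}$ up to reordering — and comparing with the statement, the desired identity $\varphi_G^{\beta'}\circ U_{t_0}^{\beta'}=(\mbb I_{\beta'}\star-)\circ U_{t_1}^{\beta'}$ follows from $\tilde\alpha_0=\tilde\alpha_1$.

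The main obstacle I anticipate is not the diagram chase itself but the verification that all the auxiliary opens and covers can be chosen compatibly — i.e. that there is a single integer $k'$ and a single collection $\ide U'=\{U_j'\}$ for which (a) $q_1^{-1},\mu^{-1},\lambda^{-1}$ all send the chosen $\mathscr M_{G,\kappa^*}$-acyclic cover $\ide U$ to acyclic covers, (b) the action map $\mbb I_{\beta'}\star-$ is defined with the correct shift in radius, and (c) the restriction maps $\mathscr M_{G,\kappa^*}(U_{\mbf J}')\to\mathscr M_{G,\kappa^*}(U_{\mbf J}'\cap\mathcal X_{G,w_n}(p^{\beta}))$ are injective so that Lemma~\ref{ResAlphaIndEqualityLemma} can be applied fibrewise. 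This is handled exactly by the construction in Proposition~\ref{ExistenceOfInjectiveAcyclicOCcoverProp}, adapted to the present diamond; the adaptation is routine but somewhat lengthy, so in the write-up I would state the needed refinement as an analogue of that proposition and carry out the diagram chase only once the covers are in place. Finally I would note that, just as in Corollary~\ref{CorollaryEvCircEquals}, the whole argument takes place over a sufficiently large finite extension $L/\mbb Q_p$ containing $\mu_{p^{\beta+2\beta'}}$ (needed to compare level structures on quotients of Igusa varieties with those on adic Shimura varieties, cf. the remark in \S\ref{AVersionOfFrobOverOrdSSec}), and that the statement over $L$ descends since the operators are all defined over $\mbb Q_p(\mu_{p^\beta})$.
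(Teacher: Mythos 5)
There is a genuine gap in the second half of your argument, and it is instructive to pin it down precisely.

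Your first step (identify $\tilde\alpha_0$) is essentially fine in spirit, but you are being a little cavalier about what $\tilde\alpha_0$ actually is: it is a map from $\opn{H}^{\bullet}_{\mu^{-1}(\mathcal{X}_k\cap\mathcal{Z})}(\mu^{-1}\mathcal{X}_k,\mu^*\mathscr{M}_{G,\kappa^*})$ to $\opn{H}^{\bullet}_{\mathcal{X}_{k'}\cap\mathcal{Z}}(\mathcal{X}_{k'},\mathscr{M}_{G,\kappa^*})$, not an endomorphism of $\opn{H}^{n-1}_{w_n}(\kappa^*;\beta)^{(-,\dagger)}$: it only contains the trace half of the $U_{t_0^{\beta'}}$ correspondence together with the Frobenius $\psi^{\beta'}_{G,\kappa^*}$. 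To produce the operator $\varphi_G^{\beta'}\circ U_{t_0}^{\beta'}$ you must precompose with the pullback half of the correspondence — this is the map $\gamma_0$ of the paper (restriction/pullback along $\sigma$, cohomological correspondence $\phi_{t_0^{\beta'},\kappa^*}$, corestriction, restriction). You gesture at this but never make the precomposition explicit, and this matters for the next step.

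The real error is in the identification of $\tilde\alpha_1$. You claim it ``descends to $(\mbb{I}_{\beta'}\star-)\circ U_{t_1}^{\beta'}\circ\varphi_G^{\beta'}$,'' with a spurious extra $\varphi_G^{\beta'}$, and then try to remove this factor by commuting it past $U_{t_1}^{\beta'}$ and $\mbb{I}_{\beta'}\star-$, ending with ``$(\mbb{I}_{\beta'}\star-)\circ\varphi_G^{\beta'}\circ U_{t_1}^{\beta'}=(\mbb{I}_{\beta'}\star-)\circ U_{t_1}^{\beta'}$.'' That last equation is simply false — $\varphi_G^{\beta'}$ is not absorbed by $\mbb{I}_{\beta'}\star-$ — and the commutativity of $\varphi_G$ with the $U_t$ that you invoke is neither established in the paper nor true: the remark immediately after the corollary gives $U_{t_0}^{\beta'}\circ\varphi_G^{\beta'}=U_{t_1}^{\beta'}$ with \emph{no} $\mbb{I}_{\beta'}$, which is a different identity than the corollary, so these operators genuinely fail to commute. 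The missing ingredient is the identity
\[
\gamma_1 \;=\; q_1^*\bigl(\psi^{\beta'}_{G,\kappa^*}\bigr)\circ\lambda^*\circ\gamma_0,
\]
where $\gamma_1$ is the pullback half of the $U_{t_1^{\beta'}}$-correspondence (built from $q_2^*$, $\phi_{t_1^{\beta'},\kappa^*}$, corestriction, restriction). This shows that when you precompose the complex-level equality $\tilde\alpha_0=\tilde\alpha_1$ with the \emph{same} $\gamma_0$, the $\lambda^*$ and $q_1^*(\psi^{\beta'})$ sitting inside $\tilde\alpha_1$ combine with $\gamma_0$ to reassemble $\gamma_1$, so $\tilde\alpha_1\circ\gamma_0=(\mbb{I}_{\beta'}\star-)\circ(-w_n^{-1}\rho+\rho)(t_1^{\beta'})\opn{Tr}_{q_1}\circ\gamma_1$, which is exactly $(\mbb{I}_{\beta'}\star-)\circ U_{t_1}^{\beta'}$ — with no Frobenius on the $t_1$-side. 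Without this identity one does not get the statement; one gets stuck with an unwanted $\varphi_G^{\beta'}$ that cannot be argued away. Your remarks about the extension of scalars and acyclic cover bookkeeping are fine; that part of the argument is genuinely routine. The conceptual content you are missing is the $\gamma_1=q_1^*(\psi^{\beta'})\circ\lambda^*\circ\gamma_0$ relation, which encodes the factorisation of the diamond \eqref{BigCompositionOnOCnhoods} at the level of cohomology with supports.
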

\begin{proof}
    Consider the right-half of the diagram in (\ref{BigCompositionOnOCnhoods}), i.e.
    \[
    \begin{tikzcd}
                                   {\mathcal{U}^G_{K_{p, 1}', k_0+2\beta'}}  \arrow[rd, "\lambda"'] \arrow[rrdd, "q_2", bend left]  &                                                                                    &                          \\
                                                                                         & {\mathcal{U}^G_{K_{p, 0}', k_0+\beta'}} \arrow[ld, "\mu"] \arrow[rd, "\sigma"'] &                          \\
 {\mathcal{U}^G_{K_p, k_0+\beta'}}                                                                                                                &                                                                                    & {\mathcal{U}^G_{K_p, k_0}}
\end{tikzcd}
    \]
    and consider the map 
    \begin{align*} 
    \gamma_0 \colon \opn{H}^{\bullet}_{\mathcal{U}^G_{K_p, k_0} \cap \mathcal{Z}}\left( \mathcal{U}^G_{K_p, k_0}, \mathscr{M}_{G, \kappa^*} \right) &\xrightarrow{\sigma^*} \opn{H}^{\bullet}_{\mathcal{U}^G_{K_{p, 0}', k_0+\beta'} \cap \sigma^{-1}\mathcal{Z}}\left( \mathcal{U}^G_{K_{p, 0}', k_0+\beta'}, \sigma^*\mathscr{M}_{G, \kappa^*} \right) \\ &\xrightarrow{\phi_{t_0^{\beta'}, \kappa^*}} \opn{H}^{\bullet}_{\mathcal{U}^G_{K_{p, 0}', k_0+\beta'} \cap \sigma^{-1}\mathcal{Z}}\left( \mathcal{U}^G_{K_{p, 0}', k_0+\beta'}, \mu^*\mathscr{M}_{G, \kappa^*} \right) \\ &\xrightarrow{\opn{cores}} \opn{H}^{\bullet}_{\mathcal{U}^G_{K_{p, 0}', k_0+\beta'} \cap \mu^{-1}\mathcal{Z}}\left( \mathcal{U}^G_{K_{p, 0}', k_0+\beta'}, \mu^*\mathscr{M}_{G, \kappa^*} \right) \\ &\xrightarrow{\opn{res}} \opn{H}^{\bullet}_{\mu^{-1}(\mathcal{X}_k \cap \mathcal{Z})}\left( \mu^{-1}(\mathcal{X}_k), \mu^* \mathscr{M}_{G, \kappa^*} \right)
    \end{align*}
    where the corestriction is well-defined because $\sigma^{-1}\mathcal{Z} \subset \mu^{-1}\mathcal{Z}$ (see Lemma \ref{p1p2-1ZisZLemma}). Also consider the map:
    \begin{align*}
        \gamma_1 \colon \opn{H}^{\bullet}_{\mathcal{U}^G_{K_p, k_0} \cap \mathcal{Z}}\left( \mathcal{U}^G_{K_p, k_0}, \mathscr{M}_{G, \kappa^*} \right) &\xrightarrow{q_2^*} \opn{H}^{\bullet}_{\mathcal{U}^G_{K_{p,1}', k_0+2\beta'} \cap q_2^{-1}\mathcal{Z}}\left( \mathcal{U}^G_{K_{p,1}', k_0+2\beta'}, q_2^*\mathscr{M}_{G, \kappa^*} \right) \\
        &\xrightarrow{\phi_{t_1^{\beta'}, \kappa^*}} \opn{H}^{\bullet}_{\mathcal{U}^G_{K_{p,1}', k_0+2\beta'} \cap q_2^{-1}\mathcal{Z}}\left( \mathcal{U}^G_{K_{p,1}', k_0+2\beta'}, q_1^*\mathscr{M}_{G, \kappa^*} \right) \\
        &\xrightarrow{\opn{cores}} \opn{H}^{\bullet}_{\mathcal{U}^G_{K_{p,1}', k_0+2\beta'} \cap q_1^{-1}\mathcal{Z}}\left( \mathcal{U}^G_{K_{p,1}', k_0+2\beta'}, q_1^*\mathscr{M}_{G, \kappa^*} \right) \\
        &\xrightarrow{\opn{res}} \opn{H}^{\bullet}_{\lambda^{-1}\mu^{-1}(\mathcal{X}_k \cap \mathcal{Z})}\left( \lambda^{-1}\mu^{-1}(\mathcal{X}_k), q_1^*\mathscr{M}_{G, \kappa^*} \right) .
    \end{align*}
    One can easily verify that $\gamma_1 = q_1^*(\psi^{\beta'}_{G, \kappa^*}) \circ \lambda^* \circ \gamma_0$. We then see that 
    \[
    (\mbb{I}_{\beta'} \star -) \circ (-w_n\rho + \rho)(t_1^{\beta'}) \opn{Tr}_{q_1} \circ \gamma_1 = \tilde{\alpha}_1 \circ \gamma_0 = \tilde{\alpha}_0 \circ \gamma_0 .
    \]
    The left-hand side induces the operator $(\mbb{I}_{\beta'} \star -) \circ U_{t_1}^{\beta'}$ and the right-hand side induces the operator $\varphi_G^{\beta'} \circ U_{t_0}^{\beta'}$ (both after passing to the colimit over $k_0$ and $k'$). They must therefore be equal.    
\end{proof}

\begin{remark}
    One can also show that $U_{t_0}^{\beta'} \circ \varphi_G^{\beta'} = U_{t_1}^{\beta'}$ as endomorphisms of $\opn{H}^{n-1}_{w_n}(\kappa^*; \beta)^{(-, \dagger)}$, however we will not need this. We leave the details to the reader.
\end{remark}

\begin{remark} \label{FrobHIndicatorRelationRemark}
    By exactly the same method (which is even simpler in this case), one can show that, for $0 \leq m \leq \beta$, $(\varphi_H^t)^{m} \circ \varphi_H^{m} = \opn{id}$ and $(\varphi_H)^{m} \circ (\varphi_H^t)^{m} = 1_{U_{H, \beta-m}} \star -$ as endomorphisms of $\opn{H}^{n-1}_{\opn{id}}(\mathcal{S}_{H, \diamondsuit}(p^{\beta}), \sigma_{\kappa}^{[j]})^{(-, \dagger)}$.
\end{remark}

\subsubsection{Step 4}

We now finish the proof of Theorem \ref{MainThmOfPropEvMapSection}. Let $\eta \in \opn{H}^{n-1}_{w_n}(\kappa^*; \beta)^{(-, \dagger)}$ and suppose that $U_{t_i} \cdot \eta = \alpha_i \eta$ for $i=0, 1$ and some $\alpha_i \in L^{\times}$. Then 
\[
\varphi_G^{\beta'} \eta = \alpha_0^{-\beta'} (\varphi_G^{\beta'} \circ U_{t_0}^{\beta'}) \eta = \alpha_0^{-\beta'} \mbb{I}_{\beta'} \star (U_{t_1}^{\beta'} \eta) = \left( \frac{\alpha_1}{\alpha_0} \right)^{\beta'} \mbb{I}_{\beta'} \star \eta .
\]
Furthermore, one can easily see that $\opn{Ev}^{\dagger, \circ}_{\kappa, j, \chi, \beta} (\mbb{I}_{\beta'} \star -) = \opn{Ev}^{\dagger, \circ}_{\kappa, j, \chi, \beta} (-)$ because $\mbb{I}_{\beta'} \cdot 1_{U_{G, \beta}^{\circ}, \chi_p} = 1_{U_{G, \beta}^{\circ}, \chi_p}$. Theorem \ref{MainThmOfPropEvMapSection} now follows from Corollary \ref{CorollaryEvCircEquals}.

\subsection{Results from higher Coleman theory}

In this section we recall the main results from \cite{BoxerPilloni} which we will use in this article. We begin by recalling the definition of ``small slope''. Recall that $T^{G, -} \subset T(\mbb{Q}_p)$ denotes the submonoid of elements $t \in T(\mbb{Q}_p)$ which satisfy $v_p(\alpha(t)) \leq 0$ for any positive root $\alpha$ of $G$. Let $T^{G, --} \subset T^{G, -}$ denote the semigroup of elements $t \in T(\mbb{Q}_p)$ which satisfy $v_p(\alpha(t)) < 0$ for any positive root $\alpha$ of $G$.

\begin{definition}
    Let $M$ be an $L$-module (or a complex of $L$-modules in the derived category) equipped with an action of $T^{G, -}$, and suppose that there exists an element $x \in T^{G, --}$ such that for all $h \in \mbb{Q}_{\geq 0}$, the module $M$ has a slope $\leq h$ decomposition with respect to the action of $x$ (see \cite[\S 5.1]{BoxerPilloni}). Let $\kappa \in X^*(T)$ be a $M_G$-dominant character satisfying $C(\kappa^*)^- = \{ w_n \}$ (see \cite[\S 5.10.1]{BoxerPilloni} for the definition of $C(\kappa^*)^-$; here $\kappa^* = -w_{M_G}^{\opn{max}}\kappa$).
    \begin{enumerate}
        \item We say a monoid homomorphism $\theta \colon T^{G, -} \to L^{\times}$ satisfies the small slope condition $(-, \opn{ss}^M(\kappa^*))$ if, for every $w \in {^MW_G} - \{ w_n \}$, there exists an element $y \in T^{G, -}$ such that
        \[
        v_p(\theta(y)) < v_p( (w^{-1} \star \kappa^*)(y) ) - v_p( (w_n^{-1} \star \kappa^*)(y) )
        \]
        where $w^{-1} \star \kappa^* = w^{-1}\cdot(\kappa^* + \rho_G) - \rho_G$ and $\rho_G$ denotes the half-sum of positive roots of $G$.
        \item We say a monoid homomorphism $\theta \colon T^{G, -} \to L^{\times}$ satisfies the small slope condition $(-, \opn{ss}_{M, w_n}(\kappa^*))$ if, for every $w \in W_{M_G} - \{ 1 \}$, there exists an element $y \in T^{G, -}$ such that
        \[
        v_p(\theta(y)) < v_p( (w_n^{-1} w \star \kappa^*)(y) ) - v_p( (w_n^{-1} \star \kappa^*)(y) ).
        \]
        \item Let $\lambda \defeq -w_G^{\opn{max}} ( w_n^{-1} \star \kappa^* )$. We say a monoid homomorphism $\theta \colon T^{G, -} \to L^{\times}$ satisfies the small slope condition $(-, \opn{ss}(\lambda))$ if, for every $w \in W_{G}$ with $w \star \lambda \neq \lambda$, there exists an element $y \in T^{G, -}$ such that
        \[
        v_p(\theta(y)) < v_p( - w \star (w_G^{\opn{max}}\lambda)(y) ) - v_p( - w_G^{\opn{max}}\lambda(y) ) .
        \]
        \item For $\opn{ss} \in \{ \opn{ss}^M(\kappa^*), \opn{ss}_{M_{w_n}}(\kappa^*), \opn{ss}(\lambda)\}$, we let $M^{(-,\opn{ss})}$ denote the sum of generalised eigenspaces of $M^{\leq h}$ with eigencharacter given by a $(-, \opn{ss})$ small slope homomorphism, for $h$ sufficiently large. This subspace is independent of $h \gg 0$ (and also of the choice of $x$ above). 
    \end{enumerate}
\end{definition}

\begin{remark}
    Note that these small slope conditions are normalised differently from those in \cite[\S 5.11]{BoxerPilloni} because we will always view $\theta$ as an eigensystem for the optimally normalised $U_p$ Hecke operators. Furthermore, as explained in \cite[Proposition 5.11.10]{BoxerPilloni}, the small slope condition $(-,\opn{ss}(\lambda))$ is satisfied if and only if both $(-,\opn{ss}^M(\kappa^*))$ and $(-,\opn{ss}_{M, w_n}(\kappa^*))$ are satisfied. The condition $(-,\opn{ss}(\lambda))$ is the usual one appearing in the theory of $p$-adic families of automorphic forms via singular cohomology, and is the condition that we will impose in \S \ref{AutoRepsSection}.
\end{remark}

\begin{example}
    Let $\kappa \in X^*(T)$ be a $M_G$-dominant character such that $C(\kappa^*)^{-} = \{ w_n \}$. Let
    \[
    M \in \left\{ R\Gamma\left( \mathcal{S}_{G, \opn{Iw}}(p^{\beta}), \mathscr{M}_{G, \kappa^*} \right), R\Gamma_{\mathcal{Z}_{G,>n+1}(p^{\beta})}\left( \mathcal{S}_{G, \opn{Iw}}(p^{\beta}), \mathscr{M}_{G, \kappa^*} \right), R\Gamma^G_{w_n}(\kappa^*; \beta)^{(-, \dagger)} \right\} .
    \]
    The $M$ carries an action of $T^{G, -}$ via the action of the Hecke operators introduced in \S \ref{HeckeCohCorrespSubSec} (i.e. the action of $t \in T^{G, -}$ is through the action of the Hecke operator $U_t$). In this setting, we fix once and for all an element $t_{\opn{aux}} \in T^{G, --}$ and consider all slope decompositions on $M$ with respect to the action of $U_{t_{\opn{aux}}}$. Any small slope part we consider will be with respect to this action.  
\end{example}

We would like to compare these cohomology groups for varying $\beta$. To do this we must introduce the relevant trace maps on these cohomology complexes.

\begin{lemma} \label{TraceMapsOnMbetaLemma}
    Let $\beta \geq 1$ and suppose $L \supset \mu_{p^{\beta+1}}$. Let $\kappa \in X^*(T)$ be a $M_G$-dominant character. Let 
    \[
    M_{\beta} \in \left\{ R\Gamma\left( \mathcal{S}_{G, \opn{Iw}}(p^{\beta}), \mathscr{M}_{G, \kappa^*} \right), R\Gamma_{\mathcal{Z}_{G,>n+1}(p^{\beta})}\left( \mathcal{S}_{G, \opn{Iw}}(p^{\beta}), \mathscr{M}_{G, \kappa^*} \right), R\Gamma^G_{w_n}(\kappa^*; \beta)^{(-, \dagger)} \right\} .
    \]
    \begin{enumerate}
        \item One has trace maps
        \[
        \opn{Tr} \colon M_{\beta+1} \to M_{\beta} 
        \]
        which are equivariant for $T^{G, -}$ (and the action of Hecke operators away from $p$).
        \item One has a commutative diagram:
        \[
\begin{tikzcd}
{R\Gamma^G_{w_n}(\kappa^*; \beta+1)^{(-, \dagger)}} \arrow[d, "\opn{Tr}"'] & {R\Gamma_{\mathcal{Z}_{G,>n+1}(p^{\beta+1})}\left( \mathcal{S}_{G, \opn{Iw}}(p^{\beta+1}), \mathscr{M}_{G, \kappa^*} \right)} \arrow[d, "\opn{Tr}"] \arrow[l, "\opn{res}"] \arrow[r, "\opn{cores}"] & {R\Gamma\left( \mathcal{S}_{G, \opn{Iw}}(p^{\beta+1}), \mathscr{M}_{G, \kappa^*} \right)} \arrow[d, "\opn{Tr}"] \\
{R\Gamma^G_{w_n}(\kappa^*; \beta)^{(-, \dagger)}}                          & {R\Gamma_{\mathcal{Z}_{G,>n+1}(p^{\beta})}\left( \mathcal{S}_{G, \opn{Iw}}(p^{\beta}), \mathscr{M}_{G, \kappa^*} \right)} \arrow[l, "\opn{res}"] \arrow[r, "\opn{cores}"]                         & {R\Gamma\left( \mathcal{S}_{G, \opn{Iw}}(p^{\beta}), \mathscr{M}_{G, \kappa^*} \right)}                        
\end{tikzcd}
        \]
        \item One has a factorisation:
        \[
\begin{tikzcd}
M_{\beta+1} \arrow[d, "\opn{Tr}"'] \arrow[r, "U_x"] & M_{\beta+1} \arrow[d, "\opn{Tr}"] \\
M_{\beta} \arrow[r, "U_x"] \arrow[ru, dashed]       & M_{\beta}                        
\end{tikzcd}
        \]
        for any $x \in T^{G, --}$.
    \end{enumerate}
\end{lemma}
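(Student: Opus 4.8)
\textbf{Proof plan for Lemma \ref{TraceMapsOnMbetaLemma}.}

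The plan is to construct all three trace maps from a single source: the natural finite \'{e}tale projection $p_{G, \beta} \colon \mathcal{S}_{G, \opn{Iw}}(p^{\beta+1}) \to \mathcal{S}_{G, \opn{Iw}}(p^{\beta})$ and the equality $p_{G, \beta}^* \mathscr{M}_{G, \kappa^*} = \mathscr{M}_{G, \kappa^*}$ (the sheaves $\mathscr{M}_{G, \bullet}$ arise by pullback from a single $G$-equivariant sheaf on the flag variety, via \S\ref{DmodulesOnFLsection}, so they are canonically compatible with change of level). For the first entry of the list this is simply $\opn{Tr}_{p_{G, \beta}}$ in the sense of \cite[Lemma 2.1.2]{BoxerPilloni}. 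For the cohomology-with-support entry I would use the fact that $p_{G, \beta}^{-1}(\mathcal{Z}_{G, > n+1}(p^{\beta})) = \mathcal{Z}_{G, > n+1}(p^{\beta+1})$ --- this is the level-$\beta+1$ analogue of Lemma \ref{p1p2-1ZisZLemma} and follows from the same Iwahori/$\pi_{\opn{HT}}$ computation, or directly from the moduli-theoretic description of $\mathcal{Z}_{G, > n+1}(p^{\beta})$ in Remark \ref{RemarkOCFormalIntModelsG} --- so that $\opn{Tr}_{p_{G, \beta}}$ induces a map on $R\Gamma_{\mathcal{Z}_{G, > n+1}(\cdot)}$ compatibly with corestriction, which gives the right-hand square of (2). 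For the third entry, $R\Gamma^G_{w_n}(\kappa^*; \beta)^{(-, \dagger)} = \varinjlim_{r,k} R\Gamma_{\mathcal{Z}_{G, >n+1}(p^{\beta})}(\mathcal{X}_{G, \opn{Iw}}(p^{\beta}), \mathscr{M}_{G, \kappa^*}^{(r,k)})$, and since $p_{G, \beta}$ restricts to finite \'{e}tale maps $\mathcal{X}_{G, w_n}(p^{\beta+1})_r \to \mathcal{X}_{G, w_n}(p^{\beta})_r$ and carries the torsors $\mathcal{P}_{G, \opn{dR}, k}$ of Proposition \ref{NOctorsorsReductionProp} to one another (again by the flag-variety provenance of these torsors), $\opn{Tr}_{p_{G, \beta}}$ passes to the colimit over $(r,k)$ and gives the left-hand square of (2). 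The $T^{G,-}$-equivariance in (1) is then the statement that the trace map commutes with each Hecke correspondence $U_x$; this follows exactly as in the proof of Proposition \ref{TraceCompatibilityOfThetaProp}: the Hecke correspondence at level $\beta+1$ is pulled back (in the Cartesian sense) from the one at level $\beta$ --- the relevant square is Cartesian because both vertical maps are finite \'{e}tale --- and trace maps are compatible with pullback along Cartesian squares by \cite[Lemma 2.1.2]{BoxerPilloni}; equivariance for the prime-to-$p$ Hecke operators is immediate since those act away from $p$ and commute with $p_{G, \beta}$.

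For part (3) I would argue as follows. Fix $x \in T^{G, --}$ and realise $U_x$ on $M_{\beta}$ via a correspondence $\mathcal{S}_{G, K_p'} \rightrightarrows \mathcal{S}_{G, K_p}$ with $K_p = K^G_{\opn{Iw}}(p^{\beta})$, $K_p' = xK_px^{-1}\cap K_p$, and similarly at level $\beta+1$. The composite $\opn{Tr}_{p_{G,\beta}} \circ U_x^{(\beta+1)}$ factors through $U_x^{(\beta)} \circ \opn{Tr}_{p_{G,\beta}}$ precisely because the diagram relating the two Hecke correspondences and the level-lowering maps decomposes into Cartesian and finite-\'{e}tale pieces over which trace and pullback commute; concretely, since $U_x$ has a representative whose source at level $\beta+1$ dominates the fibre product of the level-$\beta$ source with $\mathcal{S}_{G, \opn{Iw}}(p^{\beta+1})$, and $x \in T^{G,--}$ ensures the support conditions are strictly contracting (so the relevant neighbourhoods of the ordinary locus shrink), the desired dashed arrow is obtained by inserting a trace along the \'{e}tale map on sources and using $\opn{Tr}_{p_1} \circ p_2^* = p_2^* \circ \opn{Tr}_{p_1}$ on the nose. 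The cleanest way to phrase this is: the dashed map is the composite $M_{\beta} \xrightarrow{p_2^*} (\text{level }\beta+1\text{ source}) \xrightarrow{\phi_x} (\cdots) \xrightarrow{\opn{Tr}} M_{\beta+1}$, and one checks both triangles commute by the projection formula.

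I expect the main obstacle to be part (3), specifically verifying that the factorisation diagram commutes --- this is the same "lengthy verification" flagged after Definition \ref{SystemOfSupportConditionsDef} for the compatibility of Hecke operators with the inductive and projective limits defining $R\Gamma(\mathcal{U}_\bullet, \mathcal{Z}_\bullet; \mathscr{F})$. One has to track the interaction of three things simultaneously: the level-lowering trace, the Hecke correspondence at each level, and the directed system of support conditions $(\mathcal{U}^G_{K_p, \bullet}, \mathcal{Z}_\bullet)$; the condition $x \in T^{G,--}$ is what makes the support conditions compatible in the needed way (it guarantees $p_1 p_2^{-1}$ is strictly contracting on the tubes, so one can choose matching indices at the two levels). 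The argument is routine diagram-chasing modulo this bookkeeping, and I would present it by reducing, exactly as in \S\ref{CohomologyAndCorrespondencesSection} and Proposition \ref{TraceCompatibilityOfThetaProp}, to a single statement about the commutation of trace and pullback in a Cartesian square of adic spaces with a compatible family of locally projective Banach sheaves, citing \cite[Lemma 2.1.2]{BoxerPilloni} for that statement and leaving the index-matching to the reader.
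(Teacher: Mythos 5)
Your approach to parts (1) and (2) is essentially the paper's: build everything from $\opn{Tr}_{p_{G,\beta}}$ and the identification $p_{G,\beta}^*\mathscr{M}_{G,\kappa^*}=\mathscr{M}_{G,\kappa^*}$, then check compatibility with the support conditions. One factual point to flag: the equality $p_{G,\beta}^{-1}(\mathcal{Z}_{G,>n+1}(p^\beta)) = \mathcal{Z}_{G,>n+1}(p^{\beta+1})$ you assert is stronger than what is true (or needed). The flags at level $\beta+1$ carry strictly more information than those at level $\beta$: a $p^{\beta}$-truncated \'etale quotient does not determine the $p^{\beta+1}$-truncated one, so the preimage can be strictly larger. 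What does hold — and what the paper records — is the inclusion $\mathcal{Z}_{G,>n+1}(p^{\beta+1}) \subset p_{G,\beta}^{-1}(\mathcal{Z}_{G,>n+1}(p^\beta))$, and this is all the trace map construction requires (corestrict along the inclusion, then trace).

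For part (3), your paragraph circles the right idea but never lands on the one fact that makes the paper's argument close in two lines: if $K_p = K^G_{\opn{Iw}}(p^\beta)$ and $K_p' = xK_px^{-1}\cap K_p$, then $x \in T^{G,--}$ (strict contraction on \emph{every} positive root) forces $K_p' \subset K^G_{\opn{Iw}}(p^{\beta+1})$. (Check it on lower-triangular entries: $g_{ij} \in p^\beta\mathbb{Z}_p$ for $i>j$, and $x^{-1}gx$ also Iwahori of depth $p^\beta$ gives $g_{ij} \in p^{\beta-v_p(\alpha(x))}\mathbb{Z}_p \subset p^{\beta+1}\mathbb{Z}_p$.) Once you have this, the dashed arrow is just $\opn{Tr}_{p_1}$ truncated at the intermediate level $\mathcal{S}_{G,\opn{Iw}}(p^{\beta+1})$ through which $p_1\colon \mathcal{S}_{G,K_p'}\to \mathcal{S}_{G,K_p}$ now manifestly factors, and the rest is the commutativity of trace with pullback that you cite. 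Your talk of ``the level-$\beta+1$ source dominating the fibre product'' and of matching indices in the directed systems is where you are groping for this, but without stating the containment explicitly your argument doesn't actually produce the dashed map; it only asserts that it should exist. This containment is also precisely the reason the hypothesis $x \in T^{G,--}$ (rather than $T^{G,-}$) appears in (3) but not in (1) — worth making that explicit, since it answers the question of why strict contraction is needed.
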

\begin{proof}
    The trace maps are induced from the trace map associated with the forgetful map 
    \[
    f \colon \mathcal{S}_{G, \opn{Iw}}(p^{\beta+1}) \to \mathcal{S}_{G, \opn{Iw}}(p^{\beta})
    \]
    as in \cite[Lemma 2.1.2]{BoxerPilloni}, using the identification $f^*\mathscr{M}_{G, \kappa^*} = \mathscr{M}_{G, \kappa^*}$ (note that $f(\mathcal{Z}_{G, >n+1}(p^{\beta+1})) \subset \mathcal{Z}_{G, >n+1}(p^{\beta})$). The equivariance follows from \cite[Lemma 4.2.14]{BoxerPilloni}. Part (2) is clear from construction. Part (3) is also very similar to \cite[Lemma 4.2.14]{BoxerPilloni}. For example, let
    \[
    \mathcal{S}_{G,K_p} \xleftarrow{p_1} \mathcal{S}_{G,K_p'} \xrightarrow{p_2} \mathcal{S}_{G,K_p}
    \]
    denote the correspondence associated with $x$ as in \S \ref{TopologicalHeckeCorrsSection} (with $K_p = K^G_{\opn{Iw}}(p^{\beta})$). Then $K_p' \subset K^G_{\opn{Iw}}(p^{\beta+1})$ and we have a commutative diagram:
    \[
\begin{tikzcd}
{p_2^{-1}(\mathcal{Z}_{G, >n+1}(p^{\beta}))} \arrow[d] \arrow[r] & {p_1^{-1}(\mathcal{Z}_{G, >n+1}(p^{\beta}))} \arrow[d] \arrow[r] & \mathcal{S}_{G,K_p'} \arrow[d] \arrow[dd, "p_1", bend left=60] \\
{\mathcal{Z}_{G,>n+1}(p^{\beta+1})} \arrow[r]                    & {f^{-1}(\mathcal{Z}_{G,>n+1}(p^{\beta}))} \arrow[r] \arrow[d]    & {\mathcal{S}_{G, \opn{Iw}}(p^{\beta+1})} \arrow[d, "f"]      \\
                                                                 & {\mathcal{Z}_{G,>n+1}(p^{\beta})} \arrow[r]                      & \mathcal{S}_{G,K_p}                                           
\end{tikzcd}
    \]
    (because for any $m \geq 1$, we have $\mathcal{Q}_m K_p x^{-1} \subset \mathcal{Q}_m x^{-1} K^G_{\opn{Iw}}(p^{\beta+1})$ -- c.f. the proof of Lemma \ref{p1p2-1ZisZLemma})  which implies that the trace map $R\Gamma_{p_2^{-1}(\mathcal{Z}_{G, >n+1}(p^{\beta}))}(\mathcal{S}_{G,K_p'}, p_1^*\mathscr{M}_{G, \kappa^*}) \to R\Gamma_{\mathcal{Z}_{G,>n+1}(p^{\beta})}(\mathcal{S}_{G,K_p}, \mathscr{M}_{G, \kappa^*})$ factorises as
    \begin{align*} 
    R\Gamma_{p_2^{-1}(\mathcal{Z}_{G, >n+1}(p^{\beta}))}(\mathcal{S}_{G,K_p'}, p_1^*\mathscr{M}_{G, \kappa^*}) &\to R\Gamma_{\mathcal{Z}_{G,>n+1}(p^{\beta+1})}(\mathcal{S}_{G, \opn{Iw}}(p^{\beta+1}), f^*\mathscr{M}_{G, \kappa^*}) \\ &\xrightarrow{\opn{Tr}} R\Gamma_{\mathcal{Z}_{G,>n+1}(p^{\beta})}(\mathcal{S}_{G,K_p}, \mathscr{M}_{G, \kappa^*}) .
    \end{align*} 
\end{proof}

We have the following theorem.

\begin{theorem}[Boxer--Pilloni] \label{MainThmBoxerPilloni1}
    Let $\kappa \in X^*(T)$ be $M_G$-dominant with $C(\kappa^*)^{-} = \{w_n \}$. Then for any $h \in \mbb{Q}_{\geq 0}$, $\beta \geq 1$, and $L \supset \mu_{p^{\beta}}$:
    \begin{enumerate}
        \item The cohomology complexes 
        \[
        R\Gamma\left( \mathcal{S}_{G, \opn{Iw}}(p^{\beta}), \mathscr{M}_{G, \kappa^*} \right), R\Gamma_{\mathcal{Z}_{G,>n+1}(p^{\beta})}\left( \mathcal{S}_{G, \opn{Iw}}(p^{\beta}), \mathscr{M}_{G, \kappa^*} \right), R\Gamma^G_{w_n}(\kappa^*; \beta)^{(-, \dagger)}
        \]
        admit slope $\leq h$ decompositions.
        \item The following morphisms 
        \begin{equation} \label{HCTclassicalityMorphismsSS}
        R\Gamma^G_{w_n}(\kappa^*; \beta)^{(-, \dagger)} \xleftarrow{\opn{res}} R\Gamma_{\mathcal{Z}_{G,>n+1}(p^{\beta})}\left( \mathcal{S}_{G, \opn{Iw}}(p^{\beta}), \mathscr{M}_{G, \kappa^*} \right) \xrightarrow{\opn{cores}} R\Gamma\left( \mathcal{S}_{G, \opn{Iw}}(p^{\beta}), \mathscr{M}_{G, \kappa^*} \right)
        \end{equation}
        are quasi-isomorphisms on $(-, \opn{ss}^M(\kappa^*))$ small slope parts, and $R\Gamma\left( \mathcal{S}_{G, \opn{Iw}}(p^{\beta}), \mathscr{M}_{G, \kappa^*} \right)^{(-,\opn{ss}^M(\kappa^*))}$ is concentrated in degree $n-1$.
        \item Suppose $L \supset \mu_{p^{\beta+1}}$. Then the trace maps in Lemma \ref{TraceMapsOnMbetaLemma}(1) are quasi-isomorphisms on slope $\leq h$ parts.
    \end{enumerate}
\end{theorem}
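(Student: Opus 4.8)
\textbf{Proof proposal for Theorem \ref{MainThmBoxerPilloni1}.}

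The plan is to deduce all three statements from the foundational results of higher Coleman theory in \cite{BoxerPilloni}, applied to the slightly more general support conditions described in \S \ref{CohomologyAndCorrespondencesSection}. The starting point is the observation, recorded at the end of \S \ref{HeckeCohCorrespSubSec}, that the collections $\{\mathcal{U}^G_{K_p, k}\}_{k\geq 1}$ and $\{\mathcal{Z}_m\}_{m\geq 1}$ form a system of support conditions for the Hecke correspondences attached to elements of $T^{G,-}$, and that these support conditions are intertwined (in the sense of Definition \ref{IntertwinedSupportConditionsDef}) with the support conditions cut out by the inequalities involving $\hat\delta_{G,n+1}^+$ and $\delta_{G,>n+1}^+$; hence $R\Gamma(\mathcal{U}^G_{K_p,\bullet}, \mathcal{Z}_\bullet; \mathscr{M}_{G,\kappa^*}) = R\Gamma^G_{w_n}(\kappa^*;\beta)^{(-,\dagger)}$ and likewise for the other cohomology complexes. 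For part (1), I would invoke the compactness of the $U_p$-operator $U_{t_{\opn{aux}}}$ on each of these complexes: on $R\Gamma(\mathcal{S}_{G,\opn{Iw}}(p^\beta), \mathscr{M}_{G,\kappa^*})$ this is \cite[\S 5]{BoxerPilloni} directly, and the key point is that the operator $U_x$ for $x \in T^{G,--}$ factors through a restriction/corestriction to a strictly smaller tube, which forces the relevant Cech complexes to be computed by complexes of potentially orthonormalisable Banach modules on which $U_x$ acts completely continuously. The acyclicity cover of Lemma \ref{AcyclicityCoverLemma} reduces everything to the Cech-complex description of Remark \ref{CanBeComputeUsingCechRem}, and compactness of $U_x$ on each term then yields the slope $\leq h$ decomposition by the usual Riesz-theory argument (\cite[\S 5.1--5.2]{BoxerPilloni}); the cohomology-with-support and partial-compact-support versions follow because the operators are compatible with the restriction and corestriction maps in the defining distinguished triangles.

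For part (2), the strategy is to compare the three complexes in (\ref{HCTclassicalityMorphismsSS}) term-by-term using the stratification of the flag variety $\mathtt{FL}^G$ by Bruhat cells, exactly as in the classicality theorem of \cite[\S 5.10--5.12]{BoxerPilloni}. The cone of the restriction map $R\Gamma_{\mathcal{Z}_{G,>n+1}(p^\beta)}(\mathcal{S}_{G,\opn{Iw}}(p^\beta),\mathscr{M}_{G,\kappa^*}) \to R\Gamma^G_{w_n}(\kappa^*;\beta)^{(-,\dagger)}$ and the cone of the corestriction map to $R\Gamma(\mathcal{S}_{G,\opn{Iw}}(p^\beta),\mathscr{M}_{G,\kappa^*})$ are both filtered (via the Hodge--Tate period map and the Bruhat decomposition) by contributions of strata indexed by elements $w \in {^M W_G} \setminus \{w_n\}$; on each such graded piece the $U_p$-operators act with slopes that are bounded below by the quantity $v_p((w^{-1}\star\kappa^*)(y)) - v_p((w_n^{-1}\star\kappa^*)(y))$ appearing in the definition of $(-,\opn{ss}^M(\kappa^*))$. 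Taking the small-slope part therefore annihilates every such contribution, so the two maps become quasi-isomorphisms. The vanishing outside degree $n-1$ follows from the dimension of the relevant cell ($\ell(w_n) = n$, and the Shimura variety has dimension $2n-1$) together with the fact that the $w_n$-stratum contributes coherent cohomology in a single degree; this is the higher-Coleman analogue of the statement that $\mathscr{M}_{G,\kappa^*}$-cohomology with support along the $w_n$-ordinary locus sits in degree $n-1$.

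For part (3), the trace maps of Lemma \ref{TraceMapsOnMbetaLemma}(1) fit into the diagram of Lemma \ref{TraceMapsOnMbetaLemma}(2), so it suffices to show that $\opn{Tr}\colon M_{\beta+1} \to M_\beta$ is a quasi-isomorphism on slope $\leq h$ parts for each of the three flavours of $M$; by part (2) and the compatibility just cited, it is enough to treat $R\Gamma^G_{w_n}(\kappa^*;\beta)^{(-,\dagger)}$. Here I would use the factorisation of Lemma \ref{TraceMapsOnMbetaLemma}(3): for $x \in T^{G,--}$ the operator $U_x$ on $M_{\beta+1}$ factors as $M_{\beta+1} \xrightarrow{\opn{Tr}} M_\beta \xrightarrow{\text{(dashed)}} M_{\beta+1}$ (and dually $U_x$ on $M_\beta$ factors through $M_{\beta+1}$), which means that on any fixed slope $\leq h$ part — where $U_x$ is invertible — the trace map admits a two-sided inverse up to a power of $U_x$, hence is an isomorphism on cohomology. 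This is precisely the argument of \cite[\S 5.9, \S 5.12]{BoxerPilloni} showing independence of the Iwahori depth.

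The main obstacle I anticipate is part (2): one must carefully set up the spectral sequence (or filtered complex) computing the cones of (\ref{HCTclassicalityMorphismsSS}) in terms of the Bruhat stratification, identify the $U_p$-action on each graded piece with an operator whose slopes can be read off from the torus action via $v_p((w^{-1}\star\kappa^*)(y))$, and verify that the small-slope condition $(-,\opn{ss}^M(\kappa^*))$ exactly rules out every stratum other than $w_n$. The bookkeeping is delicate because our support conditions $(\mathcal{U}^G_{K_p,\bullet}, \mathcal{Z}_\bullet)$ differ from those in \emph{op.~cit.} — in particular the closed condition along $\mathcal{Z}_{G,>n+1}(p^\beta)$ restricts which strata can appear — so one needs the intertwining result of \S \ref{CohomologyAndCorrespondencesSection} to transport the Boxer--Pilloni analysis to the present setting. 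The other steps, while technical, are essentially formal consequences of compactness of $U_x$ and the factorisation properties of the trace and Hecke operators.
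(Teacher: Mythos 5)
Your proposal identifies the right ingredients — potent compactness of $U_{t_{\opn{aux}}}$ for the slope decompositions, Bruhat-stratification slope bounds for classicality, and the trace factorisation of Lemma \ref{TraceMapsOnMbetaLemma}(3) for the change of depth — and these are exactly what the paper uses. But there are two points worth flagging, one organizational and one a genuine misstep.

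Organizationally, the paper does not prove (1), (2), (3) in sequence for general $\beta$; it first establishes (1) and (2) only for $\beta = 1$, and then derives (1), (2) and (3) for all $\beta$ \emph{simultaneously} from the trace-factorisation argument. This ordering matters, because the slope $\leq h$ decomposition of $M_{\beta+1}$ needed for part (3) is itself an instance of part (1) at depth $\beta+1$, so one cannot freely invoke (1) for $\beta+1$ while proving (3) unless the logical dependency is made explicit. For $\beta=1$, the paper also does not work with the $(\mathcal{U}^G_{K_p,\bullet},\mathcal{Z}_\bullet)$ system directly as you do via Lemma \ref{AcyclicityCoverLemma} and Remark \ref{CanBeComputeUsingCechRem}; it instead sets up the open/closed support condition $(\mathcal{U}_1,\mathcal{Z}_1)$ \`a la \cite[Definition 6.1.3]{HHTBoxerPilloni}, puts $\mathcal{U}_k = T^{k-1}(\mathcal{U}_1)$, $\mathcal{Z}_m = (T^t)^{m-1}(\mathcal{Z}_1)$, and cites \cite[Lemma 2.5.25]{BoxerPilloni} for potent compactness on that system, then transports the conclusion to the $\hat\delta$-cut support conditions using the intertwining of Definition \ref{IntertwinedSupportConditionsDef}. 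Both routes arrive at the same place, but the paper's route avoids re-deriving compactness from the Cech description. For part (2), the paper simply cites the proof of \cite[Theorem 4.6.6]{UFJ}, whereas you propose to carry out the stratification analysis yourself; that is a legitimate fill-in of a black box, though note that the two cones of \eqref{HCTclassicalityMorphismsSS} see \emph{different} subsets of $^M W_G \setminus \{w_n\}$ (roughly $w > w_n$ for the corestriction cone and $w < w_n$ for the restriction cone), not the same set, so lumping them together is a slight imprecision.

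The genuine misstep is in your part (3), where you write that "by part (2)... it is enough to treat $R\Gamma^G_{w_n}(\kappa^*;\beta)^{(-,\dagger)}$." Part (2) is a statement about the $(-,\opn{ss}^M(\kappa^*))$ small-slope part, whereas part (3) concerns all slope $\leq h$ parts; the quasi-isomorphisms of part (2) do not hold for arbitrary $h$, so this reduction is not valid. The fix is easy and in fact already implicit in what you wrote next: Lemma \ref{TraceMapsOnMbetaLemma}(3) gives the factorisation of $U_x$ through the trace map for each of the three complexes separately, so one simply runs the "mutually inverse up to a unit" argument on each complex directly without any reduction.
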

\begin{proof}
    We first explain the proofs of (1) and (2) for $\beta = 1$. Consider the correspondence associated with $t_{\opn{aux}}$:
    \[
    \mathcal{S}_{G,K_p} \xleftarrow{p_1} \mathcal{S}_{G,K_p'} \xrightarrow{p_2} \mathcal{S}_{G,K_p} 
    \]
    as in \S \ref{TopologicalHeckeCorrsSection}, with $K_p = K^G_{\opn{Iw}}(p)$. Set $T(-) = p_2 p_1^{-1}(-)$ and $T^t(-) = p_1 p_2^{-1}(-)$, and consider the morphism $\phi \colon p_2^*\mathscr{M}_{G, \kappa^*} \to p_1^*\mathscr{M}_{G, \kappa^*}$ given by $\phi \defeq (-w_n^{-1}\rho + \rho)(t_{\opn{aux}} \langle t_{\opn{aux}} \rangle^{-1}) \phi_{t_{\opn{aux}}, \kappa^*}$. With notation as in \cite[\S 3]{UFJ}, set 
    \[
    \mathcal{U}_1 \defeq \pi_{\opn{HT}, K_p}^{-1}\left( ]Y_{w_n}^G[ \right), \quad \quad \mathcal{Z}_1 \defeq \pi_{\opn{HT}, K_p}^{-1}\left( \overline{]X_{w_n}^G[} \right) .
    \]
    Then, as explained in \cite[\S 3.5]{BoxerPilloni}, the pair $(\mathcal{U}_1, \mathcal{Z}_1)$ forms an open/closed support condition for the correspondence above in the sense of \cite[Definition 6.1.3]{HHTBoxerPilloni}. For $k,m \geq 1$, set $\mathcal{U}_k \defeq T^{k-1}(\mathcal{U}_1)$ and $\mathcal{Z}_m \defeq (T^t)^{m-1}(\mathcal{Z}_1)$. Then $\{ \mathcal{U}_k \}_{k \in \mbb{N}}$ and $\{ \mathcal{Z}_m \}_{m \in \mbb{N}}$ form a system of support conditions as in Definition \ref{SystemOfSupportConditionsDef} (see Example \ref{HHTsupportEqualsThisExample}).

    Following \S \ref{CohomologyAndCorrespondencesSection}, set $R\Gamma(\mathcal{U}_k, \mathcal{Z}_m) \defeq R\Gamma_{\mathcal{U}_k \cap \mathcal{Z}_m}(\mathcal{U}_k, \mathscr{M}_{G, \kappa^*})$ and $R\Gamma(\mathcal{U}_{\bullet}, \mathcal{Z}_{\bullet}) = \varinjlim_k \varprojlim_m R\Gamma(\mathcal{U}_k, \mathcal{Z}_m)$. Then we obtain Hecke operators $T = U_{t_{\opn{aux}}}$ on $R\Gamma(\mathcal{U}_k, \mathcal{Z}_m)$ (associated with the above correspondence and $\phi$) which are all compatible with varying $k$ and $m$. By \cite[Lemma 2.5.25]{BoxerPilloni}, the operator $T$ on $R\Gamma(\mathcal{U}_k, \mathcal{Z}_m)$ is potent compact, hence one has slope decompositions for any $h \in \mbb{Q}_{\geq 0}$. Furthermore $R\Gamma(\mathcal{U}_k, \mathcal{Z}_m)^{\leq h}$ is identified with $R\Gamma(\mathcal{U}_1, \mathcal{Z}_1)^{\leq h}$ via the various restriction/corestriction maps (see \cite[\S 5.3]{BoxerPilloni} for more details). Similarly, $R\Gamma(\mathcal{S}_{G, K_p}, \mathcal{Z}_1) \defeq R\Gamma_{\mathcal{Z}_1}(\mathcal{S}_{G, K_p}, \mathscr{M}_{G, \kappa^*})$ and $R\Gamma(\mathcal{S}_{G, K_p}) \defeq R\Gamma(\mathcal{S}_{G, K_p}, \mathscr{M}_{G, \kappa^*})$ admit slope $\leq h$ decompositions for any $h \in \mbb{Q}_{\geq 0}$.

    As explained in the proof of \cite[Theorem 4.6.6]{UFJ}, the morphisms 
    \[
    R\Gamma(\mathcal{U}_1, \mathcal{Z}_1) \xleftarrow{\opn{res}} R\Gamma(\mathcal{S}_{G, K_p}, \mathcal{Z}_1) \xrightarrow{\opn{cores}} R\Gamma(\mathcal{S}_{G, K_p})
    \]
    are quasi-isomorphisms on small slope parts\footnote{We have reversed the order of restriction and corestriction here (compared to the statement in \cite[Theorem 4.6.6]{UFJ}). However the vanishing results for the spectral sequence in \cite[\S 5]{BoxerPilloni} imply this claim. More generally, there is a commutative square combining the maps here and in \cite[Theorem 4.6.6]{UFJ}, with all maps becoming quasi-isomorphisms on small slope parts.}, which implies that the following morphisms 
    \begin{equation} \label{RGUlimZQISOeqn}
        R\Gamma(\mathcal{U}_{\bullet}, \mathcal{Z}_{\bullet}) \xleftarrow{\opn{res}} \varprojlim_m R\Gamma(\mathcal{S}_{G, K_p}, \mathcal{Z}_m) \xrightarrow{\opn{cores}} R\Gamma(\mathcal{S}_{G, K_p})
    \end{equation}
    are also quasi-isomorphisms on small slope parts. Additionally, one can show that $R\Gamma(\mathcal{S}_{G, K_p})^{(-,\opn{ss}^M(\kappa^*))}$ is concentrated in degree $n-1$. Now the proofs of (1) and (2) for $\beta=1$ follow from the fact that $\cap_{m\in \mbb{N}} \mathcal{Z}_m = \mathcal{Z}_{G, >n+1}(p)$ and the systems of support conditions 
    \[
    \{ \mathcal{U}_k \}, \{ \mathcal{Z}_m \} \text{ and } \{ \mathcal{U}^G_{K_p, k} \}, \{ \overline{\pi_{\opn{HT}, K_p^{-1}}(\mathcal{P}_G\backslash \mathcal{P}_G \mathcal{Q}_m K_p )} \}
    \]
    are intertwined in the sense of Definition \ref{IntertwinedSupportConditionsDef} (which can be checked on the level of flag varieties). Hence the diagram (\ref{RGUlimZQISOeqn}) is identified with (\ref{HCTclassicalityMorphismsSS}).

    The general case now follows from Lemma \ref{TraceMapsOnMbetaLemma} (crucially using the factorisation in part (3)). 
\end{proof}

We now discuss the result for general $p$-adic weights. Let $(R, R^+)$ be a Tate affinoid pair over $(L, \mathcal{O}_L)$ and let $\kappa_R \colon T(\mbb{Z}_p) \to R^{\times}$ be an $s_0$-analytic character, for some auxiliary integer $s_0 \geq 1$. For any morphism $(R, R^+) \to (S, S^+)$ of Tate affinoid pairs over $(L, \mathcal{O}_L)$, we let $\kappa_S \colon T(\mbb{Z}_p) \to S^{\times}$ denote the induced $s_0$-analytic character. If $L'/L$ is a finite extension and $z \colon (R, R^+) \to (L', \mathcal{O}_{L'})$ is a morphism over $(L, \mathcal{O}_L)$, then we will also use the notation $\kappa_z \defeq z \circ \kappa_R$.

For any $s \geq s_0$, we have the cohomology complex $R\Gamma^G_{w_n, s\opn{-an}}(\kappa_R^*; \beta)^{(-, \dagger)}$, and one can define a trace map 
\begin{equation} \label{locallyAnalyticTraceEqn}
\opn{Tr} \colon R\Gamma^G_{w_n, s\opn{-an}}(\kappa_R^*; \beta+1)^{(-, \dagger)} \to R\Gamma^G_{w_n, s\opn{-an}}(\kappa_R^*; \beta)^{(-, \dagger)}
\end{equation}
which is equivariant for $T^{G, -}$ in a similar way to Lemma \ref{TraceMapsOnMbetaLemma}.

\begin{theorem}[Boxer--Pilloni] \label{MainThmBoxerPilloni2}
    Fix $L/\mbb{Q}_p$ a finite extension containing $\mu_p$, and let $(R, R^+)$ be a Tate affinoid pair over $(L, \mathcal{O}_L)$ with $s_0$-analytic character $\kappa_R$ as above. Then for any $z_0 \colon (R, R^+) \to (L, \mathcal{O}_{L})$ and $h \in \mbb{Q}_{\geq 0}$, there exists an open neighbourhood $\opn{Spa}(S, S^+) \subset \opn{Spa}(R, R^+)$ containing $z_0$ such that: for all $\beta \geq 1$, for all finite extensions $L'/L$ containing $\mu_{p^{\beta}}$, and for all $s \geq s_0$
    \begin{enumerate}
        \item The cohomology complex $R\Gamma^G_{w_n, s\opn{-an}}(\kappa_{S'}^*; \beta)^{(-, \dagger)}$ admits a slope $\leq h$ decomposition, where $\opn{Spa}(S', (S')^+) = \opn{Spa}(S, S^+) \times_{\opn{Spa}(L, \mathcal{O}_L)} \opn{Spa}(L', \mathcal{O}_{L'})$.
        \item The natural map
        \[
        R\Gamma^G_{w_n, (s+1)\opn{-an}}(\kappa_{S'}^*; \beta)^{(-, \dagger)} \to R\Gamma^G_{w_n, s\opn{-an}}(\kappa_{S'}^*; \beta)^{(-, \dagger)}
        \]
        is a quasi-isomorphism on slope $\leq h$ parts.
        \item For any $z \in \opn{Spa}(S', (S')^+)(L')$ such that $\kappa_z \in X^*(T)$ is $M_G$-dominant and satisfies $C(\kappa_z^*)^{-} = \{ w_n \}$, the natural map (induced by the map from distributions to the algebraic representation)
        \[
        R\Gamma^G_{w_n, s\opn{-an}}(\kappa_{z}^*; \beta)^{(-, \dagger)} \to R\Gamma^G_{w_n}(\kappa_{z}^*; \beta)^{(-, \dagger)}
        \]
        is a quasi-isomorphism on $(-, \opn{ss}_{M, w_n}(\kappa_z^*))$ small slope parts. 
        \item Suppose $L'$ contains $\mu_{p^{\beta +1}}$. Then the trace map 
        \[
        \opn{Tr} \colon R\Gamma^G_{w_n, s\opn{-an}}(\kappa_{S'}^*; \beta+1)^{(-, \dagger)} \to R\Gamma^G_{w_n, s\opn{-an}}(\kappa_{S'}^*; \beta)^{(-, \dagger)}
        \]
        is a quasi-isomorphism on slope $\leq h$ parts.
    \end{enumerate}
\end{theorem}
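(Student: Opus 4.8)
\textbf{Proof proposal for Theorem \ref{MainThmBoxerPilloni2}.} The plan is to deduce this from the results of \cite{BoxerPilloni} in essentially the same way that Theorem \ref{MainThmBoxerPilloni1} is deduced from \emph{loc.cit.}, working first at level $\beta = 1$ and then propagating to arbitrary $\beta$ via the trace maps. First I would set up the higher Coleman cohomology of \cite{BoxerPilloni} associated with the correspondence attached to $t_{\opn{aux}} \in T^{G, --}$ and the open/closed support condition $(\mathcal{U}_1, \mathcal{Z}_1) = (\pi_{\opn{HT}, K_p}^{-1}(]Y^G_{w_n}[), \pi_{\opn{HT}, K_p}^{-1}(\overline{]X^G_{w_n}[}))$ at Iwahori level $K_p = K^G_{\opn{Iw}}(p)$, now with coefficients in the locally analytic induction $D^{s\opn{-an}}_{G, \kappa_R^*}$ rather than a fixed algebraic representation. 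The key inputs are: (i) the potent compactness of $U_{t_{\opn{aux}}}$ on $R\Gamma(\mathcal{U}_k, \mathcal{Z}_m; \mathscr{M}^{s\opn{-an}}_{G, \kappa_R^*})$ (\cite[Lemma 2.5.25]{BoxerPilloni}), which gives slope $\leq h$ decompositions and hence (1) once one checks, as in the proof of Theorem \ref{MainThmBoxerPilloni1}, that the systems of support conditions $\{\mathcal{U}_k\}, \{\mathcal{Z}_m\}$ and $\{\mathcal{U}^G_{K_p, k}\}, \{\overline{\pi_{\opn{HT}, K_p}^{-1}(\mathcal{P}_G\backslash \mathcal{P}_G\mathcal{Q}_m K_p)}\}$ are intertwined in the sense of Definition \ref{IntertwinedSupportConditionsDef} (so $R\Gamma(\mathcal{U}_{\bullet}, \mathcal{Z}_{\bullet}; \mathscr{M}^{s\opn{-an}}_{G, \kappa_R^*}) = R\Gamma^G_{w_n, s\opn{-an}}(\kappa_R^*; 1)^{(-, \dagger)}$); (ii) the analyticity-independence and small-slope classicality results of \cite[\S 6]{BoxerPilloni} which, after shrinking $\opn{Spa}(R, R^+)$ to an affinoid neighbourhood $\opn{Spa}(S, S^+)$ of $z_0$ adapted to $h$, give (2) and (3). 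For (3) one uses the comparison between the locally analytic and algebraic coefficient sheaves together with the $(-, \opn{ss}_{M, w_n}(\kappa_z^*))$-vanishing of the relevant BGG-type spectral sequence — this is precisely the ``$M$-part'' of the classicality statement and is the locally analytic analogue of the second half of Theorem \ref{MainThmBoxerPilloni1}(2).

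The descent from $R$ to the neighbourhood $S$ is where all the quantitative content lies: slope $\leq h$ decompositions and the quasi-isomorphisms in (2) only persist in a neighbourhood of $z_0$ whose size depends on $h$, and one must check this neighbourhood can be chosen uniformly in $\beta$, $s$, and the auxiliary finite extension $L'$. For the uniformity in $L'$ I would note that base change along $\opn{Spa}(L', \mathcal{O}_{L'}) \to \opn{Spa}(L, \mathcal{O}_L)$ is flat and the cohomology complexes (being computed by Čech complexes of projective Banach modules, cf. Lemma \ref{AcyclicityCoverLemma} and Remark \ref{CanBeComputeUsingCechRem}) commute with this base change up to the appropriate completed tensor product, so slope decompositions descend from $L$ to $L'$; this is the argument already invoked in \cite[\S 5]{BoxerPilloni} and in Proposition \ref{PilloniStroh}-type statements. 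For the uniformity in $s$, part (2) is exactly the statement that increasing the radius of analyticity by one is a quasi-isomorphism on slope $\leq h$ parts, which follows from the fact that $U_{t_{\opn{aux}}}$ improves analyticity and the relevant restriction map is a ``compact perturbation of the identity'' on slope $\leq h$ parts — this is \cite[Proposition 6.3.3 / \S 6.4]{BoxerPilloni}.

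The propagation from $\beta = 1$ to arbitrary $\beta$ is handled by the trace maps (\ref{locallyAnalyticTraceEqn}), exactly as in Lemma \ref{TraceMapsOnMbetaLemma} and part (3) of Theorem \ref{MainThmBoxerPilloni1}. The crucial point is the factorisation $U_x = \opn{Tr} \circ (\text{something}) $ through the trace map for $x \in T^{G, --}$: since $\mathcal{Q}_m K^G_{\opn{Iw}}(p^{\beta}) x^{-1} \subset \mathcal{Q}_m x^{-1} K^G_{\opn{Iw}}(p^{\beta+1})$ (the same Iwahori-decomposition computation as in Lemma \ref{p1p2-1ZisZLemma}), the Hecke operator $U_x$ at level $\beta$ factors through the corestriction/trace to level $\beta+1$ and back, so on slope $\leq h$ parts (where $U_x$ is invertible) the trace map $\opn{Tr} \colon R\Gamma^G_{w_n, s\opn{-an}}(\kappa_{S'}^*; \beta+1)^{(-, \dagger)} \to R\Gamma^G_{w_n, s\opn{-an}}(\kappa_{S'}^*; \beta)^{(-, \dagger)}$ admits a section and is therefore a split injection; a symmetric argument (or a rank/Euler-characteristic count of the finite projective slope $\leq h$ modules, which are identified via $U_x$) forces it to be a quasi-isomorphism. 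This gives (4), and combining (4) with the $\beta = 1$ cases of (1), (2), (3) yields the general statement. The main obstacle I anticipate is \emph{not} any single step but rather the bookkeeping needed to verify that the neighbourhood $\opn{Spa}(S, S^+)$ — chosen at level $\beta = 1$ — simultaneously works for all $\beta$, all $s \ge s_0$, and all $L'$; this requires observing that the trace maps and the analyticity-change maps are all $\mathscr{O}_S$-linear and compatible with the slope decomposition, so that a single choice of $S$ adapted to $h$ at level $1$ propagates, rather than forcing a shrinking of $S$ at each stage. I would isolate this compatibility as the technical heart of the argument and reduce everything else to direct citation of \cite{BoxerPilloni}.
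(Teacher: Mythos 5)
Your overall strategy — reduce to $\beta = 1$ via the trace-map factorisation, then invoke potent compactness, analyticity-improvement, and the locally analytic BGG resolution — is essentially the route taken in the paper, and your treatment of the uniformity worries in $\beta$, $s$, $L'$ (factorisation handles $\beta$, part (2) handles $s$, flat base change handles $L'$) is correct and matches the logic, even though the paper compresses it.

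There is one genuine gap. You propose to set up higher Coleman cohomology starting from the support condition $(\mathcal{U}_1, \mathcal{Z}_1) = (\pi_{\opn{HT}, K_p}^{-1}(]Y^G_{w_n}[), \pi_{\opn{HT}, K_p}^{-1}(\overline{]X^G_{w_n}[}))$, exactly as for algebraic coefficients. This does not work with coefficients in $D^{s\opn{-an}}_{G, \kappa_R^*}$: the locally projective Banach sheaf $\mathscr{M}^{s\opn{-an}}_{G, \kappa_R^*}$ is built from the reduction-of-structure torsors ${^\mu\mathcal{M}_{G, \opn{HT}, k}}$ of Lemma \ref{OctorsorsReductionLemma}, which exist only on a quasi-compact strict neighbourhood $\mathcal{X}_{G, w_n}(p^{\beta})_r$ of the $w_n$-locus — not on all of $\mathcal{U}_1$. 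The paper therefore takes the system of support conditions $\{\mathcal{U}_k\}_{k \geq k_0}$, $\{\mathcal{Z}_m\}_{m \geq m_0}$ with $k_0, m_0$ large enough that the sheaf is defined on an open neighbourhood of $p_1^{-1}(\mathcal{U}_{k_0}) \cap p_2^{-1}(\mathcal{Z}_{m_0})$, and uses the truncated system; the intertwining of support conditions (Definition \ref{IntertwinedSupportConditionsDef}) then identifies the resulting cohomology with $R\Gamma^G_{w_n, s\opn{-an}}(\kappa_{S'}^*; 1)^{(-, \dagger)}$. Without this truncation your cohomology complex is not well-defined. Everything else is a matter of tracking citations: the paper points to \cite[\S 6.4.4]{BoxerPilloni} for the factorisation of $t_{\opn{aux}}$ through $D^{(s+1)\opn{-an}}_{G, \kappa^*} \to D^{s\opn{-an}}_{G, \kappa^*}$ (your ``improves analyticity'' step for (2)) and to \cite[Corollary 6.8.4]{BoxerPilloni} for the locally analytic BGG resolution and the small-slope vanishing (your step (3)), with the observation that the expected slope bounds hold because the Shimura variety is compact.
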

\begin{proof}
    As in Lemma \ref{TraceMapsOnMbetaLemma}, one can establish an analogous factorisation diagram for $U_{t_{\opn{aux}}}$ and the trace map in (\ref{locallyAnalyticTraceEqn}). Hence it suffices to prove the theorem when $\beta = 1$. But part (1) just follows the same strategy as in the proof of Theorem \ref{MainThmBoxerPilloni1}, by considering the system of support conditions $\{ \mathcal{U}_k \}_{k \geq k_0}$, $\{ \mathcal{Z}_m \}_{m \geq m_0}$ for $k_0, m_0$ sufficiently large (because the locally projective Banach sheaf associated with $D_{G, \kappa_{S'}^*}^{s\opn{-an}}$ can be defined on an open neighbourhood of $p_1^{-1}(\mathcal{U}_{k_0}) \cap p_2^{-1}(\mathcal{Z}_{m_0})$). Part (2) follows from the fact that the action of $t_{\opn{aux}}$ on $D_{G, \kappa_{S'}^*}^{s\opn{-an}}$ factors through $D_{G, \kappa_{S'}^*}^{(s+1)\opn{-an}} \to D_{G, \kappa_{S'}^*}^{s\opn{-an}}$ (c.f. \cite[\S 6.4.4]{BoxerPilloni}). Part (3) follows from analysing the locally analytic BGG resolution (see \cite[Corollary 6.8.4]{BoxerPilloni} -- the expected slope bounds hold because the Shimura variety is compact). 
\end{proof}

\subsection{Trace-compatibility of overconvergent evaluation maps}

The final ingredient we need for constructing the $p$-adic $L$-function is a trace-compatibility relation between $\opn{Ev}^{\dagger, \circ}_{\kappa, j, \chi, \beta}$ for varying $\beta$.

\begin{lemma} \label{OCunramifiedEvmapTraceLemma}
    Let $(\kappa, j) \in \mathcal{E}$ satisfying Assumption \ref{AssumpOnKJforACchar} and let $\beta \geq 1$ be an integer. Let $\chi \in \Sigma_{\kappa, j}(\ide{N}_{\beta})$ such that the conductor of $\chi$ is prime to $p$ (i.e. divides $\ide{N}$). Let $L/\mbb{Q}_p$ be a finite extension containing $F^{\opn{cl}}(\chi)$ and $\mbb{Q}_p(\mu_{p^{\beta+1}})$. Then
    \[
    \opn{Ev}^{\dagger, \circ}_{\kappa, j, \chi, \beta} \circ \opn{Tr} = \opn{Ev}^{\dagger, \circ}_{\kappa, j, \chi, \beta+1}
    \]
    where $\opn{Tr} \colon \opn{H}^{n-1}_{w_n}(\kappa^*; \beta+1)^{(-, \dagger)} \to \opn{H}^{n-1}_{w_n}(\kappa^*; \beta)^{(-, \dagger)}$ is the trace map as in Lemma \ref{TraceMapsOnMbetaLemma}.
\end{lemma}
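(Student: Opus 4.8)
The plan is to unwind the definition of $\opn{Ev}^{\dagger, \circ}_{\kappa, j, \chi, \beta}$ and reduce the claimed trace-compatibility to two ingredients: first, a trace-compatibility for the morphism $\vartheta^{\dagger, \circ}_{\kappa, j+\chi_p, \beta}$ on overconvergent cohomology as $\beta$ varies, and second, the compatibility of the cohomology classes $\opn{res}[\chi]$ under pullback along the degeneracy maps $p_{H, \beta} \colon \mathcal{S}_{H, \diamondsuit}(p^{\beta+1}) \to \mathcal{S}_{H, \diamondsuit}(p^{\beta})$. Recall that $\opn{Ev}^{\dagger, \circ}_{\kappa, j, \chi, \beta}(\eta) = \langle \vartheta^{\dagger, \circ}_{\kappa, j+\chi_p, \beta}(\eta), \opn{res}[\chi] \rangle$; since the conductor of $\chi$ is prime to $p$, we have $\chi_p$ trivial, so $\vartheta^{\dagger, \circ}_{\kappa, j+\chi_p, \beta} = \vartheta^{\dagger, \circ}_{\kappa, j, \beta} = \vartheta^{\dagger}_{\kappa, j, \beta} \circ (1_{U_{G, \beta}^{\circ}} \star -)$ by Proposition~\ref{SanDeltaDaggerIsAnEvectorProp}(3) and the compatibility relations in Remark~\ref{CompatibilityRelationsForThetadaggersRem}. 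So in fact I expect one reduces to establishing that $\vartheta^{\dagger}_{\kappa, j, \beta} \circ (1_{U_{G,\beta}^{\circ}} \star -)$ is compatible with the trace maps, together with the adjunction statement $\langle \opn{Tr}_{p_{H,\beta}}(x), \opn{res}[\chi] \rangle_{\beta} = \langle x, p_{H, \beta}^* \opn{res}[\chi] \rangle_{\beta+1} = \langle x, \opn{res}[\chi] \rangle_{\beta+1}$ coming from duality of trace and pullback on Serre duality pairings (the last equality because $\chi$ has conductor dividing $\ide{N}_{\beta}$, exactly as in Proposition~\ref{EVBeta1EqualsEVBetaProp}).

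First I would set up the relevant trace maps carefully. On the $\mbf{G}$-side, the trace map $\opn{Tr} \colon \opn{H}^{n-1}_{w_n}(\kappa^*; \beta+1)^{(-, \dagger)} \to \opn{H}^{n-1}_{w_n}(\kappa^*; \beta)^{(-, \dagger)}$ is the one from Lemma~\ref{TraceMapsOnMbetaLemma}(1), induced from the forgetful degeneracy map $f_G \colon \mathcal{S}_{G, \opn{Iw}}(p^{\beta+1}) \to \mathcal{S}_{G, \opn{Iw}}(p^{\beta})$ via $f_G^* \mathscr{M}_{G, \kappa^*} = \mathscr{M}_{G, \kappa^*}$. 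On the $\mbf{H}$-side, there is an analogous trace for the finite \'{e}tale map $f_H \colon \mathcal{S}_{H, \diamondsuit}(p^{\beta+1}) \to \mathcal{S}_{H, \diamondsuit}(p^{\beta})$, compatible with $f_G$ under $\hat{\iota}$ (and with $\mathcal{Z}_{H, \opn{id}}(p^{\beta})$, since $f_H^{-1}(\mathcal{Z}_{H, \opn{id}}(p^{\beta})) = \mathcal{Z}_{H, \opn{id}}(p^{\beta+1})$, as the loci are defined via Hasse invariants pulled back along $\hat{\iota}$ by Proposition~\ref{HlociisHasseProp}). The key computation is then that $\vartheta^{\dagger}_{\kappa, j, \beta}$ commutes with these trace maps. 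This follows the same strategy as Proposition~\ref{TraceCompatibilityOfThetaProp}: the morphism $\vartheta^{\dagger}_{\kappa, j, \beta}$ is built (Definition~\ref{NOCDefinitionOfThetadaggers}, \ref{AbstractDefOfThetadagger}) from the action of the differential operators $\nabla_i$ on $\mathscr{N}^{\dagger}_G$ and the pairing with the eigenvector $\delta^{\dagger}_{\kappa, j, \beta}$; one checks that trace commutes with the $\nabla_i$ action, because the sheaves $\mathscr{N}^{\dagger}_{G, \bullet}$ and the operators arise from the same $G$-equivariant $\mathcal{D}$-module on the flag variety (pulled back along $f_G$ which is finite \'{e}tale), and because $\delta^{\dagger}_{\kappa, j, \beta'}$ is obtained from $\delta^{\dagger}_{\kappa, j, \beta}$ by the simple restrict-then-extend-by-zero recipe of Proposition~\ref{SanDeltaDaggerIsAnEvectorProp}(4). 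Similarly the action of the locally constant function $1_{U_{G,\beta}^{\circ}}$ is compatible with trace (it is compatible with the $C_{\opn{cont}}(U_{G, \bullet, \beta}, \mbb{Z}_p)$-actions on the Igusa towers, which are pullback-compatible). Then I would assemble these on the level of the \v{C}ech complexes computing the overconvergent cohomologies (as in Remark~\ref{CanBeComputeUsingCechRem}), using an acyclic cover as in Lemma~\ref{AcyclicityCoverLemma} simultaneously adapted to both levels $\beta$ and $\beta+1$ — pulling back an acyclic cover along the finite flat maps $f_G, f_H$ stays acyclic — and conclude on passing to cohomology.

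The main obstacle I anticipate is the bookkeeping of support conditions and radii of overconvergence: one needs that the overconvergent neighbourhoods $\mathcal{X}_{G, w_n}(p^{\beta+1})_r$ and the closed sets $\mathcal{Z}_{G, >n+1}(p^{\beta+1})$ pull back (or push forward) correctly under $f_G$, and that the chosen acyclic cover can be taken compatibly at both levels while still witnessing the vanishing needed to represent all the complexes by \v{C}ech complexes — this is the kind of lengthy but routine verification that Proposition~\ref{TraceCompatibilityOfThetaProp} and the constructions in \S\ref{LACechVersionMainConstructionSSec} already handle in spirit, so I expect to cite those arguments rather than redo them. A minor subtlety is that we are working over a base field $L$ containing $\mu_{p^{\beta+1}}$, so that both $\hat{\iota}$'s at levels $\beta$ and $\beta+1$ are defined with respect to compatible choices of $p$-power roots of unity (guaranteed by our fixed system $\varepsilon$); this is needed so that the cube in \S\ref{FunctorialityOfGMConnectionOnNOCSubSec} is compatible with the degeneracy maps, which in turn is what makes $\vartheta^{\dagger}_{\kappa, j, \bullet}$ trace-compatible. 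Once these compatibilities are in place, the identity $\opn{Ev}^{\dagger, \circ}_{\kappa, j, \chi, \beta} \circ \opn{Tr} = \opn{Ev}^{\dagger, \circ}_{\kappa, j, \chi, \beta+1}$ drops out by the projection formula / adjunction between the trace on $\opn{H}^{n-1}$ and the pullback on $\opn{H}^0$ for the Serre duality pairing, exactly as in the proof of Proposition~\ref{EVBeta1EqualsEVBetaProp}.
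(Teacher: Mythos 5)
Your proposal is structurally the same as the paper's proof: both reduce to (i) the trace-compatibility of $\opn{Ev}^{\dagger}_{\kappa, j, \chi, \beta}$, for which one cites the same reasons as in Proposition~\ref{TraceCompatibilityOfThetaProp} and Proposition~\ref{EVBeta1EqualsEVBetaProp}, and (ii) the commutativity of the indicator-function action $(1_{U_{G, \beta}^{\circ}} \star -)$ with $\opn{Tr}$. You also correctly note that $\chi_p$ is trivial here.

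The one point that is not correct as stated is your justification of (ii). You argue that $1_{U_{G, \beta}^{\circ}} \star -$ commutes with $\opn{Tr}$ because the $C_{\opn{cont}}(U_{G, \beta}, \mbb{Z}_p)$-actions on the Igusa towers are "pullback-compatible", i.e.\ compatible with restriction along the degeneracy map. This alone does not give trace-compatibility: the trace is a sum over coset representatives, and pullback-compatibility says nothing about how the action interacts with that sum. The actual argument the paper uses is that, over the ordinary locus, $\opn{Tr}$ is the sum $\sum_m m \cdot (-)$ over representatives $m \in M^G_{\opn{Iw}}(p^{\beta})/M^G_{\opn{Iw}}(p^{\beta+1})$, the action map is $M^G_{\opn{Iw}}(p^{\beta})$-equivariant (Theorem~\ref{MainTheoremOnPadicIteration}), and $U_{G, \beta}^{\circ}$ is stable under the conjugation action of $M^G_{\opn{Iw}}(p^{\beta})$ (a fact recorded in Definition~\ref{DefOfUGbetacircSupport}), so $1_{U_{G, \beta}^{\circ}}$ is fixed. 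Only then do you get $1_{U_{G, \beta}^{\circ}} \star (m \cdot s) = m \cdot (1_{U_{G, \beta+1}^{\circ}} \star s)$, and summing over $m$ gives the required identity $(1_{U_{G, \beta}^{\circ}} \star -) \circ \opn{Tr} = \opn{Tr} \circ (1_{U_{G, \beta+1}^{\circ}} \star -)$. For a general locally constant $\phi$ that is not $M^G_{\opn{Iw}}(p^{\beta})$-fixed, $\phi \star -$ would still be "pullback-compatible" in your sense yet would not commute with the trace, so you should replace that heuristic with the invariance observation.
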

\begin{proof}
    Note that $\opn{Ev}^{\dagger, \circ}_{\kappa, j, \chi, \beta} = \opn{Ev}^{\dagger}_{\kappa, j, \chi, \beta} \circ (1_{U_{G, \beta}^{\circ}} \star -)$ and $\opn{Ev}^{\dagger}_{\kappa, j, \chi, \beta} \circ \opn{Tr} = \opn{Ev}^{\dagger}_{\kappa, j, \chi, \beta+1}$ for the same reasons as in Proposition \ref{TraceCompatibilityOfThetaProp}. Therefore, it suffices to show that 
    \[
    (1_{U_{G, \beta}^{\circ}} \star -) \circ \opn{Tr} = \opn{Tr} \circ (1_{U_{G, \beta+1}^{\circ}} \star - ) .
    \]
    As in \S \ref{PropsOfEvMapsInterpolationSubSec}, we can reduce this to a statement over the ordinary locus, and the claim follows from the fact that $\opn{Tr}$ is expressed as the sum over the action of representatives of $M^G_{\opn{Iw}}(p^{\beta})/M^G_{\opn{Iw}}(p^{\beta+1})$ and $1_{U_{G, \beta}^{\circ}}$ is fixed under the action of $M^G_{\opn{Iw}}(p^{\beta})$.
\end{proof}

\subsection{Test data for the \texorpdfstring{$p$}{p}-adic \texorpdfstring{$L$}{L}-function} \label{TestDataForPAdicLSSec}

We now describe the general recipe for constructing the $p$-adic $L$-function, which we will be used in \S \ref{AutoRepsSection}.  As a guide for the reader, we explain in Remark \ref{Rem:WhatTheTestDataMeans} what role the following objects will play in \S \ref{AutoRepsSection}.

Fix $L/\mbb{Q}_p$ a finite extension containing $\mu_p$ and $\iota_p(F_{\ide{N}})$. Suppose that $(R, R^+)$ is a Tate affinoid pair over $(L, \mathcal{O}_L)$, and we have an $s_0$-analytic character $\kappa_R \colon T(\mbb{Z}_p) \to R^{\times}$ satisfying Assumption \ref{AssumpOnKJforACchar}. Let $z_0 \in \opn{Spa}(R, R^+)(L)$ such that $\kappa_{z_0} \in X^*(T)$ satisfies the conditions in Definition \ref{DefOfAlgWeightsE} and $C(\kappa_{z_0}^*)^{-} = \{ w_n \}$. Let $\lambda_R = -w_G^{\opn{max}}( w_n^{-1} \star \kappa_R^* )$. Fix a rational number $h \in \mbb{Q}_{\geq 0}$. 

Let $\Omega = \opn{Spa}(S, S^+)$ be the neighbourhood of $z_0$ as in Theorem \ref{MainThmBoxerPilloni2}. We let $\Upsilon \subset \Omega(\Qpb)$ denote a subset of points $z \in \Omega(\Qpb)$ such that:
\begin{itemize}
    \item $z_0 \in \Upsilon$;
    \item for any $z \in \Upsilon$, the character $\kappa_z \in X^*(T)$ satisfies the conditions in Definition \ref{DefOfAlgWeightsE} and $C(\kappa_z^*)^{-} = \{ w_n \}$.
\end{itemize}
Recall the definition of $\Omega_s = \Omega \times_{\mbb{Q}_p} \mathcal{W}(\ide{N}p^{\infty})_s$ from \S \ref{OverconvergentEVMapsPAdicWtSSec}. Let $\Sigma'_{\beta} \subset \bigcup_{s \geq 1} \Omega_{s}(\Qpb)$ denote the maximal subset of points $x \in \bigcup_{s \geq 1} \Omega_{s}(\Qpb)$ such that its projection to $\Omega$ lies in $\Upsilon$, and which satisfy the conditions in Notation \ref{NotationForSigmaSigmaPrime}. Let $\Sigma \subset \Sigma_{1}'$ denote the largest subset such that $\chi_x$ has conductor dividing $\ide{N}$ for any $x \in \Sigma$. Note that
\[
\Sigma \subset \Sigma_1' \subset \Sigma_2' \subset \cdots .
\]
We now introduce the test data. 

\begin{definition} \label{TestDataAbstractDef}
    With set-up as above, let $\Upsilon^{\opn{int}} \subset \Upsilon$ be a subset containing $z_0$ and let
\[
\eta \in \opn{H}^{n-1}_{w_n, s_0\opn{-an}}( \kappa_S^*; 1 )^{(-, \dagger, \leq h)}
\]
denote a cohomology class such that:
\begin{itemize}
    \item for any $z \in \Upsilon^{\opn{int}}$, the specialisation $\eta_{z, 1, s_0\opn{-an}} \in \opn{H}^{n-1}_{w_n, s_0\opn{-an}}(\kappa_{z}^*; 1)^{(-, \dagger, \leq h)}$ of $\eta$ at the point $z$ is an eigenvector for the action of $T^{G, -}$ with eigencharacter $\theta_z$ satisfying the $(-, \opn{ss}(\lambda_z))$ small slope condition.
\end{itemize}
\end{definition}

\begin{definition}
    Let $\eta$ be as in Definition \ref{TestDataAbstractDef}, and for $s \geq s_0$, let $\eta_{1, s\opn{-an}} \in \opn{H}^{n-1}_{w_n, s\opn{-an}}( \kappa_S^*; 1 )^{(-, \dagger, \leq h)}$ denote the unique lift of $\eta$ under the isomorphism in Theorem \ref{MainThmBoxerPilloni2}(2). Suppose that $\Sigma$ is Zariski dense in $\Omega_1$. We define:
    \[
    \Xi(\eta) \defeq \opn{Ev}^{\dagger, \opn{la}}_{\kappa_S, 1}( (\eta_{1, s-\opn{an}})_{s \geq s_0} ) \in \mathscr{D}^{\opn{la}}\left( \Gal(F_{\ide{N}p^{\infty}}/F), \mathscr{O}_{\Omega} \right) 
    \]
    (equivalently, we can view $\Xi(\eta)$ as a global section on $\Omega \times_{\mbb{Q}_p} \mathcal{W}(\ide{N}p^{\infty})$).
\end{definition}

We have the following interpolation property.

\begin{theorem} \label{AbstractPadicLMachineTheorem}
    With set-up as above, let $\eta$ and $\Upsilon^{\opn{int}}$ be as in Definition \ref{TestDataAbstractDef}. Suppose that $\Sigma$ is Zariski dense in $\Omega_1$, and let
    \[
    \Sigma^{\opn{int}} = \left\{ x \in \bigcup_{\beta \geq 1} \Sigma_{\beta}' : \begin{array}{c} \ide{p}_{\tau_0} \text{ divides the conductor of } \chi_x \text{ and } \\ x \in \Upsilon^{\opn{int}} \times \mathcal{W}(\ide{N}p^{\infty}) \end{array} \right\}.
    \]
    Then, for any $x \in \Sigma^{\opn{int}}$, one has
    \[
    \Xi(\eta)(x) = (1-p^{-1}) A(x) \cdot \opn{Ev}_x(\eta_z^{\opn{cl}})
    \]
    where
    \begin{itemize}
        \item $z \in \Upsilon^{\opn{int}}$ denotes the projection of $x$ to $\Omega$, $p^{\beta'}$ denotes the conductor of $\chi_{x, p, \bar{\tau}_0}$, and
        \[
        A(x) \defeq \left( \frac{\theta_z(t_0)}{\theta_z(t_1)} \right)^{\beta'} p^{\beta'\kappa_{x,n+1, \tau_0}} \chi_x( \varpi_{\ide{p}_{\bar{\tau}_0}} )^{-\beta'} \chi_{x,p, \bar{\tau}_0}(-1) \mathscr{G}(\chi_{x,p, \bar{\tau}_0})
        \]
        with $\varpi_{\ide{p}_{\bar{\tau}_0}} \in \mbb{A}_F^{\times}$ denoting the idele obtained as the image of $p$ under the natural embedding $\mbb{Q}_p^{\times} \cong F_{\bar{\ide{p}}_{\tau_0}}^{\times} \hookrightarrow \mbb{A}_F^{\times}$.
        \item $\opn{Ev}_x(\eta_z^{\opn{cl}}) \defeq \opn{Ev}_{\kappa_x, j_{x} - \chi_{x, p}, \chi_x, \beta}(\eta_{z, \beta}^{\opn{cl}})$ where $\beta \geq 1$ is any integer such that $x \in \Sigma_{\beta}'$, and $\eta_{z, \beta}^{\opn{cl}} \in \opn{H}^{n-1}\left( \mathcal{S}_{G, \opn{Iw}}(p^{\beta}), \mathscr{M}_{G, \kappa_z^*} \right)^{(-,\opn{ss}(\lambda_z))}$ denotes the unique class whose trace down to $\mathcal{S}_{G, \opn{Iw}}(p)$ uniquely corresponds to $\eta_{z, 1, s_0\opn{-an}}$ via the isomorphisms in Theorem \ref{MainThmBoxerPilloni2}(3) and Theorem \ref{MainThmBoxerPilloni1}(2). 
    \end{itemize}
\end{theorem}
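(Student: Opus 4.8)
The strategy is to unwind the definition of $\Xi(\eta)$ via the interpolation property of the locally analytic evaluation map recorded at the end of \S\ref{CompatibilitiesOfEvMapSSEC}, and then apply the ramified interpolation formula of Theorem \ref{MainThmOfPropEvMapSection}. First I would observe that, by construction, $\Xi(\eta) = \opn{Ev}^{\dagger, \opn{la}}_{\kappa_S, 1}((\eta_{1, s\opn{-an}})_{s \geq s_0})$, so the compatibility property in \S\ref{CompatibilitiesOfEvMapSSEC} gives, for any $x \in \Sigma'_1$ lying over $z \in \Upsilon^{\opn{int}}$,
\[
\Xi(\eta)(x) = \opn{Ev}^{\dagger, \circ}_{\kappa_z, j_z - \chi_{z, p}, \chi_x, 1}(\eta_{z, 1}),
\]
where $\eta_{z, 1}$ denotes the specialisation of $\eta_{1, s\opn{-an}}$ at $z$ (which, under Theorem \ref{MainThmBoxerPilloni2}(2), is independent of $s \geq s_0$ and coincides with $\eta_{z, 1, s_0\opn{-an}}$). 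The point $x$ need not lie in $\Sigma'_1$ — it lies in some $\Sigma'_\beta$ with $\beta$ large enough to accommodate the conductor of $\chi_x$ — so the first genuine step is to use the trace-compatibility of the overconvergent evaluation maps (Lemma \ref{OCunramifiedEvmapTraceLemma}) together with the trace-compatibility of the locally analytic evaluation maps and the quasi-isomorphism of trace maps on slope $\leq h$ parts (Theorem \ref{MainThmBoxerPilloni2}(4)) to reduce to working at level $p^\beta$. Concretely, $\eta_{z, \beta, s_0\opn{-an}}$ (the lift of $\eta_{z, 1, s_0\opn{-an}}$ to level $p^\beta$ under the trace isomorphism) satisfies $\opn{Tr}(\eta_{z, \beta, s_0\opn{-an}}) = \eta_{z, 1, s_0\opn{-an}}$, and by Lemma \ref{OCunramifiedEvmapTraceLemma} applied $\beta - 1$ times (noting $\chi_x$ at this stage is considered with its $p$-part absorbed into $j$), one rewrites $\Xi(\eta)(x)$ as $\opn{Ev}^{\dagger, \circ}_{\kappa_z, j_z - \chi_{z, p}, \chi_x, \beta}(\eta_{z, \beta, s_0\opn{-an}})$.

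Next I would invoke the classicality results. Since $x \in \Sigma^{\opn{int}}$, the eigencharacter $\theta_z$ of $\eta_{z, 1, s_0\opn{-an}}$ satisfies the $(-, \opn{ss}(\lambda_z))$ small slope condition, which by the remark following the definition of the small slope conditions is equivalent to both $(-, \opn{ss}^M(\kappa_z^*))$ and $(-, \opn{ss}_{M, w_n}(\kappa_z^*))$. Theorem \ref{MainThmBoxerPilloni2}(3) then identifies the $s_0$-analytic overconvergent cohomology with the classical-weight overconvergent cohomology $\opn{H}^{n-1}_{w_n}(\kappa_z^*; \beta)^{(-, \dagger)}$ on small slope parts, and Theorem \ref{MainThmBoxerPilloni1}(2) further identifies this with $\opn{H}^{n-1}(\mathcal{S}_{G, \opn{Iw}}(p^\beta), \mathscr{M}_{G, \kappa_z^*})$ on small slope parts; these isomorphisms send $\eta_{z, \beta, s_0\opn{-an}}$ to $\eta_{z, \beta}^{\opn{cl}}$. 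Under these identifications, and using the compatibility relations in Remark \ref{CompatibilityRelationsForThetadaggersRem} between $\vartheta^{\dagger, \circ}_{\kappa, j+\chi, \beta}$ and the maps defining the classical evaluation map, one gets $\opn{Ev}^{\dagger, \circ}_{\kappa_z, j_z - \chi_{z, p}, \chi_x, \beta}(\eta_{z, \beta, s_0\opn{-an}}) = \opn{Ev}^{\dagger, \circ}_{\kappa_z, j_z - \chi_{z, p}, \chi_x, \beta}(\eta_{z, \beta}^{\opn{cl}})$, where now the right-hand side is evaluated on a genuinely classical class. At this point, since $\ide{p}_{\tau_0}$ divides the conductor of $\chi_x$, the hypothesis $c_{\tau_0} \geq 1$ (equivalently $\beta' \geq 1$) of Theorem \ref{MainThmOfPropEvMapSection} is in force, and applying that theorem with $\eta = \eta_{z, \beta}^{\opn{cl}}$, $\alpha_i = \theta_z(t_i)$ (the $U_{t_i}$-eigenvalues, which are optimally normalised Hecke eigenvalues, hence match $\theta_z(t_i)$), yields
\[
\opn{Ev}^{\dagger, \circ}_{\kappa_z, j_z - \chi_{z, p}, \chi_x, \beta}(\eta_{z, \beta}^{\opn{cl}}) = (1 - p^{-1}) A(x) \cdot \opn{Ev}_{\kappa_x, j_x - \chi_{x, p}, \chi_x, \beta}(\eta_{z, \beta}^{\opn{cl}}),
\]
which is exactly the asserted formula after matching $\kappa_x = \kappa_z$, $j_x = j_z$ and the definition $\opn{Ev}_x(\eta_z^{\opn{cl}})$.

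\textbf{Main obstacle.} The routine-looking but genuinely delicate step is checking that all the identifications are compatible — that the class $\eta_{z,\beta}^{\opn{cl}}$ produced by the chain of isomorphisms (trace maps of Lemma \ref{TraceMapsOnMbetaLemma} and Theorem \ref{MainThmBoxerPilloni2}(4), the locally-analytic-to-classical-weight comparison of Theorem \ref{MainThmBoxerPilloni2}(3), and the overconvergent-to-classical comparison of Theorem \ref{MainThmBoxerPilloni1}(2)) is the \emph{same} class appearing in both $\opn{Ev}^{\dagger, \circ}$ and $\opn{Ev}$, with no discrepancy in normalising factors or in the direction of trace maps. One must in particular verify that $\opn{Ev}^{\dagger, \circ}_{\kappa, j, \chi, \beta}$ restricted to the image of $\opn{H}^{n-1}(\mathcal{S}_{G, \opn{Iw}}(p^\beta), \mathscr{M}_{G, \kappa^*})$ under corestriction agrees with the classical $\opn{Ev}^{\circ}_{\kappa, j, \chi, \beta}$ composed with $1_{U_{G,\beta}^\circ} \star -$, using the commutative squares of Remark \ref{CompatibilityRelationsForThetadaggersRem} and the fact that the Serre duality pairings are compatible with the restriction maps (Remark \ref{UFJsSerreDualityRem}). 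A secondary subtlety is the bookkeeping of which $\beta$ one works at: the formula must be independent of the choice of $\beta$ with $x \in \Sigma'_\beta$, which follows from Lemma \ref{OCunramifiedEvmapTraceLemma} and the $U_{t_i}$-equivariance of the trace maps (Lemma \ref{TraceMapsOnMbetaLemma}(1)), but this independence should be stated carefully. Everything else — matching the explicit constant $A(x)$ to the one in Theorem \ref{MainThmOfPropEvMapSection}, and identifying the Hecke eigenvalues $\alpha_i$ with $\theta_z(t_i)$ — is direct.
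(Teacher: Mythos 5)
Your outline correctly identifies the ingredients (Theorem \ref{MainThmOfPropEvMapSection}, Lemma \ref{OCunramifiedEvmapTraceLemma}, the classicality isomorphisms, the interpolation property of \S\ref{CompatibilitiesOfEvMapSSEC}), but it has a genuine gap at precisely the point you flag: passing from level $p^1$ to level $p^\beta$.

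You note correctly that $\Xi(\eta)$ is defined via $\opn{Ev}^{\dagger, \opn{la}}_{\kappa_S, 1}$ at level $\beta = 1$, and that a point $x \in \Sigma^{\opn{int}}$ with $\ide{p}_{\tau_0} \mid \opn{cond}(\chi_x)$ generally lies only in $\Sigma'_\beta$ for $\beta \geq \beta' > 1$, not in $\Sigma'_1$. You then propose to bridge the gap by ``applying Lemma \ref{OCunramifiedEvmapTraceLemma} $\beta - 1$ times.'' But Lemma \ref{OCunramifiedEvmapTraceLemma} is stated and proved \emph{only} for characters whose conductor is prime to $p$ (it relies on $1_{U_{G, \beta}^{\circ}}$ being $M^G_{\opn{Iw}}(p^\beta)$-fixed, which uses triviality of the local character at depth $p^\beta$). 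It gives no information about $\opn{Ev}^{\dagger, \circ}_{\kappa, j, \chi_x, \beta}$ when $\chi_x$ is ramified at $\ide{p}_{\tau_0}$ --- which is exactly the case in $\Sigma^{\opn{int}}$. Your parenthetical remark that ``$\chi_x$ at this stage is considered with its $p$-part absorbed into $j$'' does not rescue this, because the lemma is a statement about the specific operator $\opn{Ev}^{\dagger, \circ}_{\kappa, j, \chi, \beta}$ for a fixed character $\chi$, and there is no version with the $p$-part absorbed.

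The paper's actual argument avoids this by \emph{not} attempting a direct trace computation at the ramified point. Instead it defines a whole family of distributions $\Xi_\beta(\eta) \defeq \opn{Ev}^{\dagger, \opn{la}}_{\kappa_{S_\beta}, \beta}((\eta_{\beta, s\opn{-an}})_s)$ for each $\beta$. For each fixed $\beta$, the interpolation property of \S\ref{CompatibilitiesOfEvMapSSEC} together with Theorem \ref{MainThmOfPropEvMapSection} gives the desired formula for $\Xi_\beta(\eta)(x)$ at any $x \in \Sigma^{\opn{int}} \cap \Sigma'_\beta$. One then applies Lemma \ref{OCunramifiedEvmapTraceLemma} \emph{only at points of} $\Sigma$ (the unramified locus, where it is actually valid) to deduce that $\Xi_\beta(\eta)$ and $\Xi(\eta)$ agree on $\Sigma$, and concludes $\Xi_\beta(\eta) = \Xi(\eta)$ in $\mathscr{D}^{\opn{la}}(\Gal(F_{\ide{N}p^\infty}/F), \mathscr{O}_{\Omega_{L_\beta}})$ by Zariski density of $\Sigma$. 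This density step --- which is the whole reason the hypothesis ``$\Sigma$ is Zariski dense in $\Omega_1$'' appears in the statement and why the abstract set-up tracks the auxiliary set $\Sigma$ --- is missing from your argument. Without it, you have no way to evaluate $\Xi(\eta)$ at a ramified $x$, since neither the defining interpolation property (which requires $x \in \Sigma'_1$) nor the trace lemma (which requires $\chi_x$ unramified) applies.
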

\begin{proof}
    Note that $\opn{Ev}_x(\eta_z^{\opn{cl}})$ is indeed independent of $\beta$ by Proposition \ref{EVBeta1EqualsEVBetaProp}. We will define an analogue of $\Xi(\eta)$ for general $\beta \geq 1$. Let $L_{\beta} = L(\mu_{p^{\beta}})$, and set $\opn{Spa}(S_{\beta}, S_{\beta}^+) = \Omega_{L_{\beta}}$. Let $\eta_{\beta, s\opn{-an}} \in \opn{H}^{n-1}_{w_n, s\opn{-an}}( \kappa_{S_{\beta}}^*; 1 )^{(-, \dagger, \leq h)}$ denote the unique lift of $\eta$ under the morphisms in Theorem \ref{MainThmBoxerPilloni2}. Then we define $\Xi_{\beta}(\eta)$ as
    \[
    \Xi_{\beta}(\eta) \defeq \opn{Ev}^{\dagger, \opn{la}}_{\kappa_{S_{\beta}}, \beta}\left( (\eta_{\beta, s\opn{-an}})_{s \geq \opn{max}(s_0, \beta)} \right) \in \mathscr{D}^{\opn{la}}\left( \Gal(F_{\ide{N}p^{\infty}}/F), \mathscr{O}_{\Omega_{L_{\beta}}} \right)
    \]
    We see from \S \ref{CompatibilitiesOfEvMapSSEC} and Theorem \ref{MainThmOfPropEvMapSection} that, for any $x \in \Sigma^{\opn{int}} \cap \Sigma_{\beta}'$, one has the interpolation property:
    \[
    \Xi_{\beta}(\eta)(x) = (1-p^{-1}) A(x) \cdot \opn{Ev}_x(\eta_z^{\opn{cl}}) .
    \]
    Note that, by tracing through the definitions and using Proposition \ref{DaggerAndClassicalUnderBPisosProp}, $\opn{Ev}_{\kappa_x, j_x - \chi_{x, p}, \chi_x, \beta}^{\dagger}$ does indeed correspond to $\opn{Ev}_{\kappa_x, j_x - \chi_{x, p}, \chi_x, \beta}$ via the classicality isomorphisms in Theorem \ref{MainThmBoxerPilloni1}(2). 
    
    In addition to this, for any $x \in \Sigma$ we have, by Lemma \ref{OCunramifiedEvmapTraceLemma}, the compatibility
    \[
    \Xi_{\beta}(\eta)(x) = \opn{Ev}^{\dagger, \circ}_{\kappa_x, j_x, \chi_x, \beta}(\opn{sp}_x(\eta_{\beta,\opn{la}})) = \opn{Ev}^{\dagger, \circ}_{\kappa_x, j_x, \chi_x, 1}(\opn{sp}_x(\eta_{1,\opn{la}})) = \Xi(\eta)(x)
    \]
    where $\opn{sp}_x(\eta_{\beta,\opn{la}}) \in \opn{H}^{n-1}_{w_n}(\kappa_x^*; \beta)^{(-, \dagger)}$ denotes the specialisation of $\eta_{\beta, s\opn{-an}}$ (for any $s \geq \opn{max}(s_0, \beta)$) at the point $x \in \Sigma$ (note that $\opn{sp}_x(\eta_{\beta,\opn{la}})$ are trace-compatible for varying $\beta$). Since $\Sigma$ is Zariski dense, this implies that the image of $\Xi(\eta)$ in $\mathscr{D}^{\opn{la}}\left( \Gal(F_{\ide{N}p^{\infty}}/F), \mathscr{O}_{\Omega_{L_{\beta}}} \right)$ must coincide with $\Xi_{\beta}(\eta)$. The result follows.
\end{proof}

\begin{remark} \label{Rem:WhatTheTestDataMeans}
    Let us explain the context of this abstract test data and how it will be applied in \S \ref{AutoRepsSection}. 
    \begin{itemize}
        \item $z_0$ will correspond to a classical point in the $n[F^+:\mbb{Q}]$-dimensional weight space $\mathcal{W}_0$ which parameterises self-dual continuous characters of $T(\mbb{Z}_p)$. It will correspond to the weight of the fixed automorphic representation $\pi$ of $\mbf{G}(\mbb{A})$ that we start with in the construction of the $p$-adic $L$-functions.
        \item $\Omega$ will be a sufficiently small open affinoid neighbourhood of $z_0$ in $\mathcal{W}_0$.
        \item $\Upsilon$ essentially corresponds to the set of all classical weights $z$ in $\Omega$ such that $\kappa_z$ satisfies the conditions needed for constructing the evaluation maps (on overconvergent cohomology of weight $\kappa^*_z$).
    \end{itemize}
    The $p$-adic $L$-functions will be (specialisations of) global sections on the product $\Omega \times_{\mbb{Q}_p} \mathcal{W}(\ide{N}p^{\infty})$, where the first variable concerns the variation of the weight of the automorphic representations, and the second variable concerns the variation of the anticyclotomic characters. For technical reasons though, one must construct (as above) compatible global sections on an increasing open affinoid cover $\{ \Omega_{s}\}_{s \geq 1}$ of $\Omega \times_{\mbb{Q}_p} \mathcal{W}(\ide{N}p^{\infty})$.

    The class $\eta$ gives rise to the inputs into the $p$-adic evaluation maps. One should view this as a family of overconvergent cohomology classes over the space $\Omega$ which specialises to a small slope Hecke eigenvector at $z_0$ (which in practice corresponds to $\pi$). We let $\Upsilon^{\opn{int}} \subset \Upsilon$ denote a subset of weights (containing $z_0$) such that the specialisation of $\eta$ at any point in $\Upsilon^{\opn{int}}$ is also a small slope Hecke eigenvector. If one thinks of $\eta$ as a $p$-adic family of overconvergent forms for $\mbf{G}$, then the specialisation of $\eta$ at points in $\Upsilon^{\opn{int}}$ will correspond to the ``classical specialisations'' of the family. We note however, that in this set-up, we do not necessarily want to assume $\eta$ is a Hecke eigenvector over $\Omega$ nor that $\Upsilon^{\opn{int}}$ is Zariski-dense in $\Omega$ (although this will be the case in \S \ref{VariationInColemanFamiliesSSec}).

    The $p$-adic $L$-functions will of course satisfy an interpolation property.
    \begin{itemize}
        \item The subset $\Upsilon^{\opn{int}}$ corresponds to the specialisations in the \emph{first} variable of $\Omega \times_{\mbb{Q}_p} \mathcal{W}(\ide{N}p^{\infty})$ that form part of the interpolation formulae.
        \item We let $\Sigma'_{\beta} \subset \Omega \times_{\mbb{Q}_p} \mathcal{W}(\ide{N}p^{\infty})$ denote the subset such that the \emph{second} variable corresponds to an algebraic anticyclotomic Hecke character, satisfying the conditions needed to participate in the evaluation maps, and with conductor at $p$ bounded by $p^{\beta}$. Let us call $\bigcup_{\beta \geq 1} \Sigma'_{\beta}$ the set of ``classical anticyclotomic characters''.
        \item We let $\Sigma^{\opn{int}} \subset \Omega \times_{\mbb{Q}_p} \mathcal{W}(\ide{N}p^{\infty})$ denote the subset of points such that the first variable lies in $\Upsilon^{\opn{int}}$ and the second variable is a ``classical anticyclotomic character'' with conductor divisible by $\ide{p}_{\tau_0}$. This will be the interpolation region for the $p$-adic $L$-function.
    \end{itemize}
    We note that it should be possible with more work to remove the condition that $\ide{p}_{\tau_0}$ divides the conductor of the ``classical anticyclotomic character''; this assumption is an artifact of the methods used in the proof of Theorem \ref{MainThmOfPropEvMapSection}.

    Finally, we note that the main choice that influences the $p$-adic $L$-function is the family $\eta$ and the set of ``classical specialisations'' $\Upsilon^{\opn{int}}$. If we make another choice of $(\eta, \Upsilon^{\opn{int}})$, then we will obtain a different $p$-adic $L$-function. The construction in Theorem \ref{FirstMainTheoremFinalSection} will therefore (a priori) depend on this choice. The construction in Theorem \ref{SecondMainTheoremFinalSSec} will be independent of the choice however, since the interpolation region $\Sigma^{\opn{int}}$ will be Zariski-dense in $\Omega \times_{\mbb{Q}_p} \mathcal{W}(\ide{N}p^{\infty})$.     
\end{remark}


\section{Automorphic representations} \label{AutoRepsSection}

In this section we introduce the relevant automorphic representations of $\mbf{G}(\mbb{A})$ and construct the $p$-adic $L$-functions in Theorem \ref{FirstTheoremIntro} and Theorem \ref{SecondTheoremIntro}. 

\subsection{Assumptions} \label{FinalSectionAssumptionsSSec}

Let $\pi$ be a cuspidal automorphic representation of $\mbf{G}(\mbb{A})$. Write $\pi = \otimes_{v}' \pi_v$ for its restricted tensor product decomposition over the places of $\mbb{Q}$. By abuse of notation, we will also use the notation $\pi_{\infty}$ to denote the underlying $(\ide{g}, K_{\infty})$-module, where $K_{\infty} \subset \mbf{G}(\mbb{R})$ is the maximal compact-mod-centre subgroup whose complexification equals $M_{\mbf{G}}(\mbb{C})$. We impose the following hypotheses:
\begin{itemize}
    \item (``generic at $\infty$'') We suppose that $\pi_{\infty}$ lies in the discrete series with Harish-Chandra parameter of the form $w_n \cdot (\lambda + \rho_G)$ for some dominant $\lambda \in X^*(T)^+$. In particular, we set $\kappa \defeq -w_{M_G}^{\opn{max}} \cdot ( w_n \star (-w_G^{\opn{max}} \lambda ))$, which satisfies $C(\kappa^*)^{-} = \{ w_n \}$. 
    \item (``self-duality'') The character $\lambda$ is self-dual, i.e., $\lambda = -w_G^{\opn{max}}\lambda$.
    \item Let $S$ be a finite set of places of $\mbb{Q}$ containing $\infty$ and all primes where $\pi$ is ramified.\footnote{We say $\pi$ ramifies at $\ell$ if there does not exist a maximal special subgroup of $\mbf{G}(\mbb{Q}_{\ell})$ under which $\pi_{\ell}$ has non-trivial invariants.} We let $K \subset \mbf{G}(\mbb{A}_f)$ be a neat compact open subgroup such that $\pi_f^K \neq 0$. We suppose that $K$ factorises as 
    \[
    K = K_S K^S, \quad \quad K_S \subset \mbf{G}(\mbb{A}_{f,S}), \; K^S \subset \mbf{G}(\mbb{A}^S), 
    \]
    where $K^S = \prod_{\ell \not\in S} K_{\ell}$ with $K_{\ell} \subset \mbf{G}(\mbb{Q}_{\ell})$ a good maximal special compact open subgroup (as in \cite[\S 2.1]{Minguez}). We assume that $p \not\in S$ and that $K_p$ is hyperspecial (and identifies with $G(\mbb{Z}_p)$ with respect to a fixed reductive integral model of $\mbf{G}_{\mbb{Q}_p}$).
\end{itemize}

We let $\ide{N}$ denote the smallest ideal of $\mathcal{O}_F$ such that 
\[
\nu(K^p \cap \mbf{H}(\mbb{A}_f^p)) \subset \mathcal{N}( ( \widehat{\mathcal{O}}_{F^+}^{(p)} + \ide{N} \widehat{\mathcal{O}}_{F}^{(p)} )^{\times} )
\]
(see Definition \ref{DefinitionOfMathfrakNbeta}).

\begin{remark}
    Write $\lambda = (\lambda_0; \lambda_{1, \tau}, \dots, \lambda_{2n, \tau})$ and $\kappa = (\kappa_0; \kappa_{1, \tau}, \dots, \kappa_{2n, \tau})$. Since $\lambda$ is self-dual, we have $\lambda_0 = 0$ and $\lambda_{i, \tau} = -\lambda_{2n+1-i, \tau}$ for any $i \in \{1, \dots, 2n\}$ and $\tau \in \Psi$. An explicit calculation then shows that $\kappa_0 = 0$ and 
    \begin{align*}
        (\kappa_{1, \tau_0}, \dots, \kappa_{2n, \tau}) &= (n-\lambda_{n+1, \tau_0}, -\lambda_{2n, \tau_0}, \dots, -\lambda_{n+2, \tau_0}, -(1+\lambda_{n, \tau_0}), \dots, -(1+\lambda_{1, \tau_0}) ) \\ 
        (\kappa_{1, \tau}, \dots, \kappa_{2n, \tau}) &= (\lambda_{1, \tau}, \dots, \lambda_{2n, \tau}) \quad \quad (\tau \neq \tau_0) .
    \end{align*}
    We therefore see that $\kappa$ satisfies the assumptions in Definition \ref{DefOfAlgWeightsE} and also Assumption \ref{AssumpOnKJforACchar}.
\end{remark}

\subsubsection{Test data}

 We now describe the test data that will specify the specialisation of family $\eta$ at $z_0$ as described in Remark \ref{Rem:WhatTheTestDataMeans} (more accurately, it will determine the dual test data in \S \ref{Subsub:DualTestData} and hence the class $\eta^{\opn{cl}}_{\pi, 1}$, which corresponds to the specialisation of $\eta$ at $z_0$ via higher Coleman theory).
\begin{itemize}
    \item Let $\phi_{\infty} \in \pi_{\infty}$ be a non-zero element in the lowest $K_{\infty}$-type which is an eigenvector for the action of $K_{\infty} \cap \mbf{H}(\mbb{R})$ with eigencharacter $\sigma_{\kappa}^{[0], \vee}$ (note the lowest $K_{\infty}$-type has highest weight $\kappa - 2\rho_{H, \opn{nc}}$, so there does indeed exist such an eigenvector).
    \item We fix a non-zero element $\phi_S \in \pi_{f,S}^{K_S}$
    \item For $\ell \notin S' \defeq S \cup \{ p \}$, we fix a non-zero element $\phi_{\ell} \in \pi_{\ell}^{K_{\ell}}$. We let
    \[
    \theta^{S'}_{\pi} \colon C^{\infty}( K^{S'} \backslash \mbf{G}(\mbb{A}^{S'}) / K^{S'}; \mbb{C}) \to \mbb{C}
    \]
    denote the corresponding Hecke eigensystem with respect to a fixed Haar measure on $\mbf{G}(\mbb{A}^{S'})$, i.e., for any $f \in C^{\infty}( K^{S'} \backslash \mbf{G}(\mbb{A}^{S'}) / K^{S'}; \mbb{C})$, we have 
    \[
    f \cdot \phi^{S'} = \theta_{\pi}^{S'}(f) \phi^{S'}, \quad \quad \quad  \phi^{S'} \defeq  \bigotimes_{\ell \not\in S'} \phi_{\ell} \in \bigotimes'_{\ell \not\in S'} \pi_{\ell}.
    \]
    \item We suppose that there exists a monoid homomorphism $\theta_{\pi, p} \colon T^{G,-} \to \Qpb^{\times}$ satisfying $(-, \opn{ss}(\lambda))$ and a non-zero vector $\phi_p \in \pi_p^{K^G_{\opn{Iw}}(p)}$ such that
    \[
    \lambda(t'\langle t' \rangle^{-1}) [K^G_{\opn{Iw}}(p) \cdot t' \cdot K^G_{\opn{Iw}}(p)] \cdot \phi_p = \theta_{\pi, p}(t) \phi_p, \quad \quad t' = (w_G^{\opn{max}})^{-1} t w_G^{\opn{max}} 
    \]
    for all $t \in T^{G, -}$. Here $[K^G_{\opn{Iw}}(p) \cdot t' \cdot K^G_{\opn{Iw}}(p)]$ denotes the (unnormalised) Hecke operator associated with the element $t'$, and we are using the identification $\iota_p \colon \mbb{C} \cong \Qpb$.
\end{itemize}

We let $\phi \defeq \phi_{\infty} \otimes \phi_S \otimes \phi^{S'} \otimes \phi_{p}$. Set 
\[
\mbb{T}^{-} = C^{\infty}( K^{S'} \backslash \mbf{G}(\mbb{A}^{S'}) / K^{S'}; \mbb{Q}) \otimes_{\mbb{Q}} \mbb{Q}[T^{G, -}]
\]
and let $\theta_{\pi} \colon \mbb{T}^{-}_{\mbb{C}} \to \mbb{C}$ denote the homomorphism given by $\theta_{\pi} \defeq \theta^{S'}_{\pi} \otimes \theta_{\pi, p}$. We note that $\theta_{\pi}$ is defined over a number field (see \cite[Proposition 2.15]{ShinTemplier}), so there exists a finite extension $L/\mbb{Q}_p$ such that $\theta_{\pi}$ is defined over $L$ (again using the fixed identification $\iota_p \colon \mbb{C} \cong \Qpb$). We assume that $L$ contains $\mu_p$ and $\iota_p(F_{\ide{N}})$. 

\begin{definition} \label{PlusVersionOfTestDataDef}
    Let $u_{\opn{sph}} \in \mbf{G}(\mbb{Q}_p) = \mbb{Q}_p^{\times} \times \prod_{\tau \in \Psi} \opn{GL}_{2n}(\mbb{Q}_p)$ denote the element $u_{\opn{sph}} = 1 \times \prod_{\tau} u_{\opn{sph}, \tau}$ where $u_{\opn{sph}, \tau}$ is the block matrix (with block size $n \times n$) given by
    \[
    u_{\opn{sph}, \tau} = \tbyt{1}{-w_{\opn{GL}_n}^{\opn{max}}}{}{1}
    \]
    where $w_{\opn{GL}_n}^{\opn{max}}$ denotes the antidiagonal matrix with $1$s along the antidiagonal. For tuples $j = (j_{\tau}) \in \prod_{\tau \in \Psi} \mbb{Z}_{\geq 0}$ and $e = (e_{\tau}) \in \prod_{\tau \in \Psi} \mbb{Z}_{\geq 1}$ such that $(\kappa, j) \in \mathcal{E}$, we define
    \[
    \phi_{e}^{[j]} \defeq \left( \Delta_{\kappa}^{[j]} \cdot \phi_{\infty} \right) \otimes \phi_S \otimes \phi^{S'} \otimes \left( u_{\opn{sph}} t_p^e \cdot \phi_p \right) \; \; \in \; \; \pi 
    \]
    where $\Delta_{\kappa}^{[j]}$ is as in Definition \ref{AppendixDefOfDELTAkappaJDiffOp} and $t_p^e \in T(\mbb{Q}_p)$ denotes the element 
    \[
    t_p^e = 1 \times \prod_{\tau} \opn{diag}(p^{e_{\tau}(2n-1)}, p^{e_{\tau}(2n-2)}, \dots, p^{e_{\tau}}, 1).
    \]
\end{definition}

\begin{remark} \label{AdditionalUnimposedAssumptionRem}
The automorphic forms in Definition \ref{PlusVersionOfTestDataDef} will be the input into the unitary Friedberg--Jacquet periods, and their definition is very closely related to the input data for the Friedberg--Jacquet periods for general linear groups (see \cite{5author, ClassicalLocus}). In fact, for the automorphic periods we consider to be non-trivial we should impose the following additional assumptions:
\begin{itemize}
    \item (``symplectic type'') The weak base-change of $\pi$ to an automorphic representation of $\opn{GL}_1(\mbb{A}_E) \times \opn{GL}_{2n}(\mbb{A}_F)$ is of symplectic type with trivial similitude character (see \cite[Conjecture 7.4(2)]{ChenGan}). Furthermore, $\opn{Hom}_{\mbf{H}(\mbb{A}_f)}(\pi_f, \mbb{C}) \neq 0$. 
    \item $\pi_p$ is generic (i.e. isomorphic to an unramified principal series representation) with distinct Satake parameters in each $\tau$-component, and the choice of $p$-stabilisation $\phi_p$ above is ``spin'' (see \cite[\S 6]{5author} or \cite[\S 3]{ClassicalLocus}) .
\end{itemize}
In the constructions below, we prefer not to make these additional assumptions because this affords us with extra flexibility when deforming $\pi$ in a Coleman family (and it is not necessary to assume \cite[Conjecture 7.4]{ChenGan}).
\end{remark}

For the test data for the $p$-adic $L$-function however, we need to consider ``dual versions'' of these automorphic forms, which we introduce in the following section.

\subsubsection{Dual test data} \label{Subsub:DualTestData}

Let $\beta \geq 1$ be an integer, and set 
\[
s_p = 1 \times \prod_{\tau \in \Psi} \opn{diag}(1, p, p^2, \dots, p^{2n-1}) \; \in \; T(\mbb{Q}_p) .
\]
View $w_G^{\opn{max}} \in G(\mbb{Q}_p)$ as the matrix which in the $\tau$-component is given by the the antidiagonal matrix with $1$s along the antidiagonal (for any $\tau \in \Psi$). We define:
\[
\psi_{p, \beta} \defeq (w_G^{\opn{max}}\lambda)(s_p)^{\beta}\theta_{\pi, p}(s_p)^{-\beta} [K^G_{\opn{Iw}}(p^{\beta}) s_p^{\beta} w_G^{\opn{max}} K^G_{\opn{Iw}}(p) ] \cdot \phi_p \; \in \; \pi_p^{K^G_{\opn{Iw}}(p^{\beta})}  .
\]

\begin{lemma} \label{PropsOfPsiBetaClassLemma}
    The class $\psi_{p, \beta}$ satisfies
    \begin{equation} \label{TransformationForDualEVEqn}
    (w_G^{\opn{max}}\lambda)(t \langle t \rangle^{-1}) [ K^G_{\opn{Iw}}(p^{\beta}) t K^G_{\opn{Iw}}(p^{\beta}) ] \cdot \psi_{p, \beta} = \theta_{\pi, p}(t) \psi_{p, \beta}
    \end{equation}
    for any $t \in T^{G, -}$. Moreover $\opn{tr}(\psi_{p, \beta+1}) = \psi_{p, \beta}$, where $\opn{tr}$ denotes the (unnormalised) trace from level $K^G_{\opn{Iw}}(p^{\beta+1})$ to level $K^G_{\opn{Iw}}(p^{\beta})$.
\end{lemma}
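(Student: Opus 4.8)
The statement asserts two properties of the vectors $\psi_{p,\beta} \in \pi_p^{K^G_{\opn{Iw}}(p^\beta)}$: an eigenvector property \eqref{TransformationForDualEVEqn} for the transpose $U_p$-operators, and trace-compatibility as $\beta$ varies. The plan is to work entirely inside the local representation $\pi_p = \bigotimes_{\tau} \opn{Ind}(\theta_\tau)$ (which after $\iota_p$ we view over $\Qpb$), reducing both assertions to elementary coset combinatorics for the relevant Hecke operators on an Iwahori-fixed line. First I would set up notation: recall $\phi_p$ is the chosen $p$-stabilisation satisfying $\lambda(t')[K^G_{\opn{Iw}}(p) \cdot t' \cdot K^G_{\opn{Iw}}(p)]\cdot \phi_p = \theta_{\pi,p}(t)\phi_p$ for $t' = (w_G^{\opn{max}})^{-1} t w_G^{\opn{max}}$ and $t \in T^{G,-}$, and observe that conjugation by $w_G^{\opn{max}}$ swaps the upper-triangular Borel with the lower-triangular one, so $w_G^{\opn{max}} K^G_{\opn{Iw}}(p) (w_G^{\opn{max}})^{-1}$ is the lower-triangular mod-$p$ Iwahori; hence $\psi_{p,\beta}$, which is obtained by applying $[K^G_{\opn{Iw}}(p^\beta) s_p^\beta w_G^{\opn{max}} K^G_{\opn{Iw}}(p)]$ to $\phi_p$, is naturally an eigenvector for the \emph{transpose} Hecke operators at level $K^G_{\opn{Iw}}(p^\beta)$. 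The normalising scalar $(w_G^{\opn{max}}\lambda)(s_p)^\beta \theta_{\pi,p}(s_p)^{-\beta}$ is chosen precisely so the eigenvalue comes out as $\theta_{\pi,p}(t)$ rather than a twist of it.

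For the eigenvalue computation \eqref{TransformationForDualEVEqn}, the key step is to compare the double coset $K^G_{\opn{Iw}}(p^\beta) t K^G_{\opn{Iw}}(p^\beta)$ with the image under $w_G^{\opn{max}}$-conjugation of $K^G_{\opn{Iw}}(p) t' K^G_{\opn{Iw}}(p)$, using that $s_p^\beta w_G^{\opn{max}}$ intertwines the two Iwahori levels in the appropriate way. Concretely I would show that, as operators on the Iwahori line, the element $[K^G_{\opn{Iw}}(p^\beta) s_p^\beta w_G^{\opn{max}} K^G_{\opn{Iw}}(p)]$ satisfies an intertwining identity
\[
[K^G_{\opn{Iw}}(p^\beta) t K^G_{\opn{Iw}}(p^\beta)] \circ [K^G_{\opn{Iw}}(p^\beta) s_p^\beta w_G^{\opn{max}} K^G_{\opn{Iw}}(p)]
= c(t) \cdot [K^G_{\opn{Iw}}(p^\beta) s_p^\beta w_G^{\opn{max}} K^G_{\opn{Iw}}(p)] \circ [K^G_{\opn{Iw}}(p) t' K^G_{\opn{Iw}}(p)]
\]
for an explicit scalar $c(t)$ accounting for the change of normalisation (modulus characters / the $\lambda$ versus $w_G^{\opn{max}}\lambda$ discrepancy), and then apply this to $\phi_p$. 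Feeding in the eigenvalue of $\phi_p$ and the chosen normalising scalar collapses everything to $\theta_{\pi,p}(t)\psi_{p,\beta}$. This is a standard but slightly fiddly computation with Iwahori double cosets; the main subtlety is keeping track of the modulus/valuation factors so that the final eigenvalue is exactly $\theta_{\pi,p}(t)$, and checking that $t \in T^{G,-}$ (equivalently $t$ contracting the opposite unipotent) is what guarantees the relevant coset decompositions are finite and the operators are well-defined.

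For the trace-compatibility $\opn{tr}(\psi_{p,\beta+1}) = \psi_{p,\beta}$, I would note that the (unnormalised) trace from $K^G_{\opn{Iw}}(p^{\beta+1})$ to $K^G_{\opn{Iw}}(p^\beta)$ corresponds to the Hecke operator $[K^G_{\opn{Iw}}(p^\beta) \cdot 1 \cdot K^G_{\opn{Iw}}(p^{\beta+1})]$, and then compute $[K^G_{\opn{Iw}}(p^\beta) \cdot 1 \cdot K^G_{\opn{Iw}}(p^{\beta+1})] \circ [K^G_{\opn{Iw}}(p^{\beta+1}) s_p^{\beta+1} w_G^{\opn{max}} K^G_{\opn{Iw}}(p)]$ against $[K^G_{\opn{Iw}}(p^\beta) s_p^{\beta} w_G^{\opn{max}} K^G_{\opn{Iw}}(p)]$. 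The point is that $s_p^{\beta+1} w_G^{\opn{max}} = (s_p w_G^{\opn{max}} s_p^{-1}) \cdot s_p^{\beta} w_G^{\opn{max}}$ up to elements of the Iwahori, and the extra factor $s_p w_G^{\opn{max}} s_p^{-1}$, combined with summing over the coset representatives for $K^G_{\opn{Iw}}(p^\beta)/K^G_{\opn{Iw}}(p^{\beta+1})$, contributes exactly the factor needed to match the change in normalising scalar from $\beta+1$ to $\beta$ (namely $(w_G^{\opn{max}}\lambda)(s_p)\theta_{\pi,p}(s_p)^{-1}$); this is essentially the fact that $s_p \in T^{G,-}$ so the $U_{s_p}$-operator is ``trace-compatible with itself'' in the usual sense for $p$-stabilised forms. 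The main obstacle I anticipate is purely bookkeeping: getting the decomposition of $s_p^{\beta} w_G^{\opn{max}} K^G_{\opn{Iw}}(p)$ into $K^G_{\opn{Iw}}(p^\beta)$-cosets correct and verifying that the Iwahori-factorisation arguments go through with the mixed levels $K^G_{\opn{Iw}}(p^\beta)$ and $K^G_{\opn{Iw}}(p)$, rather than any genuine conceptual difficulty; one can alternatively deduce trace-compatibility abstractly from the eigenvector property \eqref{TransformationForDualEVEqn} together with uniqueness of the Iwahori eigenline for the eigensystem $\theta_{\pi,p}$, if one additionally knows $p$-regularity, but I would prefer the direct coset computation since no such hypothesis is imposed here.
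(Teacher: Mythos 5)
Your plan follows the same route as the paper's proof: establish an intertwining identity between the transpose Hecke operator at level $p^{\beta}$ and the original one at level $p$, using the coset-counting via Iwahori decompositions, and derive both claims from it. However, there are a few inaccuracies worth flagging.

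First, the intertwining identity holds with $c(t)=1$ --- there is no modulus-character or valuation factor to keep track of, since the Hecke operators involved are all unnormalised and the paper shows both relevant double-coset spaces are singletons. Your anticipated discrepancy ``$\lambda$ versus $w_G^{\opn{max}}\lambda$'' is illusory: since $t' = (w_G^{\opn{max}})^{-1} t w_G^{\opn{max}}$, one has $\lambda(t') = (w_G^{\opn{max}}\lambda)(t)$ identically, so the two normalising prefactors in the eigenvector equations for $\phi_p$ and $\psi_{p,\beta}$ match up exactly. Relatedly, your assertion that the scalar $(w_G^{\opn{max}}\lambda)(s_p)^{\beta}\theta_{\pi,p}(s_p)^{-\beta}$ in the definition of $\psi_{p,\beta}$ is ``chosen precisely so the eigenvalue comes out as $\theta_{\pi,p}(t)$'' is incorrect: a fixed scalar multiple cannot change an eigenvalue, and indeed that prefactor cancels from both sides of \eqref{TransformationForDualEVEqn}. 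It is there only so that the trace-compatibility $\opn{tr}(\psi_{p,\beta+1}) = \psi_{p,\beta}$ holds on the nose, with no extra factor.

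Second, the specific algebraic identity you invoke, $s_p^{\beta+1}w_G^{\opn{max}} = (s_p w_G^{\opn{max}} s_p^{-1})\cdot s_p^{\beta}w_G^{\opn{max}}$ ``up to Iwahori,'' is false: the right-hand side equals $s_p t_p^{\beta-1}$ (using $w_G^{\opn{max}} s_p w_G^{\opn{max}} = t_p$), which is not $s_p^{\beta+1} w_G^{\opn{max}}$ modulo the relevant Iwahori groups. The correct factorisation is at the level of Hecke operators, not single group elements: one shows directly that
\[
\opn{tr}\circ[K^G_{\opn{Iw}}(p^{\beta+1})\, s_p^{\beta+1}w_G^{\opn{max}}\, K^G_{\opn{Iw}}(p)] = [K^G_{\opn{Iw}}(p^{\beta})\, s_p\, K^G_{\opn{Iw}}(p^{\beta})]\circ[K^G_{\opn{Iw}}(p^{\beta})\, s_p^{\beta}w_G^{\opn{max}}\, K^G_{\opn{Iw}}(p)],
\]
again by checking that the relevant double cosets are singletons, and then applies \eqref{TransformationForDualEVEqn} at level $\beta$ with $t=s_p$ to absorb the extra factor $(w_G^{\opn{max}}\lambda)(s_p)\theta_{\pi,p}(s_p)^{-1}$. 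Your instinct to avoid the indirect argument via $p$-regularity is correct; you just need to replace the group-element manipulation with the Hecke-algebra-level identity.
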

\begin{proof}
    Let $t \in T^{G, -}$. Then an explicit calculation shows that
    \[
    [K^G_{\opn{Iw}}(p^{\beta}) t K^G_{\opn{Iw}}(p^{\beta})] \circ [K^G_{\opn{Iw}}(p^{\beta}) s_p^{\beta} w_G^{\opn{max}} K^G_{\opn{Iw}}(p) ] = [K^G_{\opn{Iw}}(p^{\beta}) s_p^{\beta} w_G^{\opn{max}} K^G_{\opn{Iw}}(p) ] \circ [K^G_{\opn{Iw}}(p) t' K^G_{\opn{Iw}}(p)]
    \]
    where $t' = (w_G^{\opn{max}})^{-1} t w_G^{\opn{max}}$. Indeed, set $y = s_p^{\beta}w_G^{\opn{max}}$. Then both
    \[
    ( t^{-1} K^G_{\opn{Iw}}(p^{\beta}) t \cap K^G_{\opn{Iw}}(p^{\beta}) ) \backslash K^G_{\opn{Iw}}(p^{\beta}) / (y K^G_{\opn{Iw}}(p) y^{-1} \cap K^G_{\opn{Iw}}(p^{\beta}) )
    \]
    and
    \[
    ( y^{-1} K^G_{\opn{Iw}}(p^{\beta}) y \cap K^G_{\opn{Iw}}(p) ) \backslash K^G_{\opn{Iw}}(p) / (t' K^G_{\opn{Iw}}(p) (t')^{-1} \cap K^G_{\opn{Iw}}(p) )
    \]
    are singletons (which can easily be checked using Iwahori decompositions). This implies the transformation property in (\ref{TransformationForDualEVEqn}). Furthermore, a similar calculation shows that
    \[
    \opn{tr} \circ [K^G_{\opn{Iw}}(p^{\beta+1}) s_p^{\beta+1} w_G^{\opn{max}} K^G_{\opn{Iw}}(p) ] = [K^G_{\opn{Iw}}(p^{\beta}) s_p K^G_{\opn{Iw}}(p^{\beta})] \circ [K^G_{\opn{Iw}}(p^{\beta}) s_p^{\beta} w_G^{\opn{max}} K^G_{\opn{Iw}}(p) ]
    \]
    which implies the trace compatibility. 
\end{proof}

For an integer $\beta \geq 1$ and a tuple $j = (j_{\tau}) \in \prod_{\tau \in \Psi} \mbb{Z}_{\geq 0}$ such that $(\kappa, j) \in \mathcal{E}$, we set 
\[
\psi_{\beta}^{[j]} \defeq \left( \Delta_{\kappa}^{[j]} \cdot \phi_{\infty} \right) \otimes \phi_S \otimes \phi^{S'} \otimes \hat{\gamma} \cdot \psi_{p, \beta} \; \in \; \pi . 
\]
We now consider the corresponding cohomology classes. Let $V_{\kappa - 2\rho_{H, \opn{nc}}}$ denote the algebraic representation of $K_{\infty}$ with highest weight $\kappa - 2\rho_{H, \opn{nc}}$. Then there exists a unique $K_{\infty}$-equivariant homomorphism $F_{\infty} \in \opn{Hom}_{K_{\infty}}(V_{\kappa - 2\rho_{H, \opn{nc}}}, \pi_{\infty})$ such that $F_{\infty}(\alpha_{-} \odot v_{\kappa}^{[0]}) = \phi_{\infty}$, where $\alpha_{-} \in V_{-2\rho_{H, \opn{nc}}} = \bigwedge^{n-1}\overline{\ide{u}}_G$ denotes a generator of the line $\bigwedge^{n-1}\overline{\ide{u}}_H$ and $\odot$ denotes the Cartan product. This extends to a $K_{\infty}$-equivariant homomorphism $F_{\infty} \in \opn{Hom}_{K_{\infty}}( \bigwedge^{n-1}\overline{\ide{u}}_G, \pi_{\infty} \otimes V_{\kappa}^* )$ in the obvious way. By Arthur's multiplicity formula for unitary groups \cite{CZ21}, the representation $\pi$ appears with multiplicity one in the $K_{\circ}$-finite vectors $C^{\infty}([\mbf{G}])^{K_{\circ}\opn{-fin}}$ of the smooth functions on $[\mbf{G}] = \mbf{G}(\mbb{Q})A_{\mbf{G}}(\mbb{R})^{\circ} \backslash \mbf{G}(\mbb{A})$ (see \S \ref{RelationWithUFJperiodsSubSubSec}). We fix such an embedding and consider the induced homomorphism
\[
\opn{Hom}_{K_{\infty}}( \bigwedge^{n-1}\overline{\ide{u}}_G, \pi_{\infty} \otimes V_{\kappa}^* ) \otimes \pi_f^{K^p K^G_{\opn{Iw}}(p^{\beta})} \hookrightarrow \opn{Hom}_{K_{\infty}}( \bigwedge^{n-1}\overline{\ide{u}}_G, C^{\infty}([\mbf{G}]/K^p K^G_{\opn{Iw}}(p^{\beta}))^{K_{\circ}\opn{-fin}} \otimes V_{\kappa}^* ) .
\]
We let $\eta^{\opn{cl}}_{\pi, \beta}$ denote the image of $F_{\infty} \otimes (\phi_S \otimes \phi^{S'} \otimes \psi_{p, \beta})$ under this embedding. Since the $(\overline{\ide{p}}_G, K_{\infty})$-cohomology of $\pi_{\infty} \otimes V_{\kappa}^*$ is one-dimensional and concentrated in degree $n-1$ (see \cite[Theorem 3.2.1]{BHR94}), this class $\eta^{\opn{cl}}_{\pi, \beta}$ must represent a cohomology class in $\opn{H}^{n-1}\left( S_{\mbf{G}, \opn{Iw}}(p^{\beta})(\mbb{C}), \mathscr{M}_{G, \kappa^*} \right)$ which we also denote by $\eta^{\opn{cl}}_{\pi, \beta}$. By Lemma \ref{PropsOfPsiBetaClassLemma}, the classes $\eta^{\opn{cl}}_{\pi, \beta}$ are trace-compatible as $\beta$ varies, and are Hecke eigenvectors -- for any $f \in C^{\infty}(K^{S'} \backslash \mbf{G}(\mbb{A}^{S'}) / K^{S'})$ and $t \in T^{G, -}$ we have
\[
f \cdot \eta^{\opn{cl}}_{\pi, \beta} = \theta^{S'}_{\pi}(f) \eta^{\opn{cl}}_{\pi, \beta}, \quad \quad U_t \cdot \eta^{\opn{cl}}_{\pi, \beta} = \theta_{\pi, p}(t) \eta^{\opn{cl}}_{\pi, \beta} .
\]
We can (and do) rescale $\phi = \phi_{\infty} \otimes \phi_S \otimes \phi^{S'} \otimes \phi_{p}$ so that $\eta^{\opn{cl}}_{\pi, 1}$ is defined over a number field, and hence by rigid GAGA, gives rise to a cohomology class in $\opn{H}^{n-1}\left( \mathcal{S}_{G, \opn{Iw}}(p^{\beta}), \mathscr{M}_{G, \kappa^*} \right)$, where we view $\mathcal{S}_{G, \opn{Iw}}(p^{\beta})$ as an adic space over $\opn{Spa}(L, \mathcal{O}_L)$. This cohomology class lies in the $(-, \opn{ss}(\lambda))$ part by assumption.

\subsection{Variation in the anticyclotomic direction}

We now construct the $p$-adic $L$-function appearing in Theorem \ref{FirstTheoremIntro}. We first introduce the interpolation set. 

\begin{definition}
    Let $\Sigma_{\pi}$ denote the set of anticyclotomic algebraic Hecke characters 
    \[
    \chi \colon F^{\times} \mbb{A}_{F^+}^{\times} \backslash \mbb{A}_F^{\times} \to \mbb{C}^{\times}
    \]
    such that:
    \begin{itemize}
        \item The $\infty$-type of $\chi$ is of the form
        \[
\chi( z ) = z_{\tau_0}^{-(\lambda_{n, \tau_0} + 1 + j_{\tau_0})} \bar{z}_{\tau_0}^{\lambda_{n, \tau_0} + 1 + j_{\tau_0}} \cdot \prod_{\tau \in \Psi - \{\tau_0\}} z_{\tau}^{-j_{\tau}} \bar{z}_{\tau}^{j_{\tau}} 
\]
for all $z = (z_{\tau}) \in (\mbb{R} \otimes_{\mbb{Q}} F)^{\times} = \prod_{\tau \in \Psi} \mbb{C}^{\times}$, for some tuple of integers $j = (j_{\tau}) \in \prod_{\tau \in \Psi} \mbb{Z}_{\geq 0}$ such that $j_{\tau_0} \leq \lambda_{n-1, \tau_0} - \lambda_{n, \tau_0}$ and $j_{\tau} \leq \lambda_{n, \tau}$ for $\tau \neq \tau_0$.
\item The conductor of $\chi$ divides $\ide{N}p^{\infty}$. In particular, we let $c = c(\chi) = (c_{\tau}) \in \prod_{\tau \in \Psi} \mbb{Z}_{\geq 0}$ denote the tuple of integers such that the $p$-part of the conductor of $\chi$ is of the form $\prod_{\tau \in \Psi} (\ide{p}_{\tau}\cdot \ide{p}_{\bar{\tau}})^{c_{\tau}}$. 
\item One has $c_{\tau_0} \geq 1$.
    \end{itemize}
    For any such character satisfying these assumptions, we let $j = j(\chi)$ denote the tuple in the first bullet point, and we let $e = e(\chi) = (e_{\tau}) \in \prod_{\tau \in \Psi} \mbb{Z}_{\geq 1}$ denote the tuple satisfying $e_{\tau} = \opn{max}(c_{\tau}, 1)$.
\end{definition}

For any $i=1, \dots, 2n$ and $\tau \in \Psi$, let $t_{p, i, \tau} \in \mbf{G}(\mbb{Q}_p)$ denote the element which is the identity outside the $\tau$-component, and in the $\tau$-component is given by $\opn{diag}(p, \dots, p, 1, \dots, 1)$ where there are $i$ lots of $p$. Let $\alpha_{i, \tau} \in \Qpb^{\times}$ and $\alpha_p^e$ denote the elements which satisfy
\[
[K^G_{\opn{Iw}}(p) \cdot t_{p, i, \tau} \cdot K^G_{\opn{Iw}}(p)] \cdot \phi_p = \alpha_{i, \tau} \phi_p \quad \text{ and } \quad [K^G_{\opn{Iw}}(p) \cdot t_{p}^e \cdot K^G_{\opn{Iw}}(p)] \cdot \phi_p = \alpha_p^e \phi_p .
\]
For any $\chi \in \Sigma_{\pi}$, we let $\mathscr{E}_p(\pi, \chi)$ denote the following $p$-adic multiplier
\[
\mathscr{E}_p(\pi, \chi) = p^{-e_{\tau_0}} \left( \frac{\alpha_{n, \tau_0}}{\alpha_{n-1,\tau_0}}\right)^{e_{\tau_0}} \chi_{\ide{p}_{\bar{\tau}_0}}(-1) \chi_{\ide{p}_{\bar{\tau}_0}}(p)^{-e_{\tau_0}} \mathscr{G}(\chi_{\ide{p}_{\bar{\tau}_0}}) \left( \prod_{\tau \in \Psi} \chi_{\ide{p}_{\bar{\tau}}}(-1)^n \right) (\alpha_p^e \delta_B(t_p^e))^{-1}
\]

where $\delta_B$ denotes the modulus function associated with the upper-triangular Borel subgroup of $\mbf{G}(\mbb{Q}_p)$, and $\chi_{\ide{p}_{\bar{\tau}}}$ denotes the restriction of $\chi$ to $F_{\ide{p}_{\bar{\tau}}}^{\times} \cong \mbb{Q}_p^{\times}$.

\begin{theorem} \label{FirstMainTheoremFinalSection}
    Let $\pi$ be a cuspidal automorphic representation satisfying the assumptions in \S \ref{FinalSectionAssumptionsSSec} (except Remark \ref{AdditionalUnimposedAssumptionRem}). Given test data $\phi$ as in \S \ref{FinalSectionAssumptionsSSec}, there exists a locally analytic distribution $\mathscr{L}_{p, \phi}(\pi, -) \in \mathscr{D}^{\opn{la}}(\Gal(F_{\ide{N}p^{\infty}}/F), L)$ such that
    \[
    \mathscr{L}_{p, \phi}(\pi, \hat{\chi}) = (\star) \cdot (2 \pi i)^{-(n-1)} \cdot \mathscr{E}_p(\pi, \chi) \cdot \int_{[\mbf{H}]} \phi^{[j]}_e(h) \chi'\left(\frac{\opn{det}h_2}{\opn{det}h_1} \right) dh
    \]
    for any $\chi \in \Sigma_{\pi}$, where $(\star)$ is a non-zero rational number independent of $\pi$ and $\chi$.
\end{theorem}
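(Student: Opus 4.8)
The strategy is to apply the abstract $p$-adic $L$-function machine of Theorem \ref{AbstractPadicLMachineTheorem} to a Coleman family through $\pi$ (or, more economically here, to the single point $\pi$ itself) and then match the output of the interpolation formula with the unitary Friedberg--Jacquet period. Concretely, I would first invoke Theorem \ref{MainThmBoxerPilloni2} applied to an affinoid neighbourhood of the weight $\kappa$ to obtain $\Omega = \opn{Spa}(S, S^+)$ and a slope $\leq h$ decomposition for $h$ exceeding the slope of $\theta_{\pi, p}$; the small-slope hypothesis $(-, \opn{ss}(\lambda))$ on $\theta_{\pi, p}$ guarantees (via Theorem \ref{MainThmBoxerPilloni1}(2) and Theorem \ref{MainThmBoxerPilloni2}(3)) that the class $\eta^{\opn{cl}}_{\pi, 1} \in \opn{H}^{n-1}(\mathcal{S}_{G, \opn{Iw}}(p), \mathscr{M}_{G, \kappa^*})^{(-, \opn{ss}(\lambda))}$ lifts uniquely to a class $\eta \in \opn{H}^{n-1}_{w_n, s_0\opn{-an}}(\kappa_S^*; 1)^{(-, \dagger, \leq h)}$. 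This $\eta$ — with $\Upsilon^{\opn{int}} = \{z_0\}$ the point cutting out $\pi$ — is exactly the test data demanded in Definition \ref{TestDataAbstractDef}. I would then set $\mathscr{L}_{p, \phi}(\pi, -) \defeq \Xi(\eta) \in \mathscr{D}^{\opn{la}}(\Gal(F_{\ide{N}p^{\infty}}/F), L)$, having first checked that the relevant $\Sigma$ is Zariski dense in $\Omega_1$ (which it is, since among the $\chi \in \Sigma_{\pi}$ of conductor prime to $p$ the infinity-types range over a Zariski dense set of classical weights in the one-dimensional slice $\{z_0\} \times \mathcal{W}(\ide{N}p^{\infty})$).

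\textbf{Identifying the interpolation values.} By Theorem \ref{AbstractPadicLMachineTheorem}, for $x \in \Sigma^{\opn{int}}$ (i.e.\ $\ide{p}_{\tau_0} \mid \opn{cond}(\chi_x)$, forcing $c_{\tau_0} \geq 1$, which is precisely the defining condition of $\Sigma_{\pi}$) one has
\[
\Xi(\eta)(x) = (1 - p^{-1}) A(x) \cdot \opn{Ev}_{\kappa, j_x - \chi_{x, p}, \chi_x, \beta}(\eta^{\opn{cl}}_{\pi, \beta}),
\]
where $A(x)$ is the explicit product of the ratio $(\theta_{\pi, p}(t_0)/\theta_{\pi, p}(t_1))^{\beta'}$, a power of $p$, a value of $\chi_x$ at $\varpi_{\ide{p}_{\bar\tau_0}}$, a sign, and the Gauss sum $\mathscr{G}(\chi_{x, p, \bar\tau_0})$. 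The next step is to rewrite $\opn{Ev}_{\kappa, j, \chi, \beta}(\eta^{\opn{cl}}_{\pi, \beta})$ as a unitary Friedberg--Jacquet period using Proposition \ref{ClassicalComplexUFJrelationProp}: unwinding the construction of $\eta^{\opn{cl}}_{\pi, \beta}$ from $F_\infty \otimes (\phi_S \otimes \phi^{S'} \otimes \psi_{p, \beta})$ and recalling $F_\infty(\alpha_- \odot v_\kappa^{[0]}) = \phi_\infty$, that proposition yields
\[
\opn{Ev}_{\kappa, j, \chi, \beta}(\eta^{\opn{cl}}_{\pi, \beta}) = (2\pi i)^{-(n-1)} \opn{Vol}(K_{H, \beta}; dh) \int_{[\mbf{H}]} \left(\Delta^{[j]}_\kappa \cdot \phi\right)(h \hat\gamma\, \psi_{p, \beta}) \, \chi'\!\left(\tfrac{\opn{det} h_2}{\opn{det} h_1}\right) dh,
\]
where the $p$-component of the test vector is $\hat\gamma \cdot \psi_{p, \beta} = \gamma w_n \cdot \psi_{p, \beta}$. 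Finally one must compare the period of $\psi^{[j]}_\beta$ (input $\hat\gamma\,\psi_{p, \beta}$ at $p$) with the period of $\phi^{[j]}_e$ (input $u_{\opn{sph}} t_p^e \cdot \phi_p$ at $p$): using the definition of $\psi_{p, \beta}$ via the Hecke operator $[K^G_{\opn{Iw}}(p^\beta) s_p^\beta w_G^{\opn{max}} K^G_{\opn{Iw}}(p)]$ together with the $p$-adic multiplier bookkeeping sketched in the introduction (the interplay between $\hat\gamma$, $u_{\opn{sph}}$, $t_p^e$ and the $U_p$-eigenvalues $\alpha_{i, \tau}$), these two periods agree up to the factor $\mathscr{E}_p(\pi, \chi)$ times an explicit rational constant and a power of $p$; crucially $\opn{Vol}(K_{H, \beta}; dh)$ is itself a rational number times such a power of $p$. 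Assembling $A(x)$ with this comparison, the ratio $\theta_{\pi, p}(t_0)/\theta_{\pi, p}(t_1)$ combines with the local $U_p$-eigenvalues to produce exactly the Euler-type factor $(\alpha_{n, \tau_0}/\alpha_{n-1, \tau_0})^{e_{\tau_0}}$, the Gauss sum $\mathscr{G}(\chi_{x, p, \bar\tau_0})$ becomes $\mathscr{G}(\chi_{\ide{p}_{\bar\tau_0}})$, and the remaining powers of $p$ and signs collapse into the definition of $\mathscr{E}_p(\pi, \chi)$. All non-explicit constants are rational and independent of $\pi$ and $\chi$, giving the factor $(\star)$.

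\textbf{Main obstacle.} The genuine work is the local harmonic-analytic comparison at $p$: showing that the period of the vector $\hat\gamma\,\psi_{p, \beta}$ (produced by higher Coleman theory) equals, up to the prescribed $p$-adic multiplier, the period of the ``spin $p$-stabilisation'' vector $u_{\opn{sph}} t_p^e \cdot \phi_p$ appearing in the theory of twisted local zeta integrals for Shalika models. This requires carefully tracking how the change-of-level Hecke operator $[K^G_{\opn{Iw}}(p^\beta) s_p^\beta w_G^{\opn{max}} K^G_{\opn{Iw}}(p)]$, conjugation by $\hat\gamma = \gamma w_n$, and the diagonal torus elements $t_p^e$ interact inside the spherical pair $(\mbf{G}(\mbb{Q}_p), \mbf{H}(\mbb{Q}_p))$, and matching the resulting local orbital integral with the computation in \cite{5author, ClassicalLocus}. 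A secondary (but more routine) point is verifying that $\eta^{\opn{cl}}_{\pi, \beta}$ really does lie in the $(-, \opn{ss}(\lambda))$ small-slope part and that its trace to level $p$ is the class $\eta_{z_0, 1, s_0\opn{-an}}$ matched with $\eta$ under the classicality isomorphisms — this follows from the multiplicity-one input of Arthur's classification for unitary groups \cite{CZ21} together with the $(\overline{\ide{p}}_G, K_\infty)$-cohomology computation \cite{BHR94}, but must be stated. Once the local comparison is in hand, the theorem is a matter of bookkeeping the constants.
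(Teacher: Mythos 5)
Your overall architecture matches the paper's proof: construct a class $\eta$ in the $s_0$-analytic cohomology over an affinoid neighbourhood, feed it through the abstract machine $\Xi(-)$ of Theorem \ref{AbstractPadicLMachineTheorem}, use Proposition \ref{ClassicalComplexUFJrelationProp} to convert $\opn{Ev}$ to a Friedberg--Jacquet period, and match local test vectors at $p$. But there are two gaps worth flagging, one of which is a genuine error rather than a loose end.

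The error is in your Zariski-density justification. You claim $\Sigma$ is Zariski dense in $\Omega_1$ because the characters of conductor prime to $p$ already fill out the \emph{one-dimensional slice} $\{z_0\} \times \mathcal{W}(\ide{N}p^\infty)$. This is exactly what fails for $n \geq 2$: at the fixed weight $\lambda$, the constraints $0 \leq j_{\tau_0} \leq \lambda_{n-1,\tau_0} - \lambda_{n,\tau_0}$ and $0 \leq j_\tau \leq \lambda_{n,\tau}$ confine the allowed infinity-types to a \emph{bounded box}, and a bounded set of classical weights is never Zariski dense in $\mathcal{W}(\ide{N}p^\infty)$. (The paper points this out explicitly — it is why the interpolation set does not uniquely determine $\mathscr{L}_{p,\phi}(\pi,-)$ at a fixed weight.) The correct argument, as in the paper, is that the weight $\lambda_z$ must itself vary over the classical points $\Upsilon$ of $\Omega$: as $\lambda_{z,n-1,\tau_0} - \lambda_{z,n,\tau_0}$ and $\lambda_{z,n,\tau}$ grow unboundedly, the boxes exhaust a Zariski dense set, and only then is $\Sigma$ dense in $\Omega_1 = \Omega \times_{\mathbb{Q}_p} \mathcal{W}(\ide{N}p^\infty)_1$. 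This forces $\Omega$ to be more than a point, and consequently $\Xi(\eta)$ lands in $\mathscr{D}^{\opn{la}}(\Gal(F_{\ide{N}p^\infty}/F), \mathscr{O}_\Omega)$, not $\mathscr{D}^{\opn{la}}(\cdots, L)$; the definition must be $\mathscr{L}_{p,\phi}(\pi,-) = \opn{sp}_{z_0}\Xi(\eta)$. Your parenthetical "(or, more economically, to the single point $\pi$ itself)" is therefore not an option here.

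The second, less serious, gap is in the lifting step. The classicality isomorphisms (Theorems \ref{MainThmBoxerPilloni1}(2) and \ref{MainThmBoxerPilloni2}(3)) compare cohomologies at a \emph{fixed} weight, and by themselves they do not produce a section $\eta$ of $\opn{H}^{n-1}_{w_n, s_0\opn{-an}}(\kappa_S^*;1)^{(-,\dagger,\leq h)}$ over the whole affinoid $\Omega$ whose fibre at $z_0$ is $\eta_{z_0,1,s_0\opn{-an}}$. The paper obtains this by a Tor-spectral-sequence argument, using the vanishing of cohomology outside degree $n-1$ at $z_0$ and the small-slope hypothesis to deduce local freeness (hence flatness) of the localised cohomology module over $S_0$, from which the lift exists after shrinking $\Omega$. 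Invoking multiplicity one and $(\overline{\ide{p}}_G, K_\infty)$-cohomology, as you do, establishes the classical class at level $p^\beta$, but a flatness-type argument is still needed to deform it in the weight direction. Once these two points are supplied, the remaining harmonic-analytic matching of $\psi_\beta^{[j]}$ with $\phi_e^{[j]}$ is, as you correctly anticipate, a local computation; in the paper it is delegated to Appendix \ref{EquivariantLinearFuncsAppendix}, whose two lemmas supply the explicit factor $\chi_{\ide{p}_{\bar\tau_0}}(-\nu_{n-1})(\alpha_p^e\delta_B(t_p^e))^{-1}p^{\beta[F^+:\mathbb{Q}]n(2n-1)}\prod_{\tau\neq\tau_0}\chi_{\ide{p}_{\bar\tau}}(-1)^n$ relating the two periods.
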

\begin{proof}
    We wish to apply the general construction in \S \ref{TestDataForPAdicLSSec}.  We remind the reader that a description of the roles the objects play is given in Remark \ref{Rem:WhatTheTestDataMeans}. 
    
    Let $\mathcal{W}_0$ denote the ($n[F^+:\mbb{Q}]$-dimensional) weight space over $\opn{Spa}(L, \mathcal{O}_L)$ parameterising continuous characters $\xi$ on $T(\mbb{Z}_p)$ which satisfy $\xi = -w_G^{\opn{max}}\xi$. The weight $\lambda$ therefore corresponds to a point $z_0 \in \mathcal{W}_0(L)$. Let $\opn{Spa}(R, R^+) \subset \mathcal{W}_0$ be an open affinoid neighbourhood of $z_0$ with $s_0$-analytic universal character $\lambda_R \colon T(\mbb{Z}_p) \to R^{\times}$. Let $\kappa_R = -w_{M_G}^{\opn{max}} \cdot ( w_n \star \lambda_R)$. This satisfies Assumption \ref{AssumpOnKJforACchar}. Furthermore, the specialisation $\kappa_z$ of $\kappa_R$ at any dominant classical weight $z \in \opn{Spa}(R, R^+)$ satisfies the assumptions in Definition \ref{DefOfAlgWeightsE} and $C(\kappa_z^*)^{-} = \{w_n \}$. Fix a rational number $h \in \mbb{Q}_{\geq 0}$ which is larger than $v_p(\theta_{\pi, p}(t_{\opn{aux}}))$. Note that $\kappa_{z_0} = \kappa$ and $\lambda_{z_0} = \lambda$.

    Let $\Omega = \opn{Spa}(S, S^+) \subset \opn{Spa}(R, R^+)$ denote open neighbourhood of $z_0$ as in Theorem \ref{MainThmBoxerPilloni2}, and let $\Upsilon \subset \Omega(\Qpb)$ denote the subset of classical dominant weights. Let $\eta_{z_0, 1, s_0\opn{-an}} \in \opn{H}^{n-1}_{w_n, s_0\opn{-an}}(\kappa_{z_0}^*; 1)^{(-, \dagger, \opn{ss}(\lambda_{z_0}))}$ denote the unique class corresponding to $\eta^{\opn{cl}}_{\pi, 1}$ via the classicality isomorphisms in Theorem \ref{MainThmBoxerPilloni1} and Theorem \ref{MainThmBoxerPilloni2}. It is an eigenvector for the action of $\mbb{T}^{-}_L$ with eigencharacter $\theta_{\pi}$. We claim that, after possibly shrinking $\Omega$, we can lift $\eta_{z_0, 1, s_0\opn{-an}}$ to a class
    \[
    \eta \in \opn{H}^{n-1}_{w_n, s_0\opn{-an}}(\kappa_S^*; 1)^{(-, \dagger, \leq h)} .
    \]
    Indeed, let $I_{\pi}$ denote the kernel of $\mbb{T}^{-}_L \xrightarrow{\theta_{\pi}} L$ and let $I_{S}$ denote the kernel of the composition $\mbb{T}^{-}_S \to \mbb{T}^{-}_L \xrightarrow{\theta_{\pi}} L$, where the first map is induced from specialisation at $z_0$. Let $S_0$ denote the (rigid) localisation of $S$ at the maximal ideal corresponding to $z_0$. The Tor-spectral sequence:
    \[
    E_2^{i, j} \colon \opn{Tor}^{S_0}_{-i}\left( \opn{H}^{j}_{w_n, s_0\opn{-an}}(\kappa_S^*; 1)^{(-, \dagger, \leq h)}_{I_{S}}, L \right) \Rightarrow \opn{H}^{i+j}_{w_n, s_0\opn{-an}}(\kappa_{z_0}^*; 1)^{(-, \dagger, \leq h)}_{I_{\pi}}
    \]
    and the vanishing/classicality results in Theorem \ref{MainThmBoxerPilloni1} and Theorem \ref{MainThmBoxerPilloni2} imply that 
    \[
    \opn{H}^{j}_{w_n, s_0\opn{-an}}(\kappa_S^*; 1)^{(-, \dagger, \leq h)}_{I_{S}} = 0 \quad \text{ for } j \neq n-1
    \]
    and $\opn{H}^{n-1}_{w_n, s_0\opn{-an}}(\kappa_S^*; 1)^{(-, \dagger, \leq h)}_{I_{S}}$ is free of finite-rank over $S_0$ (here we are using the assumption that $\theta_{\pi, p}$ is small slope, and the local criterion for flatness). Furthermore, we have
    \[
    \opn{H}^{n-1}_{w_n, s_0\opn{-an}}(\kappa_S^*; 1)^{(-, \dagger, \leq h)}_{I_{S}} \otimes_{S_0} L = \opn{H}^{n-1}_{w_n, s_0\opn{-an}}(\kappa_{z_0}^*; 1)^{(-, \dagger, \leq h)}_{I_{\pi}} 
    \] 
    and $\opn{H}^{n-1}_{w_n, s_0\opn{-an}}(\kappa_S^*; 1)^{(-, \dagger, \leq h)}_{I_{S}}$ is a direct factor of $\opn{H}^{n-1}_{w_n, s_0\opn{-an}}(\kappa_S^*; 1)^{(-, \dagger, \leq h)} \otimes_S S_0$. This implies that we can shrink the neighbourhood $\Omega$ around $z_0$ and lift $\eta_{z_0, 1, s_0\opn{-an}}$ to a class $\eta \in \opn{H}^{n-1}_{w_n, s_0\opn{-an}}(\kappa_S^*; 1)^{(-, \dagger, \leq h)}$. 

    Set $\Upsilon^{\opn{int}} = \{ z_0 \}$ and note that $\Sigma^{\opn{int}}$ in Theorem \ref{AbstractPadicLMachineTheorem} is equal to $\{ z_0 \} \times \Sigma_{\pi}$. Furthermore, the set $\Sigma$ is Zariski dense in $\Omega_1$ (because $\lambda_{z,n-1, \tau_0} - \lambda_{z,n, \tau_0}$ and $\lambda_{z, n, \tau}$ become arbitrarily large as $z$ runs over $\Upsilon$). We can therefore apply Theorem \ref{AbstractPadicLMachineTheorem} and define $\mathscr{L}_{p, \phi}(\pi, -) = \opn{sp}_{z_0}\Xi(\eta)$, where $\opn{sp}_{z_0}$ denotes the specialisation at $z_0 \in \Omega$.  Let $\nu_{n-1} \defeq (-1)^{(n-1)}$. Then, $\mathscr{L}_{p, \phi}(\pi, -)$ has the following interpolation property:
    \begin{align*} 
    \mathscr{L}_{p,\phi}(\pi, \hat{\chi}) &= (1-p^{-1}) \left( \frac{\theta_{\pi,p}(t_0)}{\theta_{\pi,p}(t_1)} \right)^{e_{\tau_0}} p^{e_{\tau_0}\kappa_{n+1, \tau_0}} \chi_{\ide{p}_{\bar{\tau}_0}}(p)^{-e_{\tau_0}} \chi_{\ide{p}_{\bar{\tau}_0}}(-1) \mathscr{G}(\chi_{\ide{p}_{\bar{\tau}_0}}) \opn{Ev}_{\kappa, j, \chi, \beta} (\eta^{\opn{cl}}_{\pi, \beta}) \\
    &= (1-p^{-1}) \cdot \mathscr{E}_p(\pi, \chi) \cdot \chi_{\ide{p}_{\bar{\tau}_0}}(\nu_{n-1})\left( \prod_{\tau \in \Psi} \chi_{\ide{p}_{\bar{\tau}}}(-1)^n \right) \alpha_p^e \delta_B(t_p^e) \cdot \opn{Ev}_{\kappa, j, \chi, \beta} (\eta^{\opn{cl}}_{\pi, \beta}) 
    \end{align*} 
    where $\beta \geq \opn{max}\{ e_{\tau} : \tau \in \Psi \}$ and for the second equality we have used $\kappa_{n+1, \tau_0} = -1-\lambda_{n, \tau_0}$. Set 
    \[
    \mathscr{P}_{\pi, \chi}(\psi) = \int_{[\mbf{H}]} \psi(h) \chi'\left(\frac{\opn{det}h_2}{\opn{det}h_1} \right) dh
    \]
    for any $\psi \in \pi$. Note that this is defines an equivariant linear map $\mathscr{P}_{\pi, \chi} \in \opn{Hom}_{\mbf{H}(\mbb{A})}(\pi, (\chi')^{-1} \circ \nu)$. We now apply Proposition \ref{ClassicalComplexUFJrelationProp} and see that
    \[
    \opn{Ev}_{\kappa, j, \chi, \beta} (\eta^{\opn{cl}}_{\pi, \beta}) = (2 \pi i)^{-(n-1)} \opn{Vol}(K_{H, \beta}; dh){^{-1}} \mathscr{P}_{\pi, \chi}(\psi_{\beta}^{[j]}) .
    \]

 Let $\psi_{p, e}$ denote the trace of $\psi_{\beta}^{[j]}$ down to depth $p^{e_{\tau}}$ Iwahori level in the $\tau$-component, and define $\psi_e^{[j]}$ in exactly the same way as $\psi_{\beta}^{[j]}$, replacing $\psi_{p, \beta}$ with $\psi_{p, e}$. Let $\hat{\gamma}_{\clubsuit} \in \mbf{G}(\mbb{Q}_p)$ denote the element which is trivial in the similitude component, and equal to the element \eqref{SimpleExampleofHatGammaAppendix} in each $\tau$-component. With notation as in Lemma \ref{OpenOrbitForgammauvLemma}, we have the relation 
 \[
 \hat{\gamma} \in \zeta \cdot \hat{\gamma}_{\clubsuit} \cdot B_G(\mbb{Z}_p)
 \]
 where $\zeta$ is the element which is trivial outside the $\tau_0$-component and equal to $\left( \begin{smallmatrix} X & \\ & 1\end{smallmatrix} \right)$ in the $\tau_0$-component. Using the transformation properties in Appendix \ref{EquivariantLinearFuncsAppendix}, we have

    \begin{align*}
        \mathscr{P}_{\pi, \chi}(\psi_{\beta}^{[j]}) &= \prod_{\tau \in \Psi} p^{(e_{\tau}-\beta)n(2n-1)} \mathscr{P}_{\pi, \chi}(\psi_{e}^{[j]}) \\
        &= \chi_{\ide{p}_{\bar{\tau}_0}}(\opn{det}X) \prod_{\tau \in \Psi} p^{(e_{\tau}-\beta)n(2n-1)} \mathscr{P}_{\pi, \chi}(\zeta^{-1} \cdot \psi_{e}^{[j]}) \\
        &= \chi_{\ide{p}_{\bar{\tau}_0}}(\opn{det}X) (\alpha_p^e \delta_B(t_p^e))^{-1} \prod_{\tau \in \Psi} p^{(e_{\tau}-\beta)n(2n-1)} \chi_{\ide{p}_{\bar{\tau}}}(-1)^n p^{-e_{\tau}n(2n-1)} \cdot \mathscr{P}_{\pi, \chi}(\phi_{e}^{[j]}) \\
        &= \chi_{\ide{p}_{\bar{\tau}_0}}(\nu_{n-1}) (\alpha_p^e \delta_B(t_p^e))^{-1} p^{-\beta [F^+:\mbb{Q}]n(2n-1)} \prod_{\tau \in \Psi} \chi_{\ide{p}_{\bar{\tau}}}(-1)^n \cdot \mathscr{P}_{\pi, \chi}(\phi_{e}^{[j]})
    \end{align*}

where the first equality is Lemma \ref{Lem:1stAppendixB}; the second equality follows from the $\mbf{H}(\mbb{Q}_p)$-equivariance of $\mathscr{P}_{\pi, \chi}$; in the third equality we have used Lemma \ref{Lem:2ndAppendixB} and the fact that $\zeta^{-1} \cdot \psi_{e}^{[j]}$ is the dual eigenvector associated with $\phi_p$ as in Appendix \ref{EquivariantLinearFuncsAppendix} multiplied by $\alpha_p^{-e}$ (as well as the fact that $p^{-n(2n-1)}[K_{G, 1} : K_{G, e_{\tau}}]^{-1} = p^{-e_{\tau}n(2n-1)}$); the fourth equality uses the fact that $\opn{det}X = \nu_{n-1}$.

We now define
    \[
    (\star) = (1-p^{-1}) \opn{Vol}(K_{H, \beta})^{-1} p^{-\beta[F^+:\mbb{Q}]n(2n-1)}
    \]
    which is a non-zero rational number which is independent of $\beta$ (it only depends on $n$ and the volume of $K^p \cap \mbf{H}(\mbb{A}_f^p)$). The result follows.
\end{proof}

\subsection{Variation in Coleman families} \label{VariationInColemanFamiliesSSec}

We continue with the notation introduced in the previous sections -- in particular, let $\pi$ be a cuspidal automorphic representation of $\mbf{G}(\mbb{A})$ satisfying the assumptions in \S \ref{FinalSectionAssumptionsSSec} (except Remark \ref{AdditionalUnimposedAssumptionRem}). To construct $p$-adic families through $\pi$ and a $p$-adic distribution associated with these families, we impose the following additional hypotheses:
\begin{itemize}
    \item We assume that the finite primes in $S$ split in $E/\mbb{Q}$.
    \item ($\phi$ is new away from $p$) We assume that there exists compact open subgroup $K_S^{\opn{new}} \subset \mbf{G}(\mbb{A}_{f, S})$ such that $\phi_S \in \pi_{f, S}^{K_S^{\opn{new}}}$ and $\opn{dim}_{\mbb{C}}\pi_{f, S}^{K_S^{\opn{new}}} = 1$ (note that, under the above assumption on $S$, such a compact open always exists by the local newform theory for general linear groups -- see \cite[Remark 6.1.2]{UFJ}). We assume that $K_S$ is a normal subgroup of $K_S^{\opn{new}}$. If we set $K^{\opn{new}} \defeq K_S^{\opn{new}}K^S$, then we have $\opn{dim}_{\mbb{C}}\pi_f^{K^{\opn{new}}} = 1$.
    \item ($\phi_p$ is a $p$-regular $p$-stabilisation) We assume that the generalised eigenspace of $\pi_p^{K^G_{\opn{Iw}}(p)}$ associated with the character $\theta_{\pi, p}$ is one-dimensional. 
\end{itemize}
Under these assumptions, we say that $\phi$ is a $p$-regular $p$-stabilisation which is new away from $p$. In what follows, the Shimura varieties $S_{\mbf{G}, \opn{Iw}}(p^{\beta})$ will have prime-to-$p$ level given by $K^p$. Note that the finite group $K_S^{\opn{new}}/K_S$ acts on these Shimura varieties.

\begin{proposition} \label{MultOnePropFinalSec}
    Let $I_{\pi}$ denote the kernel of the map $\theta_{\pi} \colon \mbb{T}_L^{-} \to L$. Then 
    \[
    \opn{H}^{n-1}\left( \mathcal{S}_{G, \opn{Iw}}(p), \mathscr{M}_{G, \kappa^*} \right)_{I_{\pi}}^{K_S^{\opn{new}}/K_S}
    \]
    is one-dimensional over $L$.
\end{proposition}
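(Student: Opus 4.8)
The strategy is to match the coherent cohomology with (a piece of) the automorphic spectrum and invoke Arthur's multiplicity formula together with local newvector theory. First I would use the classicality and vanishing results (Theorem \ref{MainThmBoxerPilloni1}), combined with the fact that $\theta_{\pi}$ satisfies the small slope condition $(-,\opn{ss}(\lambda))$, to identify $\opn{H}^{n-1}\left( \mathcal{S}_{G,\opn{Iw}}(p), \mathscr{M}_{G,\kappa^*} \right)_{I_\pi}$ with the $I_\pi$-localisation of the concentrated-in-degree-$(n-1)$ cohomology, which by the standard comparison between coherent cohomology of the adic Shimura variety and $(\overline{\ide{p}}_G, K_\infty)$-cohomology of the automorphic spectrum (as in \S \ref{RelationWithUFJperiodsSubSubSec}, using rigid GAGA and the results of Su \cite{Su19}) is identified with
\[
\opn{Hom}_{K_\infty}\left( \textstyle\bigwedge^{n-1}\overline{\ide{u}}_G, \left( \mathcal{A}_{\opn{cusp}}(\mbf{G}) \otimes V_\kappa^* \right)[\theta_\pi] \right)^{K^p K^G_{\opn{Iw}}(p)}
\]
where $\mathcal{A}_{\opn{cusp}}(\mbf{G})$ denotes the cuspidal automorphic forms on $[\mbf{G}]$ (the contribution of the rest of the discrete spectrum and of non-tempered Eisenstein contributions is killed after localising at $I_\pi$ and passing to the small slope part, since $\theta_\pi$ is the Hecke eigensystem of the discrete series representation $\pi$).

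Next I would decompose this space over the automorphic representations $\pi'$ of $\mbf{G}(\mbb{A})$ whose Hecke eigensystem away from $S\cup\{p\}$ agrees with $\theta_\pi^{S'}$ and whose $U_p$-eigensystem agrees with $\theta_{\pi,p}$. By strong multiplicity one for the groups involved (or rather, by the description of near-equivalence classes via Arthur parameters and the multiplicity formula of Chenevier--Lannes type for unitary groups \cite{CZ21}) and the $p$-regularity hypothesis, the only such $\pi'$ that contributes is $\pi$ itself, appearing with multiplicity one in $\mathcal{A}_{\opn{cusp}}(\mbf{G})$. The $(\overline{\ide{p}}_G,K_\infty)$-cohomology $\opn{H}^{n-1}\left(\overline{\ide{p}}_G, K_\infty; \pi_\infty \otimes V_\kappa^*\right)$ is one-dimensional by \cite[Theorem 3.2.1]{BHR94} since $\pi_\infty$ is the discrete series with the prescribed Harish-Chandra parameter. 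Hence the space above is identified with $\pi_f^{K^p K^G_{\opn{Iw}}(p)}[\theta_{\pi,p}]$, the $\theta_{\pi,p}$-eigenspace (or rather generalised eigenspace, but $p$-regularity makes it a genuine eigenspace of dimension one) inside the Iwahori-invariants at $p$ tensored with the prime-to-$p$ invariants. Taking $K_S^{\opn{new}}/K_S$-invariants on both sides, and using that $\opn{dim}_{\mbb{C}}\pi_{f,S}^{K_S^{\opn{new}}} = 1$ and $\pi_p^{K^G_{\opn{Iw}}(p)}[\theta_{\pi,p}]$ is one-dimensional, yields that the resulting space is one-dimensional over $L$.

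The main obstacle I expect is the bookkeeping needed to rule out all other contributions to the $I_\pi$-localised small slope cohomology: one must argue that no CAP or residual form, nor any other cuspidal representation in the same near-equivalence class with a small slope refinement, can share the eigensystem $\theta_\pi$. This is where the small slope hypothesis $(-,\opn{ss}(\lambda))$ does real work (it forces the relevant cohomology to be interior/cuspidal and concentrated in a single degree, cf. Theorem \ref{MainThmBoxerPilloni1}(2) and the BGG analysis in Theorem \ref{MainThmBoxerPilloni2}(3)), together with the $p$-regularity assumption on $\phi_p$ (which pins down a single refinement). A secondary technical point is that the identification of coherent cohomology with $(\ide{p},K)$-cohomology is naturally an identification over $\mbb{C}$ via $\iota_p$, so one must check the $L$-rationality of the resulting line; this follows from the fact that $\theta_\pi$ is defined over $L$ (as noted in \S \ref{FinalSectionAssumptionsSSec}, using \cite[Proposition 2.15]{ShinTemplier}) and that the coherent cohomology has a natural $L$-structure, so the $I_\pi$-eigenspace descends. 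Once these points are settled, the dimension count is immediate.
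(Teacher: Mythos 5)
Your overall strategy—identify the localised coherent cohomology with the $(\overline{\ide{p}}_G,K_\infty)$-cohomology of the cuspidal spectrum, isolate the contribution of $\pi$, and count dimensions using \cite{BHR94} and the newvector/$p$-regularity hypotheses—matches what the paper does (with the dimension count itself delegated to \cite[Corollary 6.2.3]{UFJ}). The piece you've left as a hand-wave, however, is exactly where the paper's proof does its real work.

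You assert that ``by the description of near-equivalence classes via Arthur parameters and the multiplicity formula \cite{CZ21} \dots\ the only such $\pi'$ that contributes is $\pi$ itself,'' but this does not follow from Arthur alone. The localisation at $I_\pi$ only controls the Hecke eigensystem away from $S\cup\{p\}$ together with the $U_p$-eigensystem; any other cuspidal $\sigma$ with $\sigma^{S'}\cong \pi^{S'}$ lies in the same global Arthur packet, but its local components at the ramified primes $v\in S$ are a priori only constrained to lie in the corresponding local packet. If some $v\in S$ were inert, that local packet could have several members admitting $K_v^{\opn{new}}$-fixed vectors, and the $I_\pi$-localised space could pick up several summands. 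What pins down $\sigma_{f,S}\cong\pi_{f,S}$ is the hypothesis imposed in \S\ref{ResultsInFamiliesSSecIntro} / \S\ref{VariationInColemanFamiliesSSec} that \emph{every finite prime in $S$ splits in $E/\mbb{Q}$}. The paper exploits this by a cleaner route than Arthur's multiplicity formula: it base-changes both $\pi$ and $\sigma$ to $\opn{GL}_1(\mbb{A}_E)\times\opn{GL}_{2n}(\mbb{A}_F)$, invokes strong multiplicity one for general linear groups \cite{JacquetShalikaMultOne} to get $\Pi\cong\Sigma$, and then uses that local base change at split primes is an isomorphism of local representations \cite[Theorem A.1(2)]{ShinAppendix} to deduce $\pi_{f,S}\cong\sigma_{f,S}$. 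Your proposal never invokes the split-primes hypothesis, which is the assumption that makes the local packets at $S$ singletons; without it the argument breaks. Once you have $\pi_f\cong\sigma_f$, the remaining steps (multiplicity one in the discrete spectrum from \cite{CZ21}, the one-dimensionality of the $(\overline{\ide{p}}_G,K_\infty)$-cohomology, and the dimension $1$ of $\pi_{f,S}^{K_S^{\opn{new}}}$ and $\pi_p^{K^G_{\opn{Iw}}(p)}[\theta_{\pi,p}]$) do go through as you describe. Finally, the paper reduces the $L$-rationality issue simply by noting that it suffices to prove the statement over $\mbb{C}$, rather than arguing descent of a line as you propose; both work, but the paper's reduction is more direct.
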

\begin{proof}
    Note that it is enough to prove the analogous statement over $\mbb{C}$, i.e., that the $\mbb{C}$-vector space
    \[
    \opn{H}^{n-1}\left( S(\mbb{C}), \mathscr{M}_{G, \kappa^*} \right)_{I_{\pi}}^{K_S^{\opn{new}}/K_S}, \quad \quad S \defeq S_{\mbf{G}, \opn{Iw}}(p),
    \]
    is one dimensional (where by abuse of notation, $I_{\pi}$ now denotes the kernel of the map $\theta_{\pi} \colon \mbb{T}_{\mbb{C}}^{-} \to \mbb{C}$). Suppose that $\sigma$ is a cuspidal automorphic representation of $\mbf{G}(\mbb{A})$ such that $\sigma_{\infty}$ is cohomological. Let $S' \supset S$ be a finite set of primes including all primes where $\mbf{G}$ and $\sigma$ are ramified. Then there exists an automorphic representation $\Sigma$ of $\opn{GL}_1(\mbb{A}_E) \times \opn{GL}_{2n}(\mbb{A}_F)$ such that $\Sigma^{S'} \cong \opn{BC}^{S'}(\sigma^{S'})$, where $\opn{BC}^{S'}$ denotes the unramified base-change outside the set of places $S'$ (see \cite{ShinAppendix}). If $\Pi$ denotes the base-change of $\pi$ to an automorphic representation of $\opn{GL}_1(\mbb{A}_E) \times \opn{GL}_{2n}(\mbb{A}_F)$, then by strong multiplicity one for automorphic representations of general linear groups (see \cite[Theorem 4.4]{JacquetShalikaMultOne}), if $\pi^{S} \cong \sigma^{S}$ then we must have $\Pi \cong \Sigma$. Furthermore, since every prime in $S$ splits in $E/\mbb{Q}$, this implies that $\pi_{f, S} \cong \sigma_{f, S}$ (see \cite[Theorem A.1(2)]{ShinAppendix}). Hence the condition $\pi^S \cong \sigma^{S}$ implies that $\pi_f \cong \sigma_f$. The proposition now follows from the same proof as in \cite[Corollary 6.2.3]{UFJ} (note that we do not need to assume that $\Pi$ is cuspidal).
\end{proof}

We now prove the main theorem on the existence of $p$-adic $L$-functions as $\pi$ varies in a Coleman family.

\begin{theorem} \label{SecondMainTheoremFinalSSec}
    Let $\pi$ be as above and let $\mathcal{W}_0$ denote the $n[F^+:\mbb{Q}]$-dimensional weight space over $\opn{Spa}(L, \mathcal{O}_L)$ as in the proof of Theorem \ref{FirstMainTheoremFinalSection}. Then (after possibly increasing $L$ by a finite extension) there exists a open affinoid neighbourhood $\Omega = \opn{Spa}(\mathscr{O}_{\Omega}) \subset \mathcal{W}_0$ containing $\lambda_{\pi} \defeq \lambda$ such that:
    \begin{enumerate}
        \item There exists a Zariski dense subset $\Upsilon^{\opn{int}} \subset \Omega(L)$ of classical\footnote{I.e. the weights which are induced from a self-dual dominant algebraic character of the torus $T$.} weights and a morphism $\theta_{\underline{\pi}} \colon \mbb{T}_{\mathscr{O}_{\Omega}}^{-} \to \mathscr{O}_{\Omega}$ such that:
        \begin{itemize}
            \item For any $z \in \Upsilon^{\opn{int}}$, there exists a unique (up to isomorphism) cuspidal automorphic representation $\underline{\pi}_z$ satisfying the assumptions in \S \ref{FinalSectionAssumptionsSSec} and at the start of \S \ref{VariationInColemanFamiliesSSec} (with respect to the same compact open subgroups $K$ and $K^{\opn{new}}$) such that the specialisation of $\theta_{\underline{\pi}}$ at $z$ is equal to $\theta_{\underline{\pi}_z}$ (and the Harish-Chandra parameter of $(\underline{\pi}_z)_{\infty}$ is of the form $w_n \cdot (\lambda_z + \rho_G)$, where $\lambda_z$ is the self-dual dominant character corresponding to $z$).
            \item The specialisation of $\theta_{\underline{\pi}}$ at $\lambda_{\pi}$ is equal to $\theta_{\pi}$.
            \item The set $\Sigma_{\underline{\pi}} \defeq \bigcup_{z \in \Upsilon^{\opn{int}}} \{ z \} \times \Sigma_{\underline{\pi}_z}$ is Zariski dense in $\Omega \times \mathcal{W}(\ide{N}p^{\infty})$.
        \end{itemize}
        \item For any $z \in \Upsilon^{\opn{int}}$, let $\mathscr{L}_p(\underline{\pi}_{z}, -) \in \mathscr{D}^{\opn{la}}(\Gal(F_{\ide{N}p^{\infty}}/F), L)$ denote the locally analytic distribution in Theorem \ref{FirstMainTheoremFinalSection} associated with a fixed choice of $p$-regular $p$-stabilisation $\phi_z$ for $\underline{\pi}_z$ which is new away from $p$. Then there exist constants $\{ c_z \in L^{\times} : z \in \Upsilon^{\opn{int}} \}$ and a locally analytic distribution $\mathscr{L}_p(\underline{\pi}, -) \in \mathscr{D}^{\opn{la}}(\Gal(F_{\ide{N}p^{\infty}}/F), \mathscr{O}_{\Omega})$ such that
        \[
        \opn{sp}_z \mathscr{L}_p(\underline{\pi}, -) = c_z \cdot \mathscr{L}_p(\underline{\pi}_z, -)
        \]
        for any $z \in \Upsilon^{\opn{int}}$, where $\opn{sp}_z$ denotes the specialisation at $z$. 
    \end{enumerate}
\end{theorem}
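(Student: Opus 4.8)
The plan is to run the same machinery that produced Theorem \ref{FirstMainTheoremFinalSection}, but now over the full weight neighbourhood $\Omega$ rather than over the single point $z_0 = \lambda_\pi$, using the multiplicity-one statement of Proposition \ref{MultOnePropFinalSec} to pin down a canonical (up to an $\mathscr{O}_\Omega^\times$-scalar) family of test classes. First I would set up the weight space exactly as in the proof of Theorem \ref{FirstMainTheoremFinalSection}: take $\mathcal{W}_0$ the self-dual weight space over $\opn{Spa}(L,\mathcal{O}_L)$, an affinoid neighbourhood $\opn{Spa}(R,R^+)$ of $z_0$ with $s_0$-analytic universal character $\lambda_R$, set $\kappa_R = -w_{M_G}^{\opn{max}}\cdot(w_n \star \lambda_R)$, fix $h \in \mbb{Q}_{\geq 0}$ larger than $v_p(\theta_{\pi,p}(t_{\opn{aux}}))$, and let $\Omega = \opn{Spa}(\mathscr{O}_\Omega) = \opn{Spa}(S,S^+) \subset \opn{Spa}(R,R^+)$ be the neighbourhood provided by Theorem \ref{MainThmBoxerPilloni2}. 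The cohomology module $\opn{H}^{n-1}_{w_n, s_0\opn{-an}}(\kappa_S^*;1)^{(-,\dagger,\leq h)}$ (taking $K_S^{\opn{new}}/K_S$-invariants, since $\phi$ is new away from $p$) is, after localising at the maximal ideal of $S$ corresponding to $z_0$ and possibly shrinking $\Omega$, free of finite rank over $\mathscr{O}_\Omega$ by the Tor-spectral-sequence argument in the proof of Theorem \ref{FirstMainTheoremFinalSection} together with the vanishing and classicality results of Theorems \ref{MainThmBoxerPilloni1}--\ref{MainThmBoxerPilloni2}; its specialisation at $z_0$ is the $\theta_\pi$-generalised eigenspace, which is one-dimensional by Proposition \ref{MultOnePropFinalSec}. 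Hence after further shrinking $\Omega$ the $\mbb{T}^-_{\mathscr{O}_\Omega}$-action on this free module is pro-étale-locally diagonalisable, producing the Hecke eigensystem $\theta_{\underline{\pi}} \colon \mbb{T}^-_{\mathscr{O}_\Omega} \to \mathscr{O}_\Omega$ and (up to an $\mathscr{O}_\Omega^\times$-scalar) a generator
\[
\eta \in \opn{H}^{n-1}_{w_n, s_0\opn{-an}}(\kappa_S^*;1)^{(-,\dagger,\leq h)}
\]
which is a $\theta_{\underline{\pi}}$-eigenvector. One then lets $\Upsilon^{\opn{int}} \subset \Omega(L)$ be the classical dominant weights $z$ at which $\opn{sp}_z\theta_{\underline{\pi}}$ is small slope $(-,\opn{ss}(\lambda_z))$ and cuts out, via classicality, a cohomological cuspidal automorphic representation $\underline{\pi}_z$; this set contains $z_0$ and is Zariski dense because the small-slope condition is open and classicality holds on a Zariski dense set of weights.

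Next I would verify hypothesis (1): that each $\underline{\pi}_z$ satisfies the running assumptions of \S\ref{FinalSectionAssumptionsSSec} and the start of \S\ref{VariationInColemanFamiliesSSec}, with the \emph{same} tame level $K$ and $K^{\opn{new}}$. The generic-at-$\infty$, self-duality, and $C(\kappa_z^*)^- = \{w_n\}$ conditions are automatic from the shape of $\kappa_R$ and the choice of $\Upsilon^{\opn{int}}$; that $\underline{\pi}_z$ is new away from $p$ with $\opn{dim}_{\mbb{C}}(\underline{\pi}_z)_f^K = 1$ follows because $\eta$ is $K_S^{\opn{new}}/K_S$-invariant and the specialisation at $z$ lands in a one-dimensional space (again by the analogue of Proposition \ref{MultOnePropFinalSec} at weight $z$, which holds by the same strong-base-change argument since the primes in $S$ split in $E/\mbb{Q}$); $p$-regularity of the $p$-stabilisation is the statement that the generalised eigenspace in $(\underline{\pi}_z)_p^{\opn{Iw}}$ is one-dimensional, which follows from freeness of the cohomology module. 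Uniqueness of $\underline{\pi}_z$ up to isomorphism follows from strong multiplicity one for $\opn{GL}_{2n}$ via base change, exactly as in Proposition \ref{MultOnePropFinalSec}. For the Zariski density of $\Sigma_{\underline{\pi}} = \bigcup_{z}\{z\}\times\Sigma_{\underline{\pi}_z}$ in $\Omega\times\mathcal{W}(\ide{N}p^\infty)$, I would argue as in the proof of Theorem \ref{FirstMainTheoremFinalSection} (or of Theorem \ref{AbstractPadicLMachineTheorem}): for each $z \in \Upsilon^{\opn{int}}$ the anticyclotomic infinity-type ranges over a box whose size is governed by $\lambda_{z,n-1,\tau_0}-\lambda_{z,n,\tau_0}$ and $\lambda_{z,n,\tau}$, which grow without bound as $z$ varies over $\Upsilon^{\opn{int}}$, so the union is Zariski dense fibrewise over a Zariski dense base.

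For (2) I would feed $\eta$ (together with $\Upsilon^{\opn{int}}$) into Theorem \ref{AbstractPadicLMachineTheorem}: the eigenvector condition of Definition \ref{TestDataAbstractDef} holds by construction, and $\Sigma$ (the subset of $\Sigma_{\beta=1}'$ with conductor prime to $p$) is Zariski dense in $\Omega_1$ by the same growth-of-weight argument. This yields $\Xi(\eta) \in \mathscr{D}^{\opn{la}}(\Gal(F_{\ide{N}p^\infty}/F),\mathscr{O}_\Omega)$, and I set $\mathscr{L}_p(\underline{\pi},-) \defeq \Xi(\eta)$. The interpolation formula of Theorem \ref{AbstractPadicLMachineTheorem} gives $\opn{sp}_z\Xi(\eta)(x) = (1-p^{-1})A(x)\opn{Ev}_x(\eta_z^{\opn{cl}})$ for $x$ in the relevant interpolation set over $z$; comparing with the interpolation property of $\mathscr{L}_p(\underline{\pi}_z,-)$ from Theorem \ref{FirstMainTheoremFinalSection} (which, after rescaling the test data so that the classical class at level $1$ is defined over a number field, has the identical shape with a $p$-adic multiplier $\mathscr{E}_p(\underline{\pi}_z,\chi)$ built from the same Satake/$U_p$-eigenvalues), both sides have the same values on the Zariski dense interpolation set up to a factor $c_z \in L^\times$ coming from the freedom in normalising $\eta$ versus the separate normalisation of $\phi_z$ and the period/volume constants. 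Since $\mathscr{D}^{\opn{la}}(\Gal(F_{\ide{N}p^\infty}/F),L)$ has no nonzero element vanishing on a Zariski dense set of characters (using here that $\mathcal{W}(\ide{N}p^\infty)$ is a finite étale cover of the weight space of $\prod_\tau \mbb{Z}_p^\times$ and the density of locally analytic characters), the identity $\opn{sp}_z\mathscr{L}_p(\underline{\pi},-) = c_z\cdot\mathscr{L}_p(\underline{\pi}_z,-)$ follows.

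\textbf{Main obstacle.} The technical heart, and the step I expect to be most delicate, is establishing that the relevant small-slope cohomology module over $\Omega$ is \emph{free} of finite rank over $\mathscr{O}_\Omega$ after shrinking, with specialisation at $z_0$ exactly the one-dimensional $\theta_\pi$-eigenspace; this is where Proposition \ref{MultOnePropFinalSec} (hence the assumption that $S$ splits in $E/\mbb{Q}$ and the new-away-from-$p$, $p$-regularity hypotheses) is essential, and where one must carefully combine the Tor-spectral sequence, the local criterion for flatness, and the classicality/vanishing inputs of Theorems \ref{MainThmBoxerPilloni1} and \ref{MainThmBoxerPilloni2} — keeping track of the $K_S^{\opn{new}}/K_S$-action throughout. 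Everything downstream (defining $\theta_{\underline{\pi}}$, $\eta$, and $\Xi(\eta)$, and matching the interpolation formulae) is then a formal consequence of the abstract machine in \S\ref{TestDataForPAdicLSSec} together with bookkeeping of normalising constants.
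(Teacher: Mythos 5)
Your proposal follows the paper's strategy closely and captures most of the argument: set up $\Omega$ and $\eta$ as in Theorem \ref{FirstMainTheoremFinalSection}, use Proposition \ref{MultOnePropFinalSec} plus flatness to get a one-dimensional free piece and a Hecke eigensystem $\theta_{\underline{\pi}}$, define $\mathscr{L}_p(\underline{\pi},-) = \Xi(\eta)$ via Theorem \ref{AbstractPadicLMachineTheorem}, and compare with the single-weight interpolation up to a normalising constant $c_z$.

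However, there is a genuine gap in your treatment of the archimedean component. You claim that the ``generic at $\infty$'' (discrete series) condition is ``automatic from the shape of $\kappa_R$ and the choice of $\Upsilon^{\opn{int}}$,'' but this is not the case: classicality (Theorems \ref{MainThmBoxerPilloni1}, \ref{MainThmBoxerPilloni2}) only ensures that the class at weight $z$ arises from a cuspidal $\underline{\pi}_z$ with $(\underline{\pi}_z)_\infty \otimes V_{\kappa_z}^*$ having non-vanishing $(\overline{\ide{p}}_\circ, K_\circ)$-cohomology in degree $n-1$. This does not force $(\underline{\pi}_z)_\infty$ to lie in the discrete series with Harish-Chandra parameter $w_n\cdot(\lambda_z + \rho_G)$ — it could, for instance, be a non-tempered cohomological representation. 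The paper's proof explicitly flags this (``except $(\underline{\pi}_z)_\infty$ may not lie in the discrete series'') and resolves it by restricting $\Upsilon^{\opn{int}}$ to sufficiently regular $\lambda_z$ and invoking \cite[Theorem 3.5 and Lemma 3.6.1]{HarrisAA} to force the discrete series condition, then observing that this restricted set is still Zariski dense. Your definition of $\Upsilon^{\opn{int}}$ (small slope plus dominant) does not build in this regularity constraint, and without it the first bullet of part (1) — uniqueness of a discrete-series $\underline{\pi}_z$ with the stated Harish-Chandra parameter — is not established. This is a fixable but real omission: you need to shrink $\Upsilon^{\opn{int}}$ and argue density of the sufficiently-regular locus, which is exactly what the paper does.

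Two smaller points of imprecision. First, your assertion that $p$-regularity of the $p$-stabilisation of $\underline{\pi}_z$ ``follows from freeness of the cohomology module'' elides the passage from the cohomology module to the local representation $(\underline{\pi}_z)_p^{\opn{Iw}}$; the paper routes this through the argument of \cite[Theorem 6.2.5]{UFJ}, which uses the multiplicity-one input of Proposition \ref{MultOnePropFinalSec} at each $z$. Second, your justification that $\Sigma_{\underline{\pi}}$ is Zariski dense in $\Omega\times\mathcal{W}(\ide{N}p^\infty)$ should make the growth-of-weight argument relative to $\Upsilon^{\opn{int}}$ as restricted to the regular locus, not just to arbitrary classical weights.
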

\begin{proof}
    Following the proof of Theorem \ref{FirstMainTheoremFinalSection}, there exist an open affinoid neighbourhood $\Omega = \opn{Spa}(\mathscr{O}_{\Omega}) = \opn{Spa}(S, S^+)$ of $\lambda_{\pi}$ and a cohomology class
    \[
    \eta \in \opn{H}^{n-1}_{w_n, s_0\opn{-an}}(\kappa_S^*; 1)^{(-, \dagger, \leq h)}
    \]
    lifting $\eta_{z_0, 1, s_0\opn{-an}}$ (where $z_0$ denotes the point corresponding to $\lambda_{\pi}$). By Proposition \ref{MultOnePropFinalSec}, we see that $I_{\pi}$ defines a point on the eigenvariety (over $\mathcal{W}_0$), and the projection to $\mathcal{W}_0$ is \'{e}tale at this point. This implies that (after possibly shrinking $\Omega$), there exists a morphism $\theta_{\underline{\pi}} \colon \mbb{T}_{S}^{-} \to S$ with kernel $I_{\underline{\pi}}$ such that
    \[
    \eta \in \opn{H}^{n-1}_{w_n, s_0\opn{-an}}(\kappa_S^*; 1)^{(-, \dagger, \leq h), K_S^{\opn{new}}/K_S}_{I_{\underline{\pi}}}
    \]
    is a basis of this one-dimensional free $S$-module. It is also an eigenvector for the action of $\mbb{T}_{S}^{-}$ with eigencharacter $\theta_{\underline{\pi}}$. Let $\Upsilon \subset \Omega(L)$ be the subset of classical weights (which is Zariski dense). Then, by the same argument as in \cite[Theorem 6.2.5]{UFJ} and after possibly shrinking $\Omega$, the specialisation of $\theta_{\underline{\pi}}$ at $z \in \Upsilon$ determines a unique (up to isomorphism) cuspidal automorphic representation $\underline{\pi}_z$ of $\mbf{G}(\mbb{A})$ such that:
    \begin{itemize}
        \item $(\underline{\pi}_z)_{\infty} \otimes V_{\kappa_z}^*$ has non-vanishing $(\overline{\ide{p}}_{\circ}, K^{\circ})$-cohomology in degree $n-1$;
        \item $\underline{\pi}_z$ satisfies the assumptions in \S \ref{FinalSectionAssumptionsSSec} and \S \ref{VariationInColemanFamiliesSSec}, except $(\underline{\pi}_z)_{\infty}$ may not lie in the discrete series.
    \end{itemize}
    We let $\Upsilon^{\opn{int}} \subset \Upsilon$ denote the subset of classical weights $z$ where $(\underline{\pi}_z)_{\infty}$ lies in the discrete series with Harish-Chandra parameter $w_n \cdot (\lambda_z + \rho_G)$. By \cite[Theorem 3.5 \& Lemma 3.6.1]{HarrisAA}, we see that one can force this condition by assuming that $\lambda_z$ is sufficiently regular, therefore $\Upsilon^{\opn{int}}$ is still Zariski dense in $\Omega$.

    The rest of the theorem now follows by applying the main construction in Theorem \ref{AbstractPadicLMachineTheorem} to the class $\eta$, i.e., we define $\mathscr{L}_p(\underline{\pi}, -) = \Xi(\eta)$. Note that $\eta$ does indeed specialise to small slope eigenvectors at points in $\Upsilon^{\opn{int}}$. The constants $c_z$ measure the difference between the choices of $p$-regular $p$-stabilisations for $\underline{\pi}_z$ which are new away from $p$ and the specialisations of the cohomology class $\eta$.
\end{proof}

\subsection{The version for non-similitude groups}

We finish by explaining how one can deduce the versions of Theorem \ref{FirstMainTheoremFinalSection} and Theorem \ref{SecondMainTheoremFinalSSec} for non-similitude unitary groups given in the introduction. Firstly, we note that for any cuspidal automorphic representation of $\mbf{G}_0(\mbb{A})$, we can lift this to a cuspidal automorphic representation of $\mbf{G}(\mbb{A})$ such that $A_{\mbf{G}}(\mbb{A})$ acts trivially (see \cite[Proposition 3.11]{ACES} and the references therein). Here $A_{\mbf{G}}$ denotes the maximal $\mbb{Q}$-split torus in the centre of $\mbf{G}$. Conversely, the irreducible constituents of a cuspidal automorphic representation of $\mbf{G}(\mbb{A})$ (such that $A_{\mbf{G}}(\mbb{A})$ acts trivially) restricted to $\mbf{G}_0(\mbb{A})$ all lie in the same $L$-packet. We also have extension and restriction results for the local components of these automorphic representations. This means that:
\begin{itemize}
    \item We can lift an automorphic representation of $\mbf{G}_0(\mbb{A})$ and the test data satisfying the assumptions in \S \ref{StatementOfMainResultsSSecIntro} (resp. \S \ref{StatementOfMainResultsSSecIntro} and \S \ref{ResultsInFamiliesSSecIntro}) to an automorphic representation of $\mbf{G}(\mbb{A})$ and test data satisfying the assumptions in \S \ref{FinalSectionAssumptionsSSec} (resp. \S \ref{FinalSectionAssumptionsSSec} and \S \ref{VariationInColemanFamiliesSSec}), with the additional property that $A_{\mbf{G}}(\mbb{A})$ acts trivially.
    \item For any automorphic representation of $\mbf{G}(\mbb{A})$ such that $A_{\mbf{G}}(\mbb{A})$ acts trivially and test data satisfying the assumptions in \S \ref{FinalSectionAssumptionsSSec} (resp. \S \ref{FinalSectionAssumptionsSSec} and \S \ref{VariationInColemanFamiliesSSec}), we can find an irreducible constituent of its restriction to $\mbf{G}_0(\mbb{A})$ and test data satisfying the assumptions in \S \ref{StatementOfMainResultsSSecIntro} (resp. \S \ref{StatementOfMainResultsSSecIntro} and \S \ref{ResultsInFamiliesSSecIntro}). 
\end{itemize}
We can now deduce Theorem \ref{FirstTheoremIntro} and Theorem \ref{SecondTheoremIntro} from Theorem \ref{FirstMainTheoremFinalSection} and Theorem \ref{SecondMainTheoremFinalSSec} respectively. Indeed, we can force the additional condition that $A_{\mbf{G}}(\mbb{A})$ acts trivially for the family $\underline{\pi}$ by the same argument as in \cite[Corollary 8.2.4]{UFJ}, and we can relate the automorphic periods by using the fact that $[\mbf{H}]/A_{\mbf{G}}(\mbb{A})$ is a disjoint union of finitely many translates of $[\mbf{H}_0]$ (see \cite[Remark 8.2.8]{UFJ}).


\appendix

\section{Some representation theory} \label{SomeRepTheoryAppendix}

In this appendix, we record some results from the representation theory of general linear groups which are used in \S \ref{RelationWithUFJperiodsSubSubSec}. The notation in the first part of this appendix (up until \S \ref{ExplicitBranchingOpsAppendixSec}) differs from the rest of the article. 

Let $1 \leq a \leq b$ be integers, and set $G = \opn{GL}_{a + b}$. We let $H \subset G$ denote the subgroup $H = \opn{GL}_a \times \opn{GL}_b$ embedded block diagonally. We consider $G$ and $H$ as algebraic groups over $\mbb{Q}$ (or any field of characteristic zero). We let $T \subset H \subset G$ denote the standard diagonal torus.

\begin{notation} \label{YXtAppendixNotation}
    Let $R = \mbb{Q}[T_1, \dots, T_a]/(T_1^2, \dots, T_a^2)$ and let $\ide{m}_R \subset R$ denote the maximal ideal. Let $t = \opn{diag}(t_1, \dots, t_a) \in M_{a\times a}(R)$ denote a diagonal matrix with $t_i \in \ide{m}_R$ for all $i=1, \dots, a$, where $M_{a \times a}(R)$ denotes the space of $(a \times a)$-matrices with coefficients in $R$. For any permutation $\sigma \in S_a$ (which we view as a permutation of the set $\{1, \dots, a \}$), we set
    \begin{align*} 
    Y^{(a)}_{\sigma}(t) \defeq Y^{(a)}_{\sigma}(t_1, \dots, t_a) &\defeq \left( \delta_{\sigma(i), j} t_i \right)_{1 \leq i,j \leq a} \in M_{a \times a}(R) \\
    X^{(a)}_{\sigma}(t) \defeq X^{(a)}_{\sigma}(t_1, \dots, t_a) &\defeq \opn{I}_{a \times a} + Y^{(a)}_{\sigma}(t) \in \opn{GL}_a(R)
    \end{align*} 
    where $\opn{I}_{a \times a}$ is the $(a \times a)$ identity matrix, and $\delta_{-,-}$ denotes the Kronecker delta function.
\end{notation}

We have the following useful lemma:

\begin{lemma} \label{AppendixIWahoriFactorisationLemma}
    Let $t = \opn{diag}(t_1, \dots, t_a)$ and $\sigma$ be as in Notation \ref{YXtAppendixNotation}. Let 
    \[
    \sigma = \sigma_1 \circ \cdots \circ \sigma_r
    \]
    denote the decomposition of $\sigma$ into cycles, and let $|\sigma_i| \defeq \{ k \in \{ 1, \dots, a \} : \sigma_i(k) \neq k \}$. Set $m_i \defeq \opn{min}(\sigma_i) \defeq \opn{min}|\sigma_i|$. Then:
    \begin{enumerate}
        \item $X^{(a)}_{\sigma}(t)$ has an Iwahori decomposition
        \[
        X^{(a)}_{\sigma}(t) = X^{(a)}_{\sigma}(t)^+ \cdot X^{(a)}_{\sigma}(t)^{-}
        \]
        where $X^{(a)}_{\sigma}(t)^+$ (resp. $X^{(a)}_{\sigma}(t)^{-}$) lies in the standard upper-triangular Borel subgroup (resp. lower-triangular unipotent subgroup) of $\opn{GL}_a(R)$.
        \item The projection of $X^{(a)}_{\sigma}(t)^+$ to the standard diagonal torus is of the form $\opn{diag}(x_1, \dots, x_a)$ with
        \[
        x_j = \left\{ \begin{array}{cc} 1 & \text{ if } j \neq m_i \text{ for any } i=1, \dots, r \\ \left( 1 + \opn{sgn}(\sigma_i) \prod_{k \in |\sigma_i|} t_k \right) & \text{ if } j=m_i \end{array} \right.
        \]
        where $\opn{sgn}(-)$ denotes the sign of a permutation.
    \end{enumerate}
\end{lemma}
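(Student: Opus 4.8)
The plan is to reduce everything to a careful bookkeeping exercise about the matrix $Y^{(a)}_{\sigma}(t)$, using that each $t_i$ is nilpotent of order $2$ in $R$ and that all the $t_i$ commute. The key observation is that $Y = Y^{(a)}_{\sigma}(t)$ is a ``weighted permutation matrix'': its only nonzero entries are $Y_{\sigma^{-1}(j),j}$ — wait, more precisely $Y_{i,\sigma(i)}$ — equal to $t_i$, so $Y$ decomposes as a direct sum (after reindexing by the cycle decomposition) of blocks $Y_i$ supported on the index set $|\sigma_i| \cup \{\text{fixed points}\}$. The fixed points of $\sigma$ contribute nothing to $Y$, so $X^{(a)}_{\sigma}(t) = \opn{I} + Y$ is block-diagonal with respect to the partition of $\{1,\dots,a\}$ into the supports $|\sigma_i|$ together with the singletons $\{j\}$ for $j$ a fixed point, and on each singleton block it is the identity. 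Thus it suffices to treat a single cycle $\sigma_i$, i.e.\ to prove both parts when $\sigma$ is a single cycle of length $\ell = \#|\sigma_i|$ acting on $|\sigma_i|$ and fixing everything else.

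For a single cycle, I would write $|\sigma_i| = \{j_1 < j_2 < \cdots < j_\ell\}$ with $m_i = j_1$, and observe that $Y$ restricted to this block is a cyclic shift matrix scaled by the $t$'s, so $Y^\ell$ restricted to the block is scalar equal to $\prod_{k \in |\sigma_i|} t_k$ times a power of the cyclic permutation that returns to identity — and $Y^{\ell+1} = 0$ because each entry of $Y^{\ell+1}$ is a product of $\ell+1$ of the $t_k$'s drawn (with repetition forced by pigeonhole once you traverse a cycle of length $\ell$ twice) from a set of $\ell$ square-zero elements, hence vanishes. Actually since $Y$ is supported on a single $\ell$-cycle, $Y^\ell$ is already a diagonal matrix whose only nonzero entry (at position $(j_1,j_1)$, say, after fixing the cycle orientation) is $\pm\prod_{k\in|\sigma_i|}t_k$, and $Y^{\ell+1}$ repeats an index and so kills one $t_k$ twice, giving $0$. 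This makes $(\opn{I}+Y)$ unipotent with a finite, explicit expansion. For the Iwahori (LU-type) decomposition in part (1), I would argue by downward induction on the pivot position, or more cleanly invoke the standard fact that a matrix in $\opn{GL}_a(R)$ lying in the ``big cell'' (all leading principal minors invertible) admits a unique factorization $B^+ \cdot N^-$ with $B^+$ upper-triangular and $N^-$ lower-triangular unipotent; since $\opn{I}+Y \equiv \opn{I} \pmod{\ide{m}_R}$, all leading principal minors are $\equiv 1$, hence invertible, so the factorization exists and is unique. This handles part (1).

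For part (2), the diagonal part of $B^+$ is the product of the pivots appearing in Gaussian elimination (row reduction using lower rows to clear, which is exactly the $B^+ N^-$ factorization). Here I would do the computation cycle-by-cycle. Within the cycle block on $|\sigma_i| = \{j_1 < \cdots < j_\ell\}$, the matrix $\opn{I} + Y$ has $1$'s on the diagonal and one off-diagonal entry $t_{j_r}$ in each row $j_r$ (in column $\sigma(j_r)$). Performing the LU reduction, the first $\ell-1$ pivots encountered are $1$ (the only modifications that feed back come after going all the way around the cycle), and the last pivot — at position $j_1 = m_i$ — picks up the accumulated product $\prod_{k\in|\sigma_i|} t_k$ with a sign equal to $\opn{sgn}(\sigma_i)$ coming from the cyclic structure (a cycle of length $\ell$ has sign $(-1)^{\ell-1}$, which is exactly the sign that appears when the single nonzero entry of $Y^\ell$ lands back at the starting index with the appropriate orientation). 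I expect the sign computation to be the main obstacle: one must fix an orientation convention for $\sigma$ versus $\sigma^{-1}$ in the definition $Y^{(a)}_\sigma(t) = (\delta_{\sigma(i),j}t_i)$, track whether the nonzero entry of $Y^\ell$ sits at $(m_i,m_i)$, and confirm the sign matches $\opn{sgn}(\sigma_i) = (-1)^{\ell-1}$; the cleanest way is probably to compute $\det(\opn{I} + Y)$ directly via the Leibniz formula — it equals $\prod_i\bigl(1 + \opn{sgn}(\sigma_i)\prod_{k\in|\sigma_i|} t_k\bigr)$ since the only permutations contributing are products of subsets of the cycles — and then note that $\det(B^+) = \det(\opn{I}+Y)$ forces the product of pivots, while the inductive structure forces all but the bottom pivot in each block to be $1$. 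A short additional argument (e.g.\ that the pivots at non-$m_i$ positions are genuinely $1$, not merely that their product with the $m_i$-pivot gives the right determinant) completes part (2); this follows because clearing column $j_r$ for $r \geq 2$ only uses row $j_{r-1}$ and introduces no diagonal modification until the cycle closes up at $j_1$.
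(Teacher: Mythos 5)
There is a genuine error at the first step of your reduction. You assert that ``the fixed points of $\sigma$ contribute nothing to $Y$'' and that the corresponding singleton blocks of $X^{(a)}_\sigma(t)$ are identities. But with $Y^{(a)}_\sigma(t)_{i,j} = \delta_{\sigma(i),j}t_i$, a fixed point $j$ of $\sigma$ has $Y_{j,j}=t_j$, so the singleton block of $\opn{I}_{a\times a}+Y$ there is $(1+t_j)$, not $(1)$. Your argument would therefore output $x_j=1$ at a fixed point, whereas the correct value is $x_j=1+t_j$. The convention that makes the lemma's formula correct --- and the one the paper's own proof clearly uses, since in the case $\sigma(a)=a$ it peels off a $1+t_a$ block and records this as consistent with the claimed formula --- is that $1$-cycles $(j)$ are included in the decomposition with $|\sigma_i|$ read as the orbit $\{j\}$, so that $m_i=j$ and $x_j = 1+\opn{sgn}((j))\,t_j = 1+t_j$. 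Your Leibniz computation has the same omission: the correct expansion is $\det(\opn{I}_{a\times a}+Y) = \prod_{i}\bigl(1+\opn{sgn}(\sigma_i)\prod_{k\in|\sigma_i|}t_k\bigr)$ with the product running over \emph{all} cycles, $1$-cycles included; dropping the fixed-point factors $1+t_j$ gives the wrong determinant and hence the wrong product of pivots for part~(2).

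Once the fixed-point bookkeeping is corrected, your route does differ from the paper's: you reduce by block-diagonality to a single cycle, invoke the big-cell criterion for existence of the factorization, and identify pivots by elimination plus the total determinant, whereas the paper runs an induction on $a$ --- when $\sigma(a)\neq a$ it right-multiplies by an elementary matrix to remove $a$ from its cycle, which decrements $a$, replaces $t_{\sigma^{-1}(a)}$ by $-t_{\sigma^{-1}(a)}t_a$, and flips the cycle sign, so the sign bookkeeping propagates automatically through the induction. Even so, the pivot-location claim in your part~(2) is not yet a proof: the determinant fixes the product of pivots in a cycle block, but the assertion that the unique nontrivial pivot sits at $m_i$, supported by ``clearing column $j_r$ only uses row $j_{r-1}$'', conflates the sorted labels $j_1<\dots<j_\ell$ with the order the cycle $\sigma_i$ actually traverses them; the fill-in during elimination follows $\sigma_i$, not the sorted order, and needs to be traced explicitly. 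Also, $Y^\ell$ on a cycle block is the scalar matrix $\bigl(\prod_{k\in|\sigma_i|}t_k\bigr)\opn{I}_{\ell\times\ell}$ --- every diagonal entry equals $+\prod t_k$, not a single signed entry at one position.
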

\begin{proof}
    Part (1) is immediate because $t$ has coefficients in $\ide{m}_R$ and $R$ is an Artinian local ring. For part (2), we will prove this by induction on $a$. Clearly the claim is true when $a = 1$, so we suppose $a > 1$.

    Suppose that $\sigma(a) = a$. Then
    \[
    X^{(a)}_{\sigma}(t_1, \dots, t_a) = \tbyt{X_{\sigma}^{(a-1)}(t_1, \dots, t_{a-1})}{}{}{1 + t_a}
    \]
    where the right-hand side is a block matrix, with top left block of size $(a-1) \times (a-1)$. The claim now follows from the induction hypothesis.

    Finally, suppose that $\sigma(a) \neq a$ (so in particular, $\sigma(a) < a$). Define a new permutation $\tau \in S_{a-1}$ by removing $a$ from its (non-trivial) cycle, i.e. we define
    \[
    \tau(j) \defeq \left\{ \begin{array}{cc} \sigma(a) & \text{ if } j=\sigma^{-1}(a) \\ \sigma(j) & \text{ if } j \neq \sigma^{-1}(a) \end{array} \right.
    \]
    for $j=1, \dots, a-1$. Suppose, without loss of generality, that $a \in |\sigma_1|$. Then the decomposition of $\tau$ into cycles is of the form $\tau = \tau_1 \circ \sigma_2 \circ \cdots \circ \sigma_r$, where $\tau_1$ is the cycle obtained by omitting $a$. Note that $\opn{sgn}(\tau_1) = -\opn{sgn}(\sigma_1)$. For $1 < i \leq r$, set $\tau_i = \sigma_i$.

    Let $Z$ denote the $(a \times a)$ matrix which is zero everywhere, except in the $(a, \sigma(a))$-entry where it is equal to $-t_a$. Then
    \[
    X_{\sigma}^{(a)}(t_1, \dots, t_a) \cdot (\opn{I}_{a \times a} + Z ) = \tbyt{U}{V}{}{1}
    \]
    where the right-hand side is a block matrix with $U \in \opn{GL}_{a-1}(R)$, and the matrix $U$ is given by
    \[
    U = X_{\tau}^{(a-1)}(t_1, \dots, t_{\sigma^{-1}(a)-1}, - t_{\sigma^{-1}(a)} t_a, t_{\sigma^{-1}(a)+1}, \dots, t_{a-1}) .
    \]
    Using the induction hypothesis, we therefore see that the projection of $X^{(a)}_{\sigma}(t)^+$ to the standard diagonal torus is of the form $\opn{diag}(x_1, \dots, x_{a-1}, 1)$ with
    \[
    x_j = \left\{ \begin{array}{cc} 1 & \text{ if } j \neq \opn{min}(\tau_i) \; (= \opn{min}(\sigma_i)) \text{ for any } i=1, \dots, r \\
    \left( 1 + \opn{sgn}(\sigma_i)\prod_{k \in |\sigma_i|} t_k \right) & \text{ if } j = \opn{min}(\sigma_i) \text{ with } i > 1 \\
    \left( 1 + \opn{sgn}(\tau_1) \cdot (- t_{\sigma^{-1}(a)}t_a ) \cdot \prod_{\substack{k \in |\tau_1| \\ k \neq \sigma^{-1}(a)}} t_k \right) & \text{ if } j = \opn{min}(\sigma_1) \end{array} \right.
    \]
    The claim now follows from $\opn{sgn}(\sigma_1) = -\opn{sgn}(\tau_1)$ and $|\tau_1| \cup \{a\} = |\sigma_1|$.
\end{proof}

The above lemma will be used in the proof of the following proposition. Let $\mbb{Q}[G]$ denote the ring of algebraic functions on $G$. This comes equipped with an action of $G$ given by $(g \cdot f)(-) = f(g^{-1} - )$ (for $g \in G$ and $f \in \mbb{Q}[G]$), and we can differentiate this to obtain an action of $\ide{g} = \opn{Lie}(G)$. We will denote this action of $\ide{g}$ by $\star$. We denote by
\[
u \defeq \tbyt{1}{u'}{}{1} \in G(\mbb{Q})
\]
the block matrix (with top left block of size $(a \times a)$, and bottom right of size $(b \times b)$), where $u'$ is the $(a \times b)$-matrix given by
\[
(u')_{i,j} = \delta_{b+1-i,j} , \quad \quad i \in \{1, \dots, a\}, j \in \{1, \dots, b\} .
\]
We let $w_a \in \opn{GL}_a(\mbb{Q})$ denote the antidiagonal matrix with each non-zero entry equal to $1$.

\begin{proposition} \label{NonVanishingOnUPropAppendix}
    Let $f \in \mbb{Q}[G]$ be an algebraic function such that
    \begin{itemize}
        \item $f(h^{-1} - ) = (\opn{det}h_1)^{-\nu_1} (\opn{det}h_2)^{-\nu_2} f(-)$ for all $h = (h_1, h_2) \in \opn{GL}_a \times \opn{GL}_b = H$
        \item $f( - b) = \kappa(b^{-1}) f(-)$ for all $b$ in the standard lower-triangular Borel subgroup of $G$
    \end{itemize}
    for some integers $\nu_1, \nu_2 \in \mbb{Z}$ and algebraic character $\kappa = (\kappa_1, \dots, \kappa_{a+b}) \in X^*(T)$ (inflated to the lower-triangular Borel subgroup). Let $\sigma \colon \{1, \dots, a \} \hookrightarrow \{1, \dots, b \}$ be an injective map, and set
    \[
    \mu_{\sigma} \defeq \prod_{i=1}^a E_{i, a+b+1-\sigma(i)} \in \mathcal{U}(\ide{g})
    \]
    where $E_{i, j}$ denotes the elementary matrix with non-zero entry in the $(i, j)$-th place (and the product takes place in $\mathcal{U}(\ide{g})$). 
    \begin{enumerate}
        \item If $\opn{im}(\sigma) \cap \{a+1, \dots, b \} \neq \varnothing$ then
        \[
        (\mu_{\sigma} \star f)(u) = 0 .
        \]
        \item Suppose $\opn{im}(\sigma) \cap \{a+1, \dots, b\} = \varnothing$ (so $\sigma$ is a permutation of $\{1, \dots, a \}$). Set $\opn{max}(\sigma_i) \defeq \opn{max}|\sigma_i|$, where $\sigma = \sigma_1 \circ \cdots \circ \sigma_r$ is the decomposition of $\sigma$ into cycles. Then 
        \[
        (\mu_{\sigma} \star f)(u) = (-1)^a \opn{sgn}(\sigma) \prod_{i=1}^r (\nu_1 + \kappa_{\opn{max}(\sigma_i)}) \cdot f(u) .
        \]
    \end{enumerate}
\end{proposition}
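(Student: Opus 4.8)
The plan is to compute $(\mu_\sigma \star f)(u)$ by exponentiating. Since $\mathcal{U}(\mathfrak{g})$ acts through left-invariant differential operators, the action of a product $\mu_\sigma = \prod_i E_{i,a+b+1-\sigma(i)}$ evaluated at $u$ can be obtained by differentiating $f(u \cdot \exp(s_1 E_{1,a+b+1-\sigma(1)}) \cdots \exp(s_a E_{a,a+b+1-\sigma(a)}))$ in each $s_i$ at $0$, or equivalently (since the $E_{i,j}$ with $i\le a<j$ all commute with one another) by differentiating $f(u \cdot (\mathrm{I} + \sum_i s_i E_{i,a+b+1-\sigma(i)}))$; concretely, setting $R = \mathbb{Q}[s_1,\dots,s_a]/(s_1^2,\dots,s_a^2)$, one has $(\mu_\sigma \star f)(u) = \partial_{s_1}\cdots\partial_{s_a}\big|_{0}\, f\big(u \cdot (\mathrm{I} + \sum_{i=1}^a s_i E_{i,a+b+1-\sigma(i)})\big)$, which is just the coefficient of $s_1\cdots s_a$ in the $R$-valued quantity $f(u(\mathrm{I}+\sum_i s_i E_{i,a+b+1-\sigma(i)}))$. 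So the first step is to write $v_\sigma(s) \defeq u\cdot(\mathrm{I} + \sum_{i=1}^a s_i E_{i,a+b+1-\sigma(i)}) \in G(R)$ explicitly as a block matrix and simplify $f(v_\sigma(s))$ using the two transformation properties of $f$.

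For part (1): if $\sigma(i_0) \in \{a+1,\dots,b\}$ for some $i_0$, then I would show that $E_{i_0,a+b+1-\sigma(i_0)}$, after multiplying $u$ on the right, produces a matrix entry in a block position that can be absorbed into the lower-triangular Borel without producing any $s_{i_0}$-dependence in the torus part — more precisely, $u E_{i_0, a+b+1-\sigma(i_0)}$ has its single nonzero entry in the $(i_0, a+b+1-\sigma(i_0))$ position with $a+b+1-\sigma(i_0) \le b < a+b+1 \le$ first index of the second block offset... I would check that this column index is $\le a$ only if $\sigma(i_0)\in\{b-a+1,\dots,b\}$, and handle the remaining columns: the point is that $v_\sigma(s)$ then factors as (something independent of $s_{i_0}$) times an element of the lower-triangular Borel whose torus projection does not involve $s_{i_0}$, or more directly that $f(v_\sigma(s))$ is linear in $s_{i_0}$ with vanishing coefficient because the relevant column of $u$ can be cleared by a lower-triangular column operation. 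The cleanest route is probably to observe that $u\cdot(\mathrm{I}+s_{i_0}E_{i_0,a+b+1-\sigma(i_0)})\cdot(\mathrm{I}-s_{i_0}E_{i_0,a+b+1-\sigma(i_0)})= u$ and that the correcting factor lies in the lower Borel acting by a \emph{trivial} torus character when the column index exceeds the relevant range, so $f$ is unchanged by the insertion and $\partial_{s_{i_0}}f = 0$.

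For part (2): when $\sigma$ is a permutation of $\{1,\dots,a\}$, the matrix $u(\mathrm{I}+\sum_i s_i E_{i,a+b+1-\sigma(i)})$ has block form $\left(\begin{smallmatrix} X^{(a)}_\sigma(s) & u' \\ 0 & \mathrm{I}_b\end{smallmatrix}\right)$ up to reindexing, where $X^{(a)}_\sigma(s)$ is exactly the matrix of Notation \ref{YXtAppendixNotation} (the block $u'$ flips indices, converting $E_{i,a+b+1-\sigma(i)}$ into the $(i,\sigma(i))$-entry of the top-left block). Then I use the Iwahori factorisation of Lemma \ref{AppendixIWahoriFactorisationLemma}: $X^{(a)}_\sigma(s) = X^+ X^-$ with $X^-$ in the lower-triangular unipotent of $\mathrm{GL}_a$ (hence in the lower Borel of $G$) and $X^+$ upper-triangular with torus projection $\mathrm{diag}(x_1,\dots,x_a)$ having $x_{\min(\sigma_i)} = 1 + \mathrm{sgn}(\sigma_i)\prod_{k\in|\sigma_i|}s_k$ and all other $x_j=1$. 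One then pulls the block-diagonal part $(\mathrm{diag}(x_1,\dots,x_a),\mathrm{I}_b) \in H$ and the lower-triangular $X^-$ (and the unipotent upper part of $X^+$, which also lies in the lower... no — I would instead conjugate/regroup so that the upper unipotent part of $X^+$ gets absorbed by $w_a$-type twists) out of $f$ using its two transformation rules: the $H$-rule gives a factor $\prod_i x_i^{-\nu_1}$ and the Borel rule gives a factor involving $\kappa$. The subtlety — and the main obstacle — is correctly tracking how the \emph{non-torus} part of the upper-triangular factor $X^+$ and the permutation matrix implicit in $w_a$ interact with $f$'s lower-Borel transformation property (which is for the lower Borel, not the upper one); here one uses that $f(u) = f(u \cdot w)$ for $w$ a product of the relevant Weyl elements, together with the fact that $f$ restricted to such cosets is, up to sign $\mathrm{sgn}(\sigma)$, a monomial in the torus coordinates with exponents read off from $\kappa$. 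Extracting the coefficient of $s_1\cdots s_a = \prod_i \prod_{k\in|\sigma_i|} s_k$ from $\prod_i x_i^{-\nu_1}\cdot(\text{$\kappa$-factor})$ then forces exactly one factor of the form $\mathrm{sgn}(\sigma_i)\prod_{k\in|\sigma_i|}s_k$ from each cycle, each contributing $-(\nu_1+\kappa_{\max(\sigma_i)})$ (the index $\max(\sigma_i)$ rather than $\min(\sigma_i)$ appearing because of the $w_a$ flip relating $u$-coordinates to standard ones), times an overall $(-1)^a$ from the $a$ sign flips in $u'$, giving the stated formula. The bookkeeping of which weight index $\kappa_{\max(\sigma_i)}$ survives — i.e. reconciling $\min$ in Lemma \ref{AppendixIWahoriFactorisationLemma} with $\max$ in the statement via the anti-diagonal flip — is the step I expect to require the most care, and I would verify it first on $a=1,2$ and a $3$-cycle to pin down all signs before writing the general induction.
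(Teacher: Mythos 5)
Your high-level plan matches the paper's: introduce dual-number perturbations, reduce to the block matrix $\bigl(\begin{smallmatrix}1 & U_\sigma + u' \\ & 1\end{smallmatrix}\bigr)$, invoke the Iwahori factorisation of Lemma~\ref{AppendixIWahoriFactorisationLemma}, and pull out $H$- and lower-Borel factors via the two transformation rules, tracking the $\min$-to-$\max$ flip through $w_a$. You correctly identified the key lemma and the main bookkeeping hazards. However there are two concrete errors in the details.

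First, your opening formula
\[
(\mu_\sigma \star f)(u) = \partial_{s_1}\cdots\partial_{s_a}\big|_{0}\, f\bigl(u\cdot(\mathrm{I}+\textstyle\sum_i s_i E_{i,a+b+1-\sigma(i)})\bigr)
\]
is off by $(-1)^a$. The $\star$-action defined in the paper comes from $(g\cdot f)(-) = f(g^{-1}-)$, so $X \in \mathfrak{g}$ acts via $x \mapsto \frac{d}{dt}\big|_0 f(e^{-tX}x)$ --- it is a \emph{right}-invariant operator acting by left translation with an inverse, not a left-invariant operator acting by right translation as you assumed. Since the $E_{i,a+b+1-\sigma(i)}$ all commute with $u$ (both lie in the upper-right block), left vs.\ right multiplication gives the same group element, but the inverse in $f(g^{-1}-)$ produces the $(-1)^a$ up front. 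Your later claim that the $(-1)^a$ arises ``from the $a$ sign flips in $u'$'' is a fabricated compensation: $u'$ has only $+1$ entries. The paper's $(-1)^a$ comes precisely from $e^{-tX}$ in the definition of $\star$, nowhere else; if you started from the correct formula you would not need to invent a sign at the end.

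Second, your ``cleanest route'' for part (1) does not work as stated. You propose to cancel via $u\,(\mathrm{I}+s_{i_0}E_{i_0,\cdot})\,(\mathrm{I}-s_{i_0}E_{i_0,\cdot}) = u$ and argue that the correcting factor ``lies in the lower Borel''. But $E_{i_0, a+b+1-\sigma(i_0)}$ with $i_0 \le a < a+b+1-\sigma(i_0)$ is strictly upper-triangular, so $\mathrm{I}-s_{i_0}E_{i_0,\cdot}$ is in the \emph{upper} unipotent and the lower-Borel rule does not apply to it. The correct argument (your hedged alternative, and the paper's) is to multiply on the right by a lower-unipotent matrix $Z \in \mathrm{GL}_b(R)$, which is transparent for $f$ by the $\kappa$-rule, to clear columns $1,\dots,b-a$ of $U_\sigma + u'$; the resulting matrix no longer depends on $T_n$ when $\sigma(n) \in \{a+1,\dots,b\}$, so the $T_n$-derivative vanishes. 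You should drop the cancellation sketch and make the column-operation argument precise. Similarly for part (2) the handling of the upper-triangular factor $X^+$ is not a matter of ``absorbing it by $w_a$-type twists'' vaguely: one conjugates $X_\sigma^{(a)}(t)^+$ by $w_a$ to obtain a lower-triangular matrix $A$, which then lives in $H$ and contributes $(\det A)^{\nu_1}$ via the $H$-rule, while the remaining factor sits in the lower Borel and contributes through $\kappa$; both factors must be tracked explicitly with the $\sigma'(i) = a+1-\sigma(a+1-i)$ reindexing, which is where the $\max$/$\min$ exchange actually happens.
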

\begin{proof}
    Unless specified otherwise, in this proof any $(a+b) \times (a+b)$ matrix written as $\left( \begin{smallmatrix} * & * \\ * & * \end{smallmatrix} \right)$ means that matrix is written as a block matrix with top left block of size $(a \times a)$ (and hence the bottom right block is of size $(b \times b)$). We continue to denote $R = \mbb{Q}[T_1, \dots, T_a]/(T_1^2, \dots, T_a^2)$.

    Let $C \in M_{a \times b}(R)$, and let $C' \in M_{a \times a}(R)$ denote the matrix $(C')_{i, j} = C_{i, b-a+j}$ (i.e. the right-hand $(a \times a)$ block of $C$). Suppose that $C' \in \opn{GL}_a(R)$. Then there exists a matrix $Z \in \opn{GL}_b(R)$ in the standard lower-triangular unipotent such that $C Z \in M_{a \times b}(R)$ satisfies 
    \[
    (C Z)_{i, j} = 0 \text{ if } j \in \{1, \dots, b-a \}, \text{ and } (CZ)_{i, j} = (C')_{i, j-(b-a)} \text{ if } j \in \{b-a+1, \dots, b\}.
    \]
    We now prove part (1). Let $l = \sigma(n) \in \{a+1, \dots, b\}$. Let $U_{\sigma} \in M_{a \times b}(R)$ denote the matrix
    \[
    (U_{\sigma})_{i, j} = \delta_{b+1-\sigma(i),j} T_i .
    \]
    Then 
    \begin{equation} \label{MultipleDiffAppendix}
    (\mu_{\sigma} \star f)(u) = (-1)^a \left. \frac{\partial}{\partial T_1} \cdots \frac{\partial}{\partial T_a} f\left( \tbyt{1}{U_{\sigma}}{}{1} \cdot u \right) \right|_{T_1 = 0, \dots, T_a = 0}
    \end{equation}
    where one is also permitted to permute the order of differentiation freely. The power of $(-1)$ arises because $\left( \begin{smallmatrix} 1 & U_{\sigma} \\ 0 & 1 \end{smallmatrix} \right)^{-1} = \left( \begin{smallmatrix} 1 & -U_{\sigma} \\ 0 & 1 \end{smallmatrix} \right)$. We claim that $f\left( \left( \begin{smallmatrix} 1 & U_{\sigma} \\ 0 & 1 \end{smallmatrix} \right) \cdot u \right)$ doesn't depend on $T_n$ (where $n$ is the element such that $\sigma(n) = l$), which will give the claim because it will be killed by $\frac{\partial}{\partial T_n}$. We have
    \[
    \tbyt{1}{U_{\sigma}}{}{1} \cdot u = \tbyt{1}{U_{\sigma} + u'}{}{1} .
    \]
    Let $C = U_{\sigma} + u'$. Then the right-hand $(a \times a)$ block is invertible, so by the above argument we can find a lower triangular unipotent matrix $Z \in \opn{GL}_b(R)$ such that $(C Z)_{i, j} = 0$ for $j \in \{1, \dots, b-a\}$, and $(CZ)_{i, j}$ doesn't depend on $T_n$ if $j \in \{b-a+1, \dots, b\}$. By the transformation properties of $f$, we see that
    \[
    f\left( \tbyt{1}{U_{\sigma} + u'}{}{1} \right) = f\left(\tbyt{1}{}{}{Z^{-1}} \tbyt{1}{U_{\sigma} + u'}{}{1} \tbyt{1}{}{}{Z} \right) = f\left( \tbyt{1}{CZ}{}{1} \right)
    \]
    which doesn't depend on $T_n$. 

    We now prove part (2). Set $\sigma'(i) \defeq a+1-\sigma(a+1 - i)$, and $\sigma''(i) \defeq a+1 - \sigma(i)$. Let $U_{\sigma} \in M_{a \times b}(R)$ denote the matrix 
    \[
    (U_{\sigma})_{i, j} = \delta_{b+1-\sigma(i),j} T_{a+1 - i} = \delta_{b-a + \sigma''(i), j} T_{a+1 - i} .
    \]
    Then, as above, we can calculate $(\mu_{\sigma} \star f)(u)$ be the same expression in (\ref{MultipleDiffAppendix}). Let $C = U_{\sigma} + u'$ and $C' \in \opn{GL}_a(R)$ the right-hand $(a \times a)$ block. Then $C_{i, j} = 0$ if $j \in \{1, \dots, b-a\}$, and $C' = w_a X_{\sigma'}^{(a)}(t)$ where $t = \opn{diag}(T_1, \dots, T_a)$. Let 
    \begin{itemize}
        \item $A = w_a X_{\sigma'}^{(a)}(t)^+ w_a^{-1} \in \opn{GL}_a(R)$ which is lower-triangular. One has 
        \[
        \opn{det}A = \prod_{\ide{c}} \left( 1 + \opn{sgn}(\ide{c}) \prod_{k \in |\ide{c}|} T_k \right)
        \]
        by Lemma \ref{AppendixIWahoriFactorisationLemma}, where the product is over all cycles $\ide{c}$ which appear in the decomposition of $\sigma'$.
        \item $X = A^{-1}$ which is lower-triangular. By Lemma \ref{AppendixIWahoriFactorisationLemma}, its projection to the diagonal torus is of the form $(x_1, \dots, x_a)$ where
        \[
        x_j = \left\{ \begin{array}{cc} \left( 1 + \opn{sgn}(\ide{c}) \prod_{k \in |\ide{c}|} T_k \right)^{-1} & \text{ if } j = a+1-\opn{min}(\ide{c}) \text{ for some cycle } \ide{c} \\ 1 \text{ otherwise } \end{array} \right. .
        \]
        \item $Z \in \opn{GL}_b(R)$ is the matrix given by $Z_{i, j} = \delta_{i, j}$ if either $i$ or $j$ is not contained in $\{b-a+1, \dots, b\}$, and $Z_{i, j} = (X^{(a)}_{\sigma'}(t)^{-})_{i-(b-a), j-(b-a)}$ if $i, j \in \{b-a+1, \dots, b\}$. This lies in the lower triangular unipotent of $\opn{GL}_b(R)$. 
        \item $B = Z^{-1}$, which satisfies $\opn{det}B = 1$.
    \end{itemize}
    Using the transformation properties for $f$, we see that:
    \begin{align*}
        f\left( \tbyt{1}{U_{\sigma} + u'}{}{1} \right) &= f\left( \tbyt{A}{}{}{B} \tbyt{1}{u'}{}{1} \tbyt{X}{}{}{Z} \right) \\
         &= (\opn{det}A)^{\nu_1} \prod_{\ide{c}} \left( 1 + \opn{sgn}(\ide{c}) \prod_{k \in |\ide{c}|} T_k \right)^{\kappa_{a+1-\opn{min}(\ide{c})}} f(u) \\
         &= \prod_{\ide{c}} \left( 1 + \opn{sgn}(\ide{c}) \prod_{k \in |\ide{c}|} T_k \right)^{\nu_1 + \kappa_{a+1-\opn{min}(\ide{c})}} f(u).
    \end{align*}
    Differentiating and setting $T_1 = \cdots = T_a = 0$, we see that
    \[
    (\mu_{\sigma} \star f)(u) = (-1)^a \prod_{\ide{c}} \opn{sgn}(\ide{c}) (\nu_1 + \kappa_{a+1-\opn{min}(\ide{c})}) .
    \]
    To conclude, we note that $\prod_{\ide{c}} \opn{sgn}(\ide{c}) = \opn{sgn}(\sigma') = \opn{sgn}(\sigma)$, and the cycles appearing in the decomposition of $\sigma$ are $w_a \circ \ide{c} \circ w_a$ (and we have $\opn{max}(w_a \circ \ide{c} \circ w_a) = a+1- \opn{min}(\ide{c})$). Here $w_a$ is the permutation of $\{1, \dots, a\}$ given by $w_a(i) = a+1 - i$.
\end{proof}

\subsection{Explicit branching operators} \label{ExplicitBranchingOpsAppendixSec}

We now return to the setting of \S \ref{BranchingLawsPreliminarySection}. It turns out that (up to a non-zero rational number depending on $(\kappa, j) \in \mathcal{E}$), one can express $v_{\kappa}^{[j]}$ in terms of $v_{\kappa}^{[0]}$ via certain explicit elements of the universal enveloping algebras of $\ide{g} = \opn{Lie}(G)$ and $\ide{m}_G = \opn{Lie}M_G$. More precisely, let $J \subset \{ n+1, \dots, 2n\}$ be a multiset of size $j_{\tau_0}$. Fix a basis $\{ e_J \}$ of $S_{-j}$ (as $J$ runs over all such multisets) such that a general torus element $(x; y_{1, \tau}, \dots, y_{2n, \tau}) \in T$ acts on $e_J$ as
\[
(x; y_{1, \tau}, \dots, y_{2n, \tau}) \cdot e_J = y_{1, \tau_0}^{j_{\tau_0}} \left( \prod_{i \in J} y_{i, \tau_0}^{-1} \right) e_J .
\]
If $j_{\tau_0} = 0$, then we identify $S_{-j}$ with the trivial representation and the basis is a singleton $\{ e_{\varnothing} \}$.

For any $(i, j) \in \{1, \dots, 2n\}$, let $E_{i, j, \tau} \in \ide{g}$ denote the element which is zero outside the $\tau$-component, and in the $\tau$-component is equal to the elementary $(2n \times 2n)$-matrix with non-zero entry in the $(i, j)$-th place. For any $\tau \neq \tau_0$, let $\opn{det}_{\tau} \in \mathcal{U}(\ide{gl}_{2n})$ denote the element obtained as the determinant of the $\mathcal{U}(\ide{gl}_{2n})$-valued $(n \times n)$-matrix $(E_{i, j+n, \tau})_{i, j}$ (where $i, j \in \{1, \dots, n\}$). This is well-defined and independent of any ordering because all the elementary matrices considered commute with each other. We view $\opn{det}_{\tau}$ as elements of $\mathcal{U}(\ide{m}_G)$ by identifying $\ide{gl}_{2n}$ with the $\tau$-component of $\ide{m}_G$.

Similarly, for any $k \in \{ n+1, \dots, 2n \}$, we define $(-1)^{k-(n+1)}\opn{det}_{k, \tau_0} \in \mathcal{U}(\ide{gl}_1 \oplus \ide{gl}_{2n-1})$ as the determinant of the $\mathcal{U}(\ide{gl}_1 \oplus \ide{gl}_{2n-1})$-valued $((n-1) \times (n-1))$-matrix whose $(i, j)$-th component is
\[
\left\{ \begin{array}{cc} E_{i+1, j+n} & \text{ if } j < k - n \\ E_{i+1, j+n+1} & \text{ if } j \geq k-n \end{array} \right. 
\]
as $i, j$ run through the set $\{1, \dots, n-1\}$. As above, this is well-defined and independent of ordering because the elementary matrices considered commute with each other. We view $\opn{det}_{k, \tau_0}$ as an element of $\mathcal{U}(\ide{m}_G)$ by identifying the $\tau_0$-component of $\ide{m}_G$ with $\ide{gl}_1 \oplus \ide{gl}_{2n-1}$. Finally, for any multiset $J \subset \{n+1, \dots, 2n \}$ of size $j_{\tau_0}$, we define
\[
\opn{det}^{[j]}_J \defeq \left( \prod_{k \in J} \opn{det}_{k, \tau_0} \right) \prod_{\tau \neq \tau_0} \opn{det}_{\tau}^{j_{\tau}} \in \mathcal{U}(\ide{m}_G) .
\]

\begin{proposition} \label{ExistenceOfCkappaJAppendixProp}
    Let $(\kappa, j) \in \mathcal{E}$. Then
    \[
    \sum_J \opn{det}^{[j]}_J \cdot v_{\kappa}^{[0]} \otimes e_J \in V_{\kappa} \otimes S_{-j}
    \]
    is a non-zero multiple of $v_{\kappa}^{[j]}$, where the sum runs over all multisets $J \subset \{n+1, \dots, 2n\}$ of size $j_{\tau_0}$.
\end{proposition}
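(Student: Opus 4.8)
The plan is to verify the two defining properties of $v_{\kappa}^{[j]}$ from Theorem \ref{TheoremForClassicalBranching} (equivalently, the vector $x_{\kappa}^{[j]}$ up to the twist by $(u^{-1},1)$) for the explicit vector $w \defeq \sum_J \opn{det}^{[j]}_J \cdot v_{\kappa}^{[0]} \otimes e_J$: namely that $w$ is an $M_H$-eigenvector with eigencharacter $\sigma_{\kappa}^{[j],-1}$ sitting in the multiplicity-one subspace $V_{\kappa'} \subset V_{\kappa} \otimes W_{-j}$ (with $\kappa'$ as in the proof of Theorem \ref{TheoremForClassicalBranching}), and that $w$ is non-zero. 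Since $v_{\kappa}^{[j]}$ is characterised by being the unique (up to scalar) such eigenvector in $V_{\kappa} \otimes S_{-j}$, establishing these two facts forces $w$ to be a non-zero multiple of $v_{\kappa}^{[j]}$. By Lemma \ref{ConeOfWeightsForELemma} and the compatibility of the $\opn{det}_J^{[j]}$-operators with tensor products of the generating characters, it suffices to treat the generating weights, i.e.\ reduce to the cases $j = 1_{\tau_0}$ (so $j_{\tau_0}=1$, $J$ a single element of $\{n+1,\dots,2n\}$) and $j = 1_{\tau}$ for $\tau \neq \tau_0$; the general statement then follows by multiplicativity, noting that each $\opn{det}_{k,\tau_0}$ and $\opn{det}_\tau$ is built from commuting elementary matrices so the products are unambiguous and the eigencharacters add up correctly.

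First I would compute the torus weight of $w$. The vector $v_{\kappa}^{[0]}$ is (up to scalar) a highest-weight vector of the copy of $V_{\kappa}$ inside $V_{\kappa} \otimes W_0 = V_\kappa$, on which $M_H$ acts through $\sigma_{\kappa}^{[0],-1}$ (this is the content of \cite[Theorem A.5.4]{UFJ} in the case $j=0$); recall $\sigma_{\kappa}^{[0]}$ sends $(x; y_1, y_2, y_3; z_{1,\tau}, z_{2,\tau})$ to $x^{-\kappa_0} y_1^{-\kappa_{1,\tau_0}} \opn{det}y_2^{\kappa_{n+1,\tau_0}-w}\opn{det}y_3^{-\kappa_{n+1,\tau_0}}$. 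Each elementary matrix $E_{a,b+n,\tau}$ appearing in $\opn{det}_\tau$ (resp.\ each $E_{i+1,\bullet}$ in $\opn{det}_{k,\tau_0}$) is a root vector for a root $\alpha_{a,b}$ with $\alpha_{a,b}(t) = z_{1,\tau}\text{-part} / z_{2,\tau}\text{-part}$ type behaviour; using the standard fact $t \cdot (X v) = \alpha(t) X (t\cdot v)$ for a root vector $X$ of root $\alpha$, one reads off that $\opn{det}_\tau$ shifts the torus weight by $-(\text{sum of the roots } \alpha_{i,\sigma(i)})$ over a permutation $\sigma \in S_n$, which by a direct bookkeeping equals the weight of $\opn{det}z_{1,\tau}^{-1}\opn{det}z_{2,\tau}$; similarly $\opn{det}_{k,\tau_0}$ contributes $\opn{det}y_2^{-1}\opn{det}y_3$ (and the $e_J$ factor contributes the $y_1^{j_{\tau_0}}\prod_{i\in J}y_{i,\tau_0}^{-1}$ recorded above). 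Summing these contributions over $k \in J$ and over $\tau$, the total $M_H$-weight of $w$ matches $\sigma_{\kappa}^{[j],-1}$ exactly — this is the step where one must be careful about the identification of $y_2,y_3$ with the block decomposition $\opn{GL}_{n-1}\times\opn{GL}_n$ in the $\tau_0$-component and the role of the ``skipped column'' $k$ in $\opn{det}_{k,\tau_0}$.

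Next I would verify that $w$ lies in the $V_{\kappa'}$-isotypic component of $V_\kappa \otimes W_{-j}$, i.e.\ that $w$ is killed by the positive nilpotent radical $\ide{n}_G^+$ of the upper-triangular Borel $B_G$ acting diagonally. Since $v_{\kappa}^{[0]}$ is a highest-weight vector for $V_\kappa$ (hence killed by $\ide{n}_G^+$) and the $e_J$ transform suitably under $B_G \cap M_G$, this reduces to a commutator computation: one checks that $[\ide{n}_G^+, \opn{det}_J^{[j]}]$ acting on $v_\kappa^{[0]}$, together with the $\ide{n}_G^+$-action on the $e_J$ side, cancels after summing over $J$. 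Concretely, raising operators $E_{i,i+1}$ either commute past the $E_{a,b+n,\tau}$'s or produce a telescoping sum over the choices of $J$ that is exactly cancelled by the corresponding action on $e_J$; this is where the precise sign conventions $(-1)^{k-(n+1)}$ in the definition of $\opn{det}_{k,\tau_0}$ and the sum structure $\sum_J$ are essential, and it is the analogue of the classical fact that such ``determinantal'' raising-lowering identities realise the relevant Littlewood--Richardson component. Alternatively — and this may be cleaner — I would realise $V_\kappa \otimes W_{-j} \subset V_\kappa \otimes V_0^{\otimes j}$ and use the branching/Pieri combinatorics of Lemma \ref{PierisRuleLemma}(1) together with the known description of $v_\kappa^{[0]}$ to identify the image of $w$ in the unique $V_{\kappa'}$-summand.

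Finally, non-vanishing of $w$: since I will have shown $w$ lies in the at-most-one-dimensional $\sigma_\kappa^{[j],-1}$-eigenspace (which by Theorem \ref{TheoremForClassicalBranching} is exactly one-dimensional and spanned by $v_\kappa^{[j]}$), it is enough to exhibit one coordinate of $w$ that is non-zero. For this I would pair $w$ against a suitable lowest-weight functional — e.g.\ evaluate the corresponding algebraic function on $(1, v)$ as in Theorem \ref{TheoremForClassicalBranching}(2), or restrict to the subgroup $M' \subset M_H$ and use Lemma \ref{PierisRuleLemma}(2) — and check that $\opn{det}_J^{[j]} \cdot v_\kappa^{[0]}$ has a non-zero component along the extremal weight vector of $V_{\kappa'}$; this is a single explicit monomial computation in $\mathcal{U}(\ide{m}_G)$ acting on a highest-weight vector, where the relevant coefficient is a product of strictly positive integers (a product of entries of the weight $\kappa$ shifted as in Proposition \ref{NonVanishingOnUPropAppendix}(2)), hence non-zero since $\kappa$ is sufficiently dominant by the constraints defining $\mathcal{E}$. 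The main obstacle I anticipate is the bookkeeping in the second step: matching the commutator cancellations across the sum over $J$ with the correct signs, and correctly tracking the block structure in the $\tau_0$-component where the ``$\opn{GL}_1 \times \opn{GL}_{n-1} \times \opn{GL}_n$'' decomposition of $M_H$ interacts nontrivially with the ``$\opn{GL}_1 \times \opn{GL}_{2n-1}$'' decomposition of $M_G$; getting these indices right (and confirming that the stated sign $(-1)^{k-(n+1)}$ is the one that makes the $B_G$-annihilation work) is the crux.
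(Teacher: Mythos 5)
Your overall skeleton — show that $w \defeq \sum_J \opn{det}^{[j]}_J \cdot v_{\kappa}^{[0]} \otimes e_J$ is an $M_H$-eigenvector with eigencharacter $\sigma_{\kappa}^{[j],-1}$, show $w\neq 0$, and invoke multiplicity one — matches the paper's strategy. But your proposed reduction to generating weights has a genuine gap. You want to write $(\kappa,j)$ in terms of the generators from Lemma \ref{ConeOfWeightsForELemma} and then appeal to ``multiplicativity'' of the operators $\opn{det}_J^{[j]}$. Those operators, however, live in $\mathcal{U}(\ide{m}_G)$ (resp.\ $\mathcal{U}(\ide{g})$), and $\mathcal{U}(\ide{g})$ acts on a Cartan product $V_{\kappa_1}\odot V_{\kappa_2}\subset V_{\kappa_1}\otimes V_{\kappa_2}$ by the Leibniz rule, not multiplicatively: a product of elementary matrices applied to $v_1\odot v_2$ produces many cross-terms, so $\opn{det}_J^{[j]}\cdot(v_1\odot v_2)$ is not $(\opn{det}_{J_1}^{[j_1]}\cdot v_1)\odot(\opn{det}_{J_2}^{[j_2]}\cdot v_2)$. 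In addition, decomposing $\kappa$ into the generators of Lemma \ref{ConeOfWeightsForELemma} can break the intertwining constraint $j_{\tau_0}\le\kappa_{n+1,\tau_0}-\kappa_{n+2,\tau_0}$ on the individual summands. The paper instead factorizes across the $\tau$-components of $M_G$, where $V_\kappa = (-)^{\kappa_0}\otimes\bigotimes_\tau V_{\kappa_\tau}$ is a genuine external tensor decomposition and the operators $\opn{det}_\tau$, $\opn{det}_{k,\tau_0}$ and the basis $e_J$ genuinely split; for the $\tau_0$-factor it then runs an honest induction on $j_{\tau_0}$, lifting from $j_{\tau_0}$ to $j_{\tau_0}+1$ via the twist map $V_{\kappa_{\tau_0}'}\otimes S_{-1,\tau_0}\to V_{\kappa_{\tau_0}}\otimes S_{-(j+1),\tau_0}$ and its injectivity on the relevant eigenspace (Lemma \ref{PierisRuleLemma}(1)).

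Two further remarks. First, your step of verifying $B_G$-annihilation (via ``telescoping'' commutator cancellations) is both unnecessary and unverified as sketched: once you know $w$ is an $M_H$-eigenvector with eigencharacter $\sigma_\kappa^{[j],-1}$ and is non-zero, Theorem \ref{TheoremForClassicalBranching} already gives uniqueness in $V_\kappa\otimes S_{-j}$, so placing $w$ inside $V_{\kappa'}$ separately is redundant. Second, your intuition about the non-vanishing is correct — this is precisely the role of Proposition \ref{NonVanishingOnUPropAppendix}, which produces a coefficient that is a sum of products of strictly positive integers under the dominance constraints defining $\mathcal{E}$ — but the paper applies it by evaluating at the open-orbit representative $u_\tau^t$, and in the $\tau_0$-case applies it only to a single $\opn{det}_{n+1,\tau_0}$ at each inductive step, whence the strict inequality $j_{\tau_0}<\kappa_{n+1,\tau_0}-\kappa_{n+2,\tau_0}$ in the inductive hypothesis. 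Your proposed one-shot reduction to $j_{\tau_0}=1$ would need to be replaced by this induction, and the injectivity of the twist map on eigenspaces is a step your sketch does not anticipate.
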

\begin{proof}
    For $\tau \in \Psi$, let $M_{G, \tau}$ denote the $\tau$-component of $M_G$ (so $M_{G, \tau_0} = \opn{GL}_1 \times \opn{GL}_{2n-1}$ and $M_{G, \tau} = \opn{GL}_{2n}$ for $\tau \neq \tau_0$). We use similar notation for $M_H$. Write
    \[
    V_{\kappa} = (-)^{\kappa_0} \otimes \bigotimes_{\tau \in \Psi} V_{\kappa_{\tau}}
    \]
    where $V_{\kappa_{\tau}}$ is (up to isomorphism) the irreducible representation of $M_{G, \tau}$ of highest weight $\kappa_{\tau} = (\kappa_{1, \tau}, \dots, \kappa_{2n, \tau})$, and $(-)^{\kappa_0}$ denotes the line on which $\opn{GL}_1$ acts through the character $x \mapsto x^{\kappa_0}$. Similarly, we let $S_{-j, \tau_0}$ denote the irreducible representation of $M_{H, \tau_0}$ of highest weight $(j_{\tau_0}, \dots, -j_{\tau_0})$.
    
    In this proof only, we will use a slightly different identification of $V_{\kappa_{\tau}}$ with algebraic functions than in \S \ref{BranchingLawsPreliminarySection}. More precisely, we identify $V_{\kappa_{\tau}}$ with the space of algebraic functions
    \[
    f \colon M_{G, \tau} \to \mbb{A}^1
    \]
    such that $f(- b) = \kappa_{\tau}(b^{-1})f(-)$ for any $b$ in the standard \emph{lower-triangular} Borel subgroup of $M_{G, \tau}$. Then $v_{\kappa}^{[0]}$ is identified with 
    \[
    1 \otimes \bigotimes_{\tau \in \Psi} f_{\tau} \in (-)^{\kappa_0} \otimes \bigotimes_{\tau \in \Psi} V_{\kappa_{\tau}}
    \]
    where $f_{\tau} \colon M_{G, \tau} \to \mbb{A}^1$ are functions as above, with $f_{\tau}(m^{-1} -) = \sigma_{\kappa}^{[0], -1}(m) f_{\tau}(-)$ for any $m \in M_{H, \tau}$. If we let $u_{\tau}^t$ denote the transpose in $M_{G, \tau}$ of the $\tau$-component of $u$ (see Definition \ref{NewDefOfGamma}), then $u_{\tau}^t$ is a representative for the open orbit of $M_{H, \tau}$ on the quotient of $M_{G, \tau}$ by the lower-triangular Borel subgroup. Therefore, we have $f_{\tau}(u_{\tau}^t) \neq 0$ for all $\tau \in \Psi$. 

    It suffices to prove the claim for each individual $\tau \in \Psi$. We first deal with the case $\tau \neq \tau_0$. In this setting, we have an algebraic function
    \[
    f_{\tau} \colon \opn{GL}_{2n} \to \mbb{A}^1
    \]
    such that $f(m^{-1}-) = f(-)$ for all $m \in M_{H, \tau}$, and $f_{\tau}(- b) = \kappa_{\tau}(b^{-1}) f(-)$ for all $b$ in the lower-triangular Borel subgroup of $\opn{GL}_{2n}$. Note that $\opn{Ad}(m)\opn{det}_{\tau} = \opn{det}m_1 \opn{det}m_2^{-1} \opn{det}_{\tau}$ for $m = (m_1, m_2) \in \opn{GL}_n \times \opn{GL}_n = M_{H, \tau}$. By iterating Proposition \ref{NonVanishingOnUPropAppendix}, we see that
    \[
    (\opn{det}_{\tau}^{j_{\tau}} \cdot f_{\tau})(u_{\tau}^t) = \pm C \cdot f_{\tau}(u_{\tau}^t)
    \]
    where $C$ is product of terms of the form
    \begin{equation} \label{SnSumKappaNu}
    \sum_{\sigma \in S_n} \prod_{i} ( \kappa_{m_i, \tau} - \nu )
    \end{equation}
    where $\nu$ is an integer satisfying $0 \leq \nu \leq j_{\tau}-1$, and $1 \leq m_i \leq n$ are integers depending on the cycle decomposition of $\sigma$. Here, in the notation of Proposition \ref{NonVanishingOnUPropAppendix}, we have used that $\opn{det}_{\tau} = \pm \left( \sum_{\sigma \in S_n} \opn{sgn}(\sigma) \mu_{\sigma} \right)$. Since $j_{\tau} \leq \kappa_{n, \tau} \leq \kappa_{n-1, \tau} \leq \cdots \leq \kappa_{1, \tau}$, we see that (\ref{SnSumKappaNu}) is a sum of (non-zero) positive integers, hence must be non-zero itself. This implies $C \neq 0$ and hence $\opn{det}_{\tau}^{j_{\tau}} \cdot f_{\tau}$ is non-zero. By multiplicity one, it must therefore be a non-zero multiple of the $\tau$-component of $v_{\kappa}^{[j]}$.

    We now consider the case $\tau = \tau_0$. Let $W_{-j, \tau_0}$ denote the algebraic representation of $M_{G, \tau_0}$ of highest weight $(j_{\tau_0}, 0, \dots, 0, -j_{\tau_0})$. We can naturally view $S_{-j, \tau_0} \subset W_{-j, \tau_0}$, and note that the subspace $S_{-j, \tau_0}$ is killed under the action of any elementary matrix $E_{i, j, \tau_0} \in \ide{gl}_1 \oplus \ide{gl}_{2n-1}$ with $i \in \{2, \dots, n \}$ and $j \in \{n+1, \dots, 2n\}$. Recall from the proof of Theorem \ref{TheoremForClassicalBranching} that $V_{\kappa_{\tau_0}'}$ appears in $V_{\kappa_{\tau_0}} \otimes S_{-j, \tau_0} \subset V_{\kappa_{\tau_0}} \otimes W_{-j, \tau_0}$ with multiplicity one, where $\kappa_{\tau_0}'$ is the weight 
    \[
    (\kappa_{1,\tau_0}+j_{\tau_0}, \kappa_{2, \tau_0}, \dots, \kappa_{n, \tau_0}, \kappa_{n+1, \tau_0} - j_{\tau_0}, \kappa_{n+2, \tau_0}, \dots, \kappa_{2n, \tau_0}) .
    \]
    Suppose $0 \leq j_{\tau_0} < \kappa_{n+1, \tau_0} - \kappa_{n+2, \tau_0}$. Suppose that $\sum_{J} \prod_{k \in J}\opn{det}_{k, \tau_0} \cdot f_{\tau_0} \otimes e_{J} \in V_{\kappa_{\tau_0}} \otimes S_{-j, \tau_0} \subset V_{\kappa_{\tau_0}} \otimes W_{-j, \tau_0}$ is a non-zero multiple of the $\tau_0$-component of $v_{\kappa}^{[j]}$. Then, by multiplicity one, it must correspond to a function $F \in V_{\kappa_{\tau_0}'}$.

    Let $H \defeq \sum_{k=n+1}^{2n} \opn{det}_{k, \tau_0} \cdot F \otimes e_{\{k\}} \in V_{\kappa_{\tau_0}'} \otimes S_{-1, \tau_0}$. Then the image of $H$ under the map
    \begin{equation} \label{VkprimeToVkTwist}
    V_{\kappa_{\tau_0}'} \otimes S_{-1, \tau_0} \to V_{\kappa_{\tau_0}} \otimes S_{-j, \tau_0} \otimes S_{-1, \tau_0} \to V_{\kappa_{\tau_0}} \otimes S_{-(j+1), \tau_0}
    \end{equation}
    is equal to 
    \[
    \sum_{l=n+1}^{2n} \sum_{J} \prod_{k \in J} \opn{det}_{l, \tau_0}\opn{det}_{k, \tau_0} \cdot f_{\tau_0} \otimes e_{J \cup \{l\}} = \sum_{A} \prod_{k \in A}\opn{det}_{k, \tau_0} \cdot f_{\tau_0} \otimes e_{A}
    \]
    where $A$ runs of all multisets in $\{ n+1, \dots, 2n \}$ of size $j_{\tau_0}+1$, and the last map in (\ref{VkprimeToVkTwist}) is the natural one using the fact that $S_{-j, \tau_0} = \opn{Sym}^{j_{\tau_0}} S_{-1, \tau_0}$. Note that Lemma \ref{PierisRuleLemma} implies the map (\ref{VkprimeToVkTwist}) is injective on the $M_{H, \tau_0}$-eigenspaces with eigencharacter:
    \begin{align} 
    M_{H, \tau_0} = \opn{GL}_1 \times \opn{GL}_{n-1} \times \opn{GL}_n &\to \mbb{G}_m \nonumber \\
    (m_1, m_2, m_3) &\mapsto m_1^{\kappa_{1, \tau_0} + j_{\tau_0} +1} \opn{det}m_2^{w + j_{\tau_0}+1-\kappa_{n+1, \tau_0}} \opn{det}m_3^{\kappa_{n+1, \tau_0} - (j_{\tau_0}+1)} . \label{JplusOneEigenchar}
    \end{align} 
    We will show that $H$ is non-zero and an eigenvector under $M_{H, \tau_0}$ with eigencharacter (\ref{JplusOneEigenchar}), and then the general claim follows from induction on $j_{\tau_0}$. 

    The fact that $H$ transforms through the character (\ref{JplusOneEigenchar}) under the action of $M_{H, \tau_0}$ follows from a direct calculation. To show $H$ is non-zero, we will again apply Proposition \ref{NonVanishingOnUPropAppendix}. It suffices to show that $(\opn{det}_{n+1, \tau_0} \cdot F)(u_{\tau_0}^t)$ is non-zero.\footnote{In fact, one can also use Proposition \ref{NonVanishingOnUPropAppendix} to show that $(\opn{det}_{k, \tau_0} \cdot F)(u_{\tau_0}^t) =0$ for any $k > n+1$.} But in the notation of Proposition \ref{NonVanishingOnUPropAppendix} (with $a=n-1$, $b=n$), we see that $\opn{det}_{n+1, \tau_0} = \pm \sum_{\sigma \in S_{n-1}} \opn{sgn}(\sigma)\mu_{\sigma}$, where we naturally view $\mu_{\sigma}$ in $\mathcal{U}(\ide{m}_G)$ via the inclusion $\mathcal{U}(\ide{gl}_{2n-1}) \subset \mathcal{U}(\ide{m}_G)$. But now we are in the setting of Proposition \ref{NonVanishingOnUPropAppendix}(2), and hence $(\opn{det}_{n+1, \tau_0} \cdot F)(u_{\tau_0}^t) = \pm C \cdot F(u_{\tau_0}^t)$, where $C$ is of the form
    \[
    C = \sum_{\sigma \in S_{n-1}} \prod_i ( \kappa_{n+1, \tau_0} - j_{\tau_0} - w + \kappa_{m_i, \tau_0} ) = \sum_{\sigma \in S_{n-1}} \prod_i ( \kappa_{n+1, \tau_0} - \kappa_{2n+2-m_i, \tau_0} - j_{\tau_0} ) 
    \]
    where $2 \leq m_i \leq n$ are certain integers. This is non-zero because $j_{\tau_0} < \kappa_{n+1, \tau_0} - \kappa_{n+2, \tau_0}$, and $F(u_{\tau_0}^t)$ is non-zero by the induction hypothesis. This finishes the proof of the proposition. 
\end{proof}

Let $\opn{det}_{\tau_0} \in \mathcal{U}(\ide{gl}_{2n})$ denote the determinant of the $\mathcal{U}(\ide{gl}_{2n})$-valued matrix $(E_{i, j+n, \tau_0})_{i, j}$ with $i, j \in \{1, \dots, n \}$. We view this as an element of $\mathcal{U}(\ide{g})$ by identifying the $\tau_0$-component of $\ide{g}$ with $\ide{gl}_{2n}$. We make the following definition:

\begin{definition} \label{AppendixDefOfDELTAkappaJDiffOp}
    Let $(\kappa, j) \in \mathcal{E}$ and recall the definition of $\delta_{\kappa, j}$ from Definition \ref{DefinitionOfDeltakappaj} (which makes sense over any characteristic zero field). With notation as above:
    \begin{enumerate}
        \item Let $C_{\kappa, j} \in \mbb{Q}^{\times}$ denote the unique non-zero rational number satisfying
        \[
         \delta_{\kappa, j} = C_{\kappa, j} \cdot \left( \sum_J \opn{det}^{[j]}_J \cdot \delta_{\kappa, 0} \otimes x_J \right)
        \]
        where the sum runs over all multisets $J \subset \{n+1, \dots, 2n\}$ of size $j_{\tau_0}$, and $x_J \in C^{\opn{pol}}(\mbb{G}_a^{\oplus 2n-1}, \mbb{G}_a)$ is the algebraic function satisfying $x_J(a_2, \dots, a_{2n}) = \prod_{k \in J}a_k$. Note that $C_{\kappa, j}$ exists by Proposition \ref{ExistenceOfCkappaJAppendixProp}.
        \item We define:
        \[
        \Delta_{\kappa}^{[j]} \defeq C_{\kappa, j} \cdot \prod_{\tau \in \Psi} \opn{det}_{\tau}^{j_{\tau}} \; \in \; \mathcal{U}(\ide{g}).
        \]
        Note that $\Delta_{\kappa}^{[j]} = \sum_J \left( \prod_{k \in J} E_{1, k, \tau_0} \right) \opn{det}^{[j]}_J$, where the sum runs over all multisets $J \subset \{n+1, \dots, 2n\}$ of size $j_{\tau_0}$.
    \end{enumerate}
\end{definition}

\section{Equivariant linear functionals} \label{EquivariantLinearFuncsAppendix}

Let $G = \opn{GL}_{2n}$ and $H = \opn{GL}_n \times \opn{GL}_n$, and consider the block diagonal embedding $H \subset G$. In this section, we record some useful transformation properties for $H(\mbb{Q}_p)$-equivariant linear functionals on smooth representations of $G(\mbb{Q}_p)$.

\subsection{Trace-compatibility}

Let $\beta \geq 1$ be an integer and let $K_{G, \beta} \subset G(\mbb{Z}_p)$ denote the depth $p^{\beta}$ upper-triangular Iwahori subgroup, i.e., all elements which land in the standard upper triangular Borel subgroup modulo $p^{\beta}$. Let $\hat{\gamma} \in G(\mbb{Z}_p)$ be any element such that $H(\mbb{Z}_p) \cdot \hat{\gamma} \cdot B(\mbb{Z}_p)$ is Zariski dense in $G(\mbb{Z}_p)$ (such an element exists because the pair $(G, H)$ is spherical). Here $B \subset G$ denotes the upper-triangular Borel subgroup. 

Let $K_{H, \beta} \defeq \hat{\gamma} K_{G, \beta} \hat{\gamma}^{-1} \cap H(\mbb{Z}_p)$. Let $\nu \colon H \to \mbb{G}_m$ denote the morphism given by $\nu(h_1, h_2) = \opn{det}h_2/\opn{det}h_1$. Then $K_{H, \beta}$ is contained in $\nu^{-1}(1 + p^{\beta}\mbb{Z}_p) \subset H(\mbb{Z}_p)$. Indeed, it suffices to check this for a single choice of $\hat{\gamma}$ -- for example the block matrix
\begin{equation} \label{SimpleExampleofHatGammaAppendix}
\hat{\gamma} = \tbyt{1}{}{w_{\opn{GL}_{n}}^{\opn{max}}}{1}
\end{equation} 
with block sizes $n \times n$, where $w_{\opn{GL}_{n}}^{\opn{max}}$ is the antidiagonal matrix with $1$s along the antidiagonal -- and this is a simple computation. Furthermore, one can verify that $\hat{\gamma}^{-1} K_{H, \beta} \hat{\gamma} \backslash K_{G, \beta} / K_{G, \beta+1}$ is a singleton and $\hat{\gamma}^{-1} K_{H, \beta} \hat{\gamma} \cap K_{G, \beta+1} = \hat{\gamma}^{-1} K_{H, \beta+1} \hat{\gamma}$ (again, it suffices to check this for one example of $\hat{\gamma}$). 

\begin{lemma} \label{Lem:1stAppendixB}
    Let $\beta \geq 1$ and $1 \leq e \leq \beta$. Suppose that $\chi \colon \mbb{Q}_p^{\times} \to \mbb{C}^{\times}$ is a smooth character which is trivial on $1 + p^e\mbb{Z}_p$. Let $\pi$ be a smooth representation of $G(\mbb{Q}_p)$ and let $\mathfrak{Z} \in \opn{Hom}_{H(\mbb{Q}_p)}(\pi, \chi^{-1} \circ \nu)$. Let $\phi \in \pi^{K_{G, \beta}}$ and let
    \[
    \opn{tr}(\phi) \defeq \sum_{k \in K_{G, e}/K_{G, \beta}} k \cdot \phi \; \in \; \pi^{K_{G, e}}.
    \]
    Then $\mathfrak{Z}(\hat{\gamma} \cdot \opn{tr}(\phi)) = p^{(\beta - e)(2n-1)n} \mathfrak{Z}(\hat{\gamma} \cdot \phi)$.
\end{lemma}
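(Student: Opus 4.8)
The plan is to reduce the statement to a combinatorial identity about coset representatives, using the $H(\mbb{Q}_p)$-equivariance of $\mathfrak{Z}$ and the spherical geometry of the pair $(G,H)$. First I would choose an explicit $\hat\gamma$, say the block matrix in \eqref{SimpleExampleofHatGammaAppendix}, since both sides are independent of the choice of $\hat\gamma$ (different choices differ by right-translation by $B(\mbb{Z}_p)$ and left-translation by $H(\mbb{Z}_p)$, neither of which affects the claimed scalar). Having fixed $\hat\gamma$, the key move is to rewrite $\hat\gamma \cdot \opn{tr}(\phi) = \sum_{k \in K_{G,e}/K_{G,\beta}} \hat\gamma k \cdot \phi = \sum_k (\hat\gamma k \hat\gamma^{-1}) \hat\gamma \cdot \phi$, and then to understand how the elements $\hat\gamma k \hat\gamma^{-1}$ sit inside $G(\mbb{Q}_p)$ when acting against the functional $\mathfrak{Z}$.

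The heart of the argument is an orbit count. Since $\mathfrak{Z}$ is $H(\mbb{Q}_p)$-equivariant and $\phi$ is $K_{G,\beta}$-fixed, the value $\mathfrak{Z}(\hat\gamma k \hat\gamma^{-1} \hat\gamma\phi)$ depends only on the double coset of $k$ in $(\hat\gamma^{-1} H(\mbb{Z}_p)\hat\gamma \cap K_{G,e}) \backslash K_{G,e} / K_{G,\beta}$, more precisely on the class of $\hat\gamma k \hat\gamma^{-1}$ modulo left-multiplication by $H(\mbb{Z}_p)$ and right-multiplication by $\hat\gamma K_{G,\beta}\hat\gamma^{-1}$. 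Using the Iwahori factorisation of $K_{G,e}$ relative to the Bruhat decomposition and the fact (noted in the excerpt, and checkable on the example $\hat\gamma$) that $\hat\gamma^{-1}K_{H,e}\hat\gamma \backslash K_{G,e}/K_{G,e+1}$ is a singleton, I expect that the $[K_{G,e}:K_{G,\beta}]$ cosets collapse, when evaluated under $\mathfrak{Z}$, into a single $H(\mbb{Z}_p)$-orbit with multiplicity exactly $[\,\hat\gamma^{-1}K_{H,e}\hat\gamma \cap K_{G,e} : \hat\gamma^{-1}K_{H,\beta}\hat\gamma \cap K_{G,\beta}\,]$ — or rather the complementary index — and every surviving term contributes $\mathfrak{Z}(\hat\gamma\phi)$ because the character $\chi$ is trivial on $1+p^e\mbb{Z}_p$ and the relevant unipotent directions are absorbed into the $K_{G,\beta}$-invariance of $\phi$. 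Concretely, I would stratify $K_{G,e}/K_{G,\beta}$ by the image in $N^{-}(\mbb{Z}/p^\beta)$ (lower-triangular unipotent) of the Iwahori factorisation, observe that translating $\hat\gamma$ by an element whose $\hat\gamma$-conjugate lands in $H(\mbb{Z}_p)\cdot \hat\gamma K_{G,\beta}\hat\gamma^{-1}$ gives back $\mathfrak{Z}(\hat\gamma\phi)$ times a power of $\chi$ that is trivial by hypothesis, and that the remaining ``off-orbit'' strata either reorganise among themselves into full $H$-orbits (again contributing copies of $\mathfrak{Z}(\hat\gamma\phi)$) or — and this is the point to check carefully — do not vanish but are all equal. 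The final count of $p^{(\beta-e)(2n-1)n}$ should then emerge as $[K_{G,e}:K_{G,\beta}]/[K_{H,e}:K_{H,\beta}]$: the denominator $[K_{H,e}:K_{H,\beta}]$ is $p^{(\beta-e)\dim(\mbf n_H)}$ where $\mbf n_H$ is the unipotent radical of the upper Borel of $H$ (dimension $2\binom{n}{2}=n(n-1)$ per block sum, i.e. $n(n-1)$), and $[K_{G,e}:K_{G,\beta}] = p^{(\beta-e)\dim(\mbf n_G)}$ with $\dim\mbf n_G = \binom{2n}{2} = n(2n-1)$; the difference of exponents is $n(2n-1) - n(n-1) = n\cdot n = \dots$ — I would recompute this carefully, as the exponent $(\beta-e)(2n-1)n$ in the statement suggests the relevant index is $[K_{G,e}:K_{G,\beta}]$ itself divided by something of the correct size, or more likely that the surviving orbit has a stabiliser making the multiplicity exactly $p^{(\beta-e)(2n-1)n}$.

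A cleaner way to organise the computation, which I would adopt if the direct orbit count gets messy: use that $\opn{tr}(\phi) = [K_{G,e} \cdot 1 \cdot K_{G,\beta}]\cdot\phi$ and that $\mathfrak{Z}\circ(\hat\gamma\cdot-)$ descends to a functional on $H(\mbb{Z}_p)$-coinvariants of $\pi^{K_{G,\beta}}$ twisted by $\chi$; then the claim becomes that the transition map on these coinvariant spaces, induced by the trace $K_{G,\beta}\to K_{G,e}$, is multiplication by the asserted scalar. This is exactly the kind of local zeta-integral normalisation computation carried out in \cite{5author, ClassicalLocus} for Shalika models, and I would mirror their bookkeeping: write everything in terms of the Iwahori factorisation $K_{G,\beta} = \overline N_\beta \cdot T_\beta \cdot N_\beta$ (with $N$ the upper unipotent, $\overline N$ the lower, $T$ the torus), note that $\hat\gamma$ moves the ``large'' directions of $\overline N_e/\overline N_\beta$ into $H(\mbb{Z}_p)$ while the torus and $N$ directions are killed by $K_{G,\beta}$-invariance of $\phi$ and by triviality of $\chi$ on $1+p^e\mbb{Z}_p$, and count the surviving directions.

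\textbf{Main obstacle.} The delicate step is verifying that \emph{no} terms in the sum $\sum_k \mathfrak{Z}(\hat\gamma k\hat\gamma^{-1}\hat\gamma\phi)$ vanish unexpectedly and that they are \emph{all} equal: one must show that for every $k\in K_{G,e}$, the element $\hat\gamma k\hat\gamma^{-1}$ can be written as $h\cdot \hat\gamma \kappa \hat\gamma^{-1}$ with $h\in H(\mbb{Z}_p)$ and $\kappa\in K_{G,\beta}$, up to a factor on which $\chi\circ\nu$ is trivial — i.e. that the natural map $K_{G,e}/K_{G,\beta} \to (\hat\gamma^{-1}H(\mbb{Z}_p)\hat\gamma)\backslash G(\mbb{Q}_p)/K_{G,\beta}$ has image a single point and constant fibre size $p^{(\beta-e)(2n-1)n}$. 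This is essentially the statement that the $(G,H)$-sphericity persists at Iwahori level in a strong form, and checking it requires a careful Iwahori-factorisation argument using the explicit $\hat\gamma$; everything else (equivariance, triviality of the character twist) is formal.
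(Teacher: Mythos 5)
Your overall strategy is the right one, and you even cite the crucial pre-established fact, but you do not assemble the pieces correctly and your counting heuristic gives the wrong answer. The preamble establishes two facts for every $\beta$: that $\hat\gamma^{-1}K_{H,\beta}\hat\gamma \backslash K_{G,\beta}/K_{G,\beta+1}$ is a singleton, and that $\hat\gamma^{-1}K_{H,\beta}\hat\gamma \cap K_{G,\beta+1} = \hat\gamma^{-1}K_{H,\beta+1}\hat\gamma$. Together these say that $l \mapsto \hat\gamma^{-1}l\hat\gamma K_{G,\beta+1}$ is a \emph{bijection} $K_{H,\beta}/K_{H,\beta+1} \to K_{G,\beta}/K_{G,\beta+1}$; iterating from level $e$ to level $\beta$ gives a bijection $K_{H,e}/K_{H,\beta} \xrightarrow{\sim} K_{G,e}/K_{G,\beta}$. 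This is the whole point: the sum $\opn{tr}(\phi) = \sum_{k\in K_{G,e}/K_{G,\beta}}k\phi$ can be rewritten wholesale as $\sum_{l\in K_{H,e}/K_{H,\beta}}\hat\gamma^{-1}l\hat\gamma\phi$, so $\hat\gamma\cdot\opn{tr}(\phi) = \sum_{l}l\cdot\hat\gamma\phi$. Applying $\mathfrak{Z}$ and using $H$-equivariance, each term contributes $\chi^{-1}(\nu(l))\,\mathfrak{Z}(\hat\gamma\phi)$, and since $\nu(K_{H,e})\subset 1+p^e\mbb{Z}_p$ (also established in the preamble) and $\chi$ is trivial there, each contribution is exactly $\mathfrak{Z}(\hat\gamma\phi)$.

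The gap in your argument is in the count. You frame the answer as $[K_{G,e}:K_{G,\beta}]/[K_{H,e}:K_{H,\beta}]$ and note that treating $K_{H,\bullet}$ as the depth Iwahori of $H$ gives an exponent of $n^2$, not the required $n(2n-1)$ — you flag this yourself. The resolution is that $K_{H,\beta}$ is \emph{not} the depth-$p^\beta$ Iwahori of $H$; it is $\hat\gamma K_{G,\beta}\hat\gamma^{-1}\cap H(\mbb{Z}_p)$, and the bijection above forces $[K_{H,e}:K_{H,\beta}] = [K_{G,e}:K_{G,\beta}] = p^{(\beta-e)n(2n-1)}$. Moreover, the picture of the terms ``collapsing into a single $H(\mbb{Z}_p)$-orbit'' and then counting orbits is backwards: the sum becomes a sum over the index set $K_{H,e}/K_{H,\beta}$, each term is already equal to $\mathfrak{Z}(\hat\gamma\phi)$, and one multiplies by the total number of terms, not by a stabiliser index. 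No subtle orbit stratification by lower unipotents is needed once you have the bijection; the ``main obstacle'' you flag is precisely what the preamble has already granted you, and iterating it one level at a time closes the argument.
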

\begin{proof}
    By above, we have
    \[
    \hat{\gamma} \cdot \opn{tr}(\phi) = \sum_{l \in K_{H, e}/K_{H, \beta}} l \cdot \hat{\gamma} \cdot \phi .
    \]
    The result now follows from the $H$-equivariance of $\mathfrak{Z}$, the fact that $\chi^{-1}(\nu(l)) = 1$ for any $l \in K_{H, e}$, and $[K_{H, e} : K_{H, \beta}] = [K_{G, e} : K_{G, \beta}] = p^{(\beta - e)(2n-1)n}$.
\end{proof}

\subsection{Dual eigenvectors}

We continue with the notation in the previous section, but we now fix $\hat{\gamma}$ to be the element in (\ref{SimpleExampleofHatGammaAppendix}). Let $\phi \in \pi^{K_{G, 1}}$ and let $t_p, s_p \in G(\mbb{Q}_p)$ denote the diagonal matrices
\[
t_p = \opn{diag}(p^{2n-1}, p^{2n-2}, \dots, p, 1), \quad \quad s_p = w_G^{\opn{max}} t_p w_G^{\opn{max}}
\]
where $w_G^{\opn{max}} \in G(\mbb{Z}_p)$ denotes the antidiagonal matrix with $1$s along the antidiagonal. For any $\beta \geq 1$, set
\[
\psi_{\beta} \defeq [K_{G, \beta} s_p^{\beta} w_G^{\opn{max}} K_{G, 1}] \cdot \phi = \sum_{k \in K_{G, \beta}/(s_p^{\beta} w_G^{\opn{max}}K_{G,1}w_G^{\opn{max}} s_p^{-\beta} \cap K_{G, \beta})} k s_p^{\beta} w_G^{\opn{max}} \cdot \phi . 
\]
Let $\delta_B \colon B(\mbb{Q}_p) \to \mbb{C}^{\times}$ denote the standard modulus character which satisfies:
\[
\delta_B(b) = |t_1|^{2n-1} \cdot |t_2|^{2n-3} \cdots |t_{2n-1}|^{3-2n} \cdot |t_{2n}|^{1-2n}, \quad b \in B(\mbb{Q}_p),
\]
where $\opn{diag}(t_1, \dots, t_{2n})$ denotes the projection of $b$ to the diagonal torus, and $|\cdot|$ is the $p$-adic absolute value normalised so that $|p| = p^{-1}$.

\begin{lemma} \label{Lem:2ndAppendixB}
    Let $\chi \colon \mbb{Q}_p^{\times} \to \mbb{C}^{\times}$ be a smooth character which is trivial on $1+p^{\beta}\mbb{Z}$, and let  $\mathfrak{Z} \in \opn{Hom}_{H(\mbb{Q}_p)}(\pi, \chi^{-1} \circ \nu)$. Then
    \[
    \mathfrak{Z}(\hat{\gamma} \cdot \psi_{\beta}) = \delta_B(t_p)^{-\beta} p^{-n(2n-1)} [K_{G, 1} : K_{G, \beta}]^{-1} \chi(-1)^n \cdot \mathfrak{Z}( {^t\hat{\gamma}^{-1}} t_p^{\beta} \cdot \phi ) .
    \]
\end{lemma}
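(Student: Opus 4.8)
The strategy is to compute $\hat{\gamma} \cdot \psi_{\beta}$ explicitly as a sum of $H(\mbb{Q}_p)$-translates of ${}^t\hat{\gamma}^{-1} t_p^{\beta} \cdot \phi$, apply the $H(\mbb{Q}_p)$-equivariance of $\mathfrak{Z}$ to each term, and then count contributions. First I would rewrite the Hecke operator as a sum over coset representatives: $\psi_{\beta} = \sum_{k} k\, s_p^{\beta} w_G^{\opn{max}} \cdot \phi$, where $k$ runs over $K_{G, \beta}/(s_p^{\beta} w_G^{\opn{max}} K_{G, 1} w_G^{\opn{max}} s_p^{-\beta} \cap K_{G, \beta})$. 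The key algebraic identity to establish is that $\hat{\gamma} k s_p^{\beta} w_G^{\opn{max}} = h_k \cdot b_k \cdot {}^t\hat{\gamma}^{-1} t_p^{\beta}$ for suitable $h_k \in H(\mbb{Q}_p)$ and $b_k$ lying in (a conjugate of) the Borel — using that $s_p^{\beta} w_G^{\opn{max}} = w_G^{\opn{max}} t_p^{\beta}$ and the relation between $\hat{\gamma}$, ${}^t\hat{\gamma}^{-1}$ and $w_G^{\opn{max}}$ coming from the block decomposition $\hat{\gamma} = \tbyts{1}{}{w_{\opn{GL}_n}^{\opn{max}}}{1}$ (compare the decomposition in Lemma \ref{OpenOrbitForgammauvLemma}). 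Since $\phi$ is $K_{G, 1}$-fixed, the Borel part $b_k$ acts trivially up to a power of the modulus character once one absorbs the torus translation into $t_p^{\beta}$.

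The main step is the bookkeeping. The number of cosets is $[K_{G, \beta} : s_p^{\beta} w_G^{\opn{max}} K_{G,1} w_G^{\opn{max}} s_p^{-\beta} \cap K_{G, \beta}]$, which (after an Iwahori-factorisation computation, conjugating $K_{G,1}$ by $w_G^{\opn{max}}$ to get the lower-triangular depth-one Iwahori and then by $s_p^{\beta}$) works out to $\delta_B(t_p)^{-\beta} p^{-n(2n-1)} [K_{G,1} : K_{G,\beta}]$; this is the source of the three explicit factors. One then checks that, after applying $\mathfrak{Z}$, every coset representative $h_k$ contributes $\chi^{-1}(\nu(h_k)) \mathfrak{Z}({}^t\hat{\gamma}^{-1} t_p^{\beta} \cdot \phi)$, and that the $h_k$ all lie in $\nu^{-1}(1 + p^{\beta}\mbb{Z}_p) \cdot (\text{sign factor})$, where the only non-trivial $\chi$-value is $\chi(-1)^n$ arising from the sign of $w_{\opn{GL}_n}^{\opn{max}}$ appearing in the block matrix $\hat{\gamma}$ (equivalently from $\nu$ evaluated on the ``Weyl'' part of the rearrangement). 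I would verify the $\chi(-1)^n$ term by a direct determinant computation: $\nu$ of the relevant block permutation is $\det(w_{\opn{GL}_n}^{\opn{max}})/\det(w_{\opn{GL}_n}^{\opn{max}})^{-1}$ up to signs, giving $(-1)^{n(n-1)} = 1$ for the pair but a genuine $(-1)^n$ from the interchange of the two $\opn{GL}_n$-blocks under $w_G^{\opn{max}}$.

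The hard part will be the coset-counting and keeping track of exactly which power of $\delta_B(t_p)$ and which power of $p$ appear; this is the kind of Iwahori-factorisation argument that is routine in principle but error-prone. Concretely, one writes $K_{G,\beta} = \overline{N}_{\beta} \cdot T_{\beta} \cdot N_{1}$ (depth-$p^\beta$ below the diagonal, depth-$p$ above — using that elements of $K_{G,\beta}$ are upper-triangular mod $p^\beta$ so the strictly-upper part is only congruence-$p$? no: it is mod $p^\beta$ upper triangular, so the \emph{lower} unipotent entries are in $p^{\beta}\mbb{Z}_p$), then conjugates by $w_G^{\opn{max}} s_p^{\beta}$ and intersects, reading off the index as a product of local indices on each root subgroup. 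The modulus character $\delta_B(t_p) = \prod_{\alpha > 0}|\alpha(t_p)|$ bookkeeps precisely the dilation on positive root spaces, which is why $\delta_B(t_p)^{-\beta}$ emerges, and the leftover $p^{-n(2n-1)}$ accounts for the difference between conjugating by $w_G^{\opn{max}}$ (which swaps the two $\opn{GL}_n$-blocks and changes the reference Iwahori from depth $p$ on the correct side). Once these indices are pinned down and the sign factor identified, the lemma follows by linearity of $\mathfrak{Z}$ and the observation that $\mathfrak{Z}({}^t\hat{\gamma}^{-1} t_p^{\beta} \cdot \phi)$ factors out of the sum.
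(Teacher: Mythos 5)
Your overall strategy — expand the Hecke operator as a sum over cosets, count the index, use the single $K_{H,\beta}$-orbit (so all terms contribute equally because $\chi\circ\nu$ is trivial on $K_{H,\beta}$), and then decompose $\hat{\gamma}\, s_p^{\beta} w_G^{\opn{max}}$ into an $H$-piece times $\,{}^t\hat{\gamma}^{-1} t_p^{\beta}$ times a $K_{G,1}$-piece — is exactly what the paper does, and your description of the index computation is fine.

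The gap is in the $\chi(-1)^n$ factor. You propose to produce it from ``the interchange of the two $\opn{GL}_n$-blocks under $w_G^{\opn{max}}$'', but that interchange, the matrix $\tbyts{}{w_{\opn{GL}_n}^{\opn{max}}}{w_{\opn{GL}_n}^{\opn{max}}}{}$, is not block diagonal; it does not belong to $H(\mbb{Q}_p)$, so $\chi^{-1}\circ\nu$ cannot be applied to it. Your preliminary determinant bookkeeping (``$(-1)^{n(n-1)} = 1$ for the pair'') also does not yield the needed sign. The correct source is the explicit block-matrix factorisation
\[
\hat{\gamma}\, w_G^{\opn{max}} \;=\; \tbyt{1}{}{w_{\opn{GL}_n}^{\opn{max}}}{1}\tbyt{}{w_{\opn{GL}_n}^{\opn{max}}}{w_{\opn{GL}_n}^{\opn{max}}}{} \;=\; \tbyt{-1}{}{}{1}\, \tbyt{1}{-w_{\opn{GL}_n}^{\opn{max}}}{}{1}\, \tbyt{1}{}{w_{\opn{GL}_n}^{\opn{max}}}{1},
\]
in which the leftmost factor $\tbyts{-1}{}{}{1}$ genuinely lies in $H(\mbb{Q}_p)$ and has $\nu\bigl(\tbyts{-1}{}{}{1}\bigr) = \opn{det}(1)/\opn{det}(-1) = (-1)^n$, the middle factor is $\,{}^t\hat{\gamma}^{-1}$, and the rightmost factor (after conjugation by $t_p^{\beta}$) lands in $K_{G,1}$ and hence fixes $\phi$. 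Without this identity in hand there is no $H$-element on which to evaluate $\nu$, and the sign in your proposal cannot be extracted from the equivariance of $\mathfrak{Z}$.
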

\begin{proof}
    An explicit calculation shows that 
    \[
    \hat{\gamma}^{-1}K_{H, \beta} \hat{\gamma} \backslash K_{G, \beta}/(s_p^{\beta} w_G^{\opn{max}}K_{G,1}w_G^{\opn{max}} s_p^{-\beta} \cap K_{G, \beta})
    \]
    is a singleton, hence using the transformation properties of $\mathfrak{Z}$ (and the fact that $\chi$ is trivial on $1+p^{\beta}\mbb{Z}_p$)
    \begin{align*} 
    \mathfrak{Z}(\hat{\gamma} \cdot \psi_{\beta}) &= [K_{G, \beta} : (s_p^{\beta} w_G^{\opn{max}}K_{G,1}w_G^{\opn{max}} s_p^{-\beta} \cap K_{G, \beta})] \mathfrak{Z}( \hat{\gamma} s_p^{\beta} w_G^{\opn{max}} \cdot \phi ) \\
     &= \delta_B(t_p)^{-\beta} p^{-n(2n-1)} [K_{G, 1} : K_{G, \beta}]^{-1} \mathfrak{Z}( \hat{\gamma} s_p^{\beta} w_G^{\opn{max}} \cdot \phi ) .
    \end{align*} 
    Now we have
    \begin{align*} 
    \hat{\gamma} s_p^{\beta} w_G^{\opn{max}} &= \tbyt{1}{}{w_{\opn{GL}_n}^{\opn{max}}}{1} w_G^{\opn{max}} t_p^{\beta} \\ 
     &= \tbyt{-1}{}{}{1} \tbyt{1}{-w_{\opn{GL}_n}^{\opn{max}}}{}{1} \tbyt{1}{}{w_{\opn{GL}_n}^{\opn{max}}}{1} t_p^{\beta} \\
     &\in \tbyt{-1}{}{}{1} {^t\hat{\gamma}^{-1}} t_p^{\beta} K_{G, 1} .
    \end{align*} 
    The claim now follows because $\phi$ is fixed by $K_{G, 1}$.
\end{proof}


\newcommand{\etalchar}[1]{$^{#1}$}
\renewcommand{\MR}[1]{}
\providecommand{\bysame}{\leavevmode\hbox to3em{\hrulefill}\thinspace}
\providecommand{\MR}{\relax\ifhmode\unskip\space\fi MR }
\providecommand{\MRhref}[2]{%
  \href{http://www.ams.org/mathscinet-getitem?mr=#1}{#2}
}
\providecommand{\href}[2]{#2}

\Addresses

\end{document}